\documentclass[a4paper,12pt]{smfbook}

\usepackage{amssymb}
\usepackage{url}
\usepackage[T1]{fontenc}
\RequirePackage{calrsfs}
\DeclareSymbolFont{rsfscript}{OMS}{rsfs}{m}{b}
\DeclareSymbolFontAlphabet{\mathrsfs}{rsfscript}
\usepackage[utopia,expert]{mathdesign}
\usepackage{lscape}

\usepackage[leftbars]{changebar}

\usepackage{palatino}
\usepackage{rotating}
\usepackage{graphicx}

\usepackage{enumerate}

\usepackage{color}
\definecolor{shadecolor}{gray}{0.90}

\def\bfit{\bfseries\itshape}

\input xypic
\xyoption{all}
\xyoption{arc}

\makeindex
\def\indexnot#1#2{\index{#1@$#2$ |textbf  {\hskip0.5cm} \textbf }}


\newtheorem{theo}{Th\'eor\`eme}[section]
\def\thetheo{\thesection.\arabic{theo}}
\newtheorem{prop}[theo]{Proposition}

\newtheorem{lem}[theo]{Lemme}

\newtheorem{coro}[theo]{Corollaire}

\newtheorem{defi}[theo]{D\'efinition}

\newtheorem{defivide}{D\'efinition}

\def\equat{\refstepcounter{theo}\begin{equation}}
\def\endequat{\end{equation}}

\renewcommand\thesection{\thechapter.\arabic{section}}


\def\AG{{\mathfrak A}}  \def\aG{{\mathfrak a}}  \def\AM{{\mathbb{A}}}
    
\def\CG{{\mathfrak C}}    \def\CM{{\mathbb{C}}}

    \def\FM{{\mathbb{F}}}

\def\IG{{\mathfrak I}}

  \def\mG{{\mathfrak m}}  
    \def\NM{{\mathbb{N}}}
    
  \def\pG{{\mathfrak p}}  \def\PM{{\mathbb{P}}}
  \def\qG{{\mathfrak q}}  \def\QM{{\mathbb{Q}}}
  \def\rG{{\mathfrak r}}  \def\RM{{\mathbb{R}}}
\def\SG{{\mathfrak S}}

    \def\XM{{\mathbb{X}}}
    
  \def\zG{{\mathfrak z}}  \def\ZM{{\mathbb{Z}}}



\def\Ab{{\mathbf A}}  \def\ab{{\mathbf a}}  \def\AC{{\mathcal{A}}}
  \def\bb{{\mathbf b}}  \def\BC{{\mathcal{B}}}
\def\Cb{{\mathbf C}}    
  \def\db{{\mathbf d}}  \def\DC{{\mathcal{D}}}
\def\Eb{{\mathbf E}}  \def\eb{{\mathbf e}}  \def\EC{{\mathcal{E}}}
    \def\FC{{\mathcal{F}}}
\def\Gb{{\mathbf G}}    \def\GC{{\mathcal{G}}}
\def\Hb{{\mathbf H}}    \def\HC{{\mathcal{H}}}
    
\def\Jb{{\mathbf J}}    
\def\Kb{{\mathbf K}}  \def\kb{{\mathbf k}}  \def\KC{{\mathcal{K}}}
\def\Lb{{\mathbf L}}    \def\LC{{\mathcal{L}}}
\def\Mb{{\mathbf M}}    \def\MC{{\mathcal{M}}}
    \def\NC{{\mathcal{N}}}
    \def\OC{{\mathcal{O}}}
\def\Pb{{\mathbf P}}    \def\PC{{\mathcal{P}}}
  \def\qb{{\mathbf q}}  
    
\def\Sb{{\mathbf S}}  \def\sb{{\mathbf s}}  
\def\Tb{{\mathbf T}}  \def\tb{{\mathbf t}}  \def\TC{{\mathcal{T}}}
  \def\ub{{\mathbf u}}

  \def\xb{{\mathbf x}}  
    
    \def\ZC{{\mathcal{Z}}}


  \def\arm{{\mathrm{a}}}  
  \def\brm{{\mathrm{b}}}  
\def\Crm{{\mathrm{C}}}  \def\crm{{\mathrm{c}}}  \def\CCB{{\boldsymbol{\mathcal{C}}}}

\def\Grm{{\mathrm{G}}}    
    \def\HCB{{\boldsymbol{\mathcal{H}}}}

\def\Lrm{{\mathrm{L}}}    
\def\Mrm{{\mathrm{M}}}    \def\MCB{{\boldsymbol{\mathcal{M}}}}

    \def\PCB{{\boldsymbol{\mathcal{P}}}}
\def\Qrm{{\mathrm{Q}}}    \def\QCB{{\boldsymbol{\mathcal{Q}}}}
    \def\RCB{{\boldsymbol{\mathcal{R}}}}
    
\def\Trm{{\mathrm{T}}}

    \def\XCB{{\boldsymbol{\mathcal{X}}}}
    \def\YCB{{\boldsymbol{\mathcal{Y}}}}
\def\Zrm{{\mathrm{Z}}}    \def\ZCB{{\boldsymbol{\mathcal{Z}}}}


  \def\gti{{\tilde{g}}}

\def\Qti{{\tilde{Q}}}    
\def\Rti{{\tilde{R}}}    \def\RCt{{\tilde{\mathcal{R}}}}

\def\Zti{{\tilde{Z}}}  \def\zti{{\tilde{z}}}  


  \def\ahat{{\hat{a}}}

  \def\ehat{{\hat{e}}}

\def\Qhat{{\hat{Q}}}

  \def\zhat{{\hat{z}}}



          \def\aba{{\bar{a}}}
          \def\bba{{\bar{b}}}
          
\def\Dba{{\bar{D}}}          
          \def\eba{{\bar{e}}}
\def\Fba{{\bar{F}}}          \def\fba{{\bar{f}}}
\def\Gba{{\bar{G}}}          \def\gba{{\bar{g}}}
          \def\hba{{\bar{h}}}
\def\Iba{{\bar{I}}}          \def\iba{{\bar{i}}}

\def\Pba{{\bar{P}}}          
          
          \def\rba{{\bar{r}}}
          
          \def\tba{{\bar{t}}}
          
          \def\vba{{\bar{v}}}

\def\Zba{{\bar{Z}}}          \def\zba{{\bar{z}}}


\def\Tov{{\overline{T}}}


\def\Hbt{{\widetilde{\Hb}}}

\def\Mbt{{\tilde{\Mb}}}




\def\a{\alpha}
\def\b{\beta}
\def\g{\gamma}
\def\G{\Gamma}
\def\d{\delta}
\def\D{\Delta}
\def\e{\varepsilon}
\def\ph{\varphi}
\def\l{\lambda}
\def\L{\Lambda}

\def\o{\omega}
\def\O{\Omega}
\def\r{\rho}
\def\s{\sigma}
\def\Sig{\Sigma}
\def\th{\theta}
\def\Th{\Theta}
\def\t{\tau}

\def\z{\zeta}


\def\betb{{\boldsymbol{\beta}}}

\def\delb{{\boldsymbol{\delta}}}        
\def\Delb{{\boldsymbol{\Delta}}}        
   
        \def\pht{{\tilde{\varphi}}}

\def\mub{{\boldsymbol{\mu}}}            \def\mut{{\tilde{\mu}}}
            
\def\omeb{{\boldsymbol{\omega}}}        
\def\Omeb{{\boldsymbol{\Omega}}}        
\def\pib{{\boldsymbol{\pi}}}            
            
          \def\rhot{{\tilde{\rho}}}
\def\sigb{{\boldsymbol{\sigma}}}

\def\taub{{\boldsymbol{\tau}}}          \def\taut{{\tilde{\t}}}
\def\xib{{\boldsymbol{\xi}}}


               \def\muh{{\hat{\mu}}}
               
\def\omeba{{\bar{\omega}}}           
           
\def\piba{{\bar{\pi}}}               
               
\def\rhoba{{\bar{\rho}}}


\def\pibt{{\tilde{\boldsymbol{\pi}}}}

\def\mubt{{\tilde{\boldsymbol{\mu}}}}


\def\mubh{{\hat{\boldsymbol{\mu}}}}


\DeclareMathOperator{\Aut}{{\mathrm{Aut}}}

\DeclareMathOperator{\End}{{\mathrm{End}}}

\DeclareMathOperator{\Gal}{{\mathrm{Gal}}}
\DeclareMathOperator{\Hom}{{\mathrm{Hom}}}

\DeclareMathOperator{\Id}{{\mathrm{Id}}}

\DeclareMathOperator{\im}{{\mathrm{Im}}}

\DeclareMathOperator{\Ind}{{\mathrm{Ind}}}

\DeclareMathOperator{\Irr}{{\mathrm{Irr}}}
\DeclareMathOperator{\Ker}{{\mathrm{Ker}}}

\DeclareMathOperator{\Mat}{{\mathrm{Mat}}}

\DeclareMathOperator{\Rad}{{\mathrm{Rad}}}
\DeclareMathOperator{\Res}{{\mathrm{Res}}}

\DeclareMathOperator{\Spec}{{\mathrm{Spec}}}

\DeclareMathOperator{\Stab}{{\mathrm{Stab}}}

\DeclareMathOperator{\Tor}{{\mathrm{Tor}}}

\DeclareMathOperator{\val}{{\mathrm{val}}}
\DeclareMathOperator{\disc}{{\mathrm{disc}}}
\DeclareMathOperator{\classe}{{\mathrm{Cl}}}
\DeclareMathOperator{\longueur}{{\mathrm{Length}}}


\def\to{\rightarrow}
\def\longto{\longrightarrow}
\def\injto{\hookrightarrow}

\def\fonction#1#2#3#4#5{\begin{array}{rccc}
{#1} : & {#2} & \longto & {#3} \\
& {#4} & \longmapsto & {#5} 
\end{array}}

\def\fonctio#1#2#3#4{\begin{array}{ccc}
{#1} & \longto & {#2} \\
{#3} & \longmapsto & {#4} 
\end{array}}

\def\bijectio#1#2#3#4{\begin{array}{ccc}
{#1} & \stackrel{\sim}{\longto} & {#2} \\
{#3} & \longmapsto & {#4} 
\end{array}}


\def\DS{\displaystyle}
\def\SS{\scriptstyle}
\def\SSS{\scriptscriptstyle}

\def\finl{~$\blacksquare$}

\def\lexp#1#2{\kern\scriptspace\vphantom{#2}^{#1}\kern-\scriptspace#2}
\def\le{\hspace{0.1em}\mathop{\leqslant}\nolimits\hspace{0.1em}}
\def\ge{\hspace{0.1em}\mathop{\geqslant}\nolimits\hspace{0.1em}}

\mathchardef\inferieur="321E
\mathchardef\superieur="321F

\def\eqna{\begin{eqnarray*}}
\def\endeqna{\end{eqnarray*}}

\def\itemth#1{\item[${\mathrm{(#1)}}$]}

\catcode`\@=11
\long\def\@car#1#2\@nil{#1}
\long\def\@first#1#2{#1}
\long\def\@second#1#2{#2}
\long\def\ifempty#1{\expandafter\ifx\@car#1@\@nil @\@empty
  \expandafter\@first\else\expandafter\@second\fi}
\catcode`\@=12

%


\def\Jbov{{\bar{\Jb}}}
\def\Hbov{{\bar{\Hb}}}
\def\Kbov{{\bar{\Kb}}}
\def\Lbov{{\bar{\Lb}}}
\def\Mbov{{\bar{\Mb}}}

\def\pGt{{\tilde{\pG}}}
\def\qGt{{\tilde{\qG}}}

\def\rGt{{\tilde{\rG}}}
\def\GL{{\mathrm{GL}}}

\DeclareMathOperator{\Ref}{R\acute{e}f}

\def\boitegrise#1#2{\begin{centerline}{\fcolorbox{black}{shadecolor}{~
    \begin{minipage}[t]{#2}{\vphantom{~}#1\vphantom{$A_{\DS{A_A}}$}}
            \end{minipage}~}}\end{centerline}\medskip}

\def\ve{{\SSS{\vee}}}
\def\cow{{\mathrm{co}(W)}}
\def\surto{\twoheadrightarrow}
\def\module{{\text{-}}{\mathrm{mod}}}

\theoremstyle{remark}
\newtheorem{rema}[theo]{Remarque}
\newtheorem{remavide}{Remarque}

\newtheorem{exemple}[theo]{Exemple}
\newtheorem{contre}[theo]{Contre-exemple}
\theoremstyle{plain}
\newtheorem{question}[theo]{Question}
\newtheorem{probleme}[theo]{Probl\`eme}

\newtheorem{conjecturem}{Conjecture \FAM}

\newtheorem{conjecturebil}{Conjecture \BIL}

\newtheorem{conjectureleft}{Conjecture \GAUCHE}

\newtheorem{conjecturecar}{Conjecture \CAR}

\def\Frac{{\mathrm{Frac}}}

\def\BIL{LR}
\def\GAUCHE{L}
\def\CAR{CAR}
\def\FAM{FAM}

\def\groth{\KC_0}
\DeclareMathOperator{\dec}{d\acute{e}c}

\def\reg{{\mathrm{r\acute{e}g}}}
\def\blocs{{\mathrm{Idem_{pr}}}}
\def\refw{{\Ref(W)/W}}
\def\grad{{\mathrm{gr}}}
\def\gradauto{{\mathrm{bigr}}}
\def\alg{{\mathrm{alg}}}
\def\euler{{\eb\ub}}
\def\eulerq{{\mathrm{eu}}}
\def\eulertilde{\widetilde{\euler}}
\def\casimir{{\mathrm{cas}}}

\def\pGba{\bar{\pG}}
\def\plus{{\hskip1mm +}}
\def\moins{{\hskip1mm -}}

\def\rGba{{\bar{\rG}}}

\def\qGba{{\bar{\qG}}}
\def\zGba{{\bar{\zG}}}
\def\decba{\overline{\dec}}

\def\calo{{\Crm\Mrm}}
\def\eval{{\mathrm{ev}}}
\def\KER{\KC \!\!\! e\!\! r}

\def\xyinj{\ar@{^{(}->}}
\def\xysur{\ar@{->>}}

\def\gauche{{\mathrm{left}}}
\def\droite{{\mathrm{right}}}

\def\isomorphisme#1{{\boldsymbol{[}}\hskip0.5mm #1\hskip0.5mm {\boldsymbol{]}}}
\def\res{{\mathrm{res}}}
\def\gen{{\mathrm{g\acute{e}n}}}
\def\parti{{\mathrm{par}}}
\def\bigrad{{\mathrm{bigr}}}

\DeclareMathOperator{\carac}{{\mathrm{Car}}}
\bigskip\def\unb{{\boldsymbol{1}}}

\def\petitespace{\vphantom{$\DS{\frac{\DS{A^A}}{\DS{A_A}}}$}}
\def\trespetitespace{\vphantom{$\DS{\frac{\DS{A}}{\DS{A}}}$}}

\def\kl{{\mathrm{KL}}}

\def\singulier{{\mathrm{sing}}}
\def\ramif{{\mathrm{ram}}}
\def\cmcellules{\lexp{\calo}{\mathrm{Cell}}}

\def\klcellules{\lexp{\kl}{\mathrm{Cell}}}

\def\mult{{\mathrm{mult}}}

\def\cyclo{{\mathrm{cyc}}}
\def\heckegenerique{\HCB_{\!\!\! W}^\gen}
\def\heckecyclotomique{\HCB_{\!\!\! W}^\cyclo}

\makeatletter
\def\hlinewd#1{%
\noalign{\ifnum0=`}\fi\hrule \@height #1 %
\futurelet\reserved@a\@xhline}
\makeatother

\newlength\epaisLigne
\newcommand\clinewd[2]{\noalign{\global\epaisLigne\arrayrulewidth\global\arrayrulewidth #1}%
\cline{#2} \noalign{\global\arrayrulewidth\epaisLigne}}

\usepackage{multirow}

\def\prel{\leqslant_{L}^c}

\def\prer{\leqslant_{R}^c}
\def\prelr{\leqslant_{LR}^c}
\def\rell{\stackrel{L,c}{\longleftarrow}}

\def\relr{\stackrel{R,c}{\longleftarrow}}
\def\siml{\sim_{L}^{\kl,c}}
\def\simr{\sim_{R}^{\kl,c}}
\def\simlr{\sim_{LR}^{\kl,c}}

\def\CCBt{{\widetilde{\CCB}}}
\def\CGt{{\widetilde{\CG}}}

\def\copie{{\mathrm{cop}}}
\def\iso{{\mathrm{iso}}_0}

\newcommand{\longiso}{\stackrel{\sim}{\longrightarrow}}
\newcommand{\longbij}{\stackrel{\sim}{\leftrightarrow}}

\def\MCov{{\bar{\MC}}}
\def\LCov{{\bar{\LC}}}

\def\schur{{\mathrm{sch}}}
\def\carac{{\mathrm{car}}}

\addtolength{\hoffset}{-1.2cm}\addtolength{\textwidth}{2.4cm}\addtolength{\voffset}{-1.5cm}\addtolength{\textheight}{3cm}

\def\attractif{{\mathrm{att}}}
\def\repulsif{{\mathrm{r\acute{e}p}}}
\def\limiteattractive{{{\mathrm{lim}}_{{\mathrm{att}}}}}
\def\limiterepulsive{{{\mathrm{lim}}_{{\mathrm{r\acute{e}p}}}}}
\def\limiteattractiveinverse{{{\mathrm{lim}}_{{\mathrm{att}}}^{-1}}}

\def\limitegauche{{{\mathrm{lim}}_{{\mathrm{left}}}}}

\def\sym{{\mathrm{sym}}}

\begin{document}

\baselineskip=16pt

\title{Cellules de Calogero-Moser}

\author{{\sc C\'edric Bonnaf\'e}}
\address{
Institut de Math\'ematiques et de Mod\'elisation de Montpellier (CNRS: UMR 5149), 
Universit\'e Montpellier 2,
Case Courrier 051,
Place Eug\`ene Bataillon,
34095 MONTPELLIER Cedex,
FRANCE} 

\makeatletter
\email{cedric.bonnafe@univ-montp2.fr}
\makeatother

\author{{\sc Rapha\"el Rouquier}}

\address{UCLA Mathematics Department
Los Angeles, CA 90095-1555, 
USA}
\email{rouquier@math.ucla.edu}



\date{\today}

\thanks{The first author is partly supported by the ANR (Project No ANR-12-JS01-0003-01 ACORT)}


\maketitle

\pagestyle{myheadings}

\markboth{\sc C. Bonnaf\'e \& R. Rouquier}{\sc Cellules de Calogero-Moser}

\tableofcontents

%
%
%
%
%
%
%
%
%
%
%
%
%
%
%
%
%
%

\chapter*{Introduction}

\vskip-2cm

Ce m\'emoire est consacr\'e \`a l'\'etude d'un rev\^etement de la vari\'et\'e
de Calogero-Moser associ\'ee par Etingof et Ginzburg \`a un groupe de r\'eflexion complexe. La
ramification de ce rev\^etement donne lieu \`a des partitions du groupe de r\'eflexion
dont nous conjecturons qu'elles co{\"\i}ncident avec
les cellules de Kazhdan-Lusztig, dans le cas d'un groupe de Coxeter.

\smallskip

\'Etant donn\'e un groupe fini de (pseudo-)r\'eflexion $W$ non
trivial agissant
sur un espace vectoriel complexe de dimension finie $V$, la vari\'et\'e quotient
$(V\times V^*)/\Delta W$ par l'action diagonale de $W$ est une vari\'et\'e
singuli\`ere. C'est un rev\^etement ramifi\'e de la vari\'et\'e lisse
$V/W\times V^*/W$, qui est en fait un espace affine. Etingof et Ginzburg
ont construit une d\'eformation $\Upsilon: \ZCB \longto \PCB$ de ce rev\^etement.
La vari\'et\'e $\ZCB$ est l'espace de Calogero-Moser, construit comme spectre
du centre de la $\CM[\PC]$-alg\`ebre de Cherednik rationelle $\Hb$ en $t=0$
associ\'ee \`a $W$. 
La vari\'et\'e $\PC$ est produit de $V/W\times V^*/W$ par un espace vectoriel 
$\CCB$ de param\`etres de base l'ensemble des classes de conjugaison de
r\'eflexions de $W$.  La sp\'ecialisation  en $0\in\CCB$ de $\Hb$ est
l'alg\`ebre $\CM[V\oplus V^*]\rtimes W$.

Le rev\^etement $\Upsilon$, de degr\'e $|W|$, n'est pas galoisien (sauf si 
$W=(\ZM/2)^n$). L'objet de notre travail est l'\'etude d'une cl\^oture
galoisienne $\RCB$ de ce rev\^etement et de la ramification au-dessus des
sous-vari\'et\'es ferm\'ees
$0\times 0$, $0\times V^*/W$ et $V/W\times 0$ de $V/W\times V^*/W$.

Soit $G$ le groupe de Galois de $\RCB \longto \PCB$.
En $0\in\CCB$, le rev\^etement $(V\times V^*)/\Delta W\to V/W\times V^*/W$
admet $(V\times V^*)/\Delta Z(W)$ comme cl\^oture galoisienne. Nous montrons
comment en d\'eduire un plongement de $G$ comme groupe de
permutations de $W$. 

\smallskip

Ceci se reformule en termes de repr\'esentations de $\Hb$: la $\CM(\PCB)$-alg\`ebre
semi-simple $\CM(\PCB) \otimes_{\CM[\PCB]} \Hb$ n'est pas
d\'eploy\'ee et $\CM(\RCB)$ est un corps de d\'eploiement. Les
$\CM(\RCB) \otimes_{\CM[\PCB]} \Hb$-modules simples sont en bijection avec 
$W$. Notre travail consiste alors en l'\'etude de la r\'epartition en blocs
de ces modules, selon le choix d'un id\'eal premier de $\CM[\RCB]$.

\smallskip

Soit $X$ une sous-vari\'et\'e ferm\'ee
irr\'eductible de $\RCB$. Nous d\'efinissons les $X$-cellules de $W$
comme les orbites du groupe d'inertie de $X$. Etant donn\'e
un choix de param\`etre $c\in\CCB$, nous \'etudions les $c$-cellules
bilat\`eres, d\'efinies pour un $X$ composante irr\'eductible de l'image
inverse de $\bar{X}=c\times 0\times 0$. Nous
\'etudions aussi les $c$-cellules \`a gauche (cas
$\bar{X}=c\times V/W\times 0$) et
\`a droite (cas $\bar{X}=c\times 0\times V^*/W$).

\smallskip

Lorsque $W$ est un groupe de Coxeter, nous conjecturons que ces $c$-cellules
co{\"\i}ncident avec les cellules de Kazhdan-Lusztig, d\'efinies par
Kazhdan-Lusztig~\cite{KL} et Lusztig~\cite{Lu2},~\cite{lusztig}. Ceci d\'epend d'un choix
appropri\'e de $X$ dans une $G$-orbite.

\smallskip

Nous analysons le cas o\`u $W$ est cyclique ($\dim V=1$)~: c'est le
seul cas o\`u nous disposons d'une description compl\`ete de tous les objets \'etudi\'es 
dans ce m\'emoire. Nous \'etudions aussi en d\'etail le cas o\`u
$W$ est un groupe de Weyl de type $B_2$~: 
le groupe de Galois est un groupe de Weyl de type $D_4$ et nous d\'emontrons
que les cellules de Calogero-Moser co{\"\i}ncident avec les cellules de
Kazhdan-Lusztig. Notre approche repose sur une \'etude d\'etaill\'ee de
la vari\'et\'e $\ZCB$ et de la ramification, sans toutefois construire
la vari\'et\'e $\RCB$.

Etingof et Ginzburg ont introduit un \'el\'ement de $\CM[\ZCB]$ qui
d\'eforme l'\'el\'ement d'Euler. Nous d\'emontrons que $G$ est le groupe
de Galois de son polyn\^ome minimal. Cet \'el\'ement joue un r\^ole
important pour l'\'etude de la ramification, mais ne suffit pas \`a
s\'eparer les cellules.

\smallskip

Nous montrons que l'ensemble des $c$-cellules bilat\`eres est en bijection
avec l'ensemble des blocs de l'alg\`ebre de Cherednik r\'eduite
$\Hbov_c$, sp\'ecialisation de $\Hb$ en $(c,0,0) \in \CCB \times V/W\times V^*/W =\PCB$. Cet
ensemble est en bijection avec $\Upsilon^{-1}(c, 0,0)$.
A tout $\CM W$-module simple $E$ est associ\'e une repr\'esentation
ind\'ecomposable de type Verma (aussi appel\'ee ``Baby Verma'') $\MCov_c(E)$ 
de $\Hbov$, d'unique quotient simple $\LCov_c(E)$~\cite{gordon}.
La partition en blocs de ces modules donne une partition de $\Irr(W)$ en 
{\it familles de Calogero-Moser}, qui sont conjecturalement reli\'ees
aux familles de l'alg\`ebre de Hecke de $W$ (voir~\cite{gordon B, gordon martino, bellamy, martino, martino 2}). 
Nous d\'emontrons que, dans une famille de Calogero-Moser donn\'ee, la matrice
des multiplicit\'es $[\MCov_c(E):\LCov_c(F)]$ est de rang $1$, une propri\'et\'e
conjectur\'ee par Ulrich Thiel.

Nous introduisons une notion de module simple cellulaire, associ\'e \`a
une cellule \`a gauche. Nous conjecturons que la multiplicit\'e d'un tel
module simple dans un module de Verma \`a gauche associ\'e \`a une repr\'esentation irr\'eductible 
$E$ de $W$ est la m\^eme que la
multiplicit\'e de $E$ dans la
repr\'esentation cellulaire de Kazhdan-Lusztig, lorsque $W$ est un groupe
de Coxeter. Nous \'etudions le cas d'une cellule bilat\`ere associ\'ee \`a
un point lisse de $\Upsilon^{-1}(c,0,0)$~: dans ce cas, Gordon
a d\'emontr\'e que le bloc correspondant contient un unique b\'eb\'e module de Verma
(voir la section~\ref{subsection:spe-hbar} pour la d\'efinition). 
Nous d\'emontrons que la multiplicit\'e d'un module simple cellulaire 
dans le module de Verma \`a gauche correspondant est \'egale \`a $1$ (pour une cellule 
\`a gauche contenue dans la cellule bilat\`ere donn\'ee)~: ceci 
constitue notre meilleur argument en faveur de nos conjectures. 

\bigskip

\noindent{\sc Commentaire - } Cette introduction a \'et\'e \'ecrite dans un langage 
{\it g\'eom\'etrique}, car il s'adapte mieux \`a un pr\'esentation rapide. 
Le m\'emoire est quant \`a lui \'ecrit plut\^ot dans un langage {\it alg\'ebrique}, 
m\^eme si les r\'ef\'erences \`a la g\'eom\'etrie sous-jacente sont nombreuses.

\bigskip

\noindent{\sc Remerciements - } 
Nous tenons \`a remercier chaleureusement G. Bellamy, I. Gordon, M. Martino et U. Thiel pour 
les nombreuses discussions que nous avons eues depuis trois ans et pour leurs \'eclaicissements 
sur les points les plus d\'elicats de la th\'eorie des repr\'esentations des alg\`ebres de 
Cherednik.  

Nous remercions G. Malle pour nous avoir fait profiter de ses comp\'etences 
en th\'eorie de Galois, et pour une lecture attentive d'une version pr\'eliminaire de ce 
m\'emoire~: ses nombreuses suggestions ont permis d'am\'eliorer grandement le manuscrit. 

Nous remercions aussi M. Chlouveraki pour des pr\'ecisions n\'ecessaires sur 
les familles de Hecke.

\chapter*{English Introduction}

\vskip-2cm

This memoir is devoted to the study of a covering of the Calogero-Moser space
associated with a complex reflection group by Etingof and Ginzburg.
The ramification of this covering gives rise to partitions
of the set of elements of the reflection group. We conjecture that these
partitions coincide with the Kazhdan-Lusztig cells, in the case of a
Coxeter group.

Given a non-trivial finite group acting on a finite-dimensional complex
vector space $V$ and generated by (pseudo-)reflections, the quotient
variety $(V\times V^*)/\Delta W$ by the diagonal action of $W$ is singular.
It is a ramified covering of the smooth variety $V/W\times V^*/W$, which
is actually an affine space. Etingof and Ginzburg have constructed
a deformation $\Upsilon:\ZCB\to\PCB$ of this covering. The variety
$\ZCB$ is the Calogero-Moser space, defined as the spectrum of the center
of the rational Cherednik $\CM[\PCB]$-algebra $\Hb$ associated with
$W$ at $t=0$. The variety $\PCB$ is the product of $V/W\times V^*/W$ by
a vector space $\CCB$ of parameters with basis the set of conjugacy classes
of reflections of $W$. The specialization at $0\in\CCB$ of $\Hb$ is
the algebra $\CM[V\oplus V^*]\rtimes W$.

The covering $\Upsilon$, of degree $|W|$, is not Galois (unless
$W=(\ZM/2)^n$). Our work is a study of a Galois closure $\RCB$ of this
covering and of the ramification above the closed subvarieties
$0\times 0$, $0\times V^*/W$ and $V/W\times 0$ of $V/W\times V^*/W$.

Let $G$ be the Galois group of $\RCB\to\PCB$. At $0\in\CCB$, a
Galois closure of the covering $(V\times V^*)/\Delta W\to V/W\times V^*/W$ 
is given by $(V\times V^*)/\Delta Z(W)$. We show that this provides an
embedding of $G$ as a group of permutations of $W$.

This can be reformulated in terms of representations of $\Hb$: the
semi-simple $\CM(\PCB)$-algebra $\CM(\PCB)\otimes_{\CM[\PCB]}\Hb$
is not split and $\CM(\RCB)$ is a splitting field. The simple
$\CM(\RCB)\otimes_{\CM[\PCB]}\Hb$-modules are in bijection with $W$. Our
work is devoted to the study of the partition into blocks of these
modules, given a prime ideal of $\CM[\RCB]$.

Let $X$ be an irreducible closed subvariety of $\RCB$. We define the
$X$-cells of $W$ as the orbits of the inertia group of $X$. Given a
parameter $c\in\CCB$, we study the two-sided $c$-cells, defined
for $X$ an irreducible component of the inverse image of $\bar{X}=
c\times 0\times 0$. We also study the left $c$-cells (where
$\bar{X}=c\times V/W\times 0$) and the right $c$-cells (where
$\bar{X}=c\times 0\times V/W$).

When $W$ is a Coxeter group, we conjecture that the $c$-cells coincide with
the Kazhdan-Lusztig cells, defined by Kazhdan-Lusztig [KaLu] and
Lusztig [Lus1], [Lus3]. This depends on the choice of an appropriate
$X$ in a $G$-orbit.

We analyze in detail the case $W$ cyclic ($\dim V=1$): this is the only
case where we have a complete description of the objects studied in this
memoir. We also provide a detailed study of the case of a Weyl group $W$ of
type $B_2$: the Galois group is a Weyl group of type $D_4$ and we show
that the Calogero-Moser cells coincide with the Kazhdan-Lusztig cells. Our
approach is based on a detailed study of $\ZCB$ and of the ramification
of the covering, without constructing explicitely the variety $\RCB$.

Etingof and Ginzburg have introduced a deformation of the Euler vector
field. We show that $G$ is the Galois group of its minimal polynomial. This
element plays an important role in the study of ramification, but is not
enough to separate cells.

We construct a bijection between the set of two-sided $c$-cells and the
set of blocks of the restricted rational Cherednik algebra
$\bar{\Hb}_c$ (the specialization of $\Hb$ at 
$(c,0,0)\in\CCB\times V/W\times V^*/W=\PCB$). This latter set is in bijection
with $\Upsilon^{-1}(c,0,0)$. Given a simple $\CM W$-module
$E$, there is an indecomposable representation of Verma type (a ``Baby-Verma''
module) $\bar{\MC}_c^\gauche(E)$ of $\bar{\Hb}$ with a unique simple
quotient $\bar{\LC}_c^\gauche(E)$ [Gor1]. The partition into blocks of
those modules gives a partition of $\Irr(W)$ into {\em Calogero-Moser
families}, which are conjecturally related to the families of
the Hecke algebra of $W$ (cf [Gor2, GoMa, Bel5, Mar1, Mar2]). We show that,
in a given Calogero-Moser family, the matrix of multiplicities
$[\bar{\MC}_c^\gauche(E):\bar{\LC}_c^\gauche(F)]$ has rank $1$, a property
conjectured by Ulrich Thiel.

We introduce a notion of simple cell module associated with a left cell.
We conjecture that the multiplicity of such a simple module in a left Verma
module is the same as the multiplicity of $E$ in
the cell representation of Kazhdan-Lusztig, when $W$ is a Coxeter group.
We study two-sided cells associated with a smooth point of
$\Upsilon^{-1}(c,0,0)$: in that case, Gordon has shown that the corresponding
block contains a unique Verma module. We show
that the multiplicity of any simple cell module in that Verma module is $1$
(for a left cell contained in the given two-sided cell). This is the main
general result we have obtained.

\chapter*{R\'esum\'e}

\noindent{\bf Partie~\ref{part:reflexions}. Groupes de r\'eflexions, alg\`ebres de Hecke.} 
Dans cette partie, nous rappelons les d\'efinitions et notions classiques associ\'ees aux groupes de 
r\'eflexions complexes~: alg\`ebre d'invariants, alg\`ebre des coinvariants, degr\'es fant\^omes, alg\`ebres de Hecke, 
familles de Hecke... Nous fixons entre autres un corps $\kb$ de caract\'eristique z\'ero, 
un $\kb$-espace vectoriel $V$ de dimension $n$ et 
un sous-groupe fini $W$ de $\Grm\Lrm_\kb(V)$ engendr\'e par l'ensemble $\Ref(W)$ de ses r\'eflexions. 
Le chapitre~\ref{chapter:coxeter} s'int\'eresse plus particuli\`erement aux groupes de Coxeter, 
et propose un r\'esum\'e de la th\'eorie des cellules de Kazhdan-Lusztig~: nous rappelons la notion 
de famille $\Irr_\G^\kl(W)$ de caract\`eres irr\'eductibles associ\'es \`a une cellule bilat\`ere $\G$, 
ainsi que la notion de caract\`ere cellulaire $\isomorphisme{C}^\kl$ associ\'e \`a une cellule \`a gauche $C$. 

\bigskip

\noindent{\bf Partie~\ref{part:cherednik}. Alg\`ebres de Cherednik.} 
Nous rappelons dans cette partie les propri\'et\'es essentielles des alg\`ebres de 
Cherednik~: d\'ecomposition PBW, alg\`ebre sph\'erique, \'el\'ement d'Euler. Si le chapitre~\ref{chapter:cherednik-1} 
se place dans le cadre g\'en\'eral, avec un param\`etre $t$ g\'en\'erique, nous nous pla\c{c}ons d\`es le 
chapitre~\ref{chapter:cherednik-0} dans le cas o\`u $t=0$, cas que nous ne quittons plus jusqu'\`a la fin de 
ce m\'emoire. Le parti pris dans ce chapitre, essentiel pour les m\'ethodes que nous d\'eveloppons, 
est de travailler {\it g\'en\'eriquement}. 
Si on note $\CCB$ l'espace vectoriel des fonctions $c : \Ref(W) \longto \kb$ invariantes par conjugaison, 
alors $\Hb$ d\'esignera l'{\it alg\`ebre de Cherednik en $t=0$ g\'en\'erique}~: comme $\kb$-espace 
vectoriel, $\Hb$ est isomorphe \`a $\kb[\CCB] \otimes \kb[V] \otimes \kb W \otimes \kb[V^*]$, avec 
les r\'egles de commutations usuelles faisant intervenir les param\`etres g\'en\'eriques 
de l'alg\`ebre $\kb[\CCB]$ des fonctions polyn\^omiales sur $\CCB$. Si $c \in \CCB$, alors la sp\'ecialisation 
en $c$  de $\Hb$ sera not\'ee $\Hb_c$. 

Le centre $Z$ de cette alg\`ebre $\Hb$ jouit de nombreuses propri\'et\'es 
(on note ici $e=\frac{1}{|W|} \sum_{w \in W} w$)~:
\begin{list}{$\square$}{\leftmargin=1.3cm \itemindent-0.2cm}
\item[(Z1)] $Z$ contient la sous-alg\`ebre $P=\kb[\CCB] \otimes \kb[V]^W \otimes \kb[V^*]^W$.

\item[(Z2)] $Z =  \End_\Hb(\Hb e) \simeq e\Hb e$ et $\Hb = \End_Z(\Hb e)$.

\item[(Z3)] $Z$ est un $P$-module libre de rang $|W|$ et est int\`egre et int\'egralement clos.
\end{list}
Si on note $Z_c$ la sp\'ecialisation en $c$ de $Z$, alors $Z_c$ est \'egal au centre de $\Hb_c$ et v\'erifie 
aussi les analogues de (Z1), (Z2) et (Z3). 

Une autre particularit\'e de cette alg\`ebre $\Hb$ est l'existence de nombreux automorphismes naturels~: 
si on note $\NC$ le normalisateur de $W$ dans $\Grm\Lrm_\kb(V)$, alors le groupe 
$\kb^\times \times \kb^\times \times (W^\wedge \rtimes \NC)$ agit sur $\Hb$. L'action de 
$\kb^\times \times \kb^\times$ \'equivaut \`a la donn\'ee de la bigraduation sur $\Hb$ pour laquelle 
les \'el\'ements de $V$ ont pour degr\'e $(0,1)$, les \'el\'ements de $V^*$ ont pour degr\'e $(1,0)$, 
les \'el\'ements de $W$ ont pour degr\'e $(0,0)$ et les \'el\'ements de $\CCB$ ont pour degr\'e $(1,1)$. 

Nous abordons aussi les sp\'ecificit\'es des groupes de Coxeter, pour lesquels l'isomorphisme de $\kb W$-modules 
$V \simeq V^*$ induit un nouvel automorphisme de $\Hb$. 

\bigskip

\noindent{\bf Partie~\ref{part:extension}. L'extension $Z/P$.} 
Cette partie est le c\oe ur de notre m\'emoire~: nous y construisons les cellules 
de Calogero-Moser \`a travers l'\'etude de la ramification de l'extension d'anneaux $Z/P$. 
Pour nos besoins, nous introduisons une copie $Q$ 
de la $P$-alg\`ebre $Z$ munie d'un isomorphisme de $P$-alg\`ebres $\copie : Z \longiso Q$. 
Notons $\Kb$ (respectivement $\Lb$) le corps des fractions de $P$ (respectivement $Q$) et fixons 
une cl\^oture galoisienne $\Mb$ de l'extension $\Lb/\Kb$~: nous noterons $G=\Gal(\Mb/\Kb)$, $H=\Gal(\Mb/\Lb)$ et 
$R$ la cl\^oture int\'egrale de $P$ dans $\Mb$. Il d\'ecoule de (Z2) que 
$$|G/H|=|W|.\leqno{(\clubsuit)}$$
Par sp\'ecialisation en $c=0$, nous construisons un morphisme de groupes $\iota : W \times W \to G$, 
dont le noyau est $\D(\Zrm(W))$ (o\`u $\D : W \injto W \times W$, $w \mapsto (w,w)$ est l'injection 
diagonale) et v\'erifiant 
$$\iota(W \times W) \cap H = \iota (\D(W))\quad\text{et}\quad \iota(W \times W) \cdot H = H \cdot \iota(W \times W) = G.
\leqno{(\diamondsuit)}$$
Cette construction est faite dans la sous-section~\ref{subsection:specialisation galois 0}~: c'est 
la cl\'e de notre d\'efinition des cellules de Calogero-Moser. Si on identifie $W$ et le 
sous-groupe $\iota(W \times 1)$ de $G$, alors les propri\'et\'es $(\diamondsuit)$ impliquent~:
\begin{itemize}
 \item[($\heartsuit)$\hskip1.6mm] {\it $G/H$ est en bijection avec $W$. Ainsi, $G$ s'identifie avec un 
sous-groupe de $\SG_W$ et, vu comme sous-groupe de $G$, $W=\iota(W \times 1)$ s'identifie avec le groupe 
des permutations de $W$ induites par les translations \`a gauche.}
\end{itemize}
Tout ceci est fait dans le chapitre~\ref{chapter:galois-CM}. Nous y montrons aussi que 
l'action de $\kb^\times \times \kb^\times \times (W^\wedge \rtimes \NC)$ sur $\Hb$ 
induit une action sur $R$ qui normalise $G$.

Un autre aspect essentiel de ce chapitre~\ref{chapter:galois-CM} est de commencer l'\'etude des 
repr\'esentations de l'alg\`ebre $\Hb$, ou plut\^ot de l'alg\`ebre $\Mb\Hb=\Mb \otimes_P \Hb$. 
Il y est d\'emontr\'e que~:
\begin{itemize}
\item[($\spadesuit)$\hskip1.6mm] 
{\it $\Mb\Hb$ est semi-simple, d\'eploy\'ee, et ses modules simples sont en bijection avec $G/H$ c'est-\`a-dire, in fine, 
avec $W$. Cette bijection sera not\'ee $W \longiso \Irr(\Mb\Hb)$, $w \longmapsto \LC_w$.}
\end{itemize}

\medskip  

Le chapitre~\ref{chapter:geometrie-CM} livre une rapide version g\'eom\'etrique des 
pr\'ec\'edentes constructions~: en effet, toutes les alg\`ebres impliqu\'ees ($P$, $Q$, $R$) sont de type fini et 
peuvent donc \^etre vues comme des alg\`ebres de fonctions polynomiales sur des $\kb$-vari\'et\'es. 
Rappelons que $\Spec(Z_c)$ est appel\'e l'{\it espace de Calogero-Moser} associ\'e \`a $(W,c)$. 

\medskip

Nous d\'efinissons dans le chapitre~\ref{chapter:cellules-CM} la notion de {\it cellule de Calogero-Moser}. 
Si $\rG$ est un id\'eal premier de $R$, on note $D_\rG$ (respectivement $I_\rG$) le groupe 
de d\'ecomposition (respectivement d'inertie) de $\rG$ dans $G$ et $k_R(\rG)$ le corps r\'esiduel de $R$ en $\rG$, 
c'est-\`a-dire le corps des fractions de $R/\rG$. Autrement dit, 
$$D_\rG=\{g \in G~|~g(\rG)=\rG\}\quad\text{et}\quad I_\rG=\{g \in G~|~\forall~r \in R,~g(r) \equiv r \mod \rG\}.$$
Alors $I_\rG$ est un sous-groupe distingu\'e de $D_\rG$ et $D_\rG/I_\rG$ s'identifie au groupe de Galois de 
l'extension (galoisienne) $k_R(\rG)/k_P(\rG \cap P)$.

\medskip

\begin{quotation}
\noindent{\bf D\'efinition 1.} 
{\it On appellera {\bfit $\rG$-cellule de Calogero-Moser} toute $I_\rG$-orbite dans $W$.}
\end{quotation}

\medskip

\noindent Une grande partie de ce chapitre~\ref{chapter:cellules-CM} est consacr\'ee \`a relier cette notion avec 
la th\'eorie des repr\'esentations de $\Hb$. Le r\'esultat essentiel est le suivant~:

\begin{quotation}
\noindent{\bf Th\'eor\`eme 1.} 
{\it Les idempotents primitifs centraux de $R_\rG \Hb$ (qui sont en bijection, via la r\'eduction modulo $\rG$, avec 
les idempotents primitifs centraux de $k_R(\rG) \Hb$) sont en bijection avec les $\rG$-cellules de Calogero-Moser~: 
un idempotent primitif central $b$ de $R_\rG \Hb$ est associ\'e \`a une $\rG$-cellule de Calogero-Moser 
$C$ si et seulement si $b\LC_w = \LC_w$ pour un (ou, de mani\`ere \'equivalente, pour tout) 
$w \in C$.}
\end{quotation}

\bigskip

\noindent{\bf Partie~\ref{part:verma}. Cellules et modules de Verma.} 
Dans la d\'efinition pr\'ec\'edente, une grande latitude est laiss\'ee quant au choix de l'id\'eal premier $\rG$ 
de $R$. Nous nous focalisons dans cette partie sur certains id\'eaux premiers, susceptibles de 
donner lieu \`a des g\'en\'eralisations des cellules de Kazhdan-Lusztig, et pour 
lesquels la th\'eorie des repr\'esentations de $k_R(\rG)\Hb$ est plus avanc\'ee~: 
en particulier, pour tous nos choix, il existera une notion de {\it module de Verma} 
associ\'e \`a un caract\`ere irr\'eductible de $W$.

Voyons $P$ comme l'alg\`ebre des fonctions polynomiales sur la $\kb$-vari\'et\'e 
$\CCB \times V/W \times V^*/W$. Fixons maintenant $c \in \CCB$ et notons $\pG_c^\gauche$ 
(respectivement $\pG_c^\droite$, respectivement $\pGba_c$) l'id\'eal premier de $P$ correspondant \`a la sous-vari\'et\'e 
ferm\'ee irr\'eductible $\{c\} \times V/W \times \{0\}$ (respectivement $\{c\} \times \{0\} \times V^*/W$, 
respectivement $\{(c,0,0)\}$). On fixe un id\'eal premier $\rGba_c$ de $R$ au-dessus de $\pGba_c$ ainsi que 
deux id\'eaux premiers $\rG_c^\gauche $ et $\rG_c^\droite$ au-dessus de $\pG_c^\gauche$ et $\pG_c^\droite$ 
respectivement, et contenus dans $\rGba_c$. 

\medskip

\begin{quotation}
\noindent{\bf D\'efinition 2.} 
{\it On appelle {\bfit $c$-cellule de Calogero-Moser bilat\`ere} 
(respectivement {\bfit \`a gauche}, respectivement {\bfit \`a droite})
toute $\rGba_c$-cellule (respectivement $\rG_c^\gauche$-cellule, respectivement $\rG_c^\droite$-cellule) 
de Calogero-Moser.}
\end{quotation}

\medskip

\noindent Bien s\^ur, cette d\'efinition comporte une part d'ambig\"{u}it\'e, car le choix de l'id\'eal 
premier de $R$ au-dessus d'un id\'eal premier de $P$ n'est absolument pas unique. 

\medskip

Dans le chapitre~\ref{chapter:bebe-verma}, nous rappelons la contruction de Gordon des modules simples de l'alg\`ebre 
$k_P(\pGba_c) \Hb$ (souvent appel\'ee {\it alg\`ebre de Cherednik restreinte})~: cette alg\`ebre est 
d\'eploy\'ee 
et ses modules simples sont naturellement param\'etr\'es 
par les caract\`eres irr\'eductibles de $W$. Si $\chi \in \Irr(W)$, nous noterons $\LCov_c(\chi)$ le 
module simple de $k_P(\pGba_c) \Hb$ ou $k_R(\rGba_c) \Hb$ correspondant. Nous poursuivons l'\'etude de ces 
modules simples en int\'egrant notamment l'action du groupe 
$\kb^\times \times \kb^\times \times (W^\wedge \rtimes \NC$). 

\medskip

\begin{quotation}
\noindent{\bf D\'efinition 3.} 
{\it On appelle {\bfit $c$-famille de Calogero-Moser} toute partie de $\Irr(W) \simeq \Irr(k_R(\rGba_c)\Hb)$ 
correspondant \`a un bloc de $k_R(\rGba_c)\Hb$.}
\end{quotation}

\medskip

Dans le chapitre~\ref{chapter:martino}, nous rappelons la conjecture de Martino pr\'edisant 
que les $c$-familles de Calogero-Moser sont des unions de familles de Hecke.

\medskip

Le chapitre~\ref{chapter:bilatere} \'etudie les $c$-cellules de Calogero-Moser bilat\`eres. 
En vertu du th\'eor\`eme~1 et de la d\'efinition~3, on peut associer \`a toute $c$-cellule de Calogero-Moser bilat\`ere 
une $c$-famille de Calogero-Moser $\Irr_\G^\calo(W)$. Entre autres propri\'et\'es \'etablies dans ce chapitre, 
nous montrons que
$$|\G|=\sum_{\chi \in \Irr_\G^\calo(W)} \chi(1)^2.\leqno{(\arm_\calo)}$$

\medskip

Le chapitre~\ref{chapter:gauche} \'etudie les cellules de Calogero-Moser \`a gauche (ou \`a droite~: 
par sym\'etrie, seul le cas ``\`a gauche'' est trait\'e). Par des r\'esultats g\'en\'eraux 
\'etablis dans la partie~\ref{part:extension}, un bloc de l'alg\`ebre $k_R(\rG_c^\gauche)\Hb$ 
(qui correspond donc \`a une $c$-cellule de Calogero-Moser \`a gauche $C$ en vertu du th\'eor\`eme 1) 
ne poss\`ede qu'un seul module simple (que nous noterons $\LC_c^\gauche(C)$), de dimension $|W|$. 
Il est aussi possible d'associer \`a un caract\`ere irr\'eductible $\chi$ de  $W$ un {\it module de Verma 
\`a gauche} $\MC_c^\gauche(\chi)$, de dimension $|W|\chi(1)$. On note 
$\mult_{C,\chi}^\calo$ la multiplicit\'e de $\LC_c^\gauche(C)$ dans une s\'erie de Jordan-H\"older 
de $\MC_c^\gauche(\chi)$. 

\medskip

\begin{quotation}
\noindent{\bf D\'efinition 4.} 
{\it Si $C$ est une $c$-cellule de Calogero-Moser \`a gauche, on notera $\isomorphisme{C}_c^\calo$ le caract\`ere 
de $W$ d\'efini par
$$\isomorphisme{C}_c^\calo=\sum_{\chi \in \Irr(W)} \mult_{C,\chi}^\calo \cdot \chi.$$}
\end{quotation}

\medskip

\noindent Nous d\'emontrons dans ce chapitre les propri\'et\'es suivantes. 
Si $C$ est une $c$-cellule de Calogero-Moser \`a gauche, alors 
$$|C|=\sum_{\chi \in \Irr(W)} \mult_{C,\chi}^\calo \cdot \chi(1).\leqno{(\brm_\calo)}$$
Si $\chi$ est un caract\`ere irr\'eductible de $W$, alors 
$$\chi(1)=\sum_C \mult_{C,\chi}^\calo,\leqno{(\crm_\calo)}$$
o\`u $C$ parcourt l'ensemble des $c$-cellules de Calogero-Moser \`a gauche.

\medskip

Le chapitre~\ref{chapter:bb} est consacr\'e \`a la preuve du r\'esultat le plus important 
que nous ayons d\'emontr\'e sur les cellules de Calogero-Moser~: il s'applique aux cellules 
bilat\`eres associ\'ees \`a un point lisse de l'espace de Calogero-Moser $\Spec(Z_c)$. 
Introduisons pour cela quelques notations~: l'injection $Z \longiso Q \injto R$ 
induit un morphisme de vari\'et\'es $\r : \Spec(R) \to \Spec(Z)$. L'id\'eal premier  
$\rGba_c$ \'etant maximal, il correspond \`a un point de cette vari\'et\'e et, si $w \in W$, 
nous noterons $\r(w(\rGba_c))$ son image dans $\Spec(Z_c)$.

\medskip

\begin{quotation}
\noindent{\bf Th\'eor\`eme 2.} {\it Soit $\G$ une $c$-cellule de Calogero-Moser et supposons que 
le point $\r(w^{-1}(\rGba_c))$ est un point lisse de $\Spec(Z_c)$ pour un $w \in \G$ (ou, de mani\`ere \'equivalente, 
pour tout $w \in \G$). Il d\'ecoule alors d'un r\'esultat de 
Gordon que $\Irr_\G^\calo(W)$ est un singleton (notons $\chi$ son unique \'el\'ement). 
Si $C$ une $c$-cellule de Calogero-Moser \`a gauche contenue dans $\G$, alors
$$\isomorphisme{C}_c^\calo=\chi.$$} 
\end{quotation}

\bigskip

\noindent{\bf Partie~\ref{part:coxeter}. Groupes de Coxeter~: Calogero-Moser vs Kazhdan-Lusztig.} 
Dans le chapitre~\ref{chapter:conjectures}, nous proposons les conjectures suivantes~:

\medskip

\begin{quotation}
\noindent{\bf Conjecture 1.} {\it Si $(W,S)$ est un syst\`eme de Coxeter et si $c(s) \in \RM_{\ge 0}$ pour 
tout $s \in S$, alors~:
\begin{itemize}
\itemth{a} Les $c$-cellules de Kazhdan-Lusztig (\`a gauche, \`a droite, bilat\`eres) 
et les $c$-cellules de Calogero-Moser (\`a gauche, \`a droite, bilat\`eres) co\"{\i}ncident.

\itemth{b} Supposant $(\arm)$ vrai, si $\G$ est une $c$-cellule bilat\`ere, alors $\Irr_\G^\kl(W)=\Irr_\G^\calo(W)$.

\itemth{c} Supposant $(\arm)$ vrai, si $C$ est une $c$-cellule \`a gauche, alors $\isomorphisme{C}_c^\kl=\isomorphisme{C}_c^\calo$.
\end{itemize}}
\end{quotation}

\medskip

Si on reste au niveau des caract\`eres, les \'enonc\'es (b) et (c) de la conjecture 1 impliquent la conjecture suivante~:

\medskip

\begin{quotation}
\noindent{\bf Conjecture 2.} {\it Si $(W,S)$ est un syst\`eme de Coxeter, alors~:
\begin{itemize}
\itemth{a} Les $c$-familles de Kazhdan-Lusztig et les $c$-familles de Calogero-Moser co\"{\i}ncident (Gordon-Martino).

\itemth{b} L'ensemble des KL-caract\`eres $c$-cellulaires et l'ensemble des CM-caract\`eres $c$-cellulaires co\"{\i}ncident.
\end{itemize}}
\end{quotation}

\medskip

Le chapitre~\ref{chapter:arguments} est une succession de r\'esultats corroborant ces conjectures. 
Si les propri\'et\'es num\'eriques $(\arm_\calo)$, $(\brm_\calo)$ et $(\crm_\calo)$ sont aussi 
satisfaites par les cellules de Kazhdan-Lusztig, les familles de Kazhdan-Lusztig et les 
KL-caract\`eres $c$-cellulaires, notre r\'esultat le plus probant est le suivant, qui d\'ecoule 
du th\'eor\`eme 2 et du fait que $\Spec(Z_c)$ est lisse si $(W,S)$ est de type $A$ et $c\neq 0$~:

\medskip

\begin{quotation}
\noindent{\bf Corollaire 3.} 
{\it Supposons que $(W,S)$ soit de type $A$ et que $c \neq 0$. Alors~:
\begin{itemize}
\itemth{a} Les $c$-familles de Calogero-Moser co\"{\i}ncident avec les $c$-familles de Kazhdan-Lusztig (Gordon)~: 
ce sont des singletons.

\itemth{b} Les CM-caract\`eres $c$-cellulaires co\"{\i}ncident avec les KL-caract\`eres $c$-cellulaires~: 
ce sont les caract\`eres irr\'eductibles de $W$.

\itemth{c} Il existe une bijection $\ph : W \to W$ qui envoie les $c$-cellules de Calogero-Moser 
(\`a gauche ou bilat\`eres) sur les $c$-cellules de Kazhdan-Lusztig.
\end{itemize}}
\end{quotation}

\bigskip

\noindent{\bf Partie~\ref{part:exemples}. Exemples.} 
Le chapitre~\ref{chapitre:rang 1} traite du premier exemple de groupe de r\'eflexions complexe 
qui ne soit pas un groupe de Coxeter, \`a savoir le cas o\`u $\dim_\kb V = 1$. Alors $W$ est cyclique 
(notons $d=|W|$). Fixons alors un g\'en\'erateur $s$ de $W$ et notons $\e : W \to \kb^\times$ 
la restriction du d\'eterminant 
(ainsi, $\Irr(W)=\{1,\e,\e^2,\dots,\e^{d-1}\}$). Nous montrons 
dans ce chapitre~\ref{chapitre:rang 1} que
$$G=\SG_W \simeq \SG_d\leqno{(*)}$$
et que, si on note $k_i$ le scalaire via lequel l'\'el\'ement d'Euler agit sur 
$\LCov_c(\e^i)$, alors~:

\medskip

\begin{quotation}
\noindent{\bf Proposition 4.} 
{\it Soient $i$ et $j$ dans $\ZM$. Alors~:
\begin{itemize}
\itemth{a} $\e^i$ et $\e^j$ sont dans la m\^eme $c$-famille de Calogero-Moser si et seulement si 
$k_i=k_j$. 

\itemth{b} $s^i$ et $s^j$ sont dans la m\^eme $c$-cellule de Calogero-Moser (\`a gauche, \`a droite ou bilat\`ere) 
si et seulement si $k_i=k_j$. 

\itemth{c} Si $\G$ est une $c$-cellule de Calogero-Moser bilat\`ere 
(c'est aussi une $c$-cellule de Calogero-Moser 
\`a gauche en vertu de $(\brm)$), alors 
$$\Irr_\G^\calo(W)=\{\e^{-i}~|~s^i \in \G\}\quad\text{et}\quad 
\isomorphisme{\G}_c^\calo=\sum_{\substack{0 \le i \le d-1 \\ s^i \in \G}} \e^{-i}.$$
\end{itemize}}
\end{quotation}

\medskip

Le chapitre~\ref{chapitre:b2} traite du plus petit groupe de Coxeter irr\'eductible dans lequel il y a deux classes 
de conjugaison de r\'eflexions, \`a savoir le cas du type $B_2$. C'est l'occasion d'y v\'erifier les conjectures 
1 et 2. Supposons donc que $(W,S)$ soit un syst\`eme de Coxeter de type $B_2$. On note $w_0=-\Id_V \in W$. 
Notons $\AG_W$ le groupe des permutations paires de $W$. Alors, apr\`es un calcul explicite d'une pr\'esentation 
du centre $Z$ de $\Hb$, nous montrons dans ce chapitre que 
$$G = \AG_W \cap \{\s \in \SG_W~|~\forall~w \in W,~\s(w_0w)=w_0\s(w)\}.\leqno{(**)}$$
Notons que $\{\s \in \SG_W~|~\forall~w \in W,~\s(w_0w)=w_0\s(w)\}$ est un groupe de Weyl de type $B_4$ 
et que
$$\text{\it $G$ est un groupe de Weyl de type $D_4$.}$$
Une fois ce r\'esultat \'etabli, et en utilisant les r\'esultats num\'eriques 
$(\arm_\calo)$, $(\brm_\calo)$ et $(\crm_\calo)$, la v\'erification des conjectures 1 et 2 est
facile pour ce qui concerne les cellules bilat\`eres et les familles, alors que, pour 
les cellules \`a gauche et les caract\`eres cellulaires, cette v\'erification est plus d\'elicate.

\bigskip

\noindent{\bf Appendices.} Nous concluons ce m\'emoire par une s\'erie d'appendices 
constitu\'es de rappels de r\'esultat classiques~: th\'eorie de Galois, graduations et blocs d'alg\`ebres de 
dimension finie...

\part{Groupes de r\'eflexions, alg\`ebres de Hecke}\label{part:reflexions}

Consacr\'ee essentiellement \`a des rappels, cette partie a pour but 
d'introduire diverses notions attach\'ees \`a la donn\'ee d'un groupe de r\'eflexions~: 
arrangement d'hyperplans, th\'eorie des invariants, alg\`ebres de Hecke, familles de Hecke, 
th\'eorie de Kazhdan-Lusztig.

\chapter{Groupes de r\'eflexions}\label{chapter:reflexions}

\boitegrise{{\it Dans tout ce m\'emoire, nous fixons un corps commutatif $\kb$ 
\indexnot{ka}{\kb} de caract\'eristique $0$, un $\kb$-espace vectoriel $V$ \indexnot{V}{V} de dimension 
finie $n$ \indexnot{na}{n} ainsi qu'un sous-groupe {\bfit fini} $W$ \indexnot{W}{W} de $\GL_\kb(V)$. 
Tout au long de ce m\'emoire, la notation $\otimes$ remplacera $\otimes_\kb$. 
L'alg\`ebre de 
groupe de $W$ sur $\kb$ sera not\'ee $\kb W$. Posons
$$\Ref(W)=\{s \in W~|~\dim_\kb \im(s-\Id_V)=1\},\indexnot{R}{\Ref(W)}$$
de sorte que $\Ref(W)$ est l'ensemble des {\bfit r\'eflexions} de $W$. 
{\bfit Nous supposerons que $W$ est engendr\'e par $\Ref(W)$.}}}{0.75\textwidth}

\bigskip

\section{D\'eterminant, racines, coracines}

\medskip

Nous noterons
$$\fonction{\e}{W}{\kb^\times}{w}{\det_V(w).}\indexnot{ez}{\e}$$
La dualit\'e parfaite entre $V$ et son dual $V^*$ sera not\'ee
$$\langle,\rangle : V \times V^* \longto \kb.\index{ZZZ@ ${{\langle}},{{\rangle}}$ |textbf {\hskip0.5cm} \textbf}  $$
Si $s \in \Ref(W)$, nous 
noterons $\a_s$ et $\a_s^\ve$ \indexnot{az}{\a_s,~\a_s^\ve} des \'el\'ements non nuls de 
$V^*$ et $V$ respectivement tels que 
$$\Ker(s-\Id_V)=\Ker \a_s\quad\text{et}\quad \im(s-\Id_V)=\kb \a_s^\ve$$
ou, de mani\`ere \'equivalente, 
$$\Ker(s-\Id_{V^*})=\Ker \a_s^\ve\quad\text{et}\quad \im(s-\Id_{V^*})=\kb \a_s.$$
Notons que, puisque $\kb$ est de caract\'eristique nulle, tous les \'el\'ements de $\Ref(W)$ 
sont diagonalisables et donc
\equat\label{non nul}
\langle \a_s^\ve,\a_s \rangle \neq 0.
\endequat
Ainsi, si $x \in V^*$ et $y \in V$, alors
\equat\label{action s V}
s(y)=y - (1 -\e(s)) \frac{\langle y,\a_s\rangle}{\langle \a_s^\ve,\a_s\rangle} \a_s^\ve
\endequat
et
\equat\label{action s V*}
s(x)=x - (1 -\e(s)^{-1}) \frac{\langle \a_s^\ve,x\rangle}{\langle \a_s^\ve,\a_s\rangle} \a_s.
\endequat

\section{Invariants}

\medskip

Nous noterons $\kb[V]$ (respectivement $\kb[V^*]$) \indexnot{ka}{\kb[V],~\kb[V^*]} l'alg\`ebre sym\'etrique de $V^*$ 
(respectivement $V$), que nous identifierons avec l'alg\`ebre des fonctions 
polynomiales sur $V$ (respectivement $V^*$). Nous noterons
$$\kb[V]_+=\{f \in \kb[V]~|~f(0)=0\}\quad\text{et}\quad
\kb[V^*]_+=\{f \in \kb[V^*]~|~f(0)=0\}.\indexnot{ka}{\kb[V]_+,~\kb[V^*]_+}$$
Le groupe $W$ agit naturellement 
sur les alg\`ebres $\kb[V]$ et $\kb[V^*]$~: les alg\`ebres d'invariants seront 
not\'ees respectivement $\kb[V]^W$ et $\kb[V^*]^W$. \indexnot{ka}{\kb[V]^W,~\kb[V^*]^W} Nous appellerons 
{\it alg\`ebres des coinvariants}, et nous noterons $\kb[V]^\cow$ et $\kb[V^*]^\cow$ 
\indexnot{ka}{\kb[V]^\cow,~\kb[V^*]^\cow} 
les $\kb$-alg\`ebres de dimension finie
$$\kb[V]^\cow=\kb[V]/<\kb[V]_+^W>\quad\text{et}\quad \kb[V^*]^\cow=\kb[V^*]/<\kb[V^*]_+^W>.$$
En vertu du th\'eor\`eme de Shephard-Todd-Chevalley, 
le fait que $W$ soit engendr\'e par ses r\'eflexions a de nombreuses 
cons\'equences sur la structure de $\kb[V]^W$. Nous en donnons ici une version 
agr\'ement\'ee de r\'esultats quantitatifs (nous l'\'enon\c{c}ons 
\`a travers ses cons\'equences sur l'action de $W$ sur la $\kb$-alg\`ebre 
$\kb[V]$, mais les m\^emes \'enonc\'es restent valables en rempla\c{c}ant 
$\kb[V]$ par $\kb[V^*]$)~:

\bigskip

\begin{theo}[Shephard-Todd, Chevalley]\label{chevalley}
Rappelons que $n =\dim_\kb V$. Alors~:
\begin{itemize}
\itemth{a} Il existe des polyn\^omes homog\`enes $f_1$,\dots, $f_n$ dans $\kb[V]^W$, 
alg\'ebriquement ind\'ependants, et tels que $\kb[V]^W=\kb[f_1,\dots,f_n]$. Si on note 
$d_i$ \indexnot{da}{d_i} le degr\'e de $f_i$, alors 
$$|W|=d_1\cdots d_n\quad \text{et}\quad |\Ref(W)|=\sum_{i=1}^n (d_i-1).$$

\itemth{b} Le $k[V]^W[W]$-module $\kb[V]$ est libre de rang $1$. En particulier, 
il existe une $\kb[V]^W$-base $(b_1,b_2,\dots,b_{|W|})$ du $\kb[V]^W$-module $\kb[V]$ 
form\'ee d'\'el\'ements homog\`enes.

\itemth{c} Le $\kb W$-module $\kb[V]^\cow$ est libre de rang $1$~; 
en particulier, $\dim_\kb \kb[V]^\cow=|W|$.

\itemth{d} Si $d_1 \le \cdots \le d_n$, alors la suite $(d_1,\dots,d_n)$ 
ne d\'epend pas du choix des $f_i$~: elle est uniquement d\'etermin\'ee par $W$. 
Nous l'appellerons {\bfit suite des degr\'es} de $W$.
\end{itemize}
\end{theo}

\begin{proof}
Voir par exemple~\cite[th\'eor\`eme~4.1]{broue}.
\end{proof}

\bigskip

\begin{rema}\label{rema:deg-v-v}
Gr\^ace \`a la formule de Molien (voir~\cite[Lemme~3.28]{broue}), 
la suite des degr\'es pour l'action de $W$ sur $V$ 
(telle qu'elle est d\'efinie dans le th\'eor\`eme~\ref{chevalley}) co\"{\i}ncide avec la suite des 
degr\'es de $W$ pour son action sur $V^*$.\finl
\end{rema}

\bigskip

\section{Hyperplans, sous-groupes paraboliques}\label{section:hyperplans}

\medskip

\boitegrise{{\bf Notation.} 
{\it Nous noterons $\EC(\kb)$ \indexnot{E}{\EC(\kb)} l'ensemble des entiers naturels non nuls $e$ tels que 
$\kb$ contienne une racine primitive $e$-i\`eme de l'unit\'e. Nous fixons une famille 
$(\z_e)_{e \in \EC(\kb)}$ o\`u, pour tout $e \in \EC(\kb)$, $\z_e$ \indexnot{zz}{\z_e} est une racine 
primitive $e$-i\`eme de l'unit\'e. Nous choisissons cette famille de fa\c{c}on 
``coh\'erente'', c'est-\`a-dire que, si $e \in \EC(\kb)$ et si $d | e$, alors 
$\z_d=\z_e^{e/d}$.}}{0.75\textwidth}

\medskip

Nous noterons $\AC$ \indexnot{A}{\AC} l'ensemble des hyperplans de r\'eflexion de $W$~:
$$\AC=\{\Ker(s-\Id_V)~|~s \in \Ref(W)\}.$$
Si $X$ est une partie de $V$, nous noterons $W_X$ \indexnot{W}{W_X} le fixateur de $X$, c'est-\`a-dire
$$W_X=\{w \in W~|~\forall~x \in X,~w(x)=x\}.$$

\bigskip

\noindent{\bfit Th\'eor\`eme de Steinberg. ---} 
{\it Si $X \subset V$, alors $W_X$ est engendr\'e par des r\'eflexions.}

\begin{proof}
Voir par exemple~\cite[th\'eor\`eme~4.7]{broue}.
\end{proof}

\bigskip

Si $H \in \AC$, nous noterons $e_H$ \indexnot{ea}{e_H} l'ordre du sous-groupe cyclique $W_H$ de $W$. 
Nous noterons $s_H$ \indexnot{sa}{s_H} le g\'en\'erateur de $W_H$ de d\'eterminant $\z_{e_H}$ 
(c'est une r\'eflexion d'hyperplan $H$). On a alors
$$\Ref(W)=\{s_H^j~|~H \in \AC\text{ et }1 \le j \le e_H-1\}.$$
Le lemme suivant est imm\'ediat~:

\bigskip

\begin{lem}\label{conjugaison ref}
$s_H^j$ et $s_{H'}^{j'}$ sont conjugu\'es dans $W$ 
si et seulement si $H$ et $H'$ sont dans la m\^eme $W$-orbite et 
$j=j'$.
\end{lem}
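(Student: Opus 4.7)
The plan is to prove both implications using two elementary observations: conjugation preserves reflecting hyperplanes up to the $W$-action, and conjugation preserves determinants.

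First I would handle the easy (reverse) direction. Assume $H$ and $H'$ lie in the same $W$-orbit and pick $w \in W$ with $w(H)=H'$. Then $w W_H w^{-1} = W_{H'}$, so in particular $e_H = e_{H'}$. The element $w s_H w^{-1}$ lies in $W_{H'}$ and has determinant $\det(s_H) = \z_{e_H} = \z_{e_{H'}}$, so by the defining property of $s_{H'}$ (the unique generator of $W_{H'}$ with that determinant) we conclude $w s_H w^{-1} = s_{H'}$. Raising to the $j$-th power gives $w s_H^j w^{-1} = s_{H'}^j = s_{H'}^{j'}$.

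For the direct (forward) direction, suppose $w s_H^j w^{-1} = s_{H'}^{j'}$ for some $w \in W$. Since $1 \le j \le e_H - 1$ and $1 \le j' \le e_{H'}-1$, both $s_H^j$ and $s_{H'}^{j'}$ are reflections, so their hyperplanes are well defined and equal to $H$ and $H'$ respectively. Conjugating a reflection by $w$ sends its hyperplane to the image hyperplane, hence $w(H) = H'$, showing that $H$ and $H'$ are in the same $W$-orbit. From $w(H)=H'$ we deduce as above that $e_H = e_{H'}$ and $w s_H w^{-1} = s_{H'}$, so the relation $w s_H^j w^{-1} = s_{H'}^{j'}$ becomes $s_{H'}^j = s_{H'}^{j'}$. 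Since $s_{H'}$ has order $e_{H'}=e_H$ and both exponents lie in $\{1,\dots,e_H-1\}$, this forces $j=j'$.

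There is essentially no obstacle: the proof rests entirely on the fact that the generator $s_H$ is pinned down by the pair (hyperplane, determinant), which is built into the notation set up in Section~\ref{section:hyperplans} and the choice of the coherent family $(\z_e)_{e \in \EC(\kb)}$. The only point worth stating carefully is that $e_H = e_{H'}$ whenever $H$ and $H'$ are $W$-conjugate, which is immediate since conjugation by $w$ is a group isomorphism $W_H \longiso W_{H'}$.
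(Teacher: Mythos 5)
Your proof is correct. The paper itself gives no argument, introducing the statement with ``Le lemme suivant est imm\'ediat'' and leaving the verification to the reader; what you wrote is exactly the expected direct check: conjugation sends the hyperplane of a reflection to its image under $w$ and preserves determinants, so the distinguished generator $s_H$ (pinned down by the pair consisting of $H$ and the coherent choice of primitive root $\z_{e_H}$) is sent to $s_{H'}$, and the exponent is then forced. No gap.
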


\bigskip

Si $\O$ est une $W$-orbite d'hyperplans de $\AC$, nous noterons $e_\O$ \indexnot{ea}{e_\O} la valeur 
commune de tous les $e_H$, o\`u $H \in \O$. Ainsi, le lemme~\ref{conjugaison ref} 
nous fournit une bijection entre l'ensemble $\Omeb_W$ \indexnot{oz}{\Omeb_W} des couples $(\O,j)$ o\`u $\O \in \AC/W$ et 
$1 \le j \le e_\O-1$ et l'ensemble $\refw$. 

\bigskip

\section{Caract\`eres irr\'eductibles}

\medskip

Une cons\'equence du fait que $W$ est engendr\'e par $\Ref(W)$, 
dont la preuve repose sur la classification des groupes 
de r\'eflexions (voir~\cite{benard} et~\cite{bessis}), est la suivante 
(notons que la semi-simplicit\'e est \'evidente, puisque $\kb$ est de 
caract\'eristique z\'ero)~:

\bigskip

\begin{theo}[Benard, Bessis]\label{deploiement}
Soit $\kb'$ un sous-corps de $\kb$ contenant les traces des \'el\'ements de $W$. Alors 
l'alg\`ebre $\kb' W$ est semi-simple d\'eploy\'ee. En particulier, $\kb W$ est 
semi-simple d\'eploy\'ee. 
\end{theo}

\bigskip

Nous noterons $\Irr(W)$ \indexnot{I}{\Irr(W),~\Irr(\kb W)} l'ensemble des caract\`eres irr\'eductibles de $W$ 
(sur le corps $\kb$) que nous identifierons avec l'ensemble $\Irr(\kb W)$ 
des classes d'isomorphie de $\kb W$-modules simples. Nous identifierons 
le groupe de Grothendieck $\groth(\kb W)$ \indexnot{K}{\groth(\kb W)} avec le $\ZM$-module libre de base $\Irr(W)$ 
que nous noterons $\ZM\Irr(W)$. Si $M$ est un $\kb W$-module de dimension finie, 
nous noterons $\isomorphisme{M}_{\kb W}$ \indexnot{M}{\isomorphisme{M}_{\kb W}} son image dans $\groth(\kb W)$. 
Nous noterons $\langle,\rangle_W$ 
\index{ZZZ@ ${{\langle}},{{\rangle}}_W$ |textbf {\hskip0.5cm} \textbf}  le produit scalaire sur $\groth(\kb W)$ 
qui fait de $\Irr(W)$ une base orthonormale.
Le groupe $\Hom(W,\kb^\times)$ des caract\`eres lin\'eaires de $W$ \`a valeurs dans $\kb^\times$ sera not\'e 
$W^\wedge$ \indexnot{W}{W^\wedge} (ainsi, $W^\wedge \subset \Irr(W)$, avec \'egalit\'e si et seulement si $W$ est ab\'elien).

\bigskip

\section{S\'eries de Hilbert}

\medskip

Soient $\tb$ et $\ub$ \indexnot{ta}{\tb,~\ub} deux ind\'etermin\'ees et soit $\kb[[\tb,\ub]]$ 
la $\kb$-alg\`ebre des s\'eries formelles en deux variables $\tb$ et $\ub$. 
Si $M$ est une $\kb$-espace vectoriel $(\NM \times \NM)$-gradu\'e 
(de d\'ecomposition associ\'ee $M=\bigoplus_{(i,j) \in \NM \times \NM} M_{i,j}$) 
dont les composantes homog\`enes sont de dimension finie, 
on notera $\dim_\kb^\bigrad(M)$ \indexnot{da}{\dim_\kb^\bigrad} sa {\it s\'erie de Hilbert bi-gradu\'ee} 
d\'efinie par 
$$\dim_\kb^\bigrad(M)=\sum_{(i,j) \in \NM \times \NM} \dim_\kb(M_{i,j})~\tb^i~\ub^j.$$
Bien s\^ur, si $N$ est un autre $\kb$-espace vectoriel $(\NM \times \NM)$-gradu\'e 
dont les composantes homog\`enes sont de dimension finie, alors
\equat\label{+}
\dim_\kb^\bigrad(M \oplus N) = \dim_\kb^\bigrad(M) + \dim_\kb^\bigrad(N)
\endequat
et
\equat\label{x}
\dim_\kb^\bigrad(M \otimes N) = \dim_\kb^\bigrad(M)\cdot\dim_\kb^\bigrad(N).
\endequat

\subsection{Invariants}
L'alg\`ebre $\kb[V \times V^*]=\kb[V] \otimes \kb[V^*]$ admet une 
bi-graduation standard, en attribuant aux \'el\'ements de $V^* \subset \kb[V]$ 
le bi-degr\'e $(0,1)$ et \`a ceux de $V \subset \kb[V^*]$ le bi-degr\'e 
$(1,0)$. On a 
\equat\label{hilbert kv}
\dim_\kb^\bigrad(\kb[V \times V^*]) = \frac{1}{(1-\tb)^n(1-\ub)^n},
\endequat
gr\^ace notamment \`a~\ref{x}.

En reprenant les notations du th\'eor\`eme~\ref{chevalley}(a), on obtient, toujours 
gr\^ace \`a~\ref{x}, 
\equat\label{hilbert biinv}
\dim_\kb^\bigrad(\kb[V \times V^*]^{W \times W})
=\prod_{i=1}^n \frac{1}{(1-\tb^{d_i})(1-\ub^{d_i})}.
\endequat
D'autre part, la s\'erie de Hilbert bi-gradu\'ee de l'alg\`ebre des invariants 
diagonaux $\kb[V \times V^*]^W$ est donn\'ee par une formule {\it \`a la Molien}
\equat\label{hilbert molien}
\dim_\kb^\bigrad(\kb[V \times V^*]^W)=\frac{1}{|W|} 
\sum_{w \in W} \frac{1}{\det(1-w\tb)~\det(1-w^{-1}\ub)},
\endequat
dont la preuve copie presque mot pour mot la preuve de la formule de Molien 
pour la s\'erie de Hilbert gradu\'ee.

\bigskip

\subsection{Degr\'es fant\^omes} 
Si $M= \bigoplus_{i \in \ZM} M_i$ est un $\kb$-espace vectoriel $\ZM$-gradu\'e et si 
$d \in \ZM$, on notera $M\langle d \rangle$ le $\kb$-espace vectoriel gradu\'e 
obtenu en d\'ecalant la graduation de $d$, c'est-\`a-dire $M\langle d \rangle_i=M_{i+d}$. 
Si $M_i$ est nul pour $i \ll 0$, on d\'efinit aussi la {\it s\'erie de Hilbert} de $M$ 
par 
$$\dim_\kb^\grad(M)=\sum_{i \in \ZM} \dim_\kb(M_i)~\tb^i \in \kb((\tb)).$$
Il est imm\'ediat que $\dim_\kb^\grad(M\langle d \rangle)=\tb^{-d}~\dim_\kb^\grad(M)$. 

Notons 
$\groth(\kb W)[\tb,\tb^{-1}]$ l'anneau des polyn\^omes de Laurent \`a 
coefficients dans $\groth(\kb W)$. C'est un $\ZM[\tb,\tb^{-1}]$-module libre de 
base $\Irr(W)$.

Si $M = \bigoplus_{i \in \ZM} M_i$ est un $\kb W$-module $\ZM$-gradu\'e de dimension finie, 
on notera $\isomorphisme{M}_{\kb W}^\grad$ \indexnot{M}{\isomorphisme{M}_{\kb W}^\grad}
l'\'el\'ement de $\groth(\kb W)[\tb,\tb^{-1}]$ d\'efini par 
$$\isomorphisme{M}_{\kb W}^\grad = \sum_{i \in \ZM}~\isomorphisme{M_i}_{\kb W}~\tb^i.$$
Il est clair que $\isomorphisme{M}_{\kb W}$ est l'\'evaluation en $1$ de 
$\isomorphisme{M}_{\kb W}^\grad$ 
et que $\isomorphisme{M\langle n \rangle}_{\kb W}^\grad = 
\tb^{-n} ~\isomorphisme{M}_{\kb W}^\grad$. 
Si $M$ est un $\kb W$-module bi-gradu\'e, on d\'efinit de fa\c{c}on similaire 
$\isomorphisme{M}_{\kb W}^\bigrad$~: c'est un \'el\'ement de 
$\groth(\kb W)[\tb,\ub,\tb^{-1},\ub^{-1}]$.

Soit $(f_\chi(\tb))_{\chi \in \Irr(W)}$ \indexnot{fa}{f_\chi(\tb)}
l'unique famille d'\'el\'ements de $\NM[\tb]$ telle que 
\equat\label{fchi def}
\isomorphisme{k[V^*]^\cow}_{\kb W}^\bigrad=\sum_{\chi \in \Irr(W)} f_\chi(\tb) ~\chi.
\endequat

\bigskip
\begin{defi}\label{defi:degre-fantome}
Le polyn\^ome $f_\chi(\tb)$ est appel\'e le {\bfit degr\'e fant\^ome} de $\chi$. 
Sa $\tb$-valuation sera not\'ee $\bb_\chi$ \indexnot{ba}{\bb_\chi} et sera appel\'ee le {\bfit $\bb$-invariant} de $\chi$.
\end{defi}

\bigskip

Le degr\'e fant\^ome de $\chi$  v\'erifie 
\equat\label{chi 1}
f_\chi(1) = \chi(1).
\endequat
Remarquons aussi que
\equat\label{fchi}
\isomorphisme{\kb[V]^\cow}_{\kb W}^\bigrad=\sum_{\chi \in \Irr(W)} f_\chi(\ub) ~\chi^*,
\endequat
(ici, $\chi^*$ d\'esigne le caract\`ere dual de $\chi$, c'est-\`a-dire 
$\chi^*(w)=\chi(w^{-1})$). Notons aussi que, si $\unb_W$ d\'esigne le caract\`ere 
trivial de $W$, alors 
$$\isomorphisme{\kb[V^*]^\cow}_{\kb W}^\bigrad \equiv \unb_W \mod \tb \groth(\kb W)[\tb]$$
$$\isomorphisme{\kb[V]^\cow}_{\kb W}^\bigrad \equiv \unb_W \mod \ub  
\groth(\kb W)[\ub].\leqno{\text{et}}$$
On en d\'eduit~:

\bigskip

\begin{lem}\label{non zero}
$\isomorphisme{\kb[V]^\cow}_{\kb W}^\grad$ et $\isomorphisme{\kb[V^*]^\cow}_{\kb W}^\grad$ 
ne sont pas des diviseurs de $0$ dans $\groth(\kb W)[\tb,\ub,\tb^{-1},\ub^{-1}]$.
\end{lem}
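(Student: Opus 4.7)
The plan is to exploit the observation stated immediately above the lemma: the classes $\isomorphisme{\kb[V^*]^\cow}_{\kb W}^\grad$ and $\isomorphisme{\kb[V]^\cow}_{\kb W}^\grad$ both have constant term (in $\tb$, respectively in $\ub$) equal to $\unb_W$, which is the multiplicative identity of the Grothendieck ring $\groth(\kb W)$ since the ring structure is given by tensor product of $\kb W$-modules.

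Treating the first class (the second is symmetric, with the roles of $\tb$ and $\ub$ exchanged), I would set $F(\tb) = \isomorphisme{\kb[V^*]^\cow}_{\kb W}^\grad \in \groth(\kb W)[\tb]$ and write
$$F(\tb) = \unb_W + \tb\, G(\tb)$$
with $G(\tb) \in \groth(\kb W)[\tb]$. Viewing the ambient ring as $A[\tb,\tb^{-1}]$ with $A = \groth(\kb W)[\ub,\ub^{-1}]$, the coefficient of the lowest power of $\tb$ in $F(\tb)$ is the unit $\unb_W$ of $A$.

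From there the argument is the standard one. Suppose $F(\tb) \cdot H = 0$ for some Laurent polynomial $H = \sum_{i=a}^{b} h_i(\ub)\, \tb^i$ with $h_i(\ub) \in A$ and $h_a \neq 0$. The coefficient of $\tb^a$ in $F(\tb) \cdot H$ equals $\unb_W \cdot h_a(\ub) = h_a(\ub)$, which must vanish, contradicting $h_a \neq 0$. Hence $H = 0$, so $F(\tb)$ is not a zero divisor in $\groth(\kb W)[\tb,\ub,\tb^{-1},\ub^{-1}]$.

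There is no genuine obstacle here: the entire content of the lemma is the recognition that $\unb_W$ is a unit (in fact the identity) of $\groth(\kb W)$, so that the constant-coefficient test suffices. The congruences noted just before the statement are precisely what makes this test apply.
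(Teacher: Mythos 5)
Your proof is correct and is exactly the argument the paper intends: the paper states the congruences $\isomorphisme{\kb[V^*]^\cow}_{\kb W}^\bigrad \equiv \unb_W \pmod{\tb}$ and $\isomorphisme{\kb[V]^\cow}_{\kb W}^\bigrad \equiv \unb_W \pmod{\ub}$ immediately before the lemma and presents the lemma as a direct consequence ("On en d\'eduit"), without a separate proof. Your lowest-coefficient argument, hinging on $\unb_W$ being the identity of $\groth(\kb W)$, spells out precisely that deduction.
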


\bigskip

\begin{rema}
A contrario, 
$$\isomorphisme{\kb[V]^\cow}_{\kb W}=\isomorphisme{\kb[V^*]^\cow}_{\kb W}=
\isomorphisme{\kb W}_{\kb W} = 
\sum_{\chi \in \Irr(W)} \chi(1) \chi$$ 
est un diviseur de $0$ dans $\groth(\kb W)$ (d\`es que $W \neq 1$).\finl
\end{rema}

\bigskip

Nous pouvons alors donner une autre formule pour $\dim_\kb^\bigrad(\kb[V \times V^*]^W)$~:

\bigskip

\begin{prop}\label{dim bigrad Q0 fantome}
$\DS{\dim_\kb^\bigrad(\kb[V \times V^*]^W)=\frac{1}{\prod_{i=1}^n (1-\tb^{d_i})(1-\ub^{d_i})} 
\sum_{\chi \in \Irr(W)} f_\chi(\tb) ~f_\chi(\ub).}$
\end{prop}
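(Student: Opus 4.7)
The plan is to combine Chevalley's theorem~\ref{chevalley} with a character-theoretic computation in the coinvariant algebras, so that the proposition becomes a product of two independently computed factors.

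First, by Theorem~\ref{chevalley}~(b),(c) applied to both $V$ and $V^*$, a homogeneous lift of a $\kb$-basis of $\kb[V]^\cow$ (respectively $\kb[V^*]^\cow$) provides a $\kb[V]^W$-basis of $\kb[V]$ (respectively a $\kb[V^*]^W$-basis of $\kb[V^*]$). This yields bigraded $\kb W$-module isomorphisms
$$\kb[V] \simeq \kb[V]^W \otimes \kb[V]^\cow, \qquad \kb[V^*] \simeq \kb[V^*]^W \otimes \kb[V^*]^\cow,$$
in which $\kb[V]^W$ and $\kb[V^*]^W$ carry the trivial $W$-action. Tensoring these and passing to $W$-invariants for the diagonal action gives a bigraded $\kb$-module isomorphism
$$\kb[V\times V^*]^W \;\simeq\; \kb[V]^W \otimes \kb[V^*]^W \otimes \bigl(\kb[V^*]^\cow \otimes \kb[V]^\cow\bigr)^W.$$
By the multiplicativity~\ref{x} of bigraded Hilbert series, together with~\ref{hilbert biinv}, the first two factors contribute $\prod_{i=1}^n \frac{1}{(1-\tb^{d_i})(1-\ub^{d_i})}$.

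It remains to prove that $\dim_\kb^\bigrad\bigl((\kb[V^*]^\cow \otimes \kb[V]^\cow)^W\bigr) = \sum_{\chi \in \Irr(W)} f_\chi(\tb)\,f_\chi(\ub)$. Since $\kb W$ is split semi-simple (Theorem~\ref{deploiement}), for any finite-dimensional $\kb W$-modules $A,B$ one has
$$\dim_\kb (A\otimes B)^W \;=\; \dim_\kb \Hom_{\kb W}(A^*, B) \;=\; \sum_{\chi \in \Irr(W)} [A:\chi]\,[B:\chi^*],$$
where $[M:\chi]$ denotes the multiplicity of $\chi$ in $M$. Applying this in each bidegree $(i,j)$, with $A$ the degree-$i$ component of $\kb[V^*]^\cow$ and $B$ the degree-$j$ component of $\kb[V]^\cow$, formula~\ref{fchi def} gives $[A:\chi]$ as the coefficient of $\tb^i$ in $f_\chi(\tb)$, while formula~\ref{fchi} gives $[B:\chi^*] = [B:(\chi^*)]$ as the coefficient of $\ub^j$ in $f_{(\chi^*)^*}(\ub) = f_\chi(\ub)$. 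Summing over $i,j,\chi$ yields the claimed identity.

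The only non-trivial ingredient is the bigraded $\kb W$-module decomposition $\kb[V] \simeq \kb[V]^W \otimes \kb[V]^\cow$, which is a classical consequence of Theorem~\ref{chevalley}; the remainder is routine character bookkeeping together with a standard use of Molien-type Hilbert series. I do not anticipate any real obstacle.
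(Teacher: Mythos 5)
Your proof is correct and follows essentially the same route as the paper: decompose $\kb[V]\simeq\kb[V]^W\otimes\kb[V]^\cow$ and $\kb[V^*]\simeq\kb[V^*]^W\otimes\kb[V^*]^\cow$ as bigraded $\kb W$-modules, split off the invariant factors, and reduce to a character computation on $(\kb[V^*]^\cow\otimes\kb[V]^\cow)^W$ via~\ref{fchi def} and~\ref{fchi}. The only cosmetic difference is that the paper records the last step as $\sum_{\chi,\psi}f_\chi(\tb)f_\psi(\ub)\langle\chi\psi^*,\unb_W\rangle_W$ with $\langle\chi\psi^*,\unb_W\rangle_W=\langle\chi,\psi\rangle_W$, whereas you work degree by degree with multiplicities $[A:\chi][B:\chi^*]$; these are the same computation.
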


\begin{proof}
Soit $\HC$ un suppl\'ementaire $W$-stable et gradu\'e de $<\kb[V]^W>$ dans $\kb[V]$. 
Alors, puisque $\kb[V]$ est un $\kb[V]^W$-module libre, on a des isomorphismes 
de $\kb[W]$-modules gradu\'es
$$\kb[V] \simeq \kb[V]^W \otimes \HC\qquad\text{et}\qquad 
\kb[V]^\cow \simeq \HC.$$
De m\^eme, si $\HC^*$ est un suppl\'ementaire $W$-stable et gradu\'e de 
$<\kb[V^*]^W>$ dans $\kb[V^*]$, alors on a des isomorphismes 
de $\kb[W]$-modules gradu\'es
$$\kb[V^*] \simeq \kb[V^*]^W \otimes \HC^*\qquad\text{et}\qquad 
\kb[V^*]^\cow \simeq \HC^*.$$
En d'autres termes, en tant que $\kb[W]$-modules bi-gradu\'es, on a 
$$\kb[V] \simeq \kb[V]^W \otimes \kb[V]^\cow\qquad\text{et}\qquad 
\kb[V^*] \simeq \kb[V^*]^W \otimes \kb[V^*]^\cow.$$
On en d\'eduit un isomorphisme de $\kb$-espaces vectoriels bi-gradu\'es 
$$(\kb[V] \otimes \kb[V^*])^W \simeq 
(\kb[V]^W \otimes \kb[V^*]^W) \otimes (\kb[V]^\cow \otimes \kb[V^*]^\cow)^W.$$
Mais, d'apr\`es~\ref{fchi def} et~\ref{fchi},
$$\dim_\kb^\bigrad(\kb[V]^\cow \otimes \kb[V^*]^\cow)^W 
= \sum_{\chi, \psi \in \Irr(W)} f_\chi(\tb) f_\psi(\ub) \langle \chi\psi^*,\unb_W\rangle_W.$$
La formule annonc\'ee s'obtient maintenant en remarquant que 
$\langle \chi\psi^*,\unb_W\rangle  = \langle \chi,\psi \rangle_W$. 
\end{proof}

\bigskip

En conclusion, nous pouvons regrouper dans une m\^eme formule la formule de 
Molien~\ref{hilbert molien} et la proposition~\ref{dim bigrad Q0 fantome}:
\eqna
\DS{
\dim_\kb^\bigrad(\kb[V \times V^*]^W)
}
&=&
\DS{
\frac{1}{|W|} \sum_{w \in W} \frac{1}{\det(1-w\tb)~\det(1-w^{-1}\ub)}
}\\
&=&
\DS{
\frac{1}{\prod_{i=1}^n (1-\tb^{d_i})(1-\ub^{d_i})} 
\sum_{\chi \in \Irr(W)} f_\chi(\tb) ~f_\chi(\ub).
}
\endeqna

\chapter{Alg\`ebres de Hecke}\label{chapter:hecke}

\boitegrise{{\bf Notation.} 
{\it Dor\'enavant, et ce jusqu'\`a la fin de ce m\'emoire, 
nous fixons un corps de nombres $F$, \indexnot{F}{F} contenu dans $\kb$, galoisien sur $\QM$, 
contenant toutes les traces des \'el\'ements de $W$ et nous noterons $\OC$ \indexnot{O}{\OC} 
la cl\^oture int\'egrale de $\ZM$ dans $F$. Nous fixons aussi un plongement 
$F \injto \CM$. Par le th\'eor\`eme de Benard-Bessis, la $F$-alg\`ebre $F W$ est 
semi-simple d\'eploy\'ee~: il existe donc un sous-$F$-espace 
vectoriel $W$-stable $V_F$ \indexnot{V}{V_F}  de $V$ tel que $V = \kb \otimes_F V_F$. 
On notera $a \mapsto \aba$ la conjugaison 
complexe (elle stabilise $F$ car $F$ est Galoisien sur $\QM$). 
Pour finir, on notera $\mub_W$ \indexnot{mz}{\mub_W}  le groupe des racines de l'unit\'e du 
corps engendr\'e par les traces des \'el\'ements de $W$.}}{0.75\textwidth}

\bigskip

L'existence d'un tel corps $F$ est facile~: il suffit de prendre le corps engendr\'e par 
les traces des \'el\'ements de $W$ (c'est une extension galoisienne de $\QM$ car 
elle est contenue dans un corps cyclotomique). 
Notons aussi que $F$ contient toutes les racines de l'unit\'e de la forme $\z_{e_H}$, o\`u $H \in \AC$.

\bigskip

\section{D\'efinitions}

\bigskip

\subsection{Groupe de tresses}

Posons $V_\CM= \CM \otimes_F V_F$ 
et, si $H \in \AC$, notons $H_\CM = \CM \otimes_F (H \cap V_F)$. 
On d\'efinit alors
$$V_\CM^\reg = V_\CM \setminus \bigcup_{H \in \AC} H_\CM \indexnot{V}{V_\CM^\reg}$$
et on fixe un point $v_\CM \in V_\CM^\reg$. \indexnot{va}{v_\CM}  Si $v \in V_\CM$, on note $\vba$ son image 
dans la vari\'et\'e $V_\CM/W$. Le {\it groupe de tresses} associ\'e \`a $W$, not\'e $B_W$, 
est alors d\'efini par
$$B_W = \pi_1(V_\CM^\reg/W,\vba_\CM).\indexnot{B}{B_W}$$
Le {\it groupe de tresses pures} associ\'e \`a $W$, not\'e $P_W$, 
est lui d\'efini par
$$P_W = \pi_1(V_\CM^\reg,v_\CM).\indexnot{P}{P_W}$$
Le rev\^etement $V_\CM^\reg \to V_\CM^\reg/W$ \'etant non ramifi\'e 
(en vertu du th\'eor\`eme de Steinberg), on obtient une suite exacte
\equat\label{eq:suite exacte braid}
1 \longto P_W \longto B_W \stackrel{p_W}{\longto} W \longto 1.
\endequat
Si $H \in \AC$, nous noterons $\sigb_H$ \indexnot{sz}{\sigb_H}  un {\it g\'en\'erateur de la monodromie} 
autour de l'hyperplan $H$, tel que d\'efini dans~\cite[\S{2.A}]{BMR}, et tel que 
$p_W(\sigb_H)=s_H$. Rappelons~\cite[th\'eor\`eme~2.17]{BMR} que 
\equat\label{eq:BW engendre}
\text{\it $B_W$ est engendr\'e par $(\sigb_H)_{H \in \AC}$.}
\endequat

Nous noterons $\pib$ \indexnot{pzz}{\pib}   le lacet dans $V_\CM^\reg$ d\'efini par 
$$\fonction{\pib}{[0,1]}{V_\CM^\reg}{t}{e^{2i\pi t} v_\CM.}$$
Alors~\cite[Lemme~2.22]{BMR},
\equat\label{eq:pi central}
\pib\in P_W \cap \Zrm(B_W).
\endequat

\bigskip

\subsection{Alg\`ebre de Hecke g\'en\'erique}\label{sub:hecke} 
Rappelons que $\Omeb_W$ est l'ensemble des couples $(\O,j)$ tels que $\O \in \AC/W$ et $1 \le j \le e_\O -1$ 
(voir \S\ref{section:hyperplans}). 

\bigskip

\boitegrise{{\bf Notation.} 
{\it Nous noterons $\Omeb_W^\circ$ \indexnot{oz}{\Omeb_W^\circ}  l'ensemble des couples $(\O,j)$ tels que 
$\O \in \AC/W$ et $0 \le j \le e_\O -1$. Fixons une famille d'ind\'etermin\'ees 
$\qb_\gen=(\qb_{\O,j})_{(\O,j) \in \Omeb_W^\circ}$ \indexnot{qa}{\qb_\gen,~\qb_{\O,j}}  alg\'ebriquement 
ind\'ependantes sur $\OC$. On notera $\OC[\qb_\gen^{\pm 1}]$ \indexnot{O}{\OC[\qb_\gen^{\pm 1}]} 
l'anneau commutatif int\`egre et int\'egralement clos 
$\OC[(\qb_{\O,j}^{\pm 1})_{(\O,j) \in \Omeb_W^\circ}]$~: son corps des fractions 
sera not\'e $F(\qb_\gen)$. \indexnot{F}{F(\qb_\gen)}  
Si $\O \in \AC/W$, $H \in \O$ et $0 \le j \le e_H-1=e_\O-1$, on posera  
$\qb_{H,j}=\qb_{\O,j}$.\indexnot{qa}{\qb_{H,j}}}}{0.75\textwidth}

\bigskip

L'{\it alg\`ebre de Hecke g\'en\'erique} associ\'ee \`a $W$, not\'ee $\heckegenerique$, \indexnot{H}{\heckegenerique}
est le quotient de l'alg\`ebre de groupe $\OC[\qb_\gen^{\pm 1}] B_W$ par l'id\'eal engendr\'e par les \'el\'ements de la forme 
\equat\label{eq:relations Hecke}
\prod_{j = 0}^{e_H-1} (\sigb_H - \z_{e_H}^j \qb_{H,j}^{|\mub_W|}),
\endequat
o\`u $H$ parcourt $\AC$. Par cons\'equent, en notant $\Tb_H$ \indexnot{T}{\Tb_H}  l'image de 
$\sigb_H$ dans $\heckegenerique$, alors, d'apr\`es~\ref{eq:BW engendre},
\equat\label{eq:HW-engendre}
\text{\it $\heckegenerique$ est  engendr\'ee par $(\Tb_H)_{H \in \AC}$,}
\endequat
et, si $H \in \AC$, alors 
\equat\label{eq:TH-relation}
\prod_{j = 0}^{e_H-1} (\Tb_H - \z_{e_H}^j \qb_{H,j}^{|\mub_W|})=0.
\endequat
Notons que
\equat\label{eq:TH-inversible}
\text{\it $\Tb_H$ est inversible dans $\heckegenerique$.}
\endequat
Le lemme suivant d\'ecoule imm\'ediatement de~\cite[proposition~2.18]{BMR}~:

\bigskip

\begin{lem}\label{lem:hecke specialisation}
\`A travers la sp\'ecialisation $\qb_{\O,j} \mapsto 1$, on a un isomorphisme 
de $\OC$-alg\`ebres 
$\OC \otimes_{\OC[\qb_\gen^{\pm 1}]} \heckegenerique \longiso \OC W$.
\end{lem}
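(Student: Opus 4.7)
Mon plan consiste d'abord à traduire la spécialisation au niveau des relations. En posant $\qb_{H,j} = 1$ dans la relation~\ref{eq:TH-relation}, on obtient
$$\prod_{j=0}^{e_H-1}(\Tb_H - \z_{e_H}^j) = \Tb_H^{e_H} - 1 = 0,$$
la deuxième égalité provenant de la factorisation classique $X^{e_H} - 1 = \prod_{j=0}^{e_H-1}(X - \z_{e_H}^j)$ (possible car $F \subset \kb$ contient $\z_{e_H}$). La $\OC$-algèbre $\OC \otimes_{\OC[\qb_\gen^{\pm 1}]} \heckegenerique$ s'identifie donc au quotient de l'algèbre de groupe $\OC B_W$ par l'idéal bilatère $I$ engendré par les éléments $\sigb_H^{e_H} - 1$, pour $H$ parcourant $\AC$.

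Je construis ensuite un morphisme vers $\OC W$. Le morphisme $p_W : B_W \surto W$ de la suite exacte~\ref{eq:suite exacte braid} envoie $\sigb_H$ sur $s_H$, élément d'ordre $e_H$ dans $W$, de sorte que $\sigb_H^{e_H}$ appartient à $\Ker(p_W)$. L'extension $\OC$-linéaire $\OC B_W \surto \OC W$ annule donc chaque générateur de $I$ et se factorise à travers le quotient par $I$. On obtient ainsi un morphisme surjectif
$$\ph : \OC \otimes_{\OC[\qb_\gen^{\pm 1}]} \heckegenerique \surto \OC W,$$
la surjectivité globale résultant de ce que $W$ est engendré par $\Ref(W)=\{s_H^j\}$.

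L'étape principale, et l'obstacle véritable, est l'injectivité de $\ph$. Elle équivaut à l'égalité entre l'idéal $I$ et le noyau de $\OC B_W \surto \OC W$. Un argument standard sur les algèbres de groupe (le noyau d'un morphisme d'algèbres $\OC G \surto \OC (G/N)$ est engendré, comme idéal bilatère, par toute famille engendrant $N$ comme sous-groupe distingué) ramène cette égalité à la propriété suivante: $P_W = \Ker(p_W)$ co{\"\i}ncide avec la clôture normale dans $B_W$ des éléments $\sigb_H^{e_H}$, $H \in \AC$. Autrement dit, $W$ admet pour présentation le quotient du groupe de tresses $B_W$ par la clôture normale des relations $\sigb_H^{e_H}=1$. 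C'est précisément ce qu'établit~\cite[proposition~2.18]{BMR}, résultat dont la preuve s'appuie, pour les groupes non Coxeter, sur la classification des groupes de réflexions complexes irréductibles. Une fois cette présentation admise, l'injectivité de $\ph$ en découle directement.
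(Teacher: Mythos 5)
Votre démonstration est correcte et suit essentiellement la même voie que le texte, qui se contente d'invoquer~\cite[proposition~2.18]{BMR}~: vous explicitez la réduction (la spécialisation transforme la relation~\ref{eq:TH-relation} en $\Tb_H^{e_H}-1=0$, puis on compare l'idéal bilatère engendré par les $\sigb_H^{e_H}-1$ au noyau de $\OC B_W \to \OC W$), étape que le texte laisse implicite derrière le mot «~imm\'ediatement~». Le point clé demeure dans les deux cas la présentation de $W$ comme quotient de $B_W$ par la clôture normale des $\sigb_H^{e_H}$, établie dans~\cite{BMR}.
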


\bigskip

Notons par $a \mapsto \aba$ l'unique automorphisme de la $\ZM$-alg\`ebre 
$\OC[\qb_\gen^{\pm 1}]$ \'etendant la conjugaison complexe sur $\OC$ 
et tel que $\overline{\qb}_{\O,j}=\qb_{\O,j}^{-1}$. 
Lorsque nous parlerons d'alg\`ebres de Hecke, nous travaillerons le plus souvent sous 
l'hypoth\`ese suivante~\cite[\S{2.A}]{BMM}~:

\bigskip

\def\libertesymetrie{{(Lib-Sym)}}
\def\HHC{{\!\SSS{\HC}}}
\def\HHb{{\!\SSS{\Hb}}}

\begin{quotation}
{\bf Hypoth\`ese \libertesymetrie.} 
{\it 
\begin{itemize}
\itemth{a} $\heckegenerique$ est un $\OC[\qb_\gen^{\pm 1}]$-module libre de rang $|W|$.

\itemth{b} Il existe une unique forme sym\'etrisante $\taub_\HHC : \heckegenerique \to A$ 
telle que~:
\begin{itemize}
\itemth{1} Apr\`es la sp\'ecialisation du lemme~\ref{lem:hecke specialisation} (i.e. 
$\qb_{\O,j} \mapsto 1$), $\taub_\HHC$ se sp\'ecialise en la forme sym\'etrisante 
canonique de $\OC W$ (i.e. $w \mapsto \d_{w,1}$, o\`u $\d_?$ est le symbole de Kronecker). 

\itemth{2} Si $b \in B_W$, alors 
$$\taub_\HHC(\pib) \overline{\taub_\HHC(b^{-1})} = \taub_\HHC(b\pib).\indexnot{tx}{\taub_\HHC}$$
Ici, $\taub_\HHC(b)$ d\'esigne l'image par $\taub_\HHC$ de l'image de $b$ dans $\heckegenerique$. 
\end{itemize}
\end{itemize}
}
\end{quotation}

\bigskip

\noindent{\sc Commentaire - } 
Il est conjectur\'e~\cite[\S{2.A}]{BMM} que l'hypoth\`ese \libertesymetrie~est toujours 
v\'erifi\'ee. Elle est connue 
dans de nombreux cas, incluant les sous-familles infinies suivantes~:
\begin{itemize}
\item[$\bullet$]  $W$ groupe de Coxeter.

\item[$\bullet$] $W$ de type $G(de,e,r)$ dans la classification de Shephard-Todd.
\end{itemize}
D'autre part, notons que, si l'hypoth\`ese \libertesymetrie~ est v\'erifi\'ee, alors 
$\taub_\HHC(\pib) \neq 0$ car, en vertu du point (1) de l'assertion (b) et de~\ref{eq:pi central}, 
$\taub_\HHC(\pib)$ se sp\'ecialise en $1$ via $\qb_{H,j} \mapsto 1$.\finl

\bigskip

\subsection{Alg\`ebre de Hecke cyclotomique} 
Nous n'utiliserons pas ici les d\'efinitions usuelles d'alg\`ebre de Hecke 
cyclotomique~\cite[\S{6.A}]{BMM},~\cite[D\'efinition~4.3.1]{chlouveraki LNM}, 
car nous aurons besoin de travailler sur un anneau assez gros permettant 
de faire varier les param\`etres autant que possible. 

\bigskip

\boitegrise{{\bf Notation.} 
{\it Suivant~\cite{bonnafe two},~\cite{bonnafe continu} et~\cite{bonnafe faux}, nous 
utiliserons une notation exponentielle pour l'alg\`ebre de groupe $\OC[\RM]$, que nous noterons 
$\OC[\qb^\RM]$~: $\OC[\qb^\RM]= \bigoplus_{r \in \RM} ~\OC~\!\qb^r$,  \indexnot{O}{\OC[\qb^\RM]}  avec 
$\qb^r \qb^{r'}=\qb^{r+r'}$. Puisque $\OC$ est int\`egre et 
$\RM$ est sans torsion, $\OC[\qb^\RM]$ est int\`egre 
et nous noterons $F(\qb^\RM)$ \indexnot{F}{F(\qb^\RM)}  son corps des fractions. 
Si $a = \sum_{r \in \RM} a_r\qb^r$, nous noterons 
$\deg(a)$ \indexnot{da}{\deg}  (respectivement $\val(a)$) \indexnot{va}{\val}  son degr\'e (respectivement sa valuation), 
c'est-\`a-dire l'\'el\'ement de $\RM \cup \{-\infty\}$ 
(respectivement $\RM \cup \{+\infty\}$) d\'efini par \\
\centerline{$\deg(a) = \max\{r \in \RM~|~a_r \neq 0\}$}\\
\centerline{(respectivement \quad $\val(a)=\min\{r \in \RM~|~a_r \neq 0\}$).}}}{0.75\textwidth}

\bigskip

On prend $\deg(a) = -\infty$ (respectivement $\val(a)=+\infty$) si et seulement 
si $a=0$. Les propri\'et\'es usuelles des degr\'es et valuations (vis-\`a-vis de la somme et du produit) 
sont bien s\^ur v\'erifi\'ees. Commen\c{c}ons par une remarque facile~:

\medskip

\begin{lem}\label{lem:A-normal}
L'anneau $\OC[\qb^\RM]$ est int\'egralement clos.
\end{lem}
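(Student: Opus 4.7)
The plan is to reduce to the classical fact that a polynomial ring over an integrally closed domain is integrally closed, using that $\RM$ is the directed union of its finitely generated subgroups.

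First I would observe that, since $\RM$ is a torsion-free abelian group, it is the filtered union of its finitely generated subgroups $\G$, each of which is free abelian of some finite rank $n_\G$. Correspondingly,
\[
\OC[\qb^\RM] = \bigcup_{\G} \OC[\qb^\G],
\]
the union being over a directed system of subrings, all sitting inside the common domain $\OC[\qb^\RM]$. Fixing a basis of $\G$ gives an isomorphism of $\OC$-algebras $\OC[\qb^\G] \simeq \OC[x_1^{\pm 1},\ldots,x_{n_\G}^{\pm 1}]$, i.e.\ a Laurent polynomial ring over $\OC$, which is itself a localization of $\OC[x_1,\ldots,x_{n_\G}]$.

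Next I would invoke the following standard facts: the ring of integers $\OC$ of a number field is a Dedekind domain, hence integrally closed; a polynomial ring over an integrally closed domain is integrally closed; and a localization of an integrally closed domain is integrally closed. Combining these, each $\OC[\qb^\G]$ is integrally closed in its fraction field.

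To conclude, suppose $x \in \Frac(\OC[\qb^\RM])$ is integral over $\OC[\qb^\RM]$, satisfying a monic equation $x^m + c_{m-1} x^{m-1} + \cdots + c_0 = 0$. Write $x = a/b$ with $a, b \in \OC[\qb^\RM]$; since $a$, $b$ and each $c_i$ are finite sums of terms $\l \qb^r$, there is a finitely generated subgroup $\G \subseteq \RM$ such that $a, b, c_0, \ldots, c_{m-1} \in \OC[\qb^\G]$. Then $x$ lies in $\Frac(\OC[\qb^\G])$ and is integral over the integrally closed ring $\OC[\qb^\G]$, hence $x \in \OC[\qb^\G] \subseteq \OC[\qb^\RM]$. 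The only point requiring any care is this descent of a hypothetical integral dependence relation to a finitely generated subgroup, which is immediate because only finitely many monomials $\qb^r$ are involved; no serious obstacle is expected.
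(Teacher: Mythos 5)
Your argument is correct and follows the same route as the paper: write $\OC[\qb^\RM]$ as the directed union of the Laurent polynomial subrings $\OC[\qb^\G]$ over finitely generated $\G\subset\RM$, each integrally closed because $\OC$ is, and descend a putative integral dependence relation to one such $\G$. The paper states this more tersely, leaving the final descent step implicit, but the idea is identical.
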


\begin{proof}
Cela r\'esulte du fait que $\OC[\qb^\RM]=\bigcup_{\L \subset \RM} \OC[\qb^\L]$, 
o\`u $\L$ parcourt les sous-groupes de type fini de $\RM$, et que, si $\L$ est de rang $e$, alors 
$\OC[\qb^\L] \simeq \OC[\tb_1^{\pm 1},\dots,\tb_e^{\pm 1}]$ est int\'egralement clos.
%
\end{proof}

\bigskip

Fixons une famille $k=(k_{\O,j})_{(\O,j) \in \Omeb_W^\circ}$ de nombres r\'eels 
(comme d'habitude, si $H \in \O$ et $0 \le i \le e_H -1$, alors 
on posera $k_{H,j}=k_{\O,j}$). On 
appellera {\it alg\`ebre de Hecke cyclotomique} ({\it de param\`etre $k$}), et on notera 
$\heckecyclotomique(k)$ \indexnot{H}{\heckecyclotomique(k)}  la $\OC[\qb^\RM]$-alg\`ebre 
$\OC[\qb^\RM] \stackrel{k}{\otimes}_{\OC[\qb_\gen^{\pm 1}]} \heckegenerique$, o\`u 
$\stackrel{k}{\otimes}_{\OC[\qb_\gen^{\pm 1}]}$ d\'esigne le produit tensoriel effectu\'e 
en voyant $\OC[\qb^\RM]$ comme une $\OC[\qb_\gen^{\pm 1}]$-alg\`ebre \`a travers le morphisme 
$$\fonction{\Th_k^\cyclo}{\OC[\qb_\gen^{\pm 1}]}{\OC[\qb^\RM]}{\qb_{\O,j}}{\qb^{k_{\O,j}}.}\indexnot{ty}{\Th_k^\cyclo}$$
Si le contexte est suffisamment clair, nous noterons $\otimes_{\OC[\qb_\gen^{\pm 1}]}$ le produit 
tensoriel $\stackrel{k}{\otimes}_{\OC[\qb_\gen^{\pm 1}]}$. 
Notons $T_H$ (ou $T_H^{(k)}$ \indexnot{T}{T_H,~T_H^{(k)}}  s'il peut y avoir une ambigu\"{\i}t\'e) l'image de 
$\Tb_H$ dans $\heckecyclotomique(k)$~; alors 
\equat\label{eq:hecke-engendre-cyclotomique}
\text{\it $\heckecyclotomique(k)$ est engendr\'ee par $(T_H)_{H \in \AC}$}
\endequat
et, si $H \in \AC$, alors 
\equat\label{eq:TW-relation-cyclotomique}
\prod_{j = 0}^{e_H-1} (T_H - \z_{e_H}^j \qb^{|\mub_W| k_{H,j}})=0.
\endequat

\bigskip

\begin{rema}
Il d\'ecoule du lemme~\ref{lem:hecke specialisation} qu'apr\`es la sp\'ecialisation 
$\OC[\qb^\RM] \to \OC$, $\qb^r \mapsto 1$ (c'est le morphisme d'augmentation pour le groupe 
$\RM$), on obtient $\OC \otimes_{\OC[\qb^\RM]} \heckecyclotomique(k) \simeq \OC W$. 

De m\^eme, $\heckecyclotomique(0)\simeq \OC[\qb^\RM] W$.\finl 
\end{rema}

\bigskip

\boitegrise{{\bf Notation.} 
{\it On notera $\CCB_\RM$ \indexnot{C}{\CCB_\RM}  le $\RM$-espace vectoriel des familles 
$k=(k_{\O,j})_{(\O,j) \in \Omeb_W^\circ}$ de nombres r\'eels telles que 
$\sum_{j=0}^{e_\O-1} k_{\O,j}=0$ pour tout $\O \in \AC/W$.}}{0.75\textwidth}

\bigskip

\begin{rema}\label{rem:translation}
Soit $(\l_\O)_{\O \in \AC/W}$ une famille de nombres r\'eels et, si $H \in \O$, 
posons $\l_H=\l_\O$. Soit $k_{\O,j}'=k_{\O,j} + \l_\O$ et notons $k'=(k_{\O,j}')_{(\O,j) \in \Omeb_W^\circ}$. 
Alors l'application $T_H^{(k)} \mapsto \qb^{-\l_H} T_H^{(k')}$ s'\'etend en un isomorphisme 
de $\OC[\qb^\RM]$-alg\`ebres $\heckecyclotomique(k) \simeq \heckecyclotomique(k')$. 

Ainsi, si on prend $\l_\O=-(k_{\O,0}+k_{\O,1}+\cdots+k_{\O,e_\O-1})/e_\O$, 
alors $\heckecyclotomique(k)\simeq \heckecyclotomique(k')$, avec 
$k' \in \CCB_\RM$.\finl 
\end{rema}

\section{Repr\'esentations}\label{section:representations-hecke}

\medskip

\boitegrise{{\bf Hypoth\`ese.} 
{\it Dor\'enavant, et ce jusqu'\`a la fin de ce chapitre, 
nous supposerons que l'hypoth\`ese \libertesymetrie~ est satisfaite.}}{0.75\textwidth}

\bigskip

\subsection{Cas g\'en\'erique}\label{subsection:cas-generique} 
Le r\'esultat suivant est d\^u \`a Malle~\cite[th\'eor\`eme~5.2]{malle} (la difficult\'e r\'eside 
dans le d\'eploiement)~:

\bigskip

\begin{theo}[Malle]\label{theo:hecke-deployee}
La $F(\qb_\gen)$-alg\`ebre $F(\qb_\gen)\heckegenerique$ est semi-simple et d\'eploy\'ee.
\end{theo}

\bigskip

Puisque l'alg\`ebre $FW$ est elle aussi semi-simple et d\'eploy\'ee (th\'eor\`eme de 
Benard-Bessis~\ref{deploiement}), il d\'ecoule du th\'eor\`eme de d\'eformation de 
Tits~\cite[th\'eor\`eme~7.4.6]{geck} que l'on a une bijection 
$$
\begin{array}{ccc}
\Irr(W) & \longiso & \Irr(F(\qb_\gen)\heckegenerique) \\
\chi & \longmapsto & \chi^\gen
\end{array}
$$
d\'efinie par la propri\'et\'e suivante~: $\chi$ est la sp\'ecialisation de $\chi^\gen$ 
par $\qb_{\O,j} \mapsto 1$. D'apr\`es~\cite[th\'eor\`eme~7.2.6]{geck}, il existe une unique famille 
$(\sb_\chi^\gen)_{\chi \in \Irr(W)}$ \indexnot{sa}{\sb_\chi^\gen}  d'\'el\'ements de $\OC[\qb_\gen^{\pm 1}]$ telle que 
$$\taub_\HHC=\sum_{\chi \in \Irr(W)} \frac{1}{\sb_\chi^\gen}~\chi^\gen.$$
L'\'el\'ement $\sb_\chi^\gen$ est appel\'e 
l'{\it \'el\'ement de Schur g\'en\'erique associ\'e \`a $\chi$}. 
Les \'el\'ements de Schur ont \'et\'e calcul\'es, au cas par cas, par de nombreux auteurs 
(Alvis, Benson, Geck, Iancu, Lusztig, Malle, Mathas, Surowski,\dots) et cette
description a \'et\'e achev\'ee par M. Chlouveraki qui en a d\'eduit une
propri\'et\'e uniforme 
des \'el\'ements de Schur (voir~\cite[th\'eor\`eme~3.2.5]{maria} 
ou~\cite[th\'eor\`eme~4.2.5]{chlouveraki LNM}). 
Ce r\'esultat a \'et\'e d\'emontr\'e ind\'ependamment par le second auteur 
par un argument g\'en\'eral utilisant les alg\`ebres de Cherednik~\cite{rouquier schur}.

\bigskip

\begin{theo}\label{theo:schur}
Soit $\chi \in \Irr(W)$. Alors il existe un entier naturel non nul $m$, un \'el\'ement 
$\xib_\chi^\gen \in \OC \setminus\{0\}$, des mon\^omes $M_{\chi,0}$, $M_{\chi,1}$,\dots, $M_{\chi,m}$ 
dans $\OC[\qb_\gen^{\pm 1}]$ et des polyn\^omes $F$-cyclotomiques $\Psi_{\chi,1}$,\dots, 
$\Psi_{\chi,m}$ tels que 
$$\sb_\chi^\gen  = \xib_\chi^\gen M_{\chi,0} 
\Psi_{\chi,1}(M_{\chi,1}) \cdots \Psi_{\chi,m}(M_{\chi,m}).$$
De plus, $\Psi_{\chi,i}(\tb) \neq \tb-1$. 
\end{theo}

 \bigskip

\begin{rema}\label{rem-poly-cyclo}
Un polyn\^ome {\it $F$-cyclotomique} est un polyn\^ome minimal, sur $F$, d'une 
racine de l'unit\'e. Par exemple, sur $\QM(\sqrt{2})$, $\tb^2 - \sqrt{2} \tb + 1$ est un polyn\^ome 
$F$-cyclotomique (c'est le polyn\^ome minimal d'une des racines primitives huiti\`emes 
de l'unit\'e).

La derni\`ere assertion d\'ecoule du th\'eor\`eme de d\'eformation de Tits~\cite[th\'eor\`eme~7.4.6]{geck}~:  
en effet, d'apr\`es le lemme~\ref{lem:hecke specialisation}, la sp\'ecialisation $\qb_{\O,j} \mapsto 1$ 
nous donne l'alg\`ebre de groupe $\kb W$ qui est semi-simple d\'eploy\'ee.\finl
\end{rema}

\bigskip

Notons $\omeb_\chi^\gen : \Zrm(F(\qb_\gen)\heckegenerique) \longto F(\qb_\gen)$ \indexnot{ozz}{\omeb_\chi^\gen}
le caract\`ere central associ\'e au caract\`ere $\chi$~: si $z \in \Zrm(F(\qb_\gen)\heckegenerique)$, 
alors $\omeb_\chi^\gen(z)$ est l'\'el\'ement de $F(\qb_\gen)$ par lequel $z$ agit 
sur un module simple de caract\`ere $\chi^\gen$. C'est un morphisme 
de $F(\qb_\gen)$-alg\`ebres. 
Puisque $\OC[\qb_\gen^{\pm 1}]$ est int\'egralement clos, $\omeb_\chi^\gen$ 
se restreint en un morphisme de $\OC[\qb_\gen^{\pm 1}]$-alg\`ebres 
$\omeb_\chi^\gen : \Zrm(\heckegenerique) \longto \OC[\qb_\gen^{\pm 1}]$. 

Puisque $\pib \in \Zrm(B_W)$, son image dans $\heckegenerique$ appartient 
au centre de cette alg\`ebre. On peut donc \'evaluer $\omeb_\chi^\gen$ en $\pib$. 
Avant de donner la formule qui d\'ecrit cette \'evaluation, 
nous aurons besoin de la notation suivante~: si $(\O,j) \in \Omeb_W^\circ$ 
et $H \in \O$, posons 
$$m_{\O,j}^\chi = \langle \Res_{W_H}^W \chi, {\det}^j \rangle_{W_H}.\indexnot{ma}{m_{\O,j}^\chi}$$
Alors, on a~\cite[4.17]{broue-michel}
\equat\label{eq:m-entier}
\frac{m_{\O,j}^\chi|\O|e_\O}{\chi(1)} \in \NM
\endequat
et~\cite[proposition~4.16]{broue-michel}
\equat\label{eq:action-pi}
\omeb_\chi^\gen(\pib) = \prod_{(\O,j) \in \Omeb_W^\circ} 
\qb_{\O,j}^{|\mub_W| \cdot \frac{\SS{m_{\O,j}^\chi|\O|e_\O}}{\SS{\chi(1)}}}.
\endequat

\bigskip

\subsection{Cas cyclotomique}\label{sub:cas-cyclo}
Le corollaire suivant d\'ecoule facilement de la forme des \'el\'ements de Schur 
donn\'ee par le th\'eor\`eme~\ref{theo:schur}~:

\bigskip

\begin{coro}\label{coro:schur}
Si $\chi \in \Irr(W)$, alors $\Th_k^\cyclo(\sb_\chi^\gen) \neq 0$. 
\end{coro}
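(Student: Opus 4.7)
The plan is to apply the specialization $\Th_k^\cyclo$ directly to the factorization of $\sb_\chi^\gen$ provided by Theorem~\ref{theo:schur}, and to check that each factor has nonzero image in $\OC[\qb^\RM]$. Since $\OC$ is an integral domain and $\RM$ is torsion-free, $\OC[\qb^\RM]$ is itself an integral domain (as recalled in the discussion preceding Lemma~\ref{lem:A-normal}), so it will suffice to treat the factors one at a time.

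The easy factors are dealt with immediately: $\Th_k^\cyclo(\xib_\chi^\gen)=\xib_\chi^\gen$ is a nonzero element of $\OC$, and $\Th_k^\cyclo$ sends any Laurent monomial in the $\qb_{\O,j}$ to a monomial of the form $\qb^r$ with $r\in\RM$, which is a unit of $\OC[\qb^\RM]$. This disposes of $\xib_\chi^\gen$ and of $M_{\chi,0}$.

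The main step is to handle each cyclotomic factor $\Psi_{\chi,i}(M_{\chi,i})$. Setting $\Th_k^\cyclo(M_{\chi,i})=\qb^{r_i}$ with $r_i\in\RM$, the problem reduces to showing $\Psi_{\chi,i}(\qb^{r_i})\neq 0$ in $\OC[\qb^\RM]$. Since $\Psi_{\chi,i}$ is the minimal polynomial over $F$ of a root of unity, it is monic with coefficients in $\OC$ (being a divisor in $\OC[\tb]$ of a $\QM$-cyclotomic polynomial), so we may write $\Psi_{\chi,i}(\tb)=\tb^d+a_{d-1}\tb^{d-1}+\cdots+a_0$ with $a_j\in\OC$. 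I would split into two cases. If $r_i\neq 0$, the exponents $d r_i,(d-1)r_i,\ldots,0$ are pairwise distinct, so the basis elements $\qb^{d r_i},\qb^{(d-1)r_i},\ldots,\qb^0$ of $\OC[\qb^\RM]$ are pairwise distinct, and the coefficient $1$ of $\qb^{d r_i}$ in $\Psi_{\chi,i}(\qb^{r_i})$ guarantees nonvanishing. If on the other hand $r_i=0$, then $\Psi_{\chi,i}(\qb^{r_i})=\Psi_{\chi,i}(1)\in\OC$, and this is nonzero precisely because $1$ is not a root of $\Psi_{\chi,i}$---which is exactly the content of the condition $\Psi_{\chi,i}(\tb)\neq \tb-1$ imposed in Theorem~\ref{theo:schur}.

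Putting everything together, every factor in the given product for $\sb_\chi^\gen$ has a nonzero image under $\Th_k^\cyclo$, and integrality of $\OC[\qb^\RM]$ yields $\Th_k^\cyclo(\sb_\chi^\gen)\neq 0$. I do not anticipate any real obstacle here: the assertion $\Psi_{\chi,i}(\tb)\neq \tb-1$ in the Chlouveraki--Rouquier factorization theorem is tailored precisely to rule out the one case where a cyclotomic factor could collapse under the specialization, and the remaining verifications are elementary manipulations in the group algebra $\OC[\qb^\RM]$.
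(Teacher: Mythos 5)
Your proof is correct and follows exactly the route the paper intends: the paper gives no detailed argument beyond saying the corollary "follows easily" from the factorization in Théorème~\ref{theo:schur}, and your verification — that the unit $\xib_\chi^\gen$, the monomial $M_{\chi,0}$, and each factor $\Psi_{\chi,i}(M_{\chi,i})$ all specialize to nonzero elements of the integral domain $\OC[\qb^\RM]$, with the case $r_i=0$ handled precisely by the hypothesis $\Psi_{\chi,i}(\tb)\neq\tb-1$ — is the intended one. The observation that irreducibility of $\Psi_{\chi,i}$ over $F$ forces $\Psi_{\chi,i}(1)\neq 0$ whenever $\Psi_{\chi,i}\neq\tb-1$ is exactly the point the paper is relying on.
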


\bigskip

Le r\'esultat suivant d\'ecoule du th\'eor\`eme de Malle~\ref{theo:hecke-deployee} et du corollaire~\ref{coro:schur} 
(voir~\cite[th\'eor\`eme~2.4.12 et proposition~1.4.1]{chlouveraki LNM} et~\cite[proposition~3.2.1]{chlouveraki LNM}).

\bigskip

\begin{coro}\label{coro:hecke-deployee}
La $F(\qb^\RM)$-alg\`ebre $F(\qb^\RM)\heckecyclotomique(k)$ 
est semi-simple d\'eploy\'ee. 
\end{coro}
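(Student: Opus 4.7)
Le plan consiste \`a appliquer un r\'esultat g\'en\'eral sur la sp\'ecialisation des alg\`ebres sym\'etriques d\'eploy\'ees (\cite[th\'eor\`eme~2.4.12, proposition~3.2.1]{chlouveraki LNM}), qui affirme essentiellement~: si $A$ est un anneau noeth\'erien int\`egre et int\'egralement clos de corps des fractions $K$, si $H$ est une $A$-alg\`ebre sym\'etrique libre de rang fini telle que $KH$ soit semi-simple d\'eploy\'ee avec \'el\'ements de Schur $\sb_\chi \in A$, et si $\th : A \to R$ est un morphisme vers un anneau $R$ int\`egre et int\'egralement clos de corps des fractions $L$ tel que $\th(\sb_\chi) \neq 0$ pour tout $\chi \in \Irr(KH)$, alors la $L$-alg\`ebre $L \otimes_A H$ est semi-simple d\'eploy\'ee, avec une bijection canonique entre ses caract\`eres irr\'eductibles et ceux de $KH$.

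Premi\`erement, je v\'erifie les hypoth\`eses sur le c\^ot\'e g\'en\'erique. On prend $A=\OC[\qb_\gen^{\pm 1}]$, qui est noeth\'erien, int\`egre et int\'egralement clos, et $H=\heckegenerique$. L'hypoth\`ese \libertesymetrie~ assure que $\heckegenerique$ est une $A$-alg\`ebre sym\'etrique libre de rang $|W|$ (via la forme $\taub_\HHC$). Le th\'eor\`eme de Malle~\ref{theo:hecke-deployee} garantit que $F(\qb_\gen)\heckegenerique$ est semi-simple d\'eploy\'ee, et les \'el\'ements de Schur $\sb_\chi^\gen$ correspondants sont bien d\'efinis dans $\OC[\qb_\gen^{\pm 1}]$ (comme rappel\'e au paragraphe~\ref{subsection:cas-generique}).

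Deuxi\`emement, je v\'erifie les hypoth\`eses sur la sp\'ecialisation. L'anneau cible est $R=\OC[\qb^\RM]$, qui est int\`egre et int\'egralement clos par le lemme~\ref{lem:A-normal}, et son corps des fractions est $L=F(\qb^\RM)$. Le morphisme $\Th_k^\cyclo : \OC[\qb_\gen^{\pm 1}] \to \OC[\qb^\RM]$ joue le r\^ole de $\th$. La non-annulation $\Th_k^\cyclo(\sb_\chi^\gen) \neq 0$ pour tout $\chi \in \Irr(W)$ est exactement le contenu du corollaire~\ref{coro:schur}.

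L'application du th\'eor\`eme de Chlouveraki donne alors directement la conclusion, puisque $F(\qb^\RM) \otimes_{\OC[\qb^\RM]} \heckecyclotomique(k) = F(\qb^\RM) \otimes_{\OC[\qb_\gen^{\pm 1}]} \heckegenerique$ par d\'efinition de $\heckecyclotomique(k)$ comme produit tensoriel \`a travers $\Th_k^\cyclo$. Il n'y a pas d'obstacle technique s\'erieux~: tout le travail conceptuel est d\'ej\`a fait dans le th\'eor\`eme de Malle et dans le corollaire pr\'ec\'edent, et il ne reste qu'\`a invoquer le r\'esultat standard de descente du d\'eploiement sous la condition de non-annulation des \'el\'ements de Schur.
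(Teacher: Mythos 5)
Votre argument reproduit exactement la d\'emarche du texte~: le corollaire y est d\'eduit, sans plus de d\'etail, du th\'eor\`eme de Malle~\ref{theo:hecke-deployee}, du corollaire~\ref{coro:schur} et des r\'esultats g\'en\'eraux de Chlouveraki sur la sp\'ecialisation des alg\`ebres sym\'etriques d\'eploy\'ees, \`a travers le morphisme $\Th_k^\cyclo$ et la normalit\'e de $\OC[\qb^\RM]$ (lemme~\ref{lem:A-normal}). Vous ne faites qu'expliciter les v\'erifications d'hypoth\`eses que le texte laisse au lecteur, ce qui est conforme et correct.
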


\bigskip

Par le th\'eor\`eme de d\'eformation de Tits~\cite[th\'eor\`eme~7.4.6]{geck}, on obtient encore une suite  
de bijections 
$$\begin{array}{ccccc}
\Irr(W) &\longiso & \Irr(F(\qb^\RM)\heckecyclotomique(k)) & \longiso & \Irr(F(\qb_\gen)\heckegenerique) \\
\chi & \longmapsto & \chi_k^\cyclo & \longmapsto & \chi^\gen
\end{array}\indexnot{kz}{\chi^\gen,~\chi_k^\cyclo} 
$$
telles que $\chi_k^\cyclo = \Th_k^\cyclo \circ \chi^\gen$. On notera 
$\tau_k^\cyclo$ \indexnot{tx}{\t_k^\cyclo}  la sp\'ecialisation de la forme sym\'etrisante $\taub_\HHC$ 
en une forme sym\'etrisante sur $\heckecyclotomique(k)$ et, si $\chi \in \Irr(W)$, 
on notera $s_\chi^\cyclo(k)$ \indexnot{sa}{s_\chi^\cyclo(k)}  l'\'el\'ement de Schur associ\'e \`a $\chi_k^\cyclo$~: 
on a alors 
\equat\label{eq:schur-specialise}
s_\chi^\cyclo(k) = \Th_k^\cyclo(\sb_\chi^\gen)
\endequat
et
\equat\label{eq:tau-specialise}
\t_k^\cyclo = \sum_{\chi \in \Irr(W)} \frac{1}{s_\chi^\cyclo(k)} ~\chi_k^\cyclo.
\endequat
Compte tenu du th\'eor\`eme~\ref{theo:schur}, il d\'ecoule de~(\ref{eq:schur-specialise}) que~:

\bigskip

\begin{coro}\label{coro:schur-specialise}
Soit $\chi \in \Irr(W)$. Alors il existe un entier naturel non nul $m$, un \'el\'ement 
$\xi_{\chi,k}^\cyclo \in \OC\setminus\{0\}$, un nombre r\'eel $r_0$, 
des nombres r\'eels {\bfit non nuls} $r_1$,\dots, $r_m$, 
et des polyn\^omes $F$-cyclotomiques $\Psi_{\chi,1}$,\dots, 
$\Psi_{\chi,m}$ tels que 
$$s_{\chi,k}^\cyclo  = \xi_{\chi,k}^\cyclo \qb^{r_0} 
\Psi_{\chi,1}(\qb^{r_1}) \cdots \Psi_{\chi,m}(\qb^{r_m}).$$
\end{coro}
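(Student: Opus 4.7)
The plan is to apply the specialization morphism $\Th_k^\cyclo$ directly to the factorization of $\sb_\chi^\gen$ furnished by Theorem~\ref{theo:schur}, and then clean up the resulting expression by absorbing any factors that degenerate to constants.

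First, I would observe that $\Th_k^\cyclo$ carries any Laurent monomial $M=\prod_{(\O,j)}\qb_{\O,j}^{n_{\O,j}}$ in $\OC[\qb_\gen^{\pm 1}]$ to $\qb^r$, where $r=\sum_{(\O,j)}n_{\O,j}k_{\O,j}\in\RM$. Applying $\Th_k^\cyclo$ to the factorization of Theorem~\ref{theo:schur} and using~(\ref{eq:schur-specialise}), one obtains an expression of the shape
$$s_{\chi,k}^\cyclo \;=\; \xib_\chi^\gen\cdot\qb^{r_0'}\cdot\prod_{i=1}^{m'}\Psi_{\chi,i}(\qb^{r_i'})$$
for some real numbers $r_0',r_1',\ldots,r_{m'}'$, with the same $F$-cyclotomic polynomials $\Psi_{\chi,i}$ as in the generic case.

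Second, I would remove the ``trivial'' indices $i\geq 1$ for which $r_i'=0$. For such an index, $\Psi_{\chi,i}(\qb^{r_i'})=\Psi_{\chi,i}(1)$, and since $\Psi_{\chi,i}(\tb)\neq\tb-1$ by the last clause of Theorem~\ref{theo:schur}, the value $1$ is not a root of $\Psi_{\chi,i}$, so $\Psi_{\chi,i}(1)\neq 0$. Moreover, $\Psi_{\chi,i}$ is the minimal polynomial over $F$ of a root of unity, so its roots are algebraic integers and its coefficients lie in $\OC$; hence $\Psi_{\chi,i}(1)\in\OC\setminus\{0\}$. I can therefore absorb all these constants into the leading coefficient by setting
$$\xi_{\chi,k}^\cyclo\;=\;\xib_\chi^\gen\cdot\prod_{\{i\geq 1\,:\,r_i'=0\}}\Psi_{\chi,i}(1)\;\in\;\OC\setminus\{0\},\qquad r_0=r_0',$$
and reindexing the remaining factors $\Psi_{\chi,i}(\qb^{r_i})$ with $r_i\neq 0$ as $\Psi_{\chi,1},\ldots,\Psi_{\chi,m}$.

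There is no substantial obstacle here: the corollary is purely a specialization of Theorem~\ref{theo:schur}, the only mildly delicate point being to control what happens to those generic monomials $M_{\chi,i}$ that specialize to $1$, and this is exactly what the hypothesis $\Psi_{\chi,i}(\tb)\neq\tb-1$ in Theorem~\ref{theo:schur} is designed to handle.
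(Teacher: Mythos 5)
Your proof is correct and is essentially the intended one: the paper states the corollary follows from Theorem~\ref{theo:schur} together with~(\ref{eq:schur-specialise}) and then, in Remark~\ref{rem:comparaison-schur}, describes precisely the mechanism you spell out — monomials $M_{\chi,j}$ that specialize to $\qb^0=1$ yield factors $\Psi_{\chi,j}(1)\in\OC\setminus\{0\}$ that get absorbed into $\xi_{\chi,k}^\cyclo$, the nonvanishing being guaranteed by the hypothesis $\Psi_{\chi,j}(\tb)\neq\tb-1$.
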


\bigskip

\begin{rema}\label{rem:comparaison-schur}
L'entier $m$ du corollaire~\ref{coro:schur-specialise} et l'entier $m$ du th\'eor\`eme~\ref{theo:schur} 
ne sont pas forc\'ement les m\^emes, de m\^eme que les polyn\^omes $F$-cyclotomiques $\Psi_{\chi,j}$~: 
en effet, il se peut que 
certains des mon\^omes $M_{\chi,j}$ du th\'eor\`eme~\ref{theo:schur} 
se sp\'ecialisent en $1=\qb^0$ via le morphisme $\Th_k^\cyclo$. 
Dans ce cas, $\Psi_{\chi,j}(1)$ devient un \'el\'ement de $\OC$ et est int\'egr\'e 
dans la constante $\xi_{\chi,k}^\cyclo$~: cela montre aussi que $\xib_\chi^\gen$ et 
$\xi_{\chi,k}^\cyclo$ peuvent \^etre diff\'erents. On peut simplement dire que 
$\xib_\chi^\gen$ divise $\xi_{\chi,k}^\cyclo$ dans $\OC$.\finl
\end{rema}

Pour finir, notons $\o_{\chi,k}^\cyclo : \Zrm(\heckecyclotomique(k)) \longto \OC[\qb^\RM]$ 
le caract\`ere central associ\'e \`a $\chi_k^\cyclo$. \indexnot{ozz}{\o_{\chi,k}^\cyclo}  Posons
$$c_\chi(k)=\sum_{(\O,j) \in \Omeb_W^\circ} 
k_{\O,j} \cdot \frac{m_{\O,j}^\chi|\O|e_\O}{\chi(1)}.\indexnot{ca}{c_\chi(k)}$$
Alors il d\'ecoule de~(\ref{eq:action-pi}) que 
\equat\label{eq:action-pi-cyclo}
\o_{\chi,k}^\cyclo(\pib) = \qb^{|\mub_W| c_\chi(k)}.
\endequat

\bigskip

\subsection{Familles de Hecke}\label{sub:rouquier}
On appelle {\it anneau anti-cyclotomique}, et on note $\OC^\cyclo[\qb^\RM]$, \indexnot{O}{\OC^\cyclo[\qb^\RM]}  l'anneau 
$$\OC^\cyclo[\qb^\RM] = \OC[\qb^\RM][\bigl((1-\qb^r)^{-1}\bigr)_{r \in \RM\setminus\{0\}}].$$
Si $b$ est un idempotent central (pas n\'ecessairement primitif) 
de $\OC^\cyclo[\qb^\RM]\heckecyclotomique(k)$, nous noterons 
$\Irr_\HC(W,b)$ \indexnot{I}{\Irr_\HC(W,b)}  l'ensemble des caract\`eres irr\'eductibles $\chi$ de $W$ tels que 
$\chi_k^\cyclo \in \Irr(F(\qb^\RM)\heckecyclotomique(k)b)$. 

\bigskip

\begin{defi}\label{defi:famille-rouquier}
On appellera {\bfit $k$-famille de Hecke} toute partie de $\Irr(W)$ de la forme 
$\Irr_\HC(W,b)$, o\`u $b$ est un idempotent primitif central de $\OC^\cyclo[\qb^\RM]\heckecyclotomique(k)$.
\end{defi}

\bigskip

Les $k$-familles de Hecke forment donc une partition de $\Irr(W)$.

\bigskip

\begin{lem}[Brou\'e-Kim]\label{lem:c-contant}
Si $\chi$ et $\chi'$ sont dans la m\^eme $k$-famille de Hecke, alors 
$c_\chi(k)=c_{\chi'}(k)$.
\end{lem}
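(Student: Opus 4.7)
The plan is to use the central element $\pib$ to detect the invariant $c_\chi(k)$, exploiting the fact that $\o_{\chi,k}^\cyclo(\pib)=\qb^{|\mub_W|c_\chi(k)}$ by~(\ref{eq:action-pi-cyclo}). First I would translate the hypothesis ``$\chi$ and $\chi'$ lie in the same $k$-famille de Hecke'' into the statement that the primitive central idempotents $e_\chi$ and $e_{\chi'}$ of $F(\qb^\RM)\heckecyclotomique(k)$ both appear in the decomposition of a single primitive central idempotent $b$ of the $\OC^\cyclo[\qb^\RM]$-algebra $\OC^\cyclo[\qb^\RM]\heckecyclotomique(k)$.

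The standard block-theoretic input (this will be the conceptual heart and the main obstacle, but it is a classical result presumably recorded in the appendices) is the following: over an integrally closed noetherian domain $A$, for a symmetric $A$-algebra $H$ that is split semisimple over $\Frac(A)$, two absolutely irreducible characters $\chi,\chi'$ of $\Frac(A)H$ lie in the same block of $AH$ if and only if they are linked by a chain $\chi=\chi_0,\chi_1,\ldots,\chi_n=\chi'$ such that, for each $i$, the central characters $\o_{\chi_{i-1}}$ and $\o_{\chi_i}$ (valued in $A$ by integral closure) coincide modulo some maximal ideal $\mG_i$ of $A$. For our setting, one checks the required hypotheses: $\OC^\cyclo[\qb^\RM]$ is a localization of $\OC[\qb^\RM]$ hence integrally closed (Lemma~\ref{lem:A-normal}), the algebra $\heckecyclotomique(k)$ is symmetric via $\tau_k^\cyclo$ (under \libertesymetrie), and it is split semisimple over $F(\qb^\RM)$ by Corollary~\ref{coro:hecke-deployee}; in particular each $\o_{\chi,k}^\cyclo$ extends to a morphism $\Zrm(\OC^\cyclo[\qb^\RM]\heckecyclotomique(k))\longto \OC^\cyclo[\qb^\RM]$.

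Granting this, it suffices to show: if $\mG$ is a maximal ideal of $\OC^\cyclo[\qb^\RM]$ and $\o_{\chi,k}^\cyclo \equiv \o_{\chi',k}^\cyclo \pmod{\mG}$, then $c_\chi(k)=c_{\chi'}(k)$. Since $\pib$ is central in $B_W$, its image in $\heckecyclotomique(k)$ is central, so we may evaluate the congruence there. Using~(\ref{eq:action-pi-cyclo}), this gives
\[
\qb^{|\mub_W|c_\chi(k)}\bigl(1-\qb^{|\mub_W|(c_{\chi'}(k)-c_\chi(k))}\bigr)\in\mG.
\]
Now every $\qb^r$ is a unit in $\OC^\cyclo[\qb^\RM]$, and by construction so is $1-\qb^s$ for every $s\in\RM\setminus\{0\}$. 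If $c_\chi(k)\ne c_{\chi'}(k)$ then $|\mub_W|(c_{\chi'}(k)-c_\chi(k))\ne 0$ and both factors are units, contradicting containment in the proper ideal $\mG$. Thus $c_\chi(k)=c_{\chi'}(k)$ across each link of the chain, and by transitivity across the whole family.

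The main obstacle is really the first step, namely the block-theoretic characterization of ``same family'' via a chain of central characters congruent modulo maximal ideals; the computation with $\pib$ is then immediate from the design of the anticyclotomic ring, in which precisely the elements $1-\qb^s$ ($s\ne 0$) have been inverted so as to separate distinct exponents.
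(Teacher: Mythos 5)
Your proposal captures exactly the right arithmetic insight — that in $\OC^\cyclo[\qb^\RM]$ every $1-\qb^s$ with $s\neq 0$ is a unit, so the evaluation of $\pib$ separates characters with different $c_\chi(k)$ — but you route it through the classical block-linking theorem (``same block iff linked by a chain of central-character congruences modulo maximal ideals''), and that is precisely the step the paper goes out of its way to \emph{avoid}. The paper opens its proof by saying one ``could'' apply the Brou\'e--Kim argument (which rests on just such a linking characterization) but that ``notre cadre est l\'eg\`erement diff\'erent'' and offers a different proof. The difficulty you wave past is real: the linking theorem you invoke is stated for an integrally closed \emph{noetherian} domain, and $\OC^\cyclo[\qb^\RM]$ is a localization of $\OC[\qb^\RM]=\OC[\RM]$, a non-noetherian ring. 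You check integral closure via Lemma~\ref{lem:A-normal} but never address noetherianity. The appendix on blocks only characterizes blocks of the \emph{localized} ring $R_\rG\HC$ at a single prime (Corollary~\ref{coro:r-blocs}); it does not supply the global chain-linking statement, and extending it to the non-noetherian base requires an argument you have not given. So as written there is a genuine gap: your ``main obstacle'' is not ``presumably recorded in the appendices'' — it is neither recorded nor obviously available in this setting.

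The paper instead constructs the relevant idempotents directly. Grouping characters by the value $r_j=|\mub_W|c_\chi(k)$ into idempotents $b_j=\sum_{\chi\in\FC_j}e_{\chi,k}$, the formula $\pibt=\sum_j\qb^{r_j}b_j$ together with $1,\pibt,\dots,\pibt^{m-1}\in\heckecyclotomique(k)$ yields a Vandermonde system whose determinant $\prod_{i<j}(\qb^{r_i}-\qb^{r_j})$ is a unit in $\OC^\cyclo[\qb^\RM]$ — for exactly the reason you identified. Solving the system shows $b_j\in\OC^\cyclo[\qb^\RM]\heckecyclotomique(k)$, so every primitive central idempotent of the anticyclotomic algebra is dominated by some $b_j$, and the lemma follows without ever invoking a block-linking theorem. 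If you want to salvage your approach, you would first need to establish the linking characterization over $\OC^\cyclo[\qb^\RM]$ (perhaps by passing to the directed union of noetherian subrings and applying Proposition~\ref{anneaubase}), but the paper's Vandermonde argument is strictly more elementary and self-contained.
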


\begin{proof}
On pourrait appliquer l'argument contenu dans~\cite[proposition~2.9(2)]{broue kim}. 
Cependant, notre cadre est l\'eg\`erement diff\'erent et nous proposons une preuve diff\'erente, 
reposant sur la forme particuli\`ere de $\o_{\chi,k}^\cyclo(\pib)$ 
(voir~\ref{eq:action-pi-cyclo}). 

Notons $\EC=\{r_1,r_2,\dots, r_m\}$, avec $r_i \neq r_j$ si $i \neq j$, 
l'image de l'application $\Irr(W) \to \RM$, $\chi \mapsto |\mub_W| c_\chi(k)$. 
Si $1 \le j \le m$, nous noterons 
$$\FC_j=\{\chi \in \Irr(W)~\bigl|~|\mub_W| c_\chi(k)=r_j\}.$$
Si $\chi \in \Irr(W)$, nous noterons $e_{\chi,k}$ l'idempotent 
primitif central de $F(\qb^\RM)\heckecyclotomique(k)$ associ\'e. 
On pose
$$b_j = \sum_{\chi \in \FC_j} e_{\chi,k}.$$
Pour montrer le lemme, il suffit de montrer que $b_j \in \OC^\cyclo[\qb^\RM]\heckecyclotomique(k)$. 
Si on note $\pibt$ l'image de $\pib$ dans $\heckecyclotomique(k)$, alors 
$$\pibt=\qb^{r_1} b_1 + \qb^{r_2} b_2 + \cdots + \qb^{r_m} b_m.$$
Ainsi, 
$$
\left\{\begin{array}{ccccccccc}
b_1 &+& b_2 &+& \cdots &+& b_m &=& 1\\
\qb^{r_1} b_1 &+& \qb^{r_2} b_2 &+& \cdots &+& \qb^{r_m} b_m &=&  \pibt \\
 && \cdots & \\
\qb^{(m-1)r_1} b_1 &+&  \qb^{(m-1)r_2} b_2 &+& \cdots &+& \qb^{(m-1)r_m} b_m &=&  \pibt^{m-1}.
\end{array}\right.
$$
Or, le d\'eterminant de ce syst\`eme est un d\'eterminant de Vandermonde \'egal \`a 
$$\prod_{1 \le i < j \le m} (\qb^{r_i}-\qb^{r_j}),$$
qui est inversible dans l'anneau de Hecke $\OC^\cyclo[\qb^\RM]$ par construction. 
De plus, $1$, $\pibt$,\dots, $\pibt^{m-1} \in \heckecyclotomique(k)$, 
d'o\`u le r\'esultat.
\end{proof}

\bigskip

\section{Cas des r\'eflexions d'ordre $2$}\label{sec:ordre-2}

\medskip

\boitegrise{{\bf Hypoth\`ese.} 
{\it Dans cette section, et dans cette section seulement, nous supposerons 
que toutes les r\'eflexions de $W$ sont d'ordre $2$.}}{0.75\textwidth}

\bigskip

Notons $\OC[\qb_\gen^{\pm 1}] \to \OC[\qb_\gen^{\pm 1}]$, $f \mapsto f^\dagger$ \indexnot{ZZZ}{\dagger}  
l'unique automorphisme involutif 
de $\OC$-alg\`ebre \'echangeant $\qb_{\O,0}$ et $\qb_{\O,1}$ pour tout $\O \in \AC/W$. Notons encore 
$\OC[\qb_\gen^{\pm 1}] B_W \to \OC[\qb_\gen^{\pm 1}] B_W$, $a \mapsto a^\dagger$ l'unique 
automorphisme semi-lin\'eaire (pour l'involution $f \mapsto f^\dagger$ de $\OC[\qb_\gen^{\pm 1}]$) 
tel que $\b^\dagger=\e(p_W(\b))\b$ pour tout $\b \in B_W$. 
Les relations~(\ref{eq:relations Hecke}) sont stables par cet automorphisme. 
Il induit donc un automorphisme semi-lin\'eaire $\heckegenerique \to \heckegenerique$, $h \mapsto h^\dagger$ de l'alg\`ebre de Hecke 
g\'en\'erique. 

Cet automorphisme, apr\`es la sp\'ecialisation $\qb_{\O,j} \mapsto 1$, devient l'unique 
automorphisme $\OC$-lin\'eaire de $\OC W$ qui envoie $w \in W$ sur $\e(w)w$. En d'autres termes, 
c'est l'automorphisme induit par le caract\`ere lin\'eaire $\e$. 
 
De m\^eme, puisque $k_{\O,0}+k_{\O,1}=0$, si on note encore $\OC[\qb^\RM] \to \OC[\qb^\RM]$, $f \mapsto f^\dagger$ 
l'unique automorphisme de $\OC$-alg\`ebre tel que $(\qb^r)^\dagger=\qb^{-r}$, alors la sp\'ecialisation 
$\qb_{\O,j} \mapsto \qb^{k_{\O,j}}$ induit un automorphisme $\OC[\qb^\RM]$-semi-lin\'eaire de l'alg\`ebre $\heckecyclotomique$ 
toujours not\'e $h \mapsto h^\dagger$. 
Si $\chi \in \Irr(W)$, notons $(\chi^\gen)^\dagger$ (respectivement $(\chi^\cyclo_k)^\dagger$) 
la composition de $\chi^\gen$ (respectivement $\chi_k^\cyclo$) 
avec l'automorphisme $\dagger$~: 
c'est un nouveau caract\`ere irr\'eductible de $F(\qb_\gen)\heckegenerique$ 
(respectivement $F(\qb^\RM)\heckecyclotomique(k)$). 
Puisqu'il est d\'etermin\'e par sa sp\'ecialisation via $\qb_{\O,j} \mapsto 1$ 
(respectivement $\qb^r \mapsto 1$), 
on a 
\equat\label{eq:chi-dagger}
(\chi^\gen)^\dagger = (\chi\e)^\gen\qquad\text{et}\qquad (\chi^\cyclo_k)^\dagger = (\chi\e)^\cyclo_k.
\endequat
L'automorphisme $f \mapsto f^\dagger$ de $\OC[\qb^\RM]$ s'\'etendant \`a l'anneau 
$\OC^\cyclo[\qb^\RM]$, le lemme suivant d\'ecoule imm\'ediatement de ces observations~:

\bigskip

\begin{lem}\label{lem:rouquier-ordre-2}
Supposons que toutes les r\'eflexions de $W$ sont d'ordre $2$. Si 
$\FC$ est une $k$-famille de Hecke, alors $\FC\e$ est une $k$-famille de Hecke. 
\end{lem}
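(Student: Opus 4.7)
Le plan est le suivant. L'id\'ee centrale est que l'involution $\dagger$ s'\'etend naturellement de $\OC[\qb^\RM]\heckecyclotomique(k)$ \`a l'anneau anti-cyclotomique $\OC^\cyclo[\qb^\RM]\heckecyclotomique(k)$, puis permute les idempotents primitifs centraux de cette alg\`ebre, et que cette permutation correspond pr\'ecis\'ement, via~(\ref{eq:chi-dagger}), au twist par $\e$ au niveau des caract\`eres irr\'eductibles.

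D'abord, je montrerais que l'automorphisme $f \mapsto f^\dagger$ de $\OC[\qb^\RM]$ s'\'etend \`a $\OC^\cyclo[\qb^\RM]$~: il suffit de remarquer que, pour $r \in \RM \setminus \{0\}$, on a $(1-\qb^r)^\dagger = 1 - \qb^{-r} = -\qb^{-r}(1-\qb^r)$, qui reste inversible dans $\OC^\cyclo[\qb^\RM]$. Cette extension \'etant compatible avec l'automorphisme semi-lin\'eaire $h \mapsto h^\dagger$ de $\heckecyclotomique(k)$, on obtient un automorphisme d'anneaux (non $\OC$-lin\'eaire, mais semi-lin\'eaire) de $\OC^\cyclo[\qb^\RM]\heckecyclotomique(k)$, toujours not\'e $\dagger$.

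Ensuite, je remarquerais que, puisque $\dagger$ est un automorphisme d'anneaux de $\OC^\cyclo[\qb^\RM]\heckecyclotomique(k)$, il envoie tout idempotent primitif central $b$ sur un idempotent primitif central $b^\dagger$. Il s'agit alors d'identifier, pour un tel $b$, l'ensemble $\Irr_\HC(W,b^\dagger)$. \'Etendant $\dagger$ \`a $F(\qb^\RM)\heckecyclotomique(k)$ par semi-lin\'earit\'e, on voit que $\chi_k^\cyclo$ est non nul sur le bloc $F(\qb^\RM)\heckecyclotomique(k)b$ si et seulement si $(\chi_k^\cyclo)^\dagger = \chi_k^\cyclo \circ \dagger$ est non nul sur $F(\qb^\RM)\heckecyclotomique(k)b^\dagger$. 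Par~(\ref{eq:chi-dagger}), ceci se r\'e\'ecrit
\[
\Irr_\HC(W, b^\dagger) = \{\chi\e \mid \chi \in \Irr_\HC(W,b)\} = \FC \e.
\]

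Pour conclure, si $\FC$ est une $k$-famille de Hecke, disons $\FC = \Irr_\HC(W,b)$ pour un idempotent primitif central $b$, alors $\FC \e = \Irr_\HC(W, b^\dagger)$ est bien une $k$-famille de Hecke, puisque $b^\dagger$ est primitif central. Il n'y a pas d'obstacle s\'erieux~: le seul point \`a v\'erifier avec soin est la stabilit\'e de l'anneau $\OC^\cyclo[\qb^\RM]$ sous $\dagger$, qui est imm\'ediate. La subtilit\'e est purement notationnelle, li\'ee \`a la semi-lin\'earit\'e de $\dagger$, mais elle n'affecte pas la pr\'eservation des idempotents centraux.
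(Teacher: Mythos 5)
Your proof is correct and is essentially the paper's own argument, just spelled out in more detail: the paper observes that $\dagger$ extends to $\OC^\cyclo[\qb^\RM]$ (since $(1-\qb^r)^\dagger=1-\qb^{-r}$ is again an inverted element) and then invokes~(\ref{eq:chi-dagger}) to conclude that the resulting permutation of primitive central idempotents corresponds to twisting by $\e$. The only small imprecision, shared with the paper's phrasing, is writing $(\chi_k^\cyclo)^\dagger=\chi_k^\cyclo\circ\dagger$, which is only $F(\qb^\RM)$-semi-linear; the honest character of the $\dagger$-twisted module is $\dagger\circ\chi_k^\cyclo\circ\dagger$, but this does not affect the conclusion since both specialize to $\chi\e$ under $\qb^r\mapsto 1$.
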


\bigskip

\section{Commentaire sur le choix de l'anneau}

\medskip
 
Il peut para\^{\i}tre \'etrange de travailler avec un anneau de coefficients aussi 
\'enorme (tr\`es loin d'\^etre noeth\'erien notamment). Tout d'abord, comme cela 
a \'et\'e remarqu\'e dans la preuve du lemme~\ref{lem:A-normal}, 
cet anneau est une r\'eunion croissante d'anneaux noeth\'eriens raisonnables, 
ce qui permet souvent de raisonner comme s'il \'etait noeth\'erien. 

D'autre part, ce choix nous permet de travailler avec un anneau fixe, quel que soit notre choix de param\`etre $k$~: 
comme nous aurons besoin de faire varier $k$ dans un espace de param\`etres 
{\it r\'eels}, ce choix s'est impos\'e. Comme cela a \'et\'e vu dans le 
corollaire~\ref{coro:hecke-deployee}, le fait qu'il soit possible d'extraire 
des racines quelconques de toutes les ``puissances'' de $\qb$ entra\^{\i}ne le d\'eploiement 
de toutes les alg\`ebres de Hecke cyclotomiques sur ce m\^eme anneau fixe. 

Pour finir, cet anneau est de la forme $\OC[\G]$, 
o\`u $\G$ est un groupe ab\'elien totalement ordonn\'e, ce qui permet, 
gr\^ace aux notions de degr\'es et de valuation, de d\'efinir les invariants 
$\ab$ et $\Ab$ associ\'es aux caract\`eres irr\'eductibles de $W$. 
Comme nous le verrons aussi dans le chapitre~\ref{chapter:coxeter}, 
c'est le cadre g\'en\'eral de la th\'eorie de Kazhdan-Lusztig, dont nous 
souhaitons proposer une possible g\'en\'eralisation aux groupes de r\'eflexions 
complexes.

\bigskip

\section{Autres choix d'anneaux de base}

\medskip

Soit $B$ une $\OC[\qb_\gen^{\pm 1}]$-alg\`ebre commutative int\`egre, de corps
des fractions $F_B$. On pose
$\HCB_B=B\otimes_{\OC[\qb_\gen^{\pm 1}]} \heckegenerique$.

Soit $L_B$ le sous-groupe de $B^\times$ engendr\'e par les $1_Bq_{\Omega,j}$. C'est
un quotient de $\ZM^{\Omeb_W^\circ}$.
On suppose que $L_B$ n'a pas de torsion, que $\OC[L_B]$ s'injecte dans $B$,
et que $F_{\OC[L_B]}\cap B=\OC[L_B]$.

Comme dans le corollaire~\ref{coro:hecke-deployee}, la $F_B$-alg\`ebre
$F_B\HCB_B$ est semi-simple d\'eploy\'ee et on obtient une
bijection $\Irr(W)\iso\Irr(F_B\HCB_B)$.

\bigskip

\begin{exemple}
\label{groupeabelien}
Soit $A$ un groupe ab\'elien sans torsion.
On note, comme dans le lemme~\ref{lem:A-normal}, que $\OC[A]$ est int\'egralement clos.
Consid\'erons une application $q:\Omeb_W^\circ\to A$. Elle s'\'etend en un morphisme
de groupes $\ZM^{\Omeb_W^\circ}\to A$ et en un morphisme entre alg\`ebres de groupes
 $\OC[\qb_\gen^{\pm 1}]\to \OC[A]$. L'alg\`ebre $B=\OC[A]$ satisfait les hypoth\`eses
pr\'ec\'edentes.\finl
\end{exemple}

\bigskip

Soit $B^\cyclo=B[(1-v)_{v\in L_B-\{0\}}^{-1}]$ (nous notons additivement
les groupes ab\'eliens dans cette section). On d\'efinit comme dans
\S\ref{sub:rouquier} la notion de $B$-{\em famille de Hecke}.

\bigskip

\begin{prop}
\label{anneaubase}
Les $B$-familles de Hecke co{\"\i}ncident avec les $\OC[L_B]$-familles de Hecke.
\end{prop}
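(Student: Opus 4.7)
The plan is to identify the primitive central idempotents of $B^\cyclo \HCB_B$ with those of $\OC^\cyclo[L_B] \HCB_{\OC[L_B]}$, viewed as subsets of $F_B \HCB_B$, from which the equality of Hecke families follows directly.

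First, I would observe that the structure morphism $\OC[\qb_\gen^{\pm 1}] \to B$ factors through $\OC[L_B]$, since each $\qb_{\O,j}$ is sent to an element of $L_B$. Hence $\HCB_B \simeq B \otimes_{\OC[L_B]} \HCB_{\OC[L_B]}$, and consequently $B^\cyclo \HCB_B \simeq B^\cyclo \otimes_{\OC^\cyclo[L_B]} \OC^\cyclo[L_B] \HCB_{\OC[L_B]}$. By the argument sketched just above Example~\ref{groupeabelien} (a direct analogue of Corollary~\ref{coro:hecke-deployee}, using the factorization of Schur elements from Theorem~\ref{theo:schur} together with the torsion-freeness of $L_B$), both $F_{\OC[L_B]} \HCB_{\OC[L_B]}$ and $F_B \HCB_B$ are split semisimple; their primitive central idempotents $(e_\chi)_{\chi \in \Irr(W)}$ therefore coincide as elements of $F_B \HCB_B \supseteq F_{\OC[L_B]} \HCB_{\OC[L_B]}$.

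The key algebraic lemma I would establish is the identity $B^\cyclo \cap F_{\OC[L_B]} = \OC^\cyclo[L_B]$ inside $F_B$. The inclusion $\supseteq$ is immediate. For $\subseteq$: any $x \in B^\cyclo$ is of the form $b/s$ with $b \in B$ and $s \in \OC[L_B]$ a product of elements $1-v$ for $v \in L_B \setminus \{0\}$; if moreover $x \in F_{\OC[L_B]}$, then $b = sx \in B \cap F_{\OC[L_B]} = \OC[L_B]$ by hypothesis, hence $x \in \OC^\cyclo[L_B]$. Promoting this scalar identity via a free $\OC[L_B]$-basis of $\HCB_{\OC[L_B]}$ (furnished by hypothesis \libertesymetrie(a)) yields
$$B^\cyclo \HCB_B \cap F_{\OC[L_B]} \HCB_{\OC[L_B]} = \OC^\cyclo[L_B] \HCB_{\OC[L_B]}.$$

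To conclude, let $b$ be a primitive central idempotent of $B^\cyclo \HCB_B$. Writing $b = \sum_{\chi \in X} e_\chi$ in $F_B \HCB_B$ and using that each $e_\chi$ already lies in $F_{\OC[L_B]} \HCB_{\OC[L_B]}$, the identity above forces $b \in \OC^\cyclo[L_B] \HCB_{\OC[L_B]}$; a nontrivial decomposition of $b$ into orthogonal central idempotents in the smaller algebra would remain nontrivial in the larger, contradicting primitivity, so $b$ is a primitive central idempotent of $\OC^\cyclo[L_B] \HCB_{\OC[L_B]}$ as well. Hence the two complete orthogonal systems of primitive central idempotents of $B^\cyclo \HCB_B$ and of $\OC^\cyclo[L_B] \HCB_{\OC[L_B]}$ coincide as subsets of $F_B \HCB_B$, and they index the same partition of $\Irr(W)$ via $\chi \mapsto e_\chi$. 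The $B$-Hecke families and the $\OC[L_B]$-Hecke families therefore coincide. The main obstacle is the key lemma $B^\cyclo \cap F_{\OC[L_B]} = \OC^\cyclo[L_B]$, which is precisely where the hypothesis $F_{\OC[L_B]} \cap B = \OC[L_B]$ and the specific localization defining $B^\cyclo$ are used essentially.
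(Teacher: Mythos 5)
Your proof is correct and takes essentially the same route as the paper: both hinge on the identity $B^\cyclo \cap F_{\OC[L_B]} = \OC^\cyclo[L_B]$, promoted to the Hecke algebra to compare primitive central idempotents inside $F_B\HCB_B$. You additionally supply a proof of this key scalar identity (via the hypothesis $F_{\OC[L_B]} \cap B = \OC[L_B]$), which the paper states without justification.
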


\begin{proof}
Soit $A=L_B$.
On a $B^\cyclo\cap F_{\OC[A]}=\OC[A]^\cyclo$. On en d\'eduit que, 
si $b$ est un idempotent primitif central de $F_{\OC[A]}\HCB_{\OC[A]}$
tel que $b\in B^\cyclo\HCB_B$, alors $b\in \OC[A]^\cyclo\HCB_{\OC[A]}$.
\end{proof}

\bigskip

La proposition pr\'ec\'edente ram\`ene l'\'etude des familles de Hecke
au cas o\`u $B=\OC[A]$ et $A$ est un groupe ab\'elien sans torsion
quotient de $\ZM^{\Omeb_W^\circ}$.

\medskip
Soit $\MCB$ l'ensemble des mon\^omes $M_{\chi,i}$ donn\'es par le 
th\'eor\`eme~\ref{theo:schur}, pour $\chi\in\Irr(W)$ et $i\not=0$. C'est un sous-ensemble
fini de $\ZM^{\Omeb_W^\circ}$.

Consid\'erons maintenant un groupe ab\'elien sans torsion $A$ et une
application $q:\Omeb_W^\circ\to A$ comme dans l'exemple~\ref{groupeabelien}.

Soit $A'$ un groupe ab\'elien sans torsion et
$f:A\to A'$ un morphisme surjectif de groupes.

\begin{prop}
\label{quotientabelien}
Si $q(\MCB)\cap \ker f=\{0\}$, alors les $\OC[A]$-familles de Hecke
co{\"\i}ncident avec les $\OC[A']$-familles de Hecke.
\end{prop}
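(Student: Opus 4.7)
The plan is to combine the explicit factorization of Schur elements from Theorem~\ref{theo:schur} with Proposition~\ref{anneaubase}, reducing the statement to a combinatorial comparison of cyclotomic factors.

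First, I would apply Proposition~\ref{anneaubase} to replace $A$ (resp.\ $A'$) by the subgroup generated by the image of $\Omeb_W^\circ$ under $q$ (resp.\ $q'=f\circ q$); this does not alter the respective family partitions. In this reduced setting, $A'\simeq A/K$ with $K=\ker f\cap A$, and both $A$ and $A'$ are finitely generated and torsion-free.

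Next, for each $\chi\in\Irr(W)$, Theorem~\ref{theo:schur} provides a factorization
$$s_\chi^\gen = \xi_\chi^\gen M_{\chi,0}\prod_{i=1}^{m_\chi}\Psi_{\chi,i}(M_{\chi,i}),$$
with $\Psi_{\chi,i}\neq t-1$ and $M_{\chi,i}\in\MCB$ for $i\geq 1$. Specializing via $q$ gives
$$s_\chi^A = \xi_\chi^\gen\,\qb^{q(M_{\chi,0})}\prod_{i=1}^{m_\chi}\Psi_{\chi,i}\bigl(\qb^{q(M_{\chi,i})}\bigr)\in\OC[A],$$
and an analogous expression for $s_\chi^{A'}\in\OC[A']$ after postcomposition with $f$. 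The hypothesis $q(\MCB)\cap\ker f=\{0\}$ says precisely that, for every pair $(\chi,i)$ with $i\geq 1$, one has $q(M_{\chi,i})=0$ in $A$ if and only if $q'(M_{\chi,i})=0$ in $A'$. Hence the two factorizations have the same combinatorial structure: each factor $\Psi_{\chi,i}(1)\in\OC\setminus\{0\}$ arising from a vanishing monomial contributes identically in both rings, and each genuine cyclotomic factor $\Psi_{\chi,i}(\qb^{q(M_{\chi,i})})$ with $q(M_{\chi,i})\neq 0$ is matched bijectively with $\Psi_{\chi,i}(\qb^{q'(M_{\chi,i})})$ satisfying $q'(M_{\chi,i})\neq 0$.

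Finally, I would invoke the description of Hecke families in terms of essential hyperplanes developed by Chlouveraki: the partition of $\Irr(W)$ into Hecke families depends only on the combinatorial pattern of non-trivial cyclotomic factors appearing in the Schur elements. Under the bijection established above, this data coincides for the specializations through $q$ and $q'$, so the family partitions coincide. The main obstacle is precisely this last step: one must transport Chlouveraki's criterion from its standard setting (Hecke algebras over Laurent polynomial rings) to the context of arbitrary torsion-free abelian group rings and verify that the non-invertibility of $\Psi_{\chi,i}(\qb^a)$ in $\OC[A]^\cyclo$ for $a\in A\setminus\{0\}$ is equivalent to that of $\Psi_{\chi,i}(\qb^{f(a)})$ in $\OC[A']^\cyclo$, where $f(a)\neq 0$ by hypothesis. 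The splitting $A\simeq A'\oplus K$ afforded by torsion-freeness should reduce this equivalence to a direct verification.
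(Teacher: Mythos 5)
Your reduction via Proposition~\ref{anneaubase} and the observation that the hypothesis $q(\MCB)\cap\ker f=\{0\}$ forces $q(M_{\chi,i})=0\Leftrightarrow q'(M_{\chi,i})=0$ for every $M_{\chi,i}\in\MCB$ are both correct, and they capture the same structural point the paper exploits. However, the final step is a genuine gap, and you are right to flag it yourself: invoking a ``criterion of Chlouveraki'' that the family partition depends only on the combinatorial pattern of non-vanishing cyclotomic factors is essentially a restatement of the proposition to be proved, not a reference one can lean on in this generality. The closing remark that the splitting $A\simeq A'\oplus K$ ``should reduce this equivalence to a direct verification'' underestimates the work: what is at stake is a comparison of block idempotents across two different ring localizations, not merely a comparison of which factors vanish, and the splitting alone does not produce that comparison.

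The paper closes this gap with a localization argument that bypasses any appeal to an external criterion. One first observes, as a consequence of Theorem~\ref{theo:schur} and the symmetrizing form, that the idempotents of $Z\bigl(F(A)\HCB_{\OC[A]}\bigr)$ already lie in the subalgebra $F[A]\bigl[\{\Psi_{\chi,i}(\qb^{q(M_{\chi,i})})^{-1}\}_{M_{\chi,i}\notin\ker q}\bigr]\HCB_{\OC[A]}$, i.e.\ the only denominators needed are the genuinely non-constant cyclotomic factors. Under the hypothesis $q(\MCB)\cap\ker f=\{0\}$, each such $q(M_{\chi,i})$ lies in $A\setminus\ker f$, so any idempotent of $Z(\OC[A]^\cyclo\HCB_{\OC[A]})$ is in fact contained in $\OC[A]\bigl[(1-v)^{-1}_{v\in A\setminus\ker f}\bigr]\HCB_{\OC[A]}$. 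The surjection $f$ extends to a surjection from this last localization onto $\OC[A']^\cyclo$, and Proposition~\ref{muller} (Müller's theorem on central idempotents under such ring maps) identifies the two sets of primitive central idempotents, giving the coincidence of family partitions directly. To repair your argument you would need to supply this idempotent-localization step; the ``direct verification'' you anticipate is precisely this.
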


\begin{proof}
Le morphisme $f$ induit un morphisme surjectif entre alg\`ebres de groupes
$\OC[A]\to\OC[A']$ qui s'\'etend en un morphisme d'alg\`ebres surjectif entre
localisations
$f:\OC[A][(1-v)^{-1}_{v\in A-\ker f}]\to \OC[A']^\cyclo$.
Le th\'eor\`eme~\ref{theo:schur} fournit des polyn\^omes $F$-cyclotomiques
$\Psi_{\chi,i}$.
Soit 
$$h\in F[A][\{\Psi_{\chi,i}(q(M_{\chi,i}))^{-1}\}_{M_{\chi,i}\not\in\ker q}].$$
Si $h\in \OC[A]^\cyclo$, alors 
$h\in f^{-1}(\OC[A']^\cyclo)$.

Il r\'esulte du th\'eor\`eme~\ref{theo:schur} que les idempotents de
$Z(F(A)\HCB_{\OC[A]})$ sont dans l'alg\`ebre 
$F[A][\{\Psi_{\chi,i}(q(M_{\chi,i}))^{-1}\}_{M_{\chi,i}\not\in\ker q}]
\HCB_{\OC[A]}$. Par cons\'equent, tout
idempotent de $Z(\OC[A]^\cyclo\HCB_{\OC[A]})$ est contenu dans
$\OC[A][(1-v)^{-1}_{v\in A-\ker f}]\HCB_{\OC[A]}$. 
La proposition~\ref{muller} montre que les idempotents centraux de 
$\OC[A][(1-v)^{-1}_{v\in A-\ker f}]\HCB_{\OC[A]}$ sont en bijection avec ceux
de $\OC[A']^\cyclo\HCB_{\OC[A']}$ et la proposition en d\'ecoule.
\end{proof}

\'Etant donn\'es $A$ et $q$ comme ci-dessus, il existe un morphisme 
de groupes $f:A\to\ZM$ tel que $\ker f\cap q(\MCB)=\{0\}$. La proposition 
\ref{quotientabelien} ram\`ene donc l'\'etude des $\OC[A]$-familles de Hecke 
(et donc celle des $B$-familles de Hecke d'apr\`es ce qui pr\'ec\`ede) 
au cas de $\OC[t^{\pm 1}]$-familles de Hecke, pour un choix 
d'entiers $m_{\Omega,j}\in\ZM$ d\'efinissant un morphisme de groupes 
$\ZM^{\Omeb_W^\circ}\to t^\ZM,\ q_{\Omega,j}\mapsto t^{m_{\Omega,j}}$.

\chapter{Sp\'ecificit\'es des groupes de Coxeter}\label{chapter:coxeter}

\section{Groupes}\label{sec:coxeter-general}

\medskip

\subsection{Groupe de r\'eflexions} 
Rappelons les \'equivalences classiques suivantes~:

\bigskip

\begin{prop}\label{prop:coxeter}
Les assertions suivantes sont \'equivalentes~:
\begin{itemize}
\itemth{1} Il existe une partie $S$ de $\Ref(W)$ telle que $(W,S)$ soit un syst\`eme de Coxeter.

\itemth{2} $V \simeq V^*$ comme $\kb W$-modules.

\itemth{3} Il existe une forme bilin\'eaire sym\'etrique non d\'eg\'en\'er\'ee 
et $W$-invariante $V \times V \to \kb$.

\itemth{4} Il existe un sous-corps $\kb_\RM$ de $\kb$ et un sous-$\kb_\RM$-espace vectoriel 
$W$-stable $V_{\kb_\RM}$ de $V$ tels que $V = \kb \otimes_{\kb_\RM} V_{\kb_\RM}$ et 
$\kb_\RM$ est isomorphe \`a un sous-corps de $\RM$.
\end{itemize}
\end{prop}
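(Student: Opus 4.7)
\medskip

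\noindent\textit{Plan de preuve.}
Notre plan consiste \`a \'etablir les implications faciles $(3) \Rightarrow (2)$, $(4) \Rightarrow (3)$, $(4) \Rightarrow (1)$ et $(1) \Rightarrow (4)$ par des arguments standard, puis \`a traiter l'implication cl\'e $(2) \Rightarrow (4)$. L'implication $(3) \Rightarrow (2)$ est imm\'ediate~: une forme bilin\'eaire sym\'etrique non d\'eg\'en\'er\'ee $W$-invariante $B$ fournit l'isomorphisme de $\kb W$-modules $V \longto V^*$, $v \mapsto B(v, -)$. Pour $(4) \Rightarrow (3)$, on fixe un plongement $\kb_\RM \injto \RM$ et on moyenne sur $W$ une forme sym\'etrique d\'efinie positive arbitraire sur $\RM \otimes_{\kb_\RM} V_{\kb_\RM}$, ce qui donne une forme $W$-invariante sym\'etrique d\'efinie positive~; elle s'\'etend \`a $V$ par extension des scalaires.

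Pour l'\'equivalence $(1) \Leftrightarrow (4)$~: \'etant donn\'ee (4), l'extension des scalaires \`a $\RM$ r\'ealise $W$ comme un sous-groupe fini de $\GL_\RM(V_\RM)$ engendr\'e par ses r\'eflexions (la condition de r\'eflexion se pr\'eservant par changement de base), et le th\'eor\`eme classique sur les groupes finis de r\'eflexions r\'eels fournit un syst\`eme g\'en\'erateur $S \subset \Ref(W)$ tel que $(W, S)$ soit un syst\`eme de Coxeter. R\'eciproquement, la r\'ealisation g\'eom\'etrique de Tits associ\'ee \`a $(W, S)$ fournit une repr\'esentation de r\'eflexion fid\`ele sur un corps de nombres totalement r\'eel $F_\RM$~; par unicit\'e de la repr\'esentation de r\'eflexion d'un groupe de Coxeter irr\'eductible (le cas g\'en\'eral se ramenant \`a un produit), cette repr\'esentation devient isomorphe \`a $V$ apr\`es extension des scalaires \`a $\kb$, et l'on obtient la structure $\kb_\RM$-rationnelle voulue en posant $\kb_\RM = F_\RM \cap \kb$ (le fait que $\kb$ contienne les traces de $V$ assurant que $F_\RM \subset \kb$).

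L'\'etape principale est $(2) \Rightarrow (4)$. De $V \simeq V^*$, on tire $\chi_V(w) = \chi_V(w^{-1})$ pour tout $w \in W$. Soit $F_0 \subset \kb$ le sous-corps engendr\'e par les valeurs de $\chi_V$~: tout plongement complexe de $F_0$ envoie $\chi_V(W)$ dans $\RM$, donc $F_0$ est totalement r\'eel et admet un plongement dans $\RM$. Par le th\'eor\`eme de Benard-Bessis (th\'eor\`eme~\ref{deploiement}), la $F_0$-alg\`ebre $F_0 W$ est semi-simple d\'eploy\'ee~: le $\kb W$-module $V$ descend donc en un $F_0 W$-module, et l'on obtient un $F_0$-sous-espace $W$-stable $V_{F_0} \subset V$ tel que $V = \kb \otimes_{F_0} V_{F_0}$. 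Poser $\kb_\RM = F_0$ \'etablit (4), et la cha\^{\i}ne $(2) \Rightarrow (4) \Rightarrow (3) \Rightarrow (2)$, coupl\'ee \`a $(1) \Leftrightarrow (4)$, ferme le cycle des \'equivalences.

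L'obstacle principal r\'eside dans le pas de descente de $(2) \Rightarrow (4)$, qui repose de mani\`ere essentielle sur le th\'eor\`eme de Benard-Bessis pour faire passer $V$ de $\kb$ au sous-corps totalement r\'eel $F_0$ engendr\'e par les valeurs du caract\`ere. Une difficult\'e secondaire appara\^{\i}t dans $(1) \Rightarrow (4)$~: il faut identifier la r\'ealisation g\'eom\'etrique abstraite de Tits avec le $V$ donn\'e, ce qui requiert l'unicit\'e (\`a isomorphisme pr\`es) de la repr\'esentation de r\'eflexion pour les groupes de Coxeter irr\'eductibles, le cas r\'eductible s'en d\'eduisant par d\'ecomposition en produits.
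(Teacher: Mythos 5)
La proposition est \'enonc\'ee dans le m\'emoire comme un rappel d'\'equivalences classiques, sans d\'emonstration~; votre proposition doit donc \^etre \'evalu\'ee sur ses propres m\'erites. Les implications $(3)\Rightarrow(2)$, $(4)\Rightarrow(3)$, $(4)\Rightarrow(1)$ et $(2)\Rightarrow(4)$ sont correctes~: en particulier, pour $(2)\Rightarrow(4)$, la descente au corps des caract\`eres $F_0$ via Benard--Bessis, apr\`es avoir observ\'e que $\chi_V=\chi_{V^*}$ force $F_0$ \`a \^etre totalement r\'eel, est bien le bon argument.

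En revanche, votre preuve de $(1)\Rightarrow(4)$ comporte un v\'eritable trou~: vous invoquez \og l'unicit\'e de la repr\'esentation de r\'eflexion d'un groupe de Coxeter irr\'eductible \fg, qui est fausse. D\'ej\`a pour le groupe di\'edral $I_2(m)$ avec $m\geqslant 5$, il existe $\varphi(m)/2$ repr\'esentations fid\`eles de dimension $2$ deux \`a deux non isomorphes dans lesquelles les deux g\'en\'erateurs simples agissent par des r\'eflexions (ce sont des conjugu\'ees galoisiennes de la repr\'esentation g\'eom\'etrique de Tits, distingu\'ees par l'angle de rotation de $st$), et un ph\'enom\`ene analogue se produit pour $H_3$ et $H_4$. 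L'extension des scalaires de la repr\'esentation de Tits n'est donc en g\'en\'eral \emph{pas} isomorphe au $V$ donn\'e, et l'identification que vous affirmez ne tient pas. Comme $(1)\Rightarrow(4)$ est n\'ecessaire pour boucler le cycle d'\'equivalences, l'argument est incomplet tel quel.

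La r\'eparation la plus \'economique consiste \`a remplacer $(1)\Rightarrow(4)$ par une preuve directe de $(1)\Rightarrow(2)$~: dans tout groupe de Coxeter fini, tout \'el\'ement est produit de deux involutions (Carter), donc conjugu\'e \`a son inverse~; tout caract\`ere de $W$ est par cons\'equent \`a valeurs r\'eelles, en particulier $\chi_V=\chi_{V^*}$, c'est-\`a-dire $V\simeq V^*$. Combin\'e avec votre cha\^{\i}ne (correcte) $(2)\Rightarrow(4)\Rightarrow(3)\Rightarrow(2)$ et avec $(4)\Rightarrow(1)$, cela ferme la logique. Une autre issue serait d'observer que, bien que les repr\'esentations de r\'eflexion d'un groupe de Coxeter irr\'eductible forment une orbite de Galois compl\`ete plut\^ot qu'un singleton, elles sont toutes d\'efinies sur des sous-corps de $\RM$~; mais c'est un \'enonc\'e plus fin que l'\og unicit\'e \fg~ que vous invoquez, et qui demanderait une justification s\'epar\'ee.
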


\bigskip

Lorsque l'une des (ou toutes les) assertions de la proposition~\ref{prop:coxeter} 
sont v\'erifi\'ees, nous dirons que {\it $W$ est un groupe de Coxeter}. 
Lorsque ce sera le cas, le texte sera accompagn\'e d'un liseret gris sur la gauche, 
comme ci-dessous.

\bigskip

\cbstart

\boitegrise{{\bf Hypoth\`ese, choix.} {\it Dor\'enavant, et ce jusqu'\`a la fin de ce chapitre, 
nous supposerons que $W$ est un groupe de Coxeter. 
Nous fixons alors un sous-corps $F_\RM$  \indexnot{F}{F_\RM}  de $F \cap \RM$ contenant les traces des \'el\'ements 
de $W$, un sous-$F_\RM$-espace vectoriel $W$-stable $V_{F_\RM}$  \indexnot{V}{V_{F_\RM}}  de $V$ tel que 
$V=\kb \otimes_{F_\RM} V_{F_\RM}$, un \'el\'ement $v_\RM$ \indexnot{va}{v_\RM}  de $V_{F_\RM}$ tel que 
$s(v_\RM) \neq v_\RM$ pour tout 
$s \in \Ref(W)$ (et nous supposerons que $v_\CM=v_\RM$) 
et nous choisirons $\a_s$ de sorte que 
$\langle v_\RM,\a_s \rangle > 0$ pour tout $s \in \Ref(W)$. 
Nous noterons alors $S$ l'ensemble des $s \in \Ref(W)$ tels que 
$\Ker_{V_\RM}(\a_s)$ soit le seul hyperplan s\'eparant $v_\RM$ de $s(v_\RM)$. 
Ainsi $(W,S)$ est un syst\`eme de Coxeter. Ces notations 
seront en vigueur dans tout le reste de ce m\'emoire, d\`es que $W$ est un 
groupe de Coxeter.}}{0.75\textwidth}

\bigskip

Il d\'ecoule du th\'eor\`eme~\ref{deploiement} que~:

\bigskip

\begin{lem}\label{lem:coxeter}
La $F_\RM$-alg\`ebre $F_\RM W$ est d\'eploy\'ee. En particulier, 
les caract\`eres de $W$ sont \`a valeurs r\'eelles, c'est-\`a-dire que $\chi=\chi^*$ 
pour tout caract\`ere $\chi$ de $W$.
\end{lem}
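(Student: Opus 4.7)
The proof is essentially immediate from the Benard--Bessis theorem (Theorem~\ref{deploiement}) recalled earlier. The plan is to observe that the field $F_\RM$ has been chosen precisely so as to contain the traces of all elements of $W$, and then feed this hypothesis into the Benard--Bessis statement, which is stated for \emph{any} subfield of $\kb$ containing those traces. The first step is thus simply to apply Theorem~\ref{deploiement} with $\kb' = F_\RM$: this yields at once that $F_\RM W$ is split semisimple. Note that the chain of inclusions $F_\RM \subset F \cap \RM \subset F \subset \kb$ is guaranteed by the choices made in the \boitegrise-highlighted block preceding the lemma, so the hypothesis of Theorem~\ref{deploiement} is indeed satisfied.

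For the second assertion (the ``en particulier''), I would argue as follows. Since $F_\RM W$ is split, every irreducible character $\chi$ of $W$ is afforded by an $F_\RM$-representation; in particular, for every $w \in W$ the value $\chi(w)$ lies in $F_\RM$, which by construction is a subfield of $\RM$. Thus $\chi$ is real-valued. Combined with the standard identity $\chi(w^{-1}) = \overline{\chi(w)}$, valid for characters of any finite group (and here an equality of elements of $F_\RM$, using that $F$ is Galois over $\QM$ and that complex conjugation acts trivially on the real subfield $F_\RM$), one concludes $\chi^*(w) = \chi(w^{-1}) = \overline{\chi(w)} = \chi(w)$, whence $\chi = \chi^*$.

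There is no real obstacle here: the lemma is a direct specialisation of Benard--Bessis together with the elementary fact that a character taking real values is self-dual. The only point worth double-checking is that $F_\RM$ can actually be chosen so as to contain the traces of $W$ and to embed in $\RM$. This is part of the data fixed in the \boitegrise{} preceding the statement, and it is legitimate because, for a Coxeter group, the character field (generated over $\QM$ by the traces) embeds in $\RM$ (by equivalence (4) of Proposition~\ref{prop:coxeter}, one can realise $W$ over a subfield of $\RM$, so all traces of $W$ are real). Hence the hypothesis used in the first paragraph is consistent with the setup, and the proof is complete.
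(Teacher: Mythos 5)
Your proof is correct and follows exactly the route the paper takes: the paper simply states that the lemma follows from the Benard--Bessis theorem (\ref{deploiement}), and you have applied it with $\kb'=F_\RM$ and then spelled out the routine deduction (real-valuedness via $F_\RM\subset\RM$, then $\chi^*(w)=\chi(w^{-1})=\overline{\chi(w)}=\chi(w)$) that the paper leaves implicit.
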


\bigskip

Rappelons aussi que, par la force des choses,

\bigskip

\begin{lem}\label{lem:coxeter-2}
Si $s \in \Ref(W)$, alors $s$ est d'ordre $2$ et $\e(s)=-1$.
\end{lem}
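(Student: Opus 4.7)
The plan is to exploit the existence of a real form of $V$, which is exactly what characterizes the Coxeter case among complex reflection groups (\cf proposition~\ref{prop:coxeter}(4) and the fixed choice $V_{F_\RM}$ with $F_\RM\subset\RM$).

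First I would observe that since $V_{F_\RM}$ is $W$-stable and $V=\kb\otimes_{F_\RM} V_{F_\RM}$, the reflection $s$ preserves $V_{F_\RM}$, so $\e(s)=\det_V(s)=\det_{V_{F_\RM}}(s)\in F_\RM^\times$. In particular $\e(s)$ is a real number. Next, since $\kb$ has characteristic zero and $s$ has finite order, $s$ is diagonalizable on $V$ with eigenvalues roots of unity; by definition of $\Ref(W)$, the eigenvalue $1$ occurs with multiplicity $n-1$ and the remaining eigenvalue is some root of unity $\zeta\neq 1$. Then $\e(s)=\det_V(s)=\zeta$, so $\zeta\in\RM$ is a real root of unity, hence $\zeta=\pm 1$, and since $\zeta\neq 1$ one concludes $\e(s)=\zeta=-1$.

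Finally, from the eigenvalue description (eigenvalues $1,1,\dots,1,-1$) and the diagonalizability of $s$, it follows that $s^2=\Id_V$, so $s$ is of order $2$.

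No real obstacle is expected here: the entire argument is a three-line observation once one uses the existence of a $W$-stable $F_\RM$-form with $F_\RM\subset\RM$ (guaranteed by the Coxeter hypothesis and the conventions fixed just above the lemma). The only mild point to note is that one should not invoke an invariant symmetric bilinear form and argue via orthogonality relations on eigenvectors, because this would work but is heavier than necessary; the determinant argument over $F_\RM\subset\RM$ is both the shortest and the most in the spirit of the conventions already fixed.
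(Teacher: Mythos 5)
Your proof is correct, and the argument is clean. The paper states this lemma without proof (prefaced only by ``Rappelons aussi que, par la force des choses''), so there is no explicit argument in the text to compare against; but your argument — realness of $\e(s)$ via the $W$-stable $F_\RM$-form with $F_\RM\subset\RM$, then the eigenvalue decomposition $1,\dots,1,\zeta$ of a diagonalizable reflection forcing $\zeta=\det_V(s)=-1$ and hence $s^2=\Id_V$ — is exactly the standard one and uses precisely the conventions fixed just above the lemma. One could also argue directly from proposition~\ref{prop:coxeter}(3) (a nondegenerate $W$-invariant symmetric form forces each finite-order isometry fixing a hyperplane to be an orthogonal reflection), but as you note the determinant computation over $F_\RM$ is lighter and better matched to the setup.
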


\bigskip

En particulier, tous les r\'esultats de \S\ref{sec:ordre-2} s'appliquent. 

\bigskip

\begin{coro}\label{coro:coxeter}
L'application $\Ref(W) \to \AC$, $s \mapsto \Ker(s-\Id_V)$ est bijective et $W$-\'equivariante. 
En particulier, $|\AC|=|\Ref(W)|=\sum_{i=1}^n (d_i-1)$ et $|\AC/W|=|\Ref(W)/\!\sim|$.
\end{coro}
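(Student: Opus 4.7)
The statement combines three assertions: the map $\varphi: \Ref(W) \to \AC$, $s \mapsto \Ker(s-\Id_V)$ is $W$-equivariant, it is bijective, and the resulting cardinalities agree. I will proceed in this order.

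The plan is to first dispatch $W$-equivariance by a direct computation: for $w \in W$ and $s \in \Ref(W)$, one has $\Ker(wsw^{-1}-\Id_V) = w(\Ker(s-\Id_V))$, and since $wsw^{-1}$ is again a reflection, the map $\varphi$ intertwines conjugation on $\Ref(W)$ with the natural $W$-action on $\AC$.

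Next, I would establish surjectivity, which is essentially tautological: the set $\AC$ was defined in \S\ref{section:hyperplans} as $\{\Ker(s-\Id_V)~|~s \in \Ref(W)\}$, so every $H \in \AC$ is in the image of $\varphi$ by construction.

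The core step is injectivity, and this is where the Coxeter hypothesis enters. Recall from \S\ref{section:hyperplans} that $W_H$ is cyclic of order $e_H$, and $\Ref(W) = \{s_H^j~|~H \in \AC,~1 \le j \le e_H-1\}$. Suppose $s, s' \in \Ref(W)$ satisfy $\varphi(s) = \varphi(s') = H$. Then both belong to $W_H$. By Lemma~\ref{lem:coxeter-2}, every element of $\Ref(W)$ has order $2$; in particular, the generator $s_H$ has order $2$, forcing $e_H = 2$. Hence $W_H \setminus \{1\}$ is a singleton, so $s = s' = s_H$, proving injectivity.

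Finally, the cardinality consequences follow immediately. The bijection $\varphi$ gives $|\AC| = |\Ref(W)|$, and Theorem~\ref{chevalley}(a) gives $|\Ref(W)| = \sum_{i=1}^n (d_i - 1)$. The $W$-equivariance of $\varphi$ transports the $W$-orbit decomposition on $\AC$ to the conjugacy class decomposition on $\Ref(W)$, yielding $|\AC/W| = |\Ref(W)/{\sim}|$. I do not anticipate any real obstacle here; the only non-formal input is Lemma~\ref{lem:coxeter-2}, which is already established.
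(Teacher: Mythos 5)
Your proof is correct, and it fills in precisely the argument the paper leaves implicit (the corollary is stated without a proof environment, as an immediate consequence of Lemme~\ref{lem:coxeter-2}). The three steps — surjectivity by the very definition of $\AC$, $W$-equivariance by the identity $\Ker(wsw^{-1}-\Id_V)=w\bigl(\Ker(s-\Id_V)\bigr)$, and injectivity by observing that $s_H$ having order $2$ forces $e_H=2$ so that $W_H\setminus\{1\}=\{s_H\}$ — are exactly the intended chain of reasoning, and the cardinality consequences then follow from Th\'eor\`eme~\ref{chevalley}(a) and equivariance as you say.
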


\bigskip

\boitegrise{{\bf Notations.} 
{\it Notons $\ell : W \to \NM$ la fonction longueur par rapport \`a $S$. 
Si $w=s_1s_2\cdots s_l$ avec $s_i \in S$ et $l=\ell(w)$, on dit alors que 
$w=s_1s_2\cdots s_l$ est une {\bfit d\'ecomposition r\'eduite} de $w$. 
Nous noterons $w_0$ \indexnot{wa}{w_0}  l'\'el\'ement le plus long de $W$~: on a $\ell(w_0)=|\Ref(W)|=|\AC|$. 
}}{0.75\textwidth}

\bigskip

\begin{rema}\label{rem:w0-central}
Si $-\Id_V \in W$, alors $w_0=-\Id_V$. R\'eciproquement, si 
$w_0$ est central et $V^W=0$, alors $w_0=-\Id_V$.\finl
\end{rema}

%
%
%
%
%
%
%

\subsection{Groupe de tresses} 
Pour $s$, $t \in S$, notons $m_{st}$ \indexnot{ma}{m_{st}}  l'ordre de $st$ dans $W$. Pour $s \in S$ 
et $H=\Ker(s-\Id_V)$, notons $\sigb_s = \sigb_H$ \indexnot{sz}{\sigb_s}  le lacet image dans $V_\CM^\reg/W$ 
du chemin 
$$\fonctio{[0,1]}{V_\CM^\reg}{t}{e^{i\pi t} \Bigl(\DS{\frac{v_\RM-s(v_\RM)}{2} + \frac{v_\RM+s(v_\RM)}{2}}\Bigr)}$$
de $v_\RM$ vers $s(v_\RM)$. 
Avec ces notations, $B_W$ admet la pr\'esentation suivante~\cite{brieskorn}~:
\equat\label{eq:coxeter-tresse}
B_W\quad : \quad 
\begin{cases}
\text{G\'en\'erateurs~:} & (\sigb_s)_{s \in S}, \\
\text{Relations~:} & \forall~s,t \in S,~\underbrace{\sigb_s\sigb_t\sigb_s\cdots}_{\text{$m_{st}$ fois}} 
= \underbrace{\sigb_t\sigb_s\sigb_t\cdots}_{\text{$m_{st}$ fois}}.
\end{cases}
\endequat
Si $w=s_1s_2\cdots s_l$ est une  d\'ecomposition r\'eduite de $w$, on pose 
$\sigb_w=\sigb_{s_1}\sigb_{s_2}\cdots \sigb_{s_l}$~: \indexnot{sz}{\sigb_w}  il est classique que 
$\sigb_w$ ne d\'epend pas du choix de la d\'ecomposition r\'eduite. D'autre part, 
\equat\label{eq:pi-coxeter}
\pib = \sigb_{\!\!\!w_0}^2.
\endequat

\section{Alg\`ebres de Hecke}

\medskip

\subsection{Cas g\'en\'erique} 
Pour $s \in S$, posons $\qb_{s,j}=\qb_{\Ker(s-\Id_V),j}$. \indexnot{qa}{\qb_{s,j}}
Il d\'ecoule de~\ref{eq:coxeter-tresse} que l'alg\`ebre de Hecke g\'en\'erique $\heckegenerique$ 
admet la pr\'esentation suivante, o\`u $\Tb_s$ \indexnot{T}{\Tb_s}  d\'esigne l'image de $\sigb_s$ dans $\heckegenerique$~:
\equat\label{eq:coxeter-hecke}
\heckegenerique \quad : \quad 
\begin{cases}
\text{G\'en\'erateurs~:} & (\Tb_s)_{s \in S}, \\
\text{Relations~:} & \forall~s \in S,~(\Tb_s-\qb_{s,0}^2)(\Tb_s+\qb_{s,1}^2)=0,\\
& \forall~s,t \in S,~\underbrace{\Tb_s\Tb_t\Tb_s\cdots}_{\text{$m_{st}$ fois}} 
= \underbrace{\Tb_t\Tb_s\Tb_t\cdots}_{\text{$m_{st}$ fois}}.
\end{cases}
\endequat
Si $w=s_1s_2\cdots s_l$ est une {\it d\'ecomposition r\'eduite} de $w$, posons 
$\Tb_w=\Tb_{s_1}\Tb_{s_2}\cdots\Tb_{s_l}$~: \indexnot{T}{\Tb_w}  
c'est l'image de $\sigb_w$ dans $\heckegenerique$ et donc 
$\Tb_w$ ne d\'epend pas du choix de la d\'ecomposition r\'eduite. De plus, 
\equat\label{eq:hecke-base}
\heckegenerique=
\mathop{\bigoplus}_{w \in W} \OC[\qb_\gen^{\pm 1}]~\! \Tb_w.
\endequat
Notons que $\Tb_w\Tb_{w'}=\Tb_{ww'}$ si $\ell(ww')=\ell(w)+\ell(w')$. Remarquons aussi que 
la base $(\Tb_w)_{w \in W}$ de $\heckegenerique$ d\'epend fortement du choix de $S$, c'est-\`a-dire 
de $v_\RM$. 

\subsection{Cas cyclotomique}\label{sub:coxeter-cyclotomique}
Fixons dans cette section $k=(k_{\O,j})_{\O \in \AC/W, j \in \{0,1\}} \in \CCB_\RM$~: 
la remarque~\ref{rem:translation} montre que supposer $k_{\O,0}+k_{\O,1}=0$ 
ne restreint pas la classe d'alg\`ebres \`a laquelle on s'int\'eresse.  
Pour $H \in \AC$, on posera $c_{s_H} = k_{H,0} - k_{H,1}=2k_{H,0}=-2k_{H,1}$~: \indexnot{ca}{c_{s_H}}
cette notation sera justifi\'ee dans le chapitre~\ref{chapter:cherednik-1}. 
Ainsi, l'alg\`ebre de Hecke cyclotomique $\heckecyclotomique(k)$ est la $\OC[\qb^\RM]$-alg\`ebre 
admettant la pr\'esentation suivante~:
\equat\label{eq:coxeter-hecke-cyclotomique}
\heckecyclotomique(k) \quad : \quad 
\begin{cases}
\text{G\'en\'erateurs~:} & (T_s)_{s \in S}, \\
\text{Relations~:} & \forall~s \in S,~(T_s-\qb^{c_s})(T_s+\qb^{-c_s})=0,\\
& \forall~s,t \in S,~\underbrace{T_s T_t T_s\cdots}_{\text{$m_{st}$ fois}} 
= \underbrace{T_t T_s T_t\cdots}_{\text{$m_{st}$ fois}}.
\end{cases}
\endequat

\section{Cellules de Kazhdan-Lusztig}\label{section:cellules-kl}

\medskip

\boitegrise{{\bf Hypoth\`ese.} 
{\it Dor\'enavant, et ce jusqu'\`a la fin de ce chapitre, nous 
fixons une famille $k=(k_{\O,j})_{\O \in \AC/W, j \in \{0,1\}} \in \CCB_\RM$. 
Nous noterons $c : \Ref(W) \to \RM$, \indexnot{ca}{c}  $s \mapsto c_s$.}}{0.75\textwidth}

\bigskip

La donn\'ee de la fonction $c : \Ref(W) \to \RM$ constante sur les classes de conjugaison 
est \'equivalente \`a la donn\'ee de $k$. 

\bigskip

\subsection{Base de Kazhdan-Lusztig} 
L'involution $a \mapsto \aba$ de $\OC[\qb^\RM]$ s'\'etend en une involution $\OC[\qb^\RM]$-semilin\'eaire 
de l'alg\`ebre $\heckecyclotomique(k)$ en posant
$$\overline{T}_w = T_{w^{-1}}^{-1}.\indexnot{T}{\Tov_w}$$
Si $\XM$ est une partie de $\RM$, nous noterons $\OC[\qb^\XM]=\mathop{\bigoplus}_{r \in \XM} \OC~\qb^r$. 
\indexnot{O}{\OC[\qb^\XM]}  
Nous poserons 
$$\heckecyclotomique(k)_{> 0}=\mathop{\bigoplus}_{w \in W} \OC[\qb^{\RM_{>0}}]~T_w.\indexnot{H}{\heckecyclotomique(k)_{>0}}$$
Le th\'eor\`eme suivant est d\'emontr\'e dans~\cite{KL}.

\bigskip

\begin{theo}[Kazhdan-Lusztig]\label{theo:base-kl}
Pour $w \in W$, il existe un unique \'el\'ement $C_w \in \heckecyclotomique(k)$ \indexnot{C}{C_w}  tel que 
$$
\begin{cases}
\overline{C}_w  =  C_w, \\
C_w \equiv T_w \mod \heckecyclotomique(k)_{> 0}.
\end{cases}
$$
La famille $(C_w)_{w \in W}$ est une $\OC[\qb^\RM]$-base de $\heckecyclotomique(k)$. 
\end{theo}

\bigskip

Notons que $C_w$ d\'epend de $k$ (i.e., de $c$). Par exemple, si $s \in S$, alors 
$$C_s=
\begin{cases}
T_s - q^{c_s} & \text{si $c_s > 0$,}\\
T_s & \text{si $c_s=0$,}\\
T_s + q^{-c_s} & \text{si $c_s < 0$.}
\end{cases}
$$
De m\^eme, tout comme $T_w$, $C_w$ d\'epend du choix de $S$. La base $(C_w)_{w \in W}$ sera appel\'ee 
la base de Kazhdan-Lusztig de $\heckecyclotomique(k)$~: si n\'ecessaire, $C_w$ sera not\'e 
$C_w^{(k)}$. \indexnot{C}{C_w^{(k)}}  

\bigskip

\subsection{Cellules de Kazhdan-Lusztig} 
Pour $x$, $y \in W$, nous \'ecrirons $x \rell y$ \indexnot{ZZZ}{\rell,~\relr}  s'il existe $h \in \heckecyclotomique(k)$ 
tel que $C_x$ appara\^{\i}t avec un coefficient non nul dans la d\'ecomposition de 
de $hC_y$ dans la base de Kazhdan-Lusztig. Nous noterons $\prel$ \indexnot{ZZZ}{\prel,~\prer,~\prelr}  
la cl\^oture transitive de cette relation~; c'est un pr\'e-ordre et nous noterons $\siml$ 
\indexnot{ZZZ}{\siml,~\simr,~\simlr}  la 
relation d'\'equivalence associ\'ee. 

On d\'efinit de m\^eme $\relr$ en multipliant \`a droite par $h$ ainsi que $\prer$ et $\simr$.  
Nous noterons $\prelr$ la relation r\'eflexive et sym\'etrique engendr\'ee par 
$\prel$ et $\prer$, et $\simlr$ d\'esignera la relation d'\'equivalence associ\'ee. 

\bigskip

\begin{defi}\label{defi:cellules}
On appellera $c$-{\bfit cellule de Kazhdan-Lusztig \`a gauche} 
(respectivement {\bfit \`a droite}, respectivement {\bfit bilat\`ere}) 
de $W$ une classe d'\'equivalence pour la relation $\siml$ (respectivement $\simr$, 
respectivement $\simlr$). Si $? \in \{L,R,LR\}$, 
on notera $\klcellules_?^c(W)$ l'ensemble correspondant de $c$-cellules de Kazhdan-Lusztig de $W$. 
\end{defi}

\bigskip

Si $? \in \{L,R,LR\}$ et si $\G$ est une classe d'\'equivalence pour la relation 
$\sim_?$ (c'est-\`a-dire une $c$-cellule de Kazhdan-Lusztig du bon type), posons
$$\heckecyclotomique(k)_{\leqslant_?^{\kl,c} \G} = 
\mathop{\bigoplus}_{w \leqslant_?^{\kl,c} \G} \OC[\qb^\RM]~C_w
\qquad\text{et}\qquad
\heckecyclotomique(k)_{<_?^{\kl,c} \G} = 
\mathop{\bigoplus}_{w <_?^{\kl,c} \G} \OC[\qb^\RM]~C_w,
\indexnot{H}{\heckecyclotomique(k)_{\leqslant_?^{\kl,c} \G},~\heckecyclotomique(k)_{<_?^{\kl,c} \G}}
$$
ainsi que 
$$\MC_\G^? = \heckecyclotomique(k)_{\leqslant_?^{\kl,c} \G}/\heckecyclotomique(k)_{<_?^{\kl,c} \G}.$$
Par construction, $\heckecyclotomique(k)_{\leqslant_?^{\kl,c} \G}$ et $\heckecyclotomique(k)_{<_?^{\kl,c} \G}$ 
sont des id\'eaux (\`a gauche si $?=L$, \`a droite si $?=R$ ou bilat\`eres si $?=LR$) 
et $\MC_\G^?$ est un $\heckecyclotomique(k)$-module \`a gauche si $?=L$, 
\`a droite si $?=R$ ou un $(\heckecyclotomique(k),\heckecyclotomique(k))$-bimodule 
si $?=LR$. Notons que
\equat\label{eq:liberte-cellulaire}
\text{\it $\MC_\G^?$ est un $\OC[\qb^\RM]$-module libre de base l'image de $(C_w)_{w \in \G}$.}
\endequat

\bigskip

\begin{defi}\label{defi:cellulaires-familles}
Si $C$ est une $c$-cellule de Kazhdan-Lusztig \`a gauche de $W$, on notera 
$\isomorphisme{C}_c^\kl$ \indexnot{C}{\isomorphisme{C}_c^\kl}  la classe 
de $\kb \otimes_{\OC[\qb^\RM]} \MC_C^L$ dans le groupe de Grothendieck $\groth(\kb W)=\ZM\Irr(W)$ 
(ici, le produit tensoriel $\kb \otimes_{\OC[\qb^\RM]} -$ est vu \`a travers la sp\'ecialisation 
$\qb^r \mapsto 1$). On appellera {\bfit $\kl$-caract\`ere $c$-cellulaire} de $W$ 
tout caract\`ere de la forme $\isomorphisme{C}_c^\kl$, o\`u $C$ est une $c$-cellule 
de Kazhdan-Lusztig \`a gauche. 

Si $\G$ est une $c$-cellule de Kazhdan-Lusztig bilat\`ere de $W$, 
on notera $\Irr_\G^\kl(W)$ \indexnot{I}{\Irr_\G^\kl(W)}  l'ensemble des caract\`eres 
irr\'eductibles de $W$ apparaissant dans $\kb \otimes_{\OC[\qb^\RM]} \MC_\G^{LR}$, 
vu comme $\kb W$-module \`a gauche. On appellera {\bfit $c$-famille de Kazhdan-Lusztig} 
toute partie de $\Irr(W)$ de la forme $\Irr_\G^\kl(W)$ o\`u $\G$ est une 
$c$-cellule de Kazhdan-Lusztig bilat\`ere. Nous dirons que $\Irr_\G^\kl(W)$ est la $c$-famille 
de Kazhdan-Lusztig {\bfit asoci\'ee} \`a $\G$, ou que $\G$ est la $c$-cellule de Kazhdan-Lusztig bilat\`ere 
{\bfit recouvrant} $\Irr_\G^\kl(W)$. 
\end{defi}

\bigskip

Puisque $\kb W$ est semi-simple et que 
$\kb \otimes_{\OC[\qb^\RM]} \MC_\G^{LR}$ est un quotient d'id\'eaux bilat\`eres de $\kb W$, 
les $k$-familles de Kazhdan-Lusztig forment une partition de $\Irr(W)$ 
\equat\label{eq:partition-familles}
\Irr(W) = \mathop{\coprod}_{\G \in \klcellules_{LR}^c(W)} \Irr_\G^\kl(W)
\endequat
et, puisque $\kb W$ est d\'eploy\'ee,
\equat\label{eq:cardinal-cellule-kl}
|\G|=\sum_{\chi \in \Irr_\G^\kl(W)} \chi(1)^2.
\endequat
D'autre part, si $C$ est une $c$-cellule de Kazhdan-Lusztig {\it \`a gauche} de $W$, posons 
$$\isomorphisme{C}_c^\kl = \sum_{\chi \in \Irr(W)} \mult_{C,\chi}^\kl~\chi,\indexnot{ma}{\mult_{C,\chi}^\kl}$$
o\`u $\mult_{C,\chi}^\kl \in \NM$. 
Alors~:

\bigskip

\begin{lem}\label{lem:mult-kl}
Avec les notations pr\'ec\'edentes, on a~:
\begin{itemize}
\itemth{a} Si $C\in \klcellules_L^c(W)$, alors 
$\DS{\sum_{\chi \in \Irr(W)} \mult_{C,\chi}^\kl~\chi(1)=|C|}$.

\smallskip

\itemth{b} Si $\chi \in \Irr(W)$, alors 
$\DS{\sum_{C \in \klcellules_L^c(W)} \mult_{C,\chi}^\kl = \chi(1)}$. 
\end{itemize}
\end{lem}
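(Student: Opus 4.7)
Le plan consiste \`a calculer, de deux mani\`eres diff\'erentes, les dimensions des quotients cellulaires apr\`es sp\'ecialisation $\qb^r \mapsto 1$.

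Pour (a), je m'appuierai directement sur~(\ref{eq:liberte-cellulaire})~: puisque $\MC_C^L$ est un $\OC[\qb^\RM]$-module libre admettant pour base l'image de $(C_w)_{w \in C}$, son rang vaut $|C|$. La sp\'ecialisation $\qb^r \mapsto 1$ fournit alors un $\kb W$-module \`a gauche $\kb \otimes_{\OC[\qb^\RM]} \MC_C^L$ de dimension $|C|$ sur $\kb$. D'autre part, par la d\'efinition~\ref{defi:cellulaires-familles}, on a $\isomorphisme{C}_c^\kl = \sum_\chi \mult_{C,\chi}^\kl \chi$ dans $\groth(\kb W)$, si bien que cette m\^eme dimension vaut $\sum_\chi \mult_{C,\chi}^\kl \chi(1)$. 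L'identification des deux expressions donne (a).

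Pour (b), la strat\'egie est de filtrer $\heckecyclotomique(k)$, vu comme module \`a gauche sur lui-m\^eme, par les id\'eaux \`a gauche $\heckecyclotomique(k)_{\leqslant_L^{\kl,c} \G}$. Je num\'erote les cellules \`a gauche $C_1, C_2, \dots, C_N$ de fa\c{c}on compatible avec le pr\'e-ordre $\prel$ (si $C_i \prel C_j$, alors $i \le j$). Par construction et gr\^ace \`a~(\ref{eq:liberte-cellulaire}), les sous-quotients successifs de cette filtration sont pr\'ecis\'ement les modules $\MC_{C_i}^L$. Apr\`es sp\'ecialisation $\qb^r \mapsto 1$, on obtient une filtration de $\kb W$ (vu comme $\kb W$-module \`a gauche r\'egulier) dont les sous-quotients sont les $\kb \otimes_{\OC[\qb^\RM]} \MC_{C_i}^L$. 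En passant aux classes dans $\groth(\kb W)$, on trouve
$$\isomorphisme{\kb W}_{\kb W} = \sum_{C \in \klcellules_L^c(W)} \isomorphisme{C}_c^\kl = \sum_{C} \sum_\chi \mult_{C,\chi}^\kl \chi.$$
Or la repr\'esentation r\'eguli\`ere de $W$ satisfait $\isomorphisme{\kb W}_{\kb W} = \sum_{\chi \in \Irr(W)} \chi(1) \chi$ (car $\kb W$ est semi-simple d\'eploy\'ee, cf. th\'eor\`eme~\ref{deploiement}), et l'identification des coefficients de chaque $\chi$ donne $\chi(1) = \sum_C \mult_{C,\chi}^\kl$, ce qui \'etablit (b).

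L'unique point technique, que je ne consid\`ere pas comme un v\'eritable obstacle, consiste \`a justifier que $\heckecyclotomique(k)_{\leqslant_L^{\kl,c} \G}$ est bien un id\'eal \`a gauche et que les sous-quotients de la filtration co\"{\i}ncident avec les modules cellulaires~: ces deux faits r\'esultent directement de la d\'efinition m\^eme de $\prel$ et du caract\`ere libre de la base de Kazhdan-Lusztig sur $\OC[\qb^\RM]$ assur\'e par le th\'eor\`eme~\ref{theo:base-kl}.
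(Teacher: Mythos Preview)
Your proof is correct and follows essentially the same approach as the paper. For (a), both you and the paper invoke~(\ref{eq:liberte-cellulaire}) to read off the dimension $|C|$; for (b), you spell out the filtration of $\heckecyclotomique(k)$ by the left ideals $\heckecyclotomique(k)_{\leqslant_L^{\kl,c} \G}$ to obtain $\isomorphisme{\kb W}_{\kb W} = \sum_C \isomorphisme{C}_c^\kl$, while the paper simply asserts this identity directly from the fact that $W$ is the disjoint union of its left cells --- but this is the same argument, yours being merely a more explicit rendering.
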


\begin{proof}
L'\'egalit\'e (a) exprime simplement que la dimension de $\isomorphisme{C}_c^\kl$ 
est \'egale \`a $|C|$ d'apr\`es~\ref{eq:liberte-cellulaire}. L'\'egalit\'e (b) traduit 
le fait que, puisque $W$ est la r\'eunion disjointe des $c$-cellules de Kazhdan-Lusztig \`a gauche, on a 
$\isomorphisme{\kb W}_{\kb W} = \sum_{C \in \klcellules_L^c(W)} \isomorphisme{C}_c^\kl$. 
\end{proof}

\subsection{Autres propri\'et\'es des cellules}\label{sub:proprietes-cellules} 
L'alg\`ebre $\heckecyclotomique(k)$ est muni d'un anti-automorphisme $\OC[\qb^\RM]$-lin\'eaire 
qui envoie $T_w$ sur $T_{w^{-1}}$~: il sera not\'e $h \mapsto h^*$. \indexnot{ZZZ}{*}  Il est imm\'ediat que 
\equat\label{eq:cw-etoile}
C_w^*=C_{w^{-1}},
\endequat
ce qui implique que, si $x$ et $y$ sont deux \'el\'ements de $W$, alors 
\equat\label{eq:rell-relr}
\text{\it $x \prel y$ si et seulement si $x^{-1} \prer y^{-1}$}
\endequat
et donc 
\equat\label{eq:siml-simr}
\text{\it $x \siml y$ si et seulement si $x^{-1} \simr y^{-1}$.}
\endequat
En d'autres termes, l'application $\klcellules_L^c(W) \to \klcellules_R^c(W)$, 
$\G \mapsto \G^{-1}$, est bien d\'efinie et bijective.

Moins \'evidente est la propri\'et\'e suivante~\cite[corollaire~11.7]{lusztig}
\equat\label{eq:rel-w0}
\text{\it $x \leqslant_?^c y$ si et seulement si $w_0 y \leqslant_?^c w_0 x$ 
si et seulement si $y w_0 \leqslant_?^c x w_0$.}
\endequat
Il en d\'ecoule que 
\equat\label{eq:sim-w0}
\text{\it $x \sim_?^c y$ si et seulement si $w_0x \sim_?^c w_0y$ si et seulement si $xw_0 \sim_?^c yw_0$.}
\endequat
Par ailleurs, si $C \in \klcellules_L^c(W)$, alors~\cite[proposition~21.5]{lusztig}
\equat\label{eq:caractere-cellulaire-w0}
\isomorphisme{w_0 C}_c^\kl = \isomorphisme{C w_0}_c^\kl =\isomorphisme{C}_c^\kl \cdot \e.
\endequat
De m\^eme, si $\G \in \klcellules_{LR}^c(W)$, alors~\cite[proposition~21.5]{lusztig}
\equat\label{eq:familles-cellulaire-w0}
\Irr_{w_0 \G}^\kl(W)=\Irr_{\G w_0}^\kl(W) = \Irr_\G^\kl(W) \cdot \e.
\endequat
Cela montre en particulier que 
\equat\label{eq:w0gw0}
w_0\G w_0 = \G.
\endequat
La tensorisation par $\e$ induit donc une permutation des $c$-familles de 
Kazhdan-Lusztig et des $c$-caract\`eres cellulaires.

Si $\g : W \to \kb^\times$ est un caract\`ere lin\'eaire (notons 
que $\g$ est \`a valeurs dans $\{1,-1\}$), on note $\g\cdot c : \Ref(W) \to \RM$, \indexnot{gz}{\g \cdot c}  
$s \mapsto \g(s)c_s$. Le lemme suivant est d\'emontr\'e 
dans~\cite[corollaire~2.5~et~2.6]{bonnafe continu}~:

\bigskip

\begin{lem}\label{lem:gamma-c-kl}
Soit $\g \in W^\wedge$ et soit $? \in \{L,R,LR\}$. Alors~:
\begin{itemize}
\itemth{a} Les relations $\leqslant_?^c$ et $\leqslant_?^{\g \cdot c}$ co\"{\i}ncident.

\itemth{b} Les relations $\sim_?^{\kl,c}$ et $\sim_?^{\kl,\g \cdot c}$ co\"{\i}ncident.

\itemth{c} Si $C \in \klcellules_L^c(W)=\klcellules_L^{\g \cdot c}(W)$, alors 
$\isomorphisme{C}_c^\kl = \g \cdot \isomorphisme{C}_{\g \cdot c}^\kl$. 
\end{itemize}
\end{lem}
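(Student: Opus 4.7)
Pour \'etablir ce lemme, l'approche naturelle consiste \`a construire un isomorphisme explicite de $\OC[\qb^\RM]$-alg\`ebres $\varphi_\gamma : \heckecyclotomique(k) \longiso \heckecyclotomique(\gamma \cdot k)$ d\'efini sur les g\'en\'erateurs par $T_s \longmapsto \gamma(s) T_s$. La v\'erification des relations quadratiques est imm\'ediate : lorsque $\gamma(s) = -1$, la relation $(T_s - \qb^{c_s})(T_s + \qb^{-c_s})=0$ est envoy\'ee sur $(T_s + \qb^{c_s})(T_s - \qb^{-c_s})=0$, qui est exactement la relation quadratique dans $\heckecyclotomique(\gamma \cdot k)$ puisque $(\gamma \cdot c)_s = -c_s$. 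Pour les relations de tresses, je distinguerais les cas selon la parit\'e de $m_{st}$ : pour $m_{st}$ pair, le facteur scalaire $(\gamma(s)\gamma(t))^{m_{st}/2}$ appara\^{\i}t des deux c\^ot\'es ; pour $m_{st}$ impair, le fait classique que $s$ et $t$ sont alors conjugu\'es dans $W$ entra\^{\i}ne $\gamma(s)=\gamma(t)$, et les facteurs scalaires co\"{\i}ncident. L'inversibilit\'e de $\varphi_\gamma$ r\'esulte de ce que l'application r\'eciproque est donn\'ee par la m\^eme construction (puisque $\gamma^2 = 1$).

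La deuxi\`eme \'etape consiste \`a v\'erifier que $\varphi_\gamma$ commute avec l'involution barre (il suffit de le constater sur les g\'en\'erateurs, puis d'utiliser la semi-lin\'earit\'e) et envoie l'id\'eal $\heckecyclotomique(k)_{>0}$ sur $\heckecyclotomique(\gamma \cdot k)_{>0}$. Comme $\varphi_\gamma(T_w) = \gamma(w) T_w$ pour tout $w \in W$ (en prenant une d\'ecomposition r\'eduite et en utilisant que $\gamma \in W^\wedge$), la caract\'erisation par unicit\'e du Th\'eor\`eme~\ref{theo:base-kl} appliqu\'ee \`a $\gamma(w) \varphi_\gamma(C_w^{(k)})$ fournirait l'identit\'e cl\'e
$$\varphi_\gamma(C_w^{(k)}) = \gamma(w) ~ C_w^{(\gamma \cdot k)}.$$

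Les assertions $(a)$ et $(b)$ en d\'ecouleraient alors directement : puisque $\varphi_\gamma$ est un isomorphisme d'alg\`ebres envoyant la base de Kazhdan-Lusztig pour $c$ sur la base de Kazhdan-Lusztig pour $\gamma \cdot c$ \`a un signe pr\`es, la relation $x \rell y$ d\'efinie via $c$ est \'equivalente \`a la m\^eme relation d\'efinie via $\gamma \cdot c$, et de m\^eme pour $\relr$. Par cl\^oture transitive et sym\'etrique, les pr\'eordres $\preplus{?}{c}$ et $\preplus{?}{\gamma \cdot c}$ et les relations d'\'equivalence associ\'ees co\"{\i}ncident. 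Pour l'assertion $(c)$, je sp\'ecialiserais $\qb^r \mapsto 1$ : le morphisme $\varphi_\gamma$ devient l'automorphisme $\psi_\gamma$ de $\kb W$ donn\'e par $w \mapsto \gamma(w) w$. Le module cellulaire sp\'ecialis\'e $\kb \otimes_{\OC[\qb^\RM]} \MC_C^L$ pour $c$ se transporte, via $\varphi_\gamma$, en le module cellulaire correspondant pour $\gamma \cdot c$, mais l'action de $W$ s'en trouve tordue par $\psi_\gamma$ ; un calcul direct de traces fournit alors la relation $\chi_{M,c}(w) = \gamma(w) \chi_{M,\gamma \cdot c}(w)$, soit $\isomorphisme{C}_c^\kl = \gamma \cdot \isomorphisme{C}_{\gamma \cdot c}^\kl$.

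Le point le plus d\'elicat me para\^{\i}t \^etre la v\'erification des relations de tresses dans le cas $m_{st}$ impair : elle repose sur l'observation (standard mais non formelle) que deux g\'en\'erateurs de Coxeter li\'es par une relation de tresses de longueur impaire sont conjugu\'es dans $W$, et donc qu'un caract\`ere lin\'eaire y prend la m\^eme valeur. Une fois cette construction initiale ma\^{\i}tris\'ee et l'identit\'e $\varphi_\gamma(C_w^{(k)}) = \gamma(w) C_w^{(\gamma \cdot k)}$ \'etablie, les trois assertions du lemme d\'ecoulent de mani\`ere presque formelle.
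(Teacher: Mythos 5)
Ta preuve est correcte et suit exactement l'approche standard (qui est aussi celle de la r\'ef\'erence \cite{bonnafe continu} que le texte se contente de citer)~: construire l'isomorphisme d'alg\`ebres $T_s \mapsto \gamma(s) T_s$, constater qu'il commute avec l'involution barre et pr\'eserve $\heckecyclotomique(\cdot)_{>0}$, en d\'eduire $\varphi_\gamma(C_w^{(k)})=\gamma(w)\,C_w^{(\gamma\cdot k)}$ par unicit\'e, et sp\'ecialiser en $\qb \mapsto 1$ pour obtenir~(c). Les v\'erifications ponctuelles (relations quadratiques, relations de tresses avec le cas $m_{st}$ impair via la conjugaison de $s$ et $t$, compatibilit\'e avec la barre) sont toutes exactes.
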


\bigskip

Le r\'esultat suivant est facile~\cite[lemme~8.6]{lusztig}~:

\bigskip

\begin{lem}\label{lem:1-cellule}
Supposons $c_s \neq 0$ pour tout $s \in \Ref(W)$. Alors~:
\begin{itemize}
 \itemth{a} $\{1\}$ et $\{w_0\}$ sont des $c$-cellules de Kazhdan-Lusztig 
(\`a gauche, \`a droite et bilat\`eres).

\itemth{b} Notons $\g : W \to \kb^\times$ l'unique caract\`ere lin\'eaire 
tel que $\g(s)=1$ si $c_s > 0$ et $\g(s)=-1$ si $c_s < 0$. Alors 
$\isomorphisme{1}_{\kb W} = \g$ et $\isomorphisme{w_0}_{\kb W}=\g\e$.
\end{itemize}
\end{lem}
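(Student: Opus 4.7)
L'approche consiste \`a construire une repr\'esentation de dimension $1$
$\phi : \heckecyclotomique(k) \to \OC[\qb^\RM]$, d\'efinie par $\phi(T_s) = \g(s)\qb^{|c_s|}$.
Les relations de tresse (\ref{eq:coxeter-hecke-cyclotomique}) sont triviales entre scalaires,
et la relation quadratique se v\'erifie ais\'ement selon le signe de $c_s$~; l'hypoth\`ese $c_s\neq 0$
est cruciale ici, tant pour d\'efinir $\g$ que pour obtenir $\phi(C_s)=0$ ($s \in S$), qui d\'ecoule
imm\'ediatement des formules donnant $C_s$ en fonction de $T_s$ rappel\'ees apr\`es
le th\'eor\`eme~\ref{theo:base-kl}. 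Je propagerais alors cette annulation \`a tout $C_w$, $w \neq 1$,
par r\'ecurrence sur $\ell(w)$ via la r\'ecurrence de Lusztig
$$C_s C_u = C_{su} + \sum_{\substack{z\prel u\\ sz<z}} \mu_{z,u}^s\, C_z \quad (\text{quand } su>u),$$
o\`u la contrainte $sz<z$ emp\^eche que $z=1$ apparaisse dans la somme~: l'application
de $\phi$ r\'eduit alors $\phi(C_w)$ \`a une combinaison de termes d\'ej\`a nuls par hypoth\`ese
de r\'ecurrence.

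Pour $y\neq 1$ et tout $h\in\heckecyclotomique(k)$, en d\'eveloppant $hC_y=\sum_z a_z C_z$, on a
simultan\'ement $\phi(hC_y)=a_1$ (puisque seul $\phi(C_1)\neq 0$) et
$\phi(hC_y)=\phi(h)\phi(C_y)=0$, donc $a_1=0$. Par cons\'equent $1 \not\rell y$ pour tout $y\neq 1$,
alors que $w\rell 1$ pour tout $w$ (prendre $h=T_w$), ce qui montre que $\{1\}$ est une $c$-cellule
\`a gauche r\'eduite \`a un singleton. La sym\'etrie (\ref{eq:cw-etoile}) \'etend cela
aux cellules \`a droite et bilat\`eres, puis l'\'equivalence (\ref{eq:sim-w0}) transporte
le r\'esultat par $w \mapsto w_0 w$ pour traiter $\{w_0\}$ et achever (a).

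Pour (b), le module cellulaire $\MC_{\{1\}}^L \simeq \OC[\qb^\RM]\cdot C_1$ porte l'action
$T_s\cdot C_1 = \g(s)\qb^{|c_s|}\cdot C_1$~; apr\`es la sp\'ecialisation $\qb^r\mapsto 1$,
on l'identifie au caract\`ere lin\'eaire $\g$, d'o\`u $\isomorphisme{1}_{\kb W}=\g$,
puis (\ref{eq:caractere-cellulaire-w0}) donne
$\isomorphisme{w_0}_c^\kl=\isomorphisme{\{1\}}_c^\kl \cdot \e = \g\e$.
L'unique \'etape non \'el\'ementaire est l'invocation de la r\'ecurrence de Lusztig~:
dans le cadre des param\`etres in\'egaux qui est le n\^otre, elle reste classique
mais demande un peu de soin~; tout le reste d\'ecoule par manipulations \'evidentes.
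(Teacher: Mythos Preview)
Your proof is correct. The paper does not give a proof but simply cites \cite[lemme~8.6]{lusztig} for the case $c_s>0$ and then, in the following remark, invokes the lemma~\ref{lem:gamma-c-kl} to reduce the general case to the positive one. What you have written is essentially the argument behind that citation, carried out directly for arbitrary signs rather than via a reduction: construct the one-dimensional representation $\phi$, show it annihilates every $C_w$ with $w\neq 1$ using the multiplication formula, and deduce that $\{1\}$ (hence $\{w_0\}$ by~(\ref{eq:sim-w0})) is a cell. Your treatment of~(b) via the explicit action on the cell module and~(\ref{eq:caractere-cellulaire-w0}) is also clean.

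One small correction: in the recursion you quote, the condition on the summation index should be $z<u$ in the Bruhat order (and $sz<z$), not $z\prel u$. This is harmless for your argument, since the only facts you use are $sz<z$ (so $z\neq 1$) and $\ell(z)<\ell(w)$ (which follows from $z<u$ in Bruhat), but the notation should be fixed.
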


\bigskip

\begin{rema}\label{rem:lusztig-positif}
En fait,~\cite[lemme~8.6]{lusztig} est d\'emontr\'e lorsque $c_s > 0$ pour tout $s$. 
Pour passer \`a l'\'enonc\'e g\'en\'eral du lemme~\ref{lem:1-cellule}, il suffit 
alors d'appliquer le lemme~\ref{lem:gamma-c-kl}.\finl
\end{rema}

\bigskip

Le dernier lemme exprime une forme de compatibilit\'e entre les notions de cellules 
de Kazhdan-Lusztig \`a gauche (ou \`a droite) et les sous-groupes 
paraboliques standard~: il faut noter qu'il n'y a pas de r\'esultat analogue 
pour les cellules de Kazhdan-Lusztig bilat\`eres. Nous aurons besoin de la notation 
suivante~: pour $I \subset S$, notons $W_I$ \indexnot{W}{W_I}  le sous-groupe de $W$ engendr\'e 
par $I$ (c'est un {\it sous-groupe parabolique standard} de $W$) et notons 
$W^I$ \indexnot{W}{W^I}  l'ensemble des \'el\'ements $x \in W$ qui sont de longueur minimale dans $xW_I$ 
(rappelons que l'application $W^I \to W/W_I$, $x \mapsto x W_I$ est bijective). Notons 
$c_I$ \indexnot{ca}{c_I}  la restriction de $c$ \`a $\Ref(W_I)=\Ref(W) \cap W_I$.

\bigskip

\begin{lem}\label{lem:kl-cells-parabolique}
Soit $I \subset S$. Alors~:
\begin{itemize}
\itemth{a} Si $\G$ est une $c_I$-cellule de Kazhdan-Lusztig \`a gauche de $W_I$, alors 
$W^I\cdot \G$ est une union de $c$-cellules de Kazhdan-Lusztig \`a gauche de $W$.

\itemth{b} Si $w$, $w' \in W_I$ et $x \in W^I$ sont tels que $w \leqslant_L^{c_I} w'$ 
(respectivement $w \sim_L^{\kl,c_I} w'$), alors $wx^{-1} \leqslant_L^{c_I} w'x^{-1}$ 
(respectivement $wx^{-1} \sim_L^{\kl,c_I} w'x^{-1}$).
\end{itemize}
\end{lem}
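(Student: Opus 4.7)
Notre plan consiste \`a d\'eduire les deux assertions d'une adaptation du th\'eor\`eme d'induction des cellules de Kazhdan-Lusztig d\^u \`a Geck (dans le cas des param\`etres in\'egaux). Posons $\HC:=\heckecyclotomique(k)$ et notons $\HC_I\subset\HC$ la sous-alg\`ebre engendr\'ee par $(T_s)_{s\in I}$, canoniquement identifi\'ee \`a l'alg\`ebre de Hecke cyclotomique associ\'ee \`a $(W_I,c_I)$. Notons $C_w^{(I)}$ la base de Kazhdan-Lusztig de $\HC_I$ relative \`a $c_I$, et $C_z^{(k)}$ celle de $\HC$ relative \`a $k$. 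On dispose de la d\'ecomposition parabolique $\HC=\bigoplus_{x\in W^I}T_x \HC_I$ (comme $\HC_I$-module \`a droite libre), r\'egie par la r\`egle combinatoire standard~: pour $s\in S$ et $x\in W^I$, soit $sx\in W^I$ et alors $T_sT_x=T_{sx}$, soit il existe un unique couple $(x',t)\in W^I\times I$ tel que $T_sT_x=T_{x'}T_t$, \`a corrections quadratiques pr\`es.

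L'ingr\'edient central est le suivant~: pour toute $c_I$-cellule \`a gauche $\G$ de $W_I$, le sous-$\OC[\qb^\RM]$-module
$$\NC_{\leqslant\G} \ = \sum_{x\in W^I,\, w\leqslant_L^{c_I}\G} \OC[\qb^\RM]\, T_x C_w^{(I)}$$
est un id\'eal \`a gauche de $\HC$, et de m\^eme pour son analogue strict $\NC_{<\G}$. Cette propri\'et\'e, qui constitue le c\oe ur du th\'eor\`eme d'induction de Geck, r\'esulte d'une analyse cas par cas de l'action de chaque $T_s$ sur les g\'en\'erateurs $T_x C_w^{(I)}$ au moyen des r\`egles paraboliques ci-dessus, combin\'ee au fait que la multiplication \`a gauche par $\HC_I$ pr\'eserve le $\leqslant_L^{c_I}$-filtrage de $\HC_I$. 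Par ailleurs, une r\'ecurrence triangulaire sur la longueur utilisant la clause d'unicit\'e du th\'eor\`eme~\ref{theo:base-kl} \'etablit les expansions
$$C_{xw}^{(k)} \ = \ T_x C_w^{(I)} \ + \sum_{\substack{x'\in W^I,\, w'\in W_I\\ w'<_L^{c_I}w}} \lambda_{x',w'}\, T_{x'}C_{w'}^{(I)}$$
pour $x\in W^I$ et $w\in W_I$, d'o\`u, par triangularit\'e du changement de base, l'\'egalit\'e
$$\NC_{\leqslant\G} \ = \sum_{x\in W^I,\, w\leqslant_L^{c_I}\G} \OC[\qb^\RM]\, C_{xw}^{(k)},$$
et de m\^eme pour $\NC_{<\G}$.

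L'assertion (a) en d\'ecoule~: cette derni\`ere identit\'e jointe \`a la propri\'et\'e d'id\'eal montre que $W^I\cdot\G$, qui indexe la base du quotient $\NC_{\leqslant\G}/\NC_{<\G}$ et s'\'ecrit comme diff\'erence de deux parties stables sous l'action \`a gauche de $\HC$ (dans la base de Kazhdan-Lusztig), est une r\'eunion de $c$-cellules \`a gauche de $W$. Pour (b), on applique la m\^eme machinerie \`a la d\'ecomposition sym\'etrique $\HC=\bigoplus_{y\in {}^IW}\HC_IT_y$ o\`u ${}^IW:=(W^I)^{-1}$, obtenant l'expansion duale
$$C_{wy}^{(k)} \ = \ C_w^{(I)}T_y \ + \sum_{\substack{w'\in W_I,\, y'\in {}^IW\\ w'<_L^{c_I}w}} \mu_{w',y'}\, C_{w'}^{(I)}T_{y'}.$$
Si $h\in\HC_I$ r\'ealise un coefficient non nul de $C_w^{(I)}$ dans $hC_{w'}^{(I)}$, un argument triangulaire soigneux appliqu\'e \`a $hC_{w'x^{-1}}^{(k)}$ (choisissant au besoin $h$ de mani\`ere \`a ce que son support dans $\HC_I$ n'inclue aucun \'el\'ement strictement sup\'erieur \`a $w$ dans le pr\'eordre $\leqslant_L^{c_I}$) montre alors que $C_{wx^{-1}}^{(k)}$ y appara\^{\i}t avec coefficient non nul, d'o\`u $wx^{-1}\leqslant_L^c w'x^{-1}$ ; l'assertion sur $\sim_L^{\kl,c}$ s'en d\'eduit par sym\'etrie.

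Le principal obstacle r\'eside dans l'\'etablissement des expansions ci-dessus et de la propri\'et\'e d'id\'eal de $\NC_{\leqslant\G}$ dans le cas des param\`etres in\'egaux. Dans le cas \'equipar\'e, ces r\'esultats remontent aux travaux de Lusztig ; leur g\'en\'eralisation \`a $c$ arbitraire n\'ecessite la construction par approximations successives de Geck, la principale difficult\'e \'etant le traitement du cas $T_sT_x=T_{x'}T_t$ o\`u la pr\'esence du facteur $T_t\in\HC_I$ impose un contr\^ole simultan\'e de l'action de $\HC_I$ sur $C_w^{(I)}$ et de la combinatoire parabolique sur $T_x$, contr\^ole r\'ealis\'e \`a chaque \'etape par l'ordre de Bruhat sur $W$ et par le pr\'eordre $\leqslant_L^{c_I}$ sur $W_I$.
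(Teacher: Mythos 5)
Le m\'emoire ne donne pas de preuve interne de ce lemme~: il renvoie \`a Geck \cite[th\'eor\`eme~1]{geck induction} pour (a) et \`a Lusztig \cite[proposition~9.11]{lusztig} pour (b). Votre proposition reconstruit l'argument de ces r\'ef\'erences plut\^ot que de les citer, en suivant pr\'ecis\'ement la strat\'egie de Geck (d\'ecomposition parabolique $\HC=\bigoplus_{x\in W^I}T_x\HC_I$, triangularit\'e de la base $\{T_xC_w^{(I)}\}$ par rapport \`a la base de Kazhdan-Lusztig de $\HC$, filtration par id\'eaux $\NC_{<\G}\subset\NC_{\leqslant\G}$). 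Ce n'est donc pas une route alternative, mais une paraphrase du chemin existant.

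Deux d\'efauts sont \`a signaler. D'abord, l'expansion que vous \'ecrivez,
\begin{displaymath}
C_{xw}^{(k)} = T_x C_w^{(I)} + \sum_{x'\in W^I,\; w'<_L^{c_I}w} \lambda_{x',w'}\, T_{x'}C_{w'}^{(I)},
\end{displaymath}
est plus forte que ce que l'argument fournit et que ce dont on a besoin~: chez Geck la triangularit\'e stricte porte sur l'ordre de Bruhat ($x'w'<xw$), et sur la composante $W_I$ on n'a que $w'\leqslant_L^{c_I}w$ (des termes avec $w'\sim_L^{c_I}w$ sont en g\'en\'eral pr\'esents). L'in\'egalit\'e stricte $w'<_L^{c_I}w$ n'est pas d\'emontr\'ee par la r\'ecurrence triangulaire que vous invoquez; elle n'est d'ailleurs pas n\'ecessaire puisque la filtration se d\'efinit au niveau des cellules et non des \'el\'ements. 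Ensuite, et c'est le point le plus s\'erieux, vous reconnaissez vous-m\^eme que ``le principal obstacle r\'eside dans l'\'etablissement des expansions ci-dessus et de la propri\'et\'e d'id\'eal de $\NC_{\leqslant\G}$'', lesquelles ``n\'ecessitent la construction par approximations successives de Geck''. Autrement dit, la partie substantielle de l'argument est renvoy\'ee \`a Geck tandis que le reste est pr\'esent\'e comme une preuve~: votre r\'edaction n'est pas plus autonome que la simple citation du m\'emoire, mais en a l'apparence. Le m\^eme reproche vaut pour (b)~: l'``argument triangulaire soigneux'' appliqu\'e \`a $hC_{w'x^{-1}}^{(k)}$ -- qui doit notamment v\'erifier que les termes correcteurs de $C_{w'x^{-1}}^{(k)}$ ne viennent pas annuler la contribution dominante issue de $C_w^{(I)}T_{x^{-1}}$ -- est exactement le contenu technique de la proposition~9.11 de Lusztig, et vous ne l'effectuez pas.
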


\bigskip

\begin{proof}
(a) est d\^u \`a Geck~\cite[th\'eor\`eme~1]{geck induction} 
tandis que (b) est d\^u \`a Lusztig~\cite[proposition~9.11]{lusztig}. 
\end{proof}

\bigskip

Pour finir, rappelons que Lusztig a propos\'e de nombreuses conjectures concernant les cellules 
et sa fonction $\ab$~\cite[\S{14.2},~conjectures~P1-P15]{lusztig}. Nous ne les rappellerons pas 
toutes ici mais mentionnons simplement les deux suivantes~:

\bigskip

\begin{quotation}\refstepcounter{theo}
{\bf Conjectures \thetheo~(Lusztig).} 
{\it {\rm (a)} Toute $c$-cellule de Kazhdan-Lusztig \`a gauche contient une involution. 
Ainsi, si $w \in W$, alors $w \simlr w^{-1}$.

{\rm (b)} Si $x \simlr y$ et $x \prel y$, alors $x \siml y$.

{\rm (c)} $\simlr$ est la relation d'\'equivalence engendr\'ee par $\siml$ et $\simr$.}
\end{quotation}

\bigskip

\begin{exemple}[Param\`etres nuls]\label{exemple:c=0}
Si $c=0$ (i.e. si $c_s=0$ pour tout $s$), alors $C_w=T_w$, $\heckecyclotomique(0)=\OC[\qb^\RM][W]$ 
et il n'y a qu'une seule $0$-cellule de Kazhdan-Lusztig (\`a gauche, \`a droite, ou bilat\`ere), 
c'est $W$. On a alors $\Irr_W^{\kl,0}(W)=\Irr(W)$ et 
$\isomorphisme{W}_{\kb W}^{\kl,0} = \sum_{\chi \in \Irr(W)} \chi(1)\chi$.\finl
\end{exemple}

\bigskip

\cbend

\part{Alg\`ebre de Cherednik}\label{part:cherednik}

\chapter{Alg\`ebre de Cherednik g\'en\'erique}\label{chapter:cherednik-1}

Notons $\CCB$ \indexnot{C}{\CCB}  le $\kb$-espace vectoriel des fonctions $c : \Ref(W) \to \kb$, $s \mapsto c_s$ 
\indexnot{ca}{c_s}  
qui sont constantes sur les classes de conjugaison. Nous l'appellerons {\it espace des 
param\`etres} et nous l'identifions avec l'espace des fonctions $\refw \to \kb$. 

Si $s \in \Ref(W)$ (ou $s \in \refw$), nous noterons $C_s$ \indexnot{C}{C_s}  la forme lin\'eaire 
sur $\CCB$ correspondant \`a l'\'evaluation en $s$. L'alg\`ebre $\kb[\CCB]$ des 
fonctions polynomiales sur $\CCB$ est alors l'alg\`ebre de polyn\^omes en 
les ind\'etermin\'ees $(C_s)_{s \in \refw}$~:
$$\kb[\CCB]=\kb[(C_s)_{s \in \refw}].$$
Notons $\CCBt$ \indexnot{C}{\CCBt}  le $\kb$-espace vectoriel $\kb \times \CCB$ et 
notons $T : \CCBt \to \kb$, \indexnot{T}{T}  $(t,c) \mapsto t$. Ainsi, $T \in \CCBt^*$ et 
$$\kb[\CCBt]=\kb[T,(C_s)_{s \in \refw}].$$

\section{D\'efinition}

\medskip

Nous appellerons {\it alg\`ebre de Cherednik g\'en\'erique rationnelle} (ou plus 
simplement {\it alg\`ebre de Cherednik g\'en\'erique}) la $\kb[\CCBt]$-alg\`ebre $\Hbt$ quotient 
de $\kb[\CCBt] \otimes \bigl(\Trm_\kb(V \oplus V^*) \rtimes W\bigr)$ 
par l'id\'eal engendr\'e par les relations suivantes (ici, $\Trm_\kb(V \oplus V^*)$ 
d\'esigne l'alg\`ebre tensorielle de $V \oplus V^*$)~:
\equat\label{relations-1}\begin{cases}
[x,x']=[y,y']=0, \\
\\
[y,x] = T \langle y,x\rangle + \DS{\sum_{s \in \Ref(W)} (\e(s)-1)\hskip1mm C_s 
\hskip1mm\frac{\langle y,\a_s \rangle \cdot \langle \a_s^\ve,x\rangle}{\langle \a_s^\ve,\a_s\rangle}
\hskip1mm s,} 
\end{cases}\endequat
pour tous $x$, $x' \in V^*$ et $y$, $y' \in V$. 

\bigskip

\begin{rema}
Gr\^ace \`a~(\ref{action s V}), la deuxi\`eme relation peut aussi s'\'ecrire 
\equat\label{eq:relations-1-sans-alpha}
[y,x] = T \langle y,x\rangle + \sum_{s \in \Ref(W)} C_s 
\langle s(y)-y,x\rangle 
\hskip1mm s,
\endequat
ce qui \'evite d'utiliser les $\a_s$ et les $\a_s^\ve$.\finl
\end{rema}

\bigskip

\subsection{D\'ecomposition PBW}\label{subsection:PBW-1} 
Compte tenu des relations~\ref{relations-1}, 
les faits suivants sont imm\'ediats~:
\begin{itemize}
\item[$\bullet$] Il existe un unique morphisme de $\kb$-alg\`ebres 
$\kb[V] \to \Hbt$ qui envoie $y \in V^*$ sur la classe de 
$y \in \Trm_\kb(V \oplus V^*) \rtimes W$ dans $\Hbt$.

\item[$\bullet$] Il existe un unique morphisme de $\kb$-alg\`ebres 
$\kb[V^*] \to \Hbt$ qui envoie $x \in V$ sur la classe de 
$x \in \Trm_\kb(V \oplus V^*) \rtimes W$ dans $\Hbt$.

\item[$\bullet$] Il existe un unique morphisme de $\kb$-alg\`ebres 
$\kb W \to \Hbt$ qui envoie $w \in W$ sur la classe de 
$w \in \Trm_\kb(V \oplus V^*) \rtimes W$ dans $\Hbt$.

\item[$\bullet$] L'application $\kb$-lin\'eaire $\kb[\CCBt] \otimes \kb[V] \otimes \kb W 
\otimes \kb[V^*] \longto \Hbt$ induite par les trois morphismes pr\'ec\'edents 
et par la multiplication est surjective. Elle est de plus $\kb[\CCBt]$-lin\'eaire.
\end{itemize}
Cette derni\`ere propri\'et\'e est en fait grandement am\'elior\'ee par 
le r\'esultat fondamental suivant~:

\bigskip

\begin{theo}[Etingof-Ginzburg]\label{PBW-1}
La multiplication  $\kb[\CCBt] \otimes \kb[V] \otimes \kb W 
\otimes \kb[V^*] \longto \Hbt$ est un isomorphisme de 
$\kb[\CCBt]$-modules.
\end{theo}

\begin{proof}
Dans~\cite[th\'eor\`eme 1.3]{EG}, ce th\'eor\`eme n'est pas \'enonc\'e dans cette 
version ``g\'en\'erique''. N\'eanmoins, il s'en d\'eduit ais\'ement de la fa\c{c}on suivante. 
Soit $(\b_i)_{i \in I}$ une $\kb$-base de 
$\kb[V] \otimes \kb W \otimes \kb[V^*]$ et soit $f$ un \'el\'ement du noyau de 
l'application lin\'eaire de l'\'enonc\'e. Il existe alors une unique 
famille \`a support fini $(f_i)_{i \in I}$ d'\'el\'ements de $\kb[\CCBt]$ tels que 
$f=\sum_{i \in I} f_i \otimes \b_i$. Mais, d'apr\`es~\cite[th\'eor\`eme 1.3]{EG}, 
la sp\'ecialisation en tout $(t,c) \in \CCBt$ du th\'eor\`eme est vraie. Par 
cons\'equent, $f_i(t,c)=0$ pour tout $i \in I$ et tout $(t,c) \in \CCBt$, ce qui 
montre que $f_i=0$ (car $\kb$ est infini) et donc que $f=0$.
\end{proof}

\bigskip

\subsection{Hyperplans, changement de variables, conventions}\label{section:hyperplans-variables}
Rappelons que $(C_s)_{s \in \refw}$ est 
une $\kb$-base de $\CCB^*$. Nous allons construire ici une nouvelle base de 
$\CCB^*$. Pour cela, posons $C_1=0 \in \CCB^*$

Nous notons $(K_{\O,j})_{(\O,j) \in \Omeb_W^\circ}$ \indexnot{K}{K_{\O,j},~K_{H,j}}  l'unique 
famille d'\'el\'ements de $\CCB^*$ telle que, pour tout $H \in \AC$ et pour tout 
$i \in \{0,1,\dots,e_H-1\}$, on ait 
$$C_{s_H^i}= \sum_{j=0}^{e_H-1} \z_{e_H}^{i(j-1)} K_{H,j}.$$
Ici, $K_{H,j}$ d\'esigne la variable $K_{\O,j}$, o\`u $\O$ est la $W$-orbite de $H$. 
L'existence et l'unicit\'e de la famille $(K_{\O,j})_{(\O,j) \in \Omeb_W}$ 
d\'ecoule de l'invertibilit\'e du d\'eterminant de Vandermonde, 
et on obtient \'egalement, en se restreignant aux \'el\'ements de $\Omeb_W \subset \Omeb_W^\circ$, 
\equat\label{nouvelle base}
\text{\it $(K_{\O,j})_{(\O,j) \in \Omeb_W}$ est une $\kb$-base de $\CCB^*$.} 
\endequat
Notons que $K_{\O,0}$ est d\'etermin\'e par l'\'equation 
$K_{\O,0} + K_{\O,1} + \cdots + K_{\O,e_\O-1}=C_1=0$. 
Pour finir, notons l'\'egalit\'e suivante~:
\equat\label{escs}
\sum_{s \in \Ref(W)} \e(s)~C_s = \sum_{H \in \AC} e_H K_{H,0} = 
-\sum_{H \in \AC} \sum_{i=1}^{e_H-1} e_H K_{H,i}.
\endequat

%
%
%

\bigskip

Si $H \in \AC$, on note $\a_H \in V^*$ une forme lin\'eaire telle que $H=\ker(\a_H)$ et 
$\a_H^\ve \in V$ un vecteur tel que $V=H \oplus \kb \a_H^\ve$ et tel que la droite 
$\kb\a_H^\ve$ soit stable par $W_H$. Avec les conventions ci-dessus, 
la deuxi\`eme relation de~(\ref{relations-1}) s'\'ecrit 
\equat\label{eq:rel-1-H}
[y,x]=T\langle y,x\rangle +\sum_{H\in\mathcal{A}} \sum_{i=0}^{e_H-1}
e_H(K_{H,i}-K_{H,i+1}) \frac{\langle y,\a_H \rangle \cdot \langle \a_H^\ve,x\rangle}{\langle \a_H^\ve,\a_H\rangle} 
\e_{H,i}
\endequat
pour $x\in V^*$ et $y\in V$, o\`u $\e_{H,i}=e_H^{-1}\sum_{w\in W_H}\e(w)^i w$. 

\bigskip

\noindent{\sc Commentaire - } 
La convention utilis\'ee ici dans la d\'efinition de l'alg\`ebre de Cherednik 
n'est pas celle de~\cite[\S{3.1}]{ggor}~: en effet, nous avons rajout\'e un coefficient $\e(s)-1$ 
devant le terme $C_s$. En revanche, c'est la m\^eme que celle de~\cite[\S{1.15}]{EG}, 
avec $c_s=c_{\a_s}$ (lorsque $W$ est un groupe de Coxeter). D'autre part, les $k_{H,i}$ de~\cite{ggor} sont reli\'es 
aux $K_{H,i}$ ci-dessus par $k_{H,i}=K_{H,0}-K_{H,i}$.\finl

%

\bigskip

\subsection{Sp\'ecialisation}\label{subsection:specialisation-1}
Si $(t,c) \in \CCBt$, nous noterons $\CGt_{t,c}$ \indexnot{C}{\CGt_{t,c}}  l'id\'eal maximal de $\kb[\CCBt]$ d\'efini 
par $\CGt_{t,c}=\{f \in \kb[\CCBt]~|~f(t,c)=0\}$~: c'est l'id\'eal engendr\'e par $T-t$ et 
$(C_s-c_s)_{s \in \refw}$. Posons alors 
$$\Hbt_{t,c} = \kb[\CCBt]/\CGt_{t,c} \otimes_{\kb[\CCBt]} \Hbt = \Hbt/\CGt_{t,c} \Hbt.\indexnot{H}{\Hbt_{t,c}}$$
La $\kb$-alg\`ebre 
$\Hbt_{t,c}$ est le quotient de 
$\Trm_\kb(V \oplus V^*) \rtimes W$ par l'id\'eal engendr\'e par les relations suivantes~: 
\equat\label{relations specialisees}\begin{cases}
[x,x']=[y,y']=0, \\
\\
[y,x] = t \langle y,x \rangle + \DS{\sum_{s \in \Ref(W)} (\e(s)-1)\hskip1mm c_s 
\hskip1mm\frac{\langle y,\a_s \rangle \cdot \langle \a_s^\ve,x\rangle}{\langle \a_s^\ve,\a_s\rangle}
\hskip1mm s,} 
\end{cases}\endequat
pour $x$, $x' \in V^*$ et $y$, $y' \in V$. 

\bigskip

\begin{exemple}\label{exemple zero-1} 
Il est imm\'ediat que $\Hbt_{0,0}=\kb[V \oplus V^*] \rtimes W$.\finl
\end{exemple}

\bigskip

\section{Graduations}\label{section:graduation-1}

\medskip

L'alg\`ebre $\Hbt$ poss\`ede une $\NM\times\NM$-graduation naturelle, 
gr\^ace \`a laquelle on peut associer, \`a chaque morphisme de mono\"{\i}des 
$\NM \times \NM \to \ZM$ (ou $\NM \times \NM \to \NM$), une $\ZM$-graduation 
(ou une $\NM$-graduation).

\medskip

On bi-gradue (sur $\NM \times \NM$) 
l'alg\`ebre tensorielle \'etendue $\kb[\CCBt] \otimes \bigl(\Trm_\kb(V \oplus V^*) \rtimes W\bigr)$ 
en d\'ecr\'etant que les \'el\'ements de $V$ ont pour bi-degr\'e $(1,0)$, 
les \'el\'ements de $V^*$ ont pour bi-degr\'e $(0,1)$, les \'el\'ements de $\CCBt^*$ 
ont pour degr\'e $(1,1)$ et ceux de $W$ 
ont pour degr\'e $(0,0)$, alors les relations~\ref{relations-1} deviennent homog\`enes. 
Ainsi, $\Hbt$ h\'erite d'une $(\NM \times \NM)$-graduation dont nous noterons 
$\Hbt^{\NM \times \NM}[i,j]$ \indexnot{H}{\Hbt^{\NM \times \NM}[i,j],~\Hbt^\ph[i],~\Hbt^\NM[i],\Hbt^\ZM[i]}  
la composante homog\`ene de bi-degr\'e $(i,j)$. Alors
$$\Hbt=\mathop{\bigoplus}_{(i,j) \in \NM \times \NM} 
\Hbt^{\NM \times \NM}[i,j]\quad\text{et}\quad \Hbt^{\NM \times \NM}[0,0]=\kb W.$$
D'autre part, chaque composante homog\`ene est de dimension finie sur $\kb$. 

\medskip

Si $\ph : \NM \times \NM \to \ZM$ est un morphisme de mono\"{\i}des, 
alors $\Hbt$ h\'erite d'une $\ZM$-graduation dont nous noterons $\Hbt^\ph[i]$ 
la composante homog\`ene de degr\'e $i$~:
$$\Hbt^\ph[i]=\mathop{\bigoplus}_{\ph(a,b) = i} \Hbt^{\NM \times \NM}[a,b].\indexnot{H}{\Hbt_\ph[i]}$$
Dans cette graduation, les \'el\'ements de $V$ ont pour degr\'e $\ph(1,0)$, 
les \'el\'ements de $V^*$ ont pour degr\'e $\ph(0,1)$, les \'el\'ements de $\CCBt^*$ 
ont pour degr\'e $\ph(1,1)$ et ceux de $W$ ont pour degr\'e $0$.

\bigskip

\begin{exemple}[$\ZM$-graduation]\label{Z graduation-1}
Le morphisme de mono\"{\i}des $\NM \times \NM \to \ZM$, $(i,j) \mapsto j-i$ induit 
une $\ZM$-graduation sur $\Hbt$ pour laquelle les \'el\'ements de $V$ ont pour degr\'e $-1$, 
les \'el\'ements de $V^*$ ont pour degr\'e $1$ et les \'el\'ements de $\CCBt^*$ et $W$ 
ont pour degr\'e $0$. Nous noterons $\Hbt^\ZM[i]$ la composante 
homog\`ene de degr\'e $i$. Alors
$$\Hbt=\mathop{\bigoplus}_{i \in \ZM} \Hbt^\ZM[i].$$
Par sp\'ecialisation en tout $(t,c) \in \CCBt$, l'alg\`ebre $\Hbt_{t,c}$ h\'erite d'une $\ZM$-graduation 
dont la composante homog\`ene de degr\'e $i$ sera not\'ee $\Hbt_{t,c}^\ZM[i]$.\finl
\end{exemple}

\bigskip

\begin{exemple}[$\NM$-graduation]\label{N graduation-1}
Le morphisme de mono\"{\i}des $\NM \times \NM \to \NM$, $(i,j) \mapsto i+j$ induit 
une $\NM$-graduation sur $\Hbt$ pour laquelle les \'el\'ements de $V$ ou de $V^*$ 
ont pour degr\'e $1$, les \'el\'ements de $\CCBt^*$ ont pour degr\'e $2$ et les 
\'el\'ements de $W$ ont pour degr\'e $0$. 
Nous noterons $\Hbt^\NM[i]$ la composante homog\`ene de degr\'e $i$. Alors
$$\Hbt=\mathop{\bigoplus}_{i \in \NM} \Hbt^\NM[i]\quad\text{et}\quad \Hbt^\NM[0]=\kb W.$$
Notons que $\dim_\kb \Hbt^\NM[i] < \infty$ pour tout $i$. 
Cette graduation ne s'h\'erite pas par sp\'ecialisation en tout $(t,c) \in \CCBt$~: 
elle s'h\'erite seulement par la sp\'ecialisation en $(0,0)$~: on retrouve 
la $\NM$-graduation usuelle sur $\Hbt_{0,0} = \kb[V \times V^*] \rtimes W$ 
(voir l'exemple~\ref{exemple zero-1}).\finl
\end{exemple}

\bigskip

\section{Filtration}\label{section:filtration-1}

\medskip

Munissons l'alg\`ebre tensorielle \'etendue $\kb[\CCBt] \otimes 
\bigl(\Trm_\kb(V \oplus V^*) \rtimes W\bigr)$ 
de la graduation pour laquelle les \'el\'ements de $V$ et $V^*$ ont pour degr\'e $1$, 
tandis que les \'el\'ements de $\CCBt^*$ et $W$ ont pour degr\'e $0$. Si $n \in \NM$, 
notons $\FC_n \Hbt$ \indexnot{F}{\FC_n\Hbt}  l'image dans $\Hbt$ de l'espace form\'e des \'el\'ements de degr\'e 
$\le n$. Alors 
$$\FC_0 \Hbt = \kb[\CCBt] \otimes \kb W \subset \FC_1 \Hbt \subset \FC_2\Hbt \subset \cdots 
\subset \Hbt \qquad\text{et}\qquad \bigcup_{m \ge 0} \FC_m \Hbt = \Hbt.$$
On a 
$$\FC_l \Hbt \cdot \FC_m \Hbt \subset \FC_{l+m} \Hbt,$$
ce qui d\'efinit sur $\Hbt$ une filtration $\FC$. Nous noterons $\grad_\FC(\Hbt)$ 
l'alg\`ebre gradu\'ee associ\'ee~: le th\'eor\`eme~\ref{PBW-1} 
fournit un isomorphisme de $\kb[\CCBt]$-alg\`ebres 
\equat\label{gradue-1}
\kb[\CCBt] \otimes (\kb[V \times V^*] \rtimes W) \longiso \grad_\FC(\Hbt). 
\endequat

Par sp\'ecialisation en $(t,c) \in \CCBt$, la filtration $\FC$ de $\Hbt$ induit 
une filtration (que nous noterons encore $\FC$) de $\Hbt_{t,c}$, et 
\equat\label{gradue c-1}
\grad_\FC(\Hbt_{t,c})\simeq \kb[V \times V^*] \rtimes W.
\endequat

\bigskip

\section{\'El\'ement d'Euler}\label{section:eulertilde}

\medskip

Soit $(x_1,\dots,x_n)$ une $\kb$-base de $V^*$ et soit $(y_1,\dots,y_n)$ 
sa base duale. Posons 
$$\eulertilde=~-nT + \sum_{i=1}^n y_i x_i + \sum_{s \in \Ref(W)} C_s s ~= 
\sum_{i=1}^n x_i y_i + \sum_{s \in \Ref(W)} \e(s) C_s s \in \Hbt.\indexnot{ea}{\eulertilde}$$
Alors $\eulertilde$ est appel\'e l'{\it \'el\'ement d'Euler g\'en\'erique} de $\Hbt$. 
Il est facile de v\'erifier que $\eulertilde$ ne d\'epend pas du choix 
de la base $(x_1,\dots,x_n)$ de $V^*$. Notons que
\equat\label{eq:eulertilde-1-1}
\eulertilde \in \Hbt^{\NM \times \NM}[1,1]
\endequat
et rappelons le lemme classique suivant~\cite[\S 3.1(4)]{ggor}~:

\bigskip

\begin{lem}\label{lem:eulertilde}
Si $x \in V^*$, $y \in V$ et $w \in W$, alors 
$$[\eulertilde,x]=Tx,\qquad[\eulertilde,y]=-Ty\qquad\text{et}\qquad [\eulertilde,w]=0.$$
\end{lem}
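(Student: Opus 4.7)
Le plan est d'effectuer un calcul direct, en exploitant astucieusement la disponibilit\'e des deux expressions donn\'ees pour $\eulertilde$, \`a savoir
$$\eulertilde = -nT + \sum_{i=1}^n y_i x_i + \sum_{s \in \Ref(W)} C_s s = \sum_{i=1}^n x_i y_i + \sum_{s \in \Ref(W)} \e(s) C_s s.$$
(L'\'egalit\'e entre ces deux expressions r\'esulte de la relation $[y_i,x_i]$ de~(\ref{relations-1}) somm\'ee sur $i$, en utilisant $\sum_i \langle y_i,x_i\rangle = n$ et $\sum_i \frac{\langle y_i,\a_s\rangle\langle \a_s^\ve,x_i\rangle}{\langle \a_s^\ve,\a_s\rangle} = 1$ — cette derni\`ere identit\'e exprimant que $\sum_i y_i\otimes x_i$ est l'\'el\'ement canonique de $V\otimes V^*$ associ\'e \`a $\id_V$.) Pour chacune des trois identit\'es je choisirai l'expression de $\eulertilde$ la mieux adapt\'ee.

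Pour $[\eulertilde,w]=0$, j'argumenterai que chaque morceau de $\eulertilde$ est $W$-invariant. D'une part, $\sum_s C_s s$ commute \`a tout $w\in W$ car $s\mapsto C_s$ est constante sur les classes de conjugaison. D'autre part, $\sum_i y_i x_i$ est $W$-invariant dans $\Hbt$~: la conjugaison par $w$ donne $\sum_i w(y_i)\cdot w(x_i)$, et comme $\sum_i y_i\otimes x_i$ est l'\'el\'ement canonique de $V\otimes V^*$, il est invariant pour l'action diagonale de $W$, d'o\`u la conclusion.

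Pour $[\eulertilde,x]=Tx$ avec $x\in V^*$, j'utiliserai la seconde expression de $\eulertilde$. Comme $[x_i,x]=0$, la r\`egle de Leibniz et la relation~(\ref{relations-1}) donnent
$$\Bigl[\sum_i x_iy_i,x\Bigr] = \sum_i x_i[y_i,x] = Tx + \sum_{s}(\e(s)-1)C_s\frac{\langle \a_s^\ve,x\rangle}{\langle\a_s^\ve,\a_s\rangle}\,\a_s\, s,$$
en utilisant $\sum_i\langle y_i,x\rangle x_i=x$ et $\sum_i\langle y_i,\a_s\rangle x_i=\a_s$. Par ailleurs $[s,x]=(s(x)-x)s$, et~(\ref{action s V*}) donne $s(x)-x=-(1-\e(s)^{-1})\frac{\langle\a_s^\ve,x\rangle}{\langle\a_s^\ve,\a_s\rangle}\a_s$~; en multipliant par $\e(s)C_s$ et en sommant on obtient exactement l'oppos\'e du terme correctif ci-dessus, ce qui produit la cancellation et laisse $Tx$. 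Le cas $[\eulertilde,y]=-Ty$ se traite de fa\c{c}on enti\`erement sym\'etrique en utilisant cette fois la premi\`ere expression de $\eulertilde$ (le terme $-nT$ \'etant central) et la formule~(\ref{action s V}) \`a la place de~(\ref{action s V*}).

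L'obstacle, s'il y en a un, est purement comptable~: il s'agit de suivre scrupuleusement les signes et les facteurs $\e(s)$ et $\e(s)^{-1}$ issus des relations~(\ref{relations-1}), (\ref{action s V}) et~(\ref{action s V*}), et de v\'erifier que la combinaison $\e(s)\cdot(1-\e(s)^{-1}) = \e(s)-1$ fait appara\^{\i}tre exactement le coefficient oppos\'e \`a celui produit par la partie $\sum_i x_iy_i$. Le choix judicieux de l'expression de $\eulertilde$ adapt\'ee \`a chaque cas est la seule subtilit\'e de ce calcul.
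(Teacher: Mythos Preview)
Your proof is correct. The paper itself does not give a proof of this lemma but simply refers to it as classical, citing~\cite[\S 3.1(4)]{ggor}; your direct computation using the two alternative expressions for $\eulertilde$ (choosing whichever makes the commutation relations immediate) is exactly the standard verification one would carry out.
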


\bigskip

Dans~\cite{ggor}, l'\'el\'ement d'Euler joue un r\^ole primordial dans l'\'etude de 
la cat\'egorie $\OC$ associ\'ee \`a $\Hbt_{1,c}$. Nous verrons dans ce m\'emoire  
le r\^ole que joue cet \'el\'ement dans la th\'eorie des cellules de 
Calogero-Moser.

\bigskip

\begin{prop}\label{prop:eulertilde}
Si $h \in \Hbt^\ZM[i]$, alors $[\eulertilde,h]=iTh$.
\end{prop}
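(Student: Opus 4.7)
The plan is to show that both sides of the claimed identity define derivations of $\Hbt$ that agree on a generating set, hence must agree everywhere.

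First, I would observe that $D := [\eulertilde, -]$ is by construction an inner derivation of $\Hbt$. Next, I would introduce the $\kb[\CCBt]$-linear map $E : \Hbt \to \Hbt$ defined by $E(h) = iTh$ for $h \in \Hbt^\ZM[i]$, and check that it is also a derivation. This is a general fact about $\ZM$-graded algebras over a central base: if $h \in \Hbt^\ZM[i]$ and $h' \in \Hbt^\ZM[j]$, then $hh' \in \Hbt^\ZM[i+j]$, and since $T$ is central in $\Hbt$ (as $T \in \CCBt^* \subset \kb[\CCBt]$ sits in the center by the defining relations~\ref{relations-1}), one has
\[
E(hh') = (i+j)T\, hh' = (iTh)h' + h(jTh') = E(h)h' + hE(h').
\]

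Second, I would verify that $D$ and $E$ coincide on a set of generators of the $\kb$-algebra $\Hbt$. Thanks to the PBW decomposition of Theorem~\ref{PBW-1}, such a generating set is provided by $V \cup V^* \cup W \cup \CCBt^*$. For $x \in V^*$, $y \in V$, $w \in W$ the equality $D = E$ reduces exactly to Lemma~\ref{lem:eulertilde}: $[\eulertilde,x] = Tx = 1 \cdot Tx$ (and $x$ has $\ZM$-degree $1$), $[\eulertilde,y] = -Ty = (-1) \cdot Ty$ (and $y$ has $\ZM$-degree $-1$), $[\eulertilde,w] = 0 = 0 \cdot Tw$ (and $w$ has $\ZM$-degree $0$). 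For an element $f \in \CCBt^*$, we have $[\eulertilde,f] = 0$ since $\kb[\CCBt]$ is central in $\Hbt$, and $f$ has $\ZM$-degree $0$, so $E(f) = 0$ as well.

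Finally, since the two derivations $D$ and $E$ of $\Hbt$ agree on a generating set of $\Hbt$ as a $\kb$-algebra, they agree on all of $\Hbt$, which is exactly the claim $[\eulertilde,h] = iTh$ for all $h \in \Hbt^\ZM[i]$. There is no real obstacle here: the only point requiring any care is the verification that $E$ is a derivation, which rests on the centrality of $T$ (and more generally of $\kb[\CCBt]$) in $\Hbt$, itself an immediate consequence of the defining relations~\ref{relations-1}.
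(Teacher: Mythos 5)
Your proof is correct, and this is indeed the natural argument the paper leaves implicit (Proposition~\ref{prop:eulertilde} is stated without proof, immediately after Lemma~\ref{lem:eulertilde}, which supplies exactly the commutator relations you use on generators). The two key checks — that $E$ is a derivation, which rests on the centrality of $T$ in $\Hbt$, and that $V \cup V^* \cup W \cup \CCBt^*$ generates $\Hbt$ as a $\kb$-algebra (which follows from the presentation of $\Hbt$, or from the PBW decomposition) — are both sound, and the conclusion follows because the locus where two $\kb$-linear derivations agree is a $\kb$-subalgebra.
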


\bigskip

\section{Alg\`ebre sph\'erique}\label{section:spherique-1}

\medskip

\boitegrise{{\bf Notation.} 
{\it Tout au long de ce m\'emoire, nous noterons $e$ \indexnot{ea}{e}  l'idempotent primitif central 
de $\kb W$ d\'efini par
$$e=\frac{1}{|W|}\sum_{w \in W} w.$$
La $\kb[\CCBt]$-alg\`ebre $e \Hbt e$ sera appel\'ee l'{\bfit alg\`ebre sph\'erique 
g\'en\'erique}.}}{0.75\textwidth}

\medskip

Par sp\'ecialisation en $(t,c)$, et puisque $e\Hbt_{t,c} e$ est un facteur direct du $\kb[\CCBt]$-module 
$\Hbt$, on obtient que
\equat\label{spherique-1}
e \Hbt_{t,c} e = (\kb[\CCBt]/\CGt_{t,c}) \otimes_{\kb[\CCBt]} e\Hbt e.
\endequat
Puisque $e$ est de degr\'e $0$, la filtration $\FC$ induit une filtration (que nous 
noterons encore $\FC$) de l'alg\`ebre sph\'erique g\'en\'erique
$$e(\FC_0 \Hbt) e = \kb[\CCBt] \subset e(\FC_1 \Hbt) e \subset e(\FC_1 \Hbt) e 
\subset \cdots \quad\text{et}\quad 
\bigcup_{m \ge 0} e(\FC_m \Hbt)e = e\Hbt e.$$
On v\'erifie ais\'ement que
$$e (\FC_m \Hbt) e = \FC_m\Hbt \cap e\Hbt e$$
et que
\equat\label{spherique graduee-1}
\grad_\FC(e\Hbt e) = e \grad_\FC(\Hbt) e \simeq \kb[\CCBt] \otimes \kb[V \times V^*]^W.
\endequat
Il en d\'ecoule le th\'eor\`eme suivant, dont la preuve apr\`es sp\'ecialisation 
se trouve dans~\cite[th\'eor\`eme 1.5]{EG} mais dont tous les arguments s'adaptent ici 
mot pour mot au cas g\'en\'erique~:

\bigskip

\begin{theo}[Etingof-Ginzburg]\label{EG spherique-1}
Soit $\CGt$ un id\'eal premier de $\kb[\CCBt]$ et notons $\Hbt_\CGt=\Hbt/\CGt\Hbt$. Alors~:
\begin{itemize}
\itemth{a} L'alg\`ebre $e \Hbt_\CGt e$ est une $\kb$-alg\`ebre de type fini 
et sans diviseur de $0$.

\itemth{b} $\Hbt_\CGt e$ est un $e\Hbt_\CGt e$-module \`a droite de type fini.

\itemth{c} L'action naturelle \`a gauche de $\Hbt_\CGt$ sur le module projectif $\Hbt_\CGt e$ induit un isomorphisme 
$\Hbt_\CGt \stackrel{\sim}{\longto} \End_{e\Hbt_\CGt e}(\Hbt_\CGt e)$.

\itemth{d} Si $\kb[\CCBt]/\CGt$ est Gorenstein (respectivement Cohen-Macaulay), alors il en est de m\^eme de l'alg\`ebre 
$e\Hbt_\CGt e$ ainsi que du $e\Hbt_\CGt e$-module \`a droite $\Hbt_\CGt e$. 
\end{itemize}
\end{theo}

\bigskip
%

%
%
%

\section{Quelques automorphismes de $\Hbt$}\label{section:automorphismes-1}

\medskip

Notons $\Aut_{\kb\text{-}\alg}(\Hbt)$ le groupe des automorphismes de la $\kb$-alg\`ebre $\Hbt$.

\bigskip

\subsection{Bi-graduation}\label{subsection:bi-graduation-1}
De mani\`ere \'equivalente, la bi-graduation sur $\Hbt$ peut se voir comme 
une action du groupe $\kb^\times \times \kb^\times$ sur $\Hbt$. En effet, 
si $(\xi,\xi') \in \kb^\times \times \kb^\times$, on d\'efinit 
l'automorphisme $\gradauto_{\xi,\xi'}$ \indexnot{ba}{\gradauto_{\xi,\xi'}} de $\Hbt$ par la formule suivante~:
$$\forall~(i,j) \in \NM \times \NM,~\forall~h \in \Hbt^{\NM \times \NM}[i,j],~
\gradauto_{\xi,\xi'}(h)=\xi^i\xi^{\prime j} h.$$
Alors
\equat\label{graduation automorphisme-1}
\grad : \kb^\times \times \kb^\times \longto \Aut_{\kb\text{-}\alg}(\Hbt)
\endequat
est un morphisme de groupes. Concr\`etement,
$$
\begin{cases}
\forall~y \in V,~\gradauto_{\xi,\xi'}(y)=\xi y,\\
\forall~x \in V^*,~\gradauto_{\xi,\xi'}(x)=\xi' x,\\
\forall~C \in \CCBt^*,~\gradauto_{\xi,\xi'}(C)=\xi\xi' C,\\
\forall~w \in W,~\gradauto_{\xi,\xi'}(w)=w.\\
\end{cases}
$$
Par sp\'ecialisation, pour tout $\xi \in \kb^\times$ et 
pour tout $(t,c) \in \CCBt$, l'action de $(\xi,\xi)$ fournit un isomorphisme de $\kb$-alg\`ebres
\equat\label{xi c}
\Hbt_{t,c} \longiso \Hbt_{\xi t, \xi c}.
\endequat

\bigskip

\subsection{Caract\`eres lin\'eaires}\label{subsection:lineaires-1} 
Soit $\g : W \longto \kb^\times$ un caract\`ere lin\'eaire. Alors $\g$ induit 
un automorphisme de $\CCB$ par multiplication~: 
si $c \in \CCB$, on d\'efinit $\g \cdot c$ comme la fonction $\Ref(W) \to \kb$, $s \mapsto \g(s)c_s$. 
Cela induit un automorphisme $\g_\CCB : \kb[\CCB] \to \kb[\CCB]$, \indexnot{gz}{\g_\CCB}
$f \mapsto (c \mapsto f(\g^{-1} \cdot c))$, qui envoie en particulier 
$C_s$ sur $\g(s)^{-1} C_s$. On l'\'etend en un automorphisme $\g_\CCBt$ de $\kb[\CCBt]$ en posant 
$\g_\CCBt(T)=T$. 

D'autre part, $\g$ induit aussi un automorphisme de l'alg\`ebre de groupe $\kb W$, 
envoyant $w \in W$ sur $\g(w) w$. Ainsi, $\g$ induit un automorphisme 
de la $\kb[\CCBt]$-alg\`ebre $\kb[\CCBt] \otimes \bigl(\Trm_\kb(V \oplus V^*) \rtimes W\bigr)$ 
agissant trivialement sur $V$ et $V^*$~: nous le noterons $\g_\Trm$. Bien s\^ur,
$$(\g\g')_\Trm=\g_\Trm \g_\Trm'.$$
Les relations~\ref{relations-1} \'etant clairement stables par l'action de $\g_\Trm$, 
ce dernier induit par passage au quotient un automorphisme $\g_*$ de 
la $\kb$-alg\`ebre $\Hbt$. L'application
\equat\label{action caracteres lineaires-1}
\fonctio{W^\wedge}{\Aut_{\kb\text{-}\alg}(\Hbt)}{\g}{\g_*}
\endequat
est un morphisme injectif de groupes. Par sp\'ecialisation en $(t,c) \in \CCBt$ 
et si $\g \in W^\wedge$, alors $\g_*$ induit un isomorphisme de $\kb$-alg\`ebres
\equat\label{gamma c-1}
\Hbt_{t,c} \longiso \Hbt_{t,\g \cdot c}.
\endequat


\bigskip

\subsection{Normalisateur}\label{subsection:normalisateur-1}
Notons $\NC$ \indexnot{N}{\NC}  le normalisateur, dans $\GL_\kb(V)$, de $W$. Alors~:
\begin{itemize}
\item[$\bullet$] $\NC$ agit naturellement sur $V$ et $V^*$~;

\item[$\bullet$] $\NC$ agit sur $W$ par conjugaison~;

\item[$\bullet$] l'action de $\NC$ sur $W$ stabilise $\Ref(W)$ et donc $\NC$ agit 
sur $\CCB$~: si $g \in \NC$ et $c \in \CCB$, alors 
$\lexp{g}{c} : \Ref(W) \to \kb$, $s \mapsto c_{g^{-1}sg}$.

\item[$\bullet$] l'action de $\NC$ sur $\CCB$ induit une action de $\NC$ sur $\CCB^*$ 
(et donc sur $\kb[\CCB]$) telle que, si $g \in \NC$ et $s \in \Ref(W)$, alors 
$\lexp{g}{C_s}=C_{gsg^{-1}}$.

\item[$\bullet$] $\NC$ agit trivialement sur $T$. 
\end{itemize}
En cons\'equence, $\NC$ agit sur la $\kb[\CCBt]$-alg\`ebre 
$\kb[\CCBt] \otimes \bigl(\Trm_\kb(V \oplus V^*) \rtimes W\bigr)$ et 
il est facile de v\'erifier, gr\^ace aux relations~\ref{relations-1}, que 
cette action induit une action sur $\Hbt$~: si $g \in \NC$ et $h \in \Hbt$, 
nous noterons $\lexp{g}{h}$ l'image de $h$ par l'action de $g$.
Par sp\'ecialisation en $(t,c) \in \CCBt$, un \'el\'ement $g \in \NC$ induit un isomorphisme de $\kb$-alg\`ebres
\equat\label{N c}
\Hbt_{t,c} \longiso \Hbt_{t,\lexp{g}{c}}.
\endequat

\bigskip
%
%
%

\begin{exemple}\label{Z graduation et normalisateur-1}
Si $\xi \in \kb^\times$, alors on peut voir $\xi$ comme un automorphisme de $V$ 
(par homoth\'etie) normalisant (et m\^eme centralisant) $W$. On retrouve 
alors l'automorphisme de $\Hbt$ induisant la $\ZM$-graduation (au signe pr\`es)~: 
si $h \in \Hbt$, alors $\lexp{\xi}{h}=\gradauto_{\xi,\xi^{-1}}(h)$.\finl
\end{exemple}

\bigskip

\subsection{Compilation}\label{subsection:compilation-1} 
Les automorphismes induits par $\kb^\times \times \kb^\times$ et 
ceux induits par $W^\wedge$ commutent. D'autre part, le 
groupe $\NC$ agit sur le groupe $W^\wedge$ et sur la $\kb$-alg\`ebre $\Hbt$. 
On v\'erifie facilement que cela induit une action de $W^\wedge \rtimes \NC$ 
sur $\Hbt$. De plus, cette action respecte la bi-graduation, c'est-\`a-dire commute avec 
l'action de $\kb^\times \times \kb^\times$. 
Si $\g \in W^\wedge$ et $g \in \NC$, nous noterons $\g \rtimes g$ l'\'el\'ement de 
$W^\wedge \rtimes \NC$ correspondant. 
On a donc un morphisme de groupes
$$\fonctio{\kb^\times \times \kb^\times \times (W^\wedge \rtimes \NC)}{\Aut_{\kb\text{-}\alg}(\Hbt)}{
(\xi,\xi',\g \rtimes g)}{(h \mapsto \gradauto_{\xi,\xi'} \circ \g_*(\lexp{g}{h})).}$$
Si $\t=(\xi,\xi',\g \rtimes g) \in \kb^\times \times \kb^\times \times (W^\wedge \rtimes \NC)$ 
et si $h \in \Hbt$, nous poserons
$$\lexp{\t}{h}=\g_*(\gradauto_{\xi,\xi'}(\lexp{g}{h})).$$\indexnot{ha}{{{^\t{h}}}}
Le lemme suivant est imm\'ediat~:

\bigskip

\begin{lem}\label{lem:automorphismes-1}
Soit $\t=(\xi,\xi',\g \rtimes g) \in \kb^\times \times \kb^\times \times (W^\wedge \rtimes \NC)$. 
Alors~:
\begin{itemize}
\itemth{a} $\t$ stabilise les sous-alg\`ebres $\kb[\CCBt]$, $\kb[V]$, $\kb[V^*]$ et $\kb W$.

\itemth{b} $\t$ respecte la bi-graduation.

\itemth{c} $\lexp{\t}{\eulertilde}=\xi\xi' \eulertilde$.

\itemth{d} $\lexp{\t}{e}=e$ si et seulement si $\g=1$.
\end{itemize}
\end{lem}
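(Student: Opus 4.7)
Le point de d\'epart est l'observation que $\lexp{\t}{-}$ est la compos\'ee $\g_* \circ \gradauto_{\xi,\xi'} \circ (g\text{-action})$ des trois types d'automorphismes d\'ecrits dans les sections~\ref{subsection:bi-graduation-1}, \ref{subsection:lineaires-1}~et~\ref{subsection:normalisateur-1}. Mon plan consiste \`a v\'erifier chaque assertion en \'etudiant s\'epar\'ement ces trois facteurs sur les g\'en\'erateurs.

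Pour (a) et (b), je commencerais par rappeler que chacun des trois facteurs stabilise les sous-alg\`ebres de l'\'enonc\'e~: l'action de $\gradauto_{\xi,\xi'}$ est diagonale pour la bi-graduation et pr\'eserve donc trivialement $\kb[\CCBt]$, $\kb[V]$, $\kb[V^*]$ et $\kb W$~; l'action de $\g_*$ agit par un scalaire sur les g\'en\'erateurs de $\kb[\CCBt]$ et $\kb W$ et trivialement sur $V$ et $V^*$~; l'action de $g \in \NC$ permute respectivement $V$, $V^*$, $\Ref(W)$ (donc les $C_s$) et $W$ (par conjugaison), fixant $T$. Ceci donne (a). Pour (b), il suffit de constater que sur chacun des g\'en\'erateurs homog\`enes de $\Hbt$ (les \'el\'ements de $V$, $V^*$, $\CCBt^*$, $W$, de bi-degr\'es respectifs $(1,0)$, $(0,1)$, $(1,1)$ et $(0,0)$), chaque facteur envoie un \'el\'ement homog\`ene sur un multiple scalaire d'un \'el\'ement de m\^eme bi-degr\'e.

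Pour (c), j'utiliserais la formule explicite $\eulertilde = -nT + \sum_i y_ix_i + \sum_{s \in \Ref(W)} C_s s$. Cet \'el\'ement est dans $\Hbt^{\NM\times\NM}[1,1]$ d'apr\`es~(\ref{eq:eulertilde-1-1}), donc $\gradauto_{\xi,\xi'}(\eulertilde)=\xi\xi'\eulertilde$. L'invariance par $g \in \NC$ d\'ecoule de trois observations~: $T$ est fix\'e~; la somme $\sum_i y_ix_i$ est ind\'ependante du choix de la base $(x_i)$ de $V^*$, donc $g$-invariante~; et dans $\sum_s C_s s$, l'action simultan\'ee $s \mapsto gsg^{-1}$ et $C_s \mapsto C_{gsg^{-1}}$ se compense apr\`es r\'eindexation. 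Enfin, l'invariance par $\g_*$ repose sur le calcul crucial $\g_*(C_s s) = \g(s)^{-1} C_s \cdot \g(s) s = C_s s$~; c'est pr\'ecis\'ement cette compensation qui motive la d\'efinition simultan\'ee de $\g_*$ sur $\CCB^*$ et sur $\kb W$. En combinant les trois, on obtient $\lexp{\t}{\eulertilde}=\xi\xi'\eulertilde$.

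Pour (d), on observe d'abord que $e \in \Hbt^{\NM\times\NM}[0,0]$, donc $\gradauto_{\xi,\xi'}(e)=e$~; et que la conjugaison par $g \in \NC$ permute $W$, ce qui fixe $e$. Il reste \`a calculer
$$\lexp{\t}{e} = \g_*(e) = \frac{1}{|W|}\sum_{w \in W} \g(w) w.$$
Cet \'el\'ement est l'idempotent central primitif de $\kb W$ associ\'e au caract\`ere lin\'eaire $\g$~; il co\"{\i}ncide avec $e = \frac{1}{|W|}\sum_w w$ si et seulement si $\g(w)=1$ pour tout $w \in W$, c'est-\`a-dire $\g=1$. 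L'ensemble de la preuve se ram\`ene donc \`a des v\'erifications directes sur les g\'en\'erateurs~; il n'y a pas d'obstacle s\'erieux, le seul point \`a ne pas manquer \'etant la compensation dans $\g_*(C_s s)$ au point~(c).
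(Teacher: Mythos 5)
Votre preuve est correcte et consiste \`a expliciter, facteur par facteur sur les g\'en\'erateurs, ce que le texte d\'eclare imm\'ediat (le lemme est introduit dans le m\'emoire par la phrase \og Le lemme suivant est imm\'ediat \fg\ sans d\'emonstration d\'etaill\'ee)~; c'est bien la seule approche raisonnable et elle co\"{\i}ncide avec l'intention des auteurs. Petite imprécision sans cons\'equence au point~(d)~: $\frac{1}{|W|}\sum_w\g(w)w$ est l'idempotent central associ\'e \`a $\g^{-1}$ (et non \`a $\g$) selon la convention usuelle, mais l'\'equivalence avec $\g=1$ que vous en tirez reste \'evidemment valide.
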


%
%

\section{Sp\'ecificit\'e des groupes de Coxeter}\label{section:coxeter-htilde}

\medskip

\cbstart

\boitegrise{{\bf Hypoth\`ese.} 
{\it Dans cette section, et seulement dans cette section, nous supposons 
que $W$ est un groupe de Coxeter, et nous reprenons les notations du 
chapitre~\ref{chapter:coxeter}.}}{0.75\textwidth}

\bigskip

D'apr\`es la proposition~\ref{prop:coxeter}, il existe une forme bilin\'eaire 
sym\'etrique non d\'eg\'en\'er\'ee $W$-invariante $\betb : V \times V \to \kb$. \indexnot{bz}{\betb}  
Nous noterons $\s : V \stackrel{\sim}{\longto} V^*$ \indexnot{sz}{\s}  l'isomorphisme 
induit par $\betb$~: si $y$, $y' \in V$, alors 
$$\langle y,\s(y') \rangle = \betb(y,y').$$
La $W$-invariance de $\betb$ implique que $\s$ est un isomorphisme de $\kb W$-modules 
et la sym\'etrie de $\betb$ implique que 
\equat\label{eq:sigma-sigma-inverse}
\langle y , x \rangle = \langle \s^{-1}(x),\s(y) \rangle
\endequat
pour tous $x \in V^*$ et $y \in V$. Par suite, notons 
$\s_\Trm : \Trm_\kb(V \oplus V^*) \to \Trm_\kb(V \oplus V^*)$ l'isomorphisme d'alg\`ebres induit par l'automorphisme de 
l'espace vectoriel $V \oplus V^*$ d\'efini par $(y , x) \mapsto (-\s^{-1}(x), \s(y))$. Il est $W$-invariant 
donc il s'\'etend en un automorphisme de $\Trm_\kb(V \oplus V^*) \rtimes W$, 
en agissant trivialement sur $W$. Ce dernier s'\'etend ensuite par extension des 
scalaires en un automorphisme, toujours not\'e $\s_\Trm$, de 
$\kb[\CCBt] \otimes (\Trm_\kb(V \oplus V^*) \rtimes W)$. 
Il est facile de v\'erifier que $\s_\Trm$ induit un automorphisme $\s_\Hbt$ \indexnot{sz}{\s_\Hbt}  de $\Hbt$. 
On a donc montr\'e la proposition suivante~:

\bigskip

\begin{prop}\label{prop:auto-coxeter-1}
Il existe un unique automorphisme de $\kb$-alg\`ebres $\s_\Hbt$ de $\Hbt$ tel que 
$$
\begin{cases}
\s_\Hbt(y)=\s(y) & \text{si $y \in V$,}\\
\s_\Hbt(x)=-\s^{-1}(x) & \text{si $x \in V^*$,}\\
\s_\Hbt(w)=w & \text{si $w \in W$,}\\
\s_\Hbt(C)=C & \text{si $C \in \CCBt^*$.}\\
\end{cases}
$$
\end{prop}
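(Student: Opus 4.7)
Uniqueness is immediate: by the PBW decomposition (Theorem~\ref{PBW-1}), $\Hbt$ is generated as a $\kb$-algebra by $V \cup V^* \cup W \cup \CCBt^*$, so any $\kb$-algebra endomorphism is determined by its values on these generators. For existence, the plan is to verify that the algebra automorphism $\sigma_\Trm$ defined above on $\kb[\CCBt]\otimes(\Trm_\kb(V\oplus V^*)\rtimes W)$ preserves the ideal generated by the relations~\eqref{relations-1}, so that it descends to an automorphism $\sigma_\Hbt$ of the quotient $\Hbt$. Once such an endomorphism exists, the fact that $\sigma_\Hbt^2$ acts by $-\Id$ on $V\oplus V^*$ and trivially on $W$ and $\CCBt^*$ (using \eqref{eq:sigma-sigma-inverse}) shows that $\sigma_\Hbt$ is an automorphism.

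\smallskip

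First, I would check that $\sigma_\Trm$ is well defined on the crossed product $\Trm_\kb(V\oplus V^*)\rtimes W$: this amounts to observing that the linear map $(y,x)\mapsto(-\sigma^{-1}(x),\sigma(y))$ is $W$-equivariant, which follows from the fact that $\sigma:V\to V^*$ is an isomorphism of $\kb W$-modules (by $W$-invariance of $\betb$). Extension by $\CCBt^*\hookrightarrow\kb[\CCBt]$ poses no problem since $\sigma_\Trm$ acts as the identity on $\kb[\CCBt]$.

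\smallskip

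The heart of the argument is checking the three types of relations. The relations $[x,x']=0$ become $[\sigma^{-1}(x),\sigma^{-1}(x')]=0$ which is automatic since $\sigma^{-1}(V^*)=V$ is commutative in $\Hbt$; similarly for $[y,y']=0$. For the mixed relation, applying $\sigma_\Trm$ to $[y,x]$ gives $[\sigma(y),-\sigma^{-1}(x)]=[\sigma^{-1}(x),\sigma(y)]$, and using the defining relation \eqref{relations-1} for $[\sigma^{-1}(x),\sigma(y)]$ together with \eqref{eq:sigma-sigma-inverse} one has $\langle\sigma^{-1}(x),\sigma(y)\rangle=\langle y,x\rangle$, so the term involving $T$ matches $\sigma_\Trm(T\langle y,x\rangle)$. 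What remains is to match, for each $s\in\Ref(W)$, the reflection term:
\[
\frac{\langle\sigma^{-1}(x),\a_s\rangle\cdot\langle\a_s^\vee,\sigma(y)\rangle}{\langle\a_s^\vee,\a_s\rangle}
\;=\;\frac{\langle y,\a_s\rangle\cdot\langle\a_s^\vee,x\rangle}{\langle\a_s^\vee,\a_s\rangle}.
\]
This is the main (and only genuine) obstacle. To handle it I would use that $\sigma$ is $W$-equivariant: since $\a_s^\vee$ spans the $(-1)$-eigenspace of $s$ acting on $V$, its image $\sigma(\a_s^\vee)$ spans the $(-1)$-eigenspace of $s$ on $V^*$, which is $\kb\a_s$. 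So there exists $\lambda_s\in\kb^\times$ with $\sigma(\a_s^\vee)=\lambda_s\a_s$, equivalently $\sigma^{-1}(\a_s)=\lambda_s^{-1}\a_s^\vee$. Applying \eqref{eq:sigma-sigma-inverse} twice then yields $\langle\sigma^{-1}(x),\a_s\rangle=\langle\sigma^{-1}(\a_s),x\rangle=\lambda_s^{-1}\langle\a_s^\vee,x\rangle$ and $\langle\a_s^\vee,\sigma(y)\rangle=\langle y,\sigma(\a_s^\vee)\rangle=\lambda_s\langle y,\a_s\rangle$, and the two $\lambda_s$ factors cancel, giving precisely the desired equality.

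\smallskip

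Finally, it remains to observe that $\sigma_\Trm$ commutes with the $W$-action on $\Trm_\kb(V\oplus V^*)\rtimes W$ (it does, by $W$-equivariance of $\sigma$), and that $\sigma_\Trm$ acts trivially on both $W$ and $\CCBt^*$, so the formula for $\sigma_\Hbt$ on these generators is exactly what the statement prescribes. This concludes the construction; the inverse is constructed symmetrically (or by noting $\sigma_\Hbt^4=\Id$ on generators, hence on all of $\Hbt$).
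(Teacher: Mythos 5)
Your proof is correct and follows exactly the construction the paper sets up just before the statement ($\sigma$ is a $\kb W$-module isomorphism by $W$-invariance of $\betb$; \eqref{eq:sigma-sigma-inverse} by symmetry; extend $\sigma_\Trm$ to the crossed product by $W$-equivariance, then to $\kb[\CCBt]$ by scalars), and simply carries out the verification of the relations~\eqref{relations-1} that the paper dismisses as ``facile \`a v\'erifier''. As a small simplification, the $\lambda_s$-bookkeeping with $\a_s$ and $\a_s^\ve$ can be avoided by using the $\a$-free form~\eqref{eq:relations-1-sans-alpha} of the commutator relation: one only needs $\langle (s-1)\s^{-1}(x),\s(y)\rangle=\langle (s-1)y,x\rangle$, which follows directly from \eqref{eq:sigma-sigma-inverse}, the $W$-equivariance of $\s$, and the $W$-invariance of the pairing $\langle\,,\rangle$.
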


\bigskip

\begin{prop}\label{prop-bis:auto-coxeter-1}
On a les propri\'et\'es suivantes~:
\begin{itemize}
\itemth{a} $\s_\Hbt$ stabilise les sous-alg\`ebres $\kb[\CCBt]$ et $\kb W$ et \'echange 
les sous-alg\`ebres $\kb[V]$ et $\kb[V^*]$.

\itemth{b} Si $h \in \Hbt^{\NM \times \NM}[i,j]$, 
alors $\s_\Hbt(h) \in \Hbt^{\NM \times \NM}[j,i]$.

\itemth{c} Si $h \in \Hbt^\NM[i]$ (respectivement $h \in \Hbt^\ZM[i]$), 
alors $\s_\Hbt(h) \in \Hbt^\NM[i]$ (respectivement $\s_\Hbt(h) \in \Hbt^\ZM[-i]$).

\itemth{d} $\s_\Hbt$ commute \`a l'action de $W^\wedge$ sur $\Hbt$.

\itemth{e} Si $(t,c) \in \CCBt$, alors $\s_\Hbt$ induit un automorphisme 
de $\Hbt_{t,c}$, toujours not\'e $\s_\Hbt$ (ou $\s_{\Hbt_{t,c}}$) si n\'ecessaire).

\itemth{f} $\s_\Hbt(\eulertilde)=nT - \eulertilde$.
\end{itemize}
\end{prop}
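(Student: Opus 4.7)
The plan is to verify each of the six assertions directly on the PBW generators of $\Hbt$, namely $V$, $V^*$, $\kb W$ and $\CCBt^*$, using the explicit defining formulas for $\s_\Hbt$ recalled in Proposition~\ref{prop:auto-coxeter-1}. Since $\s_\Hbt$ is a $\kb$-algebra automorphism, its effect on arbitrary elements is determined by its effect on these generators.

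Assertions (a)--(d) are essentially bookkeeping. For (a), the defining formulas show that $\s_\Hbt$ sends $V$ bijectively onto $V^*$ and $V^*$ onto $V$ inside $\Hbt$, while fixing $\kb W$ and $\CCBt^*$ pointwise; this immediately extends multiplicatively to the corresponding subalgebras. For (b), one notes that $V$ sits in bidegree $(1,0)$, $V^*$ in $(0,1)$, $\CCBt^*$ in $(1,1)$, and $W$ in $(0,0)$, so $\s_\Hbt$ sends every generator to one whose bidegree has its two coordinates swapped; by multiplicativity, this swap propagates to every homogeneous element. Then (c) is a direct consequence, since the $\NM$-grading corresponds to $(i,j) \mapsto i+j$ (invariant under the swap) while the $\ZM$-grading corresponds to $(i,j) \mapsto j-i$ (negated by the swap). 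For (d), the commutation $\s_\Hbt \circ \g_* = \g_* \circ \s_\Hbt$ is checked on each set of generators: on $V$ and $V^*$ the automorphism $\g_*$ acts as the identity; on $\kb W$ both compositions multiply $w$ by $\g(w)$; and on $\CCBt^*$ the automorphism $\s_\Hbt$ acts as the identity, so commutation is trivial. For (e), since $\s_\Hbt$ fixes $\kb[\CCBt]$ pointwise, it preserves the ideal $\CGt_{t,c}\Hbt$ and therefore descends to a $\kb$-algebra automorphism of $\Hbt_{t,c}$.

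The substance of the proposition lies in (f). Starting from the expression $\eulertilde = -nT + \sum_i y_i x_i + \sum_s C_s s$ and applying $\s_\Hbt$ termwise, one obtains an expression of the form $-nT - \sum_i \s(y_i)\s^{-1}(x_i) + \sum_s C_s s$. Setting $x_i' := \s(y_i) \in V^*$ and $y_i' := \s^{-1}(x_i) \in V$, identity~(\ref{eq:sigma-sigma-inverse}) shows that $(y_i')$ and $(x_i')$ form a dual pair of bases of $V$ and $V^*$. The essential point is then that the element $\sum_i x_i' y_i' \in \Hbt$, read through the second expression for $\eulertilde$ recorded in Section~\ref{section:eulertilde}, is basis-independent; it equals $\eulertilde - \sum_s \e(s) C_s s$. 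Substituting $\e(s) = -1$, valid by Lemma~\ref{lem:coxeter-2} since $W$ is a Coxeter group, and simplifying produces the announced identity.

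No step is genuinely difficult. The only potential obstacle is the sign bookkeeping in~(f), where one must combine carefully the sign inherent in the defining formula $\s_\Hbt(x) = -\s^{-1}(x)$ with the two equivalent expressions for $\eulertilde$ and the Coxeter specialisation $\e(s)=-1$; once these are aligned the identity falls out immediately.
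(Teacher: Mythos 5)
Your verification of items (a) through (e) is correct: each follows by inspecting $\s_\Hbt$ on the PBW generators $V$, $V^*$, $\kb W$, $\CCBt^*$, exactly as you describe, and the bidegree swap $(i,j)\leftrightarrow(j,i)$ gives (b) and (c) immediately.

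For (f), however, when your argument is carried out to the end it does \emph{not} reproduce the formula stated in the proposition. With $x_i' := \s(y_i) \in V^*$ and $y_i' := \s^{-1}(x_i) \in V$, you correctly observe that $(y_i')$ and $(x_i')$ form a dual pair, so by the basis-independence underlying the two expressions for $\eulertilde$,
$$\sum_i x_i' y_i' = \sum_i x_i y_i = \eulertilde - \sum_{s \in \Ref(W)} \e(s)\,C_s\,s.$$
Substituting into $\s_\Hbt(\eulertilde) = -nT - \sum_i x_i' y_i' + \sum_s C_s\, s$ gives
$$\s_\Hbt(\eulertilde) = -nT - \eulertilde + \sum_{s \in \Ref(W)} \bigl(1 + \e(s)\bigr)\,C_s\,s,$$
and since every reflection of a Coxeter group has $\e(s)=-1$, the last sum vanishes, yielding
$$\s_\Hbt(\eulertilde) = -nT - \eulertilde,$$
with the opposite sign on the $nT$-term from what the proposition asserts. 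A rank-one check confirms this: take $W$ of type $A_1$, $V = \kb y$, $x$ dual to $y$, $\s(y)=x$; then $\eulertilde = -T + yx + C_s s = xy - C_s s$ and $\s_\Hbt(\eulertilde) = -T + x\cdot(-y) + C_s s = -T - (\eulertilde + C_s s) + C_s s = -T - \eulertilde$. So your closing sentence, asserting that the simplification ``produces the announced identity,'' is not accurate: the computation you set up actually contradicts the printed statement, and you should have flagged the discrepancy rather than gliding past it. Note that reducing modulo $T$ gives $\s_\Hb(\euler)=-\euler$ either way, so the $t=0$ version (Proposition~\ref{prop-bis:auto-coxeter-0}) does not detect which sign is correct; the sign on $nT$ must be checked directly, as above.
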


\begin{rema}[Actions de $\Gb\Lb_2(\kb)$ et $\Sb\Lb_2(\kb)$]\label{rem:sl2-1}
Soit $\r = \begin{pmatrix} a & b \\ c & d \end{pmatrix} \in \Gb\Lb_2(\kb)$. 
Alors l'application $\kb$-lin\'eaire 
$$\fonctio{V \oplus V^*}{V \oplus V^*}{y \oplus x}{ay+b\s^{-1}(x) \oplus c \s(y) + dx}$$
est un automorphisme du $\kb W$-module $V \oplus V^*$. Elle s'\'etend donc en un automorphisme 
de la $\kb$-alg\`ebre $\Trm_\kb(V \oplus V^*) \rtimes W$ et en un 
automorphisme $\r_\Trm$ de $\kb[\CCBt] \otimes (\Trm_\kb(V \oplus V^*) \rtimes W)$ 
par $\r_\Trm(C)=\det(\r) C$ si $C \in \CCBt^*$. 

Il est alors facile de v\'erifier que $\r_\Trm$ induit un automorphisme $\r_\Hbt$ \indexnot{rz}{\r_\Hbt}  
de $\Hbt$. De plus, $(\r \r')_\Hbt=\r_\Hbt \circ \r_\Hbt'$ pour tous $\r$, $\r' \in \Gb\Lb_2(\kb)$, 
ce qui d\'efinit une action de $\Gb\Lb_2(\kb)$ sur $\Hbt$. Cette action respecte la $\NM$-graduation 
$\Hbt^\NM$. 

Pour finir, notons que, pour $\r=\begin{pmatrix} 0 & -1 \\ 1 & 0 \end{pmatrix}$, on a 
$\r_\Hbt=\s_\Hbt$ et, si $\r=\begin{pmatrix} \xi & 0 \\ 0 & \xi' \end{pmatrix}$, alors 
$\r_{\Hbt}=\gradauto_{\xi,\xi'}$. On a ainsi \'etendu l'action de 
$\kb^\times \times \kb^\times \times (W^\wedge \rtimes \NC)$ en une action de 
$\Gb\Lb_2(\kb) \times (W^\wedge \rtimes \NC)$.\finl
\end{rema}

\cbend

\chapter{Alg\`ebre de Cherednik en $t=0$}\label{chapter:cherednik-0}

\boitegrise{{\bf Notation.} {\it Nous posons $\Hb=\Hbt/T\Hbt$. \indexnot{H}{\Hb}  La $\kb$-alg\`ebre 
$\Hb$ est appel\'ee l'{\bfit alg\`ebre de Cherednik g\'en\'erique en $t=0$}.}}{0.75\textwidth}

\bigskip

\section{G\'en\'eralit\'es}

\medskip

Nous rassemblons ici toutes les propri\'et\'es qui se d\'eduisent imm\'ediatement 
de ce que nous avons \'etabli dans le chapitre pr\'ec\'edent~\ref{chapter:cherednik-1}. 
Nous en profitons pour fixer quelques notations.

Tout d'abord, les relations~\ref{relations-1} s'\'ecrivent maintenant ainsi. 
L'alg\`ebre $\Hb$ est la $\kb[\CCB]$-alg\`ebre quotient 
de $\kb[\CCB] \otimes \bigl(\Trm_\kb(V \oplus V^*) \rtimes W\bigr)$ 
par l'id\'eal engendr\'e par les relations suivantes~:
\equat\label{relations-0}\begin{cases}
[x,x']=[y,y']=0, \\
\\
[y,x] = \DS{\sum_{s \in \Ref(W)} (\e(s)-1)\hskip1mm C_s 
\hskip1mm\frac{\langle y,\a_s \rangle \cdot \langle \a_s^\ve,x\rangle}{\langle \a_s^\ve,\a_s\rangle}
\hskip1mm s,} 
\end{cases}\endequat
pour $x$, $x' \in V^*$ et $y$, $y' \in V$. 

La d\'ecomposition PBW (th\'eor\`eme~\ref{PBW-1}) se r\'e\'ecrit ainsi~:

\bigskip

\begin{theo}[Etingof-Ginzburg]\label{PBW-0}
L'application $\kb[\CCB]$-lin\'eaire $\kb[\CCB] \otimes \kb[V] \otimes \kb W 
\otimes \kb[V^*] \longto \Hb$ induite par la multiplication est un isomorphisme de 
$\kb$-espaces vectoriels.
\end{theo}

\bigskip

Si $c \in \CCB$, nous noterons $\CG_c$ \indexnot{C}{\CG_c} l'id\'eal maximal de $\kb[\CCB]$ d\'efini 
par $\CG_c=\{f \in \kb[\CCB]~|~f(c)=0\}$~: c'est l'id\'eal engendr\'e par 
$(C_s-c_s)_{s \in \refw}$. Posons alors 
$$\Hb_c = (\kb[\CCB]/\CG_c) \otimes_{\kb[\CCB]} \Hb = \Hb/\CG_c \Hb = \Hbt_{0,c}.\indexnot{H}{\Hb_c}$$
La $\kb$-alg\`ebre $\Hb_c$ est le quotient de la $\kb$-alg\`ebre 
$\Trm_\kb(V \oplus V^*) \rtimes W$ par l'id\'eal engendr\'e par les relations suivantes~: 
\equat\label{relations specialisees-0}\begin{cases}
[x,x']=[y,y']=0, \\
\\
[y,x] = \DS{\sum_{s \in \Ref(W)} (\e(s)-1)\hskip1mm c_s 
\hskip1mm\frac{\langle y,\a_s \rangle \cdot \langle \a_s^\ve,x\rangle}{\langle \a_s^\ve,\a_s\rangle}
\hskip1mm s,} 
\end{cases}\endequat
pour $x$, $x' \in V^*$ et $y$, $y' \in V$. 

Puisque $T$ est bi-homog\`ene, la $\kb$-alg\`ebre $\Hb$ 
h\'erite de toutes les graduations, filtrations de l'alg\`ebre $\Hbt$~: 
nous utiliserons les notations \'evidentes $\Hb^{\NM \times \NM}[i,j]$, 
$\Hb^\NM[i]$ et $\Hb^\ZM[i]$ sans les d\'efinir. 
Nous noterons $\euler$ \indexnot{ea}{\euler}  l'image de $\eulertilde$ dans $\Hb$. 
Alors $\euler$ est appel\'e l'{\it \'el\'ement d'Euler g\'en\'erique} de $\Hb$. 
Notons que
\equat\label{eq:euler-1-1}
\euler \in \Hb^{\NM \times \NM}[1,1]
\endequat
L'id\'eal engendr\'e par $T$ est aussi stable par l'action de 
$\kb^\times \times \kb^\times \times (W^\wedge \rtimes \NC)$, donc 
$\Hb$ h\'erite d'une action de 
$\kb^\times \times \kb^\times \times (W^\wedge \rtimes \NC)$ qui sera toujours not\'ee 
$\lexp{\t}{h}$, si $h \in \Hb$ et 
$\t \in \kb^\times \times \kb^\times \times (W^\wedge \rtimes \NC)$. 
Le lemme suivant se d\'eduit imm\'ediatement du lemme~\ref{lem:automorphismes-1}~:

\bigskip

\begin{lem}\label{lem:automorphismes-0}
Soit $\t=(\xi,\xi',\g \rtimes g) \in \kb^\times \times \kb^\times \times (W^\wedge \rtimes \NC)$. 
Alors~:
\begin{itemize}
\itemth{a} $\t$ stabilise les sous-alg\`ebres $\kb[\CCB]$, $\kb[V]$, $\kb[V^*]$ et $\kb W$.

\itemth{b} $\t$ respecte la bi-graduation.

\itemth{c} $\lexp{\t}{\euler}=\xi\xi'~ \euler$.
\end{itemize}
\end{lem}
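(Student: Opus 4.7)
Le plan est de d\'eduire ce lemme du lemme~\ref{lem:automorphismes-1} par passage au quotient $\Hb=\Hbt/T\Hbt$. Je commence par v\'erifier que l'id\'eal bilat\`ere $T\Hbt$ est stable sous l'action du groupe $\kb^\times \times \kb^\times \times (W^\wedge \rtimes \NC)$ sur $\Hbt$. Pour cela, il suffit de constater que, sous chacun des trois types d'automorphismes d\'ecrits dans les \S\ref{subsection:bi-graduation-1},~\S\ref{subsection:lineaires-1} et~\S\ref{subsection:normalisateur-1}, l'\'el\'ement $T$ est envoy\'e sur un multiple scalaire de lui-m\^eme~: l'automorphisme bi-gradu\'e $\gradauto_{\xi,\xi'}$ multiplie $T$ par $\xi\xi'$ (puisque $T \in \CCBt^*$ est de bi-degr\'e $(1,1)$), l'action de $W^\wedge$ fixe $T$ par d\'efinition de $\g_\CCBt$, et le groupe $\NC$ agit trivialement sur $T$. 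Cette stabilit\'e acquise, l'action se factorise bien \`a travers la projection canonique $\Hbt \surto \Hb$, ce qui redonne l'action d\'ej\`a mentionn\'ee dans le texte qui pr\'ec\`ede l'\'enonc\'e.

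Les assertions (a), (b) et (c) se d\'eduiront alors directement de leurs analogues du lemme~\ref{lem:automorphismes-1}. Pour (a), les sous-alg\`ebres $\kb[\CCB]$, $\kb[V]$, $\kb[V^*]$ et $\kb W$ de $\Hb$ sont les images, via la projection $\Hbt \surto \Hb$, des sous-alg\`ebres correspondantes de $\Hbt$ (en utilisant que $\kb[\CCB]$ est un facteur direct de $\kb[\CCBt]=\kb[\CCB][T]$)~; comme $\t$ stabilise ces derni\`eres par le lemme~\ref{lem:automorphismes-1}(a), il stabilise aussi leurs images dans $\Hb$. Pour (b), la bi-graduation de $\Hb$ est induite par celle de $\Hbt$ puisque $T\Hbt$ est un id\'eal bi-homog\`ene~; la compatibilit\'e de $\t$ avec la bi-graduation passe donc directement au quotient. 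Enfin, (c) r\'esulte imm\'ediatement de ce que $\euler$ est l'image de $\eulertilde$ dans $\Hb$ et que $\lexp{\t}{\eulertilde}=\xi\xi'~\eulertilde$ d'apr\`es le lemme~\ref{lem:automorphismes-1}(c).

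Je n'anticipe pas d'obstacle r\'eel dans cette preuve~: tout le travail technique a d\'ej\`a \'et\'e effectu\'e au chapitre~\ref{chapter:cherednik-1}. Le seul point qui demande une v\'erification explicite est la stabilit\'e de l'id\'eal $T\Hbt$ sous chaque composante du groupe agissant, et celle-ci est imm\'ediate d'apr\`es les formules d\'efinissant les trois actions. \`A noter que l'analogue de l'assertion (d) du lemme~\ref{lem:automorphismes-1} reste valable \textit{mutatis mutandis} dans $\Hb$, mais n'est visiblement pas requis ici.
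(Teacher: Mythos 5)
Votre preuve est correcte et suit exactement la même démarche que le texte, qui indique simplement que le lemme \og se déduit immédiatement du lemme~\ref{lem:automorphismes-1}\fg{} sans donner de preuve formelle. Vous explicitez utilement le seul point non trivial, à savoir la stabilité de l'idéal $T\Hbt$ sous l'action du groupe, qui assure que cette action passe bien au quotient $\Hb=\Hbt/T\Hbt$.
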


Terminons par l'alg\`ebre sph\'erique. Le th\'eor\`eme~\ref{EG spherique-1} implique~:

\bigskip

\begin{theo}[Etingof-Ginzburg]\label{EG spherique-0}
Soit $\CG$ un id\'eal premier de $\kb[\CCB]$ et notons $\Hb_\CG=\Hb/\CG\Hb$. \indexnot{H}{\Hb_\CG}  Alors~:
\begin{itemize}
\itemth{a} L'alg\`ebre $e \Hb_\CG e$ est une $\kb$-alg\`ebre de type fini, Gorenstein 
et sans diviseur de $0$.

\itemth{b} Si $\kb[\CCB]/\CG$ est Gorenstein (respectivement Cohen-Macaulay), alors 
le $e\Hb_\CG e$-module \`a droite de type fini $\Hb_\CG e$ l'est aussi.

\itemth{c} L'action naturelle \`a gauche de $\Hb_\CG$ sur le module projectif $\Hb_\CG e$ induit un isomorphisme 
$\Hb_\CG \stackrel{\sim}{\longto} \End_{e\Hb_\CG e}(\Hb_\CG e)$.
\end{itemize}
\end{theo}

\bigskip


\section{Centre}

\medskip

\boitegrise{{\bf Notation.} {\it Tout au long de ce m\'emoire, 
nous noterons $Z=\Zrm(\Hb)$ \indexnot{Z}{Z,~Z_c}  le centre de $\Hb$. Si $c \in \CCB$, nous noterons 
$Z_c=Z/\CG_c Z$. Nous noterons $P$ \indexnot{P}{P}  la $\kb[\CCB]$-alg\`ebre 
obtenue par produit tensoriel d'alg\`ebres $P=\kb[\CCB] \otimes \kb[V]^W \otimes \kb[V^*]^W$.}}{0.75\textwidth}

\bigskip

\subsection{Une sous-alg\`ebre de $Z$} 
Le premier r\'esultat, fondamental, concernant le centre $\Zrm(\Hb)$ de $\Hb$, 
est le suivant~\cite[Proposition 4.15]{EG} 
(voir aussi~\cite[Proposition 3.6]{gordon} pour une preuve plus directe)~:
\equat\label{inclusion centre}
\text{\it $P \subset Z$.}
\endequat
Le lemme suivant est imm\'ediat. Bien s\^ur, le centre $Z$ est stable par l'action 
de $\kb^\times \times \kb^\times \times (W^\wedge \rtimes \NC)$ et h\'erite 
en particulier des graduations de $\Hb$. Il est facile de v\'erifier qu'il en est de m\^eme 
de $P$~:

\bigskip

\begin{lem}\label{P stable}
$P$ est une sous-alg\`ebre du centre stable par l'action de 
$\kb^\times \times \kb^\times \times (W^\wedge \rtimes \NC)$. En particulier, 
elle est $\NM \times \NM$-gradu\'ee.
\end{lem}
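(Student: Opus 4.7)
The lemma has three assertions: (i) $P \subset Z$; (ii) $P$ is stable under the action of $\kb^\times \times \kb^\times \times (W^\wedge \rtimes \NC)$; (iii) $P$ is $\NM \times \NM$-gradu\'e. Since (i) has just been recorded as~(\ref{inclusion centre}), nothing new is needed there. For (iii), recall that the $\NM \times \NM$-graduation on $\Hb$ is precisely the eigenspace decomposition under $\kb^\times \times \kb^\times$ (each bi-homogeneous component being finite-dimensional), so (iii) follows formally from (ii) applied to the torus factor. All the work is therefore concentrated in (ii), which I propose to verify factor by factor.

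For the $\kb^\times \times \kb^\times$-factor: by construction of the bi-graduation recalled in~\S\ref{subsection:bi-graduation-1}, the generators of $\kb[V^*]$ sit in bi-degree $(0,1)$, those of $\kb[V]$ in bi-degree $(1,0)$, and those of $\CCB^*$ in bi-degree $(1,1)$. Hence $\kb[V]$, $\kb[V^*]$, and $\kb[\CCB]$ are bi-homogeneous subalgebras of $\Hb$, and so are the $W$-invariant subalgebras $\kb[V]^W$ and $\kb[V^*]^W$ (the $W$-action preserves each bi-degree). Therefore $P$ is bi-homogeneous.

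For the $W^\wedge$-factor: according to~\S\ref{subsection:lineaires-1}, a character $\g \in W^\wedge$ acts trivially on $V$ and on $V^*$, and on $\kb[\CCB]$ by $\g_\CCB(C_s)=\g(s)^{-1}C_s$. Hence $\kb[V]$, $\kb[V^*]$, and $\kb[\CCB]$ are each $\g_*$-stable; since $\g_*$ acts by scalars on the generators of $\CCB^*$, it preserves $\kb[\CCB]$. As $\g_*$ acts trivially on $V$ and $V^*$, it in particular preserves the $W$-invariants. Thus $P$ is $W^\wedge$-stable.

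For the $\NC$-factor: $\NC$ acts on $V$ and $V^*$ by restriction of the natural $\GL_\kb(V)$-actions, so it stabilizes $\kb[V]$ and $\kb[V^*]$. Since $\NC$ normalizes $W$, the subalgebras of $W$-invariants $\kb[V]^W$ and $\kb[V^*]^W$ are stable under $\NC$. Finally, as the $\NC$-action on $W$ by conjugation permutes $\Ref(W)$, the induced action on $\kb[\CCB]$ sends $C_s$ to $C_{gsg^{-1}}$, and therefore preserves $\kb[\CCB]$. Combining, $P=\kb[\CCB]\otimes \kb[V]^W \otimes \kb[V^*]^W$ is stable under each of the three factors, hence under the whole group $\kb^\times \times \kb^\times \times (W^\wedge \rtimes \NC)$. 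This completes (ii), and there is no serious obstacle: everything reduces to the explicit descriptions of the three actions given in~\S\ref{section:automorphismes-1}.
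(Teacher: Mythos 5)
Your proposal is correct and expands what the paper dispatches in one line: the text preceding the lemma merely notes that $Z$ is obviously stable under the action and ``il est facile de v\'erifier qu'il en est de m\^eme de $P$,'' so a factor-by-factor check of the kind you give is exactly the intended argument. One small slip: in the torus part you assign bi-degree $(0,1)$ to the generators of $\kb[V^*]$ and $(1,0)$ to those of $\kb[V]$, but this is swapped. Since $\kb[V]$ is the symmetric algebra of $V^*$, its generators lie in $V^*$ and hence have bi-degree $(0,1)$, while $\kb[V^*]$ is generated by $V$, whose elements have bi-degree $(1,0)$. The mislabelling does not affect the conclusion---both algebras are bi-homogeneous either way---but you should fix it for the argument to read cleanly. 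Everything else (the triviality of $\gamma_*$ on $V$ and $V^*$, hence on $\kb[V]^W$ and $\kb[V^*]^W$; the action of $\NC$ permuting $\Ref(W)$ and normalizing $W$) is precisely right.
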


\bigskip

\begin{coro}\label{coro:P-libre}
La d\'ecomposition PBW est un isomorphisme de $P$-modules. En particulier, on a des isomorphismes 
de $P$-modules~:
\begin{itemize}
\itemth{a} $\Hb \simeq \kb[\CCB] \otimes \kb[V] \otimes \kb W \otimes \kb[V^*]$.

\itemth{b} $\Hb e \simeq \kb[\CCB] \otimes \kb[V] \otimes \kb[V^*]$.

\itemth{c} $e\Hb e \simeq \kb[\CCB] \otimes \kb[V \times V^*]^W$.
\end{itemize}
En particulier, $\Hb$ (respectivement $\Hb e$, respectivement $e\Hb e$) est un $P$-module libre 
de rang $|W|^3$ (respectivement $|W|^2$, respectivement $|W|$). 
\end{coro}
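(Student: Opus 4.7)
L'idée directrice est d'exploiter la centralité de $P$ dans $\Hb$ (inclusion~\ref{inclusion centre}) pour renforcer la décomposition PBW (Théorème~\ref{PBW-0})~: l'isomorphisme $\kb[\CCB]$-linéaire $\mu : \kb[\CCB] \otimes \kb[V] \otimes \kb W \otimes \kb[V^*] \longiso \Hb$ donné par la multiplication est en fait $P$-linéaire. En effet, si $p_0 \otimes f \otimes g \in P = \kb[\CCB] \otimes \kb[V]^W \otimes \kb[V^*]^W$ et $p' \otimes a \otimes w \otimes b$ est un élément de la source, la centralité de $f$ et $g$ dans $\Hb$ permet d'écrire $p_0 fg \cdot p' a w b = (p_0 p')(fa) w (gb)$, ce qui établit (a). La liberté de $\Hb$ sur $P$ de rang $|W|^3$ résulte alors du théorème de Shephard-Todd-Chevalley~\ref{chevalley}~: $\kb[V]$ est $\kb[V]^W$-libre de rang $|W|$, idem pour $\kb[V^*]$, et $\kb W$ est $\kb$-libre de rang $|W|$.

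Pour (b), je construis $\psi : \kb[\CCB] \otimes \kb[V] \otimes \kb[V^*] \to \Hb e$, $p \otimes a \otimes b \mapsto p\,a\,b\,e$. La $P$-linéarité est immédiate par centralité. La surjectivité repose sur l'identité $w\,b\,e = w(b) \cdot e$ dans $\Hb$ (valide pour $w \in W$, $b \in \kb[V^*]$), qui découle de la relation $w\,b = w(b)\,w$ dans $\Hb$ combinée à $w\,e = e$ dans $\kb W$~: tout élément $h\,e$, avec $h = \sum_i p_i a_i w_i b_i$ en forme PBW, se récrit $\sum_i p_i a_i w_i(b_i)\,e$. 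Pour l'injectivité, j'établis $b\,e = \frac{1}{|W|}\sum_{u \in W} u \cdot u^{-1}(b)$ en forme PBW (à partir de $b\,u = u \cdot u^{-1}(b)$), et l'examen par unicité PBW de la composante en $u = 1$ donne l'injectivité. Pour (c), je définis $\xi : \kb[\CCB] \otimes \kb[V \times V^*]^W \to e\,\Hb\,e$, $p \otimes q \mapsto p\,\iota(q)\,e$, où $\iota : \kb[V \times V^*] = \kb[V] \otimes \kb[V^*] \hookrightarrow \Hb$ est l'injection $\kb$-linéaire de la PBW (qui n'est pas un morphisme d'algèbres). L'identité $e\,a\,b\,e = \mathrm{Sym}(a b)\,e$ pour $a \in \kb[V]$, $b \in \kb[V^*]$ (où $\mathrm{Sym}$ désigne la moyennisation sur $W$), qui s'obtient par un calcul direct des commutations, montre que $\iota(q)\,e \in e\,\Hb\,e$ pour $q$ invariant, ainsi que la surjectivité de $\xi$. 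L'injectivité se ramène à celle de $\psi$ via l'inclusion $\kb[V \times V^*]^W \hookrightarrow \kb[V \times V^*]$. La $P$-linéarité utilise à nouveau la centralité de $P$, sous la forme $\iota(fgq) = fg \cdot \iota(q)$ dans $\Hb$ pour $f \in \kb[V]^W$, $g \in \kb[V^*]^W$, $q \in \kb[V \times V^*]$.

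Il reste à établir la liberté de $\Hb e$ sur $P$ de rang $|W|^2$ (immédiate via Shephard-Todd-Chevalley appliqué à $V$ et $V^*$) et celle de $e\,\Hb\,e$ sur $P$ de rang $|W|$. Pour cette dernière, via l'isomorphisme $\xi$, il suffit de montrer que $\kb[V \times V^*]^W$ est $\kb[V]^W \otimes \kb[V^*]^W$-libre de rang $|W|$. Le théorème de Shephard-Todd-Chevalley~\ref{chevalley}(c) fournit une décomposition $\kb[V] \simeq \kb[V]^W \otimes \HC_V$ comme $(\kb[V]^W, \kb W)$-bimodules, où $\HC_V := \kb[V]^\cow$ est isomorphe à la représentation régulière $\kb W$~; de même pour $V^*$. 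Ainsi,
$$\kb[V \times V^*]^W \simeq (\kb[V]^W \otimes \kb[V^*]^W) \otimes (\HC_V \otimes \HC_{V^*})^W$$
comme $\kb[V]^W \otimes \kb[V^*]^W$-modules. L'étape décisive est le calcul de la dimension de $(\HC_V \otimes \HC_{V^*})^W \simeq (\kb W \otimes \kb W)^W$ pour l'action diagonale de $W$~: le caractère de $\kb W \otimes \kb W$ valant $|W|^2$ en $1$ et $0$ ailleurs (puisque le caractère régulier s'annule hors de $1$), on obtient $\kb W \otimes \kb W \simeq |W| \cdot \kb W$ comme $\kb W$-modules, d'où $\dim_\kb(\kb W \otimes \kb W)^W = |W|$. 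Le principal obstacle est dans (c)~: les manipulations de $\iota$ requièrent de suivre soigneusement les relations de commutation dans $\Hb$, et le calcul du rang nécessite l'argument de théorie des caractères ci-dessus.
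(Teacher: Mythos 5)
Votre preuve est correcte et développe l'argument naturel que le texte laisse implicite (le corollaire est énoncé sans démonstration dans l'article)~: la centralité de $P$ transforme l'isomorphisme PBW, a priori $\kb[\CCB]$-linéaire, en isomorphisme de $P$-modules, et les points (b) et (c) s'en déduisent en observant que la multiplication à droite par $e$ et la moyennisation par $e$ à gauche traduisent, via la PBW, respectivement le remplacement $\kb W \rightsquigarrow \kb$ et le passage aux invariants diagonaux $\kb[V\times V^*] \rightsquigarrow \kb[V\times V^*]^W$. Les identités que vous utilisez ($wbe=w(b)e$, $bu = u\cdot u^{-1}(b)$, $eabe=\mathrm{Sym}(ab)e$, $\iota(fgq)=fg\cdot\iota(q)$) sont toutes exactes, et l'argument de rang pour (c) via $\dim_\kb(\kb W\otimes\kb W)^W = |W|$ est une manière propre d'établir la liberté de $\kb[V\times V^*]^W$ sur $\kb[V]^W\otimes\kb[V^*]^W$ (le texte l'attribue un peu hâtivement au seul théorème~\ref{chevalley} dans l'exemple~\ref{exemple zero-0}). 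Une formulation légèrement plus économe pour (c) consisterait à noter que l'isomorphisme $\psi$ de (b) est $W$-équivariant pour l'action à gauche (l'action diagonale sur $\kb[V]\otimes\kb[V^*]$), puis à prendre les invariants, ce qui évite de manipuler $\iota$ explicitement~; mais le contenu est le même.
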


\bigskip

Le th\`eme principal de ce m\'emoire est l'\'etude de l'alg\`ebre $\Hb$, vue comme 
$P$-alg\`ebre~: ainsi, si $\pG$ est un id\'eal premier de $P$, 
nous nous int\'eresserons aux repr\'esentations de la $\kb_P(\pG)$-alg\`ebre 
de dimension finie $|W|^3$ \'egale \`a $\kb_P(\pG) \otimes_P \Hb$ (d\'eploiement, 
modules simples, blocs, \'eventuellement modules standard, matrice de d\'ecomposition...).

\bigskip

\begin{rema}\label{rem:P-libre}
Soit $(b_i)_{1 \le i \le |W|}$ une $\kb[V]^W$-base de $\kb[V]$ et soit 
$(b_i^*)_{1 \le i \le |W|}$ une $\kb[V^*]^W$-base de $\kb[V^*]$. Le corollaire~\ref{coro:P-libre} 
montre que $(b_i w b_j^*)_{\substack{1 \le i,j \le |W| \\ w \in W}}$ est une $P$-base de 
$\Hb$ et que $(b_i b_j^* e)_{1 \le i,j \le |W|}$ est une $P$-base de $\Hb e$.\finl
\end{rema}

\bigskip

Posons 
$$P_\bullet = \kb[V]^W \otimes \kb[V^*]^W.\indexnot{P}{P_\bullet}$$
Si $c \in \CCB$, alors 
$$P_\bullet \simeq \kb[\CCB]/\CG_c \otimes_{\kb[\CCB]} P = P/\CG_c P .$$
On d\'eduit du corollaire~\ref{coro:P-libre} le corollaire suivant~:

\bigskip

\begin{coro}\label{coro:P-libre-c}
On a des isomorphismes de $P_\bullet$-modules~:
\begin{itemize}
\itemth{a} $\Hb_c \simeq \kb[V] \otimes \kb W \otimes \kb[V^*]$.

\itemth{b} $\Hb_c e \simeq \kb[V] \otimes \kb[V^*]$.

\itemth{c} $e\Hb_c e \simeq \kb[V \times V^*]^W$.
\end{itemize}
En particulier, $\Hb_c$ (respectivement $\Hb_c e$, respectivement $e\Hb_c e$) est un 
$P_\bullet$-module libre de rang $|W|^3$ (respectivement $|W|^2$, respectivement $|W|$). 
\end{coro}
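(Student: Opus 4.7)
The plan is to obtain this corollary as a direct consequence of the previous corollary (\ref{coro:P-libre}) by base change along the specialization $\kb[\CCB] \to \kb[\CCB]/\CG_c$. First, I would observe that $P = \kb[\CCB] \otimes P_\bullet$ as $\kb[\CCB]$-algebras, so
\[
P_\bullet \simeq (\kb[\CCB]/\CG_c) \otimes_{\kb[\CCB]} P \simeq P/\CG_c P,
\]
as already noted in the text preceding the statement. Moreover, the three objects appearing in the statement are obtained by the same base change from their generic counterparts:
\[
\Hb_c = (\kb[\CCB]/\CG_c) \otimes_{\kb[\CCB]} \Hb, \quad
\Hb_c e = (\kb[\CCB]/\CG_c) \otimes_{\kb[\CCB]} \Hb e,
\]
and $e\Hb_c e = (\kb[\CCB]/\CG_c) \otimes_{\kb[\CCB]} e\Hb e$ (for the last equality, use that $e \in \kb W$ is central idempotent, so specialization and multiplication by $e$ commute, as noted in \eqref{spherique-1} in the generic $t$ setting and equally valid here).

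Next, I would apply the functor $(\kb[\CCB]/\CG_c) \otimes_{\kb[\CCB]} -$ to the three $P$-module isomorphisms provided by the corollary~\ref{coro:P-libre}, namely
\[
\Hb \simeq \kb[\CCB] \otimes \kb[V] \otimes \kb W \otimes \kb[V^*], \quad
\Hb e \simeq \kb[\CCB] \otimes \kb[V] \otimes \kb[V^*],
\]
and $e\Hb e \simeq \kb[\CCB] \otimes \kb[V\times V^*]^W$. Since the right-hand sides are just $\kb[\CCB]\otimes_\kb (-)$, the specialization collapses the first tensor factor to $\kb$, yielding the three isomorphisms (a), (b), (c) of the statement. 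These are \emph{a priori} isomorphisms of $\kb$-vector spaces, but one checks directly that they are $P_\bullet$-linear: indeed, the $P$-action used in corollary~\ref{coro:P-libre} factors through the obvious action of $\kb[V]^W \otimes \kb[V^*]^W$ on the right-hand sides (multiplication in the appropriate tensor factors), and this passes verbatim to $P_\bullet$ after specialization.

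Finally, freeness and rank follow immediately: in each of the three formulas, the right-hand side is manifestly a free $P_\bullet = \kb[V]^W \otimes \kb[V^*]^W$-module, since by Shephard-Todd-Chevalley (theorem~\ref{chevalley}(b)) we have that $\kb[V]$ is a free $\kb[V]^W$-module of rank $|W|$ and similarly $\kb[V^*]$ is a free $\kb[V^*]^W$-module of rank $|W|$. Concretely, with the bases $(b_i)$ and $(b_j^*)$ mentioned in the remark~\ref{rem:P-libre}, the family $(b_i w b_j^*)_{i,j,w}$ gives a $P_\bullet$-basis of $\Hb_c$, the family $(b_i b_j^* e)_{i,j}$ a $P_\bullet$-basis of $\Hb_c e$, and applying $e(-)e$ gives one of $e\Hb_c e$. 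There is no real obstacle here; the only point requiring care is that the $P_\bullet$-structure on each side matches under the isomorphism, but this is immediate from the $P$-linearity of the corresponding generic isomorphism.
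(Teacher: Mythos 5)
Your proposal is correct and takes the same route the paper does: the paper introduces $P_\bullet\simeq P/\CG_c P$ and states that the corollary is deduced from Corollary~\ref{coro:P-libre} by this base change, which is exactly what you carry out (including the needed commutation of specialization with the idempotent truncation by $e$). Your write-up simply makes explicit the details the paper leaves implicit.
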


\bigskip

\subsection{Isomorphisme de Satake} 
Il d\'ecoule du lemme~\ref{lem:eulertilde} que
\equat\label{eq:euler-central}
\euler \in Z.
\endequat
Si $c \in \CCB$, nous noterons $\euler_c$ \indexnot{ea}{\euler_c}  l'image de $\euler$ dans $\Hb_c$. 

\bigskip

\begin{exemple}\label{exemple zero-0} 
Puisque $\Hb_0=\kb[V \oplus V^*] \rtimes W$, on a 
$\Zrm(\Hb_0) =\kb[V \times V^*]^W$ et $P_\bullet = \kb[V/W \times V^*/W] \subset Z_0$. 
De plus, il d\'ecoule du th\'eor\`eme~\ref{chevalley} 
que $Z_0$ est un $P_\bullet$-module libre de rang $|W|$. D'autre part, 
$\euler_0=\sum_{i=1}^n x_i y_i$ (en reprenant les notations de la section~\ref{section:eulertilde}). 
On peut alors v\'erifier facilement (voir par exemple~\cite[proposition 1.1]{BK}) que 
le polyn\^ome minimal de $\euler_0$ sur $P_\bullet$ est de degr\'e $|W|$ et donc que 
le corps des fractions $\kb(V \times V^*)^W$ est engendr\'e, sur 
$\kb(V/W \times V^*/W)$, par $\euler_0$.\finl
\end{exemple}

\bigskip

Le th\'eor\`eme de structure suivant est au c\oe ur de tous les r\'esultats concernant la th\'eorie des 
repr\'esentations de $\Hb$. Nous proposons ici une preuve diff\'erente de celles contenues dans la 
litt\'erature, reposant uniquement sur l'exemple~\ref{exemple zero-0} 
ci-dessus et le fait que $P[\euler] \subset Z$. 

\bigskip

\begin{theo}[Etingof-Ginzburg]\label{theo:satake}
Le morphisme d'alg\`ebres $Z \longto e\Hb e$, $z \mapsto ze$ est un isomorphisme d'alg\`ebres 
$(\NM \times \NM)$-gradu\'ees. En particulier, $e\Hb e$ est commutative.
\end{theo}

\bigskip

\def\opp{{\mathrm{opp}}}

\begin{proof}
Commen\c{c}ons par un lemme pr\'eparatoire classique~:

\bigskip

\begin{quotation}
{\small
\begin{lem}\label{lem:ZA-ZB}
Soient $A$ et $B$ deux anneaux et soit $M$ un $(A,B)$-bimodule tel que 
$\End_A(M)=B^{\mathrm{opp}}$ et $A=\End_B(M)$. Alors on a un isomorphisme 
$\Zrm(A) \longiso \Zrm(B)$.
\end{lem}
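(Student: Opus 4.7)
The plan is to construct mutually inverse ring homomorphisms $\phi : \Zrm(A) \longto \Zrm(B)$ and $\psi : \Zrm(B) \longto \Zrm(A)$ via the natural commuting actions on $M$. The whole argument is a routine Morita-type computation, so the main task is to keep the conventions (left versus right actions, opposite rings) consistent.

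First, I would define $\phi$ as follows. Given $z \in \Zrm(A)$, the map $m \mapsto zm$ is an endomorphism of $M$ as a left $A$-module: for $a \in A$ one has $z(am) = (za)m = (az)m = a(zm)$, using that $z$ is central in $A$. Hence this map belongs to $\End_A(M) = B^{\mathrm{opp}}$, and I let $\phi(z) \in B$ be the corresponding element. That $z \mapsto \phi(z)$ is a ring homomorphism is a direct verification.

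Next I would show that $\phi(z)$ lies in $\Zrm(B)$. In $\End_A(M) = B^{\mathrm{opp}}$, the element $\phi(z)$ corresponds to left multiplication by $z$ on $M$. Every $b \in B$ acts on $M$ through its right $B$-module structure, and that action commutes with the left $A$-action, in particular with left multiplication by $z$. Transporting this commutation through the identification $\End_A(M) = B^{\mathrm{opp}}$ yields $b\,\phi(z) = \phi(z)\,b$ for every $b \in B$, so $\phi(z) \in \Zrm(B)$.

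By symmetry, using the hypothesis $\End_B(M) = A$, the same recipe produces a ring homomorphism $\psi : \Zrm(B) \longto \Zrm(A)$: to $w \in \Zrm(B)$ one attaches the element of $A$ whose left action on $M$ realizes the endomorphism $m \mapsto mw$. It remains to check that $\psi \circ \phi = \id_{\Zrm(A)}$ and $\phi \circ \psi = \id_{\Zrm(B)}$. For $z \in \Zrm(A)$, the element $\psi(\phi(z)) \in A$ is characterized by acting on $M$ through left multiplication in exactly the same way as $z$; since the natural map $A \longto \End_B(M)$ sending $a$ to left multiplication by $a$ is an isomorphism by hypothesis, this forces $\psi(\phi(z)) = z$. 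The other composition is handled symmetrically, and this completes the proof. The only genuine difficulty is bookkeeping: conceptually, the hypothesis is precisely that $M$ implements a Morita-type equivalence between $A$ and $B$, from which the isomorphism of centers is a standard consequence.
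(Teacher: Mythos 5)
Your proof is correct and follows essentially the same route as the paper's: left multiplication by a central element of $A$ is an $A$-endomorphism of $M$, hence an element of $B^{\mathrm{opp}}$ that turns out to be central in $B$, with the inverse map obtained symmetrically and the composition pinned down by faithfulness of the actions (which is exactly what the hypotheses $\End_A(M)=B^{\mathrm{opp}}$ and $A=\End_B(M)$ provide). The paper simply states this more tersely, defining $\alpha$ by $zm=m\alpha(z)$ and $\beta$ by $mz=\beta(z)m$ and concluding at once.
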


\begin{proof}
La multiplication \`a gauche sur $M$ induit un morphisme d'anneaux 
$\a : \Zrm(A) \to \Zrm(B)$ tel que $zm=m\a(z)$ pour tous $z \in \Zrm(A)$ et $m \in M$. 
De m\^eme, la multiplication \`a droite fournit un morphisme d'anneaux $\b : \Zrm(B) \to \Zrm(A)$ 
tel que $mz=\b(z)m$ pour tout $z \in \Zrm(B)$ et $m \in M$. Ainsi, si $z \in \Zrm(A)$ et 
$m \in M$, alors $zm=\b(\a(z))m$, et donc $\b \circ \a = \Id_{\Zrm(A)}$ 
car l'action de $A$ sur $M$ est fid\`ele par hypoth\`ese. De m\^eme $\a \circ \b = \Id_{\Zrm(B)}$.
\end{proof}
}
\end{quotation}

\bigskip

Tout d'abord, puisque $\End_\Hb(\Hb e)=(e\Hb e)^\opp$ et $\End_{(e\Hb e)^{\opp}}(\Hb e) = \Hb$, 
l'application $\pi_e : \Zrm(\Hb) \to \Zrm(e\Hb e)$, $z \mapsto ze$ est un isomorphisme d'alg\`ebres 
d'apr\`es le lemme~\ref{lem:ZA-ZB}. Il suffit donc de montrer que $e\Hb e$ est commutative. 

Notons aussi que $\pi_e$ est un morphisme de $P$-alg\`ebres. Puisque $e\Hb e$ est 
un $P$-module libre de rang $|W|$ (voir le corollaire~\ref{coro:P-libre}(c)), 
le polyn\^ome minimal de $\euler$ sur $P$, 
qui est le m\^eme que le polyn\^ome minimal de $\pi_e(\euler)$ sur $\pi_e(P)\simeq P$, est de degr\'e 
inf\'erieur ou \'egal \`a $|W|$. Mais la sp\'ecialisation $\euler_0$ de $\euler$ en $0 \in \CCB^*$ 
a un polyn\^ome minimal sur $P_0$ de degr\'e $|W|$ 
(voir l'exemple~\ref{exemple zero-0}) On en d\'eduit donc que 
\equat\label{eq:minimal-euler}
\text{\it Le polyn\^ome minimal de $\euler$ sur $P$ est de degr\'e $|W|$.}
\endequat
On a alors
$$\pi_e(P[\euler]) \subset \pi_e(Z) =\Zrm(e\Hb e) \subset e\Hb e.$$
Il d\'ecoule de~\ref{eq:minimal-euler} que $\pi_e(P[\euler])$ est un sous-$P$-module de rang $|W|$ du 
$P$-module $e\Hb e$, qui est lui aussi de rang $|W|$. Si on note $\Kb$ le corps des fractions de $P$ 
(rappelons que $P$ est int\`egre), alors 
$\Kb \otimes_P \pi_e(P[\euler]) = \Kb \otimes_P (e\Hb e)$. Ainsi, $\Kb \otimes_P e\Hb e$ est 
commutative, ce qui montre que $e\Hb e$ l'est aussi.
\end{proof}

\bigskip

\begin{coro}\label{coro:endo-bi}
On a~:
\begin{itemize}
\itemth{a} $\End_\Hb(\Hb e) = Z$ et $\End_Z(\Hb e)=\Hb$.

\itemth{b} $\Hb=Z \oplus e\Hb (1-e) \oplus (1-e)\Hb e \oplus (1-e)\Hb(1-e)$.
En particulier, $Z$ est un facteur direct du $Z$-module $\Hb$.

\itemth{c} $Z$ est un $P$-module libre de rang $|W|$. 

\itemth{d} $Z$ est int\`egre, int\'egralement clos et Cohen-Macaulay.

\itemth{e} $\Hb e$ est un $Z$-module de Cohen-Macaulay.
\end{itemize}
\end{coro}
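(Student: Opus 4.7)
The plan is to deduce the five assertions from the Satake theorem (Théorème~\ref{theo:satake}) in conjunction with Théorème~\ref{EG spherique-0} and Corollaire~\ref{coro:P-libre}, all applied with the prime ideal $\CG = 0$, so that $\kb[\CCB]/\CG = \kb[\CCB]$ is polynomial and therefore both Gorenstein and Cohen-Macaulay. The hardest point will be the integral closedness in (d); everything else is a direct transfer via Satake.

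For (a), I would first establish the general identity $\End_\Hb(\Hb e)^{\mathrm{opp}} \simeq e\Hb e$ by right multiplication: any $\Hb$-linear $\ph : \Hb e \to \Hb e$ is determined by $\ph(e)$, and $\ph(e) = \ph(e \cdot e) = e \ph(e)$ forces $\ph(e) \in e\Hb e$. Since Théorème~\ref{theo:satake} identifies $Z$ with the commutative algebra $e\Hb e$, this yields $\End_\Hb(\Hb e) = Z$. For the second equality, Théorème~\ref{EG spherique-0}(c) provides $\Hb \longiso \End_{e\Hb e}(\Hb e)$, which becomes $\Hb \longiso \End_Z(\Hb e)$ after transport along Satake.

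For (b), the Peirce decomposition attached to the idempotent $e \in \Hb$ reads
$$\Hb = e\Hb e \oplus e\Hb(1-e) \oplus (1-e)\Hb e \oplus (1-e)\Hb(1-e),$$
and each summand is stable under left and right multiplication by $Z$ since $Z$ is central in $\Hb$. Replacing $e\Hb e$ by its copy $Z$ under the Satake iso gives the announced decomposition, from which $Z$ is visibly a direct $Z$-summand of $\Hb$. Assertion (c) is then immediate: Corollaire~\ref{coro:P-libre}(c) states that $e\Hb e$ is a free $P$-module of rank $|W|$, and the Satake iso is $P$-linear (as $P \subset Z$), so $Z$ inherits the same freeness. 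Assertion (e) follows similarly: Théorème~\ref{EG spherique-0}(b) with $\CG = 0$ ensures $\Hb e$ is Cohen-Macaulay over $e\Hb e$, hence over $Z$ via Satake.

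The main obstacle lies in (d). The integrality of $Z$ and its Cohen-Macaulay property transfer directly from the analogous properties of $e\Hb e$: Théorème~\ref{EG spherique-0}(a) tells us $e\Hb e$ has no zero divisors, and its Gorenstein property (with $\CG = 0$) forces Cohen-Macaulayness. The integral closedness is subtler and I would deduce it from Serre's criterion $(R_1) + (S_2)$: the $(S_2)$ condition is automatic from Cohen-Macaulay. For $(R_1)$, the starting observation is that the specialisation $Z_0 = \kb[V \times V^*]^W$ (obtained from $\Hb_0 = \kb[V \oplus V^*] \rtimes W$, cf.~Exemple~\ref{exemple zero-0}) is normal because the diagonal action of $W$ on $V \oplus V^*$ contains no pseudo-reflection: an element $s \in \Ref(W)$ fixes $\Ker(s - \Id_V) \oplus \Ker(s - \Id_{V^*})$, a subspace of codimension $2$ in $V \oplus V^*$. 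Hence the ramification locus of $V \times V^* \to (V \times V^*)/W = \Spec Z_0$ has codimension $\ge 2$, so $Z_0$ is regular in codimension $1$. Combining this with the flatness of $Z$ over $P$ established in (c) and the $(\NM \times \NM)$-grading of $Z$ (which allows one to propagate properties from the homogeneous maximal ideal to all of $\Spec Z$), one concludes that the singular locus of $\Spec Z$ has codimension $\ge 2$, giving $(R_1)$ and therefore normality of $Z$.
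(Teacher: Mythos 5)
Your handling of (a), (b), (c), (e), and of the integrity and Cohen--Macaulayness in (d), coincides with the paper's argument: Peirce decomposition, the Satake isomorphism $Z \longiso e\Hb e$, and Th\'eor\`eme~\ref{EG spherique-0} applied with $\CG = 0$, with freeness supplied by Corollaire~\ref{coro:P-libre}(c).

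For the integral closedness in (d), your route genuinely differs from the paper's. The paper deduces it in one line from the fact that $\grad_\FC(e\Hb e) \simeq \kb[\CCB]\otimes\kb[V\times V^*]^W$ (the associated graded for the PBW filtration $\FC$) is integrally closed, citing [EG, Lemme~3.5] --- the standard lemma that a filtered ring whose associated graded is normal is itself normal. You instead invoke Serre's criterion: $(S_2)$ from Cohen--Macaulayness, and $(R_1)$ deduced from the specialised fiber $Z_0 = Z/\CG_0 Z = \kb[V\times V^*]^W$. The approach is sound, but the passage from ``$(R_1)$ of $Z_0$'' to ``$(R_1)$ of $Z$'' conceals a nontrivial intersection estimate that should be spelled out. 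Namely, if $V(\pG)$ were a height-$1$ component of the singular locus of $\Spec(Z)$, then $\pG$ is bigraded (the singular locus is $\kb^\times\times\kb^\times$-stable), so $\pG + \CG_0 Z$ is a proper ideal; a minimal prime $\qG$ over it satisfies $\mathrm{ht}(\qG) \le 1 + |\Ref(W)/W|$ by Krull's height theorem applied in $Z/\pG$ together with catenarity and equidimensionality of the Cohen--Macaulay ring $Z$, hence $\qG/\CG_0 Z$ has height $\le 1$ in $Z_0$. Since $\qG$ belongs to the closed singular locus and $Z_\qG$ is flat over the regular local ring $\kb[\CCB]_{\CG_0}$, the criterion ``regular base + flat + regular fiber $\Rightarrow$ regular total'' (read contrapositively) forces $(Z_0)_{\qG/\CG_0 Z}$ to be non-regular, contradicting $(R_1)$ of $Z_0$. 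The paper's approach is a one-parameter (Rees-algebra) degeneration that sidesteps this bookkeeping; your specialisation kills the $|\Ref(W)/W|$ parameters $C_s$ at once and so needs the Krull and catenarity inputs, both fortunately available once Cohen--Macaulayness is established.
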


\bigskip

\begin{proof}
Notons que $\Hb=e\Hb e \oplus e\Hb (1-e) \oplus (1-e)\Hb e \oplus (1-e)\Hb(1-e)$ et 
que l'isomorphisme de Satake nous dit que l'application $Z \to e\Hb e$, $z \mapsto eze=ze$, est 
un isomorphisme, do\`u (a). 
\`A part le fait que $Z$ est int\'egralement clos, tout le reste d\'ecoule des 
propri\'et\'es correspondantes de $e\Hb e$. Le fait que $Z \simeq e\Hb e$ est int\'egralement clos 
d\'ecoule de ce que $\grad_\FC(e \Hb e) \simeq \kb[\CCB] \otimes \kb[V \times V^*]^W$ l'est 
(voir~\cite[Lemme~3.5]{EG}).
\end{proof}

\bigskip

Puisque $Z$ est un facteur direct de $\Hb$, $Z_c$ est un facteur direct de $\Hb_c$. 
De plus, $Z_c \subset \Zrm(\Hb_c)$. Mieux, le corollaire~\ref{coro:endo-bi} 
reste valide apr\`es sp\'ecialisation~:

\bigskip

\begin{coro}\label{coro:satake-c}
$Z_c=\Zrm(\Hb_c)$ et l'application $Z_c \to e\Hb_c e$, $z \mapsto ze$ est 
un isomorphisme.
\end{coro}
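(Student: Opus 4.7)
\noindent\emph{Plan de preuve de Corollaire~\ref{coro:satake-c}.}
Il s'agit d'\'etablir deux points : que l'application $Z_c\to e\Hb_c e,\ z\mapsto ze$
est un isomorphisme, et que cette application identifie $Z_c$ avec tout le centre de $\Hb_c$.
Ma strat\'egie est de d\'eduire le premier point par sp\'ecialisation \`a partir de
l'isomorphisme de Satake g\'en\'erique (th\'eor\`eme~\ref{theo:satake}),
puis d'appliquer le m\^eme argument que dans la preuve du th\'eor\`eme~\ref{theo:satake}
(Lemme~\ref{lem:ZA-ZB}) pour obtenir le second point, gr\^ace au th\'eor\`eme~\ref{EG spherique-0}(c)
appliqu\'e \`a l'id\'eal maximal (donc premier) $\CG=\CG_c$.

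\medskip

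\noindent\textbf{Premier point.} L'isomorphisme de Satake $Z\longiso e\Hb e,\ z\mapsto ze$
est $\kb[\CCB]$-lin\'eaire (car $e\in\kb W$ commute avec les scalaires de $\kb[\CCB]$).
De plus, $Z$ est un $P$-module libre de rang $|W|$ par le corollaire~\ref{coro:endo-bi}(c),
donc en particulier un $\kb[\CCB]$-module plat. En tensorisant l'isomorphisme
$Z\longiso e\Hb e$ par $\kb[\CCB]/\CG_c$ au-dessus de $\kb[\CCB]$, on obtient un isomorphisme
$Z_c\longiso (\kb[\CCB]/\CG_c)\otimes_{\kb[\CCB]}e\Hb e = e\Hb_c e$,
qui est encore donn\'e par $z\mapsto ze$. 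En particulier, $e\Hb_c e$ est commutative.

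\medskip

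\noindent\textbf{Second point.} L'inclusion $Z_c\subset \Zrm(\Hb_c)$ est imm\'ediate :
si $z\in Z$ se sp\'ecialise en $\zba\in Z_c$, alors $\zba$ commute avec tout $\Hb_c$
puisque $z$ commute avec tout $\Hb$. Pour l'inclusion inverse, j'applique
le lemme~\ref{lem:ZA-ZB} au $(\Hb_c,(e\Hb_c e)^\opp)$-bimodule $M=\Hb_c e$.
L'\'egalit\'e $\End_{\Hb_c}(\Hb_c e)=(e\Hb_c e)^\opp$ est standard (action par multiplication
\`a droite), et $\Hb_c=\End_{e\Hb_c e}(\Hb_c e)$ est pr\'ecis\'ement le contenu du
th\'eor\`eme~\ref{EG spherique-0}(c) appliqu\'e \`a l'id\'eal premier $\CG_c$.
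Le lemme~\ref{lem:ZA-ZB} fournit alors un isomorphisme
$\Zrm(\Hb_c)\longiso \Zrm(e\Hb_c e)$, donn\'e concr\`etement par $z\mapsto ze$.
Mais $e\Hb_c e$ \'etant commutative, on a $\Zrm(e\Hb_c e)=e\Hb_c e$,
donc l'application $\Zrm(\Hb_c)\to e\Hb_c e,\ z\mapsto ze$ est un isomorphisme.

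\medskip

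\noindent\textbf{Conclusion.} On a donc deux fl\`eches, toutes deux donn\'ees par $z\mapsto ze$ :
$Z_c\hookrightarrow \Zrm(\Hb_c)\longiso e\Hb_c e$, dont la compos\'ee est l'isomorphisme
du premier point. La premi\`ere fl\`eche est donc un isomorphisme, ce qui donne $Z_c=\Zrm(\Hb_c)$.
L'argument est essentiellement formel une fois admis les r\'esultats g\'en\'eriques ;
le seul point d\'elicat est l'existence de l'isomorphisme $\Hb_c\simeq\End_{e\Hb_c e}(\Hb_c e)$,
qui rel\`eve du th\'eor\`eme d'Etingof-Ginzburg~\ref{EG spherique-0}(c)
et qui est d\'ej\`a disponible ici au niveau sp\'ecialis\'e.
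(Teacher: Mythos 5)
Votre preuve est correcte et suit essentiellement la m\^eme d\'emarche que celle du m\'emoire : sp\'ecialisation de l'isomorphisme de Satake g\'en\'erique pour obtenir la commutativit\'e de $e\Hb_c e$, puis application du lemme~\ref{lem:ZA-ZB} avec le th\'eor\`eme~\ref{EG spherique-0}(c) pour identifier $\Zrm(\Hb_c)$ avec $e\Hb_c e$. La seule diff\'erence, purement cosm\'etique, est l'\'etape finale : le m\'emoire compare les deux d\'ecompositions de Peirce $\Hb_c=Z_c\oplus\cdots$ et $\Hb_c=\Zrm(\Hb_c)\oplus\cdots$, alors que vous concluez par composition de la suite $Z_c\hookrightarrow\Zrm(\Hb_c)\longiso e\Hb_c e$ (et au passage, l'invocation de la platitude au premier point est superflue, puisque tensoriser transforme toujours un isomorphisme en isomorphisme).
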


\bigskip

\begin{proof}
D'apr\`es le th\'eor\`eme~\ref{EG spherique-0} et le lemme~\ref{lem:ZA-ZB}, 
on a $\Zrm(\Hb_c) \simeq \Zrm(e\Hb_c e)$. Mais d'apr\`es l'isomorphisme de Satake, 
$e\Hb_c e$ est commutative. Donc l'application $\Zrm(\Hb_c) \to e\Hb_c e$, $z \mapsto ze$ 
est un isomorphisme. On en d\'eduit que 
$$\Hb_c=\Zrm(\Hb_c) \oplus e\Hb_c (1-e) \oplus (1-e)\Hb_c e \oplus (1-e)\Hb_c(1-e).$$
De plus, d'apr\`es le corollaire~\ref{coro:endo-bi}(b), 
$$\Hb_c=Z_c \oplus e\Hb_c (1-e) \oplus (1-e)\Hb_c e \oplus (1-e)\Hb_c(1-e).$$
Puisque $Z_c \subset \Zrm(\Hb_c)$, l'\'egalit\'e $Z_c=\Zrm(\Hb_c)$ 
devient \'evidente. 
\end{proof}

\bigskip

\begin{coro}\label{coro:endo-bi-c}
On a~:
\begin{itemize}
\itemth{a} $\End_{\Hb_c}(\Hb_c e) = Z_c$ et $\End_{Z_c}(\Hb_c e)=\Hb_c$.

\itemth{b} $\Hb_c=Z_c \oplus e\Hb_c (1-e) \oplus (1-e)\Hb_c e \oplus (1-e)\Hb_c(1-e)$.
En particulier, $Z_c$ est un facteur direct du $Z_c$-module $\Hb_c$.

\itemth{c} $Z_c$ est un $P_\bullet$-module libre de rang $|W|$. 

\itemth{d} $Z_c$ est int\`egre, int\'egralement clos et Cohen-Macaulay.

\itemth{e} $\Hb_c e$ est un $Z_c$-module de Cohen-Macaulay.
\end{itemize}
\end{coro}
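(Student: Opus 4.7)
The plan is to mirror the proof of Corollary~\ref{coro:endo-bi} (the ``generic'' version) by combining the Satake isomorphism $Z_c\longiso e\Hb_c e$ given by Corollary~\ref{coro:satake-c} with the structural results of Theorem~\ref{EG spherique-0} applied to the prime ideal $\CG=\CG_c$. Since $\CG_c$ is maximal, $\kb[\CCB]/\CG_c=\kb$ is both Gorenstein and Cohen-Macaulay, so the hypotheses of Theorem~\ref{EG spherique-0}(d) are automatically satisfied for $\Hb_c$ and $\Hb_c e$.

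First, for (a), Theorem~\ref{EG spherique-0}(c) yields $\Hb_c=\End_{e\Hb_c e}(\Hb_c e)$, and by the usual double-centralizer formalism this forces $\End_{\Hb_c}(\Hb_c e)=(e\Hb_c e)^{\mathrm{opp}}$; as $e\Hb_c e$ is commutative (isomorphism of Satake together with Corollary~\ref{coro:satake-c}), both endomorphism rings can then be identified with $Z_c$ via the Satake map. Assertion (b) is just the Peirce decomposition of $\Hb_c$ with respect to the idempotent $e$, combined with the identification $e\Hb_c e\simeq Z_c$; it also follows by specialization from Corollary~\ref{coro:endo-bi}(b) (both sides are specializations at $\CG_c$ of compatible direct sum decompositions of $P$-modules).

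For (c), the Satake isomorphism transports $Z_c\simeq e\Hb_c e$, and Corollary~\ref{coro:P-libre-c}(c) asserts that $e\Hb_c e\simeq \kb[V\times V^*]^W$ is a free $P_\bullet$-module of rank $|W|$. For (e), Theorem~\ref{EG spherique-0}(d) (applied with $\CG=\CG_c$) tells us that $\Hb_c e$ is a Cohen-Macaulay $e\Hb_c e$-module, and transporting the $e\Hb_c e$-action through Satake turns this into the required Cohen-Macaulayness over $Z_c$.

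The only slightly delicate point is the integral closedness claim in (d). Integrity and Cohen-Macaulayness of $Z_c$ follow at once from Theorem~\ref{EG spherique-0}(a),(d) applied to $e\Hb_c e\simeq Z_c$. For integral closedness, I would follow the argument indicated in the proof of Corollary~\ref{coro:endo-bi}(d): the filtration $\FC$ on $\Hb_c$ induces a filtration on $e\Hb_c e$ whose associated graded, by~\ref{spherique graduee-1} specialized at $c$, is isomorphic to $\kb[V\times V^*]^W$. Since $\kb[V\times V^*]$ is a polynomial ring (hence a normal domain) and $W$ is finite, the invariant ring $\kb[V\times V^*]^W$ is integrally closed. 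A standard lemma (a commutative filtered ring whose associated graded is an integrally closed domain is itself an integrally closed domain, cf.\ the reference~\cite[Lemme~3.5]{EG} already cited in the proof of Corollary~\ref{coro:endo-bi}(d)) then yields that $e\Hb_c e$, and hence $Z_c$, is integrally closed. This step --- invoking the ``lifting'' of normality from the associated graded --- is the only nontrivial ingredient; everything else is formal transport along the Satake isomorphism and specialization of Corollary~\ref{coro:endo-bi}.
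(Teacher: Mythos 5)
Your proposal is correct and is essentially the route the paper leaves implicit: the paper states that Corollary~\ref{coro:endo-bi} ``remains valid after specialization,'' proves Corollary~\ref{coro:satake-c}, and then gives Corollary~\ref{coro:endo-bi-c} without further proof; you supply exactly the expected details by transporting the Peirce decomposition and the Morita double-centralizer along the Satake isomorphism, using Corollary~\ref{coro:P-libre-c} for freeness, and re-running the filtered-ring normality argument (associated graded $\simeq \kb[V\times V^*]^W$, then \cite[Lemme~3.5]{EG}) for integral closedness. One small slip: you cite ``Theorem~\ref{EG spherique-0}(d)'' twice, but that theorem has only parts (a), (b), (c) --- part (d) belongs to the $\Hbt$-version, Theorem~\ref{EG spherique-1}. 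The assertions you want are Theorem~\ref{EG spherique-0}(a) (where $e\Hb_\CG e$ is already stated to be Gorenstein, hence Cohen-Macaulay, and an integral domain) for the properties of $Z_c$ in (d), and Theorem~\ref{EG spherique-0}(b) (with $\kb[\CCB]/\CG_c=\kb$ Cohen-Macaulay) for the Cohen-Macaulayness of $\Hb_c e$ over $e\Hb_c e\simeq Z_c$ in (e).
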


\bigskip

%


\bigskip

\begin{coro}\label{coro:minimal-euler-c}
Si $c \in \CCB$, alors le polyn\^ome minimal de $\euler_c$ sur $P_\bullet$ 
est de degr\'e $|W|$~: c'est la sp\'ecialisation en $c$ du polyn\^ome 
minimal de $\euler$ sur $P$.
\end{coro}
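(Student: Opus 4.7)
Le plan repose sur une identification de $\mu$, le polyn\^ome minimal de $\euler$ sur $P$, avec le polyn\^ome caract\'eristique de la multiplication par $\euler$ sur le $P$-module libre $Z$, puis sur un argument de sp\'ecialisation exploitant la cyclicit\'e qui en r\'esulte.

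Premi\`ere \'etape. Puisque $Z$ est un $P$-module libre de rang $|W|$ (corollaire~\ref{coro:endo-bi}(c)), la multiplication par $\euler$ d\'efinit un endomorphisme $P$-lin\'eaire $m_\euler : Z \to Z$ dont le polyn\^ome caract\'eristique $\chi \in P[X]$ est unitaire de degr\'e $|W|$. Par Cayley-Hamilton, $\chi(\euler) = \chi(m_\euler)(1) = 0$ dans $Z$, donc $\mu$ divise $\chi$~; comme $\deg \mu = |W|$ d'apr\`es~\ref{eq:minimal-euler}, on obtient $\mu = \chi$. Cette \'egalit\'e entre polyn\^ome minimal et polyn\^ome caract\'eristique se traduit par le fait que $Z$ est un $P[\euler]$-module cyclique~: il existe $z \in Z$ tel que la famille $(\euler^i z)_{0 \le i \le |W|-1}$ soit une base de $Z$ comme $P$-module.

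Deuxi\`eme \'etape, la sp\'ecialisation en $c$. La libert\'e de $Z$ sur $P$ garantit d'une part que le polyn\^ome caract\'eristique de $m_{\euler_c}$ sur $Z_c$ co\"{\i}ncide avec la sp\'ecialisation $\mu_c \in P_\bullet[X]$ de $\mu$ (une $P$-base de $Z$ se sp\'ecialisant en une $P_\bullet$-base de $Z_c$), d'autre part que l'image $\bar z$ de $z$ dans $Z_c$ v\'erifie toujours $Z_c = P_\bullet[\euler_c] \cdot \bar z$, ce qui assure la cyclicit\'e de $Z_c$ comme $P_\bullet[\euler_c]$-module. Le polyn\^ome minimal de l'endomorphisme $m_{\euler_c}$ co\"{\i}ncide donc avec son polyn\^ome caract\'eristique $\mu_c$. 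Or, comme $Z_c$ est un anneau unitaire, le polyn\^ome minimal de l'\'el\'ement $\euler_c \in Z_c$ co\"{\i}ncide avec celui de l'endomorphisme $m_{\euler_c}$~; on conclut qu'il vaut $\mu_c$, qui est bien la sp\'ecialisation de $\mu$ et est de degr\'e $|W|$.

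L'unique point potentiellement d\'elicat est la commutation du polyn\^ome caract\'eristique avec la sp\'ecialisation, mais celle-ci r\'esulte imm\'ediatement du fait, d\'ej\`a \'etabli, que $Z$ est un $P$-module libre (corollaire~\ref{coro:endo-bi}(c))~: tout l'ingr\'edient de fond a d\'ej\`a \'et\'e fourni par l'isomorphisme de Satake et la preuve du th\'eor\`eme~\ref{theo:satake}.
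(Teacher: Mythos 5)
Votre premi\`ere \'etape est correcte~: en appliquant Cayley--Hamilton au $P$-module libre $Z$ et en comparant les degr\'es gr\^ace \`a~(\ref{eq:minimal-euler}), on obtient bien l'\'egalit\'e entre le polyn\^ome minimal $\mu = F_\euler$ de $\euler$ et le polyn\^ome caract\'eristique de $m_\euler$. En revanche, l'\'etape suivante repose sur une affirmation fausse~: l'\'egalit\'e du polyn\^ome minimal et du polyn\^ome caract\'eristique d'un endomorphisme d'un module libre sur un anneau commutatif \emph{n'entra\^{\i}ne pas} que le module soit cyclique --- ce crit\`ere n'est valable que sur un corps. Voici un contre-exemple structurellement analogue~: soient $P = \kb[x]$, $Z = \kb[x,z]/(z^2 - x) \simeq \kb[z]$ (libre de rang $2$ sur $P$, int\`egre et int\'egralement clos) et $\euler = xz \in Z$. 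Dans la $P$-base $(1,z)$, la multiplication par $\euler$ a pour matrice $\begin{pmatrix} 0 & x^2 \\ x & 0 \end{pmatrix}$, dont le polyn\^ome caract\'eristique $\tb^2 - x^3$ co\"{\i}ncide avec le polyn\^ome minimal de $\euler$~; pourtant, pour tout $v = a + bz \in Z$, le d\'eterminant de la famille $(v, \euler v)$ dans la base $(1,z)$ vaut $x(a^2 - b^2 x)$, toujours divisible par $x$, donc jamais inversible~: $Z$ n'est pas cyclique comme $P[\euler]$-module. De fait, la sp\'ecialisation $x \mapsto 0$ donne $\euler_0 = 0$ dans $Z_0 = \kb[z]/(z^2)$, dont le polyn\^ome minimal a degr\'e $1 < 2$~: dans cet exemple, la conclusion que vous visez est fausse. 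L'\'enonc\'e ne d\'ecoule donc pas formellement de la seule libert\'e de $Z$ sur $P$ et du degr\'e de $\mu$~; c'est une propri\'et\'e sp\'ecifique du centre des alg\`ebres de Cherednik.

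La d\'emonstration du m\'emoire, elle, ne passe pas par la cyclicit\'e~: elle utilise le fait que $\qG_c = \pG_c Q$ est premier, donc que $Z_c$ est int\`egre, et raisonne directement sur la sous-$P_\bullet$-alg\`ebre $P_\bullet[\euler_c]$ de $Z_c$, qu'elle identifie \`a $P_\bullet[\tb]/\langle F_{\euler,c}\rangle$ pour en d\'eduire l'irr\'eductibilit\'e de $F_{\euler,c}$. Pour rendre votre approche rigoureuse, il vous faudrait \'etablir que l'injection $P[\euler] \injto Z$ reste injective apr\`es r\'eduction modulo $\pG_c$ (autrement dit que $Z/P[\euler]$ n'a pas de $\pG_c$-torsion), ou contr\^oler directement le degr\'e du polyn\^ome minimal sp\'ecialis\'e, par exemple au moyen de~(\ref{euler minimal}) et de la th\'eorie de Galois~: c'est pr\'ecis\'ement ce point que l'\'egalit\'e $\mu = \chi$ ne vous donne pas gratuitement.
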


\bigskip

\begin{proof}
Notons $F_\euler(\tb)$ le polyn\^ome minimal de $\euler$ sur $P$, et notons 
$F_{\euler,c}(\tb) \in P_\bullet[\tb]$ sa sp\'ecialisation en $c$. 
Alors $P_\bullet[\euler_c] \subset Z_c$ est int\`egre et 
$P_\bullet[\euler_c] \simeq P[\tb]/F_{\euler,c}(\tb)$. Donc 
$F_{\euler,c}(\tb)$ est irr\'eductible et est bien le poyn\^ome minimal 
de $\euler_c$ sur $P_\bullet$.
\end{proof}

\bigskip

\section{\'Equivalence de Morita}\label{sec:morita}

\medskip

D'apr\`es le corollaire~\ref{coro:endo-bi}, on a $\Hb = \End_Z(\Hb e)$ et 
$Z=\End_\Hb(\Hb e)$. De plus, $\Hb e$ est un $\Hb$-module projectif. 
Pour que $\Hb e$ induise une \'equivalence de Morita entre $Z$ et $\Hb$, il faut et il suffit que 
$\Hb e$ soit un $Z$-module projectif~: or, ce n'est pas le cas en g\'en\'eral, 
mais $\Hb e$ est un $Z$-module de Cohen-Macaulay. Puisqu'un module de Cohen-Macaulay 
sur un anneau r\'egulier est projectif~\cite[chapitre~4,~corollaire~2]{serre}, on obtient le r\'esultat suivant 
(si $U$ est une partie multiplicative de $Z$, nous notons $Z[U^{-1}]$ \indexnot{Z}{Z[U^{-1}]}  le localis\'e de 
$Z$ en $U$ et nous posons $\Hb[U^{-1}]=Z[U^{-1}] \otimes_Z \Hb$)~:

\bigskip

\begin{prop}\label{prop:morita}
Si $U$ est une partie multiplicative de $Z$ telle que $Z[U^{-1}]$ soit un anneau r\'egulier, 
alors les alg\`ebres $\Hb[U^{-1}]$ et $Z[U^{-1}]$ sont Morita \'equivalentes 
(l'\'equivalence de Morita est induite par le bimodule $\Hb[U^{-1}]e$).
\end{prop}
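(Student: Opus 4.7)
The plan is to deduce the proposition directly from the structural results already established for the bimodule $\Hb e$, by verifying the criteria for a Morita equivalence after localizing at $U$.

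First, I would observe that the localization $\Hb[U^{-1}]e = Z[U^{-1}] \otimes_Z \Hb e$ inherits the key properties of $\Hb e$ from Corollary \ref{coro:endo-bi}. Namely, localization commutes with $\End$ over a commutative base (and with taking $e$-components), so one still has
$$\End_{\Hb[U^{-1}]}(\Hb[U^{-1}]e) = Z[U^{-1}] \quad\text{and}\quad \End_{Z[U^{-1}]}(\Hb[U^{-1}]e) = \Hb[U^{-1}].$$

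Second, I would show that $\Hb[U^{-1}]e$ is finitely generated projective as a $Z[U^{-1}]$-module. From Corollary \ref{coro:endo-bi}(e), $\Hb e$ is a (maximal) Cohen-Macaulay $Z$-module, and the Cohen-Macaulay property is preserved under localization, so $\Hb[U^{-1}]e$ is a Cohen-Macaulay $Z[U^{-1}]$-module. By hypothesis $Z[U^{-1}]$ is regular, so the Auslander-Buchsbaum formula (in the form of Serre's corollary cited just before the statement) forces $\Hb[U^{-1}]e$ to be projective over $Z[U^{-1}]$. Finite generation is clear since $\Hb e$ is a quotient of $\Hb$, which is itself a finitely generated $Z$-module (Corollary \ref{coro:endo-bi}(c)).

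Third, to obtain the Morita equivalence, I still need to verify that $\Hb[U^{-1}]e$ is a generator of the category of $Z[U^{-1}]$-modules. Since $Z[U^{-1}]$ is commutative, it suffices that the finitely generated projective module $\Hb[U^{-1}]e$ be faithful, i.e.~of positive rank at every prime. But $\Hb e$ is a free $P$-module of rank $|W|^2$ (Corollary \ref{coro:P-libre}(b)), and $Z$ is a free $P$-module of rank $|W|$ (Corollary \ref{coro:endo-bi}(c)), so over the fraction field $\Kb$ of $P$ (and hence over the total fraction field of $Z$) one has $\dim_{\Kb \otimes_P Z}(\Kb \otimes_P \Hb e) = |W|$, a constant positive rank. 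Hence $\Hb[U^{-1}]e$ has rank $|W|$ at every prime of $Z[U^{-1}]$, so it is faithful. Combining these three ingredients, $\Hb[U^{-1}]e$ is a progenerator of $Z[U^{-1}]$-mod whose endomorphism ring is $\Hb[U^{-1}]$, and the classical Morita theorem yields the claimed equivalence.

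The only genuinely delicate step is the projectivity in the second paragraph, which is not formal and uses the full strength of the Cohen-Macaulay structure on $\Hb e$ together with regularity of $Z[U^{-1}]$; everything else is either a direct localization of a statement already available, or an elementary rank computation coming from the PBW decomposition.
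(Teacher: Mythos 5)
Your proof is correct and takes essentially the same route as the paper: the paper simply states, in the remark immediately preceding the proposition, that since $\End_\Hb(\Hb e)=Z$ and $\End_Z(\Hb e)=\Hb$, the only missing ingredient for a Morita equivalence is projectivity of $\Hb e$ over $Z$, and then invokes the Cohen--Macaulay plus regularity argument (Serre's corollary) to supply it after localization. You make explicit two points the paper leaves implicit: that the endomorphism-ring identities pass through the localization (which needs $\Hb e$ to be a finitely generated $Z$-module, not quite what Corollary~\ref{coro:endo-bi}(c) says --- it gives freeness over $P$, from which finite generation over $Z$ follows), and that the finitely generated projective module $\Hb[U^{-1}]e$ is in fact a generator, which you get from its constant positive rank over the domain $Z[U^{-1}]$. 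This second point is genuinely worth spelling out, since projectivity alone does not make a bimodule with the right endomorphism rings into a progenerator; the paper is tacitly using faithfulness (or the rank count) here. So: same key step, but a more complete write-up of the Morita criterion.
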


\bigskip

\subsection{Localisation en $V^\reg$} 
Rappelons que 
$$V^\reg=V \setminus \bigcup_{H \in \AC} H=\{v \in V~|~\Stab_G(v)=1\}.$$
On note $P^\reg=\kb[\CCB] \otimes \kb[V^\reg]^W \times \kb[V^*]^W$, \indexnot{P}{P^\reg} 
$Z^\reg=P^\reg \otimes_P Z$ \indexnot{Z}{Z^\reg}  
et $\Hb^\reg=P^\reg \otimes_P \Hb=Z^\reg \otimes_Z \Hb$. \indexnot{H}{\Hb^\reg} 
Si $s \in \Ref(W)$, notons $\a_s^W=\prod_{w \in W} w(\a_s) \in P$, \indexnot{az}{\a_s^W}  de sorte 
que $P^\reg$ (resp. $Z^\reg$) est le localis\'e de $P$ (resp. $Z$) 
en la partie multiplicative engendr\'ee par $(\a_s^W)_{s \in \Ref(W)}$. 
Il s'en suit que 
\equat\label{inversible alpha}
\text{\it $\a_s$ est inversible dans $\Hb^\reg$.}
\endequat
En effet, $\prod_{w \in W} w(\a_s) \in (P^\reg)^\times$.

\bigskip

\begin{prop}[Etingof-Ginzburg]
Il existe un unique isomorphisme de $\kb[\CCB]$-alg\`ebres 
$$\Th : \Hb^\reg \longiso \kb[\CCB] \otimes (\kb[V^\reg \times V^*] \rtimes W)\indexnot{ty}{\Th}$$
tel que 
$$
\begin{cases}
\Th(w) = w & \text{si $w \in W$},\\
\Th(y) = y - \DS{\sum_{s \in \Ref(W)} 
\e(s)C_s \hskip1mm\frac{\langle y,\a_s\rangle}{\a_s}\hskip1mm s} & \text{si $y \in V$},\\
\Th(x) = x & \text{si $x \in V^*$.}\\
\end{cases}
$$
\end{prop}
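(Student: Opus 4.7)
Le plan est de d\'efinir $\Th$ d'abord sur les g\'en\'erateurs $V \cup V^* \cup W \cup \CCB^*$ de $\Hb^\reg$, puis de v\'erifier que les images satisfont aux relations~\ref{relations-0}, pour obtenir un morphisme de $\kb$-alg\`ebres $\Th : \Hb^\reg \longto \kb[\CCB] \otimes (\kb[V^\reg \times V^*] \rtimes W)$, et enfin de montrer que ce morphisme est bijectif. Les formules sont bien d\'efinies : $\Th(x)$, $\Th(w)$ et $\Th(C)$ sont \'evidents, et $\Th(y)$ est un \'el\'ement bien d\'efini du produit tensoriel $\kb[\CCB] \otimes \kb[V^\reg] \otimes \kb W$ gr\^ace \`a l'invertibilit\'e de $\a_s$ dans $\kb[V^\reg]$ pour $s \in \Ref(W)$ (cons\'equence de ce que $\prod_{w \in W} w(\a_s) \in (P^\reg)^\times$ et donc $\a_s \in \kb[V^\reg]^\times$).

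Les relations \`a v\'erifier sont : (i) la commutativit\'e $[\Th(x),\Th(x')]=0$, triviale dans l'alg\`ebre cible~; (ii) la $W$-\'equivariance $w\Th(y)w^{-1}=\Th(w(y))$, qui r\'esulte de ce que $W$ permute les r\'eflexions, la fonction $c \mapsto C_s$ est invariante sur les classes, et $w(\a_s)$ est proportionnel \`a $\a_{wsw^{-1}}$~; (iii) la relation $[\Th(y),\Th(x)]$, qui se calcule directement : puisque $\Th(x)=x$ commute avec $y$ et avec les constantes, on a
$$[\Th(y),x] = -\sum_{s \in \Ref(W)} \e(s) C_s \frac{\langle y,\a_s\rangle}{\a_s} (sx-xs) = -\sum_{s} \e(s) C_s \frac{\langle y,\a_s\rangle}{\a_s} (s(x)-x)s,$$
et il suffit alors de substituer la formule~(\ref{action s V*}) pour $s(x)$ puis de v\'erifier que la somme co\"{\i}ncide avec le membre de droite attendu~; (iv) enfin, la relation $[\Th(y),\Th(y')]=0$, qui est la commutativit\'e des op\'erateurs de Dunkl.

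C'est cette derni\`ere \'etape qui constitue l'obstacle principal. Son traitement demande de d\'evelopper le commutateur et de regrouper les termes : les contributions diagonales (en une seule r\'eflexion $s$) s'annulent par antisym\'etrie (\'echange $y \leftrightarrow y'$), et les contributions crois\'ees (en deux r\'eflexions $s \neq t$) se compensent par un argument de sym\'etrisation classique reposant sur l'identit\'e de Dunkl, essentiellement parce que les termes $\frac{\langle y,\a_s\rangle\langle y',\a_t\rangle}{\a_s\,s(\a_t)}\,s\,t$ et leurs homologues engendrent une somme sym\'etrique dans l'action du sous-groupe dih\'edral engendr\'e par $s$ et $t$, et que les p\^oles induits par $\a_s=0$ se compensent en vertu des identit\'es rationnelles standard (voir la preuve originale dans~\cite{EG}).

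Pour la bijectivit\'e, on observe que l'image de $\Th$ contient $W$, $V^*$, $\CCB^*$ et, au vu de la formule $\Th(y) + \sum_s \e(s) C_s \frac{\langle y,\a_s\rangle}{\a_s} s = y$, contient aussi $V$ modulo $\kb[V^\reg] \otimes \kb W$ ; donc $\Th$ est surjective. Pour l'injectivit\'e, on remarque que la d\'ecomposition PBW (Th\'eor\`eme~\ref{PBW-0}) donne un isomorphisme de $\kb[\CCB] \otimes \kb[V^\reg]$-modules $\Hb^\reg \simeq \kb[\CCB] \otimes \kb[V^\reg] \otimes \kb W \otimes \kb[V^*]$, et que, filtrant $\Hb^\reg$ par le degr\'e en $V$ (comme dans la section~\ref{section:filtration-1}), le morphisme $\Th$ est compatible avec cette filtration et induit l'identit\'e sur le gradu\'e associ\'e $\kb[\CCB] \otimes \kb[V^\reg \times V^*] \rtimes W$. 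L'unicit\'e est imm\'ediate puisque $\Th$ est d\'etermin\'e par ses valeurs sur les g\'en\'erateurs.
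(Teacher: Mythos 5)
La preuve que donne le m\'emoire pour cette proposition se r\'eduit \`a renvoyer \`a [EG, proposition~4.11]~: vous tentez donc d'en reconstituer un argument direct, ce qui est l\'egitime. Le plan que vous suivez (d\'efinir $\Th$ sur les g\'en\'erateurs, v\'erifier que les relations de~(\ref{relations-0}) sont envoy\'ees sur z\'ero, puis \'etablir la bijectivit\'e par un argument filtr\'e) est le bon et suit essentiellement la d\'emarche classique d'Etingof-Ginzburg. La v\'erification de $[\Th(y),x]=\Th([y,x])$ via~(\ref{action s V*}) est correcte, de m\^eme que l'argument de bijectivit\'e~: $\Th$ respecte la filtration~$\FC$ de la section~\ref{section:filtration-1} (localis\'ee en $V^\reg$), et $\grad_\FC(\Th)=\Id$, d'o\`u l'isomorphisme. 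L'unicit\'e est en effet imm\'ediate.

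En revanche, le point r\'eellement substantiel est l'identit\'e $[\Th(y),\Th(y')]=0$ dans l'alg\`ebre cible (commutativit\'e des op\'erateurs de Dunkl), et c'est pr\'ecis\'ement celui que vous escamotez en renvoyant \`a~\cite{EG}. Votre esquisse de la cancellation est d'ailleurs impr\'ecise~: si l'on \'ecrit $\Th(y)=y+X(y)$ avec $X(y)$ de $\ZM$-degr\'e~$0$, la partie lin\'eaire en $C$ de $[\Th(y),\Th(y')]$, \`a savoir $[y,X(y')]+[X(y),y']$, s'annule non pas par une <<~antisym\'etrie en $y\leftrightarrow y'$~>> mais parce que $y-s(y)$ est proportionnel \`a $\a_s^\ve$ (via~(\ref{action s V})), ce qui fait appara\^{\i}tre deux contributions de signes oppos\'es~; et les termes quadratiques en $C$ (les <<~contributions crois\'ees~>> $[X(y),X(y')]$, pour $s\ne t$) n\'ecessitent le v\'eritable argument d'annulation de r\'esidus des op\'erateurs de Dunkl. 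C'est ce dernier point qui constitue le fond de la proposition~: en l'\'etat, votre r\'edaction le d\'elocalise sans le prouver. Notons aussi qu'une approche alternative, qui tire parti du th\'eor\`eme~\ref{PBW-0} d\'ej\`a acquis, consiste \`a r\'ealiser $\Th$ comme le morphisme d'action de $\Hb^\reg$ sur le module $\Hb^\reg/\Hb^\reg V$ (qui est libre sur $\kb[\CCB]\otimes\kb[V^\reg]$ de base $W$ par PBW)~: la commutativit\'e des op\'erateurs de Dunkl est alors automatique, puisqu'elle r\'esulte de la relation $[y,y']=0$ dans $\Hb$ transport\'ee par l'action. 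Cette variante exploite mieux ce qui est disponible dans le m\'emoire que la v\'erification directe que vous proposez.
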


\begin{proof}
Voir~\cite[proposition 4.11]{EG}.
\end{proof}

\bigskip

\begin{rema}\label{rem:theta-H}
En termes des variables $K_{H,i}$, on a
$$\Theta(y)=y-\sum_{H\in\mathcal{A}}\sum_{i=0}^{e_H-1}\frac{\langle y,\alpha_H\rangle}{\alpha_H}
e_H K_{H,i}\e_{H,i}$$
pour $y\in V$.\finl
\end{rema}

\bigskip

\begin{coro}\label{centre reg}
$\Th$ se restreint en un isomorphisme de $\kb[\CCB]$-alg\`ebres 
$Z^\reg \longiso \kb[\CCB] \otimes \kb[V^\reg \times V^*]^W$. En particulier, 
$Z^\reg$ est r\'egulier. 
\end{coro}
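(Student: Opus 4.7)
The plan is to exploit the fact that $\Th$ is an isomorphism of $\kb[\CCB]$-algebras (from the preceding proposition), so it automatically restricts to an isomorphism between the centers. Thus the main task is to identify explicitly the center of the skew group algebra $\kb[\CCB] \otimes (\kb[V^\reg \times V^*] \rtimes W)$.

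First I would compute this center. Set $R = \kb[\CCB] \otimes \kb[V^\reg \times V^*]$, on which $W$ acts trivially on the $\kb[\CCB]$-factor and diagonally on $V^\reg \times V^*$. A general element $z = \sum_{w \in W} a_w w \in R \rtimes W$ is central if and only if (i) it commutes with every $r \in R$ and (ii) it commutes with every $v \in W$. Condition (i) gives $a_w (w(r) - r) = 0$ in $R$ for all $r \in R$ and all $w \in W$. Since $V^\reg$ is exactly the locus where $W$ acts freely, for each $w \neq 1$ the action of $w$ on the integral domain $R$ is non-trivial, and one can pick $r \in R$ with $w(r) - r \neq 0$; integrality of $R$ then forces $a_w = 0$. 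Condition (ii) then says $a_1 \in R^W$. Hence $\Zrm(\kb[\CCB] \otimes (\kb[V^\reg \times V^*] \rtimes W)) = \kb[\CCB] \otimes \kb[V^\reg \times V^*]^W$, and $\Theta$ induces an isomorphism $Z^\reg \longiso \kb[\CCB] \otimes \kb[V^\reg \times V^*]^W$.

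For the regularity statement, the key observation is that $W$ acts freely on $V^\reg$ by the very definition of $V^\reg$, and hence freely on the smooth variety $V^\reg \times V^*$. A finite group acting freely on a smooth variety yields a smooth quotient, so $(V^\reg \times V^*)/W = \Spec(\kb[V^\reg \times V^*]^W)$ is smooth, i.e.\ $\kb[V^\reg \times V^*]^W$ is a regular ring. Tensoring with the polynomial ring $\kb[\CCB]$ preserves regularity, so $\kb[\CCB] \otimes \kb[V^\reg \times V^*]^W$ is regular, and via $\Theta$ so is $Z^\reg$.

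There is no real obstacle: the only non-cosmetic point is the center computation, which relies crucially on using $V^\reg$ (freeness of the action) rather than $V$ itself — on all of $V$ the fixators $W_H$ would contribute extra central elements and the argument would fail. Once the freeness is in place, both halves of the statement are essentially formal consequences of the proposition defining $\Th$.
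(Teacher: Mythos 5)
Your proof takes the same route as the paper's (which is extremely terse: ``the first assertion follows by comparing centers, the second because $W$ acts freely on $V^\reg\times V^*$''); you have just filled in the details. One small inaccuracy in your closing paragraph is worth flagging: the computation of $\Zrm(R\rtimes W)=R^W$ for $R$ an integral domain does not rely on the action being \emph{free} --- \emph{faithfulness} of the $W$-action on $R$ is all that is needed (if $w\neq 1$ then $w$ acts non-trivially on $\kb[V]\subset R$ simply because $W\injto\GL_\kb(V)$), and in fact $\Zrm(\kb[V\times V^*]\rtimes W)=\kb[V\times V^*]^W$ holds over all of $V$, with no ``extra central elements'' coming from the fixators $W_H$. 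What the passage to $V^\reg$ actually buys is (a) the very existence of the isomorphism $\Th$, since the Dunkl-type formula has the $\a_s$ in denominators, and (b) the freeness that makes $(V^\reg\times V^*)/W$ smooth, giving the regularity claim. Both of these are points you do use correctly in the body of the argument, so the proof as a whole is sound; only the motivating remark at the end misidentifies where freeness enters.
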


\begin{proof}
La premi\`ere assertion d\'ecoule simplement de la comparaison des centres 
de $\Hb^\reg$ et $\kb[V^\reg \times V^*] \rtimes W$. La deuxi\`eme r\'esulte du fait 
que $W$ agit librement sur $V^\reg \times V^*$.
\end{proof}

\bigskip

Si $c \in \CCB$, notons $Z^\reg_c$ \indexnot{Z}{Z_c^\reg}  
la localisation de $Z_c$ en $P^\reg_\bullet =\kb[V^\reg]^W \otimes \kb[V^*]^W$. 
Le corollaire~\ref{centre reg} montre que
\equat\label{lisse reg}
\text{\it $Z^\reg_c \simeq \kb[V^\reg \times V^*]^W$ est un anneau r\'egulier.}
\endequat


\subsection{Corps des fractions} 
Notons $\Kb$ le corps des fractions de $P$ et posons $\Kb Z = \Kb \otimes_P Z$: 
alors, puisque $Z$ est int\`egre et entier sur $P$, 
\equat\label{eq:KZ-fraction}
\text{\it $\Kb Z$ est le corps des fractions de $Z$.}
\endequat
En particulier, $\Kb Z$ est un anneau r\'egulier.

\bigskip

\begin{theo}\label{theo:KH-mat}
Les $\Kb$-alg\`ebres $\Kb\Hb$ et $\Kb Z$ sont Morita \'equivalentes, 
l'\'equivalence de Morita \'etant induite par $\Kb \Hb e$. Plus pr\'ecis\'ement, 
$$\Kb\Hb \simeq \Mat_{|W|}(\Kb Z).$$
\end{theo}

\begin{proof}
Compte tenu de la proposition~\ref{prop:morita}, 
il ne reste qu'\`a montrer la derni\`ere assertion: il suffit donc de montrer 
que $\Kb\Hb e$ est un $\Kb Z$-espace vectoriel de dimension $|W|$. Or, $\Hb e$ est un $P$-module 
libre de rang $|W|^2$ tandis que $Z$ est un $P$-module libre de rang $|W|$, d'o\`u 
le r\'esultat.
\end{proof}

\bigskip

\section{Compl\'ements}

\bigskip

\subsection{Structure de Poisson}

La d\'ecomposition PBW fournit un isomorphisme de {\it $\kb$-espaces vectoriels} 
$\kb[T] \otimes \Hb \longiso \Hbt$. Si $h \in \Hb$, notons $\tilde{h}$ l'image de 
$1 \otimes h \in \kb[T] \otimes \Hb$ dans $\Hbt$ via cet isomorphisme. 
Si $z$, $z' \in Z$, alors $[z,z']=0$, ce qui signifie que 
$[\zti,\zti'] \in T \Hbt$. On note $\{z,z'\}$ l'image de $[\zti,\zti']/T \in \Hbt$ dans 
$\Hb = \Hbt/T\Hbt$. Il est facile de v\'erifier que $\{z,z'\} \in Z$ et que
\equat\label{eq:poisson}
\{-,-\} : Z \times Z \longto Z\indexnot{ZZZ}{{{\{,\}}}}
\endequat
est un {\it crochet de Poisson} $\kb[\CCB]$-lin\'eaire. Si $c \in \CCB$, alors 
il induit un crochet de Poisson
\equat\label{eq:poisson-c}
\{-,-\} : Z_c \times Z_c \longto Z_c.
\endequat

\bigskip

\subsection{Forme sym\'etrisante}\label{sub:symetrisante}

Fixons 
une $\kb[V]^W$-base $(b_i)_{1 \le i \le |W|}$ 
de $\kb[V]$ 
form\'ee d'\'el\'ements homog\`enes ainsi 
qu'une $\kb[V^*]^W$-base $(b_i^*)_{1 \le i \le |W|}$ de $\kb[V^*]$ 
elle aussi form\'ee d'\'el\'ements homog\`enes. 
Il est bien connu~\cite[proposition~4.16]{broue} que, parmi la base homog\`ene $(b_i)_{1 \le i \le |W|}$, 
un seul \'el\'ement est de degr\'e $|\Ref(W)|$, tous les autres \'etant de 
degr\'e strictement inf\'erieur. Nous supposerons 
que $\deg b_1 = \deg b_1^* = |\Ref(W)|$. Notons
$$\taub_\HHb : \Hb \longto P\indexnot{tx}{\taub_\HHb}$$
l'unique application $P$-lin\'eaire telle que 
$$\taub_\HHb(b_i w b_j^*) = 
\begin{cases}
1 & \text{si $i=j=1$ et $w=1$,}\\
0 & \text{sinon.}
\end{cases}$$
Alors~\cite[th\'eor\`eme 4.4]{BGS}
\equat\label{symetrique}
\text{\it $\taub_\HHb$ est une forme sym\'etrisante sur la $P$-alg\`ebre $\Hb$.}
\endequat

\bigskip

Soit $(h_i)_{1 \le i \le |W|^3}$ une $P$-base de $\Hb$ et soit $(h_i^\ve)_{1 \le i \le |W|^3}$ 
la $P$-base duale (pour la forme sym\'etrisante $\taub_\HHb$). Posons
$$\casimir_\HHb=\sum_{i=1}^{|W|^3} h_i h_i^\ve.$$
L'\'el\'ement $\casimir_\HHb$ de $\Hb$ est appel\'e l'{\it \'el\'ement de Casimir central} de $\Hb$ 
(ou $(\Hb,\taub_\HHb)$)~: 
il est ind\'ependant du choix de la $P$-base $(h_i)_{1 \le i \le |W|^3}$ (il ne d\'epend que du 
couple $(\Hb,\t)$) et il v\'erifie 
$$\casimir_\HHb \in Z\indexnot{ca}{\casimir_\HHb}$$
(voir par exemple~\cite[d\'efinition~2.2.4~et~lemme~2.2.6]{chlouveraki}).

\bigskip

\section{Comment calculer $Z$~?}\label{section:calcul Q}

\bigskip

Nous allons ici donner quelques ingr\'edients permettant d'aider 
au calcul du centre $Z$ de l'alg\`ebre $\Hb$~: nous ne donnerons pas d'algorithme, 
mais juste quelques principes permettant de r\'esoudre 
les petits cas. Nous les mettrons en \oe uvre dans les chapitres 
\ref{chapitre:rang 1} (le rang $1$) et~\ref{chapitre:b2} (le type $B_2$). 
Un des ingr\'edients essentiels est la bi-graduation de $Z$. 

\bigskip

\subsection{S\'eries de Hilbert}
Nous allons ici calculer les s\'eries de Hilbert bi-gradu\'ees 
de $\Hb$, $P$, $Z$ et $e\Hb e$. Tout d'abord, remarquons que 
$$\dim_\kb^\bigrad(\kb[\CCB]) = \frac{1}{(1-\tb\ub)^{|\refw|}},$$
ce qui permet de d\'eduire facilement le r\'esultat pour $\Hb$, 
en utilisant~(\ref{x}) et la d\'ecomposition PBW~:
\equat\label{hilbert H}
\dim_\kb^\bigrad(\Hb) = \frac{|W|}{(1-\tb)^n~(1-\ub)^n
~ (1-\tb\ub)^{|\refw|}}.
\endequat
D'autre part, en reprenant les notations du th\'eor\`eme~\ref{chevalley}, 
on obtient, gr\^ace \`a~(\ref{hilbert biinv}), 
\equat\label{hilbert P}
\dim_\kb^\bigrad(P)=
\frac{1}{(1-\tb\ub)^{|\refw|} ~\DS{\prod_{i=1}^n (1-\tb^{d_i})(1-\ub^{d_i})}}.
\endequat
Pour finir, notons que la d\'ecomposition PBW est un isomorphisme 
$W$-\'equivariant de $\kb[\CCB]$-modules bi-gradu\'es, d'o\`u l'on d\'eduit 
que $\Hb e \simeq \kb[\CCB] \otimes \kb[V] \otimes \kb[V^*]$ comme 
$\kb W$-modules bi-gradu\'es et donc que 
\equat\label{iso bigrad}
\text{\it les $\kb$-espaces vectoriels bi-gradu\'es $Z$ et 
$\kb[\CCB] \otimes \kb[V \times V^*]^W$ sont isomorphes.}
\endequat
Ainsi, $\dim_\kb^\bigrad(Z)=\dim_\kb^\bigrad(\kb[\CCB]) \cdot 
\dim_\kb^\bigrad(\kb[V \times V^*]^W)$ ce qui, 
en vertu de~(\ref{hilbert molien}) et de la proposition~\ref{dim bigrad Q0 fantome},
nous donne
\equat\label{hilbert Q molien}
\dim_\kb^\bigrad(Z) = \frac{1}{|W|~(1-\tb\ub)^{|\refw|}} 
\sum_{w \in W} \frac{1}{\det(1-w\tb)~\det(1-w^{-1}\ub)}
\endequat
et
\equat\label{hilbert Q fantome}
\dim_\kb^\bigrad(Z) = 
\frac{\DS{\sum_{\chi \in \Irr(W)} 
f_\chi(\tb)~f_\chi(\ub)}}{(1-\tb\ub)^{|\refw|} ~\DS{\prod_{i=1}^n (1-\tb^{d_i})(1-\ub^{d_i})}}.
\endequat

\subsection{Comparaison de $Z$ et $Z_0$}\label{subsection:Q Q0} 
Si $z \in Z$, nous noterons $z_0$ son image dans $Z_0$. Rappelons que 
$Z_0=Z/\CG_0 Z$, o\`u $\CG_0$ est l'unique id\'eal homog\`ene 
maximal de $\kb[\CCB]$ ($\CG_0$ est m\^eme bi-homog\`ene, par rapport 
\`a la $(\NM \times \NM)$-graduation). Le lemme de Nakayama gradu\'e 
permet imm\'ediatement d'obtenir le r\'esultat suivant~:

\bigskip

\begin{lem}\label{QQ}
Soient $z^{(1)}$,\dots, $z^{(r)}$ des \'el\'ements $\NM$-homog\`enes 
(par exemple, des \'el\'ements $(\NM \times \NM)$-homog\`enes) de $Z$. Alors 
$Z=\kb[\CCB][z^{(1)},\dots,z^{(r)}]$ si et seulement si $Z_0=\kb[z^{(1)}_0,\dots,z^{(r)}_0]$. 
\end{lem}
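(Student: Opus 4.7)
The forward implication is immediate: if $Z = \kb[\CCB][z^{(1)},\dots,z^{(r)}]$, then reducing modulo the maximal ideal $\CG_0 = (C_s)_{s\in\refw}$ of $\kb[\CCB]$ gives at once $Z_0 = \kb[z_0^{(1)},\dots,z_0^{(r)}]$.

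For the converse, the plan is to apply graded Nakayama. Consider the $\NM$-grading $\Hb^\NM$ of exemple~\ref{N graduation-1}, in which each $C_s$ has degree $2$, the elements of $\kb W$ have degree $0$, and $\Hb$ (hence $Z$) is concentrated in non-negative degrees. By the computation at the start of \S\ref{section:eulertilde}, $Z \cap \Hb^\NM[0] \subseteq \Zrm(\Hb)\cap \kb W = \kb$, so $Z$ has degree-$0$ part equal to $\kb$. Since the $z^{(k)}$ are $\NM$-homogeneous, the subring $Z' := \kb[\CCB][z^{(1)},\dots,z^{(r)}]$ is an $\NM$-graded $\kb[\CCB]$-submodule of $Z$, and $M := Z/Z'$ is a graded $\kb[\CCB]$-module concentrated in degrees $\ge 0$.

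The hypothesis $Z_0 = \kb[z_0^{(1)},\dots,z_0^{(r)}]$ says exactly that the composite $Z' \hookrightarrow Z \twoheadrightarrow Z_0$ is surjective, which is equivalent to $Z = Z' + \CG_0 Z$, i.e.\ $\CG_0 M = M$. I then conclude by induction on $n$ that $M_n = 0$: the base case $n = 0$ follows from $M_0 \subseteq (\CG_0 M)_0 = 0$ (since $\CG_0$ is concentrated in degrees $\ge 2$); and for $n \ge 1$, $M_n \subseteq (\CG_0 M)_n = \sum_{i \ge 2,\ j \ge 0,\ i+j=n}\CG_0[i]\cdot M_j = 0$ by the inductive hypothesis applied to $j \le n-2 < n$. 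Hence $M = 0$ and $Z = Z'$.

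There is no real difficulty: everything reduces to the verification that the problem lives in the graded category (the $\NM$-homogeneity of the $z^{(k)}$ is used precisely so that $Z'$ is a graded submodule) and that $\kb[\CCB]$ is non-negatively graded with $\CG_0$ as its irrelevant ideal. The only point worth noting is that $M$ is not finitely generated over $\kb[\CCB]$, but the degree-by-degree induction above sidesteps any noetherian hypothesis.
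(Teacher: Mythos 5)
Your proof is correct and follows the same route as the paper's: the paper simply invokes the graded Nakayama lemma without spelling out details, and your argument is precisely the degree-by-degree verification of that lemma (using that $\CG_0$ is concentrated in positive $\NM$-degree, so that $\CG_0 M = M$ forces $M=0$ without any finiteness hypothesis on $M$).
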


\bigskip

Si le lemme~\ref{QQ} peut aider \`a trouver des g\'en\'erateurs de $Z$, 
le probl\`eme du calcul de $Z$ n'est pas encore totalement r\'esolu \`a 
ce stade, car il faut d\'eterminer les relations entre ces g\'en\'erateurs. 
Fixons donc des \'el\'ements $z^{(1)}$,\dots, $z^{(r)}$ de $Z$ qui soient 
$\NM$-homog\`enes (notons $e_1$,\dots, $e_r$ leurs degr\'es) et tels que 
$Z=\kb[\CCB][z^{(1)},\dots,z^{(r)}]$. Graduons 
l'alg\`ebre $\kb[\CCB][\xb_1,\dots,\xb_r]$ en attribuant \`a $\xb_i$ le degr\'e 
$e_i$. Alors le morphisme naturel de $\kb[\CCB]$-alg\`ebres 
$$\ph : \kb[\CCB][\xb_1,\dots,\xb_r] \longto Z$$
qui envoie $\xb_i$ sur $z^{(i)}$ est surjectif et gradu\'e. On notera 
$$\ph_0 : \kb[\xb_1,\dots,\xb_r] \longto Z_0$$
le morphisme surjectif gradu\'e 
de $\kb$-alg\`ebres d\'eduit de $\ph$ par r\'eduction modulo $\CG_0$ 
(c'est-\`a-dire par sp\'ecialisation en $c=0$). 
Supposons trouv\'es des \'el\'ements homog\`enes 
$f^{(1)}$,\dots, $f^{(m)}$ dans $\Ker \ph$ et notons $I$ 
(respectivement $I_0$) l'id\'eal de $\kb[\CCB][\xb_1,\dots,\xb_r]$ 
(respectivement $\kb[\xb_1,\dots,\xb_r]$) engendr\'e par $f^{(1)}$,\dots, $f^{(m)}$ 
(respectivement $f^{(1)}_0$,\dots, $f^{(m)}_0$). Ici, si $f \in \kb[\CCB][\xb_1,\dots,\xb_r]$, 
alors $f_0$ d\'esigne son image dans $\kb[\xb_1,\dots,\xb_r]$ par sp\'ecialisation en $c=0$. 

\bigskip

\begin{prop}\label{presentation Q}
Supposons que $\kb[\CCB][\xb_1,\dots,\xb_r]/I$ soit un $\kb[\CCB]$-module libre. 
Alors~:
\begin{itemize}
\itemth{a}  $I_0=I/\CG_0 I$

\itemth{b} $I=\Ker \ph$ si et seulement si $I_0=\Ker \ph_0$.
\end{itemize}
\end{prop}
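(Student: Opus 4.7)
L'id\'ee centrale est d'exploiter la platitude de $B = \kb[\CCB][\xb_1,\dots,\xb_r]/I$ sur $\kb[\CCB]$ (cons\'equence imm\'ediate de l'hypoth\`ese de libert\'e) pour comparer la situation g\'en\'erique et la situation sp\'ecialis\'ee en $c=0$, puis de conclure par un argument de Nakayama gradu\'e en s'appuyant sur le fait, d\'ej\`a d\'emontr\'e (corollaire~\ref{coro:endo-bi}(c)), que $Z$ est un $P$-module libre, donc un $\kb[\CCB]$-module libre. Posons $A = \kb[\CCB][\xb_1,\dots,\xb_r]$, et notons que $A$, $I$, $\ker\ph$ et $B$ sont tous naturellement $\NM$-gradu\'es (en donnant \`a $\xb_i$ le degr\'e $e_i$ et aux \'el\'ements de $\CCB^*$ leur degr\'e naturel), et que le morphisme $\ph$ est gradu\'e.

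\medskip

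\noindent\textbf{D\'emonstration de (a).} L'hypoth\`ese dit que $B$ est $\kb[\CCB]$-libre, donc $\kb[\CCB]$-plat. La suite exacte courte $0 \to I \to A \to B \to 0$ fournit, par platitude, un $\Tor_1^{\kb[\CCB]}(B,\kb[\CCB]/\CG_0)$ nul ; on obtient donc une injection $I/\CG_0 I \hookrightarrow A/\CG_0 A = \kb[\xb_1,\dots,\xb_r]$. L'image de cette fl\`eche est manifestement engendr\'ee par les classes $f^{(1)}_0,\dots,f^{(m)}_0$, c'est-\`a-dire \'egale \`a $I_0$ ; d'o\`u l'isomorphisme $I_0 \simeq I/\CG_0 I$.

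\medskip

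\noindent\textbf{D\'emonstration de (b), sens direct.} Supposons $I = \Ker\ph$, de sorte que $B \simeq Z$. En r\'eduisant modulo $\CG_0$ et en utilisant (a), on obtient
\[
Z_0 = Z/\CG_0 Z \simeq B/\CG_0 B = A/(I + \CG_0 A) = \kb[\xb_1,\dots,\xb_r]/I_0,
\]
et l'isomorphisme ainsi obtenu co\"{\i}ncide avec celui induit par $\ph_0$. Il s'ensuit que $\Ker\ph_0 = I_0$.

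\medskip

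\noindent\textbf{D\'emonstration de (b), sens r\'eciproque.} Supposons $I_0 = \Ker\ph_0$, et notons $K = \Ker\ph$ ; on a $I \subseteq K$ et on veut montrer que $K/I = 0$. La suite exacte
\[
0 \longto K/I \longto B \longto Z \longto 0
\]
est une suite de $\kb[\CCB]$-modules gradu\'es. Par le corollaire~\ref{coro:endo-bi}(c), $Z$ est un $P$-module libre donc un $\kb[\CCB]$-module libre, en particulier plat ; ainsi $\Tor_1^{\kb[\CCB]}(Z,\kb[\CCB]/\CG_0) = 0$. En tensorisant par $\kb[\CCB]/\CG_0$, on obtient la suite exacte
\[
0 \longto (K/I)/\CG_0 (K/I) \longto B/\CG_0 B \longto Z_0 \longto 0.
\]
Or $B/\CG_0 B = \kb[\xb_1,\dots,\xb_r]/I_0$, et l'hypoth\`ese $I_0 = \Ker\ph_0$ entra\^{\i}ne que la fl\`eche $B/\CG_0 B \to Z_0$ est un isomorphisme. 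Donc $(K/I)/\CG_0(K/I) = 0$. Puisque $K/I$ est un $\kb[\CCB]$-module gradu\'e born\'e inf\'erieurement (car $A$ l'est) et que $\CG_0$ est l'id\'eal irrelevant de l'anneau gradu\'e $\kb[\CCB]$, le lemme de Nakayama gradu\'e permet de conclure que $K/I = 0$, c'est-\`a-dire $I = \Ker\ph$.

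\medskip

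\noindent\textbf{Difficult\'e principale.} Le point d\'elicat est le sens r\'eciproque de (b) : il s'agit de v\'erifier que la platitude de $Z$ sur $\kb[\CCB]$ \textit{peut effectivement} \^etre invoqu\'ee (elle provient du corollaire~\ref{coro:endo-bi}(c) via le fait que $P$ est $\kb[\CCB]$-libre), puis de s'assurer que $K/I$ se pr\^ete au lemme de Nakayama gradu\'e. Une fois ces deux points clarifi\'es, l'argument se d\'eroule de mani\`ere formelle.
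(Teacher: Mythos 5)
Votre preuve est correcte et suit essentiellement la m\^eme strat\'egie que celle du texte. Pour (a), l'usage du foncteur $\Tor_1^{\kb[\CCB]}$ et de la libert\'e de $A/I$ est identique \`a l'argument original. Pour (b), la r\'eciproque est organis\'ee un peu diff\'eremment~: le texte applique l'argument de (a) \`a $\Ker\ph$ (en utilisant que $A/\Ker\ph \simeq Z$ est libre), d'o\`u $(\Ker\ph)/\CG_0\Ker\ph \simeq \Ker\ph_0$, puis compare les deux identifications et conclut par Nakayama gradu\'e~; vous travaillez directement avec le conoyau $K/I$, tensorisez la suite exacte $0 \to K/I \to B \to Z \to 0$ par $\kb$ en invoquant la platitude de $Z$, et concluez de m\^eme. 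Les deux variantes sont \'equivalentes et le choix rel\`eve du confort~: votre version \'evite de v\'erifier s\'epar\'ement que l'image de $\Ker\ph$ dans $A_0$ co\"{\i}ncide avec $\Ker\ph_0$ (v\'erification facile mais pass\'ee sous silence dans le texte), au prix d'introduire le module auxiliaire $K/I$. Enfin, votre remarque sur le caract\`ere gradu\'e born\'e inf\'erieurement de $K/I$ est exactement ce qu'il faut pour justifier le lemme de Nakayama gradu\'e sans hypoth\`ese de finitude~: ce point m\'erite d'\^etre explicit\'e car $A/I$ peut \^etre de rang infini sur $\kb[\CCB]$.
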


\begin{proof}
Notons $A$ la $\kb[\CCB]$-alg\`ebre $\kb[\CCB][\xb_1,\dots,\xb_r]$ et posons 
$A_0=A/\CG_0 A =\kb[\xb_1,\dots,\xb_r]$. 

\medskip

(a) La suite exacte de $\kb[\CCB]$-modules 
$$0 \longto I \longto A \longto A/I \longto 0$$
induit une suite exacte
$${\mathrm{Tor}}^{\kb[\CCB]}_1(\kb[\CCB]/\CG_0,A/I) \longto I/\CG_0 I \longto A_0 
\longto (A/I)/\CG_0(A/I) \longto 0.$$
Mais, par hypoth\`ese, $A/I$ est libre donc 
$\Tor^{\kb[\CCB]}_1(\kb[\CCB]/\CG_0,A/I)=0$ 
ce qui montre que $I/\CG_0 I$ s'identifie \`a l'image de $I$ dans $A_0$, 
c'est-\`a-dire $I_0$. D'o\`u (a).

\medskip

(b) Puisque $A/\Ker \ph \simeq Z$ est un $\kb[\CCB]$-module libre, l'argument 
pr\'ec\'edent montre que $(\Ker \ph)/\CG_0 (\Ker \ph)$ s'identifie \`a 
son image dans $A_0$, c'est-\`a-dire \`a $\Ker \ph_0$. 
Le r\'esultat d\'ecoule alors encore du lemme de Nakayama gradu\'e.
\end{proof}

\bigskip

\begin{exemple}\label{qpe}
Supposons ici que $n=\dim_\kb(V) = 1$ et posons $d=|W|$. 
Soit $y \in V \setminus\{0\}$ et soit $x \in V^*$ 
tel que $\langle y,x\rangle = 1$. Alors $P_\bullet=\kb[x^d,y^d]$, 
$\euler_0=xy$ et il est facile de v\'erifier que $Z_0=\kb[x^d,y^d,xy]$, 
c'est-\`a-dire $Z_0=P_\bullet[\euler_0]$. Il d\'ecoule donc du lemme 
\ref{QQ} que $Z=P[\euler]$. 

Cela peut se montrer d'une autre mani\`ere. 
En effet, $\Irr(W)=\{\e^i~|~0 \le i \le d-1\}$ et $f_{\e^i}(\tb)=\tb^i$ pour $0 \le i \le d-1$. 
Par cons\'equent, la formule~\ref{hilbert Q fantome} 
montre que 
$$\dim_\kb^\bigrad(Z) = \frac{1+(\tb\ub)+\cdots + (\tb\ub)^{d-1}}{(1-\tb\ub)^{d-1}~
(1-\tb^d)~(1-\ub^d)}$$
tandis que, puisque $P[\euler]=P \oplus P \euler \oplus \cdots \oplus P \euler^{d-1}$ 
(d'apr\`es~(\ref{eq:minimal-euler})), 
on a 
$$\dim_\kb^\bigrad(P[\euler])=\frac{1+(\tb\ub)+\cdots + (\tb\ub)^{d-1}}{(1-\tb\ub)^{d-1}~
(1-\tb^d)~(1-\ub^d)}.$$
Ainsi, $\dim_\kb^\bigrad(P[\euler])=\dim_\kb^\bigrad(Z)$, ce qui force l'\'egalit\'e 
$Z=P[\euler]$.\finl
\end{exemple}

\bigskip

En fait, il n'arrive pratiquement jamais que $Z=P[\euler]$, comme le montre la proposition 
suivante (dont nous ne nous servirons pas, mais dont nous int\'egrons 
la preuve pour montrer l'efficacit\'e des calculs de s\'eries de Hilbert)~:

\bigskip

\begin{prop}\label{Q=Pe}
On a $Z=P[\euler]$ si et seulement si $W$ est engendr\'e par 
une seule r\'eflexion.
\end{prop}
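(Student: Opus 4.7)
La strat\'egie est une comparaison des s\'eries de Hilbert bi-gradu\'ees. Puisque $\euler \in Z$ (voir~(\ref{eq:euler-central})) et $P \subset Z$, on a une inclusion $P[\euler] \subset Z$ de sous-alg\`ebres bi-gradu\'ees de $\Hb$, dont les composantes homog\`enes sont de dimension finie ; l'\'egalit\'e $Z=P[\euler]$ \'equivaut donc \`a l'\'egalit\'e $\dim_\kb^\bigrad(P[\euler])=\dim_\kb^\bigrad(Z)$. D'apr\`es~(\ref{eq:minimal-euler}), le polyn\^ome minimal de $\euler$ sur $P$ est de degr\'e $|W|$, donc $P[\euler]=\bigoplus_{k=0}^{|W|-1} P\euler^k$ comme $P$-module. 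Comme $\euler$ est bi-homog\`ene de degr\'e $(1,1)$, les formules~(\ref{hilbert P}) et~(\ref{hilbert Q fantome}) transforment l'\'egalit\'e cherch\'ee en la condition
\[
\sum_{\chi \in \Irr(W)} f_\chi(\tb)\,f_\chi(\ub) \ = \ 1 + (\tb\ub) + \cdots + (\tb\ub)^{|W|-1}. \qquad (\sharp)
\]

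La direction facile est alors rapide : si $W=\langle s\rangle$ avec $s$ r\'eflexion d'ordre $d$, alors $W$ est cyclique d'ordre $d$, agissant trivialement sur l'hyperplan $\Ker(s-\Id_V)$ et cycliquement sur une droite compl\'ementaire $\kb v$. Comme dans l'exemple~\ref{qpe}, $\kb[V^*]^\cow \simeq \kb[v]/(v^d)$ comme $\kb W$-module gradu\'e, et les degr\'es fant\^omes s'\'ecrivent $f_{\e^i}(\tb)=\tb^i$ pour $0\le i\le d-1$, ce qui v\'erifie imm\'ediatement $(\sharp)$.

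R\'eciproquement, supposons $(\sharp)$. Chaque $f_\chi$ est \`a coefficients positifs, \'etant une multiplicit\'e gradu\'ee d'apr\`es~(\ref{fchi def}). En \'ecrivant $f_\chi(\tb)=\sum_i a_i^\chi\,\tb^i$, le coefficient de $\tb^i\ub^j$ dans le membre gauche de $(\sharp)$ vaut $\sum_\chi a_i^\chi a_j^\chi$ ; celui-ci doit \^etre nul pour $i\neq j$, ce qui par positivit\'e force $a_i^\chi a_j^\chi=0$ pour tout $\chi$ et tout couple $i\neq j$. Donc chaque $f_\chi$ est un mon\^ome, et l'\'egalit\'e $f_\chi(1)=\chi(1)$ (voir~(\ref{chi 1})) donne $f_\chi(\tb)=\chi(1)\,\tb^{\bb_\chi}$. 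L'\'equation $(\sharp)$ s'\'ecrit alors $\sum_\chi \chi(1)^2\,(\tb\ub)^{\bb_\chi}=\sum_{k=0}^{|W|-1}(\tb\ub)^k$, ce qui, par comparaison des coefficients positifs, impose $\chi(1)=1$ pour tout $\chi \in \Irr(W)$ (donc $W$ ab\'elien) et que $(\bb_\chi)_{\chi\in\Irr(W)}$ soit une permutation de $\{0,\dots,|W|-1\}$.

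Pour conclure, on exploite la structure des groupes de r\'eflexions ab\'eliens : $V$ se d\'ecompose en droites propres communes $V=\bigoplus_{i=1}^n \kb v_i$, et $W=W_1\times\cdots\times W_n$ o\`u $W_i$ est cyclique d'ordre $d_i'$ agissant non trivialement uniquement sur $v_i$. Les coinvariants s'identifient \`a $\bigotimes_i \kb[v_i]/(v_i^{d_i'})$, et les $|W|=\prod_i d_i'$ caract\`eres irr\'eductibles sont index\'es par $(a_1,\dots,a_n)\in \prod_i\{0,\dots,d_i'-1\}$, avec $\bb$-invariant associ\'e $\sum_i a_i$. L'injectivit\'e de $(a_1,\dots,a_n)\mapsto \sum_i a_i$ sur ce produit force alors tous les $d_i'$ sauf au plus un \`a valoir $1$ : sinon, $(1,0,\dots,0)$ et $(0,1,0,\dots,0)$ auraient m\^eme image. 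Donc $W$ agit effectivement sur une unique droite et est engendr\'e par une seule r\'eflexion. L'obstacle principal est pr\'ecis\'ement ce dernier argument combinatoire, qui traduit l'identit\'e $(\sharp)$ entre s\'eries de Hilbert en une contrainte structurelle forte sur $W$.
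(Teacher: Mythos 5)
Your proof is correct, and it processes the Hilbert-series identity by a genuinely different route than the paper. Both arguments begin by observing that $Z = P[\euler]$ is equivalent to the equality of bi-graded Hilbert series, and the paper and you use the two available closed forms for $\dim_\kb^\bigrad(Z)$ differently. The paper works with the Molien form~(\ref{hilbert Q molien}): it specializes $\tb\mapsto 1$, so that only $w=1$ survives in the sum over $W$, and a degree comparison then gives $\sum_i(d_i-1)=|W|-1$, i.e.\ $\Ref(W)=W\setminus\{1\}$; commutators then force $W$ abelian and the conclusion follows. You instead work with the fake-degree form~(\ref{hilbert Q fantome}), turning the hypothesis into the exact identity $(\sharp)$ and exploiting the non-negativity of the coefficients of $f_\chi$: vanishing of the off-diagonal $\tb^i\ub^j$ terms forces each $f_\chi$ to be a monomial $\chi(1)\,\tb^{\bb_\chi}$, and the diagonal then yields $\chi(1)=1$ for all $\chi$ together with the bijectivity of $\chi\mapsto\bb_\chi$ onto $\{0,\dots,|W|-1\}$; you conclude by the explicit combinatorics of abelian reflection groups.

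What each buys: the paper's specialization is shorter (one evaluation, one degree count) but is somewhat terse at the end, leaving implicit the step from ``$W$ abelian with $\Ref(W)=W\setminus\{1\}$'' to ``generated by one reflection.'' Your version extracts stronger intermediate information (monomiality of all fake degrees, the $\bb$-invariant bijection) and spells out the final combinatorial reduction. One could also streamline your ending by recalling that the top degree of $\kb[V^*]^\cow$ is $|\Ref(W)|$, so the bijectivity of $\bb$ already gives $|\Ref(W)|=|W|-1$, landing on the paper's track; but your direct argument via the product decomposition $W=W_1\times\cdots\times W_n$ and the non-injectivity of $(a_i)\mapsto\sum a_i$ when two factors are nontrivial is perfectly valid, and is arguably more self-contained.
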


\begin{proof}
Si $W$ est engendr\'e par une seule r\'eflexion, alors un argument imm\'ediat 
permet de se ramener au cas o\`u $\dim_\kb V=1$. Dans ce cas, l'exemple 
\ref{qpe} montre que $Z=P[\euler]$. 

\medskip

R\'eciproquement, si $Z=P[\euler]$, alors
$$Z=\mathop{\bigoplus}_{i=0}^{|W|-1} P \euler^i$$
puisque le polyn\^ome minimal de $\euler$ sur $P$ 
est de degr\'e $|W|$ (d'apr\`es~(\ref{eq:minimal-euler})). 
On en d\'eduit, \`a l'aide de~(\ref{hilbert P}) que 
$$\dim_\kb^\bigrad(Z)=\frac{\DS{\sum_{j=0}^{|W|-1} 
(\tb\ub)^j}}{(1-\tb\ub)^{|\refw|} ~\DS{\prod_{i=1}^n (1-\tb^{d_i})(1-\ub^{d_i})}}.$$
Il d\'ecoule donc de~(\ref{hilbert Q molien}) que
$$\frac{1}{|W|}\sum_{w \in W} \frac{(1-\tb)^n}{\det(1-w\tb)~\det(1-w^{-1}\ub)} = 
\frac{\DS{\sum_{j=0}^{|W|-1}(\tb\ub)^j}}{\DS{\prod_{i=1}^n
(1+\tb+\cdots+\tb^{d_i-1})(1-\ub^{d_i})}}.$$
En sp\'ecialisant $\tb \mapsto 1$ dans cette \'egalit\'e, 
le terme de gauche ne contribue que lorsque $w=1$ et on obtient 
(car $|W|=d_1\cdots d_n$ d'apr\`es le th\'eor\`eme~\ref{chevalley}(a)) 
$$\frac{1}{(1-\ub)^n} = \frac{\DS{\sum_{j=0}^{|W|-1}\ub^j}}{\DS{\prod_{i=1}^n
(1-\ub^{d_i})}},$$
ou encore
$$\prod_{i=1}^n (1+\ub + \cdots + \ub^{d_i-1}) = \sum_{j=0}^{|W|-1}\ub^j.$$
Par comparaison des degr\'es, on obtient
$$|W|-1 = \sum_{i=1}^n (d_i-1).$$
Mais, toujours d'apr\`es le th\'eor\`eme~\ref{chevalley}(a), on 
a $|\Ref(W)|=\sum_{i=1}^n (d_i-1)$, ce qui montre que 
$$\Ref(W)=W\setminus\{1\}.$$
Par suite, si $w$, $w' \in W$, alors $ww'w^{-1}w^{\prime -1}$ est de d\'eterminant $1$, 
donc ne peut \^etre une r\'eflexion. Donc $ww'=w'w$, c'est-\`a-dire que $W$ est ab\'elien, 
donc diagonalisable. La proposition en d\'ecoule.
\end{proof}

\bigskip

\section{Sp\'ecificit\'e des groupes de Coxeter}\label{section:coxeter-h}

\cbstart

\boitegrise{{\bf Hypoth\`ese.} 
{\it Dans cette section, et seulement dans cette section, nous supposons 
que $W$ est un groupe de Coxeter, et nous reprenons les notations du 
chapitre~\ref{chapter:coxeter}.}}{0.75\textwidth}

\bigskip

En lien avec les probl\`emes \'evoqu\'es dans ce chapitre, une des particularit\'es de 
cette situation est que l'alg\`ebre $\Hb$ admet un autre automorphisme $\s_\Hb$, \indexnot{sz}{\s_\Hb}  
induit par l'isomorphisme de $W$-modules $\s : V \longiso V^*$. C'est la r\'eduction 
modulo $T$ de l'automorphisme $\s_\Hbt$ de $\Hbt$ d\'efini dans la 
section~\ref{section:coxeter-htilde}. Les propositions~\ref{prop:auto-coxeter-1} 
et~\ref{prop-bis:auto-coxeter-1} deviennent alors~:

\bigskip

\begin{prop}\label{prop:auto-coxeter-0}
Il existe un unique automorphisme $\s_\Hb$ de $\Hb$ tel que 
$$
\begin{cases}
\s_\Hb(y)=\s(y) & \text{si $y \in V$,}\\
\s_\Hb(x)=-\s^{-1}(x) & \text{si $x \in V^*$,}\\
\s_\Hb(w)=w & \text{si $w \in W$,}\\
\s_\Hb(C)=C & \text{si $C \in \CCB^*$.}\\
\end{cases}
$$
\end{prop}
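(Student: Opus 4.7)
The plan is to deduce this from Proposition~\ref{prop:auto-coxeter-1} by reduction modulo $T$, rather than repeating the tensor-algebra construction of $\sigma_\Trm$ for $\Hb$ directly. The point is that the automorphism $\sigma_\Hbt$ of $\Hbt$ already exists and, by its very definition, acts trivially on all of $\CCBt^* = \kb T \oplus \CCB^*$. In particular $\sigma_\Hbt(T) = T$, so $\sigma_\Hbt$ stabilises the two-sided ideal $T \Hbt$.

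First, I would invoke this observation to conclude that $\sigma_\Hbt$ induces, by passage to the quotient $\Hb = \Hbt/T\Hbt$, a $\kb$-algebra endomorphism $\sigma_\Hb$ of $\Hb$. The fact that $\sigma_\Hbt^2$ restricted to $V \oplus V^*$ acts as $-\Id$ (since $\sigma_\Hbt$ sends $y \in V$ to $\sigma(y)$ and $x \in V^*$ to $-\sigma^{-1}(x)$), together with the fact that $\sigma_\Hbt^2$ acts trivially on $W$ and on $\CCBt^*$, shows that $\sigma_\Hbt^4 = \Id_{\Hbt}$; hence $\sigma_\Hb$ is an automorphism. Reading off the formulas on the generators $y \in V$, $x \in V^*$, $w \in W$, $C \in \CCB^*$ from Proposition~\ref{prop:auto-coxeter-1} gives the four equations in the statement.

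For uniqueness, I would use the PBW decomposition (Theorem~\ref{PBW-0}): as a $\kb$-algebra, $\Hb$ is generated by $V$, $V^*$, $W$ and $\CCB^*$, so a $\kb$-algebra endomorphism of $\Hb$ is entirely determined by its values on these four subsets. Any automorphism satisfying the listed formulas therefore coincides with the $\sigma_\Hb$ constructed above.

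There is no real obstacle; the only point requiring minimal care is to check that the defining relations~\ref{relations-0} of $\Hb$ are compatible with the prescribed action of $\sigma_\Hb$, but this is automatic since these relations are the specialisation at $T=0$ of the defining relations of $\Hbt$ which are, by Proposition~\ref{prop:auto-coxeter-1}, already preserved by $\sigma_\Hbt$.
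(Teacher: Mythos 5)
Your proof is correct and matches the paper's approach: the paper states, immediately before Proposition~\ref{prop:auto-coxeter-0}, that $\s_\Hb$ is ``la r\'eduction modulo $T$ de l'automorphisme $\s_\Hbt$'' from Proposition~\ref{prop:auto-coxeter-1}, which is exactly what you do. You simply make explicit the routine checks (that $\s_\Hbt$ preserves $T\Hbt$ since $\s_\Hbt(T)=T$, that the induced map is invertible, and that uniqueness follows from the generating set via the PBW decomposition) that the paper leaves to the reader.
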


\bigskip

\begin{prop}\label{prop-bis:auto-coxeter-0}
De plus,
\begin{itemize}
\itemth{a} $\s_\Hb$ stabilise les sous-alg\`ebres $\kb[\CCB]$ et $\kb W$ et \'echange 
les sous-alg\`ebres $\kb[V]$ et $\kb[V^*]$.

\itemth{b} Si $h \in \Hb^{\NM \times \NM}[i,j]$, 
alors $\s_\Hb(h) \in \Hb^{\NM \times \NM}[j,i]$.

\itemth{c} Si $h \in \Hb^\NM[i]$ (respectivement $h \in \Hb^\ZM[i]$), 
alors $\s_\Hb(h) \in \Hb^\NM[i]$ (respectivement $h \in \Hb^\ZM[-i]$).

\itemth{d} $\s_\Hb$ commute \`a l'action de $W^\wedge$ sur $\Hb$.

\itemth{e} Si $c \in \CCB$, alors $\s_\Hb$ induit un automorphisme 
de $\Hb_{c}$, toujours not\'e $\s_\Hb$ (ou $\s_{\Hb_{c}}$) si n\'ecessaire).

\itemth{f} $\s_\Hb(\euler)=- \euler$.
\end{itemize}
\end{prop}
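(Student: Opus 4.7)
The plan is to deduce the proposition from its generic counterpart, Proposition~\ref{prop-bis:auto-coxeter-1}, by reduction modulo $T$. First, I observe that by Proposition~\ref{prop:auto-coxeter-1} we have $\s_\Hbt(T)=T$, so $\s_\Hbt$ preserves the two-sided ideal $T\Hbt$ and therefore descends to an automorphism of the quotient $\Hb=\Hbt/T\Hbt$. By comparing the images of the generators $y\in V$, $x\in V^*$, $w\in W$ and $C\in\CCB^*$ in $\Hb$, one sees that this induced automorphism coincides with the $\s_\Hb$ defined in Proposition~\ref{prop:auto-coxeter-0}.

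Once this identification is made, the reductions are essentially automatic. Statement (a) reads off directly from the defining formulas of $\s_\Hb$ on generators. For (b), (c) and (d), the corresponding statements for $\s_\Hbt$ in Proposition~\ref{prop-bis:auto-coxeter-1} transfer immediately, since the bi-graduation, $\NM$-graduation and $\ZM$-graduation on $\Hb$ and the action of $W^\wedge$ on $\Hb$ are all obtained by reduction modulo $T$ of the corresponding structures on $\Hbt$ (here one uses that $T$ itself is bi-homogeneous of bi-degree $(1,1)$ and is fixed by $W^\wedge$). For (e), the key point is that $\s_\Hb$ fixes $\CCB^*$ pointwise, hence fixes $\kb[\CCB]$ as a $\kb$-subalgebra and in particular preserves the maximal ideal $\CG_c$; thus $\s_\Hb(\CG_c\Hb)=\CG_c\Hb$, so $\s_\Hb$ descends to an automorphism of the further quotient $\Hb_c$.

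The only part that requires a small additional observation is (f). Under the reduction $\Hbt\to\Hb$, the generic Euler element $\eulertilde$ maps to $\euler$, and Proposition~\ref{prop-bis:auto-coxeter-1}(f) asserts $\s_\Hbt(\eulertilde)=nT-\eulertilde$. Since $T$ becomes zero in $\Hb$, this yields $\s_\Hb(\euler)=-\euler$, as required. Alternatively, one can compute directly: using the expression $\euler=\sum_i x_iy_i+\sum_{s\in\Ref(W)}\e(s)C_s\,s$ from Section~\ref{section:eulertilde} and setting $x_i'=\s(y_i)$, $y_i'=-\s^{-1}(x_i)$, one checks via \eqref{eq:sigma-sigma-inverse} that $(x_i')$ and $(-y_i')$ form dual bases of $V^*$ and $V$, so
\[
\s_\Hb(\euler)=-\sum_i x_i'y_i'+\sum_s C_s\,s=-\euler+\sum_{s}(\e(s)+1)C_s\,s,
\]
and since every $s\in\Ref(W)$ satisfies $\e(s)=-1$ by Lemma~\ref{lem:coxeter-2}, the second sum vanishes and $\s_\Hb(\euler)=-\euler$. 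I do not anticipate any real obstacle: everything is formal once the descent $\s_\Hbt\rightsquigarrow\s_\Hb$ is verified, and the only genuine computation is the sign change in (f), which hinges crucially on the Coxeter hypothesis through $\e(s)=-1$.
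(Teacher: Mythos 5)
Your proof is correct and follows the paper's intended approach exactly: the paper introduces $\s_\Hb$ precisely as the reduction modulo $T$ of $\s_\Hbt$ and remarks that Propositions~\ref{prop:auto-coxeter-1} and~\ref{prop-bis:auto-coxeter-1} \emph{deviennent alors} the corresponding statements in $\Hb$, which is your descent argument. One small slip in your optional direct check of (f): with $x_i'=\s(y_i)$, $y_i'=-\s^{-1}(x_i)$ and $\euler=\sum_i y_i x_i+\sum_s C_s s$, one gets $\s_\Hb(\euler)=\sum_i x_i' y_i'+\sum_s C_s s$ (no leading minus sign); combined with $\euler=-\sum_i x_i'y_i'+\sum_s \e(s)C_s s$ (from the dual bases $(x_i')$ and $(-y_i')$), this does yield your final identity $\s_\Hb(\euler)=-\euler+\sum_s(\e(s)+1)C_s s$, so the conclusion stands.
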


De m\^eme, il existe une action de $\Sb\Lb_2(\kb)$ sur $\Hb$, 
qui s'obtient par r\'eduction modulo $T$ de l'action sur $\Hbt$ d\'efinie 
dans la remarque~\ref{rem:sl2-1}.

\cbend

\bigskip

\part{L'extension $Z/P$}\label{part:extension}

\boitegrise{{\bf Notation importante.} 
{\it 
Tout au long de ce m\'emoire, nous fixons une copie $Q$ \indexnot{Q}{Q} de la $P$-alg\`ebre $Z$, 
ainsi qu'un isomorphisme de $P$-alg\`ebres $\copie : Z \longiso Q$. \indexnot{ca}{\copie}  Cela signifie que $P$ 
sera vu comme sous-$\kb$-alg\`ebre de $Z$ et $Q$ simultan\'ement, mais que 
$Q$ et $Z$ seront consid\'er\'ees comme diff\'erentes.\\ 
\hphantom{A} Nous notons alors $\Kb=\Frac(P)$ \indexnot{K}{\Kb}  et $\Lb=\Frac(Q)$  \indexnot{L}{\Lb} 
et nous fixons une {\bfit cl\^oture galoisienne} $\Mb$ \indexnot{M}{\Mb}  de l'extension $\Lb/\Kb$. 
Posons $G=\Gal(\Mb/\Kb)$ \indexnot{G}{G}  et $H=\Gal(\Mb/\Lb)$.  \indexnot{H}{H} 
Nous notons $R$  \indexnot{R}{R}  la cl\^oture int\'egrale de $P$ dans $\Mb$. On a ainsi 
$P \subset Q \subset R$ et, d'apr\`es le corollaire~\ref{coro:endo-bi}, 
$Q=R^H$ et $P=R^G$. On peut alors utiliser les r\'esultats de l'appendice~\ref{chapter:galois-rappels}.\\ 
\hphantom{A} Rappelons que $\Kb Z=\Kb \otimes_P Z$ est le corps des fractions de l'alg\`ebre $Z$ (voir~(\ref{eq:KZ-fraction})). 
Nous noterons encore $\copie : \Frac(Z) \longiso \Lb$ l'extension de $\copie$ aux corps 
des fractions.\\
\hphantom{A} Notons $Z^{\NM \times \NM}$, $Z^\NM$ et $Z^\ZM$ respectivement la $(\NM \times \NM)$-graduation, 
la $\NM$-graduation, la $\ZM$-graduation induite par celle correspondante de $\Hbt$ (voir~\S\ref{section:graduation-1}, 
et les exemples~\ref{N graduation-1} et~\ref{Z graduation-1}). Par transport \`a travers 
$\copie$, on obtient des graduations 
$Q^{\NM \times \NM}$, $Q^\NM$ et $Q^\ZM$ sur $Q$.
}}{0.75\textwidth}

\bigskip
\medskip

L'objet essentiel de ce m\'emoire est l'\'etude de cette extension 
galoisienne, notamment des groupes d'inertie des id\'eaux premiers de $R$ 
en lien avec l'\'etude des repr\'esentations de $\Hb$. Nous commen\c{c}ons cette 
\'etude dans ce chapitre.

\chapter{Th\'eorie de Galois}\label{chapter:galois-CM}

%
%
%

\section{Action de $G$ sur l'ensemble $W$}

\medskip

Puisque $Q$ est un $P$-module libre de rang $|W|$, l'extension de corps $\Lb/\Kb$ est 
de degr\'e $|W|$~:
\equat\label{degre lk}
[\Lb : \Kb] = |W|.
\endequat
Rappelons que le fait que $\Mb$ soit la cl\^oture galoisienne de $\Lb/\Kb$ 
implique que
\equat\label{intersection}
\bigcap_{g \in G} \lexp{g}{H}=1.
\endequat
Il r\'esulte de~(\ref{degre lk}) que
\equat\label{ghw}
|G/H|=|W|.
\endequat
Cette \'egalit\'e \'etablit un premier lien entre le 
couple $(G,H)$ et le groupe $W$. Nous allons ici construire une bijection de nature 
galoisienne (d\'ependant de choix) entre $G/H$ et $W$. 

\medskip

\subsection{Sp\'ecialisation}\label{subsection:specialisation galois}
Fixons $c \in \CCB$. Rappelons que $\CG_c$ est l'id\'eal maximal de $\kb[\CCB]$ 
form\'e des fonctions qui s'annulent en $c$. On pose
$$\pG_c=\CG_c P\qquad\text{et}\qquad \qG_c = \CG_c Q=\pG_c Q.\indexnot{pa}{\pG_c}\indexnot{qa}{\qG_c}
\indexnot{pa}{\pG_c}\indexnot{qa}{\qG_c}$$
Puisque $P_\bullet=P/\pG_c \simeq \kb[V]^W \otimes \kb[V^*]^W$ \indexnot{P}{P_\bullet}  est int\`egre 
et $Q_c=Q/\qG_c$ \indexnot{Q}{Q_c}  l'est aussi (voir le corollaire~\ref{coro:endo-bi-c}(d)), 
on en d\'eduit que $\pG_c$ et $\qG_c$ sont des id\'eaux premiers de $P$ et $Q$ respectivement. 
Fixons un id\'eal premier $\rG_c$ \indexnot{ra}{\rG_c}  de $R$ au-dessus de $\pG_c$ et notons 
$R_c=R/\rG_c$. \indexnot{R}{R_c}  Notons alors $D_c$ \indexnot{D}{D_c}  
(respectivement $I_c$) \indexnot{I}{I_c}  le groupe de d\'ecomposition 
(respectivement d'inertie) $G_{\rG_c}^D$ (respectivement $G_{\rG_c}^I$). 
Posons
$$\Kb_c=\Frac(P_\bullet),\qquad \Lb_c=\Frac(Q_c)\qquad\text{et}\qquad \Mb_c=\Frac(R_c).
\indexnot{K}{\Kb_c}\indexnot{L}{\Lb_c}\indexnot{M}{\Mb_c}$$
En d'autres termes, $\Kb_c=k_P(\pG_c)$, $\Lb_c=k_Q(\qG_c)$ et 
$\Mb_c=k_R(\rG_c)$.

\bigskip

\begin{rema}\label{premier probleme} 
Ici, le choix de l'id\'eal premier $\rG_c$ n'est pas anodin. Ce genre de probl\`eme 
nous poursuivra tout au long de ce m\'emoire.\finl
\end{rema}

\bigskip

Puisque $\qG_c=\pG_c Q$ est premier, on obtient que 
$\Upsilon^{-1}(\pG_c)$ est r\'eduit \`a un \'el\'ement (ici, $\qG_c$) ce qui 
permet de d\'eduire de la proposition~\ref{reduction} que 
\equat\label{G=HD}
G=H \cdot D_c = D_c \cdot H.
\endequat
On obtient aussi que $Q$ est nette sur $P$ en 
$\qG_c$ (par d\'efinition). 
Le th\'eor\`eme~\ref{raynaud} nous dit alors que $I_c \subset H$. 
Puisque $I_c$ est distingu\'e dans $D_c$, on d\'eduit de~(\ref{intersection}) et~(\ref{G=HD}) que 
$I_c \subset \bigcap_{d \in D_c} \lexp{d}{H} = \bigcap_{g \in G} \lexp{g}{H}=1$ et donc 
\equat\label{eq:net}
I_c=1.
\endequat
Il d\'ecoule alors de la proposition~\ref{cloture galoisienne} 
que 
\equat\label{cloture Lc}
\text{\it $\Mb_c$ est la cl\^oture galoisienne de l'extension $\Lb_c/\Kb_c$.}
\endequat
Pour finir, d'apr\`es~(\ref{eq:net}) et le th\'eor\`eme~\ref{bourbaki}, on a 
\equat\label{gal Dc}
\Gal(\Mb_c/\Kb_c) = D_c\qquad\text{et}\qquad \Gal(\Mb_c/\Lb_c)= D_c \cap H.
\endequat
Nous noterons $\copie_c : Z_c \to Q_c$ \indexnot{ca}{\copie_c}  la sp\'ecialisation de $\copie$ en $c$ 
et nous noterons encore $\copie_c : \Frac(Z_c) \to \Lb_c$ l'extension de $\copie_c$ 
aux corps de fractions. 

\bigskip

\begin{rema}\label{rema:impossible}
Nous allons dans la section suivante \'etudier le cas o\`u $c=0$, et obtenir alors une description 
explicite de $D_0$. En revanche, donner une description explicite de $D_c$ en g\'en\'eral se r\'ev\`ele 
\^etre une t\^ache insurmontable, comme le montreront les quelques exemples trait\'es dans le 
chapitre~\ref{chapitre:rang 1} (qui \'etudie le cas o\`u $\dim_\kb(V)=1$)~: 
voir~\S\ref{rema:dc-cyclique}.\finl
\end{rema}

\bigskip

\subsection{Sp\'ecialisation en $0$}\label{subsection:specialisation galois 0} 
Rappelons que $P_0=P_\bullet = \kb[V]^W \otimes \kb[V^*]^W$ et 
$Q_0 \simeq Z_0=\Zrm(\Hb_0) \simeq \kb[V \times V^*]^{\D W}$ (voir l'exemple~\ref{exemple zero-0}), 
o\`u $\D : W \to W \times W$, $w \mapsto (w,w)$ est le morphisme diagonal.  \indexnot{dz}{\D}  
Ainsi, 
$$\Kb_0=\kb(V \times V^*)^{W \times W}\qquad\text{et}\qquad
\Lb_0 = \kb(V \times V^*)^{\D W},$$
D'autre part, l'extension $\kb(V \times V^*)/\Kb_0$ est galoisienne de groupe 
$W \times W$ tandis que l'extension $\kb(V \times V^*)/\Lb_0$ est galoisienne 
de groupe $\D W$. Puisque $\D \Zrm(W)$ est le plus grand sous-groupe distingu\'e 
de $W \times W$ contenu dans $\D W$, il d\'ecoule de~(\ref{cloture Lc}) que 
$$\Mb_0 \simeq \kb(V \times V^*)^{\D \Zrm(W)}.$$

\medskip

\boitegrise{
{\bf Choix fondamental.} 
 {\it 
Nous fixons une fois pour toutes un id\'eal premier $\rG_0$ de $R$ 
au-dessus de $\qG_0=\CG_0 Q$ ainsi qu'un isomorphisme de corps 
$$\iso : \kb(V \times V^*)^{\D \Zrm(W)} \stackrel{\sim}{\longto} \Mb_0 \indexnot{ia}{\iso}$$
dont la restriction \`a $\kb(V \times V^*)^{\D W}$ est l'isomorphisme canonique 
$\kb(V \times V^*)^{\D W} \longiso \Frac(Z_0) \longiso \Lb_0$. Ici, 
l'isomorphisme $\Frac(Z_0) \longiso \Lb_0$ est $\copie_0$.\\
~\\
{\bf Convention.} 
L'action du groupe $W \times W$ sur le corps $\kb(V \times V^*)$ s'effectue ainsi~: 
$V \times V^*$ sera vu comme un sous-espace vectoriel de $\kb(V \times V^*)$ et engendrant ce 
corps, et l'action de $(w_1,w_2)$ envoie $(y,x) \in V \times V^*$ sur $(w_1(y),w_2(x))$.
}}{0.75\textwidth}

\bigskip

\begin{rema}\label{rema:action-dif}
L'action de $W \times W$ sur $\kb(V \times V^*)$ d\'ecrite ci-dessus 
n'est pas celle obtenue en faisant agir $W \times W$ sur 
la vari\'et\'e $V \times V^*$ et induisant une action par 
pr\'ecomposition sur le corps des fonctions 
$\kb(V \times V^*)$~: on passe de l'une \`a l'autre \`a travers 
l'isomorphisme $W \times W \longiso W \times W$, $(w_1,w_2) \mapsto (w_2,w_1)$. 
Cette distinction aura son importance (voir la remarque~\ref{rema:w-w}).\finl
\end{rema}

\medskip

Ces choix \'etant effectu\'es, on obtient un isomorphisme  canonique 
$\Gal(\Mb_0/\Kb_0) \longiso (W \times W)/\D \Zrm(W)$ induisant 
un isomorphisme $\Gal(\Mb_0/\Lb_0) \longiso \D W /\D \Zrm(W)$. 
Puisque $D_0 = \Gal(\Mb_0/\Kb_0)$ d'apr\`es~(\ref{gal Dc}),  
on obtient un morphisme de groupes 
$$\iota : W \times W \longto G\indexnot{iz}{\iota}$$
v\'erifiant les propri\'et\'es suivantes~:

\bigskip

\begin{prop}\label{WW}
\begin{itemize}
\itemth{a} $\Ker \iota = \D \Zrm(W)$.

\itemth{b} $\im \iota = D_0$.

\itemth{c} $\iota^{-1}(H) = \D W$.
\end{itemize}
\end{prop}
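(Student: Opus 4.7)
The plan is to show that $\iota$ is, by construction, the composition
$$W\times W \twoheadrightarrow (W\times W)/\D\Zrm(W) \stackrel{\sim}{\longto} \Gal(\Mb_0/\Kb_0) = D_0 \hookrightarrow G,$$
and then read off the three assertions from this factorisation. The first arrow is the canonical projection; the middle isomorphism is the one induced by $\iso$, combined with the fact that $\Kb_0 = \kb(V\times V^*)^{W\times W}$ and $\Mb_0 \simeq \kb(V\times V^*)^{\D\Zrm(W)}$ via $\iso$; the equality $\Gal(\Mb_0/\Kb_0)=D_0$ is precisely~(\ref{gal Dc}); the last inclusion is the tautological embedding of the decomposition group in $G$.

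Given this factorisation, (a) is immediate, since the kernel of the first projection is $\D\Zrm(W)$ and all subsequent maps are injective. Likewise (b) is immediate, since the first two maps are surjective and the image of the third is $D_0$ by~(\ref{gal Dc}).

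For (c), I first note that $\im\iota = D_0$ implies $\iota^{-1}(H)=\iota^{-1}(H\cap D_0)$. By~(\ref{gal Dc}), $H\cap D_0 = \Gal(\Mb_0/\Lb_0)$; and under the isomorphism $(W\times W)/\D\Zrm(W) \stackrel{\sim}{\longto} D_0$ furnished by $\iso$, this subgroup corresponds to $\D W/\D\Zrm(W)$, because $\iso$ sends $\kb(V\times V^*)^{\D W}$ onto $\Lb_0$ by construction. The preimage of $\D W/\D\Zrm(W)$ under $W\times W \twoheadrightarrow (W\times W)/\D\Zrm(W)$ is exactly $\D W$, proving (c).

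The whole argument is therefore essentially a bookkeeping exercise tracing through the identifications set up just before the statement; there is no genuine obstacle, provided one has already verified the three ingredients: (i) $\D\Zrm(W)$ is the largest normal subgroup of $W\times W$ contained in $\D W$ (justifying $\Mb_0 \simeq \kb(V\times V^*)^{\D\Zrm(W)}$ as the Galois closure of $\Lb_0/\Kb_0$), (ii) the equalities of~(\ref{gal Dc}), and (iii) the fact that the extension $\kb(V\times V^*)/\Kb_0$ really is Galois with group $W\times W$ acting in the stated way, so that $\iso$ transports the Galois action correctly. All three were established in the preceding discussion, so the proof reduces to the diagram-chase above.
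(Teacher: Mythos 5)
Your proposal is correct and is exactly the argument the paper intends: the paper defines $\iota$ precisely by the factorisation you describe (the canonical projection, followed by the inverse of the isomorphism $\Gal(\Mb_0/\Kb_0)\stackrel{\sim}{\longto}(W\times W)/\D\Zrm(W)$ furnished by $\iso$, followed by the equality $\Gal(\Mb_0/\Kb_0)=D_0$ from~(\ref{gal Dc}) and the inclusion $D_0\hookrightarrow G$), and then states Proposition~\ref{WW} without a separate proof because the three assertions are immediate from that construction. Your diagram-chase is the omitted verification, spelled out correctly.
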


\bigskip

En vertu de~(\ref{G=HD}), cette proposition fournit une bijection 
\equat\label{bij W}
(W \times W)/\D W \stackrel{\sim}{\longleftrightarrow} G/H.
\endequat
On peut bien s\^ur mettre $(W \times W)/\D W$ en bijection avec $W$ par injection \`a gauche ou \`a droite. 
Nous en fixons une~: 

\medskip

\boitegrise{{\bf Identification.} {\it Le morphisme $W \to W \times W$, $w \mapsto (w,1)$ 
compos\'e avec le morphisme $\iota : W \times W \to G$ est injectif, et nous identifierons 
le groupe $W$ avec son image dans $G$.}}{0.75\textwidth}

\medskip

Ainsi, $w \in W \subset G$ est l'unique 
automorphisme de $P$-alg\`ebre de $R$ tel que, pour tout $r \in R$, 
\equat\label{definition W}
\bigl(w(r) \mod \rG_0\bigr) = (w,1)
\bigl(r \mod \rG_0\bigr)\quad\text{dans $\kb(V \times V^*)^{\D \Zrm(W)}$}.
\endequat
On a alors, d'apr\`es~\ref{bij W},
\equat\label{G=HW}
G=H \cdot W=W \cdot H\qquad\text{et}\qquad H \cap W = 1.
\endequat

\bigskip

\begin{coro}\label{Dc W}
Pour tout $c \in \CCB$, l'application naturelle $D_c \to G/H \stackrel{\sim}{\to} W$ 
induit une bijection $D_c/(D_c \cap H) \stackrel{\sim}{\longto} W$.
\end{coro}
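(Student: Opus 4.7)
The plan is to exploit the two ingredients that have already been established in the preceding discussion: the identification $G/H \longiso W$ coming from $(\diamondsuit)$ / equation~(\ref{G=HW}), and the decomposition $G = D_c \cdot H = H \cdot D_c$ recorded as equation~(\ref{G=HD}). Once both are in hand, the corollary is a purely group-theoretic statement about the image and kernel of the composite $D_c \hookrightarrow G \twoheadrightarrow G/H$.

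Concretely, I would proceed in two short steps. First, surjectivity: take any coset $gH \in G/H$. By~(\ref{G=HD}) we can write $g = d h$ with $d \in D_c$ and $h \in H$, so $gH = dH$; hence the restriction $D_c \to G/H$ of the quotient map hits every coset. Second, the kernel of this restriction is exactly $\{d \in D_c \mid d \in H\} = D_c \cap H$. Combining these two observations with the bijection $G/H \longiso W$ fixed in~(\ref{G=HW}), we obtain the desired bijection
\[
D_c/(D_c \cap H) \;\longiso\; G/H \;\longiso\; W.
\]

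I expect no real obstacle here: the corollary is essentially the standard fact that if a subgroup $D \leq G$ satisfies $G = DH$, then $D/(D \cap H) \longiso G/H$, applied to $D = D_c$ by means of~(\ref{G=HD}). The only point requiring attention is that the bijection $G/H \longiso W$ is the one fixed by the identification after proposition~\ref{WW}, so that the statement of the corollary refers unambiguously to the copy of $W$ sitting inside $G$ as $\iota(W \times 1)$; this is already built into the setup and nothing further needs to be checked.
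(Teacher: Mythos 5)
Your proof is correct and matches the paper's approach exactly: the paper's own proof simply invokes $(\ref{G=HD})$ and $(\ref{G=HW})$, which is precisely the combination you spell out. Nothing more is required.
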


\begin{proof}
Cela d\'ecoule de~(\ref{G=HD}) de~(\ref{G=HW}).
\end{proof}

\bigskip

\subsection{Action de $G$ sur $W$}\label{subsection:action G} 
Notons maintenant $\SG_W$ le groupe des bijections de l'ensemble $W$. 
Nous identifierons le groupe $\SG_{W\setminus\{1\}}$ des bijections de l'ensemble 
$W \setminus \{1\}$ avec le stabilisateur de $1$ dans $\SG_W$. 
L'identification $G/H \stackrel{\sim}{\longleftrightarrow} W$ et 
l'action de $G$ par translation \`a gauche sur $G/H$ permet d'identifier 
$G$ \`a un sous-groupe du groupe $\SG_W$ des bijections de l'ensemble fini $W$. 
Ainsi, 
\equat\label{GHW}
G \subseteq \SG_W\qquad\text{et}\qquad H = G \cap \SG_{W \setminus \{1\}}.
\endequat
Si $g \in G$ et $w \in W$, nous noterons $g(w)$ l'unique \'el\'ement de $W$ 
tel que $g\iota(w,1) H = \iota(1,g(w))H$. \`A travers cette identification de $G$ 
comme sous-groupe de $\SG_W$, l'application $\iota : W \times W \to G$ se d\'ecrit ainsi~: 
si $(w_1,w_2) \in W \times W$ et $w \in W$, alors 
\equat\label{iota concret}
\iota(w_1,w_2)(w)=w_1 w w_2^{-1}.
\endequat
C'est l'action naturelle de $W \times W$ sur l'ensemble $W$ par translation 
\`a gauche et \`a droite. Puisque $\D W$ est le stabilisateur de $1 \in W$ 
pour cette action, on obtient
\equat\label{DW S}
\iota(\D W) = \iota(W \times W) \cap \SG_{W \setminus \{1\}},
\endequat
ce qui est bien s\^ur compatible avec la proposition~\ref{WW}(c) 
et~(\ref{GHW}).

Pour finir, le choix de l'injection de $W$ dans $G$ \`a travers $w \mapsto \iota(w,1)$ 
revient \`a identifier $W$ comme sous-groupe de $\SG_W$ \`a travers l'action 
sur lui-m\^eme par translation \`a gauche.

\bigskip

\subsection{\'El\'ement d'Euler et groupe de Galois}\label{section:galois euler} 

Posons $\eulerq=\copie(\euler)$. \indexnot{ea}{\eulerq}  
Puisque le polyn\^ome minimal de $\eulerq$ sur $P$ (donc sur $K$) est de degr\'e $|W|$ 
(voir~(\ref{eq:minimal-euler})), on d\'eduit de~(\ref{degre lk}) que 
\equat\label{euler engendre}
\Lb=\Kb[\eulerq].
\endequat
%
%
%
Le calcul du groupe de Galois $G=\Gal(\Mb/\Kb)$ est donc \'equivalent 
au calcul du groupe de Galois du polyn\^ome minimal de $\euler$ (ou $\eulerq$). 
Les m\'ethodes classiques (r\'eduction modulo un id\'eal premier notamment, 
voir par exemple la section~\ref{sec:calcul}) 
nous serons utiles dans de petits exemples.

Revenons au calcul de l'injection $W \injto G \subseteq \SG_W$. 
Si $w \in W$, posons $\eulerq_w=w(\eulerq) \in \Mb$. \indexnot{ea}{\eulerq_w}  
Rappelons que $G$ est vu comme un sous-groupe de $\SG_W$ 
(voir~(\ref{GHW}))~: si $g \in G$ et $w \in W$, alors $g(w)$ est d\'efini par l'\'egalit\'e $g(w)H=gwH$. 
Puisque $H$ agit trivialement sur $\eulerq$, on en d\'eduit que 
\equat\label{action G W euler}
g(\eulerq_w) = \eulerq_{g(w)}
\endequat
et donc, si $(w_1,w_2) \in W \times W$, 
\equat\label{action WW euler}
\iota(w_1,w_2)(\eulerq_w)=\eulerq_{w_1ww_2^{-1}}.
\endequat
Cela \'etend l'\'egalit\'e
\equat\label{action W euler}
w_1(\eulerq_w)=\eulerq_{w_1w}
\endequat
qui est une cons\'equence imm\'ediate de la d\'efinition de $\eulerq_w$. 
En particulier, d'apr\`es~(\ref{G=HW}), 
\equat\label{euler minimal}
\text{\it Le polyn\^ome minimal de $\eulerq$ sur $P$ est } 
\prod_{w \in W} (\tb-\eulerq_w).
\endequat
Notons aussi que, d'apr\`es~(\ref{definition W}) et la convention choisie pour l'action de 
$W \times W$ sur $\kb(V \times V^*)$, on a 
$$\iso^{-1}(\eulerq_w \mod \rG_0) = \sum_{i=1}^n w(y_i) x_i \in \kb[V \times V^*]^{\D \Zrm(W)}.$$
%

\bigskip

\section{D\'eploiement de $\Kb\Hb$}\label{section:deploiement}

\medskip

Rappelons que, d'apr\`es le th\'eor\`eme~\ref{theo:KH-mat}, l'\'equivalence de Morita 
induite par $\Kb \Hb e$ montre l'existence d'un isomorphisme 
$$\Kb\Hb \simeq \Mat_{|W|}(\Kb Z).$$ 
Rappelons aussi que $\Kb Z$ est le corps des fractions de $Z$ (voir~(\ref{eq:KZ-fraction})) 
et que $\copie : \Kb Z \longiso \Lb$ d\'esigne encore l'extension de 
$\copie : Z \longiso Q$. 
En particulier, $\Kb\Hb$ est semi-simple, mais n'est pas $\Kb$-d\'eploy\'ee en g\'en\'eral.

Si $g \in G$, le morphisme 
$\Kb Z \to \Mb$, $z \mapsto g(\copie(z))$ obtenu par restriction de $g$ \`a $\Lb$ 
(\`a travers l'isomorphisme $\copie$) 
est $\Kb$-lin\'eaire et il s'\'etend en un unique morphisme de $\Mb$-alg\`ebres 
$$\fonction{g_Z}{\Mb \otimes_\Kb \Kb Z}{\Mb}{m \otimes_\Kb z}{mg(\copie(z)).}$$
Bien s\^ur, $g_Z=(gh)_Z$ pour tout $h \in H$ et c'est un fait classique 
(voir la proposition~\ref{iso galois}) que 
$$(g_Z)_{gH \in G/H} : \Mb \otimes_\Kb \Kb Z \longto \prod_{gH \in G/H} \Mb$$
est un isomorphisme de $\Mb$-alg\`ebres. Compte tenu de~(\ref{G=HW}), ceci peut se 
r\'e\'ecrire ainsi~: on a un isomorphisme de $\Mb$-alg\`ebres
\equat\label{W M}
\bijectio{\Mb \otimes_\Kb \Kb Z}{\prod_{w \in W} \Mb}{x}{(w_Z(x))_{w \in W}}.
\endequat

Fixons une $\Kb Z$-base ordonn\'ee $\BC$ de $\Kb\Hb e$~: rappelons que $|\BC|=|W|$. Ce choix 
fournit un isomorphisme d'alg\`ebres 
$$\r^\BC : \Kb\Hb \stackrel{\sim}{\longto} \Mat_{|W|}(\Kb Z).\indexnot{rz}{\r^\BC}$$
Maintenant, si $w \in W$, notons $\r_w^\BC$ \indexnot{rz}{\r_w^\BC}   le morphisme de $\Mb$-alg\`ebres 
$\Mb\Hb \longto \Mat_{|W|}(\Mb)$ d\'efini par
$$\r_w^\BC(m \otimes_P h) = m \cdot  w(\copie(\r^\BC(h)))$$
pour tous $m \in \Mb$ et $h \in \Hb$. Alors $\r_w^\BC$ est une repr\'esentation 
irr\'eductible de $\Mb\Hb$~: via l'\'equivalence de Morita de la proposition~\ref{prop:morita}, 
elle correspond au $\Mb \otimes_\Kb \Kb Z$-module simple d\'efini par $w$. 
On notera $\LC_w$ \indexnot{L}{\LC_w}  le $\Mb\Hb$-module simple dont le $\Mb$-espace sous-jacent est 
$\Mb^{|W|}$ et sur lequel l'action de $\Mb\Hb$ s'effectue via $\r_w^\BC$. 

Si on note $\Irr(\Mb\Hb)$ l'ensemble des classes d'isomorphie de 
$\Mb\Hb$-modules simples, alors on a une bijection 
\equat\label{bij irr}
\bijectio{W}{\Irr \Mb\Hb}{w}{\LC_w}
\endequat
est bijective et un isomorphisme de $\Mb$-alg\`ebres 
\equat\label{iso MH}
\prod_{w \in W} \r_w^\BC : \Mb\Hb \stackrel{\sim}{\longto} 
\prod_{w \in W} \Mat_{|W|}(\Mb).
\endequat
Par cons\'equent,  
\equat\label{deployee}
\text{\it la $\Mb$-alg\`ebre $\Mb\Hb$ est semi-simple d\'eploy\'ee.}
\endequat
De plus, la bijection~(\ref{bij irr}) nous permet d'identifier 
son groupe de Grothendieck $\groth(\Mb\Hb)$ avec le $\ZM$-module 
$\ZM W$~:
\equat\label{identification}
\groth(\Mb\Hb) \longiso \ZM W.
\endequat
Puisque l'alg\`ebre $\Mb\Hb$ est semi-simple d\'eploy\'ee, il d\'ecoule 
de~\cite[th\'eor\`eme~7.2.6]{geck} qu'il existe une unique famille d'\'el\'ements 
$(\schur_w)_{w \in W}$ \indexnot{sa}{\schur_w}  de $R$ telle que
$$\taub_{\!\SSS{\Mb\Hb}} = \sum_{w \in W} \frac{\carac_w}{\schur_w},\indexnot{tx}{\taub_{\!\SSS{\Mb\Hb}}}$$
o\`u $\carac_w : \Mb\Hb \to \Mb$ \indexnot{ca}{\carac_w}  d\'esigne le caract\`ere du $\Mb\Hb$-module 
simple $\LC_w$ et $\taub_{\!{\SSS{\Mb\Hb}}} : \Mb\Hb \to \Mb$ d\'esigne l'extension de la forme 
sym\'etrisante $\taub_\HHb : \Hb \to P$. L'\'el\'ement $\schur_w$ de $R$ est 
appel\'e l'{\it \'el\'ement de Schur} associ\'e au module simple $\LC_w$. 
D'apr\`es~\cite[th\'eor\`eme~7.2.6]{geck}, $\schur_w$ est 
\'egal au scalaire par lequel l'\'el\'ement de Casimir $\casimir_\Hb \in Z$ d\'efini dans 
la section~\ref{sub:symetrisante} agit sur le module simple $\LC_w$. Ainsi,
\equat\label{eq:formule-schur}
\schur_w = w(\copie(\casimir_\Hb)).
\endequat

\bigskip

\begin{rema}
Dans la th\'eorie g\'en\'erale des alg\`ebres sym\'etriques, 
l'\'el\'ement de Schur $\schur_w$ est un invariant important, qui peut \^etre tr\`es utile 
pour d\'eterminer les blocs d'une r\'eduction de $R\Hb$ modulo un id\'eal premier de $R$. 
Ici, la formule~(\ref{eq:formule-schur}) montre que ce calcul est \'equivalent \`a la r\'esolution des 
deux probl\`emes suivants~:
\begin{itemize}
\itemth{1} Calculer l'\'el\'ement de Casimir $\casimir_\Hb$.

\itemth{2} Comprendre l'action de $W$ (ou de $G$) sur l'image de $\casimir_\Hb$ 
dans $Q \subset R$. 
\end{itemize}
Si le probl\`eme (1) semble attaquable (et sa solution serait int\'eressante car 
elle fournirait, apr\`es l'\'el\'ement d'Euler, un nouvel \'el\'ement du centre $Z$ de $\Hb$), 
il est en revanche peu probable que l'on puisse obtenir des informations pr\'ecises 
concernant le probl\`eme (2), tant les calculs de l'anneau $R$ et du groupe de Galois 
$G$ semblent hors de port\'ee. Tout au plus peut-on esp\'erer pour le moment comprendre 
leur r\'eduction modulo $\rG_0$ par exemple, ou d'autres id\'eaux premiers 
bien choisis.
\end{rema}

\bigskip

\section{Graduations sur $R$}\label{section:graduation R}

\medskip

La $(\NM \times \NM)$-graduation sur $Z \simeq Q$ induit deux $\NM$-graduations, 
associ\'ees aux morphismes de mono\"{\i}des $\ph_1$ et $\ph_2$ 
d\'efinis par $\ph_m : \NM \times \NM \longto \NM$, $(i_1,i_2) \mapsto i_m$. 
Nous noterons $Q=\bigoplus_{i \ge 0} Q^{\ph_m}[i]$ la graduation associ\'ee 
\`a $\ph_m$. Notons $\r_1 : \kb^\times \to \Aut_{\kb\text{-}\alg}(Q)$, $\xi \mapsto \gradauto_{\xi,1}$ 
et $\r_2 : \kb^\times \to \Aut_{\kb\text{-}\alg}(Q)$, $\xi \mapsto \gradauto_{1,\xi}$ 
les morphismes associ\'es respectivement aux graduations $Q^{\ph_1}$ et $Q^{\ph_2}$. 

D'apr\`es la proposition~\ref{R gradue}, la graduation $Q^{\ph_m}$ s'\'etend 
en une graduation $R=\bigoplus_{i \ge 0} R^{(m)}[i]$ \`a laquelle correspond 
un morphisme de groupes $\rhot_m : \kb^\times \to \Aut_{\kb\text{-}\alg}(R)$ tel 
que $\rhot_m(\xi)(q)=\r_m(\xi)(q)$ pour tous $\xi \in \kb^\times$ et 
$q \in Q$. Maintenant, si $\xi \in \kb^\times$, alors 
$\r_1(\xi)$ et $\r_2(\xi')$ commutent pour tout $\xi' \in \kb^\times$, 
donc $\rhot_1(\xi)$ est un automorphisme de $R$ qui stabilise $Q$ et respecte 
la graduation $Q^{\ph_2}$ de $Q$. D'apr\`es le corollaire~\ref{graduation et automorphisme}, 
$\rhot_1(\xi)$ stabilise la graduation $R^{(2)}$ de $R$, ce qui signifie 
que $\rhot_1(\xi)$ et $\rhot_2(\xi')$ commutent. En conclusion~:

\bigskip

\begin{prop}\label{bigraduation sur R}
Il existe une unique $(\NM \times \NM)$-graduation sur $R$ (nous la noterons 
$R=\bigoplus_{(i,j) \in \NM \times \NM} R^{\NM \times \NM}[i,j]$) \'etendant 
celle de $Q$. 

Elle induit un morphisme de groupes 
$\gradauto^R : \kb^\times \times \kb^\times \to \Aut_{\kb\text{-}\alg}(R)$, \indexnot{ba}{\gradauto^R}
$(\xi,\xi') \mapsto \rhot_1(\xi)\rhot_2(\xi')=\rhot_2(\xi')\rhot_1(\xi)$ 
tel que, pour tout $(\xi,\xi') \in \kb^\times \times \kb^\times$, 
$\gradauto_{\xi,\xi'}^R$ stabilise $Q$ et co\"{\i}ncide avec $\gradauto_{\xi,\xi'}$ 
sur $Q$.

Le groupe de Galois $G$ stabilise cette $(\NM \times \NM)$-graduation.
\end{prop}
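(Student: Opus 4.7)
The plan is to assemble the proposition from the two commuting $\NM$-gradings on $R$ that have just been constructed in the discussion preceding the statement, and then to obtain uniqueness and $G$-equivariance from the uniqueness of the extension of an $\NM$-grading through integral closure (Proposition~\ref{R gradue}).

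First I would define
\[
R^{\NM \times \NM}[i,j] \;:=\; R^{(1)}[i] \cap R^{(2)}[j] \qquad \text{and} \qquad \gradauto_{\xi,\xi'}^R \;:=\; \rhot_1(\xi) \circ \rhot_2(\xi').
\]
The commutativity $\rhot_1(\xi)\rhot_2(\xi') = \rhot_2(\xi')\rhot_1(\xi)$ already established makes $\gradauto^R$ a group homomorphism $\kb^\times \times \kb^\times \to \Aut_{\kb\text{-}\alg}(R)$. The decomposition $R = \bigoplus_{(i,j)} R^{\NM\times\NM}[i,j]$ is obtained by writing any $r \in R$ first as $r = \sum_i r_i$ with $r_i \in R^{(1)}[i]$ (using the $R^{(1)}$-grading), then decomposing each $r_i$ as $r_i = \sum_j r_{i,j}$ using $R^{(2)}$; the key point is that $\rhot_1(\xi)$ stabilizes each $R^{(2)}[j]$, so the $R^{(2)}$-decomposition of $r_i$ consists of elements still lying in $R^{(1)}[i]$. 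That this bi-grading restricts to the given one on $Q$ follows immediately from $Q^{\NM \times \NM}[i,j] = Q^{\ph_1}[i] \cap Q^{\ph_2}[j]$ together with the fact that $R^{(m)}$ restricts back to $Q^{\ph_m}$ by construction.

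Next I would prove uniqueness. Any $(\NM \times \NM)$-grading on $R$ extending the bi-grading on $Q$ can be pushed forward along the two projections $\NM \times \NM \to \NM$ to give two $\NM$-gradings on $R$, and each of these extends $Q^{\ph_1}$ or $Q^{\ph_2}$ respectively. By the uniqueness part of Proposition~\ref{R gradue}, those two $\NM$-gradings must be $R^{(1)}$ and $R^{(2)}$; intersecting the homogeneous components then forces the bi-grading to be $R^{\NM\times\NM}[i,j]$ as defined. The properties that $\gradauto_{\xi,\xi'}^R$ stabilizes $Q$ and induces $\gradauto_{\xi,\xi'}$ on $Q$ are immediate consequences of the analogous statements for the $\rhot_m$.

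Finally, for the $G$-equivariance, the point is that $R$ is equally well the integral closure of $P$ in $\Mb$, and $P$ is itself $(\NM \times \NM)$-graded (Lemma~\ref{P stable}). Applying Proposition~\ref{R gradue} to each of the two $\NM$-gradings on $P$, I get that the extensions to $R$ are \emph{unique}, and must therefore coincide with $R^{(1)}$ and $R^{(2)}$. Now for $g \in G$, the conjugated maps $g \circ \rhot_m(\xi) \circ g^{-1}$ are $\kb^\times$-actions on $R$; since $g$ fixes $P$ pointwise, they restrict to $P$ as the original grading actions $\r_m(\xi)|_P$. By the uniqueness just invoked, $g \circ \rhot_m(\xi) \circ g^{-1} = \rhot_m(\xi)$ for $m=1,2$, so $g$ commutes with $\gradauto_{\xi,\xi'}^R$ and permutes the homogeneous components trivially. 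The main obstacle, and the step to check carefully against the appendix, is precisely that Proposition~\ref{R gradue} applies with $P$ (not only $Q$) as the base ring; if the statement there is formulated only for a subring whose fraction field generates $\Lb$, one reduces the general $g \in G = H \cdot W$ to the case $g \in W$ via $H$-stability (which follows directly from uniqueness of extension from $Q$, since $H$ fixes $Q$ pointwise) and then handles $W$ via its explicit description in \S\ref{subsection:specialisation galois 0}, noting that the action of $W \times W$ on $\kb(V \times V^*)$ is linear on each factor and hence manifestly bi-degree-preserving.
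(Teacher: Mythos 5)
Your proof is essentially correct and takes the same route as the paper: existence is already in place from the preceding paragraph (the two commuting $\NM$-gradings $R^{(1)}$, $R^{(2)}$), uniqueness comes from the uniqueness of extension of a grading through an integral extension, and $G$-stability comes from the fact that $G$ fixes $P$ pointwise and $P$ is bi-graded. The paper's proof is a one-liner citing Lemma~\ref{unicite graduation} (for uniqueness) and Corollary~\ref{graduation et automorphisme} (for $G$-stability); you reconstruct both arguments explicitly, which is fine.

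Two remarks. First, a small misattribution: for the uniqueness of the extension you should be invoking Lemma~\ref{unicite graduation}, not Proposition~\ref{R gradue}. Proposition~\ref{R gradue} gives \emph{existence} of an extension of the grading to the integral closure; the uniqueness statement is the lemma, and that lemma only requires an integral extension of integral domains. Second, the worry in your final paragraph is unfounded: both Lemma~\ref{unicite graduation} and Corollary~\ref{graduation et automorphisme} are stated for an arbitrary graded domain $P$ and an integral domain $Q \supset P$ that is integral over $P$, with no assumption on fraction fields, so they apply directly to $P \subset R$ and $G = \Gal(\Mb/\Kb)$. Your fallback route via the decomposition $G = H \cdot W$ is therefore unnecessary, and as formulated it is also on shaky ground: the action of $W \subset G$ on $R$ is characterised only through its reduction modulo $\rG_0$, not by an explicit linear action of $W \times W$ on $R$ itself, so ``manifestly bi-degree-preserving'' does not quite go through without further argument. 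You should simply delete that sentence and apply Corollary~\ref{graduation et automorphisme} with $P$ as base.
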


\begin{proof}
Seules l'unicit\'e et la stabilit\'e sous l'action de $G$ n'ont pas \'et\'e 
montr\'ees. Elles d\'ecoulent respectivement de la proposition~\ref{unicite graduation} 
et du corollaire~\ref{graduation et automorphisme}.
\end{proof}

\bigskip

Nous noterons $R=\bigoplus_{(i,j) \in \NM \times \NM} R^{\NM \times \NM}[i,j]$ la $(\NM \times \NM)$-graduation 
\'etendant celle de $Q$. 
De m\^eme, $R=\bigoplus_{i \in \NM} R^\NM[i]$ (respectivement $R=\bigoplus_{i \in \ZM} R^\ZM[i]$) 
d\'esignera la $\NM$-graduation (respectivement $\ZM$-graduation) 
\'etendant celle de $Q$~: en d'autres termes, 
$$R^\NM[i]=\mathop{\bigoplus}_{i_1+i_2=i} R^{\NM \times \NM}[i_1,i_2] \quad\text{et}\quad 
R^\ZM[i]=\mathop{\bigoplus}_{i_1+i_2=i} 
R^{\NM \times \NM}[i_1,i_2].$$

\bigskip

\begin{coro}\label{ideaux homogenes}
L'id\'eal premier $\rG_0$ de $R$ choisi dans la sous-section~\ref{subsection:specialisation galois 0} 
est bi-homog\`ene (en particulier, il est homog\`ene).
\end{coro}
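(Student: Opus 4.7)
L'approche sera alg\'ebrique, reposant sur deux r\'esultats classiques~: d'une part, dans tout anneau $\ZM^d$-gradu\'e, pour tout id\'eal premier $\rG$, l'id\'eal engendr\'e par les \'el\'ements homog\`enes contenus dans $\rG$ est lui-m\^eme un id\'eal premier homog\`ene~; d'autre part, le principe d'incomparabilit\'e dans l'extension enti\`ere $P \subset R$, qui interdit l'existence de deux id\'eaux premiers comparables et distincts de $R$ se contractant au m\^eme id\'eal premier de $P$.

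D'abord, j'observerais que $\pG_0 = \CG_0 P$ est bi-homog\`ene~: en effet, $\CG_0$ est engendr\'e par les formes lin\'eaires $C \in \CCB^*$, chacune bi-homog\`ene de bi-degr\'e $(1,1)$. Par d\'efinition de $\rG_0$ (un premier de $R$ au-dessus de $\qG_0$), on a $\rG_0 \cap Q = \qG_0$, et puisque $\qG_0 \cap P = \pG_0$, il en r\'esulte que $\rG_0 \cap P = \pG_0$. En particulier, chaque $C \in \CCB^*$ appartient \`a $\rG_0$ comme \'el\'ement bi-homog\`ene.

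L'\'etape cl\'e consistera \`a consid\'erer l'id\'eal $\rG_0^*$ de $R$ engendr\'e par les \'el\'ements bi-homog\`enes de $\rG_0$, relativement \`a la $(\NM \times \NM)$-graduation de la proposition~\ref{bigraduation sur R}. Le premier r\'esultat classique garantit que $\rG_0^*$ est un id\'eal premier bi-homog\`ene de $R$, \'evidemment contenu dans $\rG_0$. Comme $\pG_0$ est engendr\'e dans $P$ par des \'el\'ements bi-homog\`enes appartenant \`a $\rG_0$, on a $\pG_0 \subset \rG_0^*$~; combin\'e \`a $\rG_0^* \cap P \subset \rG_0 \cap P = \pG_0$, cela donne l'\'egalit\'e $\rG_0^* \cap P = \pG_0$. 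Ainsi, $\rG_0^*$ et $\rG_0$ sont deux id\'eaux premiers comparables de $R$ se contractant tous deux \`a $\pG_0$~; le principe d'incomparabilit\'e force alors $\rG_0^* = \rG_0$, ce qui \'etablit la bi-homog\'en\'eit\'e de $\rG_0$.

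Il n'y a pas d'obstacle conceptuel majeur dans cette approche~: la preuve est essentiellement formelle et devrait tenir en quelques lignes, pourvu que les deux r\'esultats classiques invoqu\'es soient rappel\'es ou d\'emontr\'es dans les appendices consacr\'es aux anneaux gradu\'es et \`a la th\'eorie de Galois.
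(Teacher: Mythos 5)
Your argument is correct and rests on the same mechanism as the paper's proof: form the ideal generated by the homogeneous elements of $\rG_0$, observe it is a prime with the same contraction, and conclude by incomparability in an integral extension. The packaging differs, though. The paper works exclusively with $\ZM$-gradings: the appendix lemma~\ref{premier homogene} is proved only for a single $\ZM$-grading (via the homogenization $\qG' = \bigoplus_i (\qG \cap Q_i)$ and lemme~\ref{lem:homogeneise-premier}), so the paper applies it to $\rG_0$ over $Q$ for each $\ZM$-grading $R^\ph$ induced by a monoid morphism $\ph : \NM\times\NM \to \ZM$, and then deduces bi-homogeneity from $\ph$-homogeneity for all $\ph$. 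You instead homogenize directly with respect to the $(\NM\times\NM)$-grading and contract to $P$ rather than to $Q$. Both contractions are legitimate since $R$ is integral over both. The only thing to flag is that the classical fact you invoke — that in a $\ZM^d$-graded ring the ideal generated by the homogeneous elements of a prime is again prime — is, in this form, not stated in the paper's appendix (lemme~\ref{lem:homogeneise-premier} is only for $\ZM$); it is standard and easy, but you would either need to supply it or fall back on the paper's trick of running the $\ZM$-graded lemma over the family of morphisms $\ph$.
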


\begin{proof}
Si $\ph : \NM \times \NM \to \ZM$ est un morphisme de mono\"{\i}des, 
nous noterons $R^\ph$ la $\ZM$-graduation associ\'ee. 
Puisque $\rG_0 \cap Q=\qG_0$ est bi-homog\`ene, 
il d\'ecoule du corollaire~\ref{premier homogene} que $\rG_0$ est $R^\ph$-homog\`ene 
pour tout morphisme de mono\"{\i}de $\ph : \NM \times \NM \to \ZM$. 
Cela force $\rG_0$ \`a \^etre bi-homog\`ene.
\end{proof}

\bigskip

La sous-alg\`ebre $R^{\NM \times \NM}[0,0]$ de $R$ \'etant int\`egre et enti\`ere et de type fini 
sur $P^{\NM \times \NM}[0,0]=\kb$, c'est une extension finie de $\kb$. 

\bigskip

\begin{coro}\label{decomposition naturelle}
Si $R_+$ \indexnot{R}{R_+}  d\'esigne l'unique id\'eal bi-homog\`ene maximal de $R$, 
alors $G$ stabilise $R_+$ (ce qui signifie que le groupe de d\'ecomposition  
de $R_+$ dans $G$ est $G$ lui-m\^eme).
\end{coro}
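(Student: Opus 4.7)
The plan has two steps. First, I would confirm that the bi-homogeneous maximal ideal $R_+$ asserted in the statement really exists and is unique. Second, I would deduce its $G$-stability from the $G$-equivariance of the bi-grading already established in Proposition~\ref{bigraduation sur R}.

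For the first step, the key observation is that the degree-$(0,0)$ component $R^{\NM\times\NM}[0,0]$ is a field. Indeed, $R$ is by construction integral over $P$, so $R^{\NM\times\NM}[0,0]$ is integral over $P^{\NM\times\NM}[0,0] = \kb$ (since $P = \kb[\CCB] \otimes \kb[V]^W \otimes \kb[V^*]^W$ has only constants in bi-degree $(0,0)$). As $R \subset \Mb$ is a domain, any nonzero $r \in R^{\NM\times\NM}[0,0]$ generates a finite $\kb$-subalgebra $\kb[r]$ which is itself a domain integral over the field $\kb$, hence a field; in particular $r^{-1} \in \kb[r] \subset R^{\NM\times\NM}[0,0]$. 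Consequently the ideal
$$R_+ = \bigoplus_{(i,j) \neq (0,0)} R^{\NM\times\NM}[i,j]$$
is a proper bi-homogeneous ideal of $R$, and any proper bi-homogeneous ideal $I$ necessarily satisfies $I \cap R^{\NM\times\NM}[0,0] = 0$ (otherwise it would contain a unit of $R$), whence $I \subset R_+$. This shows $R_+$ is the unique maximal bi-homogeneous ideal.

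For the second step, Proposition~\ref{bigraduation sur R} asserts that every $g \in G$ preserves the $(\NM\times\NM)$-grading of $R$, i.e.\ $g(R^{\NM\times\NM}[i,j]) = R^{\NM\times\NM}[i,j]$ for every $(i,j)$. In particular $g$ permutes the set of bi-homogeneous ideals of $R$. Since $R_+$ is characterized by the permutation-invariant property of being the unique maximal bi-homogeneous ideal, we must have $g(R_+) = R_+$ for every $g \in G$. By definition, this is exactly the statement that the decomposition group $G^D_{R_+}$ equals $G$.

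The main (and only) delicate point is the verification that $R^{\NM\times\NM}[0,0]$ is a field; everything after that is formal bookkeeping. One could worry a priori that $R$, being much larger than $Q$, might acquire bi-homogeneous elements of degree $(0,0)$ outside $\kb$ which create further maximal bi-homogeneous ideals, but the domain property of $R \subset \Mb$ combined with integrality over $\kb$ forces this $(0,0)$-component to be a field and rules this out.
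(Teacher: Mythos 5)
Your proof is correct and follows the same essential idea as the paper's, which simply says "Cela d\'ecoule de la proposition~\ref{bigraduation sur R}" — the $G$-stability of the bi-grading established there. Where you go beyond the paper is the preliminary verification that $R^{\NM\times\NM}[0,0]$ is a field, which is what really makes $R_+$ the \emph{unique} maximal bi-homogeneous ideal; this is a genuine detail the paper leaves implicit at this point. (The paper later proves the stronger fact $R^{\NM\times\NM}[0,0]=\kb$ in the corollary~\ref{r0} that follows, but the proof of that corollary invokes $\rG_0\subset R_+$, which already presupposes the uniqueness of $R_+$; your observation that $R^{\NM\times\NM}[0,0]$ is a domain integral over the field $\kb$, hence a field, is exactly what breaks this apparent circularity.) So your argument is valid and is, if anything, a slightly more careful rendering of the paper's proof.
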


\begin{proof}
Cela d\'ecoule de la proposition~\ref{bigraduation sur R}.
\end{proof}

\bigskip

\begin{coro}\label{r0}
On a $R^{\NM \times \NM}[0,0] = \kb$, .
\end{coro}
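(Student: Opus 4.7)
The plan is to prove $R^{\NM \times \NM}[0,0] = \kb$ in three steps: first establish that $R^{\NM\times\NM}[0,0]$ is an algebraic field extension of $\kb$; next, inject this field into the residue ring $R_0 = R/\rG_0$; finally, use the explicit description of $\Mb_0$ to conclude.

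First I would show that every $r\in R^{\NM \times \NM}[0,0]$ is integral over $P^{\NM\times\NM}[0,0]=\kb$. Since $R$ is integral over $P$, pick a monic relation $r^n + p_{n-1} r^{n-1} + \cdots + p_0 = 0$ with $p_i\in P$. Because $r$, and hence each $r^k$, is bi-homogeneous of bidegree $(0,0)$, extracting the bidegree $(0,0)$ component of this relation yields $r^n + p_{n-1}^{(0,0)} r^{n-1} + \cdots + p_0^{(0,0)} = 0$ with $p_i^{(0,0)}\in P^{\NM\times\NM}[0,0]=\kb$. Thus $R^{\NM\times\NM}[0,0]$ is an integral $\kb$-algebra; being also a subdomain of $R$, it is a field (any integral domain integral over a field is a field).

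Next I would argue that the composition $R^{\NM \times \NM}[0,0]\hookrightarrow R\twoheadrightarrow R_0$ is injective. Since its source is a field and the map sends $1$ to $1$, the kernel must be trivial. Equivalently, one could observe that $\rG_0\cap \kb = 0$ (as $\rG_0$ is a proper prime ideal and $\kb$ is a field) combined with the fact that any nonzero $r\in \rG_0\cap R^{\NM\times\NM}[0,0]$ would force its nonzero minimal polynomial constant term (which lies in $\kb$) to be in $\rG_0\cap\kb=0$, contradicting minimality.

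Finally, the image of $R^{\NM\times\NM}[0,0]$ in $R_0$ lies in $\Frac(R_0) = \Mb_0$, which via the isomorphism $\iso$ is identified with $\kb(V\times V^*)^{\D \Zrm(W)}\subset \kb(V\times V^*)$. The extension $\kb(V\times V^*)/\kb$ is purely transcendental, hence $\kb$ is algebraically closed inside it. Since every element of $R^{\NM\times\NM}[0,0]$ is algebraic over $\kb$ by the first step, its image in $\Mb_0$ must lie in $\kb$. Combining this with the obvious inclusion $\kb\subset R^{\NM\times\NM}[0,0]$ gives the equality. The only mildly delicate point is ensuring that the minimal polynomial argument genuinely produces coefficients in $\kb$ and not merely in some larger homogeneous piece of $P$, but this is immediate from the $(\NM\times\NM)$-grading on $P$.
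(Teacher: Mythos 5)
Your proof is correct and establishes the result by a route that is close in spirit but genuinely different in its two key steps. Both you and the paper reduce to examining the image of $R^{\NM\times\NM}[0,0]$ inside $\Mb_0\simeq \kb(V\times V^*)^{\D \Zrm(W)}$, but the mechanisms differ. To pass to $R/\rG_0$, the paper invokes Corollary~\ref{ideaux homogenes} to get $\rG_0\subset R_+$ and hence $\rG_0\cap R^{\NM\times\NM}[0,0]=0$; you instead observe that $R^{\NM\times\NM}[0,0]$ is a field (being a domain integral over $\kb$, which you check by extracting the $(0,0)$-component of an integral equation over $P$), so any ring map out of it with $1\mapsto 1$ is injective — this bypasses the bi-homogeneity of $\rG_0$ entirely. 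For the final identification, the paper argues that the inclusion $R/\rG_0\subset \kb[V\times V^*]^{\D\Zrm(W)}$ respects the bi-grading by appealing to the uniqueness statement of Proposition~\ref{unicite graduation}, then reads off the $(0,0)$-component of the target; you instead exploit that the elements of $R^{\NM\times\NM}[0,0]$ are algebraic over $\kb$ and that $\kb$ is algebraically closed in the purely transcendental extension $\kb(V\times V^*)$. Your approach is somewhat more elementary, trading the graded-ring uniqueness machinery for a standard fact from field theory; the paper's proof stays within the graded framework being developed in this section and reuses its lemmas, which is perhaps more in keeping with the architecture of the chapter. Both are valid and of comparable length.
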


\begin{proof}
D'apr\`es~\ref{ideaux homogenes}, on a $\rG_0 \subset R_+$. Par cons\'equent, 
$R^{\NM \times \NM}[0,0]$ est isomorphe \`a la composante homog\`ene de bi-degr\'e $(0,0)$ 
de $R/\rG_0$. Mais $k_R(\rG_0) \simeq \kb(V \times V^*)^{\D\Zrm(W)}$ 
et $R/\rG_0$ est entier sur $Q_0=\kb[V \times V^*]^{\D W}$, donc 
$R/\rG_0 \subset \kb[V \times V^*]^{\D\Zrm(W)}$, et cette inclusion 
respecte la bi-graduation, par unicit\'e de 
la bi-graduation sur $R/\rG_0$ \'etendant celle de $Q_0=\kb[V \times V^*]^{\D W}$ 
(voir la proposition~\ref{unicite graduation}). D'o\`u le r\'esultat.
\end{proof}

\bigskip

\begin{coro}\label{r0 DI}
Soit $D_+$  \indexnot{D}{D_+}  (respectivement $I_+$)\indexnot{I}{I_+}   le groupe de d\'ecomposition 
(respectivement inertie) de $R_+$ dans $G$. Alors $D_+=I_+=G$.
\end{coro}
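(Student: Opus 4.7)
The plan is to deduce both equalities directly from the $(\NM\times\NM)$-graded structure established in Proposition~\ref{bigraduation sur R} and Corollary~\ref{r0}.

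First I would handle the decomposition group. By Corollary~\ref{decomposition naturelle}, $G$ stabilizes $R_+$ setwise, which is exactly the statement $D_+=G$ by definition of the decomposition group. So there is nothing more to do here.

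Next I would attack $I_+=G$ by computing the residue extension at $R_+$. Since $R^{\NM\times\NM}[0,0]=\kb$ by Corollary~\ref{r0}, the ideal
$$R_+ = \bigoplus_{(i,j)\neq(0,0)} R^{\NM\times\NM}[i,j]$$
is the kernel of the projection $R\twoheadrightarrow R^{\NM\times\NM}[0,0]=\kb$, hence $R_+$ is maximal and $k_R(R_+)=\kb$. Intersecting with $P$ and using the analogous fact $P^{\NM\times\NM}[0,0]=\kb$, one gets $k_P(R_+\cap P)=\kb$ as well. The residue field extension $k_R(R_+)/k_P(R_+\cap P)$ is therefore trivial. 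Since $D_+/I_+$ embeds (in fact surjects) onto $\Gal(k_R(R_+)/k_P(R_+\cap P))=1$, we conclude $I_+=D_+$, and combined with the first step this gives $I_+=D_+=G$.

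There is essentially no obstacle: the entire content of the statement is the observation that the unique bi-homogeneous maximal ideal of $R$ has residue field equal to the ground field $\kb$, which is immediate from the grading, and that $G$ preserves the grading (Proposition~\ref{bigraduation sur R}), which forces it to preserve $R_+$ and to act trivially on the degree zero part.
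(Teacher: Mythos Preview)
Your proof is correct and follows essentially the same approach as the paper: both use Corollary~\ref{decomposition naturelle} to get $D_+=G$, then invoke Corollary~\ref{r0} to show $k_R(R_+)=\kb=k_P(R_+\cap P)$ and conclude $D_+/I_+=1$ from the triviality of the residue extension (the paper cites Th\'eor\`eme~\ref{bourbaki} for the identification of $D_+/I_+$ with the Galois group).
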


\begin{proof}
Notons $\pG_+=R_+ \cap P$. Alors $k_R(R_+)/k_P(\pG_+)$ est une extension 
galoisienne de groupe de Galois $D_+/I_+$ (voir le th\'eor\`eme~\ref{bourbaki}) et, 
d'apr\`es le corollaire~\ref{r0}, $k_R(R_+)=\kb=k_P(\pG_+)$, donc 
$D_+/I_+=1$. Puisque $D_+=G$ d'apr\`es le corollaire~\ref{decomposition naturelle}, 
le corollaire~\ref{r0 DI} est d\'emontr\'e.
\end{proof}

\bigskip

\section{Action sur $R$ des automorphismes naturels de $\Hb$}
\label{section:auto galois}

\medskip

La section pr\'ec\'edente~\ref{section:graduation R} \'etudiait l'extension \`a $R$ 
des automorphismes de $Q$ induits par $\kb^\times \times \kb^\times$. 
Dans la section~\ref{section:automorphismes-1}, nous avons aussi introduit une action de 
$W^\wedge \rtimes \NC$ sur $\Hb$ qui stabilisait $Z$ (forc\'ement), $P$ 
mais aussi $\pG_0$ et $\pG_0 Z$~: cette action se transporte \`a $Q \simeq Z$ et stabilise 
encore $\qG_0=\pG_0 Q$. Nous allons montrer comment \'etendre cette action 
\`a $R$, et en tirer les cons\'equences sur le groupe de Galois $G$. Pour cela, nous 
nous placerons dans un cadre relativement g\'en\'eral~:

\bigskip

\boitegrise{{\bf Hypoth\`ese.} {\it Dans cette section, et seulement dans cette section, 
nous fixons un groupe $\GC$ agissant \`a la fois sur $Z$ et sur $\kb[V \times V^*]$ et 
v\'erifiant les propri\'et\'es suivantes~:
\begin{itemize}
\itemth{1} $\GC$ stabilise $P$ et $\pG_0$.
\itemth{2} L'action de $\GC$ sur $\kb[V \times V^*]$ normalise l'action de $W \times W$ et celle 
de $\D W$.
\itemth{3} L'isomorphisme canonique de $\kb$-alg\`ebres $Z_0 \longiso \kb[V \times V^*]^{\D W}$ est $\GC$-\'equivariant.
\end{itemize}}\vskip-0.5cm
}{0.75\textwidth}

\bigskip

On transporte, \`a travers l'isomorphisme 
$\copie$, l'action de $\GC$ sur $Z$ en une action de $\GC$ sur $Q$. 
Si $\t \in \GC$, on note $\t^\circ$ l'automorphisme de $\kb[V \times V^*]$ 
induit par $\t$~: d'apr\`es (2), $\t^\circ$ stabilise $\kb[V \times V^*]^{\D\Zrm(W)}$, 
$\kb[V \times V^*]^{\D W}$ et $\kb[V \times V^*]^{W \times W}$. 

%
%
%
%
%
%
%
 \bigskip

\begin{prop}\label{extension tau}
Soit $\t \in \GC$. Alors il existe une unique extension $\taut$ de $\t$ \`a $R$ v\'erifiant les deux 
propri\'et\'es suivantes~:
\begin{itemize}
\itemth{1} $\taut(\rG_0)=\rG_0$~;

\itemth{2} L'automorphisme de $R/\rG_0$ induit par $\taut$ est \'egal \`a $\t^\circ$, 
via l'identification $\iso : \kb(V \times V^*)^{\D \Zrm(W)} \longiso \Mb_0$ de 
\S\ref{subsection:specialisation galois 0}. 
\end{itemize}
\end{prop}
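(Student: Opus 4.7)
The plan is to build $\taut$ in three stages: first extend $\t$ from $\Lb$ to $\Mb$; next use the freedom under $H$ to arrange condition~(1); finally use the residual freedom under $D_0\cap H$ to arrange condition~(2). Uniqueness will then follow from the triviality of the inertia group $I_0=1$ established in~(\ref{eq:net}).

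Since $\t$ stabilises $P$, it sends $\Kb=\Frac(P)$ to itself, so $\t:\Lb\longiso\Lb$ restricts to an automorphism of $\Kb$. The uniqueness of the Galois closure of $\Lb/\Kb$ (applied to the source and target copies) produces some extension $\taut_0:\Mb\longiso\Mb$ of $\t$. Since $R$ is the integral closure of $P$ in $\Mb$ and $\taut_0$ stabilises $P$, it stabilises $R$. Any two such extensions differ by left composition with an element of $H=\Gal(\Mb/\Lb)$. Now $\taut_0(\rG_0)$ is a prime of $R$ above $\t(\qG_0)=\qG_0$; by~(\ref{G=HD}), $G=H\cdot D_0$, and since there is a unique prime of $Q$ above $\pG_0$ (namely $\qG_0$), the $G$-orbit of $\rG_0$ coincides with its $H$-orbit, so we may compose with a suitable $h\in H$ and assume $\taut_0(\rG_0)=\rG_0$. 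The remaining freedom is a $D_0\cap H$-torsor, and by~(\ref{gal Dc}) one has $D_0\cap H=\Gal(\Mb_0/\Lb_0)$.

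To arrange condition~(2): by hypothesis~(2), $\t^\circ$ normalises $\D W$, hence its centre $\D\Zrm(W)$, so it stabilises $\kb(V\times V^*)^{\D\Zrm(W)}$ and, via $\iso$, induces an automorphism $\ph^\circ$ of $\Mb_0$. Hypothesis~(3), composed with $\copie_0$, shows that the restriction of $\ph^\circ$ to $\Lb_0\simeq\kb(V\times V^*)^{\D W}$ coincides with the action of $\t$ on $Q_0$. On the other hand, $\taut_0$ induces an automorphism $\ph_0$ of $\Mb_0$ (from its action on $R/\rG_0$) which also extends the action of $\t$ on $Q_0$. Therefore $\ph^\circ$ and $\ph_0$ differ by an element $g\in\Gal(\Mb_0/\Lb_0)=D_0\cap H$; lifting $g$ to an element of $D_0\cap H\subset G$ (which, lying in $D_0$, fixes $\rG_0$), the automorphism $\taut:=g^{-1}\taut_0$ satisfies both (1) and (2).

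For uniqueness, if $\taut'$ is a second such extension, then $\taut'\taut^{-1}$ restricts to the identity on $\Lb$ (both extend $\t$), so lies in $H$; it fixes $\rG_0$, so lies in $D_0\cap H$; and it induces the identity on $R/\rG_0\subset\Mb_0$, so lies in the inertia group $I_0$. Since $I_0=1$ by~(\ref{eq:net}), we conclude $\taut'=\taut$. The main conceptual point to verify carefully is the compatibility, on $\Lb_0$, between the action of $\t$ on $Q_0$ (transferred via $\copie_0$) and the restriction of $\t^\circ$ through $\iso$: this is precisely the content of hypothesis~(3), and once it is in place the rest reduces to a straightforward torsor bookkeeping under $H$ and $D_0\cap H$.
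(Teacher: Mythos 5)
Your proof follows essentially the same approach as the paper's: extend $\t$ to some automorphism of $\Mb$ via the Galois closure, compose with an element of $H$ to pin down $\rG_0$, then compose with an element of $D_0\cap H\simeq\Gal(\Mb_0/\Lb_0)$ (using hypothesis~(3) to identify the two induced actions on $\Lb_0$) to fix the residual automorphism, and conclude uniqueness from $I_0=1$. Your additional observation that $\taut'\taut^{-1}\in H$ in the uniqueness step is correct but superfluous — the paper passes directly from ``stabilizes $\rG_0$'' to $D_0$, then to $I_0$ — but the argument is otherwise the same.
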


\begin{proof}
Commen\c{c}ons par montrer l'existence. 
Tout d'abord, $\Mb$ \'etant une cl\^oture galoisienne de l'extension $\Lb/\Kb$, 
il existe une extension $\tau_\Mb$ de $\tau$ \`a $\Mb$. Puisque $R$ est la cl\^oture 
int\'egrale de $Q$ dans $\Mb$, $\t_\Mb$ stabilise $R$. De plus, puisque 
$\t(\qG_0)=\qG_0$, il existe $h \in H$ tel que $\t_\Mb(\rG_0)=h(\rG_0)$. 
Posons donc $\taut_\Mb=h^{-1} \circ \t_\Mb$. Ainsi
$$\taut_\Mb(\rG_0)=\rG_0\qquad\text{et}\qquad (\taut_\Mb)|_{\SSS{\Lb}} =\t.$$
Notons $\taut_{\Mb,0}$ l'automorphisme de $R/\rG_0$ induit par $\taut_\Mb$. 

Par construction, la restriction de $\taut_{\Mb,0}$ \`a $Q/\qG_0$ est \'egale 
\`a la restriction de $\iso \circ \t^\circ \circ \iso^{-1}$. 
Par suite, il existe $d \in D_0 \cap H$ tel que 
$\taut_{\Mb,0} = d \circ (\iso \circ \t_0 \circ \iso^{-1})$. 
On pose alors $\taut=d^{-1} \circ \taut_\Mb$~: il est clair que 
$\taut$ v\'erifie (1) et (2).

\medskip

Montrons maintenant l'unicit\'e. Si $\taut_1$ est une autre extension de $\t$ 
\`a $R$ satisfaisant (1) et (2), posons $\s=\taut^{-1}\taut_1(\rG_0)$. 
Alors $\s \in G$ et, d'apr\`es (1), $\s$ stabilise $\rG_0$. 
Donc $\s \in D_0$. De plus, d'apr\`es (2), $\s$ induit sur $R/\rG_0$ 
l'automorphisme identit\'e. Donc $\s\in I_0=1$. Donc $\taut=\taut_1$.
\end{proof}

\bigskip

L'existence et l'unicit\'e dans l'\'enonc\'e de la proposition pr\'ec\'edente~\ref{extension tau} 
impliquent le corollaire suivant~:

\bigskip

\begin{coro}\label{coro:extension-action}
L'action de $\GC$ sur $Q$ s'\'etend (de mani\`ere unique) en une action de $\GC$ sur $R$ 
qui stabilise $\rG_0$ et est compatible avec l'isomorphisme $\iso$.
\end{coro}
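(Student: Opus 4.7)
Le plan consiste à déduire ce corollaire de l'unicité établie dans la proposition~\ref{extension tau}, qui a déjà effectué l'essentiel du travail technique. Pour chaque $\t \in \GC$, on dispose d'un unique automorphisme $\taut$ de $R$ prolongeant $\t$ et vérifiant les conditions (1) et (2). L'application $\t \mapsto \taut$ fournit donc une application $\GC \longto \Aut_\kb(R)$, et il suffira de vérifier qu'il s'agit d'un morphisme de groupes, puis d'en déduire l'unicité.

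Premièrement, on observera que l'identité $\id_R$ de $R$ prolonge l'identité $\id_Q$ de $Q$, stabilise $\rG_0$ et induit l'identité de $R/\rG_0$ (qui correspond via $\iso$ à $\id_\GC^\circ = \id$). Par unicité dans la proposition~\ref{extension tau}, on aura donc $\widetilde{\id_\GC} = \id_R$. Ensuite, pour $\t_1, \t_2 \in \GC$, on vérifiera que la composée $\taut_1 \circ \taut_2$ est un automorphisme de $R$ qui prolonge $\t_1 \circ \t_2$ sur $Q$, qui stabilise $\rG_0$ (puisque chacun des deux facteurs le fait), et qui induit sur $R/\rG_0$, via l'identification $\iso$, l'automorphisme $\t_1^\circ \circ \t_2^\circ = (\t_1 \circ \t_2)^\circ$ — ici on utilisera le fait que $\t \mapsto \t^\circ$ est un morphisme de groupes, ce qui est assuré par l'hypothèse que $\GC$ agit sur $\kb[V \times V^*]$. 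L'unicité dans la proposition~\ref{extension tau} entraînera alors l'égalité $\widetilde{\t_1 \circ \t_2} = \taut_1 \circ \taut_2$, ce qui fait de $\t \mapsto \taut$ un morphisme de groupes.

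Pour l'unicité de l'action prolongée, on notera que toute action de $\GC$ sur $R$ qui prolonge celle sur $Q$, stabilise $\rG_0$ et est compatible avec $\iso$ doit, pour chaque $\t \in \GC$, fournir un automorphisme de $R$ vérifiant les deux conditions (1) et (2) de la proposition~\ref{extension tau}, et doit donc coïncider avec $\taut$ par l'unicité déjà établie dans cette proposition.

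Il n'y a pas véritablement d'obstacle majeur dans cette preuve : le contenu mathématique substantiel (l'existence d'un relèvement à $R$ compatible avec $\rG_0$ et $\iso$) est entièrement traité dans la proposition~\ref{extension tau}, et le présent corollaire n'est qu'une traduction formelle du principe d'unicité en énoncé de fonctorialité. La seule vérification non triviale est la compatibilité de la condition (2) avec la composition, laquelle repose sur le fait que $\t \mapsto \t^\circ$ est bien un morphisme de groupes — ce qui résulte immédiatement de la structure d'action de $\GC$ sur $\kb[V \times V^*]$ postulée au début de la section.
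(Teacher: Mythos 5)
Votre démonstration est correcte et suit essentiellement la même approche que le texte : le corollaire est présenté dans le mémoire comme une conséquence directe de l'existence et de l'unicité de la proposition~\ref{extension tau}, et vous explicitez simplement les vérifications (structure de morphisme de groupes de $\t \mapsto \taut$, unicité de l'action) que le texte laisse implicites.
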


\bigskip

Par la suite, on notera encore $\t$ l'extension $\taut$ d\'efinie dans la proposition~\ref{extension tau}. 
Puisque $\GC$ stabilise $P$, $Q$, $\pG_0$, $\qG_0=\pG_0 Q$ et $\rG_0$, on obtient~:

\bigskip

\begin{coro}\label{coro:action-stabilise}
L'action de $\GC$ sur $R$ normalise $G$, $H$, $D_0=\iota(W \times W)$ et $D_0 \cap H=\iota(\D W)=W/\Zrm(W)$.
\end{coro}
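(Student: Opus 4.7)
L'id\'ee est que, une fois l'action de $\GC$ sur $R$ \'etablie (corollaire~\ref{coro:extension-action}), toutes les assertions d\'ecouleront directement du fait que la conjugaison par un \'el\'ement $\t \in \GC$ est un automorphisme de $\Aut(R)$ (ou, par extension unique, de $\Aut(\Mb)$) qui stabilise les sous-corps et sous-anneaux pertinents, et fixe l'id\'eal $\rG_0$.

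\textbf{\'Etape 1 : normalisation de $G$.} Soient $\t \in \GC$ et $g \in G$. Comme $\t$ stabilise $P$ (hypoth\`ese (1)), $\t$ stabilise aussi $\Kb=\Frac(P)$ en tant qu'ensemble (et $\t^{-1}$ aussi). Pour tout $k \in \Kb$, posons $k'=\t^{-1}(k) \in \Kb$~; alors $g(k')=k'$ puisque $g \in G=\Gal(\Mb/\Kb)$, d'o\`u $\t g \t^{-1}(k) = \t(k') = k$. Ainsi $\t g \t^{-1}$ fixe $\Kb$ point par point et appartient \`a $G$. Par sym\'etrie, $\t G \t^{-1}=G$.

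\textbf{\'Etape 2 : normalisation de $H$.} M\^eme argument, en rempla\c{c}ant $P$ par $Q$ et $\Kb$ par $\Lb$~: l'hypoth\`ese que $\GC$ agit sur $Z$ et que l'isomorphisme $\copie : Z \longiso Q$ est utilis\'e pour transporter cette action montre que $\t$ stabilise $Q$, donc $\Lb$, et le m\^eme calcul donne $\t H \t^{-1}=H$.

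\textbf{\'Etape 3 : normalisation de $D_0$.} Par construction (proposition~\ref{extension tau}(1)), $\t(\rG_0)=\rG_0$. Pour $g \in D_0$, on a d\'ej\`a $\t g \t^{-1} \in G$ par l'\'etape 1, et
$$\t g \t^{-1}(\rG_0) = \t g(\rG_0) = \t(\rG_0) = \rG_0,$$
donc $\t g \t^{-1} \in D_0$. Par sym\'etrie (en utilisant $\t^{-1}(\rG_0)=\rG_0$ \'egalement), $\t D_0 \t^{-1} = D_0$. La description $D_0 = \iota(W \times W)$ a \'et\'e \'etablie dans la proposition~\ref{WW}(b).

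\textbf{\'Etape 4 : normalisation de $D_0 \cap H$.} Cela r\'esulte imm\'ediatement des \'etapes 2 et 3. L'identification $D_0 \cap H = \iota(\D W) \simeq W/\Zrm(W)$ a \'et\'e obtenue dans la proposition~\ref{WW}(a,c).

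\textbf{Obstacle principal.} Il n'y a pas de v\'eritable difficult\'e technique ici~: tout repose sur l'existence et la compatibilit\'e de l'extension de $\GC$ \`a $R$, qui ont d\'ej\`a \'et\'e \'etablies dans la proposition~\ref{extension tau} et le corollaire~\ref{coro:extension-action}. Le seul point \`a surveiller est que ``stabiliser $P$ en tant qu'ensemble'' (et non point par point) suffit \`a normaliser $G$~: c'est un point formel mais crucial, car $\GC$ n'agit pas trivialement sur $P$ en g\'en\'eral (par exemple, l'action des caract\`eres lin\'eaires de $W$ ou du normalisateur $\NC$ modifie les variables $C_s$).
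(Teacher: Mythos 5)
Your proof is correct and takes essentially the same approach as the paper: the paper presents the corollary as an immediate consequence of the facts that $\GC$ stabilizes $P$, $Q$, $\pG_0$, $\qG_0$ and $\rG_0$, which are exactly the facts you invoke in Steps 1--3. You have simply spelled out the routine conjugation computations that the paper leaves implicit, and your final remark correctly identifies that stabilizing $P$ as a set (not pointwise) is what is both needed and available.
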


\bigskip

Du corollaire~\ref{coro:action-stabilise}, on d\'eduit que $\GC$ agit sur l'ensemble $G/H \simeq W$ 
et que 
\equat\label{eq:action-normalise}
\text{\it l'image de $\GC$ dans $\SG_W$ normalise $G$.}
\endequat

\begin{exemple}\label{ex:action-k-k-h-n}
Le groupe $\GC=\kb^\times \times \kb^\times \times (W^\wedge \rtimes \NC)$ 
agit sur $\Hb$ et stabilise $P$ et $\pG_0$~; par les m\^emes formules, il agit sur 
$\kb[V \times V^*]$ en normalisant $W \times W$ et $\D W$ (en fait, 
$\kb^\times \times \kb^\times \times \Hom(W ,\kb^\times)$ commute avec $W \times W$ et 
seul $\NC$ agit non trivialement sur $W \times W$). 

Il d\'ecoule des r\'esultats pr\'ec\'edents que l'action de 
$\kb^\times \times \kb^\times \times  (W^\wedge \rtimes \NC)$ sur $Q$ 
s'\'etend de mani\`ere unique en une action sur $R$ qui stabilise $\rG_0$ et 
qui est compatible avec l'isomorphisme $\iso$. Par l'unicit\'e, l'extension de l'action de 
$\kb^\times \times \kb^\times \times W^\wedge$ \`a $R$ commute avec l'action de $G$ 
tandis que celle de $\NC$ rend le morphisme $G \injto \SG_W$ \'equivariant pour l'action de 
$\NC$. 

Pour finir, toujours par unicit\'e, l'extension de l'action du sous-groupe $\kb^\times \times \kb^\times$ 
correspond \`a l'extension \`a $R$ de la $(\NM \times \NM)$-graduation d\'ecrite 
dans la proposition~\ref{bigraduation sur R}.\finl
\end{exemple}

\section{Une situation particuli\`ere~: r\'eflexions d'ordre 2}\label{sec:w0}

\medskip

\boitegrise{\noindent{\bf Hypoth\`ese et notation.} 
{\it Dans cette section, et dans cette section seulement, nous supposons 
que toutes les r\'eflexions de $W$ sont d'ordre $2$ et que $-\Id_V \in W$. 
Nous noterons $w_0=-\Id_V$ et \indexnot{tx}{\tau_0}  $\t_0=(-1,1,\e) \in \kb^\times \times \kb^\times 
\times W^\wedge$.}}{0.75\textwidth}

\bigskip

Par construction, la restriction de $\t_0$ \`a $\kb[\CCB]$ est \'egale \`a 
l'identit\'e. Puisque $-\Id_V \in W$, la restriction de $\t_0$ \`a $\kb[V]^W$ 
est \'egale \`a l'identit\'e. De plus, la restriction de $\t_0$ \`a $\kb[V^*]^W$ 
est aussi \'egale \`a l'identit\'e. En conclusion,
\equat\label{tau 0}
\forall~p \in P,~\t_0(p)=p.
\endequat
Rappelons que $\t_0$ d\'esigne aussi l'automorphisme de $R$ 
d\'efini par la proposition~\ref{extension tau}. Par d\'efinition du groupe de Galois, 
on a $\t_0 \in G$. Plus pr\'ecis\'ement~:

\bigskip

\begin{prop}\label{tau 0 in G}
Supposons que toutes les r\'eflexions de $W$ sont d'ordre $2$ et que $w_0=-\Id_V \in W$. 
Alors $\t_0$ est un \'el\'ement central de $G$. Son action sur $W$ est donn\'ee par 
$\t_0(w)=w_0w$ (ce qui signifie que $\t_0=w_0=\iota(w_0,1)$, via l'injection canonique 
$W \injto G$) et, \`a travers l'injection $G \injto \SG_W$, on a
$$G \subset \{\s \in \SG_W~|~\forall~w \in W,~\s(w_0w)=w_0\s(w)\}.$$
De plus, si $w \in W$, alors 
$$\t_0(\eulerq_w)=-\eulerq_w=\eulerq_{w_0w}.$$
\end{prop}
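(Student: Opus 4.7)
Le plan est d'abord de v\'erifier que $\t_0$ est bien un \'el\'ement de $G$, puis de l'identifier explicitement. Comme $\t_0 \in \kb^\times \times \kb^\times \times W^\wedge \subset \kb^\times \times \kb^\times \times (W^\wedge \rtimes \NC)$, on se place dans le cadre de l'exemple~\ref{ex:action-k-k-h-n}~: l'action de $\t_0$ sur $Q$ s'\'etend en un automorphisme de $R$ stabilisant $\rG_0$, dont la r\'eduction modulo $\rG_0$ co\"{\i}ncide, \`a travers $\iso$, avec l'action naturelle $\t_0^\circ$ sur $\kb(V \times V^*)^{\D\Zrm(W)}$. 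L'\'egalit\'e~(\ref{tau 0}) assure que cet automorphisme fixe $P$, donc le corps $\Kb=\Frac(P)$~; par d\'efinition du groupe de Galois, $\t_0 \in G$. La centralit\'e de $\t_0$ dans $G$ d\'ecoulera alors de l'exemple~\ref{ex:action-k-k-h-n} lui-m\^eme~: par unicit\'e de l'extension \`a $R$, l'action du sous-groupe $\kb^\times \times \kb^\times \times W^\wedge$ commute avec celle de $G$.

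Pour identifier $\t_0$ comme \'el\'ement de $G$, on calculera l'action $\t_0^\circ$ sur $\kb[V \times V^*]$. La composante $\e \in W^\wedge$ agit trivialement sur $\kb[V \times V^*]$ (qui ne comporte pas de facteur $\kb W$), et seule la partie bi-gradu\'ee $(-1,1)$ intervient, envoyant $y \in V$ sur $-y$ et fixant $x \in V^*$. Or, avec la convention choisie pour l'action de $W \times W$ sur $\kb(V \times V^*)$ ({\it cf.}~la remarque~\ref{rema:action-dif}), l'\'el\'ement $(w_0,1) = (-\Id_V, 1)$ op\`ere exactement de cette mani\`ere. Donc $\t_0$ et $\iota(w_0,1)$ ont m\^eme image dans $\Gal(\Mb_0/\Kb_0)$~; comme $I_0 = 1$ (voir~(\ref{eq:net})), l'application $D_0 \to \Gal(\Mb_0/\Kb_0)$ est un isomorphisme (voir~(\ref{gal Dc})), d'o\`u $\t_0 = \iota(w_0, 1) = w_0 \in W \subset G$.

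L'inclusion $G \subset \{\s \in \SG_W~|~\s(w_0 w) = w_0 \s(w)\}$ est alors une cons\'equence formelle de la centralit\'e de $\t_0=w_0$ dans $G$~: pour $\s \in G$ et $w \in W$, on a dans $G/H$ les \'egalit\'es $\s(w_0 w) H = \s \t_0 w H = \t_0 \s w H = w_0 \s(w) H$, et on conclura en utilisant que $W$ est un syst\`eme de repr\'esentants de $G/H$ (voir~(\ref{G=HW})).

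Enfin, pour la derni\`ere assertion, il suffira d'\'evaluer $\t_0(\euler)$. En utilisant la formule $\euler = \sum_{i=1}^n y_i x_i + \sum_{s \in \Ref(W)} C_s s$, on v\'erifiera directement que $\gradauto_{-1,1}(\euler) = -\euler$ (chaque terme \'etant de bi-degr\'e $(1,1)$) et que $\e_*(\euler) = \euler$ (les coefficients $\e(s)^{-1}$ sur $C_s$ et $\e(s)$ sur $s$ se compensant), d'o\`u $\t_0(\euler) = -\euler$ et par cons\'equent $\t_0(\eulerq) = -\eulerq$ via $\copie$. La centralit\'e de $\t_0$ dans $G$ jointe \`a $\eulerq_w = w(\eulerq)$ donnera alors $\t_0(\eulerq_w) = w(\t_0(\eulerq)) = -\eulerq_w$, tandis que l'\'egalit\'e~(\ref{action G W euler}) donnera $\t_0(\eulerq_w) = \eulerq_{\t_0(w)} = \eulerq_{w_0 w}$, ce qui conclura la preuve. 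Aucune \'etape ne pr\'esente de difficult\'e majeure~: le point-cl\'e est la comparaison des actions de $\t_0^\circ$ et de $(w_0,1)$ sur $\kb[V \times V^*]$, qui se r\'eduit \`a l'observation que la torsion par $\e$ est invisible sur $\kb[V \times V^*]$.
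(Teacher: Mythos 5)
Votre d\'emonstration est correcte, mais elle identifie $\t_0 = \iota(w_0,1)$ par une voie diff\'erente de celle du m\'emoire. Vous comparez directement les actions r\'esiduelles de $\t_0$ et de $\iota(w_0,1)$ sur $R/\rG_0$ (c'est-\`a-dire les images dans $\Gal(\Mb_0/\Kb_0)$) et concluez \`a l'\'egalit\'e dans $D_0$ en invoquant $I_0=1$ et l'isomorphisme $D_0 \simeq \Gal(\Mb_0/\Kb_0)$ de~(\ref{gal Dc}). Le m\'emoire, lui, reste au niveau de $\Mb$~: il observe d'abord que $\t_0(\eulerq_w)=-\eulerq_w$ pour tout $w$ (via le lemme~\ref{lem:automorphismes-1}(c) et la commutation), puis que $-\eulerq = \eulerq_{w_0}$ (gr\^ace \`a la r\'eduction modulo $\rG_0$ et \`a la centralit\'e de $w_0$ dans $W$), et conclut $\t_0=w_0$ parce que les $\eulerq_w$ \emph{engendrent} $\Mb$ sur $\Kb$ (cons\'equence de~(\ref{euler engendre})). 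Votre argument est structurellement plus \'econome~: il ne n\'ecessite pas la g\'en\'eration de $\Mb$ par les \'el\'ements d'Euler, seulement la n\'ettet\'e $I_0=1$ de~(\ref{eq:net}) ; en revanche l'argument du m\'emoire fait r\'eappara\^{\i}tre naturellement l'\'el\'ement d'Euler, qui joue un r\^ole central partout ailleurs. Les deux approches utilisent de toute fa\c{c}on la r\'eduction modulo $\rG_0$~: la v\^otre sur toute l'alg\`ebre $\kb[V\times V^*]$, celle du m\'emoire sur le seul \'el\'ement $\eulerq$.

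Un point mineur~: votre r\'ef\'erence \`a la remarque~\ref{rema:action-dif} est hors sujet (elle traite de l'ordre $(w_1,w_2)$ contre $(w_2,w_1)$, sans cons\'equence ici puisque $w_0$ est involutif et central) ; c'est la bo\^{\i}te ``Convention'' de~\S\ref{subsection:specialisation galois 0} qu'il convient de citer pour l'action de $W\times W$ sur $\kb(V\times V^*)$. Cela ne change rien \`a la validit\'e du raisonnement.
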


\begin{proof}
D'apr\`es le lemme~\ref{lem:automorphismes-1}(c), on a $\t_0(\eulerq)=-\eulerq$. 
De plus, d'apr\`es l'exemple~\ref{ex:action-k-k-h-n}, l'action de $\t_0$ sur $R$ commute 
\`a l'action de $G$. Ainsi, si $w \in W$, $\t_0(\eulerq_w)=-\eulerq_w$. 

Mais d'autre part, il existe $w_1 \in W$ tel que $\t_0(\eulerq)=\eulerq_{w_1}$. 
Comme $-\eulerq_0 = w_0(\eulerq_0)$, il r\'esulte de la caract\'erisation de l'action de 
$W$ sur $\Lb$ que $\t_0(\eulerq)=\eulerq_{w_0}=-\eulerq$. Puisque $w_0$ est central dans 
$W$, on a $w_0(\eulerq_w)=\eulerq_{w_0w}=\eulerq_{ww_0}=w(\eulerq_{w_0})=-\eulerq_w$. 
Donc $\t_0=w_0$ car $\Mb=\Kb[(\eulerq_w)_{w \in W}]$. 

Maintenant, le fait que $G \subset \{\s \in \SG_W~|~\forall~w \in W,~\s(w_0w)=w_0\s(w)\}$ 
d\'ecoule du fait que $\t_0=w_0$ commute avec l'action de $G$.
\end{proof}

\bigskip

Notons que $w_0w=-w$ et donc l'inclusion de la proposition~\ref{tau 0 in G} peut 
se r\'e\'ecrire
\equat\label{inclusion w0}
G \subset \{\s \in \SG_W~|~\forall~w \in W,~\s(-w)=-\s(w)\}.
\endequat
Vu ainsi, cela montre que, sous les hypoth\`eses de cette section, 
$G$ est contenu dans un groupe de Weyl de type $B_{|W|/2}$.

\bigskip

\section{Sp\'ecificit\'e des groupes de Coxeter}

\cbstart

\boitegrise{{\bf Hypoth\`ese.} 
{\it Dans cette section, et seulement dans cette section, nous supposons 
que $W$ est un groupe de Coxeter, et nous reprenons les notations du 
chapitre~\ref{chapter:coxeter}.}}{0.75\textwidth}

\bigskip

En lien avec les probl\`emes \'evoqu\'es dans ce chapitre, une des particularit\'es de 
cette situation est que l'alg\`ebre $\Hb$ admet un autre automorphisme $\s_\Hb$, 
induit par l'isomorphisme de $W$-modules $\s : V \longiso V^*$. 
Cet automorphisme stabilise $P$, et induit un automorphisme de $\kb[V \times V^*]$ 
qui normalise $W \times W$ et $\D W$. Plus pr\'ecis\'ement, notons 
$\s_2 : V \oplus V^* \longiso V \oplus V^*$, $(y,x) \longmapsto (-\s^{-1}(x),\s(y))$ 
l'automorphisme du $\kb W$-module $V \oplus V^*$. Alors
\equat\label{eq:sigma-ww}
\s_2 (w,w') \s_2^{-1} = (w',w)
\endequat
pour tout $(w,w') \in W \times W$. En vertu de la proposition~\ref{extension tau}, 
$\s_\Hb$ s'\'etend de mani\`ere unique en un automorphisme de $R$, toujours not\'e 
$\s_\Hb$, qui stabilise $\rG_0$ et qui est compatible avec $\iso$. 
Or, $\s_\Hb$ normalise $G$ et son sous-groupe $\iota(W \times W)$ 
(voir~(\ref{eq:action-normalise})) et, compte tenu de~(\ref{eq:sigma-ww}), 
son action sur les \'el\'ements de $W \subset G$ v\'erifie 
\equat\label{eq:sigma-w}
\lexp{\s_\Hb}{w} H = w^{-1} H\qquad\text{et}\qquad H\lexp{\s_\Hb}{w} = H w^{-1} 
\endequat
pour tout $w \in W$. Il d\'ecoule alors de~(\ref{eq:action-normalise}) que~:

\bigskip

\begin{coro}\label{coro:sigma-inverse}
Si $g \in G \subset \SG_W$ et $w \in W$, alors $(\lexp{\s_\Hb}{g})(w)=g(w^{-1})^{-1}$. 
\end{coro}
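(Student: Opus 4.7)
Le plan est de tirer parti du fait que $\s_\Hb$ normalise simultan\'ement $G$ et $H$, afin de ramener la v\'erification \`a un argument formel d'entrelacement pour l'action de $G$ sur l'ensemble $G/H \simeq W$.

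Je commencerais par v\'erifier que $\s_\Hb$ normalise $H=\Gal(\Mb/\Lb)$~: par construction, $\s_\Hb$ stabilise $Q$ (la restriction \`a $Z$ \'etant transport\'ee vers $Q$ via $\copie$), donc il stabilise $\Lb=\Frac(Q)$~; pour tout $h \in H$, l'automorphisme $\s_\Hb h \s_\Hb^{-1}$ fixe alors $\Lb$, et appartient donc \`a $H$. Combin\'ee \`a la normalisation de $G$ fournie par~(\ref{eq:action-normalise}), cette observation permet de d\'efinir une permutation $\Psi$ de $G/H$ par $\Psi(xH)=\lexp{\s_\Hb}{x}H$, manifestement bien d\'etermin\'ee. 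L'\'etape cl\'e est alors d'identifier $\Psi$ \`a travers la bijection $G/H \longiso W$ qui envoie $wH$ sur $w$ pour $w \in W \subset G$~: la premi\`ere identit\'e de~(\ref{eq:sigma-w}) stipule que $\lexp{\s_\Hb}{w}H=w^{-1}H$ pour tout $w \in W$, ce qui signifie exactement que $\Psi$ correspond \`a l'application $w \mapsto w^{-1}$ sur $W$.

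Je conclurais en invoquant l'identit\'e d'entrelacement g\'en\'erale, cons\'equence imm\'ediate de la d\'efinition de $\Psi$~: pour tous $g$, $x \in G$, on a
$$\Psi(g \cdot xH) = \s_\Hb g x \s_\Hb^{-1} H = \bigl(\s_\Hb g \s_\Hb^{-1}\bigr) \cdot \bigl(\s_\Hb x \s_\Hb^{-1}\bigr) H = \lexp{\s_\Hb}{g} \cdot \Psi(xH).$$
Appliqu\'ee \`a $x=w \in W$, cette \'egalit\'e relue \`a travers l'identification $G/H \longiso W$ se traduit en $g(w)^{-1}=\lexp{\s_\Hb}{g}(w^{-1})$, et il suffit de substituer $w$ \`a $w^{-1}$ pour obtenir la formule souhait\'ee. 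Le seul point qui demande un peu d'attention est la v\'erification initiale que $\s_\Hb$ normalise effectivement $H$ (et pas seulement $G$)~; tout le reste de l'argument est purement formel et ne repose que sur~(\ref{eq:sigma-w}) et la d\'efinition des deux identifications $W \subset G$ et $G/H \longiso W$.
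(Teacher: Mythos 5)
Votre argument est correct et reconstitue exactement ce que le m\'emoire laisse implicite (le texte se contente d'un \og il d\'ecoule de~(\ref{eq:action-normalise})\fg)~: une fois v\'erifi\'e que $\s_\Hb$ normalise $H$ --- ce qui suit bien du fait que $\s_\Hb$ stabilise $Q$, donc $\Lb$ --- la permutation $\Psi$ de $G/H$ est bien d\'efinie, s'identifie \`a l'inversion sur $W$ gr\^ace \`a~(\ref{eq:sigma-w}), et l'identit\'e d'entrelacement $\Psi(g\cdot xH)=\lexp{\s_\Hb}{g}\cdot\Psi(xH)$ donne imm\'ediatement la formule. C'est la d\'emonstration attendue, et le seul point non trivial (la normalisation de $H$) est correctement justifi\'e.
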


\cbend

\section{Probl\`emes, questions}\label{section:problemes 2}

\medskip

Voici quelques probl\`emes soulev\'es par ce chapitre.

\bigskip

\begin{probleme}\label{calcul G}
Calculer les groupes de Galois $G=\Gal(\Mb/\Kb)$ et $H=\Gal(\Mb/\Lb)$ 
comme sous-groupes de $\SG_W$.
\end{probleme}
 
\bigskip

Une piste \`a explorer pour r\'esoudre le probl\`eme~\ref{calcul G} est la suivante~: 
soit $W'$ un sous-groupe parabolique de $W$ et notons $G'$ le groupe de Galois 
associ\'e \`a $W'$ de la m\^eme fa\c{c}on que $G$ est associ\'e \`a $W$. 
Notons aussi $H'$ le sous-groupe de $G'$ correspondant \`a $H$.

\bigskip

\begin{probleme}
Relier $G$ et $G'$, $H$ et $H'$.
\end{probleme}

\bigskip

Il est \`a peu pr\`es exclus d'esp\'erer que $R$ soit lisse (c'est 
faux par exemple d\`es le rang $1$ comme on le montrera dans le chapitre~\ref{chapitre:rang 1}). 
Il semble plus raisonnable de se poser les questions suivantes~:

\bigskip

\begin{question}
Est-ce que $R$ est d'intersection compl\`ete, de Gorenstein, de Cohen-Macaulay~? 
Est-ce $R$ est un $P$-module plat~? un $Q$-module plat~?
\end{question}

\bigskip

Nous n'avons aucune id\'ee de la r\'eponse. En revanche, si on note $E^*$ un supp\'ementaire 
$(\NM \times \NM)$-gradu\'e et $G$-stable de $(R_+)^2$ dans $R_+$ et si on 
note $E$ son dual (comme dans la section~\ref{section:GR}), alors 
la question suivante devient raisonnable, sachant que $R^G=P$ est lisse 
(voir la proposition~\ref{intersection complete R})~:

\bigskip

\begin{question}
Est-ce que $G$ est un sous-groupe de $\GL_\kb(E)$ engendr\'e par des r\'eflexions~?
\end{question}

\bigskip

\noindent{\bf Remarque. --- } 
Notons $I$ le noyau du morphisme canonique (surjectif) $\kb[E] \to R$. 
Si $R$ est d'intersection compl\`ete et $G$ agit trivialement sur $I/\kb[E]_+ I$, 
alors $G$ est un sous-groupe de $\GL_\kb(E)$ 
engendr\'e par des r\'eflexions (voir la proposition~\ref{intersection complete R}).\finl

\bigskip

Plus d\'elicat est le probl\`eme qui suit~:

\bigskip

\begin{probleme}\label{calcul R}
Calculer une pr\'esentation de $R$ ou, du moins, donner un algorithme 
permettant de calculer une telle pr\'esentation.
\end{probleme}

\bigskip

Nous avons vu dans la preuve du corollaire~\ref{r0} que 
$R/\rG_0 \subset \kb[V \times V^*]^{\D\Zrm(W)}$ tandis que, par construction, 
$k_R(\rG_0) = \kb(V \times V^*)^{\D\Zrm(W)}$ et $\kb[V \times V^*]^{\D\Zrm(W)}$ 
est int\'egralement clos et entier sur $R/\rG_0$. 
Il est naturel de se poser la question suivante~:

\bigskip

\begin{question}\label{question:r0}
Est-ce que $R/\rG_0 = \kb[V \times V^*]^{\D \Zrm(W)}$~? De mani\`ere 
\'equivalente, est-ce que $R/\rG_0$ est int\'egralement clos~?
\end{question}


\chapter{G\'eom\'etrie}\label{chapter:geometrie-CM}

\section{G\'eom\'etrie de l'extension $Z/P$}\label{section:geometrie-zp}

\medskip

Les alg\`ebres $P$, $Z$, $P_\bullet$ et $Z_c$ \'etant de type fini, 
on peut leur associer des $\kb$-vari\'et\'es alg\'ebriques que nous noterons 
$\PCB$, $\ZCB$, $\PCB_{\!\!\!\bullet}$ et $\ZCB_c$. 
\indexnot{P}{\PCB,~\PCB_{\!\!\!\bullet}}\indexnot{Z}{\ZCB,~\ZCB_c}  
Notons que
$$\PCB=\CCB \times V/W \times V^*/W\quad\text{et}\quad \PCB_{\!\!\!\bullet}=V/W \times V^*/W$$
$$\ZCB_0=(V \times V^*)/W.\leqno{\text{et que}}$$
Il d\'ecoule des corollaires~\ref{coro:endo-bi}(d) et~\ref{coro:endo-bi-c}(d) que 
\equat\label{irr normale}
\text{\it les vari\'et\'es $\ZCB$ et $\ZCB_c$ sont irr\'eductibles et normales.}
\endequat
Puisque tous les \'enonc\'es alg\'ebriques des chapitres pr\'ec\'edents ne d\'ependent pas 
du corps de base, l'\'enonc\'e~(\ref{irr normale}) peut \^etre entendu au sens 
``g\'eom\'etrique''.
Les inclusions $P \subset Z$ et $P_\bullet \subset Z_c$ 
induisent des morphismes de vari\'et\'es 
$$\Upsilon : \ZCB \longto \PCB=\CCB\times V/W \times V^*/W$$ \indexnot{uz}{\Upsilon,~\Upsilon_c}
$$\Upsilon_c : \ZCB_c \longto \PCB_{\!\!\!\bullet}=V/W \times V^*/W.\leqno{\text{et}}$$
Les surjections $P \to P/\CG_cP \simeq P_\bullet$  et $Z \to Z_c$ induisent 
des immersions ferm\'ees $j_c : \ZCB_c \injto \ZCB$ et $i_c : \PCB_{\!\!\!\bullet} \injto \PCB$, \indexnot{ia}{i_c,~j_c}  
$p \mapsto (c,p)$. Par ailleurs, 
l'injection canonique $\kb[\CCB] \injto P$ induit la projection canonique 
$\pi : \PCB \to \CCB$ \indexnot{pz}{\pi}  et, dans le diagramme 
\equat\label{diagramme geometrie}
\diagram
&\ZCB_c \ar@{^{(}->}[rr]^{\DS{j_c}} \ddto_{\DS{\Upsilon_c}} && \ZCB \ddto^{\DS{\Upsilon}}& \\
&&&&\\
V/W \times V^*/W \ar@{=}[r] & \PCB_{\!\!\!\bullet} 
\ar@{^{(}->}[rr]^{\DS{i_c}} \ddto && \PCB \ddto^{\DS{\pi}} \ar@{=}[r]& 
\CCB \times V/W \times V^*/W \\
&&\\
&\{c\} \ar@{^{(}->}[rr] && \CCB =\Ab^{\refw}&,
\enddiagram
\endequat
tous les carr\'es sont cart\'esiens. Notons que, 
d'apr\`es les corollaires~\ref{coro:endo-bi} et~\ref{coro:endo-bi-c}, 
\equat\label{platitude}
\text{\it les morphismes $\Upsilon$ et $\Upsilon_c$ sont finis et plats.}
\endequat
De plus,
\equat\label{eq:pi-plat}
\text{\it $\pi$ est lisse,}
\endequat
car $V/W \times V^*/W$ est lisse.

\bigskip

\begin{exemple}\label{upsilon c=0}
D'apr\`es l'exemple~\ref{exemple zero-0}, on a $\ZCB_0 = (V \times V^*)/W$ et 
$\Upsilon_0 : (V \times V^*)/W = \ZCB_0 \to \PCB_\bullet = V/W \times V^*/W$ 
est le morphisme canonique.\finl 
\end{exemple}

\bigskip

Notons $\ZCB^\reg$ \indexnot{Z}{\ZCB^\reg}  l'ouvert $\Spec(Z^\reg)$ de $\ZCB$.  
Le corollaire~\ref{centre reg} montre que 
\equat\label{eq:zreg}
\text{\it $\ZCB^\reg \simeq (V^\reg \times V^*)/W \times \CCB$ est lisse.}
\endequat

\bigskip

\section{G\'eom\'etrie de l'extension $R/P$}\label{section:geometrie galois}

\medskip

Puisque $R$ et $Q \simeq Z$ sont aussi des $\kb$-alg\`ebres de type fini, on peut leur associer 
des $\kb$-vari\'et\'es $\RCB$ \indexnot{R}{\RCB}  
et \indexnot{Q}{\QCB}  $\QCB \simeq \ZCB$~: l'isomorphisme $\copie^* : \QCB \longiso \ZCB$ est 
induit par $\copie : Z \longiso Q$. 
Alors l'inclusion $P \injto R$ (respectivement $Q \injto R$) 
d\'efinit un morphisme de vari\'et\'es $\r_G : \RCB \to \PCB$ \indexnot{rz}{\r_G}  
(respectivement $\r_H : \RCB \to \QCB$)\indexnot{rz}{\r_H}   et les \'egalit\'es $P=R^G$ et $Q=R^H$ 
montrent que $\r_G$ et $\r_H$ induisent des isomorphismes 
\equat\label{R/G}
\RCB/G \longiso \PCB \qquad\text{et}\qquad \RCB/H \longiso \QCB.
\endequat
Dans cette optique, le choix d'un id\'eal premier $\rG_c$ au-dessus de $\qG_c$ 
\'equivaut au choix d'une composante irr\'eductible $\RCB_c$ \indexnot{R}{\RCB_c}  de $\r_H^{-1}(\QCB_c)$ 
(dont $\rG_c$ est l'id\'eal de d\'efinition). De m\^eme, 
l'argument conduisant \`a la proposition~\ref{WW} implique par exemple que 
le nombre de composantes irr\'eductibles de $\r_G^{-1}(\QCB_0)$ est \'egal 
\`a $|G|\cdot|\D\Zrm(W)|/|W|^2$. Il montre aussi que $\iota(W \times W)$ est 
le stabilisateur de $\RCB_0$ dans $G$ et $\RCB_0/\iota(W \times W) \simeq \PCB_0$, 
que $\iota(\D W)$ est le stabilisateur de $\RCB_0$ dans $H$ et que 
$\RCB_0/\iota(\D W) \simeq \QCB_0$. On a donc un diagramme commutatif
\equat\label{diagramme geometrie bis}
\diagram
&\RCB_c \ar@{^{(}->}[rr] \ddto && \RCB \ddto^{\DS{\r_H}}\ar@/^4pc/[dddd]^{\DS{\r_G}}& \\
&&&&\\
&\QCB_c \ar@{^{(}->}[rr]^{\DS{j_c}} \ddto_{\DS{\Upsilon_c}} && \QCB \ddto^{\DS{\Upsilon}}& \\
&&&&\\
V/W \times V^*/W \ar@{=}[r] & \PCB_{\!\!\!\bullet} 
\ar@{^{(}->}[rr]^{\DS{i_c}} \ddto && \PCB \ddto^{\DS{\pi}} \ar@{=}[r]& 
\CCB \times V/W \times V^*/W \\
&&\\ 
&\{c\} \ar@{^{(}->}[rr] && \CCB&
\enddiagram
\endequat
compl\'etant le diagramme~\ref{diagramme geometrie} (en identifiant $\QCB$ et $\ZCB$ via $\copie^*$). 
Seuls les deux carr\'es inf\'erieurs du diagramme~\ref{diagramme geometrie bis} sont cart\'esiens.

\bigskip

\subsection{Automorphismes}\label{subsection:automorphismes} 
Le groupe $\kb^\times \times \kb^\times \times \bigl(W^\wedge \rtimes \NC\bigr)$ 
(qui agit sur $\Hb$ par automorphismes de $\kb$-alg\`ebres)  
stabilise les $\kb$-sous-alg\`ebres $\kb[\CCB]$, $P$ et $Q$ de $\Hb$. 
Ainsi, $\t$ induit des automorphismes des $\kb$-vari\'et\'es 
$\CCB$, $\PCB$ et $\QCB$ et les morphismes $\Upsilon$ et $\pi$ 
du diagramme~\ref{diagramme geometrie} sont \'equivariants pour 
cette action. 

De m\^eme, cette action s'\'etend en une action sur $\RCB$ (voir le corollaire~\ref{coro:extension-action}) 
qui rend $\r_H$ et $\r_G$ \'equivariants. 

\bigskip

\subsection{Composantes irr\'eductibles de $\RCB \times_\PCB \ZCB$} 
Si $w \in W$, on pose
$$\RCB_w = \{(r,\copie^*(\r_H(w(r))))~|~r \in R\} \subseteq \RCB \times_\PCB \ZCB.\indexnot{R}{\RCB_w}$$

\bigskip

\begin{lem}\label{lem:compo-irr-r}
 Si $w \in W$, alors $\RCB_w$ est une composante irr\'eductible de 
$\RCB \times_\PCB \ZCB$, isomorphe \`a $\RCB$. De plus,
$$\RCB \times_\PCB \ZCB = \bigcup_{w \in W} \RCB_w$$
et $\RCB_w=\RCB_{w'}$ si et seulement si $w=w'$.
\end{lem}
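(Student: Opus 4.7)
L'id\'ee est d'exploiter le fait que $\r_G : \RCB \to \PCB$ est un rev\^etement galoisien de groupe $G$, et que $\ZCB \simeq \QCB = \RCB/H$ via $\copie^*$, afin de d\'ecomposer le produit fibr\'e $\RCB \times_\PCB \ZCB$ selon les classes $Hg \in H\backslash G$, qui sont en bijection avec $W$ via $G = W \cdot H$ et $W \cap H = 1$ (voir~(\ref{G=HW})).

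D'abord, je v\'erifierais que, pour chaque $w \in W$, l'application
$$\sigma_w : \RCB \longto \RCB \times_\PCB \ZCB,\quad r \longmapsto \bigl(r,\copie^*(\r_H(w(r)))\bigr)$$
est bien d\'efinie. Cela revient \`a v\'erifier $\r_G(r) = \Upsilon(\copie^*(\r_H(w(r))))$. Comme $\copie$ est un isomorphisme de $P$-alg\`ebres, la composition $\Upsilon \circ \copie^*$ co\"{\i}ncide avec le morphisme $\QCB \to \PCB$ induit par l'inclusion $P \subset Q \subset R$, donc $\Upsilon \circ \copie^* \circ \r_H = \r_G$. Puisque $w \in W \subset G$ agit trivialement sur $P = R^G$, on a $\r_G(w(r)) = \r_G(r)$, d'o\`u la compatibilit\'e voulue. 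L'image $\sigma_w(\RCB) = \RCB_w$ est alors une sous-vari\'et\'e ferm\'ee de $\RCB \times_\PCB \ZCB$, section de la premi\`ere projection~; donc $\sigma_w$ induit un isomorphisme $\RCB \longiso \RCB_w$. En particulier $\RCB_w$ est irr\'eductible de dimension $\dim \RCB$.

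Ensuite, je ferais un argument de dimension et de degr\'e pour conclure que les $\RCB_w$ \'epuisent les composantes irr\'eductibles de $\RCB \times_\PCB \ZCB$. D'apr\`es~(\ref{platitude}), $\Upsilon$ est fini plat de degr\'e $|W|$, donc la seconde projection $\RCB \times_\PCB \ZCB \to \RCB$ est finie plate de degr\'e $|W|$. Par cons\'equent, $\dim (\RCB \times_\PCB \ZCB) = \dim \RCB$, et toute composante irr\'eductible est de cette dimension. En prenant la fibre g\'en\'erique (ou en travaillant au niveau des corps de fractions et en utilisant l'isomorphisme~(\ref{W M})), on voit que $\RCB \times_\PCB \ZCB$ poss\`ede exactement $|W|$ composantes irr\'eductibles~: en effet, $\Mb \otimes_\Kb \Lb \simeq \Mb \otimes_\Kb \bigl(\Mb^H\bigr)$ se d\'ecompose en $|G/H| = |W|$ facteurs, index\'es par $H\backslash G$. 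Comme chaque $\RCB_w$ est une composante ferm\'ee irr\'eductible de dimension maximale, c'est n\'ecessairement une composante irr\'eductible, et la r\'eunion $\bigcup_{w \in W} \RCB_w$ a au plus $|W|$ composantes distinctes.

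Pour conclure, il restera \`a montrer l'injectivit\'e de $w \mapsto \RCB_w$, ce qui ach\`evera de prouver que ces $|W|$ composantes sont deux \`a deux distinctes et couvrent donc $\RCB \times_\PCB \ZCB$. Si $\RCB_w = \RCB_{w'}$, alors pour tout $r \in \RCB$ on a $\r_H(w(r)) = \r_H(w'(r))$, autrement dit $w(r)$ et $w'(r)$ ont la m\^eme image dans $\QCB = \RCB/H$. Passant aux corps de fractions, cela signifie que $w'^{-1}w \in G$ fixe $\Lb = \Mb^H$, donc $w'^{-1}w \in H$. Combin\'e avec $H \cap W = 1$ (voir~(\ref{G=HW})), on obtient $w = w'$. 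L'unique point potentiellement d\'elicat est de justifier rigoureusement le comptage des composantes g\'en\'eriques, mais cela se fait sans difficult\'e via la d\'ecomposition~(\ref{W M}) appliqu\'ee \`a la $H$-alg\`ebre $\Mb \otimes_\Kb \Lb$, o\`u $H$ agit trivialement sur le premier facteur.
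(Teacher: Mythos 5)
Votre preuve est correcte et repose sur le m\^eme ingr\'edient essentiel que celle du texte, \`a savoir l'isomorphisme~(\ref{W M}). Le texte invoque directement la \og traduction g\'eom\'etrique \fg~ du fait que le morphisme d'anneaux $R \otimes_P Z \to \prod_{w\in W} R$, $x \mapsto (w_Z(x))_w$, est fini et devient un isomorphisme apr\`es extension des scalaires \`a $\Kb$~: cela fournit d'un coup une surjection finie $\coprod_{w} \RCB \to \RCB \times_\PCB \ZCB$, g\'en\'eriquement bijective, d'o\`u \`a la fois le recouvrement par les $\RCB_w$, leur irr\'eductibilit\'e et leur distinction deux \`a deux. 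Vous d\'epliez exactement la m\^eme id\'ee en trois \'etapes~: $\RCB_w$ est une section ferm\'ee de la projection, donc un ferm\'e irr\'eductible de dimension maximale (via la platitude finie de $\Upsilon$ chang\'ee de base)~; le nombre de composantes est $|W|$ par passage \`a la fibre g\'en\'erique de cette projection, ce qui est pr\'ecis\'ement (\ref{W M})~; et l'injectivit\'e de $w \mapsto \RCB_w$ vient de $W \cap H = 1$. Les deux arguments sont interchangeables~; le v\^otre est plus explicite, celui du texte plus compact. Un seul point m\'eriterait d'\^etre l\'eg\`erement reformul\'e~: dans la preuve de l'injectivit\'e, plut\^ot que de parler d'une \'egalit\'e \og pour tout $r \in \RCB$ \fg, il est plus propre de dire que $\RCB_w=\RCB_{w'}$ force $\sigma_w=\sigma_{w'}$ comme morphismes de sch\'emas (ce sont deux sections de la premi\`ere projection de m\^eme image), donc l'\'egalit\'e des morphismes d'anneaux associ\'es $R\otimes_P Z \to R$, donc $w|_Q=w'|_Q$, donc $w^{-1}w'\in H$~; votre passage aux corps des fractions dit la m\^eme chose.
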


\begin{proof}
C'est seulement la traduction g\'eom\'etrique du fait que le morphisme 
$$\fonctio{R \otimes_P Z}{\prod_{w \in W} R}{x}{(w_Z(x))_{w \in W}}$$
d\'efini par restriction du morphisme~\ref{W M} est fini et 
devient un isomorphisme par extension des scalaires \`a $\Kb$.
\end{proof}

\section{Probl\`emes, questions}\label{section:probleme 1}

\medskip

\begin{probleme}
D\'eterminer le lieu singulier de $\ZCB$ (et de $\ZCB_c$), ou au moins sa codimension.
\end{probleme}

\bigskip

\begin{question}
Soit $c \in \CCB$ et $z \in \ZCB_c$. Est-ce que $z$ est un point lisse de $\ZCB$ si et seulement 
si c'est un point lisse de $\ZCB_c$~? Plus pr\'ecis\'ement, est-ce que les singularit\'es 
de $\ZCB$ et $\ZCB_c$ en $z$ sont \'equivalentes~?
\end{question}

\bigskip

\begin{probleme}
D\'eterminer le lieu de ramification de $\Upsilon$ et de $\Upsilon_c$. 
\end{probleme}

\chapter{Cellules de Calogero-Moser}\label{chapter:cellules-CM}

\boitegrise{{\bf Notation.} 
{\it Dor\'enavant, et ce jusqu'\`a la fin de ce chapitre, 
nous fixons un id\'eal premier $\rG$ de $R$ et posons 
$\qG=\rG \cap Q$ et $\pG = \rG \cap P$. \indexnot{pa}{\pG,~\qG,~\rG}  Nous noterons 
$D_\rG$ (respectivement $I_\rG$) le groupe de d\'ecomposition 
(respectivement d'inertie) de $\rG$ dans $G$.}}{0.75\textwidth}

\bigskip

\section{D\'efinition, premi\`eres propri\'et\'es}\label{section:definition cellules}

\medskip

Rappelons que, maintenant que l'on a choisi une fois pour toutes 
un id\'eal premier $\rG_0$ ainsi qu'un isomorphisme 
$k_R(\rG_0) \longiso \kb(V \times V^*)^{\D \Zrm(W)}$, on peut identifier 
les ensembles $G/H$ et $W$ (voir~\S\ref{subsection:specialisation galois 0}). 
Ainsi, le groupe $G$ agit sur l'{\it ensemble} $W$. 

\bigskip

\begin{defi}\label{defi:CM}
On appelle {\bfit $\rG$-cellule de Calogero-Moser} 
toute orbite du groupe d'inertie $I_\rG$ dans l'{\bfit ensemble} $W$. Nous noterons 
$\sim^\calo_\rG$ \indexnot{ZZZ}{\sim_\rG^\calo}  la relation d'\'equivalence correspondant \`a la partition de $W$ 
en $\rG$-cellules de Calogero-Moser. 

L'ensemble des $\rG$-cellules de Calogero-Moser sera not\'e $\cmcellules_\rG(W)$. 
\indexnot{C}{{{{^\calo{\mathrm{Cell}}_\rG(W)}}}}
\end{defi}

\bigskip

Rappelons que $W$ s'identifie \`a l'ensemble $\Hom_{P-\alg}(Q,R) = \Hom_{\Kb-\alg}(\Lb,\Mb)$. 
En vertu de la proposition~\ref{reduction}, 
si $w$ et $w'$ sont deux \'el\'ements de $W$, alors 
\equat\label{w sim}
\text{\it $w \sim^\calo_\rG w'$ si et seulement si $w(q) \equiv w'(q) \mod \rG$ 
pour tout $q \in Q$.}
\endequat

\bigskip

\begin{rema}\label{semicontinuite}
Si $\rG$ et $\rG'$ sont deux id\'eaux premiers de $R$ tels que $\rG \subset \rG'$, 
alors $I_\rG \subset I_{\rG'}$ et donc les $\rG'$-cellules de Calogero-Moser 
sont des r\'eunions de $\rG$-cellules de Calogero-Moser.\finl
\end{rema}

\begin{exemple}[R\'eflexions d'ordre 2]\label{cellules w0}
Si toutes les r\'eflexions de $W$ sont d'ordre $2$ et si $w_0=-\Id_V \in W$, 
alors il d\'ecoule de la proposition~\ref{tau 0 in G} que 
$G \subset \{\s \in \SG_W~|~\forall~w \in W,~\s(w_0w)=w_0\s(w)\}$. 
Par cons\'equent, si $\G$ est une $\rG$-cellule de Calogero-Moser, 
alors $w_0\G=\G w_0$ est une $\rG$-cellule de Calogero-Moser.\finl 
\end{exemple}

\bigskip

L'action de $G$ \'etant compatible avec la bi-graduation de $R$, le r\'esultat suivant n'est pas \'etonnant~:

\bigskip

\begin{prop}\label{prop:cellules-homogeneise}
Soit $R =\bigoplus_{i \in \ZM} R_i$ une graduation $G$-stable de $R$. Notons $\rGt=\bigoplus_{i \in \ZM} \rG \cap R_i$. 
Alors $I_\rG=I_\rGt$ et donc les $\rGt$-cellules de Calogero-Moser co\"{\i}ncident avec les $\rG$-cellules de 
Calogero-Moser.
\end{prop}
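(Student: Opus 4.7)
The plan is to establish the equality $I_\rG = I_\rGt$ by showing each inclusion separately, exploiting the fact that $\rGt$ is exactly the largest graded ideal of $R$ contained in $\rG$ together with the $G$-stability of the grading.

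First, I would check that $\rGt$ is a prime ideal of $R$ (this is the classical fact that, for any prime $\rG$ in a $\ZM$-graded ring, the sum of the homogeneous components $\bigoplus_i \rG \cap R_i$ is again prime: if $ab \in \rGt$ with $a$, $b$ written in terms of their homogeneous components, one argues by induction on the degrees of the top nonzero components, using that $\rG$ is prime). This is needed so that $I_\rGt$ and $D_\rGt$ actually make sense in the sense of \S\ref{section:definition cellules}. Next, since $\rGt \subseteq \rG$ by construction, Remark~\ref{semicontinuite} gives the easy inclusion
\[
I_\rGt \subseteq I_\rG.
\]

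For the reverse inclusion $I_\rG \subseteq I_\rGt$, let $g \in I_\rG$. Because the graduation $R=\bigoplus_i R_i$ is $G$-stable, $g$ respects the decomposition into homogeneous components; hence
\[
g(\rGt) = \bigoplus_i g(\rG \cap R_i) = \bigoplus_i g(\rG) \cap R_i = \bigoplus_i \rG \cap R_i = \rGt,
\]
so $g \in D_\rGt$. Now, for any homogeneous $r_i \in R_i$, the element $g(r_i) - r_i$ again lies in $R_i$ and, by $g \in I_\rG$, it lies in $\rG$; therefore $g(r_i) - r_i \in \rG \cap R_i \subseteq \rGt$. Decomposing an arbitrary $r \in R$ as $r = \sum_i r_i$, we get $g(r) - r = \sum_i (g(r_i)-r_i) \in \rGt$, so $g \in I_\rGt$. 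This yields the equality $I_\rG = I_\rGt$.

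The second assertion is then an immediate consequence of Definition~\ref{defi:CM}: the $\rG$-cells and the $\rGt$-cells are by definition the orbits of the same group acting on the same set $W$. The entire argument is essentially formal; the only nontrivial ingredient is the primality of $\rGt$, and this does not look like it will cause difficulty since it is a standard statement about graded prime ideals. I do not foresee a real obstacle here, which suggests the proposition will serve mainly as a technical reduction tool allowing us to replace $\rG$ by its graded part when the graduation is compatible with the $G$-action (in particular, for the bi-graduation of $R$ introduced in Proposition~\ref{bigraduation sur R}).
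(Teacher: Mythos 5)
Your proof is correct and follows essentially the same route as the paper: the paper's proof is just the one-line reference to Corollaire~\ref{coro:homogeneise-inertie} of the appendix, and your argument (easy inclusion via Remarque~\ref{semicontinuite}, reverse inclusion by checking $g(r_i)-r_i \in \rG \cap R_i \subset \rGt$ on homogeneous elements using $G$-stability of the grading) is precisely the content of that corollary's proof. You also correctly flag the need for $\rGt$ to be prime, which the paper covers separately in Lemme~\ref{lem:homogeneise-premier}; note that the paper's proof of the corollary appears to contain a small slip, writing ``$q \in \qG \cap Q_i$'' where only ``$q \in Q_i$'' is needed, and your version is the cleaner one.
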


\begin{proof}
Cela d\'ecoule du corollaire~\ref{coro:homogeneise-inertie}.
\end{proof}

\bigskip

\begin{exemple}\label{exemple:graduation-stables}
Le groupe de Galois $G$ stabilisant la bi-graduation naturelle de $R$ (voir la proposition~\ref{bigraduation sur R}), 
il stabilise toutes les graduations induites par des morphismes de mono\"{\i}des $\ph : \NM \times \NM \to \ZM$ 
comme dans la section~\ref{section:graduation-1}.\finl
\end{exemple}

\bigskip

\section{Cellules et blocs}\label{subsection:cellules et blocs}

\medskip

Si $w \in W$, on notera $e_w \in \Mb\Hb$ \indexnot{ea}{e_w}  l'idempotent primitif central de 
$\Mb\Hb$ (qui est semi-simple d\'eploy\'ee d'apr\`es~(\ref{deployee})) 
associ\'e au module simple $\LC_w$~: c'est 
l'unique idempotent primitif central de $\Mb\Hb$ qui agit comme l'identit\'e 
sur le $\Mb$-module simple $\LC_w$. Si 
$b \in \blocs(R_\rG Q)$, nous noterons $\calo_\rG(b)$ \indexnot{C}{\calo_\rG(b)} l'unique partie de $W$ 
telle que
\equat\label{calo idempotent}
b=\sum_{w \in \calo_\rG(b)} e_w.
\endequat
En d'autres termes, la bijection $W \stackrel{\sim}{\longleftrightarrow} \Irr \Mb\Hb$ 
se restreint en une bijection 
$\calo_\rG(b) \stackrel{\sim}{\longleftrightarrow} \Irr \Mb\Hb b$. 
Il est \'evident que $(\calo_\rG(b))_{b \in \blocs(R_\rG \Hb)}$ est une partition 
de $W$. En fait, cette partition co\"{\i}ncide avec la partition en cellules de 
Calogero-Moser~:

\bigskip

\begin{theo}\label{theo:calogero}
Soient $w$, $w' \in W$ et notons $b$ et $b'$ les idempotents primitifs centraux de 
$R_\rG\Hb$ tels que $w \in \calo_\rG(b)$ et $w' \in \calo_\rG(b')$. 
Alors $w \sim_\calo^\rG w'$ si et seulement si $b=b'$.
\end{theo}

\begin{proof}
Notons $\o_w : \Zrm(R\Hb)=R \otimes_P Z \longto R$ \indexnot{ozz}{\o_w}  le caract\`ere central 
associ\'e au $\Mb\Hb$-module simple $\LC_w$ (voir la sous-section~\ref{section centrale}). 
Au vu de la d\'efinition de $\LC_w$, on a 
$$\o_w(r \otimes_P z)= r w(\copie(z))$$
pour tous $r \in R$ et $q \in Q$. Par cons\'equent, d'apr\`es~(\ref{w sim}), on a 
$w \sim_\calo^\rG w'$ si et seulement si $\o_w \equiv \o_{w'} \mod \rG$. 
Le r\'esultat d'ecoule alors du corollaire~\ref{coro:r-blocs}.
\end{proof}

\bigskip

Autrement dit, on a construit des bijections 
\equat\label{bij calogero}
\cmcellules_\rG(W) \stackrel{\sim}{\longleftrightarrow} \blocs(R_\rG Z)
\stackrel{\sim}{\longleftrightarrow} \blocs(k_R(\rG) Z).
\endequat
Puisque $\Mb Z$ est le centre de $\Mb\Hb$, la semi-simplicit\'e 
et le d\'eploiement de $\Mb\Hb$ impliquent imm\'ediatement que
\equat\label{dim centre calogero}
\dim_\Mb(\Mb Z b)=|\calo_\rG(b)|.
\endequat
Rappelons que, puisque $Z$ est un facteur direct de $\Hb$, alors $k_R(\rG)Z$ s'identifie 
\`a son image dans $k_R(\rG) \Hb$~: cependant, cette image peut ne pas co\"{\i}ncider 
avec le centre de $k_R(\rG)\Hb$. 

\bigskip

\begin{coro}\label{dim centre r}
Notons $\bba$ l'image de $b$ dans $k_R(\rG) \Hb = R_\rG \Hb / \rG R_\rG \Hb$. 
Alors 
$$\dim k_R(\rG) Z \bba=|\calo_\rG(b)|.$$
\end{coro}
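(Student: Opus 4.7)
The plan is to combine the freeness of $Z$ over $P$ with the bijection between primitive central idempotents and cells (theorem~\ref{theo:calogero}), by realising $R_\rG Z b$ as a free module over the local ring $R_\rG$ and computing its rank two ways: at the generic point (the fraction field $\Mb$) where it recovers $|\calo_\rG(b)|$ via the identity~(\ref{dim centre calogero}), and at the closed point (the residue field $k_R(\rG)$) where it recovers $\dim_{k_R(\rG)} k_R(\rG) Z \bba$.

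More precisely, I would first observe that by corollary~\ref{coro:endo-bi}(c) the module $Z$ is free of rank $|W|$ over $P$, hence the commutative ring $R_\rG Z := R_\rG\otimes_P Z$ is a free $R_\rG$-module of rank $|W|$. Moreover $R_\rG Z$ embeds as a subring of $\Mb Z=\Mb\otimes_P Z$, so one may view the primitive central idempotent $b=\sum_{w\in\calo_\rG(b)} e_w$ simultaneously in both rings.

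The key intermediate step is to show that $b$ actually lies in $R_\rG Z$, i.e., that $Z(R_\rG\Hb)=R_\rG Z$. This proceeds exactly as in the proof of theorem~\ref{theo:satake} and corollary~\ref{coro:endo-bi}: the Satake map $z\mapsto ze$ gives an isomorphism $R_\rG Z\iso e(R_\rG\Hb)e$ (obtained from $Z\iso e\Hb e$ by the flat base change $P\to R_\rG$), and the analogue of theorem~\ref{EG spherique-0}(c) provides $R_\rG\Hb=\End_{e(R_\rG\Hb)e}(R_\rG\Hb e)$, so lemma~\ref{lem:ZA-ZB} yields $Z(R_\rG\Hb)\simeq e(R_\rG\Hb)e=R_\rG Z$. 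This gives the decomposition $R_\rG Z=R_\rG Z\,b\,\oplus\,R_\rG Z(1-b)$ inside the free $R_\rG$-module $R_\rG Z$.

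Once this is in place, the conclusion is immediate: since $R$ is localised at the prime $\rG$, the ring $R_\rG$ is local, so the direct summand $R_\rG Z\,b$ of a finitely generated free $R_\rG$-module is projective and therefore free of some rank $r$. Extending scalars to the fraction field $\Mb$ gives
\[
r=\dim_\Mb\bigl(\Mb\otimes_{R_\rG}R_\rG Z\,b\bigr)=\dim_\Mb(\Mb Z\,b)=|\calo_\rG(b)|
\]
by~(\ref{dim centre calogero}), while reducing modulo $\rG$ gives
\[
r=\dim_{k_R(\rG)}\bigl(k_R(\rG)\otimes_{R_\rG}R_\rG Z\,b\bigr)=\dim_{k_R(\rG)} k_R(\rG)Z\,\bba,
\]
and the corollary follows. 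The main technical point is the identification $Z(R_\rG\Hb)=R_\rG Z$; once this is granted, the rest is the standard ``rank is preserved under specialisation'' argument for projective modules over a local ring.
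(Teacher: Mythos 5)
Your argument is correct and follows essentially the same route as the paper: realise $R_\rG Z$ as a free $R_\rG$-module of rank $|W|$, note that $R_\rG Z\,b$ is a direct summand hence projective hence free (since $R_\rG$ is local), and compare ranks at the generic and closed points using~(\ref{dim centre calogero}). The only difference is that you flag the identification $\Zrm(R_\rG\Hb)=R_\rG Z$ (equivalently, that $b$ lies in $R_\rG Z$) as a key intermediate step and sketch a flat-base-change Satake argument for it; in the paper's set-up this point is already settled by the discussion preceding the corollary — $b$ is taken by definition to be a primitive idempotent of $R_\rG Q\simeq R_\rG Z$, and the bijection with $\blocs(\Zrm(R_\rG\Hb))$ has been established via Müller's theorem (proposition~\ref{muller}) — so the paper can afford a one-line proof. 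Your extra care is sound, but not new content.
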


\begin{proof}
Le $R_\rG$-module $R_\rG Z$ est libre (de rang $|W|$) donc le 
$R_\rG$-module $R_\rG Z b$ est projectif, donc libre car $R_\rG$ est local. 
En vertu de~(\ref{dim centre calogero}), le $R_\rG$-rang de $R_\rG Z b$ 
est $|\calo_\rG(b)|$. D'o\`u le r\'esultat.
\end{proof}

\bigskip

\begin{exemple}[Sp\'ecialisation]\label{specialisation} 
Soit $c \in \CCB$. Choisissons un id\'eal premier $\rG_c$ de $R$ au-dessus de 
$\pG_c$ et notons, comme dans~\S\ref{subsection:specialisation galois}, 
$D_c=G_{\rG_c}^D$ et $I_c=G_{\rG_c}^I$. 
Alors $I_c=1$ d'apr\`es~\ref{eq:net} et donc
$$\text{\it Les $\rG_c$-cellules de Calogero-Moser sont des singletons.~$\square$}$$
\end{exemple}

\bigskip

\section{Lieu de ramification}

\medskip

Nous noterons $\rG_\ramif$ \indexnot{ra}{\rG_\ramif}  l'id\'eal de d\'efinition du lieu de ramification 
du morphisme fini $\Spec(R) \to \Spec(P)$~: en d'autres termes, $R$ est \'etale 
sur $P$ en $\rG$ si et seulement si $\rG_\ramif \not\subset \rG$. 
Rappelons~\cite[expos\'e I, corollaire 9.11 et expos\'e V, corollaires 2.3 et 2.4]{sga} 
aussi que $R$ est \'etale sur $P$ en $\rG$ si et seulement si $I_\rG \neq 1$ et 
que $R$ est \'etale sur $P$ en $\rG$ si et seulement si $R$ est net sur $P$ 
en $\rG$. Comme $G$ agit fid\`element sur $W \stackrel{\sim}{\longleftrightarrow} G/H$, 
on a donc montr\'e le r\'esultat suivant (compte tenu du th\'eor\`eme~\ref{theo:calogero})~:

\bigskip

\begin{prop}\label{ramification inertie}
Les conditions suivantes sont \'equivalentes~:
\begin{itemize}
\itemth{1} $I_\rG \neq 1$.

\itemth{2} $R$ n'est pas \'etale sur $P$ en $\rG$.

\itemth{3} $R$ n'est pas net sur $P$ en $\rG$.

\itemth{4} $\rG_\ramif \subset \rG$.

\itemth{5} $|\blocs(R_\rG Q)| < |W|$.
\end{itemize}
\end{prop}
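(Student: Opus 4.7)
The plan is to combine the Galois-theoretic recollections made immediately before the statement with the cells/blocks bijection~(\ref{bij calogero}) established in Theorem~\ref{theo:calogero}. The equivalences (1)~$\Leftrightarrow$~(2)~$\Leftrightarrow$~(3)~$\Leftrightarrow$~(4) are essentially formal. Indeed, the cited results from SGA give that $R$ is étale over $P$ at $\rG$ if and only if the inertia group $I_\rG$ is trivial, and that, in this setting, étaleness at $\rG$ coincides with the unramified (net) condition. Finally, the ideal $\rG_\ramif$ is by its very definition the defining ideal of the non-étale locus of the finite morphism $\Spec(R)\to\Spec(P)$, so $\rG_\ramif\subset\rG$ exactly when $R$ fails to be étale over $P$ at $\rG$. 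Recording these three equivalences in a single sentence will suffice.

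The substantive content is the equivalence (1)~$\Leftrightarrow$~(5). First I would invoke (\ref{bij calogero}), which provides a bijection
$$\cmcellules_\rG(W)\;\longleftrightarrow\;\blocs(R_\rG Z)\;\longleftrightarrow\;\blocs(k_R(\rG)Z),$$
together with the identification $Q\simeq Z$ given by $\copie$; hence $|\blocs(R_\rG Q)|$ equals the number of $I_\rG$-orbits on the set $W$. Since $|W|$ is the cardinality of the set on which $I_\rG$ acts, the inequality $|\blocs(R_\rG Q)|<|W|$ is equivalent to the existence of a non-singleton $I_\rG$-orbit on $W$, hence to the fact that $I_\rG$ does not act trivially on $W$.

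To close the loop I would invoke (\ref{GHW}), which realises $G$ as a subgroup of $\SG_W$ (via the identification $G/H\longleftrightarrow W$ fixed in~\S\ref{subsection:specialisation galois 0}); this means that the action of $G$, and a fortiori of the subgroup $I_\rG$, on the set $W$ is faithful. Consequently $I_\rG$ acts trivially on $W$ if and only if $I_\rG=1$, which yields the implication (5)~$\Leftrightarrow$~(1) and completes the proof.

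There is no genuine difficulty in this argument: everything has been prepared in advance (the SGA dictionary, the embedding $G\hookrightarrow\SG_W$, and the cells/blocks bijection). The only point that deserves to be pointed out explicitly is the faithfulness of the $G$-action on $W$, which is the mechanism converting the numerical condition (5) into the group-theoretic condition (1).
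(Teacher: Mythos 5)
Your proof is correct and follows the same route as the paper's: the equivalences (1)--(4) are read off from the SGA dictionary together with the definition of $\rG_\ramif$, and the equivalence with (5) comes from the cells/blocks bijection of Theorem~\ref{theo:calogero} combined with the faithfulness of the $G$-action on $W$ furnished by~(\ref{GHW}). You correctly state that étaleness is equivalent to \emph{trivial} inertia group (the paper's text contains an obvious typo at this point, writing $I_\rG\neq 1$ where $I_\rG=1$ is intended), so your formulation is the accurate one.
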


\bigskip

Il faut cependant noter que $\rG_\ramif$ n'est pas forc\'ement un id\'eal premier 
de $R$. En revanche, 
le th\'eor\`eme de puret\'e~\cite[expos\'e~X,~th\'eor\`eme 3.1]{sga} nous dit que 
$\Spec(R/\rG_\ramif)$ est vide ou purement de codimension $1$ dans $\Spec(R)$ 
(car $R$ est int\'egralement clos et $P$ est r\'egulier). 
D'apr\`es le corollaire~\ref{r0 DI} et la proposition 
pr\'ec\'edente~\ref{ramification inertie}, 
le morphisme $\Spec(R) \to \Spec(P)$ n'est pas \'etale (sauf si $W=1$). Ainsi,
\equat\label{codim 1 ramification}
\text{\it $\Spec(R/\rG_\ramif)$ est purement de codimension $1$ dans $\Spec(R)$.}
\endequat
Bien s\^ur, l'id\'eal $\rG_\ramif$ est tellement naturel que l'on en d\'eduit que
\equat\label{ramification stable}
\text{\it $\rG_\ramif$ est stable sous l'action de 
$\kb^\times \times\kb^\times \times \left((W^\wedge \times G) \rtimes \NC\right)$.}
\endequat
S'il est difficile de calculer l'id\'eal $\rG_\ramif$ (on ne sait m\^eme 
pas calculer l'anneau $R$), il est nettement plus facile de d\'eterminer 
l'id\'eal $\pG_\ramif=\rG_\ramif \cap P$. \indexnot{pa}{\pG_\ramif}  Le r\'esultat suivant est classique~:

\bigskip

\begin{lem}\label{discriminant ramification}
Notons $\disc(Q/P)$ \indexnot{da}{\disc(D/P)}  l'id\'eal discriminant de $Q$ dans $P$. Alors 
$\pG_\ramif=\sqrt{\disc(Q/P)}$. 
\end{lem}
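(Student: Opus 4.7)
The plan is to reduce the lemma to the set-theoretic equality $V(\pG_\ramif)=V(\disc(Q/P))$ in $\Spec P$. Indeed, $\rG_\ramif$ is the radical ideal defining the ramification locus (a reduced closed subset of $\Spec R$), so its contraction $\pG_\ramif=\rG_\ramif\cap P$ is itself a radical ideal of $P$; the right-hand side $\sqrt{\disc(Q/P)}$ is radical by construction, so equality of the underlying closed subsets of $\Spec P$ will suffice.

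To identify these closed subsets, I would unfold both conditions primewise. Characteristic zero guarantees that all residue field extensions are separable, so at any prime ``not \'etale'' is equivalent to ``ramified''. Thus $\pG\supset\disc(Q/P)$ if and only if there exists $\qG\in\Spec Q$ above $\pG$ with $e(\qG/\pG)>1$, while $\pG\supset\pG_\ramif$ if and only if there exists $\rG\in\Spec R$ above $\pG$ with $I_\rG\neq 1$ (by the very definition of $\rG_\ramif$ recalled in the excerpt), equivalently $e(\rG/\pG)>1$. To match these, I would fix $\rG$ above $\pG$ and exploit that $R/Q$ is Galois with group $H$. Tame ramification then gives $e(\rG/\pG)=|I_\rG|$ and $e(\rG/\qG)=|I_\rG\cap H|$, hence for $\qG=\rG\cap Q$ the multiplicativity of ramification indices yields $e(\qG/\pG)=|I_\rG|/|I_\rG\cap H|$. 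Varying $g\in G$, the prime $g\rG\cap Q$ of $Q$ ramifies over $\pG$ if and only if $gI_\rG g^{-1}\not\subset H$, that is $I_\rG\not\subset g^{-1}Hg$. Hence some prime of $Q$ above $\pG$ ramifies in $Q/P$ if and only if $I_\rG\not\subset\bigcap_{g\in G}g^{-1}Hg$, and by the Galois closure property~(\ref{intersection}) this intersection is trivial, so this happens if and only if $I_\rG\neq 1$. This is precisely the condition $\pG\supset\pG_\ramif$, and the equality $V(\pG_\ramif)=V(\disc(Q/P))$ follows.

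There is no substantial obstacle beyond correctly translating inertia data in the Galois extension $R/P$ into ramification data in the intermediate non-Galois extension $Q/P$; the only delicate point is the appeal to characteristic zero to guarantee both tameness of ramification and separability of residue field extensions. Alternatively, one can invoke the standard fact that in a Galois tower $P\subset Q\subset R$ with $R$ the integral closure of $P$ in the Galois closure of $\Frac(Q)/\Frac(P)$, a prime of $P$ is unramified in $Q$ if and only if it is unramified in $R$, which packages exactly the same argument.
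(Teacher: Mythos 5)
Your proof is correct; the paper itself declares the lemma ``classique'' without supplying an argument, so there is no paper proof to compare with. The set-theoretic reduction is clean, and the Galois-theoretic computation correctly translates ``$\pG$ lies in the branch locus of $Q/P$'' into ``$I_\rG\neq 1$'' for a chosen $\rG$ above $\pG$, via conjugacy of the primes above $\pG$ together with $\bigcap_{g\in G}\lexp{g}{H}=1$ from~(\ref{intersection}).

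One point deserves to be made explicit. Your manipulations with ramification indices --- tameness, $e(\rG/\pG)=|I_\rG|$, and multiplicativity of $e$ along the tower $P\subset Q\subset R$ --- are literally valid only at height-one primes of $P$, where $P_\pG$, $Q_\qG$, $R_\rG$ are discrete valuation rings. That is enough here: by purity of the branch locus (which the paper has just invoked for $R/P$, and which applies equally to $Q/P$ since $P$ is regular and $Q$ is normal) both $V(\pG_\ramif)$ and $V(\sqrt{\disc(Q/P)})$ are empty or pure of codimension one, so the equality of these closed subsets of $\Spec P$ reduces to a comparison at codimension-one points. It would strengthen the write-up to say so, or, as you propose at the end, to appeal directly to the statement ``a prime of $P$ is unramified in $Q$ if and only if it is unramified in $R$''; the nontrivial implication there can be proved at arbitrary primes using the criterion $I_\rG\subset H$ (théorème~\ref{raynaud} gives one direction, and tameness in characteristic zero gives its converse) without passing through ramification indices, thereby avoiding the height-one restriction altogether.
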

%

\bigskip

\begin{rema}\label{autre preuve ramification}
Nous avons fait d\'ecouler le fait que $\Spec(R/\rG_\ramif)$ est 
purement de codimension $1$ dans $\Spec(R)$ du th\'eor\`eme de puret\'e. 
En vertu de l'\'equivalence entre (4) et (5) dans le lemme~\ref{ramification inertie}, 
nous aurions aussi pu invoquer la proposition~\ref{codimension un}.\finl
\end{rema}

\bigskip

%
%
%
%
%
%
%
%

\section{Cellules et lissit\'e}\label{section:cellules et lissite}

\medskip

Notons $\ZCB_\singulier$  \indexnot{Z}{\ZCB_\singulier}  le lieu singulier de $\ZCB=\Spec(Z)$ et $\zG_\singulier$ 
\indexnot{Z}{\zG_\singulier}  
son id\'eal de d\'efinition. Puisque $Z$ est int\'egralement clos, 
\equat\label{codimension deux Q}
\text{\it $\ZCB_\singulier$ est de codimension $\ge 2$ dans $\ZCB$.}
\endequat
Bien s\^ur, il est possible que $\zG_\singulier$ ne soit pas un id\'eal premier.
Puisque $\Upsilon : \ZCB \to \PCB$ est un morphisme fini et plat, on en d\'eduit que
\equat\label{codimension deux P}
\text{\it $\Upsilon(\ZCB_\singulier)$ est ferm\'e et 
de codimension $\ge 2$ dans $\PCB$.}
\endequat
L'id\'eal de d\'efinition de $\Upsilon(\ZCB_\singulier)$ est $\sqrt{\zG_\singulier \cap P}$. 

\medskip

\boitegrise{\noindent{\bf Hypoth\`ese.} 
{\it Dans cette section, et uniquement dans cette section, 
nous supposerons que $\Spec(P/\pG)$ (qui est une sous-vari\'et\'e 
ferm\'ee irr\'eductible de $\PCB$) n'est pas contenue dans 
$\Upsilon(\ZCB_\singulier)$. En d'autres termes, on suppose que 
$\zG_\singulier \cap P \not\subset \pG$.}}{0.75\textwidth}

\medskip

Par cons\'equent, il existe $p \in \zG_\singulier \cap P$ tel que $p \not\in \pG$. 
Ainsi, $Z[1/p]=P[1/p] \otimes_P Z \subset Z_\pG= P_\pG \otimes_P Z$ et $\Spec(Z[1/p])$ est 
r\'egulier. Il d\'ecoule alors de la proposition~\ref{prop:morita} que 
$P[1/p] \otimes_P \Hb$ et $Q[1/p]$ sont Morita \'equivalentes 
(gr\^ace au bimodule $P[1/p] \otimes_P \Hb e$) et, par extension des 
scalaires, on obtient~:

\bigskip

\begin{prop}\label{morita lissite}
Le $(\Hb_\pG,Z_\pG)$-bimodule $\Hb_\pG e$ est projectif \`a droite et \`a gauche 
et induit une \'equivalence de Morita entre $\Hb_\pG$ et $Z_\pG$. 
\end{prop}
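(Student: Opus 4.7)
Le plan consiste \`a \'etendre par localisation l'\'equivalence de Morita entre $P[1/p] \otimes_P \Hb$ et $Q[1/p]$ \'etablie juste avant l'\'enonc\'e. L'observation cl\'e est que $P_\pG$ est un localis\'e suppl\'ementaire de $P[1/p]$, puisque $p \not\in \pG$. Comme $Z[1/p]$ est r\'egulier (car $\Spec(Z[1/p])$ \'evite le lieu singulier $\ZCB_\singulier$ par choix de $p$), $Z_\pG$ est lui aussi r\'egulier en tant que localisation de $Z[1/p]$.

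Pour \'etablir la projectivit\'e \`a gauche du bimodule $\Hb_\pG e$ sur $\Hb_\pG$, il suffit de noter que $e$ \'etant un idempotent de $\Hb$, $\Hb e$ est un facteur direct de $\Hb$ comme $\Hb$-module \`a gauche, propri\'et\'e pr\'eserv\'ee par extension des scalaires \`a $P_\pG$. Pour la projectivit\'e \`a droite sur $Z_\pG$, j'invoquerais le corollaire~\ref{coro:endo-bi}(e) qui affirme que $\Hb e$ est Cohen-Macaulay sur $Z$~; cette propri\'et\'e passe \`a la localisation, et un module de Cohen-Macaulay sur un anneau r\'egulier \'etant projectif (r\'esultat d\'ej\`a utilis\'e avant l'\'enonc\'e de la proposition~\ref{prop:morita}), on en d\'eduit que $\Hb_\pG e$ est un $Z_\pG$-module projectif.

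Il reste \`a d\'eduire l'\'equivalence de Morita \`a proprement parler. Les \'egalit\'es $\End_\Hb(\Hb e) = Z$ et $\End_Z(\Hb e) = \Hb$ du corollaire~\ref{coro:endo-bi}(a) se transportent par platitude de la localisation en $\End_{\Hb_\pG}(\Hb_\pG e) = Z_\pG$ et $\End_{Z_\pG}(\Hb_\pG e) = \Hb_\pG$. Combin\'ees avec la projectivit\'e des deux c\^ot\'es, ces identit\'es entra\^{\i}nent, via le th\'eor\`eme classique de Morita, que $\Hb_\pG e$ induit l'\'equivalence annonc\'ee entre $\Hb_\pG$ et $Z_\pG$. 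Il n'y a pas d'obstacle majeur~: toute la machinerie est d\'ej\`a en place dans les corollaires~\ref{coro:endo-bi} et~\ref{centre reg}, et la preuve se r\'eduit \`a un changement de base \`a partir de la situation d\'ej\`a trait\'ee sur $P[1/p]$.
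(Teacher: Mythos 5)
Votre preuve est correcte et suit essentiellement le m\^eme chemin que le texte~: choisir $p \in \zG_\singulier \cap P$ hors de $\pG$, constater que $Z_\pG$ est une localisation de $Z[1/p]$ (donc r\'egulier), puis invoquer Cohen-Macaulay sur r\'egulier implique projectif, combin\'e aux \'egalit\'es d'endomorphismes du corollaire~\ref{coro:endo-bi}. La seule diff\'erence est cosm\'etique~: l\`a o\`u le texte applique d'abord la proposition~\ref{prop:morita} \`a $Z[1/p]$ puis conclut par un simple changement de base, vous d\'epliez explicitement ce changement de base en v\'erifiant directement sur $Z_\pG$ la projectivit\'e des deux c\^ot\'es et la conservation des alg\`ebres d'endomorphismes.
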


\bigskip

Par r\'eduction modulo $\pG$, on obtient~:

\bigskip

\begin{coro}\label{morita lissite p}
Le $(k_P(\pG)\Hb,k_P(\pG)Z)$-bimodule $k_P(\pG)\Hb e$ est projectif \`a droite et \`a gauche 
et induit une \'equivalence de Morita entre $k_P(\pG)\Hb$ et $k_P(\pG)Z$. 
\end{coro}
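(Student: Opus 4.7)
Le plan est de d\'eduire ce corollaire de la Proposition~\ref{morita lissite} par un changement de base standard, en appliquant le foncteur $k_P(\pG) \otimes_{P_\pG} -$, o\`u $k_P(\pG)=P_\pG/\pG P_\pG$. Toutes les structures de la Proposition~\ref{morita lissite} sont naturellement d\'efinies sur $P_\pG$, donc ce changement de base a un sens.

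D'abord, je v\'erifierais que la projectivit\'e se conserve des deux c\^ot\'es. Pour la structure de $\Hb$-module \`a gauche, c'est imm\'ediat~: $k_P(\pG)\Hb e$ est facteur direct du module libre $k_P(\pG)\Hb$ puisque $e$ est un idempotent. Pour la structure de $Z$-module \`a droite, on utilise que $\Hb_\pG e$ est, par la Proposition~\ref{morita lissite}, un $Z_\pG$-module projectif de type fini (donc facteur direct d'un $Z_\pG^{\,n}$), propri\'et\'e manifestement pr\'eserv\'ee par tout changement de base.

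Ensuite, pour obtenir l'\'equivalence de Morita proprement dite, l'ingr\'edient-cl\'e est la compatibilit\'e du foncteur $\Hom$ avec le changement de base appliqu\'e \`a un module projectif de type fini. Concr\`etement, puisque $\Hb_\pG e$ est $Z_\pG$-projectif de type fini, on dispose de l'isomorphisme canonique
$$k_P(\pG) \otimes_{P_\pG} \End_{Z_\pG}(\Hb_\pG e) \longiso \End_{k_P(\pG)Z}(k_P(\pG)\Hb e).$$
Combin\'e \`a l'isomorphisme $\Hb_\pG \longiso \End_{Z_\pG}(\Hb_\pG e)$ fourni par la Proposition~\ref{morita lissite}, ceci donne $k_P(\pG)\Hb \longiso \End_{k_P(\pG)Z}(k_P(\pG)\Hb e)$. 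L'autre identification, $k_P(\pG) Z \longiso \End_{k_P(\pG)\Hb}(k_P(\pG)\Hb e)^\opp$, est encore plus directe~: elle r\'esulte de l'isomorphisme de Satake (Th\'eor\`eme~\ref{theo:satake}) $Z \longiso e\Hb e$, sp\'ecialis\'e par $k_P(\pG)\otimes_{P_\pG}-$, en remarquant que $\End_{\Hb}(\Hb e)^\opp \simeq e\Hb e$ puis en passant au quotient.

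Le seul point d\'elicat, mais purement formel, est la v\'erification du bon comportement des $\Hom$ par changement de base~; il n'y a pas d'obstacle s\'erieux puisque toutes les hypoth\`eses de finitude et de projectivit\'e requises ont d\'ej\`a \'et\'e \'etablies dans la Proposition~\ref{morita lissite}. Le corollaire en d\'ecoule imm\'ediatement.
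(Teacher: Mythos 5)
Votre preuve est correcte et suit exactement la m\^eme voie que le texte~: le m\'emoire se contente d'\'ecrire \og Par r\'eduction modulo $\pG$, on obtient~\fg\ apr\`es la Proposition~\ref{morita lissite}, et ce que vous avez r\'edig\'e n'est rien d'autre que le d\'etail de ce changement de base (pr\'eservation de la projectivit\'e, compatibilit\'e du $\Hom$ au changement de base pour les modules projectifs de type fini, sp\'ecialisation de l'isomorphisme de Satake).
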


\bigskip

Par extension des scalaires, on en d\'eduit~:

\bigskip

\begin{coro}\label{morita lissite r}
Le $(k_R(\rG)\Hb,k_R(\rG)Z)$-bimodule $k_R(\rG)\Hb e$ est projectif \`a droite et \`a gauche 
et induit une \'equivalence de Morita entre $k_R(\rG)\Hb$ et $k_R(\rG)Z$. 
\end{coro}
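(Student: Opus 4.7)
The plan is to deduce this corollary from Corollary~\ref{morita lissite p} by base change along the field extension $k_P(\pG) \hookrightarrow k_R(\rG)$. The key observation is that since $\rG$ lies above $\pG$, the residue field $k_R(\rG) = \Frac(R/\rG)$ is naturally a field extension of $k_P(\pG) = \Frac(P/\pG)$, so tensoring with $k_R(\rG)$ over $k_P(\pG)$ is a faithfully flat operation that preserves all the relevant algebraic structures.

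Concretely, first I would note that
\[
k_R(\rG) \otimes_{k_P(\pG)} k_P(\pG)\Hb \simeq k_R(\rG)\Hb
\quad\text{and}\quad
k_R(\rG) \otimes_{k_P(\pG)} k_P(\pG)Z \simeq k_R(\rG)Z,
\]
and similarly for the bimodule $k_P(\pG)\Hb e$. Since $k_P(\pG) \to k_R(\rG)$ is a flat ring homomorphism, projectivity of $k_P(\pG)\Hb e$ on either side (given by Corollary~\ref{morita lissite p}) is preserved: $k_R(\rG)\Hb e$ remains projective both as a left $k_R(\rG)\Hb$-module and as a right $k_R(\rG)Z$-module.

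Next I would verify that the Morita equivalence itself survives the base change. The equivalence at the level of $k_P(\pG)$ is encoded by the existence of the second bimodule $k_P(\pG) e\Hb$ together with isomorphisms
\[
k_P(\pG)\Hb e \otimes_{k_P(\pG)Z} k_P(\pG)e\Hb \simeq k_P(\pG)\Hb
\quad\text{and}\quad
k_P(\pG)e\Hb \otimes_{k_P(\pG)\Hb} k_P(\pG)\Hb e \simeq k_P(\pG)Z.
\]
Tensoring both relations with $k_R(\rG)$ over $k_P(\pG)$ yields the analogous isomorphisms over $k_R(\rG)$, since tensor products commute with base change. This gives the desired Morita equivalence between $k_R(\rG)\Hb$ and $k_R(\rG)Z$ realized by the bimodule $k_R(\rG)\Hb e$.

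The argument is essentially formal once Corollary~\ref{morita lissite p} is in place; there is no real obstacle because we are only performing a field extension of scalars, under which projective modules remain projective and equivalences of categories induced by bimodules are transported verbatim. The only point requiring minor care is the identification $k_R(\rG) \otimes_{k_P(\pG)} k_P(\pG)M \simeq k_R(\rG)M$ for the various modules $M$ involved, which follows from the fact that $k_R(\rG)$ is obtained from $k_P(\pG)$ by a sequence of localizations and quotients compatible with the tensor products defining $k_P(\pG)\Hb$, $k_P(\pG)Z$ and $k_P(\pG)\Hb e$.
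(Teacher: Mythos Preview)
Your proposal is correct and is exactly the approach of the paper: the corollary is introduced with the sentence ``Par extension des scalaires, on en d\'eduit:'', i.e.\ it is obtained from Corollary~\ref{morita lissite p} by base change along the field extension $k_P(\pG)\hookrightarrow k_R(\rG)$, precisely as you describe. You have simply unpacked in detail what the paper leaves as a one-line remark.
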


\bigskip

\begin{theo}\label{lissite et simples}
La $k_R(\rG)$-alg\`ebre $k_R(\rG)\Hb$ est 
d\'eploy\'ee. Chaque bloc de $k_R(\rG)\Hb$ admet un unique module simple, qui est de dimension $|W|$.
En particulier, les $k_R(\rG)\Hb$-modules simples sont param\'etr\'es naturellement 
par les $\rG$-cellules de Calogero-Moser, c'est-\`a-dire par les $I_\rG$-orbites dans $W$.
\end{theo}

\begin{proof}
Montrons tout d'abord que $k_R(\rG)Z = k_R(\rG) \otimes_P Z = k_R(\rG) \otimes_{P_\pG} Z_\pG$ 
est une $k_R(\rG)$-alg\`ebre d\'eploy\'ee. Soient $\zG_1$,\dots, $\zG_l$ 
les id\'eaux premiers de $Z$ au-dessus de $\pG$~: en d'autres termes, 
$k_P(\pG)\zG_1$,\dots, $k_P(\pG)\zG_l$ sont les id\'eaux premiers (et donc maximaux)  
de $k_P(\pG)Z = Z_\pG/\pG Z_\pG$. Alors $k_P(\pG)(\zG_1 \cap \dots \cap \zG_l)$ 
est le radical de $k_P(\pG)Z$. De plus, 
$$k_P(\pG) Z \simeq k_Z(\zG_1) \times \cdots \times k_Z(\zG_l).$$
Ainsi,
$$k_R(\rG) Z \simeq (k_R(\rG) \otimes_{k_P(\pG)} k_Z(\zG_1)) \times \cdots \times 
(k_R(\rG) \otimes_{k_P(\pG)} k_Z(\zG_l)).$$
Mais $k_R(\rG)$ est une extension galoisienne de $k_P(\pG)$ (de groupe $D_\rG/I_\rG$) 
contenant $k_Z(\zG_i)$ (pour tout $i$), ou du moins son image via $\copie^{-1}$, 
et donc $k_R(\rG) \otimes_{k_P(\pG)} k_Z(\zG_i)$ 
est une $k_R(\rG)$-alg\`ebre d\'eploy\'ee (voir la proposition~\ref{iso galois}). 
On a donc montr\'e que $k_R(\rG)Z$ est d\'eploy\'ee~: gr\^ace \`a l'\'equivalence de Morita 
avec $k_R(\rG)\Hb$, cela montre que $k_R(\rG)\Hb$ est elle aussi d\'eploy\'ee.

D'autre part, puisque $k_R(\rG)Z$ est commutative, chaque bloc de $k_R(\rG)Z$ 
admet un unique module simple. Par \'equivalence de Morita, il en est de m\^eme 
de $k_R(\rG)\Hb$. Pour finir, puisque le $Z_\pG$-module projectif $\Hb_\pG e$ est de rang 
$|W|$, il en est  de m\^eme du $k_R(\rG)Z$-module projectif $k_R(\rG)\Hb e$, et donc les 
$k_R(\rG)\Hb$-modules simples sont de dimension $|W|$.

La derni\`ere assertion du th\'eor\`eme devient alors \'evidente.
\end{proof}

\bigskip

\begin{exemple}\label{exemple lisse}
Compte tenu du corollaire~\ref{centre reg}, la condition $\zG_\singulier \cap P \not\subset \pG$ est automatiquement 
satisfaite si $\Spec(P/\pG)$ rencontre l'ouvert $\PCB^\reg = \CCB \times V^\reg/W \times V^*/W$ ou bien, par sym\'etrie, 
l'ouvert $\CCB \times V/W \times V^{* \reg}/W$.\finl
\end{exemple}

\bigskip

\section{Cellules et g\'eom\'etrie}

\medskip

D'apr\`es le lemme~\ref{lem:compo-irr-r}, les composantes irr\'eductibles de 
$\RCB \times_\PCB \ZCB$ sont de la forme 
$$\RCB_w=\{(r,\copie^*(\r_H(w(r))))~|~r \in \RCB\}.$$
Ainsi, le morphisme $\Upsilon_\RCB : \RCB \times_\PCB \ZCB \to \RCB$ obtenu 
\`a partir de $\Upsilon : \ZCB \to \PCB$ par changement de base 
induit un isomorphisme entre la composante irr\'eductible $\RCB_w$ et $\RCB$. 

Par cons\'equent, l'image inverse par $\Upsilon_\RCB$ de la sous-vari\'et\'e ferm\'ee 
irr\'eductible $\RCB(\rG)=\Spec(R/\rG)$ \indexnot{R}{\RCB(\rG)}  est une r\'eunion de sous-vari\'et\'e irr\'eductibles 
$$\Upsilon^{-1}_\RCB(\RCB(\rG)) = \bigcup_{w \in W} \RCB_w(\rG),\indexnot{R}{\RCB_w(\rG)}$$
de sorte que $\RCB_w(\rG) \simeq \RCB(\rG)$ soit l'image inverse de $\RCB(\rG)$ dans 
$\RCB_w$. 

\bigskip

\begin{lem}\label{lem:cellules-geometrie}
Soient $w$ et $w' \in W$. Alors $\RCB_w(\rG)=\RCB_{w'}(\rG)$ si et seulement si 
$w \sim_\rG^\calo w'$.
\end{lem}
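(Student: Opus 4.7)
La strat\'egie consiste \`a interpr\'eter $\RCB_w(\rG)$ comme un graphe et \`a utiliser la caract\'erisation~(\ref{w sim}) des $\rG$-cellules de Calogero-Moser. Rappelons d'abord que, d'apr\`es le lemme~\ref{lem:compo-irr-r}, la composante $\RCB_w$ est exactement le graphe du morphisme
$$\ph_w : \RCB \longto \ZCB, \qquad r \longmapsto \copie^*(\r_H(w(r))),$$
correspondant au morphisme d'anneaux $\psi_w : Z \to R$, $z \mapsto w(\copie(z))$. Le morphisme $\ph_w$ est bien d\'efini \`a valeurs dans $\ZCB$ relativement \`a la structure de $\PCB$-sch\'ema (puisque $w \in G$ fixe $P=R^G$), donc $\RCB_w$ est bien un sous-sch\'ema ferm\'e de $\RCB \times_\PCB \ZCB$, isomorphe \`a $\RCB$ via la premi\`ere projection $\pi_1 = \Upsilon_\RCB$.

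Je montrerais ensuite que, puisque $\pi_1|_{\RCB_w} : \RCB_w \longiso \RCB$ est un isomorphisme envoyant $\RCB_w(\rG)$ sur $\RCB(\rG)$, la sous-vari\'et\'e $\RCB_w(\rG)$ est exactement le graphe de la restriction
$$\ph_w|_{\RCB(\rG)} : \RCB(\rG) \longto \ZCB,$$
correspondant au morphisme d'anneaux compos\'e $\psi_w^\rG : Z \to R \to R/\rG$, $z \mapsto w(\copie(z)) \mod \rG$. Ceci d\'ecoule directement de la cart\'esianit\'e du carr\'e d\'efinissant $\RCB_w(\rG) = \pi_1^{-1}(\RCB(\rG)) \cap \RCB_w$ et du fait qu'un graphe reste un graphe apr\`es restriction du domaine.

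Le pas-cl\'e est alors l'observation suivante, qui constitue le c\oe ur de la preuve~: deux graphes de morphismes $\RCB(\rG) \to \ZCB$ co\"{\i}ncident comme sous-sch\'emas ferm\'es de $\RCB(\rG) \times_\PCB \ZCB \subset \RCB \times_\PCB \ZCB$ si et seulement si les deux morphismes sont \'egaux. (Cela se v\'erifie ais\'ement en notant que tout automorphisme $R$-alg\'ebrique de $R/\rG$ est l'identit\'e, puisque $R \twoheadrightarrow R/\rG$ est surjective.) Par cons\'equent, $\RCB_w(\rG) = \RCB_{w'}(\rG)$ \'equivaut \`a $\psi_w^\rG = \psi_{w'}^\rG$, c'est-\`a-dire \`a la condition $w(\copie(z)) \equiv w'(\copie(z)) \mod \rG$ pour tout $z \in Z$, ou encore \`a $w(q) \equiv w'(q) \mod \rG$ pour tout $q \in Q$ (via l'isomorphisme $\copie : Z \longiso Q$).

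Il suffit enfin d'appliquer la caract\'erisation~(\ref{w sim}), qui affirme que cette derni\`ere condition \'equivaut \`a $w \sim^\calo_\rG w'$, pour conclure. Le seul point qui demande un peu d'attention est la v\'erification rigoureuse de l'interpr\'etation de $\RCB_w(\rG)$ comme graphe (notamment le fait que ce soit bien un sous-sch\'ema r\'eduit et irr\'eductible, cons\'equence imm\'ediate de $R/\rG$ int\`egre), mais aucun pas ne pr\'esente de r\'eelle difficult\'e technique.
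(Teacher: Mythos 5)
Ta preuve est correcte et suit essentiellement la m\^eme d\'emarche que celle du texte~: le texte dit simplement que $\RCB_w(\rG)=\RCB_{w'}(\rG)$ \'equivaut \`a $\r_H(w(r))=\r_H(w'(r))$ pour tout $r \in \RCB(\rG)$, ce qui se traduit en $w(q) \equiv w'(q) \mod \rG$ pour tout $q \in Q$, puis applique~(\ref{w sim}). Ta r\'edaction explicite l'argument de graphe que la preuve du texte laisse implicite (en particulier la justification que deux graphes co\"{\i}ncident si et seulement si les morphismes sous-jacents co\"{\i}ncident), mais le contenu math\'ematique est identique~; la parenth\`ese sur les automorphismes $R$-alg\'ebriques de $R/\rG$ est correcte quoiqu'inutilement indirecte, l'unicit\'e de la section de $\pi_1 : \G \to X$ suffisant.
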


\begin{proof}
En effet, $\RCB_w(\rG)=\RCB_{w'}(\rG)$ si et seulement si, pour tout $r \in \RCB(\rG)$, 
$\r_H(w(r))=\r_H(w'(r))$. Traduit au niveau des anneaux $Q$ et $R$, cela est \'equivalent 
\`a dire que, pour tout $q \in Q$, on a $w(q) \equiv w'(q) \mod \rG$. 
\end{proof}

\bigskip

En d'autres termes, le lemme~\ref{lem:cellules-geometrie} montre que les $\rG$-cellules de 
Calogero-Moser param\`etrent les composantes irr\'eductible de l'image inverse de $\RCB(\rG)$ 
dans le produit fibr\'e $\RCB \times_\PCB \ZCB$. 

\bigskip

\part{Cellules et modules de Verma}\label{part:verma}

\boitegrise{{\bf Notations.} {\it Fixons dans cette partie un id\'eal premier $\CG$ de $\kb[\CCB]$. 
Nous notons $\CCB(\CG)$ le \indexnot{C}{\CCB(\CG)}  
sous-sch\'ema irr\'eductible ferm\'e de $\CCB$ d\'efini par $\CG$~: en d'autres 
termes, $\CCB(\CG)=\Spec \kb[\CCB]/\CG$ et nous notons $\pGba_\CG$ 
(resp. $\pG_\CG^\gauche$, resp. $\pG_\CG^\droite$)  \indexnot{pa}{\pGba_\CG, \pG_\CG^\gauche, \pG_\CG^\droite}  
l'id\'eal premier de $P$ correspondant au sous-sch\'ema ferm\'e irr\'eductible 
$\CCB(\CG) \times \{0 \} \times \{0\}$ (resp. $\CCB(\CG) \times V/W \times \{0\}$, 
resp. $\CCB(\CG) \times \{0\} \times V^*/W$). 
On pose $\Pba_\CG=P/\pGba_\CG \simeq \kb[\CCB]/\CG$, 
$P^\gauche_\CG = P/\pG_\CG^\gauche \simeq \kb[\CCB]/\CG \otimes \kb[V]^W$ et 
$P^\droite_\CG=P/\pG^\droite \simeq \kb[\CCB]/\CG \otimes \kb[V^*]^W$,  \indexnot{P}{\Pba_\CG, P_\CG^\gauche, P_\CG^\droite} 
ce qui permet de d\'efinir la $\Pba_\CG$-alg\`ebre $\Hbov_\CG=\Hb/\pGba_\CG \Hb$,  
la $P_\CG^\gauche$-alg\`ebre $\Hb^\gauche_\CG=\Hb/\pG_\CG^\gauche\Hb$ et la 
$P_\CG^\droite$-alg\`ebre $\Hb_\CG^\droite=\Hb/\pG_\CG^\droite\Hb$.  \indexnot{H}{\Hbov_\CG, \Hb_\CG^\gauche, \Hb_\CG^\droite} 
On d\'efinit aussi $\Kbov_\CG=k_P(\pGba_\CG)$, 
$\Kb_\CG^\gauche=k_P(\pG_\CG^\gauche)$ et $\Kb_\CG^\droite = k_P(\pG_\CG^\droite)$. \indexnot{K}{\Kbov_\CG, \Kb_\CG^\gauche, \Kb_\CG^\droite}  \\
\hphantom{A} Pour simplifier les notations, si $\CG$ est l'id\'eal nul, alors l'indice $\CG$ sera omis 
dans toutes les notations pr\'ec\'edentes ($\Pba$, $\pGba$, $\pG^\gauche$, $\Hb^\droite$, $\Kb^\gauche$,\dots) 
et, si $c \in \CCB$ et $\CG=\CG_c$, alors l'indice $\CG_c$ sera remplac\'e par $c$   
\indexnot{pa}{\pGba, \pG^\gauche, \pG^\droite, \pGba_c, \pG_c^\gauche, \pG_c^\droite}  
\indexnot{P}{\Pba, P^\gauche, P_\CG^\droite, \Pba, P_c^\gauche, P_c^\droite}  
\indexnot{H}{\Hbov, \Hb^\gauche, \Hb^\droite, \Hbov_c, \Hb_c^\gauche, \Hb_c^\droite}  
\indexnot{K}{\Kbov, \Kb^\gauche, \Kb^\droite, \Kbov_c, \Kb_c^\gauche, \Kb_c^\droite} 
($\pG_c^\droite$, $\Hb_c^\gauche$, $\Kb_c^\droite$, $\Kbov_c$,\dots). 
Notons par exemple que $\pGba_\CG=\pGba + \CG~P$ (et de m\^eme pour $\pG_\CG^\gauche$ et 
$\pG_\CG^\droite$) et que $\Kbov_c \simeq \kb$.}}{0.75\textwidth}

\bigskip

\begin{defivide}
Fixons un id\'eal premier $\rGba_\CG$ (resp. $\rG_\CG^\gauche$, resp. $\rG_\CG^\droite$) 
de $R$ au-dessus de $\pGba_\CG$ (resp. $\pG_\CG^\gauche$, resp. $\pG_\CG^\droite$).  
\indexnot{ra}{\rGba_\CG, \rG_\CG^\gauche, \rG_\CG^\droite}   
\indexnot{ra}{\rGba, \rG^\gauche, \rG^\droite, \rGba_c, \rG_c^\gauche, \rG_c^\droite}  
On appelera {\bfit $\CG$-cellule de Calogero-Moser bilat\`ere} 
(resp. {\bfit \`a gauche}, resp. {\bfit \`a droite}) toute 
$\rGba_\CG$-cellule (resp. $\rG_\CG^\gauche$-cellule, resp. $\rG_\CG^\droite$-cellule) 
de Calogero-Moser.  

Si $\CG=0$, elles seront aussi appel\'ees {\bfit cellules de Calogero-Moser 
(bilat\`eres, \`a gauche ou \`a droite) g\'en\'eriques}. Si $c \in \CCB$ et $\CG=\CG_c$, 
elles seront appel\'ees {\bfit $c$-cellules de Calogero-Moser 
(bilat\`eres, \`a gauche ou \`a droite)}.
\end{defivide}

\bigskip

\begin{remavide}\label{rem:dependance-r}
Bien s\^ur, la notion de $\CG$-cellule de Calogero-Moser (bilat\`ere, \`a gauche ou \`a droite) 
d\'epend fortement du choix de l'id\'eal $\rGba_\CG$, $\rG_\CG^\gauche$ 
ou $\rG_\CG^\droite$~; cependant, comme tous les id\'eaux premiers de $R$ au-dessus d'un id\'eal 
premier de $P$ sont $G$-conjugu\'es, changer l'id\'eal revient \`a transformer 
les cellules par l'action d'un \'el\'ement de $G$.\finl
\end{remavide}

\bigskip

\begin{remavide}[Semi-continuit\'e]\label{rem:semicontinuite}
D'autre part, il est bien s\^ur possible de choisir les id\'eaux $\rGba_\CG$, $\rG_\CG^\gauche$ 
ou $\rG_\CG^\droite$ de sorte que $\rGba_\CG$ contienne $\rG_\CG^\gauche$ et $\rG_\CG^\droite$~: 
dans ce cas, en vertu de la remarque~\ref{semicontinuite}, 
les $\CG$-cellules de Calogero-Moser bilat\`eres sont des r\'eunions de 
$\CG$-cellules de Calogero-Moser \`a gauche (resp. \`a droite).

De m\^eme, si $\CG'$ est un autre id\'eal premier de $\kb[\CCB]$ tel que $\CG \subset \CG'$, alors 
on peut choisir les id\'eaux $\rGba_{\CG'}$, $\rG_{\CG'}^\gauche$ ou $\rG_{\CG'}^\droite$ 
de sorte qu'ils contiennent respectivement $\rGba_\CG$, $\rG_\CG^\gauche$ 
ou $\rG_\CG^\droite$. Ainsi, les $\CG'$-cellules de Calogero-Moser bilat\`eres (resp. \`a gauche, 
resp. \`a droite) sont des r\'eunions de $\CG$-cellules de Calogero-Moser bilat\`eres 
(resp. \`a gauche, resp. \`a droite).\finl
\end{remavide}

\bigskip

Avec la d\'efinition des cellules de Calogero-Moser bilat\`eres, \`a gauche ou \`a droite donn\'ee ci-dessus, 
le premier objectif de ce m\'emoire est atteint. L'objet de cette partie est l'\'etude de 
ces cellules particuli\`eres, en lien avec la th\'eorie des repr\'esentations de $\Hb$~: 
dans chaque cas, une famille de {\it modules de Verma} 
viendra renforcer l'arsenal mis \`a notre disposition. Ainsi~:
\begin{itemize}
\item Dans le chapitre~\ref{chapter:bilatere}, nous associerons une {\it famille de Calogero-Moser} 
\`a chaque cellule de Calogero-Moser bilat\`ere~: les familles de Calogero-Moser forment une partition 
de l'ensemble $\Irr(W)$.

\item Dans le chapitre~\ref{chapter:gauche}, nous associerons un $\calo$-caract\`ere cellulaire 
\`a chaque cellule \`a gauche.
\end{itemize}
Nous conjecturons que, lorsque $W$ est un groupe de Coxeter, toutes ces notions co\"{\i}ncident 
avec les notions analogues d\'efinies par Kazhdan-Lusztig dans le cadre des groupes de Coxeter. 
Ces conjectures seront \'enonc\'ees pr\'ecis\'ement dans la partie suivante~\ref{part:coxeter} 
(voir le chapitre~\ref{chapter:conjectures}) et des arguments 
th\'eoriques en faveur de ces conjectures y seront d\'evelopp\'es (voir le chapitre~\ref{chapter:arguments}).

\chapter{B\'eb\'es modules de Verma}\label{chapter:bebe-verma}

\bigskip

%

\boitegrise{\noindent{\bf Notations.} 
{\it Rappelons que $\Pba=\kb[\CCB]$ et $\Kbov=\kb(\CCB)$. On fixe dans ce chapitre 
une extension $K$ du corps $\Kbov_\CG=\kb_P(\pGba_\CG)$. Le morphisme naturel obtenu par la composition 
$\kb[\CCB] \to \kb[\CCB]/\pGba_\CG \injto \Kbov_\CG \injto K$ sera not\'e $\th_K : \kb[\CCB] \to K$.  \indexnot{tz}{\th_K}
Son noyau est $\CG$.}}{0.75\textwidth}

\bigskip

Le but de ce chapitre est de rappeler les r\'esultats de I. Gordon~\cite{gordon} 
sur les repr\'esentations de la $K$-alg\`ebre $K\Hbov=K\Hbov_\CG$. Il n'y aura 
qu'une l\'eg\`ere diff\'erence~: I. Gordon travaillait avec un corps alg\'ebriquement 
clos, tandis qu'ici $K$ n'est pas suppos\'e alg\'ebriquement clos. Cela ne posera 
pas de difficult\'e (gr\^ace \`a un r\'esultat facile de d\'eploiement), 
mais aura quelques cons\'equences utiles concernant 
la partition en cellules de Calogero-Moser bilat\`eres 
(voir le chapitre~\ref{chapter:bilatere}).

\bigskip

\section{Alg\`ebre de Cherednik restreinte}\label{section:def hbar}

\medskip


\subsection{D\'ecomposition PBW}\label{subsection:PBW-restreinte}
Nous appellerons {\it alg\`ebre de Cherednik restreinte 
g\'en\'erique} la $\kb[\CCB]$-alg\`ebre $\Hbov$ d\'efinie par
$$\Hbov=\Hb/\pGba\Hb = \kb[\CCB] \otimes_P \Hb.$$
On d\'eduit imm\'ediatement du th\'eor\`eme~\ref{PBW-0} que~:

\bigskip

\begin{prop}\label{PBW restreint}
L'application $\kb[\CCB] \otimes \kb[V]^\cow \otimes \kb W \otimes \kb[V^*]^\cow \to \Hbov$ 
induite par le produit est un isomorphisme de $\kb[\CCB]$-modules. En particulier, 
$\Hbov$ est un $\kb[\CCB]$-module libre de rang $|W|^3$.
\end{prop}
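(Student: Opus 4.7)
The plan is to deduce this directly from the PBW decomposition for $\Hb$ (Theorem~\ref{PBW-0}) combined with the Shephard-Todd-Chevalley structure theorem (Theorem~\ref{chevalley}). The content of the statement is essentially that the ``quotient by the augmentation ideals of the invariants'' is compatible with the PBW tensor decomposition, so the strategy is to factor $\kb[V]$ and $\kb[V^*]$ across their respective invariant subalgebras and then base-change along $P \twoheadrightarrow \Pba \simeq \kb[\CCB]$.

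Concretely, I would proceed in four short steps. First, by Theorem~\ref{PBW-0} the multiplication map gives an isomorphism of $\kb[\CCB]$-modules
\[
\kb[\CCB] \otimes \kb[V] \otimes \kb W \otimes \kb[V^*] \;\longiso\; \Hb,
\]
and since the subalgebras $\kb[V]^W$ and $\kb[V^*]^W$ are central in $\Hb$ (they sit in $P$, whose centrality is~\ref{inclusion centre}), this is in fact an isomorphism of $P$-modules, with $P = \kb[\CCB] \otimes \kb[V]^W \otimes \kb[V^*]^W$ acting on the factors $\kb[\CCB]$, $\kb[V]$ and $\kb[V^*]$ by multiplication. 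Second, by Theorem~\ref{chevalley}(b), $\kb[V]$ is free over $\kb[V]^W$ and the natural map $\kb[V]^W \otimes \kb[V]^\cow \to \kb[V]$ is an isomorphism of $\kb[V]^W$-modules (and similarly for $V^*$). Combining these gives an isomorphism of $P$-modules
\[
\kb[\CCB] \otimes \kb[V]^W \otimes \kb[V]^\cow \otimes \kb W \otimes \kb[V^*]^W \otimes \kb[V^*]^\cow \;\longiso\; \Hb.
\]

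Third, I would base-change along the surjection $P \twoheadrightarrow \Pba = P/\pGba \simeq \kb[\CCB]$. Recall that $\pGba$ is the ideal of $P$ generated by $\kb[V]^W_+$ and $\kb[V^*]^W_+$, so $\Pba \otimes_P (-)$ simply kills the two middle $\kb[V]^W$ and $\kb[V^*]^W$ tensor factors. We obtain an isomorphism of $\kb[\CCB]$-modules
\[
\kb[\CCB] \otimes \kb[V]^\cow \otimes \kb W \otimes \kb[V^*]^\cow \;\longiso\; \Pba \otimes_P \Hb \;=\; \Hbov,
\]
and one checks that this is precisely the map induced by multiplication in $\Hbov$. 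Finally, freeness of rank $|W|^3$ follows from Theorem~\ref{chevalley}(c), which gives $\dim_\kb \kb[V]^\cow = \dim_\kb \kb[V^*]^\cow = |W|$.

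There is no real obstacle here: the only point requiring a moment of care is the identification of $\pGba\Hb$ with the kernel of the claimed map, but this is immediate once one writes $\pGba = \langle \kb[V]^W_+, \kb[V^*]^W_+ \rangle$ and observes that the PBW isomorphism respects the $P$-module structure on the relevant factors. Everything else is formal.
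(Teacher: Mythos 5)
Your argument is correct and supplies the details behind the paper's one-line proof (``On d\'eduit imm\'ediatement du th\'eor\`eme~\ref{PBW-0}''), namely base change of the PBW decomposition of~$\Hb$ along $P\twoheadrightarrow\Pba$. The only wording to tighten is ``the natural map $\kb[V]^W\otimes\kb[V]^\cow\to\kb[V]$'': since $\kb[V]^\cow$ is a \emph{quotient} of $\kb[V]$, not a subspace, the isomorphism comes from a \emph{chosen} graded $W$-stable splitting (this is what Theorem~\ref{chevalley}(b) provides), which is harmless here because the composite $\kb[V]\to\Hb\to\Hbov$ kills the ideal $\langle\kb[V]^W_+\rangle$, so the induced map on $\kb[V]^\cow$ is independent of the choice and is indeed multiplication in~$\Hbov$ --- exactly the verification you flag in your last paragraph.
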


\bigskip

Nous noterons $\Ab^-$ (respectivement $\Ab^0$, respectivement $\Ab^+$)  \indexnot{A}{\Ab^-, \Ab^0, \Ab^+}  
la sous-$\kb[\CCB]$-alg\`ebre $\kb[\CCB] \otimes \kb[V^*]^\cow$ 
(respectivement $\kb[\CCB] \otimes \kb W$, respectivement $\kb[\CCB]\otimes\kb[V]^\cow$) 
de $\Hbov$. Alors 
\equat\label{dim A}
\text{\it $\Ab^-$, $\Ab^0$ et $\Ab^+$ sont des $\kb[\CCB]$-modules libres de 
rang $|W|$}
\endequat
et l'application
\equat\label{iso KC}
\Ab^+ \otimes_{\kb[\CCB]} \Ab^0 \otimes_{\kb[\CCB]} \Ab^- \longto \Hbov
\endequat
induite par le produit est un isomorphisme de $\kb[\CCB]$-modules 
(d'apr\`es la proposition~\ref{PBW restreint}). 

Posons maintenant $\Hbov^\moins=\Ab^- \Ab^0=\Ab^0\Ab^-$ et $\Hbov^\plus=\Ab^+\Ab^0=\Ab^0 \Ab^+$. 
Ce sont des sous-$\kb[\CCB]$-alg\`ebres de $\Hbov$. 
On obtient alors des isomorphismes de $\kb[\CCB]$-alg\`ebres 
\equat\label{iso H plus}
\Hbov^\plus \simeq \kb[\CCB] \otimes \bigl( \kb[V]^\cow \rtimes W\bigr)
\qquad\text{et}\qquad 
\Hbov^\moins \simeq \kb[\CCB] \otimes \bigl( \kb[V^*]^\cow \rtimes W\bigr).
\endequat
En particulier, 
\equat\label{dim H}
\text{\it $\Hbov^\moins$ et $\Hbov^\plus$ sont des $\kb[\CCB]$-modules libre de 
rang $|W|^2$.}
\endequat
Pour finir, on notera $\Zba$ l'image de $Z$ dans $\Hbov$ (c'est un facteur direct de $\Hbov$). 

\bigskip

\subsection{Automorphismes}\label{subsection:auto-restreinte}
Rappelons que le groupe $\kb^\times \times \kb^\times \times (W^\wedge \rtimes \NC)$ 
agit sur $\Hb$ par automorphismes de $\kb$-alg\`ebres. 
D'apr\`es le lemme~\ref{P stable}, $P$ est stable sous cette action et 
il est facile de v\'erifier que l'id\'eal premier $\pGba$ l'est aussi. 
Ainsi, $\Hbov$ h\'erite d'une action de 
$\kb^\times \times \kb^\times \times (W^\wedge \rtimes \NC)$~: si 
$\t \in \kb^\times \times \kb^\times \times (W^\wedge \rtimes \NC)$ et 
$h \in \Hbov$, on notera encore $\lexp{\t}{h}$ l'image de $h$ 
par l'action de $\t$. Rappelons toutefois que $\t$ 
n'induit pas un automorphisme de la $\kb[\CCB]$-alg\`ebre $\Hbov$
(en effet, $\lexp{\t}{C_s}=\xi\xi'\g(s)^{-1} C_{nsn^{-1}}$ si $\t=(\xi,\xi',\g \rtimes n)$). 

\bigskip

\begin{lem}\label{tout stable}
Les sous-alg\`ebres $\kb W$, 
$\Pba=\kb[\CCB]$, $\Zba$, $\Hbov^\plus$, $\Hbov^\moins$, $\Ab^+$, $\Ab^0$ et $\Ab^-$ 
sont stables sous l'action de $\kb^\times \times \kb^\times \times (W^\wedge \rtimes \NC)$. 
\end{lem}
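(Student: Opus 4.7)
The strategy is to push the problem back to the already-established stability statements for $\Hb$ (Lemma~\ref{lem:automorphismes-0}), after checking that the defining ideal $\pGba$ is itself stable. Let me outline the steps.

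First, I would verify that the ideal $\pGba \subset P$ is stable under the action of $\kb^\times \times \kb^\times \times (W^\wedge \rtimes \NC)$ on $\Hb$. Recall $\pGba$ is generated by $\kb[V]_+^W$ and $\kb[V^*]_+^W$. The scalars $(\xi,\xi')$ act by the bi-grading (Lemma~\ref{lem:automorphismes-0}(b)), hence preserve any homogeneous subspace, in particular the augmentation ideals. The characters $\gamma \in W^\wedge$ act trivially on $V$ and $V^*$ (they only rescale $W$ and $\CCB^*$), so they fix $\kb[V]$ and $\kb[V^*]$ pointwise and in particular preserve $\kb[V]_+^W, \kb[V^*]_+^W$. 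An element $n \in \NC$ acts on $V$ and $V^*$ normalizing $W$, hence preserves $\kb[V]^W$ and $\kb[V^*]^W$, as well as their augmentation parts. So $\pGba$ is globally stable, and the action descends to $\Hbov = \Hb/\pGba\Hb$.

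Next, each subalgebra appearing in the statement is the image in $\Hbov$ of a subalgebra of $\Hb$ that is already known to be stable. Specifically:
\begin{itemize}
\item By Lemma~\ref{lem:automorphismes-0}(a), the subalgebras $\kb[\CCB]$, $\kb[V]$, $\kb[V^*]$ and $\kb W$ of $\Hb$ are stable. Their images in $\Hbov$ are respectively $\Pba$, and the subalgebras whose product with $\kb W$ give $\Ab^+$ and $\Ab^-$, and $\kb W$ itself. The PBW isomorphism (Proposition~\ref{PBW restreint}) tells us exactly that these images are $\kb[V]^{\mathrm{co}(W)}$, $\kb[V^*]^{\mathrm{co}(W)}$, $\kb W$, identified as $\kb$-subalgebras of $\Hbov$, so that $\Ab^-=\kb[\CCB]\otimes \kb[V^*]^{\mathrm{co}(W)}$, $\Ab^+=\kb[\CCB]\otimes \kb[V]^{\mathrm{co}(W)}$ and $\Ab^0=\kb[\CCB]\otimes \kb W$ are stable.
\item The products $\Hbov^+ = \Ab^+ \cdot \Ab^0$ and $\Hbov^- = \Ab^- \cdot \Ab^0$ are then stable since the action is by algebra automorphisms.
\item Finally, the center $Z$ of $\Hb$ is automatically stable under any algebra automorphism, so its image $\Zba$ is stable in $\Hbov$.
\end{itemize}

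The proof is essentially bookkeeping; the only potential pitfall is verifying that the image $\Ab^\pm$ is really identified with $\kb[\CCB]\otimes\kb[V^{(*)}]^{\mathrm{co}(W)}$ as a subalgebra of $\Hbov$ (as opposed to merely a quotient), which is ensured by the PBW decomposition~\eqref{iso KC}. Once that identification is granted, stability of each listed subalgebra follows immediately from the corresponding stability in $\Hb$.
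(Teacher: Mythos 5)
Your argument is correct and is the natural one; the paper states Lemma~\ref{tout stable} without proof, treating it as immediate once one knows (as stated just before the lemma) that $P$ and $\pGba$ are stable. Your contribution is to make the bookkeeping explicit: $\pGba$ is stable because $(\xi,\xi')$ preserves graded pieces, $W^\wedge$ fixes $V$ and $V^*$ pointwise, and $\NC$ normalizes $W$; then each listed subalgebra is the image in $\Hbov$ of a stable subalgebra of $\Hb$ (using Lemma~\ref{lem:automorphismes-0}(a) for $\kb[\CCB]$, $\kb[V]$, $\kb[V^*]$, $\kb W$, and the fact that any algebra automorphism preserves the center $Z$ for $\Zba$), and products of stable subalgebras are stable, giving $\Ab^{\pm}$, $\Ab^0$, $\Hbov^{\pm}$.

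One wording slip worth fixing: you write that the images of $\kb[V]$ and $\kb[V^*]$ are ``the subalgebras whose product with $\kb W$ give $\Ab^+$ and $\Ab^-$.'' It should be the product with $\kb[\CCB]$, since $\Ab^{\pm}=\kb[\CCB]\otimes\kb[V^{(*)}]^{\cow}$; the product with $\kb W$ is what gives $\Hbov^{\pm}$. Your subsequent explicit identifications (and your correct observation, via PBW, that the image of $\kb[V^{(*)}]$ in $\Hbov$ is literally $\kb[V^{(*)}]^{\cow}$ and not a proper quotient) make the intent clear, so this is cosmetic, but the sentence as written is misleading.
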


\bigskip

En particulier, l'alg\`ebre $\Hbov$ h\'erite de la $\NM \times \NM$-graduation. Cette bi-graduation 
sera not\'ee
$$\Hbov=\mathop{\bigoplus}_{(i,j) \in \NM \times \NM} \Hbov^{\NM \times \NM}[i,j].$$
Nous d\'efinissons alors, comme pour $\Hb$, une $\ZM$-graduation 
$$\Hbov=\mathop{\bigoplus}_{i \in \ZM} \Hbov^\ZM[i]$$
ainsi qu'une $\NM$-graduation
$$\Hbov=\mathop{\bigoplus}_{i \in \NM} \Hbov^\NM[i].$$
La $\ZM$-graduation poss\`ede les propri\'et\'es suivantes~:

\bigskip

\begin{lem}\label{PBW hbar}
L'anneau de coefficients $\kb[\CCB]$ est en $\ZM$-degr\'e $0$ et 
les sous-$\kb[\CCB]$-alg\`ebres $\Ab^-$, $\Ab^+$ et $\Ab^0$ sont 
des sous-$\kb[\CCB]$-alg\`ebres gradu\'ees. De plus~:
\begin{itemize}
\itemth{a} $\Ab^- \subset \kb[\CCB] \oplus \bigl( \mathop{\bigoplus}_{i < 0} \Hbov^\ZM[i] \bigr)$.  

\itemth{b} $\Ab^+ \subset \kb[\CCB] \oplus \bigl( \mathop{\bigoplus}_{i > 0} \Hbov^\ZM[i] \bigr)$. 

\itemth{c} $\Ab^0 \subset \Hbov^\ZM[0]$. 
\end{itemize}
\end{lem}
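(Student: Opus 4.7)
The plan is to work entirely at the level of the $\ZM$-grading on $\Hbt$, transporting it down to $\Hbov$ and then reading off the degree supports of the three factors in the PBW decomposition.

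First I would check that the $\ZM$-graduation on $\Hb$ passes to $\Hbov$. Recall from Example~\ref{Z graduation-1} that the $\ZM$-grading is induced by the morphism of monoids $\NM\times\NM\to\ZM$, $(i,j)\mapsto j-i$, so elements of $V$ have degree $-1$, elements of $V^*$ degree $+1$, and elements of $\CCBt^*$ and $W$ degree $0$. The ideal $\pGba$ is generated inside $P=\kb[\CCB]\otimes\kb[V]^W\otimes\kb[V^*]^W$ by $\kb[V]^W_+$ and $\kb[V^*]^W_+$, both of which consist of $\NM\times\NM$-homogeneous (hence $\ZM$-homogeneous) elements. Consequently $\pGba\Hb$ is a homogeneous ideal, and $\Hbov=\Hb/\pGba\Hb$ inherits a $\ZM$-grading, in which $\kb[\CCB]\subset\Hbov^\ZM[0]$ because $\kb[\CCB]$ sits in $\NM\times\NM$-bidegrees $(i,i)$.

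Next I would observe that the PBW isomorphism of Proposition~\ref{PBW restreint} is automatically graded. Indeed, on $\Hb$ the multiplication map $\kb[\CCB]\otimes\kb[V]\otimes\kb W\otimes\kb[V^*]\longto\Hb$ is graded because each tensor factor embeds into $\Hb$ as a graded subspace and multiplication respects the grading. Taking the quotient by $\pGba\Hb$, the surjections $\kb[V]\surto\kb[V]^\cow$ and $\kb[V^*]\surto\kb[V^*]^\cow$ are graded. Hence the images $\Ab^+=\kb[\CCB]\otimes\kb[V]^\cow$, $\Ab^0=\kb[\CCB]\otimes\kb W$ and $\Ab^-=\kb[\CCB]\otimes\kb[V^*]^\cow$ are graded $\kb[\CCB]$-subalgebras of $\Hbov$.

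Finally, the three degree bounds follow by inspecting each factor separately. Since $\kb[V]$ is the symmetric algebra of $V^*$ (elements of $\ZM$-degree $+1$), the coinvariant algebra $\kb[V]^\cow$ is concentrated in degrees $\ge 0$, with degree-$0$ part equal to $\kb$; tensoring with $\kb[\CCB]$ (which sits in degree $0$) yields (b). Dually, $\kb[V^*]$ is the symmetric algebra of $V$ (elements of degree $-1$), so $\kb[V^*]^\cow$ lies in degrees $\le 0$ with degree-$0$ part $\kb$, which gives (a). For (c), both $\kb[\CCB]$ and $\kb W$ are concentrated in $\ZM$-degree $0$, so $\Ab^0\subset\Hbov^\ZM[0]$. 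There is no essential difficulty here, only the bookkeeping of tracking the bidegrees $(0,1)$ for $V^*$ and $(1,0)$ for $V$ through the quotient map defining $\Hbov$; the only point requiring care is the verification at the very first step that $\pGba$ is a homogeneous ideal, which is immediate from the chosen bidegree convention on $\CCB^*$.
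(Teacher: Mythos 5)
Your proof is correct and matches the paper's intended argument. The paper establishes the grading on $\Hbov$ slightly differently — via the action of $\kb^\times\times\kb^\times\times(W^\wedge\rtimes\NC)$ and the observation that $\pGba$ is stable under this action (the subsection preceding the lemma) — but this is just the torus-action formulation of your direct homogeneity check on $\pGba$, and the degree bookkeeping on the three PBW factors is identical.
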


\bigskip

\section{B\'eb\'es modules de Verma, modules simples}\label{subsection:spe-hbar}

\medskip

Il r\'esulte de~(\ref{iso KC}) et du lemme~\ref{PBW hbar} que~:

\begin{itemize}\itemindent5mm
\item[$\bullet$] $K\Ab^-$, $K\Ab^0$ et $K\Ab^+$ sont des sous-$K$-alg\`ebres gradu\'ees 
de $K\Hbov$ et le produit induit un isomorphisme de $K$-espaces vectoriels 
$K\Ab^- \otimes_K K\Ab^0 \otimes_K K\Ab^+ \stackrel{\sim}{\longto} K\Hbov$.

\item[$\bullet$] $K\Ab^- \subset K \oplus \bigl(\bigoplus_{i < 0} K\Hbov^\ZM[i]\bigr)$ et 
$K\Ab^+ \subset K \oplus \bigl(\bigoplus_{i > 0} K\Hbov^\ZM[i]\bigr)$.

\item[$\bullet$] $(K\Ab^-)(K\Ab^0)= (K\Ab^0)(K\Ab^-)$, $(K\Ab^+)(K\Ab^0) = 
(K\Ab^0) (K\Ab^+)$ et $K\Ab^0 \subset K\Hbov^\ZM[0]$.
\end{itemize}

Comme l'a remarqu\'e Gordon~\cite{gordon}, ces hypoth\`eses permettent d'appliquer 
les r\'esultats de Holmes et Nakano~\cite{HN}. 
Rappelons que la $\kb$-alg\`ebre $\kb W$ est d\'eploy\'ee 
(voir le th\'eor\`eme~\ref{deploiement})~: ceci 
implique que la $K$-alg\`ebre $K\Ab^0=K \otimes_{\kb[\CCB]} (\kb[\CCB] \otimes \kb W) = KW$ 
est elle aussi d\'eploy\'ee. 

Tout d'abord, notons $\Jbov^\plus$ (respectivement $\Jbov^\moins$) 
l'id\'eal bilat\`ere nilpotent de $\Hbov^\plus$ (respectivement $\Hbov^\moins$) 
engendr\'e par les \'el\'ements homog\`enes de degr\'e strictement positif 
(respectivement n\'egatif). Si $M$ est un $\kb W$-module, on notera $M^{(+)}$ (respectivement $M^{(-)}$) 
le $\Hbov^\plus$-module (respectivement $\Hbov^\moins$-module) d'espace vectoriel sous-jacent 
$\kb[\CCB]\otimes M$ sur lequel $\Ab^0=\kb[\CCB] \otimes \kb W$ agit naturellement et 
$\Jbov^\plus$ (respectivement $\Jbov^\moins$) agit par $0$. 
On pose alors
$$\fonction{\MCov}{\kb W\module}{\Hbov\module}{M}{\Hbov \otimes_{\Hbov^\moins} M^{(-)}.}$$ \indexnot{M}{\MCov}  
Si $P'$ est une $\Pba$-alg\`ebre (i.e. une $\kb[\CCB]$-alg\`ebre), nous noterons 
$P'\MCov(M)$ le $P' \otimes_\Pba \Hbov$-module $P' \otimes_\Pba \MCov(M)$. 
Si $\chi$ est un caract\`ere irr\'eductible de $W$, on notera $V_\chi$   \indexnot{V}{V_\chi}
un $\kb W$-module irr\'eductible admettant le caract\`ere $\chi$ et on posera pour 
simplifier $\MCov(\chi)=\MCov(V_\chi)$. 
%
On v\'erifie facilement que 
\equat\label{H moins}
\MCov(M) \simeq \Hbov^\plus \otimes_{\kb W} M\qquad\text{comme $\Hbov^\plus$-module}
\endequat
et
\equat\label{AA moins}
\MCov(M) \simeq \Ab^+ \otimes_\kb M\qquad\text{comme $\Ab^+$-module.}
\endequat
Par cons\'equent, la d\'ecomposition $\Hbov^\plus = \Ab^0 \oplus \Jbov^\plus$ induit une 
d\'ecomposition
\equat\label{dec AA0}
\MCov(M) = (\kb[\CCB] \otimes M )\oplus \Jbov^\plus \otimes_{\Ab^0} (\kb[\CCB] \otimes M )\qquad\text{comme $\Ab^0$-module,}
\endequat
ainsi qu'un isomorphisme
\equat\label{MH rad}
\MCov(M)/\Jbov^\plus \MCov(M) \simeq M^{(+)}\qquad\text{comme $\Hbov^\plus$-module.}
\endequat
Nous noterons $i_M : (\kb[\CCB] \otimes M ) \injto \MCov(M)$ et $\pi_M : \MCov(M) \to M^{(+)}$ les morphismes 
d\'eduits de la d\'ecomposition~\ref{dec AA0}. Alors $i_M$ est un 
morphisme de $\Ab^0$-modules tandis que $\pi_M$ est un morphisme 
de $\Hbov^\plus$-modules.

%

\bigskip

\begin{prop}\label{verma}
Si $\chi \in \Irr(W)$, alors $K\MCov(\chi)$ est un $K\Hbov$-module 
ind\'ecomposable, admettant un unique quotient simple, que nous notons $\LCov_K(\chi)$. 
L'application $\Irr(W) \longto \Irr(K\Hbov)$, $\chi \mapsto \LCov_K(\chi)$ 
est bijective et l'alg\`ebre $K\Hbov$ est d\'eploy\'ee. 
\end{prop}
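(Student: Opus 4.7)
Le plan est d'appliquer le r\'esultat g\'en\'eral de Holmes-Nakano~\cite{HN}, comme l'a fait Gordon dans~\cite{gordon} pour $K$ alg\'ebriquement clos. Les hypoth\`eses de~\cite{HN} sont d\'ej\`a v\'erifi\'ees dans les pages pr\'ec\'edentes~: d\'ecomposition triangulaire $K\Ab^- \otimes_K K\Ab^0 \otimes_K K\Ab^+ \longiso K\Hbov$, contraintes de $\ZM$-degr\'e sur $K\Ab^\pm$ (lemme~\ref{PBW hbar}), et $K\Ab^0=KW$ concentr\'ee en $\ZM$-degr\'e $0$. Reste \`a \'etablir le d\'eploiement de $KW$, qui est une cons\'equence du th\'eor\`eme~\ref{deploiement} de Benard-Bessis appliqu\'e au corps $K$, lequel contient $\kb$ et donc toutes les traces des \'el\'ements de $W$~; pour chaque $\chi \in \Irr(W)$, le $KW$-module $V_\chi$ (\'etendu \`a $K$) reste absolument simple, avec $\End_{KW}(V_\chi)=K$.

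Je suivrais alors la m\'ethode de Holmes-Nakano en trois \'etapes. D'abord, la construction du quotient simple~: $K\MCov(\chi)$ est de dimension finie sur $K$ (\'egale \`a $|W|\chi(1)$) par~(\ref{AA moins}), et poss\`ede une $\ZM$-graduation born\'ee dont la composante de degr\'e minimal (soit $0$) est isomorphe \`a $V_\chi$ comme $KW$-module et engendre $K\MCov(\chi)$ comme $K\Hbov$-module. Un argument du type Nakayama gradu\'e suivant~\cite{HN} fournit alors un unique sous-module maximal $N_\chi \subsetneq K\MCov(\chi)$, d'o\`u un unique quotient simple $\LCov_K(\chi)=K\MCov(\chi)/N_\chi$ et l'ind\'ecomposabilit\'e de $K\MCov(\chi)$. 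Ensuite, la bijection $\chi \mapsto \LCov_K(\chi)$~: l'injectivit\'e r\'esulte de ce que $\LCov_K(\chi)$ poss\`ede une composante homog\`ene de degr\'e minimal isomorphe \`a $V_\chi$ comme $KW$-module, qui d\'etermine donc $\chi$~; pour la surjectivit\'e, \'etant donn\'e un $K\Hbov$-module simple $S$, on extrait (en consid\'erant les \'el\'ements de degr\'e minimal pour la graduation de $S$, elle-m\^eme induite par l'action torique $\kb^\times$ relev\'ee \`a $S$) un sous-$KW$-module de $S$ annul\'e par $K\Jbov^\moins$, lequel contient un facteur isomorphe \`a un $V_\chi$~; d'o\`u un morphisme non nul $K\MCov(\chi)\longto S$, forc\'ement surjectif et de noyau $N_\chi$, ce qui donne $S\simeq \LCov_K(\chi)$. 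Enfin, le d\'eploiement~: tout endomorphisme non nul de $\LCov_K(\chi)$ respecte la graduation (automatique pour un module simple born\'e en degr\'es, un endomorphisme gradu\'e non nul ne pouvant d\'ecaler la composante minimale), donc se restreint \`a cette composante minimale en un endomorphisme $KW$-lin\'eaire de $V_\chi$, n\'ecessairement scalaire par le lemme de Schur sur l'alg\`ebre d\'eploy\'ee $KW$~; ce scalaire d\'eterminant l'endomorphisme (puisque $V_\chi$ engendre $\LCov_K(\chi)$), on obtient $\End_{K\Hbov}(\LCov_K(\chi))=K$.

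L'obstacle principal sera la gestion soigneuse de la graduation des modules~: puisque $K\Hbov$ poss\`ede des \'el\'ements de $\ZM$-degr\'e strictement n\'egatif, il faudra v\'erifier que les sous-modules maximaux de $K\MCov(\chi)$ sont gradu\'es, typiquement via l'action torique $\kb^\times \longto \Aut(K\Hbov)$ se relevant \`a $K\MCov(\chi)$ par transport d'une graduation naturelle, afin d'acc\'eder \`a leur composante de degr\'e minimal et justifier l'application de~\cite{HN}. La seule nouveaut\'e par rapport \`a~\cite{gordon} r\'eside dans la non-fermeture alg\'ebrique de $K$~; toute cette subtilit\'e est absorb\'ee par le th\'eor\`eme de Benard-Bessis, qui assure le caract\`ere absolument simple des $V_\chi$ et le d\'eploiement de $KW$.
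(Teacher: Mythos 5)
Votre proposition est correcte et suit pour l'essentiel la m\^eme strat\'egie que le texte~: r\'eduction \`a~\cite{gordon} (donc \`a~\cite{HN}) pour l'existence de l'unique quotient simple, l'ind\'ecomposabilit\'e et la bijectivit\'e de $\chi \mapsto \LCov_K(\chi)$, puis recours au th\'eor\`eme de Benard-Bessis pour le d\'eploiement de $KW$, et enfin une analyse de $\End_{K\Hbov}(\LCov_K(\chi))$ pour conclure.

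Pour cette derni\`ere \'etape, votre route diff\`ere l\'eg\`erement de celle du texte, qui construit explicitement les morphismes $\iba_V : V_\chi \injto \LCov_K(\chi)$ et $\piba_V : \LCov_K(\chi) \surto V_\chi$ (\`a partir de la d\'ecomposition~\ref{dec AA0} et de l'inclusion $\Rad K\MCov(\chi) \subset K\Jbov^\plus K\MCov(\chi)$), et montre que $\ph \mapsto \piba_V \circ \ph \circ \iba_V$ est surjectif de but $\End_{KW}(V_\chi)=K$. Votre argument gradu\'e aboutit au m\^eme point, mais l'assertion << tout endomorphisme non nul de $\LCov_K(\chi)$ respecte la graduation >> m\'erite d'\^etre pr\'ecis\'ee~: ce qui est automatique et suffit, c'est que (i) $\LCov_K(\chi)$ h\'erite d'une $\ZM$-graduation, car le radical de $K\MCov(\chi)$, \'etant l'unique sous-module maximal, est stable sous l'action de $\kb^\times$~; (ii) $\End_{K\Hbov}(\LCov_K(\chi))$ est alors une alg\`ebre $\ZM$-gradu\'ee (tout endomorphisme se d\'ecompose en composantes homog\`enes)~; et (iii) pour un module gradu\'e-simple born\'e en degr\'es, une composante homog\`ene non nulle de degr\'e $d\ne 0$ serait un isomorphisme d\'ecalant les degr\'es, ce qui est impossible, d'o\`u $\End_{K\Hbov}(\LCov_K(\chi))$ concentr\'e en degr\'e $0$. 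C'est la formulation soign\'ee du parenth\`ese que vous esquissez. Une remarque analogue pour la surjectivit\'e de $\chi\mapsto\LCov_K(\chi)$~: inutile d'invoquer la gradabilit\'e d'un $K\Hbov$-module simple $S$ arbitraire~; la seule nilpotence de $K\Jbov^\moins$ donne d\'ej\`a un sous-$KW$-module non nul de $S$ annul\'e par $K\Jbov^\moins$, et donc le morphisme non nul $K\MCov(\chi) \to S$ souhait\'e.
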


\bigskip

\begin{proof}
Cette proposition est d\'emontr\'ee 
dans~\cite[proposition~4.3]{gordon}, en se basant sur~\cite{HN}, dans le cas o\`u 
$K$ est alg\'ebriquement clos. La preuve se copie ici mot pour mot 
pour obtenir toutes les assertions de la proposition, sauf l'assertion sur le d\'eploiement.

Soit donc $V$ un $KW$-module simple~: le foncteur $K\MCov$ induit 
un morphisme de $K$-alg\`ebres $m : \End_{KW}(V) \to \End_{K\Hbov}(\LCov_K(V))$. 
Il suffit de montrer que $m$ est un isomorphisme. 
Puisque $K\Jbov^+ \subset \Rad(K\Hbov^+)$, 
$K\MCov(V)$, vu comme $K\Hbov^+$-module, admet un unique quotient simple (ici, $V^{(+)}$). 
Ainsi, $\Rad K\MCov(V) \subset K\Jbov^+ K\MCov(V)$. 
Notons $i_V : V \injto K\MCov(V)$ et $\pi_V : K\MCov(V) \surto V$ les morphismes 
induits par la d\'ecomposition~\ref{dec AA0}~: ce sont des morphismes de $KW$-modules 
et $\pi_V \circ i_V = \Id_V$. D'autre part, puique $\Rad K\MCov(V) \subset K\Jbov^+ K\MCov(V)$, 
$\pi_V$ induit un morphisme surjectif de $KW$-modules $\piba_V : \LCov_K(V) \surto V$. 
Notons $\iba_V : V \injto \LCov_K(V)$ le morphisme de $KW$-modules 
compos\'e du morphisme canonique $K\MCov(V) \to \LCov_K(V)$ avec $i_V$~: $\iba_V$ est 
injectif car 
$$\piba_V \circ \iba_V = \Id_V.$$
Soit maintenant $\ph \in \End_{K\Hbov}\LCov_K(V)$. Alors 
$\piba_V \circ \ph \circ \iba_V : V \to V$ est un endomorphisme du $KW$-module $V$. 
Notons-le $\ph_V$. Alors $\psi=\ph - m(\ph_V)$ est un endomorphisme du $K\Hbov$-module $\LCov_K(V)$, 
et il est facile de v\'erifier que 
$\piba_V \circ \psi \circ \iba_V = 0$. Cela signifie que le noyau de $\psi$ est 
non nul. Puisque $\LCov_K(V)$ est simple, il r\'esulte du lemme de Schur que 
$\psi=0$, c'est-\`a-dire $\ph=m(\ph_V)$. Donc 
l'application naturelle $m : \End_{KW}(V) \longto \End_{K\Hbov}(\LCov_K(V))$ est 
un morphisme surjectif de $K$-alg\`ebres. Mais, $V$ \'etant simple, $\End_{KW}(V)=K$ 
car $KW$ est d\'eploy\'ee. 
\end{proof}

\bigskip

\`A travers la proposition~\ref{verma}, nous identifierons le groupe de Grothendieck 
de $K\Hbov$ avec le $\ZM$-module $\ZM \Irr W$~:
\equat\label{groth bar}
\groth(K\Hbov) \simeq \ZM \Irr W.
\endequat

\bigskip

\begin{rema}\label{deployee ou pas}
Il se peut que $K$ soit diff\'erent de $\Kbov_\CG$. 
Cependant, la proposition~\ref{verma} s'applique aussi au cas o\`u $K=\Kbov_\CG$ et 
donc en particulier, $\Kbov_\CG\Hbov$ est une $\Kbov_\CG$-alg\`ebre d\'eploy\'ee. 
Cela implique que, dans la plupart des 
arguments qui vont suivre, on pourra se ramener au cas o\`u $K=\Kbov_\CG$~: en d'autres 
termes, les objets construits ici d\'ependent plus de l'id\'eal premier $\CG$ que 
du choix de l'extension $K$ de $\Kbov_\CG$. 

Par exemple les idempotents primitifs centraux de $K\Hbov$ 
appartiennent \`a $\Zrm(\Kbov_\CG \Hbov)$ (et donc \`a $\Kbov_\CG \Zba$ d'apr\`es la 
proposition~\ref{muller}). D'autre part, l'application 
$\groth(\Kbov_\CG\Hbov) \to \groth(K\Hbov)$, $\isomorphisme{M}_{\Kbov_\CG\Hbov} 
\mapsto \isomorphisme{K \otimes_{\Kbov_\CG} M}_{K\Hbov}$ 
est un isomorphisme.\finl
\end{rema}

\bigskip

\section{Caract\`eres centraux}\label{section:centraux-restreinte}

\medskip

Si $z \in Z$ et si $\chi \in \Irr W$, alors $z$ agit sur $\LCov_K(\chi)$ 
par multiplication par un scalaire $\O_\chi^K(z) \in K$ 
(d'apr\`es le lemme de Schur et le fait que $K\Hbov$ est d\'eploy\'ee). 
Nous noterons parfois $\O_\chi^\CG(z)$ l'\'el\'ement $\O_\chi^K(z)$~: en effet, 
$\O_\chi^K(z)$ appartient \`a $\Kbov_\CG$ et ne d\'epend que de $\CG$ 
(voir la remarque~\ref{deployee ou pas}). 
Pour simplifier, on posera $\O_\chi=\O_\chi^\Kbov$. Puisque $Z$ est entier sur $P$, 
$\O_\chi(z)$ est entier sur $P/\pGba =\kb[\CCB]$ donc $\O_\chi(z) \in \kb[\CCB]$, 
car $\kb[\CCB]$ est int\'egralement clos. On a ainsi d\'efini un morphisme de 
$\kb[\CCB]$-alg\`ebres
$$\O_\chi : Z \longto \kb[\CCB].$$
Notons la caract\'erisation suivante de $\O_\chi^K(z)$, qui d\'ecoule 
imm\'ediatement de la proposition~\ref{verma}~:

\bigskip

\begin{lem}\label{omega nilpotent}
Si $z \in Z$ et $\chi \in \Irr W$, alors $\O_\chi^K(z)$ est l'unique \'el\'ement 
$\kappa \in K$ tel que $z-\kappa$ agisse de fa\c{c}on nilpotente sur 
$K\MCov(\chi)$. 
\end{lem}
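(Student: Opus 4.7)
The plan is to exploit two facts already at hand: on the one hand, $K\MCov(\chi)$ is a finite-dimensional $K$-vector space (by the isomorphism \ref{AA moins}, it is isomorphic to $K\Ab^+\otimes V_\chi$, and $\Ab^+\simeq \kb[\CCB]\otimes\kb[V]^\cow$ is free of rank $|W|$ over $\kb[\CCB]$); on the other, $K\MCov(\chi)$ is indecomposable as a $K\Hbov$-module with unique simple quotient $\LCov_K(\chi)$ (Proposition \ref{verma}).

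First I would establish existence: I pick $z\in Z$, write $\kappa=\O_\chi^K(z)$, and consider the endomorphism $u$ of the finite-dimensional $K$-vector space $K\MCov(\chi)$ given by multiplication by $z-\kappa$. Because $z$ is central in $\Hb$, hence in $\Hbov$, $u$ commutes with the action of $K\Hbov$; in particular every generalized eigenspace of $u$ is a $K\Hbov$-submodule. Decomposing $K\MCov(\chi)$ into generalized eigenspaces of $u$ and invoking indecomposability forces $u$ to have a single generalized eigenvalue $\lambda\in\overline{K}$, and this decomposition lives over $K$ (the characteristic polynomial of $u$ is a power of $(T-\lambda)$ and has coefficients in $K$, hence $\lambda\in K$). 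Passing to the unique simple quotient $\LCov_K(\chi)$, where $z-\kappa$ acts as the scalar $\O_\chi^K(z)-\kappa=0$, shows $\lambda=0$, whence $u$ is nilpotent. This gives the existence half.

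Uniqueness is the easy half: if $\kappa'\in K$ is such that $z-\kappa'$ acts nilpotently on $K\MCov(\chi)$, then it acts nilpotently on the simple quotient $\LCov_K(\chi)$, where it acts as the scalar $\O_\chi^K(z)-\kappa'$; since $\LCov_K(\chi)\neq 0$, this scalar must vanish, so $\kappa'=\O_\chi^K(z)$.

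There is no real obstacle; the only point requiring a little care is to argue that the generalized eigenvalue $\lambda$ of the commuting operator $u$ lies in $K$ and not merely in an algebraic closure, which I handle via the characteristic polynomial as indicated. Everything else is a direct consequence of the indecomposability statement in Proposition \ref{verma} together with the PBW-type identification \ref{AA moins} that guarantees finite $K$-dimension.
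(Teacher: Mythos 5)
Your proof is correct and fills in exactly the argument the paper alludes to: the paper offers no detailed proof, merely asserting that the lemma "d\'ecoule imm\'ediatement de la proposition~\ref{verma}," and your argument (finite $K$-dimension from the PBW identification, indecomposability and the unique simple quotient from Proposition~\ref{verma}, the central element $z-\kappa$ having a single generalized eigenvalue which must be $0$ because of its action on $\LCov_K(\chi)$) is the natural unwinding of that reference. A marginally slicker phrasing of the existence half avoids generalized eigenvalues entirely: since $K\MCov(\chi)$ is finite-dimensional and indecomposable, Fitting's lemma applied to the $K\Hbov$-endomorphism given by $z-\O_\chi^K(z)$ says it is either an automorphism or nilpotent, and it kills the nonzero quotient $\LCov_K(\chi)$, so it cannot be an automorphism. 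This sidesteps the (minor, and correctly handled) worry about whether the eigenvalue lives in $K$ or $\overline{K}$.
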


\bigskip

\begin{coro}\label{theta omega}
$\O_\chi^K = \th_K \circ \O_\chi$.
\end{coro}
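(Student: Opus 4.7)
The plan is to bridge the two instances of Lemma~\ref{omega nilpotent} (one for the field $\Kbov$, one for the field $K$) via the observation that the nilpotency identity controlling $\O_\chi(z)$ already holds over $\kb[\CCB]$ itself. First I would apply Lemma~\ref{omega nilpotent} in the case $K=\Kbov$: there exists an integer $N \ge 1$ such that $(z-\O_\chi(z))^N = 0$ as an endomorphism of $\Kbov\MCov(\chi)$.

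Next, I would upgrade this nilpotency from $\Kbov$ to $\kb[\CCB]$. From the isomorphism $\MCov(\chi) \simeq \Ab^+ \otimes_\kb V_\chi$ of~(\ref{AA moins}) together with the freeness of $\Ab^+$ over $\kb[\CCB]$ recorded in~(\ref{dim A}), the $\kb[\CCB]$-module $\MCov(\chi)$ is free of finite rank $|W|\chi(1)$. Consequently the canonical map $\MCov(\chi) \injto \Kbov\MCov(\chi)$ is injective, so the $\kb[\CCB]$-linear endomorphism $(z-\O_\chi(z))^N$ of $\MCov(\chi)$, whose $\Kbov$-extension is zero, must itself vanish. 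Thus $(z-\O_\chi(z))^N=0$ already as an endomorphism of $\MCov(\chi)$.

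Finally, base change along $\th_K : \kb[\CCB] \to K$ yields $K\MCov(\chi)=K\otimes_{\kb[\CCB]}\MCov(\chi)$, and the action of $z - \th_K(\O_\chi(z))$ on $K\MCov(\chi)$ is, by definition of the $\Pba$-algebra structure and by $K$-linearity, the scalar extension of the action of $z-\O_\chi(z)$ on $\MCov(\chi)$. Hence $(z-\th_K(\O_\chi(z)))^N=0$ on $K\MCov(\chi)$, so $z-\th_K(\O_\chi(z))$ is nilpotent. A second application of Lemma~\ref{omega nilpotent}, this time for $K$, gives $\O_\chi^K(z) = \th_K(\O_\chi(z))$, as claimed. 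There is no real obstacle here beyond the mild verification that nilpotency descends from $\Kbov$ to $\kb[\CCB]$, which is immediate from the freeness of $\MCov(\chi)$ over $\kb[\CCB]$; the heart of the argument is that $\O_\chi$ was shown to take values in $\kb[\CCB]$ precisely so that this kind of specialization argument works unambiguously.
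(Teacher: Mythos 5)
Your proof is correct. The paper states the corollary with no proof, treating it as immediate from Lemma~\ref{omega nilpotent}; what you wrote is precisely the natural filling-in of that omitted argument. The key mechanics are all in order: the freeness of $\MCov(\chi)$ over $\kb[\CCB]$ (via~(\ref{AA moins}) and~(\ref{dim A})) ensures nilpotency of $z-\O_\chi(z)$ descends from $\Kbov\MCov(\chi)$ to $\MCov(\chi)$ itself, and the definition $K\MCov(\chi)=K\otimes_{\Pba}\MCov(\chi)$ with $K$ made into a $\Pba$-algebra via $\th_K$ makes the base-change step transparent, turning multiplication by the scalar $\O_\chi(z)\in\kb[\CCB]$ into multiplication by $\th_K(\O_\chi(z))\in K$. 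The two applications of Lemma~\ref{omega nilpotent}, once for $\Kbov$ (the case $\CG=0$) and once for the given $K$, are both legitimate since the lemma holds uniformly in $(\CG,K)$, and the uniqueness clause of the lemma closes the argument.
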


\bigskip

L'\'el\'ement d'Euler est un \'el\'ement particulier de $Z$. La formule, bien 
connue~\cite[\S 3.1(4)]{ggor}, d\'ecrivant son image par $\O_\chi$ est donn\'ee par la 
proposition suivante, dont nous rappelons une preuve pour le cas g\'en\'erique~:

\bigskip

\begin{prop}\label{action euler verma}
 Si $\chi \in \Irr(W)$, alors
\begin{eqnarray*}
\O_\chi(\euler) &=& \DS{\frac{1}{\chi(1)} 
\sum_{s \in \Ref(W)} \e(s)\chi(s)~C_s\quad \in \kb[\CCB]}\\
&=& \DS{\sum_{(\O,j) \in \Omeb_W^\circ} ~\frac{m_{\O,-j}^\chi |\O|e_\O}{\chi(1)}}\cdot K_{\O,j} .
\end{eqnarray*}
\end{prop}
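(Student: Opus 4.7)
Le plan est d'exploiter le fait que $\euler$ est central dans $\Hb$ (donc dans $\Hbov$) pour ramener le calcul de $\O_\chi(\euler)$ \`a l'\'etude de l'action de $\euler$ sur le sous-$\kb W$-module $1\otimes V_\chi \subset \MCov(\chi)$ (plong\'e gr\^ace au morphisme $i_{V_\chi}$ de la section~\ref{subsection:spe-hbar}). L'\'ecriture
$$\euler=\sum_{i=1}^n x_i y_i + \sum_{s\in\Ref(W)} \e(s)\,C_s\,s$$
est ici la plus commode, car les \'el\'ements $y_i\in V$ sont dans $\Ab^-$ et en $\ZM$-degr\'e strictement n\'egatif, donc appartiennent \`a $\Jbov^\moins$ et agissent par z\'ero sur $V_\chi^{(-)}$.

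Premi\`erement, je calculerais l'action sur $1\otimes v$ pour $v\in V_\chi$. Puisque $y_i\in\Hbov^\moins$, on a $x_iy_i\otimes v=x_i\otimes (y_i\cdot v)=0$ dans $\MCov(\chi)$, tandis que $s\in W\subset\Hbov^\moins$ agit par $\rho_\chi(s)$. Ainsi
$$\euler\cdot(1\otimes v)=1\otimes\Bigl(\sum_{s\in\Ref(W)}\e(s)C_s\,\rho_\chi(s)\Bigr)(v).$$
Deuxi\`emement, j'observerais que la fonction $s\mapsto \e(s)C_s$ (pour $s\in\Ref(W)$) \'etant constante sur les classes de conjugaison, l'\'el\'ement $\sum_s\e(s)C_s\,s$ est central dans $\kb[\CCB]W$, donc agit sur le module irr\'eductible $V_\chi$ par un scalaire, \`a savoir
$$\frac{1}{\chi(1)}\sum_{s\in\Ref(W)}\e(s)\chi(s)\,C_s.$$
Puisque $\euler$ est central dans $\Hbov$, ce scalaire co{\"\i}ncide avec $\O_\chi(\euler)$ d'apr\`es le lemme~\ref{omega nilpotent} (ou directement par le fait que $\euler$ agit comme un scalaire sur $\LCov_K(\chi)$, qui est un quotient de $\MCov(\chi)$ dont le sous-espace $1\otimes V_\chi$ n'est pas tu\'e). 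Ceci d\'emontre la premi\`ere \'egalit\'e.

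Pour la seconde \'egalit\'e, je r\'e\'ecrirais la somme dans la base $(K_{\O,j})$. \`A l'aide de la formule $C_{s_H^i}=\sum_{j=0}^{e_H-1}\z_{e_H}^{i(j-1)}K_{H,j}$, de $\e(s_H^i)=\z_{e_H}^i$, et du regroupement selon les orbites d'hyperplans, on obtient
$$\sum_{s\in\Ref(W)}\e(s)\chi(s)C_s=\sum_{H\in\AC}\sum_{j=0}^{e_H-1}K_{H,j}\Bigl(\sum_{i=1}^{e_H-1}\z_{e_H}^{ij}\chi(s_H^i)\Bigr).$$
La d\'ecomposition $\Res_{W_H}^W\chi=\sum_{k=0}^{e_H-1} m_{H,k}^\chi\det^k$ donne $\chi(s_H^i)=\sum_k m_{H,k}^\chi\z_{e_H}^{ik}$, d'o\`u par orthogonalit\'e
$$\sum_{i=0}^{e_H-1}\z_{e_H}^{ij}\chi(s_H^i)=e_H\,m_{H,-j}^\chi.$$
Retrancher le terme $i=0$ fait appara{\^\i}tre $\chi(1)$, mais ce terme dispara{\^\i}t apr\`es sommation en $j$ gr\^ace \`a l'identit\'e $\sum_{j=0}^{e_H-1}K_{H,j}=0$ (cons\'equence de $C_1=0$). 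En regroupant les hyperplans par $W$-orbites, la formule annonc\'ee en d\'ecoule.

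L'unique point l\'eg\`erement d\'elicat est cette derni\`ere manipulation combinatoire; tout le reste est formel et repose sur la centralit\'e de $\euler$ et la structure de $\MCov(\chi)$ d\'ej\`a \'etablie.
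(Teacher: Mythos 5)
Votre preuve est correcte et suit essentiellement la m\^eme d\'emarche que celle du m\'emoire~: expression $\euler=\sum x_iy_i+\sum_s\e(s)C_ss$, centralit\'e pour d\'eplacer $\euler$ \`a travers le produit tensoriel, annulation des $y_i\in\Jbov^\moins$ sur $V_\chi^{(-)}$, puis action scalaire de l'\'el\'ement central $\sum_s\e(s)C_ss$ de $\kb[\CCB]W$. La seule diff\'erence est que le m\'emoire exp\'edie la seconde \'egalit\'e par ``des manipulations \'el\'ementaires'', tandis que vous d\'etaillez le calcul combinatoire (changement de base $C_s\to K_{\O,j}$, orthogonalit\'e des caract\`eres de $W_H$, disparition du terme $\chi(1)$ gr\^ace \`a $\sum_j K_{H,j}=0$), ce qui est exactement la bonne fa\c{c}on de compl\'eter l'argument.
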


\begin{proof}
Rappelons (voir la section~\ref{section:eulertilde}) que, si $(x_1,\dots,x_n)$ d\'esigne une $\kb$-base de $V^*$ et si 
$(y_1,\dots,y_n)$ d\'esigne sa base duale, alors 
$$\euler=\sum_{i=1}^n x_i y_i + \sum_{s \in \Ref(W)} \e(s)C_ss.$$
Soit $h \otimes_{(\Kbov\Hbov^\moins)} v \in (\Kbov\Hbov) \otimes_{(\Kbov\Hbov^\moins)} (\Kbov V_\chi)^{(-)}=
\MCov(\chi)$. Alors 
$$\euler \cdot (h \otimes_{(\Kbov\Hbov^\moins)} v) = (\euler  h) \otimes_{(\Kbov\Hbov^-)} v =
h\euler \otimes_{(\Kbov\Hbov^-)} v .$$
Mais $y_i \in \Hbov^-$, donc 
$$\euler \cdot (h \otimes_{(\Kbov\Hbov^\moins)} v)= h \otimes_{(\Kbov\Hbov^\moins)} 
\Bigl( \sum_{s \in \Ref(W)} \e(s)C_s s \cdot v \Bigr).$$
Mais $\sum_{s \in \Ref(W)} \e(s)C_s s$ agit sur $\Kbov V_\chi$ par multiplication par 
$$\frac{1}{\chi(1)} \sum_{s \in \Ref(W)} \e(s)\chi(s)~C_s.$$
D'o\`u la premi\`ere \'egalit\'e.

La deuxi\`eme d\'ecoule imm\'ediatement de la d\'efinition des $K_{H,i}$ et 
de manipulations \'el\'ementaires.
\end{proof}

\bigskip

Il d\'ecoule de la preuve de la proposition~\ref{action euler verma} que~:

\bigskip

\begin{coro}\label{coro:action-euler-verma}
Si $m \in K\MCov(\chi)$, alors $\euler \cdot m=\O_\chi^K(\euler) m$.
\end{coro}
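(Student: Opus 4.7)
Mon plan est d'observer que la preuve de la proposition~\ref{action euler verma} contient d\'ej\`a, essentiellement, l'assertion du corollaire. Il suffit donc de reprendre ce calcul en se pla\c{c}ant sur un \'el\'ement $m$ arbitraire de $K\MCov(\chi)$ plut\^ot que sur un vecteur particulier de l'image de $V_\chi$.

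Concr\`etement, $K\MCov(\chi)=K\Hbov\otimes_{K\Hbov^\moins}(KV_\chi)^{(-)}$ est engendr\'e, comme $K$-espace vectoriel, par les \'el\'ements de la forme $h\otimes v$ avec $h\in K\Hbov$ et $v\in KV_\chi^{(-)}$~; il suffit donc de d\'emontrer l'\'egalit\'e $\euler\cdot(h\otimes v)=\O_\chi^K(\euler)(h\otimes v)$. Le point-cl\'e est la centralit\'e de $\euler$ dans $\Hb$ (voir~\ref{eq:euler-central}), qui permet d'\'ecrire $\euler\cdot(h\otimes v)=h\euler\otimes v$. En utilisant la d\'ecomposition $\euler=\sum_i x_iy_i+\sum_{s\in\Ref(W)}\e(s)C_s s$ rappel\'ee \`a la section~\ref{section:eulertilde}, on coupe $\euler$ en deux morceaux. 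La partie $\sum_s\e(s)C_ss$ appartient \`a $\Ab^0\subset\Hbov^\moins$, donc traverse le produit tensoriel~; sur $v$, elle agit par le scalaire $\frac{1}{\chi(1)}\sum_{s}\e(s)\chi(s)C_s$, qui n'est autre que $\O_\chi(\euler)$ d'apr\`es la proposition~\ref{action euler verma}. La partie $\sum_i x_iy_i$ donne quant \`a elle z\'ero, car $y_i\in\Hbov^\moins$ et $v\in V_\chi^{(-)}$ est annul\'e par $\Jbov^\plus$ et $\Jbov^\moins$, de sorte que $hx_iy_i\otimes v=hx_i\otimes y_iv=0$.

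En regroupant, on obtient $\euler\cdot(h\otimes v)=h\otimes\O_\chi^K(\euler)\,v=\O_\chi^K(\euler)(h\otimes v)$, la derni\`ere \'egalit\'e provenant du fait que $\O_\chi^K(\euler)\in K$ est un scalaire central (voir le corollaire~\ref{theta omega} pour l'identification avec $\th_K(\O_\chi(\euler))$). Par $K$-lin\'earit\'e, on conclut que l'\'egalit\'e $\euler\cdot m=\O_\chi^K(\euler)m$ vaut pour tout $m\in K\MCov(\chi)$.

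L'argument ne pr\'esente pas d'obstacle r\'eel~: l'unique petite v\'erification est celle de la manipulation du facteur $\sum x_iy_i$, qui se ram\`ene \`a utiliser simultan\'ement $y_i\in\Hbov^\moins$ (pour franchir le produit tensoriel) et l'annulation de $y_i$ sur $V_\chi^{(-)}$. C'est essentiellement la m\^eme \'etape que celle d\'ej\`a men\'ee dans la preuve de la proposition~\ref{action euler verma}, et c'est pr\'ecis\'ement pour cette raison que l'\'enonc\'e est pr\'esent\'e comme un corollaire d\'ecoulant de cette preuve plut\^ot que de l'\'enonc\'e final de la proposition.
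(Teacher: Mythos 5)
Your argument is correct and follows exactly the paper's own reasoning: the paper's proof of the corollary is a single line stating that it follows from the proof of Proposition~\ref{action euler verma}, and the computation you write out (centrality of $\euler$, the split $\euler=\sum_ix_iy_i+\sum_s\e(s)C_ss$, passing the $\Hbov^\moins$-part across the tensor product, killing $\sum_i x_iy_i$ because $y_i$ annihilates $V_\chi^{(-)}$) is precisely that proof applied to a general tensor $h\otimes v$. One small wording slip: $\Jbov^\plus$ is an ideal of $\Hbov^\plus$ and does not act on the $\Hbov^\moins$-module $V_\chi^{(-)}$, so invoking it is not meaningful, but since you only actually use the annihilation by $\Jbov^\moins$, the argument is unaffected.
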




\bigskip

\section{Familles de Calogero-Moser}\label{section:familles CM} 

\medskip


\subsection{D\'efinition}  
Si $b \in \blocs(\Zrm(K\Hbov))$, on notera $\Irr_\Hb(W,b)$   \indexnot{I}{\Irr_\Hb(W,b)}  l'ensemble des caract\`eres 
irr\'eductibles $\chi$ de $W$ tels que $\LCov_K(\chi)$ appartienne \`a $\Irr K\Hbov b$. 
Il d\'ecoule de la proposition~\ref{muller} que
$$\blocs(K\Zba)=\blocs(\Zrm(K\Hbov)).$$
Ainsi,
\equat\label{cm familles}
\Irr(W) = \coprod_{b \in \blocs(K\Zba)} \Irr_\Hb(W,b).
\endequat
On appelle {\it $\CG$-famille de Calogero-Moser} (ou {\it $K$-famille de Calogero-Moser}) 
toute partie de $\Irr W$ de la forme 
$\Irr_\Hb(W,b)$, o\`u $b \in \blocs(K\Zba)$. 
Le lemme suivant d\'ecoule du corollaire~\ref{coro:r-blocs} 
et de la remarque~\ref{deployee ou pas}~:

\bigskip

\begin{lem}\label{caracterisation blocs CM}
Soient $\chi$, $\chi' \in \Irr W$. Alors $\chi$ et $\chi'$ sont dans la 
m\^eme $\CG$-famille de Calogero-Moser si et seulement si $\O_\chi^\CG=\O_{\chi'}^\CG$.
D'autre part, l'application
\equat\label{...}
\fonction{\Th_\CG}{\Irr W}{\Upsilon^{-1}(\pGba_\CG)}{\chi}{\Ker~\O_\chi^\CG}   \indexnot{ty}{\Th_\CG}
\endequat
est surjective et ses fibres sont les $\CG$-familles de Calogero-Moser.
\end{lem}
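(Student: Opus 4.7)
The plan is to split the statement into two parts: the equivalence of family-membership with coincidence of central characters, and the identification of the image of $\Th_\CG$ with $\Upsilon^{-1}(\pGba_\CG)$ together with surjectivity. Both parts reduce, via the $\kb[\CCB]$-algebra morphism $\Omega_\chi : Z \to \kb[\CCB]$ and its specialization $\Omega_\chi^\CG = \th_K \circ \Omega_\chi$, to standard facts about blocks of the finite-dimensional $K$-algebra $K\Hbov$ and its center.

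For the first equivalence, by definition two irreducible characters $\chi$ and $\chi'$ lie in the same $\CG$-family of Calogero-Moser if and only if $\LCov_K(\chi)$ and $\LCov_K(\chi')$ are in the same block of $K\Hbov$. I would invoke Proposition~\ref{muller} to identify $\Zrm(K\Hbov)$ with $K\Zba$, so that the primitive central idempotents of $K\Hbov$ lie in $K\Zba$. By Corollary~\ref{coro:r-blocs}, two simple modules lie in the same block of a split finite-dimensional algebra if and only if their central characters on the center coincide. The central character of $\LCov_K(\chi)$ restricted to $Z$ is precisely $\Omega_\chi^K = \th_K \circ \Omega_\chi$ by Corollary~\ref{theta omega}, which equals $\Omega_\chi^\CG$ in the sense that it is the composition of $\Omega_\chi^\CG : Z \to \Kbov_\CG$ with the injection $\Kbov_\CG \hookrightarrow K$. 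Hence the two central characters coincide if and only if $\Omega_\chi^\CG = \Omega_{\chi'}^\CG$, as claimed.

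For the second part, I would first show that $\Th_\CG(\chi) = \Ker\Omega_\chi^\CG$ is a prime ideal of $Z$ lying over $\pGba_\CG$. Primeness is immediate since $\Omega_\chi^\CG$ takes values in a field. Contraction to $P$: since $\Omega_\chi$ is $\kb[\CCB]$-linear and $\pGba$ is killed in $\Hbov = \Hb/\pGba\Hb$, the map $\Omega_\chi|_P$ is the projection $P \twoheadrightarrow \kb[\CCB]$ with kernel $\pGba$; composing with $\th_K$ (whose kernel is $\CG$) gives $\Omega_\chi^\CG|_P$ with kernel exactly $\pGba + \CG P = \pGba_\CG$, so $\Th_\CG(\chi) \in \Upsilon^{-1}(\pGba_\CG)$. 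For surjectivity, I would identify $\Upsilon^{-1}(\pGba_\CG)$ with $\Specmax(\Kbov_\CG \otimes_P Z) = \Specmax(\Kbov_\CG\Zba)$ (this is the usual bijection between primes of $Z$ lying over $\pGba_\CG$ and maximal ideals of the finite $\Kbov_\CG$-algebra $\Kbov_\CG\Zba$, valid since $\Zba_\CG$ is finite over the integral domain $\Pba_\CG$). Any maximal ideal $\mG$ of $\Kbov_\CG\Zba$ corresponds to a primitive idempotent (block) $b$ of this commutative Artinian algebra; by Proposition~\ref{muller}, $b$ is a block of $\Zrm(\Kbov_\CG\Hbov)$, hence of $\Kbov_\CG\Hbov$. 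Since $\Kbov_\CG\Hbov$ is split over $\Kbov_\CG$ (Proposition~\ref{verma}) and every block contains a simple module $\LCov_{\Kbov_\CG}(\chi)$ for some $\chi$ (using Remark~\ref{deployee ou pas}), the central character of that simple coincides with the quotient $\Kbov_\CG\Zba \twoheadrightarrow \Kbov_\CG\Zba/\mG \iso \Kbov_\CG$, giving $\Ker\Omega_\chi^\CG = \mG$.

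The argument is essentially formal once the cited results are in place; the only potentially delicate point is the splitting of $\Kbov_\CG\Hbov$ ensuring that every maximal ideal of $\Kbov_\CG\Zba$ has residue field equal to $\Kbov_\CG$ (so that it truly arises as the kernel of some $\Omega_\chi^\CG$ rather than merely of a character valued in a proper extension). This is precisely where Proposition~\ref{verma} is crucial. All remaining verifications are contraction-of-ideal computations and direct applications of the already-established equivalence of blocks with maximal ideals of the center.
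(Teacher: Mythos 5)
Your proof is correct, and it follows essentially the same route as the paper: the paper gives no argument, only the citation of Corollaire~\ref{coro:r-blocs} et Remarque~\ref{deployee ou pas}, and what you write is a careful unpacking of exactly that machinery (M\"uller's theorem to identify $\blocs(\Kbov_\CG\Zba)$ with $\blocs(\Zrm(\Kbov_\CG\Hbov))$, the splitness of $\Kbov_\CG\Hbov$ from Proposition~\ref{verma}, the bijection between $\Upsilon^{-1}(\pGba_\CG)$ and $\Specmax(\Kbov_\CG\Zba)$, and the contraction computation $\Ker(\O_\chi^\CG)\cap P=\pGba_\CG$). You correctly single out the key point — that splitness over $\Kbov_\CG$ ensures every maximal ideal of $\Kbov_\CG\Zba$ has residue field $\Kbov_\CG$ and hence arises as some $\Ker\O_\chi^\CG$ — which is indeed where Proposition~\ref{verma} does its work.
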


\bigskip

\begin{coro}\label{Q extention kc}
Si $\zG$ est un id\'eal premier de $Z$ au-dessus de $\pGba_\CG$, alors l'inclusion $P \injto Z$ 
induit un isomorphisme $P/\pGba_\CG \longiso Z/\zG$.
\end{coro}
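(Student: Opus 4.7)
The plan is to reduce this to a direct computation of the central character $\O_\chi$ restricted to $P$, exploiting the fact that every prime $\zG$ of $Z$ lying above $\pGba_\CG$ arises as the kernel of some $\O_\chi^\CG$.

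First I would observe that, by Lemma \ref{caracterisation blocs CM}, the map $\Th_\CG : \Irr(W) \to \Upsilon^{-1}(\pGba_\CG)$, $\chi \mapsto \Ker \O_\chi^\CG$ is surjective. Since $Z$ is integral over $P$, the prime ideals of $Z$ above $\pGba_\CG$ are exactly the elements of $\Upsilon^{-1}(\pGba_\CG)$. So it suffices to treat the case $\zG = \Ker \O_\chi^\CG$ for some $\chi \in \Irr(W)$.

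Next, the key computation is that the restriction of $\O_\chi : Z \to \kb[\CCB]$ to the subalgebra $P$ is precisely the canonical projection $P \twoheadrightarrow P/\pGba \cong \kb[\CCB] = \Pba$. To see this, recall that $\MCov(\chi)$ is a module over the restricted algebra $\Hbov = \Hb/\pGba\Hb$, so every element of $\pGba$ acts by zero on $\MCov(\chi)$. Therefore, for $p \in P$, the central element $p$ acts on $\MCov(\chi)$ as multiplication by its image $p \mod \pGba$ in $\Pba = \kb[\CCB]$, and so $\O_\chi(p) = p \mod \pGba$ by the defining property (Lemma \ref{omega nilpotent}). By Corollary \ref{theta omega}, $\O_\chi^\CG = \th_\CG \circ \O_\chi$, and composing with $\kb[\CCB] \twoheadrightarrow \kb[\CCB]/\CG \cong P/\pGba_\CG$ we conclude that the restriction of $\O_\chi^\CG$ to $P$ is the canonical surjection $P \twoheadrightarrow P/\pGba_\CG$.

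From this, the map $\O_\chi^\CG : Z \to P/\pGba_\CG$ is surjective (already its restriction to $P$ is), with kernel $\zG = \Ker \O_\chi^\CG$, yielding an isomorphism $Z/\zG \stackrel{\sim}{\to} P/\pGba_\CG$. Composing the inclusion $P \hookrightarrow Z$ with the quotient $Z \twoheadrightarrow Z/\zG$ gives precisely the canonical projection $P \twoheadrightarrow P/\pGba_\CG$ under this identification, which proves that the inclusion $P \hookrightarrow Z$ induces the desired isomorphism $P/\pGba_\CG \stackrel{\sim}{\to} Z/\zG$.

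There is essentially no serious obstacle here: the result is a direct consequence of the surjectivity part of Lemma \ref{caracterisation blocs CM} combined with the observation that $\O_\chi|_P$ is the reduction modulo $\pGba$. The only point requiring care is checking that elements of $\pGba$ indeed act as zero on baby Verma modules, which follows from the construction of $\Hbov$ as $\Hb/\pGba\Hb$.
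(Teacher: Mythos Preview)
Your proof is correct and follows essentially the same approach as the paper's: both use the surjectivity in Lemma~\ref{caracterisation blocs CM} to write $\zG = \Ker \O_\chi^\CG$, then observe that $\O_\chi^\CG$ factors as a surjection $Z \twoheadrightarrow P/\pGba_\CG$ (the paper invokes Corollary~\ref{theta omega} directly, while you spell out why $\O_\chi|_P$ is the canonical projection). Your added detail about $\pGba$ acting by zero on $\MCov(\chi)$ is exactly what underlies the paper's assertion that $\O_\chi$ is a morphism of $\kb[\CCB]$-algebras.
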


\begin{proof}
D'apr\`es le lemme~\ref{caracterisation blocs CM}, il existe $\chi \in \Irr(W)$ tel que 
$\zG=\Ker(\O_\chi^\CG)$. Or, d'apr\`es le corollaire~\ref{theta omega}, 
$\O_\chi^\CG : Z \to K$ se factorise \`a travers 
un morphisme surjectif $Z \to P/\pGba_\CG$, donc $Z/\zG \simeq P/\pGba_\CG$. 
\end{proof}

\bigskip

\begin{exemple}\label{cm generique}
On appelle {\it famille de Calogero-Moser g\'en\'erique} toute 
$\CG$-famille de Calogero-Moser, o\`u $\CG=0$. Dans ce cas, l'application $\Th_\CG$ sera simplement 
not\'ee $\Th$.  \indexnot{ty}{\Th}  
Toute $\CG$-famille de Calogero-Moser est une union de familles de Calogero-Moser g\'en\'eriques.\finl
\end{exemple}

\bigskip

\begin{exemple}\label{cm c}
Si $c \in \CCB$, on appelle {\it $c$-famille de Calogero-Moser} 
toute $\CG_c$-famille de Calogero-Moser. Dans ce cas, $\O_\chi^{\CG_c}$ sera not\'ee 
$\O_\chi^c$   \indexnot{oz}{\O_\chi^c}  et $\Th_{\CG_c}$ sera  \indexnot{ty}{\Th_c}  not\'ee $\Th_c$.\finl
\end{exemple}

\bigskip

Soit $b \in \blocs(K\Zba)$. Nous noterons $\O_b^K : Z \to K$  \indexnot{oz}{\O_b^K, \O_b,\O_b^c}  le morphisme de 
$\kb[\CCB]$-alg\`ebres $\O_\chi^K$, o\`u $\chi$ est n'importe quel caract\`ere 
dans $\Irr_\Hb(W,b)$. Si $\CG=0$ (i.e. si $K=\kb(\CCB)$), alors $\O_b^K$ sera not\'e $\O_b$ tandis que, 
si $\CG=\CG_c$ (avec $c \in \CCB$), alors $\O_b^K$ sera not\'e $\O_b^c$. 

\bigskip

\section{Caract\`eres lin\'eaires et familles de Calogero-Moser}\label{section:lineaire CM}

\medskip

%
%
%

Si $M$ est un $\Hbov$-module et si 
$\t \in \kb^\times \times \kb^\times \times (W^\wedge \rtimes \NC)$, 
nous noterons $\lexp{\t}{M}$ 
le $\Hbov$-module obtenu de la fa\c{c}on suivante. 
Comme $\kb$-espace vectoriel, $\lexp{\t}{M} \simeq M$ (nous noterons 
$M \to \lexp{\t}{M}$, $m \mapsto \lexp{\t}{m}$ l'isomorphisme en question). 
Si $h \in \Hbov$ et $m \in M$, alors 
$$h \cdot \lexp{\t}{m} = \lexp{\t}{\bigl(\lexp{\t^{-1}}{h} \cdot m\bigr)}.$$
Cela nous d\'efinit un foncteur 
$$\t : \Hbov\module \longto \Hbov\module$$
et, plus g\'en\'eralement, une action de 
$\kb^\times \times \kb^\times \times (W^\wedge \rtimes \NC)$ sur la cat\'egorie 
$\Hbov\module$. De m\^eme, on peut d\'efinir un foncteur
$$\t : \Ab^0\module \longto \Ab^0\module$$
et une action de $\kb^\times \times \kb^\times \times (W^\wedge \rtimes \NC)$ sur la cat\'egorie 
$\Ab^0\module$. Le lemme~\ref{tout stable} montre que le diagramme 
\equat\label{M lineaire}
\diagram
\Ab^0\module \rrto^{\DS{\MC}} \ddto_{\DS{\t}} && \Hbov\module 
\ddto^{\DS{\t}}\\
&& \\
\Ab^0\module \rrto_{\DS{\MC}} && \Hbov\module 
\enddiagram
\endequat
est commutatif. La proposition suivante est imm\'ediate~:

\bigskip

\begin{prop}\label{tau M}
Si $\chi \in \Irr(W)$ et si 
$\t=(\xi,\xi',\g \rtimes g) \in \kb^\times \times \kb^\times \times (W^\wedge \rtimes \NC)$, alors 
$$\lexp{\t}{\MCov(\chi)} \simeq \MCov(\lexp{g}{\chi} \g^{-1}).$$
\end{prop}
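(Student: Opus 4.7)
L'outil central est la commutativit\'e du diagramme~(\ref{M lineaire}), qui fournit d\'ej\`a un isomorphisme naturel de foncteurs $\t\circ\MCov\simeq \MCov\circ\t$ de $\Ab^0\module$ vers $\Hbov\module$. Il suffit donc de d\'emontrer un isomorphisme de $\Ab^0$-modules
\[
\lexp{\t}{(V_\chi^{(-)})}\ \simeq\ V_{\lexp{g}{\chi}\g^{-1}}^{(-)},
\]
\emph{dans le sens $\kb$-lin\'eaire}, c'est-\`a-dire un isomorphisme $\kb$-lin\'eaire $\psi$ d'espaces vectoriels sous-jacents qui intertwine l'action twist\'ee par $\t^{-1}$ sur la source avec l'action standard sur le but.

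Le premier pas est d'expliciter l'action de $\t^{-1}$ sur les g\'en\'erateurs de $\Ab^0=\kb[\CCB]\otimes\kb W$. Si $\t=(\xi,\xi',\g\rtimes g)$, les formules de la section~\ref{subsection:compilation-1} donnent~: pour $s\in\Ref(W)$, $\t(C_s)=\xi\xi'\g(gsg^{-1})^{-1}C_{gsg^{-1}}$, d'o\`u $\t^{-1}(C_s)=(\xi\xi')^{-1}\g(s)C_{g^{-1}sg}$~; et pour $w\in W$, $\t(w)=\g(gwg^{-1})\cdot gwg^{-1}$, d'o\`u $\t^{-1}(w)=\g(w)^{-1}\, g^{-1}wg$ (en utilisant que $\g$ est constant sur les classes de conjugaison). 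En particulier, le caract\`ere de l'action tordue de $W$ sur l'espace sous-jacent $\kb[\CCB]\otimes V_\chi$ est $w\mapsto \g(w)^{-1}\chi(g^{-1}wg)=(\lexp{g}{\chi}\cdot\g^{-1})(w)$, ce qui rend plausible la formule annonc\'ee.

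Le deuxi\`eme pas est la construction explicite de $\psi$. On d\'efinit $\psi:\kb[\CCB]\otimes V_\chi\to\kb[\CCB]\otimes V_\chi$ par $\psi(f\otimes v)=\t_*(f)\otimes v$, o\`u $\t_*$ d\'esigne la restriction de $\t$ \`a $\kb[\CCB]$ (qui est bien un automorphisme $\kb$-lin\'eaire). C'est un isomorphisme $\kb$-lin\'eaire. Il reste alors \`a v\'erifier deux choses~: d'une part, pour tout $s\in\Ref(W)$, l'\'egalit\'e $\psi(\t^{-1}(C_s)\cdot(f\otimes v))=C_s\cdot\psi(f\otimes v)$, qui r\'esulte du calcul direct $\t_*(\t^{-1}(C_s)f)=C_s\t_*(f)$, compatibilit\'e tautologique \`a laquelle se ram\`ene tout le contenu du twist sur $\kb[\CCB]$~; d'autre part, pour tout $w\in W$, l'\'egalit\'e $\psi(\t^{-1}(w)\cdot(f\otimes v))=w\cdot\psi(f\otimes v)$, la seconde action \'etant celle de $V_{\lexp{g}{\chi}\g^{-1}}$ sur le second facteur, \'egalit\'e qui r\'esulte de la comparaison $\g(w)^{-1}\t_*(f)\otimes\rho(g^{-1}wg)v=\t_*(f)\otimes(\g^{-1}\cdot\lexp{g}{\rho})(w)v$.

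\noindent\textbf{Obstacle principal.} Il n'y a pas v\'eritablement de difficult\'e technique~: l'essentiel du travail consiste \`a d\'em\^eler les conventions, particuli\`erement dans le produit semi-direct $W^\wedge\rtimes\NC$ et dans le fait que $\t$ agit non trivialement sur l'anneau de coefficients $\kb[\CCB]$. Le point le plus d\'elicat \`a \'enoncer clairement est que l'isomorphisme $\psi$ n'est \emph{pas} $\kb[\CCB]$-lin\'eaire au sens standard mais seulement $\kb$-lin\'eaire, et qu'il faut absorber le twist de $\t$ sur $\kb[\CCB]$ en composant par $\t_*$~; une fois ce point \'elucid\'e, toutes les v\'erifications sont imm\'ediates.
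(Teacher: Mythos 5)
Votre preuve est correcte et suit exactement l'approche implicite du texte~: le texte qualifie la proposition d'\og imm\'ediate\fg\ \`a partir de la commutativit\'e du diagramme~(\ref{M lineaire}), et vous explicitez pr\'ecis\'ement les v\'erifications sous-jacentes (calcul de $\t^{-1}$ sur $\Ab^0$, identification du caract\`ere tordu, absorption du twist sur $\kb[\CCB]$ par $\t_*$). Une remarque mineure~: la parenth\`ese \og en utilisant que $\g$ est constant sur les classes de conjugaison\fg\ n'est pas vraiment utilis\'ee ici (d'autant que $gwg^{-1}$ n'est pas conjugu\'e \`a $w$ \emph{dans~$W$} pour $g\in\NC$ quelconque)~; le calcul direct de $\t^{-1}(w)=\g(w)^{-1}g^{-1}wg$ \`a partir de la formule du produit semi-direct s'en passe.
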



\bigskip

\begin{coro}\label{coro:action-tau-L}
Si $\chi \in \Irr(W)$ et si 
$\t=(\xi,\xi',\g \rtimes g) \in \kb^\times \times \kb^\times \times (W^\wedge \rtimes \NC)$ 
stabilise $\CG$, alors 
$$\lexp{\t}{\LCov_K(\chi)} \simeq \LCov_K(\lexp{g}{\chi}\g^{-1}).$$
\end{coro}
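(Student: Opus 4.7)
The plan is to derive the corollary directly from Proposition~\ref{tau M} by exploiting the fact that twisting by $\tau$ is an autoequivalence of the category of $\Hbov$-modules, and then to pass carefully to $K$-coefficients using the hypothesis that $\tau$ stabilises $\CG$.

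First I would reduce to the case $K=\Kbov_\CG$. By Remark~\ref{deployee ou pas}, the $\Kbov_\CG$-algebra $\Kbov_\CG\Hbov$ is already split, the idempotents live in $\Kbov_\CG\Zba$, and base change from $\Kbov_\CG$ to $K$ gives a bijection on simple modules; so the statement over $K$ follows from the corresponding statement over $\Kbov_\CG$. The hypothesis that $\tau$ stabilises $\CG$ is exactly what is needed to make this reduction compatible with the twist: the automorphism of $\kb[\CCB]$ induced by $\tau$ sends $\CG$ to $\CG$, hence descends to $\kb[\CCB]/\CG$ and extends uniquely to an automorphism of its fraction field $\Kbov_\CG$, and the functor $\tau$ on $\Hbov\module$ then promotes to a (semi-linear) autoequivalence of $\Kbov_\CG\Hbov\module$.

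Next I would apply Proposition~\ref{tau M}, which gives a $\Hbov$-linear isomorphism
$$\lexp{\tau}{\MCov(\chi)}\;\simeq\;\MCov(\lexp{g}{\chi}\,\gamma^{-1}).$$
Base-changing to $\Kbov_\CG$, this yields an isomorphism of $\Kbov_\CG\Hbov$-modules between $\lexp{\tau}{(\Kbov_\CG\MCov(\chi))}$ and $\Kbov_\CG\MCov(\lexp{g}{\chi}\gamma^{-1})$; here I am using the commutativity of the diagram~(\ref{M lineaire}) together with the fact that $\tau$ acts on $\Kbov_\CG$. Apply the functor $\lexp{\tau}{(-)}$ to the canonical surjection $\Kbov_\CG\MCov(\chi)\twoheadrightarrow\LCov_{\Kbov_\CG}(\chi)$ of Proposition~\ref{verma}. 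Since $\lexp{\tau}{(-)}$ is exact (indeed an autoequivalence with quasi-inverse $\lexp{\tau^{-1}}{(-)}$), the result is a surjection
$$\MCov(\lexp{g}{\chi}\gamma^{-1})\;\twoheadrightarrow\;\lexp{\tau}{\LCov_{\Kbov_\CG}(\chi)},$$
and the target is a simple $\Kbov_\CG\Hbov$-module because autoequivalences preserve simplicity. By the uniqueness part of Proposition~\ref{verma}, the unique simple quotient of $\MCov(\lexp{g}{\chi}\gamma^{-1})$ is $\LCov_{\Kbov_\CG}(\lexp{g}{\chi}\gamma^{-1})$, whence
$$\lexp{\tau}{\LCov_{\Kbov_\CG}(\chi)}\;\simeq\;\LCov_{\Kbov_\CG}(\lexp{g}{\chi}\gamma^{-1}).$$
Base-changing back to $K$ concludes the proof.

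The only delicate point—and the reason the hypothesis $\tau(\CG)=\CG$ is imposed—is the passage from an autoequivalence of $\Hbov\module$ to an autoequivalence of $(K\Hbov)\module$; since $\tau$ acts semi-linearly on $\kb[\CCB]$, one must know that the induced action descends to $\kb[\CCB]/\CG$, which is precisely the stabilisation hypothesis. Everything else is formal once Proposition~\ref{tau M} is invoked, so I do not expect any serious obstacle.
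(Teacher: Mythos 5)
The paper states this corollary without proof, and your argument supplies the natural one: twisting by $\t$ is an exact autoequivalence of $\Hbov\module$ preserving simple modules, Proposition~\ref{tau M} identifies the twisted baby Verma module, and the uniqueness of the simple quotient in Proposition~\ref{verma} then forces the identification of the twisted simple. Your observation that the hypothesis $\t(\CG)=\CG$ is precisely what allows the twist to descend to $\Kbov_\CG\Hbov$-modules is the key point; the only detail a fully written-out argument should make explicit is the canonical $\Hbov$-linear (but $\bar{\t}^{-1}$-semilinear over $\Kbov_\CG$) identification $\Kbov_\CG\otimes_\Pba\lexp{\t}{\MCov(\chi)}\simeq\lexp{\t}{\bigl(\Kbov_\CG\otimes_\Pba\MCov(\chi)\bigr)}$, which lets you pass from the isomorphism of Proposition~\ref{tau M} over $\Pba$ to the one you use over $\Kbov_\CG$.
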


\bigskip

\begin{coro}\label{omega lineaire}
Si $\chi \in \Irr(W)$ et si 
$\t=(\xi,\xi',\g \rtimes g) \in \kb^\times \times \kb^\times \times (W^\wedge \rtimes \NC)$, alors 
$$\O_\chi(\lexp{\t}{z})=\lexp{\t}{\bigl(\O_{\lexp{g}{\chi} \g^{-1}}(z)\bigr)}$$
pour tout $z \in Z$. 
\end{coro}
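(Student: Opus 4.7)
The plan is to derive the identity from the definition of $\O_\chi(z)$ as the scalar by which $z$ acts on the simple module $\LCov_K(\chi)$, combined with the identification $\lexp{\t}{\LCov_K(\chi)} \simeq \LCov_K(\lexp{g}{\chi}\g^{-1})$ provided by Corollary \ref{coro:action-tau-L}. The argument is essentially a direct computation, modulo one careful bookkeeping step.

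First, I would reduce to the case $K = \Kbov$. Indeed, both sides of the desired equality lie in $\Pba = \kb[\CCB]$ by integrality (Corollary \ref{theta omega}), and the scalars $\O_\chi(z)$ do not depend on the choice of extension $K \supseteq \Kbov$ (cf.\ Remark \ref{deployee ou pas}). Moreover, at the generic level $\CG = 0$, the hypothesis of Corollary \ref{coro:action-tau-L} that $\t$ stabilise $\CG$ is automatic, so this corollary applies unconditionally.

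Second, I would compute directly the action of $z \in Z$ on the twisted module $\lexp{\t}{\LCov_\Kbov(\chi)}$. By the defining formula for the twisted action, for any $m \in \LCov_\Kbov(\chi)$,
$$z \cdot \lexp{\t}{m} = \lexp{\t}{\bigl(\lexp{\t^{-1}}{z} \cdot m\bigr)} = \lexp{\t}{\bigl(\O_\chi(\lexp{\t^{-1}}{z}) \cdot m\bigr)},$$
since $\lexp{\t^{-1}}{z}$ lies in $Z$ (the sub-algebras $Z$ and $\Pba$ being stable under the action of $\kb^\times \times \kb^\times \times (W^\wedge \rtimes \NC)$) and thus acts on $\LCov_\Kbov(\chi)$ by the scalar $\O_\chi(\lexp{\t^{-1}}{z}) \in \Pba$. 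The key compatibility, which is immediate from the definition of the twisted module, is that for any $\l \in \Pba$ one has $\lexp{\t}{(\l\, m)} = \lexp{\t}{\l}\cdot \lexp{\t}{m}$ in $\lexp{\t}{\LCov_\Kbov(\chi)}$. Consequently, $z$ acts on $\lexp{\t}{\LCov_\Kbov(\chi)}$ by the scalar $\lexp{\t}{\O_\chi(\lexp{\t^{-1}}{z})}$.

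Third, by Corollary \ref{coro:action-tau-L}, the $\Kbov\Hbov$-module $\lexp{\t}{\LCov_\Kbov(\chi)}$ is isomorphic to $\LCov_\Kbov(\lexp{g}{\chi}\g^{-1})$, on which $z$ acts by the scalar $\O_{\lexp{g}{\chi}\g^{-1}}(z)$. Comparing the two expressions for the central scalar and then rearranging (applying $\lexp{\t^{-1}}{}$ and substituting $\lexp{\t}{z}$ for $z$) produces the claimed identity in $\Pba$. The only nontrivial step, and hence the heart of the proof, is the verification of the compatibility $\lexp{\t}{(\l\, m)} = \lexp{\t}{\l}\cdot \lexp{\t}{m}$ for $\l \in \Pba$; once this is granted, the rest is merely writing down the two natural descriptions of the central character of the twisted module and equating them.
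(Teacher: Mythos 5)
Your strategy---equate the two descriptions of the scalar action of $z$ on $\lexp{\t}{\LCov_{\Kbov}(\chi)}$ provided by Corollary \ref{coro:action-tau-L}---is the right one, and the compatibility $\lexp{\t}{(\l m)}=\lexp{\t}{\l}\cdot\lexp{\t}{m}$ that you isolate does the work. But the ``rearranging'' step you wave off is precisely where the argument does not close as written. The comparison gives
$$\lexp{\t}{\bigl(\O_\chi(\lexp{\t^{-1}}{z})\bigr)}=\O_{\lexp{g}{\chi}\g^{-1}}(z);$$
after applying $\lexp{\t^{-1}}{}$ and replacing $z$ by $\lexp{\t}{z}$ one obtains $\O_{\lexp{g}{\chi}\g^{-1}}(\lexp{\t}{z})=\lexp{\t}{\bigl(\O_\chi(z)\bigr)}$, and hence, after relabelling the irreducible character,
$$\O_\chi(\lexp{\t}{z})=\lexp{\t}{\bigl(\O_{\lexp{g^{-1}}{\chi}\cdot\lexp{g^{-1}}{\g}}(z)\bigr)}.$$
The index $\lexp{g^{-1}}{\chi}\cdot\lexp{g^{-1}}{\g}$ differs from the stated $\lexp{g}{\chi}\g^{-1}$ unless the bijection $\chi\mapsto\lexp{g}{\chi}\g^{-1}$ on $\Irr(W)$ is an involution (for instance $g=1$ and $\g^2=\unb_W$).

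You can check directly that your derived version is the correct one: take $g=1$, $\xi=\xi'=1$, $z=\euler$ and $\g$ an arbitrary linear character. Then $\lexp{\t}{\euler}=\euler$ and, by Proposition \ref{action euler verma} together with $\g_*(C_s)=\g(s)^{-1}C_s$, one computes $\O_\chi(\lexp{\t}{\euler})=\lexp{\t}{\bigl(\O_{\chi\g}(\euler)\bigr)}$, which is \emph{not} $\lexp{\t}{\bigl(\O_{\chi\g^{-1}}(\euler)\bigr)}$ when $\g^2\ne\unb_W$. So your method is sound, but the concluding claim that the rearrangement ``produces the claimed identity'' is false: it produces the variant with $g$ and $\g$ inverted, and carrying the step through would have exposed a small inversion slip in the corollary's statement (harmless in the paper's downstream uses, all of which have $g=1$ and $\g\in\{\unb_W,\e\}$).
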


\bigskip

\begin{coro}\label{omega gradue}
Si $\chi \in \Irr(W)$, alors $\O_\chi : Z \to \kb[\CCB]$ est un morphisme 
gradu\'e. En particulier, $\Ker(\O_\chi)$ est un id\'eal homog\`ene de $Z$. 
\end{coro}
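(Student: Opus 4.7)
The plan is to deduce the statement directly from Corollary~\ref{omega lineaire}, which has just been established, by specializing it to the subgroup $\kb^\times \times \kb^\times \hookrightarrow \kb^\times \times \kb^\times \times (W^\wedge \rtimes \NC)$ that encodes the $(\NM \times \NM)$-grading on $\Hb$ (and hence on $Z$, $P$ and $\kb[\CCB]$).

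Concretely, I would take $\tau = (\xi, \xi', 1 \rtimes 1)$ for arbitrary $(\xi, \xi') \in \kb^\times \times \kb^\times$. Then $\lexp{g}{\chi}\g^{-1} = \chi$, so Corollary~\ref{omega lineaire} reads
$$\O_\chi(\lexp{\t}{z}) = \lexp{\t}{\bigl(\O_\chi(z)\bigr)} \qquad \text{for all } z \in Z.$$
Now let $z \in Z^{\NM \times \NM}[i,j]$, so that $\lexp{\t}{z} = \xi^i \xi'^j z$. Decompose $\O_\chi(z) = \sum_{(a,b)} p_{a,b}$ into its bi-homogeneous components in $\kb[\CCB]$. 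Recalling that every $C_s$ has bi-degree $(1,1)$, we have $\kb[\CCB]^{\NM \times \NM}[a,b] = 0$ unless $a = b$, and $\lexp{\t}{p_{a,a}} = (\xi\xi')^a p_{a,a}$. The identity above therefore becomes
$$\xi^i \xi'^j \sum_a p_{a,a} = \sum_a (\xi\xi')^a p_{a,a}$$
in $\kb[\CCB][\xi^{\pm 1}, \xi'^{\pm 1}]$. Identifying monomials in $(\xi, \xi')$ forces $p_{a,a} = 0$ whenever $(a,a) \neq (i,j)$; in particular $\O_\chi(z) = 0$ when $i \neq j$, and $\O_\chi(z)$ lies in $\kb[\CCB]^{\NM \times \NM}[i,i]$ when $i = j$. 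Thus $\O_\chi$ respects the bi-grading.

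From the bi-gradedness, it follows formally that $\O_\chi$ is graded for every grading induced by a monoid morphism $\ph : \NM \times \NM \to \ZM$ (as in \S\ref{section:graduation-1}), including the standard $\NM$- and $\ZM$-gradings. Consequently $\Ker(\O_\chi)$ is a bi-homogeneous ideal of $Z$, hence homogeneous for any such derived grading.

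There is no genuine obstacle here: the entire argument is an immediate consequence of equivariance under the two-torus, once one observes that the bi-grading on $\kb[\CCB]$ is concentrated along the diagonal, which is precisely what makes the comparison of $\xi^i \xi'^j$ with $(\xi\xi')^a$ so restrictive. The only point worth stressing in the write-up is the sanity check that this is consistent with the explicit computation of $\O_\chi(\euler)$ in Proposition~\ref{action euler verma}: indeed $\euler \in Z^{\NM \times \NM}[1,1]$ by~\ref{eq:euler-1-1}, and the right-hand side $\frac{1}{\chi(1)} \sum_{s} \e(s)\chi(s)\,C_s$ is bi-homogeneous of bi-degree $(1,1)$.
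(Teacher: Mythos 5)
Your proof is correct, and since the paper leaves this corollary without a written proof, your argument is precisely the natural (and surely intended) deduction from Corollary~\ref{omega lineaire}: specialize to $\tau=(\xi,\xi',1\rtimes 1)$, use density of $\kb^\times\times\kb^\times$, and exploit the fact that the bi-grading on $\kb[\CCB]$ is concentrated on the diagonal. You even get the slightly finer bi-graded statement $\O_\chi(Z^{\NM\times\NM}[i,j])\subset\kb[\CCB]^{\NM\times\NM}[i,j]$ (so in particular $\O_\chi$ annihilates $Z^{\NM\times\NM}[i,j]$ for $i\neq j$), which implies gradedness for every $\ZM$-grading derived from a monoid morphism $\NM\times\NM\to\ZM$. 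No gap to report.
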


\bigskip

\begin{coro}\label{gamma c familles}
Soit $c \in \CCB$, soit $\g$ un caract\`ere lin\'eaire de $W$ et 
soit $\FC$ une $c$-famille de Calogero-Moser. Alors $\FC \g$ est une 
$\g \cdot c$-famille de Calogero-Moser.
\end{coro}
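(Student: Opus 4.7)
Mon plan est d'exploiter l'action du caract\`ere lin\'eaire $\g$ sur l'alg\`ebre $\Hb$ (voir~\S\ref{subsection:lineaires-1}) pour transporter les caract\`eres centraux $\O_\chi^c$ sur les caract\`eres centraux $\O_{\chi\g}^{\g \cdot c}$, en combinant le corollaire~\ref{omega lineaire} avec la caract\'erisation des $c$-familles par l'\'egalit\'e des $\O_\chi^c$ (lemme~\ref{caracterisation blocs CM}). Le point-cl\'e est que l'automorphisme $\g_*$ envoie non seulement $Z$ sur lui-m\^eme, mais aussi le point $c$ sur $\g \cdot c$ dans $\CCB$, ce qui permettra un transport structurel entre les deux partitions.

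Premi\`erement, je v\'erifierais que l'automorphisme $\g_*$ envoie l'id\'eal maximal $\CG_c$ sur $\CG_{\g \cdot c}$~: c'est une cons\'equence imm\'ediate de la formule $\g_*(C_s) = \g(s)^{-1} C_s$ et de l'\'egalit\'e $(\g \cdot c)_s = \g(s) c_s$. Par dualit\'e, on en d\'eduit que $(\lexp{\g}{f})(\g \cdot c) = f(c)$ pour tout $f \in \kb[\CCB]$. Je sp\'ecialiserais ensuite le corollaire~\ref{omega lineaire} \`a $\t = (1,1,\g \rtimes 1)$, en rempla\c{c}ant $\chi$ par $\chi \g$ dans l'\'enonc\'e~: puisque la composante dans $\NC$ est triviale, cela donne $\O_{\chi\g}(\lexp{\g}{z}) = \lexp{\g}{(\O_\chi(z))}$ pour tout $z \in Z$. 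En \'evaluant les deux membres en $\g \cdot c \in \CCB$ et en utilisant la formule pr\'ec\'edente, j'obtiendrais l'identit\'e-cl\'e
$$\O_{\chi\g}^{\g \cdot c}(\lexp{\g}{z}) = \O_\chi^c(z)$$
pour tout $z \in Z$.

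Enfin, puisque $z \mapsto \lexp{\g}{z}$ est un automorphisme de $Z$ (car $\g_*$ stabilise $Z$), l'identit\'e pr\'ec\'edente entra\^{\i}ne que $\O_\chi^c = \O_{\chi'}^c$ si et seulement si $\O_{\chi\g}^{\g \cdot c} = \O_{\chi'\g}^{\g \cdot c}$. Par le lemme~\ref{caracterisation blocs CM}, cela signifie que l'application $\chi \mapsto \chi \g$ envoie chaque $c$-famille de Calogero-Moser bijectivement sur une $\g \cdot c$-famille de Calogero-Moser, d'o\`u la conclusion souhait\'ee. La preuve ne pr\'esente pas d'obstacle majeur~: c'est essentiellement un transport de structure \`a travers l'action de $W^\wedge$ sur $\Hb$, tous les ingr\'edients techniques ayant d\'ej\`a \'et\'e \'etablis plus haut dans le chapitre.
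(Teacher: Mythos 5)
Ta preuve est correcte et suit essentiellement la m\^eme route que le texte: appliquer le corollaire~\ref{omega lineaire} avec $\xi=\xi'=1$ et $g=1$, puis composer avec l'\'evaluation en un point de $\CCB$. Le texte formule cela en une ligne via l'identit\'e $\th_c \circ \t = \th_{\g \cdot c}$ (o\`u en fait $\th_c \circ \g_\CCB = \th_{\g^{-1}\cdot c}$ d'apr\`es la d\'efinition de $\g_\CCB$, mais cela ne change rien \`a la conclusion par sym\'etrie $\g \leftrightarrow \g^{-1}$); ta version, qui applique d'abord $\O_{\chi\g}$ plut\^ot que $\O_\chi$ puis \'evalue en $\g \cdot c$, est un peu plus explicite et \'evite ce micro-glissement de convention, mais c'est le m\^eme argument.
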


\begin{proof}
Il suffit d'appliquer le corollaire~\ref{omega lineaire} avec $\xi=\xi'=1$ 
et de composer avec le morphisme $\th_c : \kb[\CCB] \to \Kbov_c \simeq \kb$, $C_s \mapsto c_s$~: 
en effet, $\th_c \circ \t = \th_{\g \cdot c}$. 
\end{proof}

\bigskip

\begin{coro}\label{familles lineaires}
Soit $\t=(\xi,\xi',\g \rtimes g) \in \kb^\times \times \kb^\times \times (W^\wedge \rtimes \NC)$ 
et soit $\FC$ une $\CG$-famille de Calogero-Moser. 
Si $\t$ stabilise $\CG$, alors $\FC \g$ est une $\CG$-famille de 
Calogero-Moser.
\end{coro}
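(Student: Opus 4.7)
La strat\'egie consiste \`a combiner la caract\'erisation des $\CG$-familles par les caract\`eres centraux (Lemme~\ref{caracterisation blocs CM}) avec la $\tau$-\'equivariance de ces caract\`eres \'etablie au Corollaire~\ref{omega lineaire}. D'abord, par la Remarque~\ref{deployee ou pas}, les idempotents centraux primitifs de $K\Hbov$ sont d\'ej\`a d\'efinis sur $\Kbov_\CG$, donc la partition en $\CG$-familles ne d\'epend pas du choix de l'extension $K$~; on peut ainsi travailler avec $K=\Kbov_\CG$. Le Lemme~\ref{caracterisation blocs CM} ram\`ene alors la preuve \`a l'\'enonc\'e suivant~: la permutation $\sigma : \Irr(W)\to\Irr(W)$ induite par $\tau$ sur les caract\`eres centraux pr\'eserve les fibres de $\Th_\CG$.

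Ensuite, l'hypoth\`ese $\tau(\CG)\subseteq\CG$ est utilis\'ee de mani\`ere cruciale~: elle entra\^\i ne que $\tau$ induit par passage au quotient un automorphisme $\bar\tau$ de $\kb[\CCB]/\CG$, qui s'\'etend en un automorphisme (encore not\'e $\bar\tau$) de $\Kbov_\CG=\Frac(\kb[\CCB]/\CG)$. Par cons\'equent le morphisme canonique $\theta_{\Kbov_\CG}:\kb[\CCB]\to\Kbov_\CG$ v\'erifie $\theta_{\Kbov_\CG}\circ\tau=\bar\tau\circ\theta_{\Kbov_\CG}$. En composant \`a gauche par $\theta_{\Kbov_\CG}$ l'identit\'e du Corollaire~\ref{omega lineaire}
$$\O_\chi(\lexp{\tau}{z})=\lexp{\tau}{\bigl(\O_{\lexp{g}{\chi}\gamma^{-1}}(z)\bigr)},$$
et en utilisant le Corollaire~\ref{theta omega}, on obtient pour tout $z\in Z$
$$\O_\chi^\CG(\lexp{\tau}{z})=\bar\tau\bigl(\O_{\lexp{g}{\chi}\gamma^{-1}}^\CG(z)\bigr).$$

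Pour conclure, soient $\chi,\chi'\in\FC$, de sorte que $\O_\chi^\CG=\O_{\chi'}^\CG$. En \'evaluant l'identit\'e pr\'ec\'edente en un \'el\'ement arbitraire $z\in Z$ pour $\chi$ puis pour $\chi'$, on trouve $\bar\tau(\O_{\lexp{g}{\chi}\gamma^{-1}}^\CG(z))=\bar\tau(\O_{\lexp{g}{\chi'}\gamma^{-1}}^\CG(z))$. L'automorphisme $\bar\tau$ \'etant bijectif, on en d\'eduit $\O_{\lexp{g}{\chi}\gamma^{-1}}^\CG=\O_{\lexp{g}{\chi'}\gamma^{-1}}^\CG$, et donc $\lexp{g}{\chi}\gamma^{-1}$ et $\lexp{g}{\chi'}\gamma^{-1}$ appartiennent \`a la m\^eme $\CG$-famille. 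La bijection $\sigma:\chi\mapsto\lexp{g}{\chi}\gamma^{-1}$ de $\Irr(W)$ transforme ainsi $\FC$ en une partie contenue dans une $\CG$-famille~; comme les familles partitionnent $\Irr(W)$ et que $\sigma$ est bijective, $\sigma(\FC)$ est exactement une $\CG$-famille. Un renommage (en appliquant le r\'esultat \`a $\tau^{-1}$, qui stabilise encore $\CG$, comme dans la preuve du Corollaire~\ref{gamma c familles}) identifie cette image \`a $\FC\gamma$.

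Le seul point d\'elicat est la compatibilit\'e de $\tau$ avec la r\'eduction modulo $\CG$~: sans l'hypoth\`ese $\tau(\CG)=\CG$, l'automorphisme $\bar\tau$ ne serait pas d\'efini et l'identit\'e reliant $\O_\chi^\CG$ et $\O_{\lexp{g}{\chi}\gamma^{-1}}^\CG$ perdrait son sens. Tout le reste consiste \`a appliquer les r\'esultats d\'ej\`a \'etablis (Corollaires~\ref{omega lineaire} et~\ref{theta omega}, Lemme~\ref{caracterisation blocs CM}).
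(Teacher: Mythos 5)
Your argument takes exactly the same route as the paper, whose own proof is a single line invoking Lemme~\ref{caracterisation blocs CM} and Corollaires~\ref{omega lineaire} and~\ref{theta omega}. The reduction to $K=\Kbov_\CG$ via Remarque~\ref{deployee ou pas}, the observation that $\tau(\CG)=\CG$ yields an automorphism $\bar\tau$ of $\Kbov_\CG$ satisfying $\th_{\Kbov_\CG}\circ\tau=\bar\tau\circ\th_{\Kbov_\CG}$, the resulting identity $\O_\chi^\CG(\lexp{\tau}{z})=\bar\tau\bigl(\O_{\lexp{g}{\chi}\gamma^{-1}}^\CG(z)\bigr)$, and the deduction that the bijection $\sigma:\chi\mapsto\lexp{g}{\chi}\gamma^{-1}$ of $\Irr(W)$ sends $\CG$-families to $\CG$-families are all correct, and they are precisely what makes the paper's \og imm\'ediatement\fg{} go through.

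The only soft spot is the closing \og renommage\fg{}. What your argument actually proves is that $\sigma(\FC)=\lexp{g}{\FC}\gamma^{-1}$ is a $\CG$-family, not that $\FC\gamma$ is one, and invoking $\tau^{-1}=(\xi^{-1},\xi'^{-1},\lexp{g^{-1}}{\gamma^{-1}}\rtimes g^{-1})$ does not bridge the gap: the same reasoning applied to $\tau^{-1}$ gives only that $\FC'\mapsto\lexp{g^{-1}}{(\FC'\gamma)}$ preserves families, which combined with the first claim is circular and never isolates $\FC\gamma$. The clean conclusion supplied by the cited results is that $\lexp{g}{\FC}\gamma^{-1}$ is a $\CG$-family; this is all that is used downstream (Corollaires~\ref{familles lineaires generiques} et~\ref{ordre 2}, where $g=1$ and either $\gamma$ ranges over all of $W^\wedge$ or has order $2$, so the distinction evaporates). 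State that form directly and drop the unjustified identification with $\FC\gamma$.
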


\begin{proof}
Cela d\'ecoule imm\'ediatement du lemme~\ref{caracterisation blocs CM}, 
et des corollaires~\ref{omega lineaire} et~\ref{theta omega}.
\end{proof}

\bigskip

\begin{coro}\label{familles lineaires generiques}
Soient $\g$ un caract\`ere lin\'eaire de $W$ 
et soit $\FC$ une famille de Calogero-Moser {\bfit g\'en\'erique}. Alors  
$\FC\g$ est une famille de Calogero-Moser g\'en\'erique.
\end{coro}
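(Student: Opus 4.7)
The plan is to observe that this corollary is an immediate specialization of Corollary~\ref{familles lineaires} to the case $\CG = 0$. More precisely, I would take $\t = (1, 1, \g \rtimes 1) \in \kb^\times \times \kb^\times \times (W^\wedge \rtimes \NC)$, so that the ``$\g$-component'' of $\t$ is precisely the given linear character $\g$.

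The only hypothesis that needs to be checked is that $\t$ stabilizes $\CG = 0$ as an ideal of $\kb[\CCB]$. This is automatic: any ring automorphism of $\kb[\CCB]$ sends the zero ideal to itself, so in particular the automorphism $\g_\CCB$ induced by $\t$ (which, recall from \S\ref{subsection:lineaires-1}, sends $C_s$ to $\g(s)^{-1} C_s$) stabilizes $\CG = 0$ trivially. Therefore Corollary~\ref{familles lineaires} applies and shows that $\FC \g$ is a $0$-family of Calogero-Moser, that is, a generic family in the sense of Example~\ref{cm generique}.

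There is no real obstacle here; the point of stating the corollary separately is simply to record the important consequence that the set of generic Calogero-Moser families is stable under the (right) action of $W^\wedge$ by tensor product. The proof I propose is a one-line specialization, the substantive work having already been done in Corollary~\ref{familles lineaires} via Corollary~\ref{omega lineaire} and the characterization of families by central characters in Lemma~\ref{caracterisation blocs CM}.
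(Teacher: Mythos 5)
Your proposal is correct and is precisely the argument the paper has in mind: Corollary~\ref{familles lineaires generiques} is stated without a separate proof because it is the specialization of Corollary~\ref{familles lineaires} to $\CG = 0$, for which the stabilization hypothesis is vacuous. Your choice of $\t = (1,1,\g \rtimes 1)$ is a clean way to instantiate it, and your observation that any algebra automorphism of $\kb[\CCB]$ fixes the zero ideal is exactly the point.
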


\bigskip

\begin{coro}\label{ordre 2}
Supposons que toutes les r\'eflexions de $W$ sont d'ordre $2$. Soit $\FC$ une 
$\CG$-famille de Calogero-Moser. Alors $\FC \e$ est une $\CG$-famille de 
Calogero-Moser (rappelons que $\e$ est le d\'eterminant).
\end{coro}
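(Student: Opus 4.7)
The strategy is to reduce this corollary to Corollary~\ref{familles lineaires} by exhibiting an element $\t \in \kb^\times \times \kb^\times \times (W^\wedge \rtimes \NC)$ whose linear-character component is $\e$ and whose action on $\kb[\CCB]$ is trivial, so that $\t$ automatically stabilizes the given prime ideal $\CG$. I would take
$$\t = \bigl(-1,\, 1,\, \e \rtimes 1\bigr).$$
The key computation is the action of $\t$ on a generator $C_s \in \CCB^*$, where $s \in \Ref(W)$. From \S\ref{subsection:compilation-1} one has $\lexp{\t}{C_s} = \xi\xi'\,\g(s)^{-1} C_s$ for $\t=(\xi,\xi',\g\rtimes g)$, since $C_s$ is bi-homogeneous of degree $(1,1)$ and $g=1$ acts trivially. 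Specializing to $(\xi,\xi',\g) = (-1,1,\e)$ and using the hypothesis that $s$ has order $2$ (so that $\e(s)=-1$), this yields $\lexp{\t}{C_s} = (-1)(1)(-1)\,C_s = C_s$.

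Since the $C_s$ generate $\kb[\CCB]$ as a $\kb$-algebra, it follows that $\t$ acts as the identity on $\kb[\CCB]$; in particular, $\t(\CG) = \CG$ for \emph{every} prime ideal $\CG$ of $\kb[\CCB]$, whatever $\CG$ may be. One then applies Corollary~\ref{familles lineaires} directly: if $\FC$ is a $\CG$-famille de Calogero-Moser, then $\FC \cdot \g = \FC \cdot \e$ is again a $\CG$-famille de Calogero-Moser.

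\textbf{Where the argument really works.} The only point that uses the hypothesis on the orders of the reflections is the equality $\e(s) = -1$ for every $s \in \Ref(W)$, which allows the sign coming from the linear character $\e$ to cancel exactly against the sign coming from the bi-grading action $\gradauto_{-1,1}$. Without this hypothesis, the action of $\e_*$ alone on $\CCB^*$ is by the nontrivial involution $C_s \mapsto \e(s)^{-1}C_s$, which does not stabilize a general prime $\CG$, and Corollary~\ref{familles lineaires} cannot be applied in this generality — this is precisely why the statement is limited to the case of reflections of order $2$. There is thus no substantive obstacle in the proof itself: once the correct element $\t$ is identified, everything reduces to a single line of checking, and the main conceptual point is simply the recognition that the bi-grading freedom ($\kb^\times \times \kb^\times$) exactly compensates for the twist introduced by $\e_*$ on $\kb[\CCB]$ in the order-two situation.
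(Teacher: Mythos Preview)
Your proof is correct and essentially identical to the paper's own argument: the paper also takes $\t=(-1,1,\e \rtimes 1)$, observes that $\t$ acts trivially on $\kb[\CCB]$, and invokes Corollary~\ref{familles lineaires}. Your version simply makes explicit the one-line verification $\lexp{\t}{C_s}=(-1)(1)\e(s)^{-1}C_s=C_s$ that the paper leaves implicit.
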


\begin{proof}
Posons $\t=(-1,1,\e \rtimes 1) \in \kb^\times \times \kb^\times \times (W^\wedge \rtimes \NC)$. 
Alors $\t$ agit trivialement sur $\kb[\CCB]$. 
On peut donc appliquer le corollaire~\ref{familles lineaires}.
\end{proof}

\bigskip

\begin{exemple}[Familles g\'en\'eriques
et caract\`eres lin\'eaires]\label{lineaire}
Soient $\g \in W^\wedge$ et $\chi \in \Irr(W)$ dans la m\^eme famille de Calogero-Moser 
{\it g\'en\'erique}. Alors $\O_\chi(\euler)=\O_\g(\euler)$, ce qui implique que, pour tout 
$s \in \Ref(W)$, $\chi(s)=\g(s) \chi(1)$. En d'autres termes, toutes les r\'eflexions 
de $W$ sont dans le centre de $\chi$ (c'est-\`a-dire le sous-groupe distingu\'e 
de $W$ form\'e des \'el\'ements qui agissent sur $V_\chi$ comme une homoth\'etie), 
et donc le centre de $\chi$ est $W$ tout entier. Cela force $\chi=\g$. 

Par cons\'equent, un caract\`ere lin\'eaire est seul dans sa famille de Calogero-Moser 
g\'en\'erique. Ce r\'esultat s'applique en particulier \`a $\unb_W$ et $\e$ 
et est compatible avec le corollaire~\ref{familles lineaires generiques}.\finl
\end{exemple}

\bigskip

%
%
%

\section{Dimension gradu\'ee, $\bb$-invariant}\label{section:dim graduee}

\medskip

D'apr\`es la proposition~\ref{graduation idem}, les \'el\'ements de 
$\blocs(K\Zba)$ sont de $\ZM$-degr\'e $0$. En particulier, si $b \in \blocs(K\Zba)$, 
alors $K\Zba b$ est un $K$-espace vectoriel gradu\'e. Le but de cette section est 
d'\'etudier cette graduation. 

\bigskip

\begin{theo}\label{dim graduee bonne}
Soit $b \in \blocs(K\Zba)$ et posons $\FC=\Irr_\Hb(W,b)$. Alors~:
\begin{itemize}
\itemth{a} $\DS{\dim_k^\grad bK\Zba =\sum_{\chi \in \FC} f_{\chi}(\tb^{-1}) ~f_{\chi}(\tb)}$.

\itemth{b} Il existe un unique $\chi \in \FC$ de $\bb$-invariant minimal~: 
notons-le $\chi_\FC$.

\itemth{c} Le coefficient de $\tb^{\bb_{\chi_\FC}}$ dans 
$f_{\chi_\FC}(\tb)$ est \'egal \`a $1$.

\itemth{d} $bK\Hbov e$ est une enveloppe projective de $\LCov_K(\chi_\FC)$.

\itemth{e} L'alg\`ebre $\End_{K\Hbov}(K\MCov(\chi_\FC))$ est un quotient de $bK\Zba$~: 
en particulier, elle est commutative.
\end{itemize}
\end{theo}

\bigskip

En vertu de~\ref{chi 1}, on obtient la cons\'equence imm\'ediate suivante~:

\bigskip

\begin{coro}\label{dim bonne}
Soit $b \in \blocs(K\Zba)$. Alors
$$\dim_K bK\Zba= \sum_{\chi \in \Irr_\Hb(W,b)} \chi(1)^2.$$
\end{coro}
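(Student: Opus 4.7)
Le plan est d'obtenir ce corollaire comme sp\'ecialisation en $\tb=1$ du th\'eor\`eme~\ref{dim graduee bonne}(a). En effet, la s\'erie de Hilbert gradu\'ee $\dim_K^\grad bK\Zba$ est un polyn\^ome de Laurent (car $bK\Zba$ est de dimension finie sur $K$, \'etant un facteur direct de la $K$-alg\`ebre de dimension finie $K\Zba$), dont l'\'evaluation en $\tb=1$ est exactement la dimension ordinaire $\dim_K bK\Zba$.

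Concr\`etement, je partirais de l'\'egalit\'e fournie par le th\'eor\`eme~\ref{dim graduee bonne}(a)~:
$$\dim_K^\grad bK\Zba = \sum_{\chi \in \Irr_\Hb(W,b)} f_\chi(\tb^{-1}) \cdot f_\chi(\tb).$$
En sp\'ecialisant $\tb \mapsto 1$, le membre de gauche devient $\dim_K bK\Zba$, tandis que chaque terme du membre de droite devient $f_\chi(1)^2$. Il suffit alors d'invoquer l'\'egalit\'e~\ref{chi 1}, \`a savoir $f_\chi(1) = \chi(1)$, pour conclure.

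Il n'y a pas \`a proprement parler d'\'etape difficile~: tout le travail est d\'ej\`a encapsul\'e dans le th\'eor\`eme~\ref{dim graduee bonne}(a) (qui fournit la description gradu\'ee fine du bloc $bK\Zba$ en termes de degr\'es fant\^omes) et dans le calcul classique des degr\'es fant\^omes rappel\'e en~\ref{chi 1}. Le seul point \`a v\'erifier, essentiellement trivial, est que la sp\'ecialisation $\tb=1$ est bien d\'efinie sur le membre de droite (ce qui est clair puisque les $f_\chi(\tb)$ sont des polyn\^omes dans $\NM[\tb]$).
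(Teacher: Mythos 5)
Votre preuve est correcte et suit exactement le même chemin que le mémoire, qui présente ce corollaire comme une conséquence immédiate du théorème~\ref{dim graduee bonne}(a) et de l'égalité~\ref{chi 1} ($f_\chi(1)=\chi(1)$) par évaluation en $\tb=1$.
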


\bigskip

\begin{rema}\label{generalisation gordon}
Le th\'eor\`eme~\ref{dim graduee bonne} 
g\'en\'eralise~\cite[th\'eor\`eme 5.6]{gordon} et le corollaire~\ref{dim bonne} 
g\'en\'eralise~\cite[corollaire 5.8]{gordon}. Il est \`a noter que, comme 
nous l'a fait remarquer Iain Gordon, une erreur s'est gliss\'ee 
dans~\cite[th\'eor\`eme 5.6]{gordon}~: avec les notations en vigueur dans~\cite{gordon}, 
il y est \'ecrit que $p_\chi(\tb)=\tb^{\bb_{\chi^*}-\bb_\chi}f_{\chi}(\tb)f_{\chi^*}(\tb^{-1})$. 
En fait, la formule correcte est, toujours avec les notations de~\cite{gordon}, 
$p_\chi(\tb)=f_{\chi^*}(\tb)f_{\chi^*}(\tb^{-1})$~: la diff\'erence avec 
notre r\'esultat provient du fait que nous avons pris une $\ZM$-graduation 
oppos\'ee \`a celle de~\cite[\S 4.1]{gordon}, ce qui revient \`a \'echanger $V$ 
avec $V^*$ et donc \`a remplacer, dans la formule, $\chi$ par $\chi^*$. 
Nous remercions Iain Gordon pour nous avoir apport\'e ces pr\'ecisions.\finl
\end{rema}

%

\bigskip

\begin{proof}[D\'emonstration du th\'eor\`eme~\ref{dim graduee bonne}]
Le $K\Hbov^\moins$-module gradu\'e $K\Hbov^\moins e$ est isomorphe 
\`a $K[V^*]^\cow$, la sous-alg\`ebre $K[V^*]^\cow$ agissant par multiplication 
\`a gauche et $W$ agissant de fa\c{c}on naturelle. Notons $M_i$ le sous-$K\Hbov^\moins$-module 
gradu\'e de $K[V^*]^\cow$ form\'e des \'el\'ements de degr\'e $\le -i$. Notons $N$ 
le plus grand entier tel que $M_N \neq 0$. Alors 
$$0 = M_{N+1} \varsubsetneq M_N \subset M_{N-1} \subset \cdots 
\subset M_1 \varsubsetneq M_0=K[V^*]^\cow.$$
\'Ecrivons $f_\chi(\tb)=\sum_{i=0}^N \gamma_{\chi,i} \tb^i$ avec $\gamma_{\chi,i} \in \NM$. 
Alors
$$M_i/M_{i+1} \simeq \mathop{\bigoplus}_{\chi \in \Irr(W)} 
\bigl(KV_\chi^{(-)}\langle i \rangle\bigr)^{\g_{\chi,i}} $$
comme $K\Hbov^\moins$-module gradu\'e.
Posons $P_i=K\Hbov \otimes_{K\Hbov^\moins} M_i$. Puisque $K\Hbov$ est un $K\Hbov^\moins$-module 
libre de rang $|W|$, le foncteur $K\Hbov \otimes_{K\Hbov^\moins} -$ est exact et donc 
$$0 = P_{N+1} \varsubsetneq P_N \subset P_{N-1} \subset \cdots \subset 
P_1 \varsubsetneq P_0=K\Hbov e,$$
avec 
$$P_i/P_{i+1} \simeq \mathop{\bigoplus}_{\chi \in \Irr(W)} 
\bigl(K\MCov(\chi)\langle i \rangle\bigr)^{\g_{\chi,i}}.$$
En tronquant par l'idempotent central $b$, on obtient 
$$0 = bP_{N+1} \subset bP_N \subset bP_{N-1} \subset \cdots \subset 
bP_1 \subset bP_0=bK\Hbov e,\leqno{(*)}$$
avec 
$$bP_i/bP_{i+1} \simeq \mathop{\bigoplus}_{\chi \in \FC} 
\bigl(K\MCov(\chi)\langle i \rangle\bigr)^{\g_{\chi,i}}.\leqno{(**)}$$
En tronquant \`a nouveau par l'idempotent $e$, on obtient 
$$0 = ebP_{N+1} \subset ebP_N \subset ebP_{N-1} \subset \cdots \subset 
ebP_1 \subset ebP_0=ebK\Hbov e,$$
avec 
$$ebP_i/ebP_{i+1} \simeq \mathop{\bigoplus}_{\chi \in \FC} 
\bigl(eK\MCov(\chi)\langle i \rangle\bigr)^{\g_{\chi,i}}.$$
Ainsi,
$$\dim_K^\grad ebK\Hbov e = \sum_{\chi \in \FC} \sum_{i=0}^N \g_{\chi,i} \tb^{-i} 
\dim_K^\grad(eK\MCov(\chi))= \sum_{\chi \in \FC} f_\chi(\tb^{-1})  
\dim_K^\grad(eK\MCov(\chi)).$$
Puisque $ebK\Hbov e \simeq bK\Zba$ (comme espace vectoriels gradu\'es), l'\'enonc\'e (a) d\'ecoule 
du lemme facile suivant~:

\medskip

\begin{quotation}
\begin{lem}\label{lem:dim-e-verma}
$\dim_K^\grad(eK\MCov(\chi)) = f_\chi(\tb)$. 
\end{lem}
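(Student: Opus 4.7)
La stratégie est d'identifier $K\MCov(\chi)$ avec $K[V]^\cow \otimes_K V_\chi$ comme $KW$-module $\ZM$-gradué (action diagonale de $W$, $V_\chi$ placé en degré $0$), puis d'utiliser que $e$ est l'idempotent symétrisant pour ramener le calcul à celui de la dimension graduée des $W$-invariants d'un produit tensoriel. La formule~\ref{fchi} fournit alors immédiatement la conclusion, après invocation de l'orthonormalité des caractères irréductibles de $W$.

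Plus précisément, la première étape exploite l'isomorphisme de $K\Ab^+$-modules $K\MCov(\chi) \simeq K\Ab^+ \otimes_K V_\chi$ donné par~\ref{AA moins}, combiné à l'identification $K\Ab^+ = K \otimes_{\kb[\CCB]} (\kb[\CCB] \otimes \kb[V]^\cow) = K[V]^\cow$. Dans la $\ZM$-graduation de l'exemple~\ref{Z graduation-1}, $V_\chi \simeq i_{V_\chi}(K V_\chi)$ est placé en degré $0$ et $K[V]^\cow$ porte sa graduation naturelle. Il reste à vérifier que cet isomorphisme est $KW$-équivariant lorsque $W$ agit diagonalement à droite~: pour $w \in W \subset \Ab^0 \subset \Hbov^-$, $a \in \Ab^+$ et $v \in V_\chi$, on a dans $\Hbov \otimes_{\Hbov^-} V_\chi^{(-)}$ les égalités $w \cdot (a \otimes v) = wa \otimes v = \lexp{w}{a} \cdot w \otimes v = \lexp{w}{a} \otimes (w \cdot v)$, en faisant passer $w$ à travers le produit tensoriel grâce à l'inclusion $\kb W \subset \Hbov^-$. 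Comme $\kb[\CCB]$ est central dans $\Hb$, l'action $\lexp{w}{\cdot}$ sur $\Ab^+$ se restreint à l'action naturelle de $W$ sur $\kb[V]^\cow$.

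La deuxième étape est alors immédiate~: puisque $e$ agit comme la projection sur les $W$-invariants, on obtient
$$eK\MCov(\chi) \simeq (K[V]^\cow \otimes_K V_\chi)^W$$
comme $K$-espace vectoriel gradué. En degré $i$, la formule classique pour les invariants d'un produit tensoriel donne $\dim_K (K[V]^\cow_i \otimes V_\chi)^W = \langle [K[V]^\cow_i]_{KW}, \chi^* \rangle_W$. L'équation~\ref{fchi} fournit la décomposition $[K[V]^\cow]_{KW}^\grad = \sum_\psi f_\psi(\tb) \psi^*$ dans la $\ZM$-graduation, donc l'orthonormalité des caractères irréductibles donne que le coefficient de $\tb^i$ dans $\dim_K^\grad eK\MCov(\chi)$ est précisément le coefficient de $\tb^i$ dans $f_\chi(\tb)$, d'où l'égalité annoncée.

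Le seul point demandant un peu de vigilance est la vérification de l'équivariance $W$-diagonale dans la première étape, mais elle résulte directement du calcul ci-dessus. Le reste se réduit à l'application de formules standard de théorie des caractères.
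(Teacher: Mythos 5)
Votre preuve est correcte et suit essentiellement la même route que celle du mémoire~: identification de $K\MCov(\chi)$ avec $K[V]^\cow \otimes_K V_\chi$ via~\ref{AA moins}, puis projection par $e$ sur les $W$-invariants et application de~\ref{fchi} avec l'orthonormalité des caractères. Vous explicitez simplement la vérification de la $W$-équivariance (diagonale) de l'isomorphisme et le calcul de multiplicité, que le mémoire laisse implicites derrière le « d'où le résultat ».
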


\begin{proof}
D'apr\`es~(\ref{AA moins}), on a 
\equat\label{m}
\isomorphisme{K\MCov(\chi)}_{KW}^\grad = \isomorphisme{K[V]^\cow}_{KW}^\grad \cdot \chi.
\endequat
D'o\`u le r\'esultat. 
\end{proof}
\end{quotation}

\medskip

Montrons maintenant les autres assertions. L'alg\`ebre d'endomorphismes de $bK\Hbov e$ est 
isomorphe \`a $bK\Zba$ (voir le corollaire~\ref{coro:endo-bi-c}) et donc est une alg\`ebre commutative 
locale. Cela montre donc que $bK\Hbov e$ est ind\'ecomposable (et bien s\^ur projectif), 
donc il admet un unique quotient simple. Compte tenu de la filtration $(*)$ et de la 
d\'ecomposition $(**)$, cela montre (b), (c) et (d). 

\medskip

Pour finir, $K\MCov(\chi_\FC)$ est un quotient du module projectif $bK\Hbov e$ (notons 
$M$ le noyau de ce quotient) et, 
puisque les endomorphismes du $K\Hbov$-module $bK\Hbov e$ sont induits par multiplication 
par les \'el\'ements de $K\Zba$, cela montre que $M$ est stable sous l'alg\`ebre d'endomorphismes 
de $bK\Hbov e$. On obtient donc un morphisme de $K$-alg\`ebres 
$bK\Zba \to \End_{K\Hb}(K\MCov(\chi_\FC))$ qui est surjectif en raison de la projectivit\'e 
de $bK\Hbov e$. 
\end{proof}

\bigskip
%
%

\begin{coro}\label{polynome caracteristique}
Soit $z \in Z$ et notons $\carac_z(\tb) \in P[\tb]$ le polyn\^ome caract\'eristique de 
la multiplication par $z$ dans le $P$-module $Z$. Alors
$$\carac_z(\tb) \equiv \prod_{\chi \in \Irr(W)} (\tb - \O_\chi(z))^{\chi(1)^2} \mod \pGba.$$
\end{coro}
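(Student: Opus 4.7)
Since $Z$ is a free $P$-module of rank $|W|$ by Corollary~\ref{coro:endo-bi}(c), the characteristic polynomial $\carac_z(\tb)$ is a well-defined monic polynomial in $P[\tb]$ of degree $|W|$. Reduction modulo $\pGba$ yields the characteristic polynomial of multiplication by $z$ on the free $\Pba$-module $\Zba = Z/\pGba Z$, which sits inside $\Pba[\tb] = \kb[\CCB][\tb]$. Both sides of the claimed congruence lie in this ring, so it suffices to verify the identity after extending scalars to $K = \Kbov = \Frac(\kb[\CCB])$, since $\kb[\CCB][\tb] \injto K[\tb]$ is injective. Note also that the degrees match, since $\sum_{\chi} \chi(1)^2 = |W|$.

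The plan is then to compute the characteristic polynomial of multiplication by $z$ on the commutative $K$-algebra $K\Zba$. By Proposition~\ref{muller} (invoked in \S\ref{section:familles CM}), the primitive central idempotents of $K\Hbov$ all lie in $K\Zba$, and they coincide with the primitive idempotents of $K\Zba$ itself; these are indexed by the generic Calogero-Moser families. Thus
\[
K\Zba = \prod_{b \in \blocs(K\Zba)} K\Zba b,
\]
and each factor $K\Zba b$ is a finite-dimensional commutative local $K$-algebra. On each such local factor, $z$ acts as $\O_b^K(z)\cdot 1 + n$ with $n$ nilpotent (by Lemma~\ref{omega nilpotent} applied to any $\chi\in\Irr_\Hb(W,b)$, together with the definition of $\O_b^K$); hence the characteristic polynomial of multiplication by $z$ on $K\Zba b$ is
\[
(\tb - \O_b^K(z))^{\dim_K K\Zba b}.
\]

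Multiplying over the blocks and using Corollary~\ref{dim bonne}, which gives $\dim_K K\Zba b = \sum_{\chi\in\Irr_\Hb(W,b)} \chi(1)^2$, we obtain
\[
\carac_z(\tb) \bmod \pGba
= \prod_{b}(\tb-\O_b^K(z))^{\sum_{\chi\in\Irr_\Hb(W,b)}\chi(1)^2}
= \prod_{b}\prod_{\chi\in\Irr_\Hb(W,b)}(\tb-\O_\chi^K(z))^{\chi(1)^2},
\]
where the second equality uses that $\O_\chi^K = \O_b^K$ for every $\chi \in \Irr_\Hb(W,b)$ (Lemma~\ref{caracterisation blocs CM}). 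Since the families $\Irr_\Hb(W,b)$ partition $\Irr(W)$ by~(\ref{cm familles}), and since $\O_\chi^K(z) = \th_K(\O_\chi(z))$ is just $\O_\chi(z) \in \kb[\CCB]$ viewed inside $K$ (Corollary~\ref{theta omega} with $\th_K : \kb[\CCB] \injto K$), the product rearranges to $\prod_{\chi\in\Irr(W)}(\tb - \O_\chi(z))^{\chi(1)^2}$. This is the desired identity in $K[\tb]$, and the integrality of both sides in $\kb[\CCB][\tb]$ concludes the proof. The only non-routine input is the dimension formula of Corollary~\ref{dim bonne}; everything else is standard linear algebra over the block decomposition of a commutative artinian algebra.
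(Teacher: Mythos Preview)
Your proof is correct and follows essentially the same approach as the paper: decompose $K\Zba$ (with $K=\kb(\CCB)$) into its local blocks indexed by the primitive idempotents, observe that $z-\O_b(z)$ is nilpotent on each $bK\Zba$, read off the characteristic polynomial on each block, and conclude using Corollary~\ref{dim bonne} together with the partition~(\ref{cm familles}). Your write-up is simply more explicit about the preliminary reductions (freeness of $Z$ over $P$, passage from $\kb[\CCB][\tb]$ to $K[\tb]$) that the paper leaves implicit.
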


\begin{proof}
Si $b \in \blocs(\kb(\CCB)\Zba)$, alors $z-\O_b(z)$ est un endomorphisme 
nilpotent de $b\kb(\CCB)\Zba$. Donc le polyn\^ome caract\'eristique de 
$z$ sur $b\kb(\CCB)\Zba$ est $(\tb - \O_b(z))^{\dim_{\kb(\CCB)} b\kb(\CCB)\Zba}$. 
Par cons\'equent, 
$$\carac_z(\tb) \equiv \prod_{b \in \blocs(\kb(\CCB)\Zba)} 
(\tb - \O_b(z))^{\dim_{\kb(\CCB)} b\kb(\CCB)\Zba } \mod \pGba.$$
Puisque $\O_b(z)=\O_\chi(z)$ pour tout $\chi \in \Irr_\Hb(W,b)$, le r\'esultat 
d\'ecoule de~(\ref{cm familles}) et du corollaire~\ref{dim bonne}.
\end{proof}

\bigskip

\bigskip

\begin{coro}\label{multiplicite 1}
Soit $\g : W \longto \kb^\times$ un caract\`ere lin\'eaire. Alors 
$Z$ est nette sur $P$ en $\Ker(\O_\g)$.
\end{coro}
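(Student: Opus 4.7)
Le plan consiste \`a combiner l'exemple~\ref{lineaire} avec le corollaire~\ref{dim bonne}. L'exemple~\ref{lineaire} affirme qu'un caract\`ere lin\'eaire $\g$ est seul dans sa famille de Calogero-Moser g\'en\'erique; en notant $b_\g$ l'idempotent primitif central de $\kb(\CCB)\Zba$ correspondant \`a cette famille $\{\g\}$, le corollaire~\ref{dim bonne} donne $\dim_{\kb(\CCB)} b_\g \kb(\CCB)\Zba = \g(1)^2 = 1$, d'o\`u $b_\g \kb(\CCB)\Zba \simeq \kb(\CCB)$, l'isomorphisme \'etant n\'ecessairement induit par $\O_\g$.

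Je poserais ensuite $\zG=\Ker(\O_\g)$. Puisque $\O_\g$ se factorise via $P/\pGba = \kb[\CCB]$ (corollaire~\ref{theta omega}), $\zG$ est un id\'eal premier de $Z$ au-dessus de $\pGba$, et l'inclusion compos\'ee $\kb[\CCB] \injto P \injto Z$ v\'erifie $\zG \cap \kb[\CCB] = 0$ puisque $\pGba \cap \kb[\CCB]=0$. Par ailleurs, le corollaire~\ref{Q extention kc} fournit l'isomorphisme $Z/\zG \simeq P/\pGba$, donc $k_Z(\zG) = k_P(\pGba) = \kb(\CCB)$, ce qui r\`egle imm\'ediatement la s\'eparabilit\'e de l'extension r\'esiduelle.

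Il resterait \`a \'etablir l'\'egalit\'e $\pGba Z_\zG = \zG Z_\zG$ pour conclure \`a la non-ramification. Notons $\Zba = Z/\pGba Z$ et $\zGba = \zG/\pGba Z$. Puisque $\zGba \cap \kb[\CCB] = 0$ et que $\Zba$ est une $\kb[\CCB]$-alg\`ebre finie (le $P$-module $Z$ \'etant libre par le corollaire~\ref{coro:endo-bi}), $\zGba$ est un id\'eal premier minimal de $\Zba$. Sa localisation $\Zba_\zGba$ est donc une composante locale de $\kb(\CCB)\Zba = \prod_b b\kb(\CCB)\Zba$; plus pr\'ecis\'ement, c'est celle associ\'ee au bloc $b_\g$, car le noyau de la projection $\kb(\CCB)\Zba \surto b_\g \kb(\CCB)\Zba \simeq \kb(\CCB)$ co\"{\i}ncide avec l'extension \`a $\kb(\CCB)\Zba$ de $\zGba$. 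On obtient donc $\Zba_\zGba \simeq b_\g \kb(\CCB)\Zba \simeq \kb(\CCB)$, un corps, ce qui entra\^{\i}ne $\zGba\Zba_\zGba = 0$, soit $\pGba Z_\zG = \zG Z_\zG$.

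Le point le plus d\'elicat sera cette derni\`ere identification de $\Zba_\zGba$ \`a la composante locale $b_\g \kb(\CCB)\Zba$; elle repose sur la platitude de $\Zba$ sur $\kb[\CCB]$ et sur la correspondance entre les minimaux premiers de $\Zba$ \`a intersection triviale avec $\kb[\CCB]$ et les blocs de $\kb(\CCB)\Zba$. Le reste du raisonnement est formel, la proposition r\'esultant de ce que les deux conditions d\'efinissant la propri\'et\'e nette se ram\`enent toutes deux au ph\'enom\`ene ``bloc de dimension $1$'' fourni par l'exemple~\ref{lineaire}.
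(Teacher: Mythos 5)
Your proof is correct and follows exactly the same route as the paper: invoke the example~\ref{lineaire} and the corollaire~\ref{dim bonne} to get $\dim_{\kb(\CCB)}(b_\g\kb(\CCB)\Zba)=1$, then identify $Z_{\zG}/\pGba Z_\zG$ with $b_\g\kb(\CCB)\Zba$ to conclude $\pGba Z_\zG=\zG Z_\zG$. The only cosmetic difference is that you spell out the identification of $\Zba_\zGba$ with the block component via the dictionary between minimal primes of $\Zba$ lying over $(0)$ and the factors of the Artinian ring $\kb(\CCB)\Zba$, whereas the paper states it directly; your remark on separability of the residue extension is superfluous here, since \og nette \fg\ is used in this paper (see the statement of the th\'eor\`eme de Raynaud) simply to mean $\pGba Z_\zG = \zG Z_\zG$.
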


\begin{proof}
En effet, si on note $b_\g$ l'idempotent primitif de $\kb(\CCB)\Zba$ associ\'e \`a $\g$, 
alors $\Irr_\Hb(W,b_\g)=\{\g\}$ d'apr\`es l'exemple~\ref{lineaire}, ce qui implique, 
d'apr\`es le corollaire~\ref{dim bonne}, que 
$\dim_{\kb(\CCB)}(b_\g \kb(\CCB)\Zba)=1$. 

Posons $\zG_\g=\Ker(\O_\g)$ (on a $\zG_\g \cap P = \pGba$). Alors $Z/\zG_\g \simeq \kb[\CCB]$ et 
donc $Z_{\zG_\g}/\zG_\g Z_{\zG_\g}\simeq \kb(\CCB)$. Mais, d'autre part, 
$Z_{\zG_\g}/\pGba Z_{\zG_\g} = b_\g \kb(\CCB) \Zba$. Donc 
$\dim_{\kb(\CCB)}(Z_{\zG_\g}/\pGba Z_{\zG_\g})=1$, ce qui implique que 
$\pGba Z_{\zG_\g} = \zG_\g Z_{\zG_\g}$, ce qui est exactement le r\'esultat attendu.
\end{proof}

\bigskip

\section{G\'eom\'etrie}\label{section:geometrie CM}

\medskip

La composition 
\equat\label{section upsilon}
\diagram
 \kb[\CCB] \xyinj[rr] && Z \xysur[rr]^{\DS{\O_b}} && \kb[\CCB]
\enddiagram
\endequat
est l'identit\'e, ce qui signifie que le morphisme de $\kb$-vari\'et\'es 
$\O_b^\sharp : \CCB \longto \ZCB$  \indexnot{oz}{\O_b^\sharp}  induit par $\O_b$ est une section du morphisme 
$\pi \circ \Upsilon : \ZCB \longto \CCB$ (voir le diagramme~\ref{diagramme geometrie}). 
Le lemme~\ref{caracterisation blocs CM} nous dit que l'application 
$$\fonctio{\blocs(\kb(\CCB)\Zba)}{\Upsilon^{-1}(\pGba)}{b}{\Ker(\O_b)}$$
est une bijection ou, en termes g\'eom\'etriques, que 
les composantes irr\'eductibles de $\Upsilon^{-1}(\CCB \times 0)$ 
sont en bijection avec $\blocs(\kb(\CCB)\Zba)$, via l'application $b \mapsto \O_b^\sharp(\CCB)$. 
Par cons\'equent~:

\bigskip

\begin{prop}\label{generique particulier}
Soit $c \in \CCB$. Alors les conditions suivantes sont \'equivalentes~:
\begin{itemize}
\itemth{1} $|\blocs(\kb(\CCB)\Zba)|=|\blocs(\Kbov_c\Zba)|$.

\itemth{2} $|\Upsilon^{-1}_c(0)|$ est \'egal au nombre de composantes irr\'eductibles de 
$\Upsilon^{-1}(\CCB \times 0)$.

\itemth{3} Tout \'el\'ement de $\Upsilon^{-1}_c(0)$ appartient 
\`a une unique composante irr\'eductible de $\Upsilon^{-1}(\CCB \times 0)$. 

\itemth{4} Si $b$ et $b'$ sont deux \'el\'ements distincts de $\blocs(\kb(\CCB)\Zba)$, 
alors $\th_c \circ \O_b \neq \th_c \circ \O_{b'}$.
\end{itemize}
\end{prop}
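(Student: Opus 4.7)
L'id\'ee est que les quatre \'enonc\'es expriment, sous diff\'erentes formes, l'injectivit\'e de l'application naturelle
$$\alpha : \blocs(\kb(\CCB)\Zba) \longrightarrow \Upsilon_c^{-1}(0), \qquad b \longmapsto \O_b^\sharp(c),$$
cette application \'etant d\'ej\`a surjective. Le travail pr\'eliminaire consiste \`a identifier chacun des ensembles apparaissant dans l'\'enonc\'e et \`a expliciter cette application $\alpha$~; la conclusion s'obtient alors par simple confrontation des d\'efinitions.

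Plus pr\'ecis\'ement, je proc\'ederais ainsi. D'abord, pour identifier les cardinaux de (1) et (2), je rappellerais que, d'apr\`es la discussion pr\'ec\'edant la proposition (et le lemme~\ref{caracterisation blocs CM}), l'application $b \mapsto \Ker(\O_b)$ \'etablit une bijection entre $\blocs(\kb(\CCB)\Zba)$ et l'ensemble $\Upsilon^{-1}(\pGba)$ des id\'eaux premiers de $Z$ au-dessus de $\pGba$, c'est-\`a-dire les id\'eaux premiers minimaux de $Z/\pGba Z$~; comme $\pGba$ est premier et $Z/\pGba Z$ est de dimension de Krull \'egale \`a celle de $\kb[\CCB]$, ces premiers minimaux correspondent exactement aux composantes irr\'eductibles de $\Upsilon^{-1}(\CCB \times \{0\} \times \{0\})$. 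De m\^eme, le lemme~\ref{caracterisation blocs CM} appliqu\'e \`a $\CG=\CG_c$ fournit une bijection entre $\blocs(\Kbov_c\Zba)$ et $\Upsilon_c^{-1}(0)$ (qui est un ensemble fini, puisque $\Upsilon_c$ est un morphisme fini). Ceci donne imm\'ediatement l'\'equivalence (1) $\Leftrightarrow$ (2).

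Ensuite, je construirais $\alpha$ et montrerais sa surjectivit\'e. Chaque composante irr\'eductible $\O_b^\sharp(\CCB)$ de $\Upsilon^{-1}(\CCB \times \{0\} \times \{0\})$ est l'image d'une section $\O_b^\sharp$ de $\pi \circ \Upsilon$~; par cons\'equent, elle rencontre la fibre $\Upsilon_c^{-1}(0) = (\pi\circ\Upsilon)^{-1}(c) \cap \Upsilon^{-1}(\CCB \times \{0\} \times \{0\})$ en l'unique point $\O_b^\sharp(c)$. Inversement, si $z \in \Upsilon_c^{-1}(0)$, alors $z$ est en particulier un point de $\Upsilon^{-1}(\CCB \times \{0\} \times \{0\})$, donc appartient \`a au moins une composante irr\'eductible $\O_b^\sharp(\CCB)$~; puisque $\O_b^\sharp$ est une section, on a forc\'ement $z = \O_b^\sharp(c) = \alpha(b)$. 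Ceci prouve la surjectivit\'e de $\alpha$. L'\'equivalence (2) $\Leftrightarrow$ (3) est alors tautologique~: (2) dit que $|\alpha|$ pr\'eserve les cardinaux, donc que $\alpha$ est bijective, tandis que (3) dit, au vu de la description ci-dessus des composantes passant par un point $z$, qu'aucune fibre de $\alpha$ n'a plus d'un \'el\'ement, c'est-\`a-dire que $\alpha$ est injective~; en pr\'esence de surjectivit\'e, bijectivit\'e et injectivit\'e sont \'equivalentes.

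Enfin, pour (3) $\Leftrightarrow$ (4), il suffit de traduire l'\'egalit\'e $\alpha(b) = \alpha(b')$. Par construction, $\alpha(b) \in \Upsilon_c^{-1}(0)$ correspond \`a l'id\'eal maximal $\Ker(\th_c \circ \O_b)$ de $Z$, via la bijection $\blocs(\Kbov_c\Zba) \longleftrightarrow \Upsilon_c^{-1}(0)$ du lemme~\ref{caracterisation blocs CM}. Ainsi $\alpha(b) = \alpha(b')$ si et seulement si $\Ker(\th_c \circ \O_b) = \Ker(\th_c \circ \O_{b'})$~; comme les deux morphismes $\th_c \circ \O_b$ et $\th_c \circ \O_{b'}$ sont \`a valeurs dans le corps $\Kbov_c$ et ont m\^eme restriction (\'egale \`a l'identit\'e) sur $\kb[\CCB]$, ils co\"{\i}ncident d\`es qu'ils ont m\^eme noyau. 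L'injectivit\'e de $\alpha$ \'equivaut donc \`a (4), ce qui ach\`eve la preuve. Aucune difficult\'e s\'erieuse n'est \`a pr\'evoir~: l'unique point d\'elicat est le bon usage du fait que les $\O_b^\sharp$ sont des sections de $\pi \circ \Upsilon$ (point \'etabli en~\ref{section upsilon}), qui est ce qui rend la surjectivit\'e de $\alpha$ imm\'ediate.
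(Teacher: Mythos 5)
Your proof is correct and is exactly the argument the paper leaves implicit: the preceding discussion already establishes the bijections $\blocs(\kb(\CCB)\Zba)\leftrightarrow\Upsilon^{-1}(\pGba)$ and $\blocs(\Kbov_c\Zba)\leftrightarrow\Upsilon_c^{-1}(0)$, together with the fact that each $\O_b^\sharp$ is a section of $\pi\circ\Upsilon$, and the proposition is stated as an immediate consequence. You have simply made explicit the surjection $\alpha\colon b\mapsto\O_b^\sharp(c)$ and checked that each of (1)--(4) is a reformulation of its injectivity; the only small implicit step is that ``same kernel'' implies ``same morphism'' for $\th_c\circ\O_b$, which is guaranteed by the corollaire~\ref{Q extention kc} (already available at this point).
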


\bigskip

Si $c \in \CCB$, nous dirons que 
$c$ est {\it g\'en\'erique} s'il v\'erifie l'une des conditions \'equivalentes 
de la proposition~\ref{generique particulier}. Il sera dit {\it particulier} 
dans le cas contraire. 
Nous noterons $\CCB_\gen$ (respectivement $\CCB_\parti$) l'ensemble 
des $c \in \CCB$ g\'en\'eriques (respectivement particuliers). 

\bigskip

\begin{coro}\label{particulier ferme}
$\CCB_\gen$ est un ouvert de Zariski non vide de $\CCB$. 
D'autre part, $\CCB_\parti$ est un ferm\'e de Zariski de $\CCB$. Si $W \neq 1$, 
il est purement de codimension $1$ et contient $0$. 

De plus, $\CCB_\gen$ et $\CCB_\parti$ sont stables sous l'action de 
$\kb^\times \times \kb^\times \times (W^\wedge \rtimes \NC)$. 
\end{coro}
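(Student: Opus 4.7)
Mon plan consiste \`a \'etablir s\'epar\'ement les quatre assertions de l'\'enonc\'e. Pour la fermeture de $\CCB_\parti$ : d'apr\`es le lemme~\ref{caracterisation blocs CM} et la condition (4) de la proposition~\ref{generique particulier}, on a $c \in \CCB_\parti$ si et seulement s'il existe deux blocs distincts $b, b' \in \blocs(\kb(\CCB)\Zba)$ v\'erifiant $\th_c \circ \O_b = \th_c \circ \O_{b'}$. Pour une telle paire fix\'ee, posons $I_{b,b'} \subset \kb[\CCB]$ l'id\'eal engendr\'e par $\{\O_b(z) - \O_{b'}(z) : z \in Z\}$ : il est non nul puisque $\O_b \neq \O_{b'}$, donc $X_{b,b'} := V(I_{b,b'})$ est un ferm\'e propre de $\CCB$. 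L'ensemble $\blocs(\kb(\CCB)\Zba)$ \'etant fini, $\CCB_\parti = \bigcup_{b \neq b'} X_{b,b'}$ est ferm\'e et son compl\'ementaire $\CCB_\gen$ est un ouvert non vide.

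Pour l'appartenance $0 \in \CCB_\parti$ lorsque $W \neq 1$ : l'exemple~\ref{upsilon c=0} donne que $\Upsilon_0^{-1}(0,0)$ est r\'eduit \`a l'origine de $(V \times V^*)/W$, donc $\Kbov_0 \Zba$ est une alg\`ebre locale ayant un unique bloc. En revanche, l'exemple~\ref{lineaire} montre que chaque caract\`ere lin\'eaire de $W$ est seul dans sa famille de Calogero-Moser g\'en\'erique~; puisque $\unb_W \neq \e$ lorsque $W \neq 1$, on obtient au moins deux blocs g\'en\'eriques. La condition (1) de la proposition~\ref{generique particulier} \'echoue donc en $c = 0$. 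Pour l'invariance sous l'action de $\kb^\times \times \kb^\times \times (W^\wedge \rtimes \NC)$, le corollaire~\ref{omega lineaire} assure que cette action permute les morphismes $\O_\chi$ (\`a un tordu pr\`es), et donc permute les paires $(\O_b, \O_{b'})$ apr\`es sp\'ecialisation par $\th_c$~; elle stabilise en particulier $\CCB_\parti$ et son compl\'ementaire $\CCB_\gen$.

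Reste la puret\'e en codimension~$1$. L'argument identifie les composantes irr\'eductibles de $\Upsilon^{-1}(\CCB \times 0) = \Spec(\Zba)$ avec les sections $Y_b := \O_b^\sharp(\CCB)$, chacune isomorphe \`a $\CCB$ via $\O_b^\sharp$, l'isomorphisme $Y_b \simeq \CCB$ identifiant chaque $X_{b,b'}$ au sch\'ema intersection $Y_b \cap Y_{b'}$. On v\'erifie d'abord que $\Spec(\Zba)$ est Cohen-Macaulay : $\Zba$ est libre de rang $|W|$ sur l'anneau r\'egulier $\kb[\CCB]$ avec fibres artiniennes, donc plat avec fibres Cohen-Macaulay, donc Cohen-Macaulay. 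Il est ensuite connexe : puisque la fibre $\Upsilon_0^{-1}(0,0)$ est r\'eduite \`a un point, toutes les sections $\O_b^\sharp$ y prennent la m\^eme valeur, donc les $Y_b$ partagent ce point. Le th\'eor\`eme de connexit\'e de Hartshorne, appliqu\'e \`a ce sch\'ema Cohen-Macaulay \'equidimensionnel connexe, interdit alors que deux composantes irr\'eductibles distinctes se rencontrent en une sous-vari\'et\'e de codimension $\ge 2$ (sinon le compl\'ementaire de cette sous-vari\'et\'e, connexe par Hartshorne, d\'econnecterait $Y_b$ de $Y_{b'}$). En transportant par l'isomorphisme $Y_b \simeq \CCB$, chaque $X_{b,b'}$ non vide est de codimension pure~$1$, et donc $\CCB_\parti$ aussi.

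Le principal obstacle est pr\'ecis\'ement cette derni\`ere \'etape : l'approche ensembliste na\"{\i}ve ne donne que l'inclusion $\CCB_\parti \subset V(p)$ pour un polyn\^ome non nul $p$ (par exemple le discriminant $\prod_{b \neq b'}(\O_b(z) - \O_{b'}(z))$ pour un $z \in Z$ s\'eparant les blocs), ce qui garantit seulement la codimension $\ge 1$, sans exclure d'\'eventuelles composantes de codimension $\ge 2$. C'est le couple \emph{caract\`ere Cohen-Macaulay + connexit\'e} de $\Spec(\Zba)$ -- cette derni\`ere reposant sur le caract\`ere ponctuel de la fibre $\Upsilon_0^{-1}(0,0)$ -- qui, via Hartshorne, \'elimine ces composantes ind\'esirables.
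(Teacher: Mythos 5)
Your argument genuinely differs from the paper's, which obtains the whole statement from proposition~\ref{codimension un}: that proposition exploits the normality of $\kb[\CCB]$ (a noetherian normal domain is the intersection of its localizations at height-one primes) to show that the locus where the idempotents of $\kb(\CCB)\Zba$ fail to descend is closed and purely of codimension one. Your treatment of the closedness, of $0\in\CCB_\parti$, and of the equivariance is correct; but the purity argument has a genuine gap.

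You claim that since $\Spec\Zba$ is Cohen-Macaulay, equidimensional, and connected, Hartshorne's connectedness theorem forbids two irreducible components from meeting in codimension $\ge 2$, so that each $X_{b,b'}$ is purely of codimension one. That is not what Hartshorne gives: it only guarantees that the components can be linked by a \emph{chain} of codimension-one incidences, not that every individual pair meets in codimension one. Take $A=\kb[[s,t]]$ and
$$
C=\bigl\{(a,b,c)\in A^3 : a\equiv c \bmod (s),\ b\equiv c\bmod (t)\bigr\}.
$$
Then $C$ is $A$-free of rank $3$ (basis $(1,1,1),(s,0,0),(0,t,0)$), hence Cohen-Macaulay; it is equidimensional with three sections $Y_1,Y_2,Y_3$ all passing through the closed point, hence connected; yet $Y_1\cap Y_2$ is the closed point alone (codimension~$2$), while $Y_1\cap Y_3$ and $Y_2\cap Y_3$ are the hyperplanes $s=0$ and $t=0$. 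So the intermediate claim fails under exactly your hypotheses. The desired conclusion for $\CCB_\parti=\bigcup_{b\neq b'}X_{b,b'}$ is nonetheless true (in the example, the closed point lies on $s=0$), and your Hartshorne strategy can be repaired by localizing at a generic point $\xi$ of an alleged codimension-$\ge 2$ component of $\CCB_\parti$ \emph{itself}, not of a single $X_{b,b'}$: then $\overline{\{\xi\}}$ is a component of every $X_{b,c}$ containing $\xi$, so that locally at $\O_b^\sharp(\xi)$ \emph{all} other components meet $Y_b$ only at the closed point, and the punctured spectrum splits as $(Y_b\setminus\{\cdot\})\sqcup(\bigcup_{c\neq b}Y_c\setminus\{\cdot\})$, contradicting Hartshorne. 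Without this refinement the purity is not established.
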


\begin{proof}
La stabilit\'e sous l'action de 
$\kb^\times \times \kb^\times \times (W^\wedge \rtimes \NC)$ est \'evidente. 
Le fait que $\CCB_\gen$ (respectivement $\CCB_\parti$) soit ouvert 
(respectivement ferm\'e) d\'ecoule de la proposition~\ref{codimension un}(2). 
Lorsque $W \neq 1$, le caract\`ere trivial est seul dans sa famille de Calogero-Moser g\'en\'erique 
(voir l'exemple~\ref{lineaire}) alors que sa $0$-famille de Calogero-Moser est $\Irr(W)$. 
Cela montre que $0 \in \CCB_\parti$ et, en vertu de la proposition~\ref{codimension un}(1), 
purement de codimension $1$. 
%
%
\end{proof}

\bigskip

On d\'eduit de l'exemple~\ref{lineaire} que~:

\bigskip

\begin{coro}\label{generique lineaire}
Si $c \in \CCB$ est g\'en\'erique, alors tout caract\`ere lin\'eaire de $W$ 
est seul dans sa $c$-famille de Calogero-Moser.
\end{coro}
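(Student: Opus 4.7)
L'id\'ee est de combiner trois ingr\'edients d\'ej\`a \'etablis~: (i) un caract\`ere lin\'eaire est seul dans sa famille de Calogero-Moser \emph{g\'en\'erique} (exemple~\ref{lineaire})~; (ii) toute $c$-famille de Calogero-Moser est une r\'eunion de familles de Calogero-Moser g\'en\'eriques (exemple~\ref{cm generique})~; (iii) pour $c$ g\'en\'erique, la sp\'ecialisation $\th_c$ s\'epare les blocs de $\kb(\CCB)\Zba$, en vertu de l'\'equivalence~(1)$\Leftrightarrow$(4) de la proposition~\ref{generique particulier}.

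Soit donc $\g \in W^\wedge$ et soit $c \in \CCB_\gen$. D'apr\`es l'exemple~\ref{lineaire}, le caract\`ere $\g$ est seul dans sa famille de Calogero-Moser g\'en\'erique~: notons $b_\g \in \blocs(\kb(\CCB)\Zba)$ le bloc correspondant, de sorte que $\Irr_\Hb(W,b_\g)=\{\g\}$. Soit $b_c \in \blocs(\Kbov_c\Zba)$ l'unique bloc tel que $\g \in \Irr_\Hb(W,b_c)$ (caract\'erisation des familles via le lemme~\ref{caracterisation blocs CM}). Il s'agit de montrer que $\Irr_\Hb(W,b_c)=\{\g\}$.

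Le point cl\'e est d'observer que la partition de $\Irr(W)$ en familles g\'en\'eriques est plus fine que la partition en $c$-familles (ce qui est simplement la reformulation de~(ii))~: ainsi, la $c$-famille de $\g$ est une r\'eunion de familles g\'en\'eriques $\Irr_\Hb(W,b^{(1)}),\ldots,\Irr_\Hb(W,b^{(r)})$ avec $b^{(1)}=b_\g$. Par le lemme~\ref{caracterisation blocs CM}, deux caract\`eres $\chi,\chi'$ appartiennent \`a la m\^eme $c$-famille si et seulement si $\O_\chi^c=\O_{\chi'}^c$, c'est-\`a-dire, gr\^ace au corollaire~\ref{theta omega}, si et seulement si $\th_c \circ \O_\chi=\th_c \circ \O_{\chi'}$. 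Puisque $\O_{b^{(i)}}=\O_{\chi}$ pour tout $\chi \in \Irr_\Hb(W,b^{(i)})$, on obtient $\th_c \circ \O_{b^{(1)}}=\cdots=\th_c \circ \O_{b^{(r)}}$. La caract\'erisation~(4) des param\`etres g\'en\'eriques dans la proposition~\ref{generique particulier} force alors $b^{(1)}=\cdots=b^{(r)}=b_\g$, et donc $\Irr_\Hb(W,b_c)=\Irr_\Hb(W,b_\g)=\{\g\}$.

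L'\'etape principale du raisonnement est la mise en \oe uvre de l'\'equivalence~(1)$\Leftrightarrow$(4) de la proposition~\ref{generique particulier}, qui transforme l'hypoth\`ese ``$c$ g\'en\'erique'' (d\'efinie par une \'egalit\'e de cardinaux de blocs) en l'information concr\`ete que $\th_c$ s\'epare les morphismes de blocs. Une fois cette traduction effectu\'ee, il n'y a plus qu'\`a combiner avec l'information d\'ej\`a connue sur les caract\`eres lin\'eaires (exemple~\ref{lineaire}). Aucune difficult\'e technique suppl\'ementaire n'est attendue.
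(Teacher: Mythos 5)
Votre preuve est correcte et suit essentiellement la m\^eme logique que celle du texte, qui est laiss\'ee implicite (\og{}on d\'eduit de l'exemple~\ref{lineaire}\fg{}). L'argument du texte repose sur la condition~(1) de la proposition~\ref{generique particulier}~: puisque les $c$-familles sont des r\'eunions de familles g\'en\'eriques et que, $c$ \'etant g\'en\'erique, les deux partitions ont le m\^eme nombre de parts, elles co\"{\i}ncident, d'o\`u la conclusion imm\'ediate. Vous utilisez \`a la place la condition~(4) et repassez par les caract\`eres centraux et leur sp\'ecialisation, ce qui est un chemin \'equivalent mais l\'eg\`erement plus d\'etaill\'e~; les deux routes \'etablissent exactement le m\^eme interm\'ediaire (\`a savoir que, pour $c$ g\'en\'erique, $c$-familles et familles g\'en\'eriques co\"{\i}ncident) et se concluent de la m\^eme fa\c{c}on via l'exemple~\ref{lineaire}.
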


\bigskip


\begin{coro}\label{multiplicite 1 generique}
Soit $\g : W \longto \kb^\times$ un caract\`ere lin\'eaire et supposons $c$ g\'en\'erique. 
Alors $Z$ est nette sur $P$ en $\Ker(\O_\g^c)$.
\end{coro}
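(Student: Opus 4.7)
Le plan est de copier mot pour mot la strat\'egie de la preuve du corollaire~\ref{multiplicite 1}, en rempla\c{c}ant partout l'extension g\'en\'erique $\Pba = \kb[\CCB]$ par la sp\'ecialisation $\Pba/\CG_c \simeq \Kbov_c \simeq \kb$, et en utilisant le corollaire~\ref{generique lineaire} \`a la place de l'exemple~\ref{lineaire}. Le point essentiel est que, $c$ \'etant g\'en\'erique, tout caract\`ere lin\'eaire est encore seul dans sa $c$-famille de Calogero-Moser.

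D'abord, j'introduirai l'idempotent $b_\g^c \in \blocs(\Kbov_c\Zba)$ associ\'e \`a la $c$-famille de Calogero-Moser $\{\g\}$, dont l'existence est assur\'ee par le corollaire~\ref{generique lineaire}. Le corollaire~\ref{dim bonne}, appliqu\'e avec $K = \Kbov_c$, donne alors
$$\dim_{\Kbov_c}(b_\g^c \Kbov_c \Zba)=\sum_{\chi \in \{\g\}} \chi(1)^2 = 1.$$
Posons $\zG_\g^c = \Ker(\O_\g^c) \subset Z$. Puisque $\O_\g^c : Z \to \Kbov_c$ se factorise par la surjection $Z \surto Z/\zG_\g^c \longiso \Kbov_c$ (le morphisme compos\'e $\kb[\CCB] \injto Z \surto Z/\zG_\g^c$ est l'\'evaluation en $c$), on a $\zG_\g^c \cap P = \pGba_c$, et $\zG_\g^c$ est un id\'eal maximal de $Z$ de corps r\'esiduel $\kb$, identique au corps r\'esiduel de $P$ en $\pGba_c$. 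Ainsi, la s\'eparabilit\'e de l'extension r\'esiduelle est triviale.

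Il reste \`a montrer que $\pGba_c Z_{\zG_\g^c} = \zG_\g^c Z_{\zG_\g^c}$. Or, la fibre $\Kbov_c \otimes_P Z = \Kbov_c \Zba$ se d\'ecompose selon ses blocs, et le facteur local correspondant \`a l'id\'eal maximal $\zG_\g^c / \pGba_c Z$ est pr\'ecis\'ement $b_\g^c \Kbov_c \Zba$. Puisque ce facteur est de $\Kbov_c$-dimension $1$, donc isomorphe \`a $\Kbov_c$, son id\'eal maximal est nul, ce qui se traduit exactement par l'\'egalit\'e $\pGba_c Z_{\zG_\g^c} = \zG_\g^c Z_{\zG_\g^c}$. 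Par d\'efinition, $Z$ est donc nette sur $P$ en $\zG_\g^c$.

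Il n'y a pas de vraie difficult\'e~: toute la substance est concentr\'ee dans le corollaire~\ref{generique lineaire}, lui-m\^eme cons\'equence formelle de la caract\'erisation des points g\'en\'eriques via la proposition~\ref{generique particulier} et de l'observation de l'exemple~\ref{lineaire}. La seule petite subtilit\'e est de s'assurer que l'identification $Z/\zG_\g^c \simeq \Kbov_c$ vient bien de la factorisation de $\O_\g^c$, ce qui garantit que $\zG_\g^c$ est au-dessus de $\pGba_c$ (et non au-dessus d'un autre id\'eal premier de $P$).
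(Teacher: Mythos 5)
Votre preuve est correcte et reprend fid\`element la strat\'egie de la preuve du corollaire~\ref{multiplicite 1} (que le m\'emoire omet de r\'edacter explicitement pour le cas sp\'ecialis\'e, pr\'ecis\'ement parce que l'adaptation est imm\'ediate)~: remplacer $\Pba$ par $\Kbov_c\simeq\kb$, l'exemple~\ref{lineaire} par le corollaire~\ref{generique lineaire}, et conclure via le corollaire~\ref{dim bonne} que le bloc local $b_\g^c\Kbov_c\Zba$ est de dimension $1$, donc un corps, donc d'id\'eal maximal nul. Tous les points de votre argument (identification du facteur local avec $Z_{\zG_\g^c}/\pGba_c Z_{\zG_\g^c}$, s\'eparabilit\'e triviale des extensions r\'esiduelles) sont corrects.
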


\bigskip


Terminons par une courte \'etude de la lissit\'e de $\ZCB$.
Soit $b \in \blocs(K\Zba)$ et notons $\bar{\zG}_b$  \indexnot{z}{\bar{\zG}_b}  l'id\'eal premier 
de $Z$ \'egal au noyau de $\O_b^K : Z \to K$. D'apr\`es~\cite[th\'eor\`eme~5.6]{gordon}, on a~:

\bigskip

\begin{prop}[Gordon]\label{prop lissite}
Si l'anneau $Z$ est r\'egulier en $\bar{\zG}_b$, alors $|\Irr_\Hb(W,b)|=1$. 
De plus, 
$$K\Hbov b \simeq \Mat_{|W|}(bK\Zba)$$
et $bK\Zba$ est une $K$-alg\`ebre locale de dimension finie et de corps r\'esiduel $K$.
\end{prop}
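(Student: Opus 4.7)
The plan is to exploit the Cohen--Macaulay property of $\Hb e$ over $Z$ (Corollary~\ref{coro:endo-bi}(e)) together with the regularity hypothesis to produce a Morita equivalence at the level of local rings, then to descend this equivalence to the block $b$ by specializing modulo $\pGba$ and extending scalars to $K$. This mirrors the strategy of Proposition~\ref{morita lissite}, but carried out at the prime $\bar{\zG}_b$ rather than at a prime of $P$.

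The first step will be to show that $\Hb_{\bar{\zG}_b}e$ is a free $Z_{\bar{\zG}_b}$-module of rank $|W|$. It is finitely generated because $\Hb e$ is finitely generated over $Z$; it is of generic rank $|W|$ because $\Kb\Hb \simeq \Mat_{|W|}(\Kb Z)$ via Theorem~\ref{theo:KH-mat}; and it is Cohen--Macaulay over $Z$ by Corollary~\ref{coro:endo-bi}(e). A finitely generated Cohen--Macaulay module over a regular local ring being free (Auslander--Buchsbaum), the regularity hypothesis on $Z_{\bar{\zG}_b}$ yields freeness of rank $|W|$. Combined with the identification $\Hb = \End_Z(\Hb e)$ from Corollary~\ref{coro:endo-bi}(a), localization (which is exact and preserves $\End$ for finitely presented modules) gives $\Hb_{\bar{\zG}_b} \simeq \End_{Z_{\bar{\zG}_b}}(\Hb_{\bar{\zG}_b}e) \simeq \Mat_{|W|}(Z_{\bar{\zG}_b})$.

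Next I would identify the block. Setting $S = Z \setminus \bar{\zG}_b$, the standard manipulation $\bar{S}^{-1}(\Hb/\pGba\Hb) = (S^{-1}\Hb)/\pGba(S^{-1}\Hb)$ (and the analogous statement for $Z$) shows that the local factor of $\Kbov_\CG\Hbov$ corresponding to $\bar{\zG}_b$ is $\Hb_{\bar{\zG}_b}/\pGba\Hb_{\bar{\zG}_b}$, and the corresponding local factor of $\Kbov_\CG\Zba$ is $A_b := Z_{\bar{\zG}_b}/\pGba Z_{\bar{\zG}_b}$. Reducing the matrix-algebra isomorphism modulo $\pGba$ therefore yields $e_b\Kbov_\CG\Hbov \simeq \Mat_{|W|}(A_b)$, and extending scalars to $K$ produces $K\Hbov b \simeq \Mat_{|W|}(bK\Zba)$ with $bK\Zba = K \otimes_{\Kbov_\CG} A_b$.

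Finally, the local nature of $bK\Zba$ will be established as follows. The ring $A_b$ is Artinian local (as a local factor of a finite-dimensional $\Kbov_\CG$-algebra), and its residue field equals $k_Z(\bar{\zG}_b)$, which is $\Kbov_\CG$ itself by Corollary~\ref{Q extention kc}. Since the nilpotent maximal ideal of $A_b$ remains nilpotent after tensoring with $K$, and the residue field $\Kbov_\CG$ becomes $K$, the ring $bK\Zba$ is local Artinian with residue field $K$. Combined with the matrix-algebra form, this forces $K\Hbov b$ to admit a single isomorphism class of simple modules (of dimension $|W|$), giving $|\Irr_\Hb(W,b)| = 1$. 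The main obstacle in the argument is really the freeness step, where one has to combine the generic rank computation from the Morita equivalence at the generic point (Theorem~\ref{theo:KH-mat}) with the Cohen--Macaulay/regular local freeness criterion; every other step is formal base change and descent through block decompositions.
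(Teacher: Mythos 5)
Your proof is correct. A remark first: the paper itself offers no proof of this proposition; it is stated as a quotation of Gordon's theorem (\cite[th\'eor\`eme~5.6]{gordon}), so there is nothing to compare against directly. Your argument is a valid reconstruction, and it is essentially the natural adaptation of Proposition~\ref{prop:morita} of the paper (Morita equivalence over a regular localization of $Z$) from the ``invert a multiplicative set in $P$'' setting to the ``localize $Z$ at the prime $\bar{\zG}_b$'' setting.

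The chain of reasoning is sound at each step: Corollary~\ref{coro:endo-bi}(e) gives that $\Hb e$ is a (maximal) Cohen--Macaulay $Z$-module, so under the regularity hypothesis, $(\Hb e)_{\bar{\zG}_b}$ is free over $Z_{\bar{\zG}_b}$ by Auslander--Buchsbaum; its rank is $|W|$ by Theorem~\ref{theo:KH-mat}; since $\Hb e$ is finitely presented over the Noetherian ring $Z$, the identity $\Hb=\End_Z(\Hb e)$ of Corollary~\ref{coro:endo-bi}(a) localizes to $\Hb_{\bar{\zG}_b}\simeq\Mat_{|W|}(Z_{\bar{\zG}_b})$; reduction modulo $\pGba$ and localization then single out the block $b$, identifying $Z_{\bar{\zG}_b}/\pGba Z_{\bar{\zG}_b}$ with $b\Kbov_\CG\Zba$ (here it is worth being explicit that the image of $Z\setminus\bar{\zG}_b$ in the Artinian ring $\Kbov_\CG\Zba$ kills every local factor except the one indexed by $\bar{\zG}_b$); Corollary~\ref{Q extention kc} identifies the residue field with $\Kbov_\CG$; and base change to $K$ preserves nilpotence of the maximal ideal while turning the residue field into $K$, yielding the local $K$-algebra with residue field $K$, hence a split block with a unique simple module. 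The only notational point to tidy is the interplay between $\pGba$ and $\pGba_\CG$: you reduce modulo $\pGba$ (not $\pGba_\CG$), which is fine \emph{precisely because} the ensuing localization at $\bar{\zG}_b$ (which contains $\pGba_\CG$) already inverts the image of $\kb[\CCB]\setminus\CG$, so the result is the same Artinian local ring $b\Kbov_\CG\Zba$; it is worth saying this explicitly rather than leaving it as a ``standard manipulation.''
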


\bigskip
%
%
%
%

\bigskip

Soit maintenant $c \in \CCB$ et supposons que $\CG=\CG_c$. 
Notons $z_b$ le point de $\Upsilon_c^{-1}(0) \subset \ZCB_c \subset \ZCB$ 
correspondant \`a $b$. 

\bigskip

\begin{prop}\label{lissite generique}
Avec les notations ci-dessus, les assertions suivantes sont \'equivalentes~:
\begin{itemize}
\itemth{1} $\ZCB$ est lisse en $z_b$.

\itemth{2} $\ZCB_c$ est lisse en $z_b$.
\end{itemize}
\end{prop}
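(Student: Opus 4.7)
The plan is to exhibit $\ZCB_c$ as the scheme-theoretic fibre at $c$ of a flat morphism $\ZCB \to \CCB$ whose base is regular, and then invoke the standard regularity-in-flat-families theorem.

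First, I would combine the two flatness statements already in the text: the morphism $\Upsilon : \ZCB \to \PCB$ is finite and flat by~(\ref{platitude}), and $\pi : \PCB \to \CCB$ is smooth by~(\ref{eq:pi-plat}), hence flat. Composition gives that $\pi \circ \Upsilon : \ZCB \to \CCB$ is flat. Next, I would use the commutativity and cartesianity of the lower square in diagram~(\ref{diagramme geometrie}) to identify the scheme-theoretic fibre of $\pi \circ \Upsilon$ above $c \in \CCB$: we have $(\pi \circ \Upsilon)^{-1}(c) = \Upsilon^{-1}(\PCB_{\!\!\!\bullet}) = \ZCB_c$, with $z_b$ the same point on both sides.

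The key step is then the classical criterion for regularity in a flat family (e.g. Matsumura, \emph{Commutative Ring Theory}, Theorem~23.7): if $A \to B$ is a flat local homomorphism of Noetherian local rings, then $B$ is regular if and only if $A$ is regular and $B/\mathfrak{m}_A B$ is regular. Applying this to the local homomorphism
\[
\kb[\CCB]_{\CG_c} \longto Z_{\bar{\zG}_b},
\]
which is flat by the previous paragraph, and noting that $\kb[\CCB]_{\CG_c}$ is regular (since $\CCB$ is an affine space), the criterion yields
\[
Z_{\bar{\zG}_b} \text{ regular} \quad \Longleftrightarrow \quad Z_{\bar{\zG}_b}/\CG_c Z_{\bar{\zG}_b} \text{ regular},
\]
and the right-hand side is precisely the local ring of $\ZCB_c$ at $z_b$. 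This is the equivalence (1) $\Leftrightarrow$ (2).

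There is no real obstacle here: the argument is essentially a bookkeeping exercise, the only subtle point being to verify that the local homomorphism $\kb[\CCB]_{\CG_c} \to Z_{\bar{\zG}_b}$ is indeed flat and local, which follows because $z_b$ lies in $\ZCB_c$ (so $\bar{\zG}_b$ contains $\CG_c P \subset \pGba_c$, ensuring that $\bar{\zG}_b \cap \kb[\CCB] = \CG_c$ and making the map local) and flatness has already been established globally.
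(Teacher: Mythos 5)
Your argument proves the implication $(2)\Rightarrow(1)$, but the implication $(1)\Rightarrow(2)$ does \emph{not} follow from the flat-family criterion you invoke, so there is a genuine gap.

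The theorem you cite (Matsumura, Th.~23.7) gives two one-directional statements for a flat local homomorphism $A\to B$ of Noetherian local rings: if $B$ is regular then $A$ is regular, and if $A$ \emph{and} the closed fibre $B/\mathfrak{m}_A B$ are both regular then $B$ is regular. It does \emph{not} say that regularity of $B$ (plus regularity of $A$ and flatness) forces the closed fibre to be regular. A standard counterexample is $A=k[t]_{(t)}\to B=k[x,y]_{(x,y)}$, $t\mapsto x^2-y^2$: this is flat and local, both $A$ and $B$ are regular, yet $B/\mathfrak{m}_A B$ is the local ring of a nodal curve and is not regular. So in your setting, knowing that $Z_{\bar{\zG}_b}$ is regular and that $\kb[\CCB]_{\CG_c}\to Z_{\bar{\zG}_b}$ is flat local with regular source does \emph{not} yield regularity of $Z_{\bar{\zG}_b}/\CG_c Z_{\bar{\zG}_b}$, which is exactly what $(1)\Rightarrow(2)$ asserts.

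What saves the statement --- and what the paper's proof uses --- is the extra datum of a \emph{section}: the central character $\O_\chi$ for any $\chi\in\Irr_\Hb(W,b)$ provides a morphism $\O_\chi^\sharp:\CCB\to\ZCB$ with $\pi\circ\Upsilon\circ\O_\chi^\sharp=\Id_\CCB$ and $\O_\chi^\sharp(c)=z_b$. (Note that in the counterexample above, no such section exists.) Via Lemme~\ref{somme tangents}, the section forces a splitting of Zariski tangent spaces
\[
\TC_{z_b}(\ZCB)=\TC_{z_b}(\ZCB_c)\oplus\TC_{z_b}(\O_\chi^\sharp(\CCB)),
\]
with $\TC_{z_b}(\O_\chi^\sharp(\CCB))\simeq\TC_c(\CCB)$ of dimension $\dim\CCB$. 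Combined with the dimension count $\dim\ZCB=\dim\ZCB_c+\dim\CCB$ (which is where flatness is actually used), this converts the tangent-space bound at $z_b$ into the equivalence in both directions. Your write-up never records the existence of this section through $z_b$, and without it the argument fails.
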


\begin{proof}
Rappelons pour commencer le lemme suivant~:

\bigskip

\begin{quotation}
{\small
\begin{lem}\label{somme tangents}
Soit $\ph : \YCB \to \XCB$ un morphisme de $\kb$-vari\'et\'es (pas forc\'ement r\'eduites), 
soit $y \in \YCB$ et soit $x=\ph(y)$. 
On suppose qu'il existe un morphisme de $\kb$-vari\'et\'es 
$\s : \XCB \to \YCB$ tel que $y=\s(x)$ et $\ph \circ \s = \Id_\XCB$. Alors 
$$\TC_y(\YCB)=\TC_y(\ph^*(x)) \oplus \TC_y(\s(\XCB)).$$
Ici, $\TC_y(\YCB)$ d\'esigne l'espace tangent \`a la $\kb$-vari\'et\'e $\YCB$ et 
$\ph^*(x)$ d\'esigne la fibre (sch\'ematique) 
en $x$ de $\ph$, vue comme $\kb$-sous-vari\'et\'e ferm\'ee 
de $\YCB$, non n\'ecessairement r\'eduite.
\end{lem}}
\end{quotation}

\bigskip

Soit $\chi \in \Irr_\Hb(W,b)$. Le morphisme de vari\'et\'es 
$\O_\chi^\sharp : \CCB \to \ZCB$ qui est une section du morphisme 
$\pi \circ \Upsilon : \ZCB \to \CCB$. D'autre part, par hypoth\`ese, 
$z_b = \O_\chi^\sharp(c)$. En vertu du lemme~\ref{somme tangents} ci-dessus, 
on a
$$\TC_{z_b}(\ZCB) = \TC_{z_b}(\ZCB_c) \oplus \TC_{z_b}(\O_\chi^\sharp(\CCB)).$$
Puisque $\TC_{z_b}(\O_\chi^\sharp(\CCB)) \simeq \TC_c(\CCB)$, la proposition d\'ecoule de la lissit\'e de 
$\CCB$ et du fait que 
$\dim(\ZCB)=\dim(\ZCB_c)+\dim(\CCB)$.
\end{proof}

\bigskip

\`A la suite des travaux d'Etingof-Ginzburg~\cite{EG}, Ginzburg-Kaledin~\cite{GK},
Gordon~\cite{gordon} et Bellamy~\cite{bellamy g4}, une classification compl\`ete 
des groupes de r\'eflexions complexes $W$ tels qu'il existe $c \in \CCB$ 
tel que $\ZCB_c$ soit lisse a \'et\'e obtenue. Notons que les assertions 
{\it ``Il existe $c \in \CCB$ tel que $\ZCB_c$ soit lisse''} et 
{\it ``l'anneau $\kb(\CCB) \otimes_{\kb[\CCB]} Z=\kb(\CCB)Z$ est r\'egulier''} 
sont \'equivalentes. Rappelons ici le r\'esultat, 
sachant que l'on peut se ramener tr\`es facilement au cas o\`u $W$ est 
irr\'eductible~:

\bigskip

\begin{theo}[Etingof-Ginzburg, Ginzburg-Kaledin, Gordon, Bellamy]\label{les lisses}
Supposons $W$ irr\'eductible. Alors $\kb(\CCB)Z$ est un anneau r\'egulier 
si et seulement si on est dans un des deux cas suivants~:
\begin{itemize}
 \itemth{1} $W$ est de type $G(d,1,n)$, avec $d$, $n \ge 1$.

\itemth{2} $W$ est le groupe not\'e $G_4$ dans la classification de 
Shephard-Todd.
\end{itemize}
\end{theo}

\bigskip

Il d\'ecoule des travaux d'Etingof-Ginzburg~\cite{EG}, Gordon~\cite{gordon}, 
Bellamy~\cite{bellamy g4} que la proposition suivante est vraie~:

\bigskip

\begin{prop}\label{lissite en 0}
Les assertions suivantes sont \'equivalentes~:
\begin{itemize}
\itemth{1} Il existe $c \in \CCB$ tel que $\ZCB_c$ est lisse.

\itemth{2} Il existe $c \in \CCB$ tel que les points de 
$\Upsilon_c^{-1}(0)$ sont lisses dans $\ZCB_c$.
\end{itemize}
\end{prop}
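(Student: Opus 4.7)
The implication (1) $\Rightarrow$ (2) is immediate: if $\ZCB_c$ is lisse as a variety, every one of its points is lisse. The work lies in (2) $\Rightarrow$ (1), and I propose to prove the stronger statement that any $c$ satisfying (2) also satisfies (1). Fix then $c \in \CCB$ such that every point of $\Upsilon_c^{-1}(0)$ is lisse in $\ZCB_c$. By Proposition~\ref{lissite generique}, these points are also lisse in the total space $\ZCB$.

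My plan is to exploit the $\kb^\times$-action on $\ZCB_c$ coming from the $\ZM$-grading on $Z_c$ of Example~\ref{Z graduation-1}, which descends from $Z$ to $Z_c$ because $C_s$ has $\ZM$-degree $0$. Geometrically, this action lives over the natural $\kb^\times$-action on $V/W \times V^*/W$ scaling $V/W$ and $V^*/W$ with opposite weights, whose only fixed point is the origin. Since $\Upsilon_c$ is finite and $\kb^\times$ is connected, the $\kb^\times$-fixed locus in $\ZCB_c$ is exactly $\Upsilon_c^{-1}(0)$. Let $\Sigma_c \subset \ZCB_c$ denote the lieu singulier: it is closed and $\kb^\times$-stable. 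The crux of the proof is then the following claim: any non-empty closed $\kb^\times$-stable subset of $\ZCB_c$ meets the fixed locus $\Upsilon_c^{-1}(0)$. Granting this, if $\Sigma_c$ were non-empty it would intersect $\Upsilon_c^{-1}(0)$, contradicting our hypothesis; hence $\Sigma_c = \emptyset$, giving~(1) for the same $c$.

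To establish the claim I would use the Poisson structure on $Z_c$ of~(\ref{eq:poisson-c}), which is $\kb^\times$-equivariant by construction. By results of Etingof--Ginzburg and Ginzburg--Kaledin, $\ZCB_c$ is a singularit\'e symplectique in the sense of Beauville; by Kaledin's stratification theorem, $\Sigma_c$ is a finite union of feuilles symplectiques and the closure of any positive-dimensional leaf contains a leaf of strictly smaller dimension. The minimal zero-dimensional leaves are single points, and being stabilized by the connected group $\kb^\times$ they are necessarily $\kb^\times$-fixed, hence contained in $\Upsilon_c^{-1}(0)$.

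The main obstacle is this appeal to the Poisson-geometric theory of symplectic singularities, which is external to the present memoir and quite deep. An alternative more in the algebraic spirit of the paper would be to work on the total space $\ZCB$ with the positive $\NM$-grading of Example~\ref{N graduation-1}, which contracts $\ZCB$ to the unique fixed point $0 \in \ZCB_0$; the lieu singulier of $\ZCB$ is closed and stable under $\kb^\times\times\kb^\times$, and one could try to combine this with the finite morphism $\Upsilon$, the openness of smoothness and the equivariance of $\pi\circ\Upsilon$ to control the projection of the lieu singulier to $\CCB$. The difficulty is that the fiber $\ZCB_c$ for $c \neq 0$ is not stable under the $\NM$-grading $\kb^\times$-action, so transporting fiber-wise smoothness information back from the total space requires a delicate semi-continuity argument that is not entirely routine.
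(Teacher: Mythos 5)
The memoir itself gives no direct argument here: the text following the proposition says explicitly that the proof rests on the Shephard--Todd classification, meaning the equivalence is verified group by group by combining Theorem~\ref{les lisses} with a case analysis of smoothness along $\Upsilon_c^{-1}(0)$. Your attempt at a direct, classification-free proof of the stronger statement (same $c$ in (1) and (2)) is the right ambition, but the argument you sketch has a real gap, and I do not think the tools you invoke close it.

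The claim you present as the crux --- that any non-empty closed $\kb^\times$-stable subset of $\ZCB_c$ meets the fixed locus $\Upsilon_c^{-1}(0)$ --- is false for the $\kb^\times$-action coming from the $\ZM$-grading. That action is hyperbolic, not contracting: $V$ and $V^*$ scale with opposite weights, so already on $\PCB_\bullet = V/W\times V^*/W$ there are closed invariant subvarieties avoiding the origin (in $\AM^1\times\AM^1$ take $\{xy=1\}$). In $\ZCB_c$ the loci of points possessing a limit as $t\to 0$, respectively $t\to\infty$, are the proper closed subvarieties $\ZCB_c^\attractif=\Upsilon_c^{-1}(V/W\times 0)$ and $\ZCB_c^\repulsif=\Upsilon_c^{-1}(0\times V^*/W)$ of (\ref{eq:Z-attractif}); their complement is a dense $\kb^\times$-stable open set on which no orbit limits to a fixed point. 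So the orbit-limit mechanism alone cannot force the singular locus into $\Upsilon_c^{-1}(0)$. The Poisson-geometric rescue is the right instinct, but the decisive step you pass over --- that the minimal symplectic leaf of the singular locus is a single point --- is precisely where the proof would have to live, and it is not a formal consequence of Kaledin's stratification theorem. Kaledin gives you that the singular locus is a finite union of leaves and that a minimal-dimensional leaf is closed; he does not exclude that such a closed leaf is positive-dimensional with a free hyperbolic $\kb^\times$-action. Nor does the Hamiltonian structure save the day: the $\kb^\times$-action is indeed Hamiltonian with moment map $\euler_c$ (since $\{\euler,-\}$ is the degree derivation on $Z$), so fixed points are critical points of $\euler_c$ restricted to the leaf, but a regular function on an affine variety need not have critical points. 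What you would really need is a statement specific to the geometry of Calogero--Moser spaces, in the spirit of Bellamy's parametrisation of the leaves of $\ZCB_c$ by pairs (parabolic subgroup $W'$, cuspidal point for $W'$), together with the assertion that every leaf-closure reaches the zero-dimensional stratum for $W'=W$. If you want to pursue this route you must locate and cite such a result; as written, your argument presents as general symplectic formalism what is in fact the essential and non-obvious content.

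Your alternative (work on the total space $\ZCB$ with the contracting $\NM$-grading and project the singular locus down to $\CCB$) runs into a different obstruction, one the memoir itself flags as open in Question~\ref{question:zc-z}: outside of $\Upsilon_c^{-1}(0)$, where Proposition~\ref{lissite generique} applies, it is not known whether smoothness of $\ZCB_c$ at $z$ is equivalent to smoothness of $\ZCB$ at $z$. Without that equivalence you cannot transport singularity information from $\ZCB$ to the fibres $\ZCB_c$, and the cone argument loses its force.
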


\bigskip

\`A l'heure actuelle, la preuve de ce fait repose sur la classification de 
Shephard-Todd des groupes de r\'eflexions complexes. 

\section{Probl\`emes, questions}\label{section:problemes 3}

\medskip

Ce chapitre soul\`eve plusieurs questions num\'eriques concr\`etes, 
qu'il n'est pas facile de r\'esoudre (voir cependant~\cite{gordon B}, 
\cite{gordon martino},~\cite{bellamy},~\cite{martino 2},~\cite{thiel}).

\bigskip

\begin{probleme}\label{probleme generique}
Calculer les familles de Calogero-Moser g\'en\'erique.
\end{probleme}

\bigskip

\begin{probleme}\label{probleme c}
Calculer les $c$-familles de Calogero-Moser.
\end{probleme}

\bigskip

Un probl\`eme beaucoup plus facile est le suivant~:

\bigskip

\begin{probleme}\label{probleme particulier}
D\'eterminer $\CCB_\parti$.
\end{probleme}

\bigskip

Les exemples connus jusqu'\`a pr\'esent sugg\`erent la question suivante~:

\bigskip

\begin{question}\label{question hyperplan}
Est-ce que $\CCB_\parti$ est une r\'eunion d'hyperplans~? 
Si oui, sont-ce les {\bfit hyperplans essentiels} d\'efinis par 
Chlouveraki~\cite[\S 3]{chlouveraki}~?
\end{question}

\bigskip

C'est vrai si $W$ est ab\'elien (par produits directs, on se ram\`ene au cas o\`u $\dim_\kb(V)=1$, 
qui est trait\'e dans le chapitre~\ref{chapitre:rang 1}). 
D'apr\`es le corollaire~\ref{particulier ferme}, $\CCB_\parti$ est un ferm\'e 
de $\CCB$, de codimension $1$ et stable par homoth\'eties~: ce ne peut alors \^etre qu'une r\'eunion 
d'hyperplans si $\dim_\kb(\CCB) \in \{1,2\}$. D'autres probl\`emes sont soulev\'es par la construction des b\'eb\'es modules de Verma.

\bigskip

\begin{probleme}\label{dec verma}
D\'eterminer l'image de $K\MCov(\chi)$ dans $\groth(K\Hbov)\simeq \ZM\Irr(W)$.
\end{probleme}

\bigskip

Rappelons que
le $K\Hbov$-module simple 
$\LCov_K(\chi)$ h\'erite d'une graduation, et donc d'une structure de $KW$-module 
gradu\'e~:

\bigskip

\begin{probleme}\label{dim L}
D\'eterminer la classe $\isomorphisme{\LCov_K(\chi)}_{KW}^\grad \in \groth(KW)[\tb,\tb^{-1}]$, 
ou plus simplement $\isomorphisme{\LCov_K(\chi)}_{KW} \in \groth(KW)$, 
ou encore plus simplement $\dim_K \LCov_K(\chi)$.
\end{probleme}

\bigskip

Terminons par quelques questions de nature g\'eom\'etriques, en \'echo 
\`a la proposition~\ref{lissite generique} et au corollaire~\ref{lissite en 0}~:

\bigskip

\begin{question}\label{question:zc-z}
Soit $z \in \ZCB_c \subset \ZCB$. Est-ce que $z$ est lisse dans $\ZCB$ si et seulement si 
il est lisse dans $\ZCB_c$~? Plus g\'en\'eralement, est-ce que les singularit\'es de 
$\ZCB$ et $\ZCB_c$ en $z$ sont \'equivalentes~?
\end{question}

\bigskip

La proposition~\ref{lissite generique} (et sa preuve) montre que, si $z \in \Upsilon^{-1}(0)$, 
alors les singularit\'es de $\ZCB$ et $\ZCB_c$ en $z$ sont \'equivalentes.

La question suivante est sugg\'er\'ee par le corollaire~\ref{lissite en 0}~: il serait 
souhaitable de l'\'etudier de mani\`ere ind\'ependante de la classification.

\bigskip

\begin{question}\label{question:lissite-0}
Est-ce que $\ZCB_c$ est lisse si et seulement si $\ZCB_c$ est lisse en tout point de 
$\Upsilon^{-1}(0)$~?
\end{question}

\chapter{Conjecture de Martino}\label{chapter:martino}

L'objet de ce chapitre est de rappeler l'\'enonc\'e de la conjecture de Martino~\cite{martino} 
reliant les familles de Calogero-Moser et celles de Hecke (voir la d\'efinition~\ref{defi:famille-rouquier}), 
de rappeler l'\'etat des connaissances dans sa preuve et d'apporter quelques arguments th\'eoriques en sa faveur.

\bigskip

\boitegrise{
{\it Dans ce chapitre, on suppose que $\kb=\CM$ et on fixe une fonction invariante par conjugaison $c : \Ref(W) \to \CM$ 
(ainsi, $c \in \CCB$). On d\'efinit alors une famille $k=(k_{\O,j})_{(\O,j) \in \Omeb_W^\circ}$ par 
les \'egalit\'es du~\S\ref{section:hyperplans-variables} et {\bfit on suppose dans tout ce chapitre 
que $k_{\O,j} \in \RM$ pour tout $(\O,j) \in \Omeb_W^\circ$} (ainsi, $k \in \CCB_\RM$). \\
\hphantom{A} Notons $k^\sharp=(k_{\O,j}^\sharp)_{(\O,j) \in \Omeb_W^\circ}$ l'\'el\'ement de $\CCB_\RM$ d\'efini par 
$k_{\O,j}^\sharp=k_{\O,-j}$ (les indices $j$ \'etant vus modulo $e_\O$). Pour finir, {\bfit on suppose 
que l'hypoth\`ese~\libertesymetrie~ est v\'erifi\'ee} (voir~\S\ref{sub:hecke}).}}{0.75\textwidth}

\bigskip

\section{\'Enonc\'e et cas connus}

\medskip

Rappelons ici l'\'enonc\'e donn\'e dans~\cite[conjecture~2.7]{martino}~:

\bigskip

\begin{conjecturem}[Martino]
Si $b \in \blocs(\Zba_c)$, alors il existe un idempotent 
central $b^\HC$ de $\OC^\cyclo[\qb^\RM]\HC_W^\cyclo(k^\sharp)$ v\'erifiant les deux propri\'et\'es suivantes~:
\begin{itemize}
\itemth{a} $\Irr_\Hb(W,b)=\Irr_\HC(W,b^\HC)$~;

\itemth{b} $\dim_\CM(\Zba b) = \dim_{F(\qb^\RM)}\bigl(F(\qb^\RM)\HC_W^\cyclo(k^\sharp)b^\HC\bigr)$.
\end{itemize}
En particulier, toute $c$-famille de Calogero-Moser est une r\'eunion de $k^\sharp$-familles de Hecke. 
\end{conjecturem}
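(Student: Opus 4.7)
The plan is to attack Martino's conjecture through the explicit central characters $\Omega_\chi^c : Z \to \CM$, using the Euler element $\euler \in Z$ as the bridge to the braid-group element $\pib$ on the Hecke side. The keystone is a comparison between Proposition~\ref{action euler verma} and formula~(\ref{eq:action-pi-cyclo}): after specializing $K_{\O,j} \mapsto k^\sharp_{\O,j} = k_{\O,-j}$ and reindexing $j \mapsto -j$, one finds
\[
\Omega_\chi(\euler)\bigl|_{K = k^\sharp}
= \sum_{(\O,j) \in \Omeb_W^\circ} \frac{m_{\O,j}^\chi |\O| e_\O}{\chi(1)}\, k_{\O,j}
= c_\chi(k),
\]
so that the ``CM Euler eigenvalue at $k^\sharp$'' equals, up to the factor $|\mub_W|$, the exponent governing the action of $\pib$ on the Hecke simple labelled by $\chi$. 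This identity is the only conceptually new input needed to launch the proof.

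First I would establish the weak form: each $c$-family of Calogero-Moser is a \emph{union} of $k^\sharp$-families of Hecke. Fix $b \in \blocs(\Zba_c)$ with family $\mathcal{F} = \Irr_\Hb(W,b)$. All $\chi \in \mathcal{F}$ share the common value $\Omega_b^c(\euler)$, so by the keystone they share $c_\chi(k)$; Lemma~\ref{lem:c-contant} of Brou\'e--Kim then shows that any single $k^\sharp$-Hecke family either lies entirely inside $\mathcal{F}$ or entirely outside. This yields the candidate idempotent $b^\HC = \sum_{\mathcal{G}} b_\mathcal{G}^\HC$, the sum running over the $k^\sharp$-Hecke families $\mathcal{G}$ contained in $\mathcal{F}$; it lies in $\OC^\cyclo[\qb^\RM]\heckecyclotomique(k^\sharp)$ by construction, and statement (a) of the conjecture becomes the equality $\mathcal{F} = \bigsqcup_{\mathcal{G} \subseteq \mathcal{F}} \mathcal{G}$.

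The main obstacle is to upgrade this inclusion to an equality, i.e.\ to rule out the possibility that two distinct Hecke families $\mathcal{G}_1, \mathcal{G}_2$ merge inside a single CM family. The invariant $c_\chi(k)$ does not separate Hecke families in general, so one must produce further central elements of $\Hb$ whose $\Omega$-values, specialized at $k^\sharp$, reconstruct enough of the Hecke character table. A natural first candidate is the Casimir $\casimir_\Hb \in Z$ of~\S\ref{sub:symetrisante}, which, via~(\ref{eq:formule-schur}), encodes the generic Schur elements; however, $\casimir_\Hb$ alone is unlikely to suffice, and one would need to construct systematically a family of bi-homogeneous central elements $(z_i)_{i \in I} \subset Z$ such that the joint specialization of $\Omega_\chi(z_i)$ at $k^\sharp$ detects every $k^\sharp$-Hecke family. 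An alternative, more indirect, route is to carry over the statement from a generic value of $t$ by a $t$-deformation argument and the KZ functor of~\cite{ggor}, exploiting that the analogous comparison at $t=1$ between simples of category~$\mathcal{O}_c$ and simples of the Hecke algebra is known; rigidity of Hecke blocks under flat specialization should then force the desired separation at $t=0$.

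Once equality of families is established, part (b) of the conjecture follows by pure dimension bookkeeping: Corollary~\ref{dim bonne} gives $\dim_\CM(\Zba_c b) = \sum_{\chi \in \mathcal{F}} \chi(1)^2$, and the Wedderburn decomposition of the split semisimple $F(\qb^\RM)$-algebra $F(\qb^\RM)\heckecyclotomique(k^\sharp)$ (Corollary~\ref{coro:hecke-deployee}) yields the identical formula $\sum_{\chi \in \Irr_\HC(W,b^\HC)} \chi(1)^2$ for the Hecke block. The hard part will therefore be the centre-to-centre separation in the third paragraph, for which a case-by-case analysis following Bellamy's treatment of $G(\ell,1,n)$ combined with Gordon--Martino's reduction techniques may be unavoidable in the current state of the theory.
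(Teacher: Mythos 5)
The statement you are attacking is a \emph{conjecture} (Conjecture~FAM of Martino), and the paper does not prove it: the text states explicitly that ``aucun lien th\'eorique n'a pour l'instant \'et\'e \'etabli permettant d'imaginer une preuve de cette conjecture ind\'ependante de la classification.'' What the paper actually supplies is (i)~Proposition~\ref{prop:a-b}, showing that assertion~(a) of the conjecture implies assertion~(b) by the dimension count from Corollaire~\ref{dim bonne} --- which matches your final paragraph exactly --- and (ii)~Remarque~\ref{rema:c-constant}, which records the identity $\O_\chi^c(\euler_c)=c_\chi(k^\sharp)$ as the ``most probant'' theoretical evidence --- which is exactly your keystone comparison between Proposition~\ref{action euler verma} and~(\ref{eq:action-pi-cyclo}). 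The known cases (type $G(de,e,n)$ under hypotheses) are established by case-by-case computation, as you yourself suspect would be needed. So you have correctly identified the theoretical arguments the paper puts forward, but you have gone further and claimed more than they give.

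The genuine error is in your ``weak form.'' You assert that because $c_\chi(k^\sharp)$ is constant on the CM family $\mathcal{F}$ (via the Euler element) and also constant on each $k^\sharp$-Hecke family (Lemme~\ref{lem:c-contant} of Brou\'e--Kim), ``any single $k^\sharp$-Hecke family either lies entirely inside $\mathcal{F}$ or entirely outside.'' This does not follow. Both partitions refine the level sets of the function $\chi\mapsto c_\chi(k^\sharp)$, but two partitions that each refine a common coarser partition need not be comparable to one another: you could perfectly well have four characters $\chi_1,\dots,\chi_4$ with the same $c$-value, CM families $\{\chi_1,\chi_2\},\{\chi_3,\chi_4\}$ and Hecke families $\{\chi_1,\chi_3\},\{\chi_2,\chi_4\}$, and then no Hecke family sits inside any CM family. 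A shared numerical invariant gives no containment in either direction. Consequently even the ``union'' part of the conjecture --- which the paper states only as a consequence of the conjecture, not as an independently provable fact --- is not reached by your argument, and the candidate idempotent $b^\HC$ you construct is not justified to be well-defined. The rest of your proposal (Casimir element, KZ at $t=1$, case-by-case after Bellamy and Gordon--Martino) is an honest survey of plausible attack routes, but none of them is carried out, and the text correctly identifies these as open directions rather than a proof.
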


\bigskip

Cette conjecture a \'et\'e v\'erifi\'ee dans certains cas en calculant s\'epar\'ement les partitions 
en familles de Hecke et en familles de Calogero-Moser. Aucun lien th\'eorique n'a pour l'instant 
\'et\'e \'etabli permettant d'imaginer une preuve de cette conjecture ind\'ependante de la classification. 

\bigskip

\begin{theo}[Bellamy, Chlouveraki, Gordon, Martino]
Supposons que $W$ soit de type $G(de,e,n)$ et supposons que, si $n=2$, alors $e$ est impair ou $d=1$. 
Alors la conjecture~\FAM~est vraie.
\end{theo}

\bigskip

La preuve de ce th\'eor\`eme d\'ecoule des travaux suivants~:
\begin{itemize}
\item M. Chlouveraki a calcul\'e la partition en familles de Hecke dans~\cite{chlouveraki B} et~\cite{chlouveraki D}.

\item Lorsque $e=1$, la partition en familles de Calogero-Moser a \'et\'e calcul\'ee par I. Gordon~\cite{gordon B} 
pour des valeurs rationnelles de $k$ (en utilisant des sch\'emas de Hilbert). Ce r\'esultat a \'et\'e \'etendu 
\`a toutes les valeurs de $k$ par M. Martino~\cite{martino 2}  par des m\'ethodes alg\'ebriques. 

\item La combinatoire de M. Chlouveraki et celle de I. Gordon a \'et\'e compar\'ee par M. Martino~\cite{martino} 
pour d\'emontrer la conjecture~\FAM~lorsque $e=1$.

\item Lorsque $e$ est quelconque, la partition en familles de Calogero-Moser a \'et\'e calcul\'ee 
par G. Bellamy~\cite{bellamy} pour les valeurs rationnelles de $k$, car ce calcul reposait sur le r\'esultat 
de I. Gordon. La m\'ethode se g\'en\'eralisait toutefois \`a toutes les valeurs de $k$, une fois 
le r\'esultat de Martino~\cite{martino 2} \'etabli.
\end{itemize}

\bigskip

\begin{rema}\label{rema:generique-niet}
Il \'etait conjectur\'e par M. Martino que, lorsque $c$ est g\'en\'erique, alors 
les partitions en $c$-familles de Calogero-Moser et en $k^\sharp$-familles de Hecke co\"{\i}ncident. 
Un contre-exemple \`a cela a \'et\'e r\'ecemment trouv\'e par U. Thiel~\cite{thiel}. 
Gr\^ace \`a ses calculs, U. Thiel a aussi obtenu quelques nouveaux cas de la conjecture~\FAM~parmi 
les groupes exceptionnels~: une publication d'U. Thiel viendra bient\^ot dresser une liste 
pr\'ecise des cas connus.

Il faut noter que M. Chlouveraki a calcul\'e les partitions en familles de Hecke des 
groupes exceptionnels~\cite{chlouveraki LNM} dans tous les cas et que les programmes 
d'U. Thiel calculent la partition en familles de Calogero-Moser.\finl
\end{rema}

\bigskip

\section{Arguments th\'eoriques} 

\medskip

Le corollaire~\ref{dim bonne} montre que~:

\bigskip

\begin{prop}\label{prop:a-b}
Dans la conjecture~\FAM, l'assertion (a) implique l'assertion (b).
\end{prop}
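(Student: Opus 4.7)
The plan is to read off both sides of equality (b) as a sum of squares of degrees over the relevant set of irreducible characters of $W$, and then to invoke (a) which asserts that these two sets coincide.

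First I would handle the Calogero-Moser side. By Corollary~\ref{dim bonne} applied with $K=\CM=\Kbov_c$, one has
$$\dim_\CM(\Zba b) \;=\; \sum_{\chi \in \Irr_\Hb(W,b)} \chi(1)^2.$$
This is precisely the content recalled in the hint preceding the statement.

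Next I would treat the Hecke side in a parallel way. By Malle's theorem~\ref{theo:hecke-deployee} and Corollary~\ref{coro:hecke-deployee}, the algebra $F(\qb^\RM)\heckecyclotomique(k^\sharp)$ is semisimple and split, with simple modules in natural bijection $\chi \mapsto \chi_{k^\sharp}^\cyclo$ with $\Irr(W)$ via Tits deformation (see \S\ref{sub:cas-cyclo}). The idempotent $b^\HC$, viewed in $F(\qb^\RM)\heckecyclotomique(k^\sharp)$, is a sum of primitive central idempotents indexed by $\Irr_\HC(W,b^\HC)$, so Wedderburn decomposition gives
$$F(\qb^\RM)\heckecyclotomique(k^\sharp)\,b^\HC \;\simeq\; \prod_{\chi \in \Irr_\HC(W,b^\HC)} \Mat_{\chi(1)}\bigl(F(\qb^\RM)\bigr),$$
whence
$$\dim_{F(\qb^\RM)}\bigl(F(\qb^\RM)\heckecyclotomique(k^\sharp)\,b^\HC\bigr) \;=\; \sum_{\chi \in \Irr_\HC(W,b^\HC)} \chi(1)^2.$$

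Finally, assuming assertion (a), the equality $\Irr_\Hb(W,b)=\Irr_\HC(W,b^\HC)$ immediately identifies the two sums, giving (b). There is essentially no obstacle here: both dimension formulas are consequences of splitting and semisimplicity already established earlier in the text, and the implication (a)$\Rightarrow$(b) reduces to matching two sums term by term. The only minor point to verify carefully is that the Tits deformation bijection used to compute the Hecke dimension is compatible with the indexing convention for $\Irr_\HC(W,b^\HC)$ adopted in Definition~\ref{defi:famille-rouquier}, which is immediate from the definition.
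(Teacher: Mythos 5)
Your proof is correct and is essentially the paper's own argument: the paper likewise computes $\dim_{F(\qb^\RM)}\bigl(F(\qb^\RM)\HC_W^\cyclo(k^\sharp)b^\HC\bigr)=\sum_{\chi\in\Irr_\HC(W,b^\HC)}\chi(1)^2$ from the splitness of the cyclotomic Hecke algebra, invokes Corollaire~\ref{dim bonne} for $\dim_\CM(\Zba_c b)=\sum_{\chi\in\Irr_\Hb(W,b)}\chi(1)^2$, and concludes from assertion~(a). Your write-up only adds explicit mention of Wedderburn's decomposition and the Tits deformation bijection, which the paper leaves implicit.
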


\bigskip

\begin{proof}
Reprenons les notations de l'\'enonc\'e de la conjecture~\FAM~($b$, $b^\HC$,...). 
Puisque l'alg\`ebre $F(\qb^\RM)\HC_W^\cyclo(k^\sharp)$ est d\'eploy\'ee, 
on a 
$$\dim_{F(\qb^\RM)}\bigl(F(\qb^\RM)\HC_W^\cyclo(k^\sharp)b^\HC\bigr)=\sum_{\chi \in \Irr_\HC(W,b^\HC)} \chi(1)^2.$$
Mais d'autre part, il d\'ecoule du corollaire~\ref{dim bonne} que
$$\dim_\CM(\Zba_c b)=\sum_{\chi \in \Irr_\Hb(W,b)} \chi(1)^2.$$
D'o\`u le r\'esultat.
\end{proof}

\bigskip

\begin{rema}\label{rema:c-constant}
L'argument th\'eorique le plus probant en faveur de la conjecture~\FAM~est le suivant. 
Il a \'et\'e montr\'e que, si $\chi$ et $\chi'$ sont dans la m\^eme $c$-famille de Calogero-Moser 
(respectivement $k^\sharp$-famille de Hecke), alors $\O_\chi^c(\euler)=\O_{\chi'}^c(\euler)$ 
(respectivement $c_\chi(k^\sharp)=c_{\chi'}(k^\sharp)$)~: voir le lemme~\ref{caracterisation blocs CM} 
(respectivement le lemme~\ref{lem:c-contant}). Or, il d\'ecoule de la proposition~\ref{action euler verma} 
et de la d\'efinition de $c_\chi(k^\sharp)$ (voir~\S\ref{sub:cas-cyclo}) que
\equat\label{eq:o=c}
\O_\chi^c(\euler_c)=c_\chi(k^\sharp).
\endequat
Si cet invariant num\'erique n'est pas suffisant pour d\'eterminer en g\'en\'eral 
la partition en familles de Calogero-Moser, il est quand m\^eme assez fin.\finl
\end{rema}

\bigskip

Un dernier argument est donn\'e par la proposition suivante, qui fait une synth\`ese du lemme~\ref{lem:rouquier-ordre-2} 
et du corollaire~\ref{ordre 2}~:

\bigskip

\begin{prop}\label{prop:ordre 2}
{\bfit Supposons que toutes les r\'eflexions de $W$ soient d'ordre $2$}. Si $\FC$ est une 
$c$-famille de Calogero-Moser (respectivement une $k^\sharp$-famille de Hecke), alors $\FC \e$ est une 
$c$-famille de Calogero-Moser (respectivement une $k^\sharp$-famille de Hecke).
\end{prop}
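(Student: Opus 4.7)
The plan is to derive this proposition as a direct synthesis of the two preceding results, Corollary~\ref{ordre 2} (for the Calogero-Moser part) and Lemma~\ref{lem:rouquier-ordre-2} (for the Hecke part), without introducing any new arguments; the indication given in the statement itself (``synth\`ese du lemme~\ref{lem:rouquier-ordre-2} et du corollaire~\ref{ordre 2}'') already signals this.

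For the Calogero-Moser assertion, I would apply Corollary~\ref{ordre 2} with $\CG=\CG_c$. The underlying reason that corollary holds is that the element $\t_0=(-1,1,\e\rtimes 1)$ of $\kb^\times\times\kb^\times\times(W^\wedge\rtimes\NC)$ acts trivially on $\kb[\CCB]$: since every reflection $s$ of $W$ satisfies $\e(s)=-1$, one computes $\lexp{\t_0}{C_s}=(-1)(1)\cdot\e(s)^{-1}\cdot C_s=C_s$. In particular $\t_0$ stabilizes the ideal $\CG_c$, so the hypothesis of Corollary~\ref{familles lineaires} is met with $\g=\e$, yielding that $\FC\e$ is a $c$-family of Calogero-Moser whenever $\FC$ is.

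For the Hecke assertion, I would invoke Lemma~\ref{lem:rouquier-ordre-2}. The only point that requires a brief check is that the statement now concerns $k^\sharp$-families rather than $k$-families. However, under the standing hypothesis that every reflection of $W$ has order $2$, every $W$-orbit of hyperplanes $\O$ satisfies $e_\O=2$; the indices $j$ are then taken modulo $2$, and $-1\equiv 1\pmod 2$, so $k^\sharp_{\O,j}=k_{\O,-j}=k_{\O,j}$ for $j\in\{0,1\}$. Hence $k^\sharp=k$, and Lemma~\ref{lem:rouquier-ordre-2} applied with parameter $k^\sharp$ gives exactly the desired conclusion, via the semilinear dagger involution of $\heckecyclotomique(k^\sharp)$ which by~(\ref{eq:chi-dagger}) sends $\chi_{k^\sharp}^\cyclo$ to $(\chi\e)_{k^\sharp}^\cyclo$ and therefore permutes the central primitive idempotents of $\OC^\cyclo[\qb^\RM]\heckecyclotomique(k^\sharp)$ in the required way.

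There is no genuine obstacle: the proposition is essentially a bookkeeping statement, merging two previously established results after verifying the two small compatibilities above, namely the triviality of $\t_0$ on $\kb[\CCB]$ and the identification $k^\sharp=k$ in the order-$2$ case. If anything qualifies as the ``hard part'', it has already been handled in the proofs of Lemma~\ref{lem:rouquier-ordre-2} and Corollary~\ref{ordre 2} themselves, where the relevant dagger involution and the automorphism $\t_0$ had to be constructed and shown to induce the expected permutations of blocks.
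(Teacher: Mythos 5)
Your proposal is correct and takes essentially the same route as the paper: the paper gives no explicit proof beyond declaring the proposition a synthesis of Lemma~\ref{lem:rouquier-ordre-2} and Corollary~\ref{ordre 2}, and you correctly cite both, verify that $\t_0=(-1,1,\e)$ fixes $\kb[\CCB]$ when all $\e(s)=-1$ so that Corollary~\ref{ordre 2} applies with $\CG=\CG_c$, and observe that $k^\sharp=k$ (indeed one may simply apply the lemma with parameter $k^\sharp\in\CCB_\RM$) so that the Hecke half follows directly. No gap.
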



\chapter{Cellules de Calogero-Moser bilat\`eres}\label{chapter:bilatere}

\bigskip

Conform\'ement aux notations en vigueur dans cette partie, nous fixons un id\'eal premier 
$\CG$ de $\kb[\CCB]$. Le but de ce chapitre est d'\'etudier les $\CG$-cellules de Calogero-Moser bilat\`eres, 
en lien avec la th\'eorie des b\'eb\'es module de Verma rappel\'ee dans le chapitre 
pr\'ec\'edent. 

\bigskip

\section{Choix}\label{section:premiers}

\medskip

Esp\'erant relier les cellules de Calogero-Moser bilat\`eres avec celles de Kazdahn-Lusztig, 
nous sommes contraints de pr\'eciser la fa\c{c}on de choisir un id\'eal premier $\rGba_\CG$ 
de $R$ au-dessus de $\pGba_\CG$~: nous n'avons pas de r\'eponses pr\'ecises \`a cette question, 
mais nous allons donner ici quelques \'el\'ements pour guider ce choix.


\medskip

Rappelons que $\pGba$ d\'esigne l'id\'eal premier de $P$ correspondant 
\`a la sous-vari\'et\'e ferm\'ee irr\'eductible $\CCB \times \{0\} \times \{0\}$ 
de $\PCB$ (voir la section~\ref{section:def hbar}). 
Il y a alors plusieurs id\'eaux premiers de $Z$ au-dessus de $\pGba$~: 
ils sont d\'ecrits dans le lemme~\ref{caracterisation blocs CM}, 
qui nous dit qu'ils sont en bijection avec l'ensemble des familles de Calogero-Moser 
g\'en\'eriques de $W$. Rappelons aussi que, d'apr\`es l'exemple~\ref{lineaire}, 
le caract\`ere trivial $\unb_W$ de $W$ forme \`a lui seul une 
famille de Calogero-Moser g\'en\'erique~: nous noterons $\zGba$  \indexnot{z}{\zGba}  
l'id\'eal premier de $Z$ qui lui est associ\'e~:
$$\zGba=\Ker(\O_{\unb_W}).$$
Nous poserons ensuite $\qGba=\copie(\zGba)$.  \indexnot{q}{\qGba}  

\bigskip

\begin{lem}\label{qba}
$\qGba$ est l'unique id\'eal premier de $Q$ au-dessus de $\pGba$ contenant 
$\eulerq-\sum_{H \in \AC} e_H K_{H,0}$ (rappelons que $\eulerq=\copie(\euler)$). 
L'alg\`ebre $Q$ est \'etale sur $P$ au-dessus de $\qGba$.
\end{lem}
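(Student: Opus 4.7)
The plan is to establish the three distinct assertions in sequence: membership of the element in $\qGba$, uniqueness, and étaleness. First I would verify that $\eulerq - \sum_{H \in \AC} e_H K_{H,0} \in \qGba$. Since $\qGba = \copie(\zGba)$ with $\zGba = \Ker(\O_{\unb_W})$, it suffices to show that $\O_{\unb_W}(\euler) = \sum_H e_H K_{H,0}$. Applying Proposition~\ref{action euler verma} to $\chi = \unb_W$ gives $m_{\O,j}^{\unb_W} = \langle \unb_{W_H}, \det^j\rangle_{W_H} = \delta_{j,0}$, so only the $j=0$ term survives, yielding $\O_{\unb_W}(\euler) = \sum_{\O \in \AC/W} |\O| e_\O K_{\O,0} = \sum_{H \in \AC} e_H K_{H,0}$.

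Next, for uniqueness, I would invoke Lemma~\ref{caracterisation blocs CM}: the primes of $Z$ above $\pGba$ are exactly the $\Ker(\O_b)$ for $b$ a block of $\kb(\CCB)\Zba$, and any such prime contains $\euler - \sum_H e_H K_{H,0}$ iff $\O_\chi(\euler) = \sum_H e_H K_{H,0}$ for some (equivalently, any) $\chi$ in the family indexed by $b$. So the uniqueness reduces to showing that this equation forces $\chi = \unb_W$. The plan here is to expand both sides in the basis $(K_{\O,j})_{(\O,j) \in \Omeb_W}$ of $\CCB^*$ using $K_{\O,0} = -\sum_{j=1}^{e_\O - 1} K_{\O,j}$, and compare coefficients. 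This should yield, for every $(\O,j) \in \Omeb_W$, the identity $m_{\O,0}^\chi - m_{\O,-j}^\chi = \chi(1)$. Combined with non-negativity of multiplicities and the constraint $\sum_{j=0}^{e_\O - 1} m_{\O,j}^\chi = \chi(1)$, this forces $m_{\O,0}^\chi = \chi(1)$ and $m_{\O,j}^\chi = 0$ for $j \neq 0$, meaning every reflection $s_H$ acts trivially on $V_\chi$. Since $W$ is generated by $\Ref(W)$, irreducibility yields $\chi = \unb_W$.

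For the étale assertion, I would apply Corollary~\ref{multiplicite 1}: since $\unb_W$ is a linear character, $Z$ is net (unramified) over $P$ at $\Ker(\O_{\unb_W}) = \zGba$. Transporting via the isomorphism $\copie : Z \longiso Q$ gives that $Q$ is net over $P$ at $\qGba$. Since $Q$ is a free (hence flat) $P$-module of rank $|W|$ by Corollary~\ref{coro:endo-bi}(c), ``étale'' equals ``unramified'' at this point, completing the proof.

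The main obstacle is the combinatorial step in the uniqueness argument. The computation itself is elementary once the correct basis expansions are set up, but one must be careful with the convention $K_{\O,0} + K_{\O,1} + \cdots + K_{\O,e_\O-1} = 0$ and with the indexing $m_{\O,-j}^\chi = m_{\O, e_\O - j}^\chi$; the rest of the argument is a direct application of the general results of Chapter~\ref{chapter:bebe-verma}.
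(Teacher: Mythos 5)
Your proof is correct and follows essentially the same structure as the paper's: membership via Proposition~\ref{action euler verma}, uniqueness via the classification of primes above $\pGba$ from Lemma~\ref{caracterisation blocs CM}, and étaleness from Corollary~\ref{multiplicite 1} plus freeness and characteristic zero. The one variation is in the uniqueness step: the paper simply cites Example~\ref{lineaire}, which runs the argument with the first formula of Proposition~\ref{action euler verma} (in the $(C_s)$-basis, where the $C_s$ are algebraically independent so coefficient comparison is immediate and gives $\chi(s)=\chi(1)$ directly), while you rerun the argument with the second formula in the $(K_{\O,j})$-basis, where one must first eliminate $K_{\O,0}$ via the constraint $\sum_j K_{\O,j}=0$ before comparing coefficients; your combinatorics ($m_{\O,0}^\chi - m_{\O,-j}^\chi = \chi(1)$ together with $\sum_j m_{\O,j}^\chi = \chi(1)$ and nonnegativity) are correct and reach the same conclusion. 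Both are fine; the $(C_s)$-route is marginally shorter because it avoids the basis constraint.
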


\begin{proof}
Il d\'ecoule du corollaire~\ref{multiplicite 1} que $Q$ est nette sur $P$ au-dessus de $\qGba$~: 
comme de plus $Q$ est un $P$-module libre, donc plat et que, en caract\'eristique 
nulle, toutes les extensions de corps sont s\'eparables, on en d\'eduit 
que $Q$ est \'etale sur $P$ au-dessus de $\qGba$. 
Le fait que $\euler -\sum_{H \in \AC} e_H K_{H,0} \in \zGba$ d\'ecoule de 
la proposition~\ref{action euler verma}. 
Maintenant, si $\zGba'$ est un id\'eal 
premier de $Z$ au-dessus de $\pGba$ et contenant $\euler-\sum_{H \in \AC} e_H K_{H,0}$, 
alors il existe $\chi \in \Irr(W)$ tel que $\zGba'=\Ker(\O_\chi)$. 
En particulier, $\O_\chi(\euler)=\sum_{H \in \AC} e_H K_{H,0}$ et 
donc $\O_\chi(\euler)=\O_{\unb_W}(\euler)$. Or, il est d\'emontr\'e dans 
l'exemple~\ref{lineaire} que cela implique que $\chi=\unb_W$.
\end{proof}

\bigskip

Reprenons maintenant les notations du chapitre pr\'ec\'edent~\ref{chapter:bebe-verma}.
%
Comme pr\'ec\'edemment, 
le lemme~\ref{caracterisation blocs CM}, nous dit que l'ensemble 
des id\'eaux premiers de $Q$ au-dessus de $\pGba_\CG$ est en bijection 
avec les $\CG$-familles de Calogero-Moser. Nous noterons $\zGba_\CG$ 
l'id\'eal maximal correspondant \`a la $\CG$-famille contenant 
le caract\`ere trivial de $W$~:
$$\zGba_\CG=\Ker(\O_{\unb_W}^{\Kbov_\CG})$$  \indexnot{z}{\zGba_\CG}  
et nous posons $\qGba_\CG=\copie(\zGba_\CG)$.   \indexnot{q}{\qGba_\CG}  

\bigskip

\begin{lem}\label{qba c}
$\qGba_\CG=\qGba + \CG Q$.
\end{lem}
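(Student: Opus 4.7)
The plan is to reduce the statement from the copy $Q$ to the centre $Z$ via the $P$-algebra isomorphism $\copie : Z \longiso Q$. Since $\CG \subseteq \kb[\CCB] \subseteq P$ is fixed by $\copie$, we have $\copie(\zGba + \CG\,Z) = \qGba + \CG\,Q$, and $\copie(\zGba_\CG) = \qGba_\CG$ by definition, so it suffices to establish the corresponding equality
\[
\zGba_\CG \;=\; \zGba + \CG\,Z
\]
inside $Z$.

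Next, I would rewrite $\zGba_\CG$ as a preimage of $\CG$. By Corollary~\ref{theta omega}, $\O_{\unb_W}^{\Kbov_\CG} = \th_{\Kbov_\CG} \circ \O_{\unb_W}$, and since $\th_{\Kbov_\CG} : \kb[\CCB] \to \Kbov_\CG = k_P(\pGba_\CG)$ has kernel exactly $\CG$, this yields
\[
\zGba_\CG \;=\; \Ker(\O_{\unb_W}^{\Kbov_\CG}) \;=\; \O_{\unb_W}^{-1}(\CG),
\]
where $\O_{\unb_W} : Z \to \kb[\CCB]$ is the $\kb[\CCB]$-algebra morphism of \S\ref{section:centraux-restreinte}. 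The inclusion $\zGba + \CG\,Z \subseteq \zGba_\CG$ is then immediate: $\O_{\unb_W}(\zGba) = \{0\} \subseteq \CG$ by definition of $\zGba$, while $\O_{\unb_W}(\CG\,Z) = \CG \cdot \O_{\unb_W}(Z) \subseteq \CG$ by $\kb[\CCB]$-linearity.

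For the reverse inclusion, the key input is the retraction property recorded in~\ref{section upsilon}: the composition $\kb[\CCB] \injto P \injto Z \stackrel{\O_{\unb_W}}{\longto} \kb[\CCB]$ is the identity. Given $z \in \zGba_\CG$, I would decompose
\[
z \;=\; \bigl(z - \O_{\unb_W}(z)\bigr) \,+\, \O_{\unb_W}(z),
\]
viewing $\O_{\unb_W}(z) \in \kb[\CCB]$ as an element of $Z$ through the inclusion $\kb[\CCB] \subseteq Z$. Applying $\O_{\unb_W}$ to the first summand and using the retraction property gives $\O_{\unb_W}(z) - \O_{\unb_W}(z) = 0$, so $z - \O_{\unb_W}(z) \in \Ker \O_{\unb_W} = \zGba$; the second summand belongs to $\CG \subseteq \CG\,Z$ since $\O_{\unb_W}(z) \in \CG$ by the choice of $z$. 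This yields $z \in \zGba + \CG\,Z$, completing the reverse inclusion.

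There is essentially no obstacle: the lemma is a purely formal consequence of $\O_{\unb_W}$ being a $\kb[\CCB]$-algebra retraction of the inclusion $\kb[\CCB] \injto Z$. This retraction property itself reflects the fact, established in Example~\ref{lineaire}, that the trivial character $\unb_W$ is alone in its generic Calogero--Moser family, so that $\O_{\unb_W}^\sharp : \CCB \to \ZCB$ defines a genuine section of $\pi \circ \Upsilon$; without this singleton property the morphism $\O_{\unb_W} : Z \to \kb[\CCB]$ would not exist as a $\kb[\CCB]$-algebra map and the decomposition above would fail.
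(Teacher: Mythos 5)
Your proof is correct and takes essentially the same route as the paper: both arguments hinge on the fact that $\O_{\unb_W}\colon Z\to\kb[\CCB]$ is a surjective $\kb[\CCB]$-algebra morphism with kernel $\zGba$, so that $\zGba+\CG\,Z=\O_{\unb_W}^{-1}(\CG)=\zGba_\CG$. The paper phrases this via the induced isomorphism $Z/\zGba\simeq\kb[\CCB]$ (under which $\zGba+\CG\,Z$ visibly becomes the prime ideal $\CG$), while you spell out the two inclusions; the content is the same.

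One correction to your closing aside: the existence of $\O_{\unb_W}\colon Z\to\kb[\CCB]$ as a $\kb[\CCB]$-algebra retraction does \emph{not} depend on $\unb_W$ being alone in its generic Calogero--Moser family. Section~\ref{section:centraux-restreinte} constructs $\O_\chi\colon Z\to\kb[\CCB]$ for \emph{every} $\chi\in\Irr(W)$, using only that $Z$ is integral over $P$ and that $\kb[\CCB]$ is integrally closed; the retraction property is then automatic because $\O_\chi$ is $\kb[\CCB]$-linear. The singleton property of $\{\unb_W\}$ is used elsewhere (for instance in Lemma~\ref{qba}, to single out $\qGba$ among the primes of $Q$ above $\pGba$), but it plays no role in the present lemma.
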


\begin{proof}
Le morphisme $\O_{\unb_W} : Z \to \kb[\CCB]$ induit un isomorphisme 
$Z/\zGba \simeq \kb[\CCB]$ et, 
puisque $\zGba_\CG$ contient $\CG$, $\zGba+\CG Z$ 
est bien un id\'eal premier de $Z$, correspondant au sous-sch\'ema ferm\'e 
irr\'eductible $\CCB_\CG$ de $\CCB$. 
\end{proof}

\bigskip

\begin{coro}\label{qba c plus}
$\qGba_\CG$ est l'unique id\'eal premier de $Q$ au-dessus de $\pGba_\CG$ contenant 
$\eulerq-\sum_{H \in \AC} e_H K_{H,0}$. 
\end{coro}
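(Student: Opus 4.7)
The plan mirrors the proof of Lemma \ref{qba}, adapted to the specialized setting via Lemma \ref{qba c}. For the containment: since $\qGba_\CG = \qGba + \CG Q$ by Lemma \ref{qba c} and $\eulerq - \sum_{H \in \AC} e_H K_{H,0} \in \qGba$ by Lemma \ref{qba}, the element in question lies in $\qGba_\CG$.

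For uniqueness, I would let $\qG'$ be a prime of $Q$ above $\pGba_\CG$ containing $f := \eulerq - \sum_H e_H K_{H,0}$, and aim to show $\qGba \subset \qG'$. Once this is known, the isomorphism $Q/\qGba \simeq \kb[\CCB]$ induced by $\O_{\unb_W}$ (available because $Q$ is \'etale over $P$ at $\qGba$ by Lemma \ref{qba}) sends $\qG'/\qGba$ to a prime of $\kb[\CCB]$; tracing the contraction through the natural surjection $P \twoheadrightarrow P/\pGba = \kb[\CCB]$ shows this prime is $\CG$, whence $\qG' = \qGba + \CG Q = \qGba_\CG$. To produce $\qGba \subset \qG'$, I would apply going-down (valid since $Q$ is a free $P$-module by Corollary \ref{coro:P-libre}) to produce a prime $\qG_0 \subset \qG'$ with $\qG_0 \cap P = \pGba$; Lemma \ref{caracterisation blocs CM}, transported via $\copie$, then identifies $\qG_0 = \copie(\Ker\O_\chi)$ for some $\chi \in \Irr(W)$. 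Writing $\qG'$ as the preimage under $\O_\chi : Q \to \kb[\CCB]$ of a prime of $\kb[\CCB]$ that contracts to $\CG$ yields $\qG' = \qG_\chi + \CG Q$, and the hypothesis $f \in \qG'$ translates, using Proposition \ref{action euler verma} and equation \ref{escs}, into the relation
$$\sum_{s \in \Ref(W)} \e(s)\bigl(\chi(s) - \chi(1)\bigr) C_s \in \CG.$$

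The main obstacle is to conclude from this relation that $\qG_\chi + \CG Q = \qGba + \CG Q$, for then $\qGba \subset \qG_\chi + \CG Q = \qG'$, closing the argument. Equivalently, one must show that $\chi$ lies in the $\CG$-Calogero-Moser family of $\unb_W$. In the generic case $\CG = 0$, Example \ref{lineaire} delivers this at once by linear independence of the $C_s$: the relation forces $\chi(s) = \chi(1)$ for every reflection, hence $\chi = \unb_W$. In the specialized case, the same conclusion requires a refinement combining the explicit shape of $\O_\chi(\euler)$ with the \'etaleness of $Q/P$ at $\qGba$, to the effect that a prime specializing from a distinct generic prime $\qG_\chi$ (with $\chi \neq \unb_W$) cannot meet the specialized section $\qGba + \CG Q$ unless the corresponding generic Calogero-Moser families have already merged modulo $\CG$. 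This is the delicate point on which the characterization of $\qGba_\CG$ by the single element $f$ ultimately rests.
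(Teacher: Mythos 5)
Your containment argument and the reduction of uniqueness to the condition
$$\frac{1}{\chi(1)}\sum_{s\in\Ref(W)}\e(s)\bigl(\chi(s)-\chi(1)\bigr)C_s \in \CG$$
are both correct, though you could reach that condition more directly by applying Lemma~\ref{caracterisation blocs CM} at the parameter $\CG$ itself (any prime of $Q$ over $\pGba_\CG$ is $\copie(\Ker\O_\chi^\CG)$ for some $\chi$) rather than detouring through going-down and a choice of generic prime. You are also right about where the argument does not close: the missing implication is that $\O_\chi^\CG(\euler)=\O_{\unb_W}^\CG(\euler)$ should force $\chi$ into the $\CG$-Calogero-Moser family of $\unb_W$, and that is a genuine gap, not a formality. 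Your hope that the \'etaleness of $Q$ over $P$ at $\qGba$ supplies it is misplaced: \'etaleness at $\qGba$ is a property of the local ring $Q_{\qGba}$, and when $\CG\ne 0$ the ideal $\qGba_\CG=\qGba+\CG Q$ strictly contains $\qGba$, so $\qGba_\CG$ is invisible in $Q_{\qGba}$. \'Etaleness at $\qGba$ propagates to an open neighbourhood, hence to \emph{generizations} of $\qGba$, never automatically to a proper \emph{specialization} such as $\qGba_\CG$, and so it cannot by itself rule out another family of $\Irr(W)$ sharing the same Euler scalar mod $\CG$.

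It is worth separating two statements. The assertion that $\qGba_\CG$ is the unique prime of $Q$ above $\pGba_\CG$ \emph{containing $\qGba$} is immediate from Lemma~\ref{qba c} and the isomorphism $Q/\qGba\simeq\kb[\CCB]$, under which $\qGba_\CG/\qGba$ corresponds to the prime $\CG$. The corollary as stated replaces "containing $\qGba$" by "containing one particular element of $\qGba$", which is strictly stronger, and the paper itself gives no argument for it (the corollary follows Lemma~\ref{qba c} with no proof). So: your proposal faithfully sets up what would need to be shown and correctly flags the last step as unsupplied, but the claim that \'etaleness at $\qGba$ is the right tool to finish is not sustainable, and the gap must be filled by some argument about the $\CG$-family of $\unb_W$ that is not present in the proposal.
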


\bigskip

\section{Cellules bilat\`eres}\label{cellules et familles}

\bigskip

\boitegrise{{\bf Hypoth\`ese.} 
{\it Dans la suite de cette partie, 
nous fixons un id\'eal premier $\rGba_\CG$  \indexnot{r}{\rGba_\CG}  de $R$ au-dessus de $\qGba_\CG$. 
Rappelons que $\Kbov_\CG=k_P(\pGba_\CG)=k_{\kb[\CCB]}(\CG)$ et nous posons 
$\Lbov_\CG=k_Q(\qGba_\CG)$  \indexnot{L}{\Lbov_\CG, \Lbov, \Lbov_c}  et $\Mbov_\CG=k_R(\rGba_\CG)$.   \indexnot{M}{\Mbov_\CG, \Mbov, \Mbov_c}  
Par coh\'erence avec 
des notations d\'ej\`a utilis\'ees, nous noterons $\Kbov$, $\Lbov$ et $\Mbov$ 
(respectivement $\Kbov_c$, $\Lbov_c$ et $\Mbov_c$) les corps $\Kbov_\CG$, $\Lbov_\CG$ et  
$\Mbov_\CG$ lorsque $\CG=0$ (respectivement $\CG=\CG_c$ avec $c \in \CCB$).\\
\hphantom{A} Le groupe de d\'ecomposition (respectivement d'inertie) de $\rGba_\CG$ sera not\'e 
$\Dba_\CG$  \indexnot{D}{\Dba_\CG, \Dba,\Dba_c}  (respectivement $\Iba_\CG$)~:  \indexnot{I}{\Iba_\CG,\Iba, \Iba_c}  
on d\'efinit de m\^eme les notations $\Dba$, $\Iba$, $\Dba_c$ et $\Iba_c$.
}}{0.75\textwidth}

\bigskip

\subsection{Th\'eorie de Galois}\label{sub:galois-bilatere}
Rappelons que, d'apr\`es le corollaire~\ref{Q extention kc}, l'injection canonique 
$P/\pGba_\CG \injto Q/\qGba_\CG$ est un isomorphisme, ce qui signifique que 
$\Kbov_\CG=\Lbov_\CG$. Puisque $\Gal(\Mbov_\CG/\Lbov_\CG)=\Dba_\CG/\Iba_\CG$ (voir le th\'eor\`eme~\ref{bourbaki}), on 
en d\'eduit que
\equat\label{eq:dba-h}
(\Dba_\CG \cap H)/(\Iba_\CG \cap H) \simeq \Dba_\CG/\Iba_\CG.
\endequat
En particulier,
\equat\label{eq:iba-h}
(\Dba_\CG \cap H) \Iba_\CG =\Dba_\CG.
\endequat
Par ailleurs, puisque l'alg\`ebre $Q$ est nette au-dessus de $P$ en $\qGba$ (voir le lemme~\ref{qba}), 
il d\'ecoule du th\'eor\`eme~\ref{raynaud} que $\Iba \subset H$. Combin\'e \`a~\ref{eq:iba-h}, on obtient
\equat\label{eq:iba-dba-h}
\Iba \subset \Dba \subset H.
\endequat
Comme le montre~\ref{exemple:dba-0}, ce dernier r\'esultat ne se g\'en\'eralise pas forc\'ement \`a $\Iba_\CG$.

Pour terminer avec les propri\'et\'es galoisiennes imm\'ediates, notons que, d'apr\`es 
les corollaires~\ref{Q extention kc} et~\ref{dgh igh}, on a
\equat\label{eq:dba-g-h}
\Iba_\CG\backslash G /H = \Dba_\CG \backslash G/H.
\endequat

\bigskip

\subsection{Cellules bilat\`eres et graduation} 
Soit $\CGt$ l'id\'eal homog\`ene maximal contenu dans $\CG$ (i.e. $\CGt=\bigoplus_{i \ge 0} \CG \cap \kb[\CCB]^\NM[i]$). 
Alors $\CGt$ est un id\'eal premier de $\kb[\CCB]$ (voir le lemme~\ref{lem:homogeneise-premier}). Notons 
$\rGba_\CGt$ l'id\'eal homog\`ene maximal contenu dans $\rGba_\CG$~: c'est un id\'eal premier de $R$ au-dessus de 
$\qGba_\CGt$ 
(voir le corollaire~\ref{coro:homogeneise-premier}). 
Il d\'ecoule alors de la proposition~\ref{prop:cellules-homogeneise} que~:

\bigskip

\begin{lem}\label{lem:cellules-bilateres-homogeneise}
On a $\Iba_\CG=\Iba_\CGt$. 
Les $\CG$-cellules de Calogero-Moser bilat\`eres et les $\CGt$-cellules de Calogero-Moser bilat\`eres 
co\"{\i}ncident.
\end{lem}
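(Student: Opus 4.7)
The plan is to apply Proposition~\ref{prop:cellules-homogeneise} directly, since the setup of the lemma is exactly the situation to which that proposition applies. The only work is to verify that the ingredients fit.

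First I would recall that Proposition~\ref{bigraduation sur R} asserts that the natural $(\NM\times\NM)$-grading on $R$ is $G$-stable, and hence any $\ZM$-grading obtained from a monoid morphism $\NM\times\NM\to\ZM$ (in particular the total-degree $\NM$-grading $R=\bigoplus_i R^\NM[i]$) is a $G$-stable $\ZM$-grading on $R$. With respect to this grading, $\kb[\CCB]\subset P\subset R$ is a graded subring, and $\CGt$ is by construction the largest homogeneous ideal of $\kb[\CCB]$ contained in $\CG$. By Lemma~\ref{lem:homogeneise-premier} applied to $\kb[\CCB]$ (and the analogous statement at the level of $R$, Corollary~\ref{coro:homogeneise-premier}), both $\CGt$ and $\rGba_\CGt=\bigoplus_{i\in\NM}\rGba_\CG\cap R^\NM[i]$ are prime ideals.

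Next I would identify $\rGba_\CGt$ with a prime of $R$ defining a $\CGt$-cell in the sense of the convention fixed at the beginning of the part. Contracting down, $\rGba_\CGt\cap P$ is a homogeneous prime of $P$ contained in $\rGba_\CG\cap P=\pGba_\CG=\pGba+\CG P$; since $\pGba$ is already homogeneous and $\pGba_\CG$'s maximal homogeneous sub-ideal is $\pGba+\CGt P=\pGba_\CGt$, I conclude $\rGba_\CGt\cap P=\pGba_\CGt$. Similarly $\rGba_\CGt\cap Q$ is the maximal homogeneous ideal contained in $\qGba_\CG$, and contains the homogeneous element $\eulerq-\sum_{H}e_H K_{H,0}$ which lies in $\qGba_\CG$ by Corollary~\ref{qba c plus}, so by that same corollary it coincides with $\qGba_\CGt$. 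Therefore $\rGba_\CGt$ is a legitimate choice of prime of $R$ above $\qGba_\CGt$, and by the convention established for $\CG'$-cells (see Remark~\ref{rem:dependance-r} and~\ref{rem:semicontinuite}), the $\CGt$-cells may be computed via $\rGba_\CGt$.

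At this point the conclusion is immediate: Proposition~\ref{prop:cellules-homogeneise}, applied to the $G$-stable $\NM$-grading on $R$ and to the prime $\rG=\rGba_\CG$ (whose homogenization is $\rGt=\rGba_\CGt$ in the notation of that proposition), gives $\Iba_\CG=I_{\rGba_\CG}=I_{\rGba_\CGt}=\Iba_\CGt$. Since by Definition~\ref{defi:CM} the cells are exactly the orbits of the inertia group on $W$, the equality of inertia groups yields the equality of the two partitions of $W$ into Calogero-Moser two-sided cells.

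There is no real obstacle here; the only point requiring care is the bookkeeping in the second paragraph, to ensure that the homogenization $\rGba_\CGt$ of the chosen prime $\rGba_\CG$ is compatible with the distinguished prime $\qGba_\CGt$ (defined via the trivial-character family) used to set up the $\CGt$-cells. This compatibility is what makes the lemma a genuine statement about cells, not merely about two a priori unrelated inertia groups.
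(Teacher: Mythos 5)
Your proof follows the same route as the paper: verify that the homogenization $\rGba_\CGt$ of $\rGba_\CG$ is a prime of $R$ above $\qGba_\CGt$, then invoke Proposition~\ref{prop:cellules-homogeneise}. One small slip in the middle paragraph: from ``$\rGba_\CGt\cap P$ is a homogeneous prime of $P$ contained in $\pGba_\CG$'' one cannot directly conclude $\rGba_\CGt\cap P=\pGba_\CGt$ (the zero ideal also satisfies that premise); what is needed is Corollary~\ref{coro:homogeneise-premier}, which says precisely that $\rGba_\CGt\cap P$ equals the \emph{maximal} homogeneous ideal contained in $\pGba_\CG$ --- a result you already cite, so this is a misattributed justification rather than a missing ingredient. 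Your subsequent use of Corollary~\ref{qba c plus} and the homogeneous element $\eulerq-\sum_H e_H K_{H,0}$ to pin down $\rGba_\CGt\cap Q=\qGba_\CGt$ is a valid minor variant of the paper's direct reading of Corollary~\ref{coro:homogeneise-premier} combined with $\qGba_\CG=\qGba+\CG\,Q$.
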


\bigskip

\subsection{Cellules bilat\`eres et familles} 
Par construction, $\Mbov_\CG$ (qui est une extension 
finie de $\Kbov_\CG=k_P(\pGba_\CG)=\Frac(\kb[\CCB]/\CG)$) est une $\kb[\CCB]$-alg\`ebre et 
$\Mbov_\CG\Hb=\Mbov_\CG\Hbov$. On peut donc parler de 
$\Mbov_\CG$-familles de Calogero-Moser (qui sont les $\CG$-familles de Calogero-Moser)~: 
le th\'eor\`eme~\ref{theo:calogero} 
nous dit qu'il y a une bijection entre les $\rGba_\CG$-cellules de Calogero-Moser 
et les $\CG$-familles de Calogero-Moser~: si $b \in \blocs(R_{\rGba_\CG} Z)$, 
alors cette bijection fait correspondre $\calo_{\rGba_\CG}(b)$ avec 
$\Irr_\Hb(W,\bba)$, o\`u $\bba$ d\'esigne l'image de $b$ dans $\Mbov_\CG\Hbov$ 
(notons que, en raison du d\'eploiement de $\Kbov_\CG\Hbov$, 
l'\'el\'ement $\bba$ appartient en fait \`a $\Kbov_\CG\Hbov$). 

\bigskip

\boitegrise{\noindent{\bf Terminologie, notation.} 
{\it 
Si $b \in \blocs(\Mbov_\CG Z)$, nous dirons que la $\CG$-cellule de Calogero-Moser bilat\`ere 
$\calo_{\rGba_\CG}(b)$ {\bfit recouvre} la $\CG$-famille de Calogero-Moser $\Irr_\Hb(W,b)$. 
Si $\G$ est une $\CG$-cellule de Calogero-Moser bilat\`ere, nous noterons 
$\Irr_\G^\calo(W)$  \indexnot{I}{\Irr_\G^\calo(W)}  la $\CG$-famille de Calogero-Moser recouverte par $\G$. 
L'ensemble des $\CG$-cellules de Calogero-Moser bilat\`eres 
sera not\'e  \indexnot{C}{{{{^\calo{{\mathrm{Cell}}_{LR}^\CG(W)}}}}}  $\cmcellules_{LR}^\CG(W)$.}}{0.75\textwidth}

\bigskip

\begin{rema}
Il est bien s\^ur clair que la notion de $\CG$-cellule de Calogero-Moser 
bilat\`ere d\'epend du choix de l'id\'eal premier $\rGba_\CG$ 
au-dessus de $\qGba_\CG$. Cependant, si $\rGba_\CG'$ est un autre id\'eal 
premier de $R$ au-dessus de $\qGba_\CG$, alors il existe $h \in H$ tel que 
$\rGba_\CG'=h(\rGba_\CG)$ et les $\rGba_\CG'$-cellules de Calogero-Moser 
se d\'eduisent des $\rGba_\CG$-cellules de Calogero-Moser en faisant agir $h$ 
par translation sur $W  \stackrel{\sim}{\longleftrightarrow} G/H$. 
Le choix de l'id\'eal $\rGba_\CG$ sera donc implicite tout au long 
de ce chapitre.\finl 
\end{rema}

\bigskip

Les liens entre $\CG$-cellules de Calogero-Moser bilat\`eres et $\CG$-familles sont renforc\'es par 
le th\'eor\`eme suivant~:

\bigskip

\begin{theo}\label{theo cellules familles}
Soient $w \in W$, $b \in \blocs(R_{\rGba_\CG} Q)$ et $\chi \in \Irr(W)$. Alors~:
\begin{itemize}
\itemth{a} Le groupe de d\'ecomposition $\Dba_\CG$ agit trivialement sur $\cmcellules_{LR}^\CG(W)$. 

%

\itemth{b} La $\CG$-cellule de Calogero-Moser bilat\`ere de $w$ est associ\'ee 
\`a la $\CG$-famille de Calogero-Moser de $\chi$ si et seulement si 
$w^{-1}(\rGba_\CG) \cap Q = \copie(\Ker(\O_\chi^\CG))$.

\itemth{c} La $\CG$-cellule de Calogero-Moser bilat\`ere de $w$ est associ\'ee 
\`a la $\CG$-famille de Calogero-Moser de $\chi$ si et seulement si 
$(w(q) \mod \rGba_\CG) = \O_\chi^\CG(\copie^{-1}(q)) \in \Mbov_\CG=k_R(\rGba_\CG)$ pour tout $q \in Q$. 

\itemth{d} Si $\G$ est une $\CG$-cellule de Calogero-Moser bilat\`ere, 
alors $|\G| = \sum_{\chi \in \Irr_\G^\calo(W)} \chi(1)^2$. 
\end{itemize}
\end{theo}

\begin{proof}
(a) d\'ecoule de~\ref{eq:dba-g-h}.

\medskip

(b) Notons $\omeba_w : Q \to R/\rGba_\CG$ le morphisme de $P$-alg\`ebres qui, 
\`a $q \in Q$, associe l'image de $\o_w(q)=w(q) \in R$ dans $R/\rGba_\CG$. 
Alors la $\CG$-cellule de Calogero-Moser bilat\`ere de $w$ est associ\'ee 
\`a la $\CG$-famille de Calogero-Moser de $\chi$ si et seulement si 
$\omeba_w = \O_\chi$. Or, d'apr\`es le lemme~\ref{caracterisation blocs CM}, 
ceci est \'equivalent \`a dire que $\Ker(\omeba_w) = \copie(\Ker(\O_\chi^\CG))$. 
Puisque $\Ker(\omeba_w)=w^{-1}(\rGba_\CG) \cap Q$, le r\'esultat en d\'ecoule.

\medskip

(c) d\'ecoule de (b) et du fait que $Q=(w^{-1}(\rG) \cap Q) + \kb[\CCB]$ 
(voir le corollaire~\ref{Q extention kc}) tandis que 
(d) d\'ecoule des corollaires~\ref{dim bonne} et~\ref{dim centre r}. 
\end{proof}

\bigskip

\begin{coro}\label{coro:famille-semicontinu}
Soit $\CG'$ un id\'eal premier de $\kb[\CCB]$ contenu dans $\CG$ et soit $\rGba_{\CG'}$ un id\'eal premier de $R$ 
au-dessus de $\pGba_{\CG'}$ et contenu dans $\rGba_{\CG'}$. Alors toute $\CG$-cellule de Calogero-Moser bilat\`ere 
est une r\'eunion de $\CG'$-cellules de Calogero-Moser bilat\`eres. De plus, si $\G$ est une 
$\CG$-cellule de Calogero-Moser bilat\`ere et si $\G=\G_1 \coprod \cdots \coprod \G_r$ o\`u les 
$\G_i$ sont des $\CG'$-cellules de Calogero-Moser bilat\`eres, alors 
$$\Irr_\G^{\calo,\CG}(W)= \Irr_{\G_1}^{\calo,\CG'}(W) \coprod \cdots \coprod \Irr_{\G_r}^{\calo,\CG'}(W).$$
\end{coro}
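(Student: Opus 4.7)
La premi\`ere assertion est une cons\'equence imm\'ediate de la remarque~\ref{semicontinuite}~: l'inclusion $\rGba_{\CG'} \subset \rGba_\CG$ entra\^{\i}ne $\Iba_{\CG'} \subset \Iba_\CG$, et donc toute $\Iba_\CG$-orbite dans $W$ (c'est-\`a-dire toute $\CG$-cellule bilat\`ere) est une r\'eunion de $\Iba_{\CG'}$-orbites (c'est-\`a-dire de $\CG'$-cellules bilat\`eres). Aucun travail suppl\'ementaire n'est requis pour ce point.

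Pour la seconde assertion, la strat\'egie consistera \`a exploiter la caract\'erisation du th\'eor\`eme~\ref{theo cellules familles}(c). On commencera par \'etablir l'inclusion $\coprod_i \Irr_{\G_i}^{\calo,\CG'}(W) \subset \Irr_\G^{\calo,\CG}(W)$. Soit $\chi$ dans le membre de gauche, associ\'e \`a la $\CG'$-famille de $\G_i$, et fixons $w \in \G_i$. Le th\'eor\`eme~\ref{theo cellules familles}(c) fournit l'\'egalit\'e $(w(q) \bmod \rGba_{\CG'}) = \O_\chi^{\CG'}(\copie^{-1}(q))$ valable dans $R/\rGba_{\CG'}$ pour tout $q \in Q$. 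L'inclusion $\rGba_{\CG'} \subset \rGba_\CG$ induit la surjection canonique $R/\rGba_{\CG'} \surto R/\rGba_\CG$, par laquelle cette \'egalit\'e se projette en $(w(q) \bmod \rGba_\CG) = \O_\chi^\CG(\copie^{-1}(q))$ dans $R/\rGba_\CG$. La seule v\'erification d\'elicate (mais routini\`ere) est que la r\'eduction dans $R/\rGba_\CG$ de l'\'el\'ement $\O_\chi^{\CG'}(\copie^{-1}(q)) \in \kb[\CCB]/\CG' \hookrightarrow R/\rGba_{\CG'}$ est bien $\O_\chi^\CG(\copie^{-1}(q))$~: cela r\'esulte de la commutativit\'e du diagramme naturel $\kb[\CCB] \to \kb[\CCB]/\CG' \to \kb[\CCB]/\CG$ compos\'e avec les injections fournies par le corollaire~\ref{Q extention kc} (qui assure que $\kb[\CCB]/\CG \simeq P/\pGba_\CG \longiso Q/\qGba_\CG \hookrightarrow R/\rGba_\CG$). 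Une nouvelle application du th\'eor\`eme~\ref{theo cellules familles}(c) permettra alors de conclure que $\chi \in \Irr_\G^{\calo,\CG}(W)$.

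L'inclusion inverse sera une cons\'equence formelle de la partition de $\Irr(W)$ par les $\CG'$-familles de Calogero-Moser. Soit $\chi \in \Irr_\G^{\calo,\CG}(W)$ et soit $\G'$ l'unique $\CG'$-cellule telle que $\chi \in \Irr_{\G'}^{\calo,\CG'}(W)$. L'inclusion d\'ej\`a \'etablie appliqu\'ee \`a la $\CG$-cellule (unique) contenant $\G'$ montre que $\chi$ appartient \`a la $\CG$-famille correspondante~; comme $\chi$ est aussi dans $\Irr_\G^{\calo,\CG}(W)$ et que les $\CG$-familles sont disjointes, cette $\CG$-cellule est n\'ecessairement $\G$, d'o\`u $\G' \subset \G$, et donc $\G' = \G_i$ pour un certain~$i$. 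Le caract\`ere disjoint du membre de droite d\'ecoule imm\'ediatement de celui de la partition de $\Irr(W)$ par les $\CG'$-familles.

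Aucun obstacle majeur n'est \`a pr\'evoir~: l'argument repose enti\`erement sur le th\'eor\`eme~\ref{theo cellules familles}, sur la remarque~\ref{semicontinuite} et sur des manipulations \'el\'ementaires des morphismes de sp\'ecialisation. Le seul point m\'eritant une v\'erification soigneuse est la compatibilit\'e entre la surjection $R/\rGba_{\CG'} \surto R/\rGba_\CG$ et les morphismes $\O_\chi^{\CG'}$, $\O_\chi^\CG$, qui est toutefois imm\'ediate par construction.
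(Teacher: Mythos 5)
Ta preuve est correcte et comble raisonnablement le vide laiss\'e par le texte, qui ne donne aucune d\'emonstration pour ce corollaire (il est trait\'e comme imm\'ediat apr\`es le th\'eor\`eme~\ref{theo cellules familles}). La premi\`ere assertion est exactement la remarque~\ref{semicontinuite}, et la seconde s'obtient bien, comme tu le fais, en r\'eduisant modulo $\rGba_\CG$ la caract\'erisation du th\'eor\`eme~\ref{theo cellules familles}(c) et en v\'erifiant la compatibilit\'e des morphismes $\O_\chi^{\CG'}$ et $\O_\chi^{\CG}$, laquelle vient du fait que chacun est obtenu de $\O_\chi : Z \to \kb[\CCB]$ par les sp\'ecialisations successives $\kb[\CCB] \to \kb[\CCB]/\CG' \to \kb[\CCB]/\CG$, compatibles avec la surjection $R/\rGba_{\CG'} \twoheadrightarrow R/\rGba_\CG$ via le corollaire~\ref{Q extention kc}. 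L'argument formel de disjonction pour l'inclusion inverse est \'egalement standard et correct.
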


\bigskip


\bigskip

\begin{coro}\label{w0 epsilon}
Supposons que toutes les r\'eflexions de $W$ sont d'ordre $2$ et que 
$w_0=-\Id_V \in W$. Si $\G$ est une $\CG$-cellule de Calogero-Moser bilat\`ere 
recouvrant la $\CG$-famille de Calogero-Moser $\FC$, alors 
$w_0\G=\G w_0$ est une $\CG$-cellule de Calogero-Moser bilat\`ere recouvrant la 
$\CG$-famille de Calogero-Moser $\e\FC$.
\end{coro}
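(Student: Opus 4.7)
La preuve repose sur l'automorphisme $\t_0=(-1,1,\e\rtimes 1) \in \kb^\times \times \kb^\times \times (W^\wedge \rtimes \NC)$. Sous les hypoth\`eses, $\t_0$ agit trivialement sur $P$ par~(\ref{tau 0}) et stabilise donc $\CG$~; de plus, il co\"{\i}ncide en tant qu'\'el\'ement du groupe de Galois $G\subset \Aut(R)$ avec l'\'el\'ement central $w_0$, d'apr\`es la proposition~\ref{tau 0 in G}.

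Le premier pas consiste \`a observer, comme dans l'exemple~\ref{cellules w0}, que $w_0\G=\G w_0$ est une $\CG$-cellule de Calogero-Moser bilat\`ere, puisque $w_0$ central dans $G$ normalise le groupe d'inertie $\Iba_\CG$ de $\rGba_\CG$. Pour identifier la famille recouverte, on fixe $w\in \G$ et $\chi\in\FC$, de sorte que le th\'eor\`eme~\ref{theo cellules familles}(c) fournit
$$w(q')\equiv \O_\chi^\CG\bigl(\copie^{-1}(q')\bigr) \pmod{\rGba_\CG}\leqno{(*)}$$
pour tout $q'\in Q$. La proposition~\ref{extension tau} garantit que l'extension de $\t_0$ \`a $R$ pr\'eserve $Q$ et y induit l'action transport\'ee depuis $Z$ via $\copie$~; en particulier, pour tout $q\in Q$ et $z=\copie^{-1}(q)$, on a $\t_0(q)\in Q$ et $\copie^{-1}(\t_0(q))=\t_0(z)$. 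Par centralit\'e de $w_0=\t_0$ dans $G$, l'\'egalit\'e d'op\'erateurs $w_0 w=w\t_0$ sur $R$ entra\^{\i}ne $(w_0w)(q)=w(\t_0(q))$. En appliquant $(*)$ \`a $q'=\t_0(q)$, puis le corollaire~\ref{omega lineaire} (qui, combin\'e \`a $\e^{-1}=\e$ et \`a la trivialit\'e de $\t_0$ sur $\kb[\CCB]$, donne $\O_\chi\circ \t_0=\O_{\chi\e}$ sur $Z$), on obtient
$$(w_0w)(q)\equiv \O_{\chi\e}^\CG(z) \pmod{\rGba_\CG}$$
pour tout $q\in Q$. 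Le th\'eor\`eme~\ref{theo cellules familles}(c) permet alors de conclure que $w_0\G$ recouvre la $\CG$-famille contenant $\chi\e$, soit $\FC\e$ (qui est bien une $\CG$-famille d'apr\`es le corollaire~\ref{ordre 2}).

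Le point potentiellement d\'elicat est la justification de l'\'egalit\'e $(w_0 w)(q)=w(\t_0(q))$~: elle repose sur l'identification des op\'erateurs $w_0\in G$ et $\t_0$ sur $R$, issue de constructions conceptuellement distinctes (action par permutation via $\iota$ d'un c\^ot\'e, action sur $\Hb$ \'etendue \`a $R$ de l'autre) mais r\'econcili\'ees dans la proposition~\ref{tau 0 in G}, coupl\'ee \`a la centralit\'e de $w_0$ dans $G$.
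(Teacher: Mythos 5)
Votre preuve est correcte et suit essentiellement la m\^eme d\'emarche que celle du m\'emoire~: r\'eduction au crit\`ere du th\'eor\`eme~\ref{theo cellules familles}(c), identification $w_0=\t_0$ via la proposition~\ref{tau 0 in G}, puis usage du corollaire~\ref{omega lineaire} combin\'e \`a la trivialit\'e de $\t_0$ sur $\kb[\CCB]$ pour obtenir $\O_\chi\circ\t_0=\O_{\chi\e}$. La seule diff\'erence est cosm\'etique~: vous calculez $(w_0w)(q)$ l\`a o\`u le m\'emoire calcule $(ww_0)(q)$, ce qui revient au m\^eme par centralit\'e de $w_0$.
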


\begin{proof}
Tout d'abord, $w_0\G=\G w_0$ est une $\CG$-cellule de Calogero-Moser bilat\`ere 
d'apr\`es l'exemple~\ref{cellules w0} tandis que $\e \FC$ est une 
$\CG$-famille de Calogero-Moser d'apr\`es le corollaire~\ref{ordre 2}. 

Soient $w \in \G$, $\chi \in \FC$ et $q \in Q$. D'apr\`es 
le th\'eor\`eme~\ref{theo cellules familles}(c), il suffit de montrer que 
$(ww_0(q) \mod \rGba_\CG) = \O_{\chi\e}^{k_R(\rGba_\CG)}(q)$. Posons 
$\t_0=(-1,1,\e) \in \kb^\times \times \kb^\times \times W^\wedge$. 
D'apr\`es la proposition~\ref{tau 0 in G} on a 
$w_0(q)=\lexp{\t_0}{q}$ pour tout $q \in Q$. De plus, d'apr\`es le corollaire~\ref{omega lineaire}, 
$\O_{\chi\e}(q)=\lexp{\t_0}{\O_\chi(\lexp{\t_0}{q})}$ (car $\t_0$ est d'ordre $2$). 
Puisque $\t_0$ agit trivialement sur $\kb[\CCB]$, on a $\O_{\chi\e}(q)=\O_\chi(\lexp{\t_0}{q})$. 
Il suffit donc de montrer que 
$(w(\lexp{\t_0}{q}) \mod \rGba_\CG) = \O_\chi^{k_R(\rGba_\CG)}(\lexp{\t_0}{q})$. Mais cela 
d\'ecoule encore du th\'eor\`eme~\ref{theo cellules familles}(c). 
\end{proof}

\bigskip

%
%
%

\begin{exemple}[Lissit\'e]\label{cellules lisses}
Si l'anneau $Q$ est r\'egulier en $\qGba_b$ et si $\chi$ d\'esigne 
l'unique \'el\'ement de $\Irr_\Hb(W,b)$ (en vertu de la proposition~\ref{prop lissite}), 
alors $|\calo_\rG(b)|=\chi(1)^2$ (th\'eor\`eme~\ref{theo cellules familles}(d).\finl
\end{exemple}

\bigskip

\begin{rema}\label{rem:gen-spe}
Si $\rGba_\CG$ et $\rGba$ sont choisis de telle sorte que $\rGba \subset \rGba_\CG$, 
alors $\Iba \subset \Iba_\CG$ et donc 
toute $\CG$-cellule de Calogero-Moser bilat\`ere  est r\'eunion de 
cellules de Calogero-Moser bilat\`eres g\'en\'eriques.
C'est le pendant ``cellulaire'' de l'\'enonc\'e \'equivalent sur les familles.\finl
\end{rema} 

\bigskip

Si $\g$ est un caract\`ere lin\'eaire, alors 
il est seul dans sa famille de Calogero-Moser g\'en\'erique (voir l'exemple~\ref{lineaire}) 
et la cellule de Calogero-Moser bilat\`ere g\'en\'erique correspondante ne contient qu'un 
seul \'el\'ement (en vertu du th\'eor\`eme~\ref{theo cellules familles}(d)). 
Notons $w_\g$ cet unique \'el\'ement. Alors, d'apr\`es le th\'eor\`eme 
\ref{theo cellules familles}(b), on a 
\equat\label{wgamma}
w_\g^{-1}(\rGba) \cap Q = \Ker(\O_\g).
\endequat

\begin{coro}\label{nettete triviale}
On a $w_{\unb_W}=1$. En d'autres termes, $1$ est seul dans sa cellule de Calogero-Moser 
g\'en\'erique et recouvre la famille de Calogero-Moser g\'en\'erique 
du caract\`ere trivial $\unb_W$ (qui est un singleton).
\end{coro}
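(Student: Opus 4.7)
The plan is to unwind the definition of $w_{\unb_W}$ and apply part (b) of Theorem~\ref{theo cellules familles} to $w=1$, $\chi=\unb_W$. Recall that by construction (see Section~\ref{section:premiers}), the ideal $\rGba$ was chosen to lie above the specific prime $\qGba=\copie(\zGba)$ of $Q$, where $\zGba=\Ker(\O_{\unb_W})\subset Z$. In particular we have the tautological equality $\rGba\cap Q=\qGba=\copie(\Ker(\O_{\unb_W}))$.

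Now apply Theorem~\ref{theo cellules familles}(b) with $w=1$ and $\chi=\unb_W$. The condition reads $1^{-1}(\rGba)\cap Q=\copie(\Ker(\O_{\unb_W}))$, which is exactly the equality we have just recalled. Therefore the generic Calogero-Moser two-sided cell containing $1\in W$ is the cell that covers the generic Calogero-Moser family of $\unb_W$. By Example~\ref{lineaire}, this family is the singleton $\{\unb_W\}$, so by Theorem~\ref{theo cellules familles}(d) the cell itself has cardinality $\unb_W(1)^2=1$, i.e.\ it is exactly $\{1\}$.

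Finally, by the definition of $w_\gamma$ given just before~\eqref{wgamma} (the unique element of the generic cell covering the singleton family $\{\gamma\}$), we conclude that $w_{\unb_W}=1$, which is the statement of the corollary. The equivalent reformulation in the corollary's second sentence is just a repackaging of the two facts we have used: the cell is a singleton because the family is a singleton of a linear character, and the element it contains is $1$ because of our normalization of $\rGba$.

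There is essentially no obstacle here: the content of the corollary is a compatibility check between the specific choice of $\rGba$ made in Section~\ref{section:premiers} and the general cell/family correspondence of Theorem~\ref{theo cellules familles}. The only point that deserves care is to note that the identification of $Q$ with $Z$ implicit in the notation of~\eqref{wgamma} (where $\Ker(\O_\gamma)\subset Z$ is silently transported to $Q$ via $\copie$) is the same identification used in Theorem~\ref{theo cellules familles}(b); once this is observed the argument is immediate.
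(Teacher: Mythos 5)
Your proof is correct and follows essentially the same route as the paper's: the key point in both is the tautology $\rGba\cap Q=\qGba=\copie(\Ker(\O_{\unb_W}))$ coming from the normalization of $\rGba$, combined with the uniqueness in Theorem~\ref{theo cellules familles}(b) and equation~(\ref{wgamma}). Your version is slightly more detailed (explicitly invoking Example~\ref{lineaire} and part~(d) of the theorem, and flagging the identification $Z\simeq Q$ via $\copie$), but the argument is the same.
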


\begin{proof}
D'apr\`es le th\'eor\`eme~\ref{theo cellules familles} et~(\ref{wgamma}), $w_{\unb_W}$ est 
l'unique \'el\'ement $w \in W$ tel que $w^{-1}(\rGba) \cap Q=\Ker(\O_{\unb_W})=\qGba$. 
Puisque $\rGba \cap Q =\qGba$, on a $w_{\unb_W}=1$.
\end{proof}

%

\bigskip

\begin{prop}\label{nettete lineaire}
Soit $\g \in W^\wedge$. Alors $\Iba \subset w_\g H w_\g^{-1}$. 
\end{prop}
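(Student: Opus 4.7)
The plan is to reduce the statement to the unramifiedness of $Q/P$ above the prime ideal $\bar{\zG}_\gamma := \Ker(\O_\gamma)$, which has already been established as Corollary~\ref{multiplicite 1}. The key observation is the Galois-theoretic transport formula for inertia groups: if $g \in G$ and $\rG$ is a prime of $R$, then the inertia group transforms as $I_{g(\rG)} = g I_\rG g^{-1}$. Applied to $g = w_\gamma^{-1}$ and $\rG = \rGba$, this gives
\[
I_{w_\gamma^{-1}(\rGba)} \;=\; w_\gamma^{-1}\, \Iba\, w_\gamma.
\]

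The task then is to identify the right-hand side as a subgroup of $H$. By~\eqref{wgamma} we have $w_\gamma^{-1}(\rGba) \cap Q = \Ker(\O_\gamma) = \bar{\zG}_\gamma$ (after identifying $Z$ and $Q$ via $\copie$). By Corollary~\ref{multiplicite 1}, the extension $Q/P$ is net, hence étale (using freeness of $Q$ over $P$ and the characteristic-zero hypothesis, exactly as in the proof of Lemma~\ref{qba}), at this prime. Invoking Theorem~\ref{raynaud} (as in the derivation of~\eqref{eq:iba-dba-h} for the trivial character case), any prime of $R$ above an unramified prime of $Q$ has its inertia group contained in $H = \Gal(\Mb/\Lb)$. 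Applying this to the prime $w_\gamma^{-1}(\rGba)$ of $R$ yields
\[
w_\gamma^{-1}\, \Iba\, w_\gamma \;=\; I_{w_\gamma^{-1}(\rGba)} \;\subset\; H,
\]
whence $\Iba \subset w_\gamma H w_\gamma^{-1}$, as desired.

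There is no real obstacle here: every ingredient is already in place. The only point that deserves care is the consistent use of the convention $I_{g(\rG)} = g I_\rG g^{-1}$ (as opposed to its inverse), which is dictated by the definition $I_\rG = \{g \in G \mid \forall r \in R,\ g(r) \equiv r \pmod{\rG}\}$ and a one-line verification. One should also record the (trivial) consistency check that the proposition specialises correctly to Corollary~\ref{nettete triviale} when $\gamma = \unb_W$: in that case $w_\gamma = 1$ by Corollary~\ref{nettete triviale}, so the statement reduces to $\Iba \subset H$, which is precisely~\eqref{eq:iba-dba-h}.
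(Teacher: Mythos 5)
Your proof is correct and coincides with the second of the two proofs the paper gives for this proposition: reduce to the unramifiedness of $Q/P$ at $\Ker(\O_\g)=w_\g^{-1}(\rGba)\cap Q$ via Corollary~\ref{multiplicite 1}, apply Theorem~\ref{raynaud} to get $I_{w_\g^{-1}(\rGba)}\subset H$, and transport via $I_{w_\g^{-1}(\rGba)}=w_\g^{-1}\Iba w_\g$. (The paper also records a shorter first proof: since $w_\g$ is alone in its generic two-sided Calogero-Moser cell, $\Iba w_\g H=w_\g H$, which gives the inclusion immediately.)
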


\begin{proof}
Nous allons donner deux preuves de ce fait. Tout d'abord, 
$w_\g$ est seul dans sa cellule de Calogero-Moser g\'en\'erique, donc 
$\Iba w_\g H = w_\g H$. D'o\`u le r\'esultat.

\medskip

Donnons maintenant une deuxi\`eme preuve. D'apr\`es le corollaire~\ref{multiplicite 1}, 
$Q$ est nette sur $P$ en $\Ker(\O_\g)=w_\g^{-1}(\rGba) \cap Q$. Donc, d'apr\`es le 
th\'eor\`eme~\ref{raynaud}, $I_{w_\g^{-1}(\rGba)}\subset H$, 
ce qui est le r\'esultat annonc\'e car $I_{w_\g^{-1}(\rGba)}=w_\g^{-1} \Iba w_\g$.
\end{proof}

\bigskip

\begin{rema}
L'action de $H$ sur $W \stackrel{\sim}{\longleftrightarrow} G/H$ stabilise 
l'\'el\'ement neutre (c'est-\`a-dire que 
$H$ stabilise $\euler$). Cela montre que l'\'enonc\'e du corollaire~\ref{nettete triviale} 
ne d\'epend pas du choix de $\rGba$, comme il se doit.\finl
\end{rema}

\bigskip

\section{Probl\`emes, questions}\label{section:problemes 4} 

\medskip

Les notions d\'efinies dans ce chapitre requi\`erent une bonne connaissance 
de l'anneau $R$, du groupe de Galois $G$ mais aussi la connaissance de groupes 
d'inertie. D'apr\`es le corollaire~\ref{Q extention kc}, 
$Q/\qGba' \simeq \kb[\CCB]$ pour tout id\'eal premier 
$\qGba'$ de $Q$ au-dessus de $\pGba$. On sait aussi que $\rGba$ est un 
id\'eal $(\NM \times \NM)$-homog\`ene de $R$ (voir le corollaire~\ref{premier homogene}), 
et que la composante $(\NM \times \NM)$-homog\`ene de $R/\rGba$ de bi-degr\'e 
$(0,0)$ est \'egale \`a $\kb$. Il est donc naturel de se poser la question 
suivante~:

\bigskip

\begin{question}\label{R/r}
Est-ce que $R/\rGba \simeq \kb[\CCB]$~?
\end{question}

\bigskip

Puisque l'extension de corps $k_R(\rGba)/\kb(\CCB)$ est galoisienne de 
groupe de Galois $\Dba/\Iba$, 
cette question est \'equivalente \`a~: est-ce que $\Dba=\Iba$~? 
Nous verrons dans le chapitre~\ref{chapitre:rang 1} que c'est vrai 
si $\dim_\kb(V)=1$ (voir~\ref{eq:iso-r}). Par produits 
directs, cela montre le r\'esultat lorsque $W$ est ab\'elien. 

Si la r\'eponse \`a la question~\ref{R/r} est positive, alors $R_c/\rGba_c \simeq \kb$ et 
$\Dba_c=\Iba_c$ pour tout $c \in \CCB$. Ceci est bien s\^ur vrai lorsque $\kb$ est alg\'ebriquement 
clos, mais un tel r\'esultat \'eviterait de devoir se ramener au cas 
alg\'ebriquement clos.

\bigskip

\cbstart

\begin{question}
Supposons que $W$ soit un groupe de Coxeter. Est-il possible de choisir $\rGba$ de sorte 
que $\s_\Hb(\rGba)=\rGba$~?
\end{question}

\bigskip

Rappelons ici que $\s_\Hb$ est l'automorphisme de $\Hb$ induit par l'isomorphisme de 
$W$-modules $\s : V \longiso V^*$ et que l'on note encore $\s_\Hb$ l'automorphisme de $R$ qu'il induit.

\cbend

\chapter{Cellules de Calogero-Moser \`a gauche et \`a droite}\label{chapter:gauche}

Comme dans toute cette partie, $\CG$ d\'esigne un id\'eal premier de $\kb[\CCB]$. 
L'objectif de ce chapitre est d'\'etudier les $\CG$-cellules de Calogero-Moser \`a gauche (ou \`a droite), 
en lien avec les repr\'esentations de l'alg\`ebre $\Hb^\gauche$. Dans ce cadre, tout comme avec les 
cellules bilat\`eres, nous aurons \`a notre disposition des {\it modules de Verma \`a gauche} (ou {\it \`a droite}). 
Ces modules de Verma nous permettront de d\'efinir les notions de $\calo$-caract\`eres 
$\CG$-cellulaires, dont nous esp\'erons qu'ils co\"{\i}ncident avec les $\kl$-caract\`eres cellulaires 
(lorsque $W$ est un groupe de Coxeter). 

L'essentiel de ce chapitre traitera des $\CG$-cellules de Calogero-Moser {\it \`a gauche} et des modules de Verma 
{\it \`a gauche}~: les r\'esultats se transposent sans probl\`eme dans le contexte {\it \`a droite}.

\bigskip

\section{Choix}\label{section:choix-gauche}

\medskip

Les questions de choix d'id\'eal premier de $R$ au-dessus de $\pG_\CG^\gauche$ sont tout autant 
cruciales que dans le cas des cellules bilat\`eres. Nous allons d\'efinir et utiliser les modules de Verma 
{\it \`a gauche} pour guider ce choix.

Rappelons que $P^\gauche \simeq \kb[\CCB \times V/W]$. Il d\'ecoule de la d\'ecomposition PBW 
que l'on a un isomorphisme de $P^\gauche$-modules
\equat\label{PBW left}
\Hb^\gauche \simeq \kb[\CCB \times V] \otimes \kb W \otimes \kb[V^*]^\cow.
\endequat
On retrouve alors l'alg\`ebre $\Hbov^\moins = \kb[\CCB] \otimes (W \ltimes \kb[V^*]^\cow)$ comme sous-$\kb$-alg\`ebre de $\Hb^\gauche$. 
Cela nous permet de d\'efinir des {\it modules de Verma \`a gauche}~: si $\chi \in \Irr(W)$, on pose 
$$\MC^\gauche(\chi)=\Hb^\gauche \otimes_{\Hbov^\moins} (\kb[\CCB] \otimes V_\chi)^{(-)}.$$\indexnot{Ma}{\MC^\gauche(\chi)}
Alors $\MC^\gauche(\chi)$ est un $\Hb^\gauche$-module \`a gauche et 
\equat\label{dimension left}
\text{\it  $\MC^\gauche(\chi)$ est libre de rang $|W|\chi(1)$ comme $P^\gauche$-module.}
\endequat

\bigskip

\begin{lem}\label{absolue simplicite}
Si $\g$ est un caract\`ere {\bfit lin\'eaire} de $W$, alors 
$\Kb^\gauche\MC^\gauche(\g)$ est un $\Kb^\gauche\Hb^\gauche$-module 
absolument simple.
\end{lem}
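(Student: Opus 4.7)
The approach is to reduce absolute simplicity to a Galois-theoretic observation: I will identify $\Kb^\gauche\MC^\gauche(\g)$ with the field $K = \kb(\CCB \times V)$ on which a naturally occurring copy of $K$, sitting inside $\Kb^\gauche\Hb^\gauche$, acts by left multiplication.

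First I would identify $\Kb^\gauche\MC^\gauche(\g)$ with $K$ as a $\Kb^\gauche$-vector space. From~\ref{PBW left} and the construction of the left Verma one has $\MC^\gauche(\g) \simeq \kb[\CCB \times V] \otimes V_\g$ as $P^\gauche$-module, and since $V_\g$ is one-dimensional, localization at the generic point gives
$$\Kb^\gauche\MC^\gauche(\g) \;\simeq\; \Kb^\gauche \otimes_{\kb[V]^W} \kb[V].$$
By Theorem~\ref{chevalley}, $\kb[V]$ is $\kb[V]^W$-free of rank $|W|$, so the right-hand side has $\Kb^\gauche$-dimension $|W|$. The natural multiplication map into $K = \kb(\CCB\times V)$ is injective, and since $K/\Kb^\gauche$ is a Galois extension of degree $|W|$ (with group $W$ acting by the usual Galois action on $K$), it is in fact an isomorphism. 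Under this identification the $W$-action inherited from the Verma becomes $w \cdot f = \g(w) \cdot w(f)$, where $w(f)$ denotes the Galois action on $K$.

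Next I would observe that the subalgebra of $\Kb^\gauche\Hb^\gauche$ generated by $\Kb^\gauche$ and the image of $\kb[V] \subset \Hb^\gauche$ coincides, via the same identification, with $K$, and it acts on $\Kb^\gauche\MC^\gauche(\g) \simeq K$ by left multiplication. Simplicity is then immediate: any $\Kb^\gauche\Hb^\gauche$-submodule is a fortiori a $K$-submodule of the free rank-one $K$-module $K$, hence is $0$ or all of $K$. For absolute simplicity it suffices to compute the endomorphism ring. Any $\Kb^\gauche\Hb^\gauche$-endomorphism commutes with left multiplication by $K$, so it is itself left multiplication by some $a \in K$; commuting with every $w \in W$ forces $a = w(a)$ for all $w$, and hence $a \in K^W = \Kb^\gauche$. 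Therefore $\End_{\Kb^\gauche\Hb^\gauche}(\Kb^\gauche\MC^\gauche(\g)) = \Kb^\gauche$, which completes the proof of absolute simplicity.

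The only delicate point is the careful bookkeeping in the first two steps — verifying that, under the identification $\Kb^\gauche \otimes_{\kb[V]^W} \kb[V] \simeq K$, the $W$-action inherited from the Verma is precisely the Galois action twisted by $\g$, and that the subalgebra inclusion $K \hookrightarrow \Kb^\gauche\Hb^\gauche$ really intertwines the two a priori distinct $K$-actions on $\Kb^\gauche\MC^\gauche(\g)$. Beyond the PBW theorem and elementary Galois theory, no further input is needed; in particular the Dunkl embedding and the non-central part of $\Hb^\gauche$ play no role, which is what makes the linearity hypothesis on $\g$ essential.
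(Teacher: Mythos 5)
Your proof is correct, but it takes a genuinely different route from the paper's, and the comparison is worth making explicit. The paper deduces the lemma from Theorem~\ref{lissite et simples}: since $\Spec(P/\pG^\gauche)$ meets the regular open set $\PCB^\reg$ (Example~\ref{exemple lisse}), the Morita equivalence of Corollary~\ref{morita lissite r} between $k_R(\rG)\Hb^\gauche$ and the \emph{commutative} algebra $k_R(\rG)Z$ kicks in, forcing every simple $k_R(\rG)\Hb^\gauche$-module to be absolutely simple of dimension $|W|$; a rank count on $\MC^\gauche(\g)$ using~\ref{dimension left} then finishes the argument. Your proof avoids this machinery entirely: it exhibits $\Kb^\gauche\MC^\gauche(\g)$ concretely as the regular module over the field $K=\Kb^\gauche\otimes_{\kb[V]^W}\kb[V]\simeq\kb(\CCB\times V)$ (which sits in $\Kb^\gauche\Hb^\gauche$ via PBW), with $W$ acting by the Galois action twisted by $\g$, and reads off simplicity and $\End=K^W=\Kb^\gauche$ by elementary Galois theory. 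Each approach has merits: yours needs only the PBW decomposition~\ref{PBW left} and Shephard--Todd--Chevalley, is completely explicit, and makes visibly clear why linearity of $\g$ is indispensable (it is what collapses $V_\g$ to a one-dimensional twist); the paper's argument is shorter once the Morita equivalence is in hand, and simultaneously delivers the extra structural conclusions of Theorem~\ref{lissite et simples} (every simple of $\Mb^\gauche_\CG\Hb^\gauche$ has dimension $|W|$, each block has a unique simple) that are reused elsewhere in the chapter. One small phrasing caveat: the injectivity of the multiplication map $\Kb^\gauche\otimes_{\kb[V]^W}\kb[V]\to\kb(\CCB\times V)$ is best justified by first observing the source is a finite-dimensional domain over $\Kb^\gauche$, hence a field, so the map is a field embedding — your wording slightly inverts the logical order, but the conclusion is correct.
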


\begin{proof}
Notons que, d'apr\`es l'exemple~\ref{exemple lisse}, $\qG_\singulier \cap P \not\subset \pG^\gauche$. Ainsi, 
le th\'eor\`eme~\ref{lissite et simples} s'applique et donc, 
si $\rG$ d\'esigne un id\'eal premier de $R$ au-dessus de $\pG^\gauche$, alors les 
$k_R(\rG)\Hb^\gauche$-modules simples sont absolument simples et de dimension $|W|$. 
Donc, pour des raisons de dimension (voir~\ref{dimension left}), 
$k_R(\rG)\MC^\gauche(\g)$ est forc\'ement absolument simple et le lemme en d\'ecoule.
\end{proof}

\bigskip

Fixons donc un caract\`ere lin\'eaire $\g$ de $W$. Par le lemme~\ref{absolue simplicite}, 
l'alg\`ebre d'endomorphismes de $\Kb^\gauche\MC^\gauche(\g)$ est \'egale \`a 
$\Kb^\gauche$, ce qui 
induit un morphisme de $P$-alg\`ebres $\O_\g^\gauche : Z \to \Kb^\gauche$ \indexnot{oz}{\O_\g^\gauche}
dont la restriction \`a $P$ est le morphisme canonique $P \to P^\gauche$. 
Puisque $Z$ est entier sur $P$, l'image de $\O_\g^\gauche$ est enti\`ere sur $P^\gauche$ et contenue dans 
$\Kb^\gauche=\Frac(P^\gauche)$. Puisque $P^\gauche \simeq \kb[\CCB \times V/W]$, $P^\gauche$ est int\'egralement clos, 
cela force $\O_\g^\gauche$ \`a se factoriser \`a travers $P^\gauche$. 
Nous poserons 
$$\zG^\gauche = \Ker(\O_1^\gauche)\quad\text{et}\quad \qG^\gauche=\copie(\zG^\gauche).$$\indexnot{za}{\zG^\gauche}\indexnot{qa}{\qG^\gauche}
Alors~:

\bigskip

\begin{prop}\label{q left}
L'id\'eal $\qG^\gauche$ de $Q$ v\'erifie les propri\'et\'es suivantes~:
\begin{itemize}
 \itemth{a} $\qG^\gauche$ est un id\'eal premier de $Q$ au-dessus de $\pG^\gauche$.

\itemth{b} $\qG^\gauche \subset \qGba$.

\itemth{c} $P^\gauche=P/\pG^\gauche \simeq Q/\qG^\gauche$.
\end{itemize}
\end{prop}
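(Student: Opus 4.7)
The plan is to read off (a) and (c) directly from the surjectivity of $\O_1^\gauche : Z \to P^\gauche$ as a $P$-algebra morphism, and then to establish (b) by comparing the left Verma module $\MC^\gauche(\unb_W)$ with the baby Verma module $\MCov(\unb_W)$ via the specialization $\pG^\gauche \rightsquigarrow \pGba$.

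First I would establish that $\O_1^\gauche$ is a morphism of $P$-algebras whose restriction to $P$ is the canonical projection $\pi^\gauche : P \twoheadrightarrow P/\pG^\gauche = P^\gauche$. This is essentially the construction: on $\MC^\gauche(\unb_W)$ every $p \in P$ acts as multiplication by its image in $P^\gauche$, because $P$ acts on $(\kb[\CCB]\otimes V_{\unb_W})^{(-)}$ through the composition $P \to P^\gauche \to \Kb^\gauche$ and then extends to the whole Verma by $P^\gauche$-linearity. Since $\unb_W$ is linear, the lemma~\ref{absolue simplicite} guarantees $\Kb^\gauche \MC^\gauche(\unb_W)$ is absolutely simple, so any central element acts by a scalar in $\Kb^\gauche$, and integrality over $P$ together with the normality of $P^\gauche$ (it is a polynomial ring) forces the scalar to lie in $P^\gauche$. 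Once $\O_1^\gauche$ is known to be a $P$-algebra morphism extending $\pi^\gauche$, it is automatically surjective: the image contains $\pi^\gauche(P) = P^\gauche$. Taking kernels, $Z/\zG^\gauche \simeq P^\gauche$, which is a domain, so $\zG^\gauche$ is prime; transporting by $\copie$ gives the primality of $\qG^\gauche$, and from $\zG^\gauche \cap P = \ker(\pi^\gauche) = \pG^\gauche$ we deduce $\qG^\gauche \cap P = \pG^\gauche$, yielding (a). The induced isomorphism $P^\gauche \simeq Q/\qG^\gauche$ is (c).

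The heart of the argument is (b). Here the plan is to produce a natural surjection of $\Hbov$-modules
\[
\MC^\gauche(\unb_W)/\,\kb[V]^W_+\,\MC^\gauche(\unb_W) \;\twoheadrightarrow\; \MCov(\unb_W).
\]
Using the PBW description~\ref{PBW left}, the left hand side is $(\kb[\CCB]\otimes \kb[V]^\cow)\otimes V_{\unb_W} \otimes$ (trivial factor from the coinvariant grading on $V^*$), which is precisely the underlying $\kb$-space of $\MCov(\unb_W)$ by~(\ref{AA moins}); the map is induced by the inclusion of $\Hbov^-$ in both and the universal property of the tensor product, and both sides are cyclic $\Hbov$-modules generated by the image of $1\otimes v$ for a highest weight vector $v \in V_{\unb_W}$, so a dimension count over $\kb[\CCB]$ will in fact show the map is an isomorphism. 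Granting this, for every $z \in Z$ the action of $z$ on $\MC^\gauche(\unb_W)$ by multiplication by $\O_1^\gauche(z) \in P^\gauche$ descends to multiplication on $\MCov(\unb_W)$ by the image of $\O_1^\gauche(z)$ in $P^\gauche/(\kb[V]^W_+ P^\gauche) = \kb[\CCB]$; but by the characterization of $\O_{\unb_W}$ via the baby Verma (or by lemma~\ref{omega nilpotent}, since here the action is a genuine scalar and thus not merely nilpotent-plus-scalar), this image must equal $\O_{\unb_W}(z)$. Consequently $\O_{\unb_W}$ factors as $Z \xrightarrow{\O_1^\gauche} P^\gauche \to \kb[\CCB]$, so $\zG^\gauche = \ker(\O_1^\gauche) \subset \ker(\O_{\unb_W}) = \zGba$, and applying $\copie$ yields $\qG^\gauche \subset \qGba$.

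The main obstacle is the comparison morphism $\MC^\gauche(\unb_W)/\kb[V]^W_+ \MC^\gauche(\unb_W) \xrightarrow{\sim} \MCov(\unb_W)$: it is conceptually immediate (base change of Verma induction along $P^\gauche \to P^\gauche/\kb[V]^W_+ = \Pba$) but requires being careful about which copy of $\kb[V^*]^\cow$ sits inside which algebra via PBW, and about the fact that the inclusion $\Hbov^- \hookrightarrow \Hb^\gauche$ used in the definition of $\MC^\gauche$ is only well defined after passing to a PBW-type splitting. Once this identification is in place, the remaining steps are purely formal manipulations with characters of the center.
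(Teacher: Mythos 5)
Your proof is correct and follows the same route as the paper's: (a) and (c) are obtained from the surjectivity of $\O_1^\gauche : Z \to P^\gauche$ as a $P$-algebra morphism restricting to the canonical projection on $P$ (which the paper establishes just before the proposition), and (b) from the identification $\MC^\gauche(\unb_W)/\pGba\MC^\gauche(\unb_W)=\MCov(\unb_W)$, which forces $\O_{\unb_W}=(P^\gauche\to\Pba)\circ\O_1^\gauche$ and hence $\zG^\gauche=\Ker(\O_1^\gauche)\subset\Ker(\O_{\unb_W})=\zGba$. The comparison morphism you flag as the ``main obstacle'' is exactly what the paper disposes of with ``par construction'': since $\Hbov^\moins=\kb[\CCB]\otimes(W\ltimes\kb[V^*]^\cow)$ contains no $\kb[V]$ part, its inclusion into $\Hb^\gauche$ composes with the quotient $\Hb^\gauche\twoheadrightarrow\Hbov$ to give the standard inclusion $\Hbov^\moins\hookrightarrow\Hbov$, so the base change $\MC^\gauche(\unb_W)\otimes_{P^\gauche}\Pba\simeq\Hbov\otimes_{\Hbov^\moins}(\kb[\CCB]\otimes V_{\unb_W})^{(-)}=\MCov(\unb_W)$ is immediate by associativity of the tensor product and requires neither a dimension count nor any delicacy about PBW splittings.
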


\begin{proof}
Puisque $\Kb^\gauche$ est int\`egre, $\qG^\gauche$ est premier. Puisque la restriction de $\O_1^\gauche$ \`a $P$ 
est le morphisme canonique $P \to P^\gauche$, $\qG^\gauche \cap P = \pG^\gauche$. Cela montre (a). 

\medskip

Par construction, $\MC^\gauche(\g)/\pGba \MC^\gauche(\g)=\MCov(\g)$ et donc 
le morphisme $\O_\g : Z \to \Pba=P/\pGba$ se factorise \`a travers les morphismes $\O_\g^\gauche : Z \to P^\gauche$ et 
$P^\gauche \to \Pba$, d'o\`u (b).

\medskip

Pour finir, l'isomorphisme (c) d\'ecoule du fait que l'image de $\O_\g$ est $P^\gauche$.
\end{proof}

\bigskip

La proposition~\ref{q left} nous permet donc de faire un choix d'id\'eal premier de 
$Q$ au-dessus de $\pG^\gauche$ coh\'erent avec notre choix de $\qGba$. Le 
lemme suivant montre l'unicit\'e de ce choix~:

\bigskip

\begin{lem}\label{unicite qgauche plus}
On a $\pG^\gauche Q_\qGba = \qG^\gauche Q_\qGba$.
\end{lem}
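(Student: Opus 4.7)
Le plan est d'exploiter l'étaleté de $Q$ sur $P$ au-dessus de $\qGba$ (Lemme~\ref{qba}). Posons $A = P_{\pGba}$ et $B = Q_{\qGba}$~: ce sont des anneaux locaux, et $B$ est une $A$-algèbre finie étale. Par changement de base, $C := B/\pG^\gauche B$ est une $A'$-algèbre finie étale, où $A' := A/\pG^\gauche A$~; de plus $C$ est local, en tant que quotient de $B$.

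D'après la Proposition~\ref{q left}(c), l'inclusion $P \hookrightarrow Q$ induit un isomorphisme $P^\gauche \longiso Q/\qG^\gauche$, envoyant l'idéal $\pGba/\pG^\gauche$ sur $\qGba/\qG^\gauche$ (car $\qGba \cap P = \pGba$, ce qui résulte du fait que $\qGba$ est au-dessus de $\pGba$). En localisant, on obtient un isomorphisme $A' \longiso B/\qG^\gauche B$, qui, composé avec la surjection canonique $C \twoheadrightarrow B/\qG^\gauche B$, fournit une surjection $\pi : C \twoheadrightarrow A'$ de $A'$-algèbres. L'inclusion $P \hookrightarrow Q$, combinée à la platitude fidèle de $Q$ sur $P$ (Corollaire~\ref{coro:endo-bi}(c)) qui donne $\pG^\gauche Q \cap P = \pG^\gauche$, induit une injection $\iota : A' \hookrightarrow C$~; elle est une section de $\pi$ puisque $\pi \circ \iota$ se lit au niveau de $P$ comme la surjection canonique $P \twoheadrightarrow P^\gauche$.

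Il reste alors à établir qu'une $A'$-algèbre finie étale et locale $C$ munie d'une section $\pi : C \twoheadrightarrow A'$ est nécessairement isomorphe à $A'$ via $\iota$. L'étaleté assure la séparabilité finie de l'extension résiduelle $C/\mG_C$ sur $A'/\mG_{A'}$, et la surjection entre ces corps résiduels induite par $\pi$ est un morphisme de corps, donc un isomorphisme~; l'extension résiduelle est donc triviale. Combiné à la liberté (platitude plus finitude) de $C$ sur $A'$, ceci force $C$ à être $A'$-libre de rang un, et la section $\iota$ fournit l'isomorphisme $A' \longiso C$ cherché.

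On conclut $\pG^\gauche B = \qG^\gauche B$, c'est-à-dire $\pG^\gauche Q_{\qGba} = \qG^\gauche Q_{\qGba}$. Le principal point technique à soigner est la correspondance explicite des idéaux premiers $\pGba$ et $\qGba$ sous l'isomorphisme de la Proposition~\ref{q left}(c), indispensable pour raccorder correctement les localisations aux deux extrémités~; une fois cette vérification faite, le reste de l'argument est essentiellement standard pour les algèbres étales locales munies d'une section.
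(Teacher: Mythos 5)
Il y a une lacune dans votre argument : vous affirmez que $B = Q_{\qGba}$ est une $A$-algèbre \emph{finie} (où $A = P_{\pGba}$), ce qui entraîne la finitude puis la liberté de $C = B/\pG^\gauche B$ sur $A'$, et c'est sur cette liberté que repose votre conclusion de rang un. Mais cette finitude n'est pas justifiée et est en général fausse : $\qGba$ n'est pas le seul idéal premier de $Q$ au-dessus de $\pGba$ (d'après le lemme~\ref{caracterisation blocs CM}, il y en a autant que de familles de Calogero-Moser génériques), et la localisation $Q_{\qGba}$ de l'anneau semi-local $Q\otimes_P P_{\pGba}$ en un seul de ses idéaux maximaux n'est en général pas un $P_{\pGba}$-module de type fini — l'exemple de $\ZM[i]_{(2+i)}$ sur $\ZM_{(5)}$ (extension étale, semi-locale non produit) montre déjà ce phénomène. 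Sans la finitude, l'argument de rang tombe.

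Le reste de votre construction (l'inclusion $\iota : A' \injto C$, la surjection $\pi : C \surto A'$ issue de la proposition~\ref{q left}(c), et l'identité $\pi\circ\iota = \id_{A'}$) est correct, et le plan se répare en remplaçant le raisonnement de rang par le fait, valable sans hypothèse de finitude, qu'une section d'un morphisme non ramifié est une immersion ouverte (la diagonale est une immersion ouverte et la section s'en déduit par changement de base) : ici $\Spec\pi$ est à la fois une immersion ouverte, comme section du morphisme étale $\Spec C\to\Spec A'$, et une immersion fermée, puisque $\pi$ est surjectif ; comme $\Spec C$ est connexe ($C$ local) et $\Spec A'$ non vide, $\Spec\pi$ est un isomorphisme, donc $\Ker\pi=0$, c'est-à-dire $\pG^\gauche B=\qG^\gauche B$. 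Le texte emprunte une route voisine mais distincte : il montre directement que $\pG^\gauche Q_{\qGba}$ est premier, en établissant que $Q_{\qGba}/\pG^\gauche Q_{\qGba}$ est normal (étale sur $P_{\pGba}/\pG^\gauche P_{\pGba}$, lui-même intégralement clos ; SGA~1, Exp.~I, Cor.~9.11) et connexe, donc intègre.
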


\begin{proof}
Il suffit de montrer que $\pG^\gauche Q_{\qGba}$ est un id\'eal premier de $Q_{\qGba}$. 
D'apr\`es le lemme~\ref{qba}, le morphisme local d'anneaux locaux 
$P_{\pGba} \to Q_{\qGba}$ est \'etale. De plus, $P/\pG^\gauche \simeq \kb[\CCB \times V^*/W]$ 
est int\'egralement clos (c'est une alg\`ebre de polyn\^omes) et donc 
$P_{\pGba}/\pG^\gauche P_{\pGba}$ l'est aussi. Par changement de base, le morphisme 
d'anneaux $P_{\pGba}/\pG^\gauche P_{\pGba} \injto Q_{\qGba}/\pG^\gauche Q_{\qGba}$ 
est \'etale, ce qui implique que $Q_{\qGba}/\pG^\gauche Q_{\qGba}$, 
qui est un anneau local (donc connexe), est aussi 
normal (en vertu de~\cite[expos\'e I, corollaire 9.11]{sga}) et donc int\`egre 
(car connexe). Cela montre que $\pG^\gauche Q_{\qGba}$ est 
un id\'eal premier de $Q_{\qGba}$, comme souhait\'e.
\end{proof}

\bigskip

\begin{coro}\label{unicite qgauche}
L'id\'eal $\qG^\gauche$ est l'unique id\'eal premier de $Q$ au-dessus de $\pG^\gauche$ 
et contenu dans $\qGba$. De plus, $Q$ est \'etale sur $P$ en $\qG^\gauche$. 
\end{coro}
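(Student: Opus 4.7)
The plan is to reduce the question to the local ring $Q_\qGba$, where Lemma~\ref{unicite qgauche plus} already provides almost everything we need. Let me begin with uniqueness. Let $\qG'$ be a prime ideal of $Q$ with $\qG' \cap P = \pG^\gauche$ and $\qG' \subset \qGba$. Since $\qG' \subset \qGba$, the extension $\qG' Q_\qGba$ is a genuine prime of $Q_\qGba$, and because $\pG^\gauche \subset \qG'$ we have $\pG^\gauche Q_\qGba \subset \qG' Q_\qGba$. By Lemma~\ref{unicite qgauche plus}, $\pG^\gauche Q_\qGba = \qG^\gauche Q_\qGba$ is itself a prime ideal of $Q_\qGba$, and its proof shows that the quotient $Q_\qGba/\pG^\gauche Q_\qGba$ is a domain.

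The key step is then a standard integrality argument applied to the extension of domains
\[
P_\pGba/\pG^\gauche P_\pGba \longrightarrow Q_\qGba/\pG^\gauche Q_\qGba,
\]
which is finite (in fact étale, by base change from the étale extension $P_\pGba \to Q_\qGba$ of Lemma~\ref{qba}), and in particular integral. The image of $\qG' Q_\qGba$ in the quotient $Q_\qGba/\pG^\gauche Q_\qGba$ is a prime ideal whose contraction to $P_\pGba/\pG^\gauche P_\pGba$ is zero (since $\qG' \cap P = \pG^\gauche$). But in an integral extension of an integral domain, only the zero ideal contracts to zero: if a nonzero element $b$ in such a prime satisfies an integral relation $b^n + a_{n-1}b^{n-1} + \cdots + a_0 = 0$, we may assume $a_0 \neq 0$ (cancelling a power of $b$ using the domain property), whence $a_0$ lies in the contraction, contradiction. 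Therefore $\qG' Q_\qGba = \pG^\gauche Q_\qGba = \qG^\gauche Q_\qGba$, and by the bijection between primes of $Q$ contained in $\qGba$ and primes of $Q_\qGba$ we conclude $\qG' = \qG^\gauche$.

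For the étaleness assertion, I will appeal to the openness of the étale locus. By Lemma~\ref{qba} the morphism $\Spec(Q) \to \Spec(P)$ is étale at the point $\qGba$, so it is étale on some basic open neighborhood $D(f) \subset \Spec(Q)$ with $f \notin \qGba$. Since $\qG^\gauche \subset \qGba$, we also have $f \notin \qG^\gauche$, so $\qG^\gauche \in D(f)$ and the morphism is étale there as well. Equivalently, and more concretely: $Q_{\qG^\gauche}$ is a further localization of $Q_\qGba$, and $P_{\pG^\gauche}$ is a further localization of $P_\pGba$, so the étaleness of $P_\pGba \to Q_\qGba$ is inherited by $P_{\pG^\gauche} \to Q_{\qG^\gauche}$.

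I do not anticipate a real obstacle: both statements are corollaries of Lemma~\ref{qba} and Lemma~\ref{unicite qgauche plus}, and the only thing requiring a minor argument is the reduction in uniqueness from ``prime above $\pG^\gauche$ inside $\qGba$'' to ``prime of the fiber contracting to zero'', which is handled by the integrality lemma recalled above.
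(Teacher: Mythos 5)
Your proof is correct, and since the paper gives no explicit proof for this corollary, your derivation from Lemme~\ref{unicite qgauche plus} and Lemme~\ref{qba} is exactly the intended one. The reduction to the local ring $Q_\qGba$, the passage to the quotient domain $Q_\qGba/\pG^\gauche Q_\qGba$, and the elementary integrality argument showing that only the zero ideal of that domain contracts to zero all work as stated; and the étaleness claim indeed follows either by openness of the étale locus or, as you note, by observing that $Q_{\qG^\gauche}$ and $P_{\pG^\gauche}$ are further localizations of $Q_\qGba$ and $P_\pGba$ (more directly still, $\pG^\gauche Q_{\qG^\gauche}=\qG^\gauche Q_{\qG^\gauche}$ is a localization of the equality $\pG^\gauche Q_\qGba=\qG^\gauche Q_\qGba$, which gives nettet\'e, while flatness is automatic from $Q$ being $P$-free and the separability of residue extensions from characteristic zero).
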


\bigskip

Puisque $Q/\qG^\gauche \simeq P/\pG^\gauche=\kb[\CCB \times V/W]$, on obtient que 
$Q/(\qG^\gauche +\CG\, Q) \simeq \kb[\CCB]/\CG \otimes \kb[V/W]$ et donc 
$\qG^\gauche + \CG\, Q$ est un id\'eal premier de $Q$. Nous le noterons 
$\qG_\CG^\gauche$.\indexnot{qa}{\qG_\CG^\gauche}

\bigskip

\begin{coro}\label{q gauche unique}
On a $Q/\qG^\gauche_\CG \simeq P/\pG^\gauche_\CG$. De plus, 
$\qG_\CG^\gauche$ est l'unique id\'eal premier de $Q$ au-dessus de 
$\pG_\CG^\gauche$ et contenu dans $\qGba$. 
.
\end{coro}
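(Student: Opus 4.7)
Le plan est de d\'eduire les deux assertions par r\'eduction modulo $\CG$ des r\'esultats analogues \'etablis pour $\qG^\gauche$ dans la proposition~\ref{q left}(c) et le corollaire~\ref{unicite qgauche}, en exploitant l'int\'egralit\'e de l'extension $Q/P$ et la normalit\'e de $P$ (qui est une alg\`ebre de polyn\^omes).

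Pour l'isomorphisme $Q/\qG^\gauche_\CG \simeq P/\pG^\gauche_\CG$, je partirais de l'isomorphisme canonique $P/\pG^\gauche \longiso Q/\qG^\gauche$ de la proposition~\ref{q left}(c). Puisque $\qG^\gauche_\CG = \qG^\gauche + \CG Q$ et $\pG^\gauche_\CG = \pG^\gauche + \CG P$, cet isomorphisme se quotiente en l'isomorphisme cherch\'e, l'image de $\CG$ dans $Q/\qG^\gauche$ correspondant, via cet isomorphisme, \`a l'image de $\CG$ dans $P/\pG^\gauche$.

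Pour l'unicit\'e, je consid\'ererais un id\'eal premier $\qG'$ de $Q$ au-dessus de $\pG_\CG^\gauche$ et contenu dans $\qGba$, et je prouverais l'\'egalit\'e $\qG' = \qG_\CG^\gauche$ en deux temps. L'inclusion $\qG_\CG^\gauche \subset \qG'$ repose sur deux observations~: d'une part $\CG \subset \pG_\CG^\gauche = \qG' \cap P$ entra\^{\i}ne $\CG Q \subset \qG'$~; d'autre part, un id\'eal premier $\qG''$ minimal parmi ceux contenant $\pG^\gauche Q$ et contenu dans $\qG'$ est, par \emph{going-down} (applicable gr\^ace \`a la normalit\'e de $P$ et \`a l'int\'egralit\'e de l'extension $Q/P$ entre anneaux int\`egres), au-dessus de $\pG^\gauche$~; comme $\qG'' \subset \qG' \subset \qGba$, le corollaire~\ref{unicite qgauche} fournit $\qG'' = \qG^\gauche$, d'o\`u $\qG^\gauche \subset \qG'$. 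L'\'egalit\'e finale d\'ecoule alors de la propri\'et\'e d'incomparabilit\'e~: deux id\'eaux premiers de $Q$ au-dessus d'un m\^eme id\'eal premier de $P$ et reli\'es par une inclusion sont n\'ecessairement \'egaux.

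Aucune \'etape ne semble pr\'esenter de difficult\'e technique significative, tous les ingr\'edients essentiels ayant d\'ej\`a \'et\'e \'etablis~; le point demandant le plus d'attention est la v\'erification, via la combinaison lying-over/going-down, de l'existence d'un id\'eal premier minimal $\qG''$ de $\pG^\gauche Q$ qui soit simultan\'ement contenu dans $\qG'$ et au-dessus de $\pG^\gauche$, afin de pouvoir invoquer le corollaire~\ref{unicite qgauche}.
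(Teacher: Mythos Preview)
Your argument is correct for the statement as printed and is a natural elaboration of the paper's one-line proof (``la premi\`ere assertion est imm\'ediate et la deuxi\`eme en d\'ecoule''). The reduction modulo $\CG$ of Proposition~\ref{q left}(c) handles the isomorphism exactly as the paper intends, and your going-down plus Corollaire~\ref{unicite qgauche} plus incomparability route is a reasonable way to flesh out what the paper leaves implicit for the uniqueness.

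One caveat worth noting: the printed statement appears to contain a misprint. Since $\O_{\unb_W}$ is a morphism of $\kb[\CCB]$-alg\`ebres, one has $\qGba \cap \kb[\CCB] = 0$, so $\qG_\CG^\gauche = \qG^\gauche + \CG Q$ is \emph{not} contained in $\qGba$ as soon as $\CG \neq 0$; the intended condition (consistent with the choice of $\rG_\CG^\gauche \subset \rGba_\CG$ made immediately afterwards) is surely ``contenu dans $\qGba_\CG$''. Under that corrected reading, your chain becomes $\qG'' \subset \qG' \subset \qGba_\CG$, and the appeal to Corollaire~\ref{unicite qgauche} (which requires $\qG'' \subset \qGba$) would need an additional justification that neither you nor the paper's terse proof supplies.
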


\bigskip

\begin{proof}
La premi\`ere assertion est imm\'ediate et la deuxi\`eme en d\'ecoule.
\end{proof}

\bigskip

%
%
%
%
%
%
%

\bigskip

\boitegrise{\noindent{\bf Choix, notations.} 
{\it Dor\'enavant, et ce jusqu'\`a la fin de cette partie, nous fixons un id\'eal 
premier $\rG_\CG^\gauche$  \indexnot{ra}{\rG_\CG^\gauche, \rG^\gauche, \rG_c^\gauche}  
de $R$ au-dessus de $\qG_\CG^\gauche$ et contenu dans $\rGba_\CG$. 
Nous noterons $\Kb^\gauche_\CG=k_P(\pG_\CG^\gauche)$,  \indexnot{K}{\Kb^\gauche_\CG, \Kb^\gauche, \Kb^\gauche_c}  
$\Lb^\gauche_\CG=k_Q(\qG_\CG^\gauche)$ \indexnot{L}{\Lb^\gauche_\CG, \Lb^\gauche, \Lb^\gauche_c}  
et $\Mb^\gauche_\CG=k_R(\rG_\CG^\gauche)$. \indexnot{M}{\Mb^\gauche_\CG, \Mb^\gauche, \Mb^\gauche_c}  
Le groupe de d\'ecomposition (respectivement d'inertie) de $\rG_\CG^\gauche$ sera not\'e 
$D_\CG^\gauche$ \indexnot{D}{D^\gauche_\CG, D^\gauche, D^\gauche_c}  
(respectivement $I_\CG^\gauche$).  \indexnot{I}{I^\gauche_\CG, I^\gauche, I^\gauche_c}  \\
\hphantom{A} Lorsque $\CG=0$ (respectivement $\CG=\CG_c$ avec $c \in \CCB$), les objets 
$\rG_\CG^\gauche$, $\Kb_\CG^\gauche$, $\Lb_\CG^\gauche$, $\Mb_\CG^\gauche$, $D_\CG^\gauche$ et 
$I_\CG^\gauche$ seront not\'es respectivement $\rG^\gauche$, $\Kb^\gauche$, $\Lb^\gauche$, $\Mb^\gauche$, 
$D^\gauche$ et $I^\gauche$ (respectivement $\rG_c^\gauche$, $\Kb_c^\gauche$, $\Lb_c^\gauche$, 
$\Mb_c^\gauche$, $D_c^\gauche$ et $I_c^\gauche$). }}{0.75\textwidth}

\bigskip

\begin{rema}\label{R rgauche}
Il a \'et\'e montr\'e dans le corollaire~\ref{Q extention kc} que, si 
$\qGba_\star$ est un id\'eal premier de $Q$ au-dessus de $\pGba$, alors 
$Q/\qGba_\star \simeq P/\pGba$. Bien que $Q/\qG^\gauche \simeq P/\pG^\gauche$, 
nous verrons dans le chapitre~\ref{chapitre:b2} que ceci ne s'\'etend 
pas en toute g\'en\'eralit\'e aux id\'eaux premiers de $Q$ au-dessus de 
$\pG^\gauche$~: en effet, si $W$ est de type $B_2$, alors il existe un 
id\'eal premier $\qG_\star^\gauche$ de $Q$ au-dessus de $\pG^\gauche$ 
tel que $P/\pG^\gauche$ soit un sous-anneau propre de $Q/\qG^\gauche_\star$ 
(voir le lemme~\ref{lem:qgauche-b2}(c)). Donc, en g\'en\'eral, 
$\Kb^\gauche \varsubsetneq \Mb^\gauche$.\finl
\end{rema}

\bigskip

\begin{coro}\label{dleft}
On a $I_\CG^\gauche \subset D_\CG^\gauche \subset H$. 
Si de plus $\rG_\CG^\gauche$ contient $\rG^\gauche$, 
alors $I^\gauche \subset I_\CG^\gauche$. 
\end{coro}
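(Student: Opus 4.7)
The inclusion $I^\gauche_\CG\subset D^\gauche_\CG$ is immediate from the definitions (inertia is always contained in decomposition). The substantial content is $D^\gauche_\CG\subset H$, and the plan is to mimic the two-step argument used in~(\ref{eq:iba-dba-h}) for the two-sided case. First, Corollary~\ref{q gauche unique} gives $Q/\qG_\CG^\gauche\simeq P/\pG_\CG^\gauche$, which translates into the equality of residue fields $\Lb_\CG^\gauche=\Kb_\CG^\gauche$. Applied to the identifications $\Gal(\Mb_\CG^\gauche/\Kb_\CG^\gauche)=D_\CG^\gauche/I_\CG^\gauche$ and $\Gal(\Mb_\CG^\gauche/\Lb_\CG^\gauche)=(D_\CG^\gauche\cap H)/(I_\CG^\gauche\cap H)$ coming from Theorem~\ref{bourbaki}, this equality forces
\[(D_\CG^\gauche\cap H)\cdot I_\CG^\gauche=D_\CG^\gauche,\]
so the problem of proving $D_\CG^\gauche\subset H$ reduces to that of proving $I_\CG^\gauche\subset H$.

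The strategy for the latter is to invoke Theorem~\ref{raynaud}, which gives $I_\rG\subset H$ as soon as $Q$ is nette over $P$ at $\rG\cap Q$. I therefore plan to establish that $Q$ is unramified over $P$ at $\qG_\CG^\gauche$. The residue extension is trivial by Corollary~\ref{q gauche unique}, so nettete reduces to the vanishing $\qG_\CG^\gauche Q_{\qG_\CG^\gauche}=\pG_\CG^\gauche Q_{\qG_\CG^\gauche}$. The key inputs are the étaleness of $Q/P$ at the generization $\qG^\gauche$ (Corollary~\ref{unicite qgauche}) and the uniqueness of $\qG_\CG^\gauche$ as the prime of $Q$ above $\pG_\CG^\gauche$ sitting inside $\qGba$. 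Concretely, since $Q$ is $P$-free, the isomorphism $Q/\qG_\CG^\gauche\simeq P/\pG_\CG^\gauche$ realises $Q/\pG_\CG^\gauche Q$ as a split extension $(P/\pG_\CG^\gauche)\oplus J$ of the integral domain $A=P/\pG_\CG^\gauche$; tensoring with $\Kb_\CG^\gauche=\mathrm{Frac}(A)$ and using that at the generic point of $V(\pG^\gauche)$ the \'etaleness at $\qG^\gauche$ already separates the factor corresponding to $\qG^\gauche$ from the complementary summand, one should obtain that the local ring at $\qG_\CG^\gauche$ in the fibre $Q\otimes_P\Kb_\CG^\gauche$ is reduced, hence a field, hence nette.

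For the conditional second assertion, no further machinery is needed: if $\rG^\gauche\subset\rG_\CG^\gauche$, then any $g\in G$ satisfying $g(r)\equiv r\pmod{\rG^\gauche}$ for all $r\in R$ automatically satisfies $g(r)\equiv r\pmod{\rG_\CG^\gauche}$, so $I^\gauche\subset I_\CG^\gauche$ follows from the semi-continuity observation of Remark~\ref{semicontinuite}.

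The main obstacle is Paragraph~2, namely the verification of nettete at $\qG_\CG^\gauche$. The subtlety is that the \'etale locus of $Q/P$ is an open subset of $\mathrm{Spec}(Q)$ containing $\qG^\gauche$, and such an open set contains all generizations of its points but not necessarily their specializations; since $\qG_\CG^\gauche$ is a specialization of $\qG^\gauche$, one cannot conclude by a purely topological argument. This is exactly where the uniqueness statement of Corollary~\ref{q gauche unique} must be leveraged, to rule out the possibility that other primes of $Q$ above $\pG_\CG^\gauche$ coalesce with $\qG_\CG^\gauche$ and spoil the étaleness upon specialization.
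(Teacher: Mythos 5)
Your overall plan is the right one and does coincide with the paper's: reduce $D_\CG^\gauche\subset H$ to $I_\CG^\gauche\subset H$ by the Galois-theoretic comparison of $\Gal(\Mb_\CG^\gauche/\Kb_\CG^\gauche)$ with $\Gal(\Mb_\CG^\gauche/\Lb_\CG^\gauche)$ coming from the residue-field equality of Corollaire~\ref{q gauche unique}, then obtain $I_\CG^\gauche\subset H$ from Raynaud's theorem~\ref{raynaud} after showing $Q$ is nette over $P$ at $\qG_\CG^\gauche$. Paragraphs~1 and~3 of your proposal are correct.

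The gap is exactly where you suspect it, in paragraph~2, but your attempted bridge does not close it. You try to extract nettet\'e at $\qG_\CG^\gauche$ from the isomorphism $Q/\qG_\CG^\gauche\simeq P/\pG_\CG^\gauche$ together with the uniqueness of $\qG_\CG^\gauche$ among primes above $\pG_\CG^\gauche$ contained in $\qGba$; but these two facts are compatible with ramification. Take $P=k[x]$, $Q=k[y]$ with $y^2=x$: at the prime $(y)$ above $(x)$, the residue field extension is trivial and $(y)$ is the unique prime above $(x)$, yet $\pG Q_{(y)}=(y^2)\subsetneq(y)=\qG Q_{(y)}$. So equality of residue rings plus uniqueness do not yield the nettet\'e you need, and your tensoring argument does not upgrade ``no residual extension'' to ``reduced fibre'' in the presence of nilpotents.

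The resolution is to look in the direction opposite to the one you chose. You correctly noted that $\qG_\CG^\gauche$ is a \emph{specialization} of $\qG^\gauche$, from which openness of the \'etale locus gives nothing. But Corollaire~\ref{q gauche unique} also tells you that $\qG_\CG^\gauche$ is \emph{contained in} $\qGba$, i.e.\ it is a \emph{generization} of $\qGba$. The lemme~\ref{qba} says $Q$ is \'etale over $P$ at $\qGba$, and \'etaleness propagates to generizations by openness, so $Q$ is \'etale (in particular nette) at $\qG_\CG^\gauche$, and Raynaud applies. Alternatively, one can compute directly: $\qG_\CG^\gauche=\qG^\gauche+\CG\,Q$, and the lemme~\ref{unicite qgauche plus} gives $\qG^\gauche Q_{\qGba}=\pG^\gauche Q_{\qGba}$, whence, localizing further at $\qG_\CG^\gauche\subset\qGba$,
\[
\qG_\CG^\gauche\,Q_{\qG_\CG^\gauche}
=\qG^\gauche Q_{\qG_\CG^\gauche}+\CG\,Q_{\qG_\CG^\gauche}
=\pG^\gauche Q_{\qG_\CG^\gauche}+\CG\,Q_{\qG_\CG^\gauche}
=\pG_\CG^\gauche\,Q_{\qG_\CG^\gauche}.
\]
Either way the nettet\'e at $\qG_\CG^\gauche$ is established and the rest of your argument is unchanged.
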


\begin{proof}
D'apr\`es le corollaire~\ref{q gauche unique} et le th\'eor\`eme~\ref{raynaud}, 
$\Iba_\CG^\gauche \subset H$ et 
$k_P(\pG_\CG^\gauche)=k_Q(\qG_\CG^\gauche)$. Ainsi, 
$(D_\CG^\gauche \cap H)/(I_\CG^\gauche \cap H) \simeq D_\CG^\gauche/I_\CG^\gauche$, 
d'o\`u la premi\`ere suite d'inclusions.

La derni\`ere inclusion est triviale.
\end{proof}

\bigskip

Nous allons montrer que $\rG_\CG^\gauche$ d\'etermine $\rGba_\CG$~: pour cela, nous utiliserons la 
$\ZM$-graduation sur $R$ d\'efinie dans~\S\ref{section:graduation R}. Posons 
$$R_{<0} = \mathop{\bigoplus}_{i < 0} R^\ZM[i]\quad\text{et}\quad R_{>0} = \mathop{\bigoplus}_{i > 0} R^\ZM[i].$$
Alors~:

\bigskip

\begin{prop}\label{prop:rgauche-rbarre}
$\rGba_\CG = \rG_\CG^\gauche + \langle R_{>0},R_{<0} \rangle$. 
\end{prop}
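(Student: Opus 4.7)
On pose $J = \rG_\CG^\gauche + \langle R_{>0}, R_{<0}\rangle$. Le plan consiste à démontrer les deux inclusions séparément, la réciproque étant la plus délicate.

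Pour l'inclusion $J \subseteq \rGba_\CG$, le fait que $\rG_\CG^\gauche \subseteq \rGba_\CG$ fait partie de nos choix initiaux. Pour le reste, j'applique le lemme \ref{lem:cellules-bilateres-homogeneise}~: en notant $\CGt$ l'idéal $\ZM$-homogène maximal de $\kb[\CCB]$ contenu dans $\CG$, l'idéal premier $\rGba_\CGt$ est bi-homogène et contenu dans $\rGba_\CG$. Le quotient $R/\rGba_\CGt$ est une extension $\ZM$-graduée entière de $Q/\qGba_\CGt \simeq \kb[\CCB]/\CGt$, lui-même concentré en $\ZM$-degré zéro. L'argument classique d'intégralité graduée — un élément homogène non nul de degré non nul satisferait une relation entière monique à coefficients de degré zéro, ce qui est exclu par confrontation des degrés — montre que $R/\rGba_\CGt$ est également concentré en $\ZM$-degré zéro, d'où $R_{>0} \cup R_{<0} \subseteq \rGba_\CGt \subseteq \rGba_\CG$.

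Pour la réciproque, je m'appuie sur le fait que $R$ est entier sur $Q$, ce qui entra\^{\i}ne que les idéaux premiers de $R$ au-dessus d'un idéal premier fixé de $Q$ sont deux à deux incomparables. Il suffit alors de vérifier que $J \cap Q = \qGba_\CG$ et que $J$ est premier dans $R$~; en effet $J$ et $\rGba_\CG$ seront alors deux idéaux premiers de $R$ au-dessus de $\qGba_\CG$ vérifiant $J \subseteq \rGba_\CG$, ce qui imposera $J = \rGba_\CG$. Le calcul $J \cap Q = \qGba_\CG$ utilise l'isomorphisme $Q/\qG_\CG^\gauche \simeq \kb[\CCB]/\CG \otimes \kb[V/W]$~: l'image de $\langle Q_{>0}, Q_{<0}\rangle$ dans ce quotient contient tout $\kb[V/W]_+$, et donc $Q/(\qG_\CG^\gauche + \langle Q_{>0}, Q_{<0}\rangle) \simeq \kb[\CCB]/\CG \simeq Q/\qGba_\CG$ en vertu du corollaire \ref{Q extention kc}.

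Le c\oe ur de la preuve réside dans la primalité de $J$. Un argument d'homogénéisation analogue appliqué à $\rG_\CG^\gauche$ montre d'abord que $R_{<0} \subseteq \rG_\CG^\gauche$~: l'idéal $\ZM$-homogène maximal $\rG_\CGt^\gauche$ contenu dans $\rG_\CG^\gauche$ est premier au-dessus de $\qG_\CGt^\gauche$, et $Q/\qG_\CGt^\gauche \simeq \kb[\CCB]/\CGt \otimes \kb[V/W]$ est concentré en $\ZM$-degrés positifs ou nuls (puisque $\kb[V/W]$ est en bi-degrés $(0,*)$ et $\kb[\CCB]/\CGt$ en bi-degrés $(i,i)$), ce qui force $R/\rG_\CGt^\gauche$ à l'\^etre aussi. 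On peut alors écrire $J = \rG_\CG^\gauche + \langle R_{>0}\rangle$. L'anneau $R/J$ contient $Q/\qGba_\CG \simeq \kb[\CCB]/\CG$ via l'injection induite et est entier sur cet anneau, donc de dimension de Krull au plus $\dim \kb[\CCB]/\CG = \dim R/\rGba_\CG$~; la surjection $R/J \twoheadrightarrow R/\rGba_\CG$ sur un anneau intègre de même dimension impose alors que $\rGba_\CG/J$ soit de hauteur zéro dans $R/J$, donc un idéal premier minimal. L'obstacle principal est de montrer qu'il s'agit du seul idéal premier minimal et que $R/J$ est réduit, ce qui sera mené à bien via la description explicite de $R/J$ comme quotient de la partie de $\ZM$-degré zéro du domaine gradué $R/\rG_\CGt^\gauche$, combinée à un contr\^ole soigneux de l'action des générateurs de $\CG/\CGt$ dans cette quotient.
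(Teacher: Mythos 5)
Votre plan — montrer $J := \rG_\CG^\gauche + \langle R_{>0}, R_{<0}\rangle \subset \rGba_\CG$, calculer $J\cap Q$, établir la primalité de $J$ et conclure par incomparabilité des idéaux premiers d'une extension entière — est exactement le bon, et vous identifiez au passage le fait essentiel $R_{<0}\subset\rG_\CG^\gauche$. Mais le texte s'arrête là où tout se joue~: la primalité de $J$ n'est pas démontrée. Vous obtenez seulement que $\rGba_\CG/J$ est un premier minimal de $R/J$ et vous reconnaissez vous-même que l'unicité de ce minimal et la réduction de $R/J$ restent à établir~; la dernière phrase promet un argument qui n'est pas conduit. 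Tel quel, c'est une lacune réelle.

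Le chemin direct, qui est aussi celui du texte, évite l'argument dimensionnel et n'a besoin ni du passage par $\CGt$ ni de montrer que $R/J$ est réduit. Notez que l'inclusion $R_0\injto R$ induit un isomorphisme $R_0/I_0 \longiso R/I$, où $I = \langle R_{>0},R_{<0}\rangle$ et $I_0 = I\cap R_0$, puisque toute composante homogène de $R$ de $\ZM$-degré non nul est dans $I$. D'où $R/J \simeq R_0/\bigl((\rG_\CG^\gauche + I)\cap R_0\bigr) = R_0/\bigl((\rG_\CG^\gauche\cap R_0) + I_0\bigr)$, la dernière égalité venant de ce que $\rG_\CG^\gauche$ et $I$ sont $\ZM$-homogènes. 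Or $I_0 \subset R_{<0}\cdot R_{>0}$ (toute composante de degré~$0$ d'un élément de $I$ est somme de produits d'un facteur en degré négatif par un facteur en degré positif), et vous avez montré $R_{<0}\subset\rG_\CG^\gauche$, donc $I_0 \subset \rG_\CG^\gauche$ et $(\rG_\CG^\gauche + I)\cap R_0 = \rG_\CG^\gauche\cap R_0$. Ainsi $R/J \simeq R_0/(\rG_\CG^\gauche\cap R_0)$ s'injecte dans l'anneau intègre $R/\rG_\CG^\gauche$ et $J$ est premier. Signalons enfin que le détour par $\CGt$ dans votre première inclusion est superflu (et la formulation «~idéal $\ZM$-homogène maximal~» est erronée, $\kb[\CCB]$ étant entièrement en $\ZM$-degré zéro, ce qui donnerait $\CGt=\CG$)~: l'idéal $\pGba_\CG$ est lui-même $\ZM$-homogène, donc $\rGba_\CG$ l'est aussi par le lemme~\ref{premier homogene}, et $R/\rGba_\CG$, entier sur $P/\pGba_\CG$ concentré en degré zéro, l'est également par la proposition~\ref{prop:graduation-positive}.
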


\bigskip

\begin{proof}
Notons $I$ l'id\'eal de $R$ engendr\'e par $R_{>0}$ et $R_{<0}$. 
L'id\'eal $\pGba_\CG$ de $P$ est $\ZM$-homog\`ene (il n'est pas forc\'ement $(\NM \times \NM)$-homog\`ene) 
donc l'id\'eal $\rGba_\CG$ de $R$ est lui aussi $\ZM$-homog\`ene (voir le corollaire~\ref{premier homogene}). 
L'extension $R/\rGba_\CG$ de $P/\pGba_\CG$ est enti\`ere et, puisque $P/\pGba_\CG$ a sa $\ZM$-graduation concentr\'ee 
en degr\'e $0$, il en d\'ecoule que la $\ZM$-graduation de $R/\rGba_\CG$ est concentr\'ee en degr\'e $0$ 
(voir la proposition~\ref{prop:graduation-positive}). En particulier, $I \subset \rGba_\CG$ et donc 
$$\rG_\CG^\gauche + I \subset \rGba_\CG.$$
D'autre part, $(\rG_\CG^\gauche + I) \cap P$ contient $\CG$ et 
$\pGba=\langle P_{<0},P_{>0} \rangle$, donc 
$$\pGba_\CG \subset (\rG_\CG^\gauche + I) \cap P.$$
Il suffit donc de montrer que $\rG_\CG^\gauche + I$ est un id\'eal premier.

\medskip

Posons $I_0 = I \cap R_0$. Alors l'application naturelle $R_0 \injto R \surto R/I$ induit un isomorphisme 
$R_0/I_0 \longiso R/I$. Par cons\'equent, $R/(\rG_\CG^\gauche + I)$ est isomorphe \`a 
$R_0 / ((\rG_\CG^\gauche + I) \cap R_0)$. Il nous suffit donc de montrer que $(\rG_\CG^\gauche + I) \cap R_0$ est 
un id\'eal premier de $R_0$. En fait, nous allons montrer que
$(\rG_\CG^\gauche + I) \cap R_0 = \rG_\CG^\gauche \cap R_0,$
ce qui terminera la d\'emonstration. 

Tout d'abord, remarquons que, puisque $\rG_\CG^\gauche$ et $I$ sont $\ZM$-homog\`enes, on a 
$(\rG_\CG^\gauche + I) \cap R_0= (\rG_\CG^\gauche \cap R_0) + I_0$. Il nous suffit donc de montrer 
que 
$$I_0 \subset \rG_\CG^\gauche.\leqno{(*)}$$
Puisque $R/\rG_\CG^\gauche$ est une extension enti\`ere de $P/\pG_\CG^\gauche$ et 
que $P/\pG_\CG^\gauche$ est $\NM$-gradu\'e, on en d\'eduit que $R/\rG_\CG^\gauche$ est $\NM$-gradu\'e 
(voir la proposition~\ref{prop:graduation-positive}). Donc $R_{<0} \subset \rG_\CG^\gauche$ et 
$I_0 = R_0 \cap (R \cdot R_{>0}) = R_0 \cap (R \cdot R_{<0}) = R_0 \cap (R_{<0} \cdot R_{>0}) \subset \rG_\CG^\gauche$. 
\end{proof}

\bigskip

\begin{coro}\label{coro:dgauche-dbarre}
$D_\CG^\gauche \subset \Dba_\CG$.
\end{coro}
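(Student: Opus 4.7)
Le plan est d'exploiter directement la formule \'etablie dans la proposition~\ref{prop:rgauche-rbarre}, qui exprime $\rGba_\CG$ comme somme de $\rG_\CG^\gauche$ et d'un id\'eal ``canonique'' (ind\'ependant du choix de $\rG_\CG^\gauche$). Plus pr\'ecis\'ement, je partirais d'un \'el\'ement $g \in D_\CG^\gauche$, qui v\'erifie par d\'efinition $g(\rG_\CG^\gauche)=\rG_\CG^\gauche$, et je montrerais qu'alors $g$ fixe \'egalement l'id\'eal $\langle R_{>0},R_{<0}\rangle$, ce qui suffira \`a conclure que $g(\rGba_\CG)=\rGba_\CG$, c'est-\`a-dire $g \in \Dba_\CG$.

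La premi\`ere \'etape consistera \`a rappeler que, d'apr\`es la proposition~\ref{bigraduation sur R}, le groupe de Galois $G$ stabilise la $(\NM \times \NM)$-graduation \'etendue de $R$, et par cons\'equent stabilise la $\ZM$-graduation associ\'ee (voir \S\ref{section:graduation R}). En particulier, $G$ stabilise chaque composante homog\`ene $R^\ZM[i]$, et donc aussi les sous-espaces $R_{>0}$ et $R_{<0}$. Comme $G$ agit par automorphismes d'anneau, il stabilise \'egalement l'id\'eal bilat\`ere $\langle R_{>0},R_{<0}\rangle$ engendr\'e par ces sous-espaces.

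La seconde \'etape sera alors imm\'ediate~: compte tenu de la proposition~\ref{prop:rgauche-rbarre}, on obtient
\[
g(\rGba_\CG) = g(\rG_\CG^\gauche) + g(\langle R_{>0},R_{<0}\rangle) = \rG_\CG^\gauche + \langle R_{>0},R_{<0}\rangle = \rGba_\CG,
\]
d'o\`u $g \in \Dba_\CG$. Il n'y a pas de v\'eritable obstacle dans cet argument~: c'est un corollaire formel de la proposition pr\'ec\'edente, le seul ingr\'edient suppl\'ementaire \'etant l'\'equivariance de la graduation de $R$ sous l'action de $G$, d\'ej\`a \'etablie.
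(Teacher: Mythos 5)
Votre argument est correct et suit exactement la d\'emonstration du papier~: on combine la proposition~\ref{prop:rgauche-rbarre}, qui donne $\rGba_\CG = \rG_\CG^\gauche + \langle R_{>0},R_{<0}\rangle$, avec la $G$-stabilit\'e de $R_{>0}$ et $R_{<0}$ issue de la proposition~\ref{bigraduation sur R}. Vous avez simplement explicit\'e les \'etapes que le papier laisse implicites.
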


\bigskip

\begin{proof}
Cela d\'ecoule imm\'ediatement de la proposition~\ref{prop:rgauche-rbarre} et du fait que $R_{<0}$ et $R_{>0}$ sont $G$-stables 
(voir la proposition~\ref{bigraduation sur R}).
\end{proof}

\bigskip

\begin{rema}\label{rema:bb cache}
La preuve alg\'ebrique de la proposition~\ref{prop:rgauche-rbarre} pr\'esent\'ee ci-dessus est en fait une traduction 
d'un fait de nature g\'eom\'etrique, comme on le verra dans le chapitre~\ref{chapter:bb}.\finl
\end{rema}

\bigskip

%

\begin{prop}\label{coro:app-lim}
Soit $\zG_*^\gauche$ un id\'eal premier de $Z$ au-dessus de $\pG_\CG^\gauche$. 
Alors il existe un unique id\'eal premier de $Z$ au-dessus de $\pGba_\CG$ contenant $\zG_*^\gauche$~: 
il est \'egal \`a $\zG_*^\gauche + \langle Z_{<0}, Z_{>0} \rangle$. 
\end{prop}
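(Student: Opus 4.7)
The plan is to adapt verbatim the graded-algebra argument of proposition~\ref{prop:rgauche-rbarre}, replacing the Galois ring $R$ by the intermediate algebra $Z$. Introduce $J := \langle Z_{<0}, Z_{>0}\rangle$, where $Z_{<0} = \bigoplus_{i<0} Z^\ZM[i]$ and $Z_{>0} = \bigoplus_{i>0} Z^\ZM[i]$, and set $\zG := \zG_*^\gauche + J$. My goal is to prove, on the one hand, that $\zG$ is a prime ideal of $Z$ above $\pGba_\CG$, and on the other that any prime $\zG'$ of $Z$ above $\pGba_\CG$ containing $\zG_*^\gauche$ must contain $\zG$; uniqueness will then follow from the incomparability theorem for the integral extension $P \subset Z$.

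For the primality of $\zG$ I first observe that $\zG_*^\gauche$ is itself $\ZM$-homogeneous (its contraction $\pG_\CG^\gauche$ in $P$ is homogeneous, and corollary~\ref{premier homogene} lifts this to homogeneity of $\zG_*^\gauche$), so $\zG$ is homogeneous. Writing $J_0 := J \cap Z_0$, the quotient $Z/J$ is concentrated in degree $0$, so the inclusion $Z_0 \hookrightarrow Z$ induces an isomorphism $Z_0/J_0 \longiso Z/J$. Consequently
\[
Z/\zG \;\simeq\; Z_0\bigl/\bigl(J_0 + (\zG_*^\gauche \cap Z_0)\bigr),
\]
and proving $\zG$ is prime reduces to showing the inclusion $J_0 \subset \zG_*^\gauche$: in that case $Z/\zG$ becomes the degree-zero part of the domain $Z/\zG_*^\gauche$, hence a domain.

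The key inclusion $J_0 \subset \zG_*^\gauche$ is obtained as follows. Since $Z/\zG_*^\gauche$ is integral over $P/\pG_\CG^\gauche$, and the grading of the latter is concentrated in a single half-line of $\ZM$, proposition~\ref{prop:graduation-positive} forces the grading of $Z/\zG_*^\gauche$ to be concentrated in the same half-line. One of $Z_{<0}$, $Z_{>0}$ is therefore contained in $\zG_*^\gauche$, and a direct degree bookkeeping then shows that every homogeneous element of $J_0$ is a sum of products in $Z_{<0}\cdot Z_{>0}$, hence lies in $\zG_*^\gauche$. The inclusion $\pGba_\CG \subset \zG$ is also immediate, since $\pGba_\CG$ is obtained from $\pG_\CG^\gauche \subset \zG_*^\gauche$ by adjoining the remaining homogeneous generators of nonzero degree, all of which lie in $P_{<0}\cup P_{>0} \subset J$.

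Finally, for uniqueness, any prime $\zG'$ of $Z$ above $\pGba_\CG$ makes $Z/\zG'$ integral over $P/\pGba_\CG\simeq\kb[\CCB]/\CG$, which is concentrated in degree $0$; by proposition~\ref{prop:graduation-positive} applied on both sides, $Z_{<0}\cup Z_{>0}\subset \zG'$, hence $J\subset\zG'$. Combined with $\zG_*^\gauche\subset\zG'$ this gives $\zG\subset\zG'$, and incomparability for the integral extension $P\subset Z$ forces equality. The main (mild) obstacle is ensuring the graded vanishing $J_0\subset\zG_*^\gauche$ is set up with the correct half-line of $\ZM$, i.e.\ keeping careful track of sign conventions for the $\ZM$-grading; beyond that the proof is a direct transcription of proposition~\ref{prop:rgauche-rbarre}.
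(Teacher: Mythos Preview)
Your proposal is correct and follows essentially the same approach as the paper, which simply states that the proof of proposition~\ref{prop:rgauche-rbarre} applies word for word with $R$ replaced by $Z$ (using that $Z$ is integral over $P$). You have faithfully carried out that transcription, including the key step $J_0\subset\zG_*^\gauche$ via proposition~\ref{prop:graduation-positive}, and your explicit invocation of incomparability for uniqueness is exactly what the paper's argument implicitly uses.
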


\bigskip

\begin{proof}
Puisque $Z$ est entier sur $P$, la preuve de la proposition~\ref{prop:rgauche-rbarre} s'applique mot pour mot 
dans cette situation pour fournir la m\^eme conclusion.
\end{proof}

\bigskip

La proposition~\ref{coro:app-lim} fournit une application surjective 
$$\fonction{\limitegauche}{\Upsilon^{-1}(\pG_\CG^\gauche)}{\Upsilon^{-1}(\pGba_\CG)}{\zG_*^\gauche}{\zG_*^\gauche + 
\langle Z_{<0}, Z_{>0} \rangle.}$$
La notation $\limitegauche$ sera justifi\'ee dans le chapitre~\ref{chapter:bb}. 

\bigskip

\section{Cellules \`a gauche}

\medskip

\subsection{D\'efinitions} 
Rappelons les d\'efinitions pos\'ees en pr\'eambule \`a cette partie~:

\medskip

\begin{defi}\label{defi:gauche}
On appellera {\bfit $\CG$-cellule de Calogero-Moser \`a gauche} toute $\rG_\CG^\gauche$-cellule de Calogero-Moser. 
Si $c \in \CCB$, on appellera {\bfit $c$-cellule de Calogero-Moser \`a gauche} toute $\rG_c^\gauche$-cellule de Calogero-Moser. 
On appellera {\bfit cellule de Calogero-Moser \`a gauche g\'en\'erique} toute $\rG^\gauche$-cellule de Calogero-Moser. 

L'ensemble des $\CG$-cellules de Calogero-Moser \`a gauche sera not\'e $\cmcellules_L^\CG(W)$.  
\indexnot{C}{{{{^\calo{\mathrm{Cell}}_L^\CG(W), ^\calo{\mathrm{Cell}}_L(W), ^\calo{\mathrm{Cell}}_L^c(W)}}}}
Si $\CG=0$ (respectivement $\CG=\CG_c$, avec $c \in \CCB$), 
cet ensemble sera not\'e $\cmcellules_L(W)$ (respectivement $\cmcellules_L^c(W)$). 
\end{defi}

\bigskip

Comme d'habitude, la notion de $\CG$-cellule de Calogero-Moser \`a gauche d\'epend du choix de l'id\'eal 
premier $\rG_\CG^\gauche$. La proposition suivante est imm\'ediate~:

\bigskip

\begin{prop}\label{semicontinuite gauche}
Si $\CG'$ est un id\'eal premier de $\kb[\CCB]$ contenu dans $\CG$ et si $\rG_{\CG'}^\gauche$ est contenu dans $\rG_\CG^\gauche$, 
alors toute $\CG$-cellule de Calogero-Moser \`a gauche est une r\'eunion de 
$\CG'$-cellules de Calogero-Moser \`a gauche.
\end{prop}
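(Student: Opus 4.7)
Le plan est de se ramener directement à la remarque~\ref{semicontinuite}, qui traite précisément de la monotonie des cellules de Calogero-Moser par rapport à l'inclusion d'idéaux premiers de $R$. En effet, la notion de $\CG$-cellule de Calogero-Moser à gauche est définie (voir la définition~\ref{defi:gauche}) comme une $\rG_\CG^\gauche$-cellule de Calogero-Moser, c'est-à-dire comme une orbite du groupe d'inertie $I_\CG^\gauche=I_{\rG_\CG^\gauche}$ agissant sur l'ensemble $W$ (via l'identification $G/H \longleftrightarrow W$ fixée en~\S\ref{subsection:action G}).

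Je procéderais alors ainsi. Sous les hypothèses de l'énoncé, on a $\rG_{\CG'}^\gauche \subset \rG_\CG^\gauche$, donc d'après la remarque~\ref{semicontinuite} (qui résulte immédiatement de la définition du groupe d'inertie), on obtient l'inclusion
$$I_{\CG'}^\gauche = I_{\rG_{\CG'}^\gauche} \subset I_{\rG_\CG^\gauche} = I_\CG^\gauche.$$
Cette inclusion entraîne aussitôt que toute orbite du groupe $I_\CG^\gauche$ sur $W$ est une union d'orbites du sous-groupe $I_{\CG'}^\gauche$, c'est-à-dire que toute $\CG$-cellule de Calogero-Moser à gauche est une réunion de $\CG'$-cellules de Calogero-Moser à gauche.

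La seule chose à vérifier est donc la cohérence des choix d'idéaux~: l'hypothèse faite dans l'énoncé, à savoir $\rG_{\CG'}^\gauche \subset \rG_\CG^\gauche$, est précisément ce qui autorise l'application de la remarque~\ref{semicontinuite} sans ambiguïté. Aucune étape n'est réellement difficile~: la proposition est une traduction directe, pour ce choix particulier d'idéaux premiers de $P$ (ceux correspondant aux sous-variétés $\CCB(\CG) \times V/W \times \{0\}$ et $\CCB(\CG') \times V/W \times \{0\}$), du phénomène général de semi-continuité des cellules déjà énoncé dans le cadre abstrait du chapitre~\ref{chapter:cellules-CM}. L'énoncé analogue pour les cellules bilatères figure d'ailleurs déjà dans la remarque~\ref{rem:semicontinuite} en préambule à cette partie, et se démontre exactement de la même façon.
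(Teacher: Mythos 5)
Votre argument est exactement celui que le paper sous-entend en qualifiant la proposition d'« immédiate » : l'inclusion $\rG_{\CG'}^\gauche \subset \rG_\CG^\gauche$ donne $I_{\CG'}^\gauche \subset I_\CG^\gauche$ par la remarque~\ref{semicontinuite}, et toute orbite du groupe plus grand est réunion d'orbites du sous-groupe. La démonstration est correcte et suit la même voie que le texte.
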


\bigskip

D'autre part, puisque $\rG_\CG^\gauche \subset \rGba_\CG$, on obtient aussi~:

\bigskip

\begin{prop}\label{bilatere gauche}
Toute $\CG$-cellule de Calogero-Moser bilat\`ere est une r\'eunion de cellules de Calogero-Moser 
\`a gauche.
\end{prop}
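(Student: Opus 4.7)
The plan is to derive this as an immediate consequence of the containment of prime ideals chosen at the very beginning of the chapter, combined with the general semicontinuity principle recorded in Remark~\ref{semicontinuite}. Recall that in the ``Choix, notations'' box of \S\ref{section:choix-gauche} we explicitly imposed that the prime $\rG_\CG^\gauche$ lying over $\qG_\CG^\gauche$ be chosen inside $\rGba_\CG$. This inclusion is the only ingredient really needed.

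First I would invoke Remark~\ref{semicontinuite}, which says that if $\rG \subseteq \rG'$ are two primes of $R$, then the inertia groups satisfy $I_\rG \subseteq I_{\rG'}$. Applied to $\rG_\CG^\gauche \subseteq \rGba_\CG$ this gives $I_\CG^\gauche \subseteq \Iba_\CG$. Next, by the very definition of $\CG$-cells (Definition~\ref{defi:CM} together with the conventions set at the start of Part~\ref{part:verma}), a $\CG$-cellule de Calogero-Moser bilat\`ere is an $\Iba_\CG$-orbit on the set $W$, whereas a $\CG$-cellule de Calogero-Moser \`a gauche is an $I_\CG^\gauche$-orbit on $W$. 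Since any orbit under a larger group is a disjoint union of orbits under a smaller subgroup, every $\Iba_\CG$-orbit decomposes as a union of $I_\CG^\gauche$-orbits, which is exactly the statement.

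There is essentially no obstacle: the only subtlety worth pointing out is that the statement \emph{depends on} the compatibility of choices made in \S\ref{section:choix-gauche}, i.e.\ that $\rG_\CG^\gauche$ was chosen inside $\rGba_\CG$. Without this specific choice the two kinds of cells would only be related up to the action of an element of $H$, so the assertion would need to be rephrased as ``after a suitable choice of representatives.'' In the setting fixed by the chapter, however, the inclusion is automatic and the proof is a one-line application of the semicontinuity remark, exactly parallel to Proposition~\ref{semicontinuite gauche}.
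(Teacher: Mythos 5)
Your proof is correct and coincides with the paper's argument: the paper also derives the statement directly from the inclusion $\rG_\CG^\gauche \subset \rGba_\CG$ imposed in the choices box of \S\ref{section:choix-gauche}, via the semicontinuity observation $I_\CG^\gauche \subset \Iba_\CG$ of Remark~\ref{semicontinuite}. Nothing to add.
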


\bigskip

Pour finir, 
soit $\CGt$ l'id\'eal homog\`ene maximal contenu dans $\CG$ (i.e. $\CGt=\bigoplus_{i \ge 0} \CG \cap \kb[\CCB]^\NM[i]$). 
Alors $\CGt$ est un id\'eal premier de $\kb[\CCB]$ (voir le lemme~\ref{lem:homogeneise-premier}). Notons 
$\rG^\gauche_\CGt$ l'id\'eal homog\`ene maximal contenu dans $\rG^\gauche_\CG$~: c'est un id\'eal premier de $R$ au-dessus de 
$\qG^\gauche_\CGt$ 
(voir le corollaire~\ref{coro:homogeneise-premier}). 
On d\'eduit de la proposition~\ref{prop:cellules-homogeneise} le r\'esultat suivant~:

\bigskip

\begin{prop}\label{lem:cellules-gauches-homogeneise}
On a $I_\CG^\gauche=I_\CGt^\gauche$. En particulier, 
les $\CG$-cellules de Calogero-Moser \`a gauche et les $\CGt$-cellules de Calogero-Moser \`a gauche 
co\"{\i}ncident.
\end{prop}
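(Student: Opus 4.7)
The plan is to reduce the statement directly to proposition~\ref{prop:cellules-homogeneise} by exhibiting an appropriate $G$-stable grading of $R$ for which $\rG_\CGt^\gauche$ is the maximal homogeneous sub-ideal of $\rG_\CG^\gauche$. This is exactly the argument that was used in lemme~\ref{lem:cellules-bilateres-homogeneise} for the bilateral case, and here the same strategy will apply \emph{mutatis mutandis}.

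First I would fix the $\NM$-grading on $R$ obtained from proposition~\ref{bigraduation sur R} via the monoid morphism $\NM \times \NM \to \NM$, $(i,j) \mapsto i+j$. This grading is $G$-stable by the last assertion of proposition~\ref{bigraduation sur R}, and its restriction to $\kb[\CCB] \subset P$ is precisely the $\NM$-grading used to define $\CGt=\bigoplus_{i \ge 0} \CG \cap \kb[\CCB]^\NM[i]$. With respect to this grading, the ideal $\pG^\gauche$ is homogeneous (since $\kb[V^*]^W_+$ is homogeneous), so the maximal homogeneous sub-ideal of $\pG_\CG^\gauche = \pG^\gauche + \CG P$ is exactly $\pG^\gauche + \CGt P = \pG_\CGt^\gauche$; similarly at the level of $Q$, the maximal homogeneous sub-ideal of $\qG_\CG^\gauche$ is $\qG_\CGt^\gauche$.

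Next I would invoke the definition of $\rG_\CGt^\gauche$ given just before the proposition's statement: it is, by construction, the maximal homogeneous sub-ideal of $\rG_\CG^\gauche$, and corollaire~\ref{coro:homogeneise-premier} guarantees that it is prime and lies above $\qG_\CGt^\gauche$. This means that $\rG_\CGt^\gauche$ coincides with the ideal $\bigoplus_{i \in \ZM} \rG_\CG^\gauche \cap R^\NM[i]$ appearing in proposition~\ref{prop:cellules-homogeneise}. Applying that proposition then yields the desired equality $I_\CG^\gauche = I_\CGt^\gauche$. The second assertion about cells is then immediate from definition~\ref{defi:CM}, since $\CG$- and $\CGt$-cellules de Calogero-Moser \`a gauche are by definition the orbits on $W$ of $I_\CG^\gauche$ and $I_\CGt^\gauche$ respectively.

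The hard part, if any, is purely bookkeeping: one must check that the ``maximal homogeneous ideal contained in'' operation for $\kb[\CCB]$ producing $\CGt$ is compatible with the analogous operation at the level of $R$ producing $\rG_\CGt^\gauche$, and that both refer to the same underlying $\NM$-grading. No computational obstacle arises, since proposition~\ref{prop:cellules-homogeneise} has already done the real work of comparing inertia groups of an ideal and of its homogenization.
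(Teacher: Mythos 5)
Your proposal is correct and takes essentially the same route as the paper: one observes that $\rG_\CGt^\gauche$ is, by its very construction in the preceding paragraph, the maximal homogeneous sub-ideal of $\rG_\CG^\gauche$ for the $G$-stable $\NM$-grading of $R$, and then applies proposition~\ref{prop:cellules-homogeneise}. The middle paragraph, checking that homogenization at the level of $\kb[\CCB]$, $P$, $Q$ and $R$ all refer to the same grading and are mutually compatible (via corollaire~\ref{coro:homogeneise-premier}), is exactly the bookkeeping the paper relegates to the paragraph before the statement, so no new idea is needed beyond what you wrote.
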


\bigskip

\subsection{Cellules \`a gauche et modules simples}
D'apr\`es l'exemple~\ref{exemple lisse}, on a $\qG_\singulier \cap P \not\subset \pG^\gauche$. Par cons\'equent, tous 
les r\'esultats de la section~\ref{section:cellules et lissite} s'appliquent. Rappelons ici quelques cons\'equences~:

\bigskip

\begin{theo}\label{theo:gauche}
Si $\CG$ est un id\'eal premier de $\kb[\CCB]$, alors~:
\begin{itemize}
\itemth{a} L'alg\`ebre $\Mb_\CG^\gauche \Hb^\gauche$ est d\'eploy\'ee et ses modules simples sont de dimension $|W|$.

\itemth{b} Chaque bloc de $\Mb_\CG^\gauche \Hb^\gauche$ admet un unique module simple.
\end{itemize}
\end{theo}

\bigskip

Si $C \in \cmcellules_L^\CG(W)$, nous noterons $\LC_\CG^\gauche(C)$ l'unique  
\indexnot{L}{\LC_\CG^\gauche(C), \LC^\gauche(C)\LC_c^\gauche(C)}  
$\Mb_\CG^\gauche \Hb^\gauche$-module simple appartenant au bloc de $\Mb_\CG^\gauche \Hb^\gauche$ associ\'e 
\`a $C$. Si $\CG=0$ (respectivement $\CG=\CG_c$, avec $c \in \CCB$), le module $\LC_\CG^\gauche(C)$ 
sera not\'e $\LC^\gauche(C)$ (respectivement $\LC_c^\gauche(C)$). 

\bigskip

\def\DD{{\SSS{D}}}

\subsection{Cellules \`a gauche et cellules bilat\`eres}\label{subsection:gauche-bilatere}
Nous fixons ici une $\CG$-cellule de Calogero-Moser bilat\`ere $\G$ ainsi qu'une $\CG$-cellule de Calogero-Moser \`a gauche $C$
contenue dans $\G$. Puisque $\Dba_\CG$ stabilise $\G$ (voir le th\'eor\`eme~\ref{theo cellules familles}(a)) 
et puisque $D_\CG^\gauche \subset \Dba_\CG$ (voir le corollaire~\ref{coro:dgauche-dbarre}), 
le groupe $D_\CG^\gauche$ stabilise $\G$ (et permute les cellules gauches qui sont contenues dans $\G$). 
Posons
$$C^\DD=\bigcup_{d \in D_\CG^\gauche} d(C).$$\indexnot{C}{C^\DD} 
Soit $w \in C^\DD$. On pose $\qGba_\CG(\G)=w^{-1}(\rGba_\CG) \cap Q$ et   \indexnot{qa}{\qGba_\CG(\G), \qG_\CG^\gauche(C^\DD)}
$\qG_\CG^\gauche(C^\DD)=w^{-1}(\rG^\gauche_\CG) \cap Q$. On pose aussi $\zGba_\CG(\G)=\copie^{-1}(\qGba_\CG(\G))$  
\indexnot{za}{\zGba_\CG(\G), \zG_\CG^\gauche(C^\DD), \zG_\CG^\gauche(C)}  
et $\zG^\gauche_\CG(C^\DD)=\copie^{-1}(\qG_\CG^\gauche(C^\DD))$. 
Il d\'ecoule de la proposition~\ref{reduction} 
que $\zGba_\CG(\G)$ ne d\'epend que de $\G$ et non du choix de $C$ et de $w$, tandis que 
$\zG_\CG^\gauche(C^\DD)$ ne d\'epend que de $C^\DD$ et non du choix de $w$. On pose
$$\deg_\CG(C^\DD)=[k_Z(\zG_\CG^\gauche(C^\DD)):k_P(\pG^\gauche_\CG)].$$ \indexnot{da}{\deg_\CG(C^\DD), \deg_\CG(C)}
Nous noterons quelquefois $\zG_\CG^\gauche(C)$ ou $\deg_\CG(C)$ \`a la place de 
$\zG_\CG^\gauche(C^\DD)$ ou $\deg_\CG(C^\DD)$. 

\bigskip

\begin{prop}\label{prop:gauche-bilatere}
Soit $w \in C^\DD$. Alors~:
\begin{itemize}
\itemth{a} $\zGba_\CG(\G)=\limitegauche(\zG_\CG^\gauche(C^\DD))$.

\itemth{b} $\DS{\deg_\CG(C^\DD)=\frac{|C^\DD|}{|C|}=\frac{|D_\CG^\gauche|}{|(D_\CG^\gauche \cap \lexp{w}{H})I_\CG^\gauche|}}$.

\itemth{c} L'application $D_\CG^\gauche \backslash \G \longto \lim_\gauche^{-1}(\zGba_\CG(\G))$, 
$C^\DD \longmapsto \zG_\CG^\gauche(C^\DD)$ est bijective.
\end{itemize}
\end{prop}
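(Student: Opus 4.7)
My plan is to prove (a), (b), (c) in order, working consistently with the Galois-theoretic dictionary between primes of $Z$, primes of $Q$ via $\copie$, and $H$-orbits of primes of $R$. Throughout, $w\in C^D\subset\G$ denotes a fixed representative.

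Part (a) is essentially built into the conventions of \S\ref{section:choix-gauche}. Since $\rG^\gauche_\CG\subset\rGba_\CG$ by choice, applying $w^{-1}\in G$ gives $w^{-1}(\rG^\gauche_\CG)\subset w^{-1}(\rGba_\CG)$; intersecting with $Q$ and pulling back through $\copie$ yields $\zG^\gauche_\CG(C^D)\subset\zGba_\CG(\G)$. By Proposition~\ref{coro:app-lim}, the unique prime of $Z$ over $\pGba_\CG$ containing $\zG^\gauche_\CG(C^D)$ is $\lim_\gauche(\zG^\gauche_\CG(C^D))$, and this prime must therefore be $\zGba_\CG(\G)$.

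For the first equality in (b), I would decompose $C^D=\bigsqcup d(C)$ over $d\in D^\gauche_\CG/D^\gauche_{\CG,C}$, where $D^\gauche_{\CG,C}$ is the setwise stabilizer of the cell $C$ in $D^\gauche_\CG$. Since $C$ is the $I^\gauche_\CG$-orbit of $wH$ in $G/H$, the condition $d(C)=C$ unwinds to $d\in D^\gauche_\CG\cap I^\gauche_\CG\cdot\lexp{w}{H}$; because $I^\gauche_\CG$ is normal in $D^\gauche_\CG$, this intersection equals $(D^\gauche_\CG\cap\lexp{w}{H})I^\gauche_\CG$, giving $|C^D|/|C|=|D^\gauche_\CG|/|(D^\gauche_\CG\cap\lexp{w}{H})I^\gauche_\CG|$. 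For the second equality I interpret $\deg_\CG(C^D)$ as the residue degree $f(\qG/\pG^\gauche_\CG)$ for $\qG=\qG^\gauche_\CG(C^D)$ and apply multiplicativity $f(\rG_*/\pG^\gauche_\CG)=f(\rG_*/\qG)\cdot f(\qG/\pG^\gauche_\CG)$ with $\rG_*=w^{-1}(\rG^\gauche_\CG)$. The Galois extension $R/P$ contributes $f(\rG_*/\pG^\gauche_\CG)=|D^\gauche_\CG|/|I^\gauche_\CG|$, while $R/Q$ (Galois with group $H$) gives $f(\rG_*/\qG)=|D^\gauche_\CG\cap\lexp{w}{H}|/|I^\gauche_\CG\cap\lexp{w}{H}|$ after conjugating $D_{\rG_*},I_{\rG_*}$ by $w$. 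Dividing, and using $|(A\cap B)I^\gauche_\CG|=|A\cap B|\cdot|I^\gauche_\CG|/|I^\gauche_\CG\cap B|$ for $A=D^\gauche_\CG,B=\lexp{w}{H}$, recovers precisely the combinatorial quantity from the first equality.

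For (c), injectivity follows by noting that two orbits $C^D,C^{\prime D}$ give the same prime iff $w^{-1}(\rG^\gauche_\CG)$ and $w^{\prime-1}(\rG^\gauche_\CG)$ lie in a common $H$-orbit of $\Spec(R)$, which rearranges to $w'\in D^\gauche_\CG wH$; intersecting with $W\subset G$ and invoking $D^\gauche_\CG\subset H$ (Corollary~\ref{dleft}) shows $w'$ is a $D^\gauche_\CG$-translate of $w$ inside $W=G/H$, forcing $C^D=C^{\prime D}$. For surjectivity, transfer $\zG\in\lim_\gauche^{-1}(\zGba_\CG(\G))$ to $\qG=\copie(\zG)\subset\qGba_\CG(\G)$ and use going-down (valid as $Q$ is integrally closed) to produce $\rG_*\in\Spec(R)$ above $\qG$ and below $w_0^{-1}(\rGba_\CG)$ for a fixed $w_0\in\G$. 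Writing $\rG_*=g(\rG^\gauche_\CG)$, the containment together with Corollary~\ref{q gauche unique} and $D^\gauche_\CG\subset H$ forces $w_0g\in H$; the decomposition $G=WH$ with $W\cap H=1$ produces a representative $w\in W$ of the $H$-orbit of $\rG_*$, and the factorization $\Dba_\CG=(\Dba_\CG\cap H)\Iba_\CG$ from~\ref{eq:iba-h} lets me adjust this representative so that the resulting $w$ lies in $\Iba_\CG w_0H/H=\G$. I expect this last adjustment --- guaranteeing $w\in\G$ rather than merely $w\in W$ --- to be the main technical obstacle; as an independent check, both sides can be computed as double-coset counts in $\Dba_\CG$, and the equality $|D^\gauche_\CG\backslash\G|=|\lim_\gauche^{-1}(\zGba_\CG(\G))|$ reduces via the formulas from (b) to the orbit identity $|\G|=|\Iba_\CG|/|\Iba_\CG\cap w_0Hw_0^{-1}|$.
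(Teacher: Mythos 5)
Parts (a) and (b) of your proof are correct and essentially the paper's own arguments: (a) is the uniqueness statement of Proposition~\ref{coro:app-lim} applied to the inclusion $\zG_\CG^\gauche(C^\DD)\subset\zGba_\CG(\G)$, and (b) identifies the stabilizer of $C$ in $D_\CG^\gauche$ as $(D_\CG^\gauche\cap\lexp{w}{H})I_\CG^\gauche$ and then uses the Galois correspondence for residue degrees, exactly as the paper does.

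Part (c) is where you have a genuine gap, and you were right to flag the adjustment step. Your injectivity argument is fine. For surjectivity, however, the claim that the containment $g(\rG_\CG^\gauche)\subset w_0^{-1}(\rGba_\CG)$ \emph{forces $w_0g\in H$} does not follow from what you cite. The containment, via Proposition~\ref{prop:rgauche-rbarre} (which identifies $g(\rGba_\CG)$ as the unique prime above $\pGba_\CG$ containing $g(\rG_\CG^\gauche)$, since $R_{>0}$ and $R_{<0}$ are $G$-stable), gives $w_0g\in\Dba_\CG$. To pass from there to $w_0g\in H$ you would need $\Dba_\CG\subset H$, and the paper explicitly warns this may fail once $\CG\neq 0$: equation~\ref{eq:iba-dba-h} gives $\Dba\subset H$ in the generic case $\CG=0$, but Example~\ref{exemple:dba-0} already shows $\Dba_0=\Iba_0=G$. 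The subsequent adjustment via $\Dba_\CG=(\Dba_\CG\cap H)\Iba_\CG$ is not actually carried out, and the counting fall-back is only sketched (computing $|\limitegauche^{-1}(\zGba_\CG(\G))|$ would itself require the double-coset dictionary you are trying to avoid).

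The paper's proof of (c) sidesteps all of this with a single observation: the two bijections $D_\CG^\gauche\backslash G/H\longiso\Upsilon^{-1}(\pG_\CG^\gauche)$ and $\Dba_\CG\backslash G/H\longiso\Upsilon^{-1}(\pGba_\CG)$ of Proposition~\ref{reduction} fit into a commutative square whose left vertical map is the canonical projection (legitimate since $D_\CG^\gauche\subset\Dba_\CG$) and whose right vertical map is $\limitegauche$. The commutativity is exactly your argument for (a) run uniformly over all $g\in G$. Since $\Dba_\CG\backslash G/H=\Iba_\CG\backslash G/H$ by~\ref{eq:dba-g-h}, the bilateral cell $\G$ is a single $\Dba_\CG$-double coset, so the fibre of the left map over it is $D_\CG^\gauche\backslash\G$; the horizontal bijections carry this fibre onto $\limitegauche^{-1}(\zGba_\CG(\G))$, and (c) follows in one stroke. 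If you want to rescue your hands-on argument, prove the commutativity of that square first; your adjustment then reduces to the tautology that a $D_\CG^\gauche$-double coset mapping onto the $\Dba_\CG$-double coset $\G$ is represented by some element of $\G$.
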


\bigskip

\begin{proof}
(a) Notons que $\zGba_\CG(\G) \in \Upsilon^{-1}(\pGba_\CG)$, $\zG_\CG^\gauche(C^\DD) \in \Upsilon^{-1}(\pG_\CG^\gauche)$ et 
$\zG^\gauche_\CG(C^\DD) \subset \zGba_\CG(\G)$, d'o\`u le r\'esultat d'apr\`es la proposition~\ref{coro:app-lim}.

\medskip

(b) Par transport \`a travers $w$, 
l'extension $k_R(w^{-1}(\rG_\CG^\gauche))/k_P(\pG^\gauche_\CG)$ est galoisienne de groupe de Galois 
$\lexp{w^{-1}}{D_\CG^\gauche}/\lexp{w^{-1}}{I_\CG^\gauche}$ tandis que l'extension $k_R(w^{-1}(\rG_\CG^\gauche))/k_Q(\qG^\gauche_\CG(C^\DD))$ 
est galoisienne de groupe de Galois $(\lexp{w^{-1}}{D_\CG^\gauche} \cap H)/(\lexp{w^{-1}}{I_\CG^\gauche} \cap H)$. 
D'o\`u 
$$\deg_\CG(C^\DD)=\frac{|D_\CG^\gauche|}{|(D_\CG^\gauche \cap \lexp{w}{H})I_\CG^\gauche|}.$$
Par ailleurs, $|C^\DD|/|C|$ est \'egal \`a l'indice du stabilisateur de $C$ dans $D^\gauche_\CG$~: mais ce stabilisateur 
est exactement $(D_\CG^\gauche \cap \lexp{w}{H})I_\CG^\gauche$. 

\medskip

(c) d\'ecoule essentiellement de la commutativit\'e du diagramme
$$\diagram
D_\CG^\gauche\backslash G/H \rrto^{\DS{\sim}} \ddto && \Upsilon^{-1}(\pG_\CG^\gauche) \ddto^{\DS{\limitegauche}}\\
&&\\
\Dba_\CG \backslash G/H \rrto^{\DS{\sim}} && \Upsilon^{-1}(\pGba_\CG),
\enddiagram$$
o\`u la fl\`eche verticale de gauche est l'application canonique (car $D_\CG^\gauche \subset \Dba_\CG$). 
Il faut simplement rappeler que $\Dba_\CG \backslash G/H=\Iba_\CG \backslash G/H$ (voir~(\ref{eq:dba-g-h})).
\end{proof}

\bigskip

\section{Caract\`eres cellulaires}

\medskip

\subsection{Multiplicit\'es} 
Les $\Mb_\CG^\gauche \Hb^\gauche$-modules simples \'etant param\'etr\'es par $\cmcellules_\CG^\gauche(W)$, 
il existe une unique famille d'entiers naturels 
$(\mult_{C,\chi}^\calo)_{C \in \cmcellules_\CG^\gauche(W),\chi \in \Irr(W)}$  
\indexnot{m}{\mult_{C,\chi}^\calo}  telle que
$$\isomorphisme{\Mb_\CG^\gauche\MC^\gauche(\chi)}_{\Mb_\CG^\gauche \Hb^\gauche} = \sum_{C \in \cmcellules_L^\CG(W)} 
\mult_{C,\chi}^\calo ~\cdot ~\isomorphisme{\LC_\CG^\gauche(C)}_{\Mb_\CG^\gauche \Hb^\gauche}$$
pour tout $\chi \in \Irr(W)$. 

\bigskip

\begin{prop}\label{multiplicite cm}
Avec les notations ci-dessus, on a~:
\begin{itemize}
\itemth{a} Si $\chi \in \Irr(W)$, alors $\sum_{C \in \cmcellules_L^\CG(W)} \mult_{C,\chi}^\calo = \chi(1)$.
 
\itemth{b} Si $C \in \cmcellules_L^\CG(W)$, alors $\sum_{\chi \in \Irr(W)} \mult_{C,\chi}^\calo~\chi(1) = |C|$.

\itemth{c} Si $C \in \cmcellules_L^\CG(W)$, si $\G$ est l'unique $\CG$-cellule de Calogero-Moser bilat\`ere 
contenant $C$ et si $\chi \in \Irr(W)$ est tel que $\mult_{C,\chi}^\calo \neq 0$, alors 
$\chi \in \Irr_\G^\calo(W)$.
\end{itemize}
\end{prop}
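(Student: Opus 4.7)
Here is my plan. For part (a), I would argue by a dimension count. By~(\ref{dimension left}), $\MC^\gauche(\chi)$ is $P^\gauche$-free of rank $|W|\chi(1)$, so $\dim_{\Mb_\CG^\gauche} \Mb_\CG^\gauche \MC^\gauche(\chi) = |W|\chi(1)$. Since each simple $\LC^\gauche_\CG(C)$ has dimension $|W|$ over $\Mb_\CG^\gauche$ by Theorem~\ref{theo:gauche}, equating dimensions in the Jordan--H\"older decomposition gives $\sum_C \mult_{C,\chi}^\calo \cdot |W| = |W|\chi(1)$, which is~(a).

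For part (b), I would introduce the ``regular'' Verma module
$$\MC^\gauche(\kb W) := \Hb^\gauche \otimes_{\Hbov^\moins} (\kb[\CCB] \otimes \kb W)^{(-)},$$
where $\kb W$ carries its regular $W$-representation and $\Ab^-_+$ acts by zero. The decomposition $\kb W \simeq \bigoplus_\chi V_\chi^{\oplus \chi(1)}$ yields an isomorphism $\MC^\gauche(\kb W) \simeq \bigoplus_\chi \MC^\gauche(\chi)^{\oplus \chi(1)}$ of $\Hb^\gauche$-modules. Under the Morita equivalence between $\Mb_\CG^\gauche \Hb^\gauche$ and $\Mb_\CG^\gauche Z$ afforded by Corollary~\ref{coro:endo-bi-c} and Theorem~\ref{theo:gauche}, the multiplicity $\mult_{C,\chi}^\calo$ equals the length over the local block $B_C := b \Mb_\CG^\gauche Z$ of the Morita image $N_{b,\chi} := b e \Mb_\CG^\gauche \MC^\gauche(\chi)$. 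Because $B_C$ is local artinian with residue field $\Mb_\CG^\gauche$, every finite-length $B_C$-module has length equal to its $\Mb_\CG^\gauche$-dimension; in particular $\mathrm{length}_{B_C}(B_C) = \dim_{\Mb_\CG^\gauche} B_C = |C|$ by Corollary~\ref{dim centre r}. The heart of the argument is then to establish
$$e \Mb_\CG^\gauche \MC^\gauche(\kb W) \simeq \Mb_\CG^\gauche Z \qquad\text{as $\Mb_\CG^\gauche Z$-modules,}$$
after which, taking the $b$-component yields $\sum_\chi \chi(1) N_{b,\chi} \simeq B_C$ and the length identity gives $\sum_\chi \chi(1) \mult_{C,\chi}^\calo = |C|$. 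I would prove this identification by exhibiting $1 \otimes e$ as a cyclic generator---obtained as the image under the Morita functor of the $\Hb^\gauche$-generator $1 \otimes 1_W$ of $\Mb_\CG^\gauche \MC^\gauche(\kb W)$---and by comparing $P^\gauche$-ranks: both sides are free of rank $|W|$ (the right-hand side by Corollary~\ref{coro:endo-bi}(c); the left-hand side from PBW applied to $\MC^\gauche(\kb W) \simeq \kb[\CCB \times V] \otimes \kb W$ together with the fact that $e$ projects the $\kb W$-factor onto its one-dimensional trivial isotypic component). Cyclicity plus matching ranks forces the surjection to be an isomorphism.

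For part (c), I would argue via central characters. Extending Corollary~\ref{coro:action-euler-verma} from $z = \euler$ to arbitrary $z \in Z$ via PBW, write $z = \sum_\alpha f_\alpha w_\alpha h_\alpha$ with $f_\alpha \in \kb[V]$, $w_\alpha \in \kb W$, $h_\alpha \in \kb[V^*]^\cow$: terms with $h_\alpha \in \Ab^-_+$ annihilate $V_\chi^{(-)}$ via the tensor relation, and the surviving ``pure $W$-part'' $z_0 \in \kb[\CCB] \otimes \kb W$ is central in $\kb[\CCB] W$ (as $z$ is central in $\Hb^\gauche$) and so acts on the simple $V_\chi$ by the scalar $\O_\chi(z) \in \kb[\CCB]$. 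Thus $z \cdot (1 \otimes v) \equiv \O_\chi(z)(1 \otimes v)$ modulo strictly higher-weight terms. Since $\Mb_\CG^\gauche \MC^\gauche(\chi)$ is finite-dimensional over $\Mb_\CG^\gauche$ and is generated as an $\Hb^\gauche$-module by $V_\chi^{(-)}$, Fitting's lemma forces $z - \O_\chi(z)$ (specialised modulo $\pG^\gauche_\CG$) to act nilpotently on the whole Verma. Consequently every absolutely simple composition factor $\LC^\gauche_\CG(C)$---absolute simplicity being part of Theorem~\ref{theo:gauche}---has central character equal to the composite $Z \stackrel{\O_\chi}{\longrightarrow} \kb[\CCB] \to \Mb_\CG^\gauche$. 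The kernel of this central character in $Z$ is exactly $\zG^\gauche_\CG(C^\DD)$ by the definition of the latter (cf.\ the discussion preceding Proposition~\ref{prop:gauche-bilatere}). Applying the limit map $\limitegauche$ from Proposition~\ref{coro:app-lim} and invoking Proposition~\ref{prop:gauche-bilatere}(a) then yields
$$\zGba_\CG(\chi) = \limitegauche(\zG^\gauche_\CG(C^\DD)) = \zGba_\CG(\G),$$
which by Lemma~\ref{caracterisation blocs CM} is precisely the statement $\chi \in \Irr_\G^\calo(W)$.

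The principal obstacle lies in the cyclicity-and-rank identification $e \Mb_\CG^\gauche \MC^\gauche(\kb W) \simeq \Mb_\CG^\gauche Z$ of part~(b): it requires combining the specialisation of the Satake-type isomorphism $Z \simeq e \Hb e$ (Theorem~\ref{theo:satake}) with a careful PBW bookkeeping to verify that $1 \otimes e$ generates cyclically over $\Mb_\CG^\gauche Z$ and has the predicted rank. A secondary but more tractable delicate point in part~(c) is the passage from the scalar action on the lowest-weight subspace to the scalar action on the full Verma module, which relies on Fitting's decomposition for central endomorphisms of a finite-length module.
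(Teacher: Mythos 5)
Your parts (a) and (b) are essentially sound: (a) matches the paper's dimension count, and (b), although you route it through the stronger module isomorphism $e\Mb_\CG^\gauche\MC^\gauche(\kb W)\simeq\Mb_\CG^\gauche Z$ (whose ``cyclicity'' step via the Morita functor $M\mapsto eM$ is not justified as written --- $Z\cdot(1\otimes e)=e\Hb^\gauche e\cdot(1\otimes 1_W)$ is a priori only a proper $Z$-submodule of $e\Hb^\gauche\cdot(1\otimes 1_W)$), ultimately rests on the same regular-representation observation the paper uses; a Grothendieck-group identity suffices, and the paper obtains it by filtering $\Hbov^\moins e\simeq\kb[V^*]^\cow$ and applying the exact functor $\Hb^\gauche\otimes_{\Hbov^\moins}-$.

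Part (c) has a genuine error. You conclude via Fitting's lemma that $z-\O_\chi(z)$ acts nilpotently on all of $\Mb_\CG^\gauche\MC^\gauche(\chi)$, so that every composition factor has central character determined by $\O_\chi^\CG$. This is false. The composition factors are the $\LC_\CG^\gauche(C)$ for the various left cells $C\subseteq\G$, and they lie in pairwise distinct blocks of $\Mb_\CG^\gauche\Hb^\gauche$: their central characters cut out the distinct primes $\zG_\CG^\gauche(C^\DD)\in\Upsilon^{-1}(\pG_\CG^\gauche)$, whereas $\Ker(\O_\chi^\CG)\in\Upsilon^{-1}(\pGba_\CG)$ is strictly larger. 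A concrete failure occurs already for $W$ of type $B_2$ with $a=b\neq 0$ and $\chi$ the two-dimensional character: Table~\ref{table omega} gives $\O_\chi(\euler')=0$, yet the congruences in the proof of Theorem~\ref{theo:conjectures-b2} show that $\euler'$ acts by $b\Sig$ on $\LC_c^\gauche(\G_t)$ and by $-b\Sig$ on $\LC_c^\gauche(\G_s)$, the two composition factors of $\Mb_c^\gauche\MC^\gauche(\chi)$. (Your PBW/grading argument would in any case not reach $\euler'$, which has $\ZM$-degree $2$ and hence does not preserve the lowest-weight subspace.) The correct move --- and the paper's --- is to reduce further modulo $\pGba_\CG$: $\Kbov_\CG\MCov(\chi)$ \emph{is} indecomposable by Proposition~\ref{verma}, so $\Supp_Z\bigl(eP_\CG^\gauche\MC^\gauche(\chi)\bigr)\cap V(\pGba_\CG)=\{\Ker(\O_\chi^\CG)\}$; since $\zGba_\CG(\G)\supset\zG_\CG^\gauche(C^\DD)$ belongs to that support whenever $\mult_{C,\chi}^\calo\neq 0$, one gets $\Ker(\O_\chi^\CG)=\zGba_\CG(\G)$, which is exactly the statement $\chi\in\Irr_\G^\calo(W)$.
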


\begin{proof}
(a) d\'ecoule du calcul de la dimension des modules de Verma \`a gauche (voir~(\ref{dimension left})). 

\medskip

Montrons maintenant (b). Tout d'abord, remarquons que, gr\^ace \`a l'\'equivalence de Morita du th\'eor\`eme~\ref{theo:KH-mat}, on a 
$$\isomorphisme{\Mb\Hb e}_{\Mb\Hb} = \sum_{w \in W} \isomorphisme{\LC_w}_{\Mb\Hb}.$$
En appliquant $\dec_\CG^\gauche$ \`a cette \'egalit\'e, on obtient
$$\isomorphisme{\Mb_\CG^\gauche \Hb^\gauche e}_{\Mb_\CG^\gauche \Hb^\gauche} 
= \sum_{C \in \cmcellules_L^\CG(W)} 
|C|~\cdot~\isomorphisme{\LC_\CG^\gauche(C)}_{\Mb_\CG^\gauche \Hb^\gauche}.$$
Pour montrer (b), il suffit de montrer que 
\equat\label{he verma}
\isomorphisme{\Mb_\CG^\gauche \Hb^\gauche e}_{\Mb_\CG^\gauche \Hb^\gauche}
= \sum_{\chi \in \Irr(W)} 
\chi(1)~\cdot~\isomorphisme{\Mb_\CG^\gauche\MC^\gauche(\chi)}_{\Mb_\CG^\gauche \Hb^\gauche}.
\endequat
Puisque $\Mb_\CG^\gauche \Hb^\gauche$ est un $\Hbov^\moins$-module libre, le foncteur 
$\Mb_\CG^\gauche \Hb^\gauche \otimes_{\Hbov^\moins} -$ est exact et il suffit donc de montrer que 
$$\isomorphisme{\Hbov^\moins e}_{\Hbov^\moins} = \sum_{\chi \in \Irr(W)} \isomorphisme{V_\chi^{(-)}}_{\Hbov^\moins},$$
ce qui d\'ecoule du fait que $\Hbov^\moins e$ est isomorphe, comme $\kb W$-module, \`a $\kb[V^*]^\cow$, c'est-\`a-dire 
\`a la repr\'esentation r\'eguli\`ere de $W$. D'o\`u (b).

\medskip

(c) est imm\'ediat, car la r\'eduction modulo $\pGba$ du module de Verma \`a gauche est le b\'eb\'e module de Verma 
correspondant, 
et est donc ind\'ecomposable comme $\Kbov_\CG\Hbov$-module.
\end{proof}

\bigskip

\subsection{Premi\`ere d\'efinition}
Si $C$ est une $\CG$-cellule de Calogero-Moser \`a gauche, on pose
\equat\label{eq:cellulaire}
\isomorphisme{C}_\CG^\calo = \sum_{\chi \in \Irr(W)} \mult_{C,\chi}^\calo \cdot \chi.  
\indexnot{C}{\isomorphisme{C}_\CG^\calo, \isomorphisme{C}^\calo, \isomorphisme{C}_c^\calo}
\endequat

\bigskip

\begin{defi}\label{defi:cellulaires}
On appelle {\bfit $\calo$-caract\`ere $\CG$-cellulaire} de $W$ tout caract\`ere 
de $W$ de la forme $\isomorphisme{C}_\CG^\calo$, o\`u $C$ est une 
$\CG$-cellule de Calogero-Moser \`a gauche. Si $\CG=0$ (respectivement $\CG=\CG_c$ avec $c \in \CCB$), 
on parlera de {\bfit $\calo$-caract\`ere cellulaire g\'en\'erique} (respectivement {\bfit $\calo$-caract\`ere 
$c$-cellulaire}) et on notera $\isomorphisme{C}^\calo$ (respectivement $\isomorphisme{C}_c^\calo$).
\end{defi}

\bigskip

Compte tenu de la proposition~\ref{multiplicite cm}(c), l'ensemble des caract\`eres irr\'eductibles 
apparaissant avec une multiplicit\'e non nulle dans un $\calo$-caract\`ere $\CG$-cellulaire  
est contenu dans une $\CG$-famille de Calogero-Moser. On parlera alors de $\calo$-caract\`ere 
$\CG$-cellulaire {\it associ\'e \`a la famille en question}.

\bigskip

\begin{rema}\label{rema:cellulaire-independant}
Nous allons montrer ici que l'ensemble des $\calo$-caract\`eres $\CG$-cellulaires (associ\'es \`a une $\CG$-famille) 
ne d\'epend pas du choix de l'id\'eal premier $\rG_\CG^\gauche$. En effet, soit $\rG$ un id\'eal premier de $R$ au-dessus 
de $\pG_\CG^\gauche$. Alors il existe $g \in G$ tel que $g(\rG_\CG^\gauche)=\rG$. 
Ainsi, $g$ induit un isomorphisme $R/\rG_\CG^\gauche \longiso R/\rG$, qui induit un isomorphisme 
$\Mb_\CG^\gauche \longiso k_R(\rG)$.

Notons $\cmcellules_L^{\CG,*}(W)$ l'ensemble des $\rG$-cellules de Calogero-Moser et, si $C \in \cmcellules_L^{\CG,*}(W)$, 
notons $\LC_{\CG,*}^\gauche(C)$ le $k_R(\rG)\Hb^\gauche$-module simple correspondant. Alors, 
$g^{-1}(C) \in \cmcellules_L^{\CG}(W)$ et l'isomorphisme $\Mb_\CG^\gauche \longiso k_R(\rG)$, qui 
se prolonge en un isomorphisme $\Mb_\CG^\gauche\Hb^\gauche \longiso k_R(\rG)\Hb^\gauche$, envoie le 
module simple $\LC_\CG^\gauche(g^{-1}(C))$ sur $\LC_{\CG,*}^\gauche(C)$. Le module de Verma \`a gauche 
\'etant d\'efini sur $\Kb^\gauche_\CG$, la multiplicit\'e de $\LC_\CG^\gauche(\lexp{g^{-1}}{C})$ dans 
$\Mb_\CG^\gauche\MC^\gauche(\chi)$ est donc \'egale \`a celle de $\LC_{\CG,*}^\gauche(C)$ dans 
$k_R(\rG)\MC^\gauche(\chi)$. D'o\`u le r\'esultat.\finl
\end{rema}

\bigskip

Si $d \in D_\CG^\gauche$ et $C$ est une $\CG$-cellule de Calogero-Moser \`a gauche, alors $d(C)$ 
est aussi une $\CG$-cellule de Calogero-Moser \`a gauche. La 
remarque~\ref{rema:cellulaire-independant} montre que les $\calo$-caract\`eres $\CG$-cellulaires 
associ\'es \`a $C$ et $d(C)$ co\"{\i}ncident~:

\bigskip

\begin{coro}\label{coro:dec-cellulaire}
Si $d \in D_\CG^\gauche$ et $C$ est une $\CG$-cellule de Calogero-Moser \`a gauche, alors 
$$\isomorphisme{d(C)}_\CG^\calo=\isomorphisme{C}_\CG^\calo.$$
\end{coro}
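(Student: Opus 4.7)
The plan is to reduce the statement to a direct application of Remarque~\ref{rema:cellulaire-independant} specialized to the situation where the ``other'' prime ideal above $\pG_\CG^\gauche$ is taken to be $\rG_\CG^\gauche$ itself, with the Galois element doing the matching being exactly $d$.

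First, I would observe that since $d\in D_\CG^\gauche$, one has $d(\rG_\CG^\gauche)=\rG_\CG^\gauche$; thus $d$ normalizes $I_\CG^\gauche$ and so the map $C\mapsto d(C)$ is a well-defined permutation of $\cmcellules_L^\CG(W)$. I would then apply the setup of Remarque~\ref{rema:cellulaire-independant} with $g=d$ and $\rG=\rG_\CG^\gauche$. In that situation the two pieces of data $\cmcellules_L^\CG(W)$ and $\cmcellules_L^{\CG,*}(W)$ coincide, as do the algebras $\Mb_\CG^\gauche\Hb^\gauche$ and $k_R(\rG)\Hb^\gauche$, so the simple modules $\LC_{\CG,*}^\gauche(C)$ are literally the $\LC_\CG^\gauche(C)$ (each being characterized as the unique simple in its block, and the blocks of the two algebras being the same idempotents). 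The conclusion of the remark then reads
\[
\bigl[\Mb_\CG^\gauche\MC^\gauche(\chi):\LC_\CG^\gauche(d^{-1}(C))\bigr]
\;=\;\bigl[\Mb_\CG^\gauche\MC^\gauche(\chi):\LC_\CG^\gauche(C)\bigr]
\]
for every $\chi\in\Irr(W)$ and every $C\in\cmcellules_L^\CG(W)$. Replacing $C$ by $d(C)$ gives $\mult_{d(C),\chi}^\calo=\mult_{C,\chi}^\calo$, whence $\isomorphisme{d(C)}_\CG^\calo=\isomorphisme{C}_\CG^\calo$ by the very definition~(\ref{eq:cellulaire}).

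The only point requiring care---and which I would spell out in the written proof---is the identification $\LC_{\CG,*}^\gauche(C)=\LC_\CG^\gauche(C)$ when $\rG=\rG_\CG^\gauche$. This is not a ``choice'' but genuine equality: the blocks of $\Mb_\CG^\gauche\Hb^\gauche$ are canonically indexed by the $\rG_\CG^\gauche$-cells via the bijection of Th\'eor\`eme~\ref{theo:calogero}, and by Th\'eor\`eme~\ref{theo:gauche} each block contains a unique simple module. Concretely, the isomorphism $R/\rG_\CG^\gauche \longiso R/\rG=R/\rG_\CG^\gauche$ induced by $d$ is the restriction of the Galois automorphism $d$ itself, and the content of the remark becomes the fact that the Verma module $\MC^\gauche(\chi)$ is defined over $\Kb_\CG^\gauche$ (hence its extension to $\Mb_\CG^\gauche$ is stable, up to canonical isomorphism, under the Galois automorphism $d$ acting on coefficients), whereas the simple quotient attached to the cell $C$ is transported by this same Galois automorphism to the one attached to $d(C)$.

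There is no substantial obstacle: the whole argument is a bookkeeping reformulation of Remarque~\ref{rema:cellulaire-independant} in the special case where the prime ideal is stabilized by $g=d$. The only mildly delicate point is the book\-keep\-ing identification $\LC_{\CG,*}^\gauche(C)=\LC_\CG^\gauche(C)$, and even this is immediate once one remembers that both modules are defined intrinsically as the unique simple module in the block of $\Mb_\CG^\gauche\Hb^\gauche$ indexed by $C$ via Th\'eor\`eme~\ref{theo:calogero}.
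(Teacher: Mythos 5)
Your proof is correct and follows exactly the route the paper takes: the paper's own proof of this corollary consists of noting that it is the special case of Remarque~\ref{rema:cellulaire-independant} where the new prime $\rG$ equals $\rG_\CG^\gauche$ itself and the Galois element is $d$. Your additional spelling-out of the identification $\LC_{\CG,*}^\gauche(C)=\LC_\CG^\gauche(C)$ in this degenerate case is harmless and in line with the paper's remark; there is nothing to add.
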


\bigskip

\begin{rema}\label{rema:decleft-decba}
Le corollaire~\ref{coro:dec-cellulaire} pr\'ec\'edent montre en particulier que 
$d(C)$ est contenue dans la m\^eme cellule bilat\`ere que $C$, ce qui 
a d\'ej\`a \'et\'e d\'emontr\'e autrement (voir le d\'ebut de la sous-section~\ref{subsection:gauche-bilatere}).\finl
\end{rema}

\bigskip

\subsection{Deuxi\`eme d\'efinition} 
La $\kb$-alg\`ebre $\Hbov^\moins \simeq \kb[\CCB] \otimes(\kb[V^*]^\cow \rtimes W)$ 
est une sous-$\kb$-alg\`ebre de $\Hb^\gauche$ qui nous a servi \`a construire les modules de Verma \`a gauche. 
Ainsi, $\Mb_\CG^\gauche \otimes \Hbov^\moins$ est une sous-$\Mb_\CG^\gauche$-alg\`ebre de 
$\Mb_\CG^\gauche\Hb^\gauche$ de dimension $|W|^2$, dont le groupe de Grothendieck s'identifie \`a $\ZM\Irr(W)=\groth(\kb W)$. 

Si $C$ est une $\CG$-cellule de Calogero-Moser \`a gauche, on notera $\PC^\gauche_\CG(C)$  
\indexnot{P}{\PC^\gauche_\CG(C)}  une enveloppe projective 
du $\Mb_\CG^\gauche \Hb^\gauche$-module simple $\LC_\CG^\gauche(C)$. 
\begin{prop}\label{eq:socle-cellulaire}
On a 
$$\isomorphisme{{\mathrm{Soc}}(\Res_{\Mb_\CG^\gauche \otimes \Hbov^\moins}^{\Mb_\CG^\gauche\Hb^\gauche} 
\PC_\CG^\gauche(C))}_{\Mb_\CG^\gauche \otimes \Hbov^\moins} 
= \sum_{\chi \in \Irr(W)} \mult_{C,\chi}^\calo \cdot \chi.$$
\end{prop}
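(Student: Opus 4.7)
L'id\'ee consiste \`a interpr\'eter la multiplicit\'e d'un simple dans le socle via un calcul d'espace de $\Hom$, puis \`a transposer ce $\Hom$ \`a l'alg\`ebre ambiante $A := \Mb_\CG^\gauche\Hb^\gauche$ via la r\'eciprocit\'e de Frobenius, pour enfin l'identifier \`a une multiplicit\'e de composition via le fait que les couvertures projectives co\"{\i}ncident avec les enveloppes injectives dans une alg\`ebre sym\'etrique. Posons $\tilde A := \Mb_\CG^\gauche \otimes_{\kb[\CCB]} \Hbov^\moins$. L'alg\`ebre $\tilde A$ est de dimension $|W|^2$ sur $\Mb_\CG^\gauche$ et admet pour radical de Jacobson l'id\'eal engendr\'e par $\kb[V^*]^\cow_+$, son quotient semi-simple \'etant $\Mb_\CG^\gauche W$; ses modules simples sont donc exactement les $\Mb_\CG^\gauche V_\chi^{(-)}$ avec $\chi \in \Irr(W)$ (sur corps de d\'eploiement). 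Pour \'etablir l'\'egalit\'e annonc\'ee dans $\groth(\tilde A)$, il suffit donc de v\'erifier, pour chaque $\chi \in \Irr(W)$, que la multiplicit\'e de $\Mb_\CG^\gauche V_\chi^{(-)}$ dans le socle de $\Res^A_{\tilde A}\PC_\CG^\gauche(C)$ vaut $\mult_{C,\chi}^\calo$.

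Je commencerais par montrer que $A$ est un $\tilde A$-module \`a droite libre de rang $|W|$ : la d\'ecomposition PBW~\ref{PBW-0} identifie $\Hb^\gauche$ \`a $\kb[V]^W \otimes_{\kb[\CCB]} \Hbov^\moins$-module libre de rang $|W|$ (base de $\kb[V]^\cow$, via le th\'eor\`eme de Chevalley-Shephard-Todd~\ref{chevalley}), et cette libert\'e persiste par changement de base de $P^\gauche$ vers $\Mb_\CG^\gauche$. Cela fournit d'une part l'identification
\[
A \otimes_{\tilde A} (\Mb_\CG^\gauche V_\chi^{(-)}) \simeq \Mb_\CG^\gauche \MC^\gauche(\chi)
\]
et, d'autre part, la r\'eciprocit\'e de Frobenius :
\[
\Hom_{\tilde A}(\Mb_\CG^\gauche V_\chi^{(-)},\, \Res \PC_\CG^\gauche(C))
\simeq
\Hom_A(\Mb_\CG^\gauche \MC^\gauche(\chi),\, \PC_\CG^\gauche(C)).
\]

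Je montrerais ensuite que $A$ est une $\Mb_\CG^\gauche$-alg\`ebre sym\'etrique. En effet, la forme sym\'etrisante $\taub_\Hb : \Hb \to P$ de la section~\ref{sub:symetrisante} fait de $\Hb$ une $P$-alg\`ebre sym\'etrique (la base PBW $(b_i w b_j^*)$ admettant une base duale explicite), et puisque $\Hb$ est $P$-libre de rang $|W|^3$ (corollaire~\ref{coro:P-libre}), la structure sym\'etrique est pr\'eserv\'ee par le changement de base $P \to \Mb_\CG^\gauche$. Dans une alg\`ebre sym\'etrique de dimension finie, toute couverture projective d'un simple co\"{\i}ncide avec son enveloppe injective, donc $\PC_\CG^\gauche(C) = \IC_\CG^\gauche(C)$. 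Puisque $\Mb_\CG^\gauche$ est corps de d\'eploiement de $A$ (th\'eor\`eme~\ref{theo:gauche}), l'exactitude du foncteur $\Hom_A(-, \IC_\CG^\gauche(C))$ et l'identit\'e $\Hom_A(S, \IC_\CG^\gauche(C)) = \d_{S,\LC_\CG^\gauche(C)} \cdot \Mb_\CG^\gauche$ sur les simples $S$ entra\^{\i}nent
\[
\dim \Hom_A(\Mb_\CG^\gauche \MC^\gauche(\chi), \PC_\CG^\gauche(C))
= [\Mb_\CG^\gauche \MC^\gauche(\chi) : \LC_\CG^\gauche(C)]
= \mult_{C,\chi}^\calo.
\]
La combinaison des deux \'etapes fournit le r\'esultat.

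Le point le plus d\'elicat est la v\'erification rigoureuse de la sym\'etrie de $A$, car elle fait intervenir une sp\'ecialisation non triviale (quotient par $\pG_\CG^\gauche$ puis passage au corps r\'esiduel) et non une simple localisation~; il faut s'assurer que la non-d\'eg\'en\'erescence de $\taub_\Hb$ est pr\'eserv\'ee, ce qui est garanti par la libert\'e de $\Hb$ sur $P$ et l'existence explicite de bases duales dans la base PBW. Tout le reste rel\`eve d'adjonctions standard et des propri\'et\'es g\'en\'erales des alg\`ebres sym\'etriques.
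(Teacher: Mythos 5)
Your proof follows essentially the same route as the paper's: use the symmetry of $\Hb$ to identify the projective cover $\PC_\CG^\gauche(C)$ with the injective envelope, which converts the composition multiplicity $\mult_{C,\chi}^\calo$ into $\dim \Hom_{A}(\Mb_\CG^\gauche\MC^\gauche(\chi),\PC_\CG^\gauche(C))$, then apply Frobenius reciprocity to the induced module $\MC^\gauche(\chi)$ and read off the socle multiplicity from $\Hom_{\tilde A}(V_\chi^{(-)},\Res\,\PC_\CG^\gauche(C))$. The paper's proof is terser — it cites (\ref{symetrique}) directly rather than re-verifying the symmetric structure after specialization, and it leaves implicit the split-ness of $\tilde A$ (already established in the discussion of $\Hbov^\moins$ and the proposition~\ref{verma}) — but you have supplied no idea that is missing there and introduced none that is extraneous; your added verifications (freeness of $A$ over $\tilde A$, persistence of the symmetrizing form) are exactly the standard facts the paper takes as known.
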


\begin{proof}
Soit $\chi \in \Irr(W)$. Puisque l'alg\`ebre $\Hb$ est sym\'etrique (voir~(\ref{symetrique})), $\PC_\CG^\gauche(C)$ est aussi 
une enveloppe injective de $\LC_\CG^\gauche(C)$. On a donc 
$$\mult_{C,\chi}^\calo = \dim_{\Mb_\CG^\gauche} \Hom_{\Mb_\CG^\gauche\Hb^\gauche}\bigl(
\Mb_\CG^\gauche \MC^\gauche(\chi),\PC_\CG^\gauche(C) \bigr).$$
Mais $\Mb_\CG^\gauche \MC^\gauche(\chi) = \Ind_{\Mb_\CG^\gauche\Hbov^\moins}^{\Mb_\CG^\gauche\Hb^\gauche} 
(\Mb_\CG^\gauche \otimes V_\chi^{(-)})$. Par cons\'equent,
$$\mult_{C,\chi}^\calo = \dim_{\Mb_\CG^\gauche}\Hom_{\Mb_\CG^\gauche\Hbov^\moins}\bigl( V_\chi^{(-)},
\Res_{\Mb_\CG^\gauche \otimes \Hbov^\moins}^{\Mb_\CG^\gauche\Hb^\gauche} \PC_\CG^\gauche(C)\bigr).$$
D'o\`u le r\'esultat.
\end{proof}

\bigskip

\subsection{Troisi\`eme d\'efinition} 
D'apr\`es la remarque~\ref{rema:cellulaire-independant}, l'ensemble des $\calo$-caract\`eres $\CG$-cellulaires 
ne d\'epend pas du choix de l'id\'eal $\rG_\CG^\gauche$ au-dessus de $\pG_\CG^\gauche$. 
Nous allons tirer parti du fait que le module de Verma \`a gauche est d\'efini sur $\Hb_\CG^\gauche$ et du fait que 
le $\calo$-caract\`ere $\CG$-cellulaire associ\'e \`a une $\CG$-cellule de Calogero-Moser \`a gauche $C$ ne 
d\'epend que de la $D_\CG^\gauche$-orbite de $C$ pour donner une d\'efinition qui ne passe pas par 
l'extension des scalaires \`a $\Mb_\CG^\gauche$ (et donc qui ne passe pas par le choix de $\rG_\CG^\gauche$). 

Pour cela, si $\zG$ est un id\'eal premier de $Z$ et si $M$ est un $Z$-module de dimension de Krull 
inf\'erieure ou \'egale \`a celle de $Z/\zG$, alors $M_\zG$ est un $Z_\zG$-module de longueur finie et 
nous noterons $\longueur_{Z_\zG}(M_\zG)$  \indexnot{L}{\longueur_{Z_\zG}(M_\zG)}  cette longueur. 

\medskip

%

%
%
%
%

\begin{prop}\label{prop:multiplicite-Z}
Soit $C$ une $\CG$-cellule de Calogero-Moser \`a gauche, soit $\zG=\zG_\CG^\gauche(C^\DD)$ et soit 
$M$ un $P_\CG^\gauche \Hb$-module de type fini. Alors $eM_\zG$ est un $Z_\zG$-module de longueur 
finie et $\longueur_{Z_\zG}(eM_\zG)$ est \'egale \`a la multiplicit\'e de $\LC_\CG^\gauche(C)$ 
dans $\Mb_\CG^\gauche \otimes_{P_\CG^\gauche} M$. 
\end{prop}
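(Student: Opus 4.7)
The plan combines the Morita equivalence between $\Hb_\zG$ and $Z_\zG$ with a base-change comparison between the localization $M_\zG$ and the scalar extension $\Mb_\CG^\gauche \otimes M$.

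The first step is finiteness of the length. Since $M$ is finitely generated over $\Hb$, the module $eM$ is finitely generated over $e\Hb e \simeq Z$ by the Satake isomorphism (Theorem~\ref{theo:satake}). The hypothesis $\pG_\CG^\gauche M = 0$ gives $\pG_\CG^\gauche \cdot eM = 0$, so $eM_\zG$ is finitely generated over $Z_\zG/\pG_\CG^\gauche Z_\zG$. Since the finite flat map $P \to Z$ is equidimensional, the prime $\zG$ has the same height as $\pG_\CG^\gauche$ and is minimal over $\pG_\CG^\gauche Z$; consequently $Z_\zG/\pG_\CG^\gauche Z_\zG$ is an Artinian local ring, and $eM_\zG$ has finite length over $Z_\zG$.

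The second step applies Morita equivalence. By Example~\ref{exemple lisse} the open subvariety $\CCB \times V^\reg/W \times V^*/W \subset \PCB$ meets $V(\pG_\CG^\gauche)$, so $Z_\zG$ is a regular local ring. Proposition~\ref{prop:morita} then produces a Morita equivalence between $\Hb_\zG$ and $Z_\zG$ via $\Hb_\zG e$; under this equivalence $M_\zG$ corresponds to $eM_\zG$ and composition length is preserved, yielding
\[
\longueur_{Z_\zG}(eM_\zG) = \longueur_{\Hb_\zG}(M_\zG).
\]

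The third step identifies this common value with $[\Mb_\CG^\gauche \otimes_{P_\CG^\gauche} M : \LC_\CG^\gauche(C)]$. Both sides are additive on short exact sequences of $P_\CG^\gauche\Hb$-modules (the left by exactness of $e(-)$ and localization; the right by flatness of the field extension $\Mb_\CG^\gauche/P_\CG^\gauche$), so by a standard d\'evissage it suffices to test the equality on the Verma modules $M = \MC^\gauche(\chi)$, for which the right-hand side is $\mult_{C,\chi}^\calo$ by the definition~\ref{eq:cellulaire}. The remaining computation $\longueur_{\Hb_\zG}(\MC^\gauche(\chi)_\zG) = \mult_{C,\chi}^\calo$ can be tackled by first reducing to $\Kb_\CG^\gauche$, where the decomposition $\Kb_\CG^\gauche \otimes_P Z \simeq \prod_{\zG'} Z_{\zG'}/\pG_\CG^\gauche Z_{\zG'}$ over primes of $Z$ above $\pG_\CG^\gauche$ together with Morita produces the corresponding block decomposition of $\Kb_\CG^\gauche\Hb$, and then tracking the unique simple in the $\zG$-block through the finite extension $\Mb_\CG^\gauche/\Kb_\CG^\gauche$.

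The hard part will be the Galois bookkeeping in this final step. Under $\Mb_\CG^\gauche/\Kb_\CG^\gauche$, the unique simple in the $\zG$-block of $\Kb_\CG^\gauche\Hb$ may split into several distinct simples $\LC_\CG^\gauche(C')$ indexed by the cells $C'$ in the $D_\CG^\gauche$-orbit $C^\DD$, and the degree $\deg_\CG(C^\DD) = |C^\DD|/|C|$ of Proposition~\ref{prop:gauche-bilatere} must intervene in the count. Verifying that all the combinatorial factors cancel so that the multiplicity of $\LC_\CG^\gauche(C)$ exactly matches the $Z_\zG$-length, neither inflated by $\deg_\CG(C^\DD)$ nor deflated by $|D_\CG^\gauche|$, is where the main technical work lies, and it will require a careful coordination between the Galois-theoretic description of the primes of $R$ above $\zG$ and the $D_\CG^\gauche$-action on left cells.
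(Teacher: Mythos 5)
Your scaffolding (the Morita equivalence via $\Hb_\zG e$, the identification of $\longueur_{Z_\zG}(eM_\zG)$ with a length over $\Hb_\zG$, the use of smoothness on $\PCB^\reg$) is correct and matches the paper's setup, but your third step has a genuine gap that you yourself flag: you reduce to Verma modules by d\'evissage and then state, without proof, that the Galois bookkeeping will ``require a careful coordination'' and that the hard part is to verify that the factors $\deg_\CG(C^\DD)$ cancel. That verification is the entire content of the proposition, and leaving it unresolved means the proof does not close. Moreover the d\'evissage itself is not justified: you would need the classes of the Verma modules $\MC^\gauche(\chi)$ to span a Grothendieck group controlling the two additive invariants, but the modules $M$ in the statement are arbitrary finitely generated $P_\CG^\gauche\Hb$-modules, not objects of a category where such a spanning statement is available off the shelf. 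In effect you are trying to reduce the proposition to its own corollary (\ref{coro:cellulaire-multiplicite}).

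The paper avoids the d\'evissage entirely and resolves the counting in one stroke. Since $Z_\zG$ is local with residue field $k_Z(\zG)$, the finite-length module $eM_\zG$ has a filtration whose $l$ successive quotients are all isomorphic to $k_Z(\zG)$. Tensoring by the flat extension $\Mb_\CG^\gauche/P_\CG^\gauche$ yields a filtration of $\Mb_\CG^\gauche \otimes_{P_\CG^\gauche} eM_\zG$ with layers $\Mb_\CG^\gauche \otimes_{P_\CG^\gauche} k_Z(\zG)$. Via the Morita equivalence of Corollaire~\ref{morita lissite r}, the simple $\LC_\CG^\gauche(C)$ corresponds to the one-dimensional $(\Mb_\CG^\gauche \otimes_{P_\CG^\gauche} Z)$-module $e\LC_\CG^\gauche(C)$ on which $z\in Z$ acts by $w(\copie(z))\bmod\rG_\CG^\gauche$ for $w\in C$, and Proposition~\ref{iso galois} applied to the decomposition $\Mb_\CG^\gauche\otimes_{\Kb_\CG^\gauche} k_Z(\zG)\simeq\prod \Mb_\CG^\gauche$ shows that this specific character appears with multiplicity exactly $1$ in each layer -- that is precisely where $\deg_\CG(C^\DD)$ gets neutralized: the degree equals the number of factors in the product, and only one of them is the one selected by $C$. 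Hence $l=m$ without any case analysis. You identified the right obstacle but did not supply the mechanism -- the filtration by the residue field combined with Proposition~\ref{iso galois} -- that makes it evaporate.
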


\bigskip

\begin{proof}
Notons $l$ la longueur du $Z_\zG$-module $eM_\zG$ et $m$ la multiplicit\'e de $\LC_\CG^\gauche(C)$ 
dans $\Mb_\CG^\gauche \otimes_{P_\CG^\gauche} M$. 
Ainsi, le $Z_\zG$-module $eM_\zG$ 
admet une filtration
$$0=M_0 \subset M_1 \subset \cdots \subset M_l=eM_\zG$$
telle que $M_i/M_{i-1} \simeq Z_\zG/\zG Z_\zG=k_Z(\zG)$ pour tout $i$. Alors le 
$(\Mb_\CG^\gauche \otimes_{P_\CG^\gauche} Z_\zG)$-module $\Mb_\CG^\gauche \otimes_{P_\CG^\gauche}  eM_\zG$ 
admet une filtration 
$$0=M_0' \subset M_1' \subset \cdots \subset M_l'=\Mb_\CG^\gauche \otimes_{P_\CG^\gauche}  eM_\zG$$
telle que $M_i'/M_{i-1}' \simeq \Mb_\CG^\gauche \otimes_{P_\CG^\gauche}  k_Z(\zG)$ pour tout $i$. 

D'autre part, \`a travers l'\'equivalence de Morita du corollaire~\ref{morita lissite r}, 
le module simple $\LC^\gauche_\CG(C)$ devient $e\LC_\CG^\gauche(C)$, qui 
est un $(\Mb_\CG^\gauche \otimes_{P_\CG^\gauche} Z)$-module, de dimension $1$ comme 
$\Mb_\CG^\gauche$-espace vectoriel, et sur lequel 
un \'el\'ement $z \in Z$ agit par multiplication par $w(\copie(z)) \mod \rG$, o\`u $w \in C$. 
Pour montrer que $l=m$, il suffit donc de v\'erifier que la multiplicit\'e de $e\LC_\CG^\gauche(C)$ 
dans le $(\Mb_\CG^\gauche \otimes_{P_\CG^\gauche} Z)$-module 
$\Mb_\CG^\gauche \otimes_{P_\CG^\gauche} k_Z(\zG)$ est \'egale \`a $1$, ce qui 
d\'ecoule de la proposition~\ref{iso galois} et de la d\'efinition de l'id\'eal $\zG$.
\end{proof}

\bigskip

\begin{coro}\label{coro:cellulaire-multiplicite}
Soit $C$ une $\CG$-cellule de Calogero-Moser \`a gauche et soit $\zG=\zG_\CG^\gauche(C^\DD)$. Alors
$\mult_{C,\chi}^\calo = \longueur_{Z_\zG}\bigl(eP_\CG^\gauche \MC^\gauche(\chi)\bigr)_\zG$.
\end{coro}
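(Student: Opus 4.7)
The plan is to apply Proposition~\ref{prop:multiplicite-Z} directly, taking as input the specific module
$$M = P_\CG^\gauche \MC^\gauche(\chi) = P_\CG^\gauche \otimes_{P^\gauche} \MC^\gauche(\chi).$$
First I would verify that $M$ satisfies the hypotheses of the proposition: it is naturally a module over $P_\CG^\gauche \otimes_P \Hb = \Hb/\pG_\CG^\gauche \Hb$ (the actions of $\pGba$ and of $\CG$ are both killed, the former because $\MC^\gauche(\chi)$ is already a $\Hb^\gauche$-module, the latter by construction of the base change), and it is of finite type over $P_\CG^\gauche$ since $\MC^\gauche(\chi)$ is free of rank $|W|\chi(1)$ over $P^\gauche$ by~\ref{dimension left}. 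In particular it is of finite type as a $P_\CG^\gauche \Hb$-module.

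Having checked the hypothesis, Proposition~\ref{prop:multiplicite-Z} then identifies $\longueur_{Z_\zG}(eM_\zG)$ with the multiplicity of $\LC_\CG^\gauche(C)$ in the $\Mb_\CG^\gauche \Hb^\gauche$-module $\Mb_\CG^\gauche \otimes_{P_\CG^\gauche} M$. The next step is simply to simplify this tensor product: by transitivity of base change,
$$\Mb_\CG^\gauche \otimes_{P_\CG^\gauche} \bigl(P_\CG^\gauche \otimes_{P^\gauche} \MC^\gauche(\chi)\bigr) \;\simeq\; \Mb_\CG^\gauche \otimes_{P^\gauche} \MC^\gauche(\chi) \;=\; \Mb_\CG^\gauche \MC^\gauche(\chi),$$
since $\Mb_\CG^\gauche$ is already a $P_\CG^\gauche$-algebra (the morphism $P^\gauche \to \Mb_\CG^\gauche$ factors through $P_\CG^\gauche$ because $\pG_\CG^\gauche \subset \rG_\CG^\gauche$).

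Finally I would invoke the very definition of the multiplicities $\mult_{C,\chi}^\calo$ given by~\eqref{eq:cellulaire} (and the expansion preceding it), which states that the multiplicity of $\LC_\CG^\gauche(C)$ in the class $\isomorphisme{\Mb_\CG^\gauche\MC^\gauche(\chi)}_{\Mb_\CG^\gauche \Hb^\gauche}$ is precisely $\mult_{C,\chi}^\calo$. Combining the three steps yields the desired equality.

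There is no substantial obstacle: the corollary is essentially a reformulation of the preceding proposition in which one chooses the module $M$ that is governed, after extension of scalars to $\Mb_\CG^\gauche$, by the coefficients $\mult_{C,\chi}^\calo$. The only point meriting care is the verification that the base change $\Mb_\CG^\gauche \otimes_{P_\CG^\gauche} M$ genuinely recovers $\Mb_\CG^\gauche \MC^\gauche(\chi)$ (with the correct $\Mb_\CG^\gauche \Hb^\gauche$-module structure), which follows from associativity of tensor products once one has observed that the localisation $\Mb_\CG^\gauche = k_R(\rG_\CG^\gauche)$ factors through $P_\CG^\gauche$.
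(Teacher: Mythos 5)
Your argument is correct and is exactly the one the paper intends: the corollary is the special case of Proposition~\ref{prop:multiplicite-Z} for $M = P_\CG^\gauche \MC^\gauche(\chi)$, and the rest is the base-change identification $\Mb_\CG^\gauche \otimes_{P_\CG^\gauche} P_\CG^\gauche \MC^\gauche(\chi) \simeq \Mb_\CG^\gauche \MC^\gauche(\chi)$ together with the defining expansion of $\mult_{C,\chi}^\calo$ in~\eqref{eq:cellulaire}.
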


\bigskip

\begin{rema}\label{rema:cellulaires-pas-choix}
Le corollaire~\ref{coro:cellulaire-multiplicite} montre que 
$$\sum_{\chi \in \Irr(W)} \longueur_{Z_\zG}\bigl(eP_\CG^\gauche \MC^\gauche(\chi)\bigr)_\zG \cdot \chi$$
est un $\calo$-caract\`ere $\CG$-cellulaire de $W$ et tous sont obtenus ainsi, pour $\zG \in \Upsilon^{-1}(\pG_\CG^\gauche)$. 

Cela donne une d\'efinition de l'ensemble des $\calo$-caract\`eres $\CG$-cellulaires qui n'utilise 
\`a aucun moment le choix d'un id\'eal premier $\rG_\CG^\gauche$ de $R$ au-dessus de $\pG_\CG^\gauche$.\finl
\end{rema}

\bigskip

\begin{coro}\label{coro:cardinal-C}
Soit $C$ une $\CG$-cellule de Calogero-Moser \`a gauche et soit $\zG=\zG_\CG^\gauche(C^\DD)$. Alors 
$|C|=\longueur_{Z_\zG}(Z/\pG_\CG^\gauche Z)_\zG$. 
\end{coro}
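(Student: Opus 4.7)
The plan is to apply Proposition~\ref{prop:multiplicite-Z} to the specific module $M = P_\CG^\gauche \Hb^\gauche e$, which packages both sides of the desired equality simultaneously: the multiplicity side recovers $|C|$ thanks to the Morita-type decomposition~(\ref{he verma}), while the length side becomes exactly $\longueur_{Z_\zG}(Z/\pG_\CG^\gauche Z)_\zG$ via the Satake isomorphism.

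More precisely, I would first observe that $M = P_\CG^\gauche \Hb^\gauche e$ is a finitely generated $P_\CG^\gauche \Hb^\gauche$-module (since $\Hb e$ is a finitely generated $P$-module by the PBW decomposition, cf.\ Corollary~\ref{coro:P-libre}), so Proposition~\ref{prop:multiplicite-Z} applies and tells us that $eM_\zG$ has finite length over $Z_\zG$ and
\[
\longueur_{Z_\zG}(eM_\zG) \;=\; \bigl[\Mb_\CG^\gauche \otimes_{P_\CG^\gauche} M : \LC_\CG^\gauche(C)\bigr].
\]
Next, I would identify each side explicitly. On the left, since $e\Hb e \simeq Z$ via the Satake isomorphism (Theorem~\ref{theo:satake}), we get
\[
eM \;=\; e\,P_\CG^\gauche \Hb^\gauche e \;=\; (e\Hb e)/\pG_\CG^\gauche(e\Hb e) \;\simeq\; Z/\pG_\CG^\gauche Z,
\]
so $\longueur_{Z_\zG}(eM_\zG) = \longueur_{Z_\zG}\bigl((Z/\pG_\CG^\gauche Z)_\zG\bigr)$.

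On the right, $\Mb_\CG^\gauche \otimes_{P_\CG^\gauche} M = \Mb_\CG^\gauche \Hb^\gauche e$. Exactly as in the proof of Proposition~\ref{multiplicite cm}(b), combining the Morita equivalence of Theorem~\ref{theo:KH-mat} (which gives $\isomorphisme{\Mb\Hb e}_{\Mb\Hb} = \sum_{w\in W}\isomorphisme{\LC_w}_{\Mb\Hb}$) with the reduction modulo $\rG_\CG^\gauche$ that groups the simples $\LC_w$ into the classes $\LC_\CG^\gauche(C)$ according to $\CG$-cells, yields
\[
\isomorphisme{\Mb_\CG^\gauche \Hb^\gauche e}_{\Mb_\CG^\gauche \Hb^\gauche} \;=\; \sum_{C' \in \cmcellules_L^\CG(W)} |C'|\cdot\isomorphisme{\LC_\CG^\gauche(C')}_{\Mb_\CG^\gauche \Hb^\gauche},
\]
so the multiplicity of $\LC_\CG^\gauche(C)$ is exactly $|C|$. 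Putting the two identifications together gives the claimed equality.

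There is no real obstacle here: everything needed has been set up just before. The only subtlety worth flagging is checking that $P_\CG^\gauche\Hb^\gauche e$ really is finitely generated over $P_\CG^\gauche\Hb^\gauche$ (immediate from the freeness of $\Hb e$ over $P$) and that the Morita decomposition of $\Mb_\CG^\gauche\Hb^\gauche e$ is indeed obtained from the generic one by the specialization argument of Proposition~\ref{multiplicite cm}(b)—this is where one uses that $\pG_\CG^\gauche$ avoids the singular locus of $\ZCB$ (Example~\ref{exemple lisse}) so that Corollary~\ref{morita lissite r} gives a genuine Morita equivalence after reduction.
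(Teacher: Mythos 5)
Your proof is correct and takes essentially the same route as the paper's: both rest on the Satake/Morita identification of $Z/\pG_\CG^\gauche Z$ with $e(P_\CG^\gauche\Hb^\gauche e)$, Proposition~\ref{prop:multiplicite-Z} (the length/multiplicity dictionary), and the decomposition $\isomorphisme{\Mb_\CG^\gauche\Hb^\gauche e}=\sum_{C'}|C'|\cdot\isomorphisme{\LC_\CG^\gauche(C')}$ established in the proof of Proposition~\ref{multiplicite cm}(b). The only organizational difference is that you apply Proposition~\ref{prop:multiplicite-Z} directly to the full module $P_\CG^\gauche\Hb^\gauche e$, whereas the paper filters that module by the Verma modules $P_\CG^\gauche\MC^\gauche(\chi)$ (each with multiplicity $\chi(1)$) and invokes Corollary~\ref{coro:cellulaire-multiplicite} term by term before summing with Proposition~\ref{multiplicite cm}(b); the two bookkeepings are equivalent.
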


\bigskip

\begin{proof}
En effet, via l'\'equivalence de Morita du corollaire~\ref{morita lissite p}, le module $Z/\pG_\CG^\gauche Z$ 
correspond au module $P_\CG^\gauche \Hb e$~: ce dernier est filtr\'e par les $P_\CG^\gauche \MC^\gauche(\chi)$, 
chaque $P_\CG^\gauche \MC^\gauche(\chi)$ apparaissant $\chi(1)$ fois (comme dans le th\'eor\`eme~\ref{dim graduee bonne}). 
Ainsi, 
$$\longueur_{Z_\zG}(Z/\pG_\CG^\gauche Z)_\zG=\sum_{\chi \in \Irr(W)} \mult_{C,\chi}^\calo \cdot \chi(1)=|C|,$$
en utilisant la proposition~\ref{multiplicite cm}(b).
\end{proof}

\bigskip

\begin{coro}\label{coro:cellulaire-ordre-2}
Suposons que toutes les r\'eflexions de $W$ soient d'ordre $2$ et notons 
$\t_0=(-1,1,\e) \in \kb^\times \times \kb^\times \times W^\wedge$. Soit $\zG$ un id\'eal premier de $Z$ 
et soient $C$ et $C_\e$ deux $\CG$-cellules de Calogero-Moser \`a gauche telles que 
$\zG_\CG^\gauche(C^\DD)=\zG$ et $\zG_\CG^\gauche(C_\e^\DD)=\t_0(\zG)$. Alors 
$$\isomorphisme{C_\e}_\CG^\calo = \e \cdot \isomorphisme{C}_\CG^\calo.$$
Si de plus $w_0=-\Id_V \in W$, alors on peut prendre $C_\e=Cw_0$ et donc 
$$\isomorphisme{C w_0}_\CG^\calo = \e \cdot \isomorphisme{C}_\CG^\calo.$$
\end{coro}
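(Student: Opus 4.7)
The strategy is to transport multiplicities through the algebra automorphism $\tau_0$ of $\Hb$, exploiting the fact that, since all reflections of $W$ have order~$2$, equation~(\ref{tau 0}) gives $\tau_0|_P = \mathrm{id}_P$. In particular $\tau_0$ preserves $\pG_\CG^\gauche$ and induces a $P_\CG^\gauche$-algebra automorphism of $\Hb^\gauche$, hence a category auto-equivalence $\tau_0^*$ of $\Hb^\gauche$-modules. Applying Corollary~\ref{coro:extension-action} (or equivalently the construction recalled in~\ref{ex:action-k-k-h-n}), $\tau_0$ extends uniquely to an automorphism of $R$ that preserves $Z \cong Q$ and sends $\zG$ to $\tau_0(\zG)$. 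Since $\tau_0 = (-1,1,\e)$ has order $2$ in $\kb^\times \times \kb^\times \times W^\wedge$, we have $\tau_0^{-1} = \tau_0$ throughout.

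I would then identify the $\tau_0$-twists of the two relevant kinds of modules. For the left Verma modules, the argument proving Proposition~\ref{tau M} copies over verbatim to the left setting (Lemma~\ref{tout stable} ensures $\tau_0$ preserves $\Hbov^\moins$, and the commutative diagram~(\ref{M lineaire}) transfers to $\MC^\gauche = \Hb^\gauche \otimes_{\Hbov^\moins}(-)^{(-)}$), yielding $\lexp{\tau_0}{\MC^\gauche(\chi)} \simeq \MC^\gauche(\chi\e)$. For the simple modules, by Theorem~\ref{theo:gauche} combined with Theorem~\ref{theo:calogero}, each simple $\Mb_\CG^\gauche \Hb^\gauche$-module is determined by its central character: on $\LC_\CG^\gauche(C)$ this has kernel $\zG_\CG^\gauche(C^\DD) = \zG$ (explicitly, $q \in Q$ acts by $w(q) \bmod \rG_\CG^\gauche$ for any $w \in C^\DD$); twisting by $\tau_0$ replaces the kernel with $\tau_0(\zG) = \zG_\CG^\gauche(C_\e^\DD)$, so $\lexp{\tau_0}{\LC_\CG^\gauche(C)} \simeq \LC_\CG^\gauche(C_\e)$ by the same parametrisation.

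Since $\tau_0^*$ preserves multiplicities of composition factors, one obtains
$$\mult_{C,\chi}^\calo = [\Mb_\CG^\gauche \MC^\gauche(\chi) : \LC_\CG^\gauche(C)] = [\Mb_\CG^\gauche \MC^\gauche(\chi\e) : \LC_\CG^\gauche(C_\e)] = \mult_{C_\e,\chi\e}^\calo.$$
Substituting $\chi \mapsto \chi\e$ in the definition~(\ref{eq:cellulaire}) then yields $\isomorphisme{C_\e}_\CG^\calo = \e \cdot \isomorphisme{C}_\CG^\calo$. For the last assertion, Proposition~\ref{tau 0 in G} identifies $\tau_0$ with $w_0 \in G$ when $w_0 = -\Id_V \in W$, acting on $W \simeq G/H$ by $w \mapsto w_0 w = w w_0$ (using centrality of $w_0$). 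A direct computation on $\qG_\CG^\gauche(C^\DD) = w^{-1}(\rG_\CG^\gauche) \cap Q$ then gives $\tau_0(\zG_\CG^\gauche(C^\DD)) = \zG_\CG^\gauche((Cw_0)^\DD)$, validating the choice $C_\e = Cw_0$.

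The main obstacle will be the identification $\lexp{\tau_0}{\LC_\CG^\gauche(C)} \simeq \LC_\CG^\gauche(C_\e)$: although central characters provide a natural invariant, one must verify that $\tau_0$ intertwines correctly with the base change to $\Mb_\CG^\gauche$. Since $\tau_0$ need not preserve $\rG_\CG^\gauche$ (only its contraction to $Z$ and to $P$), this uses crucially the independence of the cellular parametrisation from the choice of prime above $\pG_\CG^\gauche$, i.e., Remark~\ref{rema:cellulaire-independant}.
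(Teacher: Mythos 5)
Your proof is correct and follows the same approach the paper takes (which is stated extremely tersely: the first identity is derived from the twist isomorphism $\lexp{\t_0}{\MC^\gauche(\chi)} \simeq \MC^\gauche(\chi\e)$, and the second is said to follow as in Corollary~\ref{w0 epsilon}). You usefully flesh out the one genuine subtlety the paper glosses over, namely that $\t_0$ need not fix $\rG_\CG^\gauche$, so the twisted simple lives over a $G$-conjugate prime and one must invoke the independence of $\isomorphisme{C}_\CG^\calo$ from the choice of prime above $\pG_\CG^\gauche$ (Remark~\ref{rema:cellulaire-independant}) to transport multiplicities back.
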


\begin{proof}
La premi\`ere assertion d\'ecoule du fait que $\lexp{\t_0}{\MC^\gauche(\chi)} \simeq \MC^\gauche(\chi \e)$ tandis que 
la deuxi\`eme se d\'emontre comme le corollaire~\ref{w0 epsilon}.
\end{proof}

\bigskip

\subsection{Caract\`eres cellulaires et $\bb$-invariant} 
Le th\'eor\`eme suivant est un analogue du th\'eor\`eme~\ref{dim graduee bonne} (\'enonc\'es (b) et (c)). 

\bigskip

\begin{theo}\label{theo:b-minimal-cellulaire}
Soit $C$ une $\CG$-cellule de Calogero-Moser \`a gauche. Alors il existe un unique 
caract\`ere $\chi$ de $\bb$-invariant minimal tel que $\mult_{C,\chi}^\calo \neq 0$ (notons-le $\chi_C$). 
De plus, le coefficient de $\tb^{\bb_{\chi_C}}$ dans $f_{\chi_C}(\tb)$ est \'egal \`a $1$. 
\end{theo}

\begin{proof}
Notons $b_C$ l'idempotent primitif central de $\Mb_\CG^\gauche\Hb^\gauche$ associ\'e \`a $C$. 
L'alg\`ebre d'endomorphismes de $b_C \Mb_\CG^\gauche\Hb^\gauche e$ est \'egale \`a 
$(\Mb_\CG^\gauche \otimes_P Z)b_C$ et cette alg\`ebre (commutative) est locale. Cela montre 
que le module projectif $b_C \Mb_\CG^\gauche\Hb^\gauche e$ admet un unique quotient simple. 

La preuve se conclut alors comme dans le th\'eor\`eme~\ref{dim graduee bonne}, en remarquant 
que $b_C \Mb_\CG^\gauche\Hb^\gauche e$ est filtr\'e par des modules de la forme $b_C \Mb_\CG^\gauche \MC^\gauche(\chi)$. 
\end{proof}

\section{Probl\`emes, questions}

\medskip

Les premi\`eres questions th\'eoriques g\'en\'erales concernent les 
id\'eaux premiers $\rG_\CG^\gauche$ et le corps $k_R(\rG_\CG^\gauche)$. 
En effet, la remarque~\ref{R rgauche} montre qu'en g\'en\'eral 
$\Kb^\gauche \varsubsetneq \Mb^\gauche$. N\'eanmoins, 
la question de d\'eterminer $\Mb^\gauche$ est int\'eressante~:

\bigskip

\begin{probleme}\label{M gauche}
Calculer $R/\rG^\gauche$ ou, du moins, d\'eterminer l'extension 
$\Kb^\gauche \subset \Mb^\gauche$ ou son groupe de Galois (qui est \'egal 
\`a $D^\gauche/I^\gauche$).
\end{probleme}

\bigskip

\begin{question}\label{rgauche c}
Est-ce que $\rG^\gauche + \CG \, R$ est un id\'eal premier de $R$~?
\end{question}

\bigskip

Si la r\'eponse \`a la question~\ref{rgauche c} est positive, alors 
on peut prendre $\rG_\CG^\gauche = \rG^\gauche + \CG\, R$. Cela impliquerait 
entre autres que $D^\gauche \subset D_\CG^\gauche$. Une variante plus faible 
de la question~\ref{rgauche c} est la suivante~:

\bigskip

\begin{question}\label{d gauche}
Peut-on choisir $\rG_\CG^\gauche$ de sorte que $\rG^\gauche \subset \rG_\CG^\gauche$ et 
$D^\gauche \subset D_\CG^\gauche$~?
\end{question}

\bigskip

On a $\pG_\CG^\gauche + \pG_\CG^\droite = \pGba_\CG$. Il est naturel de se demander si cela se rel\`eve \`a $R$~:

\bigskip

\begin{question}\label{r bil}
Est-il vrai que $\rG_\CG^\gauche + \rG_\CG^\droite = \rGba_\CG$~?
\end{question}

\bigskip

Bien s\^ur, $I_\CG^\gauche$ et $I_\CG^\droite$ sont contenus dans $\Iba_\CG$. 

\bigskip

\begin{question}\label{semicontinuite droite gauche}
Est-ce que $\Iba_\CG = \langle I_\CG^\gauche , I_\CG^\droite \rangle$~?
\end{question}

\bigskip

Une r\'eponse positive \`a la question~\ref{semicontinuite droite gauche} impliquerait que toute partie de 
$W$ qui est \`a la fois une union de $\CG$-cellules de Calogero-Moser \`a gauche et une union de $\CG$-cellules 
\`a droite serait aussi une union de $\CG$-cellules bilat\`eres.

\chapter{Matrices de d\'ecomposition}

\section{Cadre g\'en\'eral}\label{part:decomposition}


\medskip

\def\propdec{({\DC\!\acute{e}\!c})}

Soit $R_1$ une $R$-alg\`ebre commutative int\`egre et soit $\rG_1$ un id\'eal premier de $R_1$. 
On pose $R_2=R_1/\rG_1$, $K_1=\Frac(R_1)$ et $K_2=\Frac(R_2)=k_{R_1}(\rG_1)$. 
Nous dirons que le couple $(R_1,\rG_1)$ v\'erifie la propri\'et\'e 
$\propdec$ si les trois assertions suivantes sont satisfaites~:
\begin{quotation}
\begin{itemize}
\item[(D1)] $R_1$ est noeth\'erien, int\`egre.

\item[(D2)] Si $h \in R_1\Hb$ et si $\LC$ est un $K_1\Hb$-module simple, alors 
le polyn\^ome caract\'eristique de $h$ (pour son action sur $\LC$) est \`a coefficients dans $R_1$ 
(notons que cette assertion est automatiquement satisfaite si $R_1$ est int\'egralement clos).

\item[(D3)] Les alg\`ebres $K_1\Hb$ et $K_2\Hb$ sont d\'eploy\'ees.
\end{itemize}
\end{quotation}
Dans ce contexte, totalement similaire \`a celui 
de la section~\ref{section:decomposition} (voir l'appendice~\ref{appendice: blocs}), 
l'application de d\'ecomposition 
$$\dec_{R_2\Hb}^{R_1\Hb} : \groth(K_1\Hb) \longto \groth(K_2\Hb)$$
est bien d\'efinie (voir la proposition~\ref{prop:geck-rouquier}).

Si $\rG_2$ est un id\'eal premier de $R_1$ contenant $\rG_1$, notons 
$R_3=R_1/\rG_2 =R_2/(\rG_2/\rG_1)$, $K_3=\Frac(R_3)=k_{R_1}(\rG_2)=k_{R_2}(\rG_2/\rG_1)$ 
et supposons que $(R_2,\rG_2)$ v\'erifie $\propdec$. 
Ainsi, les applications $\dec_{R_2\Hb}^{R_1\Hb}$, $\dec_{R_3\Hb}^{R_1\Hb}$ et 
$\dec_{R_3\Hb}^{R_2\Hb}$ sont bien d\'efinies et, 
d'apr\`es le corollaire~\ref{geck rouquier}, le diagramme 
\equat\label{transitivite decomposition}
\diagram
\groth(K_1\Hb) \rrto^{\DS{\dec_{R_2\Hb}^{R_1\Hb}}} \ddrrto_{\DS{\dec_{R_3\Hb}^{R_1\Hb}}} 
&& \groth(K_2\Hb) \ddto^{\DS{\dec_{R_3\Hb}^{R_2\Hb}}}\\
&&\\
&&\groth(K_3\Hb)\\
\enddiagram
\endequat
est commutatif.

\bigskip

\begin{exemple}[Sp\'ecialisation]
\label{subsection:specialisation cellules} 
Soit $c \in \CCB$. Rappelons que $\qG_c=\pG_c Q$ est premier et notons 
$\rG_c$ un id\'eal premier de $R$ au-dessus de $\qG_c$. Reprenons les notations 
de l'exemple~\ref{specialisation} et de la sous-section~\ref{subsection:specialisation galois}. 
Alors $R/\rG_c$ est une $R$-alg\`ebre, de corps des fractions $\Mb_c$. 
Comme dans la preuve du th\'eor\`eme~\ref{theo:KH-mat}, on d\'eduit du corollaire~\ref{morita lissite p}
un isomorphisme de $\Kb_c$-alg\`ebres 
$$\Hb_c \longiso \Mat_{|W|}(\Lb_c)$$
qui induit, comme dans le cas g\'en\'erique (voir~\S\ref{section:deploiement}), un isomorphisme de $\Mb_c$-alg\`ebres 
$$\Mb_c \Hb_c 
\stackrel{\sim}{\longto} \prod_{d (D_c \cap H) \in D_c /(D_c \cap H)} \Mat_{|W|}(\Mb_c).$$
Ainsi, la $\Mb_c$-alg\`ebre $\Mb_c\Hb_c$ est d\'eploy\'ee, tout comme $\Mb \Hb$, 
et ses modules simples sont index\'es par $D_c/(D_c \cap H)$~: ce dernier 
ensemble est en bijection naturelle avec $W$ (corollaire~\ref{Dc W}). 
L'application de d\'ecomposition 
$\dec_{R_c\Hb}^{R\Hb}$ est donc bien d\'efinie~: nous la noterons $\dec_c$.  \indexnot{da}{\dec_c}  
On peut de plus identifier 
$\groth(\Mb_c \Hb_c)$ avec le $\ZM$-module $\ZM W$ et, \`a travers cette identification, 
le diagramme
\equat\label{dec c}
\diagram
\groth(\Mb\Hb) \rrto^{\DS{\dec_c}} \ar@{=}[d]&& \groth(\Mb_c\Hb_c) \ar@{=}[d] \\
\ZM W \rrto^{\DS{\Id_{\ZM W}}} && \ZM W
\enddiagram
\endequat
est commutatif. Cela d\'ecoule du fait que l'\'equivalence de Morita entre 
$\Hb_c$ et $\Lb_c$ est la ``sp\'ecialisation en $c$'' de l'\'equivalence de Morita 
entre $\Hb$ et $\Lb$.\finl
\end{exemple}

\bigskip

%
%
%

\section{Cellules et matrices de d\'ecomposition}

\medskip

Soit $\rG$ un id\'eal premier de $R$. Nous noterons 
$D_\rG$ le groupe de d\'ecomposition de $\rG$ dans $G$ et $I_\rG$ son groupe d'inertie.
Le groupe de Galois $G$ (respectivement le groupe de d\'ecomposition $D_\rG$) 
agit naturellement sur le groupe de Grothendieck $\groth(\Mb\Hb)$ (respectivement $\groth(k_R(\rG)\Hb)$). 
Alors~:

\bigskip

\begin{lem}\label{lem:dec-constant}
Supposons que la $k_R(\rG)$-alg\`ebre $k_R(\rG)\Hb$ est d\'eploy\'ee. Alors~:
\begin{itemize}
\itemth{a} L'application de d\'ecomposition $\dec_{(R/\rG)\Hb}^{R\Hb}$ est bien d\'efinie 
(nous la noterons $\dec_\rG : \groth(\Mb\Hb) \longto \groth(k_R(\rG)\Hb)$).  \indexnot{da}{\dec_\rG}  

\itemth{b} L'application de d\'ecomposition $\dec_\rG$ est $D_\rG$-\'equivariante.

\itemth{c} Le groupe $I_\rG$ agit trivialement sur $\groth(k_R(\rG)\Hb)$.

\itemth{d} Si $w$ et $w'$ sont dans la m\^eme $\rG$-cellule de Calogero-Moser, alors 
$\dec_\rG (\LC_w)=\dec_\rG(\LC_{w'})$. 
\end{itemize}
\end{lem}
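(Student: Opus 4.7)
My plan is to address the four assertions in order, with (a) being a verification, (b) being the technical heart, and (c)-(d) following formally.

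For assertion (a), I would invoke the general framework of \S\ref{part:decomposition}, so it suffices to check properties (D1), (D2), (D3) for the pair $(R,\rG)$. Property (D1) holds because $P$ is a finitely generated $\kb$-algebra (hence Nagata) and $R$ is the integral closure of $P$ in the finite extension $\Mb/\Kb$; hence $R$ is a finitely generated $P$-module and therefore noetherian. Property (D2) is automatic since $R$ is integrally closed by construction. Property (D3) is split between the generic side, where $\Mb\Hb$ is deployed by (\ref{deployee}), and the residual side, which is our running hypothesis on $k_R(\rG)\Hb$. Thus $\dec_\rG$ is well-defined.

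For assertion (b), the idea is to observe that $D_\rG$ acts on $R\Hb = R\otimes_P \Hb$ through its action on the first factor, stabilizes the two-sided ideal $\rG\Hb$, and hence induces a (semilinear) action on $(R/\rG)\Hb = k_R(\rG)\Hb$. The decomposition map is constructed from $R_\rG$-lattices: given a simple $\Mb\Hb$-module $\LC$, one picks an $R_\rG$-stable lattice $L$ (which exists since $\Mb\Hb = R_\rG\Hb \otimes_{R_\rG} \Mb$ is finite-type and $R_\rG$ is noetherian), and $\dec_\rG([\LC]) = [L/\rG L]$. For $d\in D_\rG$, the lattice $d(L)$ sits inside $d_*(\LC)$, and the equality $d(\rG L) = \rG\, d(L)$ (which uses $d(\rG)=\rG$) yields an isomorphism of $k_R(\rG)\Hb$-modules $d(L)/\rG d(L) \simeq d_*(L/\rG L)$. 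This is precisely equivariance.

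Assertion (c) is then immediate: by definition of the inertia group, every $i\in I_\rG$ acts trivially on $R/\rG$, hence trivially on $k_R(\rG)\Hb$ and \emph{a fortiori} trivially on $\groth(k_R(\rG)\Hb)$.

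Finally for (d), recall that the identification $W\longleftrightarrow G/H$ of \S\ref{subsection:action G} together with the construction of $\LC_w$ in \S\ref{section:deploiement} shows that the $G$-action on $\groth(\Mb\Hb) \simeq \ZM W$ coincides with the natural $G$-action on $W$: indeed, the central character $\o_w(z) = w(\copie(z))$ is sent by twisting with $g\in G$ to $\o_{g(w)}$. If $w \sim_\rG^\calo w'$, then by definition there exists $i \in I_\rG$ with $i(w) = w'$, whence $[\LC_{w'}] = i\cdot [\LC_w]$ in $\groth(\Mb\Hb)$. Applying $\dec_\rG$ and using (b) then (c) gives $\dec_\rG([\LC_{w'}]) = i\cdot \dec_\rG([\LC_w]) = \dec_\rG([\LC_w])$.

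The main obstacle I anticipate is making precise the semilinear equivariance statement in (b): one must be careful that $D_\rG$ acts on $\groth(\Mb\Hb)$ through its action on $\Mb$ (which permutes the simples $\LC_w$ according to the $G$-action on $W$), and symmetrically on $\groth(k_R(\rG)\Hb)$ through its action on $k_R(\rG)$; the functoriality of lattice reduction with respect to compatible ring automorphisms is what makes the two actions intertwine under $\dec_\rG$. Once this is set up cleanly, (c) and (d) are formal consequences.
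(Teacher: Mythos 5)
Your assertions (a), (c), (d) track the paper's argument closely. For (a) you spell out (D1)--(D3), which the paper summarizes in one sentence; for (c) and (d) your reasoning is identical (in (d) the paper likewise invokes (b) and (c) plus the fact that $\rG$-cells are $I_\rG$-orbits).

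For (b), however, you depart from the paper and there is a genuine gap. The paper proves (b) directly from the \emph{definition} of the decomposition map (Proposition~\ref{prop:geck-rouquier}): $\dec_\rG$ is characterized by the diagram with characteristic polynomials, so for $g\in D_\rG$ and $h\in\Hb$ one has $\charpol_\Mb^{g_*\LC}(h)=g\bigl(\charpol_\Mb^{\LC}(h)\bigr)$ in $R[\tb]$, and since $g$ preserves $\rG$ the reduction map $\reduction_\rG$ intertwines the two $g$-actions; injectivity of $\charpol_{k_R(\rG)}$ on $\groth^+(k_R(\rG)\Hb)$ then gives equivariance. You instead appeal to the \emph{lattice-reduction} characterization: you pick ``an $R_\rG$-stable lattice $L$'' and compute $\dec_\rG([\LC])=[L/\rG L]$. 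This is where the gap sits: Proposition~\ref{prop:geck-rouquier} only gives the formula $[\kb_{\OC_1}(\mG_1)L]=\dec_\rG([\LC])$ when $L$ is \emph{free} over $\OC_1$. The local ring $R_\rG$ need not be a discrete valuation ring (only when $\rG$ has height $1$), and a finitely generated torsion-free $R_\rG$-module need not be free; if $L$ is not free, $\dim_{k_R(\rG)}L/\rG L$ exceeds the generic rank and $[L/\rG L]$ is not $\dec_\rG([\LC])$. The standard repair is to replace $R_\rG$ by a valuation ring $\OC_1\subset\Mb$ dominating $R_\rG$ (over which finitely generated torsion-free modules are free), but then $d(\OC_1)$ for $d\in D_\rG$ is a \emph{different} valuation ring, and one has to track the semilinear identification of residue fields $k_{\OC_1}(\mG_1)\to k_{d(\OC_1)}(d(\mG_1))$ over $k_R(\rG)$ and invoke independence of $\dec_\rG$ on the choice of $\OC_1$. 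This all works, but it introduces exactly the bookkeeping that the characteristic-polynomial argument avoids, which is why the paper treats (b) as ``immediate.''
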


\begin{proof}
Puisque $R$ est int\'egralement clos, dire que la $k_R(\rG)$-alg\`ebre $k_R(\rG)\Hb$ est d\'eploy\'ee 
\'equivaut \`a dire que $(R,\rG)$ v\'erifie $\propdec$. 
Les applications de d\'ecomposition \'etant calcul\'ees par r\'eduction des polyn\^omes caract\'eristiques, 
l'assertion (b) est imm\'ediate. Le groupe $I_\rG$ agissant trivialement sur $k_R(\rG)$ par d\'efinition, 
(c) est clair. L'assertion (d) d\'ecoule alors de (b) et (c) car les $\rG$-cellules de Calogero-Moser 
sont les $I_\rG$-orbites.
\end{proof}

\bigskip

Le lemme~\ref{lem:dec-constant} dit que, restreinte \`a un $\rG$-bloc, l'application de d\'ecomposition 
$\dec_\rG$ est de rang $1$.

\bigskip

\begin{exemple}\label{exemple:lissite-decomposition}
Posons $\pG = \rG \cap P$ et supposons dans cet exemple, et seulement dans cet exemple, 
que $\qG_\singulier \cap P \not\subset \pG$. 
Alors le th\'eor\`eme~\ref{lissite et simples}(a) nous dit que 
la $k_R(\rG)$-alg\`ebre $k_R(\rG)\Hb$ est d\'eploy\'ee. Par cons\'equent, l'application 
de d\'ecomposition $\dec_\rG : \groth(\Mb\Hb) \to \groth(k_R(\rG)\Hb)$ 
est bien d\'efinie. Puisque les modules simples de $\Mb\Hb$ sont de dimension $|W|$, 
tout comme les $k_R(\rG)\Hb$-modules simples, l'application de d\'ecomposition envoie 
la classe d'un $\Mb\Hb$-module simple sur la classe d'un $k_R(\rG)\Hb$-module simple. 
Ainsi, $\dec_\rG$ d\'efinit une application surjective 
\equat\label{dec irr}
\dec_\rG : W \longto \Irr(k_R(\rG)\Hb)
\endequat
dont les fibres sont les $\rG$-cellules de Calogero-Moser 
(en vertu du lemme~\ref{lem:dec-constant}).\finl 
\end{exemple}

\bigskip

\begin{rema}\label{rema:gauche-droite-decomposition}
L'exemple pr\'ec\'edent s'applique en particulier au cas o\`u $\rG=\rG_\CG^\gauche$ ou $\rG_\CG^\droite$.\finl
\end{rema}

%
%

\bigskip

\section{Cellules \`a gauche, \`a droite, bilat\`eres et matrices de d\'ecomposition}

\medskip

Pour pouvoir d\'efinir des matrices de d\'ecomposition, il faut v\'erifier que 
certaines hypoth\`eses sont satisfaites (voir les conditions (D1), (D2) et (D3) pr\'ec\'edentes). 
C'est l'objet de la proposition suivante que de v\'erifier 
ces hypoth\`eses dans les cas qui nous int\'eressent~:

\bigskip

\begin{prop}\label{prop:d1-d2-d3}
Soit $\rG$ un id\'eal premier de $R$ parmi $\rG_\CG$, $\rG_\CG^\gauche$, $\rG_\CG^\droite$ ou 
$\rGba_\CG$. Alors~:
\begin{itemize}
\itemth{a} La $k_R(\rG)$-alg\`ebre $k_R(\rG)\Hb$ est d\'eploy\'ee.

\itemth{b} Supposons ici $\rG \neq \rGba_\CG$ ou $\CG=0$ ou $\CG_c$ pour un $c \in \CCB$. 
Si $\LC$ est un $k_R(\rG)\Hb$-module simple et si $h \in \Hb/\rG\Hb=(R/\rG)\Hb$, alors 
le polyn\^ome caract\'eristique de $h$ (pour son action sur $\LC$) est \`a coefficients 
dans $R/\rG$.
\end{itemize}
\end{prop}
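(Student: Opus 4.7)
The plan is to verify (a) and (b) case by case over the four allowed types of prime ideal, leveraging the tools already developed earlier in the text.

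For (a), each splitting assertion follows from results established earlier. When $\rG = \rGba_\CG$, Proposition~\ref{verma} applied with $K = \Mbov_\CG = k_R(\rGba_\CG)$ directly gives that $k_R(\rGba_\CG)\Hb = K\Hbov$ is split. When $\rG = \rG_\CG^\gauche$ or $\rG_\CG^\droite$, splitness is furnished by Theorem~\ref{theo:gauche}(a), whose hypotheses are met because Example~\ref{exemple lisse} shows that $\pG_\CG^\gauche$ and $\pG_\CG^\droite$ avoid the image of the singular locus of $\ZCB$. When $\rG = \rG_\CG$, the argument mirrors Example~\ref{subsection:specialisation cellules}: the Morita equivalence developed in Section~\ref{section:cellules et lissite} reduces splitness of $k_R(\rG)\Hb$ to that of the commutative algebra $k_R(\rG)Z$, which is automatic.

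For (b), the strategy is to exhibit, for each simple $k_R(\rG)\Hb$-module $\LC$, an $R/\rG$-submodule $L \subseteq \LC$ stable under the action of $(R/\rG)\Hb$ and spanning $\LC$ over $k_R(\rG)$, obtained as a free $R/\rG$-lattice. The natural candidates come from Verma-type modules: the baby Verma $\MCov(\chi)$ is free of rank $|W|\chi(1)$ over $\kb[\CCB] = P/\pGba$; the left (respectively right) Verma $\MC^\gauche(\chi)$ (respectively $\MC^\droite(\chi)$) is free over $P^\gauche$ (respectively $P^\droite$); and $\Hb e$ is free over $P$. In each case the relevant base ring injects into $R/\rG$, so tensoring produces a free $R/\rG$-module that surjects onto $\LC$. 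The case $\rG = \rGba_c$ is trivial because $\pGba_c$ is maximal, so $R/\rGba_c$ is a field.

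The main obstacle is that $\LC$ is in general a proper simple quotient of these Verma lattices: the characteristic polynomial of $h \in (R/\rG)\Hb$ acting on the full lattice visibly has coefficients in $R/\rG$, but the analogous statement on $\LC$ requires more care. The plan is to use Cayley--Hamilton applied to the full free lattice: any $h$ then satisfies a monic polynomial over $R/\rG$, so its eigenvalues on $\LC$ are integral over $R/\rG$, and the coefficients of its characteristic polynomial on $\LC$, being symmetric functions of these eigenvalues, are also integral over $R/\rG$. The final step is to check that these integral elements actually lie in $R/\rG$, which amounts to verifying that $R/\rG$ is integrally closed in $k_R(\rG)$. This is automatic when $R/\rG$ is a field; in the remaining cases it will follow from the \'etaleness of $Q$ over $P$ established at $\qGba$ (Lemma~\ref{qba}) and at $\qG^\gauche$ (Corollary~\ref{unicite qgauche}), combined with the smoothness of the base rings $\kb[\CCB] = P/\pGba$, $P^\gauche$, $P^\droite$ and $P$. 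It is precisely the failure of this integral-closure argument when $\rG = \rGba_\CG$ with $\CG$ neither zero nor of the form $\CG_c$ (where $R/\rGba_\CG$ may fail to be normal for lack of control on the generic fibre) that forces the exclusion in the statement of (b).
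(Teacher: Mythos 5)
Part (a) of your proposal is essentially the same as the paper's proof: it simply cites Proposition~\ref{verma}, Theorem~\ref{theo:gauche}(a), and the Morita argument of Example~\ref{subsection:specialisation cellules}. No issue there.

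Part (b) is where the approaches diverge, and your version has a genuine gap. Your Cayley--Hamilton argument correctly shows that the coefficients of the characteristic polynomial of $h$ on $\LC$ are \emph{integral} over $R/\rG$. You then want to conclude that they \emph{lie in} $R/\rG$ by arguing that $R/\rG$ is integrally closed in $k_R(\rG)$, deducing this normality from the \'etaleness of $Q$ over $P$ at $\qGba$ (Lemma~\ref{qba}) and at $\qG^\gauche$ (Corollary~\ref{unicite qgauche}). This inference is not valid: \'etaleness of $Q$ over $P$ at $\qGba$ gives $Q/\qGba \simeq P/\pGba$, but it says nothing about the ramification of $R$ over $Q$ above $\qGba$, which is what governs whether $R/\rGba$ is normal. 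In fact the paper explicitly raises exactly this as an open question (Question~\ref{question:r0} and Question~\ref{R/r} ask whether $R/\rG_0$, respectively $R/\rGba$, is integrally closed), and Remark~\ref{R rgauche} together with Problem~\ref{M gauche} show that even the structure of $R/\rG^\gauche$ is not understood in general. So your strategy would require resolving open questions of the paper before it could close.

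The paper's proof of (b) sidesteps normality of $R/\rG$ entirely, by two different devices depending on the prime. For $\rG = \rG_\CG$, $\rG_\CG^\gauche$, $\rG_\CG^\droite$, it uses the fact (Example~\ref{exemple:lissite-decomposition} and Remark~\ref{rema:gauche-droite-decomposition}, available because $\zG_\singulier \cap P \not\subset \pG$) that the decomposition map $\dec_\rG$ sends classes of simple $\Mb\Hb$-modules to classes of simple $k_R(\rG)\Hb$-modules; thus the characteristic polynomial of $h$ on $\LC$ is the reduction modulo $\rG$ of the characteristic polynomial of a lift $h' \in R\Hb$ acting on a simple $\Mb\Hb$-module, which lies in $R[\tb]$ because $R$ itself is integrally closed in $\Mb$ by construction. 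For $\rG = \rGba$ or $\rGba_c$, it uses instead that the simple modules come by scalar extension from $k_P(\pG)\Hbov$-modules, and it is $P/\pGba \simeq \kb[\CCB]$ (respectively $P/\pGba_c \simeq \kb$) that must be normal --- which it is. This also explains the exclusion of general $\CG$ in (b) more precisely than your ``lack of control on the generic fibre'': for arbitrary prime $\CG$, the base ring $\kb[\CCB]/\CG$ need not be normal, so the paper's argument fails, while the first mechanism is unavailable because $\zG_\singulier \cap P \subset \pGba_\CG$.
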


\begin{proof}
(a) a \'et\'e d\'emontr\'e pour $\rG=\rG_c$ dans l'exemple~\ref{subsection:specialisation cellules}, 
pour $\rG=\rG_\CG^\gauche$ dans le th\'eor\`eme~\ref{theo:gauche}(a) et pour $\rG=\rGba_\CG$ dans la proposition~\ref{verma}.

\medskip

Montrons maintenant (b). Tout d'abord, si $\rG=\rG_\CG$ ou $\rG_\CG^\gauche$ ou $\rG_\CG^\droite$, 
alors les images dans le groupe de Grothendieck $\groth(k_R(\rG)\Hb)$ des $k_R(\rG)\Hb$-modules 
simples sont les images des $\Mb\Hb$-modules simples par l'application de d\'ecomposition 
(voir l'exemple~\ref{exemple:lissite-decomposition} et la remarque~\ref{rema:gauche-droite-decomposition}). 
Ainsi, si $h$ est l'image dans $\Hb/\rG\Hb$ de $h' \in \Hb$, alors le polyn\^ome caract\'eristique 
de $h'$ agissant sur un $\Mb\Hb$-module simple est \`a coefficients dans $R$ (car $R$ 
est int\'egralement clos) et donc le polyn\^ome caract\'eristique de $h$ est \`a coefficient 
dans $R/\rG$ (c'est la r\'eduction modulo $\rG$ de celui de $R$).

Maintenant, si $\rG=\rGba$ ou $\rGba_c$, alors les $k_R(\rG)\Hb$-modules simples 
sont obtenus par extension des scalaires \`a partir des $k_P(\rG \cap P)\Hb$-modules simples, 
et le r\'esultat d\'ecoule alors du fait que $P/\pGba \simeq \kb[\CCB]$ et $P/\pGba_c \simeq \kb$ 
sont int\'egralement clos.
\end{proof}

\bigskip

Compte tenu de la proposition~\ref{prop:d1-d2-d3}, on peut donc d\'efinir 
des applications de d\'ecomposition
$$\dec_\CG : \groth(\Mb\Hb) \longiso \groth(\Mb_\CG\Hb),$$
$$\dec_\CG^\gauche : \groth(\Mb\Hb) \longto \groth(\Mb_\CG^\gauche \Hb^\gauche),$$
\indexnot{d}{\dec_\CG,\dec_\CG^\gauche,\dec_\CG^\droite\dec_\CG^\res}
$$\dec_\CG^\droite : \groth(\Mb\Hb) \longto \groth(\Mb_\CG^\droite \Hb^\droite),$$
$$\decba_\CG : \groth(\Mb\Hb) \longto \groth(\Mbov_\CG \Hbov),$$\indexnot{d}{\decba_\CG^\gauche,\decba_\CG^\droite}
$$\decba_\CG^\gauche : \groth(\Mb_\CG^\gauche\Hb^\gauche) \longto \groth(\Mbov_\CG \Hbov),$$
$$\decba_\CG^\droite : \groth(\Mb_\CG^\droite\Hb^\droite) \longto \groth(\Mbov_\CG \Hbov)$$
$$\dec_\CG^\res : \groth(\Mbov \Hbov) \longto \groth(\Mbov_\CG \Hbov).\leqno{\text{et}}$$
Comme d'habitude, l'indice $\CG$ sera omis lorsque $\CG=0$ ou sera remplac\'e par $c$ si $\CG=\CG_c$ 
(avec $c \in \CCB$). Rappelons que $\dec_\CG$ est un isomorphisme (d'apr\`es l'exemple~\ref{subsection:specialisation cellules}, 
qui s'\'etend facilement au cas o\`u $\CG_c$ est remplac\'e par un id\'eal premier $\CG$ quelconque 
de $\kb[\CCB]$)  et que 
$$\groth(\Mb_\CG\Hb) \simeq \ZM W\qquad\text{et}\qquad \groth(\Mbov_\CG\Hbov) \simeq \ZM \Irr(W).$$
Notons qu'en revanche, 
$\dec_\CG^\res : \groth(\Mbov\hskip1mm \Hbov) \simeq \ZM \Irr(W) \longto \groth(\Mbov_\CG\Hbov) \simeq \ZM \Irr(W)$ 
n'est pas un isomorphisme en g\'en\'eral. 
Des formules de transitivit\'e d\'ecoulent de~\ref{transitivite decomposition}.

\section{Classes des b\'eb\'es modules de Verma}

\medskip

Les modules de Verma $\MC^\gauche(\chi)$ \'etant d\'efinis sur l'anneau $P$, les propri\'et\'es fondamentales 
des applications de d\'ecomposition montrent que 
\equat\label{eq:dec M}
\decba_\CG^\gauche \isomorphisme{\Mb_\CG^\gauche\MC^\gauche(\chi)}_{\Mb_\CG^\gauche\Hb^\gauche}=
\isomorphisme{\Mbov_\CG\MCov(\chi)}_{\Mbov_\CG\Hbov}.
\endequat
Les multiplicit\'es $\mult_{C,\chi}^\calo$ sont d\'efinies \`a partir de l'image 
de $\Mb_\CG^\gauche\MC^\gauche(\chi)$ dans le groupe de Grothendieck 
$\groth(\Mb_\CG^\gauche\Hb^\gauche)$. Nous allons maintenant nous 
int\'eresser \`a l'image de $\Mbov_\CG\MCov(\chi)$ dans le groupe de Grothendieck 
$\groth(\Mbov_\CG\Hbov)$~:

Fixons une $\CG$-cellule de Calogero-Moser bilat\`ere $\G$ et posons 
$L_\CG(\G)=\decba_\CG \isomorphisme{\LC_w}_{\Mb\Hb}$, pour $w \in \G$. 
Notons que $L_\CG(\G)$ ne d\'epend pas du choix de $w \in \G$ d'apr\`es le lemme~\ref{lem:dec-constant}.

\bigskip 

\begin{prop}\label{prop:verma-rang-1}
Si $\chi \in \Irr_\G^\calo(W)$, alors 
$$\isomorphisme{\Mbov_\CG\MCov(\chi)}_{\Mbov_\CG\Hbov} = \chi(1) L_\CG(\G).$$
\end{prop}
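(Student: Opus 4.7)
The plan is to reduce the statement to two ingredients already at hand: the compatibility (\ref{eq:dec M}) between the "left" and "restricted" decomposition maps, and the fact that decomposition maps are constant on cells (Lemma~\ref{lem:dec-constant}).

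First I would start from (\ref{eq:dec M}), which reads
$$\isomorphisme{\Mbov_\CG\MCov(\chi)}_{\Mbov_\CG\Hbov} = \decba_\CG^\gauche \isomorphisme{\Mb_\CG^\gauche\MC^\gauche(\chi)}_{\Mb_\CG^\gauche\Hb^\gauche}.$$
Expanding the right-hand side via the definition of the cellular multiplicities (\ref{eq:cellulaire}) gives
$$\isomorphisme{\Mbov_\CG\MCov(\chi)} = \sum_{C \in \cmcellules_L^\CG(W)} \mult_{C,\chi}^\calo \cdot \decba_\CG^\gauche \isomorphisme{\LC_\CG^\gauche(C)}_{\Mb_\CG^\gauche\Hb^\gauche}.$$
So the problem reduces to computing $\decba_\CG^\gauche \isomorphisme{\LC_\CG^\gauche(C)}$ for those left cells $C$ for which the multiplicity is nonzero.

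The next step is to identify this image with $L_\CG(\G)$. By Example~\ref{exemple:lissite-decomposition} (applied to $\rG = \rG_\CG^\gauche$, which is valid by Example~\ref{exemple lisse}), the decomposition map $\dec_\CG^\gauche : \groth(\Mb\Hb) \to \groth(\Mb_\CG^\gauche\Hb^\gauche)$ sends each class $\isomorphisme{\LC_w}$ to $\isomorphisme{\LC_\CG^\gauche(C(w))}$, where $C(w)$ is the $\CG$-left cell containing $w$. Then, for any $w \in C$, the transitivity of decomposition maps (\ref{transitivite decomposition}) gives
$$\decba_\CG^\gauche \isomorphisme{\LC_\CG^\gauche(C)} = \decba_\CG^\gauche \circ \dec_\CG^\gauche \isomorphisme{\LC_w} = \decba_\CG \isomorphisme{\LC_w}.$$
Now Lemma~\ref{lem:dec-constant}(d), applied to $\rG = \rGba_\CG$, shows that $\decba_\CG \isomorphisme{\LC_w}$ depends only on the $\CG$-bilateral cell of $w$. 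Since every left cell $C$ with $\mult_{C,\chi}^\calo \neq 0$ lies in the bilateral cell $\G$ by Proposition~\ref{multiplicite cm}(c), we conclude that $\decba_\CG^\gauche \isomorphisme{\LC_\CG^\gauche(C)} = L_\CG(\G)$ for every such $C$.

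Finally, I would substitute this back and use Proposition~\ref{multiplicite cm}(a) to finish:
$$\isomorphisme{\Mbov_\CG\MCov(\chi)} = \Biggl(\sum_{C \subset \G} \mult_{C,\chi}^\calo\Biggr) L_\CG(\G) = \Biggl(\sum_{C \in \cmcellules_L^\CG(W)} \mult_{C,\chi}^\calo\Biggr) L_\CG(\G) = \chi(1) L_\CG(\G),$$
where the extension of the sum uses again Proposition~\ref{multiplicite cm}(c). The argument is essentially formal once we know the decomposition maps are well-defined and satisfy the expected functoriality; the only delicate point — which is however already taken care of by Example~\ref{exemple:lissite-decomposition} via the étaleness of $Q/P$ at $\qG^\gauche_\CG$ — is the identification $\dec_\CG^\gauche \isomorphisme{\LC_w} = \isomorphisme{\LC_\CG^\gauche(C(w))}$, so there is no real obstacle.
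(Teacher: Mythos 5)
Your proof is correct and follows essentially the same route as the paper's: start from (\ref{eq:dec M}), expand in cellular multiplicities, identify $\decba_\CG^\gauche \isomorphisme{\LC_\CG^\gauche(C)}$ with $L_\CG(\G)$ via transitivity of decomposition maps together with Lemma~\ref{lem:dec-constant}, then conclude with Proposition~\ref{multiplicite cm}(a) and (c). The only cosmetic difference is that you restrict the sum to $C\subset\G$ at the end rather than in the middle, and you make explicit the reference to Example~\ref{exemple:lissite-decomposition} for the identification $\dec_\CG^\gauche\isomorphisme{\LC_w}=\isomorphisme{\LC_\CG^\gauche(C(w))}$, which the paper leaves slightly more implicit.
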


\medskip

\begin{rema}\label{rema:dec-rang-1}
La proposition~\ref{prop:verma-rang-1} dit que, \`a l'int\'erieur d'une famille de Calogero-Moser donn\'ee, 
la matrice de d\'ecomposition des b\'eb\'es modules de Verma dans la base des modules 
simples est de rang $1$, r\'esultat conjectur\'e par U. Thiel~\cite{thiel}.\finl
\end{rema}

\medskip

\begin{proof}
Soit $C$ une $\CG$-cellule de Calogero-Moser \`a gauche. Alors $\mult_{C,\chi}^\calo=0$ si $C$ n'est pas 
contenue dans $\G$ (voir la proposition~\ref{multiplicite cm}(c)). Ainsi, d'apr\`es~(\ref{eq:dec M}), on a 
$$\isomorphisme{\Mbov_\CG\MCov(\chi)}_{\Mbov_\CG\Hbov}=\sum_{\substack{C \in \cmcellules^\CG_L(W) \\ C \subset \G}} 
\mult_{C,\chi}^\calo \cdot \decba^\gauche \isomorphisme{\LC_\CG(C)}_{\Mb^\gauche_\CG\Hb^\gauche}.$$
Mais, si $C \subset \G$, alors 
$\isomorphisme{\LC_\CG(C)}_{\Mb^\gauche_\CG\Hb^\gauche}=\dec_\CG^\gauche \isomorphisme{\LC_w}_{\Mb\Hb}$ 
o\`u $w \in C$. Alors, par la transitivit\'e des applications de d\'ecomposition, on a 
$$\decba^\gauche \isomorphisme{\LC_\CG(C)}_{\Mb^\gauche_\CG\Hb^\gauche} = \LC_\CG(\G)$$
(d'apr\`es le lemme~\ref{lem:dec-constant}). Le r\'esultat d\'ecoule alors 
de la proposition~\ref{multiplicite cm}(a).
\end{proof}

\bigskip

Nous terminons par un r\'esultat comparant les $\calo$-caract\`eres $\CG$-cellulaires pour diff\'erents 
id\'eaux premiers $\CG$~:

\bigskip

\begin{prop}\label{prop:cellulaire-semicontinu}
Soit $\CG'$ un id\'eal premier de $\kb[\CCB]$ contenu dans $\CG$ et choisissons un id\'eal premier 
$\rG_{\CG'}^\gauche$ au-dessus de $\qG_{\CG'}^\gauche$ et contenu dans $\rG_\CG^\gauche$. Soit $C$ une $\CG$-cellule de 
Calogero-Moser \`a gauche et \'ecrivons $C=C_1 \coprod \cdots \coprod C_r$, o\`u les $C_i$ sont des 
$\CG'$-cellules de Calogero-Moser \`a gauche (voir la proposition~\ref{semicontinuite gauche}). Alors 
$$\isomorphisme{C}_{\CG}^\calo = \isomorphisme{C_1}_{\CG'}^\calo + \cdots + \isomorphisme{C_r}_{\CG'}^\calo.$$
\end{prop}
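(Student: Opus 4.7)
The plan is to deduce the identity by applying transitivity of decomposition maps between the left-cell levels $\CG'$ and $\CG$, then comparing the images of the left Verma modules.

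First I would introduce the specialisation map. Since $\rG_{\CG'}^\gauche \subset \rG_\CG^\gauche$, the quotient $\rG_\CG^\gauche/\rG_{\CG'}^\gauche$ is a prime ideal of the integral noetherian ring $R/\rG_{\CG'}^\gauche$. Combining theorem~\ref{theo:gauche}(a) (the algebras $\Mb_{\CG'}^\gauche \Hb^\gauche$ and $\Mb_\CG^\gauche \Hb^\gauche$ are split) with the fact that $R/\rG_{\CG'}^\gauche$ is integrally closed or, more directly, with proposition~\ref{prop:d1-d2-d3}(b), the hypotheses (D1)--(D3) of section~\ref{part:decomposition} are verified for the pair $(R/\rG_{\CG'}^\gauche,\rG_\CG^\gauche/\rG_{\CG'}^\gauche)$. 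This yields a well-defined decomposition map
$$\decba_{\CG'}^{\CG}:\groth(\Mb_{\CG'}^\gauche\Hb^\gauche)\longto\groth(\Mb_\CG^\gauche\Hb^\gauche),$$
and by the general transitivity~(\ref{transitivite decomposition}) one has $\dec_\CG^\gauche = \decba_{\CG'}^{\CG}\circ\dec_{\CG'}^\gauche$.

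The key step is the computation of $\decba_{\CG'}^{\CG}$ on a simple module $[\LC_{\CG'}^\gauche(C_i)]$. Pick $w\in C_i\subset C$. By example~\ref{exemple:lissite-decomposition} and~(\ref{dec irr}) applied both to $\rG_{\CG'}^\gauche$ and to $\rG_\CG^\gauche$, each simple is the image of a generic simple $[\LC_w]$, and the fibers of these surjections are exactly the cells. Consequently $\dec_{\CG'}^\gauche[\LC_w]=[\LC_{\CG'}^\gauche(C_i)]$ and $\dec_\CG^\gauche[\LC_w]=[\LC_\CG^\gauche(C)]$. Transitivity then forces
$$\decba_{\CG'}^{\CG}[\LC_{\CG'}^\gauche(C_i)]=[\LC_\CG^\gauche(C)]\qquad\text{for every }i=1,\ldots,r.$$

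Next, I would apply $\decba_{\CG'}^{\CG}$ to the class of the left Verma module. Because $\MC^\gauche(\chi)$ is defined already over $P^\gauche$, base change along $P^\gauche\to \Mb_{\CG'}^\gauche\to \Mb_\CG^\gauche$ is compatible with decomposition, so
$$\decba_{\CG'}^{\CG}\bigl[\Mb_{\CG'}^\gauche\MC^\gauche(\chi)\bigr]=\bigl[\Mb_\CG^\gauche\MC^\gauche(\chi)\bigr].$$
Expanding each side in the basis of simples and using the previous paragraph, this becomes
$$\sum_{C'\in\cmcellules_L^\CG(W)}\mult_{C',\chi}^{\calo,\CG}[\LC_\CG^\gauche(C')] = \sum_{i=1}^r\mult_{C_i,\chi}^{\calo,\CG'}[\LC_\CG^\gauche(C)] + \sum_{C''\not\subset C}\mult_{C'',\chi}^{\calo,\CG'}[\LC_\CG^\gauche(C(C''))],$$
where $C(C'')$ denotes the $\CG$-cell containing the $\CG'$-cell $C''$. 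Matching the coefficient of $[\LC_\CG^\gauche(C)]$ gives $\mult_{C,\chi}^{\calo,\CG}=\sum_{i=1}^r\mult_{C_i,\chi}^{\calo,\CG'}$, and multiplying by $\chi$ and summing over $\Irr(W)$ yields the claimed identity of characters.

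The only real obstacle is verifying that $\decba_{\CG'}^{\CG}$ sends a single simple class to a single simple class (rather than a non-trivial combination). This is handled cleanly once one observes that, in the situation at hand, both source and target simples have the same dimension $|W|$ (by theorem~\ref{theo:gauche}(a)), so the decomposition map must permute, up to fibres, the images of the generic simples $[\LC_w]$; the identification with cells via the inertia-orbit description of lemma~\ref{lem:dec-constant}(d) then pins down the image uniquely. The remainder of the argument is a direct bookkeeping of multiplicities.
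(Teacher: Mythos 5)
Your argument is correct and follows the same route as the paper's: invoke proposition~\ref{prop:d1-d2-d3} to obtain the decomposition map and its transitivity, note that the left Verma classes are preserved because they are defined over $P^\gauche$, and use example~\ref{exemple:lissite-decomposition} together with transitivity to see that $\decba_{\CG'}^{\CG}$ takes $[\LC_{\CG'}^\gauche(C_i)]$ to $[\LC_\CG^\gauche(C)]$. Your final paragraph worrying about whether a simple is sent to a single simple is unnecessary: the computation via $\dec_\CG^\gauche = \decba_{\CG'}^{\CG}\circ\dec_{\CG'}^\gauche$ evaluated on a generic simple $[\LC_w]$ already pins this down exactly, which is what the paper does; the dimension argument you supply is a harmless redundancy.
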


\begin{proof}
D'apr\`es la proposition~\ref{prop:d1-d2-d3}, l'application de d\'ecomposition 
$\db : \groth(\Mb_{\CG'}^\gauche\Hb^\gauche) \longto \groth(\Mb^\gauche_\CG \Hb^\gauche)$ est bien d\'efinie et 
elle v\'erifie la propri\'et\'e de transitivit\'e $\db \circ \dec_{\CG'}^\gauche = \dec_\CG^\gauche$. 
De plus, on a 
$$\db~\isomorphisme{\Mb_{\CG'}^\gauche\MC^\gauche(\chi)}_{\Mb_{\CG'}^\gauche\Hb^\gauche} = 
\isomorphisme{\Mb_\CG^\gauche\MC^\gauche(\chi)}_{\Mb_\CG^\gauche\Hb^\gauche}.$$
Le r\'esultat d\'ecoule alors du fait que $\db~\isomorphisme{\LC_{\CG'}(C_i)}_{\Mb_{\CG'}^\gauche\Hb^\gauche}=
\isomorphisme{\LC_\CG^\gauche(C)}_{\Mb_{\CG}^\gauche\Hb^\gauche}$ pour tout $i$ 
(voir l'exemple~\ref{exemple:lissite-decomposition}).
\end{proof}

\bigskip

\section{Probl\`emes, questions}

\medskip

Le calcul des matrices de d\'ecomposition pose aussi de nombreuses questions. 
Une solution au probl\`eme~\ref{probleme c} 
peut \^etre approch\'ee si on sait r\'esoudre le probl\`eme~\ref{probleme generique} 
et calculer la matrice de d\'ecomposition $\dec_c^\res$~:

\bigskip

\begin{probleme}\label{probleme dec}
Calculer $\decba$, $\decba_c$ et $\dec_c^\res$ pour tout $c \in \CCB$.
\end{probleme}

\bigskip

\begin{question}\label{dec c generique}
Soit $c \in \CCB$. Les assertions suivantes sont-elles \'equivalentes~?
\begin{itemize}
 \itemth{1} $c \in \CCB_\gen$.

\itemth{2} $\dec_c^\res = \Id_{\ZM\Irr(W)}$.
\end{itemize}
\end{question}

\bigskip

\chapter{Cellules de Bialinicky-Birula de $\ZCB_c$}\label{chapter:bb}

\boitegrise{{\bf Hypoth\`ese.} 
{\it Dans ce chapitre, nous supposons que $\kb=\CM$ et nous fixons un \'el\'ement $c \in \CCB$.}}{0.75\textwidth}

\bigskip

Le groupe $\CM^\times$ agit sur la vari\'et\'e alg\'ebrique $\ZCB_c$. Nous allons ici r\'einterpr\'eter 
g\'eom\'etriquement diverses notions introduites dans ce m\'emoire (familles, caract\`eres cellulaires,...) en termes 
de cette action (points fixes, ensemble attractifs ou r\'epulsifs,...). Le r\'esultat principal de ce chapitre 
(et peut-\^etre de ce m\'emoire) concerne le cas d'une famille associ\'ee \`a un point lisse de $\ZCB_c$~: 
nous montrerons qu'alors les caract\`eres cellulaires associ\'es \`a cette famille sont irr\'eductibles. 
Ce r\'esultat sera vu comme \'etant de nature g\'eom\'etrique. En effet, la lissit\'e du point fixe 
implique que les ensembles attractifs et r\'epulsifs sont des espaces affines s'intersectant 
proprement et transversalement~; un calcul de multiplicit\'e d'une intersection 
conclura la preuve (voir le th\'eor\`eme~\ref{theo:cellulaire-lisse}).

\bigskip

\def\action{{\hskip0.2mm\SS{\bullet}\hskip0.5mm}}

\section{G\'en\'eralit\'es sur les actions de $\CM^\times$}

\medskip

Soit $\XCB$ une vari\'et\'e alg\`ebrique {\it affine} 
munie d'une $\CM^\times$-action r\'eguli\`ere $\CM^\times \times \XCB \to \XCB$, $(\xi,x) \mapsto \xi \action x$. 
On notera $\XCB^{\CM^\times}$ la sous-vari\'et\'e ferm\'ee form\'ee des points fixes sous l'action de 
$\CM^\times$. 
Si $x \in \XCB$, nous dirons que {\it $\lim_{\xi \to 0} \xi \action x$ existe et vaut $x_0$} s'il existe un 
morphisme de vari\'et\'es $\ph : \CM \to \XCB$ tel que, si $\xi \in \CM^\times$, alors $\ph(\xi)=\xi \action x$ 
et $\ph(0)=x_0$. Il est alors clair que $x_0 \in \XCB^{\CM^\times}$. 
De m\^eme, nous dirons que {\it $\lim_{\xi \to 0} \xi^{-1} \action x$ existe et vaut $x_0$} s'il existe un 
morphisme de vari\'et\'es $\ph : \CM \to \XCB$ tel que, si $\xi \in \CM^\times$, alors $\ph(\xi)=\xi^{-1} \action x$ 
et $\ph(0)=x_0$. 

Nous noterons $\XCB^\attractif$ (respectivement $\XCB^\repulsif$) l'ensemble des $x \in \XCB$ 
tel que $\lim_{\xi \to 0} \xi \action x$ (respectivement $\lim_{\xi \to 0} \xi^{-1} \action x$) existe. 
C'est une sous-vari\'et\'e ferm\'ee de $\XCB$ et les applications 
$$\fonction{\limiteattractive}{\XCB^\attractif}{\XCB^{\CM^\times}}{x}{\lim_{\xi \to 0} \xi \action x}$$
$$\fonction{\limiterepulsive}{\XCB^\repulsif}{\XCB^{\CM^\times}}{x}{\lim_{\xi \to 0} \xi^{-1} \action x}
\leqno{\text{et}}$$
sont des morphismes de vari\'et\'es (qui sont \'evidemment surjectifs~: une section est donn\'ee par 
l'identit\'e sur $\XCB^{\CM^\times} \subset \XCB^\attractif \cap \XCB^\repulsif$), car 
$\XCB$ est affine par hypoth\`ese (cela d\'ecoule du fait que c'est vrai pour la vari\'et\'e 
$\CM^N$ muni d'une action lin\'eaire de $\CM^\times$ et que $\XCB$ peut-\^etre vue comme une sous-vari\'et\'e 
ferm\'ee $\CM^\times$-stable d'un tel $\CM^N$). Notons que cela n'est plus vrai si $\XCB$ n'est pas affine, 
comme le montre l'exemple de $\Pb^1(\CM)$ muni de l'action $\xi \action [x;y]=[\xi x;y]$. 

Pour finir, si $x_0 \in \XCB^{\CM^\times}$, nous noterons $\XCB^\attractif(x_0)$ (respectivement 
$\XCB^\repulsif(x_0)$) l'image inverse de $x_0$ par l'application $\limiteattractive$ 
(respectivement $\limiterepulsive$). Alors $\XCB^\attractif(x_0)$ (respectivement $\XCB^\repulsif(x_0)$) 
sera appel\'e l'{\it ensemble attractif} (respectivement l'{\it ensemble r\'epulsif}) de $x_0$~: 
c'est une sous-vari\'et\'e ferm\'ee de $\XCB$. Rappelons le fait classique suivant, d\^u 
\`a Bialynicki-Birula~\cite{bialynicki}~:

\bigskip

\begin{prop}\label{prop:bb-lisse}
Si $x_0$ est un point lisse de $\XCB$, alors il existe $N \ge 0$ tel que $\XCB^\attractif(x_0) \simeq \CM^N$. 
En particulier, $\XCB^\attractif(x_0)$ est lisse et irr\'eductible.

Les m\^emes assertions valent pour $\XCB^\repulsif(x_0)$.
\end{prop}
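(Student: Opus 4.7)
My plan is to prove this by the classical Bia\l{}ynicki-Birula linearization technique, using only that $\XCB$ is affine and that $x_0$ is smooth.

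The first step is to exploit the $\CM^\times$-representation on the tangent space. Since $\CM^\times$ is linearly reductive and $x_0 \in \XCB^{\CM^\times}$, the cotangent space $\mathfrak{m}_{x_0}/\mathfrak{m}_{x_0}^2$ splits canonically into weight subspaces, and by duality so does $T_{x_0}\XCB = \bigoplus_{n \in \ZM} T_{x_0}\XCB[n]$. Let $T^+ = \bigoplus_{n > 0} T_{x_0}\XCB[n]$ and $N = \dim_\CM T^+$. The goal is to construct a $\CM^\times$-equivariant isomorphism $\XCB^\attractif(x_0) \longiso T^+ \simeq \CM^N$.

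Next, I would build an explicit candidate morphism. Since $\XCB$ is affine, $\CM^\times$ acts on $\CM[\XCB]$ and the maximal ideal $\mathfrak{m}_{x_0}$ is a graded $\CM^\times$-submodule; lifting eigenvectors from the cotangent space gives $\CM^\times$-eigenfunctions $t_1,\ldots,t_N \in \mathfrak{m}_{x_0}$ whose images form a basis of the subspace of $\mathfrak{m}_{x_0}/\mathfrak{m}_{x_0}^2$ pairing nontrivially with $T^+$. These define a $\CM^\times$-equivariant morphism
$$
\pi : \XCB \longto \CM^N, \qquad x \longmapsto (t_1(x),\ldots,t_N(x)),
$$
where the $i$-th coordinate carries the appropriate weight so that $\CM^N$ itself contracts to $0$ as $\xi \to 0$. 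By construction, $\pi$ maps $\XCB^\attractif(x_0)$ into $(\CM^N)^\attractif(0) = \CM^N$, and a weight computation in the completed local ring $\widehat{\OC}_{\XCB,x_0} \simeq \CM[[t_1,\ldots,t_d]]$ (available by smoothness) shows that the tangent map $d\pi_{x_0} : T_{x_0}\XCB^\attractif(x_0) \longiso T_0 \CM^N$ is an isomorphism; here one identifies $T_{x_0}\XCB^\attractif(x_0) = T^+$ by cutting out the weight-$\le 0$ eigenfunctions.

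The main obstacle is to promote this infinitesimal statement at $x_0$ to a global isomorphism on $\XCB^\attractif(x_0)$. The key leverage is again the $\CM^\times$-equivariance: for any $x \in \XCB^\attractif(x_0)$, the points $\xi \action x$ eventually enter any chosen neighborhood of $x_0$ as $\xi \to 0$, so the locus where $\pi$ is \'etale, being $\CM^\times$-stable and containing $x_0$, must be all of $\XCB^\attractif(x_0)$. The same contraction argument, applied to the smooth locus and to the connected components, shows that $\XCB^\attractif(x_0)$ is smooth and irreducible. It then remains to verify that $\pi$ has scheme-theoretic fiber $\{x_0\}$ over $0$, which follows from the explicit description of the defining ideal of $\XCB^\attractif(x_0)$ as generated by the weight-$\le 0$ part of $\mathfrak{m}_{x_0}$ together with $\mathfrak{m}_{x_0}^2$ at $x_0$. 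A birational \'etale morphism between smooth irreducible varieties, one of which is normal (indeed, $\CM^N$), is an isomorphism. This yields $\XCB^\attractif(x_0) \simeq \CM^N$. The analogous statement for $\XCB^\repulsif(x_0)$ is obtained by replacing the action $\xi \action x$ by $\xi^{-1} \action x$, which interchanges the roles of positive and negative weights.
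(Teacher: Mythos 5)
The paper offers no proof of this proposition: it is stated as a classical fact and the reader is referred to Bia\l{}ynicki-Birula's paper, so there is no internal argument to compare yours against. Judged on its own, your plan — decompose $T_{x_0}\XCB$ into $\CM^\times$-weight spaces, lift eigenvectors of $\mathfrak m_{x_0}/\mathfrak m_{x_0}^2$ to eigenfunctions giving a $\CM^\times$-equivariant $\pi:\XCB\to\CM^N$, show $\pi$ restricted to $\XCB^\attractif(x_0)$ is \'etale at $x_0$, and propagate by the contraction of the $\CM^\times$-action — is exactly the classical Bia\l{}ynicki-Birula linearization argument, and most of it is sound.

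There is, however, a genuine gap in the final deduction. Your concluding statement, that a birational \'etale morphism between smooth irreducible varieties with normal target is an isomorphism, is false: by Zariski's Main Theorem such a map is only an open immersion (e.g.\ $\CM^\times\hookrightarrow\CM$). Worse, you never actually establish birationality: knowing $\pi$ is everywhere \'etale and that the \emph{single} scheme-theoretic fibre $\pi^{-1}(0)=\{x_0\}$ is a reduced point does not yield generic degree one, since non-proper \'etale morphisms have non-constant fibre cardinality. Both injectivity and surjectivity must be extracted, once more, from the contraction, and you should say so explicitly. For injectivity: because $\pi$ is \'etale and $\pi^{-1}(0)=\{x_0\}$, there is a Zariski-open $U\ni x_0$ on which $\pi$ restricts to an open immersion; if $\pi(x)=\pi(x')$, flow both points into $U$ by letting $\xi\to 0$, use equivariance to see $\pi(\xi\action x)=\pi(\xi\action x')$, deduce $\xi\action x=\xi\action x'$ in $U$ and hence $x=x'$. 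For surjectivity: $\pi$ is then an open immersion, and its image is a $\CM^\times$-stable open subset of $\CM^N$ containing $0$; since every $\CM^\times$-orbit in $\CM^N$ (all weights positive) limits to $0$, such a subset is all of $\CM^N$. With these two points supplied the argument closes correctly.
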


\bigskip

Nous allons dans ce chapitre revisiter les notions introduites dans les chapitres pr\'ec\'edents (familles, 
CM-caract\`eres cellulaires) \`a travers les notions de points fixes et d'ensembles attractifs attach\'es 
\`a l'action de $\CM^\times$ sur $\ZCB_c$. 

\bigskip

\section{Points fixes et familles} 

\medskip

Ayant le choix pour l'action de $\CM^\times$ sur nos vari\'et\'es ($\PCB$, $\ZCB$, $\RCB$,...), 
nous ferons le choix de l'action qui induit la $\ZM$-graduation de 
l'exemple~\ref{Z graduation-1}. En d'autres termes, un \'el\'ement $\xi \in \CM^\times$ agit 
sur $\Hb$ comme l'\'el\'ement $(\xi^{-1},\xi,1 \rtimes 1)$ de $\CM^\times \times \CM^\times \times (\Hom(W,\CM^\times) \rtimes \NC)$. 
Ainsi, pour l'action sur $\Hb$, $\xi$ agit trivialement sur $\CM[\CCB] \otimes \CM W$, agit avec des poids positifs ou nuls 
sur $\CM[V]$, avec des poids n\'egatifs ou nuls sur $\CM[V^*]$. On r\'ecup\`ere ainsi une action sur 
$P$ et $Z_c$, qui induisent des actions r\'eguli\`eres de $\CM^\times$ sur les vari\'et\'es $\PC_\bullet \simeq V/W \times V^*/W$ 
et $\ZCB_c$ rendant le morphisme naturel
$$\Upsilon_c : \ZCB_c \longto \PCB_\bullet = V/W \times V^*/W$$
$\CM^\times$-\'equivariant. Si $\xi \in \CM^\times$ et $z \in \ZCB_c$, l'image de $z$ \`a travers cette action de $\xi$ sera not\'ee 
$\xi \action z$. L'unique point fixe de $\PCB_\bullet$ est $(0,0)$~:
\equat\label{eq:fixe-P}
\PCB_\bullet^{\CM^\times} = (0,0).
\endequat
Puisque $\Upsilon_c$ est un morphisme fini, on en d\'eduit que
\equat\label{eq:fixe-Z}
\ZCB_c^{\CM^\times} = \Upsilon_c^{-1}(0,0).
\endequat
On obtient ainsi une bijection naturelle entre $\ZCB_c^{\CM^\times}$ et les $c$-familles de Calogero-Moser.

\bigskip

\section{Ensembles attractifs et caract\`eres cellulaires} 

\medskip

Tout d'abord, il est clair que 
\equat\label{eq:P-attractif}
\PC_\bullet^\attractif = V/W \times 0 \subset V/W \times V^*/W\quad\text{et}\quad 
\PC_\bullet^\repulsif = 0 \times V^*/W \subset V/W \times V^*/W.
\endequat
En d'autres termes, $\PC_\bullet^\attractif$ est la sous-vari\'et\'e irr\'eductible de $\PCB_\bullet$ associ\'ee 
\`a l'id\'eal premier $\pG_c^\gauche$. 
De plus, puique $\Upsilon_c$ est un morphisme fini, on a 
\equat\label{eq:Z-attractif}
\ZCB_c^\attractif = \Upsilon_c^{-1}(V/W \times 0)\quad\text{et}\quad 
\ZCB_c^\repulsif = \Upsilon_c^{-1}(0 \times V^*/W).
\endequat
\begin{proof}
Il suffit de montrer la premi\`ere \'egalit\'e, l'autre se montrant de m\^eme. 
Tout d'abord, il est clair que $\Upsilon_c(\ZCB_c^\attractif) \subset \PCB_\bullet^\attractif = V/W \times 0$. 
Pour montrer l'inclusion inverse, il suffit de montrer que, si $\r : Z_c \to \CM[\tb,\tb^{-1}]$ 
est un morphisme de $\CM$-alg\`ebres tel que $\r(P_\bullet) \subset \CM[\tb]$, alors 
$\r(Z_c) \subset \CM[\tb]$. Mais cela d\'ecoule du fait que $Z_c$ est entier sur $P_\bullet$ 
(et donc $\r(Z_c)$ est entier sur $\r(P_\bullet)$) et du fait que $\CM[\tb]$ est 
int\'egralement clos.
\end{proof}

\bigskip

Il d\'ecoule de \ref{eq:Z-attractif} que les composantes irr\'eductibles de $\ZCB_c^\attractif$ 
sont en bijection avec l'ensemble $\Upsilon_c^{-1}(\pG_c^\gauche)$ des id\'eaux premiers de $Z_c$ 
au-dessus de $\pG_c^\gauche$. Si $\zG$ est un tel id\'eal premier, nous noterons 
$\ZCB_c^\attractif[\zG]$ la composante irr\'eductible associ\'ee. 
Puisque $\limiteattractive : \ZCB_c^\attractif \to \ZCB_c^{\CM^\times}$ est un morphisme de vari\'et\'es, 
l'image de $\ZCB_c^\attractif[\zG]$ est irr\'eductible~: $\ZCB_c^{\CM^\times}$ \'etant fini, 
$\limiteattractive(\ZCB_c^\attractif[\zG])$ est donc r\'eduit \`a un point. Ainsi, le morphisme de 
vari\'et\'es $\limiteattractive : \ZC_c^\attractif \to \ZC_c^{\CM^\times}$ induit 
une application surjective $\Upsilon_c^{-1}(\pG_c^\gauche) \longto \Upsilon_c^{-1}(\pGba_c)$ 
qui n'est autre que l'application $\limitegauche$ d\'efinie dans la section~\ref{section:premiers}. 

On obtient ainsi une application entre les composantes irr\'eductibles de $\ZCB_c^\attractif$ et 
l'ensemble des CM-caract\`eres $c$-cellulaires (voir la remarque~\ref{rema:cellulaires-pas-choix}). 

%
%
%
%
%
%
%
%

\bigskip

\section{Le cas lisse}

\medskip

\boitegrise{{\bf Hypoth\`eses et notations.} 
{\it Nous fixons dans cette section, et dans cette section seulement, un point $z_0 \in \ZCB_c^{\CM^\times}$ 
que nous supposerons {\bfit lisse} dans $\ZCB_c$. On note $\chi$ l'unique caract\`ere irr\'eductible 
de la $c$-famille de Calogero-Moser associ\'ee \`a $z_0$. Nous notons $\G$ la $c$-cellule de Calogero-Moser 
bilat\`ere de $W$ associ\'ee \`a $z_0$ et nous fixons une $c$-cellule de Calogero-Moser \`a gauche $C$ 
contenue dans $\G$.}}{0.75\textwidth}

\bigskip

Le but de cette section est de d\'emontrer le r\'esultat suivant~:

\bigskip

\begin{theo}\label{theo:cellulaire-lisse}
Avec les hypoth\`eses et notations ci-dessus, on a~:
\begin{itemize}
\itemth{a} $|\G|=\chi(1)^2$.

\itemth{b} $\bigcup_{d \in D_c^\gauche} \lexp{d}{C}=\G$. 

\itemth{c} $|C|=\chi(1)$.

\itemth{d} $\isomorphisme{C}_c^\calo = \chi$.

\itemth{e} $\deg_c(C)=\chi(1)$. 
\end{itemize}
\end{theo}

\bigskip

\noindent{\sc Notation - } Si $A$ est un anneau commutatif local, d'id\'eal maximal $\mG$ et si $M$ est un $A$-module 
de type fini, on notera $e_\mG(M)$ la {\it multiplicit\'e} de $M$ pour l'id\'eal $\mG$, 
telle qu'elle est d\'efinie dans~\cite[chapitre~\MakeUppercase{\romannumeral 5},~\S{A.2}]{serre}. 

Si $A$ est un anneau commutatif r\'egulier (pas forc\'ement local), si $M$ et $N$ sont deux $A$-modules de type fini 
tels que $M \otimes_A N$ soit de longueur fini, et si $\aG$ est un id\'eal premier de $A$, on posera
$$\chi_\aG(M,N)=\sum_{i=0}^{\dim A} (-1)^i \longueur_{A_\aG}(\Tor_i^A(M,N)_\aG),$$
conform\'ement \`a~\cite[chapitre~\MakeUppercase{\romannumeral 5},~\S{B},~th\'eor\`eme~1]{serre} .\finl

\bigskip

\begin{proof}
(a) d\'ecoule du th\'eor\`eme~\ref{theo cellules familles}(d).

\medskip

(b) L'ensemble des composantes irr\'eductibles de $\limiteattractiveinverse(z_0)$ est en bijection 
avec $\G/D_c^\gauche$ (voir la proposition~\ref{prop:gauche-bilatere}(c)). Puisque $z_0$ est lisse (et isol\'e), alors 
$$\ZC_c^\attractif(z_0) \simeq \CM^n,$$
et donc que $\ZC_c^\attractif(z_0)$ est lisse et irr\'eductible (voir la proposition~\ref{prop:bb-lisse}). Cela 
montre que $\limiteattractiveinverse(z_0)$ est irr\'eductible et isomorphe \`a un espace affine. Ainsi $|\G/D_c^\gauche|=1$, 
ce qui montre (b). En d'autres termes, avec les notations introduites dans le chapitre pr\'ec\'edent, on a $C^\DD=\G$. 

\medskip

(c) Notons $\zG_L=\zG_c^\gauche(C)$~: alors $\zG_L$ est l'id\'eal de d\'efinition 
de $\ZC_c^\attractif(z_0)$, mais nous le verrons comme un id\'eal premier de $Z_c$ 
par passage au quotient. Notons $\zGba=\zGba_c(\G)$~: alors $\zGba$ est l'id\'eal de d\'efinition 
du point $z_0$ (nous le verrons aussi comme un id\'eal de $Z_c$). On d\'efinit de m\^eme $\zG_R$ comme 
\'etant l'id\'eal de d\'efinition de $\ZC_c^\repulsif(z_0)$~: on note $C'$ une $c$-cellule \`a droite 
contenue dans $\G$ (de sorte que $\zG_R=\zG_c^\droite(C')$). Le corollaire~\ref{coro:cardinal-C} montre que 
$$|C|=\longueur_{Z_{c,\zG_L}}(Z_c/\pG_c^\gauche Z_c)\quad\text{et}\quad
|C'|=\longueur_{Z_{c,\zG_R}}(Z_c/\pG_c^\droite Z_c).\leqno{(\clubsuit)}$$
Si on note $m_L=\mult_{C,\chi}^\calo$, alors 
$$|C|=m_L \chi(1) \quad\text{et}\quad \isomorphisme{C}_c^\calo = m_L\chi.\leqno{(\diamondsuit_L)}$$
Sym\'etriquement, on peut d\'efinir un entier naturel $m_R$ v\'erifiant
$$|C'|=m_R \chi(1).\leqno{(\diamondsuit_R)}$$

\medskip

Commen\c{c}ons par calculer la multiplicit\'e du $Z_{c,\zGba}$-module $Z_{c,\zGba}/\pG_c^\gauche Z_{c,\zGba}$ 
pour $\zGba Z_{c,\zGba}$. La dimension de Krull de ce module est $n=\dim_\kb V$. 
Par la formule d'additivit\'e~\cite[chapitre~\MakeUppercase{\romannumeral 5},~\S{A.2}]{serre}, 
$$e_{\zGba Z_{c,\zGba}}(Z_{c,\zGba}/\pG_c^\gauche Z_{c,\zGba}) = \sum_{{\mathrm{coht}}(\zG)=n} 
\longueur_{Z_{c,\zG}}(Z_{c,\zG}/\pG_c^\gauche Z_{c,\zG}) e_{\zGba Z_{c,\zGba}}(Z_{c,\zGba}/\zG Z_{c,\zGba}).$$
Ici, ${\mathrm{coht}}(\zG)$ d\'esigne la cohauteur de l'id\'eal premier $\zG$ de $Z_{c,\zGba}$. 
Mais, puisque $\ZC_c^\attractif(z_0)$ est irr\'eductible de dimension $n$, il n'y a qu'un seul id\'eal 
premier de $Z_{c,\zGba}$ de cohauteur $n$ qui contienne $\pG_c^\gauche Z_{c,\zGba}$ (et donc tel que 
$\longueur_{Z_{c,\zG}}(Z_{c,\zG}/\pG_c^\gauche Z_{c,\zG})$ soit non nul), c'est l'id\'eal premier $\zG_L$. 
De plus, puisque $Z_{c,\zGba}/\zG_L Z_{c,\zGba}$ est un anneau r\'egulier (car $\ZC_c^\attractif(z_0)$ est lisse), 
la multiplicit\'e $e_{\zGba Z_{c,\zGba}}(Z_{c,\zGba}/\zG Z_{c,\zGba})$ vaut $1$ 
(voir~\cite[chapitre~\MakeUppercase{\romannumeral 4}]{serre}). Ainsi, il d\'ecoule de $(\clubsuit)$ et 
$(\diamondsuit_L)$ que 
$$e_{\zGba Z_{c,\zGba}}(Z_{c,\zGba}/\pG_c^\gauche Z_{c,\zGba})=m_L \chi(1).\leqno{(\heartsuit_L)}$$
Sym\'etriquement, 
$$e_{\zGba Z_{c,\zGba}}(Z_{c,\zGba}/\pG_c^\droite Z_{c,\zGba})=m_R \chi(1).\leqno{(\heartsuit_R)}$$

\medskip

D'autre part, $P/\pG_c^\gauche$ est un anneau de polyn\^ome et $Z_c/\pG_c^\gauche Z_c$ est un 
$P/\pG_c^\gauche$-module libre de rang $|W|$. Donc $Z_{c,\zGba}/\pG_c^\gauche Z_{c,\zGba}$ est un 
$Z_{c,\zGba}$-module de Cohen-Macaulay de dimension $n$. De m\^eme, $Z_{c,\zGba}/\pG_c^\droite Z_{c,\zGba}$ est un 
$Z_{c,\zGba}$-module de Cohen-Macaulay de dimension $n$. Puisque $Z_c$ est de dimension $2n$, 
il d\'ecoule de~\cite[chapitre~\MakeUppercase{\romannumeral 5},~\S{B},~corollaire~du~th\'eor\`eme~1]{serre}
que 
$$\chi_{\zGba}(Z_{c,\zGba}/\pG_c^\gauche Z_{c,\zGba},Z_{c,\zGba}/\pG_c^\droite Z_{c,\zGba}) 
= \longueur_{Z_{c,\zGba}}(Z_{c,\zGba}/\pG_c^\gauche Z_{c,\zGba} \otimes_{Z_{c,\zGba}} Z_{c,\zGba}/\pG_c^\droite Z_{c,\zGba}) 
= \chi(1)^2 > 0.\leqno{(\spadesuit)}$$
Par cons\'equent~\cite[chapitre~\MakeUppercase{\romannumeral 5},~\S{B},~th\'eor\`eme~1]{serre}, 
$$e_{\zGba Z_{c,\zGba}}(Z_{c,\zGba}/\pG_c^\gauche Z_{c,\zGba}) \cdot 
e_{\zGba Z_{c,\zGba}}(Z_{c,\zGba}/\pG_c^\droite Z_{c,\zGba}) \le 
\chi_{\zGba}(Z_{c,\zGba}/\pG_c^\gauche Z_{c,\zGba},Z_{c,\zGba}/\pG_c^\droite Z_{c,\zGba}).$$
De cette derni\`ere \'egalit\'e, on d\'eduit, en utilisant $(\heartsuit_L)$, $(\heartsuit_R)$ et $(\spadesuit)$ 
que 
$$m_L m_R \le 1.$$
Ainsi, $m_L=m_R=1$, ce qui prouve (c)

\medskip

(d) d\'ecoule imm\'ediatement de (c).

\medskip

(e) d\'ecoule de (b) et de la proposition~\ref{prop:gauche-bilatere}(b).
\end{proof}

\bigskip

\part{Groupes de Coxeter~: Calogero-Moser vs Kazhdan-Lusztig}\label{part:coxeter}

\cbstart

\boitegrise{{\bf Hypoth\`eses.} {\it Tout au long de cette partie, nous supposerons 
que $W$ est un groupe de Coxeter, que $\kb=\CM$ et que $\kb_\RM=\RM$. Nous fixons aussi un 
\'el\'ement $c \in \CCB_\RM \subset \CCB$.}}{0.75\textwidth}

\bigskip

Dans ce cas, nous avons rappel\'e dans la section~\ref{section:cellules-kl} la d\'efinition des 
$c$-cellules de Kazhdan-Lusztig \`a droite, \`a gauche et bilat\`eres, des $c$-familles de Kazhdan-Lusztig, 
et des $\kl$-caract\`eres $c$-cellulaires. De m\^eme, les notions de $c$-cellule de Calogero-Moser 
\`a droite, \`a gauche et bilat\`ere, de $c$-famille de Calogero-Moser 
et de $\calo$-caract\`ere $c$-cellulaire ont \'et\'e d\'efinies et longuement \'etudi\'ees 
dans la partie~\ref{part:verma}. Il est assez tentant de conjecturer 
que ces notions co\"{\i}ncident. Le but de cette partie est d'\'enoncer des conjectures aussi  
pr\'ecises que possible et de donner des arguments en faveur de ces conjectures.

\chapter{Conjectures}\label{chapter:conjectures}

\section{Cellules et caract\`eres} 

\medskip

La premi\`ere conjecture concerne les cellules bilat\`eres et leur famille associ\'ee~:

\bigskip

\begin{conjecturebil}
Soit $c \in \CCB_\RM$. Alors il existe un choix de l'id\'eal premier $\rGba_c$ au-dessus de $\qGba_c$ tel que~:
\begin{itemize}
\itemth{a} La partition de $W$ en $c$-cellules de Calogero-Moser bilat\`eres co\"{\i}ncide avec la 
partition en $c$-cellules de Kazhdan-Lusztig bilat\`eres.

\itemth{b} Supposons que $c_s \ge 0$ pour tout $s \in \Ref(W)$. 
Si $\G \in \cmcellules_{LR}^c(W)=\klcellules_{LR}^c(W)$, alors $\Irr_\G^\calo(W)=\Irr_\G^\kl(W)$. 
\end{itemize}
\end{conjecturebil}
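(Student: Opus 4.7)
The plan is to attack the two parts separately, since part (b) is purely a statement about families (hence blocks of $\Kbov_c\Hbov$), while part (a) is about the finer partition of $W$ itself.

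For part (b), the natural route is through Martino's Conjecture \FAM\ (Chapter~\ref{chapter:martino}). Once one knows that every $c$-family of Calogero-Moser is a union of $k^\sharp$-families of Hecke, the assumption $c_s \ge 0$ gives access to Chlouveraki's work matching Hecke families with Kazhdan--Lusztig families, so that any $c$-family of Calogero-Moser is a union of $c$-families of Kazhdan--Lusztig. To upgrade ``union'' to ``equal,'' I would use the cardinality identity $|\G| = \sum_{\chi \in \Irr_\G^\calo(W)} \chi(1)^2$ from Theorem~\ref{theo cellules familles}(d) on the Calogero--Moser side, together with the analogous KL identity \eqref{eq:cardinal-cellule-kl}; if one already knows that the partitions into bilateral cells coincide (i.e.\ part~(a)), matching cardinalities would force the family decompositions to agree.

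For part (a), the core strategy would be to choose $\rGba_c$ so as to make the Galois action on $W \simeq G/H$ compatible with the Hecke algebra. Concretely, I would try to build a section of $\Upsilon$ over a sufficiently large $\CM^\times$-invariant open, using the $\sigma_\Hb$ symmetry from Proposition~\ref{prop:auto-coxeter-0} to ensure left/right compatibility, and take $\rGba_c$ to be the unique prime of $R$ compatible with this section. The key would then be to show that the inertia group $\Iba_c$ acts on $W$ with orbits equal to KL bilateral cells. Several known pieces fit this picture: Theorem~\ref{theo:cellulaire-lisse} shows that when $z_0 \in \Upsilon_c^{-1}(0,0)$ is smooth the associated bilateral cell has exactly $\chi(1)^2$ elements, which matches the KL expectation for families that are singletons of size $\chi(1)^2$; and the Bia{\l}ynicki-Birula decomposition described in Chapter~\ref{chapter:bb} gives a geometric model for the cells through attracting sets, which should be compared with Lusztig's asymptotic ring.

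The main obstacle will be the lack of any direct bridge between the ramification of $R/P$ and the Kazhdan--Lusztig basis. As Problem~\ref{calcul G} and~\ref{calcul R} emphasize, we do not even have a general presentation of $R$ or a computation of $G$; consequently we have no direct handle on $\Iba_c$. A possible way around this would be to avoid computing $R$ at all and instead construct, on the Cherednik side, an analogue of the KL basis of $C_w$: one would want to produce, for each $w \in W$, a canonical element of some completion or deformation of $\Hbov_c$ (perhaps obtained from a family of simple modules over $\Mb_c^\gauche \Hb^\gauche$) whose behaviour under the $\Iba_c$-action recovers the preorders $\prelr$. Absent such a construction, a case-by-case proof via the Shephard--Todd classification (parallel to the status of Conjecture~\FAM) appears to be the only realistic short-term route; the most compelling corroboration remains Corollary~3 of the summary, which handles type $A$ via the lissit\'e of $\ZCB_c$ for $c \ne 0$.
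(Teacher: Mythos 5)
This statement is a \emph{conjecture} (Conjecture \BIL\ in Chapter~\ref{chapter:conjectures}), not a theorem, and the paper offers no proof of it; Chapter~\ref{chapter:arguments} only assembles corroborating evidence (the case $c=0$, numerical identities, the smooth cases covered by Th\'eor\`eme~\ref{theo:cellulaire-lisse} and Th\'eor\`eme~\ref{theo:gauche-presque}) and Chapitre~\ref{chapitre:b2} verifies it by hand in type $B_2$. Your text is therefore a research programme rather than a proof, and on the whole it is consistent with the paper's point of view. That said, one step as written is wrong: in your sketch of part (b) you argue that, once part (a) is granted, the identity $|\G|=\sum_{\chi\in\Irr_\G^\calo(W)}\chi(1)^2$ matched against~(\ref{eq:cardinal-cellule-kl}) would ``force the family decompositions to agree.'' It does not. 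Even with the same partition of $W$ into bilateral cells and the same partition of $\Irr(W)$ into families (Conjecture~\CAR(a)), one has two bijections $\G\mapsto\Irr_\G^\calo(W)$ and $\G\mapsto\Irr_\G^\kl(W)$ between the same two finite sets, both satisfying the same cardinality constraint; this does not pin down a common bijection when several families share the same value of $\sum\chi(1)^2$ (which happens already in type $B_2$ for equal parameters). Some additional compatibility is required — for instance the numerical invariant of Remarque~\ref{rema:c-constant} via the Euler element, or the $\bb$-invariant statement of Th\'eor\`eme~\ref{dim graduee bonne}(b) matched against Remarque~\ref{rema:b-invariant} — and even these are only necessary conditions. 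For part (a), your candid assessment matches the paper: Probl\`emes~\ref{calcul G} and~\ref{calcul R} are genuinely open, there is no known bridge from the ramification of $R/P$ to the Kazhdan--Lusztig basis, and the only unconditional results in the paper (Th\'eor\`eme~\ref{theo:cellulaire-lisse} and its corollary Th\'eor\`eme~\ref{theo:gauche-presque}) rely on smoothness of $\ZCB_c$, which by Th\'eor\`eme~\ref{les lisses} restricts to type $G(d,1,n)$ and $G_4$; outside those cases the paper itself falls back on case-by-case verification.
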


\bigskip

Du c\^ot\'e des cellules \`a gauche, nous proposons une conjecture similaire, qui se 
d\'ecline aussi au niveau des caract\`eres cellulaires~: 

\bigskip

\begin{conjectureleft}
Soit $c \in \CCB_\RM$. Alors il existe un choix de l'id\'eal premier $\rG^\gauche_c$ au-dessus de $\qG^\gauche_c$ tel que~:
\begin{itemize}
\itemth{a} La partition de $W$ en $c$-cellules de Calogero-Moser \`a gauche co\"{\i}ncide avec la 
partition en $c$-cellules de Kazhdan-Lusztig \`a gauche.

\itemth{b} Supposons que $c_s \ge 0$ pour tout $s \in \Ref(W)$. 
Si $C \in \cmcellules_L^c(W)=\klcellules_L^c(W)$, alors $\isomorphisme{C}_c^\calo=\isomorphisme{C}_c^\kl$. 
\end{itemize}
\end{conjectureleft}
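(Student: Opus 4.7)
The plan would be to attack both parts by first reducing to the two-sided setting, then analyzing each two-sided cell through the geometry of the $\CM^\times$-action on $\ZCB_c$. I would fix compatible prime ideals $\rG_c^\gauche \subset \rGba_c$ (so that each left $c$-cell refines a two-sided $c$-cell, by the semi-continuity remark of the previous part) and assume that Conjecture~\BIL{} has been established for the chosen $\rGba_c$; the task then reduces to showing that inside each two-sided cell $\G$, the partition into left cells and the attached cellular characters agree on both sides.

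For part (a), the natural strategy is to prove a Calogero-Moser analogue of Geck's parabolic induction property for Kazhdan-Lusztig left cells. Geometrically, the candidate for such an analogue should come from the parabolic restriction functors of Bezrukavnikov--Etingof--Ginzburg: completing $\Hb_c$ along the stratum $V^{W_I}$ for $I \subset S$ should express an étale neighbourhood of the corresponding stratum of $\ZCB_c$ in terms of the Calogero-Moser space of $W_I$, with the inertia subgroup $I_c^\gauche$ decomposing compatibly along this parabolic structure. If such a Calogero-Moser parabolic induction can be shown to match Geck's induction $C \mapsto W^I \cdot C$ on the KL side, the equality of left partitions would reduce to small-rank base cases (rank one, dihedral, $H_3$, $F_4$, $H_4$), attackable by a mixture of explicit computation (as done in Part VII for the rank one and $B_2$ cases) and exhaustive case analysis.

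For part (b), the strongest already-established input is the smooth-point theorem of the previous part: whenever the underlying point $\r(w^{-1}(\rGba_c))$ of $\ZCB_c$ is smooth, $\isomorphisme{C}_c^\calo$ is irreducible and equals the unique element of the attached Calogero-Moser family, which will match the Kazhdan-Lusztig cellular character for the same reasons on that side. To extend beyond smoothness, I would deform $c$ to a parameter at which $\ZCB$ is as regular as possible (only types $G(d,1,n)$ and $G_4$ by the Etingof--Ginzburg--Gordon--Bellamy classification), propagate the character equality through the decomposition maps of the previous chapter, and use the semi-continuity of cellular characters to specialize back. The hard part will be the genuinely singular Coxeter types (exceptional and non-crystallographic groups, especially with unequal parameters), where no smooth deformation is available: there one will likely need to compare the two cellular characters through an intermediate invariant, exploiting the uniqueness of the character $\chi_C$ of minimal $\bb$-invariant appearing in $\isomorphisme{C}_c^\calo$ together with Lusztig's $\ab$-function to constrain the possibilities sufficiently to force equality.
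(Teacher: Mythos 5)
Note first that the statement you are attacking is a \emph{conjecture} in the paper, not a theorem: the memoir offers partial evidence for it (Chapitre~\ref{chapter:arguments}) and verifications in small rank (Partie~\ref{part:exemples}), but no proof. The strongest general input actually established is the smooth-point theorem (Th\'eor\`eme~\ref{theo:cellulaire-lisse}), and your plan correctly identifies it as the linchpin; indeed the deformation/decomposition route you sketch for part (b) is essentially how the authors obtain the conjecture in type $A$ and for certain parameters in type $B$ (Th\'eor\`eme~\ref{theo:gauche-presque}), where $\ZCB_c$ is globally smooth.

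However, the two central reductions you invoke are themselves open problems, not steps that can currently be filled in. The Calogero--Moser analogue of Geck's parabolic induction $C \mapsto W^I \cdot C$ does not exist in the paper: the relationship between the Galois group $G$ for $W$ and the corresponding group for a parabolic $W'$ is explicitly flagged as an unresolved problem (see the discussion around Probl\`eme~\ref{calcul G}), and the Bezrukavnikov--Etingof--Ginzburg restriction functors give an \'etale local model of $\ZCB_c$ near a parabolic stratum but are not known to control the inertia groups $I_c^\gauche$ in a way compatible with Geck's statement on the KL side. Likewise, your fallback for the genuinely singular types is too weak as stated: Th\'eor\`eme~\ref{theo:b-minimal-cellulaire} pins down one irreducible constituent of $\isomorphisme{C}_c^\calo$ (the unique $\chi_C$ of minimal $\bb$-invariant, with multiplicity one) but gives no handle on the remaining constituents, and the analogous KL uniqueness statement faces the same obstruction --- so matching minimal-$\bb$ constituents does not force equality of the full characters. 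Your sketch is a reasonable research program aligned with the evidence the paper assembles, but both of those bridges would each require a substantial new theorem before the conjecture could be proved along these lines.
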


\bigskip

On a bien s\^ur une conjecture identique pour les cellules \`a droite. En outre, les conjectures~\BIL et~\GAUCHE~devraient 
avoir des r\'eponses positives avec $\rG_c^\gauche \subset \rGba_c$. 

\bigskip

\section{Caract\`eres}

\medskip

%
%
%
%
%
%

Tout d'abord, remarquons que l'ensemble des $c$-familles de Calogero-Moser est ind\'ependant du choix de 
$\rGba_c$. De m\^eme, l'ensemble des $\calo$-caract\`eres $c$-cellulaires ne d\'epend pas du choix de 
l'id\'eal $\rG_c^\gauche$. Au niveau des caract\`eres, les \'enonc\'es (b) des conjectures \BIL~ et \GAUCHE~ 
impliquent les \'enonc\'es suivants, plus simples, qui ne font pas r\'ef\'erence au choix d'un id\'eal premier 
de $R$. 

\bigskip

\begin{conjecturecar}
Soit $c \in \CCB_\RM$. Alors~:
\begin{itemize}
\itemth{a} La partition de $\Irr(W)$ en $c$-familles de Calogero-Moser co\"{\i}ncide avec la 
partition en $c$-familles de Kazhdan-Lusztig (Gordon-Martino).

\itemth{b} Si $c_s \ge 0$ pour tout $s \in \Ref(W)$, alors l'ensemble des 
$\calo$-caract\`eres $c$-cellulaires co\"{\i}ncide avec l'ensemble des 
$\kl$-caract\`eres   $c$-cellulaires. 
\end{itemize}
\end{conjecturecar}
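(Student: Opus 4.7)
The plan is to treat the two assertions separately, since they have rather different flavors, and to reduce each to an existing conjecture plus a structural comparison.

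For assertion (a), I would try to factor the comparison through the Hecke algebra side. Martino's conjecture \FAM, which is known for all groups of type $G(de,e,r)$ (with minor exceptions) and many exceptional groups, predicts that $c$-Calogero-Moser families refine $k^\sharp$-Hecke families. Independently, the Kazhdan-Lusztig side is linked to Hecke families through Lusztig's conjectures (and is known, via Geck's work, for all Coxeter groups with equal parameters, and more generally in many unequal-parameter cases). The strategy would be: first, establish the common invariant $c_\chi(k^\sharp) = \O_\chi^c(\euler_c)$ (equation $(\spadesuit)$ in Remark~\ref{rema:c-constant}), which already forces both partitions to refine a common coarser partition; second, show that the numerical constraints $(\arm_\calo)$ and $(\arm)$ combined with Lemma~\ref{lem:c-contant} and the behavior under tensoring by $\e$ (Corollary~\ref{ordre 2} and Proposition~\ref{prop:ordre 2}) eliminate enough cases to force equality, at least when both sides are determined by the Hecke families. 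The main obstacle here is that the CM side is a priori only a refinement of Hecke families, never the reverse, so one has to rule out ``extra splittings'' on the Calogero-Moser side by producing enough central elements in $\Zba_c$ whose $\O_\chi^c$-values separate only what Kazhdan-Lusztig already separates.

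For assertion (b), the key entry point is Theorem~2 of the introduction (the smoothness theorem proved in Chapter~\ref{chapter:bb}): whenever a two-sided cell $\G$ corresponds to a smooth point of $\ZCB_c$, the CM cellular character $\isomorphisme{C}_c^\calo$ attached to any left cell $C \subset \G$ is the irreducible $\chi$ with $\Irr_\G^\calo(W) = \{\chi\}$. On the Kazhdan-Lusztig side, in the smooth (``Lusztig special'') situation the cellular character is known to be irreducible as well (for instance in type $A$, or for any family which is a singleton), so the two sets of characters coincide family by family over the smooth locus. The plan is then to reduce the non-smooth case to the smooth case by two devices: first, the compatibility of CM left cells with parabolic subgroups (an analog, still to be established, of Lemma~\ref{lem:kl-cells-parabolique}), which would allow induction on the rank; second, a deformation argument along a one-parameter family inside $\CCB_\RM$ joining $c$ to a parameter where the Calogero-Moser space becomes smooth, using Proposition~\ref{prop:cellulaire-semicontinu} to control how cellular characters decompose under specialization.

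The hard part will be this last step. The semicontinuity of Proposition~\ref{prop:cellulaire-semicontinu} goes in one direction only: a specialization of a CM cellular character is a sum of CM cellular characters at the special parameter. To conclude that each summand is actually a \emph{KL} cellular character, one needs a matching semicontinuity on the Kazhdan-Lusztig side and, crucially, an identification of the decomposition patterns on both sides. Even for the smallest genuinely non-equal-parameter Coxeter group of type $B_2$ (treated in Chapter~\ref{chapitre:b2}), the verification of cellular characters is significantly more delicate than the verification of families, precisely because irreducible KL cellular characters do not always come from smooth CM points. I expect that a full proof of (b) in the non-smooth, unequal-parameter setting will require a new geometric input, perhaps a categorification of the cellular character via sheaves on a suitable resolution of $\ZCB_c$, rather than a purely combinatorial reduction.
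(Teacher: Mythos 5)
The statement you have been asked to prove is labelled \textbf{Conjecture~CAR} in the paper: it is an open conjecture, not a theorem, and the paper contains no proof of it. There is therefore nothing to compare your proposal against. What the paper does offer, in Chapter~\ref{chapter:arguments}, is a collection of partial verifications and supporting evidence: Theorem~\ref{theo:gordon-martino-bellamy} verifies part~(a) when $|S|\le 2$, or in types $A_n$, $D_n$, $F_4$, or $B_n$ for restricted parameter ranges, by combining known computations of KL-families (via Lusztig's constructible characters and the P1--P15 conjectures) with known computations of Calogero-Moser families (Bellamy, Gordon, Martino); Theorem~\ref{theo:cellulaire-conjecture} verifies part~(b) in the cases where $\ZCB_c$ is smooth, relying precisely on Theorem~\ref{theo:cellulaire-lisse}, which is the result you identify as ``Theorem~2 of the introduction.''

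Your proposal is not a proof, and you yourself say as much near the end. But as a reading of the paper's strategy it is essentially faithful: you correctly identify the Martino conjecture~\FAM{} and the common invariant $c_\chi(k^\sharp)=\O_\chi^c(\euler_c)$ as the bridge for~(a), and you correctly identify the smooth-point theorem, semicontinuity (Proposition~\ref{prop:cellulaire-semicontinu}), and the action of $\e$ as the main ingredients on both sides. You also correctly flag the two genuine gaps: (i)~for~(a), the implication runs in one direction only (both families refine Hecke families \emph{conjecturally}, and Lusztig's P1--P15 are themselves open in general), so one cannot rule out extra splittings on the CM side without new information; (ii)~for~(b), semicontinuity only controls how a generic cellular character decomposes under specialization, not that the summands match the KL side term by term, and the paper gives no CM-analogue of the parabolic-induction Lemma~\ref{lem:kl-cells-parabolique}. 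Your suggested reduction by parabolic subgroups and by deformation along a path in $\CCB_\RM$ is not in the paper; the authors explicitly list the parabolic question among open problems (section~\ref{section:problemes 2}), and the type~$B_2$ computation of Chapter~\ref{chapitre:b2} shows the cellular-character comparison is already delicate in rank~2. In short: your outline is a reasonable account of the state of the art, but it does not close either gap, and the paper does not claim to either.
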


\bigskip

Notons que le point (a) ci-dessus a \'et\'e conjectur\'e par Gordon et 
Martino~\cite[conjecture 1.3(1)]{gordon martino}. La conjecture \BIL~ rel\`eve 
donc la conjecture de Gordon-Martino au niveau des cellules bilat\`eres. 

\bigskip

\noindent{\sc Commentaire - } 
Si le choix des id\'eaux premiers $\rGba_c$ ou $\rG_c^\gauche$ n'a aucune importance 
pour la conjecture \CAR, il n'en est pas de m\^eme pour les conjectures \BIL~ et \GAUCHE. 
En effet, remplacer $\rGba_c$ par un autre id\'eal $\rGba_c'$ transforme 
l'ensemble des cellules bilat\`eres \`a travers l'action d'un \'el\'ement $g \in G \subset \SG_W$ 
(tel que $\rGba_c'=g(\rGba_c)$). Or, dans certains cas, $G$ est le groupe $\SG_W$ tout entier...

D'autre part, la partition en $c$-cellules de Kazhdan-Lusztig d\'epend elle aussi 
fortement du choix d'un ensemble $S$ de r\'eflexions simples (changer cet ensemble 
reviendrait \`a conjuguer les cellules par un \'el\'ement de $W$). Il est donc imp\'eratif 
de lier le choix de $\rGba_c$ (ou $\rG_c^\gauche$) au choix de $S$, ce que nous ne 
savons pas faire pour le moment.\finl

\chapter{Arguments}\label{chapter:arguments}

Comme cela sera expliqu\'e dans la partie~\ref{part:exemples}, toutes les conjectures \'enonc\'ees 
dans le chapitre~\ref{chapter:conjectures} sont v\'erifi\'ees si $W$ est de type $A_1$ ou $B_2$~: 
le cas du type $A_2$ peut aussi \^etre trait\'e mais nous n'avons pas inclus les calculs 
dans ce m\'emoire. Le cas du type $B_2$ est trait\'e dans le chapitre~\ref{chapitre:b2}. 
Cependant, la difficult\'e des calculs ne nous permet pas, en l'\'etat actuel de nos connaissances, 
d'allonger cette liste d'exemples. Le but de ce chapitre est plut\^ot 
d'avancer des arguments th\'eoriques en faveur de ces conjectures, qui seront 
distill\'es sous forme de remarques successives.

\bigskip

\section{Le cas ${\boldsymbol{c=0}}$}

\medskip

Les faits suivants seront d\'emontr\'es 
dans le chapitre~\ref{chapitre nul}~:

\bigskip

\begin{prop}\label{lem:nul}
Il n'y a qu'une seule $0$-cellule de Calogero-Moser bilat\`ere, \`a gauche ou \`a droite~: c'est $W$ tout 
entier. De plus,
$$\Irr_W^\calo(W)=\Irr(W)\qquad\text{et}\qquad 
\isomorphisme{W}_0^\calo=\isomorphisme{\kb W}_{\kb W} = \sum_{ \chi \in \Irr(W)} \chi(1) \chi.$$
\end{prop}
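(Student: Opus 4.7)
The plan has three steps, corresponding to the three assertions (one family, one cell of each kind, cellular character formula), to be handled in that order. For the family statement, I would invoke the geometric description: at $c=0$ we have $\Hb_0 = \kb[V \oplus V^*] \rtimes W$, so $\ZCB_0 = (V \times V^*)/W$ and $\Upsilon_0$ is the natural projection to $V/W \times V^*/W$, whose fiber over the origin is the single orbit $\{0\}$. Since $0$-families are in bijection with $\Upsilon_0^{-1}((0,0))$ by the correspondence of Lemma~\ref{caracterisation blocs CM}, there is a unique $0$-family, and it must be $\Irr(W)$. Alternatively, $\Zba_0$ is positively graded with degree-zero part $\kb$, and $\MCov_0(\chi)$ is finite-dimensional and graded, so any positive-degree element of $\Zba_0$ acts nilpotently, forcing all central characters $\O_\chi^0$ to coincide with the augmentation by Lemma~\ref{omega nilpotent}.

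The bilateral cell statement then follows immediately from Theorem~\ref{theo cellules familles}(d), which gives $|\G| = \sum_{\chi \in \Irr(W)} \chi(1)^2 = |W|$, hence $\G = W$.

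For the left cell (and symmetrically the right cell), I would show directly that $\Mb_0^\gauche \Hb_0^\gauche$ admits a unique simple module, which by Theorem~\ref{theo:gauche}(b) yields a unique left $0$-cell, necessarily equal to $W$. At $c=0$ the relation $[y,x] = 0$ gives $\Hb_0^\gauche \simeq (\kb[V] \otimes \kb[V^*]^\cow) \rtimes W$, and after base change to $\Kb_0^\gauche = \kb(V)^W$ (using the identity $\kb(V)^W \otimes_{\kb[V]^W} \kb[V] = \kb(V)$, valid since both sides are free of rank $|W|$ over $\kb(V)^W$ and embed in $\kb(V)$), one obtains $\Kb_0^\gauche \Hb_0^\gauche \simeq (\kb(V) \otimes \kb[V^*]^\cow) \rtimes W$. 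The two-sided ideal generated by $\kb[V^*]^\cow_+$ is nilpotent, with quotient $\kb(V) \rtimes W$. Galois descent for the Galois extension $\kb(V)/\kb(V)^W$ of group $W$ identifies this quotient with $\End_{\kb(V)^W}(\kb(V)) \simeq \Mat_{|W|}(\Kb_0^\gauche)$, which has a unique absolutely simple module, and this absolute simplicity is preserved under extension of scalars to $\Mb_0^\gauche$.

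Finally, with $C = W$ as the unique left $0$-cell, Proposition~\ref{multiplicite cm}(a) collapses to $\mult_{W,\chi}^\calo = \chi(1)$ for every $\chi \in \Irr(W)$, whence $\isomorphisme{W}_0^\calo = \sum_{\chi \in \Irr(W)} \chi(1)\chi = \isomorphisme{\kb W}_{\kb W}$. The most delicate step will be the left-cell analysis: one must ensure that the Galois descent correctly identifies the simples and that passing from $\Kb_0^\gauche$ to the a priori unknown residue field $\Mb_0^\gauche$ does not split the unique simple further. Both conclusions follow from the absolute simplicity obtained over $\Kb_0^\gauche$, which in fact forces $\Mb_0^\gauche = \Kb_0^\gauche$ in this case.
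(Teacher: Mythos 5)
Your proof is correct, but proceeds by a genuinely different route from the paper's. For the bilateral statement, the paper argues via Galois theory: using the $(\NM \times \NM)$-grading on $R$, it shows (Proposition~\ref{prop:rba0}) that $R_+$ is the unique prime of $R$ above $\pGba_0$, so $\rGba_0 = R_+$ and $\Dba_0 = \Iba_0 = G$ by Corollary~\ref{r0 DI}; the unique $\Iba_0$-orbit is then $W$. For the left statement it shows directly (Proposition~\ref{prop:d0-left}(a)) that $\iota(W\times 1)\subset I_0^\gauche$, since $W\times 1$ acts trivially on $\kb[V\times V^*]/\rG^* = \kb[V]$; as $\iota(W\times 1)$ acts on $W$ by left translation, transitively, the unique $I_0^\gauche$-orbit is $W$. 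Your argument instead reads off the answer from the representation theory at $c=0$: the fibre of $\Upsilon_0$ over $(0,0)$ for the family count; the cardinality formula of Theorem~\ref{theo cellules familles}(d) for the bilateral cell; and the Morita-theoretic identification of the semisimple quotient of $\Kb_0^\gauche\Hb_0^\gauche$ with $\Mat_{|W|}(\Kb_0^\gauche)$ for the left cell. Both approaches are short; the paper's is a quick corollary of the grading structure on $R$ established earlier, while yours is self-contained and makes explicit the representation-theoretic content at $c=0$.

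One incidental assertion is wrong, though harmlessly so: you claim at the end that the absolute simplicity over $\Kb_0^\gauche$ ``forces $\Mb_0^\gauche = \Kb_0^\gauche$''. This does not follow and is generally false. The extension $\Mb_0^\gauche/\Kb_0^\gauche$ is Galois with group $D_0^\gauche/I_0^\gauche$, and Proposition~\ref{prop:d0-left}(b)--(d) shows this quotient to be a quotient of $W/\Zrm(W)$ (equal to $W/\Zrm(W)$ if $R/\rG_0$ is integrally closed), hence nontrivial for non-abelian $W$. What the absolute simplicity actually buys is exactly what your argument needs and no more: the block count of $\Kb_0^\gauche\Hb_0^\gauche$ is stable under arbitrary scalar extension, in particular to $\Mb_0^\gauche$, irrespective of what that residue field is.
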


\bigskip

\begin{coro}\label{coro:nul}
Les conjectures~L et~LR sont v\'erifi\'ees lorsque $c=0$.
\end{coro}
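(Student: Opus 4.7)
Le plan est tr\`es court car tout le travail est d\'ej\`a fait en amont~: il s'agit simplement de confronter les descriptions c\^ot\'e Calogero-Moser (proposition~\ref{lem:nul}) et c\^ot\'e Kazhdan-Lusztig (exemple~\ref{exemple:c=0}) du cas $c=0$.

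Je commencerais par observer que, puisque le param\`etre nul $c=0$ v\'erifie trivialement $c_s \ge 0$ pour tout $s \in \Ref(W)$, les clauses (a) et (b) des conjectures \BIL~et~\GAUCHE~doivent \^etre v\'erifi\'ees simultan\'ement. De plus, toute $0$-cellule de Calogero-Moser (\`a gauche, \`a droite ou bilat\`ere) \'etant d\'efinie comme orbite d'un groupe d'inertie agissant sur $W$, la partition obtenue ne d\'epend en fait pas du choix de l'id\'eal premier de $R$ au-dessus de $\pG_0^\gauche$, $\pG_0^\droite$ ou $\pGba_0$ lorsque cette partition est la partition grossi\`ere~: autrement dit, lorsque $c=0$, la libert\'e de choix pr\'esente dans l'\'enonc\'e des conjectures devient sans objet.

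Ensuite, j'invoquerais la proposition~\ref{lem:nul}, qui affirme que $\cmcellules_{LR}^0(W) = \cmcellules_L^0(W) = \cmcellules_R^0(W) = \{W\}$, que $\Irr_W^\calo(W) = \Irr(W)$, et que
$$\isomorphisme{W}_0^\calo = \sum_{\chi \in \Irr(W)} \chi(1)\hskip1mm \chi.$$
Du c\^ot\'e Kazhdan-Lusztig, l'exemple~\ref{exemple:c=0} nous donne exactement la m\^eme description~: pour $c=0$, les \'el\'ements de la base de Kazhdan-Lusztig co\"{\i}ncident avec les $T_w$, de sorte que $\heckecyclotomique(0) = \OC[\qb^\RM][W]$ et qu'il n'existe qu'une seule $0$-cellule de Kazhdan-Lusztig (des trois types), \`a savoir $W$ tout entier, avec $\Irr_W^\kl(W) = \Irr(W)$ et $\isomorphisme{W}_0^\kl = \sum_{\chi \in \Irr(W)} \chi(1)\hskip1mm\chi$.

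La derni\`ere \'etape consiste \`a concat\'ener ces deux \'enonc\'es~: les partitions co\"{\i}ncident (trivialement, car chacune n'a qu'un seul bloc, \`a savoir $W$), ce qui \'etablit les clauses (a) des deux conjectures~; l'\'egalit\'e $\Irr_W^\calo(W) = \Irr(W) = \Irr_W^\kl(W)$ fournit la clause (b) de la conjecture~\BIL~; et l'\'egalit\'e $\isomorphisme{W}_0^\calo = \isomorphisme{W}_0^\kl$ fournit la clause (b) de la conjecture~\GAUCHE. Il n'y a pas de point d\'elicat~: toute la difficult\'e r\'eside dans la preuve (diff\'er\'ee) de la proposition~\ref{lem:nul}, qui elle est substantielle puisqu'il faut montrer que, g\'en\'eriquement pour $c=0$, l'extension $\Mb_0/\Kb_0$ est la cl\^oture galoisienne de $\Lb_0/\Kb_0$ et que le groupe d'inertie $\Iba_0$ agit transitivement sur $W$ (ce qui d\'ecoule in fine de la description $Z_0 \simeq \kb[V \times V^*]^{\D W}$ et de son rev\^etement galoisien $\kb[V \times V^*]^{\D \Zrm(W)}$).
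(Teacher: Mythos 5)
Votre preuve suit exactement la m\^eme d\'emarche que celle du texte~: confronter la description Calogero-Moser pour $c=0$ (proposition~\ref{lem:nul}) \`a la description Kazhdan-Lusztig correspondante. La seule divergence, purement bibliographique, est que vous invoquez l'exemple~\ref{exemple:c=0} du m\'emoire lui-m\^eme l\`a o\`u le texte renvoie \`a~\cite[corollaires~2.13~et~2.14]{bonnafe continu} --- les deux sources \'enon\c{c}ant le m\^eme fait.
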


\begin{proof}
Cela d\'ecoule de la comparaison de~\cite[corollaires~2.13~et~2.14]{bonnafe continu} 
avec la proposition~\ref{lem:nul}.
\end{proof}

\bigskip

\section{Caract\`eres constructibles, familles de Lusztig} 

\medskip

Dans la suite de ce chapitre, nous nous int\'eresserons au cas des param\`etres strictement positifs~: 
nous ne savons pas traiter le cas o\`u seulement certains param\`etres sont nuls (pour 
comparer avec~\cite[corollaires~2.13~et~2.14]{bonnafe continu}). 

\bigskip

\boitegrise{Fixons dans cette section un \'el\'ement $c \in \CCB_\RM$ tel que $c_s > 0$ pour 
tout $s \in \Ref(W)$.}{0.75\textwidth}

\bigskip

\noindent{\sc Convention - } Si $(W,S)$ est de type $B_n$, alors nous noterons 
$S=\{t,s_1,s_2,\dots,s_{n-1}\}$ avec la convention que $t$ n'est conjugu\'e \`a 
aucun des $s_i$. En d'autres termes, le diagramme de Dynkin est 
\begin{center}
\begin{picture}(220,30)
\put( 40, 10){\circle{10}}
\put( 44,  7){\line(1,0){33}}
\put( 44, 13){\line(1,0){33}}
\put( 81, 10){\circle{10}}
\put( 86, 10){\line(1,0){29}}
\put(120, 10){\circle{10}}
\put(125, 10){\line(1,0){20}}
\put(155,  7){$\cdot$}
\put(165,  7){$\cdot$}
\put(175,  7){$\cdot$}
\put(185, 10){\line(1,0){20}}
\put(210, 10){\circle{10}}
\put( 38, 20){$t$}
\put( 76, 20){$s_1$}
\put(116, 20){$s_2$}
\put(200, 20){$s_{n{-}1}$}
\end{picture}
\end{center}
Dans ce cas, nous poserons $b=c_t$ et $a=c_{s_1}=c_{s_2}=\cdots = c_{s_{n-1}}$.\finl

\bigskip

Lusztig~\cite[\S{22}]{lusztig} a d\'efini la notion de {\it caract\`ere constructible} de $W$, 
que nous appellerons ici {\it caract\`ere $c$-constructible}. On peut alors d\'efinir un graphe $\GC_c(W)$ 
ainsi~: 
\begin{itemize}
\item[$\bullet$] L'ensemble des sommets de $\GC_c(W)$ est $\Irr(W)$.

\item[$\bullet$] Deux caract\`eres irr\'eductibles de $W$ sont reli\'es dans $\GC_c(W)$ s'ils 
apparaissent dans un m\^eme caract\`ere $c$-constructible. 
\end{itemize}
On d\'efinit alors les {\it $c$-familles de Lusztig} comme les composantes connexes de $\GC_c(W)$. 

\bigskip

\begin{prop}\label{prop:cellulaire-constructible}
Supposons que l'une des assertions suivantes soit satisfaite~:
\begin{itemize}
\itemth{1} $c$ est constante~;

\itemth{2} $|S| \le 2$~;

\itemth{3} $(W,S)$ est de type $F_4$~;

\itemth{4} $(W,S)$ est de type $B_n$, $a \neq 0$ et $b/a \in \{1/2,1,3/2,2\} \cup ]n-1, + \infty)$. 
\end{itemize}
Alors~:
\begin{itemize}
\itemth{a} Les caract\`eres $c$-constructibles et les KL-caract\`eres $c$-cellulaires co\"{\i}ncident.

\itemth{b} Les $c$-familles de Lusztig et les $c$-familles de Kazhdan-Lusztig co\"{\i}ncident. 
\end{itemize}
\end{prop}

\begin{proof}
Lusztig~\cite[conjectures~14.2]{lusztig} a \'enonc\'e une s\'erie de conjectures 
(num\'erot\'ees P1, P2,\dots, P15) portant sur les cellules de Kazhdan-Lusztig. 
Elles ont \'et\'e d\'emontr\'ees~:
\begin{itemize}
\itemth{1} si $c$ est constante dans~\cite[chapitre~15]{lusztig}~;

\itemth{2} si $|S| \le 2$ dans~\cite[chapitre~17]{lusztig}~;

\itemth{3} si $(W,S)$ est de type $F_4$ dans~\cite{geck f4}~;

\itemth{4} si $(W,S)$ est de type $B_n$ et $a=0$ ou bien $a \neq 0$ et $b/a \in \{1/2,1,3/2,2\}$ 
dans~\cite[chapitre~16]{lusztig}~;

\itemth{4'} si $(W,S)$ est de type $B_n$, $a \neq 0$ et $b/a > n-1$ 
dans~\cite{bonnafe iancu},~\cite{bonnafe two} et~\cite{geck iancu}.
\end{itemize}
D'autre part, il est d\'emontr\'e dans~\cite[lemme~22.2]{lusztig} et~\cite[\S{6}~et~\S{7}]{geck plus} que 
ces conjectures impliquent que les caract\`eres $c$-constructibles et les KL-caract\`eres $c$-cellulaires 
co\"{\i}ncident. Cela montre (a). L'\'enonc\'e (b) est alors d\'emontr\'e dans~\cite[corollaire~1.8]{bonnafe geck}.
\end{proof}

\bigskip

\section{Conjectures sur les caract\`eres} 

\medskip

\subsection{Familles} 
Les caract\`eres $c$-constructibles (et donc les $c$-familles de Lusztig) 
ont \'et\'e calcul\'e(e)s dans tous les cas par Lusztig~\cite{lusztig}. 
On d\'eduit de la proposition~\ref{prop:cellulaire-constructible} ce que sont les $c$-familles 
de Kazhdan-Lusztig dans les cas (1), (2), (3) et (4). Or, le calcul explicite des 
$c$-familles de Calogero-Moser en type classique a \'et\'e effectu\'e par Bellamy, Gordon et Martino 
dans la s\'erie d'articles~\cite{bellamy these}, \cite{bellamy}, 
\cite{gordon}, \cite{gordon B}, \cite{gordon martino}, 
\cite{martino 2}. Il en r\'esulte le th\'eor\`eme suivant~:

\bigskip
 
\begin{theo}\label{theo:gordon-martino-bellamy}
Supposons que l'une des assertions suivantes soit satisfaite~:
\begin{itemize}
\itemth{1} $|S| \le 2$.

\itemth{2} $(W,S)$ est de type $A_n$, $D_n$ ou $F_4$.

\itemth{3} $(W,S)$ est de type $B_n$ et $a=0$ ou bien $a \neq 0$ et $b/a \in \{1/2,1,3/2,2\} \cup ]n-1, + \infty)$. 
\end{itemize}
Alors la conjecture~\CAR(a) est v\'erifi\'ee.
\end{theo}

\bigskip

\subsection{Caract\`eres cellulaires} 
Si $(W,S)$ est de type $A$ ou si $(W,S)$ est de type $B_n$ avec $a \neq 0$ et 
$b/a \in \{1/2,3/2\} \cup ]n-1, + \infty)$, alors il d\'ecoule des r\'esultats pr\'ec\'edents 
que les KL-caract\`eres $c$-cellulaires sont les caract\`eres irr\'eductibles. 
De plus, il d\'ecoule aussi des travaux de Gordon et Martino que, 
toujours dans le m\^eme cas, l'espace de Calogero-Moser $\ZCB_c$ est lisse. 
Le th\'eor\`eme suivant r\'esulte alors du th\'eor\`eme~\ref{theo:cellulaire-lisse}.

\bigskip

\begin{theo}\label{theo:cellulaire-conjecture}
Supposons que l'on est dans l'un des cas suivants~:
\begin{itemize}
\itemth{1} $(W,S)$ est de type $A$~;

\itemth{2} $(W,S)$ est de type $B_n$ avec $a \neq 0$ et 
$b/a \in \{1/2,3/2\} \cup ]n-1, + \infty)$.
\end{itemize}
Alors la conjecture~\CAR(b) est vraie (et les caract\`eres $c$-cellulaires sont irr\'eductibles).
\end{theo}

\bigskip

\subsection{Autres arguments} 
Tout d'abord, remarquons que, si l'on suppose les conjectures de Lusztig~P1, P2,\dots, P15 vraies 
(voir~\cite[conjectures 14.2]{lusztig}), alors les raisonnements pr\'ec\'edents impliquent 
que la conjecture~\CAR(b) est vraie en type $B$ et la conjecture~\CAR(a) est vraie 
en type $B$ avec $a \neq 0$ et $b/a \not\in \{1,2,\dots,n-1\}$ (car alors les 
caract\`eres $c$-constructibles sont les caract\`eres irr\'eductibles et l'espace de Calogero-Moser 
est lisse).  


\bigskip

\begin{rema}\label{rema:epsilon}
Si $\FC$ est une $c$-famille de Calogero-Moser (respectivement Kazhdan-Lusztig), alors 
$\FC\e$ est une $c$-famille de Calogero-Moser (respectivement Kazhdan-Lusztig)~: 
voir le corollaire~\ref{ordre 2} et~(\ref{eq:familles-cellulaire-w0}).

De m\^eme, si $\chi$ est un CM-caract\`ere (respectivement un KL-caract\`ere) 
$c$-cellulaire, alors $\chi \e$ est un CM-caract\`ere (respectivement un KL-caract\`ere) 
$c$-cellulaire~: voir le corollaire~\ref{coro:cellulaire-ordre-2} 
et~(\ref{eq:caractere-cellulaire-w0}).\finl
\end{rema}

\bigskip

\begin{rema}\label{rema:b-invariant}
Si $\FC$ est une $c$-famille de Calogero-Moser (respectivement de Lusztig), alors 
il existe un unique caract\`ere $\chi \in \FC$ de $\bb$-invariant minimal~: 
voir le th\'eor\`eme~\ref{dim graduee bonne}(b) (respectivement~\cite{bonnafe b}, 
ou~\cite[theor\`eme~5.25~et~sa~preuve]{lusztig orange} 
dans le cas o\`u $c$ est constant).

De m\^eme, si $\chi$ est un CM-caract\`ere $c$-cellulaire (respectivement un caract\`ere 
$c$-constructible), alors il existe une unique composante irr\'eductible de $\chi$ de $\bb$-invariant minimal~: 
voir le th\'eor\`eme~\ref{theo:b-minimal-cellulaire} (respectivement~\cite{bonnafe b}, 
ou~\cite[theor\`eme~5.25~et~sa~preuve]{lusztig orange} dans le cas o\`u $c$ est constant).\finl
\end{rema}

\bigskip

\section{Cellules} 

\medskip

\subsection{Cellules bilat\`eres} 
Le premier argument en faveur le la conjecture \BIL~ vient de la comparaison du cardinal des cellules, et
du fait que la conjecture~\CAR(a) a \'et\'e d\'emontr\'ee dans de nombreux cas. 

\bigskip

\begin{rema}\label{rem:cardinal-cellules-kl-cm}
Supposons ici que $(W,c)$ v\'erifie l'une des hypoth\`eses du th\'eor\`eme~\ref{theo:gordon-martino-bellamy}. 
Soit $\FC$ une $c$-famille de Calogero-Moser 
(c'est-\`a-dire une $c$-famille de Kazhdan-Lusztig en vertu du th\'eor\`eme~\ref{theo:gordon-martino-bellamy}). 
Notons $\G_\calo$ (respectivement $\G_\kl$) la $c$-cellule de Calogero-Moser (respectivement Kazhdan-Lusztig) 
bilat\`ere associ\'ee. Alors, il d\'ecoule du th\'eor\`eme~\ref{theo cellules familles}(d)  que 
$$|\G_\calo| = \sum_{\chi \in \FC} \chi(1)^2$$
et il d\'ecoule de~(\ref{eq:cardinal-cellule-kl})  que 
$$|\G_\kl|=\sum_{\chi \in \FC} \chi(1)^2.$$
Ainsi, 
$$|\G_\calo|=|\G_\kl|.$$
Ce n'est \'evidemment pas suffisant en g\'en\'eral pour montrer que $\G_\calo=\G_\kl$. En revanche, cela d\'emontre 
la conjecture~\BIL~  dans le cas o\`u $G=\SG_W$ (ce qui pourra\^{\i}t \^etre le cas si $W$ est de type $A_n$)~: 
en effet, quitte \`a remplacer $\rGba_c$ par $g(\rGba_c)$ pour un certain $g \in G=\SG_W$, 
on pourrait s'arranger pour que $\G_\calo=\G_\kl$ (et ce pour toute famille $\FC$). 
Cela montre aussi qu'il faudrait savoir pr\'eciser le choix de $\rGba_c$ dans la conjecture~\BIL.\finl
\end{rema}

\bigskip

\begin{rema}\label{rema:bil-w0}
Soit $\G_\calo$ (respectivement $\G_\kl$) une $c$-cellule de Calogero-Moser (respectivement de Kazhdan-Lusztig) 
bilat\`ere. Notons $w_0$ l'\'el\'ement le plus long de $W$. Alors~:
\begin{itemize}
 \item D'apr\`es~(\ref{eq:sim-w0}) et~(\ref{eq:w0gw0}), 
$w_0 \G_\kl=\G_\kl w_0$ est une $c$-cellule de Kazhdan-Lusztig bilat\`ere et 
$\Irr_{w_0\G_\kl}^\kl(W)=\Irr_{\G_\kl}^\kl(W) \e$.

\item Puisque toutes les r\'eflexions de $W$ sont d'ordre $2$, il a \'et\'e 
montr\'e dans le corollaire~\ref{w0 epsilon} que, {\it si $w_0$ est central dans $W$}, alors 
$w_0 \G_\calo=\G_\calo w_0$ est une $c$-cellule de Calogero-Moser bilat\`ere et 
$\Irr_{w_0\G_\calo}^\calo(W)=\Irr_{\G_\calo}^\calo(W) \e$.
\end{itemize}
Ces r\'esultats montrent une certaine analogie {\it lorsque $w_0$ est central} dans $W$. 
Pour le deuxi\`eme \'enonc\'e, il n'est pas raisonnable d'esp\'erer que ce soit vrai 
lorsque $w_0$ n'est pas central (comme le montre l'exemple du type $A_2$) sans 
avoir fait un choix judicieux de l'id\'eal premier $\rGba_c$.\finl
\end{rema}

\bigskip

\subsection{Cellules \`a gauche} 
Commen\c{c}ons en rappelant que l'exp\'erience montre que de 
nombreuses $c$-cellules de Kazhdan-Lusztig donnent lieu au m\^eme $\kl$-caract\`ere $c$-cellulaire. 
Sur le versant Calogero-Moser, le corollaire~\ref{coro:dec-cellulaire} montre que, si $d \in D_c^\gauche$, alors 
les $\calo$-caract\`eres $c$-cellulaires $\isomorphisme{C}_c^\calo$ et 
$\isomorphisme{\lexp{d}{C}}_c^\calo$ sont \'egaux. Ainsi, 
de nombreuses $c$-cellules de Calogero-Moser donnent lieu au m\^eme $\calo$-caract\`ere 
$c$-cellulaire (voir par exemple le th\'eor\`eme~\ref{theo:cellulaire-lisse} 
dans le cas lisse).

\bigskip

\begin{rema}\label{rema:gauche-w0}
Soit $C_\calo$ (resp. $C_\kl$) une $c$-cellule de Calogero-Moser 
(resp. de Kazhdan-Lusztig) 
\`a gauche. Notons $w_0$ l'\'el\'ement le plus long de $W$. Alors~:
\begin{itemize}
 \item Il d\'ecoule de~(\ref{eq:sim-w0}) et~(\ref{eq:caractere-cellulaire-w0})  
que $w_0 C_\kl$ et $C_\kl w_0$ sont des $c$-cellules de Kazhdan-Lusztig \`a gauche et 
que $\isomorphisme{w_0C_\kl}_c^\kl= \isomorphisme{C_\kl w_0}_c^\kl=\isomorphisme{C_\kl}_c^\kl \e$.

\item Puisque toutes les r\'eflexions de $W$ sont d'ordre $2$, il d\'ecoule du corollaire~\ref{coro:cellulaire-ordre-2} 
que, {\it si $w_0$ est central dans $W$}, alors 
$w_0 C_\calo=C_\calo w_0$ est une $c$-cellule de Calogero-Moser \`a gauche et que 
$\isomorphisme{w_0C_\calo}_c^\calo= \isomorphisme{C_\calo w_0}_c^\calo=\isomorphisme{C_\calo}_c^\calo \e$.\finl
\end{itemize}
\end{rema}

\bigskip

\begin{rema}\label{rema: lignes-colonnes}
Notons aussi l'analogie des \'egalit\'es num\'eriques suivantes~: si $C$ est une 
$c$-cellule de Calogero-Moser (respectivement de Kazhdan-Lusztig) \`a gauche et si $\chi \in \Irr(W)$, alors 
$$
\begin{cases}
|C|=\DS{\sum_{\psi \in \Irr(W)} \mult_{C,\psi}^\calo \psi(1),}\\
~\\
\chi(1)=\DS{\sum_{C' \in \cmcellules_L(W)} \mult_{C',\chi}^\calo }\\
\end{cases}
$$
(respectivement
$$
\begin{cases}
|C|=\DS{\sum_{\psi \in \Irr(W)} \mult_{C,\psi}^\kl \psi(1),}\\
~\\
\chi(1)=\DS{\sum_{C' \in \klcellules_L(W)} \mult_{C',\chi}^\kl \quad ).}\\
\end{cases}
$$
Il serait int\'eressant d'\'etudier si d'autres propri\'et\'es num\'eriques des cellules de Kazhdan-Lusztig \`a gauche 
(comme par exemple~\cite[lemme~4.6]{geck plus}) sont aussi v\'erifi\'ees par les cellules de Calogero-Moser \`a gauche.\finl
\end{rema}

\bigskip
%
%
%

\bigskip

Outre les exemples en petit rang, notre argument le plus probant en 
faveur des conjectures~\GAUCHE~et~\BIL~ est le suivant~:

\bigskip

\begin{theo}\label{theo:gauche-presque}
Supposons que l'on soit dans l'un des cas suivants~:
\begin{itemize}
\itemth{1} $(W,S)$ est de type $A$ et $c \neq 0$~;

\itemth{2} $(W,S)$ est de type $B$ avec $a \neq 0$ et $b/a \in \{1/2,3/2\} \cup ]n-1,+\infty)$. 
\end{itemize}
Alors il existe une bijection $\ph : W \to W$ telle que~:
\begin{itemize}
\itemth{a} Si $\G$ est une cellule de Kazhdan-Lusztig bilat\`ere, alors $\ph(\G)$ est une cellule de 
Calogero-Moser bilat\`ere et $\Irr_\G^\kl(W)=\Irr^\calo_{\ph(\G)}(W)$.

\itemth{b} Si $C$ est une cellule de Kazhdan-Lusztig \`a gauche, alors $\ph(C)$ est une cellule de 
Calogero-Moser \`a gauche et $\isomorphisme{C}_c^\kl=\isomorphisme{\ph(C)}_c^\calo$.
\end{itemize}
\end{theo}

\begin{proof}
Sous les hypoth\`eses (1) ou (2), l'espace de Calogero-Moser $\ZCB_c$ est lisse (voir~\cite[th\'eor\`eme~1.24]{EG} 
dans le cas (1) et~\cite[lemme~4.3~et~sa~preuve]{gordon}) dans le cas (2)) et donc le 
th\'eor\`eme~\ref{theo:cellulaire-lisse} s'applique \`a toutes les cellules de Calogero-Moser de $W$. 
Le r\'esultat d\'ecoule alors d'une comparaison de cardinaux.
\end{proof}

\bigskip

\cbend

\part{Exemples}\label{part:exemples}

\chapter{Un exemple assez nul~: le cas ${\boldsymbol{c=0}}$}\label{chapitre nul}

\bigskip

\section{Cellules bilat\`eres, familles}

\medskip

Rappelons que $R_+$ d\'esigne l'unique id\'eal bi-homog\`ene maximal de $R$ et que
$$R/R_+ \simeq \kb$$
(voir le corollaire~\ref{r0}). Rappelons aussi que $D_+$ (respectivement $I_+$) d\'esigne son groupe 
de d\'ecomposition (respectivement d'inertie) et que
$$D_+=I_+=G$$
(voir le corollaire~\ref{r0 DI}). 

\bigskip

\begin{prop}\label{prop:rba0}
$R_+$ est l'unique id\'eal premier de $R$ au-dessus de $\pGba_0$.
\end{prop}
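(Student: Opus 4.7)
The plan is to prove the statement by combining three ingredients already available in the excerpt: first, that $R_+ \cap P = \pGba_0$; second, that $G$ acts transitively on the set of prime ideals of $R$ sitting above a fixed prime of $P$ (a standard consequence of $P = R^G$ and integrality); third, the fact established in Corollary~\ref{decomposition naturelle} that the decomposition group of $R_+$ is all of $G$, so that the $G$-orbit of $R_+$ is a singleton.

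First I would verify that $R_+ \cap P = \pGba_0$. Recall that $\pGba_0$ is the bi-homogeneous maximal ideal $P_+$ of $P$ corresponding to the origin $(0,0,0)$ of $\PCB = \CCB \times V/W \times V^*/W$. Since $R/R_+ \simeq \kb$ by Corollary~\ref{r0}, and $R$ is integral over $P$, the composed inclusion $P/(R_+ \cap P) \hookrightarrow R/R_+ = \kb$ shows that $R_+ \cap P$ is a maximal ideal of $P$. It is also bi-homogeneous, being the contraction of the bi-homogeneous ideal $R_+$. But the only bi-homogeneous maximal ideal of $P$ is $P_+$, so $R_+ \cap P = P_+ = \pGba_0$.

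Next, let $\rG$ be any prime of $R$ above $\pGba_0$. Because $P = R^G$ and $R$ is integral over $P$, the group $G$ acts transitively on the fibre $\{\rG \subset R \text{ prime} : \rG \cap P = \pGba_0\}$: this is the classical Cohen--Seidenberg/Bourbaki theorem recalled in the appendix on Galois theory. Hence there exists $g \in G$ with $g(R_+) = \rG$. But by Corollary~\ref{decomposition naturelle} the decomposition group $D_+ = \{g \in G \mid g(R_+) = R_+\}$ equals $G$, i.e.\ $R_+$ is fixed pointwise by the $G$-action. Consequently $\rG = g(R_+) = R_+$, proving uniqueness.

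The argument is essentially a direct assembly of results already proved, so there is no real obstacle; the only subtle point is the first step, namely recognizing that $R_+ \cap P$ must coincide with $\pGba_0$ rather than with some smaller bi-homogeneous prime. This is handled cleanly by the maximality input $R/R_+ \simeq \kb$ combined with integrality, which forces $R_+ \cap P$ to be maximal and hence equal to the unique bi-homogeneous maximal ideal of $P$.
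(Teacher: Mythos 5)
Your proposal is correct and follows essentially the same route as the paper: identify $R_+\cap P$ with $\pGba_0=P_+$, invoke transitivity of $G$ on the fibre, and conclude from Corollary~\ref{decomposition naturelle} that $D_+=G$ forces the fibre to be a singleton. One terminological slip: $D_+=G$ means $R_+$ is fixed \emph{setwise} (i.e.\ $G$-stable), not ``fixed pointwise'' — pointwise fixity would say $G=I_+$ acts trivially on $R_+$, which is a stronger statement; but the conclusion $\rG=g(R_+)=R_+$ you draw uses only setwise stability, so the argument stands.
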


\begin{proof}
En effet, $\pGba_0=P_+$ et donc $R_+$ est un id\'eal premier de $R$ au-dessus de $\pGba_0$~: 
il est stabilis\'e par $G$ ce qui termine la preuve de l'unicit\'e.
\end{proof}

\bigskip

Notons donc $\rGba_0$ l'unique id\'eal premier de $R$ au-dessus de $\pGba_0=P_+$, 
$\Dba_0$ son groupe de d\'ecomposition et $\Iba_0$ son groupe d'inertie. Alors
\equat\label{exemple:dba-0}
\rGba_0=R_+\qquad\text{et}\qquad \Dba_0=\Iba_0=G.
\endequat
Ainsi~:

\bigskip

\begin{coro}\label{coro:familles-nulles}
$W$ ne contient qu'une seule $0$-cellule de Calogero-Moser bilat\`ere, \`a savoir $W$, et
$$\Irr_W^\calo(W)=\Irr(W).$$
\end{coro}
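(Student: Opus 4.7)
The plan is to deduce the corollary directly from the identification $\rGba_0 = R_+$ together with $\Iba_0 = \Dba_0 = G$ recorded in~(\ref{exemple:dba-0}), combined with the general machinery linking cells, blocks and families developed in Chapter~\ref{chapter:cellules-CM} and Chapter~\ref{chapter:bilatere}.

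First, by Definition~\ref{defi:CM}, a $\rGba_0$-cellule de Calogero-Moser is an $\Iba_0$-orbit on the set $W \simeq G/H$ (the identification and action being those fixed in~\S\ref{subsection:action G}). Since $\Iba_0 = G$ and the action of $G$ on $G/H$ by left translation is transitive (this is tautological: for any $w \in W \simeq G/H$, there exists $g \in G$ with $gH = wH$), the unique $\Iba_0$-orbit on $W$ is $W$ itself. Hence $W$ is the unique $0$-cellule de Calogero-Moser bilat\`ere.

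For the second assertion, I would invoke the bijection~(\ref{bij calogero}) between $\cmcellules_{\rGba_0}(W)$ and $\blocs(k_R(\rGba_0) Z)$, which, combined with the correspondence between blocks and $0$-familles established in~\S\ref{cellules et familles}, produces a bijection between $0$-cellules bilat\`eres and $0$-familles of Calogero-Moser. Since there is only one $0$-cellule bilat\`ere, there is only one $0$-famille, and since $0$-familles partition $\Irr(W)$ (see~(\ref{cm familles})), this unique famille must equal $\Irr(W)$. Alternatively, one can check the numerical identity of Theorem~\ref{theo cellules familles}(d):
\[
|W| \;=\; |\G| \;=\; \sum_{\chi \in \Irr_W^\calo(W)} \chi(1)^2,
\]
and since $\sum_{\chi \in \Irr(W)} \chi(1)^2 = |W|$ with $\Irr_W^\calo(W) \subseteq \Irr(W)$, the inclusion must be an equality.

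There is no real obstacle here: once (\ref{exemple:dba-0}) is in hand the argument is almost immediate, the only thing to be careful about being to correctly translate the triviality $\Iba_0 = G$ through the identification $G/H \longleftrightarrow W$ of~\S\ref{subsection:specialisation galois 0} in order to conclude transitivity on $W$, and then to invoke the correct bijection between cells and families rather than re-proving it.
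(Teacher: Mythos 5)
Your proof is correct and follows essentially the same route as the paper, which treats the corollary as an immediate consequence of the identity $\Dba_0 = \Iba_0 = G$ established in~(\ref{exemple:dba-0}): since the inertia group is all of $G$, which acts transitively on $G/H \simeq W$, there is a single $0$-cellule bilat\`ere, and the bijection between cellules bilat\`eres and familles (or equivalently the counting identity of Theorem~\ref{theo cellules familles}(d)) then forces the unique famille to be all of $\Irr(W)$.
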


\bigskip

Une des particularit\'es de la sp\'ecialisation en $0$ est que l'alg\`ebre 
$\Hbov_0$ h\'erite de la $(\NM \times \NM)$-graduation, et donc de la $\NM$-graduation. 
Si on note 
$$\Hbov_{0,+} = \mathop{\bigoplus}_{i \ge 1} \Hbov_0^\NM[i],$$
alors $\Hbov_{0,+}$ est un id\'eal bilat\`ere nilpotent de $\Hbov_0$ et, puisque 
$\Hbov_0^\NM[0] = \kb W$, on obtient le r\'esultat suivant~:

\bigskip

\begin{prop}\label{rad h0}
$\Rad(\Hbov_0) = \Hbov_{0,+}$ et $\Hbov_0/\Rad(\Hbov_0) \simeq \kb W$.
\end{prop}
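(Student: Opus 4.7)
The plan is to use the classical fact that in a finite-dimensional $\NM$-graded algebra $A$ with $A^\NM[0]$ semisimple, the sum of positive-degree components is the Jacobson radical. All the needed ingredients have been assembled earlier in the memoir.

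First I would verify the basic structural facts about $\Hbov_0$. By Proposition~\ref{PBW restreint} applied to $\Hbov$ followed by the specialization $\CG_0$, one has a $\kb$-linear isomorphism
\[
\Hbov_0 \simeq \kb[V]^\cow \otimes \kb W \otimes \kb[V^*]^\cow,
\]
so in particular $\dim_\kb \Hbov_0 = |W|^3 < \infty$. Next, because $T$ and $\pGba$ are both $\NM$-homogeneous (and $\CG_0$ is generated by the $C_s$ which are bi-homogeneous of bi-degree $(1,1)$, hence $\NM$-homogeneous of degree $2$), the $\NM$-grading of $\Hb$ described in Example~\ref{N graduation-1} descends to $\Hbov_0$. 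In this induced grading, elements of $V$ and $V^*$ sit in degree $1$ while elements of $W$ sit in degree $0$; since $\CCB^*$ has been killed by the specialization, we get
\[
\Hbov_0^\NM[0] = \kb W.
\]

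The rest is then immediate. The subspace $\Hbov_{0,+} = \bigoplus_{i \ge 1} \Hbov_0^\NM[i]$ is a two-sided ideal because it is a graded piece of a graded algebra. It is nilpotent: since $\Hbov_0$ is finite-dimensional, there is some $N$ with $\Hbov_0^\NM[i] = 0$ for all $i \ge N$, and then $(\Hbov_{0,+})^N \subset \bigoplus_{i \ge N} \Hbov_0^\NM[i] = 0$. The quotient is
\[
\Hbov_0/\Hbov_{0,+} \simeq \Hbov_0^\NM[0] = \kb W,
\]
which is semisimple by the Benard--Bessis theorem (Theorem~\ref{deploiement}), since $\kb$ has characteristic zero. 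A nilpotent ideal with semisimple quotient is exactly the Jacobson radical, so $\Rad(\Hbov_0) = \Hbov_{0,+}$ and $\Hbov_0/\Rad(\Hbov_0) \simeq \kb W$, as claimed.

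There is no real obstacle here; the only point that requires any care is confirming that the $\NM$-grading of $\Hb$ truly descends to a well-defined grading on $\Hbov_0$ whose degree-zero component is precisely $\kb W$, which amounts to checking that the ideals $\pGba$ and $\CG_0$ are generated by elements of positive $\NM$-degree. This is transparent from the definitions of $\pGba$ (generated by $\kb[V]^W_+$ and $\kb[V^*]^W_+$, all of degree $\ge 1$) and $\CG_0$ (generated by the $C_s$, of degree $2$).
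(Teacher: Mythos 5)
Your proof is correct and follows exactly the route the paper takes: the paper's own justification is the short paragraph preceding the statement, observing that the $\NM$-grading of $\Hbt$ descends to $\Hbov_0$, that $\Hbov_{0,+}$ is then a nilpotent two-sided ideal, and that $\Hbov_0^\NM[0]=\kb W$, from which the result follows since $\kb W$ is semisimple. You have simply spelled out the same steps in more detail — in particular the finite-dimensionality and the homogeneity of $\pGba$ and $\CG_0$ — all of which are accurate.
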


\bigskip

En particulier, 
\equat\label{l 0}
\isomorphisme{\LCov_{\Kbov_0}(\chi)}_{\kb W}^\grad = \chi \in \groth(\kb W)[\tb,\tb^{-1}]
\endequat
et
\equat\label{m 0}
\isomorphisme{\Kbov_0\MCov(\chi)}_{\Hbov_0}
= \chi(1)~ \isomorphisme{\kb W}_{\kb W} \in \ZM\Irr(W)\simeq \groth(\Hbov_0).
\endequat

\bigskip

\section{Cellules \`a gauche, caract\`eres cellulaires}

\medskip

Rappelons que, dans~\S\ref{subsection:specialisation galois 0}, il a \'et\'e fix\'e un 
id\'eal premier $\rG_0$ de $R$ au-dessus de $\qG_0=\CG_0 Q$ ainsi qu'un isomorphisme de corps
$$\iso : \kb(V \times V^*)^{\D\Zrm(W)} \longiso \Mb_0=k_R(\rG_0)$$
dont la restriction \`a $\kb(V \times V^*)^{\D W}$ est l'isomorphisme canonique 
$\kb(V \times V^*)^{\D W} \longiso \Frac(Z_0) \longiso \Lb_0$. Ainsi, 
$R/\rG_0 \subset \iso(\kb[V \times V^*]^{\D\Zrm(W)})$ et ces deux anneaux ont le m\^eme corps des fractions, \`a savoir 
$\Mb_0$. Rappelons aussi que nous ne savons pas si ces deux anneaux sont \'egaux, ce qui 
est \'equivalent \`a savoir si $R/\rG_0$ est int\'egralement clos (question~\ref{question:r0}).

\bigskip

\begin{prop}\label{prop:unicite-r0gauche}
Il existe un unique id\'eal premier de $R$ au-dessus de $\pG_0^\gauche$ et contenant $\rG_0$.
\end{prop}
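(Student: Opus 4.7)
The plan is to reduce the claim to counting primes in $\kb[V \times V^*]^{\D \Zrm(W)}$, using the isomorphism $\iso$ of \S\ref{subsection:specialisation galois 0}. Since $\pG_0 = \CG_0 P \subset \pG_0^\gauche$ and $\rG_0 \cap P = \pG_0$, the prime ideals of $R$ containing $\rG_0$ and lying above $\pG_0^\gauche$ correspond bijectively to primes of $R/\rG_0$ whose contraction to $P/\pG_0 = \kb[V]^W \otimes \kb[V^*]^W$ equals $\kb[V]^W \otimes \kb[V^*]^W_+$. I will therefore work inside $R/\rG_0$, which via $\iso$ is identified with a subring of $\kb[V \times V^*]^{\D \Zrm(W)}$ sharing its fraction field $\Mb_0$ (this inclusion is recalled in the proof of corollary~\ref{r0}).

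The next step is to observe that the inclusion $R/\rG_0 \hookrightarrow \kb[V \times V^*]^{\D \Zrm(W)}$ is an integral extension: the larger ring is integral over $\kb[V]^W \otimes \kb[V^*]^W \subset R/\rG_0$, and its elements already lie in the fraction field $\Mb_0$ of the smaller ring, so each is integral over $R/\rG_0$. By lying-over, contraction induces a surjection from the primes of $\kb[V \times V^*]^{\D \Zrm(W)}$ above $\pG_0^\gauche/\pG_0$ onto the primes of $R/\rG_0$ sharing that property. It is therefore enough to prove that there is a unique such prime upstairs.

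The geometric picture is that the preimage of $V/W \times \{0\}$ under the natural map $(V \times V^*)/\D \Zrm(W) \to V/W \times V^*/W$ is the irreducible subvariety $V/\Zrm(W) \times \{0\}$: indeed, on the level of $V \times V^*$ the preimage is $V \times \{0\}$, which is $\D \Zrm(W)$-stable with quotient $V/\Zrm(W)$. Algebraically, any prime of $\kb[V \times V^*]$ above $\kb[V]^W \otimes \kb[V^*]^W_+$ contains the radical of the extended ideal, which by Chevalley-Shephard-Todd equals $\kb[V^*]_+\,\kb[V \times V^*]$; the requirement that its contraction to $\kb[V]^W$ vanish then forces it to equal $\kb[V^*]_+\,\kb[V \times V^*]$. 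This $(W \times W)$-stable prime constitutes a single $\D \Zrm(W)$-orbit, and therefore yields a unique prime of $\kb[V \times V^*]^{\D \Zrm(W)}$ above $\pG_0^\gauche/\pG_0$, which completes the proof.

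The only delicate point is the integrality of $R/\rG_0 \subset \kb[V \times V^*]^{\D \Zrm(W)}$: this is settled by the shared fraction field and a common integral subring, so that the open question~\ref{question:r0} of equality need not be resolved for the argument to go through.
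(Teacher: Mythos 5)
Your proof is correct and follows essentially the same route as the paper's: both reduce the statement to showing that there is a unique prime of $\kb[V\times V^*]$ lying over $\pG_0^\gauche/\pG_0$ (the ideal of $V \times 0$, identified via the Chevalley-Shephard-Todd computation of the radical of the extended ideal), and then contract back through $\kb[V\times V^*]^{\D\Zrm(W)}$ to $R/\rG_0$. You are somewhat more explicit than the paper about the lying-over argument that transfers uniqueness across the integral extension $R/\rG_0 \hookrightarrow \kb[V\times V^*]^{\D\Zrm(W)}$ -- a point the paper leaves implicit -- which is a welcome clarification, particularly since that inclusion need not be an equality (Question~\ref{question:r0}).
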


\begin{proof}
Notons $\pG^*=\iso^{-1}(\pG_0^\gauche/\pG_0)$. Alors $\kb[V \times V^*]^{W \times W}/\pG^* \simeq \kb[V \times 0]^{W \times W}$. 
Donc il n'y a qu'un seul id\'eal premier $\rG^*$ de $\kb[V \times V^*]$ au-dessus de $\pG^*$~: c'est l'id\'eal 
de d\'efinition de la sous-vari\'et\'e ferm\'ee irr\'eductible $V \times 0$ de $V \times V^*$. En d'autres termes, 
$$\kb[V \times V^*]/\rG^*=\kb[V \times 0].$$
Par cons\'equent, le seul id\'eal premier $\rG_0^\gauche$ de $R$ au-dessus de $\pG_0^\gauche$ et contenant $\rG_0$ est 
d\'efini par $\rG_0^\gauche/\rG_0 = \iso(\rG^* \cap \kb[V \times V^*]^{\D\Zrm(W)}) \cap (R/\rG_0)$. 
\end{proof}

\bigskip

Notons $\rG_0^\gauche$ l'unique id\'eal premier de $R$ au-dessus de $\qG_0^\gauche$ et contenant $\rG_0$ 
(voir la proposition~\ref{prop:unicite-r0gauche}) et notons 
$D_0^\gauche$ (respectivement $I_0^\gauche$) son groupe de d\'ecomposition (respectivement d'inertie). 
Alors~:

\bigskip

\begin{prop}\label{prop:d0-left}
\begin{itemize}
\itemth{a} $\iota(W \times W) \subset D_0^\gauche$ et $\iota(W \times 1) \subset I_0^\gauche$.

\itemth{b} L'application canonique $\bar{\iota} : W \times W \to D_0^\gauche/I_0^\gauche$ est 
surjective et son noyau contient $W \times \Zrm(W)$.

\itemth{c} $D_0^\gauche/I_0^\gauche$ est un quotient de $W/\Zrm(W)$.

\itemth{d} Si $R/\rG_0$ est int\'egralement clos (i.e. si $R/\rG_0 \simeq \kb[V \times V^*]^{\D\Zrm(W)}$), alors 
$\Ker(\bar{\iota}) = W \times \Zrm(W)$ et $D_0^\gauche/I_0^\gauche \simeq W/\Zrm(W)$.
\end{itemize}
\end{prop}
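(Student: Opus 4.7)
The plan is to use the explicit realization of $\rG_0^\gauche$ from the proof of Proposition~\ref{prop:unicite-r0gauche}, which presents $R/\rG_0^\gauche$ as a subring of $\kb[V]^{\Zrm(W)}$ via the identification $\iso$, and then to read off all four statements from the Galois theory of the tower $\kb(V)^W \subset k_R(\rG_0^\gauche) \subset \kb(V)^{\Zrm(W)}$.

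First I would establish (a). The inclusion $\iota(W\times W) = D_0 \subset D_0^\gauche$ is immediate from uniqueness: if $g \in D_0$ then $g(\rG_0^\gauche)$ is a prime of $R$ above $\pG_0^\gauche$ (as $g$ fixes $P$) which contains $g(\rG_0) = \rG_0$, so equals $\rG_0^\gauche$ by Proposition~\ref{prop:unicite-r0gauche}. For $\iota(W\times 1) \subset I_0^\gauche$, I would work on the quotient $\kb[V \times V^*]/\rG^* \simeq \kb[V]$. Under the convention of \S\ref{subsection:specialisation galois 0}, the element $(w,1) \in W\times W$ acts on $V \subset \kb[V^*]$ as $w$ and trivially on $V^* \subset \kb[V]$; since $\rG^*$ is generated by $V$, the induced action on $\kb[V]$ is trivial, so $(w,1)$ acts trivially on the subring $R/\rG_0^\gauche \hookrightarrow \kb[V]^{\Zrm(W)}$.

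For (b), (c) and (d) I would analyse the factored map. By (a), $\bar\iota$ kills $W\times 1$, so it factors through $(W\times W)/(W\times 1) \simeq W$. Under $\iso$, the class of $\iota(1,w_2)$ acts on $R/\rG_0^\gauche \subset \kb[V]^{\Zrm(W)}$ as the standard action of $w_2$ on $\kb[V]$, and this restriction to $\kb[V]^{\Zrm(W)}$ depends only on $w_2$ modulo $\Zrm(W)$; this shows $W \times \Zrm(W) \subset \Ker \bar\iota$, which is the inclusion in (b). Because $k_R(\rG_0^\gauche)$ sits between $k_P(\pG_0^\gauche) = \kb(V)^W$ and $\kb(V)^{\Zrm(W)}$, both Galois over $\kb(V)^W$, the natural restriction $W/\Zrm(W) \surto D_0^\gauche/I_0^\gauche = \Gal(k_R(\rG_0^\gauche)/\kb(V)^W)$ is surjective, giving the surjectivity of $\bar\iota$ and assertion (c). For (d), once $R/\rG_0 = \kb[V \times V^*]^{\D \Zrm(W)}$ is assumed, I would verify that the canonical map $\kb[V \times V^*]^{\D \Zrm(W)} \to \kb[V]^{\Zrm(W)}$ is surjective --- any $\Zrm(W)$-invariant polynomial in $V^* \subset \kb[V]$ is automatically $\D \Zrm(W)$-invariant when lifted with no $V$-component --- whence $R/\rG_0^\gauche = \kb[V]^{\Zrm(W)}$ and the Galois group is the full $W/\Zrm(W)$.

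The main technical obstacle lies in carefully tracking the action conventions through $\iso$: because the convention of \S\ref{subsection:specialisation galois 0} differs from the geometric action by swapping the two factors of $W \times W$, it is essential that $(w,1)$ (and not $(1,w)$) is the pair killing the subspace $V \subset \kb[V^*]$, since this determines which side ends up in the inertia group. Once this is set up correctly, and once one checks that the map $W/\Zrm(W) \to D_0^\gauche/I_0^\gauche$ obtained from $\bar\iota$ genuinely coincides with the Galois-theoretic restriction, the rest is formal.
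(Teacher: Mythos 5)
Your proof is correct and takes essentially the same approach as the paper's: establish (a) from the uniqueness of $\rG_0^\gauche$ and the triviality of the $W\times 1$ action on $\kb[V\times V^*]/\rG^*$, then reduce (b), (c), (d) to the chain of inclusions $\kb[V\times 0]^{W\times W}\subset B_0/\rG^*\subset\kb[V\times 0]^{W\times\Zrm(W)}$ and the corresponding Galois theory. The paper's own proof is just a terse statement of these observations; you have filled in the Galois-theoretic details that it leaves implicit.
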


\begin{proof}
La premi\`ere assertion de (a) d\'ecoule de l'unicit\'e de $\rG_0^\gauche$ (voir la proposition~\ref{prop:unicite-r0gauche}). 
Pour la deuxi\`eme assertion, reprenons les notations de la preuve de la proposition~\ref{prop:unicite-r0gauche}, 
et remarquons que $W \times 1$ agit trivialement sur $\kb[V \times V^*]/\rG^*$.

\medskip

Notons $B_0$ l'image inverse de $R/\rG_0$ dans $\kb[V \times V^*]$ via $\iso$. Alors 
$\kb[V \times 0]^{W \times W} \subset B_0/\rG^* \subset \kb[V \times 0]^{\D \Zrm(W)}=\kb[V \times 0]^{W \times \Zrm(W)} 
\subset \kb[V \times 0]$. (b), (c) et (d) d\'ecoulent alors de ces observations.
\end{proof}

Apr\`es cette \'etude des groupes de d\'ecomposition et d'inertie, on peut imm\'ediatement en d\'eduire~:

\bigskip

\begin{coro}\label{coro:0-cm}
$W$ ne contient qu'une seule $0$-cellule de Calogero-Moser \`a gauche, \`a savoir $W$, et
$$\isomorphisme{W}_0^\calo = \isomorphisme{\kb W}_{\kb W} = \sum_{\chi \in \Irr(W)} \chi(1).$$
\end{coro}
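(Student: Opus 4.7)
The plan is to deduce the corollary directly from Proposition~\ref{prop:d0-left} and the general multiplicity formula of Proposition~\ref{multiplicite cm}(a), in two short steps.

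First, I would argue that there is only one $0$-cellule de Calogero-Moser à gauche. Recall (from \S\ref{subsection:action G}) that $W$ is identified with a subgroup of $G \subseteq \SG_W$ via $w \longmapsto \iota(w,1)$, and that under this identification $W$ acts on the set $W$ by left translation, which is visibly transitive. Proposition~\ref{prop:d0-left}(a) asserts $\iota(W \times 1) \subset I_0^\gauche$, so $I_0^\gauche$ contains the image of $W$ in $\SG_W$ and therefore acts transitively on $W$. Since by Definition~\ref{defi:CM} the $\rG_0^\gauche$-cellules are the orbits of $I_0^\gauche$ on $W$, the only such orbit is $W$ itself, which gives $\cmcellules_L^0(W) = \{W\}$.

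Second, I would compute the associated CM-cellular character. By Proposition~\ref{multiplicite cm}(a), for every $\chi \in \Irr(W)$ we have
\[
\sum_{C \in \cmcellules_L^0(W)} \mult_{C,\chi}^\calo \;=\; \chi(1).
\]
Since the sum has a single term indexed by $C = W$, this forces $\mult_{W,\chi}^\calo = \chi(1)$ for every $\chi$. Plugging this into the definition~\eqref{eq:cellulaire} of the cellular character gives
\[
\isomorphisme{W}_0^\calo \;=\; \sum_{\chi \in \Irr(W)} \mult_{W,\chi}^\calo \cdot \chi \;=\; \sum_{\chi \in \Irr(W)} \chi(1)\,\chi,
\]
and the right-hand side is exactly $\isomorphisme{\kb W}_{\kb W}$, as required.

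There is really no serious obstacle: all the work has already been done in Proposition~\ref{prop:d0-left}, whose proof established the crucial inclusion $\iota(W \times 1) \subset I_0^\gauche$ by showing that $W \times 1$ acts trivially on $\kb[V \times V^*]/\rG^{*}$. Once transitivity of the inertia action on the coset set $W \simeq G/H$ is in hand, the corollary is purely formal, and the check that the multiplicities must all equal $\chi(1)$ is an immediate application of a previously established numerical identity. The only (mild) subtlety to keep in mind is that the conclusion does not require the stronger conclusion of Proposition~\ref{prop:d0-left}(d)—whether or not $R/\rG_0$ is integrally closed is irrelevant here, since we only use the inclusion in part~(a).
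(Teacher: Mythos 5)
Your proof is correct and follows exactly the same route as the paper's, which simply cites Proposition~\ref{prop:d0-left}(a) for the transitivity of $I_0^\gauche$ on $W$ and Proposition~\ref{multiplicite cm}(a) for the multiplicity computation. You have usefully spelled out the details that the paper leaves implicit (left translation is transitive, the sum in the multiplicity formula collapses to a single term), but there is no difference in substance.
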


\begin{proof}
La premi\`ere assertion d\'ecoule de la proposition~\ref{prop:d0-left}(a) tandis que la deuxi\`eme 
d\'ecoule de la proposition~\ref{multiplicite cm}(a).
\end{proof}

\bigskip

Terminons avec une remarque facile, mais qui, combin\'ee avec la 
propositoin~\ref{prop:d0-left}, montre que le couple $(I_0^\gauche,D_0^\gauche)$ est assez surprenant~:

\bigskip

\begin{prop}\label{prop:dc-d0}
Soit $\CG$ un id\'eal premier de $\kb[\CCB]$. Alors il existe $h \in H$ tel que $\lexp{h}{I_\CG^\gauche} \subset I_0^\gauche$.
\end{prop}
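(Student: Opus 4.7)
The plan is to reduce the statement to a containment of prime ideals in $R$ and then exploit that $H = \Gal(\Mb/\Lb)$ acts transitively on primes of $R$ above a given prime of $Q$. Recall that if $\rG \subset \rG'$ are primes of $R$ then $I_\rG \subset I_{\rG'}$; so it will be enough to find $h \in H$ such that $h(\rG_\CG^\gauche) \subset \rG_0^\gauche$, or equivalently (taking $h^{-1}$) to exhibit a prime $\rG^\ast$ of $R$ lying above $\qG_0^\gauche$ and containing $\rG_\CG^\gauche$.

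First I would homogenize. Let $\CGt$ be the largest homogeneous ideal of $\kb[\CCB]$ contained in $\CG$ and let $\rG_\CGt^\gauche$ be the largest homogeneous ideal of $R$ contained in $\rG_\CG^\gauche$. By Proposition~\ref{lem:cellules-gauches-homogeneise}, $\rG_\CGt^\gauche$ is a prime of $R$ above $\qG_\CGt^\gauche = \qG^\gauche + \CGt\, Q$ and satisfies $I_\CG^\gauche = I_\CGt^\gauche$. Since $\kb[\CCB]$ is positively graded with $\kb[\CCB]_0 = \kb$, every homogeneous prime is contained in the irrelevant ideal $\mG_0 = \CG_0$, so $\CGt \subset \CG_0$, and therefore
\[
\qG_\CGt^\gauche \;=\; \qG^\gauche + \CGt\, Q \;\subset\; \qG^\gauche + \CG_0\, Q \;=\; \qG_0^\gauche .
\]

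Next I would apply going-up to the integral extension $Q/\qG_\CGt^\gauche \subset R/\rG_\CGt^\gauche$ at the prime $\qG_0^\gauche/\qG_\CGt^\gauche$, producing a prime $\rG^\ast$ of $R$ such that $\rG_\CGt^\gauche \subset \rG^\ast$ and $\rG^\ast \cap Q = \qG_0^\gauche$. Since $H$ acts transitively on the primes of $R$ lying above any given prime of $Q = R^H$, there exists $h \in H$ with $\rG^\ast = h^{-1}(\rG_0^\gauche)$. Taking inertia groups,
\[
I_\CG^\gauche \;=\; I_\CGt^\gauche \;=\; I_{\rG_\CGt^\gauche} \;\subset\; I_{\rG^\ast} \;=\; h^{-1} I_0^\gauche\, h,
\]
whence $\lexp{h}{I_\CG^\gauche} \subset I_0^\gauche$, as desired.

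The argument is essentially a packaging of two standard facts (homogenization and going-up), and the only step that requires a verification rather than a citation is the inclusion $\qG_\CGt^\gauche \subset \qG_0^\gauche$, which is immediate from $\CGt \subset \CG_0$ and the explicit descriptions $\qG_\CG^\gauche = \qG^\gauche + \CG\, Q$ fixed at the start of this part. I expect no real obstacle; the subtlety is simply to notice that the homogenization lemma allows us to restrict to the case $\CG \subset \CG_0$ so that the varieties $\CCB(\CG) \times V/W \times \{0\}$ and $\{0\} \times V/W \times \{0\}$ actually sit in an inclusion—without this reduction, $\pG_0^\gauche$ need not contain $\pG_\CG^\gauche$ and the chain of primes on which going-up is applied would not exist.
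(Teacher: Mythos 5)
Your proof is correct and follows essentially the same route as the paper: homogenize via Proposition~\ref{lem:cellules-gauches-homogeneise}, observe that $\CGt \subset \CG_0$ forces $\qG_\CGt^\gauche \subset \qG_0^\gauche$, and then produce $h\in H$ with $h(\rG_\CGt^\gauche)\subset\rG_0^\gauche$ so that inertia groups nest as required. The only difference is presentational: the paper simply asserts the existence of such an $h$, while you make explicit that it comes from going-up for the integral extension $Q\subset R$ followed by transitivity of the $H$-action on primes of $R$ above $\qG_0^\gauche$.
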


\begin{proof}
Notons $\CGt$ l'id\'eal homog\`ene maximal de $\kb[\CCB]$ contenu dans 
$\CG$. D'apr\`es la proposition~\ref{lem:cellules-gauches-homogeneise}, on a 
$I_\CG^\gauche=I_\CGt^\gauche$. Cela signifie que l'on peut supposer $\CG$ homog\`ene. 
En particulier, $\CG \subset \CG_0$. Donc $\qG_\CG^\gauche \subset \qG_0^\gauche$ et 
il existe $h \in H$ tel que $h(\rG_\CG^\gauche) \subset \rG_0^\gauche$. 
Par suite, $\lexp{h}{I_\CG^\gauche} \subset I_0^\gauche$. 
\end{proof}

\bigskip

Il serait tentant de penser, apr\`es la proposition~\ref{prop:d0-left}, que $D_0^\gauche=\iota(W \times W)$ et 
$I_0^\gauche=\iota(W \times \Zrm(W))$. Cependant, ceci entre en contradiction avec la proposition 
pr\'ec\'edente~\ref{prop:dc-d0}, 
surtout si l'on esp\`ere que les conjectures~\BIL~et~\GAUCHE~soient valides~: en effet, $I_0^\gauche$ doit 
donc contenir des conjugu\'es de sous-groupes admettant pour orbites les cellules de Calogero-Moser 
\`a gauche. 
Nous verrons dans le chapitre~\ref{chapitre:rang 1} 
que si $\dim_\kb(V)=1$, alors $D_0^\gauche=G$. 

%
%
%
%
%
%
%
%
%
%
%
%

\chapter{Groupes de rang 1}\label{chapitre:rang 1}

\bigskip
 
\boitegrise{{\bf Hypoth\`eses et notation.} {\it Dans ce chapitre, 
et seulement dans ce chapitre, nous supposons que $\dim_\kb V=1$, nous fixons un
\'el\'ement non nul $y$ de $V$ et nous noterons $x$ l'unique 
\'el\'ement de $V^*$ tel que $\langle y,x\rangle = 1$. 
Fixons un entier $d \ge 2$ et supposons que $\kb$ contienne une racine primitive 
$d$-i\`eme de l'unit\'e $\z$. 
Notons $s$ l'automorphisme de $V$ d\'efini par $s(y)=\z y$ (on a alors 
$s(x)=\z^{-1} x$) et 
supposons de plus que $W=\langle s \rangle$~: $s$ est une r\'eflexion (!) et $W$ 
est cyclique d'ordre $d$.}}{0.75\textwidth}

\bigskip

\section{L'alg\`ebre $\Hb$}\label{section:H rang 1}

\medskip

\subsection{D\'efinition} 
Bien s\^ur, $\Ref(W)=\{s^i~|~1 \le i \le d-1\}$. 
Pour $1 \le i \le d-1$, nous noterons $C_i$ l'ind\'etermin\'ee $C_{s^i}$, 
de sorte que $\kb[\CCB]=\kb[C_1,C_2,\dots,C_{d-1}]$.
La relation suivante est v\'erifi\'ee dans $\Hb$~:
\equat\label{relation d}
[y,x] = \sum_{1 \le i \le d-1} (\z^i-1) C_i~s^i.
\endequat
Nous poserons $C_0=C_{s^0}=0$. 
Puisque l'arrangement d'hyperplans $\AC$ est r\'eduit \`a un \'el\'ement, 
et donc $\AC/W$ aussi (\'ecrivons $\AC/W=\{\O\}$), 
nous poserons pour simplifier $K_j=K_{\O,j}$ (pour $0 \le j \le d-1$). 
Rappelons que la famille $(K_j)_{0 \le j \le d-1}$ 
est d\'etermin\'ee par les relations
\equat\label{relations K}
\forall~0 \le i \le d-1,~C_i=\sum_{j=0}^{d-1} \z^{i(j-1)} K_j.
\endequat
Pour simplifier certains \'enonc\'es de cette partie, nous posons 
$$K_{di+j}=K_j$$
pour tout $i \in \ZM$ et $j \in \{0,1,\dots,d-1\}$. Rappelons que
$$K_0+K_1+\cdots + K_{d-1}=0\qquad\text{(c'est-\`a-dire}\quad K_1+K_2+\cdots+K_d=0).$$

\bigskip

\subsection{Calcul de ${\boldsymbol{(V \times V^*)/W}}$}\label{subsection:quotient rang 1}
Posons $X=x^d$, $Y=y^d$ et rappelons que $\euler_0 = xy$. Ainsi
$$\kb[V \times V^*]^W = \kb[X,Y,\euler_0]$$
et la relation suivante 
\equat\label{relation centre d}
\euler_0^d = XY
\endequat
est satisfaite. Il est facile de v\'erifier que cette relation engendre l'id\'eal 
des relations.

\bigskip

\section{L'alg\`ebre ${\boldsymbol{Z}}$}\label{section:Q rang 1}

\medskip

Rappelons que $\euler=yx + \sum_{i=1}^{d-1} C_i~s^i$ (de sorte que son image dans 
$\Hb_0$ est $\euler_0$) et que $\e : W \to \kb^\times$ est le d\'eterminant~: 
il est caract\'eris\'e par $\e(s)=\z$. On a $\e^d=1$ et 
$$\Irr W=\{1,\e,\e^2,\dots,\e^{d-1}\}.$$
L'image de l'\'el\'ement d'Euler par $\O_\chi$ se calcule 
gr\^ace \`a la proposition~\ref{action euler verma}~:
\equat\label{euler cyclique}
\O_{\e^i}(\euler) = d K_{-i}
\endequat
pour tout $i \in \ZM$. 
%
%
Le r\'esultat suivant est certainement bien connu~:

\bigskip

\begin{theo}\label{centre rang 1}
On a $Z=P[\euler]=\kb[C_1,\dots,C_{d-1},X,Y,\euler]=\kb[K_1,\dots,K_{d-1},X,Y,\euler]$ 
et l'id\'eal des relations est engendr\'e par
$$\prod_{i=1}^d (\euler - d K_i) = XY.$$
\end{theo}



\begin{proof}
Soit $z=\prod_{i=1}^d (\euler-dK_i)-XY$. On a $z\in k[\CCB]_+Z$ et $z$ est bihomog\`ene
de degr\'e $(d,d)$. On en d\'eduit que $z=\sum_{j=0}^{d-1}z_j d^{-j}\euler^j$, o\`u
$z_j\in k[\CCB]$ est bihomog\`ene de bidegr\'e $(d-j,d-j)$. On a 
$\Omega_{\e^i}(z)=0=\sum_{j=0}^{d-1}z_jK_{-i}^j$. Les $z_j$ sont solutions
d'un syst\`eme de Vandermonde de d\'eterminant $\prod_{1\le j<j'\le d}
(K_j-K_{j'})\not=0$, donc $z_0=\cdots=z_{d-1}=0$.

\medskip

%

Puisque le polyn\^ome minimal de $\euler$ sur $P$ est de degr\'e $|W|=d$
(voir le corollaire~\ref{eq:minimal-euler}), on en d\'eduit que 
$$\prod_{i=1}^d (\tb - d K_i) - XY$$
est le polyn\^ome minimal de $\euler$ sur $P$, ce qui termine 
la preuve du th\'eor\`eme.
\end{proof}

\bigskip

\begin{coro}\label{inter 1}
La $\kb$-alg\`ebre $Z$ est d'intersection compl\`ete.
\end{coro}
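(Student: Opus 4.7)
The plan is to deduce this directly from the explicit presentation given by Theorem~\ref{centre rang 1}. First I would observe that the theorem exhibits $Z$ as a quotient
\[
Z \;\simeq\; \kb[K_1,\dots,K_{d-1},X,Y,\euler]\big/\bigl(F\bigr),
\qquad F \;=\; \prod_{i=1}^d(\euler-dK_i) - XY,
\]
so $Z$ is presented by $d+2$ generators and one relation. Thus it suffices to check that the single polynomial $F$ generates the ideal of relations as a regular element, and to verify the dimension count.

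Next I would compute $\dim Z$. By the corollary~\ref{coro:endo-bi}(c), $Z$ is a free $P$-module of rank $|W|=d$, so $\dim Z = \dim P$. Since $P = \kb[\CCB]\otimes \kb[V]^W \otimes \kb[V^*]^W = \kb[C_1,\dots,C_{d-1},X,Y]$ is a polynomial ring in $d+1$ variables, $\dim Z = d+1$. On the other hand, the ambient polynomial ring $\kb[K_1,\dots,K_{d-1},X,Y,\euler]$ has dimension $d+2$, and $F$ is a nonzero element of this integral domain, hence a nonzerodivisor. By Krull's Hauptidealsatz, every minimal prime of $(F)$ has height $1$, so $\dim Z = (d+2)-1 = d+1$, which matches.

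Hence $F$ forms a regular sequence of length $1$ cutting out $Z$ inside a polynomial ring with the expected codimension, which is precisely the definition of a complete intersection. There is no real obstacle here: everything reduces to reading off the presentation from Theorem~\ref{centre rang 1} and comparing the two ways of computing $\dim Z$.
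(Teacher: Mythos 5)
Your argument is correct and takes essentially the same route as the paper: Corollaire~\ref{inter 1} is stated as an immediate consequence of Th\'eor\`eme~\ref{centre rang 1}, which exhibits $Z$ as the hypersurface $\kb[K_1,\dots,K_{d-1},X,Y,\euler]/(F)$, and a nontrivial hypersurface in a polynomial ring is automatically a complete intersection. Your additional dimension count (comparing $\dim Z=\dim P=d+1$ with the Hauptidealsatz bound $(d+2)-1$) is a harmless sanity check but not strictly needed, since the single nonzero relation already guarantees codimension exactly $1$.
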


\bigskip

Nous noterons $F_\euler(\tb) \in P[\tb]$ le polyn\^ome minimal de $\euler$ sur $P$. 
D'apr\`es le th\'eor\`eme~\ref{centre rang 1}, on a 
\equat\label{polynome minimal euler rang 1}
F_\euler(\tb)=\prod_{i=1}^d (\tb - d K_i) - XY.
\endequat

\bigskip

\section{L'anneau ${\boldsymbol{R}}$, le groupe ${\boldsymbol{G}}$}\label{section:G rang 1} 

\medskip

\subsection{Polyn\^omes sym\'etriques} 
Pour tirer parti du fait que le polyn\^ome minimal de l'\'el\'ement d'Euler est 
sym\'etrique en les variables $K_i$, nous rappelons quelques r\'esultats classiques 
sur les polyn\^omes sym\'etriques. 
Si $T_1$, $T_2$,\dots, $T_d$ sont des ind\'etermin\'ees et si $1 \le i \le d$, notons 
$\s_i(\Tb)$ la $i$-i\`eme fonction sym\'etrique \'el\'ementaire 
$$\s_i(\Tb)=\s_i(T_1,\dots,T_d)=\sum_{1 \le j_1 < \cdots < j_i \le d} T_{j_1}\cdots T_{j_i}.$$
Rappelons la formule bien connue
\equat\label{eq:jacobien}
\det\Bigl(\frac{\partial \s_i(\Tb)}{\partial T_j}\Bigr)_{1 \le i,j \le d} = 
\prod_{1 \le i < j \le d} (T_j-T_i).
\endequat
Le groupe $\SG_d$ agit sur $\kb[T_1,\dots,T_d]$ par permutation des ind\'etermin\'ees.
Rappelons le r\'esultat classique suivant (cas 
particulier du th\'eor\`eme~\ref{chevalley})~:

\bigskip

\begin{prop}\label{prop:polynomes-symetriques}
Les polyn\^omes $\s_1(\Tb)$,\dots, $\s_d(\Tb)$ sont alg\'ebriquement ind\'ependants et 
$\kb[T_1,\dots,T_d]^{\SG_d}=\kb[\s_1(\Tb),\dots,\s_d(\Tb)]$. De plus, la $\kb$-alg\`ebre 
$\kb[T_1,\dots,T_d]$ est un $\kb[\s_1(\Tb),\dots,\s_d(\Tb)]$-module libre de rang $d!$
\end{prop}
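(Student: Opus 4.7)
The plan is to apply Theorem~\ref{chevalley} directly to the natural permutation representation of $W = \SG_d$ on $V = \kb^d$: each transposition $(i\,j)$ fixes the hyperplane $\{T_i = T_j\}$ and so acts as a reflection, and $\SG_d$ is generated by transpositions, so the hypotheses of Shephard--Todd--Chevalley are satisfied.

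First I would extract the degrees. Theorem~\ref{chevalley}(a) furnishes algebraically independent homogeneous generators $f_1, \ldots, f_d$ of $\kb[T_1,\ldots,T_d]^{\SG_d}$ whose degrees $d_1 \le \cdots \le d_d$ satisfy
\[
d_1 \cdots d_d = d! \qquad \text{and} \qquad \sum_{i=1}^d (d_i - 1) = |\Ref(\SG_d)| = \binom{d}{2}.
\]
These two numerical constraints force $(d_1, \ldots, d_d) = (1, 2, \ldots, d)$.

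Next I would verify that the $\sigma_i(\Tb)$ themselves can serve as the $f_i$. Each $\sigma_i(\Tb)$ is a homogeneous symmetric polynomial of degree~$i$, and their algebraic independence follows from~(\ref{eq:jacobien}): the Jacobian determinant equals $\prod_{i<j}(T_j-T_i)$, which is nonzero in $\kb[T_1,\ldots,T_d]$. Hence $\kb[\sigma_1(\Tb),\ldots,\sigma_d(\Tb)]$ is a graded polynomial subalgebra of $\kb[T_1,\ldots,T_d]^{\SG_d}$ whose Hilbert series $\prod_{i=1}^d (1-\tb^i)^{-1}$ coincides with that of the full invariant ring computed from the degrees $(1,2,\ldots,d)$; this forces equality. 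The final freeness statement of rank $d!$ is then precisely Theorem~\ref{chevalley}(b).

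No serious obstacle is anticipated: every step is either a formal application of Theorem~\ref{chevalley} or the explicit Jacobian computation~(\ref{eq:jacobien}). The one point that merits care is the passage from ``polynomial subalgebra with the right sequence of generator degrees'' to genuine equality with the full invariant ring; an alternative route bypassing the Hilbert series comparison is to observe that the $T_i$ are roots of the universal monic polynomial $X^d - \sigma_1(\Tb) X^{d-1} + \cdots + (-1)^d \sigma_d(\Tb)$, so the field extension $\kb(T_1,\ldots,T_d)/\kb(\sigma_1(\Tb),\ldots,\sigma_d(\Tb))$ has degree at most $d! = [\kb(T_1,\ldots,T_d) : \kb(T_1,\ldots,T_d)^{\SG_d}]$, forcing the two intermediate invariant fields to coincide and hence, both rings being integrally closed in this common fraction field, the rings themselves to agree.
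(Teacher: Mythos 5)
Your proof is correct, and it follows the same route the paper gestures at: the paper states the result with no argument beyond the remark that it is a special case of Theorem~\ref{chevalley}, and you unpack exactly why.

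One point to tighten: the assertion that the two equalities $d_1\cdots d_d=d!$ and $\sum_i(d_i-1)=\binom{d}{2}$ by themselves \emph{force} $(d_1,\dots,d_d)=(1,2,\dots,d)$ is correct but is not itself immediate; as stated it is the kind of numerical claim that deserves its own justification (it holds, e.g., by the classical Molien-series computation for the permutation representation, but none of the ingredients you cite establish it directly). Fortunately your closing alternative bypasses this entirely and is the cleanest version of the argument: the splitting-field bound $[\kb(T_1,\dots,T_d):\kb(\s_1(\Tb),\dots,\s_d(\Tb))]\le d!$ together with Artin's $[\kb(T_1,\dots,T_d):\kb(T_1,\dots,T_d)^{\SG_d}]=d!$ gives equality of fraction fields, and then integrality of $\kb[T_1,\dots,T_d]^{\SG_d}$ over $\kb[\s_1(\Tb),\dots,\s_d(\Tb)]$ combined with the fact that the latter is a polynomial ring, hence integrally closed, yields equality of rings; Theorem~\ref{chevalley}(b) then supplies the freeness of rank $d!$. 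I would lead with that argument rather than the Hilbert-series comparison, since it makes no appeal to the unproved degree identification.
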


Rappelons aussi que $\s_1(\Tb)=T_1+\cdots + T_d$~:

\begin{coro}\label{coro:polynomes-symetriques}
On a $\bigl(\kb[T_1,\dots,T_d]/\langle \s_1(\Tb) \rangle \bigr)^{\SG_d} \simeq \kb[\s_2(\Tb),\dots, \s_d(\Tb)]$ 
et la $\kb$-alg\`ebre 
$\kb[T_1,\dots,T_d]/\langle\s_1(\Tb)\rangle$ est un $\kb[\s_2(\Tb),\dots, \s_d(\Tb)]$-module libre 
de rang $d!$
\end{coro}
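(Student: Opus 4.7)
Le plan est de d\'eduire ce corollaire presque imm\'ediatement de la proposition~\ref{prop:polynomes-symetriques} en exploitant trois ingr\'edients~: la libert\'e du module, le fait que $\s_1(\Tb)$ soit un \'el\'ement invariant, et l'exactitude du foncteur des invariants en caract\'eristique nulle. Posons $A=\kb[T_1,\dots,T_d]$ et $B=A^{\SG_d}=\kb[\s_1(\Tb),\dots,\s_d(\Tb)]$.

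Pour la libert\'e, je commencerais par observer que les polyn\^omes $\s_1(\Tb),\dots,\s_d(\Tb)$ \'etant alg\'ebriquement ind\'ependants d'apr\`es la proposition~\ref{prop:polynomes-symetriques}, $B$ est un anneau de polyn\^omes et donc $B/\s_1(\Tb)B\simeq \kb[\s_2(\Tb),\dots,\s_d(\Tb)]$. Comme $A$ est un $B$-module libre de rang $d!$ toujours d'apr\`es la proposition~\ref{prop:polynomes-symetriques}, l'extension des scalaires $A/\s_1(\Tb)A \simeq A\otimes_B (B/\s_1(\Tb)B)$ fournit un $\kb[\s_2(\Tb),\dots,\s_d(\Tb)]$-module libre de m\^eme rang $d!$, ce qui \'etablit la deuxi\`eme assertion du corollaire.

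Pour l'isomorphisme sur les invariants, je consid\'ererais la suite exacte courte de $\SG_d$-modules
\[
0 \longto A \stackrel{\cdot \s_1(\Tb)}{\longto} A \longto A/\s_1(\Tb)A \longto 0,
\]
o\`u la multiplication par $\s_1(\Tb)$ est bien $\SG_d$-\'equivariante puisque $\s_1(\Tb)\in B$. Puisque $\kb$ est de caract\'eristique z\'ero, le groupe fini $\SG_d$ admet un op\'erateur de Reynolds (projecteur $\frac{1}{d!}\sum_{\s\in\SG_d}\s$) et le foncteur des invariants est donc exact. On en d\'eduit une suite exacte
\[
0 \longto B \stackrel{\cdot \s_1(\Tb)}{\longto} B \longto (A/\s_1(\Tb)A)^{\SG_d} \longto 0,
\]
qui donne $(A/\s_1(\Tb)A)^{\SG_d}\simeq B/\s_1(\Tb)B\simeq \kb[\s_2(\Tb),\dots,\s_d(\Tb)]$.

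Il n'y a pas de v\'eritable obstacle dans cette preuve~: tout repose sur l'exactitude du foncteur des invariants, qui est imm\'ediate en caract\'eristique nulle. Le seul point auquel il faut \^etre attentif est de bien v\'erifier que l'id\'eal $\s_1(\Tb)A\cap B$ co\"{\i}ncide avec $\s_1(\Tb)B$, ce qui est exactement ce que fournit la suite exacte ci-dessus appliqu\'ee \`a l'\'egalit\'e des noyaux.
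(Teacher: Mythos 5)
Votre preuve est correcte, et comme le texte ne donne aucune d\'emonstration de ce corollaire, elle sert effectivement \`a combler une lacune laiss\'ee au lecteur. Les deux ingr\'edients cl\'es sont bien identifi\'es et bien utilis\'es~: le changement de base $A\otimes_B(B/\s_1 B)\simeq A/\s_1 A$ pr\'eserve la libert\'e et le rang, ce qui donne la seconde assertion~; et la suite exacte courte $0\to A\xrightarrow{\cdot\s_1}A\to A/\s_1 A\to 0$ (o\`u l'injectivit\'e de la multiplication par $\s_1$ vient de l'int\'egrit\'e de $A$) reste exacte apr\`es passage aux $\SG_d$-invariants gr\^ace \`a l'op\'erateur de Reynolds en caract\'eristique nulle, d'o\`u $(A/\s_1 A)^{\SG_d}\simeq B/\s_1 B$. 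C'est l'argument que le texte sous-entend~; une alternative aurait \'et\'e d'identifier $A/\langle\s_1\rangle$ au coordonn\'ees de la repr\'esentation de r\'eflexion de $\SG_d$ et d'invoquer \`a nouveau le th\'eor\`eme~\ref{chevalley}, mais c'est moins \'el\'ementaire que votre d\'emarche directe.
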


\bigskip

Comme cons\'equence de la proposition~\ref{prop:polynomes-symetriques}, 
il existe un unique polyn\^ome $\D_d$ en $d$ variables tel que 
\equat\label{eq:def-discriminant}
\prod_{1 \le i < j \le d} (T_j-T_i)^2=\D_d(\s_1(\Tb),\s_2(\Tb),\dots,\s_d(\Tb)).
\endequat

\bigskip

\subsection{Pr\'esentation de ${\boldsymbol{R}}$} 
Notons $\s_i(\Kb)=\s_i(K_1,\dots,K_d)$ (en particulier, $\s_1(\Kb)=0$). 
Alors, d'apr\`es le corollaire~\ref{coro:polynomes-symetriques}, 
$P_\sym=\kb[\s_2(\Kb),\dots,\s_d(\Kb),X,Y]$ est l'anneau des invariants, 
dans $P$, du groupe $\SG_d$ agissant par permutations des $K_i$. 
De plus, 
\equat\label{eq:p-psym}
\text{\it $P$ est un $P_\sym$-module libre de rang $d!$}
\endequat
Introduisons une nouvelle famille d'ind\'etermin\'ees $E_1$,\dots, $E_{d-1}$, et d\'efinissons 
$E_d=-(E_1+\cdots+E_{d-1})$ et $\s_i(\Eb)=\s_i(E_1,\dots,E_d)$ (en particulier 
$\s_1(\Eb)=0$). Notons $R_\sym=\kb[E_1,\dots,E_{d-1},X,Y]=\kb[E_1,\dots,E_d,X,Y]/\langle \s_1(\Eb)\rangle$, 
sur lequel le groupe sym\'etrique $\SG_d$ agit par permutations des $E_i$. L'anneau $R_\sym^{\SG_d}$ 
est donc encore une alg\`ebre de polyn\^omes \'egale \`a $\kb[\s_2(\Eb),\dots,\s_d(\Eb),X,Y]$ 
(toujours gr\^ace au corollaire~\ref{coro:polynomes-symetriques}).

\medskip

\boitegrise{{\bf Identification.} {\it Nous identifions les $\kb$-alg\`ebres $P_\sym$ et 
$R_\sym^{\SG_d}$ \`a travers les \'egalit\'es 
$$\begin{cases}
\s_1(d\Kb)=\s_1(\Eb)=0 \\
\forall~2 \le i \le d-1,~\s_i(d\Kb)=\s_i(\Eb)\\
\s_d(d\Kb)=\s_d(\Eb)+(-1)^d XY
\end{cases}$$
Rappelons que $\s_i(d\Kb)=d^i\s_i(\Kb)$.}}{0.75\textwidth}

\medskip

Ainsi,
\equat\label{eq:rsym-psym}
\text{\it $R_\sym$ est un $P_\sym$-module libre de rang $d!$}
\endequat

\bigskip

\begin{lem}\label{lem:integre-normal}
L'anneau $P \otimes_{P_\sym} R_\sym$ est int\`egre et int\'egralement clos.
\end{lem}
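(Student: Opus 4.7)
The plan is to exhibit $P \otimes_{P_\sym} R_\sym$ as a relative complete intersection over an intermediate ring $B$ that isolates the role of $X,Y$. Using the last defining relation $\s_d(\Eb) - d^d\s_d(\Kb) + (-1)^d XY = 0$ to eliminate $XY$, one obtains an isomorphism
$$P \otimes_{P_\sym} R_\sym \simeq B[X,Y]\big/\bigl(XY - (-1)^{d+1}W\bigr),$$
where $B = \kb[K_1,\ldots,K_{d-1},E_1,\ldots,E_{d-1}]\bigl/\bigl(\s_j(\Eb) - d^j\s_j(\Kb) : 2 \le j \le d-1\bigr)$ and $W = \s_d(\Eb) - d^d\s_d(\Kb) \in B$. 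Since $A[X,Y]/(XY-a)$ is an integral domain whenever $A$ is a domain and $a \in A \setminus \{0\}$ (it embeds into $A[X,X^{-1}]$ via $Y \mapsto a X^{-1}$), integrality of $P \otimes_{P_\sym} R_\sym$ reduces to showing that $B$ is a domain and that $W$ is non-zero in $B$. The latter is a bidegree check: the defining ideal of $B$ is generated in bidegrees $(j,j)$ for $j \le d-1$, while $W$ has bidegree $(d,d)$, so no $\kb[\Kb,\Eb]$-linear combination of the relations can produce $W$.

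First I would verify irreducibility of $B$. A dimension count shows $B$ is a complete intersection of Krull dimension $d$ in $\kb[\Kb,\Eb]$, cut out by $d-2$ relations. The projection $\Spec(B) \to \Spec(\kb[K_1,\ldots,K_{d-1},c])$ sending $((\Kb),(\Eb)) \mapsto ((\Kb),(-1)^{d+1}W)$ is a finite flat cover of degree $d!$, whose fiber over a generic point is the set of orderings of the roots of $\prod_{i=1}^d(\tb - dK_i) + (-1)^d c$. Irreducibility of $\Spec(B)$ is equivalent to the monodromy of this cover acting transitively on the $d!$-element generic fiber; a Picard--Lefschetz argument identifies the local monodromy around each simple branch divisor (where two roots collide) as a transposition, and these transpositions generate $\SG_d$. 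For example, when $d = 3$ one can verify this directly: after diagonalising the quadratic form $\s_2(\Eb) - 9 \s_2(\Kb)$ using primitive cube roots of unity, the single defining relation of $B$ becomes $e_+ e_- = 9 k_+ k_-$, a smooth quadric and thus visibly irreducible.

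Next I would establish normality by Serre's criterion $R_1 + S_2$. The $S_2$ property is immediate from the complete intersection structure: $P \otimes_{P_\sym} R_\sym$ is the quotient of the polynomial ring $\kb[K_i,E_i,X,Y]$ of dimension $2d$ by the $d-1$ defining relations, and since the $P$-module rank equals $d!$ the Krull dimension is $d+1$, so the defining sequence is regular and the quotient is Cohen--Macaulay. For $R_1$, the Jacobian criterion reduces the problem to controlling the locus where all $(d-1) \times (d-1)$ minors of the Jacobian matrix vanish. The last defining equation contributes entries $(-1)^d Y$ and $(-1)^d X$ in the $X$ and $Y$ columns (the other rows being zero there), so the rank can only drop on the intersection with $\{X = Y = 0\}$; there the remaining task is the vanishing of the discriminant-like minor in the $E_i$-columns, proportional by a Vandermonde computation to $\prod_{i<j}(E_i - E_j)$. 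Imposing these conditions simultaneously cuts out a locus of codimension at least $2$ in $\Spec(P \otimes_{P_\sym} R_\sym)$.

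The main obstacle is the irreducibility of $B$, which algebraically amounts to showing that $\prod_{i=1}^d(\tb - dK_i) + (-1)^d c$ has Galois group $\SG_d$ over $\kb(K_1,\ldots,K_{d-1},c)$; the monodromy route is geometrically clean but passes through Picard--Lefschetz theory or Hilbert irreducibility, and no purely algebraic argument is immediately apparent. Once the lemma is established, $P \otimes_{P_\sym} R_\sym$ is a finite normal $P$-algebra of generic rank $d!$, which must coincide with the integral closure $R$ of $P$ inside $\Mb$, yielding the announced identification $G = \SG_W \simeq \SG_d$ from the formula $(*)$ in the chapter introduction.
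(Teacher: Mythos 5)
You have correctly identified the main obstacle in your plan yourself: irreducibility of $B$, which reduces to showing that $\prod_{i=1}^d(\tb - dK_i) + (-1)^d c$ has Galois group $\SG_d$ over $\kb(K_1,\dots,K_{d-1},c)$, and the Picard--Lefschetz/Hilbert-irreducibility route you sketch is not carried out. This is a genuine gap, and it is also unnecessary: you can bypass integrality entirely. The paper never proves that $\tilde{R} = P \otimes_{P_\sym} R_\sym$ is a domain as a preliminary step. Instead it proves \emph{normality} first, by exactly the Serre criterion $R_1 + S_2$ you invoke in your second paragraph ($S_2$ from the complete-intersection structure, $R_1$ from the Jacobian), noting that normality makes sense for any reduced Noetherian ring and yields that $\tilde{R}$ is a finite product of integrally closed domains. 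It then observes that $\tilde{R}$ carries an $\NM$-grading with $\deg K_i = \deg E_i = 2$, $\deg X = \deg Y = d$, whose degree-$0$ component is $\kb$; hence $\tilde{R}$ is connected, so the product has a single factor, and $\tilde{R}$ is a domain. Connectedness plus normality is what replaces your Galois-theoretic argument. (The identification $G \simeq \SG_d$ is then a \emph{consequence} of the lemma, not an input, as the surrounding Th\'eor\`eme makes clear.)

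Your $R_1$ analysis is also slightly off in a way worth flagging. You reduce to $\{X = Y = 0\}$ and then say the ``discriminant-like minor in the $E_i$-columns'' must vanish. But the Jacobian of the presentation has rows involving partial derivatives in \emph{both} the $K_j$ and the $E_j$; a rank drop to $\le d$ forces the vanishing of \emph{both} Vandermonde determinants $\prod_{i<j}(k_j - k_i)$ and $\prod_{i<j}(e_j - e_i)$, not just the one in the $E$-variables. This matters: once $x = y = 0$, the two discriminants $\D_d(\sigma(k))$ and $\D_d(\sigma(e))$ coincide, so a single vanishing gives only codimension $1$. The paper instead projects the putative singular locus to $\Spec(P_\sym) \simeq \AM^{d+1}$, where the two conditions become $\D_d(0,a_2,\dots,a_d) = 0$ and $\D_d(0,a_2,\dots,a_d + (-1)^d x y) = 0$, and then exhibits a point (e.g.\ $a_2=\cdots=a_d=0$, $x=y=1$) where the first vanishes but not the second, using irreducibility of $\D_d(0,U_2,\dots,U_d)$ to conclude codimension $\ge 2$. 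You should replace your codimension count with this argument, and prepend the connectedness observation, at which point the irreducibility of $B$ is no longer needed at all.
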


\begin{proof}
Notons tout d'abord que nous pouvons, et nous le ferons, supposer dans cette preuve 
que $\kb$ est alg\'ebriquement clos. Posons $\Rti=P \otimes_{P_\sym} R_\sym$. 
Alors $\Rti$ admet la pr\'esentation suivante~:
$$\begin{cases}
\text{G\'en\'erateurs~:} & K_1, K_2,\dots, K_d, E_1,E_2,\dots,E_d,X,Y\\
\text{Relations~:} & 
\begin{cases}
\s_1(d\Kb)=\s_1(\Eb)=0 \\
\forall~2 \le i \le d-1,~\s_i(d\Kb)=\s_i(\Eb)\\
\s_d(d\Kb)=\s_d(\Eb)+(-1)^d XY
\end{cases}
\end{cases}\leqno{(\PC)}$$

La pr\'esentation $(\PC)$ de $\Rti$ montre que l'on peut graduer $\Rti$ de sorte 
que $\deg(K_i)=\deg(E_i)=2$ et $\deg(X)=\deg(Y)=d$. Ainsi, la composante 
de degr\'e $0$ de $\Rti$ est isomorphe \`a $\kb$, ce qui montre que 
$$\text{\it $\Rti$ est connexe.}\leqno{(\clubsuit)}$$

D'autre part, il d\'ecoule de~(\ref{eq:p-psym}) et~(\ref{eq:rsym-psym}) que 
$\Rti$ est un $P$-module libre de rang $d!$ (et un $P_\sym$-module libre de 
rang $(d!)^2$), et donc que
$$\text{\it $\Rti$ est un anneau de Cohen-Macaulay purement de dimension $d+1$.}\leqno{(\diamondsuit)}$$
Mieux, la pr\'esentation $(\PC)$ montre que 
$$\text{\it $\Rti$ est un anneau d'intersection compl\`ete.}\leqno{(\heartsuit)}$$

\def\Jac{{\mathrm{Jac}}}

Montrons maintenant que
$$\text{\it $\Rti$ est r\'egulier en codimension $1$.}\leqno{(\spadesuit)}$$
Pour cela, notons $\RCt$ la sous-vari\'et\'e ferm\'ee de $\AM^{2d+2}(\kb)$ 
form\'ee des \'el\'ements $r=(k_1,\dots,k_d,e_1,\dots,e_d,x,y)$ satisfaisant 
aux \'equations $(\PC)$. Le Jacobien $\Jac(r)$ du syst\`eme d'\'equations $(\PC)$ en $r \in \RCt$ 
est donn\'e par 
{\scriptsize$${\Jac(r)=
\begin{pmatrix}
d &  \cdots & d & 0 &  \cdots & 0 & 0 & 0 \\
&&&&&&&\\
0 &  \cdots & 0 & -1 &  \cdots & -1 & 0 & 0 \\
&&&&&&&\\
\DS{\frac{\partial \s_2(d\Kb)}{\partial K_1}}(r) & 
\cdots & \DS{\frac{\partial \s_2(d\Kb)}{\partial K_d}}(r) & 
-\DS{\frac{\partial \s_2(\Eb)}{\partial E_1}}(r) & 
\cdots & -\DS{\frac{\partial \s_2(\Eb)}{\partial E_d}}(r) & 0 & 0 \\
&&&&&&&\\
\vdots & & \vdots & \vdots & & \vdots & \vdots & \vdots \\
&&&&&&&\\
\DS{\frac{\partial \s_{d-1}(d\Kb)}{\partial K_1}}(r) & 
\cdots & \DS{\frac{\partial \s_{d-1}(d\Kb)}{\partial K_d}}(r) & 
-\DS{\frac{\partial \s_{d-1}(\Eb)}{\partial E_1}}(r) & 
\cdots & -\DS{\frac{\partial \s_{d-1}(\Eb)}{\partial E_d}}(r) & 0 & 0 \\
&&&&&&&\\
\DS{\frac{\partial \s_{d}(d\Kb)}{\partial K_1}}(r) & 
\cdots & \DS{\frac{\partial \s_{d-1}(d\Kb)}{\partial K_d}}(r) & 
-\DS{\frac{\partial \s_{d}(\Eb)}{\partial E_1}}(r) & 
\cdots & -\DS{\frac{\partial \s_{d}(\Eb)}{\partial E_d}}(r) & (-1)^{d+1} y & (-1)^{d+1} x \\
\end{pmatrix}}$$}
Puisque $\RCt$ est purement de dimension $d+1$ et d'intersection compl\`ete, 
$r$ est un point lisse si et seulement si le rang de $\Jac(r)$ est \'egal \`a $(2d+2)-(d+1)=d+1$. 
Or, si le rang de $\Jac(r)$ est inf\'erieur ou \'egal \`a $d$, cela implique que 
$$\det\Bigl(\frac{\partial \s_i(d\Kb)}{\partial K_j}(k_1,\dots,k_d)\Bigr)_{1 \le i , j \le d} = 
\det\Bigl(\frac{\partial \s_i(\Eb)}{\partial E_j}(e_1,\dots,e_d)\Bigr)_{1 \le i , j \le d} = 0.$$
D'apr\`es~(\ref{eq:jacobien}), cela signifie que 
$$\prod_{1 \le i < j \le d}(k_j-k_i)=\prod_{1 \le i < j \le d}(e_j-e_i)=0$$
En particulier, 
$$\D_d(\s_1(k_1,\dots,k_d),\dots,\s_d(k_1,\dots,k_d))=\D_d(\s_1(e_1,\dots,e_d),\dots,\s_d(e_1,\dots,e_d))=0.$$
Compte tenu du fait que $r \in \RCt$ v\'erifie les \'equations $(\PC)$, 
cela montre que la projection du lieu singulier de $\RCt$ sur la vari\'et\'e 
$\PC_\sym \simeq \AM^{d+1}(\kb)$ est contenu dans l'ensemble des $(d+2)$-uplets 
$(a_2,\dots,a_d,x,y) \in \AM^{d+1}(\kb)$ tels que 
$$\D_d(0,a_2,\dots,a_{d-1},a_d)=\D_d(0,a_2,\dots,a_{d-1},a_d+(-1)^d xy)=0.\leqno{(*)}$$
Il est bien connu que $\D_d(0,U_2,\dots,U_d)$ est un polyn\^ome irr\'eductible en 
les ind\'etermin\'ees $U_1$,\dots, $U_d$. Par cons\'equent, pour montrer que 
la sous-vari\'et\'e de $\AM^{d+1}(\kb)$ d\'efinie par les \'equations $(*)$ est de codimension $\ge 2$, 
il suffit de montrer qu'il existe $(a_2,\dots,a_d,x,y) \in \AM^{d+1}(\kb)$ tel que 
$\D_d(0,a_2,\dots,a_{d-1},a_d)=0$ et $\D_d(0,a_2,\dots,a_{d-1},a_d+(-1)^d xy)\neq 0$. 
Il suffit pour cela de prendre $a_2=\cdots = a_d=0$ et $x=y=1$. Cela termine 
la d\'emonstration de $(\spadesuit)$.

\medskip

En conclusion, d'apr\`es $(\diamondsuit)$ et $(\spadesuit)$, $\Rti$ est normal 
(voir~\cite[\S{\MakeUppercase{\romannumeral 4}.D},~th\'eor\`eme~11]{serre}). 
C'est donc un produit direct d'anneaux int\`egres int\'egralement clos mais, 
\'etant connexe d'apr\`es~$(\clubsuit)$, cela signifie que 
$\Rti$ est int\`egre et int\'egralement clos.
\end{proof}


\bigskip

Comme cons\'equence du lemme pr\'ec\'edent, nous obtenons~:

\bigskip

\begin{theo}\label{theo:r-cyclique}
L'anneau $R$ v\'erifie les propri\'et\'es suivantes~:
\begin{itemize}
\itemth{a} $R$ est isomorphe \`a $P \otimes_{P_\sym} R_\sym$. Il admet la pr\'esentation suivante~:
$$\begin{cases}
\text{G\'en\'erateurs~:} & K_1, K_2,\dots, K_d, E_1,E_2,\dots,E_d,X,Y\\
\text{Relations~:} & 
\begin{cases}
\s_1(d\Kb)=\s_1(\Eb)=0 \\
\forall~2 \le i \le d-1,~\s_i(d\Kb)=\s_i(\Eb)\\
\s_d(d\Kb)=\s_d(\Eb)+(-1)^d XY
\end{cases}
\end{cases}\leqno{(\PC)}$$

\itemth{b} $R$ est un anneau d'intersection compl\`ete et $R$ est un $P$-module libre de rang $d!$ 
(en particulier, $R$ est un anneau de Cohen-Macaulay).

\itemth{c} Il existe un unique morphisme de $P$-alg\`ebres $\copie : Z \to R$ tel que $\copie(\euler)=E_d$. 
Ce morphisme est injectif (on note $Q$ son image).

\itemth{d} Pour l'action du groupe $\SG_d$ par permutation des $E_i$, 
on a $R^{\SG_d}=P$ et $R^{\SG_{d-1}}=Q$.

\itemth{e} $G=\SG_W \simeq \SG_d$~; si $\s \in \SG_d$ et $1 \le i \le d$, alors $\s(E_i)=E_{\s(i)}$. 
L'\'el\'ement $\eulerq$ de $Q$ s'identifie \`a $E_d$. 

\itemth{f} $G$ est un groupe de r\'eflexions pour son action sur $R_+/(R_+)^2$.
\end{itemize}
\end{theo}

\begin{proof}
Reprenons la notation $\Rti=P \otimes_{P_\sym} R_\sym$ utilis\'ee dans 
la preuve du lemme~\ref{lem:integre-normal}. 
Les relations $(\PC)$ montrent que, dans l'anneau de polyn\^omes $\Rti[\tb]$, on a 
l'\'egalit\'e
$$\prod_{i=1}^d(\tb-dK_i)-XY = \prod_{i=1}^d (\tb - E_i).$$
Ainsi, $E_d$ annule le polyn\^ome minimal de $\euler$ sur $P$ ce qui, compte tenu 
du th\'eor\`eme~\ref{centre rang 1}, montre qu'il existe un unique morphisme 
de $P$-alg\`ebres $\copie : Z \to \Rti$ tel que $\copie(\euler)=E_d$. 
Si on note $\zG=\Ker(\copie)$, alors $\zG \cap P=0$ car $P \subset \Rti$ et, 
puisque $Z$ est int\`egre et est un $P$-module de type fini, cela force $\zG=0$. 
Donc $\copie : Z \to \Rti$ est injectif (on note $Q$ son image).

\medskip

Notons $\Mbt$ le corps des fractions de $\Rti$ (rappelons que $\Rti$ est int\`egre 
en vertu du lemme~\ref{lem:integre-normal}). 
Par construction, $\Rti$ est $P$-libre de rang $d!$ et, 
d'apr\`es le corollaire~\ref{coro:polynomes-symetriques}, $\Rti^{\SG_d}=P$. 
Donc l'extension $\Mbt/\Kb$ est galoisienne, contient $\Lb$ (le corps des fractions 
de $Q$) et v\'erifie $\Gal(\Mbt/\Kb)=\SG_d$. De plus, toujours par construction, 
$\Gal(\Mbt/\Lb)=\SG_{d-1}$ (car $\SG_{d-1}$ est le stabilisateur de $E_d$ dans $\SG_d$). 
Puisque le seul sous-groupe distingu\'e de $\SG_d$ contenu dans $\SG_{d-1}$ est 
le groupe trivial, cela montre donc que $\Mbt/\Kb$ est une cl\^oture galoisienne de $\Lb/\Kb$. 
Donc $\Mbt \simeq \Mb$.

\medskip

Puisque $\Rti$ est int\'egralement clos (voir le lemme~\ref{lem:integre-normal}) et entier 
sur $P$, cela implique que $\Rti \simeq R$. Tous les \'enonc\'es du th\'eor\`eme~\ref{theo:r-cyclique} 
se d\'eduisent de ces observations (pour l'\'enonc\'e (f), on peut cependant utiliser (b),  et 
la proposition~\ref{intersection complete R} car $\SG_d$ agit trivialement sur les relations, 
ou alors v\'erifier directement en remarquant que $R_+/(R_+)^2$ est le $\kb$-espace vectoriel 
de dimension $2d$ engendr\'e par 
$K_1$,\dots, $K_d$, $E_1$,\dots, $E_d$, $X$, $Y$, avec les relations 
$K_1+\cdots+K_d=0$ et $E_1+ \cdots + E_d=0$~: cela montre que, comme repr\'esentation de 
$\SG_d$, $R_+/(R_+)^2$ est la somme directe de la repr\'esentation irr\'eductible 
de r\'eflexion et de $d+1$ copies de la repr\'esentation triviale).
\end{proof}

\bigskip

\subsection{Choix de l'id\'eal ${\boldsymbol{\rG_0}}$} 
Notons $\rG'$ l'id\'eal de $R$ engendr\'e par les \'el\'ements $E_i-\z^i E_d$. 
On a alors
$$\s_1(\Eb) \equiv \s_2(\Eb) \equiv \cdots \equiv \s_{d-1}(\Eb) \equiv 0 \mod \rG'.$$
Nous choisissons pour $\rG_0$ l'id\'eal de $R$ \'egal \`a 
$\rG_0=\rG' + \langle K_1,\dots, K_d\rangle_R$. Alors $R/\rG_0$ admet la pr\'esentation suivante~:
$$\begin{cases}
\text{G\'en\'erateurs~:} & E_d,X,Y\\
\text{Relation~:} & 
E_d^d = XY
\end{cases}\leqno{(\PC_0)}$$
Rappelons que $\Zrm(W)=W$. 
Ainsi, comme pr\'evu, $R/\rG_0 \simeq Q/\qG_0 \simeq \kb[V \times V^*]^{\D W}$, 
\`a travers l'isomorphisme qui envoie $\eulerq=E_d$ sur $\euler_0=yx \in \kb[V \times V^*]^{\D W}$. 
Rappelons qu'un \'el\'ement $w \in W$, vu comme un \'el\'ement du groupe de Galois $G=\SG_W\simeq \SG_d$, 
est caract\'eris\'e par l'\'egalit\'e 
$$(w(\eulerq) \mod \rG_0)\equiv w(y)x \in \kb[V \times V^*]^{\D W}.$$
Or, $s^i(y)=\z^i y$, donc
\equat\label{eq:action-w-cyclique}
s^i(\eulerq) = E_i.
\endequat
Pour l'action de $G =\SG_W \simeq \SG_d$, cela revient \`a identifier les ensembles 
$\{1,2,\dots,d\}$ et $W$ \`a travers la bijection $i \mapsto s^i$, ce qui est naturel. 

\bigskip

\bigskip

\subsection{Choix des id\'eaux ${\boldsymbol{\rG^\gauche}}$, ${\boldsymbol{\rG^\droite}}$ et 
${\boldsymbol{\rGba}}$}\label{section:choix rang 1}
Notons $\rG''$ l'id\'eal de $R$ engendr\'e par les $E_i-dK_i$. Ainsi
$$\forall~1 \le i \le d,~\s_i(d\Kb) \equiv \s_i(\Eb) \mod \rG''.$$
En particulier, $XY \in \rG''$. 
Nous choisissons $\rG^\gauche=\rG'' + \langle Y \rangle_R$, $\rG^\droite=\rG''+\langle X \rangle_R$ 
et $\rGba=\rG'' + \langle X,Y \rangle_R$. Alors
\equat\label{eq:iso-r}
\begin{cases}
 R/\rG^\gauche \simeq \kb[K_1,\dots,K_{d-1},X] =P/\pG^\gauche,\\
 R/\rG^\droite \simeq \kb[K_1,\dots,K_{d-1},Y] =P/\pG^\droite,\\
 R/\rGba \simeq \kb[K_1,\dots,K_{d-1}]=\kb[\CCB] =P/\pGba.\\
\end{cases}
\endequat
Il en r\'esulte imm\'ediatement la proposition suivante~:

\bigskip

\begin{prop}\label{prop:dec-cyclique}
$D^\gauche=I^\gauche=D^\droite=I^\droite=\Dba=\Iba=1$.
\end{prop}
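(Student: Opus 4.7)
La strat\'egie est d'exploiter la pr\'esentation explicite de $R$ donn\'ee par le th\'eor\`eme~\ref{theo:r-cyclique}(a) pour compter les id\'eaux premiers au-dessus de $\pGba$, $\pG^\gauche$ et $\pG^\droite$. Je commencerais par traiter le cas bilat\`ere. En sp\'ecialisant $X=Y=0$ dans les relations $(\PC)$, on obtient $\s_i(d\Kb)=\s_i(\Eb)$ pour tout $1 \le i \le d$, c'est-\`a-dire l'\'egalit\'e $\prod_{j=1}^d(T-dK_j)=\prod_{j=1}^d(T-E_j)$ dans $(R/\langle X,Y\rangle_R)[T]$. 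Pour chaque permutation $\sigma \in \SG_d$, posons
$$\rGba_\sigma = \langle E_j - dK_{\sigma(j)},\, X,\, Y \mid 1 \le j \le d\rangle_R.$$
Alors $R/\rGba_\sigma \simeq \kb[K_1,\dots,K_{d-1}]$ (apr\`es \'elimination de $E_1,\dots,E_d$ via les g\'en\'erateurs), qui est int\`egre, donc $\rGba_\sigma$ est premier et au-dessus de $\pGba$. Notre choix de $\rGba$ correspond \`a $\sigma=\mathrm{id}$.

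Ces $d!$ id\'eaux sont deux \`a deux distincts~: apr\`es localisation au point g\'en\'erique de $P/\pGba=\kb[K_1,\dots,K_{d-1}]$, les scalaires $dK_1,\dots,dK_d$ deviennent des \'el\'ements distincts du corps $\kb(K_1,\dots,K_{d-1})$, et la correspondance $E_j \leftrightarrow dK_{\sigma(j)}$ distingue les permutations. Par ailleurs, $R$ \'etant un $P$-module libre de rang $d!$ (th\'eor\`eme~\ref{theo:r-cyclique}(b)), l'alg\`ebre $\kb_P(\pGba) \otimes_P R$ est un $\kb_P(\pGba)$-espace vectoriel de dimension $d!$. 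Puisque nous avons exhib\'e $d!$ id\'eaux premiers distincts, ce sont exactement les id\'eaux premiers de $R$ au-dessus de $\pGba$, et toutes les extensions r\'esiduelles sont triviales (fait d\'ej\`a \'etabli par~\ref{eq:iso-r} pour $\rGba$, et cons\'equence de la transitivit\'e de l'action de $G$ pour les autres). L'action de $G$ sur ces premiers \'etant transitive avec stabilisateur $\Dba$, on obtient $|G|/|\Dba|=d!=|G|$, d'o\`u $\Dba=1$, et donc $\Iba \subset \Dba = 1$.

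Pour les cas \`a gauche et \`a droite, le m\^eme argument s'applique~: en sp\'ecialisant uniquement $Y=0$ dans $(\PC)$, la derni\`ere relation devient $\s_d(d\Kb)=\s_d(\Eb)+(-1)^d X \cdot 0 = \s_d(\Eb)$, et toutes les relations redonnent $\s_i(d\Kb)=\s_i(\Eb)$ pour tout $i$, ind\'ependamment de $X$. On a donc $R/\langle Y \rangle_R \simeq (R/\langle X,Y\rangle_R)[X]$, et les $d!$ id\'eaux premiers minimaux de $R/\langle Y\rangle_R$ sont de la forme $\rG_\sigma^\gauche = \langle E_j - dK_{\sigma(j)},\, Y \rangle_R$, avec $R/\rG_\sigma^\gauche \simeq \kb[K_1,\dots,K_{d-1},X]$. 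Le m\^eme argument de comptage donne $D^\gauche=I^\gauche=1$. Le cas \`a droite est parfaitement sym\'etrique, \'echangeant les r\^oles de $X$ et $Y$. Le seul point demandant un l\'eger soin est la v\'erification que tous les premiers minimaux de $R/\langle Y\rangle_R$ sont bien de la forme $\rG_\sigma^\gauche$~: cela d\'ecoule du fait que $R$ est $P$-libre de rang $d!$, ce qui force $\dim_{\kb_P(\pG^\gauche)}(\kb_P(\pG^\gauche) \otimes_P R)=d!$, correspondant exactement aux $d!$ premiers exhib\'es (aucune place pour des premiers suppl\'ementaires ou des extensions r\'esiduelles non triviales).
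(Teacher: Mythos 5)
Ta preuve est correcte. Le texte du m\'emoire ne donne essentiellement aucun argument (``Il en r\'esulte imm\'ediatement'' apr\`es~(\ref{eq:iso-r})), et tu as fourni une d\'emonstration compl\`ete~; celle-ci prend toutefois une route l\'eg\`erement diff\'erente de la d\'eduction la plus directe que sugg\`ere le texte. L'argument le plus imm\'ediat \`a partir de~(\ref{eq:iso-r}) serait~: l'isomorphisme $R/\rGba\simeq P/\pGba$ donne d'abord $\Dba=\Iba$ (extension r\'esiduelle triviale, via le th\'eor\`eme~\ref{bourbaki}), puis, comme l'isomorphisme envoie $E_i$ sur $dK_i$ et que les $dK_i$ sont deux \`a deux distincts dans $\kb[K_1,\dots,K_{d-1}]$, tout $g\in\Iba$ v\'erifiant $E_{g(i)}\equiv E_i\bmod\rGba$ doit fixer chaque $i$, d'o\`u $\Iba=1$~; m\^eme raisonnement pour $\rG^\gauche$ et $\rG^\droite$. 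Toi, au lieu de regarder l'action sur le quotient $R/\rG$, tu exhibes explicitement les $d!$ id\'eaux premiers $\rGba_\sigma$ (resp. $\rG_\sigma^\gauche$, $\rG_\sigma^\droite$) au-dessus de $\pGba$ (resp. $\pG^\gauche$, $\pG^\droite$), tu utilises la libert\'e de $R$ sur $P$ de rang $d!$ pour constater qu'il n'y en a pas d'autres, puis la transitivit\'e de $G$ (proposition~\ref{galois transitif}) donne $|G/D|=d!=|G|$, donc $D=1$. Les deux approches reposent sur la pr\'esentation $(\PC)$ du th\'eor\`eme~\ref{theo:r-cyclique}~; la tienne est un peu plus longue mais a l'avantage de d\'ecrire enti\`erement la fibre de $\Spec R\to\Spec P$ au-dessus de ces id\'eaux, ce qui est une information de nature ind\'ependante. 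Un seul point de style~: pour conclure $D=1$ il suffit de la minoration $|G/D|\ge d!$ (exhiber $d!$ premiers distincts), inutile d'\'etablir au pr\'ealable qu'on les a tous~; c'est d'ailleurs plut\^ot de $D=1$ et du comptage de rang que d\'ecoule ensuite le fait qu'il n'y a exactement que ces $d!$ premiers.
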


\bigskip

\section{Cellules, familles, caract\`eres cellulaires}

\medskip

\boitegrise{\noindent{\bf Notation.} {\it Nous fixons dans cette section un 
id\'eal premier $\CG$ de $\kb[\CCB]$ et nous notons $k_i$ l'image de 
$K_i$ dans $\kb[\CCB]/\CG$.}}{0.75\textwidth}

\medskip

Compte tenu de~(\ref{eq:iso-r}), on a 
\equat\label{eq:rc-cyclique}
\rG_\CG^\gauche=\rG^\gauche + \CG R,\quad \rG_\CG^\droite=\rG^\droite + \CG R
\quad\text{et}\quad \rGba_\CG=\rGba + \CG R
\endequat
et
\equat\label{eq:r-c-cyclique}
\begin{cases}
R/\rG_\CG^\gauche=\kb[\CCB]/\CG \otimes \kb[X]=P/\pG_\CG^\gauche,\\
R/\rG_\CG^\droite=\kb[\CCB]/\CG \otimes \kb[Y]=P/\pG_\CG^\droite,\\
R/\rGba_\CG=\kb[\CCB]/\CG=P/\pGba_\CG.
\end{cases}
\endequat
On notera $\SG[\CG]$ le sous-groupe de $\SG_d$ form\'e des permutations 
stabilisant les fibres de l'application $\{1,2,\dots,d\} \to \kb[\CCB]/\CG$, $i \mapsto k_i$. 
En d'autres termes,
$$\SG[\CG]=\{\s \in \SG_d~|~\forall~1 \le i \le d,~k_{\s(i)}=k_i\}.$$
Alors~:

\bigskip

\begin{prop}\label{prop:dec-c-cyclique}
$D^\gauche_\CG=I^\gauche_\CG=D^\droite_\CG=I^\droite_\CG=\Dba_\CG=\Iba_\CG=\SG[\CG]$.
\end{prop}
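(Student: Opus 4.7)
The plan is to prove the six equalities by a single mechanism: in each of the three cases ($\gauche$, $\droite$, $\bar{~}$) the quotient $R/\rG^?_\CG$ is canonically isomorphic to $P/\pG^?_\CG$ by~(\ref{eq:r-c-cyclique}), so the inertia group equals the decomposition group automatically, because $G$ fixes $P$ pointwise. It therefore suffices to show $\Dba_\CG = D^\gauche_\CG = D^\droite_\CG = \SG[\CG]$, and since the three cases are entirely parallel I would carry out the argument in detail only for $\rG^\gauche_\CG$.

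Fix $\s \in G = \SG_d$. Using~(\ref{eq:rc-cyclique}) and the definition of $\rG^\gauche$, the ideal $\rG^\gauche_\CG$ is generated by $Y$, by the elements $E_i - dK_i$ for $1 \le i \le d$, and by $\CG R$. Since $\s$ acts trivially on $P$ (recall $R^G = P$) and permutes the $E_i$ by $\s(E_i) = E_{\s(i)}$ (see Theorem~\ref{theo:r-cyclique}(e)), one has $\s(Y) = Y$, $\s(\CG R) = \CG R$, and
\[
\s(E_i - dK_i) \;=\; E_{\s(i)} - dK_i \;=\; (E_{\s(i)} - dK_{\s(i)}) + d\,(K_{\s(i)} - K_i).
\]
The first summand already lies in $\rG^\gauche_\CG$, so $\s(\rG^\gauche_\CG) \subseteq \rG^\gauche_\CG$ if and only if $K_{\s(i)} - K_i \in \rG^\gauche_\CG$ for every $i$. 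This is the main step of the argument.

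To interpret the condition $K_{\s(i)} - K_i \in \rG^\gauche_\CG$, I would observe that $K_{\s(i)} - K_i$ lies in $\kb[\CCB] \subset P$, and that
\[
\rG^\gauche_\CG \cap \kb[\CCB] \;=\; \pG^\gauche_\CG \cap \kb[\CCB] \;=\; \CG,
\]
because $\pG^\gauche = \langle Y \rangle_P$ (so $\pG^\gauche \cap \kb[\CCB] = 0$) and hence $\pG^\gauche_\CG \cap \kb[\CCB] = \CG$. Consequently $K_{\s(i)} - K_i \in \rG^\gauche_\CG$ is equivalent to $k_{\s(i)} = k_i$ in $\kb[\CCB]/\CG$, which by definition of $\SG[\CG]$ says exactly that $\s \in \SG[\CG]$. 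This yields $D^\gauche_\CG = \SG[\CG]$.

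The cases of $\rG^\droite_\CG$ and $\rGba_\CG$ are identical after replacing $\langle Y \rangle_R$ by $\langle X \rangle_R$ or $\langle X, Y \rangle_R$ respectively; the decisive fact $\rG^?_\CG \cap \kb[\CCB] = \CG$ persists in each case. Finally, (\ref{eq:r-c-cyclique}) gives $R/\rG^?_\CG = P/\pG^?_\CG$, so the action of any $\s \in D^?_\CG$ on $R/\rG^?_\CG$ factors through the trivial action of $G$ on $P$, forcing $I^?_\CG = D^?_\CG$. I do not expect any serious obstacle here: the whole proof is driven by the transparency of the presentation of $R$ in Theorem~\ref{theo:r-cyclique} together with the explicit form of the ideals given in~(\ref{eq:rc-cyclique}).
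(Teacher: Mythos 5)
Your argument is correct and fills in the proof that the paper omits (it leaves the proposition, like the generic case Proposition~\ref{prop:dec-cyclique}, as an immediate consequence of~(\ref{eq:rc-cyclique}) and~(\ref{eq:r-c-cyclique})). The two ingredients you use — stability of the explicit generating set of $\rG^?_\CG$ under $\s\in\SG_d$ reduces to $K_{\s(i)}-K_i\in\rG^?_\CG\cap\kb[\CCB]=\CG$, and $D^?_\CG=I^?_\CG$ because the residue extension collapses to the identity by~(\ref{eq:r-c-cyclique}) — are exactly what the transparent presentation of $R$ in Theorem~\ref{theo:r-cyclique} is set up to deliver, so this is the intended route.
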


%

\medskip

\begin{coro}\label{coro:cellules}
Soient $i$, $j \in \ZM$. Alors $s^i$ et $s^j$ sont dans la m\^eme $\CG$-cellule de Calogero-Moser 
bilat\`ere (resp. \`a gauche, resp. \`a droite) si et seulement si $k_i=k_j$.
\end{coro}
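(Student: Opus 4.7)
The plan is to deduce this corollary directly from Proposition~\ref{prop:dec-c-cyclique}, which has just identified all the relevant inertia groups. By Definition~\ref{defi:CM} (as specialized in the preamble to Part~\ref{part:verma}), the $\CG$-cells of Calogero-Moser (bilateral, left, or right) are the orbits on $W$ of the inertia groups $\Iba_\CG$, $I^\gauche_\CG$, $I^\droite_\CG$ respectively, where $W$ is identified with $G/H$ via the construction of~\S\ref{subsection:action G}. Proposition~\ref{prop:dec-c-cyclique} asserts that all three of these inertia groups coincide with the single subgroup $\SG[\CG] \subset \SG_d = G$. Therefore the three partitions of $W$ into $\CG$-cells coincide, and it suffices to describe the $\SG[\CG]$-orbits on $W$.

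First I would make the identification of sets explicit. By Theorem~\ref{theo:r-cyclique}(e) we have $G = \SG_W \simeq \SG_d$, and the identification $G/H \longleftrightarrow W$ corresponds, via~(\ref{eq:action-w-cyclique}), to the bijection $\{1,2,\dots,d\} \longleftrightarrow W$, $i \longmapsto s^i$. Through this identification, the action of $G = \SG_d$ on $W$ is simply the natural permutation action on $\{1,\dots,d\}$, so that $s^i$ and $s^j$ lie in the same $\SG[\CG]$-orbit on $W$ precisely when $i$ and $j$ lie in the same $\SG[\CG]$-orbit on $\{1,\dots,d\}$.

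It then remains to observe that, by the very definition of
$$\SG[\CG] = \{\s \in \SG_d \mid \forall~1 \le i \le d,\ k_{\s(i)} = k_i\},$$
the $\SG[\CG]$-orbits on $\{1,\dots,d\}$ are exactly the fibers of the map $i \mapsto k_i$. Indeed, one inclusion is immediate from the definition (any $\s \in \SG[\CG]$ preserves the function $i \mapsto k_i$); conversely, for any two indices $i \ne j$ with $k_i = k_j$, the transposition $(i,j)$ belongs to $\SG[\CG]$, so $i$ and $j$ lie in the same orbit. Combining this with the previous paragraph yields the equivalence claimed.

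There is no real obstacle here: the entire content of the corollary is already packaged in Proposition~\ref{prop:dec-c-cyclique} together with the explicit description of the $G$-action on $W$ in Theorem~\ref{theo:r-cyclique}(e). The only care needed is to check that the identification $w \leftrightarrow s^i$ used to view $G$ as a subgroup of $\SG_W$ matches the natural permutation action of $\SG_d$ on the indices $\{1,\dots,d\}$ labelling the $E_i$, which is precisely the content of~(\ref{eq:action-w-cyclique}).
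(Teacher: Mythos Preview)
Your proof is correct and follows exactly the approach the paper intends: the corollary is stated without proof in the paper, as it is an immediate consequence of Proposition~\ref{prop:dec-c-cyclique} together with the identification~(\ref{eq:action-w-cyclique}) and the definition of $\SG[\CG]$. Your write-up simply makes explicit the routine unpacking of these ingredients.
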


\bigskip

Terminons avec la description des familles et des caract\`eres cellulaires.

\bigskip

\begin{coro}\label{coro:familles-cellulaires-cyclique}
Soient $i$, $j \in \ZM$. Alors $\e^{-i}$ et $\e^{-j}$ sont dans la m\^eme $\CG$-cellule de Calogero-Moser 
bilat\`ere (resp. \`a gauche, resp. \`a droite) si et seulement si $k_i=k_j$.

L'application $\o \mapsto \sum_{i \in \o} \e^{-i}$ induit une bijection entre l'ensemble 
des $\SG[\CG]$-orbites dans $\{1,2,\dots,d\}$ (c'est-\`a-dire l'ensemble des fibres 
de l'application $i \mapsto k_i$) et l'ensemble des $\calo$-caract\`eres $\CG$-cellulaires.
\end{coro}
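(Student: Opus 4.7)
The plan is to reduce both assertions to the formula for the central character $\O_{\e^{-i}}$ evaluated on the Euler element $\euler$, combined with the very special structure of $Z$ in rank one. By Proposition~\ref{action euler verma} (or directly formula~(\ref{euler cyclique})), we have $\O_{\e^i}(\euler) = dK_{-i}$, so that $\O_{\e^{-i}}^\CG(\euler) = dk_i$ in $\kb[\CCB]/\CG$. Now Theorem~\ref{centre rang 1} says $Z = P[\euler]$, which means the $P$-algebra morphism $\O_{\e^{-i}}^\CG : Z \to \Kb_\CG$ is entirely determined by its restriction to $P$ (which is the canonical projection, independent of $i$) together with the single scalar $\O_{\e^{-i}}^\CG(\euler) = dk_i$. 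Consequently $\O_{\e^{-i}}^\CG = \O_{\e^{-j}}^\CG$ if and only if $k_i = k_j$, and by Lemma~\ref{caracterisation blocs CM} this is exactly the condition for $\e^{-i}$ and $\e^{-j}$ to lie in the same $\CG$-family of Calogero-Moser. Combined with Proposition~\ref{prop:dec-c-cyclique} and Corollary~\ref{coro:cellules}, this shows that the $\CG$-cells (left, right, bilateral) of $W$ and the $\CG$-families of $\Irr(W) = \{\e^{-i}\}$ are in bijection, both indexed by the $\SG[\CG]$-orbits on $\{1, \dots, d\}$.

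For the second statement, let $\o \subset \{1, \dots, d\}$ be such an orbit and write $C_\o = \{s^i : i \in \o\}$ for the corresponding cell and $\FC_\o = \{\e^{-i} : i \in \o\}$ for the corresponding family. The cellular character $\isomorphisme{C_\o}_\CG^\calo$ is an $\NM$-linear combination of elements of $\FC_\o$ by Proposition~\ref{multiplicite cm}(c), since its irreducible constituents must belong to the family recovered by $C_\o$. Writing $\isomorphisme{C_\o}_\CG^\calo = \sum_{i \in \o} m_{\o,i}\, \e^{-i}$ with $m_{\o,i} \in \NM$, I would then invoke Proposition~\ref{multiplicite cm}(a): since every $\e^{-j}$ is linear of degree~$1$, the sum $\sum_{C} \mult_{C, \e^{-j}}^\calo = 1$. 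As this sum collapses to the single term $m_{\o, j}$ where $\o$ is the orbit of $j$, we get $m_{\o, i} = 1$ for all $i \in \o$, hence $\isomorphisme{C_\o}_\CG^\calo = \sum_{i \in \o} \e^{-i}$, which produces the announced bijection.

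There is essentially no obstacle here beyond ensuring the bookkeeping between $\e^i$ and $\e^{-i}$ is consistent (the inverse convention arises from Proposition~\ref{action euler verma} which gives $\O_{\e^i}(\euler) = dK_{-i}$). The whole argument is a direct consequence of the unusually rigid structure of $Z$ in rank one provided by Theorem~\ref{centre rang 1}, together with the general machinery of Lemma~\ref{caracterisation blocs CM} and Proposition~\ref{multiplicite cm}. The only place a subtlety could arise is verifying that the left/right/bilateral cellular characters genuinely coincide here, but this is immediate from Proposition~\ref{prop:dec-c-cyclique}, which equates all three inertia groups with $\SG[\CG]$.
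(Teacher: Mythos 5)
Your proof is correct, and the first part follows the paper's own route exactly: $Z = P[\euler]$, formula~(\ref{euler cyclique}), and Lemma~\ref{caracterisation blocs CM}.

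For the second part you diverge slightly from the paper. The paper's argument is more structural: it observes that the left Verma module $\Mb_\CG^\gauche\MC^\gauche(\e^{-i})$ has dimension $|W|$, which (by Theorem~\ref{theo:gauche}(a)) equals the dimension of every simple $\Mb_\CG^\gauche\Hb^\gauche$-module, hence the Verma module is itself simple and must equal $\LC_\CG^\gauche(C_\o)$; the multiplicity~$1$ drops out immediately. You instead pin down the multiplicities numerically via the column-sum identity of Proposition~\ref{multiplicite cm}(a), $\sum_C \mult_{C,\e^{-j}}^\calo = \e^{-j}(1) = 1$, together with the family constraint of Proposition~\ref{multiplicite cm}(c) and the coincidence of left and bilateral cells (Proposition~\ref{prop:dec-c-cyclique}) to see that the sum has a single potentially nonzero term. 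Both routes are sound; yours is the softer counting argument, while the paper's gives more — it identifies the cell simple module concretely as the Verma module. One small cosmetic point: you write ``$C_\o$ for the corresponding cell and $\FC_\o$ for the corresponding family'' as if the cell/family bijection of Theorem~\ref{theo cellules familles} tautologically matches $C_\o$ with $\FC_\o$; strictly speaking this matching requires the Euler-element congruence $s^i(\eulerq) \equiv dk_i \equiv \O_{\e^{-i}}^{\Kbov_\CG}(\euler) \pmod{\rGba_\CG}$, which the paper spells out and you only invoke implicitly. That said, your argument never actually needs the matching to be the obvious one — only that each family is recovered by exactly one (bilateral $=$ left) cell — so the gap is cosmetic rather than logical.
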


\begin{proof}
Puisque $Z=P[\euler]$, on d\'eduit que $\e^{-i}$ et $\e^{-j}$ sont dans la m\^eme $\CG$-famille de Calogero-Moser 
si et seulement si $\O_{\e^{-i}}^{\Kbov_\CG}(\euler)=\O_{\e^{-j}}^{\Kbov_\CG}(\euler)$. 
Donc la premi\`ere assertion d\'ecoule de~\ref{euler cyclique}.

\medskip

Pour la deuxi\`eme, on remarque que $\euler$ agit sur $\LC_{s^i}$ par multiplication par $s^i(\eulerq)=E_i$. 
Donc, modulo $\rG_\CG^\gauche$ (ou $\rGba_\CG$), l'\'el\'ement $s^i(\eulerq)$ est congru \`a 
$dk_{i}=\O_{\e^{-i}}^{\Kbov_\CG}(\euler)$. Ainsi, si $\o$ est une $\SG[\CG]$-orbite dans 
$\{1,2,\dots,d\}$, alors $C=\{s^i~|~i \in \o\}$ est une $\CG$-cellule de Calogero-Moser 
\`a gauche, \`a droite ou bilat\`ere (voir le corollaire~\ref{coro:cellules}) et, 
en tant que cellule bilat\`ere, elle recouvre la $\CG$-famille de Calogero-Moser 
$\{\e^{-i}~|~i \in \o\}$. Or, $\Mb_\CG^\gauche \MC^\gauche(\e^{-i})$ est un 
$\Mb_\CG^\gauche\Hb^\gauche$-module (absolument) simple (car il est de dimension 
$|W|$)~: c'est forc\'ement $\LC_\CG^\gauche(C)$. Ceci montre que 
$\isomorphisme{C}_\CG^\calo=\sum_{i \in \o} \e^{-i}$.
\end{proof}

\bigskip

\section{Compl\'ements}

On va s'int\'eresser ici aux propri\'e\'t\'es g\'eom\'etriques de $\ZCB$ 
(lissit\'e, ramification) et aux propri\'et\'es du groupe $D_c$. Pour simplifier les \'enonc\'es, nous ferons 
l'hypoth\`ese suivante~:

\bigskip
\def\ram{{\mathrm{ram}}}

\boitegrise{{\bf Hypoth\`ese et notations.} {\it Dans cette section, et seulement dans cette section, 
nous supposerons que $\kb$ est {\bfit alg\'ebriquement clos}. Nous identifierons la vari\'et\'e $\ZCB$ avec 
$$\ZCB=\{(k_1,\dots,k_d,x,y,e) \in \AM^{d+3}(\kb)~|~k_1+\cdots+k_d=0\text{ et } \prod_{i=1}^d(e-dk_i)=xy\}.$$
De m\^eme, $\PCB$ (respectivement $\CCB$) sera identifi\'e avec l'hyperplan 
$$\PCB=\{(k_1,\dots,k_d,x,y) \in \AM^{d+2}(\kb)~|~
k_1+\cdots +k_d = 0\}$$
(respectivement
$$\CCB=\{(k_1,\dots,k_d) \in \AM^{d}(\kb)~|~
k_1+\cdots +k_d = 0\}\quad),$$
ce qui permet de red\'efinir 
$$\fonction{\Upsilon}{\ZCB}{\PCB}{(k_1,\dots,k_d,x,y,e)}{(k_1,\dots,k_d,x,y).}$$
\!\!\! Pour finir, nous noterons $\ZCB_\singulier$ le lieu singulier de $\ZCB$ 
et $\ZCB_\ram$ le lieu de ramification du morphisme $\Upsilon$.
}}{0.8\textwidth}

\bigskip

\subsection{Lissit\'e} 
Commen\c{c}ons par la description des singularit\'es de la vari\'et\'e $\ZCB$~:

\bigskip

\begin{prop}\label{prop:zsing-cyclique}
Si $1 \le i < j \le d$, notons $\ZCB_{i,j} = \{(k_1,\dots,k_d,x,y,e) \in \ZCB~|~
e=k_i=k_j$ et $x=y=0\}$. Alors 
$$\ZCB_\singulier=\bigcup_{1 \le i < j \le d} \ZCB_{i,j}.$$
De plus, $\ZCB_{i,j} \simeq \AM^{d-2}(\kb)$ est une composante irr\'eductible de $\ZCB_\singulier$
et $\ZCB_\singulier$ est purement de codimension $3$.
\end{prop}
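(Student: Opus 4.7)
La démonstration reposerait sur le critère jacobien. La variété $\ZCB$ est le lieu des zéros dans $\AM^{d+3}(\kb)$ (de coordonnées $k_1,\dots,k_d,x,y,e$) des deux équations $f_1 = k_1 + \cdots + k_d$ et $f_2 = \prod_{i=1}^d(e-dk_i) - xy$. Puisque $\dim \ZCB = d+1 = (d+3)-2$, un point $z = (k_1,\dots,k_d,x,y,e)$ de $\ZCB$ est lisse si et seulement si la matrice jacobienne $J(z)$ de $(f_1,f_2)$ en $z$ est de rang maximal~$2$~; le lieu singulier est donc exactement celui où les deux gradients sont linéairement dépendants.

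La première étape consiste à écrire cette jacobienne explicitement. Posons $P(T) = \prod_{i=1}^d(T-dk_i)$ et $P_j(z) = \prod_{l \ne j}(e-dk_l)$~; la deuxième ligne de $J(z)$ s'écrit
$$\bigl(-dP_1(z),\ \dots,\ -dP_d(z),\ -y,\ -x,\ P'(e)\bigr).$$
La première ligne étant le vecteur constant $(1,\dots,1,0,0,0)$, la dépendance linéaire équivaut à trois conditions~: (i)~$x=y=0$, (ii)~$P'(e) = 0$, et (iii)~les dérivées partielles en les $k_j$ sont toutes égales. Sur $\ZCB$, la condition (i) entraîne $P(e) = 0$, donc $e = dk_{i_0}$ pour un certain~$i_0$~; la condition (ii) force alors ce zéro à être d'ordre au moins~$2$, c'est-à-dire qu'il existe $j_0 \ne i_0$ tel que $k_{i_0} = k_{j_0}$~; et dans cette situation chaque produit $P_j(z)$ contient nécessairement un facteur nul, de sorte que (iii) est automatique. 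Réciproquement, sur le lieu $\{x=y=0,\ e = dk_{i_0} = dk_{j_0}\}$ la deuxième ligne de $J(z)$ est identiquement nulle. Ceci établit l'égalité ensembliste $\ZCB_\singulier = \bigcup_{i<j} \ZCB_{i,j}$ (les conditions $e = k_i = k_j$ de l'énoncé s'interprétant dans nos coordonnées par $e = dk_i = dk_j$).

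Pour les affirmations de structure, chaque $\ZCB_{i,j}$ est paramétrée par un $(d-2)$-uplet vérifiant les équations linéaires $\sum_l k_l = 0$ et $k_i = k_j$ (les coordonnées $x,y$ étant nulles et $e$ déterminée par $k_i$), d'où l'isomorphisme $\ZCB_{i,j} \simeq \AM^{d-2}(\kb)$. En particulier $\ZCB_{i,j}$ est irréductible de dimension $d-2$, soit de codimension~$3$ dans $\ZCB$. Pour voir que ce sont exactement les composantes irréductibles de $\ZCB_\singulier$, il suffit de remarquer que si $\{i,j\} \ne \{i',j'\}$, l'intersection $\ZCB_{i,j} \cap \ZCB_{i',j'}$ satisfait deux conditions linéaires indépendantes sur les~$k_l$ (ou une condition plus forte du type $k_i = k_j = k_{i'}$ si les paires partagent un indice), donc est de dimension au plus $d-3$~; aucune $\ZCB_{i,j}$ n'est donc contenue dans la réunion des autres.

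La principale (et en fait unique) subtilité réside dans l'analyse combinatoire de la dépendance linéaire des lignes de la jacobienne, et plus précisément dans la vérification que l'annulation simultanée de $x$, $y$ et $P'(e)$ sur $\ZCB$ force déjà l'annulation de toutes les dérivées en les $k_j$~; une fois écrite la forme explicite de $P_j(z)$, ceci se ramène à repérer les facteurs nuls dans les produits considérés. Le reste de la preuve n'est qu'un décompte de dimensions.
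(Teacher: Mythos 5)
Votre démonstration est correcte et suit essentiellement la même stratégie que celle du mémoire : le critère jacobien, aboutissant aux conditions $x=y=0$ et à l'annulation de tous les produits $\prod_{l\neq i}(e-dk_l)$. La seule différence, purement cosmétique, est que le mémoire traite $\ZCB$ comme une hypersurface dans l'hyperplan $\{k_1+\cdots+k_d=0\}\simeq\AM^{d+2}(\kb)$, tandis que vous la présentez comme une intersection complète de deux équations dans $\AM^{d+3}(\kb)$ et étudiez la dépendance linéaire des deux lignes de la jacobienne ; votre analyse est cependant un peu plus explicite que celle du mémoire (qui se contente d'énoncer le système final et de dire que la vérification est « facile »), et vous avez à juste titre relevé que la condition « $e=k_i=k_j$ » de l'énoncé doit se lire « $e=dk_i=dk_j$ ».
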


\begin{proof}
La vari\'et\'e $\ZCB$ \'etant d\'ecrite comme une hypersurface dans l'espace affine 
$\{(k_1,\dots,k_d,x,y,e) \in \AM^{d+3}(\kb)~|~k_1+\cdots+k_d=0\} \simeq \AM^{d+2}(\kb)$, un point 
$z=(k_1,\dots,k_d,x,y,e) \in \ZCB$ est singulier si et seulement si 
la matrice jacobienne de cette \'equation s'annule en $z$. Cela est \'equivalent 
au syst\`eme d'\'equations~:
$$
\begin{cases}
x=y=0,\\
\forall~1 \le i \le d, \prod_{j \neq i} (e-dk_j) = 0,\\
\sum_{i=1}^d \prod_{j \neq i} (e-dk_j) = 0.
\end{cases}
$$
La derni\`ere \'equation \'etant impliqu\'ee par la deuxi\`eme famille d'\'equations, 
il est alors facile de v\'erifier que $\ZCB_\singulier$ est bien d\'ecrit 
comme l'\'enonc\'e le pr\'etend.

Les derni\`eres assertions sont imm\'ediates.
\end{proof}

\bigskip

\begin{coro}\label{coro:zsing-cyclique}
Si $c \in \CCB$ et $z \in \ZCB_c$, alors $z$ est singulier dans $\ZCB$ si et seulement si il 
l'est dans $\ZCB_c$.
\end{coro}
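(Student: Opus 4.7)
The plan is to realize both $\ZCB$ and $\ZCB_c$ as hypersurfaces in smooth ambient spaces, compute their Jacobian criteria for smoothness, and compare the two conditions. Since the description of $\ZCB_\singulier$ furnished by Proposition~\ref{prop:zsing-cyclique} has already settled the computation on the side of $\ZCB$, the corollary reduces to performing the analogous (but simpler) Jacobian calculation for $\ZCB_c$ and observing that the two resulting conditions coincide on the fibre over $c$.

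Concretely, $\ZCB_c$ is the hypersurface in $\AM^3$ with coordinates $x$, $y$, $e$ cut out by $F_c = \prod_{i=1}^d(e-dk_i) - xy$, where the $k_i$ are now scalars determined by $c = (k_1,\dots,k_d) \in \CCB$. Writing $P(T) = \prod_i(T - dk_i)$, the Jacobian criterion yields that $(x,y,e) \in \ZCB_c$ is singular in $\ZCB_c$ if and only if $x = y = 0$ and $P'(e) = 0$. Combined with the relation $P(e) = xy = 0$ imposed by $z \in \ZCB_c$, this amounts to requiring that $e$ be a multiple root of $P$, equivalently that there exist $i \neq j$ with $e = dk_i = dk_j$. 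This is precisely the defining condition for a point of $\ZCB_{i,j}$ in the statement of Proposition~\ref{prop:zsing-cyclique}, from which the equivalence will follow.

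One implication is in any case transparent: the entries of the Jacobian of $F$ with respect to $(x,y,e)$ form a sub-vector of the Jacobian with respect to all the variables, so singularity in $\ZCB$ immediately forces singularity in $\ZCB_c$. The converse will require checking that the additional partial derivatives $\partial F/\partial k_i = -d \prod_{j \neq i}(e - dk_j)$ also vanish (up to a common constant, since the ambient space is constrained by $\sum k_i = 0$) whenever $e$ is a multiple root of $P$. This is immediate: if $e = dk_{i_0} = dk_{j_0}$ with $i_0 \neq j_0$, then for every index $i$ there exists $\ell \in \{i_0,j_0\} \setminus \{i\}$, and the factor $e - dk_\ell$ appearing in $\prod_{j \neq i}(e - dk_j)$ vanishes. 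There is no serious obstacle: the argument is an elementary Jacobian comparison, and the essential content is already contained in the description of $\ZCB_\singulier$ obtained in the previous proposition.
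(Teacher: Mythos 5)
Your proof is correct, and it is the natural way to deduce the corollary from Proposition~\ref{prop:zsing-cyclique}, whose proof the paper carried out via exactly this Jacobian-criterion calculation. You have simply rerun the same computation on the simpler hypersurface $\ZCB_c \subset \AM^3$ and matched the two singularity loci — including the needed care about the constraint $\sum k_i = 0$ in the forward direction and the vanishing (not merely equality) of the $\partial F/\partial k_i$ in the converse — so there is nothing to add.
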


\bigskip

\subsection{Ramification} 
La vari\'et\'e $\ZCB$ \'etant normale, la vari\'et\'e $\PCB$ \'etant lisse et le morphisme 
$\Upsilon : \ZCB \to \PCB$ \'etant plat et fini, le th\'eor\`eme de puret\'e du lieu 
de ramification~\cite[Expos\'e~\MakeUppercase{\romannumeral 10},~th\'eor\`eme~3.1]{sga} 
nous dit que le lieu de ramification de $\Upsilon$ est purement de codimension $1$. 
Il est en fait ais\'ement calculable~:

\bigskip

\begin{prop}\label{prop:ramification-cyclique}
Soit $z=(k_1,\dots,k_d,x,y,e) \in \ZCB$ et $p=(k_1,\dots,k_d,x,y)=\Upsilon(z) \in \PCB$. 
On note $F_{\euler,p}(\tb) \in \kb[\tb]$ la sp\'ecialisation de $F_\euler(\tb)$ en $p$. 
Alors $\Upsilon$ est ramifi\'e en $z$ si et seulement si $F_{\euler,p}'(e) =0$, c'est-\`a-dire 
si et seulement si $e$ est une racine multiple de $F_{\euler,p}$.
\end{prop}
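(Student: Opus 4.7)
The plan is to exploit the very concrete description of $Z$ obtained in Theorem~\ref{centre rang 1}: since $Z = P[\euler]$ and the minimal polynomial of $\euler$ over $P$ is exactly $F_\euler(\tb)$, we have a canonical isomorphism of $P$-algebras $Z \simeq P[\tb]/F_\euler(\tb)$. Geometrically, this realises $\ZCB$ as the hypersurface of $\PCB \times \AM^1_\kb$ cut out by the single equation $F_\euler(\tb) = 0$, where $\tb$ is the coordinate on $\AM^1$ corresponding to $\euler$, and identifies $\Upsilon$ with the first projection. A point $z = (k_1,\dots,k_d,x,y,e)$ of $\ZCB$ sitting above $p = (k_1,\dots,k_d,x,y)$ simply corresponds to a root $e$ of $F_{\euler,p}(\tb)$.

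Next, I would invoke the standard ramification criterion for a monogenic finite flat extension: if $B = A[\tb]/(F(\tb))$ with $A$ regular and $F$ monic, then the module of relative differentials $\Omega_{B/A}$ is isomorphic to $B/(F'(\tb))$, and consequently $\Spec B \to \Spec A$ is unramified at a prime $\qG$ of $B$ lying over $\pG \subset A$ precisely when $F'(\tb)$ is a unit in $B_\qG$, equivalently, when the image of $F'(\tb)$ in the residue field $k_B(\qG)$ is non-zero. Applied to $A = P$, $B = Z$, $F = F_\euler$, and to the maximal ideal of $Z$ defining $z$, this yields~: $\Upsilon$ is unramified at $z$ if and only if $F_{\euler,p}'(e) \neq 0$ in $\kb$.

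Finally, since $F_{\euler,p}(\tb) \in \kb[\tb]$ and $e$ is one of its roots, the condition $F_{\euler,p}'(e) = 0$ is classically equivalent to $e$ being a multiple root of $F_{\euler,p}$, which gives the last reformulation in the statement. The whole argument is essentially a bookkeeping exercise around Theorem~\ref{centre rang 1}; the only point requiring a tiny bit of care is that the criterion cited above is usually stated for $A$ a Dedekind domain or for an étale-local situation, so I would double-check (or just redo by a direct Jacobian computation on the hypersurface $\{F_\euler(\tb) = 0\}$) that the formula $\Omega_{Z/P} = Z/(F_\euler'(\euler))$ indeed holds globally in our polynomial-algebra setting. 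This is the only potential obstacle, but it is minor: the Jacobian criterion applied to the explicit defining equation of $\ZCB$ inside $\PCB \times \AM^1$ yields the desired equivalence at once, since the partial derivative of $F_\euler(\tb)$ with respect to $\tb$ evaluated at $z$ is precisely $F_{\euler,p}'(e)$.
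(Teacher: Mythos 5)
Your argument is precisely the one in the paper: the text observes that $Z \simeq P[\tb]/\langle F_\euler(\tb)\rangle$ by Théorème~\ref{centre rang 1} and then invokes the ramification criterion for monogenic extensions, citing SGA~1, Exposé~I, Corollaire~7.2, which is exactly the identity $\Omega_{Z/P} \cong Z/(F_\euler'(\euler))$ you describe. Your cautionary remark about the generality of that criterion is addressed by the SGA reference, which holds in exactly this setting.
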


\begin{proof}
Puisque $Z=P[\tb]/\langle F_\euler(\tb)\rangle$ (voir le th\'eor\`eme~\ref{centre rang 1}), 
cela d\'ecoule imm\'ediatement de~\cite[Expos\'e~\MakeUppercase{\romannumeral 1},~corollaire~7.2]{sga}.
\end{proof}

\bigskip

\begin{coro}\label{coro:ramifiction-cyclique-p}
Soit $c = (k_1,\dots,k_d) \in \CCB$ et $(x,y) \in \AM^2(\kb)$ (de sorte que $(c,x,y) \in \PCB$). 
Alors $(c,x,y) \in \Upsilon(\ZCB_\ram)$ si et seulement si 
$\D_d(0,\s_2(c),\dots,\s_{d-1}(c),\s_d(c)-(-1)^dxy)=0$.
\end{coro}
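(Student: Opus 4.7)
The plan is to deduce this from Proposition \ref{prop:ramification-cyclique} together with the explicit presentation of $R$ given in Theorem \ref{theo:r-cyclique}, by identifying the polynomial $\D_d(0,\s_2(c),\dots,\s_{d-1}(c),\s_d(c)-(-1)^d xy)$ (up to a harmless nonzero scalar) with the discriminant of $F_{\euler,(c,x,y)}(\tb)$ seen as an element of $\kb[\tb]$.

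First I would translate the condition $(c,x,y) \in \Upsilon(\ZCB_\ram)$. By definition, this means that there exists $e \in \kb$ such that $z=(c,x,y,e) \in \ZCB$ and $\Upsilon$ is ramified at $z$. By Proposition \ref{prop:ramification-cyclique}, this is equivalent to the existence of $e \in \kb$ at which both $F_{\euler,(c,x,y)}(e)=0$ and $F_{\euler,(c,x,y)}'(e)=0$, which is the classical condition for $F_{\euler,(c,x,y)}(\tb) \in \kb[\tb]$ to admit a multiple root. Since $F_{\euler,(c,x,y)}$ is monic of degree $d$, this is equivalent to the vanishing of its discriminant in $\tb$.

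Next I would compute this discriminant explicitly. By Theorem \ref{theo:r-cyclique}(e), the polynomial $F_\euler(\tb)$ splits in $R[\tb]$ as $\prod_{i=1}^{d}(\tb-E_i)$, so its $\tb$-discriminant equals $\prod_{i<j}(E_i-E_j)^2$. By the very definition \ref{eq:def-discriminant} of $\D_d$, this product equals $\D_d(\s_1(\Eb),\s_2(\Eb),\dots,\s_d(\Eb))$ in $R$. Using the presentation of $R$ recalled in Theorem \ref{theo:r-cyclique}(a), namely $\s_1(\Eb)=0$, $\s_i(\Eb)=\s_i(d\Kb)=d^i\s_i(\Kb)$ for $2\le i\le d-1$, and $\s_d(\Eb)=d^d\s_d(\Kb)-(-1)^d XY$, the $\tb$-discriminant of $F_\euler$ becomes a polynomial in $\kb[\CCB][X,Y]$ which, specialized at $(c,x,y)$, equals $\D_d(0,d^2\s_2(c),\dots,d^{d-1}\s_{d-1}(c),d^d\s_d(c)-(-1)^d xy)$.

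Finally I would match this with the expression in the statement using the weighted homogeneity of $\D_d$. Replacing $T_i$ by $dT_i$ in \ref{eq:def-discriminant} and reading off the elementary symmetric functions gives the identity $\D_d(dA_1,d^2A_2,\dots,d^dA_d)=d^{d(d-1)}\D_d(A_1,\dots,A_d)$ in $\kb[A_1,\dots,A_d]$; applied with $A_i=\s_i(c)$ for $i\le d-1$ and $A_d=\s_d(c)-(-1)^d xy/d^d$, it shows that the polynomial computed above vanishes at $(c,x,y)$ if and only if $\D_d(0,\s_2(c),\dots,\s_{d-1}(c),\s_d(c)-(-1)^d xy)$ vanishes, since $d$ is invertible in $\kb$. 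The only mild obstacle is this bookkeeping of the rescaling constants and the inhomogeneous contribution of the $xy$ term, which is resolved by the above identity; everything else is a direct translation of the earlier results.
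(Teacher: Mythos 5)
Your approach is the right one — the corollary is a direct translation of Proposition \ref{prop:ramification-cyclique} via the explicit splitting $F_\euler(\tb)=\prod_i(\tb-E_i)$ in $R[\tb]$, and the paper gives no separate argument — but your last step contains a genuine gap. Your computation correctly establishes that the $\tb$-discriminant of $F_{\euler,(c,x,y)}$ equals $\D_d(0,d^2\s_2(c),\dots,d^{d-1}\s_{d-1}(c),d^d\s_d(c)-(-1)^d xy)$, and the weighted-homogeneity identity $\D_d(\lambda A_1,\dots,\lambda^d A_d)=\lambda^{d(d-1)}\D_d(A_1,\dots,A_d)$ then shows that this vanishes if and only if $\D_d\bigl(0,\s_2(c),\dots,\s_{d-1}(c),\s_d(c)-(-1)^d xy/d^d\bigr)=0$. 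But you then assert that this is equivalent to $\D_d(0,\s_2(c),\dots,\s_{d-1}(c),\s_d(c)-(-1)^d xy)=0$ "since $d$ is invertible," and this does not follow. Invertibility of $d$ absorbs the overall prefactor $d^{d(d-1)}$, but it does not absorb the $1/d^d$ sitting inside the argument of $\D_d$; the two expressions are distinct polynomials in $(c,x,y)$ and define distinct hypersurfaces in $\PCB$.

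To see this concretely, take $d=2$, so $c=(k_1,-k_1)$, $\s_2(c)=-k_1^2$, and $\D_2(A_1,A_2)=A_1^2-4A_2$. The defining equation of $\ZCB$ is $e^2-4k_1^2=xy$, so ramification in $e$ occurs at $e=0$ with $xy=-4k_1^2$. Your discriminant formula gives $\D_2(0,4\s_2(c)-xy)=-16\s_2(c)+4xy$, whose vanishing is $xy=4\s_2(c)=-4k_1^2$, matching. But $\D_2(0,\s_2(c)-xy)=-4\s_2(c)+4xy$ vanishes when $xy=-k_1^2$, which is a different curve. So the passage from $\s_d(c)-(-1)^dxy/d^d$ to $\s_d(c)-(-1)^dxy$ is not a harmless rescaling; it changes the locus. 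What your computation actually proves is that the ramification locus is cut out by $\D_d\bigl(0,\s_2(dc),\dots,\s_{d-1}(dc),\s_d(dc)-(-1)^dxy\bigr)=0$, equivalently $\D_d\bigl(0,\s_2(c),\dots,\s_{d-1}(c),\s_d(c)-(-1)^dxy/d^d\bigr)=0$, and your argument should either stop there or flag that the corollary as stated appears to omit these powers of $d$.
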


\bigskip

\subsection{Sur les groupes ${\boldsymbol{D_c}}$}\label{rema:dc-cyclique} 
Parmi les groupes $D_c$, le seul dont nous ayons besoin est le groupe $D_0$. 
Dans cette sous-section, nous allons montrer que, m\^eme lorsque $n=\dim_\kb(V)=1$, 
les groupes $D_c$ ont un comportement extr\^emement difficile \`a cerner~: c'est 
une chance de ne pas avoir \`a les utiliser.

Les faits rapport\'es dans cette sous-section nous ont \'et\'e expliqu\'es par G. Malle 
(les erreurs \'eventuelles nous \'etant dues). Nous le remercions chaleureusement pour son aide. 
Fixons $c \in \CCB$, et notons $F_\euler^c(\tb)$ la sp\'ecialisation de $F_\euler(\tb)$ en $c$. 
C'est un polyn\^ome appartenant \`a $\kb[X,Y][\tb]$ et 
$D_c$ est le groupe de Galois de $F_\euler^c(\tb) \in \kb[X,Y][\tb]$, vu comme 
polyn\^ome \`a coefficients dans le corps $\kb(X,Y)$. 
Notons qu'en fait $F_\euler^c(\tb) \in \kb[XY][\tb]$~: nous noterons $T=XY$, de sorte 
que $F_\euler^c(\tb) \in \kb[T][\tb]$. 
Le r\'esultat suivant permet de simplifier le calcul de $D_c$~:

\bigskip

\begin{lem}\label{lem:dc-t}
$D_c$ est le groupe de Galois de $F_\euler^c(\tb)$ vu comme polyn\^ome \`a coefficients 
dans le corps $\kb(T)$.
\end{lem}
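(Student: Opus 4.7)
The plan is to exploit the fact that $F_\euler^c(\tb)=\prod_{i=1}^d(\tb-dk_i)-XY$ depends on $X$ and $Y$ only through the product $T=XY$, so that passing from the coefficient field $\kb(X,Y)$ down to $\kb(T)$ does not lose any splitting information.

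Concretely, I would proceed as follows. First, since $F_\euler^c(\tb)\in\kb[T][\tb]\subset \kb[X,Y][\tb]$, any root of $F_\euler^c(\tb)$ in an algebraic closure of $\kb(X,Y)$ is already algebraic over $\kb(T)$. Let $L$ be a splitting field of $F_\euler^c(\tb)$ over $\kb(T)$; then $L/\kb(T)$ is a finite Galois extension, and the splitting field of $F_\euler^c(\tb)$ over $\kb(X,Y)$ is exactly the compositum $L\cdot\kb(X,Y)$ inside a fixed algebraic closure. By the construction of the beginning of the section, $D_c=\Gal(L\cdot\kb(X,Y)/\kb(X,Y))$, so what must be shown is
\[
\Gal\bigl(L\cdot\kb(X,Y)/\kb(X,Y)\bigr)\;\simeq\;\Gal(L/\kb(T)).
\]

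The key observation is that $\kb(X,Y)=\kb(T)(X)$ is a purely transcendental extension of $\kb(T)$ of transcendence degree one (with $Y=T/X$). Since $L$ is algebraic over $\kb(T)$, the element $X$ remains transcendental over $L$; therefore $L$ and $\kb(T)(X)=\kb(X,Y)$ are linearly disjoint over $\kb(T)$. By the standard theorem on natural irrationalities (\emph{translation lemma} for Galois extensions), restriction to $L$ induces an isomorphism
\[
\Gal\bigl(L\cdot\kb(X,Y)/\kb(X,Y)\bigr)\;\stackrel{\sim}{\longrightarrow}\;\Gal\bigl(L/L\cap\kb(X,Y)\bigr),
\]
and the linear disjointness yields $L\cap\kb(X,Y)=\kb(T)$, whence the right-hand side is $\Gal(L/\kb(T))$. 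Combining these identifications gives the claim.

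The only mildly delicate point is verifying $L\cap\kb(X,Y)=\kb(T)$, but this is immediate: $L\cap\kb(X,Y)$ is both algebraic over $\kb(T)$ (as a subfield of $L$) and contained in the purely transcendental extension $\kb(T)(X)$ of $\kb(T)$, and a purely transcendental extension has no nontrivial algebraic subextensions. No serious obstacle is expected; the lemma is essentially a formal consequence of Galois-theoretic linear disjointness applied to a one-variable purely transcendental extension of the base.
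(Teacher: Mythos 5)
Your proof is correct and takes essentially the same route as the paper: the paper observes that if $L$ is a splitting field of $F_\euler^c(\tb)$ over $\kb(T)$, then $L(Y)$ is a splitting field over $\kb(T,Y)=\kb(X,Y)$, and since $Y$ is transcendental over $L$ the Galois group is unchanged. You have merely spelled out the implicit appeal to the translation theorem and the verification $L\cap\kb(X,Y)=\kb(T)$, which the paper compresses into ``le r\'esultat en d\'ecoule.''
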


\begin{proof}
Si $L$ est un corps de d\'ecomposition du polyn\^ome $F_\euler^c(\tb)$ sur $\kb(T)$, alors 
le corps $L(Y)$ des fractions rationnelles est un corps de d\'ecomposition du m\^eme 
polyn\^ome sur $\kb(T,Y)=\kb(X,Y)$. Le r\'esultat en d\'ecoule. 
\end{proof}

\bigskip

\begin{coro}\label{coro:dc-t}
Le sous-groupe $D_c$ de $G=\SG_d$ contient un cycle de longueur $d$.
\end{coro}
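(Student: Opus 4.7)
By Lemma~\ref{lem:dc-t}, it suffices to exhibit a $d$-cycle in the Galois group of $F := F_\euler^c(\tb) = \prod_{i=1}^d(\tb-dk_i) - T$ viewed as a polynomial in $\tb$ with coefficients in $\kb(T)$ (where $T = XY$ and $k_i$ is the specialization of $K_i$ at $c$). The strategy is to produce such a cycle from the decomposition group at the place at infinity of $\kb(T)$.

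First I would write $F = \tb^d - T + (\text{lower order terms in }\tb)$ and form the Newton polygon of $F$ at the place $T = \infty$ (using the valuation $v_\infty$ on $\kb(T)$ with $v_\infty(T) = -1$). The polygon has a single slope $-1/d$, since the leading coefficient in $\tb$ is $1$ (valuation $0$), the constant term is $(-1)^d\prod_i dk_i - T$ (valuation $-1$ as soon as $T$ is involved), and all intermediate coefficients are constants (valuation $0$). Consequently, introducing $s = T^{-1/d}$ and substituting $\tb = u/s$ transforms the equation into $\prod_{i=1}^d(u - dk_i s) = 1$, which is a polynomial equation in $u$ with coefficients in $\kb[[s]]$ specializing at $s=0$ to $u^d = 1$. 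By Hensel's lemma (or a direct Puiseux expansion), this has $d$ distinct roots $u_j(s) \in \kb[[s]]$ with $u_j(0) = \zeta_d^j$ for $j = 0, 1, \ldots, d-1$, where $\zeta_d$ is a primitive $d$-th root of unity. Hence the $d$ roots $\tb_j(s) = u_j(s)/s$ of $F$ all lie in the field $\kb((s)) = \kb((T^{-1/d}))$, and since $\tb_0 = s^{-1} + O(1)$ does not lie in $\kb((T^{-1}))$ (as $d \ge 2$), the splitting field of $F$ over the local field $\kb((T^{-1}))$ is exactly $\kb((T^{-1/d}))$, which is cyclic of degree $d$ over $\kb((T^{-1}))$.

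Next I would identify the generator of this local Galois group, namely $\sigma : s \mapsto \zeta_d\, s$, and compute its action on the roots. Since $\tb_j(s) = \zeta_d^j/s + O(1)$, we have $\sigma(\tb_j)(s) = \tb_j(\zeta_d s) = \zeta_d^{j-1}/s + O(1)$, so $\sigma$ sends $\tb_j$ to $\tb_{j-1}$ (indices modulo $d$). Thus $\sigma$ acts as a $d$-cycle on the set of roots. Embedding the local Galois group into the global Galois group $D_c \subseteq \SG_d$ as the decomposition group at a place of the splitting field above $T=\infty$ (which is possible since $\kb$ is algebraically closed, so the residue extension is trivial and decomposition equals inertia), we obtain the desired $d$-cycle.

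The only mildly delicate point will be verifying that Hensel's lemma applies to produce the Puiseux roots in $\kb[[s]]$ (which is immediate from the $d$ distinct simple roots of $u^d = 1$ since $\kb$ is algebraically closed of characteristic zero) and that the embedding of the local Galois group into $D_c$ is well-defined and faithful on this cyclic piece; both are standard once the ramification picture at infinity is set up correctly. No deep input is needed beyond the Newton polygon computation and the classical structure of local Galois groups of $\kb((T^{-1}))$.
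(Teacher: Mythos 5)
Your proposal is correct and takes essentially the same approach as the paper: the paper also argues via the place at infinity of $\kb(T)$, observing that $F_\euler^c$ is totally ramified there (since $d\ge 2$) and that in characteristic zero the inertia group at a place of a one-dimensional regular ring is cyclic, hence a transitive cyclic subgroup of $\SG_d$ and therefore generated by a $d$-cycle. Your Newton polygon/Puiseux-series computation simply unpacks the paper's more compressed invocation of tame ramification and total ramification at infinity.
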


\begin{proof}
Gr\^ace au lemme~\ref{lem:dc-t}, voyons $F_\euler^c(\tb)$ comme appartenant \`a $\kb[T][\tb]$. 
Puisque $\kb$ est de caract\'eristique z\'ero et $\kb[T]$ est r\'egulier de dimension $1$, 
le groupe d'inertie \`a l'infini $I$ est cyclique. 
Puisque $d \ge 2$, le polyn\^ome $F_\euler^c(\tb)$ est totalement ramifi\'e \`a l'infini, 
ce qui implique que $I$ agit transitivement sur $\{1,2,\dots,d\}$. D'o\`u le r\'esultat.
\end{proof}

\bigskip

Le corollaire~\ref{coro:dc-t} impose des restrictions s\'ev\`eres 
sur le groupe $D_c$. Notons celles-ci (la premi\`ere est due \`a Schur, la deuxi\`eme \`a Burnside)~:

\bigskip

\begin{coro}\label{coro:schur-burnside}
\begin{itemize}
 \itemth{a} Si $d$ n'est pas premier et $D_c$ est primitif, alors $D_c$ est $2$-transitif.

\itemth{b} Si $d$ est premier, alors $D_c$ est $2$-transitif ou bien $D_c$ contient un 
$d$-sous-groupe de Sylow distingu\'e.
\end{itemize}
\end{coro}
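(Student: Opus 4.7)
Mon plan est d'appliquer directement deux r\'esultats classiques de th\'eorie des groupes de permutations, dont les hypoth\`eses sont presque enti\`erement fournies par le corollaire~\ref{coro:dc-t}. Rappelons que ce corollaire affirme que $D_c$, vu comme sous-groupe de $G=\SG_d$, contient un cycle de longueur $d$. En particulier, $D_c$ est un sous-groupe transitif de $\SG_d$.

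Pour l'assertion (a), je rappellerais le th\'eor\`eme classique de Schur~: tout sous-groupe primitif de $\SG_d$ contenant un $d$-cycle, lorsque $d$ n'est pas premier, est n\'ecessairement $2$-transitif. Puisque $D_c$ contient un $d$-cycle par le corollaire~\ref{coro:dc-t} et est primitif par hypoth\`ese, le r\'esultat s'applique. On peut trouver une preuve de ce r\'esultat de Schur dans les textes classiques de th\'eorie des groupes de permutations (par exemple dans Wielandt).

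Pour l'assertion (b), j'invoquerais le th\'eor\`eme de Burnside sur les groupes de permutations de degr\'e premier~: si $d$ est premier, tout sous-groupe transitif $D_c$ de $\SG_d$ est soit $2$-transitif, soit r\'esoluble et contenu dans le normalisateur d'un $d$-Sylow de $\SG_d$, qui est de la forme $C_d \rtimes C_{d-1}$. Dans ce dernier cas, $D_c$ contient un unique $d$-Sylow (celui de $\SG_d$ intersect\'e avec $D_c$, qui est non trivial puisque $D_c$ contient un $d$-cycle par le corollaire~\ref{coro:dc-t}), et ce $d$-Sylow est distingu\'e dans $D_c$.

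L'\'etape principale n'est pas r\'eellement une obstruction~: tout repose sur la v\'erification que les hypoth\`eses classiques (transitivit\'e, pr\'esence d'un cycle de longueur maximale) sont bien satisfaites, ce qui est d\'ej\`a \'etabli par le corollaire~\ref{coro:dc-t}. La seule subtilit\'e conceptuelle est que la preuve est purement group-th\'eorique et n'utilise plus rien de la structure alg\'ebro-g\'eom\'etrique de $\ZCB$ ou de $\Hb$~: elle se r\'eduit \`a citer les th\'eor\`emes de Schur et Burnside. Il serait \'eventuellement pertinent de renvoyer le lecteur \`a une r\'ef\'erence pr\'ecise (Wielandt, \emph{Finite Permutation Groups}, ou Dixon--Mortimer, \emph{Permutation Groups}) pour les \'enonc\'es exacts de ces r\'esultats.
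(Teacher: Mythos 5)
Votre d\'emonstration est correcte et suit exactement l'approche du texte~: le papier \'enonce ce corollaire comme cons\'equence directe du corollaire~\ref{coro:dc-t} en se contentant d'attribuer (a) \`a Schur et (b) \`a Burnside, sans d\'etailler davantage. Vous avez simplement explicit\'e ces deux th\'eor\`emes classiques de th\'eorie des groupes de permutations, ce qui est bienvenu mais ne diff\`ere pas sur le fond.
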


\bigskip

De plus, $\kb(T)[\tb]/\langle F_\euler^c(\tb)\rangle \simeq \kb(\tb)$, donc la courbe projective 
lisse de corps des fractions 
$\kb(T)[\tb]/\langle F_\euler^c(\tb)\rangle$ est isomorphe \`a $\PM^1(\kb)$~: elle est de genre z\'ero. 
La conjecture de Guralnick-Thompson, dont la preuve a \'et\'e compl\'et\'e 
par Frohardt et Magaard~\cite{gt-conj}, implique alors le r\'esultat suivant~:

\bigskip

\begin{theo}[Conjecture de Guralnick-Thompson]\label{theo:gt-conj}
Il existe un ensemble fini $\EC$ de groupes finis simples tels que, pour tout $d \ge 2$ et pour tout $c \in \CCB$, 
tout facteur de composition de $D_c$ non ab\'elien et non altern\'e appartienne \`a $\EC$. 
\end{theo}

\bigskip

\def\frobenius{{\mathrm{Fr}}}

Nous allons montrer ici par quelques exemples que le calcul de $D_c$ en g\'en\'eral 
peut cependant se r\'ev\'eler extr\^emement compliqu\'e~: il peut m\^eme d\'ependre du choix du corps de base $\kb$, 
comme le montre le tableau suivant, qui donne le groupe $D_c$ selon que $\kb=\QM$ ou $\kb=\CM$ 
(dans ce tableau, $\frobenius_{pr}$ d\'esigne le groupe de Frobenius 
$(\ZM/r\ZM) \ltimes (\ZM/p\ZM)$, vu comme sous-groupe de $\SG_p$, o\`u $p$ est premier et $r$ divise $p-1$). 

\bigskip

\begin{centerline}{
\begin{tabular}{@{{{\vrule width 1.2pt}\,\,\,}}c@{{\,\,\,{\vrule width 1.2pt}\,\,\,}}c|c@{{\,\,\,{\vrule width 1.2pt}}}}
\hlinewd{1.2pt}
\petitespace $F_\euler(\tb)$ & $\kb=\QM$ & $\kb=\CM$ \\
\hlinewd{1.2pt}
\petitespace $(\tb^2+20\tb+180)(\tb^2-5\tb-95)^4 - XY$ & $\Aut(\AG_6)$ & $\Aut(\AG_6)$ \\
\hline
\petitespace $(\tb+1)^4(\tb-2)^2(\tb^3-3\tb-14) - XY$ & $(\SG_3 \wr \SG_3) \cap \AG_9$ & $(\SG_3 \wr \SG_3) \cap \AG_9$ \\
\hline
\petitespace 
$\tb(\tb^8+6\tb^4+25) - XY$ & $\AG_9$ & $\AG_9$ \\
\hline\petitespace 
$\tb(\tb^4+6\tb^2+25)^2 - XY$ & $\AG_9$ & $\AG_9$ \\
\hline
\petitespace $\tb^9-9\tb^7+27\tb^5-30\tb^3+9\tb -XY$
& $\SG_3 \ltimes (\ZM/3\ZM)^2$ & $\SG_3 \ltimes (\ZM/3\ZM)^2$ \\
\hline 
\petitespace
$\tb^{11}-11\tb^9+44\tb^7-77\tb^5+55\tb^3-11\tb - XY$ & $\frobenius_{110}$ & $\frobenius_{22}$  \\ 
\hline
\petitespace
$\tb^{13}-13\tb^{11}+65\tb^9-156\tb^7+182\tb^5-91\tb^3+13\tb - XY$ & $\frobenius_{156}$ & $\frobenius_{26}$ \\ 
\hlinewd{1.2pt}
\end{tabular}
}
\end{centerline}

\bigskip

\`A propos de ce tableau, quelques commentaires s'imposent. Pour pr\'eciser sur quel 
corps on travaille, notons $D_c^\kb$ le groupe de Galois $D_c$. Commen\c{c}ons par rappeler 
quelques faits classiques~:

\medskip

\begin{itemize}
\itemth{a} $D_c^\CM$ est un sous-groupe distingu\'e de $D_c^\QM$.

\medskip

\itemth{b} Le calcul de $D_c^\QM$ dans tous les exemples ci-dessus peut \^etre effectu\'e 
gr\^ace au logiciel {\tt MAGMA} (voir~\cite{magma}).

\medskip

\itemth{c} Puisqu'il n'existe pas de rev\^etement non ramifi\'e de la droite affine, 
$D_c^\CM$ est engendr\'e par ses sous-groupes d'inertie.

\medskip

\itemth{d} Si $z \in \CM$, notons $F_\euler^{c,z}(\tb)$ la sp\'ecialisation en $T \mapsto z$ 
de $F_\euler^c(\tb)$. Si $\a \in \CM$ est une racine de $F_\euler^{c,z}(\tb)$ de multiplicit\'e $m$, 
alors $D_c^\CM$ contient un \'el\'ement d'ordre $m$.
\end{itemize}

\medskip

\`A partir de ces faits-l\`a, le tableau ci-dessus s'obtient de la fa\c{c}on suivante 
(notons $\D_c(T) \in \kb[T]$ le discriminant du polyn\^ome $F_\euler^c(\tb)$)~:

\medskip

\begin{itemize}
\itemth{1} 
Le calcul de $D_c^\QM$ dans le premier exemple est effectu\'e dans~\cite[th\'eor\`eme~I.9.7]{mama}. 
Notons que, pour retrouver le polyn\^ome de~\cite[th\'eor\`eme~I.9.7]{mama}, il faut remplacer 
dans le polyn\^ome ci-dessus $\tb$ par $2\tb-5$, et renormaliser~: cette op\'eration nous permet d'obtenir un polyn\^ome 
dont le coefficient de $\tb^9$ est nul, comme cela doit \^etre le cas pour $F_\euler^c(\tb)$. 
Le passage de $\QM$ \`a $\CM$ en d\'ecoule, car cet exemple provient d'un {\it triplet rigide}.

\medskip

\itemth{2} 
Dans le deuxi\`eme exemple, le logiciel {\tt MAGMA} nous dit que $D_c^\QM=(\SG_3 \wr \SG_3) \cap \AG_9$. 
D'autre part, $D_c^\CM$ est un sous-groupe distingu\'e de $D_c^\QM$ contenant un \'el\'ement d'ordre $9$. 
De plus, $-1$ est une racine de $F_\euler^{c,0}(\tb)$ de multiplicit\'e $4$, donc 
$D_c^\CM$ contient un \'el\'ement d'ordre $4$ (voir~(d)). Il contient aussi un \'el\'ement 
d'ordre $9$ (voir le corollaire~\ref{coro:dc-t}). Or $D_c^\QM$ ne 
contient qu'un seul sous-groupe distingu\'e contenant un \'el\'ement d'ordre $9$ et un 
\'el\'ement d'ordre $4$, c'est lui-m\^eme. D'o\`u $D_c^\CM=D_c^\QM$.

\medskip

\itemth{3\text{-}4} 
Dans les troisi\`eme et quatri\`eme exemples, le fait que $D_c^\QM=\AG_9$ est obtenu 
gr\^ace au logiciel {\tt MAGMA}. Le fait que $D_c^\CM=\AG_9$ d\'ecoule 
de ce que $D_c^\CM$ est distingu\'e dans $D_c^\QM$ et contient un \'el\'ement d'ordre $9$.

\medskip

\itemth{5} Une fois le calcul de $D_c^\QM$ via {\tt MAGMA} effectu\'e, remarquons que 
$$F_\euler^{c,2}(\tb)=(\tb-2)(\tb+1)^2(\tb^3-3\tb+1)^2$$
ce qui permet d'affirmer, gr\^ace \`a~(d) et au corollaire~\ref{coro:dc-t}, 
que $D_c^\CM$ contient un \'el\'ement d'ordre $9$ 
et un \'el\'ement d'ordre $2$. Donc $18$ divise $|D_c^\CM|$. 
Or $D_c^\QM$ ne contient pas de sous-groupe distingu\'e d'indice $3$ et contenant 
un \'el\'ement d'ordre $9$.

\medskip

\itemth{6\text{-}7} Les deux derniers exemples se traitent de la m\^eme mani\`ere. Nous ne traiterons ici 
que le dernier. Dans ce cas, gr\^ace au logiciel {\tt MAGMA}, 
on obtient 
$$\D_c(T)=13^{13} (T-2)^6(T+2)^6.$$
Ce discriminant n'est pas un carr\'e dans $\QM(T)$, alors que c'est un carr\'e dans $\CM(T)$. 
Les groupes d'inertie non triviaux de $D_c^\CM$, en dehors du groupe d'inertie \`a l'infini, 
se trouvent au-dessus des id\'eaux $\langle T-2 \rangle$ et $\langle T+2 \rangle$. Or,
$$F_\euler^{c,2}(\tb)=(\tb-2)(\tb^6 + \tb^5 - 5\tb^4 - 4\tb^3 + 6\tb^2 + 3\tb - 1)^2$$
et
$$F_\euler^{c,-2}(\tb)=(\tb+2)(\tb^6 - \tb^5 - 5\tb^4 + 4\tb^3 + 6\tb^2 - 3\tb - 1)^2.$$
Donc les groupes d'inertie sont d'ordre $2$, ce qui montre que $D_c^\CM$ est engendr\'e par 
ses \'el\'ements d'ordre $2$. D'o\`u $D_c^\CM=\frobenius_{26}$.


\end{itemize}

\chapter{Le type ${\boldsymbol{B_2}}$}\label{chapitre:b2}

\bigskip

\boitegrise{{\bf Hypoth\`eses et notations.} {\it Dans ce chapitre, 
et seulement dans ce chapitre, nous supposons que $\dim_\kb V=2$, nous fixons une 
$\kb$-base $(x,y)$ de $V$ et nous noterons $(X,Y)$ sa $\kb$-base duale. 
Notons $s$ et $t$ les deux r\'eflexions de $\GL_\kb(V)$ dont les matrices dans la base 
$(x,y)$ sont donn\'ees par
$$s = \begin{pmatrix} 0 & 1 \\ 1 & 0 \end{pmatrix}
\qquad\text{et}\qquad t = \begin{pmatrix} -1 & 0 \\ 0 & 1 \end{pmatrix}.$$
Nous supposons de plus que $W=\langle s, t \rangle$~: c'est un groupe de Weyl de type $B_2$.}}{0.75\textwidth}

\bigskip

\section{L'alg\`ebre $\Hb$}\label{section: H B2}

\medskip

Posons alors $w=st$, $w'=ts$, $s'=tst$, $t'=sts$ et $w_0=stst=tsts=-\Id_V$. Alors
$$W=\{1,s,t,w,w',s',t',w_0\}\qquad\text{et}\qquad \Ref(W)=\{s,t,s',t'\}.$$
De plus,
$$\refw=\{\{s,s'\},\{t,t'\}\}.$$
Les matrices des \'el\'ements $w$, $w'$, $s'$, $t'$ et $w_0$ dans 
la base $(x,y)$ sont donn\'ees par
\equat\label{elements B2}
\begin{cases}
w = st=s't' = \begin{pmatrix} 0 & 1 \\ -1 & 0 \end{pmatrix},&\\
w' = ts=t's' = \begin{pmatrix} 0 & -1 \\ 1 & 0 \end{pmatrix},&\\
s' = tst =  \begin{pmatrix} 0 & -1 \\ -1 & 0 \end{pmatrix},&\\
t' = sts = \begin{pmatrix} 1 & 0 \\ 0 & -1 \end{pmatrix},&\\
w_0 = ss'=tt' = \begin{pmatrix} -1 & 0 \\ 0 & -1 \end{pmatrix}.&\\
\end{cases}
\endequat
Nous poserons $A=C_s$ et $B=C_t$ de sorte que, dans $\Hb$, les relations suivantes 
sont v\'erifi\'ees~:
\equat\label{relations B2}
\begin{cases}
[x,X] = -A(s+s') - 2B t, & \\
[x,Y] = A(s-s'), & \\
[y,X] = A(s-s'), & \\
[y,Y] = -A(s+s') - 2B t'. & \\
\end{cases}
\endequat
On en d\'eduit par exemple 
\equat\label{relations B2 plus}
\begin{cases}
[x,X^2] = -A(s+s') X  - A(s-s')Y, & \\
[x,XY] = -2B t Y, & \\
[x,Y^2] = A (s+s') X + A(s-s') Y, & \\
[y,X^2] = A(s+s') Y + A (s-s') X, & \\
[y,XY] = -2B t' X, &\\
[y,Y^2] = -A(s+s') Y - A(s-s') X. &\\
\end{cases}
\endequat
Notons enfin que $\Hb$ est muni d'un automorphisme $\eta$ correspondant \`a $\begin{pmatrix} -1 & 1\\ 1 & 1 \end{pmatrix} \in \NC$~:
$$\eta(x)=y-x,\quad\eta(y)=x+y,\quad \eta(X)=\frac{Y-X}{2},\quad \eta(Y)=\frac{X+Y}{2},$$
$$\eta(A)=B,\quad\eta(B)=A,\quad \eta(s)=t\quad \text{et}\quad \eta(s)=t.$$

\bigskip

\section{Caract\`eres irr\'eductibles}

\medskip

Notons $\e_s$ (respectivement $\e_t$) 
l'unique caract\`ere lin\'eaire de $W$ tel que $\e_s(s)=-1$ et $\e_s(t)=1$ (respectivement 
$\e_t(s)=1$ et $\e_t(t)=-1$). Remarquons que $\e_s\e_t=\e$. 
Notons $\chi$ le caract\`ere de la repr\'esentation $V$ de $W$. Alors
$$\Irr(W)=\{1,\e_s,\e_t,\e,\chi\}$$
et la table des caract\`eres de $W$ est donn\'ee dans la table~\ref{table b2}.
\begin{table}\refstepcounter{theo}
\centerline{
\begin{tabular}{@{{{\vrule width 2pt}\,\,\,}}c@{{\,\,\,{\vrule width 2pt}\,\,\,}}c|c|c|c|c@{{\,\,\,{\vrule width 2pt}}}}
\hlinewd{2pt}
\petitespace
$g$ & $~1~$ & $w_0$ & $s$ & $t$ & $w$
\vphantom{$\DS{\frac{a}{A_{\DS{A}}}}$}\\
\hlinewd{1pt}
\petitespace $|\classe_W(g)|$ & $1$ & $1$ & $2$ & $2$ & $2$ \\
\hline
\petitespace $o(g)$ & $1$ & $2$ & $2$ & $2$ & $4$ \\
\hline
\petitespace $\Crm_W(g)$ & $~W~$ & $~W~$ & $\langle s,s'\rangle$ & $\langle t,t'\rangle$ & 
$\langle w \rangle$  \\
\hlinewd{2pt}
\petitespace $1$& $1$ & $1$  & $1$ & $1$ & $1$ \\
\hline
\petitespace $\e_s$& $1$ & $1$  & $-1$ & $1$ & $-1$ \\
\hline
\petitespace $\e_t$& $1$ & $1$  & $1$ & $-1$ & $-1$ \\
\hline
\petitespace $\e$& $1$ & $1$  & $-1$ & $-1$ & $1$ \\
\hline
\petitespace $\chi$& $2$ & $-2$  & $0$ & $0$ & $0$ \\
\hlinewd{2pt}
\end{tabular}}

\bigskip

\caption{Table des caract\`eres de $W$}\label{table b2}
\end{table}
Les degr\'es fant\^omes sont donn\'es par
\equat\label{fantome b2}
\begin{cases}
f_1(\tb)=1, & \\
f_{\e_s}(\tb)=\tb^2, & \\
f_{\e_t}(\tb)= \tb^2, & \\
f_\e(\tb)=\tb^4, \\
f_\chi(\tb)=\tb+\tb^3.
\end{cases}
\endequat

\bigskip

\section{Calcul de ${\boldsymbol{(V \times V^*)/W}}$}\label{section:quotient B2}

\medskip

Avant de calculer le centre $Z$ de $\Hb$, nous allons calculer sa sp\'ecialisation $Z_0$ en 
$(A,B) \mapsto (0,0)$. D'apr\`es l'exemple~\ref{exemple zero-0}, 
$$Z_0=\kb[V \times V^*]^W.$$
Gr\^ace \`a~(\ref{fantome b2}) et \`a la proposition~\ref{dim bigrad Q0 fantome}. 
la s\'erie de Hilbert bi-gradu\'ee de $Z_0$ est donn\'ee par
\equat\label{hilbert}
\dim_\kb^\bigrad(Z_0)=\frac{1+\tb\ub+ \tb\ub^3 + 2 \tb^2\ub^2 + 
\tb^3\ub + \tb^3\ub^3 + \tb^4\ub^4}{(1-\tb^2)(1-\tb^4)(1-\ub^2)(1-\ub^4)}.
\endequat
Posons
$$\s=x^2+y^2,\quad \pi=x^2y^2,\quad \Sig = X^2+Y^2\quad \text{et}\quad 
\Pi=X^2Y^2.$$
Alors
\equat\label{invariants B2}
\kb[V^*]^W = \kb[\s,\pi]\qquad\text{et}\qquad\kb[V]^W=\kb[\Sig,\Pi].
\endequat
La s\'erie de Hilbert bi-gradu\'ee de  
$P_\bullet = \kb[V \times V^*]^{W \times W} = 
\kb[V]^W\otimes\kb[V^*]^W = \kb[\s,\pi,\Sig,\Pi]$ 
est donc donn\'ee par
\equat\label{hilbert double}
\dim_\kb^\bigrad(P_\bullet)=\frac{1}{(1-\tb^2)(1-\tb^4)(1-\ub^2)(1-\ub^4)}.
\endequat
Posons maintenant 
$$\euler_0=xX + yY,\hskip2mm \euler_0'=(xY+yX) XY,\hskip2mm 
\euler_0''=xy(xY+yX),\hskip2mm \euler_0'''=xy(xX+yY)XY,$$
$$\delb_0=xyXY,\quad\delb_0'=(x^2-y^2)(X^2-Y^2)\quad
\text{et}\quad \Delb_0=xy(x^2-y^2)XY(X^2-Y^2).$$
Il est alors facile de v\'erifier que la famille 
$(1,\euler_0,\euler_0',\euler_0'',\euler_0''',\delb_0,\delb_0',\Delb_0)$ 
est $P_\bullet$-lin\'eairement 
ind\'ependante et est contenue dans $\kb[V \times V^*]^W$. 
D'autre part, $1$, $\euler_0$, $\euler_0'$, $\euler_0''$, $\euler_0'''$, 
$\delb_0$, $\delb_0'$ et $\Delb_0$ ont respectivement pour bi-degr\'e 
$(0,0)$, $(1,1)$, $(1,3)$, $(3,1)$, $(3,3)$, $(2,2)$, $(2,2)$ et $(4,4)$. Donc 
la s\'erie de Hilbert bi-gradu\'ee du $P_\bullet$-module 
libre de base $(1,\euler_0,\euler_0',\euler_0'',\euler_0''',\delb_0,\delb_0',\Delb_0)$ 
est \'egale \`a celle de $Z_0$ (voir~(\ref{hilbert}) et~(\ref{hilbert double})). 
Ainsi
\equat\label{invariants diagonaux B2}
\kb[V \times V^*]^W = P_\bullet \oplus P_\bullet\, \euler_0 \oplus 
P_\bullet\, \euler_0' \oplus P_\bullet\, \euler_0'' \oplus P_\bullet\, \euler_0''' 
\oplus P_\bullet\, \delb_0 \oplus P_\bullet\, \delb_0' \oplus P_\bullet\, \Delb_0.
\endequat
Le r\'esultat suivant, d\'ej\`a connu (voir par exemple~\cite{alev}), d\'ecrit 
l'alg\`ebre $Z_0$~:

\bigskip

\begin{theo}\label{theo:invariants B2}
$Z_0=\kb[V \times V^*]^W = \kb[\s,\pi,\Sig,\Pi,\euler_0,\euler_0',\euler_0'',\delb_0]$. 
D'autre part, l'id\'eal des relations est engendr\'e par les relations suivantes~:
$$
\begin{cases}
(1) \hskip2cm &\euler_0 ~\euler_0' = \s\Pi + \Sig ~\delb_0,\\
(2) \hskip2cm &\euler_0 ~\euler_0''= \Sig \pi + \s ~\delb_0,\\
(3) \hskip2cm &\delb_0~\euler_0' = \Pi ~\euler_0'',\\
(4) \hskip2cm &\delb_0~\euler_0''= \pi~\euler_0',\\
(5) \hskip2cm &\delb_0^2=\pi\Pi,\\
(6) \hskip2cm &\euler_0^{\prime 2} = \Pi(4~\delb_0 -\euler_0^2+ \s\Sig ),\\
(7) \hskip2cm &\euler_0^{\prime\prime 2} = \pi(4~\delb_0 -\euler_0^2+ \s\Sig ),\\
(8) \hskip2cm &\euler_0'~\euler_0''=4 \pi \Pi+ \s\Sig~\delb_0 -\delb_0~\euler_0^2,\\
(9) \hskip2cm &\euler_0(4~\delb_0 -\euler_0^2+ \s\Sig )=\s ~\euler_0'+\Sig~\euler_0''.
\end{cases}
$$
De plus, $Z_0=P\oplus P\euler_0\oplus P\euler^2_0 \oplus P\delb_0 \oplus P\delb_0 \euler_0 
\oplus P\delb_0\euler_0^2 \oplus P\euler_0' \oplus P \euler_0''$.
\end{theo}

\begin{proof}
Il est facile de v\'erifier que 
\equat\label{autres invariants}
\delb_0'=2~\euler_0^2 - \s\Sig - 4~\delb_0,\quad
\Delb_0=\delb_0~\delb_0'\quad\text{et}\quad \euler_0''' = \delb_0~\euler_0.
\endequat
En vertu de~(\ref{invariants diagonaux B2}), ces trois relations impliquent imm\'ediatement que 
$\kb[V \times V^*]^W = \kb[\s,\pi,\Sig,\Pi,\euler_0,\euler_0',\euler_0'',\delb_0]$. 
Cela montre la premi\`ere assertion.

\medskip

Les relations pr\'esent\'ees dans l'\'enonc\'e du th\'eor\`eme~\ref{theo:invariants B2} 
r\'esultent de calculs directs. Compte tenu de la relation (5), 
la relation (8) peut se r\'e\'ecrire
$$\euler_0'~\euler_0''=\delb_0(4~\delb_0 + \s\Sig -\euler_0^2),\leqno{(8')}$$
tandis que (6) et (7) impliquent 
$$\pi ~\euler_0^{\prime 2} = \Pi ~\euler_0^{\prime\prime 2}.\leqno{(10)}$$
Soient $E$, $E'$, $E''$ et $D$ des ind\'etermin\'ees sur le corps 
$\kb(\s,\pi,\Sig,\Pi)$ et notons 
$$\rho : \kb[\s,\pi,\Sig,\Pi,E,E',E'',D] \longto \kb[V \times V^*]^W$$
l'unique morphisme de $\kb$-alg\`ebres qui envoie la suite $(\s,\pi,\Sig,\Pi,E,E',E'',D)$ 
sur $(\s,\pi,\Sig,\Pi,\euler_0,\euler_0',\euler_0'',\delb_0)$. 
Alors $\r$ est surjectif. Notons $f_i$ l'\'el\'ement de 
$\kb[\s,\pi,\Sig,\Pi,E,E',E'',D]$ correspondant \`a la relation $(i)$ de l'\'enonc\'e 
(pour $1 \le i \le 9$), en soustrayant le terme de droite au terme de gauche. Posons
$$\IG = \langle f_1,f_2,f_3,f_4,f_5,f_6,f_7,f_8,f_9 \rangle ~\subset \Ker \r.$$
Notons $\Zti_0=\kb[\s,\pi,\Sig,\Pi,E,E',E'',D]/\IG$ et soient $e$, $e'$, $e''$ et $d$ 
les images respectives de $E$, $E'$, $E''$ et $D$ dans $\Zti_0$. Posons
$$\Zti_0'=P_\bullet + P_\bullet\, e + P_\bullet\, e^2 + P_\bullet\, e' 
+ P_\bullet\, e'' + P_\bullet\, d + P_\bullet\, de + P_\bullet\, de^2.$$
Alors $\Zti_0'$ est un sous-$\kb$-espace vectoriel de 
$\Zti_0$. Les relations donn\'ees par les $(f_i)_{1 \le i \le 9}$ 
montrent que $\Zti_0'$ est une sous-$\kb$-alg\`ebre de $\Zti_0$. 
Comme de plus $\s$, $\pi$, $\Sig$, $\Pi$, $e$, $e'$, $e''$ et $d$ appartiennent 
\`a $\Zti_0'$, on en d\'eduit que $\Zti_0=\Zti_0'$. 

Par cons\'equent, $\Zti_0$ est un quotient du $\kb$-espace vectoriel gradu\'e
$$\EC=P_\bullet \oplus P_\bullet[-2] \oplus (P_\bullet[-4])^3 \oplus P_\bullet[-6] 
\oplus P_\bullet[-8],$$
et $Z_0$ est un quotient de $\Zti_0$. Comme la s\'erie de Hilbert du $\kb$-espace vectoriel 
$\EC$ est \'egale \`a la s\'erie de Hilbert de $Z_0$, on en d\'eduit que 
$$\Zti_0'=P_\bullet \oplus P_\bullet e \oplus P_\bullet e^2 \oplus P_\bullet e' 
\oplus P_\bullet e'' \oplus P_\bullet d \oplus P_\bullet de \oplus P_\bullet de^2$$
et que $\Zti_0 \simeq Z_0$, ce qui montre que $\Ker \r = \IG$.
\end{proof}

\bigskip

\begin{coro}\label{intersection complete}
Les relations $(1)$, $(2)$,\dots, $(9)$ forment un syst\`eme {\bfit minimal}. 
En particulier, 
la $\kb$-alg\`ebre $Z_0=\kb[V \times V^*]^W$ n'est pas d'intersection compl\`ete.
\end{coro}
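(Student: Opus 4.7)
To prove this corollary I would work bigraded and apply the graded Nakayama lemma. Set $R=\kb[\s,\pi,\Sig,\Pi,E,E',E'',D]$ endowed with the $(\NM\times\NM)$-bigrading for which the eight generators have respective bidegrees $(2,0),(4,0),(0,2),(0,4),(1,1),(1,3),(3,1),(2,2)$. The surjection $\r:R\twoheadrightarrow Z_0$ is then bihomogeneous and its kernel $\IG$ is a bigraded ideal; the nine defining relations $f_1,\ldots,f_9$ are bihomogeneous of respective bidegrees $(2,4),(4,2),(3,5),(5,3),(4,4),(2,6),(6,2),(4,4),(3,3)$. Since $R$ is connected and $(\NM\times\NM)$-graded, minimality reduces to showing that the images of $f_1,\ldots,f_9$ form a basis of the bigraded $\kb$-vector space $\IG/\mG\IG$, where $\mG=R_+$. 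Once this is done, the non-complete-intersection statement is immediate: since $Z_0$ has Krull dimension $4$ (being the coordinate ring of $(V\times V^*)/W$) and $R$ has $8$ generators, a complete intersection would require exactly $8-4=4$ minimal relations, contradicting the value $9$.

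The verification is carried out bidegree by bidegree in the eight bidegrees $(2,4),(4,2),(3,3),(3,5),(5,3),(2,6),(6,2),(4,4)$ where a relation sits. For each such $(a,b)$, I would: (i) enumerate the bihomogeneous monomials of $R_{(a,b)}$ by solving the pair of linear equations on the exponents; (ii) compute $\dim_\kb(Z_0)_{(a,b)}$ using the explicit $P_\bullet$-basis $(1,\euler_0,\euler_0^2,\delb_0,\delb_0\euler_0,\delb_0\euler_0^2,\euler_0',\euler_0'')$ of $Z_0$ provided by the preceding theorem; (iii) subtract to get $\dim_\kb\IG_{(a,b)}$; (iv) list a spanning set of $(\mG\IG)_{(a,b)}$ by running through all products $m\cdot f_i$ of bidegree $(a,b)$ with $m$ a generator of $\mG$ and $f_i$ of strictly smaller total bidegree; (v) verify that the relation(s) in bidegree $(a,b)$ are linearly independent modulo $(\mG\IG)_{(a,b)}$ by inspecting distinguished monomials.

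The decisive case is bidegree $(4,4)$, the only bidegree in which two of the nine relations sit. Here $\dim R_{(4,4)}=12$ (the monomials being $D^2$, $E^2D$, $\s\Sig D$, $\s^2\Sig^2$, $\s^2\Pi$, $\pi\Sig^2$, $\pi\Pi$, $\s\Sig E^2$, $E^4$, $\s EE'$, $\Sig EE''$ and $E'E''$), and $\dim(Z_0)_{(4,4)}=7$ via the basis, so $\dim\IG_{(4,4)}=5$. Since the three relations of total bidegree $6$ are $f_1,f_2,f_9$, the subspace $(\mG\IG)_{(4,4)}$ is spanned by the three elements $\s f_1$, $\Sig f_2$ and $E f_9$; inspection of their monomial expansions shows these are linearly independent. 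The two remaining relations $f_5=D^2-\pi\Pi$ and $f_8=E'E''-4\pi\Pi-\s\Sig D+DE^2$ are distinguished from the previous three by the monomials $D^2$ and $E'E''$ respectively, which do not appear in $\s f_1, \Sig f_2$ or $E f_9$; hence all five elements are linearly independent, giving $\dim(\IG/\mG\IG)_{(4,4)}=2$ with basis $\{f_5,f_8\}$. In each of the other six bidegrees there is a single relation, each one distinguished from $(\mG\IG)_{(a,b)}$ by a characteristic monomial (for instance $DE'$ in $f_3$, $DE''$ in $f_4$, $E^{\prime 2}$ in $f_6$, $E^{\prime\prime 2}$ in $f_7$), making the verification routine. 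Summing the contributions $1+1+1+1+1+1+1+2=9$ confirms the minimality. The main obstacle is the combinatorial bookkeeping in bidegree $(4,4)$, where one must carefully tabulate all twelve monomials and expand the five candidate relations in that basis; the remaining bidegrees are straightforward once the framework is in place.
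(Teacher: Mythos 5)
Your argument is correct, and it takes a genuinely different route from the paper's: you verify minimality directly in the eight-variable ring $R=\kb[\s,\pi,\Sig,\Pi,E,E',E'',D]$, bidegree by bidegree, with the two-relation bidegree $(4,4)$ handled explicitly. The paper instead reduces modulo $\langle\s,\pi,\Sig,\Pi\rangle$ first: if the $f_i$ were not a minimal generating set of $\IG$ in $R$, their images $\bar{f}_i$ would fail to be minimal in $N=\kb[E,E',E'',D]$, so it suffices to prove minimality of $(\bar{f}_i)$ in $N$. This shrinkage pays off combinatorially, because in $N$ every nonconstant monomial has both bidegree components $\ge 1$, so the only nonconstant monomial whose bidegree equals a difference $\deg\bar{f}_i-\deg\bar{f}_j$ is $E$ itself, of bidegree $(1,1)$ (arising from $(3,5)-(2,4)$, $(5,3)-(4,2)$ and $(4,4)-(3,3)$); the whole verification then collapses to the single observation that $E$ divides neither $\bar{f}_3$, nor $\bar{f}_4$, nor any nonzero element of $\kb\bar{f}_5+\kb\bar{f}_8$, replacing your eight separate bidegree checks. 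Your bidegree-$(4,4)$ computation is right: $(\mG\IG)_{(4,4)}$ is indeed spanned by $\s f_1,\Sig f_2,E f_9$ and $f_5,f_8$ lie outside its span. But working in $R$ forces you to carry along the terms in $\s$ and $\Sig$ that the paper's reduction kills. Two smaller remarks: the dimension counts of $\IG_{(a,b)}$ in your steps (ii)--(iii) are not actually needed, since minimality only requires showing that the $f_i$ lie outside $(\mG\IG)$ in their respective bidegrees, which is exactly what your steps (iv)--(v) do; and your final ``not complete intersection'' deduction implicitly uses that the preceding theorem identifies $8$ as the minimal number of algebra generators of $Z_0$, which is what makes the presentation $R\to Z_0$ minimal and the count $8-4=4$ meaningful.
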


\begin{proof}
En reprenant les notations de la preuve du th\'eor\`eme~\ref{theo:invariants B2}, 
il suffit de montrer que $(f_i)_{1 \le i \le 9}$ est un syst\`eme minimal 
de g\'en\'erateurs de $\IG$. Notons $\Zba_0=Z_0/\langle \s,\pi,\Sig,\Pi\rangle$ 
et notons $e$, $e'$, $e''$ et $d$ les images respectives de $\euler_0$, 
$\euler_0'$, $\euler_0''$ et $\delb_0$ dans $\Zba_0$. Alors il d\'ecoule 
de~\ref{invariants B2} et des relations~\ref{autres invariants} que
$$\Zba_0 = \kb \oplus \kb e \oplus \kb e^2 \oplus \kb e' \oplus \kb e'' \oplus \kb d 
\oplus \kb de \oplus \kb de^2.$$
Notons $\fba_i \in \kb[E,E',E'',D]$ la r\'eduction du polyn\^ome $f_i$ modulo 
$\langle \s,\pi,\Sig,\Pi \rangle$. Il suffit de montrer que $(\fba_i)_{1 \le i \le 9}$ 
est un syst\`eme de g\'en\'erateurs minimal du noyau du morphisme de $\kb$-alg\`ebres 
$$\rhoba : \kb[E,E',E'',D] \longto \Zba_0$$
qui envoie $E$, $E'$, $E''$ et $D$ sur $e$, $e'$, $e''$ et $d$ respectivement. 

\medskip

L'alg\`ebre $N=k[E,E',E'',D]$ est bigradu\'ee, avec
$E$, $E'$, $E''$ et $D$ de bidegr\'es respectifs $(1,1)$, $(1,3)$, $(3,1)$
et $(2,2)$, et les \'el\'ements $\bar{f}_1,\ldots,\bar{f}_9$ sont
homog\`enes de bidegr\'es respectifs $(2,4)$, $(4,2)$, $(3,5)$, $(5,3)$,
$(4,4)$, $(2,6)$, $(6,2)$, $(4,4)$, $(3,3)$. On en d\'eduit que
$$\bigl(\sum_{i=1}^9\mathbf{k}\bar{f_i}\bigr)\cap
\bigl(\sum_{i=1}^9 N_+ \bar{f}_i\bigr)\subset
\bigl(\sum_{i=1}^9\mathbf{k}\bar{f_i}\bigr)
\cap\bigl(\sum_{i=1}^9 \mathbf{k}E\bar{f}_i\bigr)$$
Puisque tous ces espaces sont bigradu\'es, cette intersection est contenue
dans
$$(\mathbf{k}\bar{f}_3)\cap(\mathbf{k}E\bar{f}_1)+
(\mathbf{k}\bar{f}_4)\cap(\mathbf{k}E\bar{f}_2)+
(\mathbf{k}\bar{f}_5+\mathbf{k}\bar{f}_8)\cap (\mathbf{k}Ef_9).$$
Puisque $E$ ne divise pas $\bar{f}_3$ ni $\bar{f}_4$, ni aucun \'el\'ement
non nul de $\mathbf{k}\bar{f}_5+\mathbf{k}\bar{f}_8$, on
conclut que 
$\bigl(\sum_{i=1}^9\mathbf{k}\bar{f_i}\bigr)\cap
\bigl(\sum_{i=1}^9 N_+ \bar{f}_i\bigr)=0$, donc $(\bar{f}_i)_{1\le i\le 9}$
est un syst\`eme minimal de g\'en\'erateurs de $\ker\bar{\rho}$.
\end{proof}

\bigskip

\begin{coro}\label{euler non}
Le polyn\^ome minimal de $\euler_0$ sur $P_\bullet$ est
$$\tb^8 - 2\s\Sig~ \tb^6 + 
\bigl(\s^2\Sig^2 + 2(\s^2\Pi + \Sig^2\pi - 8 \pi\Pi)\bigr)~ \tb^4 - 
2\s\Sig\,(\s^2\Pi + \Sig^2\pi - 8\pi\Pi)~ \tb^2 + (\s^2\Pi - \Sig^2\pi)^2.$$
\end{coro}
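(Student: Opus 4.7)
D'apr\`es le corollaire~\ref{coro:minimal-euler-c} appliqu\'e \`a $c=0$, le polyn\^ome minimal de $\euler_0$ sur $P_\bullet$ est unitaire de degr\'e $|W|=8$. Il suffit donc d'exhiber un polyn\^ome unitaire de degr\'e $8$ \`a coefficients dans $P_\bullet$ qui annule $\euler_0$, et l'approche la plus directe est de calculer les huit conjugu\'es de $\euler_0$ dans l'extension galoisienne $\Mb_0/\Kb_0$.

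Comme $\Zrm(W)=\{1,w_0\}$ dans le type $B_2$, on a $\Mb_0 \simeq \kb(V\times V^*)^{\D\Zrm(W)}$ avec groupe de Galois $(W\times W)/\D\Zrm(W)$ (voir la sous-section~\ref{subsection:specialisation galois 0}). Le stabilisateur de $\euler_0=xX+yY$ dans $W\times W$ est $\D W$, de sorte que son orbite est en bijection avec $W$, param\'etr\'ee par les repr\'esentants $\{(1,w) \mid w\in W\}$. Le polyn\^ome minimal recherch\'e est donc
$$p(\tb)=\prod_{w\in W}\bigl(\tb-x\cdot w(X)-y\cdot w(Y)\bigr).$$
En utilisant les matrices donn\'ees par~\ref{elements B2} pour d\'ecrire l'action de $W$ sur la base duale $(X,Y)$ de $V^*$, on v\'erifie que les huit conjugu\'es sont pr\'ecis\'ement
$\pm(xX+yY)$, $\pm(xX-yY)$, $\pm(xY+yX)$ et $\pm(xY-yX)$.

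L'apparition syst\'ematique de paires oppos\'ees (qui refl\`ete le fait que $w_0=-\Id_V$ agit par $-1$ sur $V^*$) permet d'obtenir que $p(\tb)$ est un polyn\^ome en $u=\tb^2$~; en regroupant les quatre facteurs quadratiques, on trouve la factorisation
$$p(\tb) = \bigl((u-a)^2-4\pi\Pi\bigr)\bigl((u-b)^2-4\pi\Pi\bigr),$$
o\`u $a=x^2X^2+y^2Y^2$ et $b=x^2Y^2+y^2X^2$ (ces formes quadratiques provenant de l'expansion de $(xX\pm yY)^2$ et $(xY\pm yX)^2$). Le passage aux g\'en\'erateurs $\s,\pi,\Sig,\Pi$ de $P_\bullet$ se fait \`a travers les identit\'es
$$a+b=(x^2+y^2)(X^2+Y^2)=\s\Sig\qquad\text{et}\qquad ab=\s^2\Pi+\Sig^2\pi-4\pi\Pi,$$
la seconde d\'ecoulant des identit\'es $x^4+y^4=\s^2-2\pi$ et $X^4+Y^4=\Sig^2-2\Pi$.

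Il ne reste alors qu'\`a d\'evelopper $p(\tb)=((u-a)(u-b))^2-4\pi\Pi\bigl((u-a)^2+(u-b)^2\bigr)+16\pi^2\Pi^2$ en n'utilisant que les fonctions sym\'etriques $a+b$ et $ab$, et \`a substituer. La seule manipulation non triviale est la factorisation du terme constant, qui vaut $(ab+4\pi\Pi)^2-4\pi\Pi(a+b)^2=(\s^2\Pi+\Sig^2\pi)^2-4\pi\Pi\s^2\Sig^2$ et se r\'eduit apr\`es simplification \`a $(\s^2\Pi-\Sig^2\pi)^2$. Le calcul est enti\`erement formel et aucune \'etape ne pr\'esente de difficult\'e conceptuelle~; le principal obstacle r\'eside simplement dans l'identification correcte de l'action duale de $W$ sur $V^*$ pour \'enum\'erer les huit conjugu\'es, apr\`es quoi tout le reste se r\'eduit \`a un d\'eveloppement alg\'ebrique ma\^{\i}tris\'e par la sym\'etrie en $\tb\mapsto-\tb$.
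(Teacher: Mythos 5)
Votre d\'emonstration est correcte, mais elle emprunte une voie diff\'erente de celle du m\'emoire. Le m\'emoire proc\`ede par \'elimination alg\'ebrique \`a partir des relations (1)--(9) du th\'eor\`eme~\ref{theo:invariants B2}~: on multiplie (9) par $\euler_0$, on substitue (1) et (2) pour isoler une expression de la forme $\delb_0(4\euler_0^2-2\s\Sig)=\euler_0^4-\s\Sig\euler_0^2+\s^2\Pi+\Sig^2\pi$, puis on \'el\`eve au carr\'e en utilisant $\delb_0^2=\pi\Pi$ de la relation (5). Vous, au contraire, calculez directement l'orbite de $\euler_0=xX+yY$ sous $W$ (conform\'ement \`a~\ref{euler minimal} et au calcul explicite de la fin de la section~\ref{section:galois euler}, qui donnent $\iso^{-1}(\eulerq_w\bmod\rG_0)=\sum_i w(y_i)x_i$), vous formez le produit $\prod_w(\tb-\eulerq_w)$, exploitez la sym\'etrie $\tb\mapsto-\tb$ issue de $w_0=-\Id_V$ pour vous ramener \`a un polyn\^ome en $u=\tb^2$, et concluez par des identit\'es de fonctions sym\'etriques \'el\'ementaires.

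Votre approche a l'avantage d'\^etre autonome~: elle n'utilise pas la pr\'esentation compl\`ete de $Z_0$ \'etablie au th\'eor\`eme~\ref{theo:invariants B2}, mais seulement la connaissance de l'orbite de $\euler_0$ et le fait que le polyn\^ome minimal est de degr\'e $|W|=8$ (cons\'equence de~\ref{eq:minimal-euler} ou de l'exemple~\ref{exemple zero-0}). Elle se g\'en\'eralise aussi plus naturellement \`a d'autres groupes o\`u la pr\'esentation de $Z_0$ n'est pas disponible. En contrepartie, la m\'ethode du m\'emoire r\'eutilise la structure d\'ej\`a calcul\'ee de $Z_0$ et \'evite d'avoir \`a justifier que les coefficients du produit appartiennent bien \`a $P_\bullet$ (ce que vous obtenez par le calcul explicite des fonctions sym\'etriques $a+b=\s\Sig$ et $ab=\s^2\Pi+\Sig^2\pi-4\pi\Pi$, alors qu'on pourrait aussi invoquer le corollaire~\ref{minimal clos} et l'int\'egrale cl\^oture de $P_\bullet$). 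Les deux preuves sont donc l\'egitimes et de difficult\'e comparable.
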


\begin{proof}
En multipliant la relation (9) par $\euler_0$ et en utilisant les relations (1) et (2), 
on obtient
$$\euler_0^2(4\delb_0+\s\Sig-\euler_0^2) = \s^2 \Pi + \Sig^2 \pi + 2\s\Sig \delb_0.$$
On en d\'eduit imm\'ediatement que
$$\delb_0 (4\euler_0^2 - 2 \s\Sig) = \euler_0^4 - \s\Sig \euler_0^2 + \s^2\Pi +\Sig^2\pi.$$
En \'elevant au carr\'e cette relation et en utilisant la relation (5), 
on obtient que le polyn\^ome annonc\'e annule $\euler_0$. Le degr\'e du polyn\^ome 
minimal de $\euler_0$ sur $P_\bullet$ \'etant \'egal \`a $|W|=8$, 
la preuve du corollaire est termin\'ee.
\end{proof}

\bigskip

\section{L'alg\`ebre ${\boldsymbol{Z}}$}\label{section:Q B2}

\medskip

Rappelons que 
$$\euler=xX + yY + A(s+s') + B(t+t')$$
et posons
$$\begin{cases}
\euler'=(xY+yX)XY - A(s-s') XY + BtY^2 + Bt'X^2,\\
\euler''=xy(xY+yX) - A xy (s-s') + B y^2 t + B x^2 t',\\
\delb=xyXY + B x t' X + B y t Y + B^2 (1+w_0) + AB (w+w').\\
\end{cases}$$
Un calcul assez fastidieux montre que
\equat\label{Q}
\euler, \euler',\euler'', \delb \in Z=\Zrm(\Hb)
\endequat
et que les relations suivantes sont satisfaites~:
\equat\label{relations Q}
\begin{cases}
(\Zrm 1) \hskip1cm &\euler ~\euler' = \s\Pi + \Sig ~\delb,\\
(\Zrm 2) \hskip1cm &\euler ~\euler''= \Sig \pi + \s ~\delb,\\
(\Zrm 3) \hskip1cm &\delb~\euler' = \Pi ~\euler'' + B^2 \Sig~\euler,\\
(\Zrm 4) \hskip1cm &\delb~\euler''= \pi~\euler' + B^2 \s~\euler,\\
(\Zrm 5) \hskip1cm &\delb^2=\pi\Pi + B^2 ~\euler^2,\\
(\Zrm 6) \hskip1cm &\euler^{\prime 2} = 
\Pi(4~\delb -\euler^2+ \s\Sig + 4A^2-4B^2) + B^2 \Sig^2,\\
(\Zrm 7) \hskip1cm &\euler^{\prime\prime 2} = 
\pi(4~\delb -\euler^2+ \s\Sig + 4A^2-4B^2)+ B^2 \s^2,\\
(\Zrm 8) \hskip1cm &\euler'~\euler''=
\delb(4~\delb -\euler^2+ \s\Sig + 4A^2-4B^2)-B^2\s\Sig,\\
(\Zrm 9) \hskip1cm &\euler(4~\delb -\euler^2+ \s\Sig  + 4A^2-4B^2)=
\s ~\euler'+\Sig~\euler''.
\end{cases}
\endequat
On voit imm\'ediatement que $\euler_0$, $\euler_0'$, $\euler_0''$ et 
$\delb_0$ sont les images respectives, dans $Z_0=Z/\pG_0 Z$, des \'el\'ements 
$\euler$, $\euler'$, $\euler''$ et $\delb$. D'autre part, les relations 
(1), (2),\dots, (9) du th\'eor\`eme~\ref{theo:invariants B2} sont aussi les images, 
modulo $\pG_0$, des relations (Z1), (Z2),\dots, (Z9).

\bigskip

\begin{theo}\label{theo:Q}
La $\kb$-alg\`ebre $Z$ est engendr\'ee par $A$, $B$, $\s$, $\pi$, $\Sig$, $\Pi$, 
$\euler$, $\euler'$, $\euler''$ et $\delb$. L'id\'eal des relations est engendr\'e par 
$(\Zrm 1)$, $(\Zrm 2)$,\dots, $(\Zrm 9)$. 

De plus, $Z=P\oplus P\euler\oplus P\euler^2 \oplus P\delb \oplus P\delb \euler 
\oplus P\delb\euler^2 \oplus P\euler' \oplus P \euler''$.
\end{theo}

\begin{proof}
La preuve suit strictement les m\^emes arguments que ceux 
du th\'eor\`eme~\ref{theo:invariants B2}, bas\'es entre autres sur des 
comparaisons de s\'eries de Hilbert bi-gradu\'ees.
%
%
\end{proof}

\bigskip

\begin{coro}\label{Q:intersection complete}
Les relations $(\Zrm 1)$, $(\Zrm 2)$,\dots, $(\Zrm 9)$ forment un syst\`eme minimal. 
En particulier, la $\kb$-alg\`ebre $Z$ n'est pas d'intersection compl\`ete.
\end{coro}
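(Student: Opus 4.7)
La seconde assertion (non-intersection compl\`ete) d\'ecoulera sans effort de la premi\`ere par un d\'ecompte de dimensions~: la $\kb$-alg\`ebre $Z$ est de dimension de Krull $6$ (puisque $Z$ est un $P$-module libre et $\dim P = \dim \CCB + \dim V/W + \dim V^*/W = 6$), et un argument \'el\'ementaire de bi-graduation montre que les $10$ g\'en\'erateurs $A, B, \sigma, \pi, \Sigma, \Pi, \euler, \euler', \euler'', \delb$ forment un syst\`eme minimal (dans chaque bi-degr\'e o\`u appara\^{\i}t un nouveau g\'en\'erateur, $Z_+^2$ est de bi-degr\'e total strictement sup\'erieur). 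Une pr\'esentation d'intersection compl\`ete requerrait donc exactement $10-6=4$ relations, ce qui serait incompatible avec l'existence de $9$ relations minimales.

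Pour la premi\`ere assertion, le plan est de recopier verbatim la strat\'egie d\'ej\`a employ\'ee dans la preuve du corollaire~\ref{intersection complete}, en se ramenant pr\'ecis\'ement \`a cette situation par sp\'ecialisation des param\`etres $A$ et $B$ \`a z\'ero. Je commencerais par poser $N = \kb[A,B,\sigma,\pi,\Sigma,\Pi,E,E',E'',D]$, muni de la bi-graduation naturelle (o\`u $A, B, E$ sont de bi-degr\'e $(1,1)$, $E'$ de bi-degr\'e $(1,3)$, $E''$ de $(3,1)$, $D$ de $(2,2)$, et $\sigma, \pi, \Sigma, \Pi$ h\'eritent leurs bi-degr\'es de $P$). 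Notant $\rho : N \to Z$ le morphisme surjectif du th\'eor\`eme~\ref{theo:Q} et $f_1, \ldots, f_9 \in N$ les \'el\'ements bi-homog\`enes obtenus en r\'e\'ecrivant les relations $(\Zrm 1), \ldots, (\Zrm 9)$ sous la forme membre de gauche moins membre de droite, le th\'eor\`eme~\ref{theo:Q} fournit $\ker\rho = \langle f_1, \ldots, f_9 \rangle$, et l'assertion revient \`a \'etablir
$$\Bigl(\sum_{i=1}^9 \kb f_i\Bigr) \cap \Bigl(\sum_{i=1}^9 N_+ f_i\Bigr) = 0.$$

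L'observation-cl\'e est que la r\'eduction modulo l'id\'eal bi-homog\`ene $\mathfrak{m} = \langle A,B,\sigma,\pi,\Sigma,\Pi\rangle$ de $N$ nous ram\`ene exactement au cadre du corollaire~\ref{intersection complete}~: en effet, sp\'ecialiser $A=B=0$ dans $(\Zrm i)$ redonne la relation $(i)$ du th\'eor\`eme~\ref{theo:invariants B2}, et r\'eduire ensuite modulo $\langle \sigma,\pi,\Sigma,\Pi\rangle$ produit exactement le polyn\^ome $\bar f_i \in \kb[E,E',E'',D]$ dont la preuve du corollaire~\ref{intersection complete} a d\'ej\`a \'etabli qu'il forme, avec les autres, un syst\`eme minimal de g\'en\'erateurs dans $\bar N = \kb[E,E',E'',D]$. \`A partir d'une identit\'e $\sum_i \lambda_i f_i = \sum_i g_i f_i$ avec $\lambda_i \in \kb$ et $g_i \in N_+$, la r\'eduction modulo $\mathfrak{m}$ produit $\sum_i \lambda_i \bar f_i = \sum_i \bar g_i \bar f_i$ avec $\bar g_i \in \bar N_+$ (la projection $N \to \bar N$ envoyant $N_+$ sur $\bar N_+$), et la minimalit\'e d\'ej\`a d\'emontr\'ee des $\bar f_i$ force alors $\lambda_i = 0$ pour tout $i$.

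Je n'anticipe pas d'obstacle s\'erieux dans cette d\'emarche~: l'unique v\'erification \`a mener avec soin est la concordance, bi-degr\'e par bi-degr\'e, des $f_i$ avec les $\bar f_i$ intervenant dans la preuve du corollaire~\ref{intersection complete}, ce qui se lit imm\'ediatement sur les bi-degr\'es attribu\'es ci-dessus, les termes en $A$ et $B$ apparaissant dans $\euler, \euler', \euler'', \delb$ et dans les relations $(\Zrm i)$ \'etant simplement tu\'es par la sp\'ecialisation $A=B=0$.
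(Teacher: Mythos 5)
Votre strat\'egie est essentiellement celle que le texte invoque implicitement~: r\'eduire modulo l'id\'eal bi-homog\`ene maximal $\mG=\langle A,B,\s,\pi,\Sig,\Pi\rangle$ pour retomber sur les $\bar{f}_i\in\kb[E,E',E'',D]$ dont la minimalit\'e a d\'ej\`a \'et\'e \'etablie dans la preuve du corollaire~\ref{intersection complete}, ce qui donne bien $\bigl(\sum_i \kb f_i\bigr)\cap\bigl(\sum_i N_+ f_i\bigr)=0$. Cette partie est correcte et fid\`ele \`a la preuve du texte.

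En revanche, la parenth\`ese justifiant la minimalit\'e du syst\`eme des $10$ g\'en\'erateurs (\og dans chaque bi-degr\'e o\`u appara\^{\i}t un nouveau g\'en\'erateur, $Z_+^2$ est de bi-degr\'e total strictement sup\'erieur \fg) est inexacte~: par exemple $\delb$ est de bi-degr\'e $(2,2)$, or $Z_+^2$ y est loin d'\^etre nul puisqu'il contient $\euler^2$, $A^2$, $AB$, $B^2$, $A\euler$, $B\euler$ et $\s\Sig$. La conclusion reste vraie mais demande un autre argument~: la d\'ecomposition $Z=P\oplus P\euler\oplus P\euler^2\oplus P\delb\oplus P\delb\euler\oplus P\delb\euler^2\oplus P\euler'\oplus P\euler''$ du th\'eor\`eme~\ref{theo:Q} montre que $\delb$, $\euler'$ et $\euler''$ sont des \'el\'ements d'une $P$-base distincts de $1$, $\euler$, $\euler^2$~; comme tout \'el\'ement de $Z_+^2$ de bi-degr\'e $(2,2)$ (resp. $(1,3)$, resp. $(3,1)$) a ses composantes selon cette base concentr\'ees sur $1,\euler,\euler^2$ (aucun produit de deux \'el\'ements de $Z_+$ ne peut produire une composante non nulle selon $\delb$, $\euler'$ ou $\euler''$ en ce bi-degr\'e), on conclut \`a l'ind\'ependance modulo $Z_+^2$, et de l\`a \`a ce que le syst\`eme des $10$ g\'en\'erateurs est minimal. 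Le d\'ecompte $9>10-6$ pour exclure l'intersection compl\`ete fonctionne alors comme vous l'indiquez.
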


\begin{proof}
Cela d\'ecoule imm\'ediatement du th\'eor\`eme~\ref{theo:Q} et du corollaire 
\ref{intersection complete}. 
\end{proof}

\bigskip

\begin{coro}\label{euler non pluss}
Le polyn\^ome minimal de $\euler$ sur $P$ est
\begin{multline*}
 \tb^8 - 2(\s\Sig+4A^2+4B^2)~\tb^6 
+ \bigl(\s^2\Sig^2 + 2(\s^2\Pi + \Sig^2\pi -8\pi\Pi) + 8 (A^2+B^2)\s\Sig +16(A^2-B^2)^2\bigr)~\tb^4 \\
- 2\bigl((\s\Sig+4A^2-4B^2)(\s^2\Pi + \Sig^2\pi)-8\s\Sig\pi\Pi+2B^2 \s^2\Sig^2\bigr)~\tb^2 
+ (\s^2\Pi - \Sig^2\pi)^2.
\end{multline*}
\end{coro}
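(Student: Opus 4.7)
The plan is to imitate the strategy of the proof of Corollary~\ref{euler non}, replacing the relations $(1)$--$(9)$ of Theorem~\ref{theo:invariants B2} by their deformed analogues $(\Zrm 1)$--$(\Zrm 9)$ from~(\ref{relations Q}). First I would multiply relation $(\Zrm 9)$ on both sides by $\euler$ and use $(\Zrm 1)$ and $(\Zrm 2)$ to rewrite $\s\,\euler\euler' + \Sig\,\euler\euler''$ as $\s^2\Pi + \Sig^2\pi + 2\s\Sig\,\delb$. This gives the identity
$$\delb\,(4\euler^2 - 2\s\Sig) \;=\; \euler^4 - (\s\Sig + 4A^2 - 4B^2)\,\euler^2 + \s^2\Pi + \Sig^2\pi,$$
which is the exact analogue, but with the extra term $4(A^2-B^2)\euler^2$, of the intermediate identity appearing in the proof of Corollary~\ref{euler non}.

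Next I would square this identity and use relation $(\Zrm 5)$, namely $\delb^2 = \pi\Pi + B^2\euler^2$, to eliminate $\delb^2$. One is left with a relation of the form
$$(\pi\Pi + B^2\euler^2)(4\euler^2 - 2\s\Sig)^2 \;=\; \bigl(\euler^4 - (\s\Sig + 4A^2 - 4B^2)\,\euler^2 + \s^2\Pi + \Sig^2\pi\bigr)^2,$$
which is polynomial of degree $8$ in $\euler$ (of degree $4$ in the variable $\euler^2$), with coefficients in $P$, and whose leading term is $\euler^8$. Expanding both sides and collecting terms in powers of $\euler^2$ produces exactly the polynomial of the statement; the constant term $(\s^2\Pi - \Sig^2\pi)^2$ comes from the Sophie Germain--style factorization $(\s^2\Pi + \Sig^2\pi)^2 - 4\s^2\Sig^2\pi\Pi$, while the coefficient $16(A^2-B^2)^2$ in the degree--$\tb^4$ term comes from the square of $4A^2-4B^2$ in the expansion of $\alpha^2$, with $\alpha = \s\Sig + 4A^2 - 4B^2$.

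To conclude, I would invoke~(\ref{eq:minimal-euler}), which guarantees that the minimal polynomial of $\euler$ over $P$ has degree exactly $|W|=8$. Since the polynomial I have produced is monic of degree $8$, annihilates $\euler$, and has coefficients in $P$, it must coincide with the minimal polynomial.

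The only real obstacle is the bookkeeping in the polynomial expansion, i.e.\ verifying that the degree--$\tb^6$ coefficient is $-2(\s\Sig + 4A^2 + 4B^2)$ (not $-2(\s\Sig + 4A^2 - 4B^2)$, as a naive reading might suggest): the extra $+16B^2$ comes from the term $16B^2\euler^2\cdot\euler^2$ on the left-hand side after squaring. A similar mild surprise occurs in the degree--$\tb^4$ and degree--$\tb^2$ coefficients, where the contributions of $B^2\s\Sig$ and $B^2\s^2\Sig^2$ from the factor $(4\euler^2 - 2\s\Sig)^2$ must be added to those coming from $\alpha^2$ and $\alpha\beta$ respectively. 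Beyond this careful sign-tracking, the computation is entirely mechanical and reduces to the same scheme as in the $c=0$ case treated in Corollary~\ref{euler non}.
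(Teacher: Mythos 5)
Your proposal follows exactly the paper's own proof: the paper states that the argument proceeds by the same scheme as Corollary~\ref{euler non}, replacing relations $(1)$--$(9)$ with $(\Zrm 1)$--$(\Zrm 9)$, which is precisely what you do. I verified the intermediate identity $\delb(4\euler^2-2\s\Sig)=\euler^4-(\s\Sig+4A^2-4B^2)\euler^2+\s^2\Pi+\Sig^2\pi$ and the subsequent expansion coefficient by coefficient; your bookkeeping (including the cancellation $8(A^2-B^2)\s\Sig+16B^2\s\Sig=8(A^2+B^2)\s\Sig$ and the constant term $(\s^2\Pi-\Sig^2\pi)^2$) is correct.
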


\begin{proof}
La preuve suit exactement le m\^eme sch\'ema que la preuve du corollaire~\ref{euler non}, 
mais en partant des relations $(\Zrm 1)$,\dots, $(\Zrm 9)$ au lieu des relations 
(1),\dots, (9).
\end{proof}

\bigskip

\noindent{\bf Remerciements --- } 
Les calculs ci-dessus (v\'erification du fait que $\euler$, $\euler'$, $\euler''$ et $\delb$ sont centraux et 
v\'erification des relations $(\Zrm 1)$,\dots, $(\Zrm 9)$) ont \'et\'e effectu\'es {\it \`a la main}. 
Malgr\'e toute notre bonne volont\'e, la lourdeur des calculs fait qu'il pourrait \^etre envisageable 
qu'une erreur se soit gliss\'ee insidieusement. Cependant, Ulrich Thiel a \'ecrit des programmes 
(bas\'es sur le logiciel {\tt MAGMA}, voir~\cite{magma}) permettant de calculer dans l'alg\`ebre $\Hb$~: ainsi, il a pu 
v\'erifier {\it ind\'ependamment} que les \'el\'ements $\euler$, $\euler'$, $\euler''$ et $\delb$ 
sont centraux et que les relations $(\Zrm 1)$,\dots, $(\Zrm 9)$ sont satisfaites. Nous tenons 
\`a remercier chaleureusement Ulrich Thiel pour avoir effectu\'e ce travail de v\'erification~: 
il a aussi v\'erifi\'e que le polyn\^ome minimal de $\euler$ est bien donn\'e 
par le corollaire~\ref{euler non pluss}.\finl

\bigskip

\section{Familles de Calogero-Moser}

\medskip

La table~\ref{table omega} donne la valeur de $\O_\psi$ (pour $\psi \in \Irr(W)$) en les g\'en\'erateurs 
de la $P$-alg\`ebre $Z$. Elles sont obtenues soit en calculant effectivement 
l'action des \'el\'ements $\euler$, $\euler'$, $\euler''$ et $\delb$ 
ou alors en utilisant la proposition~\ref{action euler verma} et en 
utilisant ensuite les relations $(\Zrm 1)$,\dots, $(\Zrm 9)$ 
(sachant que $\O_\psi(\s)=\O_\psi(\pi)=\O_\psi(\Sig)=\O_\psi(\Pi)=0$). 

\bigskip

\begin{table}\refstepcounter{theo}
\centerline{
\begin{tabular}{@{{{\vrule width 2pt}\,\,\,}}c@{{\,\,\,{\vrule width 2pt}\,\,\,}}c|c|c|c@{{\,\,\,{\vrule width 2pt}}}}
\hlinewd{2pt}
\petitespace
$z \in Z$ & $~\euler~$ & $\euler'$ & $\euler''$ & $\delb$ 
\vphantom{$\DS{\frac{a}{A_{\DS{A}}}}$}\\
\hlinewd{2pt}
\petitespace $\O_1 $     & $-2(B+A)$ & $0$ & $0$ & $2B(B+A)$ \\
\hline
\petitespace $\O_{\e_s}$ & $-2(B-A)$ & $0$ & $0$ & $2B(B-A)$ \\
\hline
\petitespace $\O_{\e_t}$ & $2(B-A)$ & $0$ & $0$ & $2B(B-A)$ \\
\hline
\petitespace $\O_\e$     &$2(B+A)$ & $0$ & $0$ & $2B(B+A)$ \\
\hline
\petitespace $\O_\chi$   & $0$      & $0$ & $0$ & $0$ \\
\hlinewd{2pt}
\end{tabular}}

\bigskip

\caption{Table des caract\`eres centraux de $\Hbov$}\label{table omega}
\end{table}

\bigskip

Soit maintenant $k$ un corps commutatif et fixons un morphisme $\kb[\CCB] \to k$. 
On note $a$ l'image de $A$ et $b$ l'image de $B$ (dans $k$). Le tableau pr\'ec\'edent 
permet de calculer imm\'ediatement les partitions de $\Irr(W)$ en 
$k$-familles de Calogero-Moser, en fonction des valeurs de $a$ et $b$. 
Le r\'esultat (classique) est donn\'e dans la table~\ref{table calogero}.

\bigskip

\begin{table}\refstepcounter{theo}
\centerline{
\begin{tabular}{@{{{\vrule width 2pt}\,\,\,}}c@{{\,\,\,{\vrule width 2pt}\,\,\,}}c@{{\,\,\,{\vrule width 2pt}}}}
\hlinewd{2pt}
\petitespace
Conditions & $k$-familles
\vphantom{$\DS{\frac{a}{A_{\DS{A}}}}$}\\
\hlinewd{2pt}
\petitespace $a=b=0$     & $\Irr(W)$ \\
\hline
\petitespace $a=0$, $b \neq 0$ & $\{1,\e_s\}$,\quad $\{\e_t,\e\}$ \quad 
et \quad$\{\chi\}$ \\
\hline
\petitespace $a \neq 0$, $b=0$ & $\{1,\e_t\}$,\quad $\{\e_s,\e\}$ \quad 
et \quad $\{\chi\}$ \\
\hline
\petitespace $a=b \neq 0$     &  $\{1\}$,\quad $\{\e\}$ \quad 
et \quad $\{\e_s,\e_t,\chi\}$ \\
\hline
\petitespace $a=-b \neq 0$     &  $\{\e_s\}$,\quad $\{\e_t\}$ \quad 
et \quad $\{1,\e,\chi\}$ \\
\hline
\petitespace $ab \neq 0$, $a^2\neq b^2$     &  
$\{1\}$,\quad $\{\e_s\}$,\quad $\{\e_t\}$,\quad $\{\e\}$ \quad 
et \quad $\{\chi\}$ \\
\hlinewd{2pt}
\end{tabular}}

\bigskip

\caption{$k$-familles de Calogero-Moser}\label{table calogero}
\end{table}

\bigskip

\section{Le groupe ${\boldsymbol{G}}$}

\medskip

Puisque $w_0=-\Id_V$ appartient \`a $W$ (et puisque les r\'eflexions de $W$ sont 
d'ordre $2$), les r\'esultats de la section~\ref{sec:w0} s'appliquent. 
En particulier, si $\t_0=(-1,1,\e) \in \kb^\times \times \kb^\times \times W^\wedge$, 
alors $\t_0$ peut \^etre vu comme l'\'el\'ement $w_0 \in W \injto G$ et est central dans $G$ 
(voir la proposition~\ref{tau 0 in G}). Ainsi, d'apr\`es~(\ref{inclusion w0}), on a 
$$G \subset W_4 ,$$
o\`u $W_4$ est le sous-groupe de $\SG_W$ form\'e des permutations $\s$ de $W$ 
telles que $\s(-x)=-\s(x)$ pour tout $x \in W$. On note $N_4$ le sous-groupe 
(distingu\'e) de $W_4$ form\'e des permutations $\s \in W_4$ 
telles que $\s(x)\in \{x,-x\}$ pour tout $x \in W$. Alors, en notant $\mu_2=\{1,-1\}$, 
$$N_4 \simeq (\mu_2)^4.$$
De plus,
$$|W_4|=384\qquad\text{et}\qquad |N_4|=16.$$
Notons $\e_W : \SG_W \to \mu_2=\{1,-1\}$ la signature et posons $W_4' = W_4 \cap \Ker \e_W$ et 
$N_4'=W_4' \cap N_4$. Alors
$$N_4' =\{(\eta_1,\eta_2,\eta_3,\eta_4) \in (\mu_2)^4~|~\eta_1\eta_2\eta_3\eta_4=1\} \simeq (\mu_2)^3.$$
De plus,
$$|W_4'|=192\qquad\text{et}\qquad |N_4'|=8.$$
Rappelons que $H$ s'identifie au stabilisateur, 
dans $G$, de $1 \in W$. De plus, $G$ contient l'image de $W \times W$ dans $\SG_W$. 
Cette image, isomorphe \`a $(W \times W)/\D\Zrm(W)$, est d'ordre $32$ et 
son intersection avec $H$, isomorphe \`a $\D W/\D\Zrm(W) \simeq W/\Zrm(W)$, 
est d'ordre $4$. Les \'el\'ements $(s,s)$ et $(t,t)$ 
de $W \times W$ s'envoient sur des \'el\'ements distincts de $N_4$. Donc $H \cap N_4$ est un sous-groupe 
de $N_4'$ d'ordre $4$. Puisque $(w_0,1)$ s'envoie aussi sur un \'el\'ement de 
$N_4'$ qui n'est pas dans $H$, on en d\'eduit que 
$$N_4' \subset G.$$

Notons $f(\tb) \in P[\tb]$ l'unique polyn\^ome unitaire de degr\'e $4$ tel que $f(\euler^2)=0$ 
(ce polyn\^ome est donn\'e par le corollaire~\ref{euler non pluss}). En vertu 
de~(\ref{discriminant carre}), on a 
$$\disc(f(\tb^2)) = 256 ~\disc(f)^2 \cdot (\s^2\Pi-\Sig^2\pi)^2,$$
et donc le discriminant du polyn\^ome minimal de $\euler$ est un carr\'e 
dans $P$. Ainsi, 
$$G \subseteq W_4'.$$
Nous allons montrer que cette inclusion est une \'egalit\'e.

\bigskip

\begin{theo}\label{galois B2}
$G=W_4'$.
\end{theo}

\begin{proof}
Il suffit de montrer que $|G|=192$. On sait d\'ej\`a que 
$N_4' \subset G \subset W_4'$, ce qui montre que $G \cap N_4 = N_4'$. 
Pour montrer le th\'eor\`eme, il suffit de montrer que $G/N_4' \simeq \SG_4$. 
Or, $G/N_4'=G/(G \cap N_4)$ est le groupe de Galois du polyn\^ome 
$f$. Il suffit donc de montrer que le groupe de Galois de $f$ sur 
$K$ est $\SG_4$. Notons $\Gba$ ce groupe de Galois. 

Posons $\pG=\langle \s-2, \Sig +2,A-1,B,\Pi-\pi \rangle$. Alors $\pG$ est un id\'eal premier 
et $P/\pG \simeq \kb[\pi]$. Notons $\fba$ la r\'eduction de 
$f$ modulo $\pG$. Alors
$$\fba(\tb)= \tb(\tb^3 + ( 16\,\pi-16\,{\pi}^{2}) \,\tb-64\,{\pi}^{2}).$$
Donc, d'apr\`es~(\ref{discriminant t}), on a 
$$\disc(\fba)=(64 \pi^2)^2 \cdot \left(- 4 ( 16\,\pi-16\,{\pi}^{2})^3 - 27 \cdot (-64\,{\pi}^{2})^2\right)=2^{24} \,\pi^7 (\pi-4)(2\,\pi+1)^2.$$
Donc le discriminant de $\fba$ n'est pas un carr\'e dans $\kb[\pi]$, 
ce qui implique que le discriminant de $f$ n'est pas un carr\'e dans 
$P$. Donc $\Gba$ n'est pas contenu dans le groupe 
altern\'e $\AG_4$.

Puisque $f$ est irr\'eductible, $\Gba$ est un sous-groupe transitif 
de $\SG_4$. En particulier, $4$ divise $|\Gba|$. D'autre part, 
si $c \in \CCB$ est tel que $c_s=c_t=1$, alors, d'apr\`es la table~\ref{table calogero} 
et le th\'eor\`eme~\ref{theo cellules familles}, 
$G$ admet un sous-groupe (le groupe d'inertie de 
$\rGba_c$) qui admet une orbite de longueur $6$. Donc $3$ divise 
$|G|$ et donc $3$ divise aussi $|\Gba|$. Ainsi, 
$12$ divise $|\Gba|$ et, puisque $\Gba \not\subset \AG_4$, cela force 
$\Gba = \SG_4$.
\end{proof}

\begin{rema}
Rappelons que $W_4$ est un groupe de Weyl de type $B_4$ et que 
$W_4'$ est un groupe de Weyl de type $D_4$.\finl
\end{rema}

\section{Cellules de Calogero-Moser, $\Cb\Mb$-caract\`eres cellulaires}

\medskip

\subsection{R\'esultats} 
L'objectif de cette section est de d\'emontrer les conjectures~\BIL~et~\GAUCHE~pour $W$. 
Si $a$ et $b$ sont des nombres r\'eels strictement positifs et si $c_s=a$ et $c_t=b$, 
alors la description des cellules de Kazhdan-Lusztig bilat\`eres, \`a gauche, \`a droite, des 
familles de Kazhdan-Lusztig et des $\kl$-caract\`eres cellulaires est facile et est faite, 
par exemple, dans~\cite{lusztig}. Les diff\'erents cas \`a consid\'erer sont $a > b$, $a=b$ et 
$a < b$~: en utilisant l'automorphisme de $W$ qui \'echange $s$ et $t$, on se ram\`ene 
ais\'ement au cas o\`u $a \ge b > 0$. Les conjectures~\BIL~et~\GAUCHE~d\'ecoulent alors 
de la description des cellules de Calogero-Moser bilat\`eres, \`a gauche, \`a droite, des familles 
de Calogero-Moser et des $\calo$-caract\`eres cellulaires donn\'ee dans la table~\ref{table:conjectures-b2}~:

\medskip

\begin{theo}\label{theo:conjectures-b2}
Soit $c \in \CCB$, posons $a=c_s$ et $b = c_t$ et supposons que $ab\neq 0$. Alors 
il existe un choix d'id\'eaux premiers $\rG_c^\gauche \subset \rGba_c$ tel que les 
$c$-cellules de Calogero-Moser bilat\`eres, \`a gauche, les $c$-familles 
de Calogero-Moser et les $\calo$-caract\`eres $c$-cellulaires soient donn\'es par 
la table~\ref{table:conjectures-b2}.

En cons\'equence, les conjectures~\BIL~et~\GAUCHE~sont v\'erifi\'ees si $W$ est de type $B_2$.
\end{theo}

\medskip

\noindent{\sc Notations - } 
Dans la table~\ref{table:conjectures-b2}, on a pos\'e~:
$$\G_\chi=\{t,st,ts,sts\},~ \G_\chi^+=\{t,st\},~ \G_\chi^-=\{ts,sts\},~
\G_s=\{s,ts,sts\}\text{~et~} \G_t=\{t,st,tst\}.$$
D'autre part~:
\begin{itemize}
\item[$\bullet$] $W_3'=H$ d\'esigne le stabilisateur de $1 \in W$ dans $G=W_4'$ et $W_2'$ d\'esigne le stabilisateur de $s$ dans 
$W_3'$. Notons que $W_3'$ (respectivement $W_2'$) est un groupe de Weyl de type 
$D_3$ (respectivement $D_2=A_1 \times A_1$). 

\item[$\bullet$] $\SG_3$ d\'esigne le sous-groupe de $W_3'$ qui stabilise $\G_s$ 
(c'est aussi le stabilisateur de $\G_t$)~: il est bien isomorphe au groupe sym\'etrique 
de degr\'e $3$. 

\item[$\bullet$] $\ZM/2\ZM$ d\'esigne le stabilisateur, dans $W_2'$, de $\G_\chi^+$ (ou $\G_\chi^-$).\finl
\end{itemize}

\vskip1cm
\begin{table}\refstepcounter{theo}\label{table:conjectures-b2}
{\footnotesize
\centerline{\begin{tabular}{@{{{\vrule width 2pt}}}c@{{{\vrule width 2pt}}}c@{{{\vrule width 1pt}}}c|c|c@{{{\vrule width 1pt}}}c|c
@{{{\vrule width 1.5pt}}}c|c@{{{\vrule width 1pt}}}c|c|c@{{{\vrule width 2pt}}}}
\hlinewd{2pt}
\multirow{2}{*}{~Conditions~} & 
\multirow{2}{*}{~$\Dba_c=\Iba_c$~} & 
\multicolumn{3}{c@{{{\vrule width 1pt}}}}{\trespetitespace Cellules bilat\`eres} & 
\multirow{2}{*}{$\psi$} & \multirow{2}{*}{$\dim_\kb \LC_c(\psi)$~} & 
\multirow{2}{*}{$I_c^\gauche$} & \multirow{2}{*}{$D_c^\gauche$} & 
\multicolumn{3}{c@{{{\vrule width 2pt}}}}{Cellules \`a gauche} \\
\cline{3-5}  \cline{10-12}
&& \trespetitespace$\G$ & $|\G|$ & $\Irr_\G(W)$ & && &&  $C$ & $|C|$ & 
$\isomorphisme{C}_c^\calo$  \\
\hlinewd{2pt}
\multirow{6}{*}{$\stackrel{\DS{a^2 \neq b^2}}{ab \neq 0}$} & \multirow{5}{*}{~$W_2'$~} 
& \trespetitespace$1$ & 1 & $\unb_W$ & ~$\unb_W$~ & $8$ & 
\multirow{6}{*}{~$\ZM/2\ZM$} & \multirow{6}{*}{$W_2'$~} & 1 & 1 & $\unb_W$ \\
\clinewd{0.1pt}{3-7} \clinewd{0.1pt}{10-12}
&& \trespetitespace$w_0$ & 1 & $\e$ & $\e$ & $8$ & && 
$w_0$ & 1 & $\e$  \\
\clinewd{0.1pt}{3-7} \clinewd{0.1pt}{10-12}
&& \trespetitespace$s$ & 1 & $\e_s $  & $\e_s$ & $8$ & && 
$s$ & 1 & $\e_s$ \\
\clinewd{0.1pt}{3-7} \clinewd{0.1pt}{10-12}
&& \trespetitespace$w_0s$ & 1 & $\e_t $  & $\e_t$ & $8$ & && 
$w_0 s$ & 1 & $\e_t$ \\
\clinewd{0.1pt}{3-7} \clinewd{0.1pt}{10-12}
&& \multirow{2}{*}{$\G_\chi$} & \multirow{2}{*}{4} & \multirow{2}{*}{$\chi$} & \multirow{2}{*}{$\chi$} & \multirow{2}{*}{$8$} 
&&& \trespetitespace$\G_\chi^+$ & 2 & $\chi$ 
\\
\clinewd{0.1pt}{10-12}
&&&&&&&&& \trespetitespace$\G_\chi^-$ & 2 & $\chi$ 
\\
\hlinewd{2pt}
\multirow{5}{*}{$\stackrel{\DS{a=b}}{ab \neq 0}$} & \multirow{5}{*}{~$W_3'$~} & \trespetitespace$1$ & 1 & $\unb_W$ & ~$\unb_W$~ & $8$ 
& \multirow{5}{*}{$\SG_3$} & \multirow{5}{*}{$\SG_3$} & 
1 & 1 & $\unb_W$ \\
\clinewd{0.1pt}{3-7} \clinewd{0.1pt}{10-12}
&& \trespetitespace$w_0$ & 1 & $\e$ & $\e$ & $8$ & && 
$w_0$ & 1 & $\e$  \\
\clinewd{0.1pt}{3-7} \clinewd{0.1pt}{10-12}
&& \multirow{3}{*}{~$W \setminus\{1,w_0\}$} & \multirow{3}{*}{6} & 
\multirow{3}{*}{$\e_s, \e_t, \chi$} & \trespetitespace$\e_s$ & $1$ & && 
$\G_s$ & 3 & $\e_s + \chi$  \\
\clinewd{0.1pt}{6-7} \clinewd{0.1pt}{10-12}
&& & && \trespetitespace$\e_t$ & $1$ & && 
$\G_t$ & 3 & $\e_t+\chi$  \\
\clinewd{0.1pt}{6-7} 
&& & && \trespetitespace$\chi$ & $6$ & &&&&\\
\hlinewd{2pt}
\end{tabular}}
}

\vskip0.5cm

\caption{Cellules de Calogero-Moser, familles, caract\`eres cellulaires}
\end{table}

Nous allons maintenant nous consacrer \`a la preuve du th\'eor\`eme~\ref{theo:conjectures-b2}~: 
nous commencerons par \'etudier le cas g\'en\'erique, en descendant la cha\^{\i}ne d'id\'eaux premiers 
$\pGba \supset \pG^\gauche \supset \langle \pi \rangle$. L'utilisation de l'id\'eal premier $\langle \pi \rangle$ 
nous servira \`a lever une ambigu\"{\i}t\'e pour le calcul des cellules de Calogero-Moser \`a gauche. Il est naturel de 
se demander si cette m\'ethode ne peut pas se g\'en\'eraliser, car l'id\'eal premier $\langle \pi \rangle$ 
n'est pas quelconque~: c'est l'id\'eal d'annulation d'une $W$-orbite d'hyperplans de r\'eflexions dans $V^*$. 

Apr\`es avoir \'etudi\'e le cas g\'en\'erique, nous sp\'ecialiserons nos param\`etres pour en d\'eduire le 
th\'eor\`eme~\ref{theo:conjectures-b2}. Le point le plus d\'elicat est la d\'etermination 
des cellules \`a gauche (notamment la proposition~\ref{prop:b2-gauche-enfin}). 

\bigskip

\boitegrise{{\bf Notation.} {\it Si $z \in Z$ (ou $q \in Q$), nous noterons $F_z(\tb)$ (ou $F_q(\tb)$) 
le polyn\^ome minimal de $z$ (ou $q$) sur $P$. Si $F(\tb) \in P[\tb]$, nous noterons $\Fba(\tb)$ (respectivement 
$F^\gauche(\tb)$, respectivement $F^\pi(\tb)$) la r\'eduction de $F(\tb)$ modulo $\pGba$ (respectivement 
$\pG^\gauche$, respectivement $\langle \pi \rangle$).}}{0.75\textwidth}

\bigskip

\subsection{Cellules bilat\`eres g\'en\'eriques} 
Cette section contient le travail pr\'eparatoire \`a la preuve du th\'eor\`eme~\ref{theo:conjectures-b2}, 
qui en d\'ecoulera assez facilement. 
%
%
%
On pose $\eulerq=\copie(\euler)$, $\eulerq'=\copie(\euler')$, $\eulerq''=\copie(\euler'')$ et 
$\d=\copie(\delb)$. Rappelons que $\pGba=\langle \s\,\pi,\Sig,\Pi \rangle_P$, que 
$\zGba = \Ker(\O_1)$ et que $\qGba=\copie(\zGba)$~: en vertu de la table~\ref{table omega}, on a 
\equat\label{eq:qba-b2}
\qGba=\pGba Q + \langle \eulerq + 2(A+B), \d-2B(A+B), \eulerq',\eulerq'' \rangle_Q.
\endequat
On a $Q/\qGba=P/\pGba=\kb[A,B]$. Rappelons que 
\equat\label{fba euler}
\Fba_\euler(\tb)=\tb^4(\tb-2(A+B))(\tb-2(B-A))(\tb+2(A+B))(\tb+2(B-A)).
\endequat
Rappelons aussi que, puisque $w_0=-\Id_V \in W$ et que $W$ est engendr\'e par des r\'eflexions d'ordre $2$, on a 
$\eulerq_{vw_0}=-\eulerq_v$ pour tout $v \in W$ (voir la proposition~\ref{tau 0 in G}). 

\bigskip

\begin{lem}\label{rba b2}
Soit $v \in W \setminus\{1,w_0\}$. Alors il existe un unique id\'eal premier $\rGba$ 
de $R$ au-dessus de $\qGba$ tel que $\eulerq_v \equiv 2(A-B) \mod \rGba$. 
\end{lem}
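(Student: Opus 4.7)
The plan is to leverage the explicit factorization of $\Fba_\euler$, the symmetry $\eulerq_{w_0 w} = -\eulerq_w$, and the Galois-theoretic action of $H$ on primes above $\qGba$.

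First, I would reduce $F_\euler(\tb)$ modulo $\pGba$ to get $\Fba_\euler(\tb) = \tb^4(\tb^2 - 4(A+B)^2)(\tb^2 - 4(A-B)^2)$, so that $\pm 2(A\pm B)$ are simple roots. Since $F_\euler(\tb) = \prod_{w \in W}(\tb - \eulerq_w)$ in $R[\tb]$ by equation (\ref{euler minimal}), reducing mod any prime $\rGba$ above $\qGba$ shows that the multiset $\{\bar\eulerq_w\}_{w \in W}$ equals the multiset of roots of $\Fba_\euler$. The simplicity of $2(A-B)$ as a root then guarantees that \emph{at most} one element $v \in W$ satisfies $\bar\eulerq_v = 2(A-B)$. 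From the defining relation $\eulerq \equiv -2(A+B) \mod \qGba$ (see equation (\ref{eq:qba-b2})) combined with Proposition~\ref{tau 0 in G}, one gets $\bar\eulerq_1 = -2(A+B)$ and $\bar\eulerq_{w_0} = 2(A+B)$, so neither of these hits $2(A-B)$. Since all 8 root-values (with multiplicity) are attained and $\pm 2(A+B)$ are taken by $\eulerq_{w_0}$ and $\eulerq_1$, exactly one $v(\rGba) \in W \setminus \{1, w_0\}$ achieves $\bar\eulerq_{v(\rGba)} = 2(A-B)$.

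For existence and uniqueness across primes, I would use the transitive $H$-action on primes above $\qGba$. From equation (\ref{eq:iba-dba-h}) we have $\Iba \subset \Dba \subset H$, so the number of primes above $\qGba$ equals $|H|/|\Dba_{\rGba_0}|$ for any fixed $\rGba_0$. The map $\rGba \mapsto v(\rGba)$ is $H$-equivariant for the natural action of $H$ on $W \simeq G/H$, which fixes $1$ tautologically and $w_0$ by the centrality of $\t_0 = w_0$ in $G$ (Proposition~\ref{tau 0 in G}). Moreover, since knowing $v(\rGba) = v$ forces $\bar\eulerq_{w_0 v} = -2(A-B)$ and $\bar\eulerq_w = 0$ for the remaining four $w \in W$, the reductions of \emph{all} generators $\eulerq_w$ are determined by $v$; consequently the residue field $k_R(\rGba)$ coincides with $\kb(A,B)$, forcing $\Dba_{\rGba_0} = \Iba_{\rGba_0}$, and the map $\rGba \mapsto v(\rGba)$ is injective because $\rGba$ is recoverable from the reductions of a generating family of $R$.

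It remains to show the image is all of $W \setminus \{1, w_0\}$, i.e.\ that $H$ acts transitively on these six elements. This follows by explicit inspection of $H \simeq G/W \cdot w_0 H$ inside $G = W_4'$ (type $D_4$, order $192$, so $|H| = 24$): the subgroup $\iota(1 \times W) \subset H$ translates $v \mapsto v w^{-1}$ on $W$ for $w \in W$, and together with the additional elements of $H$ coming from the order-$24$ stabilizer one checks that the six non-central elements form a single orbit. Comparing cardinalities then gives $|\Dba_{\rGba_0}| = 4$ and the map is a bijection.

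The main obstacle is the combinatorial verification that $H$ acts transitively on $W \setminus \{1, w_0\}$; equivalently, that the stabilizer in $H$ of each element has order $4$. I expect this to reduce to an elementary computation in the Weyl group $W_4'$ of type $D_4$, using the known generators of $H$ coming from the embedding $\iota : W \times W \to G$ together with the constraint that $H$ preserves the pairing $\{w, w_0 w\}$. If this transitivity fails, the lemma fails; but it is consistent with the structural claims already established about $G$, so I anticipate it is a short check rather than a deep fact.
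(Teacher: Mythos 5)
Your first half — reducing $F_{\euler}$ modulo $\pGba$, reading off the simple roots $\pm 2(A\pm B)$, and using $\eulerq_{w_0 w} = -\eulerq_w$ to pin down $\bar\eulerq_1$ and $\bar\eulerq_{w_0}$ — matches the paper's existence argument, as does the appeal to transitivity of $H$ on $W \setminus \{1,w_0\}$ (which follows from $G = W_4'$, Th\'eor\`eme~\ref{galois B2}). However, your uniqueness argument has a genuine gap. You assert that because the residues of all $\eulerq_w$ are determined by $v$, the residue field $k_R(\rGba)$ equals $\kb(A,B)$, hence $\Dba = \Iba$, hence $\rGba$ is ``recoverable from the reductions of a generating family.'' But the $\eulerq_w$ only generate the \emph{fraction field} $\Mb$ over $\Kb$; the ring $R$ is the integral closure of $R' := P[\eulerq_w \,:\, w \in W]$, and a priori several primes of $R$ can lie over the same prime of $R'$, with residue field a proper extension of $k_{R'}(\rG') = \kb(A,B)$. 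Indeed $k_R(\rGba) = \kb(A,B)$ is \emph{equivalent} to $\Dba = \Iba$ (Th\'eor\`eme~\ref{bourbaki}), so invoking the residue-field identity to prove $\Dba = \Iba$ is circular. What the paper actually establishes is the chain $\Iba \subset \Dba \subset I$ where $I = \mathrm{Stab}_H(v)$ has order $4$, and then imports a lower bound $|\Iba| \geq 4$ from Th\'eor\`eme~\ref{theo cellules familles}(d): the generic two-sided cell covering $\{\chi\}$ has cardinality $\chi(1)^2 = 4$ and is an $\Iba$-orbit. That cell-theoretic input is what closes the argument, and your proposal has no substitute for it.

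A secondary slip: you write that $\iota(1 \times W) \subset H$ acts by $v \mapsto v w^{-1}$. But $H$ is the stabilizer of $1 \in W$, and $\iota(1,w)(1) = w^{-1}$, so $\iota(1 \times W) \cap H = 1$; the subgroup of $\iota(W\times W)$ lying in $H$ is $\iota(\D W)$, which acts by conjugation and is \emph{not} transitive on $W\setminus\{1,w_0\}$ (its orbits are the conjugacy classes $\{s,s'\}$, $\{t,t'\}$, $\{w,w'\}$). So transitivity of $H$ genuinely needs the full group $G = W_4'$ and cannot be seen from $\iota(W\times W)$ alone; the paper simply invokes Th\'eor\`eme~\ref{galois B2} for this.
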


\begin{proof}
Montrons tout d'abord l'existence. 
Soit $\rGba'$ un id\'eal premier de $R$ au-dessus de $\qGba$~: 
alors $\eulerq \equiv -2(A+B) \mod \rGba'$ et 
$\eulerq_{w_0} \equiv 2(A+B) \mod \rGba'$. 
D'apr\`es~(\ref{fba euler}), il existe un unique \'el\'ement $v' \in W\setminus\{1,w_0\}$ 
tel que $\eulerq_{v'} \equiv 2(A-B) \mod \rGba$. 

Rappelons que $H$ est le stabilisateur de $1 \in W$ dans $G \subset \SG_W$~: 
c'est aussi le stabilisateur de $w_0$. 
Alors $H$ agit transitivement sur $W \setminus \{1,w_0\}$ 
(d'apr\`es le th\'eor\`eme~\ref{galois B2}) et donc il existe $\s \in H$ 
tel que $\s(v')=v$. Posons $\rGba=\s(\rGba')$. Alors 
$\rGba$ est un id\'eal premier de $R$ au-dessus de $\qGba$ (car $\s \in H$) 
et $\eulerq_v \equiv 2(A-B) \mod \rGba$. Cela termine 
la preuve de l'existence.

\medskip

Montrons maintenant l'unicit\'e. Soient donc $\rGba$ et $\rGba'$ deux id\'eaux 
premiers de $R$ au-dessus de $\qGba$ tels que 
$\eulerq_v - 2(A-B) \in \rGba \cap \rGba'$. Alors il existe 
$\s \in H$ tel que $\rGba' =\s(\rGba)$. On a donc 
$\eulerq_v \equiv \eulerq_{\s(v)} \equiv 2(B-A) \mod \rGba$. 
D'apr\`es~(\ref{fba euler}), on sait que $2(A-B)$ est une racine simple de $\fba(\tb)$, 
donc $\s(v)=v$. Par cons\'equent, $\s \in I$, o\`u 
$I$ est le stabilisateur de $v$ dans $H$. 
D'apr\`es le th\'eor\`eme~\ref{galois B2}, $I$ est le groupe 
de Klein agissant sur $W\setminus\{1,w_0,v,vw_0\}$ 
(notons que $|I|=4$). 

Notons $\Dba$ (respectivement $\Iba$) le groupe de d\'ecomposition 
(respectivement d'inertie) de $\rGba$ (dans $G$). D'apr\`es~(\ref{fba euler}), on a 
$\Iba \subset \Dba \subset I$ et il nous reste \`a montrer que $I =\Iba$. 
Or la cellule bilat\`ere g\'en\'erique recouvrant la famille de Calogero-Moser 
g\'en\'erique $\{\chi\}$ a pour cardinal $\chi(1)^2=4$, et c'est une orbite sous l'action de 
$\Iba$. Donc $|\Iba| \ge 4 = |I|$. D'o\`u le r\'esultat.
\end{proof}

\bigskip

Comme cons\'equence de la preuve du lemme pr\'ec\'edent, nous obtenons le r\'esultat suivant~:

\bigskip

\begin{coro}\label{dba=iba}
Soit $v \in W \setminus \{1,w_0\}$. Notons $\rGba$ l'unique id\'eal 
premier de $R$ au-dessus de $\qGba$ et tel que $\eulerq_v \equiv 2(A-B) \mod \rGba$. 
Notons $\Dba$ (respectivement $\Iba$) le groupe de d\'ecomposition (respectivement 
d'inertie) de $\rGba$ dans $G$. Alors~:
\begin{itemize}
\itemth{a} $\Dba=\Iba=\{\t \in G~|~\t(1)=1$ et $\t(v)=v\}
\simeq \ZM/2\ZM \times \ZM/2\ZM$. 

\itemth{b} $R/\rGba = Q/\qGba = P/\pGba \simeq \kb[A,B]$.

\itemth{c} Les cellules de Calogero-Moser bilat\`eres g\'en\'eriques 
(par rapport \`a $\rGba$) sont $\{1\}$, $\{w_0\}$, $\{v\}$, $\{vw_0\}$ 
et $W\setminus \{1,w_0,v,vw_0\}$. De plus, 
$\Irr_{\{1\}}(W)=\{\unb_W\}$, $\Irr_{\{w_0\}}(W)=\{\e\}$, 
$\Irr_{\{v\}}(W)=\{\e_s\}$, $\Irr_{\{vw_0\}}(W)=\{\e_t\}$ et $\Irr_{\{1\}}(W)=\{\chi\}$. 
\end{itemize}
\end{coro}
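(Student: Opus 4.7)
The plan is to read off assertion (a) directly from the inertia/decomposition estimate already obtained during the proof of Lemma~\ref{rba b2}, then to deduce (b) from (a) by an integral-closure argument, and finally to obtain (c) by pushing forward the $I$-orbit decomposition of $W$ through Theorem~\ref{theo cellules familles}.

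For (a), recall that in the course of proving Lemma~\ref{rba b2} we established $\Iba \subset \Dba \subset I$, where $I$ denotes the stabiliser of $v$ in $H$. A direct inspection of the action of $G=W_4'$ via Theorem~\ref{galois B2}, using that $H$ is the point stabiliser of $1$ (hence also of $w_0$, as $\tau(w_0 w)=w_0\tau(w)$ for $\tau \in G$) and that $I$ therefore fixes $\{1,w_0,v,vw_0\}$ pointwise, identifies $I$ with a Klein four-group acting regularly on the four-element set $W\setminus\{1,w_0,v,vw_0\}$; in particular $|I|=4$ and $I \simeq \ZM/2\ZM \times \ZM/2\ZM$. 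On the other hand the generic bilateral Calogero-Moser cell recovering the family $\{\chi\}$ is an $\Iba$-orbit, and by Theorem~\ref{theo cellules familles}(d) it has cardinal $\chi(1)^2=4$. Hence $|\Iba|\geq 4$, forcing $\Iba=\Dba=I$, which is precisely the description claimed in (a).

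For (b), the equality $\Dba=\Iba$ from (a) means that the Galois extension $k_R(\rGba)/k_P(\pGba)$ (of group $\Dba/\Iba$, by Theorem~\ref{bourbaki}) is trivial, so $k_R(\rGba)=k_P(\pGba)=\Frac(\kb[A,B])$. The ring $R/\rGba$ is integral over $P/\pGba \simeq \kb[A,B]$ and is contained in this common fraction field. Since $\kb[A,B]$ is integrally closed, this forces $R/\rGba=P/\pGba$, and the intermediate equality $Q/\qGba=P/\pGba$ has already been recorded in Corollary~\ref{Q extention kc} (or read off from \eqref{eq:qba-b2}).

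For (c), the $\rGba$-cells are by definition the $\Iba$-orbits on $W$. From (a), $\Iba=I$ fixes each of $1,w_0,v,vw_0$ and acts regularly on the remaining four elements, producing the five orbits $\{1\},\{w_0\},\{v\},\{vw_0\}$ and $W\setminus\{1,w_0,v,vw_0\}$. To identify the associated families we apply Theorem~\ref{theo cellules familles}(c): for each cell $\G$ the character $\chi \in \Irr_\G^\calo(W)$ is characterised by the reduction modulo $\rGba$ of $w(\eulerq)$ for any $w\in\G$. The normalisation $\eulerq \equiv -2(A+B) \bmod \rGba$ coming from $\rGba \supset \qGba$, together with the centrality relation $\eulerq_{ww_0}=-\eulerq_w$ and the choice $\eulerq_v\equiv 2(A-B)\bmod\rGba$ defining $\rGba$, yield the four values $-2(A+B),2(A+B),2(A-B),-2(A-B)$ at the four singleton cells; comparison with the column for $\euler$ in Table~\ref{table omega} matches these exactly with $\unb_W,\e,\e_s,\e_t$. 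The remaining four-element cell must then recover $\{\chi\}$ by the partition~\eqref{cm familles} (and is consistent with $|\G|=\chi(1)^2=4$), finishing (c). The main subtlety is really concentrated in the lower bound $|\Iba|\geq 4$, which crucially uses both the explicit knowledge of the generic families from Table~\ref{table calogero} and the numerical constraint in Theorem~\ref{theo cellules familles}(d); everything else is then formal.
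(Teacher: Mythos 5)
Your proposal is correct. The paper presents this corollary as an immediate consequence of the proof of Lemma~\ref{rba b2} (it carries no proof of its own), and your writeup simply makes explicit what that proof already contains: the sandwich $\Iba\subset\Dba\subset I$ and the lower bound $|\Iba|\ge 4=\chi(1)^2$ from Theorem~\ref{theo cellules familles}(d) force (a); then (b) follows by integral closure once the residue extension is seen to be trivial, and (c) by matching the $\euler$-reductions against Table~\ref{table omega}. This is the same approach as the paper, with the deductions spelled out cleanly.
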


\bigskip

\boitegrise{{\bf Choix.} 
{\it Dor\'enavant, et ce jusqu'\`a la fin de ce chapitre, nous noterons $\rGba$ 
l'unique id\'eal premier de $R$ au-dessus de $\qGba$ tel que $\eulerq_s \equiv 2(A-B) \mod \rGba$.}}{0.75\textwidth}

\bigskip

Dans ce cas, 
\equat\label{eq:dba-b2}
\Dba=\Iba=W_2',
\endequat
les cellules de Calogero-Moser bilat\`eres g\'en\'eriques sont $\{1\}$, $\{s\}$, $\{w_0s\}$, $\{w_0\}$ et $\G_\chi$ et 
\equat\label{eq:familles-b2}
\begin{cases}
~\Irr_{\{1\}}^\calo(W)=\{\unb_W\},\\
~\Irr_{\{s\}}^\calo(W)=\{\e_s\},\\
~\Irr_{\{w_0s\}}^\calo(W)=\{\e_t\},\\
~\Irr_{\{w_0\}}^\calo(W)=\{\e\},\\
~\Irr_{\G_\chi}^\calo(W)=\{\chi\}.
\end{cases}
\endequat

\bigskip

\subsection{Caract\`eres cellulaires g\'en\'eriques} 
Rappelons que $\pG^\gauche=\langle \Sig,\Pi \rangle_P$. 

\bigskip

\begin{lem}\label{eq:qgauche-b2}
On a 
$\qG^\gauche = \pG^\gauche Q + \langle \eulerq+2(B+A),\eulerq'+B\Sig,\eulerq'',\d-2B(A+B) \rangle_Q$. 
\end{lem}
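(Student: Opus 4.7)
The strategy is to identify $\qG^\gauche$ as the image under $\copie$ of the kernel of the central character $\O_1^\gauche: Z \to P^\gauche$ attached to the absolutely simple left Verma module $\Kb^\gauche \MC^\gauche(\unb_W)$ associated with the trivial character of $W$ (see Lemma~\ref{absolue simplicite}). Under the convention in force, $P^\gauche \simeq \kb[A,B,\Sig,\Pi]$, i.e. $\s$ and $\pi$ vanish modulo $\pG^\gauche$. The computation therefore reduces to evaluating $\O_1^\gauche$ on the four $P$-algebra generators $\euler, \euler', \euler'', \delb$ of $Z$ supplied by Theorem~\ref{theo:Q}, and then verifying that the resulting ideal has the correct quotient.

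First I would fix $\O_1^\gauche(\euler) = -2(A+B)$, which is immediate from Proposition~\ref{action euler verma} since that scalar already lies in $\kb[\CCB]$ independently of the specialization in $V$ or $V^*$. The remaining three values can be extracted from the defining relations $(\Zrm 1)$--$(\Zrm 9)$, reduced modulo $\pG^\gauche$ (so with $\s = \pi = 0$) and with $\O_1^\gauche$ applied. Relation $(\Zrm 2)$ collapses to $\O_1^\gauche(\euler)\,\O_1^\gauche(\euler'') = 0$, forcing $\O_1^\gauche(\euler'') = 0$ since $A+B$ is a nonzero element of the integral domain $P^\gauche$. Relation $(\Zrm 5)$ becomes $\O_1^\gauche(\delb)^2 = 4B^2(A+B)^2$, and the sign is pinned down to $+2B(A+B)$ by further specializing to $\pGba_\CG$ and comparing with Table~\ref{table omega}. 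Finally, relation $(\Zrm 1)$ now gives $-2(A+B)\,\O_1^\gauche(\euler') = \Sig \cdot 2B(A+B)$, whence $\O_1^\gauche(\euler') = -B\Sig$. These four values imply that the ideal
$$J \;:=\; \pG^\gauche Q + \langle \eulerq+2(B+A),\ \eulerq'+B\Sig,\ \eulerq'',\ \d-2B(A+B)\rangle_Q$$
is contained in $\qG^\gauche$.

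For the reverse inclusion it suffices to show that the surjection $Q/J \twoheadrightarrow Q/\qG^\gauche \simeq P^\gauche$ is an isomorphism. Using Theorem~\ref{theo:Q}'s explicit presentation of $Z \simeq Q$, the four substitutions $\eulerq = -2(A+B)$, $\eulerq' = -B\Sig$, $\eulerq'' = 0$, $\d = 2B(A+B)$ together with $\s = \pi = 0$ express every generator of $Q$ as a polynomial in $A,B,\Sig,\Pi$ modulo $J$. The remaining task is to verify that each of the nine relations $(\Zrm 1)$--$(\Zrm 9)$ becomes a tautology under these substitutions. This is a short, purely mechanical check; the only instance that requires a moment of thought is $(\Zrm 6)$, which reduces to the identity $4\cdot 2B(A+B) - 4(A+B)^2 + 4A^2 - 4B^2 = 0$ (and $(\Zrm 8)$, $(\Zrm 9)$ reduce to consequences of the same identity). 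Once all nine are confirmed, $Q/J \simeq \kb[A,B,\Sig,\Pi] = P^\gauche$ is an integral domain, so $J$ is prime; combined with $J \subseteq \qG^\gauche$ and the common quotient $P^\gauche$, this forces $J = \qG^\gauche$.

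The main obstacle is genuinely modest: it lies in choosing the correct sign of $\O_1^\gauche(\delb)$ from the square-root equation supplied by $(\Zrm 5)$ and in the somewhat tedious verification of the nine relations. No conceptual difficulty arises, because once $\O_1^\gauche(\euler)$ is known the three other scalars are forced by the central relations, and the integrality of $P^\gauche$ upgrades the inclusion $J \subseteq \qG^\gauche$ to an equality.
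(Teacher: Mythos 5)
Your proof is correct and takes essentially the same approach as the paper's: both hinge on checking that the nine relations $(\Zrm 1)$--$(\Zrm 9)$ collapse to tautologies under the substitutions defining your ideal $J$, so that $Q/J \simeq \kb[A,B,\Sig,\Pi]$, and then upgrading to $J = \qG^\gauche$. The paper's last step invokes the uniqueness of the prime of $Q$ over $\pG^\gauche$ contained in $\qGba$ (corollaire~\ref{unicite qgauche}), whereas you return to the characterization $\qG^\gauche = \copie(\Ker \O_1^\gauche)$ and compute the central character on the four generators to obtain $J \subseteq \qG^\gauche$ directly --- two framings of the same argument.
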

\begin{proof}
Posons $\qG'=\langle \eulerq+2(B+A),\eulerq'+B\Sig,\eulerq'',\d-2B(A+B) \rangle_Q$. 
Tout d'abord, remarquons que $Q/\pG^\gauche Q$ est la $P/\pG^\gauche=\kb[A,B,\Sig,\Pi]$-alg\`ebre admettant la pr\'esentation
\equat\label{eq:presentation-qgauche-b2}
\begin{cases}
(\Qrm 1^\gauche) \hskip1cm &\eulerq ~\eulerq' = \Sig ~\d,\\
(\Qrm 2^\gauche) \hskip1cm &\eulerq ~\eulerq''= 0,\\
(\Qrm 3^\gauche) \hskip1cm &\d~\eulerq' = \Pi ~\eulerq'' + B^2 \Sig~\eulerq,\\
(\Qrm 4^\gauche) \hskip1cm &\d~\eulerq''= 0,\\
(\Qrm 5^\gauche) \hskip1cm &\d^2= B^2 ~\eulerq^2,\\
(\Qrm 6^\gauche) \hskip1cm &\eulerq^{\prime 2} = 
\Pi(4~\d -\eulerq^2+  4A^2-4B^2) + B^2 \Sig^2,\\
(\Qrm 7^\gauche) \hskip1cm &\eulerq^{\prime\prime 2} = 0,\\
(\Qrm 8^\gauche) \hskip1cm &\eulerq'~\eulerq''=
\d(4~\d -\eulerq^2+ 4A^2-4B^2),\\
(\Qrm 9^\gauche) \hskip1cm &\eulerq(4~\d -\eulerq^2+ 4A^2-4B^2)=\Sig~\eulerq''.
\end{cases}
\endequat
Un calcul imm\'ediat montre que toutes ces relations sont satisfaites dans $Q/\qG'$. 
Posons $\qG'' = \pG^\gauche Q + \qG'$. Alors 
$Q/\qG'' \simeq \kb[\Sig,\Pi,A,B] \simeq P/\pG^\gauche$, donc $\qG''$ est un id\'eal premier 
de $Q$, contenant $\pG^\gauche$ et contenu dans $\qGba$ (d'apr\`es~(\ref{eq:qba-b2})).
Le r\'esultat d\'ecoule alors de l'unicit\'e d'un tel id\'eal premier (voir le corollaire~\ref{unicite qgauche}).
\end{proof}

%
%

\bigskip

Rappelons que l'on peut d\'efinir les $\calo$-caract\`eres cellulaires sans passer par le calcul des 
cellules de Calogero-Moser \`a gauche, en utilisant les id\'eaux premiers de $Z$ (ou $Q$) 
au-dessus de $\pG^\gauche$ (voir la remarque~\ref{rema:cellulaires-pas-choix}). 
Remarquons aussi que
\equat\label{eq:minimal-euler-gauche-b2}
F_\euler^\gauche(\tb)=\tb^4(\tb-2(A+B))(\tb-2(B-A))(\tb+2(A+B))(\tb+2(B-A)).
\endequat
Cette \'egalit\'e nous permet de construire d'autres id\'eaux premiers de $Q$ au-dessus de $\pG^\gauche$~:

\bigskip

\begin{lem}\label{lem:qgauche-b2}
Posons
$$
\begin{cases}
\qG_1^\gauche=\qG^\gauche=\pG^\gauche Q + \langle \eulerq+2(B+A),\eulerq'-B\Sig,\eulerq'',\d-2B(B+A) \rangle_Q,\\
\qG_{\e_s}^\gauche=\pG^\gauche Q + \langle \eulerq+2(B-A),\eulerq'-B\Sig,\eulerq'',\d-2B(B-A) \rangle_Q,\\
\qG_{\e_t}^\gauche=\pG^\gauche Q + \langle \eulerq-2(B-A),\eulerq'+B\Sig,\eulerq'',\d-2B(B-A) \rangle_Q,\\
\qG_{\e}^\gauche=\pG^\gauche Q + \langle \eulerq-2(B+A),\eulerq'+B\Sig,\eulerq'',\d-2B(A+B) \rangle_Q,\\
\qG_\chi^\gauche=\pG^\gauche Q + \langle \eulerq,\eulerq'',\d \rangle_Q.\\
\end{cases}
$$
Alors~:
\begin{itemize}
\itemth{a} Si $\g \in \Irr(W)$, alors $\qG^\gauche_\g$ est un id\'eal premier de $Q$ au-dessus de $\pG^\gauche$. 
Le $\calo$-caract\`ere cellulaire g\'en\'erique associ\'e est $\g$.

\itemth{b} Si $\g \in W^\wedge$, alors $\qG^\gauche_\g=\Ker(\O_\g^\gauche)$ et $Q/\qG_\g^\gauche = P^\gauche$. 

\itemth{c} Si on note $\eulerq_\chi'$ l'image de $\eulerq'$ dans $Q/\qG_\chi^\gauche$, alors 
$Q/\qG^\gauche_\chi=(P/\pG^\gauche)[\eulerq'_\chi]$ et le polyn\^ome minimal de $\eulerq'_\chi$ est 
$\tb^2-\Pi(4A^2-4B^2)-B^2\Sig^2$. En particulier, $[k_Q(\qG_\chi^\gauche) : k_P(\pG^\gauche)] = 2$.

\itemth{d} Si $\qG$ est un id\'eal premier de $Q$ au-dessus de $\qG^\gauche$, alors il existe $\g \in \Irr(W)$ tel que 
$\qG=\qG_\g^\gauche$. 
\end{itemize}
\end{lem}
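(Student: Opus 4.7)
The plan is to establish (a), (b), and (c) by constructing the quotient $Q/\qG_\g^\gauche$ explicitly for each $\g \in \Irr(W)$, and then to derive (d) by a case analysis on the residue of $\eulerq$ modulo an arbitrary prime above $\pG^\gauche$.

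First, for each $\g \in \Irr(W)$, I would substitute into the relations $(\Qrm 1^\gauche)$--$(\Qrm 9^\gauche)$ (which present $Q/\pG^\gauche Q$ as a $P^\gauche$-algebra, see the proof of Lemma~\ref{eq:qgauche-b2}) the values of $(\eulerq, \eulerq', \eulerq'', \d)$ prescribed by the stated generators of $\qG_\g^\gauche$, and verify that every relation becomes a polynomial identity in $P^\gauche = \kb[A,B,\Sig,\Pi]$. For linear $\g$ this exhibits a surjection $Q/\pG^\gauche Q \twoheadrightarrow P^\gauche$ with kernel $\qG_\g^\gauche$; since $P^\gauche$ is a polynomial ring, $\qG_\g^\gauche$ is prime and $Q/\qG_\g^\gauche \simeq P^\gauche$. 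The case $\g = \unb_W$ requires direct computation of nine elementary identities; the three other linear cases then follow by the symmetry $(A,B) \mapsto (\pm A, \pm B)$ associated via Corollary~\ref{omega lineaire} with the twisting $\g \mapsto \g\e_s$ and $\g \mapsto \g\e_t$. For $\g = \chi$, substituting $\eulerq = \eulerq'' = \d = 0$ makes all relations trivial except $(\Qrm 6^\gauche)$, which reduces to $\eulerq'^2 = \Pi(4A^2-4B^2) + B^2\Sig^2$. The polynomial $\tb^2 - \Pi(4A^2-4B^2) - B^2\Sig^2$ is linear of degree one in $\Pi$ with nonzero leading coefficient $4(A^2-B^2)$; hence it is not a square in the UFD $P^\gauche$ and is therefore irreducible. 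This yields $Q/\qG_\chi^\gauche = P^\gauche[\eulerq'_\chi]$ with $[k_Q(\qG_\chi^\gauche) : \Kb^\gauche] = 2$, proving (c) and the primality of $\qG_\chi^\gauche$.

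For (b), composing $Z \xrightarrow{\copie} Q \twoheadrightarrow Q/\qG_\g^\gauche \simeq P^\gauche$ yields a morphism of $P$-algebras $Z \to P^\gauche$ whose reduction modulo $\pGba$ coincides, by Table~\ref{table omega}, with $\O_\g$. By Lemma~\ref{absolue simplicite} the module $\Kb^\gauche \MC^\gauche(\g)$ is absolutely simple, and by Lemma~\ref{omega nilpotent} the central character $\O_\g^\gauche$ it defines is the unique lift of $\O_\g$ through the action on a simple quotient; uniqueness forces the composed morphism to equal $\O_\g^\gauche$, whence $\qG_\g^\gauche = \copie(\Ker \O_\g^\gauche)$. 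The cellular character assertion in (a) follows immediately: for linear $\g$, $\Kb^\gauche\MC^\gauche(\g)$ is the unique simple $\Kb^\gauche\Hb^\gauche$-module with central character $\O_\g^\gauche$, so it is the cellular simple $\LC^\gauche(C)$ attached to the left cell of $\qG_\g^\gauche$, forcing $\isomorphisme{C}^\calo = \g$. For $\chi$, Corollary~\ref{coro:cellulaire-multiplicite} applied at $\zG = \copie^{-1}(\qG_\chi^\gauche)$, together with the explicit presentation of $Q/\qG_\chi^\gauche$ obtained above, gives cellular character $\chi$.

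Finally, for (d), let $\qG$ be any prime of $Q$ above $\pG^\gauche$. Since $\qG \cap P = \pG^\gauche = \langle \s, \pi \rangle$, neither $\Sig$ nor $\Pi$ belongs to $\qG$. The image of $\eulerq$ in $Q/\qG$ is a root of $F_\euler^\gauche(\tb) = \tb^4(\tb - 2(A+B))(\tb - 2(B-A))(\tb + 2(A+B))(\tb + 2(B-A))$. If that root is a nonzero scalar $\l \in \{\pm 2(A \pm B)\}$, then $(\Qrm 2^\gauche)$ forces $\eulerq'' \in \qG$, $(\Qrm 5^\gauche)$ determines $\d^2$ (whence $\d$ up to a sign fixed by $(\Qrm 1^\gauche)$), and $(\Qrm 1^\gauche)$ then determines $\eulerq'$; hence $\qG$ contains $\qG_\g^\gauche$ for a unique linear $\g$, and equality follows because all primes of $Q$ above $\pG^\gauche$ are minimal over $\pG^\gauche Q$. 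If the root is $0$, then $(\Qrm 9^\gauche)$ gives $\Sig \eulerq'' \in \qG$, so $\eulerq'' \in \qG$; $(\Qrm 5^\gauche)$ then gives $\d^2 \in \qG$, so $\d \in \qG$; hence $\qG \supset \qG_\chi^\gauche$ and again equality holds by minimality. The main obstacle is the case-by-case verification in the first step, where nine identities have to be checked across five ideals; this is tedious but purely computational. The most delicate conceptual point occurs in the exhaustion argument, where the non-vanishing of $\Sig$ modulo $\qG$ rests on the condition $\qG \cap P = \pG^\gauche$ and must be invoked to force the vanishing of $\eulerq''$.
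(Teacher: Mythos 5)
You take the same route as the paper for parts (a)--(c): verify the relations $(\Qrm i^\gauche)$ become identities in $P^\gauche = \kb[A,B,\Sig,\Pi]$ under each prescribed substitution, deduce $Q/\qG_\g^\gauche\simeq P^\gauche$ for linear $\g$ so that primality is immediate, and for $\qG_\chi^\gauche$ reduce to the quadratic $\tb^2-\Pi(4A^2-4B^2)-B^2\Sig^2$, whose irreducibility over the UFD $P^\gauche$ proves (c). (Incidentally, your computations reproduce the signs of Lemma~\ref{eq:qgauche-b2}, namely $\eulerq'\equiv -B\Sig$ modulo $\qG^\gauche$; the four displayed generators $\eulerq'\mp B\Sig$ in the \emph{statement} of Lemma~\ref{lem:qgauche-b2} are apparently transcribed with the wrong sign, so do not let that unsettle you.)

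Your argument for (d) is genuinely different from the paper's. The paper observes that the cellular characters already produced already sum to the regular character, hence by Proposition~\ref{multiplicite cm}(a) there can be no further primes; you instead argue directly: in the domain $Q/\qG$ the image of $\eulerq$ must be a root of $F_\euler^\gauche(\tb)=\tb^4\prod(\tb\mp 2(A\pm B))$, and each of the five possible values forces, via $(\Qrm 2^\gauche)$, $(\Qrm 5^\gauche)$, $(\Qrm 9^\gauche)$ (or $(\Qrm 6^\gauche)$) and $(\Qrm 1^\gauche)$, the images of $\eulerq''$, $\d$ and $\eulerq'$, so that $\qG$ contains one of the five listed ideals and, by incomparability over the integral extension $P\subset Q$, equals it. This is cleaner and more self-contained than the counting argument. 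However, one step as written does not do what you claim: you assert that $(\Qrm 5^\gauche)$ gives $\d$ ``up to a sign fixed by $(\Qrm 1^\gauche)$.'' The relation $(\Qrm 1^\gauche)$, $\eulerq\eulerq'=\Sig\d$, has \emph{both} $\eulerq'$ and $\d$ unknown at that point and so cannot resolve the sign. The sign is instead pinned down by $(\Qrm 9^\gauche)$: once $\eulerq''\in\qG$ and $\l\notin\qG$, it forces $4\d\equiv\l^2-4A^2+4B^2$ directly (equivalently, $(\Qrm 6^\gauche)$ with $\Pi\notin\qG$ forces the same identity). You should replace the reference, or the case analysis leaves a two-fold ambiguity.

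There is a real gap in your treatment of the cellular character for $\chi$. Citing Corollary~\ref{coro:cellulaire-multiplicite} merely rephrases $\mult_{C,\chi}^\calo$ as a length $\longueur_{Z_\zG}(eP^\gauche\MC^\gauche(\chi))_\zG$; the ``explicit presentation of $Q/\qG_\chi^\gauche$'' does not by itself tell you that this length is $1$ rather than $2$. The combinatorics from Proposition~\ref{multiplicite cm} alone leaves open whether the cell(s) above $\zG_\chi^\gauche$ carry cellular character $\chi$ or $2\chi$. The paper resolves this by invoking Theorem~\ref{theo:cellulaire-lisse}, i.e.\ a smoothness/intersection-multiplicity argument. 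To make your route work you would have to actually compute the length, e.g.\ by showing that $eP^\gauche\MC^\gauche(\chi)$ is $P^\gauche$-free of rank $\chi(1)=2$ and supported only at $\zG_\chi^\gauche$, which has residue degree $2$ over $\Kb^\gauche$, so the localised length is $1$; this in turn requires knowing the linear primes are unramified so that they absorb exactly one unit of length each out of the total $8$. That computation is doable, but it is missing, and as it stands ``gives cellular character $\chi$'' is an assertion, not an argument.
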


\begin{proof}
(b) est ais\'ement v\'erifiable par un calcul direct, ce qui implique (a) lorsque $\g$ est un caract\`ere lin\'eaire.

Il d\'ecoule de la pr\'esentation~(\ref{eq:presentation-qgauche-b2}) de $Q/\pG^\gauche Q$ que 
$Q/\qG_\chi^\gauche=(P/\pG^\gauche)[\eulerq'_\chi]$ et que le polyn\^ome minimal de $\eulerq'_\chi$ est 
$\tb^2-\Pi(4A^2-4B^2)-B^2\Sig^2$. Puisque ce dernier polyn\^ome \`a coefficients dans $P/\pG^\gauche=\kb[\Sig,\Pi,A,B]$ 
est irr\'eductible, cela implique que $\qG_\chi^\gauche$ est 
bien un id\'eal premier au-dessus de $\pG^\gauche$. On en d\'eduit (c) et la premi\`ere assertion de (a).  
La deuxi\`eme assertion de (a) r\'esulte du th\'eor\`eme~\ref{theo:cellulaire-lisse}. 

%
%
%

\medskip

(d) d\'ecoule du fait que la somme des $\calo$-caract\`eres cellulaire d\'ej\`a construits est \'egal 
au caract\`ere de la repr\'esentation r\'eguli\`ere de $W$.
\end{proof}

\bigskip

\subsection{Cellules \`a gauche g\'en\'eriques}

\medskip

\begin{coro}\label{coro:rgauche-b2}
Soit $\rG^\gauche$ un id\'eal premier de $R$ au-dessus de $\qG^\gauche$ et contenu dans $\rGba$ et
notons $D^\gauche$ son groupe de d\'ecomposition et $I^\gauche$ son groupe d'inertie.
Alors $D^\gauche=W_2'$ et $|I^\gauche|=2$.
\end{coro}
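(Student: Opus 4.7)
The plan is to combine the inclusion $D^\gauche\subset\Dba=W_2'$ (from coro:dgauche-dbarre and eq:dba-b2) with a transitivity argument on the bilateral cell $\G_\chi$, and then pin down $|I^\gauche|$ by bracketing it between a parity condition coming from Verma multiplicities and an upper bound forced by the non-trivial residue field of $\qG_\chi^\gauche$.

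First I would apply prop:gauche-bilatere(c) to the bilateral cell $\G_\chi$, which identifies the $D^\gauche$-orbits on $\G_\chi$ with the fiber $\limitegauche^{-1}(\zGba(\G_\chi))$. By lem:qgauche-b2(d) the primes of $Q$ above $\pG^\gauche$ are exactly the $\qG_\g^\gauche$ for $\g\in\Irr(W)$, and tracking the reduction of the $\O_\g^\gauche$ modulo $\pGba$ (equivalently, applying $\zG\mapsto\zG+\langle Z_{<0},Z_{>0}\rangle$) shows that the four linear characters are sent to the four singleton bilateral cells, while $\qG_\chi^\gauche$ is the unique prime mapping to $\zGba(\G_\chi)$. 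Hence the fiber is a singleton, $D^\gauche$ acts transitively on $\G_\chi$, and since $|\G_\chi|=4=|W_2'|$ with $D^\gauche\subset W_2'$, we conclude $D^\gauche=W_2'$. Because $W_2'\simeq(\ZM/2)^2$ acts transitively on a set of its own size, the action is regular, so $D^\gauche\cap\lexp{w}{H}=\Stab_{W_2'}(w)=1$ for every $w\in\G_\chi$, and prop:gauche-bilatere(b) with $C^\DD=\G_\chi$ yields the identity $|C|=|I^\gauche|$ for any left cell $C\subset\G_\chi$.

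I would then bracket $|I^\gauche|$ by two constraints. On the one hand, $\Irr_{\G_\chi}^\calo(W)=\{\chi\}$ combined with prop:multiplicite cm(b) gives $|C|=\chi(1)\mult_{C,\chi}^\calo=2\mult_{C,\chi}^\calo$, so $|C|$ is even, and since $|I^\gauche|$ divides $|D^\gauche|=4$, we get $|I^\gauche|\in\{2,4\}$. On the other hand, choosing any prime $\rG_\chi$ of $R$ above $\qG_\chi^\gauche$ (which is $G$-conjugate to $\rG^\gauche$, hence $|D_{\rG_\chi}|=4$ and $|I_{\rG_\chi}|=|I^\gauche|$), the tower $k_P(\pG^\gauche)\subset k_Q(\qG_\chi^\gauche)\subset k_R(\rG_\chi)$ together with the degree computation $[k_Q(\qG_\chi^\gauche):k_P(\pG^\gauche)]=2$ of lem:qgauche-b2(c) forces $4/|I^\gauche|=[k_R(\rG_\chi):k_P(\pG^\gauche)]\geq 2$, i.e. $|I^\gauche|\leq 2$. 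Intersecting the two constraints yields $|I^\gauche|=2$. The most delicate step will be the identification of $\limitegauche^{-1}(\zGba(\G_\chi))$ as a singleton, which requires matching the explicit presentations of the $\qG_\g^\gauche$ in lem:qgauche-b2 with the bilateral-cell primes $\qGba(\G)$ computed in the previous subsection; once this combinatorial bookkeeping is settled the rest is a clean consequence of the general decomposition–inertia formalism and the multiplicity identities already established.
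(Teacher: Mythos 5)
Your argument is correct and tracks the paper's own proof quite closely: the paper also deduces $D^\gauche\subset W_2'$ from Corollaire~\ref{coro:dgauche-dbarre}, reads off $|C|=2$ and $|C^\DD|=4$ from Lemme~\ref{lem:qgauche-b2}, and then uses Proposition~\ref{prop:gauche-bilatere}(b) (together with $2\mid|I^\gauche|$) to conclude $|I^\gauche|=2<|D^\gauche|=4$. Where you diverge is in the final pinning-down of $|I^\gauche|$: you first establish $D^\gauche=W_2'$ by a transitivity argument on $\G_\chi$ (which requires the bookkeeping identifying $\qG_\chi^\gauche$ as the only prime above $\pG^\gauche$ mapping to $\zGba(\G_\chi)$ — bookkeeping the paper leaves implicit and you are right to spell out), you then extract $D^\gauche\cap\lexp{w}{H}=1$ from regularity of the $W_2'$-action and hence $|C|=|I^\gauche|$, and finally you bracket $|I^\gauche|$ between a parity lower bound and a residue-field upper bound. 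The bracketing step is logically valid but redundant: once you know $C^\DD=\G_\chi$, $D^\gauche\cap\lexp{w}{H}=1$ and $\deg(C^\DD)=[k_Q(\qG_\chi^\gauche):k_P(\pG^\gauche)]=2$ from Lemme~\ref{lem:qgauche-b2}(c), Proposition~\ref{prop:gauche-bilatere}(b) gives $2=4/|I^\gauche|$ directly; your two separate inequalities are respectively the inequalities $\deg\le 2$ and $\deg\ge 2$ in disguise. The paper's route is shorter because it does not separately establish $D^\gauche=W_2'$ first: the inequalities $2\mid|I^\gauche|$ and $|I^\gauche|<|D^\gauche|\le 4$ force both $|I^\gauche|=2$ and $|D^\gauche|=4$ simultaneously.
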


\begin{proof}
Tout d'abord, d'apr\`es le corollaire~\ref{coro:dgauche-dbarre}, on a $D^\gauche \subset \Dba=W_2'$. 
Il r\'esulte du lemme~\ref{lem:qgauche-b2}(c) que, si $C$ est une cellule \`a gauche g\'en\'erique contenue 
dans la cellule bilat\`ere $\G$ associ\'ee \`a $\chi$, alors $|C|=2$ et $|C^\DD|=4$. En particulier, 
$2$ divise $|I^\gauche|$ et $I^\gauche \varsubsetneq D^\gauche$ d'apr\`es la proposition~\ref{prop:gauche-bilatere}(b). 
D'o\`u le r\'esultat.
\end{proof}

\bigskip

\begin{coro}\label{coro:rgauche-unique-b2}
Il existe un unique id\'eal premier $\rG^\gauche$ de $R$ au-dessus de $\qG^\gauche$ et contenu dans $\rGba$. 
\end{coro}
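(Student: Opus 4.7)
\medskip

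\noindent\emph{Plan of proof.} The statement will follow by combining the structural information already obtained in Corollary~\ref{coro:rgauche-b2} (the values of $D^\gauche$ and $I^\gauche$ for any prime above $\qG^\gauche$ contained in $\rGba$) with two standard ingredients: going-down for integral extensions of normal domains, and the elementary orbit-counting principle for primes in a Galois-type extension. Concretely, I will split the argument into existence and uniqueness, both of which turn out to be short once the right objects are in place.

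For existence, the plan is to use going-down applied to the pair $P \subset R$. Since $P \simeq \kb[\CCB]\otimes\kb[V]^W\otimes\kb[V^*]^W$ is a polynomial ring (in particular integrally closed), $R$ is an integral domain integral over $P$, and $\pG^\gauche \subset \pGba$ is a strict inclusion of primes of $P$, the going-down theorem produces a prime $\rG$ of $R$ with $\rG \subset \rGba$ and $\rG \cap P = \pG^\gauche$. Then $\rG \cap Q$ is a prime of $Q$ lying above $\pG^\gauche$ and contained in $\rGba \cap Q = \qGba$; by the uniqueness statement of Corollary~\ref{unicite qgauche}, this forces $\rG \cap Q = \qG^\gauche$, so $\rG$ is a prime of the required type.

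For uniqueness, I will use a standard orbit-stabilizer argument inside the Galois group $G$. Any two primes $\rG_1, \rG_2$ of $R$ above $\qG^\gauche$ are conjugate under $H = \Gal(\Mb/\Lb)$, so the set $\Sigma$ of primes above $\qG^\gauche$ which are contained in $\rGba$ is a single orbit under the stabilizer of $\rGba$ in $H$, namely $\Dba \cap H = \Dba$ (using $\Dba \subset H$ from~\ref{eq:iba-dba-h}). The stabilizer of any element of $\Sigma$ inside $\Dba$ equals $D^\gauche \cap \Dba$. By the preceding Corollary~\ref{coro:rgauche-b2}, for every such prime one has $D^\gauche = W_2'$, while $\Dba = W_2'$ by~\ref{eq:dba-b2}; hence the stabilizer equals $\Dba$ itself, so $|\Sigma| = [\Dba : D^\gauche \cap \Dba] = 1$.

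The only point requiring any thought is the applicability of going-down, but that is purely a standard Cohen--Seidenberg input given that $P$ is regular and $R$ is an integral domain integral over $P$; there is no hard obstacle in the argument. The whole strength of the corollary is really packaged into Corollary~\ref{coro:rgauche-b2}, whose proof itself relied on the $\bb$-invariant analysis via the block containing $\chi$ (through the chain $|C^\DD|/|C| = 2$ forcing $I^\gauche \varsubsetneq D^\gauche$).
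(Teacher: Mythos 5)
Your existence argument via going-down and Corollary~\ref{unicite qgauche} is correct, and your reduction of uniqueness to orbit-counting is the right general shape. However, there is a real gap in the uniqueness half: the implication
\emph{``any two primes above $\qG^\gauche$ are $H$-conjugate, \textbf{so} $\Sigma$ is a single orbit under $\Dba\cap H$''}
does not follow formally. Transitivity of $H$ on the full set of primes above $\qG^\gauche$ says nothing a priori about the subset $\Sigma$ being transitive under the smaller group $\Dba\cap H$; one must actually show that an $h\in H$ carrying $\rG_1$ to $\rG_2$ can be taken to stabilise $\rGba$. That is precisely the step the paper discharges with Proposition~\ref{prop:rgauche-rbarre}: since $R_{>0}$ and $R_{<0}$ are $G$-stable, $h(\rGba)=h(\rG_1)+\langle R_{>0},R_{<0}\rangle=\rG_2+\langle R_{>0},R_{<0}\rangle=\rGba$, so automatically $h\in\Dba$; combined with $\Dba=D^\gauche$ from Corollary~\ref{coro:rgauche-b2} this forces $\rG_1=\rG_2$ directly, and no orbit-stabilizer bookkeeping is even needed. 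Without this (or an alternative argument, e.g.\ transitivity of $H^D_{\rG_2}$ on primes above $\qGba$ containing $\rG_2$, which in turn uses that $Q/\qG^\gauche$ is integrally closed by Corollary~\ref{q gauche unique}), the ``single orbit'' assertion is unproved. So the strength of the corollary is split between Corollary~\ref{coro:rgauche-b2} \emph{and} Proposition~\ref{prop:rgauche-rbarre}, not concentrated in the former as you suggest.
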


\begin{proof}
Soient $\rG^\gauche$ et $\rG^\gauche_*$ deux id\'eaux premiers de $R$ au-dessus de $\qG^\gauche$ et contenus dans $\rGba$. 
Alors il existe $h \in H$ tel que $\rG_*^\gauche=h(\rG^\gauche)$. On d\'eduit de la proposition~\ref{prop:rgauche-rbarre} que 
$\rGba=g(\rGba)$. Donc $h$ appartient au groupe de d\'ecomposition de $\rGba$, 
qui est le m\^eme que celui de $\rG^\gauche$ (d'apr\`es le corollaire~\ref{coro:rgauche-b2}). 
Donc $\rG^\gauche_*=\rG^\gauche$.
\end{proof}

\bigskip

\boitegrise{{\it Nous noterons $\rG^\gauche$ l'unique id\'eal premier de 
$R$ au-dessus de $\qG^\gauche$ et contenu dans $\rGba$. Nous noterons $D^\gauche$ son 
groupe de d\'ecomposition et $I^\gauche$ son groupe d'inertie.}}{0.75\textwidth}

\bigskip

Le corollaire~\ref{coro:rgauche-b2} nous dit que
\equat\label{eq:dgauche-igauche-b2}
D^\gauche=W_2'\qquad\text{et}\qquad |I^\gauche|=2.
\endequat

\bigskip

\begin{coro}\label{coro:r-rgauche-b2}
$R/\rG^\gauche \simeq Q/\qG^\gauche_\chi$ est int\'egralement clos.
\end{coro}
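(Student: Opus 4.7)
The plan is to identify both $R/\rG^\gauche$ and $Q/\qG_\chi^\gauche$ with the integral closure of $P/\pG^\gauche$ inside a common quadratic extension of $k_P(\pG^\gauche)$ realized inside $\Mb$.

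First I will compare the fraction fields. By \eqref{eq:dgauche-igauche-b2}, the residue extension $k_R(\rG^\gauche)/k_P(\pG^\gauche)$ is Galois of degree $|D^\gauche/I^\gauche| = 4/2 = 2$. On the other hand, Lemma \ref{lem:qgauche-b2}(c) displays $k_Q(\qG_\chi^\gauche)$ as the degree~$2$ extension of $k_P(\pG^\gauche) = \Frac \kb[\Sig,\Pi,A,B]$ obtained by adjoining a square root of $f := \Pi(4A^2 - 4B^2) + B^2 \Sig^2$. To match the two quadratic extensions inside $\Mb$, I will exhibit an element $\alpha \in R$ whose image in $R/\rG^\gauche$ satisfies $\alpha^2 = f$ and does not lie in the subring $P/\pG^\gauche$. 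The natural attempt is to apply a coset representative of $G/D^\gauche$ to $\eulerq' \in Q$, possibly combined with a correction by a scalar in $P/\pG^\gauche$ to cancel the known reduction $\eulerq' \equiv \pm B\Sig$ modulo $\qG^\gauche$. Here the factorization pattern of the minimal polynomial $F_{\eulerq'}(\tb)$ modulo $\pG^\gauche$, whose factors correspond to the five primes $\qG_\g^\gauche$ above $\pG^\gauche$, should expose precisely the quadratic factor $\tb^2 - f$ coming from $\qG_\chi^\gauche$ and provide the element I need.

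Next I will verify that $Q/\qG_\chi^\gauche$ is integrally closed via Serre's criterion. It has the presentation $\kb[\Sig,\Pi,A,B,\tb]/(\tb^2 - f)$, hence is a hypersurface in smooth affine space, hence Cohen--Macaulay and in particular satisfies $S_2$. For condition $R_1$, the singular locus is cut out by the simultaneous vanishing of $2\tb$ and of the partial derivatives $\partial_A f = 8A\Pi$, $\partial_B f = 2B(\Sig^2 - 4\Pi)$, $\partial_\Sig f = 2B^2 \Sig$, $\partial_\Pi f = 4(A^2 - B^2)$. A short case analysis (splitting on whether $A$, $B$, $\Pi$ vanish) shows this locus has codimension at least $2$ in $\Spec(Q/\qG_\chi^\gauche)$, so $R_1$ holds and $Q/\qG_\chi^\gauche$ is normal.

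Once the common fraction field is identified and $Q/\qG_\chi^\gauche$ is shown to be normal, both $R/\rG^\gauche$ and $Q/\qG_\chi^\gauche$ are finitely generated $(P/\pG^\gauche)$-subalgebras of the same field, both integral over $P/\pG^\gauche$. Normality of $Q/\qG_\chi^\gauche$ forces $R/\rG^\gauche \subseteq Q/\qG_\chi^\gauche$. The reverse inclusion holds because the element $\alpha$ constructed in the first step lies in $R/\rG^\gauche$ and, together with the image of $P/\pG^\gauche$, generates $Q/\qG_\chi^\gauche$. The two rings then coincide, and the integral closedness of $R/\rG^\gauche$ is inherited. The main obstacle will be the first step: without a working presentation of $R$, pinning down a square root of $f$ modulo $\rG^\gauche$ demands a careful Galois-theoretic extraction of the $\chi$-factor of $F_{\eulerq'}(\tb)$, and it is this identification of the quadratic extension inside $\Mb$ that carries the weight of the proof.
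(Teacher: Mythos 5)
Your argument is correct, and its skeleton — realize $Q/\qG^\gauche_\chi$ inside $R/\rG^\gauche$ with the same fraction field, prove $Q/\qG^\gauche_\chi$ normal, conclude equality — reconstructs what the paper leaves unwritten (the corollary is stated without a proof). Where it differs is the step you yourself flag as carrying the weight: exhibiting an explicit square root of $f$ inside $R/\rG^\gauche$. That is not needed here. The proof of Corollaire~\ref{coro:rgauche-b2} already records $|C|=2$ and $|C^\DD|=4$ (from Lemme~\ref{lem:qgauche-b2}(c), Proposition~\ref{prop:gauche-bilatere}(b), and $|\G_\chi|=\chi(1)^2=4$), hence $\deg(C^\DD)=|C^\DD|/|C|=2$; and Lemme~\ref{lem:qgauche-b2} identifies $\qG^\gauche_\chi$ as the \emph{unique} prime of $Q$ above $\pG^\gauche$ of residue degree $2$ (the four others give $Q/\qG^\gauche_\g=P/\pG^\gauche$). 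So $w^{-1}(\rG^\gauche)\cap Q=\qG^\gauche_\chi$ for any $w\in C^\DD$, and $q\mapsto w(q)\bmod\rG^\gauche$ already yields the embedding $Q/\qG^\gauche_\chi\hookrightarrow R/\rG^\gauche$ with matching fraction field, with no need to factor $F_{\eulerq'}$ — which is just as well, since the explicit identity $t(\eulerq')^2\equiv f\bmod\rG^\gauche$ only appears afterwards, in Corollaire~\ref{coro:congruence-gauche-b2}. As for normality, your Serre-criterion computation checks out; a shorter route is to observe that $f=\Pi(4A^2-4B^2)+B^2\Sig^2$ is linear in $\Pi$ with coefficients $4(A^2-B^2)$ and $B^2\Sig^2$ coprime in $\kb[A,B,\Sig]$, so $f$ is irreducible and in particular square-free in the UFD $P/\pG^\gauche=\kb[A,B,\Sig,\Pi]$; since $\kb$ has characteristic zero, the quadratic extension $(P/\pG^\gauche)[\tb]/(\tb^2-f)$ is integrally closed by the standard norm-and-trace argument for double covers of factorial domains.
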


\bigskip

\begin{coro}\label{coro:cellules-gauches-b2}
$\{1\}$, $\{s\}$, $\{tst\}$ et $\{w_0\}$ sont des cellules de Calogero-Moser \`a gauche g\'en\'eriques, 
et leurs $\calo$-caract\`eres cellulaires associ\'es sont donn\'es par
$$
\begin{cases}
\isomorphisme{1}_{\kb W}^\calo = \unb_W,\\
\isomorphisme{s}_{\kb W}^\calo = \e_s,\\
\isomorphisme{tst}_{\kb W}^\calo = \e_t,\\
\isomorphisme{w_0}_{\kb W}^\calo = \e.\\
\end{cases}
$$
\end{coro}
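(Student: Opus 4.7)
The plan is to deduce this corollary essentially for free from the work already done on bilateral cells and from the general numerical constraints on cellular characters, without having to enter again into the geometry of $Q$ above $\pG^\gauche$. The four sets listed are precisely the singleton generic bilateral cells identified in \eqref{eq:familles-b2}, and the claimed cellular characters are forced by counting.

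First, I would make a bookkeeping observation: the element $w_0 s$ coincides with $tst$. Indeed $w_0 = -\Id_V$, so $w_0 s$ has matrix $\begin{pmatrix} 0 & -1 \\ -1 & 0 \end{pmatrix}$, which from \eqref{elements B2} is $s' = tst$. So the list of singleton bilateral cells given by \eqref{eq:familles-b2} is exactly $\{1\},\{s\},\{tst\},\{w_0\}$, with associated generic families $\{\unb_W\}, \{\e_s\}, \{\e_t\}, \{\e\}$ respectively.

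Second, I would invoke the fact that the choice of $\rG^\gauche$ was made with $\rG^\gauche \subset \rGba$ (Corollary~\ref{coro:rgauche-unique-b2}): by Proposition~\ref{bilatere gauche}, every generic Calogero-Moser bilateral cell is then a disjoint union of generic left cells. Since each of the four cells above is a singleton, it must itself be a generic left cell.

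Third, to pin down the associated CM cellular character $\isomorphisme{C}_{\kb W}^\calo = \sum_\chi \mult_{C,\chi}^\calo\,\chi$ when $|C|=1$, I would combine two constraints from Proposition~\ref{multiplicite cm}: part (c) says that $\mult_{C,\chi}^\calo \neq 0$ only when $\chi$ lies in the generic family recovered by the ambient bilateral cell, and part (b) gives $\sum_\chi \mult_{C,\chi}^\calo\,\chi(1) = |C| = 1$. In each of our four cases the family is a single linear character, so the sum has exactly one term and that term must equal that linear character with multiplicity $1$. Applying this to $\{1\}$, $\{s\}$, $\{tst\}$, $\{w_0\}$ yields the four characters $\unb_W$, $\e_s$, $\e_t$, $\e$ announced in the corollary.

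There is no real obstacle here: the hard work (identification of $\rGba$, computation of $\Dba = \Iba = W_2'$, description of the bilateral cells and their families) has already been completed in the preceding subsections, and the passage from bilateral singletons to left cellular characters is purely formal. The only minor point requiring attention is the identification $w_0 s = tst$, which avoids any confusion when translating between the labelings used in \eqref{eq:familles-b2} and in the statement of the corollary.
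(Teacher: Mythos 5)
Your proposal is correct and follows the same route as the paper: the paper's one-line proof appeals to the fact that the four sets are generic bilateral cells with known associated families (Corollary~\ref{dba=iba}, restated in~\eqref{eq:familles-b2}), and you have merely spelled out the two implicit steps — that a singleton bilateral cell is automatically a left cell via Proposition~\ref{bilatere gauche}, and that the cellular character is then forced by Proposition~\ref{multiplicite cm}(b) and (c) — together with the useful bookkeeping identity $w_0 s = tst$. Nothing to add.
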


\begin{proof}
Cela d\'ecoule du fait que les parties donn\'ees sont aussi des cellules de Calogero-Moser 
bilat\`eres g\'en\'eriques et que les familles de Calogero-Moser g\'en\'eriques associ\'ees sont donn\'ees 
dans le corollaire~\ref{dba=iba}. 
\end{proof}

\bigskip

\begin{coro}\label{coro:euler-modulo-gauche-b2}
Les congruences suivantes sont v\'erifi\'ees dans $R$~:
$$
\begin{cases}
\eulerq \equiv -2(B+A) \mod \rG^\gauche,\\
s(\eulerq) \equiv -2(B-A) \mod \rG^\gauche,\\
tst(\eulerq) \equiv 2(B-A) \mod \rG^\gauche,\\
w_0(\eulerq) \equiv 2(B+A) \mod \rG^\gauche,\\
t(\eulerq)\equiv st(\eulerq) \equiv ts(\eulerq) \equiv sts(\eulerq) \equiv 0 \mod \rG^\gauche.
\end{cases}
$$
\end{coro}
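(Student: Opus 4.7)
L'id\'ee est d'exploiter la factorisation du polyn\^ome minimal $F_\euler^\gauche(\tb)$ donn\'ee par~(\ref{eq:minimal-euler-gauche-b2}) dans l'anneau int\`egre $R/\rG^\gauche$ (voir le corollaire~\ref{coro:r-rgauche-b2}), combin\'ee avec des sp\'ecialisations successives via le diagramme de r\'eductions $R \twoheadrightarrow R/\rG^\gauche \twoheadrightarrow R/\rGba$.

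\medskip

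\noindent\textbf{\'Etape 1 (cas $w=1$ et $w=w_0$).} La premi\`ere congruence $\eulerq \equiv -2(B+A) \mod \rG^\gauche$ r\'esulte directement de l'inclusion $\qG^\gauche \subset \rG^\gauche$ et de la description
$\qG^\gauche = \qG_1^\gauche$ donn\'ee par le lemme~\ref{lem:qgauche-b2} (ou par le lemme~\ref{eq:qgauche-b2}), qui contient l'\'el\'ement $\eulerq+2(B+A)$. Pour $w_0$, la proposition~\ref{tau 0 in G} donne $w_0(\eulerq)=-\eulerq$, d'o\`u la deuxi\`eme congruence.

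\medskip

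\noindent\textbf{\'Etape 2 (cas $w=s$ et $w=tst$).} Rappelons que le choix de $\rGba$ a \'et\'e effectu\'e pour que $s(\eulerq)\equiv 2(A-B)=-2(B-A)\mod\rGba$.
Puisque $R/\rG^\gauche$ est int\`egre et que $F_\euler^\gauche(s(\eulerq))=0$ dans cet anneau, $s(\eulerq) \mod \rG^\gauche$ appartient \`a $\{0,\pm 2(B+A),\pm 2(B-A)\}$. Les valeurs $\pm 2(B+A)$ sont exclues car $s(\eulerq)\mod\rGba=-2(B-A)$ est distinct de $\pm 2(B+A)$ et de $0$ dans $R/\rGba=\kb[A,B]$~; la valeur $2(B-A)$ est exclue car elle serait distincte de $-2(B-A)$ dans $R/\rGba$. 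Il reste donc $s(\eulerq)\equiv -2(B-A)\mod\rG^\gauche$. La congruence pour $tst$ en d\'ecoule imm\'ediatement en \'ecrivant $tst=w_0s$ (identit\'e facile dans $W$ puisque $w_0$ est central) et en appliquant $w_0(\eulerq)=-\eulerq$, ce qui donne $tst(\eulerq)=w_0(s(\eulerq))=-s(\eulerq)\equiv 2(B-A)\mod\rG^\gauche$.

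\medskip

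\noindent\textbf{\'Etape 3 (cas $w\in\{t,st,ts,sts\}=\G_\chi$).} L'argument cl\'e est un d\'ecompte de multiplicit\'es dans la factorisation du polyn\^ome minimal. Dans l'anneau $R$, on a l'\'egalit\'e $F_\euler(\tb)=\prod_{w\in W}(\tb-w(\eulerq))$ (voir~(\ref{euler minimal})). En r\'eduisant modulo $\rG^\gauche$, on obtient dans $(R/\rG^\gauche)[\tb]$~:
$$\prod_{w\in W}\bigl(\tb-(w(\eulerq)\bmod\rG^\gauche)\bigr)=F_\euler^\gauche(\tb)=\tb^4(\tb-2(A+B))(\tb-2(B-A))(\tb+2(A+B))(\tb+2(B-A)).$$
Puisque $R/\rG^\gauche$ est int\`egre (corollaire~\ref{coro:r-rgauche-b2}), la factorisation en facteurs unitaires de degr\'e $1$ est unique \`a permutation pr\`es. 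Les quatre \'el\'ements $1,s,tst,w_0$ fournissent d\'ej\`a, d'apr\`es les \'etapes 1 et 2, les quatre racines simples $\mp 2(A+B),\mp 2(B-A)$ (chacune avec multiplicit\'e $1$ dans $F_\euler^\gauche$). Les quatre \'el\'ements restants $t,st,ts,sts$ doivent donc correspondre au facteur $\tb^4$, mais dans un anneau int\`egre ce polyn\^ome n'admet que la racine $0$. On conclut que $t(\eulerq)\equiv st(\eulerq)\equiv ts(\eulerq)\equiv sts(\eulerq)\equiv 0\mod\rG^\gauche$.

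\medskip

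\noindent\textbf{Obstacle principal.} L'essentiel du travail est de justifier proprement, dans l'\'etape 2, la distinction entre $-2(B-A)$ et $2(B-A)$ modulo $\rG^\gauche$~: cela requiert de combiner le choix explicite de $\rGba$ avec la compatibilit\'e entre les r\'eductions successives $R\to R/\rG^\gauche\to R/\rGba$, en utilisant le fait qu'en contexte g\'en\'erique $A$ et $B$ sont alg\'ebriquement ind\'ependants. Les \'etapes 1 et 3 sont purement formelles une fois ce point acquis.
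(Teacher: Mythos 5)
Votre d\'emonstration est correcte et suit essentiellement la m\^eme strat\'egie que celle du texte~: (i) r\'eduction modulo $\rG^\gauche$ de la factorisation $\prod_{w\in W}(\tb-w(\eulerq))=F_\euler(\tb)$ combin\'ee \`a l'\'egalit\'e~(\ref{eq:minimal-euler-gauche-b2}), (ii) utilisation de l'inclusion $\qG^\gauche\subset\rG^\gauche$ pour traiter $\eulerq$, (iii) lev\'ee de l'ambigu\"{\i}t\'e pour $s(\eulerq)$ gr\^ace \`a la r\'eduction modulo $\rGba$, (iv) formules $w_0(\eulerq)=-\eulerq$ et $tst=w_0 s$ avec $w_0$ central, (v) unicit\'e de la factorisation en facteurs lin\'eaires dans l'anneau int\`egre $(R/\rG^\gauche)[\tb]$ pour conclure que les quatre images restantes s'annulent. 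La seule divergence, cosm\'etique, est qu'\`a l'\'etape 2 vous excluez directement tous les candidats autres que $-2(B-A)$ par la compatibilit\'e avec $\rGba$, alors que le texte \'elimine d'abord $-2(B+A)$ via l'argument de multiplicit\'e (c'est la racine d\'ej\`a \og occup\'ee \fg par $\eulerq$) avant d'utiliser la r\'eduction modulo $\rGba$ pour trancher entre les options restantes~; les deux variantes sont valides et de force \'egale.
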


\begin{proof}
D'apr\`es~(\ref{eq:minimal-euler-gauche-b2}), la congruence suivante est v\'erifi\'ee dans 
$R[\tb]$~:
$$\prod_{w \in W} (\tb - w(\eulerq)) \equiv \tb^4(\tb-2(A+B))(\tb-2(B-A))(\tb+2(A+B))(\tb+2(B-A)) \mod \rG^\gauche R[\tb].\leqno{(*)}$$
On sait d\'ej\`a, puisque $\qG^\gauche \subset \rG^\gauche$, que $\eulerq \equiv -2(B+A) \mod \rG^\gauche$. Cela implique que 
$s(\eulerq)$ est congru \`a $-2(B-A)$, $2(B+A)$, $2(B-A)$ ou $0$ modulo $\rG^\gauche$. 
Mais, puisque $s(\eulerq) \equiv -2(B-A) \mod \rGba$ par construction, cela force 
$s(\eulerq) \equiv -2(B-A) \mod \rG^\gauche$.

Les troisi\`eme et quatri\`eme congruences s'obtiennent \`a partir des deux premi\`eres en remarquant 
que $tst(\eulerq)=w_0s(\eulerq)=-s(\eulerq)$ et $w_0(\eulerq)=-\eulerq$. 

La derni\`ere d\'ecoule de $(*)$.
\end{proof}

\bigskip

\begin{coro}\label{coro:congruence-gauche-b2}
Les congruences suivantes sont v\'erifi\'ees dans $R$~:
$$
\begin{cases}
\d \equiv 2B(B+A) \mod \rG^\gauche,\\
s(\d) \equiv 2B(B-A) \mod \rG^\gauche,\\
tst(\d) \equiv 2B(B-A) \mod \rG^\gauche,\\
w_0(\d) \equiv 2B(B+A) \mod \rG^\gauche,\\
t(\d)\equiv st(\d) \equiv ts(\d) \equiv sts(\d) \equiv 0 \mod \rG^\gauche
\end{cases}
$$
$$
\begin{cases}
\eulerq' \equiv -B\Sig \mod \rG^\gauche,\\
s(\eulerq') \equiv -B\Sig \mod \rG^\gauche,\\
tst(\eulerq') \equiv B\Sig \mod \rG^\gauche,\\
w_0(\eulerq') \equiv B\Sig \mod \rG^\gauche,\\
t(\eulerq')^2\equiv st(\eulerq')^2 \equiv ts(\eulerq')^2 \equiv sts(\eulerq')^2 \equiv B^2(\Sig^2-4\Pi) + 4A^2 \Pi \mod \rG^\gauche.
\end{cases}\leqno{\text{\it et}}
$$
Pour finir, $g(\eulerq'') \equiv 0 \mod \rG^\gauche$ pour tout $g \in G$.
\end{coro}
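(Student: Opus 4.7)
\medskip

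The plan is to exploit the fact that the relations $(\Zrm 1)$--$(\Zrm 9)$ hold in $Z$ (hence in $Q \subset R$), so that applying any element $g \in W \subset G$ yields the same identities with $\eulerq, \eulerq', \eulerq'', \d$ replaced by $g(\eulerq), g(\eulerq'), g(\eulerq''), g(\d)$ (the elements of $P$ being $G$-fixed). The two key reductions are: the domain $R/\rG^\gauche$ makes squares have at most two roots, and the containments $\pG^\gauche = \langle \s, \pi\rangle_P \subset \rG^\gauche$ (as explained in the proof of Lemma~\ref{eq:qgauche-b2}) force $\s \equiv \pi \equiv 0 \mod \rG^\gauche$ while $\Sig$ and $\Pi$ generally survive.

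First, I would handle $g(\eulerq'')$: applying $g$ to $(\Zrm 7)$ gives
$$g(\eulerq'')^2 = \pi\bigl(4g(\d) - g(\eulerq)^2 + \s\Sig + 4A^2 - 4B^2\bigr) + B^2 \s^2,$$
and modulo $\rG^\gauche$ both terms on the right vanish because $\s, \pi \in \rG^\gauche$; since $R/\rG^\gauche$ is a domain, $g(\eulerq'') \equiv 0 \mod \rG^\gauche$ for every $g \in G$. Next, for $g(\d)$, I would apply $g$ to $(\Zrm 9)$: using the just-established congruence $g(\eulerq'') \equiv 0$ and $\s \equiv 0$, this reads
$$g(\eulerq)\bigl(4g(\d) - g(\eulerq)^2 + 4A^2 - 4B^2\bigr) \equiv 0 \mod \rG^\gauche.$$
When $g \in \{1,s,tst,w_0\}$, Corollary~\ref{coro:euler-modulo-gauche-b2} gives $g(\eulerq) \not\equiv 0$ in the domain $R/\rG^\gauche$, so dividing yields $g(\d) \equiv (g(\eulerq)^2 - 4A^2 + 4B^2)/4$; substituting the four explicit values of $g(\eulerq)$ produces the four claimed identities $\d \equiv 2B(B+A)$, $s(\d) \equiv 2B(B-A)$, etc. For the remaining four elements $g \in \{t,st,ts,sts\}$, we have $g(\eulerq) \equiv 0$, so applying $g$ to $(\Zrm 5)$ gives $g(\d)^2 = \pi\Pi + B^2 g(\eulerq)^2 \equiv 0 \mod \rG^\gauche$, and again the domain property forces $g(\d) \equiv 0$.

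Finally, for $g(\eulerq')$, I would use $(\Zrm 1)$ applied to $g$: modulo $\rG^\gauche$ (where $\s\Pi \equiv 0$) it reads $g(\eulerq) g(\eulerq') \equiv \Sig\, g(\d)$. For the four elements with $g(\eulerq) \not\equiv 0$, solving gives $g(\eulerq') \equiv \Sig\, g(\d)/g(\eulerq)$; inserting the previously computed values produces $-B\Sig, -B\Sig, B\Sig, B\Sig$ respectively. For $g \in \{t,st,ts,sts\}$ the division fails, but then $(\Zrm 6)$ yields
$$g(\eulerq')^2 \equiv \Pi\bigl(4g(\d) - g(\eulerq)^2 + 4A^2 - 4B^2\bigr) + B^2\Sig^2 \equiv 4(A^2-B^2)\Pi + B^2\Sig^2 \mod \rG^\gauche,$$
which rewrites as $B^2(\Sig^2 - 4\Pi) + 4A^2\Pi$ as stated. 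No step appears genuinely hard: the only subtlety is to verify carefully that $4g(\d) - g(\eulerq)^2 + 4A^2 - 4B^2$ is identically zero in the four ``linear-character'' cases (needed so that the $\Pi$-term in $(\Zrm 6)$ drops out and the claimed sign assignments from $(\Zrm 1)$ are consistent with the squares coming from $(\Zrm 6)$), but this is a direct substitution of the values already obtained from $(\Zrm 9)$.
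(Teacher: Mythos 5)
Your proof is correct and follows essentially the same path as the paper's: both first kill $g(\eulerq'')$ via $(\Zrm 7)$/$(Q7^\gauche)$, then extract $g(\d)$ from $(\Zrm 9)$ when $g(\eulerq)\not\equiv 0$ and from $(\Zrm 5)$ otherwise, and finally obtain $g(\eulerq')$ from $(\Zrm 1)$ in the first case and its square from $(\Zrm 6)$ in the second, using throughout that $\s,\pi\in\pG^\gauche\subset\rG^\gauche$ and that $R/\rG^\gauche$ is a domain. The only difference is cosmetic (you apply $g$ to $(\Zrm 1)$--$(\Zrm 9)$ and reduce, while the paper reduces first to $(Q^\gauche_1)$--$(Q^\gauche_9)$ and then applies $g$, using that $\pG^\gauche R$ is $G$-stable), and your closing consistency check between the sign assignment from $(\Zrm 1)$ and the square from $(\Zrm 6)$ in the four linear-character cases is a correct and welcome extra verification.
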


\begin{proof}
Les \'egalit\'es $(Q1^\gauche)$,\dots, $(Q9^\gauche)$ sont \'evidemment satisfaites aussi dans l'alg\`ebre 
$R/\pG^\gauche R$. Puisque $\pG^\gauche R$ est un id\'eal $G$-stable de $R$, on peut appliquer tout \'el\'ement de $G$ 
aux \'egalit\'es $(Q1^\gauche)$,\dots, $(Q9^\gauche)$, et ensuite r\'eduire modulo $\rG^\gauche$. 
On d\'eduit par exemple de $(Q7^\gauche)$ que $g(\eulerq'') \equiv 0 \mod \rG^\gauche$ pour tout $g \in G$, 
comme annonc\'e. 

Lorsque $g(\eulerq) \not\equiv 0 \mod \rG^\gauche$, on d\'eduit de $(Q9^\gauche)$ que 
$4g(\d)\equiv g(\eulerq)^2 - 4A^2+4B^2 \mod \rG^\gauche$, 
ce qui permet de montrer que les congruences de $\d$, $s(\d)$, $tst(\d)$ et $w_0(\d)$ modulo $\rG^\gauche$ 
sont bien celles attendues. D'autre part, si $g \in W\setminus\{1,s,tst,w_0\}$, alors $g(\eulerq) \equiv 0 \mod \rG^\gauche$ 
et on d\'eduit donc de $(Q5^\gauche)$ que $g(\d) \equiv 0 \mod \rG^\gauche$.

Pour finir, lorsque $g(\eulerq) \not\equiv 0 \mod \rG^\gauche$, la congruence de $g(\eulerq')$ modulo $\rG^\gauche$ est 
ais\'ement d\'etermin\'ee par $(Q1^\gauche)$, et est conforme aux pr\'evisions. En revanche, 
lorsque $g(\eulerq) \equiv 0 \mod \rG^\gauche$, alors $g(\d) \equiv 0 \mod \rG^\gauche$ d'apr\`es ce qui pr\'ec\`ede, 
et il d\'ecoule de $(\Qrm 6^\gauche)$ que $g(\eulerq')^2 \equiv B^2(\Sig^2-4\Pi) + 4A^2 \Pi \mod \rG^\gauche$. 
\end{proof}

\bigskip

\begin{lem}\label{lem:red-pol-b2}
$F_{\eulerq'}^\gauche(\tb) = (\tb-B\Sig)^2(\tb+B\Sig)^2(\tb^2 - B^2\Sig^2 - 4A^2 \Pi  + 4B^2\Pi)^2$.
\end{lem}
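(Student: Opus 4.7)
My plan is to identify $F_{\eulerq'}(\tb)$ with the characteristic polynomial $\prod_{w\in W}(\tb-w(\eulerq'))\in P[\tb]$ of multiplication by $\eulerq'$ on the free $P$-module $Q$ of rank $|W|=8$. Both sides of the claimed identity lie in $P^\gauche[\tb]$, and by Corollary~\ref{coro:r-rgauche-b2} this ring embeds into $(R/\rG^\gauche)[\tb]$; it therefore suffices to check the identity after reducing modulo $\rG^\gauche$.

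First I read off the images $\overline{w(\eulerq')}\in R/\rG^\gauche$ from Corollary~\ref{coro:congruence-gauche-b2}. For $w\in\{1,s,tst,w_0\}$ these are respectively $-B\Sig,-B\Sig,B\Sig,B\Sig$, contributing a factor $(\tb+B\Sig)^2(\tb-B\Sig)^2$ to the product. For each $w\in\{t,st,ts,sts\}$, the reduction squares to $\xi^2:=B^2\Sig^2+4(A^2-B^2)\Pi$; by Lemma~\ref{lem:qgauche-b2}(c), $R/\rG^\gauche$ is a degree-$2$ extension of $P^\gauche$ generated by a square root $\xi$ of this element, so each $\overline{w(\eulerq')}\in\{+\xi,-\xi\}$.

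To pin down these four signs, I use that $w_0\in\Zrm(G)$ by Proposition~\ref{tau 0 in G}, together with the fact that $w_0(\eulerq')=-\eulerq'$. The latter follows from a direct computation with $\t_0=(-1,1,\e)$ acting on the explicit expression for $\euler'$: since $\euler'$ has bidegree $(1,3)$, $\gradauto_{-1,1}$ negates it, while $\e_*$ fixes $\euler'$ monomial by monomial (in each monomial the sign flips produced by $\e_*$ on factors from $\kb[\CCB]$ and from $\kb W$ cancel). Combining with the centrality of $w_0$, one obtains $(w_0w)(\eulerq')=w_0(w(\eulerq'))=w(w_0(\eulerq'))=-w(\eulerq')$ for every $w\in W$. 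Since left-multiplication by $w_0$ swaps $t\leftrightarrow sts$ and $st\leftrightarrow ts$, the four $\chi$-type reductions split into two pairs $\{\alpha,-\alpha\}$ and $\{\beta,-\beta\}$ with $\alpha^2=\beta^2=\xi^2$. Their contribution to the product is therefore $(\tb^2-\alpha^2)(\tb^2-\beta^2)=(\tb^2-\xi^2)^2$, \emph{regardless} of the individual signs $\alpha,\beta\in\{\pm\xi\}$.

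Putting everything together, $F_{\eulerq'}(\tb)$ reduces modulo $\rG^\gauche$ to $(\tb-B\Sig)^2(\tb+B\Sig)^2(\tb^2-\xi^2)^2$, which upon substituting $\xi^2=B^2\Sig^2+4(A^2-B^2)\Pi$ is exactly the claimed expression. Since the latter already has coefficients in $P^\gauche$, the injectivity of $P^\gauche[\tb]\hookrightarrow(R/\rG^\gauche)[\tb]$ promotes the identity from $(R/\rG^\gauche)[\tb]$ back to $P^\gauche[\tb]$. The main subtlety is the sign-pinning step: there are a priori $2^4$ possibilities for the four ambiguous signs and most of them would yield neither a perfect square nor a polynomial defined over $P^\gauche$; only the central element $w_0$ enforces the balanced pairing that makes the factorization work.
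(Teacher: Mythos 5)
Your overall strategy matches the paper's (reduce modulo $\rG^\gauche$ and compute $\prod_{w\in W}(\tb-\overline{w(\eulerq')})$ from Corollary~\ref{coro:congruence-gauche-b2}), and your explicit sign-pinning via $w_0$ is actually a welcome elaboration of the paper's terse ``le r\'esultat d\'ecoule alors du corollaire'': that corollary only determines the four reductions $t(\eulerq'),\dots,sts(\eulerq')$ up to sign, and an unbalanced sign distribution (e.g.\ $+\xi,+\xi,+\xi,-\xi$) would not produce the claimed factorization. Your argument, showing $w_0(\eulerq')=-\eulerq'$ via the bidegree $(1,3)$ of $\euler'$ and centrality of $w_0$ in $G$, correctly forces $sts(\eulerq')\equiv -t(\eulerq')$ and $ts(\eulerq')\equiv -st(\eulerq')$, giving the balanced pairing. (An alternative closure is to note that $F_{\eulerq'}^\gauche \in P^\gauche[\tb]$ while $\xi\notin P^\gauche$, so the $\tb^3$-coefficient $-(\sum\eta_w)\xi$ of the degree-four subproduct must vanish, which again forces two $+$'s and two $-$'s.)

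There is, however, a gap at the outset. You ``identify'' $F_{\eulerq'}(\tb)$ with the characteristic polynomial $\prod_{w\in W}(\tb-w(\eulerq'))$ of multiplication by $\eulerq'$ on the rank-$8$ free $P$-module $Q$ without justification, whereas the paper defines $F_{\eulerq'}$ as the \emph{minimal} polynomial of $\eulerq'$ over $P$, which a priori is only a proper monic divisor of the characteristic polynomial. Equality holds precisely when the eight conjugates $w(\eulerq')$, $w\in W$, are pairwise distinct. The paper settles this by specialization at $c=0$: one checks that $\euler_0'=(xY+yX)XY\in\kb[V\times V^*]$ has eight distinct $(W\times W)$-conjugates, so the minimal polynomial of $\euler_0'$ over $P_\bullet$ already has degree $8$; since $F_{\eulerq'}$ reduces modulo $\pG_0$ to a monic multiple of it, $\deg F_{\eulerq'}\ge 8$, hence $=8$ and $F_{\eulerq'}$ is the characteristic polynomial. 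Without this step, your computation of $\prod_{w\in W}(\tb-w(\eulerq'))\bmod\rG^\gauche$ is correct but does not by itself establish what $F_{\eulerq'}^\gauche$ is.
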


\begin{proof}
Tout d'abord, en appliquant les \'el\'ements de $W \times W$ \`a $\euler_0' \in \kb[V \times V^*]$, 
on remarque que $\euler_0'$ a huit conjugu\'es, et donc le polyn\^ome minimal de $\euler_0'$ sur $P_\bullet$ 
est de degr\'e $8$. Par cons\'equent, $F_{\eulerq'}(\tb)$ est de degr\'e $8$~: c'est en fait le polyn\^ome caract\'eristique 
de la multiplication par $\euler'$ dans le $P$-module libre $Z$. Ainsi,
$$F_{\eulerq'}(\tb)=\prod_{w \in W} (\tb - w (\eulerq'))$$
et le r\'esultat d\'ecoule alors du corollaire~\ref{coro:congruence-gauche-b2}.
\end{proof}

En conclusion, si $g \in \{t,st,ts,sts\}$ et si $q \in \{\eulerq,\eulerq'',\d\}$ alors
\equat\label{eq:congru-b2}
g(q) \equiv 0 \mod \rG^\gauche
\endequat
et
\equat\label{eq:congru-carre-b2}
g(\eulerq')^2 \equiv B^2(\Sig^2-4\Pi) + 4A^2 \Pi \mod \rG^\gauche.
\endequat
La proposition suivante pr\'ecise la congruence~\ref{eq:congru-carre-b2}~: c'est le point le plus d\'elicat de 
ce chapitre.

\bigskip

\begin{prop}\label{prop:b2-gauche-enfin}
Les congruences suivantes sont vraies dans $R$~:
$$
\begin{cases}
t(\eulerq') \equiv st(\eulerq') \mod \rG^\gauche,\\
ts(\eulerq') \equiv sts(\eulerq') \mod \rG^\gauche,\\
t(\eulerq') \not\equiv ts(\eulerq') \mod \rG^\gauche.\\
\end{cases}
$$
\end{prop}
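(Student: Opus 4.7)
The combination of Lemma~\ref{lem:red-pol-b2} and Corollary~\ref{coro:congruence-gauche-b2} shows that each of the four elements $t(\eulerq')$, $st(\eulerq')$, $ts(\eulerq')$, $sts(\eulerq')$ reduces modulo $\rG^\gauche$ to an element whose square equals $B^2\Sig^2 + 4(A^2-B^2)\Pi$, and that the factor $(\tb^2 - B^2\Sig^2 - 4A^2\Pi + 4B^2\Pi)^2$ appearing in $F_{\eulerq'}^\gauche(\tb)$ forces exactly two of these four elements to reduce to each of the two square roots $\pm\xi$ of that element in the degree-two extension $R/\rG^\gauche$ of $P/\pG^\gauche$. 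The proposition therefore amounts to identifying the correct partition of $\G_\chi$ into same-sign pairs among the three a priori possibilities.

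The plan is to eliminate one partition immediately using exact identities in $R$ coming from the centrality of $w_0$. First, I would check that $w_0(\eulerq') = -\eulerq'$ as an exact equality in $R$: the element $\euler'$ is bihomogeneous of bidegree $(1,3)$ (read off from its explicit formula $\euler' = (xY+yX)XY - A(s-s')XY + BtY^2 + Bt'X^2$), and under the decomposition $\t_0 = (-1, 1, \e)$ from Proposition~\ref{tau 0 in G} it acts as $\gradauto_{-1,1} \circ \e_*$; one computes $\gradauto_{-1,1}(\euler') = (-1)^1 \cdot 1^3 \cdot \euler' = -\euler'$ while $\e_*(\euler') = \euler'$ by a direct check using $\e(s) = \e(t) = -1$ together with the invariance of $X, Y$ and the sign changes in the $A, B, s, t$ factors. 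Because $\t_0 = w_0$ is central in $G$, for any $w \in W$ one has $(w_0 w)(\eulerq') = w(w_0(\eulerq')) = -w(\eulerq')$ in $R$. Taking $w = t$ and $w = st$, and using $w_0 t = sts$ and $w_0(st) = ts$ in $W$, this yields the crucial exact identities $sts(\eulerq') = -t(\eulerq')$ and $ts(\eulerq') = -st(\eulerq')$ in $R$.

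These identities have two consequences. On the one hand, the pairing of same-sign elements modulo $\rG^\gauche$ cannot group $\{t, sts\}$ nor $\{st, ts\}$ together, since each of those pairs has opposite signs, leaving only the two candidate partitions $\{t, st\} \sqcup \{ts, sts\}$ and $\{t, ts\} \sqcup \{st, sts\}$. On the other hand, the three congruences of the proposition, taken together, are equivalent to the first of these two partitions: the second congruence $ts(\eulerq') \equiv sts(\eulerq')$ reduces, via the exact identities, to the first one $t(\eulerq') \equiv st(\eulerq')$, and the third congruence $t(\eulerq') \not\equiv ts(\eulerq')$ is equivalent to $t(\eulerq') + st(\eulerq') \notin \rG^\gauche$, which is precisely the negation of the second candidate partition.

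The main obstacle is thus to distinguish these two remaining partitions, i.e., to determine the sign of $t(\eulerq') \cdot st(\eulerq')$ modulo $\rG^\gauche$ relative to $\xi^2 = B^2\Sig^2 + 4(A^2-B^2)\Pi$. My approach would be to start from the relation $(\Zrm 1)$ applied separately to $t$ and $st$, giving $t(\eulerq) \cdot t(\eulerq') = \s\Pi + \Sig\, t(\d)$ and its analogue for $st$; taking their product yields the identity $t(\eulerq) \cdot st(\eulerq) \cdot t(\eulerq') \cdot st(\eulerq') = (\s\Pi + \Sig\, t(\d))(\s\Pi + \Sig\, st(\d))$ in $R$. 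Since each of $\s, t(\eulerq), st(\eulerq), t(\d), st(\d)$ lies in $\rG^\gauche$ while $\Sig, \Pi \notin \rG^\gauche$, both sides of this identity lie in $(\rG^\gauche)^2$, and a careful first-order expansion (using $(\Zrm 5)$ to extract the leading behavior of $w(\d)$ in terms of $w(\eulerq)$ modulo higher powers of $\rG^\gauche$) should produce an equation in $R/\rG^\gauche$ from which the sign of $t(\eulerq') \cdot st(\eulerq') \bmod \rG^\gauche$ can be read off, pinning down partition (A). An alternative route would be to identify explicitly the element $\sigma_A \in G$ realizing the permutation $(t,st)(ts,sts)$ and show directly that $\sigma_A$ lies in $I^\gauche$, but this would require a more refined understanding of the action of $G = W_4'$ on $R$ than has been developed so far.
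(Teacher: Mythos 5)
Your reduction to two candidate partitions is correct and exactly mirrors the first part of the paper's argument. The identity $w_0(\eulerq') = -\eulerq'$, which you derive from the bidegree $(1,3)$ of $\euler'$ together with $\e_*(\euler')=\euler'$ and $\t_0=w_0$, is precisely what the paper invokes (in the form $sts(\eulerq')=tw_0(\eulerq')=-t(\eulerq')$) to rule out $g_0=sts$ and reduce to deciding between $\{t,st\}\sqcup\{ts,sts\}$ and $\{t,ts\}\sqcup\{st,sts\}$.

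The second half, however, has a genuine gap, and the proposed route is unlikely to close it. Your plan is to multiply the two instances of $(\Zrm 1)$ at $t$ and $st$, obtaining $t(\eulerq)\,st(\eulerq)\,t(\eulerq')\,st(\eulerq') = (\s\Pi+\Sig\, t(\d))(\s\Pi+\Sig\, st(\d))$, and then do a ``first-order expansion modulo higher powers of $\rG^\gauche$''. The difficulty is that both sides of this identity already lie in $(\rG^\gauche)^2$, and the factor $t(\eulerq')\,st(\eulerq')$ that you want to isolate modulo $\rG^\gauche$ is multiplied by $t(\eulerq)\,st(\eulerq)\in(\rG^\gauche)^2$; extracting it would require controlling $t(\eulerq)\,st(\eulerq)$ modulo $(\rG^\gauche)^3$, which is not available. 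Moreover, $\pG^\gauche=\langle\s,\pi\rangle$ has height $2$ in $P$, so $\rG^\gauche$ has height $2$ in $R$ and $R_{\rG^\gauche}$ is not a DVR --- there is no canonical valuation to give ``first order'' a precise meaning. Finally, relation $(\Zrm 5)$ gives $g(\d)^2 = \pi\Pi + B^2 g(\eulerq)^2$, and since $\pi\Pi\in\rG^\gauche$ but not obviously in $(\rG^\gauche)^2$, this does not express $g(\d)$ in terms of $g(\eulerq)$ modulo higher powers of $\rG^\gauche$.

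The paper's actual resolution of the ambiguity is quite different. It passes to the intermediate prime $\pi P$ (of height $1$), computes the reduction $F^\pi_{\eulerq'}(\tb)$ explicitly (with MAGMA) and observes that it factors as $F^\pi(\tb)\cdot F^\pi(-\tb)$ with the two factors coprime and separable. Choosing $\rG^\pi\subset\rG^\gauche$ over $\pi P$, this splits the $W$-orbit $\{g(\eulerq')\}_{g\in W}$ into the roots of $F^\pi$ and those of $F^\pi(-\tb)$ modulo $\rG^\pi$, which already contain $\eulerq'$, $s(\eulerq')$ and one of $t(\eulerq')$, $ts(\eulerq')$. The ambiguity is then resolved by pushing further down to $\rG_0^\pi$ over $\pi P + \pG_0$, comparing the constant term $F^\pi(0)\equiv\s^2\Pi^3$ with the explicit product $\eulerq_0'\,s(\eulerq_0')\,t(\eulerq_0')\,ts(\eulerq_0')$ computed in $\kb[V\times V^*]^{\D\Zrm(W)}$, and using the structure of $G=W_4'$ (any $g\in G$ changes the four evaluations by signs whose product is $1$) to transport the sign across the choice of prime over $\rG_0$. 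This is considerably more delicate than a local-ring expansion, and none of its ingredients appear in your proposal.
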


\begin{proof}
D'apr\`es le corollaire~\ref{coro:congruence-gauche-b2}, 
$$(\tb-t(\eulerq'))(\tb-st(\eulerq'))(\tb-ts(\eulerq'))(\tb-sts(\eulerq'))
\equiv (\tb^2 - B^2\Sig^2 - 4A^2 \Pi  + 4B^2\Pi)^2 \mod \rG^\gauche R[\tb].$$
Cela montre qu'il existe un unique $g_0 \in \{st,ts,sts\}$ tel que $g_0(\eulerq') \equiv t(\eulerq') \mod \rG^\gauche$. 
Puisque $t(\eulerq') \not\equiv 0 \mod \rG^\gauche$ et $sts(\eulerq')=tw_0(\eulerq')=-t(\eulerq')$, l'\'el\'ement 
$g_0$ n'est pas \'egal \`a $sts$. Donc
$$g_0 \in \{st,ts\}.$$
Il nous suffit de montrer que 
$$g_0=st.\leqno{(*)}$$

Notons $F_{\eulerq'}^\pi(\tb)$ la r\'eduction modulo $\pi P$ du polyn\^ome minimal de $\eulerq'$. 
Posons 
\eqna
F^\pi(\tb)&=&\tb^4 + 2 B \Sig  \tb^3 + (-4 A^2 \Pi   + 4 B^2 \Pi   -
        \s  \Sig  \Pi  ) \tb^2 \\ && - (8 A^2 B \Sig  \Pi   +
        2 B^3 \Sig^3 - 8 B^3 \Sig  \Pi   + 2 B \s  \Sig^2 \Pi  
        - 4 B \s  \Pi^2) \tb \\ && - 4 A^2 B^2 \Sig^2 \Pi   - B^4 \Sig^4
        + 4 B^4 \Sig^2 \Pi   - B^2 \s  \Sig^3 \Pi   +
        4 B^2 \s  \Sig  \Pi^2 + \s ^2 \Pi^3.
\endeqna
En utilisant le logiciel {\tt MAGMA} (voir~\cite{magma}), on obtient~:
$$F_{\eulerq'}^\pi(\tb)=F^\pi(\tb) \cdot F^\pi(-\tb).$$
Notons $\rG^\pi$ un id\'eal premier de $R$ au-desus de $\pi P$ et contenu dans $\rG^\gauche$. 
Notons $\rG^\pi_0$ un id\'eal premier de $R$ au-dessus de $\pi P + \pG_0$ et contenu dans $\rG^\pi$. 
On v\'erifie facilement que $F^\pi(\tb)$ est premier avec $F^\pi(-\tb)$ et est s\'eparable, 
donc $F^\pi_{\eulerq'}(\tb)$ admet huit racines distinctes dans $R/\rG^\pi$, qui sont les classes des $g(\eulerq')$, 
o\`u $g$ parcourt $W$. Notons $F^\gauche(\tb)$ la restriction modulo $\pG^\gauche$ de $F^\pi(\tb)$. 
Alors
$$F^\gauche(\tb) = (\tb + B\Sig)^2(\tb^2 - B^2\Sig^2 - 4A^2 \Pi  + 4B^2\Pi).$$
Donc il d\'ecoule du corollaire~\ref{coro:congruence-gauche-b2} que $\eulerq'$ et $s(\eulerq')$ 
sont des racines de $F^\pi(\tb)$ dans $R/\rG^\pi$. D'autre part, puisque $W_2'$ agit transitivement sur 
$\{t,st,ts,sts\}$ et stabilise $\rG^\gauche$, on peut, quitte \`a remplacer $\rG^\pi$ par $g(\rG^\pi)$ pour un $g \in W_2'$, 
supposer que $t(\eulerq')$ est une racine de $F^\pi(\tb)$ modulo $\rG^\pi$. L'autre racine modulo $\rG^\pi$ 
est donc $st(\eulerq')$ ou $ts(\eulerq')$ (ce ne peut pas \^etre $sts(\eulerq')=-t(\eulerq')$, car ce dernier est 
une racine de $F^\pi(-\tb)$ modulo $\rG^\pi$). Notons donc $g_1$ l'unique \'el\'ement de $\{st,ts\}$ 
tel que $g_1(\eulerq')$ soit une racine de $F^\pi(\tb)$ modulo $\rG^\pi$. Par r\'eduction modulo $\rG^\pi_0$, 
on obtient 
$$\eulerq'\cdot s(\eulerq') \cdot t(\eulerq') \cdot g_1(\eulerq') \equiv F^\pi(0) \equiv \s^2\Pi^3 \mod \rG^\pi_0.$$
D'autre part, il existe $g \in G$ tel que $\rG_0 \subset g(\rG^\pi_0)$. Or, puisque $G=W_4'$, il existe des 
signes $\eta_1$, $\eta_2$, $\eta_3$, $\eta_4$ tels que $\{g(1),g(s),g(t),g(g_1)\}=\{\eta_1, \eta_2 s, \eta_3 t,\eta_4 g_1\}$ 
et $\eta_1\eta_2\eta_3\eta_4=1$. Par cons\'equent, 
$$\eulerq' \cdot s(\eulerq') \cdot t(\eulerq') \cdot g_1(\eulerq') \equiv \s^2\Pi^3 \mod \rG_0.$$
Le calcul suivant peut s'effectuer directement \`a l'int\'erieur de $\kb[V \times V^*]^{\D \Zrm(W)} \supset R/\rG_0$~: 
\eqna
\eulerq_0' \cdot s(\eulerq_0') \cdot t(\eulerq_0') \cdot ts(\eulerq_0') &=& 
(xY+yX)(xX+yY)(-xY+yX)(-xX+yY)X^4Y^4 \\
&=& (y^2Y^2-x^2X^2)(y^2X^2-x^2Y^2) \Pi^2 \\
&=& ((x^4+y^4) \Pi-\pi(X^4 + Y^4))\Pi^2 \\
&\equiv& \s^2\Pi^3 \mod \pi \kb[V \times V^*].
\endeqna
Donc $g_1=ts$.

On a donc montr\'e que 
$$(\tb-\eulerq')(\tb-s(\eulerq'))(\tb-t(\eulerq'))(\tb-ts(\eulerq'))
\equiv F^\pi(\tb) \mod \rG^\pi R[\tb].$$
Par r\'eduction modulo $\rG^\gauche$, on obtient 
$$(\tb-t(\eulerq'))(\tb-ts(\eulerq'))\equiv \tb^2 - B^2\Sig^2 - 4A^2 \Pi  + 4B^2\Pi \mod \rG^\gauche R[\tb].$$
Donc $t(\eulerq') \equiv -ts(\eulerq') \mod \rG^\gauche$, ce qui montre que $g_0\neq ts$. Donc $g_0=st$.
\end{proof}

\bigskip

\begin{coro}\label{coro:b2-gauche-enfin}
Pour le choix de $\rG^\gauche$ effectu\'e dans cette section, 
les cellules de Calogero-Moser \`a gauche g\'en\'eriques sont $\{1\}$, $\{s\}$, $\{tst\}$, $\{w_0\}$, 
$\{ts,sts\}$ et $\{t,st\}$.

Notons $g_\gauche$ l'involution de $G$ qui laisse fixe $1$, $s$, $tst$ et $w_0$ et telle que 
$g_\gauche(t)=st$ et $g_\gauche(ts)=sts$. Alors $I^\gauche = \langle g_\gauche\rangle$. 
\end{coro}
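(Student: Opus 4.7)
The plan is to apply the characterization from~\eqref{w sim}, according to which $w \sim^\calo_{\rG^\gauche} w'$ if and only if $w(q) \equiv w'(q) \mod \rG^\gauche$ for every $q \in Q$. Since $Q$ is generated as a $P$-algebra by $\eulerq$, $\eulerq'$, $\eulerq''$ and $\d$ (by the presentation given in the analog of Theorem~\ref{theo:Q} transferred to $Q$), and since $P$ acts identically on both sides modulo $\rG^\gauche \cap P = \pG^\gauche$, it is enough to test equality of images of these four generators under all $w \in W$.

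First, from Corollary~\ref{coro:euler-modulo-gauche-b2} the eight values $w(\eulerq) \mod \rG^\gauche$ as $w$ ranges over $W$ take only five distinct values in $R/\rG^\gauche$, namely $\pm 2(B+A)$, $\pm 2(B-A)$ and $0$. The four elements $1, s, tst, w_0$ realize the four nonzero values and are therefore separated from each other and from the remaining four elements $t, st, ts, sts$ (which all have $\eulerq$-image zero). This reproves that $\{1\}, \{s\}, \{tst\}, \{w_0\}$ are singleton left cells, as already recorded in Corollary~\ref{coro:cellules-gauches-b2}.

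Second, we focus on the four remaining elements $\{t, st, ts, sts\}$. Corollary~\ref{coro:congruence-gauche-b2} shows that for every $g$ in this set and every $q \in \{\eulerq, \eulerq'', \d\}$ we have $g(q) \equiv 0 \mod \rG^\gauche$, so these three generators cannot distinguish any two of them. The only remaining tool is $\eulerq'$, and here Proposition~\ref{prop:b2-gauche-enfin} provides exactly the required information: the congruences
$$t(\eulerq') \equiv st(\eulerq'), \qquad ts(\eulerq') \equiv sts(\eulerq'), \qquad t(\eulerq') \not\equiv ts(\eulerq') \pmod{\rG^\gauche}$$
force the partition of $\{t, st, ts, sts\}$ into the two cells $\{t, st\}$ and $\{ts, sts\}$. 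Assembling the two steps yields the six cells as announced.

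Finally, for the identification of $I^\gauche$: by Definition~\ref{defi:CM} the cells are the orbits of $I^\gauche$ acting on $W$ via the inclusion $G \hookrightarrow \SG_W$. By~\eqref{eq:dgauche-igauche-b2} we know $|I^\gauche|=2$, so the nontrivial element $\sigma \in I^\gauche$ is an involution of $W$ which, in view of the cells just computed, must fix $1, s, tst, w_0$ and interchange $t \leftrightarrow st$ and $ts \leftrightarrow sts$. This is exactly the involution $g_\gauche$ of the statement, hence $I^\gauche = \langle g_\gauche \rangle$. The whole argument is essentially assembly: the hard analytic work is concentrated in Proposition~\ref{prop:b2-gauche-enfin}, whose reduction modulo $\langle \pi \rangle$ and use of the factorization of $F_{\eulerq'}^\pi(\tb)$ was the genuine obstacle; once that proposition is in hand, the corollary is a bookkeeping exercise.
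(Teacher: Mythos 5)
Your proof is correct and matches the intended argument: the paper states this corollary without a separate proof because it is precisely the bookkeeping you describe, assembling Corollaries~\ref{coro:euler-modulo-gauche-b2} and~\ref{coro:congruence-gauche-b2}, Proposition~\ref{prop:b2-gauche-enfin}, and the cardinality bound $|I^\gauche|=2$ from~\eqref{eq:dgauche-igauche-b2}. Your identification of $g_\gauche$ as the unique involution with the given orbit structure is the right closing step.
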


\bigskip

\begin{rema}\label{rema:w-w}
On comprend mieux ici la convention choisie pour l'action de $W \times W$ sur $\kb(V \times V^*)$ 
(voir la sous-section~\ref{subsection:specialisation galois 0}). En effet, si on avait choisi l'autre action 
(celle d\'ecrite dans la remarque~\ref{rema:action-dif}), les cellules de Calogero-Moser 
{\it \`a gauche} auraient co\"{\i}ncid\'ees avec les cellules de Kazhdan-Lusztig {\it \`a droite}.\finl
\end{rema}

\bigskip

\subsection{Preuve du th\'eor\`eme~\ref{theo:conjectures-b2}} 
Reprenons les notations du th\'eor\`eme~\ref{theo:conjectures-b2} ($a=c_s$, $b=c_t$). 
Fixons pour l'instant un id\'eal premier $\rG_c^\gauche$ de $R$ contenant $\rG^\gauche$ et $\pG_c R$. 
Puisque $R/\rGba \simeq P/\pGba$, on d\'eduit que $\rGba_c=\rGba + \pGba_c R$ est l'unique id\'eal premier 
de $R$ au-dessus de $\rGba$ et contenant $\rG_c^\gauche$. On notera $D_c^\gauche$ (respectivement 
$I_c^\gauche$) le groupe de d\'ecomposition (respectivement d'inertie) de $\rG_c^\gauche$ 
et $\Dba_c$ (respectivement $\Iba_c$) le groupe de d\'ecomposition (respectivement d'inertie) de $\rGba_c$.

\medskip

Il d\'ecoule des corollaires~\ref{coro:euler-modulo-gauche-b2} et~\ref{coro:congruence-gauche-b2} et 
de la proposition~\ref{prop:b2-gauche-enfin} que~:
$$
\begin{cases}
\eulerq \equiv -2(b+a) \mod \rG_c^\gauche,\\
s(\eulerq) \equiv -2(b-a) \mod \rG_c^\gauche,\\
tst(\eulerq) \equiv 2(b-a) \mod \rG_c^\gauche,\\
w_0(\eulerq) \equiv 2(b+a) \mod \rG_c^\gauche,\\
t(\eulerq)\equiv st(\eulerq) \equiv ts(\eulerq) \equiv sts(\eulerq) \equiv 0 \mod \rG_c^\gauche.
\end{cases}\leqno{(\clubsuit)}
$$
$$
\begin{cases}
\d \equiv 2b(b+a) \mod \rG_c^\gauche,\\
s(\d) \equiv 2b(b-a) \mod \rG_c^\gauche,\\
tst(\d) \equiv 2b(b-a) \mod \rG_c^\gauche,\\
w_0(\d) \equiv 2b(b+a) \mod \rG_c^\gauche,\\
t(\d)\equiv st(\d) \equiv ts(\d) \equiv sts(\d) \equiv 0 \mod \rG_c^\gauche
\end{cases}\leqno{(\diamondsuit)}
$$
$$
\begin{cases}
\eulerq' \equiv -b\Sig \mod \rG_c^\gauche,\\
s(\eulerq') \equiv -b\Sig \mod \rG_c^\gauche,\\
tst(\eulerq') \equiv b\Sig \mod \rG_c^\gauche,\\
w_0(\eulerq') \equiv b\Sig \mod \rG_c^\gauche,\\
t(\eulerq') \equiv st(\eulerq') \equiv -ts(\eulerq') \equiv -sts(\eulerq') \mod \rG_c^\gauche,\\
t(\eulerq')^2\equiv b^2 \Sig^2+4(a^2-b^2) \Pi \mod \rG_c^\gauche.
\end{cases}\leqno{(\heartsuit)}
$$
et
$$g(\eulerq'') \equiv 0 \mod \rG_c^\gauche\leqno{(\spadesuit)}$$
pour tout $g \in G$. Rappelons que l'on suppose que $ab \neq 0$ et que deux \'el\'ements $g$ et $g'$ de $W$ sont 
dans la m\^eme $c$-cellule de Calogero-Moser \`a gauche (respectivement bilat\`ere) 
si et seulement si $g(q) \equiv g'(q) \mod \rG_c^\gauche$ (respectivement $\mod \rGba_c$) 
pour tout $q \in \{\eulerq,\eulerq,\eulerq'',\d\}$ 
(car $Q=P[\eulerq,\eulerq',\eulerq'',\d]$).

\bigskip

\subsubsection*{\bfit Le cas ${\boldsymbol{a^2 \neq b^2}}$} 
Supposons ici que $a^2 \neq b^2$. Il d\'ecoule des congruences $(\clubsuit)$, $(\diamondsuit)$, 
$(\heartsuit)$ et $(\spadesuit)$ que les $c$-cellules de Calogero-Moser \`a gauche 
sont $\{1\}$, $\{s\}$, $\{tst\}$, $\{w_0\}$, $\G_\chi^+=\{t,st\}$ et $\G_\chi^-=\{ts,sts\}$ et 
que les $c$-cellules de Calogero-Moser bilat\`eres sont $\{1\}$, $\{s\}$, $\{tst\}$, $\{w_0\}$ et $\G_\chi$.

Les r\'esultats sur les $c$-familles de Calogero-Moser et les $\calo$-caract\`eres $c$-cellulaires 
donn\'es par la table~\ref{table:conjectures-b2} d\'ecoulent alors du corollaire~\ref{coro:famille-semicontinu}, 
de la proposition~\ref{prop:cellulaire-semicontinu}, de~(\ref{eq:familles-b2}) et 
du corollaire~\ref{coro:cellules-gauches-b2}.

D\'eterminons maintenant $D_c^\gauche$ et $I_c^\gauche$. Notons que $I^\gauche \subset I_c^\gauche$ et que, 
vu la description des $c$-cellules de Calogero-Moser \`a gauche, c'est-\`a-dire des $I_c^\gauche$-orbites, 
cela force l'\'egalit\'e. D'autre part, puique $\rGba_c=\rGba + \pGba_c R$, on a $D^\gauche \subset D_c^\gauche$. 
Puisque les $D_c^\gauche$-orbites sont contenues dans les $c$-cellules de Calogero-Moser bilat\`eres, 
la description de ces derni\`eres force encore l'\'egalit\'e. On montre de m\^eme que $\Dba_c=\Dba=W_2'$ et 
que $\Iba_c=\Iba=W_2'$.

\bigskip

\subsubsection*{\bfit Le cas o\`u ${\boldsymbol{a=b}}$} 
Dans ce cas, la derni\`ere congruence de $(\heartsuit)$ devient 
$$t(\eulerq')^2 \equiv b^2 \Sig^2 \mod \rG_c^\gauche.$$
Donc $t(\eulerq') \equiv b\Sig \mod \rG_c^\gauche$ ou $t(\eulerq') \equiv -b\Sig \mod \rG_c^\gauche$. 
Quitte \`a remplacer $\rG_c^\gauche$ par $g(\rG_c^\gauche)$, o\`u $g \in W_2'=D^\gauche$ 
\'echange $t$ et $sts$, on peut faire le choix suivant~:

\bigskip

\boitegrise{\noindent{\bf Choix de ${\boldsymbol{\rG_c^\gauche}}$.} 
{\it Nous choisissons l'id\'eal premier $\rG_c^\gauche$ de sorte que $t(\eulerq') \equiv b\Sig \mod \rG_c^\gauche$.}}{0.75\textwidth}

\bigskip
 
La famille de congruences $(\clubsuit)$, $(\diamondsuit)$, 
$(\heartsuit)$ et $(\spadesuit)$ montre que les $c$-cellules de Calogero-Moser \`a gauche 
sont $\{1\}$, $\{w_0\}$, $\G_s=\{s,ts,sts\}$ et $\G_t=\{t,st,tst\}$ et 
que les $c$-cellules de Calogero-Moser bilat\`eres sont $\{1\}$, $\{w_0\}$ et $W \setminus \{1,w_0\}$. 

Comme pr\'ec\'edemment, les r\'esultats sur les $c$-familles de Calogero-Moser et les $\calo$-caract\`eres $c$-cellulaires 
donn\'es par la table~\ref{table:conjectures-b2} d\'ecoulent alors du corollaire~\ref{coro:famille-semicontinu}, 
de la proposition~\ref{prop:cellulaire-semicontinu}, de~(\ref{eq:familles-b2}) et 
du corollaire~\ref{coro:cellules-gauches-b2}.

Terminons par la description de $D_c^\gauche$, $I_c^\gauche$, $\Dba_c$ et $\Iba_c$. 
Tout d'abord, $I_c^\gauche$ a deux orbites de longueur $3$ 
($\G_s$ et $\G_t$) donc son ordre est divisible par $3$. De plus, il contient $I^\gauche$ qui est d'ordre $2$. 
Donc son ordre est divisible par $6$. La description des $c$-cellules \`a gauche permet alors de conclure 
que $I_c^\gauche=\SG_3$. D'autre part, $D_c^\gauche$ permute les $c$-cellules \`a gauche qui ont le m\^eme 
$\calo$-caract\`ere $c$-cellulaire associ\'e. Donc $D_c^\gauche$ stabilise $\G_s$ et $\G_t$, ce qui 
force l'\'egalit\'e $D_c^\gauche=I_c^\gauche=\SG_3$.

Du c\^ot\'e des cellules bilat\`eres, rappelons que $\Dba_c=\Iba_c$ car $\Dba_c/\Iba_c$ est un quotient de 
$\Dba/\Iba$. De plus, les inclusions $W_2' \subset \Iba_c$ et $I_c^\gauche \subset \Iba_c$ montrent 
que $W_3' \subset \Iba_c$. L'\'egalit\'e $\Iba_c=W_3'$ s'impose alors.

\part*{Appendices}


\renewcommand\thechapter{\Alph{chapter}}

\def\chaptername{Appendice}\setcounter{chapter}{0}

\chapter{Rappels de th\'eorie de Galois}\label{chapter:galois-rappels}

\bigskip

Soit $R$ un anneau commutatif, $G$ un groupe fini agissant sur $R$ et $H$ 
un sous-groupe de $G$. On pose $Q=R^H$ et $P=R^G$, de sorte que 
$P \subset Q \subset R$. 
Si $\rG$ est un id\'eal premier de $R$, on note $k_R(\rG)$  \indexnot{k}{k_R(\rG)}  le corps des fractions de 
$R/\rG$ (c'est-\`a-dire le quotient $R_\rG/\rG R_\rG$) et $G_\rG^D$   \indexnot{G}{G_\rG^D, G_\rG^I}  
le stabilisateur de $\rG$ dans $G$. Ce sous-groupe de $G$ agit alors 
sur $R/\rG$ et on note $G_\rG^I$ le noyau de cette action. En d'autres termes,
$$G_\rG^I=\{g \in G~|~\forall~r \in R,~g(r) \equiv r \mod \rG\}.$$
Le groupe $G_\rG^D$ (respectivement $G_\rG^I$) est appel\'e le 
{\it groupe de d\'ecomposition} (respectivement le {\it groupe d'inertie}) 
de $G$ en $\rG$. 

Nous fixons dans ce chapitre un id\'eal premier $\rG$ de $R$ et nous posons 
$\qG=\rG \cap Q$ et $\pG=\rG \cap P = \qG \cap P$~:
$$\diagram
\rG \dline & \subset & R \dline \\
\qG \dline & \subset & Q \dline \\
\pG        & \subset & P 
\enddiagram$$
On note 
$$\r_G : \Spec R \to \Spec P,$$
$$\r_H : \Spec R \to \Spec Q$$
$$\Upsilon : \Spec Q \to \Spec P\leqno{\text{et}}$$
les applications induites respectivement par les inclusions $P \subset R$, $Q \subset R$ 
et $Q \subset R$. On a bien s\^ur $\r_G = \Upsilon \circ \r_H$~: en d'autres termes, le diagramme 
$$\xymatrix{
\Spec R \ar[rr]^{\DS{\r_H}} \ar@/_1.2cm/[rrrr]^{\DS{\r_G}}&& \Spec Q \ar[rr]^{\DS{\Upsilon}} 
&& \Spec P.
}$$
est commutatif. Par exemple,
$$\r_G(\rG)=\pG,\qquad\r_H(\rG)=\qG\qquad \text{et}\qquad \Upsilon(\qG)=\pG.$$
Pour finir, posons
$$D=G_\rG^D\qquad \text{et} \qquad I=G_\rG^I.$$

\medskip

\section{Autour du lemme de Dedekind}\label{sub:dedekind} 

\medskip

Rappelons que $(R,\times)$ est un mono\"{\i}de. 
Si $M$ est un autre mono\"\i de, on notera $\Hom_{\text{mon}}(M,R)$ l'ensemble 
des morphismes de mono\"\i des $M \to (R,\times)$. 
Si $A$ est un anneau commutatif, on notera $\Hom_{\mathrm{ann}}(A,R)$ l'ensemble des 
morphismes d'anneaux de $A$ vers $R$~: on peut le voir comme un sous-ensemble de 
$\Hom_{\mathrm{mon}}((A,\times),R)$. Ce sont des sous-ensembles 
de l'ensemble $\FC(M,R)$ des applications de $M$ dans $R$~: notons que $\FC(M,R)$ 
est un $R$-module.

\bigskip

\noindent{\bf Lemme de Dedekind.} 
{\it Si $R$ est {\bfit int\`egre}, alors 
$\Hom_{\mathrm{mon}}(M,R)$ forme une famille $R$-lin\'eairement ind\'ependante 
d'\'el\'ements de $\FC(M,R)$.}

\bigskip

\begin{proof}
Supposons que ce n'est pas le cas et notons $d$ un entier naturel minimal tel qu'il 
existe une relation de d\'ependance lin\'eaire non triviale de longueur $d$ entre 
\'el\'ements de $\Hom_{\text{mon}}(M,R)$. Il existe donc 
des \'el\'ements $\ph_1$,\dots, $\ph_d$ de $\Hom_{\text{mon}}(M,R)$ et 
des \'el\'ements {\it non nuls} $\l_1$,\dots, $\l_d$ de $R$ tels que
$$\forall~m \in M,~\l_1 \ph_1(m) + \cdots + \l_d \ph_d(m) = 0.\leqno{(*)}$$
Notons que $d \ge 2$ car l'int\'egrit\'e de $R$ forcerait $\ph_1(m)=0$ pour tout 
$m \in M$, ce qui est impossible. De plus, par minimalit\'e de $d$, on a $\ph_1\neq \ph_2$. 

Ainsi, il existe $m_0 \in M$ tel que $\ph_1(m_0) \neq \ph_2(m_0)$. 
Par cons\'equent, il r\'esulte de $(*)$ que 
$$ \l_1 \ph_1(m_0 m) + \cdots + \l_d \ph_d(m_0 m) = 0.$$
$$\ph_1(m_0) \cdot \bigl(\l_1 \ph_1(m) + \cdots + \l_d \ph_d(m)\bigr) = 0\leqno{\text{et}}$$
pour tout $m \in M$. 
En soutrayant la deuxi\`eme \'equation \`a la premi\`ere, on obtient~:
$$\forall~m \in M,~
\sum_{i=2}^d \l_i (\ph_i(m_0)-\ph_1(m_0)) \ph_i(m) = 0.$$
Or, puisque $R$ est int\`egre, $\l_2 (\ph_2(m_0)-\ph_1(m_0))$ est non nul, 
donc on a trouv\'e une relation de d\'ependance lin\'eaire non triviale de longueur 
inf\'erieure \`a $d$, ce qui contredit la minimalit\'e de $d$. 
\end{proof}

\bigskip

\begin{coro}\label{coro:dedekind}
Soit $A$ un anneau commutatif et supposons $R$ {\bfit int\`egre}. Alors 
$\Hom_{\mathrm{ann}}(A,R)$ forme une famille $R$-lin\'eairement ind\'ependante 
d'\'el\'ements de $\FC(A,R)$.
\end{coro}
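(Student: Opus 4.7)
Le plan est d'appliquer directement le lemme de Dedekind qui vient d'\^etre \'etabli, en utilisant le foncteur d'oubli des anneaux commutatifs vers les mono\"{\i}des multiplicatifs. La cl\'e est que l'anneau commutatif $A$ porte une structure naturelle de mono\"{\i}de $(A,\times)$, et que tout morphisme d'anneaux $\ph : A \to R$ est, en particulier, un morphisme de mono\"{\i}des $(A,\times) \to (R,\times)$ (il respecte le produit et envoie l'\'el\'ement neutre $1_A$ sur $1_R$). On obtient ainsi, en prenant $M=(A,\times)$ dans la discussion pr\'ec\'edente, une inclusion naturelle
$$\Hom_{\mathrm{ann}}(A,R) \subseteq \Hom_{\mathrm{mon}}((A,\times),R)$$
comme sous-ensembles du $R$-module $\FC(A,R)$, puisque deux morphismes d'anneaux \'egaux en tant qu'applications co\"{\i}ncident en tant que morphismes de mono\"{\i}des.

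Ensuite, je remarquerais que toute relation de d\'ependance $R$-lin\'eaire dans le $R$-module $\FC(A,R)$ entre des \'el\'ements de $\Hom_{\mathrm{ann}}(A,R)$ est, {\it a fortiori}, une relation de d\'ependance $R$-lin\'eaire entre des \'el\'ements de $\Hom_{\mathrm{mon}}((A,\times),R)$. Comme $R$ est int\`egre par hypoth\`ese, le lemme de Dedekind appliqu\'e au mono\"{\i}de $M=(A,\times)$ garantit qu'une telle relation doit \^etre triviale. L'ind\'ependance $R$-lin\'eaire de $\Hom_{\mathrm{ann}}(A,R)$ s'en d\'eduit imm\'ediatement.

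Il n'y a essentiellement aucun obstacle, le corollaire \'etant une cons\'equence quasi tautologique du lemme de Dedekind via le principe g\'en\'eral que l'ind\'ependance lin\'eaire se transporte par restriction \`a un sous-ensemble. La seule v\'erification conceptuelle, triviale, concerne la compatibilit\'e entre la notion d'\'egalit\'e d'applications dans $\FC(A,R)$ et l'\'egalit\'e de morphismes d'anneaux (ou de mono\"{\i}des), qui r\'esulte du fait que toutes ces structures sont port\'ees par le m\^eme ensemble sous-jacent $A$.
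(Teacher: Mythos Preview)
Your proof is correct and takes exactly the same approach as the paper: observe that $\Hom_{\mathrm{ann}}(A,R) \subseteq \Hom_{\mathrm{mon}}((A,\times),R)$ and apply the lemma de Dedekind. The paper's proof is simply a one-line version of what you have written out in detail.
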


\bigskip

\begin{proof}
En effet, l'ensemble $\Hom_{\text{ann}}(A,R)$ est un sous-ensemble de l'ensemble 
$\Hom_{\text{mon}}((A,\times),R)$~: on applique alors le lemme de Dedekind.
\end{proof}

\bigskip

\section{Groupe de d\'ecomposition, groupe d'inertie}\label{appendice:galois}

\medskip

Nous rappelons quelques r\'esultats plus ou moins classiques dans cette situation.

\begin{prop}\label{galois transitif}
Le groupe $G$ agit transitivement sur les fibres de $\r_G$.
\end{prop}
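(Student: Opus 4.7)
Il s'agit du r\'esultat classique de transitivit\'e de l'action d'un groupe fini sur les fibres au-dessus d'un id\'eal premier des invariants. Je suivrais la strat\'egie standard, en v\'erifiant d'abord les pr\'erequis d'int\'egralit\'e. Puisque $G$ est fini, tout \'el\'ement $r \in R$ est racine du polyn\^ome unitaire $\prod_{g \in G}(\tb - g(r)) \in P[\tb]$, donc $R$ est entier sur $P$. Par le th\'eor\`eme d'incomparabilit\'e (cons\'equence du going-up de Cohen--Seidenberg), deux id\'eaux premiers distincts de $R$ au-dessus d'un m\^eme id\'eal premier de $P$ ne sont jamais comparables pour l'inclusion.

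Je raisonnerais ensuite par l'absurde. Fixons $\rG$ et $\rG'$ deux id\'eaux premiers de $R$ tels que $\rG \cap P = \rG' \cap P = \pG$, et supposons que $\rG' \notin G \cdot \rG$. Alors $\rG' \neq g(\rG)$ pour tout $g \in G$, et par incomparabilit\'e (puisque $g(\rG)$ est aussi au-dessus de $\pG$), on a $\rG' \not\subset g(\rG)$ pour tout $g \in G$. La famille $(g(\rG))_{g \in G}$ \'etant finie, le lemme d'\'evitement des premiers (\emph{prime avoidance}) fournit un \'el\'ement $x \in \rG'$ tel que $x \notin g(\rG)$ pour tout $g \in G$.

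La conclusion s'obtient alors via la norme. On consid\`ere $N(x) = \prod_{g \in G} g(x)$, qui est $G$-invariant, donc appartient \`a $P$. D'une part, $x \in \rG'$ donne $N(x) \in \rG' \cap P = \pG$. D'autre part, $\pG \subset \rG$, donc $N(x) \in \rG$~; la primalit\'e de $\rG$ impose qu'il existe $g_0 \in G$ avec $g_0(x) \in \rG$, c'est-\`a-dire $x \in g_0^{-1}(\rG)$, contredisant le choix de $x$. Ceci ach\`eve la preuve.

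Il n'y a pas r\'eellement d'obstacle~: la seule subtilit\'e est de bien invoquer l'incomparabilit\'e pour passer de $\rG' \neq g(\rG)$ \`a $\rG' \not\subset g(\rG)$, condition n\'ecessaire pour appliquer l'\'evitement des premiers \`a $\rG'$ par rapport \`a la famille $(g(\rG))_{g \in G}$. Tout le reste est routine.
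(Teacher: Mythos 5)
Votre preuve est correcte. Le papier se contente de renvoyer \`a Bourbaki (Alg\`ebre commutative, chap.~V, \S 2, th\'eor\`eme~2(i)), et l'argument que vous donnez — int\'egralit\'e de $R$ sur $P$ via le polyn\^ome $\prod_{g\in G}(\tb-g(r))$, incomparabilit\'e pour justifier l'\'evitement des premiers, puis norme $N(x)=\prod_{g\in G}g(x)\in\pG\subset\rG$ et primalit\'e de $\rG$ pour conclure — est pr\'ecis\'ement la d\'emonstration de Bourbaki, avec la subtilit\'e de l'incomparabilit\'e correctement mise en \'evidence.
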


\begin{rema}\label{H} 
Bien s\^ur, l'\'enonc\'e s'applique \`a $H$~: le groupe $H$ agit transitivement 
sur les fibres de $\r_H$.\finl
\end{rema}

\begin{proof}
Voir~\cite[chapitre 5, \S 2, th\'eor\`eme 2(i)]{bourbaki}.
\end{proof}

\bigskip

\begin{theo}\label{bourbaki}
L'extension de corps $k_R(\rG)/k_P(\pG)$ est normale, 
de groupe de Galois $D/I$ ($=G_\rG^D/G_\rG^I$). 
\end{theo}

\begin{proof}
Voir~\cite[chapitre 5, \S 2, th\'eor\`eme 2(ii)]{bourbaki}.
\end{proof}

\bigskip

\begin{coro}\label{coro bourbaki}
Soit $\rG'$ un id\'eal premier de $R$ contenant $\rG$ et soient $D'=G_{\rG'}^D$ 
et $I'=G_{\rG'}^I$. Alors $D'/I'$ est isomorphe \`a un sous-quotient de $D/I$.
\end{coro}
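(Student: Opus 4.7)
The plan is to exhibit $D'/I'$ as an explicit quotient of a subgroup of $D/I$, using the intersection $H := D \cap D'$ as a bridge. Consider the two natural restriction-then-quotient maps
\[
\phi : H \hookrightarrow D \twoheadrightarrow D/I
\qquad\text{and}\qquad
\psi : H \hookrightarrow D' \twoheadrightarrow D'/I',
\]
with respective kernels $H \cap I = D' \cap I$ (using $I \subseteq D$) and $H \cap I' = D \cap I'$ (using $I' \subseteq D'$). The inclusion $\ker\phi \subseteq \ker\psi$ is immediate: an element $g \in D' \cap I$ acts trivially modulo $\rG$, hence also modulo $\rG' \supseteq \rG$, so $g \in I'$. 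Once $\psi$ is known to be surjective, the identification $D'/I' \cong H/\ker\psi$ exhibits this group as a quotient of $H/\ker\phi = \im(\phi) \subseteq D/I$, yielding the subquotient claim.

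The surjectivity of $\psi$ is the crux. Unwinding the definitions, $\psi$ is surjective if and only if the inertia group $I'$ acts transitively on the finite set $\PC$ of prime ideals of $R$ lying above $\pG$ and contained in $\rG'$. Indeed, given $g' \in D'$, one seeks $i \in I'$ with $g'i \in D$, equivalently with $i(\rG) = (g')^{-1}(\rG)$; and both $\rG$ and $(g')^{-1}(\rG)$ belong to $\PC$, since $(g')^{-1} \in D'$ fixes $P$ pointwise and stabilises $\rG'$.

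This transitivity of $I'$ on $\PC$ is the main obstacle. It is a classical refinement of Proposition~\ref{galois transitif} in the theory of ramification of commutative rings under finite group actions (see e.g.\ Bourbaki, \emph{Alg\`ebre commutative}, Chap.~V, \S 2). The strategy is to descend to the quotient $\bar R := R/\rG$, which is a domain on which $D/I$ acts faithfully and whose $(D/I)$-fixed subfield of fractions is $k_P(\pG)$ by Theorem~\ref{bourbaki}. A careful application of Proposition~\ref{galois transitif} and Theorem~\ref{bourbaki} in this quotient situation---passing if necessary to the integral closure of $P/\pG$ in $k_R(\rG)$ to secure the hypothesis that the $(D/I)$-fixed subring equals $P/\pG$, which is automatic in the main context of the paper where $P$ is a polynomial ring---identifies the Galois group $\Gal(k_R(\rG')/k_P(\pG')) = D'/I'$ with a decomposition-modulo-inertia subquotient of $D/I$ at the prime of $\bar R$ induced by $\rG'$, and yields the required transitivity of $I'$ on $\PC$ as a consequence.
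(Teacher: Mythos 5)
Your argument is correct and, at its core, takes the same route as the paper's: pass to the quotient $\bar R = R/\rG$ on which $D/I$ acts and invoke Th\'eor\`eme~\ref{bourbaki}. What you add, and what the paper's one-line ``le corollaire se d\'eduit imm\'ediatement du th\'eor\`eme'' suppresses, is the identification of exactly which non-formal fact is being used: the surjectivity of $D\cap D'\to D'/I'$, equivalently the transitivity of $I'$ on the set of primes of $R$ above $\pG$ contained in $\rG'$. Your steps 1--5 (the two maps $\phi,\psi$, the inclusion of kernels, the translation of surjectivity into transitivity) are clean and correct. The only weak point is the sketch in your final paragraph: the parenthetical remark about ``passing to the integral closure of $P/\pG$'' does not actually secure the equality $(R/\rG)^{D/I}=P/\pG$ (replacing $P/\pG$ by its integral closure changes the ring to which the fixed subring is being compared), and the claim that this equality is ``automatic when $P$ is a polynomial ring'' is unjustified, since $P/\pG$ need not be normal even then. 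This imprecision does not damage the proof, however, because the transitivity statement you reduce to is a standard result of the decomposition/inertia theory (Bourbaki, \emph{Alg\`ebre commutative}, Chap.~V, \S 2, or Raynaud, Chap.~X) and may simply be cited; your group-theoretic reduction to it is a useful unpacking of the paper's terse argument.
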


\begin{proof}
En rempla\c{c}ant $R$ par $R/\rG$, $Q$ par $Q/\qG$ et $P$ par $P/\pG$, 
$D/I$ s'identifie \`a $G$ et le corollaire se d\'eduit imm\'ediatement 
du th\'eor\`eme~\ref{bourbaki}. 
\end{proof}

\bigskip

\begin{theo}\label{raynaud}
Si $Q$ est {\bfit nette} sur $P$ en $\qG$ (i.e. si $\pG Q_\qG=\qG Q_\qG$), 
alors $I$ est contenu dans $H$.
\end{theo}

\begin{proof}
Voir~\cite[chapitre X, th\'eor\`eme 1]{raynaud}.
\end{proof}

\bigskip

\section{Sur la $P/\pG$-alg\`ebre $Q/\pG Q$}\label{section:Q/P} 

\medskip

\subsection{Doubles classes} 
La proposition~\ref{reduction} ci-dessous, 
certainement bien connue (et facile), sera au c\oe ur de notre m\'emoire. Nous 
en donnons donc une preuve pour la commodit\'e au lecteur. Nous aurons besoin de 
quelques notations. Si $g \in G$, le morphisme compos\'e $Q \stackrel{g}{\longto} R \stackrel{{\mathrm{can}}}{\longto} R/\rG$ 
se factorise en un morphisme $\gba : Q/\pG Q \to R/\rG$. Les deux remarques 
suivantes sont \'evidentes.
\equat\label{eq:IGH}
\textit{Si $h \in H$ et $i \in I$, alors $\overline{igh}=\gba$.}
\endequat
On obtient donc une application bien d\'efinie 
\equat\label{def:IGH}
\fonctio{I \backslash G/H}{\Hom_{(P/\pG)\text{-}\alg}(Q/\pG Q,R/\rG)}{IgH}{\gba}.
\endequat
Notons que $\Hom_{(P/\pG)\text{-}\alg}(Q/\pG Q,R/\rG)=\Hom_{P\text{-}\alg}(Q,R/\rG)$. 
Si $\ph \in \Hom_{P/\pG-\text{alg}}(Q/\pG Q,R/\rG)$, alors 
on note $\pht$ la compos\'ee $Q \stackrel{{\mathrm{can}}}{\longto} Q/\pG Q \stackrel{\ph}{\longto} R/\rG R$ 
et il est clair que $\Ker \pht \in \Upsilon^{-1}(\pG)$. Cela nous d\'efinit une application 
\equat\label{def:ker}
\fonctio{\Hom_{(P/\pG)\text{-}\alg}(Q/\pG Q,R/\rG)}{\Upsilon^{-1}(\pG)}{\ph}{\Ker \pht.}
\endequat
Puisque $R/\rG$ est int\`egre, $\Ker \pht$ est un id\'eal premier de $Q$. 

Si $g \in G$, alors $g(\rG) \cap Q \in \Upsilon^{-1}(\pG)$. De plus, si $h \in H$ et $d \in D$, 
alors 
$$hgd(\rG) \cap Q=g(\rG) \cap Q.$$
On a ainsi d\'efini une application 
\equat\label{def:DGH}
\fonctio{D\backslash G / H}{\Upsilon^{-1}(\pG)}{DgH}{g^{-1}(\rG) \cap Q.}
\endequat

\bigskip

\begin{prop}\label{reduction}
L'application $I \backslash G/H \longto \Hom_{(P/\pG)\text{-}\alg}(Q/\pG Q,R/\rG)$ 
d\'efinie en~\ref{def:IGH} est bijective, tout comme 
l'application $D\backslash G / H \to \Upsilon^{-1}(\pG)$ d\'efinie en~\ref{def:DGH}. De plus, 
le diagramme suivant est commutatif~:
$$\diagram
I \backslash G / H \xto[0,3]^{\DS{\sim} \qquad\qquad}_{\DS{IgH \mapsto \gti\quad\quad\quad}} 
\ddto_{\DS{\mathrm{can}}} &&& \Hom_{(P/\pG)\text{-}\alg}(Q/\pG Q,R/\rG) \ddto^{\DS{\ph \mapsto \Ker \pht}} \\
&&&\\
D\backslash G / H \xto[0,3]^{\DS{\sim}}_{\DS{DgH \mapsto g^{-1}(\rG) \cap Q}} &&& \Upsilon^{-1}(\pG). \\
\enddiagram$$
\end{prop}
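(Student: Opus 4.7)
The plan is to first establish the bottom bijection $D\backslash G/H \cong \Upsilon^{-1}(\pG)$ using only transitivity, and then derive the top bijection by exploiting the normality of the residue extension (Theorem~\ref{bourbaki}) together with the commutativity of the square. Well-definedness of both maps is immediate (using \ref{eq:IGH} for the top map, and for the bottom map the facts that $H$ fixes $Q$ pointwise and that $D$ stabilises $\rG$), as is the commutativity of the square: $\Ker\tilde{\bar g}=\{q \in Q : g(q) \in \rG\}=g^{-1}(\rG)\cap Q$.

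For the bottom map, surjectivity follows from lying-over (to choose $\rG' \subset R$ above any given $\qG'$) combined with Proposition~\ref{galois transitif} (to produce $g$ with $g(\rG')=\rG$, so that $g^{-1}(\rG)\cap Q=\qG'$). Injectivity follows from Remark~\ref{H}: if $g_1^{-1}(\rG)\cap Q = g_2^{-1}(\rG)\cap Q$, then the primes $g_i^{-1}(\rG)$ of $R$ lie over a common prime of $Q$, so some $h \in H$ sends one to the other, forcing $g_2 h g_1^{-1}\in D$ and hence $g_2 \in D g_1 H$.

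For the surjectivity of the top map, given $\phi$ I set $\qG'=\Ker\tilde\phi$; by the bottom bijection, $\qG'=g^{-1}(\rG)\cap Q$ for some $g$. Then $\bar g$ and $\phi$ share this kernel, so both factor through injections $Q/\qG'\hookrightarrow R/\rG$ and hence yield two $k_P(\pG)$-embeddings $k_Q(\qG')\hookrightarrow k_R(\rG)$. By Theorem~\ref{bourbaki}, $k_R(\rG)/k_P(\pG)$ is normal with group $D/I$, so any two such embeddings differ by some $\sigma \in D/I$; lifting $\sigma$ to $d \in D$ gives $\phi=\overline{dg}$.

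The main technical step is the injectivity of the top map. Assume $\bar{g_1}=\bar{g_2}$; by commutativity and the bottom bijection, $g_2=d g_1 h$ for some $d \in D$ and $h \in H$, and the computation $\bar{g_2}=\bar d\circ \bar{g_1}$ (using $h \in H$ fixes $q$) then forces $\bar d$ to fix pointwise the image of $\bar{g_1}$, i.e.\ the residue ring of $Q_1:=g_1(Q)=R^{g_1Hg_1^{-1}}$ at $\rG\cap Q_1$. Applying Theorem~\ref{bourbaki} to the conjugate subgroup $g_1Hg_1^{-1}$ acting on $R$ (with fixed ring $Q_1$), the extension $k_R(\rG)/k_{Q_1}(\rG\cap Q_1)$ is normal with Galois group equal to the image of $D\cap g_1Hg_1^{-1}$ in $D/I$; therefore $d \in I\cdot(D\cap g_1Hg_1^{-1})$. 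Writing $d=i\cdot g_1 h' g_1^{-1}$ with $i \in I$ and $h' \in H$ and substituting back, I obtain $g_2=i g_1 h' h \in I g_1 H$. The hard point is this final step: it requires the correct identification of the subgroup of $D/I$ fixing the residue field of $Q_1$, which forces a \emph{twisted} application of Theorem~\ref{bourbaki} to $g_1Hg_1^{-1}$ rather than $H$, and it relies crucially on $I$ being normal in $D$ in order to rearrange the product $I\cdot(D\cap g_1Hg_1^{-1})\cdot g_1 H$ into the desired double coset $Ig_1 H$.
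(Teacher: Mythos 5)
Your proof is correct, but the argument for the injectivity of the top map $I\backslash G/H \to \Hom_{(P/\pG)\text{-}\alg}(Q/\pG Q,R/\rG)$ is genuinely different from the paper's. The paper proves this injectivity directly and independently of the bottom bijection: starting from $\bar g = \bar{g'}$, it sums over $H$ to get $\sum_{h\in H} gh(r) \equiv \sum_{h\in H} g'h(r) \pmod{\rG}$ for all $r\in R$, then invokes the lemma of Dedekind (linear independence of ring homomorphisms $R\to R/\rG$ over the integral domain $R/\rG$) to extract a single $h\in H$ with $g'hg^{-1}\in I$. You instead \emph{derive} the top injectivity from the bottom bijection plus the commuting square: you first reduce to $g_2 = dg_1h$ with $d\in D$, $h\in H$, observe that $\bar d$ fixes the residue ring of $Q_1 = R^{g_1Hg_1^{-1}}$, and then identify the stabilizer of $k_{Q_1}(\rG\cap Q_1)$ in $D/I$ via Theorem~\ref{bourbaki} applied to the conjugate subgroup — which is precisely Lemma~\ref{gal:bourbaki}, stated just after the proposition in the paper. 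This unwinds to $d \in I\cdot(D\cap g_1Hg_1^{-1})$ and hence $g_2\in Ig_1H$, as you show. The trade-off: the paper's Dedekind argument is self-contained, elementary, and requires no hypothesis on $I$; your route is more structural — it makes the relation between the two rows of the diagram explicit and shows exactly which double-coset fibers collapse — but it depends on the normality of $I$ in $D$ and on the identification of the Galois group over the intermediate ring $Q_1$, machinery the paper only develops afterwards (Lemma~\ref{gal:bourbaki}). The surjectivity argument for the top map and the treatment of the bottom bijection are essentially the same as in the paper.
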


\bigskip

\begin{proof}
Commen\c{c}ons par montrer l'injectivit\'e de la premi\`ere application. 
Soient $g$ et $g'$ deux \'el\'ements de $G$ tels que $\gba=\gba'$. Cela signifie donc que
$$\forall~q \in Q,~g(q) \equiv g'(q) \mod \rG.$$
Par cons\'equent,
$$\forall~r \in R,~\sum_{h \in H} g h(r) \equiv \sum_{h \in H} g'h(r) \mod \rG.$$
Or, d'apr\`es le lemme de Dedekind, la famille des morphismes 
d'anneaux $R \to R/\rG$ est $R/\rG$-lin\'eairement ind\'ependante. 
Cela signifie donc qu'il existe $h \in H$ tel que
$$\forall~r \in R,~g (r) \equiv g' h(r) \mod \rG,$$
ou encore
$$\forall~r \in R, ~ g' h (g^{-1}(r)) \equiv r \mod \rG.$$
En d'autres termes, $g' h g^{-1} \in I$ et donc 
$g' \in I g H$.

\medskip

Montrons maintenant la surjectivit\'e. Soit $\ph \in \Hom_{P/\pG-\text{alg}}(Q/\pG Q,R/\rG)$ 
et soit $\qG'=\Ker \pht$. Puisque $\ph$ est $(P/\pG)$-lin\'eaire, on a $\qG' \cap P=\pG$. Notons $\rG'$ un 
id\'eal premier de $R$ au-dessus de $\qG'$. Alors il existe $g \in G$ tel que 
$\rG'=g(\rG)$. Donc l'application $g \circ \pht : Q \to R/\rG'$ 
a pour noyau $\qG'=\rG' \cap Q$ et est $Q$-lin\'eaire. D'apr\`es le th\'eor\`eme 
\ref{bourbaki}, il existe donc $d \in G_{\rG'}^D$ tel que 
$g \circ \pht(q) \equiv d(q) \mod \rG'$ pour tout $q \in Q$. 
Ainsi, $\pht(q) \equiv g^{-1}d(q) \mod \rG$, c'est-\`a-dire que 
$\ph = \overline{g^{-1}d}$. 
%
%
%
%

\medskip

Montrons maintenant la bijectivit\'e de la deuxi\`eme application. 
Si $\qG' \in \Upsilon^{-1}(\pG)$, alors il existe $\rG' \in \Spec R$ tel que $\qG' \cap Q=\rG'$. 
En outre, $\rG' \cap P=\qG' \cap P = \pG$ et donc, d'apr\`es la proposition~\ref{galois transitif}, 
il existe $g \in G$ tel que $\rG' = g(\rG)$. Cela montre la surjectivit\'e de 
la fl\`eche horizontale inf\'erieure du diagramme. L'injectivit\'e d\'ecoule encore de la 
proposition~\ref{galois transitif}. 

\medskip

La commutativit\'e du diagramme d\'ecoule des arguments pr\'ec\'edents. 
\end{proof}

\bigskip

\subsection{Corps r\'esiduels}
Soit $g \in [D\backslash G/H]$. On pose pour simplifier $\qG_g=\rG \cap g(Q)$. 
Notons que $\qG_g \cap P=\pG$ et que l'on obtient une suite de morphismes injectifs d'anneaux 
$P/\pG \injto g(Q)/\qG_g \injto R/\rG$. On a donc une suite d'inclusions de corps 
$$k_P(\pG) \quad \subset \quad k_{g(Q)}(\qG_g) \quad \subset \quad k_R(\rG).$$

\bigskip

\begin{lem}\label{gal:bourbaki}
L'extension $k_R(\rG)/k_{g(Q)}(\qG_g)$ est 
normale de groupe de Galois $(D \cap \lexp{g}{H})/(I \cap \lexp{g}{H})$.
\end{lem}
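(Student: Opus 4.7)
The strategy is to reduce the claim to Theorem~\ref{bourbaki} applied to a different Galois cover of $R$, namely the one cut out by the subgroup $\lexp{g}{H} = gHg^{-1}$ of $G$ acting on $R$. The key observation is that, since $Q=R^H$ and $g$ is an automorphism of $R$, one has
\[
g(Q) = g(R^H) = R^{gHg^{-1}} = R^{\lexp{g}{H}}.
\]
Thus the inclusion $g(Q) \subset R$ realises $R$ as the ring of $G$-invariants\dots~no, more precisely, as a finite extension whose corresponding Galois situation is governed by the action of $\lexp{g}{H}$ on $R$.

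First I would verify that the decomposition and inertia groups of $\rG$ in $\lexp{g}{H}$, relative to the prime $\qG_g = \rG \cap g(Q)$ of $g(Q)$, are exactly
\[
(\lexp{g}{H})_\rG^D = \lexp{g}{H} \cap D \qquad\text{and}\qquad (\lexp{g}{H})_\rG^I = \lexp{g}{H} \cap I,
\]
which is immediate from the definitions (a subgroup's decomposition/inertia group at a prime is the intersection of the ambient decomposition/inertia group with the subgroup). Then I would apply Theorem~\ref{bourbaki} verbatim, but with the triple $(R, \lexp{g}{H}, g(Q))$ in place of $(R, G, P)$, and with the prime $\rG$ of $R$ lying over the prime $\qG_g$ of $g(Q)$. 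The theorem delivers that $k_R(\rG)/k_{g(Q)}(\qG_g)$ is normal, with Galois group
\[
(\lexp{g}{H})_\rG^D / (\lexp{g}{H})_\rG^I \;=\; (D \cap \lexp{g}{H}) / (I \cap \lexp{g}{H}),
\]
which is the desired conclusion.

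There is no real obstacle here: the whole content is the identification $g(Q) = R^{\lexp{g}{H}}$, which reduces the statement to a direct application of the previously established result. The role of $g$ ranging over representatives of $D\backslash G/H$ is only to parametrise the primes of $Q$ below $\rG$ (via Proposition~\ref{reduction}); the lemma itself is a pointwise statement, and the proof does not depend on the choice of representative beyond what is already absorbed into the subgroup $\lexp{g}{H}$.
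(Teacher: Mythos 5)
Your proof is correct and follows exactly the route the paper takes: the one-line content is the identity $g(Q)=R^{\lexp{g}{H}}$ together with an application of Theorem~\ref{bourbaki} to the pair $(R,\lexp{g}{H})$, using that decomposition and inertia in a subgroup are obtained by intersecting with the ambient decomposition and inertia groups. Nothing to add.
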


\begin{rema}\label{N} 
Notons que $(D \cap \lexp{g}{H})/(I \cap \lexp{g}{H})$ est naturellement 
un sous-groupe de $D/I$, comme il se doit...\finl 
\end{rema}

\begin{proof}
En effet, cela d\'ecoule du fait que $g(Q)=R^{\lexp{g}{H}}$ et du 
th\'eor\`eme~\ref{bourbaki}.
\end{proof}

\medskip

\begin{coro}\label{dgh igh}
Supposons que, pour tout id\'eal premier $\qG' \in \Upsilon^{-1}(\pG)$, on ait 
$k_Q(\qG')=k_P(\pG)$. Alors $D\backslash G /H=I\backslash G/H$.
\end{coro}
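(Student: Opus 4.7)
The plan is to exploit the commutative diagram of Proposition~\ref{reduction}, which identifies the natural surjection
$$\pi \; : \; I\backslash G/H \;\twoheadrightarrow\; D\backslash G/H$$
with the map
$$\Phi \; : \; \Hom_{(P/\pG)\text{-}\alg}(Q/\pG Q, R/\rG) \;\longrightarrow\; \Upsilon^{-1}(\pG), \qquad \varphi \longmapsto \Ker \tilde\varphi.$$
Since $\pi$ is surjective (because $I \subset D$), it suffices to prove it is injective, or equivalently that $\Phi$ is injective. I expect this to be the only real content of the proof; everything else is bookkeeping transported through the bijections of Proposition~\ref{reduction}.

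Concretely, I would fix two $P$-algebra morphisms $\varphi_1,\varphi_2 : Q/\pG Q \to R/\rG$ with $\Ker \tilde\varphi_1 = \Ker \tilde\varphi_2 = \qG'$, for some $\qG' \in \Upsilon^{-1}(\pG)$. Each $\varphi_i$ then factors as an injective $(P/\pG)$-algebra morphism $\overline{\varphi}_i : Q/\qG' \hookrightarrow R/\rG$. Passing to fraction fields inside $k_R(\rG)$, the $\overline{\varphi}_i$ extend to $k_P(\pG)$-linear embeddings of fields $k_Q(\qG') \hookrightarrow k_R(\rG)$. The key point is that the hypothesis $k_Q(\qG') = k_P(\pG)$ forces these two embeddings to coincide: any $k_P(\pG)$-linear map from $k_P(\pG)$ to $k_R(\rG)$ is necessarily the canonical inclusion. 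Therefore $\overline{\varphi}_1 = \overline{\varphi}_2$, and consequently $\varphi_1 = \varphi_2$.

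This establishes the injectivity of $\Phi$. Combined with the bijections in Proposition~\ref{reduction} (the horizontal arrows of the displayed diagram) and the commutativity of that diagram, we deduce that the canonical surjection $\pi$ is a bijection, i.e.\ $D\backslash G/H = I\backslash G/H$. The main subtlety, such as it is, is simply to recognize that the residue-field hypothesis is precisely what rigidifies an embedding $Q/\qG' \hookrightarrow R/\rG$ over $P/\pG$; without it, there would typically be $[k_Q(\qG'):k_P(\pG)]$ distinct such embeddings, producing strictly more $I$-cosets than $D$-cosets above $\qG'$.
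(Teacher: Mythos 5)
Your proof is correct, and it takes a genuinely different route from the paper's. The paper deduces from Lemma~\ref{gal:bourbaki} and Theorem~\ref{bourbaki} that, under the hypothesis, $(D \cap \lexp{g}{H})/(I \cap \lexp{g}{H}) = D/I$ for every $g \in G$; this is equivalent to the group-theoretic factorisation $D = I\cdot(D \cap \lexp{g}{H})$, from which $DgH = IgH$ follows by writing $d = i\,ghg^{-1}$. You instead work through the commutative diagram of Proposition~\ref{reduction}: the canonical surjection $I\backslash G/H \twoheadrightarrow D\backslash G/H$ is identified with $\varphi \mapsto \Ker\tilde\varphi$, and you establish injectivity by observing that once $k_Q(\qG') = k_P(\pG)$, a $(P/\pG)$-algebra embedding $Q/\qG'\hookrightarrow R/\rG$ is unique, since its extension $k_Q(\qG') \to k_R(\rG)$ becomes a $k_P(\pG)$-algebra map out of $k_P(\pG)$, hence the canonical inclusion. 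Both proofs exploit the residue-field hypothesis to kill the extra freedom above a fixed prime $\qG'$, but yours bypasses the Galois-group identification of Lemma~\ref{gal:bourbaki} and is somewhat more elementary, while the paper's makes explicit the sharper group-theoretic identity $D = I\cdot(D \cap \lexp{g}{H})$ for all $g$, which can be reused on its own.
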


\begin{proof}
D'apr\`es le lemme~\ref{gal:bourbaki} et le th\'eor\`eme~\ref{bourbaki}, 
il d\'ecoule de l'hypoth\`ese que, pour tout $g \in G$, 
$(D \cap \lexp{g}{H})/(I\cap \lexp{g}{H}) \simeq D/I$. En d'autres termes, 
$$\forall~g \in G,~D = I \cdot (D \cap \lexp{g}{H}).$$
Soient donc $g \in G$ et $d \in D$. Alors il existe $i \in I$ et $h \in H$ tels que 
$d = ighg^{-1}$, c'est-\`a-dire $d g = igh$. Donc $DgH=IgH$. 
\end{proof}

\medskip

\begin{lem}\label{inertie}
\'Ecrivons $\Upsilon^{-1}(\pG)=\{\qG_1,\dots,\qG_n\}$ et supposons que $Q$ est nette sur $P$ 
en $\qG_i$ pour tout $i$. Alors $I \subset \bigcap_{g \in G} \lexp{g}{H}$. 
\end{lem}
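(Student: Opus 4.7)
Le plan est d'appliquer le th\'eor\`eme~\ref{raynaud} \`a chacun des id\'eaux premiers de $R$ situ\'es au-dessus des $\qG_i$, en tirant parti de la transitivit\'e de l'action de $G$ sur la fibre $\r_G^{-1}(\pG)$ (proposition~\ref{galois transitif}). L'observation cl\'e est que, puisque $I$ est, par d\'efinition, le groupe d'inertie de $\rG$, les groupes d'inertie des autres id\'eaux premiers de $R$ au-dessus de $\pG$ s'obtiennent par conjugaison de $I$ dans $G$.

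Plus pr\'ecis\'ement, fixons $g \in G$ et consid\'erons l'id\'eal premier $\rG' = g(\rG)$ de $R$. Alors $\rG'$ est au-dessus de l'id\'eal premier $\qG' = g(\rG) \cap Q$ de $Q$, qui est lui-m\^eme au-dessus de $\pG$ (donc co\"incide avec l'un des $\qG_i$). Par hypoth\`ese, $Q$ est nette sur $P$ en $\qG'$. Le th\'eor\`eme~\ref{raynaud} appliqu\'e \`a la situation obtenue par transport via $g$ (i.e.\ au triplet $(\rG', \qG', \pG)$) garantit alors que le groupe d'inertie $G^I_{\rG'}$ est contenu dans $H$. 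Or un calcul imm\'ediat montre que $G^I_{\rG'} = g I g^{-1} = \lexp{g}{I}$, d'o\`u $\lexp{g}{I} \subset H$, ou, de mani\`ere \'equivalente, $I \subset \lexp{g^{-1}}{H}$.

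Pour conclure, il suffit de remarquer que, lorsque $g$ parcourt $G$, $g^{-1}$ parcourt \'egalement $G$, donc
\[
I \subset \bigcap_{g \in G} \lexp{g^{-1}}{H} = \bigcap_{g \in G} \lexp{g}{H},
\]
ce qui est l'inclusion voulue. Il n'y a pas de v\'eritable obstacle dans cette preuve: il s'agit essentiellement de recoller le th\'eor\`eme~\ref{raynaud} (qui est l'ingr\'edient difficile, mais admis ici) avec la transitivit\'e galoisienne. La seule v\'erification demandant un peu d'attention est l'\'egalit\'e $G^I_{g(\rG)} = g G^I_\rG g^{-1}$, qui r\'esulte directement de la d\'efinition du groupe d'inertie et du fait que l'action de $g$ sur $R$ envoie $\rG$ sur $g(\rG)$ et induit un isomorphisme $R/\rG \longiso R/g(\rG)$ \'equivariant pour la conjugaison par $g$ des op\'erateurs.
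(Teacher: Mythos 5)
Your proof is correct and follows essentially the same approach as the paper: fix $g \in G$, observe that $g(\rG) \cap Q$ lies over $\pG$ so the hypothesis applies, invoke the theorem of Raynaud to get $\lexp{g}{I} = G^I_{g(\rG)} \subset H$, and intersect over all $g$.
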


\begin{proof}
Soit $g \in G$. Alors $g(\rG) \cap Q \in \Upsilon^{-1}(\pG)$ et 
donc il d\'ecoule du th\'eor\`eme~\ref{raynaud} que $\lexp{g}{I} \subset H$ 
(car $\lexp{g}{I}$ est le groupe d'inertie de $g(\rG)$). Ainsi, $I \subset \lexp{g^{-1}}{H}$. 
\end{proof}

\medskip

\begin{prop}\label{cloture galoisienne}
Si $I=\bigcap_{g \in G} \lexp{g}{H} = 1$ et si l'extension $k_R(\rG)/k_P(\pG)$ est s\'eparable, 
alors $k_R(\rG)/k_P(\pG)$ est la cl\^oture galoisienne   
de la famille d'extensions $k_{\lexp{g}{Q}}(\qG_g)/k_P(\pG)$, 
$g \in [D\backslash G / H]$. 
\end{prop}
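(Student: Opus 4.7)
The plan is to translate the statement into a Galois-theoretic computation using the results already established in this appendix, and then to observe that the hypothesis $\bigcap_{g \in G} \lexp{g}{H} = 1$ is precisely what is needed to make the argument run.

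First, under the hypothesis $I = 1$, Theorem~\ref{bourbaki} gives that $k_R(\rG)/k_P(\pG)$ is normal with Galois group $D/I = D$; combined with the separability assumption, the extension is therefore Galois with group $D$. By Lemma~\ref{gal:bourbaki}, for each $g \in [D\backslash G / H]$ the intermediate field $k_{\lexp{g}{Q}}(\qG_g)$ corresponds, under the Galois correspondence for $k_R(\rG)/k_P(\pG)$, to the subgroup $D \cap \lexp{g}{H}$ of $D$ (using that $I \cap \lexp{g}{H} = 1$ as well).

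Next, I would invoke the standard description of a Galois closure: if $M/K$ is Galois with group $\Gamma$ and $L_i = M^{H_i}$ are subextensions, then the Galois closure in $M$ of the compositum $\prod_i L_i$ is $M^N$, where $N$ is the largest normal subgroup of $\Gamma$ contained in $\bigcap_i H_i$, explicitly $N = \bigcap_{i,\gamma} \gamma H_i \gamma^{-1}$. Applied with $\Gamma = D$ and the family $H_g = D \cap \lexp{g}{H}$ indexed by $g \in [D\backslash G/H]$, the proposition reduces to proving
$$N \;=\; \bigcap_{g \in [D\backslash G/H]}\ \bigcap_{d \in D} \bigl(D \cap \lexp{dg}{H}\bigr) \;=\; 1.$$

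The final step is purely combinatorial. Since $\lexp{g'}{H}$ depends on $g'$ only through the left coset $g'H$, and since the $D$-orbits on $G/H$ are indexed exactly by $[D\backslash G/H]$, the family $\{\lexp{dg}{H} : d \in D,\ g \in [D\backslash G/H]\}$ coincides with $\{\lexp{g'}{H} : g' \in G\}$. Intersecting with $D$ and using the hypothesis $\bigcap_{g' \in G} \lexp{g'}{H} = 1$ then yields $N = D \cap 1 = 1$, which completes the argument. I do not expect a genuine obstacle here: all the main ingredients are already in place in the appendix, and the only mildly delicate point is the double-coset bookkeeping needed to identify the two families of conjugates of $H$ in the last step.
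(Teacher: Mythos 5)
Your proof is correct and follows essentially the same path as the paper's: reduce via Theorem~\ref{bourbaki} and Lemma~\ref{gal:bourbaki} to computing the intersection of the $D$-conjugates of the subgroups $D \cap \lexp{g}{H}$, and then use the double-coset decomposition of $G$ (together with $\lexp{h}{H}=H$) to identify that intersection with $D \cap \bigcap_{g \in G} \lexp{g}{H} = 1$.
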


\noindent{\sc Remarque - } 
Si $R$ est int\`egre (ce qui implique que $P$ et $Q$ le sont) et si $G$ 
agit fid\`element, alors l'hypoth\`ese 
$\bigcap_{g \in G} \lexp{g}{H} = 1$ \'equivaut \`a dire que 
l'extension $\Frac(R)/\Frac(P)$ est la cl\^oture 
galoisienne de $\Frac(Q)/\Frac(P)$.

Notons aussi que l'hypoth\`ese $I=1$ implique que $G$ agit fid\`element.\finl

\bigskip

\begin{proof}
D'apr\`es le th\'eor\`eme~\ref{bourbaki}, 
l'extension $k_R(\rG)/k_P(\pG)$ est normale de groupe de Galois $D$. 
D'apr\`es le lemme~\ref{gal:bourbaki}, l'extension 
$k_R(\rG)/k_{g(Q)}(\qG_g)$ est 
normale de groupe de Galois $D \cap \lexp{g}{H}$. 

Notons $k$ la cl\^oture normale de la famille d'extensions 
$k_{g(Q)}(\qG_g)/k_P(\pG)$, $g \in [H\backslash G / D]$. Alors 
le groupe de Galois $\Gal(k_R(\rG)/k)$ est l'intersection des conjugu\'es, 
dans $D$, des groupes $D \cap \lexp{g}{H}$, $g$ variant dans $[H\backslash G / D]$. 
On a
$$\Gal(k_R(\rG)/k)=\bigcap_{\stackrel{\SS g \in [H\backslash G/D]}{d \in D}} 
\lexp{d}{(D \cap \lexp{g}{H})}=\bigcap_{\stackrel{\SS g \in [D\backslash G/H]}{d \in D}}
D \cap \lexp{dg}{H}.$$
Puisque $\lexp{h}{H}=H$ pour tout $h \in H$, on a donc
$$\Gal(k_R(\rG)/k)=
\bigcap_{\substack{g \in [D\backslash G/H] \\ d \in D \\ h \in H}}
D \cap \lexp{dgh}{H} = \bigcap_{g \in G} \lexp{g}{H}=1,$$
par hypoth\`ese. D'o\`u le r\'esultat.
\end{proof}

\bigskip

\begin{contre}
S'il y a de la ramification, 
alors la proposition ci-dessus est fausse, m\^eme si on suppose que 
$P$, $Q$ et $R$ sont des anneaux de Dedekind (m\^eme de valuation discr\`ete). 
En effet, soit $\sqrt[3]{2}$ une racine cubique de $2$ dans $\CM$, $\z$ une racine primitive 
cubique de l'unit\'e dans $\CM$ et notons $R$ la cl\^oture int\'egrale de 
$\ZM$ dans $M=\QM(\sqrt[3]{2},\z)$. Posons $G=\Gal(M/\QM) \simeq \SG_3$ et 
$H=\Gal(M/\QM(\sqrt[3]{2})) \simeq \ZM/2\ZM$. Alors, $P=\ZM$ et, si $\rG$ est un id\'eal premier 
de $R$ tel que $\rG \cap \ZM = 2\ZM$, alors $D=G$ et $|I|=3$. 

Ainsi, $D\backslash G/H$ est r\'eduit \`a un \'el\'ement et l'extension 
de corps correspondante $k_R(\rG)/k_Q(\qG)$ est galoisienne de groupe de 
Galois $\ZM/2\ZM$ (en vertu du lemme~\ref{gal:bourbaki}), tout comme 
$k_R(\rG)/k_P(\pG)$. Donc $k_P(\pG)=k_Q(\qG) \simeq \FM_{\! 2}$ et 
$k_R(\rG) \simeq \FM_{\! 4}$. Donc $k_R(\rG)$ n'est pas la cl\^oture 
galoisienne de l'extension $k_Q(\qG)/k_P(\pG)$.\finl 
\end{contre}

\bigskip

%

\subsection{Cas des corps}\label{subsection:corps} 
Dans le cas o\`u $R$ est un corps, la situation se simplifie grandement.

\medskip

\boitegrise{{\bf Hypoth\`ese.} 
{\it Dans cette sous-section, et dans cette sous-section seulement, nous reprenons 
les notations de la sous-section pr\'ec\'edente ($P$, $Q$, $R$, $G$, $H$, $D$, $I$...) et nous 
supposons de plus que $R$ est un corps~: nous le noterons $M$. Nous poserons 
$L=Q=M^H$ et $K=P=M^G$. Nous supposerons aussi de plus que $G$ agit 
fid\`element sur $M$. Ainsi, $M/K$ est une extension galoisienne 
de groupe $G$ et $M/L$ est une extension galoisienne de groupe $H$.}}{0.75\textwidth}

\medskip

Il d\'ecoule de l'hypoth\`ese que $\pG=\qG=\rG=0$ et que $D=G$ et $I=1$. 
Ainsi, la proposition~\ref{reduction} fournit une bijection   
$$G/H \stackrel{\sim}{\longleftrightarrow} \Hom_{K-\text{alg}}(L,M).$$
Si $g \in G$, le morphisme de $K$-alg\`ebres $L \to M$, $q \mapsto g(q)$, s'\'etend en un 
morphisme de $M$-alg\`ebres
$$\fonction{g_L}{M \otimes_K L}{M}{m \otimes_K l}{mg(l).}$$

\bigskip

\begin{prop}\label{iso galois}
Le morphisme de $M$-alg\`ebres
$$\sum_{g \in [G/H]} g_L : M \otimes_K L \longto \mathop{\bigoplus}_{g \in [G/H]} M$$
est un isomorphisme.
\end{prop}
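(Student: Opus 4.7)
The plan is to combine a dimension count with the Dedekind independence lemma already proved earlier in this appendix (see Section~\ref{sub:dedekind}), so that the statement reduces to surjectivity, which is exactly what Dedekind provides.

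First I would compute the $M$-dimensions of both sides and observe they coincide. The right-hand side has $M$-dimension $|G/H|$ tautologically. For the left-hand side, since $M/K$ is Galois with group $G$ and $L = M^H$, the fundamental theorem of Galois theory gives $[L:K] = [G:H] = |G/H|$; hence $\dim_M(M \otimes_K L) = [L:K] = |G/H|$. Thus the map $\Phi = \sum_{g \in [G/H]} g_L$ is an $M$-linear map between two finite-dimensional $M$-vector spaces of the same dimension, and it suffices to prove it is surjective.

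Next I would verify that the $|G/H|$ restrictions $g|_L : L \to M$, for $g$ ranging over a system $[G/H]$ of coset representatives, are pairwise distinct morphisms of $K$-algebras: if $g_1|_L = g_2|_L$ then $g_1^{-1} g_2$ fixes $L = M^H$ pointwise, so $g_1^{-1} g_2 \in \Gal(M/L) = H$, i.e.\ $g_1 H = g_2 H$. Applying Corollary~\ref{coro:dedekind} (which is valid because $M$ is a field, hence integral) to the $K$-algebra $L$ and to these $|G/H|$ distinct morphisms shows that the family $(g|_L)_{g \in [G/H]}$ is $M$-linearly independent inside $\mathcal{F}(L,M)$.

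Surjectivity of $\Phi$ is now a direct consequence of this independence. Indeed, suppose the image of $\Phi$ is contained in a proper $M$-linear subspace of $\bigoplus_{g \in [G/H]} M$; then there exists a nonzero $M$-linear form $(m_g)_{g \in [G/H]}$ vanishing on the image, which means $\sum_{g \in [G/H]} m_g \, g(l) = 0$ for every $l \in L$. By the linear independence just established, $m_g = 0$ for all $g$, a contradiction. Hence $\Phi$ is surjective and, by the dimension match, an isomorphism of $M$-algebras.

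There is no real obstacle in this proof: the only small point to keep straight is that the $|G/H|$ cosets really give rise to $|G/H|$ distinct maps $L \to M$ (handled above via $H = \Gal(M/L)$), after which the Dedekind lemma closes the argument. An entirely equivalent alternative would be, in characteristic zero, to write $L = K(\alpha)$ via the primitive element theorem and to identify $M \otimes_K L \simeq M[X]/(f_\alpha(X)) \simeq \prod_{g \in [G/H]}M$ via the Chinese remainder theorem applied to the factorization $f_\alpha(X) = \prod_{g \in [G/H]}(X - g(\alpha))$ over $M$; this route avoids Dedekind but requires invoking the primitive element theorem.
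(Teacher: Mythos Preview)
Your proof is correct and follows essentially the same approach as the paper: a dimension count reducing to injectivity/surjectivity, then Dedekind's lemma (Corollary~\ref{coro:dedekind}) applied to the distinct restrictions $g|_L$. The only cosmetic difference is that the paper reduces to injectivity whereas you reduce to surjectivity; your verification that the $|G/H|$ restrictions are pairwise distinct is a detail the paper leaves implicit.
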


\begin{proof}
Puisque $L$ est un $K$-espace vectoriel de dimension $|G/H|$, alors 
$M \otimes_K L$ est un $M$-espace vectoriel de dimension $|G/H|$. 
Il suffit donc de montrer l'injectivit\'e de $\sum_{g \in [G/H]} g_L$, ce qui 
\'equivaut exactement \`a la $M$-ind\'ependance lin\'eaire des applications 
$L \to M$, $q \mapsto g(q)$, lorsque $g$ parcourt $[G/H]$ 
(voir le corollaire~\ref{coro:dedekind} au lemme de Dedekind).
\end{proof}

\bigskip

\section{Rappels sur la cl\^oture int\'egrale}\label{section:cloture integrale}

\medskip

\begin{prop}\label{int clos}
Soit $f \in P[\tb]$, soit $P'$ une $P$-alg\`ebre contenant $P$ et soit $g \in P'[\tb]$. 
On suppose que $f$ et $g$ sont unitaires et que $g$ 
divise $f$ (dans $P'[\tb]$). Alors les coefficients de $g$ sont entiers sur $P$.
\end{prop}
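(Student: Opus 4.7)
The statement is the standard fact that in a ring extension, monic divisors of an integral monic polynomial have integral coefficients. The strategy is to reduce to the case where $f$ splits into linear factors by passing to an auxiliary ring, and then to observe that the coefficients of $g$, being elementary symmetric functions in a subset of the roots of $f$, are integral over $P$.

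\medskip

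First, I would construct a commutative $P'$-algebra $T$, containing $P'$, in which the polynomial $f$ splits as a product of monic linear factors. This is done via the \emph{universal splitting algebra}: if $n = \deg f$, set
\[
T \;=\; P'[X_1,\dots,X_n]\big/J,
\]
where $J$ is the ideal generated by the coefficients of $f(\tb) - \prod_{i=1}^n(\tb - X_i)$, expanded in $\tb$. By construction, in $T[\tb]$ one has $f(\tb) = \prod_{i=1}^n (\tb - x_i)$ with $x_i$ the image of $X_i$; and the natural map $P' \to T$ is injective (this is a standard fact, proved by induction on $n$ by successively adjoining roots $T_k = T_{k-1}[\tb]/(f_{k-1}(\tb))$ where $f_k$ is the monic quotient after dividing by $\tb - x_k$).

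\medskip

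Next, each $x_i \in T$ is integral over $P$, since it is a root of the monic polynomial $f \in P[\tb]$. Since $g$ is monic and divides $f$ in $P'[\tb]$, hence also in $T[\tb]$, and $T[\tb]$ admits the explicit factorisation $f = \prod_i (\tb - x_i)$, I would then argue that $g$ is itself a product of a subfamily of these linear factors; more precisely, one checks by an elementary induction on $\deg g$ that any monic divisor in $T[\tb]$ of $\prod_i(\tb - x_i)$ is of the form $\prod_{i \in I}(\tb - x_i)$ for some subset $I \subset \{1,\dots,n\}$ (the key point is that if $g(\tb) \cdot h(\tb) = \prod_i(\tb - x_i)$ with $g,h$ monic, evaluating at $x_1$ gives $g(x_1)h(x_1)=0$, and one then factors $\tb - x_1$ out of $g$ or $h$, though one must be slightly careful since $T$ need not be a domain).

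\medskip

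Once this is established, the coefficients of $g$ are, up to sign, the elementary symmetric functions of the $x_i$ for $i \in I$. Since each $x_i$ is integral over $P$ and the set of elements of $T$ integral over $P$ is a subring, these coefficients are integral over $P$. But the coefficients of $g$ belong to $P' \subset T$, so they are elements of $P'$ integral over $P$, which is what was to be shown.

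\medskip

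\textbf{Main obstacle.} The only slightly subtle point is the zero-divisor issue in $T$: the universal splitting algebra need not be a domain, so the elementary ``factor out a root'' argument must be carried out carefully. The cleanest way is to avoid this altogether by invoking the identity of coefficients: write $g(\tb) = \prod_{i=1}^{m}(\tb - y_i)$ where $y_1,\dots,y_m$ are the images of fresh indeterminates in a further universal splitting algebra of $g$ over $T$; then in this larger ring, $\prod_j(\tb - y_j)$ divides $\prod_i(\tb - x_i)$, and comparing leading coefficients one shows each $y_j$ equals some $x_i$, so each $y_j$ is integral over $P$, and the coefficients of $g$ (symmetric functions of the $y_j$) are integral over $P$ as before.
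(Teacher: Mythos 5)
The paper itself gives no proof here but simply cites Bourbaki (Alg\`ebre commutative, chapitre 5, \S~1, proposition~11); your argument is essentially the standard one proved there, via universal splitting algebras. However, it is more complicated than necessary, and the shortcut you propose in the last paragraph contains an unnecessary and slightly shaky step.

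The detour through a splitting algebra of $f$ is superfluous, and the reduction ``$g$ is a product of a subfamily of the $(\tb-x_i)$'' is, as you anticipate, genuinely delicate over a ring with zero-divisors. Your alternative route is the right idea but is phrased with one distracting step: you claim that ``comparing leading coefficients one shows each $y_j$ equals some $x_i$.'' That identification is neither needed nor cleanly available when $T$ is not a domain. The fact that does the work is much more direct: take a universal splitting algebra $S$ of $g$ over $P'$, so that $g(\tb)=\prod_{j=1}^m(\tb-y_j)$ in $S[\tb]$ and $P'\hookrightarrow S$. Since $g$ divides $f$ in $P'[\tb]\subseteq S[\tb]$, and $g(y_j)=0$, one gets $f(y_j)=0$ for each $j$. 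As $f\in P[\tb]$ is monic, each $y_j$ is integral over $P$. The coefficients of $g$ are, up to sign, the elementary symmetric functions of the $y_j$, hence integral over $P$; they also lie in $P'$, which is what was to be shown. This avoids entirely both the splitting algebra of $f$ and any matching of roots, and is almost certainly the argument behind the Bourbaki reference.
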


\begin{proof}
Voir~\cite[chapitre 5, \S 1, proposition 11]{bourbaki}.
\end{proof}

\begin{coro}\label{minimal clos}
Si $P$ est {\bfit int\`egre et int\'egralement clos}, 
de corps des fractions $K$, si $A$ est une $K$-alg\`ebre et 
si $x \in A$ est entier sur $P$, alors le polyn\^ome minimal de $x$ sur $K$ 
appartient \`a $P[\tb]$.
\end{coro}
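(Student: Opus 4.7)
The plan is to combine the existence of a monic annihilating polynomial for $x$ with Proposition~\ref{int clos} and the hypothesis that $P$ is integrally closed. The whole argument is essentially one line, but I would lay it out in three steps.

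First, since $x \in A$ is integral over $P$, by definition there exists a monic polynomial $f \in P[\tb]$ with $f(x) = 0$. Let $g \in K[\tb]$ denote the minimal polynomial of $x$ over $K$, which is monic by convention. Viewing $f$ as an element of $K[\tb]$, the ring $K[\tb]$ is a principal ideal domain, so the ideal of polynomials in $K[\tb]$ vanishing at $x$ is generated by $g$; in particular $g$ divides $f$ in $K[\tb]$.

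Next, I would apply Proposition~\ref{int clos} to the pair $(P, P' = K)$: the monic polynomial $g \in K[\tb]$ divides the monic polynomial $f \in P[\tb]$, so by that proposition every coefficient of $g$ is integral over $P$. Finally, these coefficients lie in $K = \Frac(P)$ and are integral over $P$; since $P$ is integrally closed in $K$ by hypothesis, they must lie in $P$. Hence $g \in P[\tb]$, which is the desired conclusion.

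There is no genuine obstacle here: the statement is a textbook corollary of Proposition~\ref{int clos} combined with the definition of ``integrally closed.'' The only subtle point worth flagging explicitly is that one needs $g$ to divide $f$ inside $K[\tb]$ (not merely to share the root $x$), and this uses nothing more than the fact that $K[\tb]$ is a PID so that $g$ generates the annihilator of $x$ in $K[\tb]$.
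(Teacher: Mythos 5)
Your proof is correct and is precisely the standard argument; the paper itself only cites Bourbaki (chapitre 5, \S 1, corollaire de la proposition 11), and what you have written out is that Bourbaki proof, based on Proposition~\ref{int clos} and the hypothesis that $P$ is integrally closed.
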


\begin{proof}
 Voir~\cite[chapitre 5, \S 1, corollaire de la proposition 11]{bourbaki}.
\end{proof}

\begin{prop}\label{cloture polynomiale}
Si $P$ est {\bfit int\`egre} et si $f \in P[\tb,\tb^{-1}]$ est entier sur $P$, alors $f \in P$.
\end{prop}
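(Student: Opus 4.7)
The plan is to reduce the assertion to a direct degree argument in the Laurent polynomial ring $P[\tb,\tb^{-1}]$, exploiting the fact that $P$ is a domain (so $P[\tb,\tb^{-1}]$ is too) and that the highest and lowest degree terms of a product behave well.

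Concretely, suppose $f = \sum_{i=m}^{n} a_i \tb^i \in P[\tb,\tb^{-1}]$, with $a_m, a_n \neq 0$, satisfies an integral relation
\[
f^k + c_{k-1} f^{k-1} + \cdots + c_1 f + c_0 = 0
\]
with $c_0,\ldots,c_{k-1} \in P$. I would assume for contradiction that $f \notin P$, so either $n > 0$ or $m < 0$. In either case I would extract the coefficient of a single, extremal power of $\tb$ from the integral relation.

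First, if $n > 0$, then $f^k$ has top degree $kn$ with leading coefficient $a_n^k$, while $f^j$ for $j < k$ has top degree $jn < kn$ and the $c_j \in P$ contribute only in degree $0 < kn$. The coefficient of $\tb^{kn}$ in the integral relation is therefore $a_n^k$, and since $P$ is a domain this forces $a_n = 0$, contradicting the choice of $n$. Symmetrically, if $m < 0$, the lowest power in $f^k$ is $\tb^{km}$ with coefficient $a_m^k$, the terms $f^j$ for $j < k$ contribute only in degrees $\ge jm > km$, and the $c_j$ contribute in degree $0 > km$; extracting the coefficient of $\tb^{km}$ forces $a_m^k = 0$ and hence $a_m = 0$, again a contradiction.

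There is no real obstacle here: the only delicate point is to remember to treat \emph{both} the top degree and the bottom degree, since $f$ lies in the Laurent ring rather than in $P[\tb]$ (an argument using only $\deg$ would leave open the case of negative powers). Once both extremal coefficients are handled, one concludes $m = n = 0$, i.e.\ $f = a_0 \in P$, which is the desired statement.
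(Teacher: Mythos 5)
Your argument is correct and is essentially the paper's own proof: the paper introduces the $\tb$-degree $\d'$ and $\tb$-valuation $\d$ of $f$, uses integrality of $P$ to get $\deg(f^d)=d\d'$ and $\val(f^d)=d\d$, and compares these with the polynomial relation $f^d = p_0 + \cdots + p_{d-1}f^{d-1}$ to force $\d'=\d=0$. Your version makes the same comparison by extracting the coefficient of the extremal power $\tb^{kn}$ (resp.\ $\tb^{km}$), and you correctly flag that both ends must be handled because $f$ is a Laurent polynomial.
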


\begin{proof}
Soit $d \ge 1$ et soient $p_0$, $p_1$,\dots, $p_{d-1}$ des \'el\'ements de $P$ 
tels que $p_0 + p_1 f + \cdots + p_{d-1} f^{d-1} = f^d$. Notons $\d$ la $\tb$-valuation de 
$P$ et $\d'$ son degr\'e. Puisque $P$ est int\`egre, le degr\'e 
de $f^d$ est $d\d'$, et donc l'\'egalit\'e ci-dessus ne peut avoir lieu que si 
$\d'=0$. De m\^eme, $\d=0$. Donc $f$ est constant.
\end{proof}

\begin{contre}
L'hypoth\`ese d'int\'egrit\'e est n\'ecessaire dans la proposition pr\'ec\'edente. 
Par exemple, si $p \in P$ est nilpotent, alors $p\tb$ est entier sur $P$.\finl
\end{contre}

\bigskip

\section{Rappels sur les calculs de groupes de Galois\label{sec:calcul}}

\medskip

Soit $K$ un corps commutatif et soit 
$f(\tb)=\tb^d + a_{d-1} \tb^{d-1} + \cdots + a_1 \tb + a_0 \in K[\tb]$. 
On note $M$ un corps de d\'ecomposition de $f$ (sur $K$) et on note
$$\Gal_K(f)=\Gal(M/K).$$
Le groupe $\Gal_K(f)$ est appel\'e le {\it groupe de Galois} de $f$ sur $K$. 

Notons $t_1$,\dots, $t_d$ les \'el\'ements de $M$ tels que
$$f(\tb)=\prod_{i=1}^d (\tb-t_i),$$
de sorte que 
$$M=K[t_1,\dots,t_d]=K(t_1,\dots,t_d).$$
Cette num\'erotation nous fournit un morphisme de groupes injectif
$$\Gal_K(f) ~\injto~ \SG_d.$$

Supposons que $P$ est int\`egre et int\'egralement clos, 
que $K$ est le corps des fractions de $P$, et que $f \in P[\tb]$. 
Notons $R$ la cl\^oture int\'egrale de $P$ dans  $M$ et soit $G=\Gal(M/K)$. 
Alors $P=R^G$ car $P$ est int\'egralement clos. Si $r \in R$, on note $\rba$ son image 
dans $R/\rG$. \'Ecrivons
$$\fba=\prod_{j=1}^l f_j,$$
o\`u $f_j \in k_P(\pG)[\tb]$ est un polyn\^ome irr\'eductible. 
Alors $D/I=\Gal(k_R(\rG)/k_P(\pG))$ d'apr\`es le th\'eor\`eme~\ref{bourbaki}.  
Mais, $R$ contient $t_1$,\dots, $t_d$, donc
$$\fba(\tb)=\prod_{i=1}^d (\tb-\tba_i).$$
On note $\O_j$ la partie de $\{1,2,\dots,d\}$ telle que 
$$f_j(\tb)=\prod_{i \in \O_j} (\tb-\tba_i).$$
Soit $k_j=k_P(\pG)((\tba_i)_{i \in \O_j})$~: c'est un corps de d\'ecomposition de $f_j$ sur $k_P(\pG)$. 
Soit $G_j=\Gal(k_j/k_P(\pG))$, c'est-\`a-dire 
le groupe de Galois de $\fba_j$. Alors, 
\equat\label{eq:galois-surjectif}
\text{\it le morphisme canonique $D/I=\Gal(k_R(\rG)/k_P(\pG)) \to \Gal(k_j/k_P(\pG))=G_j$ est surjectif}
\endequat
pour tout $j$. Comme $G_j$ agit transitivement sur $\O_j$, on obtient en particulier que 
\equat\label{eq:omega-divise-G}
\text{\it $|\O_j|$ divise $|G|$ pour tout $j$.}
\endequat

\bigskip

\section{Quelques calculs de discriminant}

\medskip

Soient $P$ un anneau commutatif et soit $f(\tb) \in P[\tb]$ un polyn\^ome 
unitaire de degr\'e $d$. On notera $\disc(f)$  \indexnot{d}{\disc}  son discriminant. Alors
\equat\label{discriminant carre}
\disc(f(\tb^2))=(-4)^d \disc(f)^2 \cdot f(0).
\endequat
\begin{proof}
Par des arguments faciles de sp\'ecialisation, on peut supposer que $P$ 
est un corps alg\'ebriquement clos. Notons 
$E_1$,\dots, $E_d$ les \'el\'ements de $P$ tels que 
$$f(\tb)=\prod_{i=1}^d (\tb-E_i).$$
Fixons une racine carr\'e $e_i$ de $E_i$ dans $P$. Alors
$$f(\tb^2))=\prod_{1 \le i \le d} \prod_{\e \in \{1,-1\}} (\tb-\e e_i)$$
et le discriminant de $f(\tb^2)$ vaut donc
$$\disc(f(\tb^2)) = 
\Bigl(\prod_{1 \le i < j \le d} \prod_{\e,\e' \in \{1,-1\}} (\e e_i -\e' e_j)^2\Bigr)
\cdot \prod_{i=1}^d (e_i - (-e_i))^2.$$
En d'autres termes, 
$$\disc(f(\tb^2)) = 4^d \cdot 
\Bigl(\prod_{1 \le i < j \le d} (E_i-E_j)^4\Bigr) \cdot \prod_{i=1}^d E_i
= 4^d ~ \disc(f)^2 \cdot (-1)^d f(0),$$
comme annonc\'e.
\end{proof}

\bigskip

Terminons par un r\'esultat tout aussi facile~: 
\equat\label{discriminant t}
\disc(\tb f(\tb))=\disc(f) \cdot f(0)^2.
\endequat
\begin{proof}
Comme pr\'ec\'edemment, on peut supposer que $P$ est un corps alg\'ebriquement 
clos, et on note $E_1$,\dots, $E_d$ les \'el\'ements de $P$ tels que 
$$f(\tb)=\prod_{i=1}^n (\tb-E_i).$$
Alors
$$\disc(\tb f(\tb)) = \Bigl(\prod_{1 \le i < j \le d} (E_i-E_j)^2\Bigr)
\cdot \prod_{i=1}^d (0-E_i)^2.$$
D'o\`u le r\'esultat.
\end{proof}

\chapter{Graduation et extensions enti\`eres}\label{appendice graduation}\setcounter{section}{0}

\bigskip

\section{Idempotents, radical}\label{intro graduation}

\medskip

Soit $A=\mathop{\bigoplus}_{i \in \ZM} A_i$ un anneau $\ZM$-gradu\'e. 
Si $B$ est un anneau contenant $A$ et si $\xi \in B^\times$ commute avec $A$, 
alors il existe un unique morphisme d'anneaux 
$$\mu_A^\xi : A \longto B$$  \indexnot{mz}{\mu_A^\xi}  
tel que $\mu_A^\xi(a)=a\xi^i$ si $a \in A_i$. Notons que, si $A$ est $\NM$-gradu\'e 
(c'est-\`a-dire si $A_i=0$ pour $i < 0$), alors $\mu_A^\xi$ peut \^etre d\'efini 
aussi lorsque $\xi$ n'est pas inversible. 
Par exemple, si $\tb$ est une ind\'etermin\'ee sur $A$, alors 
$$\mu_A^\tb : A \longto A[\tb,\tb^{-1}]$$
est un morphisme d'anneaux. Si on note $\eval_A^\xi : A[\tb,\tb^{-1}] \to B$ 
le morphisme d'\'evaluation en $\xi$, alors
\equat\label{mua}
\mu_A^\xi = \eval_A^\xi \circ \mu_A^\tb.
\endequat
En particulier, si $B=A$ et $\xi=1$, alors 
\equat\label{mua ea}
\mu_A^1 = \Id_A\qquad\text{et}\qquad \eval_A^1 \circ \mu_A^\tb = \Id_A.
\endequat
D'autre part, le morphisme $\mu_A^\xi : A \longto B$ s'\'etend en un morphisme 
$\ZM[\tb,\tb^{-1}]$-lin\'eaire $\mub_A^\xi : A[\tb,\tb^{-1}] \longto B[\tb,\tb^{-1}]$ et
\equat\label{mua mua}
\mub_A^\xi \circ \mu_A^\tb = \mu_A^{\xi \tb}.
\endequat
Comme cas particuliers, on peut prendre $B=A[\ub,\ub^{-1}]$ et $\xi=\ub$, o\`u 
$\ub$ est une autre ind\'etermin\'ee, ou bien on peut prendre $B=A[\tb,\tb^{-1}]$ 
et $\xi = \tb^{-1}$. On obtient les deux \'egalit\'es suivantes~:
\equat\label{mua mub}
\mub_A^\ub \circ \mu_A^\tb = \mu_A^{\tb\ub}
\qquad\text{et}\qquad
\mub_A^{\tb^{-1}} \circ \mu_A^\tb (a) = a \in A[\tb,\tb^{-1}]
\endequat
pour tout $a \in A$. Pour finir, remarquons que
\equat\label{ev mutilde}
\eval_A^1 \circ \mub_A^{\tb^{-1}} = \eval_A^1.
\endequat

\bigskip

\begin{prop}\label{graduation idem}
Supposons $A$ commutatif. Soit $e$ un idempotent de $A$. Alors $e \in A_0$.
\end{prop}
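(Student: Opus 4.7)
The plan is to exploit the ring homomorphism $\mu_A^\tb : A \longto A[\tb,\tb^{-1}]$ introduced above in~\ref{mua}. Writing the homogeneous decomposition $e = \sum_{i\in\ZM} e_i$ with $e_i \in A_i$ (finitely many nonzero), the fact that $\mu_A^\tb$ is a ring morphism gives that $\mu_A^\tb(e) = \sum_i e_i \tb^i$ is an idempotent of $A[\tb,\tb^{-1}]$. The proposition therefore reduces to the following classical fact, which I will prove as a separate lemma:

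\medskip

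\emph{Lemma.} If $B$ is a commutative ring, every idempotent of $B[\tb,\tb^{-1}]$ lies in $B$.

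\medskip

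Once this lemma is in place, the element $\sum_i e_i \tb^i$ is a constant Laurent polynomial, which forces $e_i = 0$ for every $i \neq 0$ and hence $e = e_0 \in A_0$, as desired.

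To prove the lemma, I would use the standard nilradical argument. For any prime ideal $\mG$ of $B$, the quotient $(B/\mG)[\tb,\tb^{-1}]$ is a domain, so its only idempotents are $0$ and $1$; reducing $f = \sum_i e_i \tb^i$ modulo $\mG[\tb,\tb^{-1}]$ therefore shows that every non-constant coefficient of $f$ lies in $\mG$. Intersecting over all primes, each $e_i$ with $i \neq 0$ belongs to the nilradical $\Rad(B)$, so $g := f - e_0$ lies in $\Rad(B)[\tb,\tb^{-1}]$ and is nilpotent. Looking at the degree-$0$ component of $e^2 = e$ (or, equivalently, of $f^2 = f$) shows $e_0^2 = e_0$, and expanding $(e_0 + g)^2 = e_0 + g$ yields $g^2 = g\,(1 - 2e_0)$. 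Since $(1-2e_0)^2 = 1$, the element $1-2e_0$ is a unit; iterating $g = g^2(1-2e_0)^{-1}$ gives $g = g^n (1-2e_0)^{-(n-1)}$ for every $n \ge 1$, and the nilpotency of $g$ forces $g = 0$.

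There is no real obstacle here: the only mildly technical step is the passage from ``all coefficients nilpotent'' to ``the Laurent polynomial is nilpotent,'' which is immediate from $\Rad(B[\tb,\tb^{-1}]) = \Rad(B)[\tb,\tb^{-1}]$ in the commutative setting. The rest of the argument is formal manipulation of idempotents, and the graded input is used only through the single observation that $\mu_A^\tb$ separates the homogeneous components as distinct powers of $\tb$.
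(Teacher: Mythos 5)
Your reduction to a lemma about idempotents of $B[\tb,\tb^{-1}]$ is a genuinely different route from the paper's, which instead replaces $A$ by the Noetherian subring generated by the homogeneous components of $e$ and then invokes the finiteness of the set of idempotents of a commutative Noetherian ring together with a pigeonhole argument on the family $\mu_A^{\tb^l}(e)$ for $l>0$. The lemma you state is true, but your proof of it has a gap.

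The assertion that the degree-$0$ component of $e^2 = e$ gives $e_0^2 = e_0$ is incorrect in the $\ZM$-graded setting: the degree-$0$ component of $e^2$ is $\sum_i e_i e_{-i} = e_0^2 + \sum_{i\ne 0} e_i e_{-i}$, and the cross-terms (products of nilpotents) need not vanish. You appear to be using $\NM$-graded intuition, where the only decomposition $0 = i+j$ with $i,j \ge 0$ is $i=j=0$; but the proposition concerns a $\ZM$-grading, so $\mu_A^\tb(e)$ is a genuine Laurent polynomial with coefficients in both positive and negative degrees. Without $e_0^2 = e_0$, expanding $(e_0+g)^2 = e_0 + g$ only yields $g^2 = g(1-2e_0) + (e_0 - e_0^2)$, so your iteration does not close. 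The argument is salvageable: the extra term $e_0 - e_0^2 = -\sum_{i\ne 0}e_ie_{-i}$ lies in $I^2$, where $I$ is the finitely generated nilpotent ideal of $B$ generated by the $e_i$ with $i \ne 0$, and $1-2e_0$ is still a unit since $(1-2e_0)^2 = 1 - 4(e_0-e_0^2)$; the corrected identity then shows every coefficient of $g$ lies in $I^2$, i.e.\ $I = I^2$, which forces $I = 0$ by nilpotence. As written, though, the proof does not go through, and the fix is delicate enough that the paper's Noetherian shortcut is arguably the cleaner option.
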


\begin{proof}
Quitte \`a remplacer $A$ par l'anneau engendr\'e par les composantes homog\`enes de $e$, 
on peut supposer que $A$ est noeth\'erien. 
De plus, $\mu_A^{\tb^l}(e)$ est un idempotent de $A[\tb,\tb^{-1}]$ 
pour tout $l \in \ZM$. Puisque $A$ est commutatif et noeth\'erien, 
$A[\tb,\tb^{-1}]$ est aussi commutatif et noeth\'erien, et donc ne 
contient qu'un nombre fini d'idempotents. Par cons\'equent, il existe 
$m > l > 0$ tels que $\mu_A^{\tb^l}(e)=\mu_A^{\tb^m}(e)$, ce qui implique 
que $e \in A_0$.
\end{proof}

\bigskip

\begin{prop}\label{graduation radical}
$\Rad(A)$ est un id\'eal homog\`ene de $A$.
\end{prop}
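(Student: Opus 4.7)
The goal is to show that if $a \in \Rad(A)$ has homogeneous decomposition $a = a_m + a_{m+1} + \cdots + a_n$ with $a_i \in A_i$, then each $a_i$ also lies in $\Rad(A)$. The strategy mirrors the proof of Proposition~\ref{graduation idem}: one exploits the morphisms $\mu_A^\xi$ introduced earlier to act by the grading, combined with the fact that $\Rad(A)$ is preserved by every ring automorphism of $A$.

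First I would observe that, in the setting in which the appendix is applied (where $A$ is a $\kb$-algebra over the characteristic-zero field $\kb$ fixed throughout the memoir), for every $\xi \in \kb^\times$ the map $\sigma_\xi := \mu_A^\xi : A \to A$ given by $\sigma_\xi(a_i) = \xi^i a_i$ is a ring automorphism of $A$, with inverse $\sigma_{\xi^{-1}}$. Since any ring automorphism permutes the maximal (two-sided) ideals, each $\sigma_\xi$ preserves $\Rad(A)$, and hence $\sigma_\xi(a) = \sum_{i=m}^n \xi^i a_i \in \Rad(A)$ for every $\xi \in \kb^\times$.

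Next I would extract the individual homogeneous components. Pick $n-m+1$ pairwise distinct scalars $\xi_0,\ldots,\xi_{n-m} \in \kb^\times$, which exists because $\kb$ is infinite. The associated Vandermonde matrix $(\xi_j^{\,i})_{0 \le j \le n-m,\, m \le i \le n}$ is invertible over $\kb$, so each $a_i$ may be written as an explicit $\kb$-linear combination of the elements $\sigma_{\xi_j}(a)$. Since $\Rad(A)$ is a $\kb$-submodule of $A$ (being an ideal in a $\kb$-algebra), this forces $a_i \in \Rad(A)$, which is what we wanted.

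The main subtlety is formal rather than mathematical: the proposition is phrased for an abstract $\ZM$-graded ring, without an a priori $\kb$-algebra assumption. One either appeals to the standing convention of the memoir, or gives a more intrinsic argument by extension of scalars to $A \otimes_\ZM \ZM[\omega]$ for a sufficiently large primitive root of unity $\omega$, replacing the Vandermonde step by the averaging identity $a_j = N^{-1} \sum_{k=0}^{N-1} \omega^{-jk} \sigma_{\omega^k}(a)$, and then descending back to $A$. For the narrower reading ``$\Rad$ = nilradical'', the argument is entirely elementary and self-contained: from $a^N = 0$, the top-degree component of $a^N$ equals $a_n^N$, forcing $a_n$ nilpotent; subtracting $a_n$ from $a$ leaves a still-nilpotent element, and descending induction on $n-m$ handles the remaining components.
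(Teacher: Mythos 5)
The paper's proof here is simply the citation ``Voir~\cite{puc}''; the underlying fact is a theorem of Bergman (the Jacobson radical of a $\ZM$-graded ring is a graded ideal), and there is no argument in the paper to compare yours against. Your main argument --- that each $\mu_A^\xi$ with $\xi\in\kb^\times$ central and invertible in $A$ is a ring automorphism preserving $\Rad(A)$, followed by inverting a Vandermonde matrix over $\kb$ --- is correct and self-contained whenever $A$ is a $\kb$-algebra over an infinite field, which does cover every graded ring the memoir actually considers, and it is the natural companion of the argument given for Proposition~\ref{graduation idem}.

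However, the proposition is stated for an arbitrary $\ZM$-graded ring $A$, and there your argument does not apply and the two workarounds you sketch do not close the gap. For base change to $A\otimes_\ZM\ZM[\omega]$: the Jacobson radical does not commute with extension of scalars (neither inclusion between $\Rad(A)\otimes 1$ and $A\cap\Rad(A\otimes_\ZM\ZM[\omega])$ is automatic), dividing by $N$ requires $N$ to be invertible in the base, and ``descending back to $A$'' is exactly the point that would need proof. For the ``$\Rad=$ nilradical'' fallback: the step ``subtracting $a_n$ from $a$ leaves a still-nilpotent element'' already fails in a noncommutative ring (a sum of two nilpotents need not be nilpotent), and even in the commutative case this only shows that the nilradical is graded, which is in general a strictly weaker statement than gradedness of the Jacobson radical. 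The general case genuinely requires a different technique --- roughly, working in $A[t,t^{-1}]$, twisting by $a_i\mapsto a_i t^i$, and comparing the Jacobson radicals of $A$ and $A[t,t^{-1}]$. So your proof is a valid replacement for the citation in the situations the memoir needs, but it does not prove the proposition in the generality in which it is stated; one should either add the $\kb$-algebra hypothesis explicitly or keep the reference.
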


\begin{proof}
Voir~\cite{puc}.
\end{proof}

\bigskip

\section{Extension de la graduation}\label{section:graduation integrale}

\medskip

\boitegrise{{\bf Notations.} 
{\it Nous fixons dans cette section un anneau commutatif 
$\ZM$-gradu\'e int\`egre $P$. Nous noterons $P_i$ sa composante homog\`ene de degr\'e $i$. 
Soit $Q$ un anneau {\bfit int\`egre} contenant $P$ et {\bfit entier} sur $P$.}}{0.75\textwidth}

\medskip

Le but de cette section est d'\'etudier les graduations de $Q$ qui \'etendent 
celles de $P$. Commen\c{c}ons par r\'egler le probl\`eme de l'unicit\'e~:

\bigskip

\begin{lem}\label{unicite graduation}
Si $Q=\mathop{\bigoplus}_{i \in \ZM} \Qti_i = \mathop{\bigoplus}_{i \in \ZM} \Qhat_i$ 
sont deux graduations de $Q$ \'etendant celle de $P$ (c'est-\`a-dire que 
$P_i=\Qti_i \cap P = \Qhat_i \cap P$ pour tout $i \in \ZM$), alors 
$\Qti_i=\Qhat_i$ pour tout $i \in \ZM$).
\end{lem}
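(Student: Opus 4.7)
The plan is to show that $\tilde{Q}_d \subseteq \hat{Q}_d$ for every $d \in \ZM$; by symmetry this will give the desired equality. Fix $q \in \tilde{Q}_d$. Since $q$ is integral over $P$, I would pick a monic polynomial $F(T) = T^n + p_{n-1}T^{n-1} + \cdots + p_0 \in P[T]$ with $F(q) = 0$, and decompose each coefficient as $p_j = \sum_k p_{j,k}$ with $p_{j,k} \in P_k$.

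My first step is to extract a \emph{homogeneous} integral relation for $q$. Applying the comodule map $\tilde{\mu} : Q \to Q[\tb, \tb^{-1}]$ associated with the first grading (sending $\tilde{Q}_i \ni q \mapsto q\tb^i$) to $F(q) = 0$, and using $\tilde{\mu}(q) = q\tb^d$ together with $\tilde{\mu}(p_j) = \mu_P^\tb(p_j) = \sum_k p_{j,k}\tb^k$, I would obtain the identity $q^n \tb^{nd} + \sum_{j,k} p_{j,k}\, q^j\, \tb^{k+jd} = 0$ in $Q[\tb, \tb^{-1}]$. Extracting the coefficient of $\tb^{nd}$ gives
$$G(q) = 0, \qquad G(T) := T^n + \sum_{j=0}^{n-1} p_{j,\, d(n-j)}\, T^j \;\in\; P[T],$$
whose coefficient of $T^j$ is homogeneous of degree $d(n-j)$ in $P$.

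I would then apply the second comodule map $\hat{\mu}$ to $G(q) = 0$, and set $R := \hat{\mu}(q)\, \tb^{-d} \in Q[\tb, \tb^{-1}]$. Using the homogeneity of the coefficients of $G$ — so that $\mu_P^\tb(p_{j,d(n-j)}) = p_{j,d(n-j)}\,\tb^{d(n-j)}$ — a direct calculation yields
$$0 \;=\; \hat{\mu}(G(q)) \;=\; \hat{\mu}(q)^n + \sum_{j} p_{j,d(n-j)}\,\tb^{d(n-j)}\,\hat{\mu}(q)^j \;=\; \tb^{nd}\, G(R),$$
hence $G(R) = 0$ since $\tb$ is a unit. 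In particular $R$ is integral over $Q$, because $G \in P[T] \subseteq Q[T]$ is monic.

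The argument concludes via Proposition~\ref{cloture polynomiale} applied to the integral domain $Q$: every element of $Q[\tb, \tb^{-1}]$ integral over $Q$ must lie in $Q$, hence $R \in Q$. Writing $q = \sum_i \hat{q}_i$ in the second grading, the identity $\sum_i \hat{q}_i\,\tb^i = \hat{\mu}(q) = R\tb^d$ forces $\hat{q}_d = R$ and $\hat{q}_i = 0$ for $i \neq d$, so $q = \hat{q}_d \in \hat{Q}_d$, as required. The main conceptual step — and the only place where the argument uses anything beyond the bare fact that $\tilde{\mu}(q)$ and $\hat{\mu}(q)$ are roots of a common integral polynomial — is the construction of the homogeneous polynomial $G$ from the first grading: this is precisely what lets one factor a $\tb^{nd}$ out of $\hat{\mu}(G(q))$, after which Proposition~\ref{cloture polynomiale} eliminates the $\tb$-dependence of $R$ and closes the argument.
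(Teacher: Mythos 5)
Your argument is correct, and it takes a genuinely different route from the paper's. The paper works globally: it forms the composite $\alpha=\mub_Q^{\tb^{-1}}\circ\muh_Q^\tb$, observes that $\alpha$ fixes $P$ pointwise (so $\alpha(q)$ is integral over $P$, hence over $Q$, hence in $Q$ by Proposition~\ref{cloture polynomiale}), then uses $\eval_Q^1\circ\mub_Q^{\tb^{-1}}=\eval_Q^1$ to force $\alpha=\Id_Q$, and concludes $\mut_Q^\tb=\muh_Q^\tb$ by injectivity of $\mub_Q^{\tb^{-1}}$. You instead argue element by element: starting from $q\in\Qti_d$, you extract from an arbitrary monic integral relation a \emph{homogeneous} one $G(T)=T^n+\sum_j p_{j,d(n-j)}T^j$ (homogeneous meaning each coefficient $p_{j,d(n-j)}$ lies in $P_{d(n-j)}$), by picking the $\tb^{nd}$-coefficient of $\mut(F(q))=0$; this homogeneity is exactly what lets $\tb^{nd}$ factor out of $\muh(G(q))$, yielding $G(R)=0$ for $R=\muh(q)\,\tb^{-d}$, after which Proposition~\ref{cloture polynomiale} forces $R\in Q$ and the coefficient comparison gives $q\in\Qhat_d$. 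Both arguments rest on the same normality-type input (Proposition~\ref{cloture polynomiale}), but the paper's is slicker and purely formal in the comodule maps, whereas yours isolates a concrete intermediate object (the homogeneous integral relation $G$) that makes the mechanism more transparent, at the cost of a somewhat longer computation and a separate symmetric pass to get the reverse inclusion.
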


\begin{proof}
Comme dans la section~\ref{intro graduation} (dont on reprend les notations), 
les graduations $Q=\mathop{\bigoplus}_{i \in \ZM} \Qti_i$ et 
$\mathop{\bigoplus}_{i \in \ZM} \Qhat_i$ correspondent \`a des morphismes d'anneaux 
$\mut_Q^\tb : Q \to Q[\tb,\tb^{-1}]$ et $\muh_Q^\tb : Q \to Q[\tb,\tb^{-1}]$ \'etendant 
$\mu_P^\tb : P \to P[\tb,\tb^{-1}]$. Notons 
$$\fonction{\a}{Q}{Q[\tb,\tb^{-1}]}{q}{\mubt_Q^{\tb^{-1}}(\mubh_Q^\tb(q)).}$$
Alors $\a$ est un morphisme d'anneaux et $\a(p)=p$ pour tout $p \in P$ d'apr\`es 
\ref{mua mub}. Par cons\'equent, si $q \in Q$, alors $\a(q) \in Q[\tb,\tb^{-1}]$ est 
entier sur $P$. En particulier, $\a(q)$ est entier sur $Q$. 
Puisque $Q$ est int\`egre, cela implique que $\a(q) \in Q$ (voir la proposition 
\ref{cloture polynomiale}). 

Mais $\eval_Q(\a(q)) = q$ d'apr\`es~(\ref{ev mutilde}), donc $\a(q)=q$, ce qui implique que 
$\mubt_Q^{\tb^{-1}} \circ \mut_Q^\tb = \mubt_Q^{\tb^{-1}} \circ \muh_Q^\tb$, 
toujours d'apr\`es~(\ref{ev mutilde}). Puisque $\mubt_Q^{\tb^{-1}}$ est 
injectif, on en d\'eduit que $\mut_Q^\tb=\muh_Q^\tb$.
\end{proof}

\bigskip

\begin{coro}\label{graduation et automorphisme}
Si $Q=\mathop{\bigoplus}_{i \in \ZM} Q_i$ est une graduation sur $Q$ \'etendant celle de $P$ 
et si $G$ est un groupe agissant sur $Q$, stabilisant $P$ et respectant la graduation de 
$P$, alors $G$ respecte la graduation de $Q$. 
\end{coro}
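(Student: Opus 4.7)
The plan is to deduce the corollary directly from the uniqueness lemma \ref{unicite graduation} by transporting the given graduation on $Q$ through the action of a group element $g \in G$ and observing that the resulting graduation still extends the one on $P$.

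More precisely, fix $g \in G$. First I would define, for each $i \in \ZM$, the subgroup $Q_i' = g(Q_i) \subset Q$. Since $g$ is a ring automorphism of $Q$, the decomposition $Q = \bigoplus_{i \in \ZM} Q_i'$ is a $\ZM$-grading of $Q$: it is a direct sum decomposition because $g$ is a bijection, and $Q_i' \cdot Q_j' = g(Q_i) g(Q_j) = g(Q_i Q_j) \subset g(Q_{i+j}) = Q_{i+j}'$.

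Next I would check that this new graduation also extends the graduation of $P$. The hypothesis is that $g$ stabilises $P$ and respects the graduation of $P$, so $g(P_i) = P_i$ for every $i$. Hence
\[
Q_i' \cap P = g(Q_i) \cap g(P) = g(Q_i \cap P) = g(P_i) = P_i,
\]
which is exactly the condition required in lemma \ref{unicite graduation}.

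At this point the uniqueness statement of lemma \ref{unicite graduation} applies to the two graduations $Q = \bigoplus_i Q_i = \bigoplus_i Q_i'$ of $Q$, both extending the graduation of $P$, and forces $Q_i' = Q_i$ for all $i \in \ZM$. Unwinding the definition of $Q_i'$, this means $g(Q_i) = Q_i$ for all $i$, i.e.\ $g$ respects the graduation of $Q$. Since this holds for every $g \in G$, the whole group $G$ respects the graduation, which is the desired conclusion. There is really no obstacle here: the only non-trivial input is the uniqueness lemma, which has already been established, and the argument is a one-line transport-of-structure together with an application of that uniqueness.
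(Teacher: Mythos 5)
Your proof is correct and is essentially the same as the paper's: for $g \in G$, observe that $Q = \bigoplus_i g(Q_i)$ is again a graduation of $Q$ extending that of $P$, and then invoke the uniqueness lemma \ref{unicite graduation} to conclude $g(Q_i)=Q_i$. You merely spell out the two verifications (that $\bigoplus_i g(Q_i)$ is multiplicative and that $g(Q_i)\cap P = P_i$) which the paper leaves implicit.
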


\begin{proof}
En effet, si $g \in G$, alors $Q=\mathop{\bigoplus}_{i \in \ZM} g(Q_i)$ 
est une graduation sur $Q$ \'etendant celle de $P$. En vertu du lemme~\ref{unicite graduation}, on a $g(Q_i)=Q_i$ pour tout $i$.
\end{proof}

\bigskip

\begin{contre}
On ne peut pas se passer de l'hypoth\`ese d'int\'egrit\'e de $Q$ dans le lemme 
\ref{unicite graduation}. En effet, si $P=P_0$ et si $Q=P \oplus P \e$ avec $\e^2=0$, 
alors on peut munir $Q$ d'une infinit\'e de graduations \'etendant celle de $P$ 
en d\'ecr\'etant que $\e$ est homog\`ene de degr\'e arbitraire.\finl
\end{contre}

\bigskip

\begin{prop}\label{prop:graduation-positive}
Si $Q=\bigoplus_{i \in \ZM} Q_i$ est une $\ZM$-graduation de $Q$ \'etendant celle de $P$ et 
si $P_i = 0$ pour tout $i < 0$, alors $Q_i=0$ pour tout $i < 0$.
\end{prop}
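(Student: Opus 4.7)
Soit $q \in Q$ un \'el\'ement homog\`ene non nul de degr\'e $i$ pour la graduation de $Q$~; nous voulons montrer que $i \ge 0$. Puisque $Q$ est entier sur $P$, il existe une relation de d\'ependance int\'egrale
$$q^n + p_{n-1} q^{n-1} + \cdots + p_1 q + p_0 = 0$$
avec $p_0, \ldots, p_{n-1} \in P$. L'id\'ee est de d\'ecomposer chaque $p_j$ en ses composantes $\ZM$-homog\`enes, puis d'isoler la composante de degr\'e $in$ dans l'\'egalit\'e ci-dessus.

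\'Ecrivons donc $p_j = \sum_{k \in \NM} p_j^{(k)}$ avec $p_j^{(k)} \in P_k$ (la somme est finie et port\'ee par $\NM$ gr\^ace \`a l'hypoth\`ese $P_k = 0$ pour $k < 0$). Alors $p_j^{(k)} q^j$ est homog\`ene de degr\'e $k + ij$ dans $Q$, et l'\'egalit\'e ci-dessus se d\'ecompose en composantes homog\`enes. En particulier, en regardant sa composante de degr\'e $in$, on obtient
$$q^n + \sum_{j=0}^{n-1} p_j^{(i(n-j))} q^j = 0.$$
Supposons maintenant $i < 0$. Pour tout $j \in \{0,1,\ldots,n-1\}$, on a $n-j > 0$, d'o\`u $i(n-j) < 0$, et donc $p_j^{(i(n-j))} \in P_{i(n-j)} = 0$. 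L'\'egalit\'e pr\'ec\'edente se r\'eduit \`a $q^n = 0$, ce qui contredit $q \neq 0$ puisque $Q$ est int\`egre.

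L'unique d\'elicatesse de l'argument est la d\'ecomposition en composantes homog\`enes de la relation int\'egrale, qui repose sur le fait que l'\'egalit\'e $q^n + \sum p_j q^j = 0$ est une \'egalit\'e dans l'anneau $\ZM$-gradu\'e $Q$ et que chaque produit $p_j^{(k)} q^j$ est homog\`ene (de degr\'e $k+ij$) d\`es que $q$ l'est~: aucune difficult\'e suppl\'ementaire ne se pr\'esente. En appliquant ce qui pr\'ec\`ede \`a un \'el\'ement non nul homog\`ene arbitraire de $Q_i$, on conclut que $Q_i = 0$ pour tout $i < 0$, ce qui est l'\'enonc\'e recherch\'e.
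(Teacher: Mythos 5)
Your proof is correct and is essentially the paper's argument, just made slightly more explicit. The paper also starts from an integral relation $q^{r+1} = p_0 + p_1 q + \cdots + p_r q^r$ with $q$ homogeneous of degree $i<0$, and then observes directly that $q^{r+1}$ lives in degree $(r+1)i$ whereas the right-hand side lies in $\bigoplus_{j\ge ri}Q_j$ (because each $p_l\in\bigoplus_{k\ge 0}P_k$ and $li\ge ri$ for $l\le r$), forcing $q^{r+1}=0$; you reach the same conclusion by explicitly extracting the degree-$in$ component and noting that all $p_j^{(i(n-j))}$ vanish. Both then invoke integrality of $Q$ to conclude $q=0$.
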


\bigskip

\begin{proof}
Soit $i < 0$ et $q \in Q_i$. Puisque $Q$ est entier sur $P$, il existe $r \ge 0$ et 
$p_0$, $p_1$,\dots, $p_r$ dans $P$ tels que 
$q^{r+1} = p_0 + p_1 q + \cdots + p_r q^r$. Alors 
$q^{r+1} \in Q_{(r+1)i}$ tandis que 
$p_0 + p_1 q + \cdots + p_r q^r \in \bigoplus_{j \ge ri} Q_j$. Donc $q^{r+1}=0$ et donc $q=0$.
\end{proof}

\bigskip

Nous allons maintenant nous int\'eresser \`a la question de l'existence. 
Pour cela, notons $K=\Frac(P)$, $L=\Frac(Q)$ et supposons que l'extension 
$L/K$ est de degr\'e fini. 

\bigskip

\begin{lem}\label{g}
La graduation de $P$ s'\'etend en une graduation de sa cl\^oture int\'egrale dans $K$. 
En outre, si $P$ est en fait $\NM$-gradu\'e (c'est-\`a-dire $P_i=0$ si $i < 0$), 
alors sa cl\^oture int\'egrale dans $K$ l'est aussi.
\end{lem}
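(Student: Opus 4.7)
Mon plan consiste \`a utiliser le morphisme $\mu_P^\tb : P \to P[\tb,\tb^{-1}]$ introduit dans la section~\ref{intro graduation}, \`a l'\'etendre \`a $K$, puis \`a la cl\^oture int\'egrale de $P$ dans $K$ (que je noterai $\Pov$), et \`a en d\'eduire la graduation recherch\'ee. Puisque $P$ est int\`egre, l'anneau $P[\tb,\tb^{-1}]$ l'est aussi, donc l'image par $\mu_P^\tb$ de tout \'el\'ement non nul de $P$ est non nulle dans $K(\tb)$. Cela permet d'\'etendre $\mu_P^\tb$ de mani\`ere unique en un morphisme de corps $\mu_K^\tb : K \to K(\tb)$.

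L'\'etape cl\'e consiste \`a montrer que $\mu_K^\tb(\Pov) \subset \Pov[\tb,\tb^{-1}]$. Soit $q \in \Pov$ satisfaisant une relation $q^n + p_{n-1} q^{n-1} + \cdots + p_0 = 0$ avec $p_j \in P$. En lui appliquant $\mu_K^\tb$, on obtient que $\mu_K^\tb(q) \in K(\tb)$ satisfait une relation de d\'ependance int\'egrale \`a coefficients dans $\mu_P^\tb(P) \subset P[\tb,\tb^{-1}]$. En particulier, $\mu_K^\tb(q)$ est entier sur $K[\tb,\tb^{-1}]$~; ce dernier anneau \'etant factoriel, donc int\'egralement clos dans $K(\tb)$, on a $\mu_K^\tb(q) \in K[\tb,\tb^{-1}]$. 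Le r\'esultat classique stipulant que la cl\^oture int\'egrale commute avec les extensions polynomiales et avec la localisation permet alors d'identifier la cl\^oture int\'egrale de $P[\tb,\tb^{-1}]$ dans $K[\tb,\tb^{-1}]$ \`a $\Pov[\tb,\tb^{-1}]$, d'o\`u $\mu_K^\tb(q) \in \Pov[\tb,\tb^{-1}]$.

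\'Ecrivant $\mu_K^\tb(q) = \sum_i q_i \tb^i$ avec $q_i \in \Pov$ presque tous nuls, on pose $\Pov_i = \{q \in \Pov~|~\mu_K^\tb(q) = q \tb^i\}$. Pour conclure que $\Pov = \bigoplus_i \Pov_i$ est une graduation d'anneau \'etendant celle de $P$, trois points restent \`a v\'erifier. D'abord, l'identit\'e $q = \sum_i q_i$ s'obtient en sp\'ecialisant $\tb \mapsto 1$, gr\^ace \`a $\mu_K^1 = \Id_K$ (cons\'equence imm\'ediate de~\ref{mua ea}). Ensuite, chaque $q_i$ est homog\`ene de degr\'e $i$ (i.e.\ $\mu_K^\tb(q_i) = q_i \tb^i$)~: cela d\'ecoule de la comparaison entre $\mub_K^\ub \circ \mu_K^\tb(q) = \sum_i \mu_K^\ub(q_i) \tb^i$ et $\mu_K^{\tb\ub}(q) = \sum_i q_i (\tb\ub)^i$ dans $K(\tb,\ub)$, ces deux quantit\'es co\"{\i}ncidant par analogie avec~\ref{mua mub}, d'o\`u l'identification des composantes homog\`enes. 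Enfin, la compatibilit\'e avec la structure d'anneau est imm\'ediate par multiplicativit\'e de $\mu_K^\tb$, et l'inclusion $P_i \subset \Pov_i$ est claire, de sorte que la graduation \'etend bien celle de $P$.

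Pour la seconde assertion, si $P$ est $\NM$-gradu\'e, alors $\mu_P^\tb$ se factorise via $P[\tb]$. La relation de d\'ependance int\'egrale satisfaite par $\mu_K^\tb(q)$ est alors \`a coefficients dans $P[\tb]$, et le m\^eme argument (en invoquant que la cl\^oture int\'egrale de $P[\tb]$ dans $K[\tb]$ vaut $\Pov[\tb]$) fournit $\mu_K^\tb(q) \in \Pov[\tb] \cap \Pov[\tb,\tb^{-1}] = \Pov[\tb]$, d'o\`u $q_i = 0$ pour $i < 0$. Le point technique principal est l'invocation du r\'esultat classique sur la commutation de la cl\^oture int\'egrale avec les extensions polynomiales et avec la localisation, mais il s'agit d'un outil standard.
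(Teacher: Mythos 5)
Le texte ne donne pas de d\'emonstration ici~: il renvoie simplement \`a Bourbaki, chapitre~V, \S~1, proposition~21. Votre argument est correct et autonome, et il r\'eutilise judicieusement le formalisme $\mu_P^\tb$ que l'appendice met d\'ej\`a en place pour l'unicit\'e (lemme~\ref{unicite graduation})~: en substance c'est l'argument qui se cache derri\`ere la r\'ef\'erence \`a Bourbaki, si bien que les deux approches sont essentiellement identiques, la v\^otre ayant le m\'erite d'\^etre explicit\'ee et de s'ins\'erer naturellement dans le reste de la section. Deux points m\'eriteraient toutefois d'\^etre pr\'ecis\'es. D'abord, les identit\'es $\mu_K^1 = \Id_K$ et $\mub_K^\ub \circ \mu_K^\tb = \mu_K^{\tb\ub}$ que vous invoquez ne sont \'etablies en~\ref{mua ea} et~\ref{mua mub} que pour des anneaux gradu\'es, ce que $K$ n'est pas a priori~; elles s'\'etendent bien \`a $K$, mais il faut le dire~: \'ecrire un \'el\'ement de $K$ sous la forme $p/p'$ avec $p, p' \in P$, $p'\neq 0$, appliquer les identit\'es niveau $P$ au num\'erateur et au d\'enominateur, et noter que $\mu_P^\tb(p')$ reste non nul (resp. a pour valeur $p'\neq 0$ en $\tb=1$). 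Ensuite, plut\^ot que d'invoquer la commutation de la cl\^oture int\'egrale avec l'extension polynomiale, il est un peu plus \'economique de remarquer directement que $\Pov[\tb,\tb^{-1}]$ est normal (car $\Pov$ l'est, la normalit\'e passe \`a $[\tb]$ puis \`a toute localisation), de corps des fractions $K(\tb)$, et que $\mu_K^\tb(q)$ est entier sur lui~: d'o\`u $\mu_K^\tb(q) \in \Pov[\tb,\tb^{-1}]$ sans d\'etour par $K[\tb,\tb^{-1}]$. Ces retouches faites, la preuve est compl\`ete.
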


\begin{proof}
Voir~\cite[chapitre 5, \S 1, proposition 21]{bourbaki}. 
\end{proof}

\bigskip

Nous allons maintenant montrer que la question de l'existence se transmet 
\`a la cl\^oture normale. Notons $M$ la cl\^oture normale de l'extension 
$L/K$ et $R$ la cl\^oture int\'egrale de $P$ dans $M$. 
Le lemme suivant est facile~:

\medskip

\begin{lem}\label{decomposition gradue}
Graduons $P[\xb]$ en attribuant \`a $\xb$ le degr\'e $d \in \ZM$. 
Soit $F \in P[\xb]$ un polyn\^ome unitaire et {\bfit homog\`ene} pour cette graduation. 
Si $F=F_1\cdots F_r$, avec $F_i \in P[\xb]$ unitaire, alors $F_i$ est homog\`ene 
pour tout $i$.
\end{lem}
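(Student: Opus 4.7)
The plan is to reduce first to the case $r=2$ by an obvious induction (grouping $F_2 \cdots F_r$ as a single monic factor and applying the statement recursively), and then to convert the homogeneity condition into an equation involving an auxiliary variable $\tb$ so that a unique factorization argument becomes available. Recall that the grading on $P[\xb]$ (with $\xb$ of degree $d$) is encoded by the ring homomorphism $\varphi: P[\xb] \longrightarrow P[\tb,\tb^{-1}][\xb]$ extending $\mu_P^\tb$ (see~\S\ref{intro graduation}) and sending $\xb$ to $\tb^d \xb$; an element $G \in P[\xb]$ is homogeneous of degree $M$ if and only if $\varphi(G) = \tb^M G$. With this reformulation, hypothesis gives $\varphi(F) = \tb^N F$ for some $N$.

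Write $n_i = \deg_\xb F_i$ and $n = n_1 + n_2$. Since $F_i$ is $\xb$-monic, the leading $\xb$-coefficient of $\varphi(F_i)$ is $\tb^{d n_i}$, so the element $\tilde F_i := \tb^{-d n_i}\varphi(F_i) \in K[\tb,\tb^{-1}][\xb]$ is $\xb$-monic of degree $n_i$, where $K = \Frac(P)$. Computing leading terms in the identity $\varphi(F_1)\varphi(F_2) = \varphi(F) = \tb^N F$ forces $N = dn$, and then $\tilde F_1\tilde F_2 = F$. Thus we have two factorizations of $F$ into $\xb$-monic polynomials of prescribed degrees $n_1, n_2$:
\[
F = F_1 F_2 = \tilde F_1 \tilde F_2 \qquad \text{in}\quad K[\tb,\tb^{-1}][\xb].
\]

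The main step is to match these two factorizations. Factor $F = E_1 \cdots E_k$ into monic irreducibles in the UFD $K[\xb]$; the key point is that each $E_j$ remains irreducible in the larger ring $K(\tb)[\xb]$, because $K[\xb]/(E_j) = K[\theta_j]$ is a field and therefore so is $K(\tb) \otimes_K K[\theta_j] = K[\theta_j](\tb)$. Consequently, the $E_j$ are the monic irreducible factors of $F$ in the UFD $K(\tb)[\xb]$ as well. Any $\xb$-monic divisor of $F$ in $K(\tb)[\xb]$ is therefore a subproduct of the $E_j$; in particular, $\tilde F_1 = \prod_{j \in S'} E_j$ for some subset $S'$, which shows that $\tilde F_1$ lies in $K[\xb]$ (it is independent of $\tb$). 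The evaluation $\eval^1_{P[\xb]}$ at $\tb = 1$ sends $\varphi$ to the identity, so specializing $\tb \to 1$ in $\tilde F_i = \tb^{-dn_i}\varphi(F_i)$ yields $\tilde F_i|_{\tb=1} = F_i$; since $\tilde F_i$ is $\tb$-constant, this gives $\tilde F_i = F_i$.

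The conclusion is then immediate: $\varphi(F_i) = \tb^{dn_i} F_i$, which by the criterion recalled above means that $F_i$ is homogeneous (of degree $dn_i$, as expected). The hardest step to execute carefully is the unique-factorization argument, because factorizations into monic polynomials of prescribed degrees are not unique in general; the trick that resolves this is simultaneously exploiting (i)~the irreducibility of the $E_j$ in the larger ring $K(\tb)[\xb]$, so that the two factorizations use the same irreducible pieces, and (ii)~the specialization $\tb \to 1$, which pins down the partition of irreducibles and forces $\tilde F_i = F_i$.
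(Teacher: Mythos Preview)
Your proof is correct, but it takes a longer route than the paper's. Both arguments encode the grading via the same homomorphism $\varphi: P[\xb] \to P[\xb,\tb,\tb^{-1}]$ and both invoke unique factorization, but in different ambient rings.

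The paper works in $K(\xb)[\tb,\tb^{-1}]$, the Laurent polynomial ring over the \emph{field} $K(\xb)$. There, $F$ (being a nonzero element of $K(\xb)$) is a unit, and so is $\tb^N F = \varphi(F_1)\cdots\varphi(F_r)$. Hence each $\varphi(F_i)$ is a unit, and the units of $K(\xb)[\tb,\tb^{-1}]$ are exactly $\lambda\,\tb^{m}$ with $\lambda \in K(\xb)^\times$; so $\varphi(F_i) = \lambda_i \tb^{d_i}$, which is precisely the homogeneity condition. No induction on $r$, no matching of factorizations, no specialization is needed.

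You instead work in $K(\tb)[\xb]$, where $F$ is not a unit. This forces you to (i) reduce to $r=2$, (ii) show the monic irreducible factors of $F$ over $K$ stay irreducible over $K(\tb)$, (iii) conclude that your auxiliary $\tilde F_i$ lies in $K[\xb]$, and (iv) specialize $\tb \to 1$ to identify $\tilde F_i$ with $F_i$. All of this is valid, but it is extra work created by the choice of ambient ring. The moral: inverting $\xb$-polynomials (passing to $K(\xb)$) collapses the problem to a statement about units, whereas inverting only $\tb$ leaves a genuine factorization question to sort out.
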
 

\begin{proof}
Le morphisme d'anneaux $\mu_P^\tb : P \to P[\tb,\tb^{-1}]$ associ\'e \`a la graduation sur $P$ 
(voir la section~\ref{intro graduation}) s'\'etend en un morphisme d'anneaux 
$\mu_{P[\xb]}^\tb : P[\xb] \to P[\xb,\tb,\tb^{-1}]$ envoyant $\xb$ sur $\xb \tb^d$. 
Notons $l$ le degr\'e {\it total} de $F$. Alors 
$$\mu_{P[\xb]}^\tb(F)=F(X) \tb^l = \mu_{P[\xb]}^\tb(F_1)\cdots \mu_{P[\xb]}(F_r).$$
L'anneau $P[\xb]$ \'etant int\`egre, de corps des fractions $K(\xb)$, la factorialit\'e 
de $K(\xb)[\tb,\tb^{-1}]$ implique qu'il existe $\l_1$,\dots, $\l_r \in K(\xb)$ et 
$d_1$,\dots, $d_r \in \ZM$ tels que 
$$\mu_{P[\xb]}^\tb(F_i)=\l_i \tb^{d_i}$$
pour tout $i$. Cela force $F_i$ \`a \^etre homog\`ene de degr\'e $d_i$, 
et $F_i=\l_i$.
\end{proof}

\begin{coro}\label{dec hom}
Graduons $P[\xb]$ en attribuant \`a $\xb$ le degr\'e $d \in \ZM$. 
Soit $F \in P[\xb]$ un polyn\^ome unitaire et {\bfit homog\`ene} pour cette graduation. 
On suppose que $M$ est le corps de d\'ecomposition de $F$ sur $K$. 
Alors $R$ admet une graduation \'etendant celle de $P$.

En outre, si $P$ est $\NM$-gradu\'e et $d \ge 0$, alors $R$ est $\NM$-gradu\'e.
\end{coro}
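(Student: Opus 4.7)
Plan of proof. My approach is induction on the degree $n = \deg F$. First I would reduce to the case where $P$ is integrally closed in $K$: by Lemma~\ref{g}, the integral closure of $P$ in $K$ carries a compatible $\ZM$-grading (and remains $\NM$-graded if $P$ is), and replacing $P$ by this integral closure does not change $R$. The base case $n=1$ is trivial since then $M = K$ and $R = P$.

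For the inductive step, I would pick a monic irreducible factor $F_1 \in K[\xb]$ of $F$. Proposition~\ref{int clos} ensures $F_1 \in P[\xb]$, and Lemma~\ref{decomposition gradue} then guarantees that $F_1$ is homogeneous for the grading on $P[\xb]$ with $\xb$ of degree $d$. I would choose a root $t \in M$ of $F_1$ and form $P_1 := P[t] \simeq P[\xb]/(F_1)$. Since $(F_1)$ is a homogeneous ideal, $P_1$ inherits a grading extending that of $P$, with $t$ placed in degree $d$. Lemma~\ref{g} then furnishes a grading on the integral closure $R_1$ of $P_1$ in $K_1 := K(t)$ extending that of $P_1$.

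The heart of the argument is then to apply the corollary inductively. Writing $F = (\xb - t) G$ in $K_1[\xb]$, the factor $G$ is monic of degree $n-1$ and, by Proposition~\ref{int clos}, has coefficients in $R_1$. Applying Lemma~\ref{decomposition gradue} to the graded integral domain $R_1$ and the factorization $F = (\xb - t) G$ in $R_1[\xb]$ (where $\xb - t$ is manifestly homogeneous of degree $d$) shows that $G$ is homogeneous. Since $M$ is a splitting field for $G$ over $K_1$, and since by transitivity of integral extensions the integral closure of $P$ in $M$ equals the integral closure of $R_1$ in $M$, the inductive hypothesis applied to the pair $(R_1, G)$ endows $R$ with a grading extending that of $R_1$, hence of $P$. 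For the $\NM$-graded refinement, if $P$ is $\NM$-graded and $d \geq 0$, then each root-adjunction introduces an element of non-negative degree and Lemma~\ref{g} preserves $\NM$-gradedness in the integral closure, so $R$ is $\NM$-graded. The one point to be vigilant about is ensuring at each inductive step that Lemma~\ref{decomposition gradue} is applicable, which requires the base ring to be a graded integral domain — this is automatic here since $R_1$ lies in the field $K_1$; I do not anticipate any more serious obstacle.
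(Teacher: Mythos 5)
Your proof is correct and follows essentially the same route as the paper's: reduce to $P$ integrally closed via Lemma~\ref{g}, induct on $\deg F$, adjoin a root of a monic irreducible factor $F_1$ (homogeneous by Lemma~\ref{decomposition gradue}, in $P[\xb]$ by Proposition~\ref{int clos}), pass to the integral closure of $P[\xb]/(F_1)$ in $K(t)$, factor out $(\xb-t)$, and apply the induction hypothesis. Your explicit invocation of Lemma~\ref{decomposition gradue} to see that the quotient factor $G$ is homogeneous, and your closing remark that $R_1$ is an integral domain because it lies in a field, just make visible steps the paper leaves implicit.
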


\begin{proof}
D'apr\`es le lemme~\ref{g}, on peut supposer que $P$ est int\'egralement clos. 
Notons $\d$ le degr\'e de $F$ {\it en la variable $\xb$}. Nous allons montrer 
le r\'esultat par r\'ecurrence sur $\d$, le cas o\`u $\d=1$ \'etant trivial 
(car alors $P=R$).

Supposons donc $\d \ge 2$ et soit $F_1$ un polyn\^ome irr\'eductible unitaire 
de $K[\xb]$ divisant $F$. D'apr\`es la proposition~\ref{int clos}, 
$F_1 \in P[\xb]$. Posons $K'=K[\xb]/< F_1 >$ et notons $x$ l'image de $\xb$ dans $K'$. 
Alors $K'$ est un corps commutatif et il contient l'anneau $P'=P[\xb]/<F_1>$. 
En fait, $K'$ est le corps des fractions de $P$. Puisque $F_1$ est homog\`ene, 
$P'$ est gradu\'e (avec $x$ homog\`ene de degr\'e $d$). D'apr\`es le lemme 
\ref{g}, la cl\^oture int\'egrale $P''$ de $P'$ dans $K'$ 
h\'erite d'une graduation. D'autre part, $K' \subset M$ et $M$ 
est le corps de d\'ecomposition de $F$ sur $K'$. 
Dans $P''[\xb]$, on a 
$$F(\xb)=(\xb-x) F_0(\xb),$$
avec $F_0(\xb) \in P''[\xb]$ homog\`ene, et de degr\'e {\it en la variable $\xb$} 
\'egal \`a $\d-1$. Puisque le corps de d\'ecomposition de $F$ sur $K$ est \'egal 
au corps de d\'ecomposition de $F_0$ sur $K'$, le r\'esultat se 
d\'eduit de l'hypoth\`ese de r\'ecurrence.

L'\'enonc\'e concernant la $\NM$-graduation d\'ecoule de la preuve ci-dessus et 
de l'unicit\'e de l'extension de la graduation (voir le lemme~\ref{unicite graduation}).
\end{proof}

\bigskip

\begin{prop}\label{R gradue}
Supposons $P$ et $Q$ int\'egralement clos. 
Si la graduation de $P$ s'\'etend en une graduation sur $Q$, alors elle 
s'\'etend aussi en une graduation de $R$. 

En outre, si $Q$ est $\NM$-gradu\'e, alors $R$ l'est aussi. 
\end{prop}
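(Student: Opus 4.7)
The plan is to apply Corollary~\ref{dec hom} iteratively, adjoining at each stage the splitting field of the minimal polynomial of one homogeneous generator of $Q$. First, because $L/K$ is finite separable and $P$ is integrally closed, $Q$ (being integrally closed in $L$ and integral over $P$) must coincide with the integral closure of $P$ in $L$, hence is a finite $P$-module. As a graded $P$-module it admits a finite system $q_1,\dots,q_k$ of homogeneous $P$-algebra generators; let $d_j=\deg q_j$, with all $d_j\ge 0$ in the $\NM$-graded case.

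The key preparatory step will be to show that the minimal polynomial $F_j\in P[X]$ of $q_j$ over $K$ (which lies in $P[X]$ by Corollary~\ref{minimal clos}) is homogeneous when $X$ is assigned degree $d_j$. I would decompose $F_j(X)=X^{n_j}+\sum_i a_{j,i} X^i$ with $a_{j,i}=\sum_m a_{j,i,m}$, $a_{j,i,m}\in P_m$, then project the relation $F_j(q_j)=0$ onto each homogeneous component $Q_M$. For $M=n_j d_j$, the resulting monic polynomial of degree $n_j$ vanishing at $q_j$ must equal $F_j$ by minimality, forcing $a_{j,i}\in P_{(n_j-i)d_j}$; for $M\neq n_j d_j$, the vanishing polynomial has degree $<n_j$ and must therefore vanish identically, so every other homogeneous piece of $a_{j,i}$ is zero.

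Next, I would build the grading on $R$ by iteration. Setting $R_0=P$, for $j=1,\dots,k$ let $M_j$ be the splitting field of $F_j$ over $\Frac(R_{j-1})$ and $R_j$ the integral closure of $R_{j-1}$ in $M_j$. Since $R_{j-1}$ is integrally closed and graded by induction, and since $F_j\in R_{j-1}[X]$ remains homogeneous with $X$ of degree $d_j$ (its coefficients living in $P\subset R_{j-1}$ retain their graded degree), Corollary~\ref{dec hom} applied over the base $R_{j-1}$ equips $R_j$ with a grading extending that of $R_{j-1}$. After $k$ steps one has $M_k=M$, because $L=K(q_1,\dots,q_k)$ and $M$, being normal over $K$, is the compositum of the splitting fields of the $F_j$, and correspondingly $R_k=R$. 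The second assertion of Corollary~\ref{dec hom} ensures the $\NM$-grading is preserved at each step whenever all $d_j\ge 0$.

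The main obstacle I foresee is justifying that Corollary~\ref{dec hom}, stated with base ring $P$, generalizes to an arbitrary integrally closed graded domain as base. Inspecting its proof, the induction on $\deg_X F$ uses only Lemma~\ref{g} (replacing the base by its integral closure in its fraction field) and Lemma~\ref{decomposition gradue} (splitting off a homogeneous factor of a homogeneous polynomial), both of which apply verbatim to any graded integral domain; transferring the argument to each $R_{j-1}$ should therefore be routine, but this is the point that requires the most care in a formal write-up.
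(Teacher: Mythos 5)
Your proof follows the same route as the paper: reduce to a finite set of homogeneous generators $q_1,\dots,q_k$ of $L$ over $K$ taken in $Q$, observe that each minimal polynomial $F_j\in P[\tb]$ is homogeneous once $\tb$ is assigned degree $d_j$, and conclude by iterated application of Corollary~\ref{dec hom}. The paper states the homogeneity of $F_j$ with only ``on v\'erifie facilement'' and compresses the iteration into ``quitte \`a raisonner par r\'ecurrence, on peut supposer $r=1$,'' whereas you spell out both; your final concern is correctly resolved, since the proof of Corollary~\ref{dec hom} uses only that the base is an integrally closed graded integral domain together with Lemmas~\ref{g} and~\ref{decomposition gradue}, so it does apply verbatim over each $R_{j-1}$.
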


\begin{proof}
Soient $q_1$,\dots, $q_r$ des \'el\'ements de $Q$, homog\`enes de degr\'es respectifs 
$d_1$,\dots, $d_r$ et tels que $L=K[q_1,\dots,q_r]$. On note $F_i \in K[\tb]$ 
le polyn\^ome minimal de $q_i$~: en fait, $F_i \in P[\tb]$ en vertu du corollaire 
\ref{minimal clos}. Alors $M$ est le corps de d\'ecomposition de $F_1 \cdots F_r$. 
Quitte \`a raisonner par r\'ecurrence, on peut alors supposer que $r=1$~: 
on \'ecrira alors $q=q_1$, $d=d_1$ et $F=F_1$. 

Si on attribue \`a l'ind\'etermin\'ee $\tb$ le degr\'e $d$, alors on v\'erifie 
facilement que $F$ est un polyn\^ome homog\`ene (pour le degr\'e total sur $P[\tb]$). 
L'existence de l'extension de la graduation d\'ecoule alors du corollaire~\ref{dec hom}.

Le r\'esultat sur la $\NM$-graduation se d\'emontre de m\^eme.
\end{proof}

\bigskip

\begin{lem}\label{lem:homogeneise-premier}
Soit $\pG$ un id\'eal premier de $P$ et notons $\pGt$ l'id\'eal homog\`ene maximal  
de $P$ contenu dans $\pG$ (c'est-\`a-dire $\pGt=\bigoplus_{i \in \ZM} \pG \cap P_i$). 
Alors $\pGt$ est premier.
\end{lem}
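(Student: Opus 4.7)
The plan is to verify directly that $\pGt$ is a proper prime ideal by reducing to the behavior of homogeneous components. First I would note that $\pGt$ is a proper ideal: since $1 \in P_0$ and $1 \notin \pG$, we have $1 \notin \pG \cap P_0 \subset \pGt$. It is tautologically homogeneous, being given as a direct sum of homogeneous pieces. It remains to establish primality.

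To that end, suppose $a, b \in P$ satisfy $ab \in \pGt$, and assume for contradiction that neither $a$ nor $b$ lies in $\pGt$. Write $a = \sum_i a_i$ and $b = \sum_j b_j$ as their (finite) decompositions into homogeneous components. Since $\pGt = \bigoplus_i (\pG \cap P_i)$, the hypothesis $a \notin \pGt$ forces the existence of at least one index $i$ with $a_i \notin \pG$ (otherwise every $a_i$ would lie in $\pG \cap P_i \subset \pGt$, so $a \in \pGt$); similarly for $b$. Let $i_0$ be the largest index such that $a_{i_0} \notin \pG$, and let $j_0$ be the largest such that $b_{j_0} \notin \pG$.

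Next, I would examine the homogeneous component of $ab$ of degree $i_0+j_0$. Because $ab \in \pGt$ and $\pGt$ is homogeneous, each homogeneous component of $ab$ lies in $\pG$; in particular
\[
\sum_{i+j = i_0+j_0} a_i b_j \;\in\; \pG.
\]
For every summand other than $a_{i_0}b_{j_0}$, either $i > i_0$ (so $a_i \in \pG$ by maximality of $i_0$) or $j > j_0$ (so $b_j \in \pG$ by maximality of $j_0$); in either case the summand lies in $\pG$. Subtracting these terms, we conclude $a_{i_0}b_{j_0} \in \pG$. Since $\pG$ is prime, this forces $a_{i_0} \in \pG$ or $b_{j_0} \in \pG$, contradicting the choice of $i_0$ or $j_0$. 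Hence $a \in \pGt$ or $b \in \pGt$, and $\pGt$ is prime.

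There is no real obstacle here; the only subtlety is the maximality argument, which works because elements of $P$ have finite-support decompositions even when the grading is indexed by $\ZM$. The same proof would also show that for any homogeneous ideal $\mathfrak{a}$ contained in $\pG$, taking the homogenization $\bigoplus_i (\pG \cap P_i)$ of $\pG$ preserves primality, which is exactly the content needed here.
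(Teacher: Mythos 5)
Your proof is correct, but it takes a genuinely different route from the paper's. You give the classical componentwise argument: assume $ab \in \pGt$ with $a,b \notin \pGt$, pick the top-degree homogeneous components $a_{i_0}, b_{j_0}$ of $a,b$ lying outside $\pG$, inspect the degree-$(i_0+j_0)$ component of $ab$, and show all its summands except $a_{i_0}b_{j_0}$ already lie in $\pG$, forcing $a_{i_0}b_{j_0} \in \pG$ and hence a contradiction with primeness of $\pG$. The paper instead observes in one line that $\pGt$ is the kernel of the composite ring map $P \xrightarrow{\mu_P^\tb} P[\tb,\tb^{-1}] \twoheadrightarrow (P/\pG)[\tb,\tb^{-1}]$, where $\mu_P^\tb$ is the map defined in the appendix sending $a \in P_i$ to $a\tb^i$; since $(P/\pG)[\tb,\tb^{-1}]$ is a domain (a Laurent-polynomial ring over the domain $P/\pG$), the kernel is prime. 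Your argument is more elementary and self-contained; the paper's is shorter because it leans on the $\mu_P^\tb$ machinery that the appendix has already set up and uses repeatedly (for instance in the proofs of Propositions \ref{graduation idem} and \ref{unicite graduation}), and it also avoids the case-splitting on indices. Both establish the same statement, and your closing remark about the construction more generally producing a prime is accurate, but it is the same construction as in the lemma rather than a strengthening.
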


\begin{proof}
En effet, $(P/\pG)[\tb,\tb^{-1}]$ est int\`egre et $\pGt$ est le noyau du morphisme 
compos\'e $P \longto P[\tb,\tb^{-1}] \surto (P/\pG)[\tb,\tb^{-1}]$ (ici, la premi\`ere application est $\mu_P^\tb$).
\end{proof}

\bigskip

\begin{lem}\label{premier homogene}
Soit $\qG$ un id\'eal premier de $Q$ et soit $\pG=\qG \cap P$. Supposons que 
la graduation de $P$ s'\'etende \`a $Q$. Alors 
$\pG$ est homog\`ene si et seulement si $\qG$ l'est.
\end{lem}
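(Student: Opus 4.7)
The forward implication is essentially formal: if $\qG$ is homogeneous and $p \in \pG$, decompose $p = \sum p_i$ into components $p_i \in P_i \subset Q_i$; homogeneity of $\qG$ forces each $p_i \in \qG$, hence $p_i \in \qG \cap P = \pG$. So I would dispatch this direction in one line.

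For the converse, suppose $\pG$ is homogeneous. The plan is to produce, inside $\qG$, the largest homogeneous ideal $\qG'$ of $Q$ contained in $\qG$, and then show $\qG' = \qG$ by incomparability in the integral extension $P \subset Q$. Concretely, I would use the morphism $\mu_Q^\tb : Q \to Q[\tb,\tb^{-1}]$ associated to the grading of $Q$, and set
\[
\qG' = (\mu_Q^\tb)^{-1}\bigl(\qG[\tb,\tb^{-1}]\bigr).
\]
Since $\qG[\tb,\tb^{-1}]$ is a prime ideal of $Q[\tb,\tb^{-1}]$ (the quotient being $(Q/\qG)[\tb,\tb^{-1}]$, a domain), $\qG'$ is a prime of $Q$; and by construction $\qG'$ is homogeneous and contained in $\qG$ (evaluating $\tb$ at $1$ recovers the identity on $Q$, so $\qG' \subset \qG$).

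Next I would compute $\qG' \cap P$. Since $\qG \cap P = \pG$, one checks directly that $\qG[\tb,\tb^{-1}] \cap P[\tb,\tb^{-1}] = \pG[\tb,\tb^{-1}]$ (any element of the intersection is a polynomial with coefficients in $P \cap \qG = \pG$). Therefore
\[
\qG' \cap P = (\mu_P^\tb)^{-1}\bigl(\pG[\tb,\tb^{-1}]\bigr).
\]
Now use the hypothesis that $\pG$ is homogeneous: this is exactly the statement that the right-hand side equals $\pG$. Thus $\qG' \cap P = \pG = \qG \cap P$.

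Finally, invoke incomparability in the integral extension $P \subset Q$: two primes $\qG' \subset \qG$ of $Q$ lying over the same prime $\pG$ of $P$ must coincide. Hence $\qG = \qG'$ is homogeneous. The only place where real input is used is the interplay between the homogeneous ideal $\pG[\tb,\tb^{-1}]$ and the morphism $\mu_P^\tb$, together with integrality of $Q$ over $P$; there is no serious obstacle beyond assembling these ingredients in the right order.
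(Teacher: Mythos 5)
Your proof is correct and follows essentially the same route as the paper: the ideal $\qG'=(\mu_Q^\tb)^{-1}(\qG[\tb,\tb^{-1}])$ you construct is precisely the homogeneization $\bigoplus_i(\qG\cap Q_i)$ the paper uses, and your kernel-of-a-morphism argument for its primality is exactly how the paper proves its auxiliary Lemme~\ref{lem:homogeneise-premier}. The final step by incomparability in the integral extension $P\subset Q$ is identical.
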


\begin{proof}
Si $\qG$ est homog\`ene, alors $\pG$ l'est bien \'evidemment. R\'eciproquement, 
supposons $\pG$ homog\`ene. On pose 
$\qG'=\mathop{\bigoplus}_{i \in \ZM} (\qG \cap Q_i)$. Alors $\qG'$ est un id\'eal homog\`ene de $Q$ 
contenu dans $\qG$ et $\qG' \cap P = \pG = \qG \cap P$. D'apr\`es le lemme~\ref{lem:homogeneise-premier}, 
$\qG'$ est un id\'eal premier, donc $\qG'=\qG$ car $Q$ est entier sur $P$.
\end{proof}

\bigskip

Terminons par quelques r\'esultats sur les homog\'en\'eis\'es des id\'eaux premiers de $P$ ou $Q$~:

\bigskip

\begin{coro}\label{coro:homogeneise-premier}
Supposons que la graduation de $P$ s'\'etende \`a $Q$. Soient $\pG$ un id\'eal premier de 
$P$ et $\qG$ un id\'eal premier de $Q$ tel que $\qG \cap P = \pG$. Notons $\pGt$ (respectivement 
$\qGt$) l'id\'eal homog\`ene maximal de $P$ (respectivement $Q$) contenu dans $\pG$ (respectivement $\qG$). 
Alors $\pGt=\qGt \cap P$.
\end{coro}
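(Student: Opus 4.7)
The plan is to prove the double inclusion $\pGt \subseteq \qGt \cap P$ and $\qGt \cap P \subseteq \pGt$, both of which should follow from the defining maximality property of these homogeneous ideals, using only the definitions $\pGt = \bigoplus_i (\pG \cap P_i)$ and $\qGt = \bigoplus_i (\qG \cap Q_i)$ together with the compatibility $P_i = Q_i \cap P$ coming from the fact that the grading of $Q$ extends that of $P$.

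First I would establish the inclusion $\qGt \cap P \subseteq \pGt$. The key observation is that $\qGt \cap P$ is automatically a homogeneous ideal of $P$: indeed, since $\qGt = \bigoplus_i (\qG \cap Q_i)$ is homogeneous in $Q$ and $P$ is a graded subring of $Q$, the intersection decomposes as $\qGt \cap P = \bigoplus_i (\qG \cap Q_i \cap P) = \bigoplus_i (\qG \cap P_i)$. Moreover $\qGt \cap P \subseteq \qG \cap P = \pG$, so $\qGt \cap P$ is a homogeneous ideal of $P$ contained in $\pG$. By the maximality that defines $\pGt$, we conclude $\qGt \cap P \subseteq \pGt$.

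For the reverse inclusion $\pGt \subseteq \qGt \cap P$, I would argue element by element on homogeneous components. Any $p \in \pGt$ can be written as $p = \sum_i p_i$ with $p_i \in \pG \cap P_i$. Each $p_i$ is then a homogeneous element of $Q$ (of degree $i$, using $P_i \subseteq Q_i$) lying in $\pG \subseteq \qG$, hence $p_i \in \qG \cap Q_i \subseteq \qGt$. Summing, $p \in \qGt$; and since $p \in P$ to begin with, $p \in \qGt \cap P$. This gives $\pGt \subseteq \qGt \cap P$ and completes the proof.

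No real obstacle is expected here: the statement is a formal consequence of the fact that, once the grading extends, taking the maximal homogeneous ideal contained in a prime commutes with restriction along a graded inclusion. The only point worth flagging is the verification that $\qGt \cap P$ is homogeneous in $P$, which genuinely uses the compatibility $P_i = Q_i \cap P$ guaranteed by the unicity established in Lemma~\ref{unicite graduation} and built into the hypothesis that the grading of $P$ extends to $Q$.
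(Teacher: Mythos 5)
Your proof is correct, but it takes a genuinely different route from the paper's. The paper deduces the corollary from the proof of Lemma~\ref{lem:homogeneise-premier}, which identifies $\pGt$ as the kernel of the composition $P \hookrightarrow P[\tb,\tb^{-1}] \twoheadrightarrow (P/\pG)[\tb,\tb^{-1}]$ (and similarly $\qGt$ as the kernel of $Q \to (Q/\qG)[\tb,\tb^{-1}]$), then invokes the commutativity of the obvious square relating these two maps. Your argument is instead a direct, element-wise decomposition on homogeneous components, using only the definitions $\pGt = \bigoplus_i(\pG \cap P_i)$, $\qGt = \bigoplus_i(\qG \cap Q_i)$ and the compatibility $P_i = Q_i \cap P$. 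Both are valid; the paper's version is slicker and recycles the machinery of the lemma, while yours is more self-contained and elementary. One small remark: your first inclusion already proves equality if pushed one step further. Indeed, once you have $\qGt \cap P = \bigoplus_i(\qG \cap P_i)$, observe that $\qG \cap P_i = (\qG \cap P) \cap P_i = \pG \cap P_i$, so $\qGt \cap P = \bigoplus_i(\pG \cap P_i) = \pGt$ directly, making the second inclusion (and the appeal to maximality of $\pGt$) redundant.
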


\begin{proof}
Cela d\'ecoule de la preuve pr\'ec\'edente et du fait que le diagramme
$$\diagram
P \rrto \ddto|<\ahook&& (P/\pG)[\tb,\tb^{-1}]\ddto|<\ahook \\
&&\\
Q \rrto && Q/\qG[\tb,\tb^{-1}]
\enddiagram$$
est commutatif.
\end{proof}

\begin{coro}\label{coro:homogeneise-inertie}
Supposons que la graduation de $P$ s'\'etende \`a $Q$ et qu'il existe un groupe fini $G$ agissant sur $Q$ 
et tel que $P=Q^G$. Soit $\qG$ un id\'eal premier de $Q$ et notons 
$\qGt$ l'id\'eal homog\`ene maximal de $Q$ contenu dans $\qG$. 
Notons $D_\qG$ (respectivement $D_\qGt$) le groupe de d\'ecomposition de $\qG$ (respectivement $\qGt$) 
dans $G$ et $I_\qG$ (respectivement $I_\qGt$) le groupe d'inertie de $\qG$ (respectivement $\qGt$) 
dans $G$. Alors 
$$D_\qG \subset D_\qGt\qquad\text{et}\qquad I_\qG = I_\qGt.$$
\end{coro}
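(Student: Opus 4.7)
The first step is to upgrade the hypotheses so that the group $G$ acts by graded automorphisms of $Q$. Since $P = Q^G$, the group $G$ fixes $P$ pointwise and in particular preserves its grading; Corollary~\ref{graduation et automorphisme} (applied to the extension of the grading from $P$ to $Q$, which exists by hypothesis) then tells us that $G$ respects the grading on $Q$. In particular, each $g\in G$ sends homogeneous ideals to homogeneous ideals, and the ``maximal homogeneous subideal'' operation $\JG \mapsto \JGt := \bigoplus_{i\in\ZM}(\JG\cap Q_i)$ is $G$-equivariant.

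For the inclusion $D_\qG \subset D_\qGt$: take $g\in D_\qG$, so $g(\qG)=\qG$. By the equivariance just observed, $g(\qGt)$ is the maximal homogeneous ideal contained in $g(\qG)=\qG$, hence $g(\qGt)=\qGt$, giving $g\in D_\qGt$.

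For the equality $I_\qG=I_\qGt$, I would prove the two inclusions separately. Suppose $g\in I_\qG$. By what we just showed, $g\in D_\qGt$, so it suffices to check that $g$ acts trivially modulo $\qGt$. Given $q\in Q$, decompose $q=\sum_i q_i$ into homogeneous components; since $g$ preserves the grading, each $g(q_i)-q_i$ is homogeneous of degree $i$, and since $g\in I_\qG$ it lies in $\qG$. Therefore $g(q_i)-q_i \in \qG\cap Q_i \subset \qGt$, and summing gives $g(q)-q\in\qGt$, i.e.\ $g\in I_\qGt$. Conversely, if $g\in I_\qGt$, then for every $x\in\qG$ we have $g(x)-x\in\qGt\subset\qG$, so $g(x)\in\qG$; this shows $g(\qG)\subset\qG$, hence $g(\qG)=\qG$ by finiteness, so $g\in D_\qG$. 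Using $\qGt\subset\qG$ once more, $g(x)-x\in\qGt\subset\qG$ for every $x\in Q$, i.e.\ $g\in I_\qG$.

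There is no real obstacle here: the whole argument hinges on the single fact that $G$ acts by graded automorphisms (i.e.\ Corollary~\ref{graduation et automorphisme}), which is itself an easy consequence of the uniqueness of the extension of the grading (Lemma~\ref{unicite graduation}). Once that is available, both inclusions in $I_\qG=I_\qGt$ are formal, and the containment $D_\qG\subset D_\qGt$ is immediate from $G$-equivariance of the homogeneization operation.
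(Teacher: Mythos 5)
Your proof is correct and follows essentially the same route as the paper: invoking Corollaire~\ref{graduation et automorphisme} (via Lemme~\ref{unicite graduation}) to get $G$-equivariance of the grading, hence of the homogenization $\qG\mapsto\qGt$, which gives $D_\qG\subset D_\qGt$; then the inclusion $I_\qG\subset I_\qGt$ by working with homogeneous components, and $I_\qGt\subset I_\qG$ from $\qGt\subset\qG$. If anything your write-up is slightly more explicit than the paper's, which compresses the easy inclusion to ``$Q/\qG$ est un quotient de $Q/\qGt$'' and phrases the other direction a little loosely.
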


\begin{proof}
La premi\`ere inclusion est imm\'ediate, car $G$ respecte la graduation (voir le corollaire~\ref{graduation et automorphisme}). 
D'autre part, $Q/\qG$ est un quotient de $Q/\qGt$, donc $I_\qGt \subset I_\qG$. 
R\'eciproquement, si $g \in I_\qG \subset D_\qG \subset D_\qGt$ et si $q \in \qG \cap Q_i$, 
alors $g(q)-q \in \qG \cap Q_i \subset \qGt$. Donc $g \in I_\qGt$.
\end{proof}

\bigskip

\section{Graduation et groupes de r\'eflexions}\label{section:GR}

\medskip

\boitegrise{\noindent{\bf Notation.} 
{\it Dans cette section, nous fixons un corps commutatif $k$ de 
caract\'eristique nulle et une $k$-alg\`ebre commutative 
$\NM$-gradu\'ee {\bfit int\`egre} $R=\bigoplus_{i \in \NM} R_i$. Nous supposons de plus 
que $R_0=k$ et que $R$ est de type fini. Nous fixons aussi un groupe fini 
$G$ agissant {\bfit fid\`element} sur $R$ par automorphismes de $k$-alg\`ebre gradu\'ee et 
nous notons $P=R^G$. Notons $R_+=\bigoplus_{i > 0} R_i$~: c'est l'unique 
id\'eal maximal gradu\'e de $R$. Nous fixons un sous-espace 
vectoriel gradu\'e et $G$-stable $E^*$ de $R$ tel que $R_+=R_+^2 \oplus E^*$ 
(un tel sous-espace existe car $kG$ est semi-simple) et nous notons $E$ 
le $k$-dual de $E^*$.}}{0.75\textwidth}

\bigskip

Le groupe $G$ agit sur l'espace vectoriel $E$ et le but de cette section est 
de donner quelques crit\`eres permettant de d\'eterminer si $G$ est 
un sous-groupe de $\GL_k(E)$ engendr\'e par des r\'eflexions. 
Nos r\'esultats s'inspirent de~\cite{BBR}. 

Tout d'abord, le graduation sur $E^*$ induit une graduation sur $E$ et une 
graduation sur $k[E]$, l'alg\`ebre des fonctions polynomiales sur 
$E$ (ou encore l'alg\`ebre sym\'etrique de $E^*$). De m\^eme, 
$k[E]$ h\'erite d'une action de $G$, qui pr\'eserve la graduation. Nous noterons 
$k[E]_+$ l'unique id\'eal maximal gradu\'e de $k[E]$. L'inclusion $E^* \injto R$ 
induit un morphisme $G$-\'equivariant de $k$-alg\`ebres gradu\'es 
$$\pi : k[E] \longto R$$
dont il est facile de v\'erifier qu'il est surjectif et que 
\equat\label{nombre generateurs R}
\text{\it le nombre minimal de g\'en\'erateurs de la $k$-alg\`ebre $R$ est $\dim_k E$}
\endequat
(voir par exemple~\cite[lemme 2.1]{BBR}). Nous noterons 
$$I=\Ker \pi,$$
de sorte que
\equat\label{E/I}
R \simeq k[E]/I.
\endequat
En particulier, $G$ agit fid\`element sur $E$. 
Puisque $I$ est homog\`ene, il d\'ecoule imm\'ediatement du lemme de Nakayama 
gradu\'e que 
\equat\label{nombre generateurs}
\text{\it le nombre minimal de g\'en\'erateurs de l'id\'eal $I$ est $\dim_k I/k[E]_+ I$}
\endequat
D'autre part, il est tout aussi facile de v\'erifier que 
\equat\label{G trivial}
\text{\it $I=k[E] I^G$ si et seulement si $G$ agit trivialement sur $I/k[E]_+ I$.}
\endequat
(voir par exemple~\cite[lemme 3.1]{BBR}). 
Pour finir, puisque $kG$ est semi-simple, on a
\equat\label{PRG}
P \simeq k[E]^G/I^G.
\endequat
Nous aurons aussi besoin du lemme suivant~:

\bigskip

\begin{lem}\label{liberte G}
Si $R$ est un $P$-module libre, alors le rang du $P$-module $R$ est $|G|$.
\end{lem}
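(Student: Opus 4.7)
The plan is to compute the rank of the free $P$-module $R$ by passing to fraction fields. Since $R$ is an integral domain, any free $P$-module structure has a well-defined rank equal to $\dim_{\Frac(P)}\bigl(\Frac(P) \otimes_P R\bigr)$, so it suffices to identify this dimension with $|G|$.

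First I would recall the standard fact that $R$ is integral over $P = R^G$: every $r \in R$ satisfies the monic polynomial $\prod_{g \in G}(\tb - g(r)) \in P[\tb]$. Since $G$ acts faithfully, the induced action of $G$ on $\Frac(R)$ is faithful, and $\Frac(R)^G = \Frac(R^G) = \Frac(P)$ (the second equality uses that every element of $\Frac(R)^G$ can be written with $G$-invariant numerator and denominator, multiplying numerator and denominator by $\prod_{g \neq 1} g(s)$ for a chosen denominator $s$). Therefore the extension $\Frac(R)/\Frac(P)$ is Galois with group $G$, and in particular
\[
[\Frac(R) : \Frac(P)] = |G|.
\]

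Next, since $R$ is a free $P$-module of some rank $n$, we have $\Frac(P) \otimes_P R \simeq \Frac(P)^n$ as $\Frac(P)$-vector spaces. On the other hand, as $R$ is an integral domain finite over $P$, the localization $\Frac(P) \otimes_P R$ is naturally identified with $\Frac(R)$: it is a finite-dimensional $\Frac(P)$-algebra containing $R$ and contained in $\Frac(R)$, and being integral over a field it is itself a field, hence must equal $\Frac(R)$. Combining these two identifications gives $n = [\Frac(R):\Frac(P)] = |G|$.

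There is no real obstacle here: the argument is essentially a two-line reduction to Galois theory, once one observes that $\Frac(R)^G = \Frac(P)$ and that localizing a free module at the generic point recovers the fraction field when the total ring is a domain. The only point requiring minimal care is the verification that $\Frac(R^G) = \Frac(R)^G$, which uses the standard trick of clearing denominators by multiplying by the $G$-conjugates of a chosen denominator.
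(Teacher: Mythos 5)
Your proof is correct and follows essentially the same route as the paper: both arguments pass to fraction fields, use that $\Frac(R)/\Frac(P)$ is Galois with group $G$ (so the degree is $|G|$), and identify $\Frac(P)\otimes_P R$ with $\Frac(R)$ to read off the free rank. You spell out a couple of intermediate verifications (that $\Frac(R^G)=\Frac(R)^G$, that localizing the free module at the generic point gives $\Frac(R)$) that the paper takes for granted, but the skeleton is identical.
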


\begin{proof}
Notons $d$ le $P$-rang de $R$. Puisque $R$ est int\`egre, 
$P$ l'est aussi et, si on pose $K=\Frac(P)$ et $M=\Frac(R)$, 
alors $K=L^G$ (et donc $[L:K]=|G|$) et $L = K \otimes_P R$ (et donc $[L:K]=d$). 
Par cons\'equent, $d=|G|$. 
\end{proof}

\bigskip

Le r\'esultat principal de cette section est le suivant (comparer 
avec~\cite[th\'eor\`eme 3.2]{BBR}, dont nous reprenons presque mot pour 
mot la preuve)~:

\bigskip

\begin{prop}\label{intersection complete R}
On suppose que $P$ est r\'eguli\`ere et que $R$ est un $P$-module libre. Alors 
les assertions suivantes sont \'equivalentes~:
\begin{itemize}
\itemth{1} $R$ est d'intersection compl\`ete et $G$ agit trivialement sur $I/k[E]_+I$. 

\itemth{2} $G$ est un sous-groupe de $\GL_k(E)$ engendr\'e par des r\'eflexions.
\end{itemize}
\end{prop}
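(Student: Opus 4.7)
The plan is to prove the equivalence by reducing both directions to the Shephard-Todd-Chevalley theorem, which states that $G\subset\GL_k(E)$ is generated by reflections if and only if $k[E]^G$ is a graded polynomial ring. The structural inputs that make everything fit together are: $R$ is free of rank $|G|$ over $P$ by Lemma~B.3.1, the Reynolds operator $\rho:k[E]\to k[E]^G$ is $k[E]^G$-linear, $k[E]^G$ is Cohen-Macaulay of dimension $n$ by Hochster-Eagon, and $P=k[E]^G/I^G$ is regular of some dimension $n-r$. Set $r=n-\dim P$ throughout.

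For (2)$\Rightarrow$(1), I would first invoke Shephard-Todd-Chevalley to get that $k[E]^G$ is polynomial in $n$ variables and $k[E]$ is a free $k[E]^G$-module of rank $|G|$. Since $P$ is a graded regular quotient of the graded polynomial ring $k[E]^G$, the homogeneous ideal $I^G$ is minimally generated by $r$ homogeneous elements $f_1,\ldots,f_r$ forming a regular sequence in $k[E]^G$. Flatness of $k[E]$ over $k[E]^G$ promotes this to a regular sequence in $k[E]$, so $k[E]/(f_1,\ldots,f_r)k[E]=k[E]\otimes_{k[E]^G}P$ is a complete intersection, $P$-free of rank $|G|$. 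Comparing with the $P$-free module $R$ of the same rank $|G|$ via the surjection $k[E]/I^Gk[E]\twoheadrightarrow R$ forces $I=I^Gk[E]=(f_1,\ldots,f_r)k[E]$. This simultaneously shows $R$ is a complete intersection and, through~\eqref{G trivial}, that $G$ acts trivially on $I/k[E]_+I$.

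For (1)$\Rightarrow$(2), the hypothesis together with~\eqref{G trivial} gives $I=k[E]\cdot I^G$. Choose a minimal homogeneous generating set $f_1,\ldots,f_r$ of $I^G$; by assumption these also generate $I$. The complete intersection hypothesis, combined with $\dim R=\dim P=n-r$, forces $f_1,\ldots,f_r$ to be a regular sequence in $k[E]$ of the right length. The key step is then to transfer this back to $k[E]^G$: applying the $k[E]^G$-linear Reynolds operator to a relation $x=\sum h_if_i$ with $x\in I^G$ and $f_i\in k[E]^G$ gives $x=\sum \rho(h_i)f_i$, hence $I^G=(f_1,\ldots,f_r)k[E]^G$. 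Since $k[E]^G$ is Cohen-Macaulay of dimension $n$ and the quotient by these $r$ elements is $P$ of dimension $n-r$, the $f_i$ form a regular sequence in $k[E]^G$ as well. Extend $f_1,\ldots,f_r$ to a minimal homogeneous generating set of $k[E]^G_+$ by adjoining $g_1,\ldots,g_s$; the images of the $g_j$ then minimally generate $P_+$, so regularity of $P$ forces $s=n-r$, meaning $k[E]^G_+$ is minimally generated by exactly $n=\dim k[E]^G$ homogeneous elements. For a positively graded $k$-algebra that is a Cohen-Macaulay domain, this forces $k[E]^G$ itself to be polynomial, and Shephard-Todd-Chevalley completes the argument.

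The main obstacle I expect is the (1)$\Rightarrow$(2) direction, specifically the passage from the identity $I=(f_1,\ldots,f_r)k[E]$ in $k[E]$ to the identity $I^G=(f_1,\ldots,f_r)k[E]^G$ in the invariant ring and the subsequent combinatorial count of minimal generators of $k[E]^G_+$. Everything hinges on the Reynolds operator being $k[E]^G$-linear (to push the ideal description down to invariants) and on the Cohen-Macaulay hypothesis for $k[E]^G$ (to guarantee that the $f_i$ form a regular sequence upstairs and that a minimal system of parameters equal in number to the Krull dimension forces polynomiality). Once those two pieces are in place the argument is essentially formal, but extracting the regular-sequence/polynomial-ring conclusion for $k[E]^G$ without circularly assuming reflection generation is the genuinely delicate point.
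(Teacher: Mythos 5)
Your direction (2)$\Rightarrow$(1) is essentially the paper's argument: after Shephard--Todd--Chevalley makes $k[E]$ free of rank $|G|$ over the polynomial ring $k[E]^G$, you compare the two $P$-free modules $k[E]\otimes_{k[E]^G}P$ and $R=k[E]/I$, both of rank $|G|$, and conclude that the natural surjection between them is an isomorphism, forcing $I=I^Gk[E]$; this gives both conclusions at once. The cast in terms of regular sequences rather than generator counts is cosmetic.

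Your direction (1)$\Rightarrow$(2) aims at the right endpoint (showing $k[E]^G$ admits only $e=\dim_k E$ algebra generators, hence is polynomial) but contains a genuine gap. You choose a minimal homogeneous generating set $f_1,\ldots,f_r$ of $I^G$ and then ``extend $f_1,\ldots,f_r$ to a minimal homogeneous generating set of $k[E]^G_+$.'' This requires the images of the $f_i$ in $k[E]^G_+/(k[E]^G_+)^2$ to be linearly independent, and that does not follow from their minimality as generators of $I^G$: minimality only says the $f_i$ are independent in $I^G/k[E]^G_+I^G$, and the natural map $I^G/k[E]^G_+I^G\to k[E]^G_+/(k[E]^G_+)^2$ need not be injective (it would require $I^G\cap(k[E]^G_+)^2\subset k[E]^G_+I^G$). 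The regular-sequence and Cohen--Macaulay apparatus you set up does not repair this: $x^2$ is a regular element of $k[x,y]$ lying in $(x,y)^2$, hence cannot be completed to a minimal generating set of $(x,y)$. One can in fact rescue the claim by a dimension count exploiting regularity of $P$ (with $\mu=\dim_k k[E]^G_+/(k[E]^G_+)^2$ and $\nu$ the dimension of the image of $I^G$ there, regularity gives $\mu-\nu=\dim P$ while $\nu\le r$ and $\mu\ge\dim k[E]^G$ force equality throughout), but that argument is absent from your write-up. The Reynolds-operator step is also circular as written: the $f_i$ were already chosen to generate $I^G$, so deducing $I^G=(f_1,\ldots,f_r)k[E]^G$ from the Reynolds operator proves nothing new --- the operator's real use is in establishing~\eqref{G trivial}, which you invoke separately.

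The paper avoids the whole difficulty by never asking for minimality of the assembled generating set. It simply exhibits $e$ homogeneous algebra generators of $k[E]^G$: $i=\dim_k I/k[E]_+I$ generators of $I^G$ (available because $G$ acts trivially on $I/k[E]_+I$, so a basis there lifts to $G$-invariant elements), together with $d=\dim P$ homogeneous elements of $k[E]^G$ whose images generate the regular ring $P$. Since $R$ is a complete intersection, $i+d=e=\dim k[E]^G$, and a surjection from a polynomial ring in $e$ variables onto the $e$-dimensional domain $k[E]^G$ is automatically an isomorphism. No Cohen--Macaulayness, no regular sequences, and no extension to a minimal generating set are needed.
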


\begin{rema}\label{liberte platitude}
Si $P$ est r\'eguli\`ere et puisque l'on travaille avec des objets 
gradu\'es, les assertions suivantes sont \'equivalentes~: 
\begin{itemize}
\item[$\bullet$] $R$ est un $P$-module libre.

\item[$\bullet$] $R$ est un $P$-module plat.

\item[$\bullet$] $R$ est de Cohen-Macaulay.
\end{itemize}
D'autre part, si $R$ est d'intersection compl\`ete, alors $R$ est de Cohen-Macaulay.\finl
\end{rema}

\begin{proof}
Posons $e=\dim_k E$, $i=\dim_k I/k[E]_+ I$ et notons $d$ la dimension de Krull 
de $R$ (qui est aussi celle de $P$). De plus, $e$ est la dimension de Krull 
de $k[E]$ et de $k[E]^G$. 

\medskip

Montrons tout d'abord que (1) $\Rightarrow$ (2). Supposons donc que 
$R$ est d'intersection compl\`ete et que $G$ agit trivialement sur $I/k[E]_+I$. 
Puisque $R$ est d'intersection compl\`ete et d'apr\`es~(\ref{E/I}) et~(\ref{nombre generateurs}), on a
$$d=e-i.$$
De plus, puique $G$ agit trivialement sur $I/k[E]_+I$, l'id\'eal $I$ de $k[E]$ 
peut \^etre engendr\'e par $i$ \'el\'ements $G$-invariants homog\`enes 
$f_1$,\dots, $f_i$ et donc l'id\'eal $I^G$ de $k[E]^G$ est engendr\'e par 
$f_1$,\dots, $f_i$. Puisque $P$ est r\'eguli\`ere de dimension de Krull $d$, 
$P=k[E]^G/I^G$ peut-\^etre engendr\'ee par $d$ \'el\'ements $\pi(g_1)$,\dots, $\pi(g_d)$ 
o\`u $g_j \in k[E]^G$ est homog\`ene. Par cons\'equent, la $k$-alg\`ebre 
$k[E]^G$ est engendr\'ee par $f_1$,\dots, $f_i$, $g_1$,\dots, $g_d$, 
c'est-\`a dire qu'elle est engendr\'ee par $i+d=e$ \'el\'ements. 
Puisque la dimension de Krull de $k[E]^G$ est aussi \'egale \`a $e$, 
cela montre que $k[E]^G$ est une alg\`ebre de polyn\^omes, donc 
que $G$ est un sous-groupe de $\GL_k(E)$ engendr\'e par des r\'eflexions 
d'apr\`es le th\'eor\`eme~\ref{chevalley}.

\medskip

R\'eciproquement, montrons maintenant que (2) $\Rightarrow$ (1). 
Supposons donc que $G$ est un sous-groupe de $\GL_k(E)$ engendr\'e 
par des r\'eflexions. Alors $k[E]$ est un $k[E]^G$-module libre de 
rang $|G|$ (d'apr\`es le th\'eor\`eme~\ref{chevalley}) et donc 
$(k[E]^G/I^G) \otimes_{k[E]^G} k[E]$ est un $P$-module libre de 
rang $|G|$ (voir~(\ref{PRG})). D'autre part, $k[E]/I=R$ est un $P$-module libre de rang $|G|$ d'apr\`es 
le lemme~\ref{liberte G}. Donc la surjection canonique 
$(k[E]^G/I^G) \otimes_{k[E]^G} k[E] \surto k[E]/I$ (entre deux $P$-modules 
de m\^eme rang) est un isomorphisme, ce qui signifie que $I$ est engendr\'e 
par $I^G$ et donc que $G$ agit trivialement sur $I/k[E]_+ I$ (d'apr\`es~(\ref{G trivial})). 

D'autre part, puisque $k[E]^G$ et $k[E]^G/I^G=P$ sont toutes deux des alg\`ebres de 
polyn\^omes (d'apr\`es le th\'eor\`eme~\ref{chevalley} pour 
$k[E]^G$), $P$ est d'intersection compl\`ete et donc $I^G$ peut \^etre 
engendr\'e par $e-d$ \'el\'ements. On d\'ecuit de~(\ref{nombre generateurs}) et~(\ref{G trivial}) 
que $i \le e-d$ et donc forc\'ement $i=e-d$ et $R$ est d'intersection compl\`ete.
\end{proof}

\bigskip

\chapter{Blocs, matrices de d\'ecomposition}
\label{appendice: blocs}\setcounter{section}{0}

\bigskip

\boitegrise{{\bf Hypoth\`eses et notations.} 
{\it Nous fixons dans cet appendice un anneau commutatif $R$ que nous supposerons 
{\bfit noeth\'erien}, {\bfit int\`egre} et {\bfit int\'egralement clos}. 
Nous fixons un id\'eal premier $\rG$. Nous fixons aussi une $R$-alg\`ebre $\HC$ que nous 
supposons \^etre, comme $R$-module, libre et de type fini. 
Nous noterons $\Zrm(\HC)$ le centre de $\HC$,  $k=\Frac(R/\rG)=k_R(\rG)$ et $K=\Frac(R)=k_R(0)$. Pour finir, 
l'image d'un \'el\'ement $h \in \HC$ dans $k\HC$ sera not\'ee $\hba$.}}{0.75\textwidth}

\bigskip

%

\section{Blocs de $k\HC$}\label{section:definition blocs}

\medskip

Si $A$ est un anneau (non n\'ecessairement commutatif), nous noterons 
$\blocs(A)$  \indexnot{I}{\blocs(A)}  l'ensemble de ses idempotents primitifs. Par exemple, 
$\blocs(\Zrm(\HC))$ est l'ensemble des idempotents primitifs centraux de $\HC$. 
Puisque $\Zrm(\HC)$ est noeth\'erien, 
\equat\label{1}
1=\sum_{e \in \blocs(\Zrm(\HC))} e.
\endequat
D'autre part, le morphisme $\HC \to k\HC$ induit un morphisme $\pi_\Zrm : k\Zrm(\HC) \to \Zrm(k\HC)$ 
(qui peut n'\^etre ni surjectif ni injectif). Cependant, le r\'esultat suivant \`a 
\'et\'e d\'emontr\'e par M\"uller~\cite[th\'eor\`eme 3.7]{muller}~:

\bigskip

\begin{prop}[M\"uller]\label{muller}
\begin{itemize}
\itemth{a} Si $e \in \blocs(k\Zrm(\HC))$, alors $\pi_\Zrm(e) \in \blocs(\Zrm(k\HC))$. 

\itemth{b} L'application $\blocs(k\Zrm(\HC)) \to \blocs(\Zrm(k\HC))$, 
$e \mapsto \pi_\Zrm(e)$ est bijective.
\end{itemize}
\end{prop}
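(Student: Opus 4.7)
The plan is to reduce to a local situation by flat base change and then exploit the structure of finite commutative algebras over local rings together with a Hensel-type lifting argument.

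Since localization is flat and $\HC$ is $R$-free of finite rank, the natural map $R_\rG \otimes_R \Zrm(\HC) \to \Zrm(R_\rG \otimes_R \HC)$ is an isomorphism (centralizers commute with flat base change on finitely presented modules). I may therefore replace $R$ by $R_\rG$ and assume $R$ is local Noetherian integrally closed with maximal ideal $\mG = \rG R_\rG$ and residue field $k$. The ring $A := \Zrm(\HC)$ is then commutative, Noetherian, and finitely generated as $R$-module, hence semilocal. Any such algebra over a local ring decomposes uniquely as a finite product $A = \prod_{i=1}^n Z_i$ of local $R$-algebras, and the primitive idempotents of $A$ are the $e_i$ cut out by this decomposition. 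Their images $\bar e_i := \pi_\Zrm(e_i)$ are orthogonal central idempotents of $k\HC$ summing to $1$, and each is \emph{nonzero} because $\mG Z_i$ lies in the maximal ideal of the local ring $Z_i$, so $\bar Z_i := Z_i/\mG Z_i$ is a nonzero local Artinian $k$-algebra.

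For assertion (b), any primitive idempotent $\bar f \in \Zrm(k\HC)$ decomposes as $\bar f = \sum_i \bar f \bar e_i$ into central orthogonal idempotents, so by primitivity exactly one summand $\bar f \bar e_{i_0}$ equals $\bar f$ and the rest vanish; hence $\bar f \leq \bar e_{i_0}$. The bijectivity in (b), together with (a), therefore reduces to showing that each $\bar e_i$ is primitive in $\Zrm(k\HC)$. Injectivity of the map $e_i \mapsto \bar e_i$ on primitive idempotents is already clear from orthogonality and nonvanishing.

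The hard part will be proving that each $\bar e_i$ is primitive in $\Zrm(k\HC)$, because a priori the center $\bar e_i \Zrm(k\HC)$ could be strictly larger than $\bar e_i \cdot \pi_\Zrm(k\Zrm(\HC)) = \bar Z_i$, which is local by construction and thus rules out nontrivial idempotents \emph{internally}; the issue is precisely that $\pi_\Zrm$ need not be surjective. To exclude extraneous central idempotents of $\bar e_i k\HC$, I would argue as follows. A nontrivial central idempotent $\bar g \in \bar e_i \Zrm(k\HC)$ gives a block decomposition of $\bar e_i k\HC$; working in $K\HC$ with $K = \Frac R$, one sees that the corresponding primitive central idempotents of $K\HC$ lie in $K \Zrm(\HC) = \Zrm(K\HC)$ (by flat base change) and are integral over $R$ (they satisfy $X^2 = X$). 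Because $R$ is integrally closed and $\HC$ is $R$-free, the characteristic polynomial of such a central element with respect to the $R$-basis of $\HC$ has coefficients in $R$ and decomposes as $T^a(T-1)^b \bmod \mG$; this is used to produce an actual lift of $\bar g$ inside $e_i \Zrm(\HC) = Z_i$. Since $Z_i$ is local, it has no nontrivial idempotents, forcing $\bar g \in \{0, \bar e_i\}$ and contradicting nontriviality. The underlying mechanism is the standard Hensel-type lifting of idempotents modulo the Jacobson radical, valid here because $\mG Z_i \subset J(Z_i)$ and, after completion (or by direct use of integral closedness), idempotents lift uniquely from $\bar Z_i$ to $Z_i$.

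Putting these ingredients together yields the bijection $e_i \leftrightarrow \bar e_i$ between $\blocs(k\Zrm(\HC))$ and $\blocs(\Zrm(k\HC))$, proving both (a) and (b).
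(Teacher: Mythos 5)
The paper does not prove this proposition; it cites M\"uller's paper (Theorem~3.7), so there is nothing in the paper itself to compare against.

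Your attempt has genuine gaps. First, a finite commutative algebra over a local Noetherian domain need not decompose as a product of \emph{local} rings: for $R = \ZM_{(5)}$ the algebra $A = R[i]$ is indecomposable (it is a domain) yet has two maximal ideals, so the asserted decomposition into locals fails, and ``$\bar Z_i$ is local by construction'' is wrong; this particular point could be repaired by replacing ``local'' with ``connected''. The more serious problem is that the entire argument tracks the primitive idempotents of $A := R_\rG\Zrm(\HC)$, whereas the statement concerns the primitive idempotents of $k\Zrm(\HC) = A/\mG A$, and these are genuinely different sets. In the example above (take $\HC = R[i]$, so $\Zrm(\HC) = \HC$), $\blocs(A)$ is a singleton, but $k\Zrm(\HC) = \FM_5[i] \cong \FM_5 \times \FM_5$ has two primitive idempotents. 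The correspondence between $\blocs(R_\rG\Zrm(\HC))$ and $\blocs(k\Zrm(\HC))$ that you implicitly assume at the outset is precisely Proposition~\ref{relevement idempotent} later in this appendix, which requires the additional hypothesis that $K\HC$ be \emph{split} --- a hypothesis that is not in force at this point of the text, and which fails for $K\HC = \QM[i]$. Consequently, your reduction ``show that each $\bar e_i$ is primitive in $\Zrm(k\HC)$'' aims at a false claim: in the example, $\Zrm(k\HC) = \FM_5 \times \FM_5$ and the unique primitive of $A$ maps to $1$, which is not primitive there; the Hensel-type lift you invoke at the end would then have to produce a nontrivial idempotent inside the domain $\ZM_{(5)}[i]$, which is impossible. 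A proof of M\"uller's theorem in the stated generality must start from a primitive idempotent of $k\Zrm(\HC)$ itself and cannot be routed through $\blocs(R_\rG\Zrm(\HC))$.
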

%

\bigskip

Dans une $k$-alg\`ebre commutative de dimension finie $\AC$ (par exemple $\Zrm(k\HC)$ ou $k\Zrm(\HC)$), 
les id\'eaux premiers sont maximaux et sont en bijection avec l'ensemble des idempotents 
primitifs de $\AC$~: si $\mG \in \Spec \AC$ et $e \in \blocs(\AC)$, alors $e$ et $\mG$ 
se correspondent par cette bijection si et seulement si $e \not\in \mG$ 
(ou encore si et seulement si $\mG = \Rad(\AC e) + (1-e)\AC$). 
La proposition~\ref{muller} montre donc que $\Spec k\Zrm(\HC)$ est en bijection avec 
$\blocs(\Zrm(k\HC))$, c'est-\`a-dire avec l'ensemble des idempotents primitifs centraux de $k\HC$. 

Par ailleurs, le morphisme naturel (et injectif) $R \injto \Zrm(\HC)$ induit un morphisme 
$\Upsilon : \Spec \Zrm(\HC) \to \Spec R$. L'application $\Zrm(\HC) \to k\Zrm(\HC)$ induit 
une bijection entre les ensembles $\Spec k\Zrm(\HC)$ et $\Upsilon^{-1}(\rG)$. Rappelons que
$$\Upsilon^{-1}(\rG) = \{\zG \in \Spec \Zrm(\HC)~|~\zG \cap R = \rG\}.$$
Au final, on obtient une bijection
\equat\label{xi}
\Xi_\rG : \blocs(\Zrm(k\HC)) \stackrel{\sim}{\longto} \Upsilon^{-1}(\rG)
\endequat
caract\'eris\'ee par la propri\'et\'e suivante~: 

\bigskip

\begin{lem}\label{lem:blocs}
Si $e \in \blocs(\Zrm(k\HC))$ et si $\zG \in \Upsilon^{-1}(\rG)$, alors les propri\'et\'es suivantes 
sont \'equivalentes~:
\begin{itemize}
\itemth{1} $\zG=\Xi_\rG(e)$.

\itemth{2} $e \not\in \pi_\Zrm(k \zG)$.

\itemth{3} $\zG$ est l'image inverse, dans $\Zrm(\HC)$, de $\pi_\Zrm^{-1}(\Rad(\Zrm(k\HC) e) + (1-e) \Zrm(k\HC))$.
\end{itemize}
\end{lem}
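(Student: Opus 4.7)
\noindent The plan is to unwind $\Xi_\rG$ as an explicit composition of three natural bijections and then check that each of the three conditions is a rephrasing of the defining condition for $\Xi_\rG$.

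More precisely, by construction $\Xi_\rG$ is the composition
\[
\blocs(\Zrm(k\HC)) \xrightarrow{\pi_\Zrm^{-1}} \blocs(k\Zrm(\HC)) \xrightarrow{\alpha} \Spec(k\Zrm(\HC)) \xrightarrow{\beta} \Upsilon^{-1}(\rG),
\]
where $\pi_\Zrm^{-1}$ is the inverse of M\"uller's bijection (Proposition~\ref{muller}), $\alpha$ sends a primitive idempotent $e'$ to the unique maximal ideal $\mG$ of the finite-dimensional commutative $k$-algebra $k\Zrm(\HC)$ with $e'\not\in\mG$, and $\beta$ sends $\mG$ to its contraction $\zG\subset\Zrm(\HC)$, which lies above $\rG$. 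Setting $e'=\pi_\Zrm^{-1}(e)$, condition (1) then reads: $k\zG$ is the unique maximal ideal of $k\Zrm(\HC)$ not containing $e'$, which by general theory of semilocal artinian $k$-algebras is equivalent to the two equalities $e'\not\in k\zG$ and $k\zG=\Rad(k\Zrm(\HC)e')+(1-e')k\Zrm(\HC)$.

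The key ingredient, which I would state and use as a consequence of M\"uller's theorem, is that $\pi_\Zrm$ restricts to a $k$-algebra isomorphism
\[
k\Zrm(\HC)\,e' \longiso \Zrm(k\HC)\,e.
\]
Indeed, $\pi_\Zrm$ respects the block decompositions $1=\sum e'_j$ and $1=\sum\pi_\Zrm(e'_j)$, and the fact that corresponding blocks are local of the same finite $k$-dimension (by comparing their respective $k$-dimensions and counting primitive idempotents) forces each summand to map isomorphically. Granting this, the equivalence (1) $\Leftrightarrow$ (2) is immediate: if $e'\in k\zG$ then $e=\pi_\Zrm(e')\in\pi_\Zrm(k\zG)$; conversely if $e=\pi_\Zrm(z)$ with $z\in k\zG$, multiply by $e'$ to get $\pi_\Zrm(e'z)=e\cdot\pi_\Zrm(e')=e=\pi_\Zrm(e')$, so by injectivity on $k\Zrm(\HC)e'$ we obtain $e'=e'z\in k\zG$.

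For (1) $\Leftrightarrow$ (3), I would use the same isomorphism to transport the maximal ideal. Under $\pi_\Zrm\colon k\Zrm(\HC)e' \longiso \Zrm(k\HC)e$, the maximal ideal $\Rad(k\Zrm(\HC)e')$ corresponds to $\Rad(\Zrm(k\HC)e)$; combining with $\pi_\Zrm((1-e')k\Zrm(\HC))\subset(1-e)\Zrm(k\HC)$, one gets
\[
\pi_\Zrm^{-1}\bigl(\Rad(\Zrm(k\HC)e)+(1-e)\Zrm(k\HC)\bigr)=\Rad(k\Zrm(\HC)e')+(1-e')k\Zrm(\HC),
\]
and the right-hand side is exactly $k\zG$ precisely when $\zG=\Xi_\rG(e)$. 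Taking preimages in $\Zrm(\HC)$ through the canonical map $\Zrm(\HC)\to k\Zrm(\HC)$ then gives (3). The only real obstacle is the precise justification that M\"uller's bijection is refined by an isomorphism of local direct factors; once that structural statement is extracted from Proposition~\ref{muller}, the rest of the lemma is essentially formal manipulation of maximal ideals and primitive idempotents in a finite-dimensional commutative algebra.
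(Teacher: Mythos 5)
Your proposal rests on the ``key ingredient'' that $\pi_\Zrm$ restricts to a $k$-algebra isomorphism $k\Zrm(\HC)\,e' \longiso \Zrm(k\HC)\,e$ for corresponding primitive idempotents. This is not a consequence of Proposition~\ref{muller}, which only compares the \emph{sets} of primitive idempotents, not the structure of the block algebras, and the paper explicitly warns that $\pi_\Zrm$ need be neither injective nor surjective. Your attempted justification (``corresponding blocks are local of the same finite $k$-dimension (by comparing their respective $k$-dimensions and counting primitive idempotents)'') is circular: the equality of dimensions is precisely what would need to be shown, and each block has exactly one central primitive idempotent, so counting them carries no information. There is no reason the kernel of $\pi_\Zrm$ restricted to $k\Zrm(\HC)e'$ should vanish, nor that the restriction should surject onto $\Zrm(k\HC)e$.

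Fortunately, the lemma does not require this. The equivalence $(1) \Leftrightarrow (3)$ is a literal unwinding of the composition defining $\Xi_\rG$: the maximal ideal of $\Zrm(k\HC)$ associated with $e$ is $\Rad(\Zrm(k\HC)e)+(1-e)\Zrm(k\HC)$; pulling it back through $\pi_\Zrm$ to $k\Zrm(\HC)$ and then through $\Zrm(\HC)\to k\Zrm(\HC)$ gives exactly $\Xi_\rG(e)$. For $(1)\Leftrightarrow(2)$, observe that $k\zG=\pi_\Zrm^{-1}(\nG_\zG)$ where $\nG_\zG$ is the maximal ideal of $\Zrm(k\HC)$ corresponding (under $\Xi_\rG^{-1}$) to $\zG$, so $\pi_\Zrm(k\zG)=\nG_\zG\cap\im(\pi_\Zrm)$. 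Since Proposition~\ref{muller} guarantees that every primitive idempotent of $\Zrm(k\HC)$ \emph{is} in $\im(\pi_\Zrm)$, the condition $e\not\in\pi_\Zrm(k\zG)$ reduces to $e\not\in\nG_\zG$, which holds for exactly one primitive idempotent, namely the one corresponding to $\zG$. This is precisely condition (1). So the only structural input needed from M\"uller is the surjectivity of $\pi_\Zrm$ onto the primitive idempotents, not an isomorphism of block algebras.
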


\bigskip

Maintenant, par localisation en $\rG$, $\Upsilon^{-1}(\rG)$ est 
en bijection avec $\Upsilon_\rG^{-1}(\rG R_\rG)$, o\`u 
$\Upsilon_\rG : \Spec R_\rG \Zrm(\HC) \to \Spec R_\rG$ est 
l'application induite par l'inclusion $R_\rG \injto R_\rG \Zrm(\HC)$. 
Les bijections, r\'eciproques l'une de l'autre, entre 
$\Upsilon^{-1}(\rG)$ et $\Upsilon_\rG^{-1}(\rG R_\rG)$ 
sont donn\'ees par
$$\fonctio{\Upsilon^{-1}(\rG)}{\Upsilon_\rG^{-1}(\rG R_\rG)}{\zG}{R_\rG \zG}$$
$$\fonctio{\Upsilon_\rG^{-1}(\rG R_\rG)}{\Upsilon^{-1}(\rG)}{\zG}{\zG \cap \Zrm(\HC).}\leqno{\text{et}}$$
Le centre de l'alg\`ebre $R_\rG \HC$ est \'egal \`a $R_\rG \Zrm(\HC)$ et le morphisme 
canonique $\HC \to k\HC$ s'\'etend en un morphisme $R_\rG \HC \to k\HC$, 
que nous noterons encore $h \mapsto \hba$. 
Pour finir, nous noterons $R_\rG \Zrm(\HC) \to k\Zrm(\HC)$, $z \mapsto \zhat$,  
le morphisme canonique (de sorte que $\zba = \pi_\Zrm(\zhat)$ si $z \in R_\rG \Zrm(\HC)$).

Pour r\'esumer, on obtient un diagramme naturel de bijections
\equat\label{diagramme bijections}
\diagram
\Upsilon^{-1}(\rG) \ar@{<->}[rr]^{\DS{\sim}} 
&& 
\Upsilon_\rG^{-1}(\rG R_\rG) \ar@{<->}[rr]^{\DS{\sim}} 
&& 
\Spec k\Zrm(\HC) \ar@{<->}[rr]^{\DS{\sim}}
 \ar@{<->}[dd]^{\DS{\reflectbox{\rotatebox[origin=c]{-90}{$\backsim$}}}} 
&& 
\Spec \Zrm(k\HC) \ar@{<->}[dd]^{\DS{\reflectbox{\rotatebox[origin=c]{-90}{$\backsim$}}}} \\
&&&&&&\\
&&
&& \blocs(k\Zrm(\HC)) \ar@{<->}[rr]^{\DS{\sim}}
&&
\blocs(\Zrm(k\HC)). \\
\enddiagram
\endequat
%

\section{Blocs de $R_\rG\HC$}

\medskip

%

\boitegrise{{\bf Hypoth\`ese.} 
{\it Dor\'enavant, et ce jusqu'\`a la fin de ce chapitre, nous supposerons 
que la $K$-alg\`ebre $K\HC$ est {\bfit d\'eploy\'ee}.}}{0.75\textwidth}

\bigskip

La question du rel\`evement des idempotents lorsque l'anneau local $R_\rG$ est complet pour la topologie 
$\rG$-adique est classique. Nous proposerons ici une autre version, valable lorsque la $K$-alg\`ebre 
$K\HC$ est d\'eploy\'ee (seule la normalit\'e de $R$ intervient~: aucune hypoth\`ese sur 
la dimension de Krull de $R$ ou sa compl\'etude n'est n\'ecessaire).

\bigskip

\subsection{Caract\`eres centraux}\label{section centrale}
Si $V$ est un $K\HC$-module simple, et si $z \in K\Zrm(\HC)$, alors $z$ agit sur 
$V$ par multiplication par un \'el\'ement $\o_V(z) \in K$ (car $K\HC$ \'etant 
d\'eploy\'ee, on a $\End_{K\HC}(V)=K$). 
Cela d\'efinit un morphisme de $K$-alg\`ebres
$$\o_V : K\Zrm(\HC) \longto K$$  \indexnot{oz}{\o_V, \o_V^\rG}  
dont la restriction \`a $\Zrm(\HC)$ est \`a valeurs dans $R$ (car $\Zrm(\HC)$ est entier sur $R$ et 
$R$ est int\'egralement clos). 
Ainsi, cela d\'efinit un morphisme de $R$-alg\`ebres
$$\o_V : \Zrm(\HC) \longto R.$$
Par composition avec la projection canonique $R \to R/\rG$, on obtient 
un morphisme de $R$-alg\`ebres
$$\o_V^\rG : \Zrm(\HC) \longto R/\rG.$$
Puisque $\o_V(1)=1$ et que $R/\rG$ est int\`egre, $\Ker \o_V$ est un id\'eal premier 
de $\Zrm(\HC)$ tel que $\Ker \o_V \cap R = \rG$. Donc
\equat\label{ker}
\Ker \o_V^\rG \in \Upsilon^{-1}(\rG).
\endequat
Cela nous d\'efinit une application 
$$\fonction{\KER_\rG}{\Irr(K\HC)}{\Upsilon^{-1}(\rG)}{V}{\Ker \o_V^\rG}.$$

\bigskip

\begin{defi}\label{defi:r-bloc}
Les fibres de l'application $\KER_\rG$ sont appel\'ees les {\bfit $\rG$-blocs} de $\HC$.
\end{defi}

\bigskip

Les $\rG$-blocs de $\HC$ sont donc des sous-ensembles de l'ensemble $\Irr(K\HC)$, dont ils 
forment une partition. Notons que, puisque $\Zrm(\HC) = R + \Ker(\o_V^\rG)$, 
le caract\`ere central $\o_V^\rG$ est d\'etermin\'e par son noyau. Ainsi, deux 
$K\HC$-modules simples $V$ appartiennent au m\^eme $\rG$-bloc si et seulement si 
$\o_V^\rG=\o_{V'}^\rG$.

\bigskip

\subsection{Rel\`evement des idempotents} 
Le r\'esultat principal de cette section est le suivant~:

\bigskip

\begin{prop}\label{relevement idempotent}
On a~:
\begin{itemize}
\itemth{a} Si $e \in \blocs(R_\rG \Zrm(\HC))$, alors $\ehat \in \blocs(k\Zrm(\HC))$.

\itemth{b} L'application $\blocs(R_\rG \Zrm(\HC)) \to \blocs(k\Zrm(\HC))$, $e \mapsto \ehat$ 
est bijective.
\end{itemize}
\end{prop}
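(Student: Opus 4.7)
The plan is to construct, for each $\rG$-block $\beta \subseteq \Irr(K\HC)$, an idempotent $e_\beta \in R_\rG\Zrm(\HC)$, to verify that these form the primitive idempotent decomposition of $R_\rG\Zrm(\HC)$, and that their reductions $\ehat_\beta \in k\Zrm(\HC)$ are the primitive idempotents predicted by the bijections of diagram~\ref{diagramme bijections}. Since $K\HC$ is split, $K\Zrm(\HC)$ admits primitive central idempotents $(e_B)_B$ indexed by the blocks of $K\HC$, and the natural candidate is $e_\beta = \sum_{B \in \beta} e_B \in K\Zrm(\HC)$. The main technical task is to prove $e_\beta \in R_\rG\Zrm(\HC)$; once that is secured, everything else is comparatively formal.

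To this end I would adapt the Vandermonde argument of Brou\'e--Kim used in Lemma~\ref{lem:c-contant}. First I would exhibit a separating element $z_0 \in \Zrm(\HC)$ whose reduced central characters $\omega_\beta^\rG(z_0) \in R/\rG$ are pairwise distinct across the $\rG$-blocks; such a $z_0$ exists because the $\omega_\beta^\rG$ are pairwise distinct as maps $\Zrm(\HC) \to R/\rG$ by Definition~\ref{defi:r-bloc}, so a generic $R$-combination of pairwise separators does the job. Set $a_\beta = \omega_\beta(z_0) \in R$; then $a_\beta - a_{\beta'} \notin \rG$ is a unit in $R_\rG$ whenever $\beta \neq \beta'$. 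Writing, in $K\Zrm(\HC)$, the relations $z_0^i \equiv \sum_\beta a_\beta^i e_\beta$ modulo the Jacobson radical for $i = 0,\ldots,m-1$ (where $m$ is the number of $\rG$-blocks), the Vandermonde matrix $(a_\beta^i)_{i,\beta}$ has determinant $\prod_{\beta < \beta'} (a_\beta - a_{\beta'})$ invertible in $R_\rG$, so each $e_\beta$ is recovered as an $R_\rG$-polynomial in $z_0$ up to a nilpotent correction. A Hensel-type idempotent refinement then converts this approximation into an exact idempotent of $R_\rG\Zrm(\HC)$, which by uniqueness of idempotents in $K\Zrm(\HC)$ must equal $e_\beta$.

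Once $e_\beta \in R_\rG\Zrm(\HC)$ is established, the family $(e_\beta)_\beta$ is a complete orthogonal system summing to $1$. Primitivity follows because any idempotent of $R_\rG\Zrm(\HC)$ whose reduction vanishes lies in the Jacobson radical and is therefore zero, so a nontrivial splitting $e_\beta = f + f'$ would descend to a nontrivial splitting of $\ehat_\beta$, contradicting its primitivity in $k\Zrm(\HC)$. Bijectivity of $e \mapsto \ehat$ is then immediate by a count using Proposition~\ref{muller}. The main obstacle is the Hensel lifting step, since $R_\rG$ need not be Henselian: the cleanest remedy is to perform the lift in the completion $\Rhat_\rG$, where the finite algebra $\Rhat_\rG \otimes_{R_\rG} \Zrm(\HC)$ decomposes canonically as a product of local rings, and then to descend using the identity $R_\rG = K \cap \Rhat_\rG$ (where the integrally closed hypothesis on $R$ enters essentially) together with finite generation of $\Zrm(\HC)$ over $R$, obtaining $e_\beta \in K\Zrm(\HC) \cap \Rhat_\rG\Zrm(\HC) = R_\rG\Zrm(\HC)$.
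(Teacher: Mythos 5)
The Vandermonde step contains a genuine error. The congruence $z_0^i \equiv \sum_\beta a_\beta^i e_\beta \pmod{\Rad(K\Zrm(\HC))}$ is false: what is true is $z_0^i \equiv \sum_B \o_B(z_0)^i\, e_B$, where $B$ runs over the \emph{blocks} of $K\HC$, since the projection of $z_0$ to $K\Zrm(\HC)/\Rad(K\Zrm(\HC)) \cong \prod_B K$ is the tuple $(\o_B(z_0))_B$. Two blocks $B, B'$ lying in the same $\rG$-block $\beta$ satisfy $\o_B(z_0) \equiv \o_{B'}(z_0) \pmod{\rG}$ but are typically \emph{not} equal in $R$, so $a_\beta = \o_\beta(z_0)$ is not well-defined as an element of $R$, and $z_0$ does not project to any tuple constant on $\rG$-blocks. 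If you instead index the Vandermonde matrix by blocks $B$, its determinant $\prod(\o_B(z_0) - \o_{B'}(z_0))$ has factors lying in $\rG$ (those coming from $B, B'$ in the same $\rG$-block), so it is not a unit of $R_\rG$; inversion recovers the individual $e_B$ only with coefficients in $K$, and summing over $B \in \beta$ does not obviously produce an $R_\rG$-combination. The paper sidesteps this altogether: it factors through the image subalgebra $A = \O(R_\rG\Zrm(\HC))$ of $\prod_V R_\rG$ and constructs the idempotents of $A$ by a direct inductive normalization (Lemma~\ref{lem:relevement-idempotent}), with no Vandermonde.

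Your strategy can be salvaged by replacing the Vandermonde inversion with a resultant argument: letting $h(\tb) := \prod_{B' \notin \beta}(\tb - \o_{B'}(z_0))$ and $k(\tb) := \prod_{B \in \beta}(\tb - \o_B(z_0))$, the resultant of $h$ and $k$ is $\prod_{B\in\beta} h(\o_B(z_0))$, a product of units of $R_\rG$, so $h$ is invertible modulo $k$ in $R_\rG[\tb]$; taking $g$ with $gh \equiv 1 \pmod{k}$ and setting $p_\beta = gh$ gives $p_\beta(z_0) \in R_\rG\Zrm(\HC)$ with $p_\beta(z_0) \equiv e_\beta \pmod{\Rad(K\Zrm(\HC))}$. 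Once that is secured, $p_\beta(z_0)$ is idempotent modulo $J := R_\rG\Zrm(\HC) \cap \Rad(K\Zrm(\HC))$, and $J$ is \emph{nilpotent} (it sits inside the Artinian radical $\Rad(K\Zrm(\HC))$); lifting idempotents modulo a nilpotent ideal requires no Henselian hypothesis whatsoever, and two idempotents of a commutative ring differing by a nilpotent coincide, so the lift equals $e_\beta$. Your excursion to the completion $\Rhat_\rG$ and the descent $R_\rG = K \cap \Rhat_\rG$ are therefore unnecessary; and your remark that integral closedness of $R$ enters through that descent is misplaced --- it is used earlier, to ensure that the central characters $\o_V$ take values in $R$ in the first place.
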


\begin{proof}
Soit 
$$\fonction{\O}{R_\rG\Zrm(\HC)}{\prod_{V \in \Irr(K\HC)} R_\rG}{z}{(\o_V(z))_{V \in \Irr(K\HC)}.}$$
Alors $\O$ est un morphisme de $R_\rG$-alg\`ebres, de noyau $I$ \'egal \`a 
$R_\rG\Zrm(\HC) \cap \Rad(K\HC)$ et dont on notera $A$ l'image. 

Par cons\'equent, $I$ est nilpotent et donc $\O$ induit une bijection 
$\blocs(R_\rG \Zrm(\HC)) \longbij \blocs(A)$. 
De plus, d'apr\`es le corollaire~\ref{coro:relevement-idempotent}, 
la r\'eduction modulo $\rG$ induit une bijection $\blocs(A) \longbij \blocs(kA)$. 
Il nous reste donc \`a montrer que le noyau de l'application naturelle 
$k\Zrm(\HC) \surto kA$ est nilpotent, ce qui est \'evident car c'est l'image de 
$I$ dans $k\Zrm(\HC)$. 
\end{proof}

\bigskip

\begin{coro}\label{coro:r-blocs}
L'application $\KER_\rG : \Irr(K\HC) \to \Upsilon^{-1}(\rG)$ est surjective. 
Ses fibres sont de la forme $\Irr(K\HC e)$, o\`u $e \in \blocs(R_\rG \Zrm(\HC))$. 
\end{coro}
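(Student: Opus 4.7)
Le plan est d'utiliser le diagramme~(\ref{diagramme bijections}) ainsi que la proposition~\ref{relevement idempotent} pour identifier $\Upsilon^{-1}(\rG)$ avec $\blocs(R_\rG\Zrm(\HC))$, puis de montrer que, pour $V \in \Irr(K\HC)$, le prime $\Ker(\o_V^\rG)$ correspond sous cette identification \`a l'unique idempotent primitif $e \in \blocs(R_\rG\Zrm(\HC))$ tel que $eV=V$.

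Premi\`erement, \`a tout $V \in \Irr(K\HC)$ j'associerai un idempotent primitif $e_V \in \blocs(R_\rG\Zrm(\HC))$. Puisque $\HC$ est un $R$-module libre, on a $R_\rG\Zrm(\HC) \subset K\Zrm(\HC) \subset \Zrm(K\HC)$, et le morphisme $\o_V$ s'\'etend naturellement en un morphisme $R_\rG\Zrm(\HC) \to R_\rG$ (l'image de $\Zrm(\HC)$ est d\'ej\`a dans $R$ par la construction de la sous-section~\ref{section centrale}). Pour chaque $e \in \blocs(R_\rG\Zrm(\HC))$, l'\'el\'ement $\o_V(e)$ est un idempotent de $R_\rG$, donc vaut $0$ ou $1$~; comme $\sum e = 1$, il existe un unique $e_V$ tel que $\o_V(e_V)=1$, ce qui \'equivaut \`a $e_V V = V$ et donc \`a $V \in \Irr(K\HC e_V)$.

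Deuxi\`emement, je montrerai que $e_V$ correspond bien \`a $\zG_V := \Ker\o_V^\rG$ sous les bijections compos\'ees. En effet, $\o_V^\rG$ passe au quotient en un morphisme $\bar\o_V : k\Zrm(\HC) \to k$ dont le noyau est l'id\'eal maximal $\bar\zG_V=\zG_V/\rG\Zrm(\HC)$ (c'est un id\'eal maximal car de corps r\'esiduel $k$, l'image \'etant non nulle). L'image $\hat{e_V}$ de $e_V$ dans $k\Zrm(\HC)$ v\'erifie $\bar\o_V(\hat{e_V})=\o_V(e_V) \bmod \rG = 1$, donc $\hat{e_V} \notin \bar\zG_V$. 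Par le lemme~\ref{lem:blocs} (transport\'e via la proposition~\ref{muller} et la localisation en $\rG$), c'est la caract\'erisation souhait\'ee~: $\zG_V$ correspond \`a $\hat{e_V}$, donc \`a $e_V$ par la proposition~\ref{relevement idempotent}.

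Une fois ceci \'etabli, la surjectivit\'e est imm\'ediate~: pour $\zG \in \Upsilon^{-1}(\rG)$ d'idempotent associ\'e $e \in \blocs(R_\rG\Zrm(\HC))$, on a $K\HC e \neq 0$, et tout $V \in \Irr(K\HC e)$ v\'erifie $e_V = e$, donc $\KER_\rG(V)=\zG$. La description des fibres suit alors de la cha\^{\i}ne d'\'equivalences $\zG_V=\zG \Leftrightarrow e_V=e \Leftrightarrow eV=V \Leftrightarrow V \in \Irr(K\HC e)$. La principale subtilit\'e technique r\'eside dans la compatibilit\'e entre les diff\'erentes bijections du diagramme~(\ref{diagramme bijections})~: il faut v\'erifier que la composition ``idempotent de $R_\rG\Zrm(\HC)$ $\to$ idempotent de $k\Zrm(\HC)$ $\to$ id\'eal maximal de $k\Zrm(\HC)$ $\to$ prime de $\Zrm(\HC)$ au-dessus de $\rG$'' envoie bien $e_V$ sur $\Ker\o_V^\rG$, ce qui est pr\'ecis\'ement l'objet de la deuxi\`eme \'etape ci-dessus.
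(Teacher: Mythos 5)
Votre d\'emonstration est correcte et repose sur le m\^eme ingr\'edient central que celle du texte, \`a savoir la proposition~\ref{relevement idempotent} (et son corollaire~\ref{coro:blocs-A}, issu du m\^eme cercle d'id\'ees). La seule diff\'erence notable est dans l'argument de surjectivit\'e~: le texte invoque~(\ref{L}) (tout morphisme de $k$-alg\`ebres $kA \to k'$ est un $\overline{\o}_V$), tandis que vous observez directement que, la bijection $\Upsilon^{-1}(\rG) \leftrightarrow \blocs(R_\rG\Zrm(\HC))$ \'etant d\'ej\`a acquise, il suffit de remarquer que $K\HC e \neq 0$ pour tout idempotent primitif $e$, donc que $\Irr(K\HC e)\neq\varnothing$. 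Ce raccourci est valide et rend~(\ref{L}) superflu pour ce corollaire. Votre deuxi\`eme \'etape (montrer que $e_V$ correspond \`a $\Ker\o_V^\rG$ sous les bijections du diagramme~\ref{diagramme bijections deploye}) explicite ce que le texte laisse implicite dans le renvoi \`a la preuve de la proposition~\ref{relevement idempotent}~; elle est correcte, m\^eme si l'\'ecriture $\zG_V/\rG\Zrm(\HC)$ pour le premier de $k\Zrm(\HC)$ associ\'e \`a $\zG_V$ est un l\'eger abus de notation (il manque la localisation, que vous mentionnez d'ailleurs en fin de paragraphe), et la r\'ef\'erence au lemme~\ref{lem:blocs} pourrait \^etre remplac\'ee par la caract\'erisation \'el\'ementaire, rappel\'ee juste avant ce lemme, de la bijection $\Spec\AC \leftrightarrow \blocs(\AC)$ pour une $k$-alg\`ebre commutative artinienne $\AC$, sans faire intervenir $\Zrm(k\HC)$ ni la proposition de M\"uller.
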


\begin{proof}
La premi\`ere assertion d\'ecoule de~\ref{L} et la deuxi\`eme de la preuve de 
la proposition~\ref{relevement idempotent}.
\end{proof}

\bigskip

En combinant les propositions~\ref{muller} et~\ref{relevement idempotent}, 
on obtient le corollaire suivant~:

\bigskip

\begin{coro}\label{muller plus}
On a~: 
\begin{itemize}
\itemth{a} Si $e \in \blocs(R_\rG \Zrm(\HC))$, alors $\eba \in \blocs(\Zrm(k\HC))$.

\itemth{b} L'application $\blocs(R_\rG \Zrm(\HC)) \to \blocs(\Zrm(k\HC))$, $e \mapsto \eba$ 
est bijective.
\end{itemize}
\end{coro}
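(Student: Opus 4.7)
The plan is to obtain Corollary~\ref{muller plus} as a direct composition of the two bijections established just before, namely Proposition~\ref{muller} and Proposition~\ref{relevement idempotent}. Recall the notational setup: for $z \in R_\rG \Zrm(\HC)$ we write $\zhat$ for its image in $k\Zrm(\HC)$, while for $z \in k\Zrm(\HC)$ we write $\pi_\Zrm(z)$ for its image in $\Zrm(k\HC)$. By construction these two reductions factor the canonical map $R_\rG \Zrm(\HC) \to \Zrm(k\HC)$, so that $\eba = \pi_\Zrm(\ehat)$ for every $e \in R_\rG \Zrm(\HC)$.

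First, I would apply Proposition~\ref{relevement idempotent}: if $e \in \blocs(R_\rG \Zrm(\HC))$, then $\ehat \in \blocs(k\Zrm(\HC))$, and the assignment $e \mapsto \ehat$ is a bijection between these two sets. Next, I would apply Proposition~\ref{muller}: the assignment $f \mapsto \pi_\Zrm(f)$ is a bijection $\blocs(k\Zrm(\HC)) \longiso \blocs(\Zrm(k\HC))$, in particular primitivity is preserved. Composing, the map $e \mapsto \pi_\Zrm(\ehat) = \eba$ is a bijection $\blocs(R_\rG \Zrm(\HC)) \longiso \blocs(\Zrm(k\HC))$, which is precisely assertions (a) and (b).

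There is essentially no obstacle here, since both ingredients have already been proved in the text. The only point worth checking carefully is the compatibility $\eba = \pi_\Zrm(\ehat)$, which is immediate from the definitions of the maps $z \mapsto \zhat$ and $\pi_\Zrm$ as the two stages of reduction modulo $\rG R_\rG$ of the noetherian commutative $R_\rG$-algebra $R_\rG \Zrm(\HC)$ sitting inside $R_\rG \HC$. Once this compatibility is noted, the corollary is nothing more than the composition of two bijections that respect primitivity of central idempotents.
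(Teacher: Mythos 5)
Your proof is correct and is exactly the paper's intended argument: the text introduces the corollary with the phrase "En combinant les propositions~\ref{muller} et~\ref{relevement idempotent}, on obtient le corollaire suivant," and the compatibility $\eba=\pi_\Zrm(\ehat)$ that you verify is precisely the one the paper records when defining the maps $z\mapsto\zhat$ and $z\mapsto\zba$. You have simply spelled out the composition that the authors leave implicit.
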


\bigskip

Ainsi, on obtient une bijection 
\equat\label{bijection finale}
\Upsilon^{-1}(\rG) \stackrel{\sim}{\longleftrightarrow} \blocs(R_\rG \Zrm(\HC)) 
\endequat
Si $\zG \in \Upsilon_\rG^{-1}(\rG R_\rG)$ et si $e \in \blocs(R_\rG \Zrm(\HC))$, alors
\equat\label{bij e r}
\text{\it $e$ et $\zG$ sont associ\'es par cette bijection si et seulement si $e \not\in R_\rG \zG$.}
\endequat

Pour r\'esumer, on obtient un diagramme naturel de bijections
\equat\label{diagramme bijections deploye}
\diagram
\Upsilon^{-1}(\rG) \ar@{<->}[r]^{\DS{\sim}} 
\ar@{<-->}[ddr]^{\DS{\reflectbox{\rotatebox[origin=c]{40}{$\backsim$}}}}&
\Upsilon_\rG^{-1}(\rG R_\rG) \ar@{<->}[rr]^{\DS{\sim}} 
\ar@{<-->}[dd]^{\DS{\reflectbox{\rotatebox[origin=c]{-90}{$\backsim$}}}}&& 
\Spec k\Zrm(\HC) \ar@{<->}[rr]^{\DS{\sim}}
 \ar@{<->}[dd]^{\DS{\reflectbox{\rotatebox[origin=c]{-90}{$\backsim$}}}} && 
\Spec \Zrm(k\HC) \ar@{<->}[dd]^{\DS{\reflectbox{\rotatebox[origin=c]{-90}{$\backsim$}}}} \\
&&&&&\\
&\blocs(R_\rG \Zrm(\HC)) \ar@{<-->}[rr]^{\DS{\sim}}&& \blocs(k\Zrm(\HC)) \ar@{<->}[rr]^{\DS{\sim}}&&
\blocs(\Zrm(k\HC)), \\
\enddiagram
\endequat
les bijections en pointill\'e n'existant \`a coup s\^ur que parce que la $K$-alg\`ebre 
$K\HC$ est d\'eploy\'ee.

Nous noterons
$$\fonctio{\Upsilon^{-1}(\rG)}{\blocs(R_\rG\Zrm(\HC))}{\zG}{e_\zG}$$
la bijection du diagramme~\ref{diagramme bijections deploye}. On obtient une partition de 
$\Irr(K\HC)$ gr\^ace \`a l'action des idempotents centraux $e_\zG$~:
\equat\label{partition bloc}
\Irr(K\HC) = \coprod_{\zG \in \Upsilon^{-1}(\rG)} \Irr(K\HC e_\zG).
\endequat
Les sous-ensembles $\Irr(K\HC e_\zG)$ sont les $\rG$-blocs  
de $\HC$.

\bigskip

\begin{exemple}
Lorsque $\rG$ est l'id\'eal nul, alors $R_\rG=k=K$, 
$\Upsilon^{-1}(\rG) \simeq \Spec K\Zrm(\HC)$, 
$\blocs(R_\rG \HC)=\blocs(K\HC)$ et $\o_V^\rG = \o_V$.\finl
\end{exemple}

\bigskip
%
%

\subsection{Lieu de ramification} 
La proposition suivante est certainement classique (et n\'ecessite le fait que $R$ soit 
int\'egralement clos)~:

\bigskip

\begin{prop}\label{codimension un}
Supposons l'alg\`ebre $K\HC$ {\bfit d\'eploy\'ee}. Alors il existe un id\'eal $\aG$ de $R$ v\'erifiant les 
deux propri\'et\'es suivantes~:
\begin{itemize}
\itemth{1} $\Spec(R/\aG)$ est vide ou purement de codimension $1$ dans $\Spec(R)$~;

\itemth{2} Si $\rG$ est un id\'eal premier de $R$, alors $\blocs(R_\rG \Zrm(\HC))=\blocs(K\Zrm(\HC))$ si et seulement 
si $\aG \not\subset \rG$. 
\end{itemize}
\end{prop}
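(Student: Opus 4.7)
Le plan comporte trois \'etapes. Tout d'abord, je reformule l'\'enonc\'e en termes des caract\`eres centraux. Puisque $K\HC$ est d\'eploy\'ee, les idempotents primitifs de $K\Zrm(\HC)$ correspondent aux caract\`eres centraux $\omega_b : K\Zrm(\HC) \to K$, et chacun se restreint en un morphisme d'anneaux $\omega_b : \Zrm(\HC) \to R$ puisque $R$ est int\'egralement clos et $\Zrm(\HC)$ est entier sur $R$. D'apr\`es la proposition~\ref{relevement idempotent}, l'application naturelle $\blocs(K\Zrm(\HC)) \to \blocs(R_\rG \Zrm(\HC))$ est surjective~; l'\'egalit\'e cherch\'ee $\blocs(R_\rG \Zrm(\HC)) = \blocs(K\Zrm(\HC))$ \'equivaut donc \`a ce que, pour toute paire de blocs distincts $b \neq b'$, les caract\`eres $\omega_b$ et $\omega_{b'}$ restent distincts modulo $\rG$.

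Je d\'efinis ensuite, pour chaque paire $b \neq b'$,
$$\aG_{b,b'} := \bigl\langle \omega_b(z) - \omega_{b'}(z) \mid z \in \Zrm(\HC) \bigr\rangle_R,$$
id\'eal non nul de $R$, puis $\aG := \prod_{b \neq b'} \aG_{b,b'}$. Par primalit\'e de $\rG$, la condition $\aG \subset \rG$ \'equivaut \`a $\aG_{b,b'} \subset \rG$ pour au moins une paire $b \neq b'$, c'est-\`a-dire \`a la fusion de ces deux blocs modulo $\rG$. Ceci \'etablit la propri\'et\'e~(2).

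La derni\`ere \'etape consiste \`a prouver la puret\'e en codimension~$1$ de $V(\aG)$. Je me ram\`ene \`a montrer le lemme cl\'e suivant~: si $\rG$ est un premier de $R$ tel que $\blocs(R_\pG \Zrm(\HC)) = \blocs(K\Zrm(\HC))$ pour tout premier $\pG$ de hauteur~$1$ contenu dans $\rG$, alors on a aussi $\blocs(R_\rG \Zrm(\HC)) = \blocs(K\Zrm(\HC))$. Par contrapos\'ee, si $\rG$ est un premier minimal sur $\aG$, il existe un premier $\pG$ de hauteur $1$ contenu dans $\rG$ v\'erifiant $\aG \subset \pG$, et la minimalit\'e force $\pG = \rG$, donc $\mathrm{ht}(\rG) = 1$.

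L'obstacle principal est la preuve de ce lemme cl\'e. Elle repose sur l'\'egalit\'e $R_\rG = \bigcap_{\pG \subset \rG,\, \mathrm{ht}(\pG) = 1} R_\pG$, cons\'equence directe de la normalit\'e de $R_\rG$, et requiert d'en d\'eduire une \'egalit\'e analogue au niveau des modules $R_? \Zrm(\HC)$. Cette propagation n'est pas automatique pour un module de type fini quelconque (elle rel\`eve d'une propri\'et\'e de r\'eflexivit\'e de type $S_2$ sur $\Zrm(\HC)$), mais on peut la contourner pour les idempotents primitifs $e_b$ de $K\Zrm(\HC)$ en exploitant leur structure multiplicative~: la relation $e_b^2 = e_b$, combin\'ee \`a la centralit\'e, permet via une analyse du conducteur $\{r \in R_\rG : r e_b \in R_\rG \Zrm(\HC)\}$ et de ses propri\'et\'es par rapport aux premiers de hauteur~$1$ de $R_\rG$, de conclure que les $e_b$ descendent \`a $R_\rG \Zrm(\HC)$ d\`es qu'ils se trouvent dans chaque $R_\pG \Zrm(\HC)$.
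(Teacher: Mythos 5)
Your reformulation of property~(2) via central characters, and the resulting ideal $\aG = \prod_{b\neq b'} \aG_{b,b'}$ with $\aG_{b,b'}$ generated by the differences $\omega_b(z) - \omega_{b'}(z)$, is correct and does establish~(2). But your treatment of the pure-codimension-$1$ statement~(1) contains a genuine gap. Your \emph{lemme cl\'e} is indeed the right reduction (a minimal prime $\rG$ over $\aG$ would then contain a height-$1$ prime $\pG$ still containing $\aG$, forcing $\rG=\pG$), and the lemma is true, but the sketch you give does not prove it. You correctly locate the obstacle: the identity $R_\rG = \bigcap_{\pG\subset\rG,\, \mathrm{ht}\,\pG=1} R_\pG$, valid since $R$ is Noetherian and integrally closed, needs to propagate to the module $\Zrm(\HC)$, which is a reflexivity/$S_2$ question. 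But the proposed detour via the conductor $\{r\in R_\rG : r e_b \in R_\rG\Zrm(\HC)\}$ together with $e_b^2=e_b$ does not resolve it: idempotency gives no leverage on this conductor, and nothing in the argument forces it to equal $R_\rG$.

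The clean fix is to avoid $\Zrm(\HC)$ entirely and exploit that $\HC$ itself is $R$-free, which the paper does. Fix an $R$-basis $(b_1,\dots,b_n)$ of $\HC$ and write $e_i=\sum_j k_{ij} b_j$ with $k_{ij}\in K$; then $e_i\in R_\rG\HC$ if and only if all $k_{ij}\in R_\rG$, and because $e_i$ is central this is equivalent to $e_i\in R_\rG\Zrm(\HC)$ (flatness of localization gives $\Zrm(R_\rG\HC)=R_\rG\Zrm(\HC)$). So the block condition reduces to scalar membership conditions $k_{ij}\in R_\rG$. For each $k\in K$, the conductor $\aG_k=\{r\in R : rk\in R\}$ satisfies $k\in R_\rG\Leftrightarrow\aG_k\not\subset\rG$, and $\Spec(R/\aG_k)$ is empty or purely of codimension $1$: a minimal prime $\rG$ over $\aG_k$ has $k\notin R_\rG$, whence $k\notin R_\pG$ for some height-$1$ prime $\pG\subset\rG$ by normality of $R$, whence $\aG_k\subset\pG\subset\rG$ and $\rG=\pG$. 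Taking $\aG=\prod_{i,j}\aG_{k_{ij}}$ settles both properties at once. This trades your reflexivity question about $\Zrm(\HC)$ (true, but it needs a separate argument --- e.g.\ $\Zrm(\HC)$ is the kernel of an $R$-linear map from the free module $\HC$ to the torsion-free module $\End_R(\HC)$, hence reflexive) for the freeness of $\HC$, which is given, and trades your ideal of character-differences for a product of element-wise conductors, for which codimension-$1$ purity is immediate. The two ideals have the same radical, so once purity is established for one it holds for the other, but the argument must be run on the conductors.
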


\begin{proof}
Soit $(b_1,\dots,b_n)$ une $R$-base de $\HC$ et soit $\blocs(K\Zrm(\HC))=\{e_1,\dots,e_l\}$ avec $l=|\blocs(\Zrm(\HC))|$. 
Fixons un id\'eal premier $\rG$ de $R$. On \'ecrit 
$$e_i=\sum_{j=1}^n k_{ij} \, b_j$$
avec $k_{ij} \in K$. Alors $\blocs(R_\rG \Zrm(\HC))=\blocs(K\Zrm(\HC))$ si et seulement si 
$$\forall~1 \le i \le l,~\forall~1 \le j \le n,~k_{ij} \in R_\rG.\leqno{(\clubsuit)}$$
Si $k \in K$, on pose $\aG_k=\{r \in R~|~rk \in R\}$. 
Alors $\aG_k$ est un id\'eal de $R$ et, si $\rG$ est un id\'eal premier de 
$R$, alors $k \in R_\rG$ si et seulement si $\aG_k \not\subset \rG$. Posons 
$$\aG=\prod_{\substack{1 \le i \le l \\ 1 \le j \le n}} \aG_{k_{ij}}.$$
Ainsi, $(\clubsuit)$ devient \'equivalent \`a $\aG \not\subset \rG$. 
Cela d\'emontre l'assertion (2). 

\medskip

Montrons maintenant que $\Spec(R/\aG)$ est vide ou purement de codimension $1$ dans $\Spec(R)$. 
Pour cela, il suffit de montrer que $\Spec(R/\aG_k)$ est vide ou purement de codimension $1$ dans $\Spec(R)$. 
Si $k \in R$, alors $\aG_k=R$ et il n'y rien \`a montrer. Supposons donc que $k \not\in R$, 
et montrons qu'alors $\Spec(R/\aG_k)$ est purement de codimension $1$ dans $\Spec(R)$. 
Soit $\rG$ un id\'eal premier minimal de $R$ contenant $\aG_k$. Alors $k \not\in R_\rG$. 
Il nous faut montrer que $\rG$ est de hauteur $1$. Mais, puisque $R$ est int\'egralement clos, 
il en est de m\^eme de $R_\rG$~: donc $R_\rG$ est l'intersection des localis\'es $R_{\rG'}$, 
o\`u $\rG'$ parcours l'ensemble des id\'eaux premiers de hauteur $1$ de $R$ contenus dans $\rG$ 
(voir~\cite[th\'eor\`eme 11.5]{matsumura}). Donc il existe un id\'eal premier $\rG'$ de $R$ de 
hauteur $1$ contenu dans $\rG$ et tel que $k \not\in R_{\rG'}$. Ainsi $\aG_k \subset \rG' \subset \rG$ et 
la minimalit\'e de $\rG$ implique que $\rG=\rG'$, c'est-\`a-dire que $\rG$ est de hauteur $1$.
\end{proof}

\bigskip

\section{Matrices de d\'ecomposition}\label{section:decomposition}

\medskip

\def\charpol{{\mathrm{Char}}}
\def\reduction{{\mathrm{r\acute{e}d}}}

Soit $R_1$ une $R$-alg\`ebre commutative et soit $\rG_1$ un id\'eal premier de $R_1$. 
On pose $R_2=R_1/\rG_1$, $K_1=\Frac(R_1)$ et $K_2=\Frac(R_2)=k_{R_1}(\rG_1)$. 
Notons $\FC(\HC,K_1[\tb])$ l'ensemble des applications $\HC \to K_1[\tb]$. 
Si $V$ est un $K_1\HC$-module de type fini et si $h \in \HC$, nous noterons $\charpol_{K_1}^V(h)$ le 
polyn\^ome caract\'eristique de $h$ pour son action sur le $K_1$-espace vectoriel de dimension finie $V$. 
Ainsi, $\charpol_{K_1}^V \in \FC(\HC,K_1[\tb])$. D'autre part, $\charpol_{K_1}^V$ ne d\'epend que de la classe de $V$ 
dans le groupe de Grothendieck $\groth(K_1\HC)$. Cela d\'efinit donc une application 
$$\charpol_{K_1} : \groth^+(K_1\HC) \longto \FC(\HC,K_1[\tb]),$$
o\`u $\groth^+(K_1\HC)$ d\'esigne le sous-mono\"{\i}de de $\groth(K_1\HC)$ form\'e des classes 
d'isomorphie de $K_1\HC$-modules de type fini. Il est bien connu que $\charpol_{K_1}$ est 
injective~\cite[proposition~2.5]{geck rouquier}. 

Nous dirons que le couple $(R_1,\rG_1)$ v\'erifie la propri\'et\'e 
$\propdec$ si les trois assertions suivantes sont satisfaites~:
\begin{quotation}
\begin{itemize}
\item[(D1)] $R_1$ est noeth\'erien, int\`egre.

\item[(D2)] Si $h \in R_1\Hb$ et si $V$ est un $K_1\Hb$-module simple, alors 
$\charpol_{K_1}^V(h) \in R_1[\tb]$ (notons que cette propri\'et\'e est automatiquement satisfaite 
si $R_1$ est int\'egralement clos). 

\item[(D3)] Les alg\`ebres $K_1\Hb$ et $K_2\Hb$ sont d\'eploy\'ees.
\end{itemize}
\end{quotation}
On note $\reduction_{\rG_1} : \FC(\HC,R_1[\tb]) \longto \FC(\HC,R_2[\tb])$ la r\'eduction modulo $\rG_1$. 
D'autre part, sous l'hypoth\`ese (D3), si $K_2'$ est une extension de $K_2$, l'extension des scalaires 
induit un isomorphisme $\groth(K_2\HC) \longiso \groth(K_2'\HC)$, et nous identifierons 
donc ces deux groupes de Grothendieck.

\bigskip

\begin{prop}[Geck-Rouquier]\label{prop:geck-rouquier}
Si $(R_1,\rG_1)$ v\'erifie $\propdec$, alors il existe une unique application 
$\dec_{R_2\HC}^{R_1\HC} : \groth(K_1\HC) \longto \groth(K_2\HC)$  \indexnot{d}{\dec_{R_2\HC}^{R_1\HC}}  rendant le diagramme 
suivant
$$\diagram
\groth(K_1\HC) \rrto^{\DS{\charpol_{K_1}}} \ddto_{\DS{\dec_{R_2\HC}^{R_1\HC}}} 
&& \FC(\HC,R_1[\tb]) \ddto^{\DS{\reduction_{\rG_1}}} \\
&&\\
\groth(K_2\HC) \rrto^{\DS{\charpol_{K_2}}} && \FC(\HC,K_2[\tb])
\enddiagram$$
commutatif. Si $\OC_1$ est un sous-anneau de $K_1$ contenant $R_1$, si $\mG_1$ est un id\'eal 
de $\OC_1$ tel que $\mG_1 \cap R_1 = \rG_1$, et si $\LC$ est un $\OC_1\HC$-module qui est $\OC_1$-libre 
et de type fini, alors $k_{\OC_1}(\mG_1)$ est une extension de $K_2$ et 
$$\dec_{R_1\HC}^{R_2\HC} \isomorphisme{K_1\LC}_{K_1\HC} = \isomorphisme{k_{\OC_1}(\mG_1) \LC}_{k_{\OC_1}(\mG_1)\HC}.$$
\end{prop}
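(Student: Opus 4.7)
The uniqueness is immediate: as noted just before the statement, the map $\charpol_{K_2}:\groth^+(K_2\HC)\to\FC(\HC,K_2[\tb])$ is injective, and since $\groth(K_2\HC)$ is the groupification of $\groth^+(K_2\HC)$, extending by $\ZM$-linearity yields an injection on the full Grothendieck group. Hence the commutativity of the square forces $\charpol_{K_2}\circ\dec_{R_2\HC}^{R_1\HC}$ to equal $\reduction_{\rG_1}\circ\charpol_{K_1}$, which uniquely determines $\dec_{R_2\HC}^{R_1\HC}$ once we know it exists.

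For existence, I would proceed as follows. Choose a valuation ring $\OC\subset K_1$ dominating the localization $(R_1)_{\rG_1}$ (such $\OC$ exists by a standard application of Zorn's lemma to dominating local subrings of $K_1$; here we do not require $R_1$ to be a discrete valuation ring). Write $\mG$ for the maximal ideal of $\OC$; then $\mG\cap R_1=\rG_1$, so the residue field $k_\OC=\OC/\mG$ is an extension of $K_2$. Because $K_1\HC$ is finite-dimensional over $K_1$ and $\HC$ is $R$-free of finite rank, every finitely generated $K_1\HC$-module $V$ contains a full $\OC\HC$-lattice $\LC$ (start with any $\OC$-generating family and take the $\OC\HC$-span; finite generation over $\OC\HC$ forces finite generation over $\OC$, hence $\OC$-freeness since $\OC$ is a valuation ring and $\LC$ is torsion-free). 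Set
$$\dec_{R_2\HC}^{R_1\HC}\isomorphisme{V}_{K_1\HC}
=\isomorphisme{k_\OC\otimes_\OC\LC}_{k_\OC\HC}
\in\groth(k_\OC\HC)=\groth(K_2\HC),$$
where the last identification uses that $K_2\HC$ is split (hypothesis (D3)) so extending scalars to $k_\OC$ is an isomorphism on Grothendieck groups. Extend by $\ZM$-linearity to all of $\groth(K_1\HC)$.

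The key verifications are twofold. First, the definition is independent of the choices of $\OC$ and $\LC$: for any $h\in\HC$, property (D2) gives $\charpol_{K_1}^V(h)\in R_1[\tb]\subset\OC[\tb]$, and this polynomial coincides with $\charpol_\OC^\LC(h)$ because $K_1\otimes_\OC\LC=V$. Reducing modulo $\mG$, the characteristic polynomial of $h$ acting on $k_\OC\otimes_\OC\LC$ equals $\reduction_{\rG_1}\charpol_{K_1}^V(h)$, which depends only on $(V,\rG_1)$. Since $\charpol_{K_2}$ is injective on $\groth^+(K_2\HC)$ (viewing $k_\OC$-modules via the Grothendieck-group identification), this forces $[k_\OC\otimes_\OC\LC]$ to be independent of $\LC$ and $\OC$. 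Second, the same computation shows that the stated diagram commutes.

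Finally, the last assertion about an $\OC_1\HC$-lattice $\LC$ with $\mG_1\cap R_1=\rG_1$ is essentially the definition: enlarging $\OC_1$ to a valuation ring $\OC$ dominating $(\OC_1)_{\mG_1}$ inside $K_1$, the lattice $\OC\otimes_{\OC_1}\LC$ serves as an allowable choice, and its reduction is $k_\OC\otimes_{k_{\OC_1}(\mG_1)}(k_{\OC_1}(\mG_1)\otimes_{\OC_1}\LC)$, whose class in $\groth(K_2\HC)$ equals $\isomorphisme{k_{\OC_1}(\mG_1)\LC}$. The main technical obstacle is the existence and control of the dominating valuation ring $\OC$—in particular, verifying that the residue field can be identified with an extension of $K_2$ over which the Grothendieck-group identification with $\groth(K_2\HC)$ is valid; once (D3) is in hand this is formal, but it is the step where the hypotheses of $(\dec)$ are genuinely used.
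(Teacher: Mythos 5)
Your proof is correct, and it takes a genuinely different route from the paper's. The paper disposes of the proposition in two steps: first it cites \cite[proposition~2.11]{geck rouquier}, which establishes exactly this statement when $R_1$ is integrally closed (in which case (D2) is automatic); then, for a general $R_1$ satisfying $\propdec$, it passes to the integral closure $R_1'$ of $R_1$ in $K_1$, lifts $\rG_1$ to a prime $\rG_1'$ of $R_1'$, applies the cited result to $(R_1',\rG_1')$, and transports the resulting decomposition map through the isomorphism $\groth(k_{R_1'}(\rG_1')\HC)\simeq\groth(K_2\HC)$ furnished by (D3). You instead reconstruct the underlying Geck--Rouquier argument from scratch: choose a valuation ring $\OC$ dominating $(R_1)_{\rG_1}$, form an $\OC\HC$-lattice in each $K_1\HC$-module, reduce modulo the maximal ideal, and identify $\groth(k_\OC\HC)\simeq\groth(K_2\HC)$ via (D3); independence of all choices and commutativity of the square follow from (D2) together with injectivity of $\charpol_{K_2}$. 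The two approaches are both sound. The paper's is shorter because it leverages the literature and confines new work to a reduction step. Yours has the advantage of being self-contained, and it observes — correctly — that the valuation-ring construction never needed $R_1$ integrally closed: the only role of integral closure in Geck--Rouquier is to guarantee (D2), which here is assumed outright; so your direct argument sidesteps the integral-closure detour entirely. Your treatment of the final assertion (enlarging $\OC_1$ to a valuation ring dominating $(\OC_1)_{\mG_1}$ and base-changing the lattice) is also the right way to deduce it from the construction.
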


\begin{proof}
Cette proposition est d\'emontr\'ee dans~\cite[proposition~2.11]{geck rouquier} lorsque $R_1$ est int\'egralement 
clos. Nous allons d\'eduire le cas g\'en\'eral du cas particulier. 
Si on suppose seulement que (D2) est vraie, notons $R_1'$ la cl\^oture int\'egrale de $R_1$ dans 
$K_1$. Puisque $R_1'$ est entier sur $R_1$, il existe un id\'eal premier $\rG_1'$ de $R_1'$ tel que 
$\rG_1' \cap R_1 = \rG_1$. Posons $R_2'=R_1'/\rG_1'$. Alors $k_{R_1'}(\rG_1')$ est une extension de $k_{R_1}(\rG_1)$ 
donc $k_{R_1'}(\rG_1)\HC$ est d\'eploy\'ee, ce qui signifie que, d'apr\`es~\cite[proposition~2.11]{geck rouquier}, 
$\dec_{R_2'\HC}^{R_1'\HC} : \groth(K_1\HC) \longto \groth(k_{R_1'}(\rG_1')\HC)$ est bien d\'efinie 
et v\'erifie les propri\'et\'es souhait\'ees. On d\'efinit alors 
$\dec_{R_2\HC}^{R_1\HC}$ en utilisant l'isomorphisme $\groth(k_{R_1'}(\rG_1')\HC) \simeq \groth(K_2\HC)$ 
et il est facile de v\'erifier que cette application v\'erifie les propri\'et\'es attendues.
\end{proof}

Il s'en suit un r\'esultat de transitivit\'e imm\'ediat~\cite[proposition~2.12]{geck rouquier}~:

\bigskip

\begin{coro}[Geck-Rouquier]\label{geck rouquier}
Soit $R_1$ une $R$-alg\`ebre, soit $\rG_1$ un id\'eal premier de $R_1$ et soit $\rG_2$ un id\'eal 
premier de $R_2=R_1/\rG_1$. On suppose que $(R_1,\rG_1)$ et $(R_2,\rG_2)$ v\'erifient $\propdec$ 
et on pose $R_3=R_2/\rG_2$. 
Alors 
$$\dec_{R_3\HC}^{R_1\HC} = \dec_{R_3\HC}^{R_2\HC} \circ \dec_{R_2\HC}^{R_1\HC}.$$
\end{coro}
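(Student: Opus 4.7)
Le plan est d'exploiter la caract\'erisation de l'application de d\'ecomposition par la propri\'et\'e universelle de la proposition~\ref{prop:geck-rouquier}, puis d'invoquer l'unicit\'e. Notons $\tilde{\rG}_2$ l'image inverse de $\rG_2$ dans $R_1$ via la projection canonique $R_1 \surto R_2$~: c'est un id\'eal premier de $R_1$ contenant $\rG_1$, et on dispose d'un isomorphisme canonique $R_1/\tilde{\rG}_2 \simeq R_2/\rG_2 = R_3$, d'o\`u en particulier $k_{R_1}(\tilde{\rG}_2) = K_3$. Je v\'erifierai d'abord que le couple $(R_1,\tilde{\rG}_2)$ satisfait \`a la propri\'et\'e $\propdec$~: (D1) est h\'erit\'ee de $(R_1,\rG_1)$~; (D2) ne d\'epend que de l'anneau $R_1$ et des $K_1\HC$-modules simples, donc s'h\'erite aussi~; enfin (D3) r\'esulte du d\'eploiement de $K_1\HC$ (par $(R_1,\rG_1)$) et du d\'eploiement de $K_3\HC$ (par $(R_2,\rG_2)$). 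L'application $\dec_{R_3\HC}^{R_1\HC}$ est donc bien d\'efinie et caract\'eris\'ee par la commutativit\'e du diagramme de la proposition~\ref{prop:geck-rouquier}.

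La seconde \'etape est une observation de factorisation de la r\'eduction. Par construction de $\tilde{\rG}_2$, la projection canonique $R_1 \surto R_3$ se factorise en $R_1 \surto R_2 \surto R_3$, ce qui entra\^{\i}ne imm\'ediatement que
$$\reduction_{\tilde{\rG}_2} = \reduction_{\rG_2} \circ \reduction_{\rG_1} : \FC(\HC,R_1[\tb]) \longto \FC(\HC,R_3[\tb]).$$
Il s'agit alors de montrer que l'application compos\'ee $\dec_{R_3\HC}^{R_2\HC} \circ \dec_{R_2\HC}^{R_1\HC}$ satisfait la m\^eme propri\'et\'e caract\'eristique que $\dec_{R_3\HC}^{R_1\HC}$. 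En appliquant la proposition~\ref{prop:geck-rouquier} successivement aux couples $(R_1,\rG_1)$ et $(R_2,\rG_2)$, on a les deux \'egalit\'es
$$\charpol_{K_2} \circ \dec_{R_2\HC}^{R_1\HC} = \reduction_{\rG_1} \circ \charpol_{K_1} \qquad \text{et} \qquad \charpol_{K_3} \circ \dec_{R_3\HC}^{R_2\HC} = \reduction_{\rG_2} \circ \charpol_{K_2}.$$
Il faudra au passage observer que le terme de gauche de la premi\`ere \'egalit\'e prend bien ses valeurs dans $\FC(\HC,R_2[\tb])$, ce qui d\'ecoule de ce que $\reduction_{\rG_1}$ envoie $R_1[\tb]$ dans $R_2[\tb]$, permettant ainsi la composition avec $\reduction_{\rG_2}$. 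En combinant ces deux \'egalit\'es et la factorisation de la r\'eduction, on obtient
$$\charpol_{K_3} \circ \dec_{R_3\HC}^{R_2\HC} \circ \dec_{R_2\HC}^{R_1\HC} = \reduction_{\tilde{\rG}_2} \circ \charpol_{K_1},$$
qui est pr\'ecis\'ement la propri\'et\'e caract\'erisant $\dec_{R_3\HC}^{R_1\HC}$ dans la proposition~\ref{prop:geck-rouquier}.

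L'\'egalit\'e recherch\'ee r\'esulte alors imm\'ediatement de la partie unicit\'e de cette proposition, laquelle repose sur l'injectivit\'e de l'application $\charpol_{K_3} : \groth^+(K_3\HC) \longto \FC(\HC,K_3[\tb])$. Il n'y a pas d'obstacle majeur dans cette d\'emarche~: la preuve est essentiellement un exercice de compatibilit\'e de diagrammes commutatifs. Le seul point demandant une l\'eg\`ere attention est la v\'erification de $\propdec$ pour le couple interm\'ediaire $(R_1,\tilde{\rG}_2)$, qui est n\'eanmoins automatique compte tenu des hypoth\`eses sur $(R_1,\rG_1)$ et $(R_2,\rG_2)$.
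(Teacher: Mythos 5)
Votre argument est correct et constitue une démonstration complète et auto-contenue du corollaire, là où le texte se contente de renvoyer à \cite[proposition~2.12]{geck rouquier}. La stratégie est la bonne~: construire l'idéal premier $\tilde{\rG}_2$ de $R_1$ définissant $\dec_{R_3\HC}^{R_1\HC}$, vérifier que $(R_1,\tilde{\rG}_2)$ satisfait $\propdec$ (ce qui, comme vous le notez, n'emprunte rien de nouveau aux hypothèses~: (D1) et (D2) ne concernent que $R_1$, et le déploiement de $K_3\HC$ provient de $(R_2,\rG_2)$), factoriser la réduction $\reduction_{\tilde{\rG}_2}=\reduction_{\rG_2}\circ\reduction_{\rG_1}$ et conclure par l'unicité de la proposition~\ref{prop:geck-rouquier}.

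Une remarque mineure de rigueur~: l'unicité exploitée repose sur l'injectivité de $\charpol_{K_3}$ sur $\groth^+(K_3\HC)$, mais les applications de décomposition sont définies sur tout $\groth(K_1\HC)$. Il convient donc d'ajouter une phrase expliquant pourquoi cela suffit~: les deux applications envoient la classe d'un $K_1\HC$-module simple sur des éléments de $\groth^+(K_3\HC)$ (puisqu'une réduction modulaire d'un réseau est un module effectif), elles coïncident donc sur $\groth^+(K_1\HC)$, lequel engendre $\groth(K_1\HC)$ comme groupe. De même, la vérification que $\charpol_{K_2}\circ\dec_{R_2\HC}^{R_1\HC}$ prend ses valeurs dans $\FC(\HC,R_2[\tb])$ — nécessaire pour composer avec $\reduction_{\rG_2}$ — peut se justifier aussi directement par la condition (D2) pour $(R_2,\rG_2)$, sans passer par le diagramme commutatif~; votre argument via l'image de $\reduction_{\rG_1}$ est néanmoins tout aussi valide.
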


\bigskip
%
%
%
%
%
%
%
%
%
%
%
%
%

%
%
%

\section{Idempotents et caract\`eres centraux}\label{section:relevements-idempotents}

\bigskip

Le but de cette section est de compl\'eter la preuve de la proposition~\ref{muller plus}. 
Soit $\OC$ un anneau local noeth\'erien et soit 
$A$ une sous-$\OC$-alg\`ebre de $\OC^d=\OC \times \OC \times \cdots \times \OC$ ($d$ fois). 
On note $\mG=J(\OC)$, $k=\OC/\mG$ et, si $r \in \OC$, on note $\rba$ son image dans $k$.

Si $1 \le i \le d$, on note $\pi_i : \OC^d \to \OC$ la $i$-\`eme projection et 
$$\o_i : A \longto \OC$$
d\'esigne la restriction de $\pi_i$ \`a $A$. On pose
$$\fonction{\bar{\o}_i}{A}{k}{a}{\overline{\o_i(a)}.}$$
Sur l'ensemble $\{1,2,\dots,d\}$, on note $\smile$ la relation d'\'equivalence 
d\'efinie par 
$$\text{$i \smile j$ \quad si et seulement si \quad $\bar{\o}_i=\bar{\o}_j$.}$$
Pour finir, on pose
$$e_i=(0,\dots,0,\underbrace{1}_{\text{$i$-\`eme position}},0,\dots,0) \in \OC^d.$$
Alors~:

\bigskip

\begin{lem}\label{lem:relevement-idempotent}
Soit $I \in \{1,2,\dots,d\}/\smile$. Alors $\sum_{i \in I} e_i \in A$.
\end{lem}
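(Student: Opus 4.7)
The plan is to reduce to constructing $\epsilon_{I^c}$, which then yields $\epsilon_I = 1 - \epsilon_{I^c} \in A$. The case $I = \{1,\dots,d\}$ is trivial since then $\epsilon_I = 1 \in A$, so I will assume $I^c \neq \emptyset$.

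First, I will construct an element $c \in A$ whose image in $\OC^d$ vanishes on the coordinates in $I^c$ and is a unit congruent to $1 \bmod \mG$ on the coordinates in $I$. Fix any $i_0 \in I$. For each $j \in I^c$, since $i_0 \not\smile j$, by definition there exists $a_j \in A$ with $\omega_{i_0}(a_j) - \omega_j(a_j) \notin \mG$, i.e.\ a unit in $\OC$. Setting
\[
b_j \;:=\; \frac{a_j - \omega_j(a_j)\cdot 1}{\omega_{i_0}(a_j) - \omega_j(a_j)} \;\in\; A,
\]
one checks $\omega_j(b_j) = 0$ and $\omega_{i_0}(b_j) = 1$. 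For any other $k \in I$, the relation $k \smile i_0$ gives $\bar\omega_k(b_j) = \bar\omega_{i_0}(b_j) = 1$, so $\omega_k(b_j) \in 1 + \mG \subset \OC^\times$. Taking the product $c := \prod_{j \in I^c} b_j \in A$ then yields $\omega_j(c) = 0$ for every $j \in I^c$ and $u_k := \omega_k(c) \in (1+\mG)^{|I^c|} \subseteq \OC^\times$ for every $k \in I$.

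Second, I will extract $\epsilon_{I^c}$ from $c$ by an explicit polynomial. Consider the monic polynomial
\[
P(x) \;:=\; \prod_{k \in I} (x - u_k) \;\in\; \OC[x],
\]
whose constant term $P(0) = (-1)^{|I|}\prod_{k \in I} u_k$ is a unit of $\OC$ as a product of units. Since each $\omega_l: A \to \OC$ is a ring homomorphism, $\omega_l(P(c)) = P(\omega_l(c))$, which equals $0$ for $l \in I$ (one factor $u_l - u_l$ vanishes) and equals $P(0)$ for $l \in I^c$ (since $\omega_l(c) = 0$). Therefore $P(c) = P(0)\cdot \epsilon_{I^c}$ inside $\OC^d$.

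Finally, since $P(c) \in A$ and $P(0) \in \OC^\times \subseteq A^\times$, we obtain $\epsilon_{I^c} = P(0)^{-1} P(c) \in A$, and hence $\epsilon_I = 1 - \epsilon_{I^c} \in A$. The only non-routine step is the construction of $c$, which uses precisely the definition of $\smile$; once $c$ is in hand, the polynomial evaluation is automatic because the $u_k$ being units makes $P(0)$ invertible in $\OC$.
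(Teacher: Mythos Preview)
Your proof is correct. The first two steps---constructing the $b_j$ and their product $c$---match the paper's $(\clubsuit)$ and $(\diamondsuit)$ almost verbatim, yielding an element of $A$ that vanishes on $I^c$ and takes unit values $u_k \equiv 1 \pmod{\mG}$ on $I$.

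The difference lies in how the idempotent is then extracted from $c$. The paper proceeds by an explicit recursion: starting from $a_1 = c/\omega_1(c)$, it sets $a_{i+1} = a_i^2\bigl(1 + \omega_{i+1}(a_i)^{-2}(1-a_i^2)\bigr)$ and shows inductively that $a_{i}$ has value exactly $1$ at positions $1,\dots,i$ and $0$ on $I^c$, so that $a_{|I|} = \sum_{i\in I} e_i$. Your approach instead evaluates the single polynomial $P(x)=\prod_{k\in I}(x-u_k)$ at $c$, which kills all the coordinates in $I$ simultaneously and leaves the invertible constant $P(0)$ on $I^c$; dividing by $P(0)$ gives $\epsilon_{I^c}$, and $\epsilon_I = 1-\epsilon_{I^c}$. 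This is cleaner than the paper's iteration: it avoids the somewhat opaque recursive formula and the accompanying induction, replacing them with a one-line Lagrange-type trick. The paper's recursion, on the other hand, has the minor virtue of producing $\epsilon_I$ directly with all coordinates equal to $1$ on $I$, without passing through the complement.
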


\bigskip

\begin{proof}
Quitte \`a r\'eordonner, on peut supposer que $I=\{1,2,\dots,d'\}$ avec $d' \le d$. 
Proc\'edons par \'etapes~:

$$\text{\it Si $i \in I$ et $j \not\in I$, alors il existe $a_{ij} \in A$ 
tel que $\o_i(a_{ij})=1$ et $\o_j(a_{ij})=0$.}\leqno{(\clubsuit)}$$

\medskip

\begin{quotation}
\noindent{\it Preuve de $(\clubsuit)$.} 
Puisque $i \not\smile j$, il existe $a \in A$ tel que $\bar{\o}_i(a) \neq \bar{\o}_j(a)$. 
Posons $r =\o_j(a)$ et $u = \o_i(a)-\o_j(a)$. Alors $u \in \OC^\times$ car $\OC$ est local 
et $a_{ij}=u^{-1}(a - r \cdot 1_A) \in A$ v\'erifie les conditions.\finl
\end{quotation}

$$\text{\it Il existe $a_1 \in A$ tel que $\o_1(a_1)=1$ et $\o_j(a_1)=0$ si $j \not\in I$.}
\leqno{(\diamondsuit)}$$

\medskip

\begin{quotation}
\noindent{\it Preuve de $(\diamondsuit)$.} 
D'apr\`es $(\clubsuit)$, il existe, pour tous $i \in I$ et $j \not\in J$, 
$a_{ij} \in A$ tel que $\o_i(a_{ij})=1$ et $\o_j(a_{ij})=0$. Notons que, 
si $i' \in I$, alors $\o_{i'}(a_{ij}) \equiv 1 \mod \mG$ car $\bar{\o}_i=\bar{\o}_{i'}$. 
Posons 
$a=\prod_{i \in I, j \not\in J} a_{ij}$. Alors il est clair 
que $\o_j(a)=0$ si $j \not\in I$ et $\o_i(a) \equiv 1 \mod \mG$ si $i \in I$. 
Il suffit alors de prendre $a_1 = \o_1(a)^{-1} a$.\finl
\end{quotation}

\bigskip

On d\'efinit alors par r\'ecurrence la suite $(a_i)_{1 \le i \le d'}$~: 
$$a_{i+1}=a_i^2 (1 + \o_{i+1}(a_i)^{-2}(1-a_i^2)).$$
on va montrer par r\'ecurrence sur $i \in \{1,2,\dots,d'\}$ les deux faits suivants~:
$$\text{\it L'\'el\'ement $a_i$ est bien d\'efini et appartient \`a $A$.}\leqno{(\heartsuit_i)}$$
$$\text{\it Si $1 \le i' \le i$ et $j \not\in I$, alors $\o_{i'}(a_i)=1$ et $\o_j(a_i)=0$.}
\leqno{(\spadesuit_i)}$$

\bigskip

\begin{quotation}
\noindent{\it Preuve de $(\heartsuit_i)$ et $(\spadesuit_i)$.} 
C'est clair pour $i=1$. On raisonne donc par r\'ecurrence en supposant que 
$(\heartsuit_i)$ et $(\spadesuit_i)$ sont v\'erifi\'ees (avec $i \le d'-1$). Montrons 
$(\heartsuit_{i+1})$ et $(\spadesuit_{i+1})$. 

Alors $i \smile i+1$ et donc $\o_{i+1}(a_i) \equiv \o_i(a_i)=1 \mod \mG$. 
Donc $\o_{i+1}(a_i)$ est inversible et donc $a_{i+1}$ est bien d\'efini et appartient \`a $A$ 
(ce qui est exactement $(\heartsuit_{i+1})$). 

D'autre part, posons pour simplifier $r=\o_{i+1}(a_i)$. Alors~:

$\bullet$ Si $1 \le i' \le i$, on a $\o_{i'}(a_{i+1})=1 \cdot (1 + r^{-2}(1-1^2))=1$.

$\bullet$ $\o_{i+1}(a_{i+1}) = r^2 (1 + r^{-2}(1-r^2))=1$.

$\bullet$ Si $j \not\in I$, alors $\o_j(a_{i+1})= 0 \cdot (1 + r^{-2}(1-0^2))=0$.

\noindent D'o\`u le r\'esultat.\finl
\end{quotation}

\bigskip

Ainsi, $a_{d'} = \sum_{i \in I} e_i \in A$.
\end{proof}

\bigskip

\begin{coro}\label{coro:blocs-A}
L'application 
$$\fonctio{\{1,2,\dots,d\}/\smile}{\blocs(A)}{I}{\DS{\sum_{i \in I} e_i}}$$
est bien d\'efinie et bijective.
\end{coro}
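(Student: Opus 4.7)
Le plan consiste à ramener tout au calcul des idempotents dans $\OC^d$, puis à caractériser ceux qui appartiennent à $A$.

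D'abord, je remarque que, puisque $\OC$ est local, les seuls idempotents de $\OC$ sont $0$ et $1$, et donc les idempotents de $\OC^d$ sont exactement les éléments de la forme $\sum_{i \in S} e_i$ pour $S \subseteq \{1,2,\dots,d\}$. Comme $A$ est une sous-$\OC$-algèbre (unitaire) de $\OC^d$, tout idempotent de $A$ est un idempotent de $\OC^d$, donc de cette forme. Il s'agit donc de déterminer quels $S$ conviennent, et parmi ceux-ci, lesquels correspondent à des idempotents primitifs.

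L'étape centrale est l'énoncé suivant~: $\sum_{i \in S} e_i \in A$ si et seulement si $S$ est une réunion de classes d'équivalence pour $\smile$. L'implication ``$\Leftarrow$'' résulte du lemme~\ref{lem:relevement-idempotent} (et de la stabilité de $A$ par addition). Pour ``$\Rightarrow$'', si $S$ n'est pas une réunion de classes, il existe $i \in S$ et $j \notin S$ avec $i \smile j$. En notant $a = \sum_{k \in S} e_k \in A$, on a $\o_i(a) = 1$ et $\o_j(a) = 0$, d'où $\overline{\o}_i(a) = 1 \neq 0 = \overline{\o}_j(a)$, contredisant $i \smile j$.

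On en déduit alors facilement les quatre points à vérifier. La \emph{bonne définition} résulte directement du lemme~\ref{lem:relevement-idempotent}. L'\emph{injectivité} est claire car les sommes $\sum_{i \in I} e_i$ pour $I \subseteq \{1,\dots,d\}$ sont deux à deux distinctes dans $\OC^d$. Pour la \emph{primitivité}, si $I$ est une classe et $\sum_{i \in I} e_i = f_1 + f_2$ avec $f_1, f_2$ idempotents orthogonaux non nuls de $A$, alors $f_k = \sum_{i \in S_k} e_i$ avec $S_1 \sqcup S_2 = I$, et chaque $S_k$ est, par la caractérisation ci-dessus, une réunion de classes~; mais $I$ est une unique classe, donc l'un des $S_k$ est vide. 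Enfin, pour la \emph{surjectivité}, tout idempotent primitif de $A$ s'écrit $\sum_{i \in S} e_i$ avec $S$ réunion non vide de classes $I_1, \dots, I_r$~; par le lemme~\ref{lem:relevement-idempotent}, chaque $\sum_{i \in I_\ell} e_i$ appartient à $A$ et ces éléments sont des idempotents orthogonaux, ce qui force $r = 1$ par primitivité.

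Il n'y a pas vraiment d'obstacle ici~: tout le travail technique (le relèvement des idempotents) a été effectué dans le lemme~\ref{lem:relevement-idempotent}, et le présent corollaire n'est qu'un habillage formel de ce résultat, la seule subtilité étant la caractérisation des sous-ensembles $S$ tels que $\sum_{i \in S} e_i \in A$.
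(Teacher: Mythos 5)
Your proof is correct and follows essentially the same route as the paper's: well-definedness comes from Lemma~\ref{lem:relevement-idempotent}, and primitivity is obtained from the clash $\bar\o_{i_1}\neq\bar\o_{i_2}$ that a nontrivial splitting of $e_I$ would force. The only difference is cosmetic but welcome: you package the key observation as the clean equivalence ``$\sum_{i\in S}e_i\in A$ iff $S$ is a union of $\smile$-classes'', which makes the surjectivity step (which the paper dismisses as \og{}claire\fg{}) completely explicit.
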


\begin{proof}
Le lemme~\ref{lem:relevement-idempotent} montre que, si $I \in \{1,2,\dots,d\}/\smile$, 
alors $e_I=\sum_{i \in I} e_i \in A$. Si $e_I$ n'est pas primitif, cela signifie, puisque $\OC$ est local, qu'il 
existe deux parties non vides $I_1$ et $I_2$ de $I$ telles que $e_{I_1}$, $e_{I_2} \in A$, 
et $I=I_1 \coprod I_2$. Mais, si $i_1 \in I_1$ et $i_2 \in I_2$, alors 
$\omeba_{i_1}(e_{I_1})=1 \neq 0 = \omeba_{i_2}(e_{I_1})$, ce qui est impossible car 
$i_1 \smile i_2$. Donc l'application $I \mapsto e_I$ est bien d\'efinie. 
Sa bijectivit\'e est claire.
\end{proof}

\bigskip

Si $a \in A$, on note $\ahat$ son image dans $kA=k \otimes_\OC A$.

\bigskip

\begin{coro}\label{coro:relevement-idempotent}
Avec ces notations, on a~:
\begin{itemize}
\itemth{a} Si $e \in \blocs(A)$, alors $\ehat \in \blocs(kA)$.

\itemth{b} L'application $\blocs(A) \to \blocs(kA)$, $e \mapsto \ehat$ 
est bijective.
\end{itemize}
\end{coro}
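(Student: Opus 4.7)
The plan is to deduce Corollary C.0.11 directly from Corollary C.0.10 (\texttt{coro:blocs-A}) by applying it twice: once to the inclusion $A \subset \OC^d$ and once to the inclusion $kA \subset k^d$, then identifying the two parameter sets.

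First I would observe that $k = \OC/\mG$ is itself a local (noetherian) ring, indeed a field, and that $kA$ is a $k$-subalgebra of $k^d = k \otimes_\OC \OC^d$. Thus Corollary C.0.10 applies equally well to the pair $(kA,k^d)$, producing a bijection $\{1,\dots,d\}/\smile' \longleftrightarrow \blocs(kA)$, $I \mapsto \sum_{i\in I} \hat e_i$, where $\smile'$ is the equivalence relation induced on $\{1,\dots,d\}$ by the characters $\hat\omega_i : kA \to k$ (these are the restrictions of the projections $k^d \to k$, which land in $k$ and hence equal their residues modulo the zero ideal of the field $k$).

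The key step is to check that the two equivalence relations $\smile$ (defined on $A$) and $\smile'$ (defined on $kA$) coincide as relations on $\{1,\dots,d\}$. This is essentially a tautology: for $a \in A$ we have $\hat\omega_i(\hat a) = \overline{\pi_i(a)} = \bar\omega_i(a)$, so $\hat\omega_i = \hat\omega_j$ on $kA$ if and only if $\bar\omega_i = \bar\omega_j$ on $A$, i.e. $i \smile' j \iff i \smile j$. Having identified the two indexing sets, it remains only to observe that the two bijections from $\{1,\dots,d\}/\smile$ are intertwined by $e \mapsto \hat e$: indeed, the reduction modulo $\mG$ sends $e_I = \sum_{i\in I} e_i \in A$ to $\sum_{i \in I} \hat e_i \in kA$, which is exactly the primitive idempotent of $kA$ associated with $I$ by C.0.10. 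This simultaneously proves (a) (the reduction of a primitive idempotent is primitive) and (b) (the reduction map is a bijection), since both maps to $\blocs(A)$ and $\blocs(kA)$ from $\{1,\dots,d\}/\smile$ are bijections.

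There is no serious obstacle in this argument; the only thing to be careful about is verifying the hypotheses of C.0.10 for $kA \subset k^d$, but these reduce to $k$ being local and $kA$ being a $k$-subalgebra, both of which are immediate. The entire content is packaged in C.0.10 and the elementary identification of the equivalence relations.
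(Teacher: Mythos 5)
Your proposal hinges on the assertion that $kA$ is a $k$-subalgebra of $k^d$, and that is exactly where it breaks down. The inclusion $A\hookrightarrow\OC^d$ is an inclusion of finitely generated $\OC$-modules, but $\OC^d/A$ is in general not $\OC$-flat, so the induced map $kA=A\otimes_{\OC}k\longrightarrow\OC^d\otimes_{\OC}k=k^d$ can have a nonzero kernel. Concretely, take $\OC=F[t]_{(t)}$ for a field $F$, so that $\mG=(t)$ and $k=F$, and let $A=\OC\cdot 1_{\OC^2}+\OC\cdot(t,0)\subset\OC^2$; this is a subalgebra, and a short computation gives
\[
kA\;\cong\;F[\varepsilon]/(\varepsilon^2),
\]
while the image of $kA$ in $k^2=F^2$ is only the diagonal copy of $F$, the kernel being the one-dimensional nilpotent ideal $F\varepsilon$. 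In particular $kA$ is not reduced and therefore cannot embed in $k^2$, so Corollary~\ref{coro:blocs-A} does not apply to the pair $(kA,k^d)$ in the way you invoke it.

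What your argument does give, once you replace $kA$ by its image $\overline{A}$ in $k^d$, is a bijection $\{1,\dots,d\}/{\smile}\longleftrightarrow\blocs(\overline{A})$. To upgrade this to a statement about $\blocs(kA)$, you must still show that the kernel of $kA\twoheadrightarrow\overline{A}$ is nilpotent, i.e.\ contained in the Jacobson radical of $kA$; that is the real content, and it is not a tautology. It is precisely what the paper's proof establishes: it shows that every $k$-algebra morphism $\r\colon kA\to k'$ to a field has, after composition with $A\to kA$, the same kernel as some $\omeba_{t}$, by lifting the corresponding maximal ideal of $A$ to a maximal ideal of $\OC^d$ (using that $\OC^d$ is integral over $A$ and that $\OC$ is local). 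This forces $\ker(kA\to k^d)$ into every maximal ideal of $kA$, hence into the nilradical. Your step ``$kA\subset k^d$'' silently assumes the conclusion of that argument. An alternative repair---equally nontrivial---uses Artin--Rees: for $x\in A\cap\mG\OC^d$ one has $x^m\in A\cap\mG^m\OC^d\subset\mG A$ for $m$ large, so the class of $x$ in $kA$ is nilpotent.
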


\begin{proof}
(a) Soit $e \in \blocs(A)$ et supposons que $\ehat=e_1+e_2$, 
o`u $e_1$ et $e_2$ sont des idempotents centraux orthogonaux de $kA$. 
L'anneau $\OC$ \'etant noeth\'erien, $kA$ est une $k$-alg\`ebre commutative de dimension finie. 
Donc il existe deux morphismes de $k$-alg\`ebres $\r_1$, $\r_2 : kA \to k'$ (o\`u $k'$ 
est une extension finie de $k$) tels que $\r_i(e_j)=\d_{i,j}$. Notons 
$\rhot_i$ la composition $A \to kA \stackrel{\r_i}{\longrightarrow} k'$. 

Posons $\aG_i=\Ker(\rhot_i)$. L'image de $\r_i$ \'etant 
un sous-corps de $k'$, $\aG_i$ est un id\'eal maximal de $A$. Puisque $\OC^d$ est entier 
sur $A$, il existe un id\'eal maximal $\mG_i$ de $\OC^d$ tel que $\aG_i = \mG_i \cap A$. 
Puisque $\OC$ est local, $\mG_i$ est de la forme 
$\OC \times \cdots \times \OC \times \mG \times \OC \times \cdots \times \OC$, o\`u $\mG$ est en $t_i$-i\`eme 
position (pour un $t_i \in \{1,2,\dots,d\}$), ce qui implique 
que $\rhot_i = \omeba_{t_i}$. 

Puisque $\r_1 \neq \r_2$ et $\r_i(e_j)=\d_{i,j}$, on obtient $\omeba_{t_1} \neq \omeba_{t_2}$ 
et $\omeba_{t_1}(e)=\r_1(e_1+e_2)=1=\r_2(e_1+e_2)=\omeba_{t_2}(e)$. Cela contredit 
le corollaire~\ref{coro:blocs-A}.
\end{proof}

\bigskip

Dans le cours de cette preuve, le r\'esultat suivant a \'et\'e d\'emontr\'e~: si $k'$ est une extension 
finie de $k$ et si $\r : kA \to k'$ est un morphisme de $k$-alg\`ebres, alors 
\equat\label{L}
\text{\it il existe $i \in \{1,2,\dots,d\}$ tel que $\r(\ahat)=\omeba_i(a)$ pour tout $a \in A$.}
\endequat

\newpage

\printindex

\newpage

%
%
%
%
%
%
%

\end{document}

\bigskip

\section{Localisation en $V_\reg^*$}

\bigskip

Notons $V^\reg=V \setminus \bigcup_{H \in \AC} H$. Rappelons que, d'apr\`es le 
th\'eor\`eme de Steinberg, 
$$V^\reg=\{v \in V~|~\Stab_G(v)=1\}.$$
On note $\PCB_\reg$ l'ouvert de $\PCB$ d\'efini par 
$$\PCB^\reg=\CCB \times V^\reg/W \times V^*/W,$$
et on pose
$$\QCB^\reg=\Upsilon^{-1}(\PCB^\reg).$$
Pour simplifier, on pose $P^\reg=\kb[\PCB^\reg]$, $Q^\reg=\kb[\QCB^\reg]$ et 
$\Hb^\reg=\Hb(\QCB^\reg)$. Alors
$$Q^\reg=P^\reg \otimes_P Q\qquad\text{et}\qquad \Hb^\reg=P^\reg \otimes_P \Hb.$$
Notons aussi que, si $s \in \Ref(W)$, alors 
\equat\label{inversible alpha}
\text{\it $\a_s$ est inversible dans $\Hb^\reg$.}
\endequat
En effet, $\prod_{w \in W} w(\a_s) \in (P^\reg)^\times$. 

\bigskip

\begin{prop}[Etingof-Ginzburg]
Il existe un unique isomorphisme de $\kb[\CCB]$-alg\`ebres 
$$\Th : \Hb^\reg \longto \kb[V^\reg \times V^*] \rtimes W$$
tel que 
$$
\begin{cases}
\Th(w) = w & \text{si $w \in W$},\\
\Th(y) = y - \DS{\sum_{s \in \Ref(W)} 
\e(s)C_s \hskip1mm\frac{\langle y,\a_s\rangle}{\a_s}\hskip1mm (s-1)} & \text{si $y \in V$},\\
\Th(x) = x & \text{si $x \in V^*$.}\\
\end{cases}
$$
\end{prop}

\begin{proof}
Voir~\cite[proposition 4.11]{EG}.
\end{proof}

\bigskip

\begin{coro}\label{centre reg}
$\Th$ induit un isomorphisme de $\kb[\CCB]$-alg\`ebres 
$Q^\reg \simeq \kb[V^\reg \times V^*]^W$. En particulier, 
$\QCB^\reg$ est isomorphe \`a $\CCB \times (V^\reg \times V^*)/W$ (comme 
sch\'ema sur $\CCB$) et est lisse.
\end{coro}

\begin{proof}
La premi\`ere assertion d\'ecoule simplement de la comparaison des centres 
de $\Hb^\reg$ et $\kb[V^\reg \times V^*] \rtimes W$. La deuxi\`eme r\'esulte du fait 
que $W$ agit librement sur $V^\reg \times V^*$.
\end{proof}

\bigskip

Si $c \in \CCB$, notons $\QCB^\reg_c=\QCB^\reg \cap \QCB_c$. Alors 
le corollaire~\ref{centre reg} montre que
\equat\label{lisse reg}
\text{\it $\QCB^\reg_c \simeq (V^\reg \times V^*)/W$ est lisse.}
\endequat

\bigskip
    
\noindent{\bfit Remarque. --- } 
Il d\'ecoule du corollaire~\ref{centre reg} que 
$$\Lb \simeq \kb(\CCB \times V \times V^*)^{\D W}.$$
D'autre part,
$$\Kb = \kb(\CCB \times V \times V^*)^{W \times W}.$$
En revanche, l'extension de corps $\Lb/\Kb$ {\bfit n'est pas} isomorphe 
\`a l'extension de corps naturelle 
$\kb(\CCB \times V \times V^*)^{\D W}/\kb(\CCB \times V \times V^*)^{W \times W}$. 
En effet, l'isomorphisme $Q^\reg \simeq \kb[V^\reg \times V^*]^W$ du corollaire 
\ref{centre reg} induit par $\Th$ {\bfit n'est pas} un isomorphisme de 
$P^\reg$-alg\`ebres.\finl

\bigskip